\documentclass[11pt,reqno]{article}

\setlength{\hoffset}{-2cm}
\setlength{\voffset}{-2.5cm}
\setlength{\textwidth}{15.75cm}
\setlength{\textheight}{23.275cm}

\usepackage[T1]{fontenc}
\usepackage[utf8]{inputenc}

\usepackage{amsfonts}
\usepackage{amscd}
\usepackage{color}
\usepackage{amsmath}
\usepackage{float}
\usepackage{amsthm}
\usepackage{amssymb}

\usepackage{makeidx}
\makeindex

\usepackage{mathrsfs}
\usepackage{amstext}
\usepackage{graphics}
\usepackage{tikz}

\usepackage{color}

\usetikzlibrary{mindmap,trees}
\usetikzlibrary{shapes,arrows}
\usepackage{pgfplots}
\usepgflibrary{plotmarks}

\tikzset{
 basic/.style = {draw, rectangle, rounded corners=6pt, align=center},
 root/.style = {basic, text width=6cm},
 rect1/.style = {basic, font=\small},
 ellip/.style = {ellipse, fill=blue!20, minimum width=6cm}
}

\usepackage{verbatim}
\evensidemargin0.5cm  



\usepackage{enumerate}

\numberwithin{equation}{section}

\theoremstyle{plain}
\newtheorem{thm}{Theorem}[section]
\newtheorem{defi}[thm]{Definition}
\newtheorem{cor}[thm]{Corollary}
\newtheorem{lem}[thm]{Lemma}
\newtheorem{prop}[thm]{Proposition}
\newtheorem{rem}[thm]{Remark}
\newtheorem{conj}[thm]{Conjecture}

\newcommand{\I}{\ensuremath{\mathbb{I}}}
\newcommand{\R}{\ensuremath{\mathbb{R}}}
\newcommand{\bR}{\ensuremath{\mathbb{R}}}
\newcommand{\N}{\ensuremath{\mathbb{N}}}
\newcommand{\zz}{\ensuremath{\mathbb{Z}}}
\newcommand{\C}{\ensuremath{\mathbb{C}}}
\newcommand{\T}{\ensuremath{\mathbb{T}}}
\newcommand{\Z}{\ensuremath{\mathbb{Z}}}
\newcommand{\n}{\ensuremath{{\N}_0}}

\newcommand\dint{{\rm d}}



\newcommand{\cf}{\ensuremath{\mathcal F}}

\newcommand{\cU}{\ensuremath{\mathcal U}}

\def\DD{\mathbb D}
\def\OO{\mathbb O}

\def\bw{\mathbf w}
\def\D{{\mathcal D}}
\def\Di{{\mathcal D}}
\def\di{\mathbb D}
\def\og{\mathbb O}

\def \Tr{\mathcal T}

\def \V{\mathcal V}
\def\RR{\mathcal R}
\def \<{\langle}
\def\>{\rangle}
\def \L{\Lambda}
\def\La{\Lambda}
\def\Ga{\Gamma}
\def\Th{{\Theta}}
\def \e{\epsilon}
\def \ep{\epsilon}

\def \al{\alpha}

\def \ff{\varphi}
\def \fk{\hat f(\mathbf k)}
\def\ba{\mathbf a}
\def\bk{\mathbf k}
\def\bn{\mathbf n}
\def\bm{\mathbf m}
\def\bs{\mathbf s}
\def\bx{\mathbf x}
\def\by{\mathbf y}
\def\bN{\mathbf N}
\def\bF{\mathbf F}
\def\bE{\mathbf E}
\def\bp{\mathbf p}
\def\bb{\mathbf b}
\def\bH{\mathbf H}


\newcommand{\Span}{{\rm span \, }}
\newcommand{\rank}{{\rm rank \, }}
\newcommand{\supp}{{\rm supp \, }}



\newcommand{\eps}{\varepsilon}


\newcommand{\bproof}{\begin{proof}}
\newcommand{\eproof}{\end{proof}}

\newlength{\fixboxwidth}
\setlength{\fixboxwidth}{\marginparwidth}
\addtolength{\fixboxwidth}{-0pt}


\newcommand{\be}{\begin{equation}}
\newcommand{\ee}{\end{equation}}

\newcommand{\bh}{{\bf h}}
\newcommand{\br}{{\bf r}}
\newcommand{\bt}{{\bf t}}

\newcommand{\bj}{{\bf j}}
\newcommand{\bW}{\ensuremath{\mathbf{W}}}
\newcommand{\bB}{\ensuremath{\mathbf{B}}}

\newcommand{\Wrp}{\ensuremath{\mathbf{W}^r_p}}
\newcommand{\Brpt}{\ensuremath{\mathbf{B}^r_{p,\theta}}}
\newcommand{\Hrp}{\ensuremath{\mathbf{H}^r_p}}
\def\Wr2{{\bf W}^r_2}
\def\Wt2{W^t_2}

\def\bBr{{\bf B}^r_{p,\theta}}


\def\F{\mathcal{F}}
\newcommand{\wt}{\widetilde}
\newcommand{\X}{\mathbb{X}}
\newcommand{\Wo}{\ensuremath{\mathring{\mathbf W}_p^r}}

\begin{document}

\title{Hyperbolic Cross Approximation}

\author{Dinh D{\~u}ng$^a$, Vladimir  Temlyakov$^b$\footnote{Corresponding
author, Email: temlyakovv@gmail.com}, Tino Ullrich$^c$\\\\
$^a$Information Technology Institute, Vietnam National University, Hanoi, Vietnam\\
$^b$Department of Mathematics, University of South Carolina, 29208 Columbia, USA\\
$^c$Institute for Numerical Simulation, University of Bonn, 53115 Bonn, Germany}

\date{\today}

\maketitle

\begin{abstract} 
  Hyperbolic cross approximation is a special type of multivariate approximation. Recently, driven by applications in
engineering, biology, medicine and other areas of science new challenging problems have appeared. The common feature of
these problems is high dimensions. We present here a survey on classical methods developed in multivariate approximation
theory, which are known to work very well for moderate dimensions and which have potential for applications in really
high dimensions. 
The theory of hyperbolic cross approximation and related theory of functions with mixed smoothness are under detailed
study for more than 50 years. It is now well understood that this theory is important both for theoretical study and for
practical applications. It is also understood that both theoretical analysis and construction of practical algorithms
are very difficult problems. This explains why many fundamental problems in this area are still unsolved. Only a few
survey papers and monographs on the topic are published. This and recently discovered deep connections between the
hyperbolic cross approximation (and related sparse grids) and other areas of mathematics such as probability,
discrepancy, and numerical integration motivated us to write this survey. 
We try to put emphases on the development of ideas and methods rather than list all the known results in the area. We
formulate many problems, which, to our knowledge, 
are open problems. We also include some very recent results on the topic, which sometimes highlight new interesting
directions of research. 
We hope that this survey will stimulate further active research in this fascinating and challenging area of
approximation theory and numerical analysis. 
\end{abstract}

\newpage

\tableofcontents
  \newpage
    \newpage 
\section{Introduction}
This book is a survey on multivariate approximation. The 20th century was a period of transition from univariate
problems to multivariate problems in a number of areas of mathematics. For instance, it is a step from Gaussian sums to
Weil's sums in number theory,
a step from ordinary differential equations to PDEs, a step from univariate
trigonometric series to multivariate trigonometric series in harmonic analysis,
a step from quadrature formulas to cubature formulas in numerical integration, a
step from univariate function classes to multivariate function classes in
approximation theory. 
In many cases this step brought not only new phenomena but also required new
techniques to handle the corresponding multivariate problems. In some cases even
a formulation of a multivariate problem requires 
a nontrivial modification of a univariate problem. For instance, the problem of
convergence of the multivariate trigonometric series immediately encounters a
question of which partial sums we should consider -- there is 
no natural ordering in the multivariate case. In other words: What is a natural
multivariate analog of univariate trigonometric polynomials? Answering this
question mathematicians studied different generalizations of the univariate
trigonometric polynomials: with frequencies from a ball, a cube or, most importantly, a {\it hyperbolic
cross \index{Hyperbolic cross}}
$$
    \Gamma(N) = \Big\{\bk=(k_1,\dots,k_d)\in \Z^d~:~\prod\limits_{j=1}^d \max\{|k_j|,1\} \leq N\Big\}\,.
$$

\begin{figure}[H]
\center
\includegraphics[scale = 0.25]{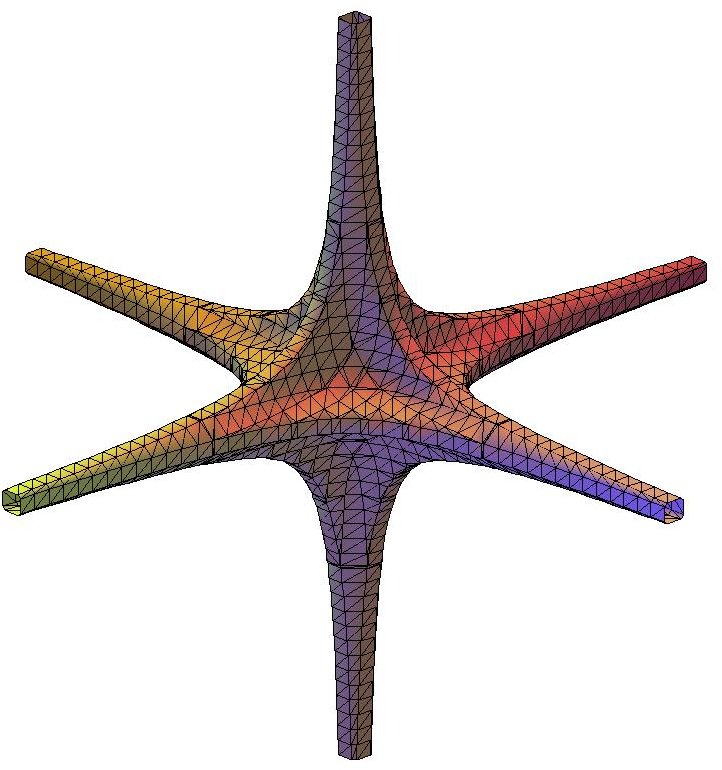}
\caption{A smooth hyperbolic cross in $\R^3$}
\end{figure}

Results discussed in this survey demonstrate that polynomials with frequencies from hyperbolic crosses $\Gamma(N)$ play
the same role in the multivariate approximation as the univariate trigonometric polynomials play in the approximation of
functions on a single variable. 
On a very simple example we show how the hyperbolic cross polynomials\index{Hyperbolic cross!Polynomials} appear naturally in the multivariate
approximation. Let us begin with the univariate case. The natural ordering of the univariate trigonometric system is
closely connected with the ordering of eigenvalues of the differential operator $D:=\frac{d}{dx}$ considered on $2\pi$
periodic functions. The eigenvalues are $\{\pm ik\}$ with $k=0,1,2,\dots$ and the corresponding eigenfunctions \index{Eigenfunction} 
are $\{e^{\pm
ikx}\}$. Nonzero eigenvalues \index{Eigenvalue} of the differential operator of mixed derivative $\prod_{j=1}^d D_j$ are
$\{\prod_{j=1}^d ik_j\,:\,\bk \in \Z^d\}$. Ordering these eigenvalues we immediately obtain the hyperbolic crosses
$\Gamma(N)$. This
simple observation shows that hyperbolic crosses 
are closely connected with the mixed derivative. Results obtained in the multivariate approximation theory for the last
50 years established a deep connection between trigonometric polynomials with frequencies from the hyperbolic crosses
and classes of functions defined with the help of either mixed derivatives or mixed differences. The importance of
these classes was understood in the beginning of 1960s. 

In the 1930s in
connection with applications in mathematical physics, S.L. Sobolev introduced the
classes  of functions by imposing the following restrictions
\be\label{tag1.13i}
\|f^{(n_1,\dots ,n_d)}\|_p  \le 1  \quad
\ee
for all ${\bf n} = (n_1,\dots ,n_d)$ such that $n_1 + \dots + n_d \le R$.
These classes  appeared as natural ways to measure smoothness in many
multivariate problems including numerical integration. It was established
that for Sobolev  classes the optimal error of numerical integration by
formulas with $m$ nodes is of order $m^{-R/d}$. On the other hand, at the end of 1950s,  N.M. Korobov discovered the
following phenomenon: Let us consider the class of functions which satisfy (\ref{tag1.13i}) for all ${\bf n}$ such that
$n_j \le r, \quad j = 1,\dots ,d$ (Korobov considered different, more general classes, but for illustration purposes it
is convenient for us to deal with these classes here). Obviously this new class (class of
functions with \index{Bounded mixed derivative} bounded mixed derivative) is much wider then the Sobolev class with
$R = rd$. For example, all functions of the form
$$
f(x) = \prod_{j=1}^d f_j(x_j) , \quad \|f^{(r)}_j\|_p \le 1,
$$
belong to this class, while not necessarily to the Sobolev class (it would
require, roughly, $\|f_j^{(rd)}\|_p \le 1$). Korobov constructed a cubature
formula with $m$ nodes which guaranteed the accuracy of numerical integration
for this class of order $m^{-r}(\log m)^{rd}$, i.e., almost the same accuracy that 
we had for the Sobolev class.   Korobov's discovery pointed out the importance
of  the classes of functions with bounded mixed derivative  in fields such
as approximation theory and numerical analysis. The simplest versions of Korobov's magic cubature formulas are the
Fibonacci cubature formulas \index{Cubature formula!Fibonacci}, see  Subsection \ref{Sect:Fib} below, given by 
$$     
\Phi_n(f) = b_n^{-1} \sum_{\mu=1}^{b_n}  f(\mu/b_n,\,\{\mu
b_{n-1}/b_n \}), 
$$
where $b_0=b_1=1$,  $\;  b_n=b_{n-1}+b_{n-2}$ are the Fibonacci numbers and
$\{x\}$ is the fractional part of the number $x$. These cubature formulas work optimally 
both for classes of functions with bounded mixed derivative and for classes with bounded mixed difference.
The reason for such an outstanding behavior is the fact that the Fibonacci  cubature
formulas are exact on the hyperbolic cross polynomials associated with $\Gamma(cb_n)$.

The multivariate problems of hyperbolic cross approximation turn out to be much more
involved than their univariate counterparts. For instance, the fundamental Bernstein inequalities
for the trigonometric polynomials are known in the univariate case with explicit
constants and they are not even known in the sense of order for the
trigonometric polynomials with frequencies from a hyperbolic cross (see Open problem 1.1 below). 

 We give a brief historical overview of challenges and open problems of
approximation theory with emphasis put on multivariate approximation. It was
understood in the beginning of the 20th century that smoothness properties of a
univariate function determine the rate of approximation of this function by
polynomials (trigonometric in the periodic case and algebraic in the
non-periodic case).  
A fundamental question is: What is a natural multivariate analog of univariate
smoothness classes? Different function classes were considered in the
multivariate case: isotropic and anisotropic Sobolev and Besov classes, classes
of functions with bounded mixed derivative and others. The simplest case of such a function class is the unit ball of
the mixed Sobolev space \index{Function spaces!Mixed smoothness Sobolev}of bivariate functions given by\index{Mixed smoothness}
$$
    \bW^r_p := \Big\{f\in L_p~:~\|f\|_{\bW^r_p} := \|f\|_p + \Big\|\frac{\partial^rf}{\partial
x_1^r}\Big\|_p+\Big\|\frac{\partial^rf}{\partial x_2^r}\Big\|_p + \Big\|\frac{\partial^{2r}f}{\partial
x_1^r\partial x_2^r}\Big\|_p\le 1\Big\}\,.
$$
These classes are sometimes denoted as classes of functions with dominating mixed derivative \index{Bounded mixed derivative} since the condition on the
mixed derivative is the dominating one. Babenko \cite{Bab2} was the first who introduced such classes and began to study approximation of these classes by the hyperbolic cross polynomials. In Section 3 we will define more general periodic $\bW$ and $\bH$, $\bB$ 
classes
of $d$-variate functions, also with fractional smoothness $r>0$. What concerns $\bH$ and $\bB$ classes we replace
the condition on the mixed derivative, used for the definition of $\bW$ classes, by a condition on a mixed difference. In Section 3 we give a historical comment on the further study of the mixed smoothness classes. 

The next fundamental
question is: How to approximate functions from these classes?   Kolmogorov
introduced the concept of the $n$-width of a function class. This concept is
very useful in answering the above question. The Kolmogorov $n$-width is a
solution to an optimization problem where we minimize the error of best approximation with respect 
to all $n$-dimensional linear subspaces. This concept allows us to understand which $n$-dimensional
linear subspace is the best for approximating a given class of functions.  The
rates of decay of the Kolmogorov $n$-width are known for the univariate
smoothness classes. In some cases even exact values of it are known. The problem
of the rates of decay of the Kolmogorov $n$-width for the classes of
multivariate functions with bounded mixed derivative is still not completely understood. 

We note that the function classes with bounded mixed derivative are not only an
interesting and challenging object for approximation theory. They also represent a suitable model in 
scientific computations. Bungartz and Griebel \cite{BuGr04, BG99, Gr05} and their groups 
use approximation methods designed for these classes in elliptic variational
problems. The recent work of Yserentant \cite{Y2010} on the regularity of eigenfunctions\index{Eigenfunction} of the electronic Schr\"odinger
operator, and Triebel \cite{Tr15} on the regularity of
solutions of Navier-Stokes equations, show that mixed regularity plays a fundamental role in mathematical physics. This
makes approximation
techniques developed for classes of functions with bounded mixed derivative a
proper choice for the numerical
treatment of those problems.      
 
Approximation of classes of functions with bounded mixed derivative (the $\bW$ classes) and 
functions with a restriction on mixed differences (the $\bH$ and, more
generally, $\bB$ classes) have been developed following classical tradition. A systematic
study of different asymptotic characteristics of these classes dates back to the
beginning of 1960s. 
Babenko \cite{Bab1} and Mityagin \cite{Mit} were the first who established the
nowadays well-known classical estimates for the Kolmogorov widths of the classes
$\bW^r_p$ in $L_p$ if $1<p<\infty$, namely
\begin{equation}\label{BabMit}
    d_m(\bW^r_p,L_p) \asymp \left(\frac{(\log m)^{d-1}}{m}\right)^r\quad,\quad m\in \N\,,
\end{equation}
where the constants behind $\asymp$ depend on $d$, $p$ and $r$. Later it turned out that the same order is present in
the
situation $d_m(\bW^r_p,L_q)$ if $1<q\le p<\infty$ and $2\le p\le q<\infty$. With \eqref{BabMit} it has been realized
that
the {\it hyperbolic cross polynomials}
play the same role in the approximation of multivariate functions as classical
univariate polynomials for the approximation of univariate functions. This
discovery resulted in a detailed study of the
hyperbolic cross polynomials. However, it turned out that the study of
properties of hyperbolic cross polynomials is much more difficult than 
the study of their univariate analogs. We discuss the corresponding results in
Section \ref{trigpol}.

In Sections 4 and 5 we consider linear approximation problems -- the problems of approximation by elements of a
given
finite dimensional linear subspace. We discussed a number of the most important asymptotic characteristics related to
the linear approximation. In addition to the Kolmogorov $n$-width we also study the asymptotic behavior of
linear widths and orthowidths. Interesting effects occur when studying the approximation of the class $\bW^r_p$ in $L_q$
if
$p<q$. In contrast to \eqref{BabMit} the influence of the parameters $p$ and $q$ is always visible in the rate of the
order of the linear widths. In fact, if either $1<p\le q\le 2$ or $2\le p\le q<\infty$ then we have
\begin{equation}\nonumber
    \lambda_m(\bW^r_p,L_q) \asymp \left(\frac{(\log m)^{d-1}}{m}\right)^{r-1/p+1/q}\quad,\quad m\in \N\,.
\end{equation}
Then the optimal approximant is realized by a projection on an appropriate linear subspace of the hyperbolic cross
polynomials,
which is not always the case, like for instance in the case $p<2<q$. 
In the definition of linear width we allow all linear operators of rank $m$ to compete in the minimization problem. 
Clearly, we would like to work with nice and simple linear operators. 
This idea motivated researchers to impose additional restrictions 
on the linear operators in the definitions of the corresponding modifications of the linear width. 
One very natural restriction is that the approximating rank $m$ operator is an orthogonal projection operator. 
This leads to the concept of the orthowidth (Fourier width) $\varphi_m(\bF,X)$. It turns out that 
the behavior of the $\varphi_m(\bW^r_p,L_q)$ is different from the 
behavior of the  $\lambda_m(\bW^r_p,L_q)$. For instance, 
it was proved that the operators $S_{Q_n}$ of orthogonal projection onto subspaces $\Tr(Q_n)$ of the 
hyperbolic cross polynomials are optimal (in the sense of order) from the point of view of the 
orthowidth for all $1\le p,q\le\infty$ except $p=q=1$ and $p=q=\infty$. 
That proof required new nontrivial methods for establishing the 
right lower bounds. 

Another natural restriction is that the approximating rank $m$ operator is a recovering operator, which uses function values at $m$ points.
Restricting the set of admissible
rank $m$ operators to such that are based on function evaluations (instead of general linear functionals), we
observe a behavior which is clearly bounded below by $\lambda_m$. We call the corresponding asymptotic
quantities sampling widths $\varrho_m$. However, this is not the end of the story. Already in the situation $\bW^r_p$ in
$L_q$ we are able to determine sets of parameters where $\varrho_m$ is equal to $\lambda_m$ in the sense of order, and
others where $\varrho_m$ behaves strictly worse (already in the main rate). However, the complete picture is still
unknown. In that sense the situation $p=q$ (including $p=2$) is of particular interest. The
result 
$$
      \varrho_m(\bW^r_p,L_p) \lesssim \left(\frac{(\log m)^{(d-1)}}{m}\right)^r (\log m)^{(d-1)/2}\quad,\quad m\in \N\,,
$$
has been a breakthrough since it improved on a standard upper bound by using a non-trivial technique. However, the
exact order is still unknown even in case $p=2$. The so far best-known upper bounds for sampling recovery are all based 
on sparse grid \index{Sparse grid} constructions. Alike the orthowidth results, the optimal in the sense of order subspaces for recovering are 
subspaces $\Tr(Q_n)$ with appropriate $n$ (in all cases, where we know the order of  $\varrho_m(\bW^r_p,L_q)$).

In Section 6 we discuss a very important characteristic of a compact -- its entropy numbers. It quantitatively
determines its ``degree of compactness''. Contrary to the
asymptotic characteristics discussed in Sections 4 and 5 the entropy numbers are not directly connected to the linear
theory of approximation. However, there are very useful inequalities between the entropy numbers and other asymptotic
characteristics, which provide a powerful method of proving good lower bounds, say, for the Kolmogorov widths. In this
case Carl's inequality is used.
Two more points, which motivated us to discuss the entropy numbers of classes of functions with bounded mixed derivative
are the following. (A) The problem of the rate of decay of $\e_n(\bW^r_{p},L_q)$ in a
particular case $r=1$, $p=2$, $q=\infty$ is equivalent  to a
fundamental problem of probability theory (the Small Ball Problem, see Subsection \ref{sect:ESBP} for details). Both of
these problems are still open for $d>2$. (B) The problem of the rate of decay of $\e_n(\bW^r_{p},L_q)$   turns out to be
a very rich and difficult problem, which is not yet completely solved. Those problems that have been resolved required
different nontrivial methods for different pairs $(p,q)$. 

Here is a typical result on the $\e_n(\bW^r_{p},L_q)$: For $1<p,q<\infty$ and $r>(1/p-1/q)_+ $ one has
$$
\e_n(\bW^r_{p},L_q) \asymp \left(\frac{(\log n)^{ d-1}}{n}\right)^r\quad,\quad n\in \N.
 $$
 The above rate of decay does neither depend on $p$ nor on $q$. It is known in approximation theory  that investigation
of asymptotic characteristics of classes $\bW^r_{p}$ in $L_q$ becomes more
difficult when $p$ or $q$ takes value $1$ or $\infty$ than when $1<p,q<\infty$.
This is true for $\e_n(\bW^r_{p},L_q)$, too. There are still fundamental open problems, for instance, Open problem 1.6
below. 

Recently, driven by applications in engineering, biology, medicine and other areas of science
nonlinear approximation began to play an important role. Nonlinear
approximation is important in applications because of its concise
representations  and increased computational efficiency. In Section 7 we discuss a typical 
problem of nonlinear approximation -- the $m$-term approximation. Another name for $m$-term approximation is {\it sparse
approximation}. In this setting we begin with a given system of elements (functions) $\Di$, which is usually called a
{\it dictionary},  in a Banach space $X$. 
The following characteristic 
$$
\sigma_m(f,\Di)_X := \inf_{\substack{g_i\in \Di,c_i \\i=1,\dots,m}} \Big\|f -\sum_{i=1}^m
c_ig_i\Big\|_X
$$
is called best $m$-term approximation of $f$ with regard to $\Di$ and gives us the bottom line of $m$-term approximation
of $f$. For instance, we can use the classical trigonometric system 
$\Tr^d$ as a dictionary $\Di$. Then, clearly, for any $f\in L_p$ we have $\sigma_m(f,\Tr^d)_{L_p} \le E_{Q_n}(f)_p$,
when
$m=|Q_n|$ (see Section 2 for the definition of the step hyperbolic cross $Q_n$). Here, $E_{Q_n}(f)_p$ is the best
approximation by the hyperbolic cross polynomials with frequencies from the
hyperbolic cross $Q_n$. It turns out that for some function classes best $m$-term approximations give the same order of
approximation as the corresponding hyperbolic cross polynomials but for other classes the nonlinear way of approximation
provides a substantial gain over the hyperbolic cross approximation. For instance, when $m=|Q_n|$, we have
$$
\sup_{f\in \bW^r_p}\sigma_m(f,\Tr^d)_{L_p} \asymp \sup_{f\in \bW^r_p}E_{Q_n}(f)_p,\quad 1<p<\infty,
$$
but for $2\le p <q<\infty$ the quantity
$
\sup_{f\in \bW^r_p}\sigma_m(f,\Di)_{L_q} $ is substantially smaller than $\sup_{f\in \bW^r_p}E_{Q_n}(f)_q$.  

In a way similar to optimization over linear subspaces in the case of linear approximation we 
discuss an optimization over dictionaries from a given collection in the $m$-term approximation problem. It turns out
that the wavelet type bases, for instance, the basis $\mathcal U^d$ discussed in Section 7, are very good for sparse
approximation -- in many cases they are optimal (in the sense of order) among all orthogonal bases. A typical result
here is the following: for $1<p,q<\infty$ and large enough $r$ we have
$$
\sup_{f\in \bW^r_p}\sigma_m(f,\mathcal U^d)_{L_q} \asymp \left(\frac{(\log m)^{d-1}}{m}\right)^r\quad,\quad m\in \N.
$$
It is important to point out that the above rate of decay does not depend on $p$ and $q$. 

The characteristic $\sigma_m(f,\Di)_X$ gives us a bench mark, which we can ideally achieve 
in $m$-term approximation of $f$. Clearly, keeping in mind possible numerical applications, we would like to devise good
constructive methods (algorithms) of $m$-term  
approximation. It turns out that greedy approximations (algorithms) work very well for a wide variety of dictionaries
$\Di$ (see Section 7 for more details).

Numerical integration, discussed in Section 8, is one more challenging multivariate problem where approximation theory
methods 
are very useful. In the simplest form (Quasi-Monte Carlo setting), for a given function class $\mathbf F$ we want to
find $m$ points $\bx^1,\dots,\bx^m$ in $D$
such that $\sum_{j=1}^m \frac{1}{m} f(\bx^j)$ approximates well the integral $\int_D fd\mu$, where $\mu$ is the
normalized
Lebesgue measure on $D$. Classical discrepancy theory provides constructions of point sets that are good for numerical
integration of characteristic functions of parallelepipeds of the form $P=\prod_{j=1}^d [a_j,b_j]$. The typical error
bound is of the form $m^{-1}(\log m)^{d-1}$, see Subsection \ref{numintdisc} below. Note that a regular grid for $m=n^d$ provides an error of the order
$m^{-1/d}$. 
The above mentioned results of discrepancy theory are closely related to numerical integration of functions with bounded
mixed derivative (the case of the first mixed derivative) by the Koksma-Hlawka inequality. 

In a somewhat more general setting (optimal cubature formula setting) we are optimizing not only over points
$\bx^1,\dots,\bx^m$ but also over the weights $\lambda_1,\dots,\lambda_m$:
$$
 \kappa_m(\mathbf{F}) \,:=\,
\inf\limits_{X_m=\{\bx^1,\dots,\bx^m\}\subset D}\,
	\inf\limits_{\lambda_1,\dots,\lambda_m\in\R}\sup\limits_{f\in{\mathbf{F}} }
	\Big|\int_{D} f(\bx) \, d\bx-\sum\limits_{i=1}^m \lambda_i f(\bx^i)\Big|\,.
$$
A typical and very nontrivial result here is: For $1<p<\infty$ and $r>\max\{1/p,1/2\}$
we have
$$
\kappa_m(\bW^r_p)\asymp m^{-r} (\log m)^{(d-1)/2}\quad,\quad m\in \N.
$$
In the case of functions of two variables optimal cubature rules are very simple -- the Fibonacci
cubature rules. They represent a special type of cubature rules, so-called Quasi-Monte Carlo rules. In the case $d\geq
3$ the optimal (in the sense of order) cubature rules  are constructive but not as
simple as the Fibonacci cubature formulas -- the Frolov cubature formulae. In fact, for the Frolov cubature formulae
  all weights $\lambda_i$, $i=1,...,m$,  are equal but in general do not sum up to one. This
means in particular that constant functions would not be integrated exactly by Frolov's method. Equal weights which
sum up to one is the main feature of the quasi-Monte Carlo integration. We point out that   there are still fundamental
open
problems: right orders of $\kappa_m(\bW^r_p)$ for $p=1$ and $p=\infty$ are not known (see Open problems 1.8 and 1.9
below). 

In this survey the notion {\it Sparse Grids} \index{Sparse grid} actually refers to the point grid coming out of Smolyak's algorithm applied
to univariate interpolation/cubature rules. The phrase itself is due to Zenger \cite{Ze91}, \cite{GrScZe92} and
co-workers who addressed more general hierarchical methods for avoiding the ``full grid'' decomposition. In any sense
{\em
sparse grids} play an important role in numerical integration of functions with bounded mixed derivative. In the
case of $r$th bounded  mixed derivatives they provide an error of the order $m^{-r}(\log m)^{(d-1)(r+1/2)}$. Also, they
provide the recovery error in the sampling problem of the same order. Note again that the regular grid from above
provides an error of the order $m^{-r/d}$. 
The error bound $m^{-r}(\log m)^{(d-1)(r+1/2)}$ is reasonably good for moderate dimensions $d$, say, $d\le 40$. It turns
out that there are practical computational problems with moderate dimensions where sparse grids work well. 
Sparse grids techniques have applications in quantum mechanics, numerical solutions of stochastic PDEs, data mining, 
finance. 
 
 At the end of this section we give some remarks, which demonstrate a typical
 difficulty of the study of the hyperbolic cross approximations. It is known
that the Dirichlet and the de la Vall\'ee Poussin kernels and operators,
associated
with them, play a significant role in investigation of the trigonometric
approximation. In particular, the boundedness property of the de la Vall\'ee
Poussin kernel: $\|\mathcal V_n(x)\|_1 \le C$ is very helpful. It turns out that there is
no analog of this boundedness property for the hyperbolic cross polynomials: for
the corresponding de la Vall\'ee Poussin kernel we have $\|\mathcal V_{Q_n}(\bx)\|_1 \ge
cn^{d-1}$. 
This phenomenon made the study in the $L_1$ and $L_\infty$ norms difficult.
There are many unsolved problems for approximation in the $L_1$ and $L_\infty$
norms. 

In the case of $L_p$, $1<p<\infty$, the classical tools of harmonic analysis --
the Littlewood-Paley theorem, the Marcinkiewicz multipliers, the
Hardy-Littlewood inequality -- are very useful. However, in some cases other
methods were needed to obtain correct estimates. 
We illustrate it on the following example. Let $\rho(\bs):=\{\bk: [2^{s_j-1}]\le |k_j|<2^{s_j},\, j=1,\dots,d\}$ and
$$
 \Di_{Q_n}(\bx) := \sum_{|\bs|_1\le n} \Di_{\rho(\bs)}(\bx), \quad \Di_{\rho(\bs)}(\bx):=
\sum_{\bk\in\rho(\bs)}e^{i(\bk,\bx)},
$$
be the Dirichlet kernel for the step hyperbolic cross $Q_n$. Then for $2<p<\infty$ by a corollary to the
Littlewood-Paley theorem one gets
\begin{equation}\label{1.1.1}  
  \|\Di_{Q_n}\|_p \lesssim
\left(\sum_{|\bs|_1\le n} \|\Di_{\rho(\bs)}\|_p^2\right)^{1/2}\lesssim2^{(1-1/p)n}n^{(d-1)/2}.
\end{equation}
However,   the
upper bound in 
(\ref{1.1.1}) does not provide the right bound. Other technique (see Theorem \ref{T2.4.6}) gives
\begin{equation}\nonumber
  \|\Di_{Q_n}\|_p \lesssim
\left(\sum_{|\bs|_1\le n} \left(\|\Di_{\rho(\bs)}\|_22^{|\bs|_1(1/2-1/p)}\right)^p\right)^{1/p}\lesssim
2^{(1-1/p)n}n^{(d-1)/p}.
\end{equation}
The above example
demonstrates the problem, which is related to the fact that
in the multivariate Littlewood-Paley formula we have many ($\asymp n^{d-1}$)
dyadic blocks of the same size ($2^n$). 

It is known that in studying asymptotic characteristics of function classes
the discretization technique is useful. Classically,
the Marcinkiewicz theorem served as a powerful tool for discretizing the
$L_p$-norm of a trigonometric polynomial. Unfortunately, there
is no analog of Marcinkiewicz' theorem for hyperbolic cross polynomials (see
Theorem \ref{T2.5.7}). An important new technique, which was developed to
overcome the
above difficulties, is based on volume estimates of the sets of Fourier
coefficients of unit balls of trigonometric polynomials (see Subsection 2.5). 
Later, also within the ``wavelet revolution'', sequence space isomorphisms
were used to discretize function spaces, see Section \ref{Sect:Smolyak} for
the use of the tensorized Faber-Schauder system.

A standard technique of proving lower bounds for asymptotic characteristics of function classes ($d_m$, $\lambda_m$,
$\varphi_m$, $\varrho_m$, $\epsilon_n$ $\sigma_m$, $\kappa_m$) is based on searching for ``bad'' functions in an
appropriate subspaces of the trigonometric polynomials. Afterwards, using the de la
Vall{\'e}e Poussin operator, we reduce the problem of approximation by arbitrary functions to the problem of
approximation by the trigonometric polynomials. The uniform boundedness property of the de la Vall{\'e}e Poussin
operators is fundamentally important in this technique. As we pointed out above the de la Vall{\'e}e Poussin operators
for the hyperbolic cross are not uniformly bounded as operators from $L_1$ to $L_1$ and from $L_\infty$ to $L_\infty$ --
their norms grow with $N$ as $(\log N)^{d-1}$. In some cases we are able to overcome this difficulty by considering the
entropy numbers in the respective situations. In fact, we use some general
inequalities (Carl's inequality, see Section 6), which provide lower bounds for
the Kolmogorov widths in terms of the entropy numbers. However, there are still
outstanding open problems on the behavior of the entropy numbers in 
$L_\infty$ (see Subsection \ref{entropy_gen}). We also point out that classical
techniques, based on Riesz products, turned out to be useful for the 
hyperbolic cross polynomials. 

As we already pointed out we present a survey of results on multivariate approximation -- the hyperbolic cross
approximation. These results provide a natural generalization of the classical univariate approximation theory results
to the case of multivariate functions. We give detailed historical comments only on the multivariate results. Typically,
the corresponding univariate results are well-known and could be found in a book on approximation theory, for instance, 
\cite{Pin85}, \cite{DeLo93}, \cite{lomavo96}, \cite{TBook}. 
We discuss here two types of mixed smoothness classes: 
(I) the $\bW$-type classes, which are defined by a restriction on the mixed derivatives 
(more generally, fractional mixed derivatives); (II) the $\bH$-type and the $\bB$-type classes, 
which are defined by restrictions on the mixed differences. It has become standard in the theory of 
function spaces to call classes defined by restrictions on derivatives Sobolev or Sobolev-type classes. 
We follow this tradition in our survey. However, we point out that S.L. Sobolev did not study the 
classes of functions with bounded mixed derivative. The $\bB$-type classes are usually called 
Besov or Besov-type classes. We follow this tradition in our survey too. 
The $\bH$-type classes are a special case of the $\bB$-type classes, namely $\bH^r_p = \bB^r_{p,\infty}$. 
Historically, the first investigations were conducted on the $\bW$-type and $\bH$-type classes. 
An interesting phenomenon was discovered. It was established that the behavior of the asymptotic characteristics of 
classes $\bW^r_p$ and $\bH^r_p$ measured in $L_q$ are different for $1<p,q<\infty$. 
Typically, for $d\ge 2$
$$
a_m(\bW^r_p,L_q) = o\left(a_m(\bH^r_p,L_q)\right),
$$
where $a_m$ stands for best approximations $E_{Q_n}$, 
the Kolmogorov width $d_m$, the linear width $\lambda_m$, the orthowidth $\varphi_m$, 
the entropy numbers $\varepsilon_m$, and the best $m$-term approximation $\sigma_m$. 
This shows that classes $\bH^r_p$ are substantially larger than their counterparts $\bW^r_p$. 
We point out that it is well-known that in the univariate case, typically, the behavior of the asymptotic 
characteristics of classes $W^r_p$ and $H^r_p$ coincide, even though $H^r_p$ is a wider class than $W^r_p$. 
This phenomenon encouraged researchers to study the $\bB$-type classes $\bB^r_{p,\theta}$ and 
determine the influence of the secondary parameter $\theta$.

In Section 9 we provide some more arguments in favor of a systematic study of the hyperbolic cross approximation and
classes of functions with mixed smoothness. In particular, we discuss the anisotropic mixed smoothness which plays an
important role in high-dimensional approximation and applications; direct and inverse theorems for hyperbolic cross
approximation; widths and hyperbolic cross approximation for the intersection of classes of mixed smoothness; continuous
algorithms in $m$-term approximation and non-linear widths.
 We also comment on the quasi-Banach situation, i.e. the case
$p<1$. Corresponding classes of functions turned out to be suitable not only for best $m$-term approximation problems.
In this section we also complement the results from Section 4 with recent results on other widths ($s$-numbers) like
Weyl and Bernstein numbers relevant for the analysis of Monte Carlo algorithms. Also, we demonstrate there how
classical hyperbolic cross approximation theory can be used in some important contemporary problems of numerical
analysis.

High-dimensional approximation problems appear in several areas of science like for instance in quantum chemistry and
meteorology. As already mentioned above some of our function class models are relevant in this context. In Section 
\ref{Sect:highdim} we comment on some recent results on how the
underlying dimension $d$ affects the multivariate
approximation error. The order of the approximation error is not longer sufficient for determining the {\it information
based complexity} of the problem. We present some recent results and techniques to see the $d$-dependence of the
constants in the approximation error estimates and the convergence rate of widths complemented by sharp {\it
preasymptotical} estimates in the Hilbert space case. In computational mathematics related to high-dimensional problems,
the so-called $\varepsilon$-dimension $n_\varepsilon = n_\varepsilon(\mathbf{F},X)$ is used to quantify the
computational complexity. We discuss $d$-dependence of  the $\varepsilon$-dimension of $d$-variate function classes of
mixed smoothness and of the relevant hyperbolic cross approximations where $d$ may be very large or even infinite, as
well its application for numerical solving of parametric and stochastic elliptic PDEs.

The following list contains monographs and
survey papers directly related to this book: \cite{Korb2}, \cite{Tmon}, \cite{Telyak88}, \cite{TBook},
\cite{D}, \cite{Tsurv}, \cite{BuGr04}, \cite{DeSc89}, \cite{NW08}, \cite{NoWo09}, \cite{NoWo12},
\cite{DiKuSl13}, \cite{No14}, \cite{Rom12}.

{\bf Notation.} As usual $\N$ denotes the natural numbers, $\N_0:=\N\cup\{0\}$,
$\N_{-1}:=\N_0\cup\{-1\}$, $\zz$ denotes the integers, 
$\R$ the real numbers, and $\C$ the complex numbers. The letter $d$ is always
reserved for the underlying dimension in $\R^d, \Z^d$ etc. Elements
$\bx,\by,\bs \in \R^d$ are always typesetted in bold face.  We denote
with $(\bx,\by)$ and $\bx\cdot \by$ the usual Euclidean inner
product in $\R^d$. For
$a\in \R$ we denote $a_+ := \max\{a,0\}$. 
For $0<p\leq \infty$ and $\bx\in \R^d$ we denote $|\bx|_p := (\sum_{i=1}^d
|x_i|^p)^{1/p}$ with the usual modification in the case $p=\infty$. By
$\bx = (x_1,\ldots,x_d)>0$ we mean that each coordinate is positive. By $\T$ we denote
the torus represented by the interval $[0,2\pi]$.
If $X$ and $Y$ are two (quasi-)normed spaces, the (quasi-)norm
of an element $x$ in $X$ will be denoted by $\|x\|_X$. If $T:X\to Y$ is a continuous operator we write $T\in
\mathcal{L}(X,Y)$. The symbol $X \hookrightarrow Y$ indicates that the
identity operator is continuous. For two sequences $a_n$ and $b_n$ we will
write $a_n \lesssim b_n$ if there exists a constant $c>0$ such that $a_n \leq
c\,b_n$ for all $n$. We will write $a_n \asymp b_n$ if $a_n \lesssim b_n$ and
$b_n \lesssim a_n$.

\subsection*{Outstanding open problems}\index{Open problems!Outstanding}
Let us conclude this introductory section with a list of outstanding open problems. 
\begin{itemize}
 \item[\bf 1.1] Find the right form of the Bernstein inequality for $\Tr(N)$ in $L_1$ (Open problem 2.1).
 \item[\bf 1.2] Prove the Small Ball Inequality for $d\ge 3$ (Open problem 2.5, 2.6).
 \item[\bf 1.3] Find the right order of the Kolmogorov widths 
 $d_m(\bW^r_p,L_\infty)$ and $d_m(\bH^r_p, L_\infty)$ for $2\leq p\leq \infty$ and $r>1/p$ in dimension 
 $d\ge 3$ (Open problem 4.2).
\item[\bf 1.4] Find the right order of the optimal sampling recovery $\varrho_m(\bW^r_p,L_p)$, $1\le p\le \infty$, $r>1/p$.
\item[\bf 1.5] Find the right order of the optimal sampling recovery $\varrho_m(\bW^r_p,L_q)$, $1<p<2<q<\infty$.
\item[\bf 1.6] Find the right order of the entropy numbers  $\e_n(\bW^r_p,L_\infty)$ and $\e_n(\bH^r_p, L_\infty)$ for $1\leq p\leq \infty$ and $r>1/p$ in dimension 
 $d\ge 3$ (Open problem 6.3).
\item[\bf 1.7] Find the right order of the best $m$-term trigonometric approximation $\sigma_m(\bW^r_p)_\infty$, $1\le
p\le \infty$ (Open problem 7.2).
\item[\bf 1.8] Find the right order of the optimal error of numerical integration $\kappa_m(\bW^r_\infty)$ (see
Conjecture \ref{C6.5.2} in Subsection \ref{OP_int}). 
\item[\bf 1.9] Find the right order of the optimal error of numerical integration $\kappa_m(\bW^r_1)$ (see
Conjecture \ref{C6.5.1} in Subsection \ref{OP_int}).
\item[\bf 1.10] Find the right order of the optimal error of numerical integration $\kappa_m(\bW^r_p)$ in the range of
small smoothness (see Conjectures \ref{C6.5.1b}, Subsection \ref{OP_int}). 
\end{itemize}

\newpage

{\bf Acknowledgment.} The authors acknowledge the fruitful discussions with D.B. Bazarkhanov, A. Hinrichs, T. K\"uhn, E. Novak and W. Sickel on
this topic, especially at the ICERM Semester Programme ``High-Dimensional Approximation'' in Providence,
2014, where this project has been initiated, and at the conference ``Approximation Methods and Function Spaces'' in
Hasenwinkel, 2015.  The authors would further like to thank M. Hansen, J. Oettershagen, A.S. Romanyuk, S.A. Stasyuk, M. Ullrich and 
N. Temirgaliev for helpful comments on the material. V.N. Temlyakov and T. Ullrich would like to thank the organizers of the 2016 special semester ``Constructive Approximation and Harmonic Analysis''
at the Centre de Recerca Matem\'atica (Barcelona) for the opportunity to present an advanced course based on this material. 
Moreover, T. Ullrich gratefully acknowledges support by the German Research Foundation (DFG), Ul-403/2-1, 
and the Emmy-Noether programme, Ul-403/1-1. D. D\~ung thanks the Vietnam Institute for Advanced Study in Mathematics
(VIASM) and the Vietnam National Foundation for Science and Technology Development (NAFOSTED), Grant No. 102.01-2017.05,
for partial supports.  D. D\~ung and V.N. Temlyakov would like to thank T. Ullrich and M. Griebel for supporting their visits at
the Institute for Numerical Simulation, University of Bonn, where major parts of this work were discussed. Also, D. D\~ung and V.N. Temlyakov express their gratitude to the VIASM for support during 
Temlyakov's visit of VIASM in September-October 2016, when certain parts of the survey were finished.
Finally, the authors would like to thank G. Byrenheid, J. Oettershagen and S. Mayer (Bonn) for preparing most of the figures in the text. 
\newpage
    \newpage
    \section{Trigonometric polynomials}\index{Trigonometric polynomials}
\label{trigpol}

\subsection{Univariate polynomials}\index{Trigonometric polynomials!Univariate}
\label{univpol}

Functions of the form
\begin{equation}\label{2.1.1}
t(x) = \sum_{|k|\le n}c_k e^{ikx} =a_0/2+\sum_{k=1}^n
(a_k\cos kx+b_k\sin kx)
\end{equation}
($c_k$,  $a_k$,  $b_k$ are complex numbers) will be called trigonometric
polynomials of order $n$. The set of such polynomials we shall denote
by $\Tr(n)$,  and by $\RR\Tr(n)$ the subset of $\Tr(n)$ of real polynomials.

We first consider a number of concrete polynomials which play
an important role in approximation theory.

{\bf 1. The Dirichlet kernel.}\index{Kernel!Dirichlet}  The Dirichlet kernel of order $n$:
\begin{align*}
\D_n (x)&:= \sum_{|k|\le n}e^{ikx} = e^{-inx} (e^{i(2n+1)x} - 1)
(e^{ix} - 1)^{-1} =\\
&=\bigl(\sin (n + 1/2)x\bigr)\bigm/\sin (x/2).
\end{align*}

The Dirichlet kernel is an even trigonometric polynomial with
the majorant
\begin{equation}\label{2.1.2}
\bigl|\D_n (x)\bigr|\le \min \bigl\{2n+1, \pi/|x|\bigr\}, \qquad
 |x|\le \pi.
\end{equation}
The estimate
\begin{equation}\label{2.1.3}
\|\D_n\|_1\le C \ln n, \qquad n = 2, 3, \dotsc.
\end{equation}
follows from (\ref{2.1.2}).

With $f\ast g$ we denote the convolution
$$
f\ast g := (2\pi)^{-d}\int_{\T^d}f(\by)g(\bx-\by)d\by.
$$
For any trigonometric polynomial $t\in \Tr(n)$ we have
$$
t * \D_n = t,
$$

Denote
$$
x^l := 2\pi l/(2n+1), \qquad l = 0, 1, ..., 2n.
$$
Clearly,  the points $x^l$,  $l = 1, \dots, 2n$,  are zeros of the Dirichlet
kernel $\D_n$ on $[0, 2\pi]$.

 For any $t\in \Tr(n)$
\begin{equation}\label{2.1.4}
t(x) = (2n+1)^{-1}\sum_{l=0}^{2n} t(x^l) \D_n (x - x^l),
\end{equation}
 and for any $t\in \Tr(n)$,
\begin{equation}\label{2.1.5}
\| t \|_2^2 = (2n+1)^{-1}\sum_{l=0}^{2n} \bigl|t(x^l)\bigr|^2.
\end{equation}

Sometimes it is convenient to consider the following slight modification of
$\D_n$:
\begin{equation}\label{nestedDirichlet}
\D_n^1 (x):=  \D_n(x)-e^{-inx} = e^{-i(n-1)x} (e^{i2nx} - 1)
(e^{ix} - 1)^{-1}.
\end{equation}
Denote
$$
y^l := \pi l/n, \qquad l = 0, 1, ..., 2n-1.
$$
Clearly,  the points $y^l$,  $l = 1, \dots, 2n-1$,  are zeros of the
kernel $\D_n^1$ on $[0, 2\pi]$.

In the same way as (\ref{2.1.4}) and (\ref{2.1.5}) are proved for $t\in \Tr(n)$
one can prove the following identities for $t\in \Tr((-n,n]):=
\Span\{e^{ikx}\}_{k=-n+1}^n$.
\begin{equation}\nonumber
t(x) = (2n)^{-1}\sum_{l=0}^{2n-1} t(y^l) \D^1_n (x - y^l),
\end{equation}
\begin{equation}\nonumber
\| t \|_2^2 = (2n)^{-1}\sum_{l=0}^{2n-1} \bigl|t(y^l)\bigr|^2.
\end{equation}

An advantage of $\D_n^1$ over $\D_n$ is that the sets $\{y^l\}_{l=0}^{2^s-1}$,
$s=1,2,\dots$, are nested.

The following relation  for $1 < q < \infty$ is well known and easy to check
\begin{equation}\label{2.1.6}
\| \D_n \|_q\asymp n^{1-1/q}.
\end{equation}

The relation (\ref{2.1.6}) for $q = \infty$ is obvious.

We denote by $S_n$ the operator of taking the partial sum of
order $n$. Then for $f\in L_1$ we have
$$
S_n (f) = f * \D_n.
$$

\begin {thm}\label{T2.1.1} The operator $S_n$ does not change polynomials from
$\Tr(n)$ and for $p = 1$ or $\infty$ we have
$$
\| S_n \|_{p\to p}\le C \ln n, \qquad n = 2, 3, \dots,
$$
and for $1 < p < \infty$ for all n we have
$$
\| S_n \|_{p\to p}\le C(p).
$$
\end{thm}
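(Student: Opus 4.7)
For Theorem 2.1.1, the first assertion $S_n t = t$ for $t \in \Tr(n)$ is a direct consequence of the reproducing identity $t \ast \D_n = t$ noted between (2.1.3) and (2.1.4), since by definition $S_n f = f \ast \D_n$.

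For the bound $\|S_n\|_{p \to p} \le C \ln n$ at the endpoints $p=1$ and $p=\infty$, I would apply the standard convolution estimate $\|f \ast g\|_p \le \|f\|_p \|g\|_1$ (for $p=1$ via Fubini, for $p=\infty$ by a direct pointwise estimate) to $S_n f = f \ast \D_n$ and then invoke (2.1.3):
$$
\|S_n f\|_p \;\le\; \|\D_n\|_1 \, \|f\|_p \;\le\; C \ln n \, \|f\|_p, \qquad n \ge 2.
$$

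For the uniform bound in the reflexive range $1 < p < \infty$, the approach is to decompose the partial-sum projection into two one-sided Fourier projections and invoke M.\,Riesz's theorem. Let $P$ denote the Riesz projection on $\T$, $(Pf)(x) := \sum_{k \ge 0} \hat f(k) e^{ikx}$, and for $m \in \Z$ define $P_m f(x) := e^{imx}\, P(e^{-im\cdot} f)(x) = \sum_{k \ge m} \hat f(k) e^{ikx}$. Because multiplication by $e^{imx}$ is an $L_p$-isometry, $\|P_m\|_{p \to p} = \|P\|_{p \to p}$ uniformly in $m \in \Z$. Since
$$
S_n f \;=\; P_{-n} f \,-\, P_{n+1} f,
$$
the triangle inequality gives $\|S_n\|_{p\to p} \le 2\|P\|_{p\to p} \le C(p)$.

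The only non-routine ingredient here is the $L_p$-boundedness of the Riesz projection (equivalently, of the periodic conjugate function / Hilbert transform) for $1 < p < \infty$; this is the traditional obstacle. It is classically due to M.\,Riesz and can be reached either by his original convexity argument on trigonometric polynomials, by Calder\'on--Zygmund singular-integral theory, or by Marcinkiewicz interpolation applied to the weak-type $(1,1)$ bound for the conjugate function. In a survey of this type the theorem would be used as a black box.
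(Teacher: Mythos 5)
Your proof is correct, but the route you take for the $1<p<\infty$ case differs from the paper's. The paper cites the Marcinkiewicz multiplier theorem (Theorem~\ref{T7.3.4}): since the multiplier sequence $\lambda_k := \chi_{[-n,n]}(k)$ is uniformly bounded and has total variation at most $1$ on each dyadic block (uniformly in $n$), the theorem immediately yields $\|S_n\|_{p\to p}\le C(p)$. You instead write $S_n = P_{-n} - P_{n+1}$ as a difference of modulated Riesz projections and appeal to M.~Riesz's theorem on the $L_p$-boundedness of the conjugate function; the modulation trick $P_m f = e^{imx}P(e^{-im\cdot}f)$ correctly gives a bound independent of $m$. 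The two routes are closely related (one common proof of the Marcinkiewicz theorem runs through the conjugate function), but they are organized differently: your argument is arguably more elementary and self-contained since it only needs the single operator $P$, whereas the multiplier theorem is a more powerful black box that the paper uses repeatedly elsewhere and so is the natural citation in this survey. Both the identity $S_n t=t$ for $t\in\Tr(n)$ and the endpoint bound via $\|f\ast \D_n\|_p\le \|\D_n\|_1\|f\|_p$ together with \eqref{2.1.3} are handled exactly as in the paper.
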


This theorem follows from (\ref{2.1.3}) and the Marcinkiewicz
multiplier theorem (see Theorem \ref{T7.3.4}).

{\bf 2. The Fej\'er kernel.}\index{Kernel!Fej\'er} The Fej\'er kernel of order $n - 1$:
\begin{align*}
{\mathcal K}_{n-1} (x) &:= n^{-1}\sum_{m=0}^{n-1} \D_m (x) =
\sum_{|m|\le n} \bigl(1 - |m|/n\bigr) e^{imx} =\\
&=\bigl(\sin (nx/2)\bigr)^2\bigm /\bigl(n (\sin (x/2)\bigr)^2\bigr).
\end{align*}

The Fej\'er kernel is an even nonnegative trigonometric
polynomial in $\Tr(n-1)$ with the majorant
\begin{equation}\nonumber
 {\mathcal K}_{n-1} (x) \le \min \bigl\{n, \pi^2 /(nx^2)\bigr\},
\qquad |x|\le \pi.
\end{equation}

From the obvious relations
$$
\| {\mathcal K}_{n-1} \|_1 = 1, \qquad \| {\mathcal K}_{n-1} \|_{\infty} = n
$$
and the inequality  
$$
\| f \|_q \le \| f \|_1^{1/q} \| f \|_{\infty}^{1-1/q}
$$
we get  
\begin{equation}\nonumber
C n^{1-1/q}\le \| {\mathcal K}_{n-1} \|_q \le n^{1-1/q}, \qquad
 1\le q\le \infty.
\end{equation}

{\bf 3. The de la Vall\'ee Poussin kernels.}\index{Kernel!de la Vall\'ee Poussin} The de la Vall\'ee Poussin kernels:
$$
{\mathcal V}_{m, n} (x) := (n - m)^{-1}\sum_{l=m}^{n-1} \D_l (x), \qquad n > m.
$$
It is convenient to represent these kernels in terms of the Fej\'er
kernels:
\begin{align*}
{\mathcal V}_{m, n} (x)&= (n - m)^{-1} \bigl(n {\mathcal K}_{n-1}(x) -
m {\mathcal K}_{m-1} (x)\bigr) =\\
&= (\cos mx - \cos nx)\bigl(2(n - m) \bigl(\sin (x/2)\bigr)^2 \bigr)^{-1}.
\end{align*}

The de la Vall\'ee Poussin kernels ${\mathcal V}_{m, n}$ are even trigonometric
polynomials of order $n - 1$ with the majorant
\begin{equation}\label{2.1.9}
\bigl| {\mathcal V}_{m, n} (x) \bigr| \le C \min \bigl\{n, \ 1/|x|, \
 1/((n-m)x^2)\bigr\}, \ |x|\le \pi.
\end{equation}
The relation (\ref{2.1.9}) implies the estimate
$$
\| {\mathcal V}_{m, n} \|_1 \le C \ln \bigl(1 + n/(n-m)\bigr).
$$

We shall often use the de la Vall\'ee Poussin kernel with $n = 2m$
and denote it by
$$
{\mathcal V}_m (x) := {\mathcal V}_{m, 2m} (x), \qquad m \ge 1, \qquad {\mathcal
V}_0 (x) = 1.
$$

Then for $m \ge 1$ we have
$$
{\mathcal V}_m = 2{\mathcal K}_{2m-1} - {\mathcal K}_{m-1},
$$
which with the properties of ${\mathcal K}_n$ implies
\begin{equation}\label{2.1.10}
\| {\mathcal V}_m \|_1\le 3.
\end{equation}
In addition
$$
\|{\mathcal V}_m \|_{\infty}\le 3m.
$$
Consequently,  in the same way as above we get
\begin{equation}\nonumber
\| {\mathcal V}_m \|_q \asymp m^{1-1/q}, \qquad 1 \le q \le \infty.
\end{equation}

We denote
$$
x(l) := \pi l/2m, \qquad l = 1, \dots, 4m.
$$
Then as in (\ref{2.1.4}) for each $t\in \Tr(m)$ we have
\begin{equation}\label{2.1.12}
t(x) = (4m)^{-1}\sum_{l=1}^{4m} t\bigl(x(l)\bigr)
{\mathcal V}_m \bigl(x - x(l)\bigr).
\end{equation}

The operator $V_m$ defined on $L_1$ by the formula
$$
V_m (f) = f * {\mathcal V}_m
$$
will be called the de la Vall\'ee Poussin operator.

The following theorem is a corollary of the definition of
kernels ${\mathcal V}_m$ and the relation~(\ref{2.1.10}).

\begin{thm}\label{T2.1.2} The operator $V_m$ does not change polynomials from
$\Tr(m)$ and for all $1\le p\le \infty$ we have
$$
\|V_m \|_{p\to p}\le 3, \qquad m = 1, 2, \dotsc .
$$
\end{thm}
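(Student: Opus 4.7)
The plan is to prove the two claims directly from the explicit formula for $\mathcal V_m$ and the $L_1$-bound (\ref{2.1.10}).

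For the invariance part, I would use the representation
\[
\mathcal V_m(x) = \mathcal V_{m,2m}(x) = m^{-1}\sum_{l=m}^{2m-1}\mathcal D_l(x).
\]
Since each Dirichlet kernel $\mathcal D_l$ with $l\ge m$ reproduces polynomials of degree $\le m$, i.e., $t*\mathcal D_l = t$ for every $t\in\Tr(m)$, linearity of convolution gives
\[
V_m(t) = t*\mathcal V_m = m^{-1}\sum_{l=m}^{2m-1} t*\mathcal D_l = m^{-1}\sum_{l=m}^{2m-1} t = t,
\]
so $V_m$ restricted to $\Tr(m)$ is the identity.

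For the norm bound, I would invoke Young's convolution inequality: for $f\in L_p(\T)$ and $g\in L_1(\T)$,
\[
\|f*g\|_p \le \|f\|_p\,\|g\|_1, \qquad 1\le p\le\infty.
\]
Applied to $g=\mathcal V_m$ and combined with the estimate $\|\mathcal V_m\|_1\le 3$ from (\ref{2.1.10}), this immediately yields $\|V_m(f)\|_p\le 3\|f\|_p$ uniformly in $m\ge 1$, i.e., $\|V_m\|_{p\to p}\le 3$.

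There is no real obstacle here; the argument is essentially a bookkeeping consequence of what has already been established for $\mathcal V_m$ in the preceding paragraphs. The only substantive input is the uniform $L_1$-bound $\|\mathcal V_m\|_1\le 3$, which was derived from the decomposition $\mathcal V_m = 2\mathcal K_{2m-1}-\mathcal K_{m-1}$ together with $\|\mathcal K_n\|_1=1$. The value $3$ in the theorem's statement is nothing more than $2\cdot 1+1$, reflecting this decomposition. For $m=0$, the reproducing property holds trivially since $\mathcal V_0=1$ preserves constants, and the bound $\|V_0\|_{p\to p}\le 1\le 3$ is immediate, so the statement is uniform in $m$.
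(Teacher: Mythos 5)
Your proof is correct and follows essentially the same route the paper indicates: the paper dispatches Theorem 2.1.2 as an immediate corollary of the definition of $\mathcal V_m$ and the bound $\|\mathcal V_m\|_1\le 3$ from (2.1.10), which is precisely the combination of the averaging representation $\mathcal V_m=m^{-1}\sum_{l=m}^{2m-1}\mathcal D_l$ (for reproduction) and Young's inequality (for the uniform $L_p\to L_p$ bound) that you spell out. The only inessential discrepancy is your remark about $m=0$, which lies outside the range $m\ge 1$ stated in the theorem; the paper handles $\mathcal V_0\equiv 1$ by a separate convention, so the remark is harmless but superfluous.
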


 {\bf 4. The Rudin-Shapiro polynomials.}\index{Trigonometric polynomials!Rudin-Shapiro} For any natural number $N$ there
exists a polynomial of the form
$$
\RR_N (x) =\sum_{|k|\le N}\varepsilon_k e^{ikx},\qquad\varepsilon_k=\pm1,
$$
such that the bound
\begin{equation}\nonumber
\| \RR_N \|_{\infty}\le CN^{1/2}
\end{equation}
holds.

\subsection{Multivariate polynomials}\index{Trigonometric polynomials!Multivariate}
\label{multpol}

The multivariate trigonometric system $\Tr^d:=\Tr\times\cdots\times\Tr$, $d\ge
2$, in contrast to the univariate trigonometric system $\Tr$ does not have a
natural ordering. This leads to different natural ways of building sets of
trigonometric polynomials. 
In this section we define the analogs of the Dirichlet, Fej\'er,
de la Vall\'ee Poussin and Rudin-Shapiro  kernels  for $d$-dimensional
parallelepipeds
$$
\Pi(\mathbf N,d) :=\bigl \{\mathbf a\in \R^d  : |a_j|\le N_j,\
j = 1,\dots,d \bigr\} ,
$$
where $N_j$ are nonnegative integers.
We shall formulate properties of these multivariate 
kernels, which easily follow from the corresponding properties of
univariate kernels. Here $\Tr(\mathbf N,d)$ is the set of
complex trigonometric polynomials with harmonics from  $\Pi(\mathbf N,d)$.
The set of real trigonometric polynomials with  harmonics  from
$\Pi(\mathbf N,d)$ will be denoted by $\RR\Tr(\mathbf N,d)$. In the sequel we will frequently use the following notation
\begin{equation}\label{nutheta}
  \nu(\bar{\mathbf N}) :=
\prod_{j=1}^d \bar N_j\quad\mbox{and}\quad \vartheta(\mathbf N) := \prod_{j=1}^d (2N_j  + 1)\,,
\end{equation}
with $\bar{\mathbf N} := (\bar{N}_1,...,\bar{N}_d)$ and $\bar{N}:=\max\{N,1\}$\,.

{\bf 1d. The Dirichlet kernels}\index{Kernel!Dirichlet}
$$
\mathcal D_{\mathbf N} (\mathbf x) := \prod_{j=1}^d \mathcal D_{N_j}  (x_j),\qquad
   \mathbf N = (N_1 ,\dots,N_d)
$$
have the following properties.
For any trigonometric polynomial $t\in \Tr(\mathbf N,d)$,
$$
t * \mathcal D_{\mathbf N}  = t .
$$
For $1 < q\le \infty$,
\begin{equation}\nonumber
\|\mathcal D_{\mathbf N}\|_q\asymp\nu(\bar{\mathbf N})^{1-1/q}
\end{equation}
where $\bar N_j  := \max \{N_j ,1\}$, $\nu(\bar{\mathbf N}) :=
\prod_{j=1}^d \bar N_j$  and
\begin{equation}\nonumber
\|\mathcal D_{\mathbf N}\|_1\asymp \prod_{j=1}^d  \ln (N_j  + 2) .
\end{equation}
We denote
$$
P(\mathbf N) := \{\mathbf n = (n_1 ,\dots,n_d),\quad n_j\in \N_0,\quad
0\le n_j\le 2N_j  ,\quad j = 1,\dots,d \},
$$
and set
$$
\mathbf x^{\mathbf n}:=\left(\frac{2\pi n_1}{2N_1+1},\dots,\frac{2\pi n_d}
{2N_d+1}\right),\qquad \mathbf n\in P(\mathbf N) .
$$
Then for any $t\in \Tr(\mathbf N,d)$,
\begin{equation}\nonumber
t(\mathbf x) =\vartheta(\mathbf N)^{-1}\sum_{\mathbf n\in P(\mathbf N)}
t(\mathbf x^{\mathbf n})\mathcal D_{\mathbf N} (\mathbf x - \mathbf x^{\mathbf n}) ,
\end{equation}
where $\vartheta(\mathbf N) := \prod_{j=1}^d (2N_j  + 1)$ and for any
$t,u\in \Tr(\mathbf N,d)$,
\begin{equation}\nonumber
\langle t,u \rangle = \vartheta(\mathbf N)^{-1}\sum_{\mathbf n\in P(\mathbf N)}
t(\mathbf x^{\mathbf n})\bar u(\mathbf x^{\mathbf n})  , 
\end{equation}
\begin{equation}\nonumber
\|t\|_2^2  =\vartheta(\mathbf N)^{-1}\sum_{\mathbf n\in P(\mathbf N)}
\bigl|t(\mathbf x^{\mathbf n})\bigr|^2   .
\end{equation}

{\bf 2d. The Fej\'er kernels}\index{Kernel!Fej\'er}
$$
\mathcal K_{\mathbf N} (\mathbf x) :=\prod_{j=1}^d\mathcal K_{N_j}  (x_j)  ,\qquad
\mathbf N = (N_1 ,\dots,N_d) ,
$$
are nonnegative trigonometric polynomials from  $\Tr(\mathbf N,d)$, which have
the following properties (recall \eqref{nutheta}):
\begin{equation}\nonumber
\|\mathcal K_{\mathbf N}\|_1  = 1  ,
\end{equation}
\begin{equation}\nonumber
\|\mathcal K_{\mathbf N}\|_q\asymp \vartheta(\mathbf N)^{1-1/q},\qquad
1\le q\le\infty ,
\end{equation}
\begin{equation}\nonumber
\|\mathcal K_{\mathbf N}\|_{\mathbf q}  \asymp \prod_{j=1}^d
\bigl(\max\{1,N_j\}\bigr)^{1-1/q_j},\qquad
\mathbf 1\le \mathbf q\le \infty, \mathbf{q} = (q_1,...,q_d).
\end{equation}

{\bf 3d. The de la Vall\'ee Poussin kernels}\index{Kernel!de la Vall\'ee Poussin}
$$
\mathcal V_{\mathbf N} (\mathbf x) := \prod_{j=1}^d \mathcal V_{N_j}  (x_j)  ,\qquad
\mathbf N = (N_1 ,\dots,N_d) ,
$$
have the following properties (recall \eqref{nutheta})
\begin{equation}\label{2.2.9}
\|\mathcal V_{\mathbf N}\|_1  \le 3^d  ,
\end{equation}
\begin{equation}\nonumber
\|\mathcal V_{\mathbf N}\|_q\asymp \vartheta(\mathbf N)^{1-1/q},\qquad
1\le q\le\infty ,
\end{equation}
\begin{equation}\nonumber
\|\mathcal V_{\mathbf N}\|_{\mathbf q}  \asymp \prod_{j=1}^d
\bigl(\max\{1,N_j\}\bigr)^{1-1/q_j},\qquad
\mathbf 1\le \mathbf q\le \infty .
\end{equation}
For any $t\in \Tr(\mathbf N,d)$,
$$
V_{\mathbf N} (t) := t * \mathcal V_{\mathbf N}  = t .
$$
We denote
$$
P'(\mathbf N) := \{\mathbf n = (n_1,\dots,n_d),\quad n_j\in\N_0,\quad
1\le n_j\le 4N_j  ,\quad j = 1,\dots,d \}
$$
and set
$$
\mathbf x(\mathbf n) :=\left (\frac{\pi n_1}{2N_1} ,\dots,\frac{\pi n_d}{2N_d}
\right)   ,\qquad \mathbf n\in P'(\mathbf N)  .
$$
In the case $N_j  = 0$ we assume $x_j (\mathbf n) = 0$.
Then for any $t\in \Tr(\mathbf N,d)$ we have the representation (recall \eqref{nutheta})
\begin{equation}\label{2.2.12}
t(\mathbf x) =\nu(4\bar{\mathbf N})^{-1}\sum_{\mathbf n\in P'
(\mathbf N)}
t(\mathbf x(\mathbf n))\mathcal V_{\mathbf N} (\mathbf x - \mathbf x
(\mathbf n))  .
\end{equation}
The relation (\ref{2.2.9}) implies that
\begin{equation}\nonumber
\|V_{\mathbf N}\|_{\mathbf  p\to\mathbf p}\le 3^d,\qquad  \mathbf 1\le\mathbf p
\le \infty .
\end{equation}
Let us define the polynomials $\mathcal A_{\mathbf s} (\mathbf x)$
for $\mathbf s =  (s_1 ,\dots,s_d)\in\N^d_0$
$$
\mathcal A_{\mathbf s} (\mathbf x) :=\prod_{j=1}^d\mathcal A_{s_j}(x_j),
$$
with $\mathcal A_{s_j}(x_j)$ defined as follows: 
$$
\mathcal A_0 (x) := 1, \quad \mathcal A_1 (x) := \mathcal V_1 (x) - 1, \quad
\mathcal A_s (x) := \mathcal V_{2^{s-1}} (x) -\mathcal V_{2^{s-2}} (x),
\quad s\ge 2,
$$
where $\mathcal V_m$ are the de la Vall\'ee Poussin kernels. Then by (\ref{2.1.10}),
\begin{equation}\nonumber
\bigl\|\mathcal A_{\mathbf s} (\mathbf x)\bigr\|_1\le 6^d,
\end{equation}
and consequently we  have  for  the  operator  $A_{\mathbf s}$, which  is
the
convolution with the kernel $\mathcal A_{\mathbf s} (\mathbf x)$, the inequality
\begin{equation}\label{2.2.16}
\|A_{\mathbf s}\|_{\mathbf p\to\mathbf p}\le 6^d,\qquad \mathbf 1\le\mathbf p
\le\infty .
\end{equation}
We note that in the case $\mathbf s\ge \mathbf 2$ for any
$t\in \Tr(2^{\mathbf s-\mathbf 2}   ,d)$,
$$
A_{\mathbf s} (t) = 0 .
$$

{\bf 4d. The Rudin--Shapiro polynomials}\index{Trigonometric polynomials!Rudin-Shapiro}
$$
\RR_{\mathbf N} (\mathbf x) :=\prod_{j=1}^d \RR_{N_j}(x_j),
\qquad\mathbf N = (N_1 ,\dots,N_d) ,
$$
have the following properties: $\RR_{\mathbf N}\in \Tr(\mathbf N,d)$,
\begin{equation}\nonumber
\|\RR_{\mathbf N}\|_{\infty}\le C(d)\vartheta(\mathbf N)^{1/2},\quad \hat
\RR_{\mathbf N} (\mathbf k) =\pm 1  ,\quad |\mathbf k| \le \mathbf N. 
\end{equation}
The Rudin-Shapiro polynomials have all the Fourier coefficients with their
absolute values equal to one. This is similar to the Dirichlet kernels. However,
the $L_p$ norms of $\RR_{\mathbf N}$ behave in a very different way:
$$
\|\RR_{\mathbf N}\|_p \asymp \vartheta(\mathbf N)^{1/2},\quad 1\le p\le\infty.
$$
In some applications we need to construct polynomials with similar properties
in a subspace of the $\Tr(\mathbf N,d)$. We present here one known result in
that direction (see \cite{TBook}, Ch.2, Theorem 1.1 and \cite{TE2}).
\begin{thm}\label{T2.2.1} Let $\varepsilon > 0$ and a subspace
$\Psi\subset \Tr(\mathbf N,d)$ be such that
$\dim \Psi\ge\varepsilon\vartheta(\mathbf N)$.
Then there is a $t \in\Psi$ such that
$$
\|t\|_{\infty}  = 1  ,\qquad \|t\|_2\ge C(\varepsilon,d) > 0 .
$$
\end{thm}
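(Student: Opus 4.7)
The plan is to discretise on a fine equispaced grid of size $M\asymp_d \vartheta(\mathbf N)$ and then reduce the claim to a volume comparison in $\mathbb{C}^M$.

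Set $x_j(n_j):=\pi n_j/(2N_j)$ for $n_j=1,\dots,4N_j$ (with the obvious modification when $N_j=0$), $\mathbf x(\mathbf n):=(x_1(n_1),\dots,x_d(n_d))$, and $M:=\prod_j 4\bar N_j$. This is a finer version of the grid in \eqref{2.2.12}. First I would check that for any $t\in\Tr(\mathbf N,d)$, tensorising the univariate Plancherel identity on the $(4m)$-point grid gives
\begin{equation*}
\|t\|_2^2 \;=\; M^{-1}\sum_{\mathbf n}|t(\mathbf x(\mathbf n))|^2,
\end{equation*}
which is exact because $|t|^2$ has coordinatewise degree $\le 2N_j<4\bar N_j$. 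Combining the interpolation formula \eqref{2.2.12}, the pointwise bound $|\mathcal V_m|\le 2\mathcal K_{2m-1}+\mathcal K_{m-1}$, and the identity $(4m)^{-1}\sum_{l=1}^{4m}\mathcal K_{n-1}(y-x(l))\equiv 1$ (every non-zero mode of $\mathcal K_{n-1}$ has $|k|\le n-1<4m$, so the discrete sum isolates the mean), the same tensorisation yields the Marcinkiewicz-type upper bound
\begin{equation*}
\|t\|_\infty \;\le\; 3^d\,\max_{\mathbf n}|t(\mathbf x(\mathbf n))|.
\end{equation*}

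Now let $T:\Tr(\mathbf N,d)\to\mathbb C^M$ be the sampling map $t\mapsto(t(\mathbf x(\mathbf n)))_{\mathbf n}$, which is injective by the Plancherel identity above. Then $V:=T(\Psi)$ is a complex subspace of dimension $k=\dim_{\mathbb C}\Psi\ge\varepsilon\vartheta(\mathbf N)\ge 4^{-d}\varepsilon M$. By the two displayed discretisation inequalities it suffices to exhibit $v\in V$ with $\|v\|_{\ell_\infty}\le 1$ and $\|v\|_{\ell_2}\ge c(\varepsilon,d)\sqrt M$: the pre-image $t:=T^{-1}v\in\Psi$ then satisfies $\|t\|_\infty\le 3^d$ and $\|t\|_2\ge c(\varepsilon,d)$, and normalising by $\|t\|_\infty$ finishes.

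To produce such a $v$ I would run a volume comparison on $V$. Identifying $\mathbb C^M\cong\mathbb R^{2M}$, the polydisc $D:=\{v:|v_j|\le 1\ \forall j\}$ contains the real cube $\sqrt 2\cdot[-1/2,1/2]^{2M}$, and $V$ becomes a real $2k$-dimensional subspace. Vaaler's sharp bound on central cube sections (a classical member of the volume-estimate circle of Subsection~2.5) then yields
\begin{equation*}
\mathrm{vol}_{2k}(D\cap V) \;\ge\; (\sqrt 2)^{2k} \;=\; 2^k,
\end{equation*}
while $B_2\cap V$ is the Euclidean unit ball of the $2k$-dimensional space $V$ with volume $\pi^k/k!$. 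Writing $\rho:=\sup\{\|v\|_{\ell_2}:v\in V,\ \|v\|_{\ell_\infty}\le 1\}$ and using the inclusion $D\cap V\subset\rho(B_2\cap V)$, I obtain
\begin{equation*}
\rho^{2k} \;\ge\; \frac{2^k\,k!}{\pi^k} \;\ge\; \Bigl(\frac{2k}{e\pi}\Bigr)^k
\end{equation*}
by Stirling, hence $\rho\ge\sqrt{2k/(e\pi)}\ge c(\varepsilon,d)\sqrt M$, which is exactly what was needed.

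The hard part is the Vaaler-type lower bound on $\mathrm{vol}_{2k}(D\cap V)$; the hypothesis $\dim\Psi\ge\varepsilon\vartheta(\mathbf N)$ enters here to guarantee $k\ge 4^{-d}\varepsilon M$, so that the Vaaler bound $2^k$ dominates the ball volume $\pi^k/k!\asymp(e\pi/k)^k$ by a linear-in-$k$ margin. The remaining ingredients are routine tensorisations of the univariate material from Subsections~2.1--2.2. One should expect $C(\varepsilon,d)$ to be of the form $c\sqrt\varepsilon\,3^{-d}$, which degenerates exponentially with the dimension --- consistent with the qualitative form in which the theorem is stated.
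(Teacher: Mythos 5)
Your argument is correct, and it takes a genuinely different route from the one indicated in the paper. The paper (Subsection 2.7: ``Theorem \ref{T2.2.1} is from \cite{TE2}. Its proof is based on the volume estimates from Theorem \ref{T2.5.1} and the classical Brun theorem on sections of convex bodies.'') works in Fourier \emph{coefficient} space: there the set $B_{\Pi(\mathbf N)}(L_\infty)$ of coefficient vectors of polynomials with $\|t\|_\infty\le 1$ is a highly non-trivial convex body, whose volume is controlled only via the Kashin--Temlyakov estimate (Theorem \ref{T2.5.1}, $p=\infty$), and Brunn's section theorem is then used to pass from the full volume to the volume of the central section by the coefficient subspace $A(\Psi)$. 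You instead discretise in \emph{physical} space on the $4\bar N_j$-point tensor grid: the exact Plancherel identity $\|t\|_2^2=M^{-1}\sum_{\mathbf n}|t(\mathbf x(\mathbf n))|^2$ (valid since $|t|^2$ has coordinate degree $\le 2N_j<4\bar N_j$) and the de la Vall\'ee Poussin bound $\|t\|_\infty\le 3^d\max_{\mathbf n}|t(\mathbf x(\mathbf n))|$ (from \eqref{2.2.12}, $|\mathcal V_m|\le 2\mathcal K_{2m-1}+\mathcal K_{m-1}$, and the exact grid averages of the Fej\'er kernels) reduce the problem to the image $V=T(\Psi)\subset\mathbb C^M$, where the relevant body is the honest polydisc $D$. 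The central-section lower bound is then supplied not by Brunn--plus--Theorem~\ref{T2.5.1} but by Vaaler's cube-slicing theorem, and the rest is the elementary ball-volume/Stirling comparison you wrote. The net effect is that you trade the hard ingredient (the $p=\infty$ lower volume bound in Theorem \ref{T2.5.1}, proved in \cite{KaE}, \cite{TE2}, \cite{T6}) for a different, but classical, hard ingredient (Vaaler 1979), together with the standard Marcinkiewicz-type discretisation from Subsections 2.1--2.2. Both proofs unavoidably pay a constant that degenerates exponentially in $d$; in yours it is the $3^{-d}$ from the de la Vall\'ee Poussin step and the $2^{-d}$ from $M\le 4^d\vartheta(\mathbf N)$.

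Two minor points worth tightening before you present this. First, make explicit that $T$ is injective (it is, by your Plancherel identity: $Tt=0\Rightarrow\|t\|_2=0$), so that $T^{-1}$ and the final pull-back $t=T^{-1}v$ are well defined; this is implicit in your writeup. Second, you parenthetically place Vaaler ``in the volume-estimate circle of Subsection~2.5,'' but none of the results stated there (Theorems \ref{T2.5.1}--\ref{T2.5.5}) is Vaaler's slicing theorem, which concerns sections of the \emph{geometric} cube $[-1/2,1/2]^n$ rather than the body of Fourier coefficients of uniformly bounded polynomials; it should be cited independently. Neither point affects correctness.
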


\subsection{Hyperbolic cross polynomials}\index{Hyperbolic cross!Polynomials}
\label{ssect:hyppoly}

Let $\mathbf s=(s_1,\dots,s_d )$ be a  vector  whose  coordinates  are
nonnegative integers
\begin{equation}\label{HC}
\begin{split}
\rho(\mathbf s) &:= \bigl\{ \mathbf k\in\mathbb Z^d:[ 2^{s_j-1}] \le
|k_j| < 2^{s_j},\qquad j=1,\dots,d \bigr\},\\
Q_n &:=   \cup_{|\mathbf s|_1\le n}
\rho(\mathbf s) \quad\text{--}\quad\text{a step hyperbolic cross},\\
\Gamma(N) &:= \bigl\{ \mathbf k\in\mathbb Z^d :\prod_{j=1}^d
\max\bigl\{ |k_j|,1\bigr\} \le N\bigr\}\quad\text{--}\quad\text{a hyperbolic
cross}.
\end{split}
\end{equation}

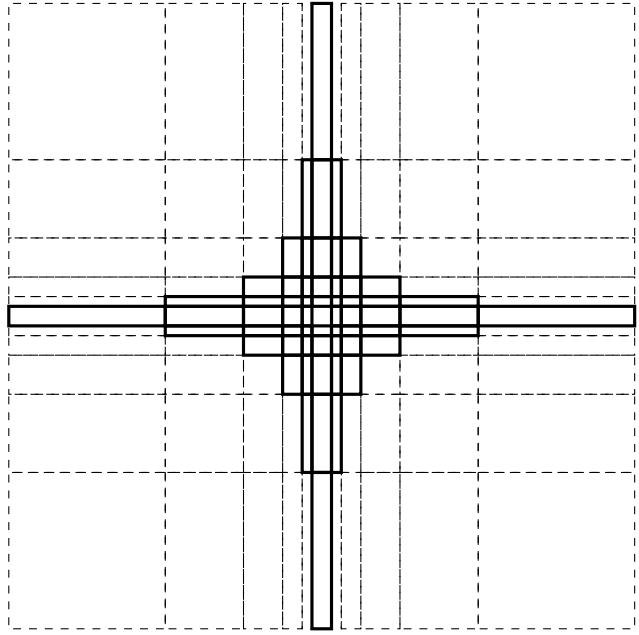
\begin{figure}[H]
 \begin{center}
\begin{tikzpicture}[scale=0.13]
\pgfmathsetmacro{\m}{5};
\pgfmathsetmacro{\rf}{1};
\pgfmathsetmacro{\rs}{1};
\pgfmathtruncatemacro{\xl}{2^(\m / \rf)}
\pgfmathtruncatemacro{\yl}{2^(\m / \rs)}
\pgfmathtruncatemacro{\limf}{round(\m / \rf)}

\pgfmathtruncatemacro{\lima}{round(\m / \rf)}
\pgfmathtruncatemacro{\limb}{round(\m / \rs)}

\foreach \x in {1,...,\lima}
{
\pgfmathtruncatemacro{\lim}{round((\m-\rf*\x)/(\rs))}
\foreach \y in {1,...,\limb}
{
\pgfmathtruncatemacro{\xm}{\x-1}
\pgfmathtruncatemacro{\ym}{\y-1}
    \draw[dashed] (2^\xm,2^\ym)-- (2^\x,2^\ym) --(2^\x,2^\y) -- (2^\xm,2^\y) --
(2^\xm,2^\ym);
        \draw[dashed] (-2^\xm,-2^\ym)-- (-2^\x,-2^\ym) --(-2^\x,-2^\y) --
(-2^\xm,-2^\y) -- (-2^\xm,-2^\ym) ;
        \draw[dashed] (-2^\xm,2^\ym)-- (-2^\x,2^\ym) --(-2^\x,2^\y) --
(-2^\xm,2^\y) -- (-2^\xm,2^\ym) ;
        \draw[dashed] (2^\xm,-2^\ym)-- (2^\x,-2^\ym) --(2^\x,-2^\y) --
(2^\xm,-2^\y) -- (2^\xm,-2^\ym) ;
}
}

\foreach \x in {0,...,\limf}
{
\pgfmathtruncatemacro{\lim}{round((\m-\rf*\x)/(\rs))}
\foreach \y in {0,...,\lim}
{
\pgfmathtruncatemacro{\xm}{\x-1}
\pgfmathtruncatemacro{\ym}{\y-1}
\ifnum \x<1 \relax%
     \ifnum \y<1 \relax%
         draw[color=black,very thick] (-1,-1)-- (1,-1) --(1,1) -- (-1,1) --
(-1,-1);
    \else
        \draw[color=black,very thick] (-1,-2^\ym)-- (1,-2^\ym) --(1,-2^\y) --
(-1,-2^\y) -- (-1,-2^\ym) ;
        \draw[color=black,very thick] (-1,2^\ym)-- (1,2^\ym) --(1,2^\y) --
(-1,2^\y) -- (-1,2^\ym) ;
     \fi
\else%
    \ifnum \y<1 \relax%
            \ifnum \x<1 \relax%

            \else
            \draw[color=black,very thick] (2^\xm,-1)-- (2^\x,-1) --(2^\x,1) --
(2^\xm,1) -- (2^\xm,-1) ;
            \draw[color=black,very thick] (-2^\xm,-1)-- (-2^\x,-1) --(-2^\x,1)
--
(-2^\xm,1) -- (-2^\xm,-1);
            \fi
    \else
    \draw[color=black,very thick] (2^\xm,2^\ym)-- (2^\x,2^\ym) --(2^\x,2^\y) --
(2^\xm,2^\y) -- (2^\xm,2^\ym);
        \draw[color=black,very thick] (-2^\xm,-2^\ym)-- (-2^\x,-2^\ym)
--(-2^\x,-2^\y) -- (-2^\xm,-2^\y) -- (-2^\xm,-2^\ym) ;
        \draw[color=black,very thick] (-2^\xm,2^\ym)-- (-2^\x,2^\ym)
--(-2^\x,2^\y) -- (-2^\xm,2^\y) -- (-2^\xm,2^\ym) ;
        \draw[color=black,very thick] (2^\xm,-2^\ym)-- (2^\x,-2^\ym)
--(2^\x,-2^\y) -- (2^\xm,-2^\y) -- (2^\xm,-2^\ym) ;
    \fi
\fi

}}

\end{tikzpicture} 
  \caption{A step hyperbolic cross $Q_n$ in $d=2$}\index{Hyperbolic cross}
 \end{center}
\end{figure}

For $f\in L_1 (\T^d)$
$$
\delta_{\mathbf s} (f,\mathbf x) :=\sum_{\mathbf k\in\rho(\mathbf s)}
\hat f(\mathbf k)e^{i(\mathbf k,\mathbf x)}.
$$

Let $G$ be a finite set of points in $\mathbb Z^d$, we denote
$$
\Tr(G) :=\left\{ t : t(\mathbf x) =\sum_{\mathbf k\in G}c_{\mathbf k}
e^{i(\mathbf k,\mathbf x)}\right\} .
$$

For the sake of simplicity we shall write  
$\Tr\bigl(\Gamma(N)\bigr) = \Tr(N)$. The unit $L_p$-ball in $\Tr(G)$ we
denote by $\Tr(G)_p$ and in addition
$$
\Tr^{\perp}(G) := \bigl\{ g\in L_1 :\quad\text{ for all }\quad
f\in \Tr(G) ,\qquad \<f,g\> = 0 \bigr\} .
$$

As above for $G = \Gamma(N)$ we write $\Tr^{\perp}(N)$ instead of $\Tr^{\perp}(\Gamma(N))$.
We shall use the following simple relations (recall the notation in \eqref{nutheta})
\begin{equation}\label{hcsums}
\begin{split}
\bigl|\Gamma(N) \bigr|&\asymp N(\log N)^{d-1};\qquad
|Q_n|\asymp 2^n n^{d-1};\\
\sum_{\mathbf k>\mathbf0,\nu(\mathbf k)>N}\nu(\mathbf k)^{-r}&\asymp
N^{-r+1}(\log N)^{d-1},\qquad r>1 ;\\
\sum_{|\bs|_1>n}2^{-r|\bs|_1}&\asymp 2^{-rn}n^{d-1},
\qquad r>0.
\end{split}
\end{equation}
Note, that the sum in the middle can be rewritten (via dyadic blocks) to a sum $\sum_{|\bs|_1>n}2^{-(r-1)|\bs|_1}$ with $|n-\log N| \leq c$. Sums of this type
have been discussed in detail in Lemmas A -- D in the introduction of \cite{Tmon}. Refined estimates for the cardinality of hyperbolic crosses 
of any kind in high dimensions can be found in the recent papers \cite[Lem.\ 3.1, 3.2, Thm.\ 4.9]{KSU15} and \cite{CD13}, see also \eqref{cardhyp} in Section \ref{Sect:highdim}
below. 

It is easy to see that
\begin{equation}\nonumber
Q_n\subset \Gamma(2^n )\subset Q_{n+d}.
\end{equation}
Therefore it is enough to prove a number of properties of polynomials
such as the Bernstein and Nikol'skii inequalities for one set
$\Tr(Q_n)$ or $\Tr(N)$ only.

We shall consider the following trigonometric polynomials.

{\bf 1h. The analogs of the Dirichlet kernels.}\index{Kernel!Dirichlet} Consider
$$
\mathcal D_{Q_n} (\mathbf x) :=\sum_{\mathbf k\in Q_n}e^{i(\mathbf k,\mathbf x)}=
\sum_{|\bs|_1\le n}\mathcal D_{\rho(\mathbf s)}(\mathbf x) ,
$$
where $\mathcal D_{\rho(\mathbf s)}(\mathbf x) :=\sum_{\mathbf k\in\rho(\mathbf s)}
e^{i(\mathbf k,\mathbf x)}$.
It is clear that for $t\in \Tr(Q_n )$,
$$
t * \mathcal D_{Q_n}  = t .
$$
 We have the following behavior of the $L_p$ norms of the Dirichlet kernels
(see \cite{TBook}, Ch.3, Lemma 1.1).
\begin{lem}\label{L2.3.1} Let $1 < p < \infty $. Then
$$
\bigl\|\mathcal D_{Q_n}(\mathbf x)\bigr\|_p\asymp
2^{(1-1/p)n}n^{(d-1)/p}.
$$
\end{lem}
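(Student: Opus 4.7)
The plan is to decompose $\D_{Q_n}$ into its dyadic blocks $\D_{Q_n}=\sum_{|\bs|_1\le n}\D_{\rho(\bs)}$, compute the $L_q$-norm of each block using the tensor product structure and the univariate identity~\eqref{2.1.6}, and then reassemble via Littlewood--Paley techniques for the upper bound; the lower bound will follow from a duality argument combined with Parseval. Because each $\rho(\bs)$ is a tensor product of univariate dyadic annuli, $\D_{\rho(\bs)}$ factorizes as a product of univariate kernels, and \eqref{2.1.6} applied coordinatewise gives
$$
\|\D_{\rho(\bs)}\|_q \asymp 2^{|\bs|_1(1-1/q)}\quad(1<q\le\infty),\qquad \|\D_{\rho(\bs)}\|_2\asymp 2^{|\bs|_1/2}.
$$

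For the upper bound in the range $1<p\le 2$, I would invoke the multivariate Littlewood--Paley theorem (a consequence of the Marcinkiewicz multiplier theorem, cf.\ Theorem~\ref{T7.3.4}), which yields $\|\D_{Q_n}\|_p \asymp \bigl\|\bigl(\sum_{|\bs|_1\le n}|\D_{\rho(\bs)}|^2\bigr)^{1/2}\bigr\|_p$. Since $|a|_{\ell^2}\le |a|_{\ell^p}$ pointwise for $p\le 2$, integrating in $\bx$ gives
$$
\|\D_{Q_n}\|_p \lesssim \Bigl(\sum_{|\bs|_1\le n}\|\D_{\rho(\bs)}\|_p^p\Bigr)^{1/p}\asymp \Bigl(\sum_{|\bs|_1\le n}2^{|\bs|_1(p-1)}\Bigr)^{1/p}\asymp 2^{n(1-1/p)}n^{(d-1)/p},
$$
where I used that the number of $\bs\in\N_0^d$ with $|\bs|_1=k$ is $\asymp k^{d-1}$, so the geometric sum is dominated by the outer shell $|\bs|_1=n$. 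For $p\ge 2$ the same upper bound holds by the refined block decomposition of Theorem~\ref{T2.4.6}, applied with $\delta_\bs(\D_{Q_n})=\D_{\rho(\bs)}$ for $|\bs|_1\le n$:
$$
\|\D_{Q_n}\|_p\lesssim \Bigl(\sum_{|\bs|_1\le n}\bigl(\|\D_{\rho(\bs)}\|_2\,2^{|\bs|_1(1/2-1/p)}\bigr)^p\Bigr)^{1/p}\asymp \Bigl(\sum_{|\bs|_1\le n}2^{|\bs|_1(p-1)}\Bigr)^{1/p}\asymp 2^{n(1-1/p)}n^{(d-1)/p}.
$$

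The matching lower bound follows from Parseval and H\"older: $|Q_n|=\|\D_{Q_n}\|_2^2 \le \|\D_{Q_n}\|_p\,\|\D_{Q_n}\|_{p'}$, together with $|Q_n|\asymp 2^n n^{d-1}$ from~\eqref{hcsums} and the upper bound for the dual exponent $p'\in(1,\infty)$ already proved, yielding
$$
\|\D_{Q_n}\|_p \gtrsim \frac{2^n n^{d-1}}{2^{n(1-1/p')}n^{(d-1)/p'}} = 2^{n(1-1/p)}n^{(d-1)/p}.
$$

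The \emph{main obstacle} is Step~3, i.e.\ obtaining the sharp logarithmic exponent $n^{(d-1)/p}$ (rather than $n^{(d-1)/2}$) when $p>2$. A naive square-function estimate, using Minkowski in $L^{p/2}$, only produces the weaker bound $2^{n(1-1/p)}n^{(d-1)/2}$ displayed in~\eqref{1.1.1}. This loss is precisely the characteristic multivariate phenomenon highlighted in the introduction: there are $\asymp n^{d-1}$ dyadic blocks of essentially the same total size $2^n$, and summing them through $\ell^2$ in the Littlewood--Paley functional is wasteful once $p>2$. The refined $\ell^p$-type block inequality provided by Theorem~\ref{T2.4.6} is what recovers the correct exponent and makes the upper and lower bounds meet.
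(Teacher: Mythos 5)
Your argument is correct and follows the route the paper itself indicates: the commentary after the lemma explicitly says it is easy to derive from Theorem~\ref{T2.4.6}, and your $p>2$ step is exactly that derivation, while the lower bound via Parseval and H\"older duality against the $L_{p'}$ upper bound is the standard complement. The only remark worth making is that your separate square-function argument for $1<p\le 2$ is not actually needed: if you apply Theorem~\ref{T2.4.6} (relation~\eqref{2.4.4}) with any auxiliary index $q$ satisfying $1<q<p$ and use $\|\mathcal D_{\rho(\bs)}\|_q\asymp 2^{|\bs|_1(1-1/q)}$, the exponent collapses to $p(1-1/q)+(p/q-1)=p-1$ independently of $q$, so
\begin{equation}\nonumber
\|\mathcal D_{Q_n}\|_p\lesssim\Bigl(\sum_{|\bs|_1\le n}2^{|\bs|_1(p-1)}\Bigr)^{1/p}\asymp 2^{n(1-1/p)}n^{(d-1)/p}
\end{equation}
holds for the entire range $1<p<\infty$ (with $p=2$ being immediate from Parseval), making the Littlewood--Paley/$\ell^2\le\ell^p$ detour redundant. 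Your diagnosis of why the naive $\ell^2$-square-function bound loses $n^{(d-1)(1/2-1/p)}$ for $p>2$ matches what the paper highlights around~\eqref{1.1.1}.
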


{\bf 2h. The analogs of the de la Vall\'ee Poussin kernels.}\index{Kernel!de la Vall\'ee Poussin} Let
$\mathcal A_{\mathbf s} (x)$ be
the polynomials which have been defined above.
These polynomials are from $\Tr(2^{\mathbf s} ,d)$ and
\begin{equation}\nonumber
\hat{\mathcal A}_{\mathbf s} (\mathbf k)\ne 0\quad\text{ only for }\quad
\mathbf k\quad:\quad 2^{\mathbf s_j-2}< |k_j| <
2^{s_j},\qquad j=1,\dots,d.
\end{equation}

We define the polynomials
$$
\mathcal V_{Q_n}(\mathbf x):=\sum_{|\bs|_1\le n}\mathcal A_{\mathbf s} (\mathbf
x) .
$$
These are polynomials in $\Tr(Q_n)$ with the property
\begin{equation}\nonumber
\hat{\mathcal V}_{Q_{n+d}}(\mathbf k) = 1\quad\text{ for }\quad \mathbf k\in Q_n.
\end{equation}
We shall use the following notation. Let $f\in L_1$
$$
S_{Q_n} (f) := f* \mathcal D_{Q_n},
$$
$$
V_{Q_n} (f) := f* \mathcal V_{Q_n},
$$
$$
A_{\mathbf s} (f) := f* \mathcal A_{\mathbf s}  .
$$

From Corollary \ref{C7.1} to the Littlewood-Paley theorem (see Appendix) it follows
that for $1 < p < \infty $
\begin{equation}\nonumber
\|S_{Q_n}\|_{p\to p}\le C(p,d) .
\end{equation}

In Subsection \ref{multpol} it was established that the $L_1$-norms of the de
la Vall\'ee Poussin kernels for parallelepipeds  are uniformly bounded. This
fact plays an essential role in studying approximation problems in the $L_1$ and
$L_{\infty}$ norms. The following lemma shows that, unfortunately, the kernels
$\mathcal V_{Q_n}$
have no such property (see \cite{TBook}, Ch.3, Lemma 1.2).

\begin{lem}\label{L2.3.2} Let $1 \le p < \infty $. Then   the following relation
$$
\bigl\|\mathcal V_{Q_n}(\mathbf x)\bigr\|_p\asymp
2^{(1-1/p)n}n^{(d-1)/p}.
$$
holds.
\end{lem}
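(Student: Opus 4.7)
The plan is to prove the two-sided bound $\|\mathcal V_{Q_n}\|_p \asymp 2^{(1-1/p)n} n^{(d-1)/p}$ by establishing matching upper and lower bounds, treating the endpoints $p=1$ and $p=2$ as anchors and filling in $1<p<2$ by interpolation, while handling $p>2$ by a sharp Littlewood--Paley-type device and using reconstruction through the Dirichlet kernel for the lower bound.

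For the upper bound I would first settle $p=2$: the Fourier supports of the building blocks $\mathcal A_\bs$ lie in dyadic shells of multi-index $\bs$, which are essentially disjoint, so the $\mathcal A_\bs$ are almost orthogonal in $L_2$ and Parseval yields
\[
 \|\mathcal V_{Q_n}\|_2^2 \asymp \sum_{|\bs|_1 \le n} \|\mathcal A_\bs\|_2^2 \asymp \sum_{|\bs|_1 \le n} 2^{|\bs|_1} \asymp 2^n n^{d-1}.
\]
For $p=1$ the naive triangle inequality over $|\bs|_1\le n$ is too crude by a factor of $n$ (since there are $\asymp n^d$ such $\bs$), so instead I would exploit the tensor structure: writing $\mathcal V_{Q_n}(\bx) = \sum_{s_d=0}^{n} \mathcal A_{s_d}(x_d)\,\mathcal V_{Q_{n-s_d}}^{(d-1)}(x_1,\ldots,x_{d-1})$ for the $(d-1)$-dimensional analogue and using Fubini together with $\|\mathcal A_{s}\|_1\le 6$ gives the recursion
\[
 \|\mathcal V_{Q_n}\|_1 \,\lesssim\, \sum_{s_d=0}^n \|\mathcal V_{Q_{n-s_d}}^{(d-1)}\|_1 \,\lesssim\, \sum_{s_d=0}^n (n-s_d)^{d-2} \,\asymp\, n^{d-1},
\]
starting from the $d=1$ base case $\sum_{s\le n}\mathcal A_s = \mathcal V_{2^{n-1}}$ together with \eqref{2.1.10}. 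Riesz--Thorin interpolation between $p=1$ and $p=2$ then covers $1\le p \le 2$. For $p>2$ the straightforward Littlewood--Paley inequality is known to lose a logarithmic factor (exactly as in the discussion around \eqref{1.1.1}); the right tool is the sharper $\ell_p$-Littlewood--Paley inequality (foreshadowed by Theorem \ref{T2.4.6}) which gives
\[
 \|\mathcal V_{Q_n}\|_p \,\lesssim\, \Big(\sum_{|\bs|_1\le n}\bigl(\|\mathcal A_\bs\|_2\, 2^{|\bs|_1(1/2-1/p)}\bigr)^p\Big)^{1/p} \,\asymp\, 2^{(1-1/p)n} n^{(d-1)/p}.
\]

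For the lower bound the key observation is that, by construction of $\mathcal V_{Q_n}$ from the blocks $\mathcal A_\bs$, one has $\hat{\mathcal V}_{Q_{n+d}}(\bk)=1$ for every $\bk\in Q_n$ (as already noted after the definition of $\mathcal V_{Q_n}$). Consequently $S_{Q_n}(\mathcal V_{Q_{n+d}}) = \mathcal V_{Q_{n+d}} * \mathcal D_{Q_n} = \mathcal D_{Q_n}$. For $1<p<\infty$ the partial-sum operator $S_{Q_n}$ is uniformly bounded on $L_p$ (the corollary to Littlewood--Paley cited before Lemma \ref{L2.3.2}), and hence $\|\mathcal D_{Q_n}\|_p \le \|S_{Q_n}\|_{p\to p}\,\|\mathcal V_{Q_{n+d}}\|_p \lesssim \|\mathcal V_{Q_{n+d}}\|_p$. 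Combining this with Lemma \ref{L2.3.1} and shifting $n\mapsto n-d$ yields the matching lower bound for $1<p<\infty$.

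The main obstacle is the endpoint $p=1$: precisely because this lemma witnesses that the de la Vall\'ee Poussin kernels on $Q_n$ are not uniformly $L_1$-bounded, $S_{Q_n}$ is not bounded on $L_1$, and the reconstruction argument collapses. I would handle this by a log-convexity (three-lines) trick. The identity $\mathcal V_{Q_n}(\mathbf 0) = \sum_{|\bs|_1\le n} \mathcal A_\bs(\mathbf 0) \asymp 2^n n^{d-1}$ yields the trivial bound $\|\mathcal V_{Q_n}\|_\infty \lesssim 2^n n^{d-1}$, while the lower bound $\|\mathcal V_{Q_n}\|_p \gtrsim 2^{(1-1/p)n} n^{(d-1)/p}$ already proven for $1<p<\infty$, applied at $p := 1+1/\log n$ (where the Littlewood--Paley constants grow only polylogarithmically in $n$), combined with the pointwise inequality $\|f\|_p^p \le \|f\|_1\,\|f\|_\infty^{p-1}$, produces $\|\mathcal V_{Q_n}\|_1 \gtrsim n^{d-1}$ with constants depending only on $d$, completing the asymptotic.
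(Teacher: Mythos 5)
Your upper bounds and the lower bound for $1<p<\infty$ are sound: the Parseval argument at $p=2$, the Fubini recursion at $p=1$ (using the telescoping $\sum_{s\le n}\mathcal A_s=\mathcal V_{2^{n-1}}$ and \eqref{2.1.10} as base case), the log-convexity step for $1<p<2$, Remark~\ref{RT2.4.6} with $q=2$ for $p>2$, and the lower bound via $\mathcal D_{Q_{n}}=S_{Q_{n}}(\mathcal V_{Q_{n+d}})$ together with Corollary~\ref{C7.1} and Lemma~\ref{L2.3.1} -- all of these check out. Since the paper defers to \cite{TBook} for the proof, I cannot compare directly, but your route for these pieces is natural and likely close to the original.

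The genuine gap is the lower bound at $p=1$, which is in fact the heart of the lemma (it is precisely the statement that hyperbolic-cross de la Vall\'ee Poussin kernels are not uniformly $L_1$-bounded). Your three-lines trick does not close the loop. Writing $\|f\|_1\ge \|f\|_p^p/\|f\|_\infty^{p-1}$ with $p=1+1/\log n$ and inserting the proven bounds $\|\mathcal V_{Q_n}\|_\infty\le C(d)2^n n^{d-1}$ and $\|\mathcal V_{Q_n}\|_p\ge c_p\,2^{(1-1/p)n}n^{(d-1)/p}$, one gets exactly
\[
\|\mathcal V_{Q_n}\|_1 \;\ge\; \frac{c_p^{\,p}}{C(d)^{p-1}}\; n^{(d-1)(2-p)} \;\asymp\; c_p\, n^{d-1}.
\]
The constant $c_p$ coming out of your own derivation is $c_p\asymp 1/\|S_{Q_n}\|_{p\to p}$ (times the Lemma~\ref{L2.3.1} constant), and $\|S_{Q_n}\|_{p\to p}$ blows up as $p\to 1^{+}$ at a rate $\gtrsim (p-1)^{-\gamma}$ for some $\gamma>0$ (indeed by duality $\|S_{Q_n}\|_{\infty\to\infty}=\|\mathcal D_{Q_n}\|_1\asymp n^{d}$, so it genuinely diverges); at $p=1+1/\log n$ this is $(\log n)^{\gamma}$. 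Thus your argument produces $\|\mathcal V_{Q_n}\|_1\gtrsim n^{d-1}/(\log n)^{\gamma}$, not $\gtrsim n^{d-1}$. Note the same obstruction appears if one tries the inequality $\sum_{\bk\in\Gamma(N)}|\hat t(\bk)|\le C_\eps(\ln N)^\eps N\|t\|_1$ quoted right after the Lemma: applied to $\mathcal V_{Q_n}$, it yields only $\|\mathcal V_{Q_n}\|_1\gtrsim n^{d-1-\eps}$. Getting the exact power $n^{d-1}$ from below in $L_1$ requires an argument that does not pass through $L_p$-bounded projections near $p=1$ (a direct pointwise kernel estimate, or a test function / Riesz-product argument tailored to $L_1$ duality); as it stands, this endpoint remains open in your proposal.
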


Lemma \ref{L2.3.2} highlights a new phenomenon for hyperbolic cross polynomials -- 
there is no analogs of the de la Vall\'ee Poussin kernels for the hyperbolic
crosses with uniformly bounded $L_1$ norms. In particular, it follows from the
inequality: For any $\ep>0$ there is a number $C_\ep$ such that for all $t\in
\Tr(N)$ (see \cite{Tmon}, Ch.1, Section 2)
\begin{equation}
\sum_{\bk \in \Ga(N)} |\hat t(\bk)| \le C_\ep(\ln N)^\ep N \|t\|_1.
\end{equation}
This new phenomenon substantially complicates the study of approximation by
hyperbolic cross polynomials in the $L_1$ and $L_\infty$ norms. The reader can
find a discussion of related questions in \cite{Tmon}, Chapter 2, Section 5. 

\subsection{The Bernstein-Nikol'skii inequalities  }

{\bf 1. The Bernstein inequalities.}\index{Inequality!Bernstein}
We define the operator $D_{\alpha}^r$,  $r\ge 0$,  $\alpha\in\mathbb R$,
on the set of
trigonometric polynomials as follows:  let $f\in \Tr(n)$;  then
\begin{equation}\label{2.4.1}
D_{\alpha}^r f = f^{(r)} (x, \alpha):=
f(x) * \mathcal V_n^r (x, \alpha),
\end{equation}
\begin{equation}\label{2.4.2}
 \mathcal V_n^r (x, \alpha):=1+2\sum_{k=1}^n k^r\cos(kx
+\al\pi/2)+2\sum_{k=n+1}^{2n-1} k^r(1-(k-n)/n)\cos(kx +\al\pi/2)
\end{equation}
and $f^{(r)} (x, \alpha)$ will be called the $(r, \alpha)$ derivative.
It is clear that
for $f(x)$ such that $\hat f(0) = 0$ we have for natural numbers $r$,
$$
D_r^r f =\frac{d^r}{dx^r} f.
$$

The operator $D_{\alpha}^r$ is defined in such a way that it has an
inverse operator for each $\Tr(n)$. This property distinguishes
$D_{\alpha}^r$ from the differential operator and it will
be convenient for us. On the other hand it is clear that
$$
\frac{d^rf}{dx^r}= D_r^r f - \hat f(0).
$$

\begin{thm}\label{T2.4.1} For any $t\in \Tr(n)$ we have {\rm(}$r > 0$,
$\alpha\in\mathbb R$,  $1\le p\le \infty${\rm)}
$$
\bigl\| t^{(r)} (x, \alpha) \bigr\|_p \le C(r) n^r \| t \|_p,
\qquad n = 1, 2, \dots .
$$
\end{thm}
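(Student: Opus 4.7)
The plan is to reduce the inequality to an $L_1$ estimate on the kernel $\mathcal V_n^r(\cdot,\alpha)$ and then obtain that kernel estimate by Poisson summation from a fixed ``symbol'' independent of $n$.

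By the very definition (\ref{2.4.1}), $t^{(r)}(\cdot,\alpha)=t*\mathcal V_n^r(\cdot,\alpha)$. Young's convolution inequality on $\T$ then yields, simultaneously for every $1\le p\le\infty$,
$$
\|t^{(r)}(\cdot,\alpha)\|_p\;\le\;\|t\|_p\,\|\mathcal V_n^r(\cdot,\alpha)\|_1\,,
$$
so the whole theorem is reduced to the uniform bound $\|\mathcal V_n^r(\cdot,\alpha)\|_1\le C(r)\,n^r$ for $n\ge 1$. This is the step that actually does the work.

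For the kernel estimate I would introduce the continuous profile
$$
\Phi(t):=\mu(t)\,|t|^r\,e^{i\,\text{sign}(t)\,\alpha\pi/2}\ \ (t\neq 0),\qquad\Phi(0):=0,
$$
where $\mu:\R\to[0,1]$ is the de la Vall\'ee Poussin taper equal to $1$ on $[-1,1]$, linear on $[1,2]$ and $[-2,-1]$, and zero outside $[-2,2]$. A direct comparison of Fourier coefficients with (\ref{2.4.2}) shows
$$
\mathcal V_n^r(x,\alpha)\;=\;1+n^r\!\sum_{k\in\Z}\Phi(k/n)\,e^{ikx}.
$$
Applying Poisson summation to $\xi\mapsto\Phi(\xi/n)e^{ix\xi}$ gives
$$
\sum_{k\in\Z}\Phi(k/n)\,e^{ikx}\;=\;\sum_{j\in\Z}n\,\widehat\Phi\bigl(n(2\pi j-x)\bigr),
$$
whose $L_1(\T)$-norm (unfolding to an integral over $\R$) is bounded by $\|\widehat\Phi\|_{L_1(\R)}$. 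Consequently $\|\mathcal V_n^r(\cdot,\alpha)\|_1\le 1+n^r\|\widehat\Phi\|_{L_1(\R)}$, and we are done provided $\widehat\Phi\in L_1(\R)$.

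The hard part is precisely the verification $\widehat\Phi\in L_1(\R)$. The function $\Phi$ is compactly supported in $[-2,2]$, continuous and $C^\infty$ away from the exceptional set $\{0,\pm 1,\pm 2\}$. The kinks of $\mu$ at $\pm 1,\pm 2$ contribute only an $O(|\xi|^{-2})$ tail to $\widehat\Phi$ after one integration by parts, so they are harmless. The delicate piece is the cusp at the origin, where $\Phi$ models the homogeneous distribution $|t|^r e^{i\,\text{sign}(t)\alpha\pi/2}$ whose Fourier transform is explicit (given by Gamma-function constants) and decays as $|\xi|^{-r-1}$ at infinity. Both contributions belong to $L_1(\R)$ precisely when $r>0$. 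For integer $r\ge 2$ this is routine (two integrations by parts), whereas for $0<r<2$ one must split $\Phi=\Phi\chi_{|t|\le 1/2}+\Phi\chi_{|t|>1/2}$ and treat the origin piece by the aforementioned explicit formula. This origin analysis is the only non-routine ingredient of the proof; everything else is bookkeeping.
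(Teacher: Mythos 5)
Your reduction is exactly the right one: the statement is equivalent, via Young's inequality with the paper's normalized convolution, to the uniform bound $\|\mathcal V_n^r(\cdot,\alpha)\|_1\le C(r)\,n^r$, and your identification of the Fourier coefficients of $\mathcal V_n^r$ with the rescaled profile $n^r\Phi(k/n)$ matches (\ref{2.4.2}) precisely; your unfolding of the Poisson sum to $\|\widehat\Phi\|_{L_1(\R)}$ is also correct. The constant comes out independent of $\alpha$ since $\Phi=e^{i\alpha\pi/2}\Phi^++e^{-i\alpha\pi/2}\Phi^-$ with $\Phi^\pm$ independent of $\alpha$, so your $C(r)$ is honest.

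The paper itself treats Theorem~\ref{T2.4.1} as classical and gives no proof; the argument cited (from \cite{TBook}) establishes the same $L_1$ kernel bound, but by a different route: summation by parts (Abel transformation, carried out twice so that the Fejér kernel appears) applied directly to the finite sum in (\ref{2.4.2}), together with the elementary bound $\sum_{k=1}^n|\Delta^2(k^r)|\lesssim n^{r-1}$ (and corresponding handling of the taper interval $[n,2n]$). That route is more elementary — no Fourier analysis on $\R$ and no Poisson summation — and requires nothing beyond estimates of Fejér-type kernels like (\ref{2.1.9}). Your Poisson-summation approach is more conceptual and scales automatically to any family of multipliers given by rescaling a fixed compactly supported profile whose Fourier transform is integrable; the price is that the integrability of $\widehat\Phi$ becomes a separate lemma.

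That lemma is where your proposal is thinner than it should be. Your claims about it are accurate: the kink contributions from the taper's corners give $O(|\xi|^{-2})$ (note this requires a \emph{second} integration by parts, picking up the jump of $\Phi'$; a single integration by parts only gives $O(|\xi|^{-1})$, which is not integrable), and the contribution of the cusp at the origin decays like $|\xi|^{-r-1}$ by the known Fourier transform of $|t|^r e^{i\,\mathrm{sign}(t)\alpha\pi/2}$, with the compact cut-off contributing an error that is Schwartz-class. But these two items should be written out as a short lemma: split $\Phi$ with a smooth partition of unity near $0$, compute the homogeneous Fourier transform on the cusp piece and absorb the smooth correction, and integrate by parts twice on the smooth-but-kinked piece. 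As it stands, the proof is a correct outline rather than a complete argument.
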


Theorem \ref{T2.4.1} can be easily generalized   to the
multidimensional case of trigonometric polynomials from $\Tr({\mathbf N},d)$.
 Let $\mathbf r = (r_1 ,\dots,r_d)$, $r_j\ge 0$, $j = 1,\dots,d$,
$\alpha = (\alpha_1,\dots,\alpha_d)$,
$\mathbf N = (N_1,\dots,N_d)$. We consider the polynomials
\begin{equation}\nonumber
\mathcal V_{\mathbf N}^{\mathbf r} (\mathbf x,\alpha) =
\prod_{j=1}^d  \mathcal V_{N_j}^{r_j}   (x_j  ,\alpha_j) ,
\end{equation}
where $\mathcal V_{N_j}^{r_j}   (x_j  ,\alpha_j)$ are defined in (\ref{2.4.2}).

We  define  the  operator  $D_{\alpha}^{\mathbf r}$ on the set of
trigonometric
polynomials as follows: let $f\in \Tr(\mathbf N,d)$, then
$$
D_{\alpha}^{\mathbf r}f:=  f^{(\mathbf r)}(\mathbf x,\alpha)
:=  f(\mathbf x) * \mathcal V_{\mathbf N}^{\mathbf r}
(\mathbf x,\alpha) ,
$$
and we  shall  call  $D_{\alpha}^{\mathbf r}   f$ the
$(\mathbf r,\alpha)$-derivative. In  the  case  of
identical components $r_j=r$, $j=1,\dots,d$, we shall write the
scalar $r$ in place of the vector.

\begin{thm}\label{T2.4.2} Let $\mathbf r\ge \mathbf 0$ and
$\alpha\in \R^d$ be such that for $r_j  =  0$ we
have $\alpha_j=0$. Then for any $t\in \Tr(\mathbf N,d)$,
$\mathbf N >\mathbf 0$, the inequality
$$
\bigl\|t^{(\mathbf r)}   (\cdot ,\alpha)\bigr\|_{\mathbf p}\le
C(\mathbf r)\|t\|_{\mathbf p}\prod_{j=1}^d N_j^{r_j},\qquad
\mathbf 1\le \mathbf p\le \infty ,
$$
holds.
\end{thm}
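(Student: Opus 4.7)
The strategy is to reduce the multivariate inequality to the univariate Theorem~\ref{T2.4.1} by exploiting the tensor product structure of the kernel
$$
\mathcal V_{\mathbf N}^{\mathbf r}(\mathbf x,\alpha) \;=\; \prod_{j=1}^d \mathcal V_{N_j}^{r_j}(x_j,\alpha_j).
$$
Because this kernel factorises into univariate factors, each depending on a single variable, the convolution operator $D^{\mathbf r}_\alpha$ factorises as a composition $D^{r_1}_{\alpha_1} \circ \cdots \circ D^{r_d}_{\alpha_d}$, where each $D^{r_j}_{\alpha_j}$ acts only on the $j$-th variable (convolution in $x_j$ with the identity in the remaining variables). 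These $d$ operators commute, so I may apply them in any order.

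Next I would fix an index $j$ and, holding the other variables as parameters, view $t$ as a function of $x_j$ alone. Since $t\in\Tr(\mathbf N,d)$, this univariate section is in $\Tr(N_j)$, so Theorem~\ref{T2.4.1} applies pointwise in the remaining variables to give
$$
\bigl\|(D^{r_j}_{\alpha_j} t)(\mathbf x)\bigr\|_{L_{p_j}(x_j)} \;\le\; C(r_j)\, N_j^{r_j}\, \bigl\|t(\mathbf x)\bigr\|_{L_{p_j}(x_j)}.
$$
Taking the remaining iterated mixed norm in the other $d-1$ variables and invoking Fubini (and monotonicity of the mixed $L_{\mathbf p}$-norm) yields
$$
\|D^{r_j}_{\alpha_j} t\|_{\mathbf p} \;\le\; C(r_j)\, N_j^{r_j}\, \|t\|_{\mathbf p}.
$$

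Finally I would iterate this single-variable bound over $j=1,\dots,d$. Multiplying the constants produces the announced estimate with $C(\mathbf r) = \prod_{j=1}^d C(r_j)$ and the weight $\prod_{j=1}^d N_j^{r_j}$. The iteration is legitimate because each $D^{r_j}_{\alpha_j}$ is a Fourier multiplier and therefore does not enlarge the spectrum $\Pi(\mathbf N,d)$, so after each step the intermediate function is still in $\Tr(\mathbf N,d)$ and the next univariate application of Theorem~\ref{T2.4.1} is justified. The degenerate case $r_j=0$, together with the assumption $\alpha_j=0$, causes no trouble, since $D^{0}_{0}$ acts as the identity on $\Tr(\mathbf N,d)$ and contributes the trivial factor $N_j^0=1$. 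The deep content lives entirely in the univariate Theorem~\ref{T2.4.1}; the only real work in the multivariate step is the bookkeeping for iterated mixed norms and the trivial check that the polynomial spectrum is preserved, so I do not expect a serious obstacle.
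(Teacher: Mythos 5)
Your proof is correct and is exactly the argument the paper intends: the paper itself only remarks that Theorem~\ref{T2.4.2} is ``a straight forward generalization of Theorem~\ref{T2.4.1},'' and the straight-forward generalization is the tensor-factorization plus variable-by-variable reduction that you carry out. One small imprecision in the bookkeeping: the step where you pass from the sectional bound $\|D^{r_j}_{\alpha_j}t(\cdot,x')\|_{L_{p_j}(x_j)}\le C(r_j)N_j^{r_j}\|t(\cdot,x')\|_{L_{p_j}(x_j)}$ to the mixed-norm bound $\|D^{r_j}_{\alpha_j}t\|_{\mathbf p}\le C(r_j)N_j^{r_j}\|t\|_{\mathbf p}$ is not merely Fubini plus monotonicity (recall $\|\cdot\|_{\mathbf p}$ is an iterated norm with a prescribed order of integration, and for $j>1$ the operator acts on an outer variable while the univariate estimate sits inside). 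What makes it go through is that $D^{r_j}_{\alpha_j}$ is convolution in $x_j$ with the kernel $\mathcal V_{N_j}^{r_j}(\cdot,\alpha_j)$, whose $L_1$-norm is $\lesssim N_j^{r_j}$, combined with the generalized Minkowski inequality (Appendix 1.8) or, equivalently, the mixed-norm Young inequality (Appendix 1.11): one bounds the inner norm of the convolution by the convolution of the absolute kernel with the inner norm, and then iterates outward. With that citation in place the argument is airtight; the remaining points (commuting Fourier multipliers, preservation of the spectrum $\Pi(\mathbf N,d)$, the degenerate $r_j=0$ case) are handled correctly.
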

It is easy to check that the above Bernstein inequalities are sharp. Extension of
Theorem \ref{T2.4.2} to the case of hyperbolic cross polynomials is nontrivial
and brings out a new phenomenon. 
\begin{thm}\label{T2.4.3} For arbitrary $\alpha$
$$
\sup_{t\in \Tr(N)}\bigl\|t^{(r)}(\mathbf x,\alpha)\bigr\|_p\bigm/ \|t\|_p
\asymp\begin{cases} N^r\quad&\text{ for }1<p<\infty,\quad r\ge 0.\\
N^r(\log N)^{d-1}\quad&\text{ for }p=\infty,\qquad r>0.
\end{cases}
$$
\end{thm}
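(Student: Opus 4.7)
The plan is to reduce the problem to a step hyperbolic cross $Q_n$ with $2^n\asymp N$ (using $\Tr(N)\subset\Tr(Q_{n+d})$), and to exploit the identity $t^{(r)}=t*\mathcal V^r_{Q_n}$ together with the block decomposition $\mathcal V^r_{Q_n}=\sum_{|\bs|_1\le n+d}\mathcal A^{(r)}_\bs$ from Subsection \ref{multpol}. For $1<p<\infty$ I would derive the upper bound from the Littlewood--Paley square function characterization (Corollary 7.1 in the Appendix) combined with the fact that, on each block $\rho(\bs)$, the symbol $\phi(\bk):=\prod_j\max(|k_j|,1)^r e^{i\alpha\pi/2\operatorname{sgn}(k_j)}$ equals $2^{r|\bs|_1}$ times a uniformly bounded Marcinkiewicz multiplier (Theorem 7.3.4); pulling $\max_{|\bs|_1\le n}2^{r|\bs|_1}=N^r$ out of the LP square function yields $\|t^{(r)}\|_p\lesssim N^r\|t\|_p$. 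The matching lower bound is immediate from $t(\bx)=e^{iNx_1}\in\Tr(N)$, for which $\|t\|_p=1$ and $\|t^{(r)}\|_p\asymp N^r$.

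For $p=\infty$ the upper bound comes from Young's inequality together with a blockwise $L_1$-estimate of the derivative kernel,
\[
\|t^{(r)}\|_\infty\le\|t\|_\infty\,\|\mathcal V^r_{Q_n}\|_1\le\|t\|_\infty\sum_{|\bs|_1\le n+d}\|\mathcal A^{(r)}_\bs\|_1\lesssim\|t\|_\infty\sum_{|\bs|_1\le n+d}2^{r|\bs|_1}\asymp N^r(\log N)^{d-1}\|t\|_\infty,
\]
where the blockwise estimate $\|\mathcal A^{(r)}_\bs\|_1=\prod_j\|\mathcal A^{(r)}_{s_j}\|_1\lesssim 2^{r|\bs|_1}$ follows from the univariate Bernstein inequality (Theorem \ref{T2.4.1}) applied coordinatewise together with $\|\mathcal A_{s_j}\|_1\le 6$, and the last $\asymp$ is the third identity in \eqref{hcsums}. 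Here the new $(\log N)^{d-1}$ factor is exactly the multiplicity of dyadic blocks of equal mixed scale $|\bs|_1=n$ sitting inside $Q_n$, the same combinatorial phenomenon as in Lemma \ref{L2.3.2}.

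The main obstacle will be the matching lower bound at $p=\infty$. None of the natural candidates -- a single exponential, $\mathcal D_{Q_n}$, $\mathcal V_{Q_n}$, a Riesz product supported on $\Gamma(N)$, or a Rudin--Shapiro-type random-sign combination -- yields a ratio better than $\asymp N^r$, because $\|t\|_\infty$ and $\|t^{(r)}\|_\infty$ are both attained near the origin with the same logarithmic prefactor and it cancels. To recover the missing $(\log N)^{d-1}$ one must build $t\in\Tr(Q_n)$ whose $L_\infty$-norm stays of constant order while $t^{(r)}$ concentrates at a specific $\bx_0$ so that the contributions of the $\asymp n^{d-1}$ blocks $\rho(\bs)$ with $|\bs|_1=n$ add constructively. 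I would pass to duality, reducing the Bernstein lower bound to the sharpness of the $L_1$-estimate, $\|\mathcal V^r_{Q_n}\|_1\gtrsim N^r(\log N)^{d-1}$, which is in turn obtained by an essentially-disjointness argument for the block kernels: each $\mathcal A^{(r)}_\bs$ concentrates its $L_1$-mass of size $\asymp 2^{r|\bs|_1}$ in a rectangle of aspect $1/2^{s_1}\times\cdots\times 1/2^{s_d}$, and the ``own'' parts of the $\asymp n^{d-1}$ such rectangles with $|\bs|_1=n$ do not overlap, making the triangle inequality used in the upper bound sharp up to a constant. Realizing this lower bound by an actual polynomial relies on a Rudin--Shapiro/volume-type existence argument applied at each dyadic scale in the spirit of Theorem \ref{T2.2.1}, combined with phase shifts aligning the $(r,\alpha)$-differentiation at a common point. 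This final step genuinely uses the breakdown of the uniform $L_1$-boundedness of $\mathcal V_{Q_n}$ recorded in Lemma \ref{L2.3.2}, and is the true novelty of Theorem \ref{T2.4.3} over its rectangular counterpart Theorem \ref{T2.4.2}.
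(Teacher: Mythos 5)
Your sketch correctly identifies the overall architecture and gets two of the three pieces essentially right, but the one piece the theorem hinges on -- the $p=\infty$ lower bound -- is left as a programme rather than a proof, and the reduction you propose for it has a hole. A brief note first: the paper itself does not prove Theorem~\ref{T2.4.3}; it attributes the upper bound at $p=\infty$ to Babenko \cite{Bab2}, the lower bound at $p=\infty$ to Telyakovskii \cite{Tel1}, and the case $1<p<\infty$ to Mityagin \cite{Mit}, so there is no in-text argument to compare against and one must evaluate your roadmap on its own.

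The $1<p<\infty$ argument is fine: the Littlewood--Paley/Marcinkiewicz-multiplier route gives the upper bound, with the factor $N^r$ coming from $\max_{|\bs|_1\le n}2^{r|\bs|_1}$, and $e^{iNx_1}$ saturates it. The $p=\infty$ upper bound via $\|t^{(r)}\|_\infty\le\|t\|_\infty\sum_{|\bs|_1\le n+d}\|\mathcal A_\bs^{(r)}\|_1$ together with the univariate Bernstein bound $\|\mathcal A_s^{(r)}\|_1\lesssim 2^{rs}$ and the hyperbolic sum $\sum_{|\bs|_1\le n}2^{r|\bs|_1}\asymp 2^{rn}n^{d-1}$ (valid for $r>0$; the hypothesis $r>0$ is precisely what makes the terms with $|\bs|_1=n$ dominate) is also sound.

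The problem is the $p=\infty$ lower bound. Your ``pass to duality'' step, reducing the Bernstein lower bound to $\|\mathcal V^r_{Q_n}\|_1\gtrsim N^r(\log N)^{d-1}$, does not go through as stated. The identity $\sup_{\|g\|_\infty\le 1}\|g*\mathcal V^r_{Q_n}\|_\infty=\|\mathcal V^r_{Q_n}\|_1$ requires taking the supremum over all bounded $g$, not over polynomials; to convert the extremal $g$ into an element of $\Tr(Q_n)$ you would project with $V_{Q_{n+d}}$ and set $t=V_{Q_{n+d}}(g)$ so that $t^{(r)}=g*\mathcal V^r_{Q_n}$, but Lemma~\ref{L2.3.2} says precisely that $\|V_{Q_{n+d}}\|_{\infty\to\infty}\asymp n^{d-1}$, so $\|t\|_\infty$ may be as large as $n^{d-1}$ and the ratio $\|t^{(r)}\|_\infty/\|t\|_\infty$ drops back to $\asymp N^r$ with no logarithmic gain. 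In other words, the very lack of a uniformly bounded de la Vall\'ee Poussin operator that you correctly invoke as the source of the phenomenon is also exactly what destroys the naive duality reduction. Your final sentence (``Realizing this lower bound by an actual polynomial relies on a Rudin--Shapiro/volume-type existence argument applied at each dyadic scale\ldots combined with phase shifts'') gestures at the right kind of construction but does not resolve the core tension, which you yourself flag earlier: any polynomial obtained by adding $\asymp n^{d-1}$ normalized blocks $t_\bs$ with aligned $(r,\alpha)$-derivative at a common point has $\|t\|_\infty\lesssim n^{d-1}$ and $\|t^{(r)}\|_\infty\gtrsim n^{d-1}2^{rn}$, again yielding only $\asymp N^r$. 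The genuine difficulty is to make $\|t\|_\infty\lesssim 1$ while the derivative contributions stay coherent; this is what Telyakovskii's construction accomplishes, and a proof must exhibit it explicitly rather than cite the (correct and provable, but itself nontrivial) statement $\|\mathcal V^r_{Q_n}\|_1\asymp N^r(\log N)^{d-1}$. Incidentally, the ``essentially disjoint boxes'' picture you use to justify that $L_1$-estimate sharpens is plausible in measure (the boxes with $|\bs|_1=n$ do have a union of Lebesgue measure comparable to the sum of their measures), but one still has to control sign cancellations of the oscillatory kernels $\mathcal A^{(r)}_\bs$ on their mutual overlaps, which you do not address.

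In short: the $1<p<\infty$ case and the $p=\infty$ upper bound are correct; the $p=\infty$ lower bound -- the real content of the theorem, proved in \cite{Tel1} -- is sketched at a level where the key obstruction you correctly identify is not actually overcome.
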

The Bernstein inequalities in Theorem \ref{T2.4.3} have different form for
$p=\infty$ and $1<p<\infty$. The upper bound in the case $p=\infty$ was obtained
by Babenko \cite{Bab2}. The matching lower bound for $p=\infty$ was proved by
Telyakovskii \cite{Tel1}. The case $1<p<\infty$ was settled by Mityagin
\cite{Mit}. 
The right form of the Bernstein inequalities in case $p=1$ is not known. 
It was proved in \cite{TE4} that in the case $d=2$
$$
\sup_{t\in \Tr(N)}\bigl\|t^{(r)}(\mathbf x,\alpha)\bigr\|_1\bigm/ \|t\|_1 \lesssim (\ln N)^{1/2}N^r.
$$

\noindent{\bf 2. The Nikol'skii inequalities.}\index{Inequality!Nikol'skii}
The following inequalities are well known and easy to prove. 
\begin{thm}\label{T2.4.4} For any $t\in \Tr(n)$,  $n > 0$,
we have the inequality
$$
\| t \|_p\le C n^{1/q-1/p} \| t \|_q, \qquad 1\le q < p\le \infty.
$$
\end{thm}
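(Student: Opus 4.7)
The plan is to reduce the statement to the endpoint case $p=\infty$ first, and then obtain all intermediate $p$ by a simple pointwise interpolation trick. The key tool for the endpoint is the reproducing property of the de la Vall\'ee Poussin kernel together with H\"older's inequality.

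First I would fix $t\in\Tr(n)$ and recall from Theorem \ref{T2.1.2} that
$$
t(x) \;=\; (V_n t)(x) \;=\; (t*\mathcal V_n)(x)\,,
$$
since $\mathcal V_n$ reproduces polynomials in $\Tr(n)$. Let $q'$ denote the conjugate exponent of $q$ (so $1/q+1/q'=1$). Applying H\"older's inequality to the convolution at each point $x$,
$$
|t(x)| \;\le\; \|\mathcal V_n\|_{q'}\,\|t\|_q\,.
$$
Using (\ref{2.1.10}) and the obvious bound $\|\mathcal V_n\|_\infty\le 3n$ together with the log-convexity of $L_p$-norms (equivalently, relation (\ref{2.1.6})-type estimate established for the de la Vall\'ee Poussin kernel just before Theorem \ref{T2.1.2}), one has $\|\mathcal V_n\|_{q'}\asymp n^{1-1/q'} = n^{1/q}$. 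This immediately gives the endpoint inequality
$$
\|t\|_\infty \;\le\; C\,n^{1/q}\,\|t\|_q\,,\qquad 1\le q\le\infty.
$$

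Next, for finite $p$ with $q<p<\infty$, I would use the pointwise bound $|t|^p = |t|^{p-q}|t|^q \le \|t\|_\infty^{p-q}|t|^q$ to obtain
$$
\|t\|_p^p \;\le\; \|t\|_\infty^{p-q}\,\|t\|_q^q\,.
$$
Substituting the endpoint bound $\|t\|_\infty\le Cn^{1/q}\|t\|_q$ just established yields
$$
\|t\|_p^p \;\le\; C^{\,p-q}\,n^{(p-q)/q}\,\|t\|_q^p,
$$
and taking the $p$-th root produces the exponent $n^{(p-q)/(pq)} = n^{1/q-1/p}$, as claimed. The case $p=\infty$ is already handled, and the case $p=q$ is trivial.

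There is no real obstacle here: the only place one has to be a little careful is in getting the \emph{sharp} power $n^{1/q}$ (as opposed to the cruder $n$ that comes from combining $\|t\|_\infty\le Cn\|t\|_1$ with $\|t\|_1\le\|t\|_q$), and that sharpness is supplied by the $L_{q'}$-norm estimate for $\mathcal V_n$ rather than by its $L_\infty$-norm. Once the H\"older step is done with the right exponent $q'$, the rest is an elementary interpolation.
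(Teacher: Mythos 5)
Your proof is correct. The paper does not actually display its own argument for Theorem \ref{T2.4.4} --- it merely states that the Nikol'skii inequalities are ``well known and easy to prove'' --- but the argument you give (reproduce $t$ by the de la Vall\'ee Poussin operator $V_n$, apply H\"older to the convolution to obtain the endpoint $\|t\|_\infty \lesssim n^{1/q}\|t\|_q$ using $\|\mathcal V_n\|_{q'}\asymp n^{1/q}$, and then interpolate via $\|t\|_p^p \le \|t\|_\infty^{p-q}\|t\|_q^q$) is exactly the classical proof the authors have in mind, and all the ingredients (Theorem \ref{T2.1.2} and the $L_q$-norm estimate for $\mathcal V_m$) are supplied in the text just above the statement. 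The exponent bookkeeping $(p-q)/(pq) = 1/q - 1/p$ is right, and your remark about why one must take $\|\mathcal V_n\|_{q'}$ rather than combining $\|t\|_\infty \lesssim n\|t\|_1$ with $\|t\|_1 \le \|t\|_q$ correctly identifies where the sharp power comes from.
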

The above univariate inequalities can be extended to the case of polynomials from $\Tr(\bN,d)$. 
\begin{thm}\label{T2.4.5} For any $t\in \Tr(\mathbf N,d)$, $\mathbf N >\mathbf 0$
the  following  inequality
holds $(\mathbf 1\le \mathbf q\le \mathbf p\le \infty)$:
$$
\|t\|_{\mathbf p}\le C(d) \|t\|_{\mathbf q}\prod_{j=1}^d N_j^{1/q_j-1/p_j}.
$$
\end{thm}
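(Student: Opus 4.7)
My plan is to reduce the multivariate Nikol'skii inequality to its univariate counterpart (Theorem \ref{T2.4.4}) by convolving against the tensor-product de la Vall\'ee Poussin kernel. From Subsection \ref{multpol}, every $t \in \Tr(\mathbf N, d)$ satisfies $t = t * \cv_{\mathbf N}$, where $\cv_{\mathbf N}(\bx) = \prod_{j=1}^d \cv_{N_j}(x_j)$ factors as a tensor product. Because of this structure, the desired $L_{\mathbf q} \to L_{\mathbf p}$ bound for the operator $g \mapsto g * \cv_{\mathbf N}$ can be assembled from one-dimensional Young-type bounds applied in each coordinate direction.

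Concretely, I would set $1/r_j := 1 + 1/p_j - 1/q_j$; the hypothesis $\mathbf q \le \mathbf p$ gives $r_j \in [1, \infty]$, and $1 - 1/r_j = 1/q_j - 1/p_j$. I would then prove by induction on $j = 0, 1, \ldots, d$ the mixed-norm Young estimate
\[
\|h_j\|_{(p_1,\dots,p_j,q_{j+1},\dots,q_d)} \,\le\, \prod_{k=1}^j \|\cv_{N_k}\|_{L_{r_k}} \cdot \|t\|_{\mathbf q},
\]
where $h_j$ is the function obtained from $t$ by convolving in the $k$-th variable against $\cv_{N_k}$ for each $k \le j$. The inductive step proceeds in two moves: first Minkowski's integral inequality is used to commute the mixed norm in the first $j-1$ variables with the new convolution in $x_j$ (this is permitted because every $p_k \ge 1$); then the univariate Young inequality is applied in $x_j$, pointwise in the remaining variables, with exponents $(p_j, q_j, r_j)$. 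At $j = d$ this yields $\|t\|_{\mathbf p} = \|t * \cv_{\mathbf N}\|_{\mathbf p} \leq \prod_j \|\cv_{N_j}\|_{L_{r_j}} \cdot \|t\|_{\mathbf q}$.

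The remaining ingredient is the univariate estimate $\|\cv_m\|_{L_r} \asymp m^{1-1/r}$ from Subsection \ref{univpol}, valid for every $r \in [1, \infty]$; in particular the endpoint $r = 1$ is exactly the uniform bound $\|\cv_m\|_1 \le 3$ from \eqref{2.1.10}, which is essential when $q_j = \infty$. Plugging in $r_j$ gives $\|\cv_{N_j}\|_{L_{r_j}} \asymp N_j^{1/q_j - 1/p_j}$, and taking the product over $j$ delivers the asserted inequality with a constant depending only on $d$.

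The one genuinely nontrivial step is the Minkowski-based commutation of the mixed norm with a coordinate-wise convolution in the inductive argument. A naive alternative — iterating the univariate Nikol'skii inequality directly on $t$ — handles the first coordinate cleanly but breaks down beyond it, since after integration in $x_1$ the function $(\int_{\T} |t|^{q_1}\,dx_1)^{1/q_1}$ is no longer a trigonometric polynomial in the remaining variables and Theorem \ref{T2.4.4} cannot be reapplied. The tensor-product convolution route sidesteps this obstruction by keeping $t$ itself untouched and shifting all of the $L_p$-analysis onto the (tensor-product) kernel.
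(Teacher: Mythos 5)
Your proof is correct. The paper does not prove this theorem — it is listed among the "classical Nikol'skii inequalities'' in Subsection~2.7 — but the route you take is the standard one and stays entirely within the toolkit the paper develops: the reproducing identity $t = t * \mathcal V_{\mathbf N}$, the tensor factorization $\mathcal V_{\mathbf N}(\bx) = \prod_j \mathcal V_{N_j}(x_j)$, the uniform bound $\|\mathcal V_m\|_r \le 3\, m^{1-1/r}$ for $1\le r\le\infty$, and Young's inequality. In fact the inductive Minkowski/Young argument you carry out is precisely a proof of the vector-valued Young inequality that the paper already records as item~1.11 of the Appendix; citing that directly (after swapping the roles of $\mathbf p$ and $\mathbf q$, which their stated hypothesis $\mathbf 1\le\mathbf p\le\mathbf q$ allows) yields $\|t*\mathcal V_{\mathbf N}\|_{\mathbf p} \le \|\mathcal V_{\mathbf N}\|_{\mathbf r}\,\|t\|_{\mathbf q}$ with $1/\mathbf r = \mathbf 1 + 1/\mathbf p - 1/\mathbf q$ in one stroke, and then
\[
\|\mathcal V_{\mathbf N}\|_{\mathbf r} \;=\; \prod_{j=1}^d \|\mathcal V_{N_j}\|_{r_j} \;\le\; 3^d \prod_{j=1}^d N_j^{1/q_j - 1/p_j}
\]
finishes the proof. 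Two small remarks on the write-up. First, the case $r_j = 1$ occurs whenever $p_j = q_j$, not only when $q_j = \infty$; the latter is merely one instance of it (since $q_j = \infty$ together with $q_j\le p_j$ forces $p_j=\infty$), and in every such coordinate the exponent of $N_j$ is zero, so only $\|\mathcal V_{N_j}\|_1\le 3$ enters. Second, your observation that a naive coordinate-by-coordinate iteration of Theorem~\ref{T2.4.4} fails — because the inner $x_1$-partial-norm of $t$ is no longer a polynomial in the remaining variables — is correct and is exactly why the convolution-against-the-kernel route is the right one.
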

We formulate the above inequalities for vector ${\mathbf p}$ and $\mathbf q$
because in this form they are used to prove embedding type inequalities.
We proceed to the problem of estimating $\|f\|_p$ in terms of
the array $\bigl\{ \|\delta_{\mathbf s} (f)\|_q  \bigr\}_{\bs \in \N_0^d}$ where Theorem
\ref{T2.4.5} is heavily used.
Here and below $p$ and $q$  are  scalars  such
that $1\le q,p\le \infty$. Let an array
$\varepsilon = \{\varepsilon_{\mathbf s}\}$ be given, where
$\varepsilon_{\mathbf s}\ge 0$, $\mathbf s = (s_1 ,\dots,s_d)$,
and $s_j$ are nonnegative integers, $j = 1,\dots,d$.
We denote by $G(\varepsilon,q)$ and $F(\varepsilon,q)$
the following sets of functions
$(1\le q\le \infty)$:
$$
G(\varepsilon,q) := \bigl\{ f\in L_q  : \bigl\|\delta_{\mathbf s} (f)
\bigr\|_q
\le\varepsilon_{\mathbf s}\qquad\text{ for all }\mathbf s\bigr\} ,
$$
$$
F(\varepsilon,q) := \bigl\{ f\in L_q  : \bigl\|\delta_{\mathbf s} (f)
\bigr\|_q
\ge\varepsilon_{\mathbf s}\qquad\text{ for all }\mathbf s\bigr\}.
$$

 \begin{thm}\label{T2.4.6} The following relations hold:
\begin{equation}\label{2.4.4}
\sup_{f\in G(\varepsilon,q)}\|f\|_p\asymp\left(\sum_{\mathbf s}
\varepsilon_{\mathbf s}^p2^{|\bs|_1(p/q-1)}\right)^{1/p},
\qquad 1\le q < p < \infty ;
\end{equation}
\begin{equation}\label{2.4.5}
\inf_{f\in F(\varepsilon,q)}\|f\|_p\asymp\left(\sum_{\mathbf s}
\varepsilon_{\mathbf s}^p  2^{|\bs|_1(p/q-1)}\right)^{1/p},
\qquad 1< p < q\le\infty ,
\end{equation}
with constants independent of $\varepsilon$.
\end{thm}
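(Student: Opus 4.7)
The plan is to establish (2.4.4) and (2.4.5) by proving matching upper and lower bounds, exploiting the fact that the two statements are dual to one another: by $L_p$--$L_{p'}$ duality, the upper bound in (2.4.4) at parameters $(q,p)$ translates, after relabeling, into the lower bound in (2.4.5) at parameters $(p',q')$. Hence it suffices to treat the constructive directions in both statements and the duality argument for the upper bound in (2.4.4).

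For the constructive directions I would take $f=\sum_{\mathbf{s}}\varepsilon_{\mathbf{s}} t_{\mathbf{s}}$ with each $t_{\mathbf{s}}\in\Tr(\rho(\mathbf{s}))$ chosen in a problem-specific way. For the lower bound in (2.4.4), pick $t_{\mathbf{s}}$ to saturate the Nikol'skii inequality of Theorem~\ref{T2.4.5}, so that $\|t_{\mathbf{s}}\|_p\asymp\|t_{\mathbf{s}}\|_q\cdot 2^{|\mathbf{s}|_1(1/q-1/p)}$; modulations of the Dirichlet kernels $\mathcal D_{\rho(\mathbf{s})}$ with peaks aligned at the origin work, and coherent addition at the peak combined with a Nikol'skii-type spatial comparison yields $\|f\|_p\gtrsim(\sum\varepsilon_{\mathbf{s}}^p 2^{|\mathbf{s}|_1(p/q-1)})^{1/p}$. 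For the upper bound in (2.4.5), instead take $t_{\mathbf{s}}$ to be localized and pairwise nearly non-overlapping in space (e.g.\ built from shifted de la Vall\'ee Poussin kernels or Rudin--Shapiro polynomials), so that the blocks combine nearly $\ell_p$-orthogonally in $L_p$ and the full sum has $L_p$-norm no larger than the desired right-hand side.

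For the upper bound in (2.4.4) I would use duality: $\|f\|_p=\sup_{\|g\|_{p'}\le 1}|\langle f,g\rangle|$, and by Fourier orthogonality of the dyadic blocks
\[
\langle f,g\rangle=\sum_{\mathbf{s}}\langle\delta_{\mathbf{s}}(f),\delta_{\mathbf{s}}(g)\rangle.
\]
H\"older with $(q,q')$ in each inner product, then a weighted discrete H\"older inequality in $(\ell_p,\ell_{p'})$ with weights $2^{|\mathbf{s}|_1(p/q-1)}$, combined with the Nikol'skii inequality passing from $\|\delta_{\mathbf{s}}(g)\|_{q'}$ to $\|\delta_{\mathbf{s}}(g)\|_{p'}$ (valid since $p'\le q'$), reduce the desired bound to the dual square-function estimate
\[
\Bigl(\sum_{\mathbf{s}}\|\delta_{\mathbf{s}}(g)\|_{p'}^{p'}\Bigr)^{1/p'}\lesssim\|g\|_{p'}.
\]
For $p'\ge 2$ this follows immediately from the Littlewood--Paley theorem (Corollary~\ref{C7.1}) and the pointwise sequence inequality $(\sum_{\mathbf{s}} a_{\mathbf{s}}^{p'})^{1/p'}\le(\sum_{\mathbf{s}} a_{\mathbf{s}}^2)^{1/2}$.

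The main obstacle is the regime $p>2$, where $p'<2$, the last pointwise sequence inequality reverses, and the dual square-function estimate fails in general. In this regime one cannot reduce through the dual test function; instead, I would apply Littlewood--Paley directly to $f$, use that each $|\delta_{\mathbf{s}}(f)|^2$ is a polynomial whose spectrum lies in a tensor-product box of side $\sim 2^{\mathbf{s}}$, and combine the decomposition of $(\sum_{\mathbf{s}}|\delta_{\mathbf{s}}(f)|^2)^{p/2}$ in $L_{p/2}$ with block-wise Nikol'skii applied to the squared blocks $|\delta_{\mathbf{s}}(f)|^2$. The naive Minkowski estimate at this step yields only an $\ell_2$-weighted sum, producing the suboptimal $n^{(d-1)/2}$-loss exhibited in~\eqref{1.1.1}; replacing it by the sharp $\ell_p$-weighted bound with the correct $n^{(d-1)/p}$-loss requires a careful reorganization of the resulting iterated sum, which is precisely the ``other technique'' referenced just before the theorem.
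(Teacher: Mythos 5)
You correctly identify the structural features: \eqref{2.4.4} and \eqref{2.4.5} are $L_p$--$L_{p'}$ duals, and the heart of the matter is the upper bound in \eqref{2.4.4}. Your chain — duality, H\"older on each dyadic block, weighted discrete H\"older in $(\ell_p,\ell_{p'})$, Nikol'skii from $q'$ to $p'$, and the dual square-function estimate $\bigl(\sum_\bs\|\delta_\bs(g)\|_{p'}^{p'}\bigr)^{1/p'}\lesssim\|g\|_{p'}$ — is sound for $p'\ge 2$, i.e.\ $p\le 2$, and by duality it also gives the lower bound in \eqref{2.4.5} for $p\ge 2$. But your treatment of $p>2$ (and, dually, of $p<2$ in \eqref{2.4.5}) is not a proof. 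You observe, correctly, that the dual square-function estimate fails once $p'<2$, and then write that the direct Littlewood--Paley route can be repaired by ``a careful reorganization of the resulting iterated sum, which is precisely the `other technique' referenced just before the theorem.'' That appeal is circular: in the paper the ``other technique'' invoked around \eqref{1.1.1} \emph{is} Theorem~\ref{T2.4.6} itself — that passage is there to illustrate what Theorem~\ref{T2.4.6} buys over naive Littlewood--Paley, not to explain how to prove it. Since $p>2$ is exactly the regime where \eqref{2.4.4} improves on the $\ell_2$-weighted Littlewood--Paley bound (for $p\le 2$ the $\ell_p$-weighted sum is the larger one anyway), the essential content of the theorem is missing. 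The paper records that the proof in \cite{Tmon} is ``a nontrivial application of Theorem~\ref{T2.4.5} and the H\"older inequalities~\eqref{7.3}'' — a single argument valid across the whole range $1\le q<p<\infty$, not a $p\lessgtr 2$ dichotomy; some substitute for that device is what is needed and is not supplied here.

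The constructive directions are also too thin to accept as written. For the lower bound in \eqref{2.4.4}, aligning Dirichlet-kernel peaks at the origin does give $f(0)\asymp\sum_\bs\varepsilon_\bs2^{|\bs|_1/q}$, but the region where all blocks peak simultaneously has tiny volume (on the order of $2^{-n}$ or less, with $n$ the top level), and its contribution $\asymp |f(0)|^p\cdot 2^{-n}$ to $\|f\|_p^p$ is easily dominated by a single term $\varepsilon_{\bs_0}^p 2^{|\bs_0|_1(p/q-1)}$ of the target sum when the coefficients live at a low level $|\bs_0|_1\ll n$; passing from $|f(0)|$ to $\|f\|_p$ via a Nikol'skii inequality gives the right answer only if the Nikol'skii inequality is matched to the actual support of $\varepsilon$ (hyperbolic-cross Nikol'skii as in Theorem~\ref{T2.4.7} when $\varepsilon$ fills a cross, box Nikol'skii when it concentrates on a single block, and neither in general). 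Until the ``Nikol'skii-type spatial comparison'' is made precise, the lower bound in \eqref{2.4.4} and its dual upper bound in \eqref{2.4.5} should be regarded as unjustified.
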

Theorem \ref{T2.4.6} was proved in \cite{Tem10} (see also \cite{Tmon}, Ch.1, Theorem 3.3).
Theorem \ref{T2.4.6} can be formulated in the form of embeddings: relation (\ref{2.4.4}) implies 
Lemma \ref{L3.5c} and relation (\ref{2.4.5}) implies Lemma \ref{L3.5d} (see Section 3 below). 
\begin{rem}\label{RT2.4.6} 
In the proof of the upper bound in (\ref{2.4.4}) from \cite{Tmon} we  used  only
the property $\delta_{\bs} (f)\in \Tr(2^{\bs},d)$. That is, if
$$
f =\sum_{\bs}t_{\bs},\qquad t_{\bs}\in \Tr(2^{\bs},d) ,
$$
then for $1\le q < p < \infty$,
\begin{equation}\label{2.4.4R}
\|f\|_p\le C(q,p,d) \left (\sum_{\bs}\|t_{\bs}\|_q^p
2^{\|\bs\|_1(p/q-1)}\right)^{1/p}    .
\end{equation}
\end{rem}
The above Remark \ref{RT2.4.6} is from \cite{Di91}. This remark is very useful in studying 
sampling recovery by Smolyak's algorithms (see Section 5). 

The Nikol'skii inequalities\index{Inequality!Nikol'skii} for polynomials from $\Tr(N)$ are nontrivial in the case $q=1$.
The following two theorems are from \cite{Tmon}, Ch.1, Section 2.
\begin{thm}\label{T2.4.7} Suppose that $1 \le q < \infty $ and $r\ge 0$. Then
$$
\sup_{t\in \Tr(N)}\bigl\|t^{(r)}(\mathbf x,\alpha)\bigr\|_{\infty}
\bigm/ \|t\|_q\asymp N^{r+1/q}(\log N)^{(d-1)(1-1/q)}.
$$
\end{thm}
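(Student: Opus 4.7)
The plan is to establish matching upper and lower bounds by exploiting the hyperbolic cross de la Vall\'ee Poussin kernel $\mathcal V_{Q_n}$ and its block decomposition $\mathcal V_{Q_n}=\sum_{|\bs|_1\le n}\mathcal A_{\bs}$. Fix $n$ to be the smallest integer with $\Gamma(N)\subset Q_n$, so that $2^n\asymp N$ and $t = t*\mathcal V_{Q_n}$ for every $t\in \Tr(N)$.

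For the upper bound in the main case $1<q<\infty$, the reproducing identity gives $t^{(r)} = t*\mathcal V_{Q_n}^{(r)}$, and H\"older's inequality yields
\[
\|t^{(r)}\|_\infty \le \|t\|_q\,\|\mathcal V_{Q_n}^{(r)}\|_{q'},\qquad 1/q+1/q'=1.
\]
The task is then to estimate $\|\mathcal V_{Q_n}^{(r)}\|_{q'}$. Since each $\mathcal A_\bs\in \Tr(2^\bs,d)$ with $\|\mathcal A_\bs\|_1\le 6^d$, the Bernstein inequality (Theorem \ref{T2.4.2}) gives $\|\mathcal A_\bs^{(r)}\|_1\lesssim 2^{r|\bs|_1}$. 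I would then apply Remark \ref{RT2.4.6} with inner index $q=1$ and outer index $p=q'\in(1,\infty)$ to obtain
\[
\|\mathcal V_{Q_n}^{(r)}\|_{q'} \lesssim \Big(\sum_{|\bs|_1\le n}\|\mathcal A_\bs^{(r)}\|_1^{q'}\,2^{|\bs|_1(q'-1)}\Big)^{1/q'} \lesssim \Big(\sum_{|\bs|_1\le n}2^{(r+1/q)q'|\bs|_1}\Big)^{1/q'}.
\]
The dominant contribution comes from $|\bs|_1=n$, where there are $\asymp n^{d-1}$ terms, so this sum is $\asymp n^{(d-1)/q'}2^{(r+1/q)n}\asymp N^{r+1/q}(\log N)^{(d-1)(1-1/q)}$, using $(d-1)/q'=(d-1)(1-1/q)$.

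For the lower bound I test with $t_0:=\mathcal V_{Q_n}$. Lemma \ref{L2.3.2} gives $\|t_0\|_q \asymp 2^{(1-1/q)n}n^{(d-1)/q}\asymp N^{1-1/q}(\log N)^{(d-1)/q}$ for every $1\le q<\infty$. Choosing $\alpha$ so that the Fourier terms of $t_0^{(r)}$ align in phase at the origin, the contribution of the block $\rho(\bs)$ to $t_0^{(r)}(0)$ is of order $2^{|\bs|_1}\prod_j 2^{rs_j}=2^{(r+1)|\bs|_1}$; summing over $|\bs|_1\le n$ gives
\[
\|t_0^{(r)}\|_\infty \ge |t_0^{(r)}(0)| \asymp \sum_{|\bs|_1\le n}2^{(r+1)|\bs|_1} \asymp 2^{(r+1)n}n^{d-1} \asymp N^{r+1}(\log N)^{d-1}.
\]
Dividing by $\|t_0\|_q$ gives exactly the claimed ratio $N^{r+1/q}(\log N)^{(d-1)(1-1/q)}$, valid for all $1\le q<\infty$.

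The main obstacle is the boundary case $q=1$ of the upper bound. The H\"older route would require controlling $\|\mathcal V_{Q_n}^{(r)}\|_\infty$, but the lower bound computation above shows $\|\mathcal V_{Q_n}^{(r)}\|_\infty\asymp N^{r+1}(\log N)^{d-1}$, producing a spurious $(\log N)^{d-1}$ factor. One cannot evade this by replacing $\mathcal V_{Q_n}$ with any other universal reproducing kernel (an evaluation at $0$ shows every such kernel has $L_\infty$-norm $\gtrsim N^{r+1}(\log N)^{d-1}$), so a finer argument is required. The natural route is to invoke the absolute-convergence inequality $\sum_{\bk\in \Gamma(N)}|\hat t(\bk)|\le C_\varepsilon(\log N)^\varepsilon N\|t\|_1$ recalled in Subsection \ref{ssect:hyppoly}, together with the volume-estimate technique for sets of Fourier coefficients of unit $L_1$-balls of hyperbolic cross polynomials mentioned in Subsection~2.5, and then remove the $\varepsilon$-loss via a dyadic refinement that exploits the factor $\prod|k_j|^r\le N^r$ on $\Gamma(N)$.
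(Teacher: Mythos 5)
Your upper bound for $1<q<\infty$ is correct and clean: the reproduction $t^{(r)}=t*\mathcal V_{Q_n}^{(r)}$, the block bound $\|\mathcal A_{\bs}^{(r)}\|_1\lesssim 2^{r|\bs|_1}$ from Theorem \ref{T2.4.2}, and Remark \ref{RT2.4.6} with inner exponent $1$ and outer exponent $q'$ do give $\|\mathcal V_{Q_n}^{(r)}\|_{q'}\lesssim N^{r+1/q}(\log N)^{(d-1)(1-1/q)}$, after which H\"older finishes. In the lower bound you write ``choosing $\alpha$,'' but $\alpha$ is a fixed parameter of the theorem, not a degree of freedom; the $\asymp$-constants may depend on it, but you must prove the bound for each $\alpha$. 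The repair is routine: transfer the phases onto the test polynomial, e.g.\ take the one-sided kernel $t_0=\sum_{\bk\in Q_n,\,\bk>0}e^{i(\bk,\cdot)}$ or $\sum_{\bs}\varepsilon_{\bs}\mathcal A_{\bs}$ with unimodular $\varepsilon_{\bs}$; these have the same $L_q$ norms (orderwise) as $\mathcal V_{Q_n}$ and realign $D^r_\alpha$ at the origin whatever $\alpha$ is.

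The real gap, which you do flag, is the endpoint $q=1$, where the claim is $\|t^{(r)}\|_\infty\lesssim N^{r+1}\|t\|_1$ with \emph{no} logarithm, and your proposed repair cannot close it. Any argument that dominates $\|t^{(r)}\|_\infty$ by $\sum_{\bk\in\Gamma(N)}|\hat t(\bk)|\prod_j|k_j|^r$ has already discarded exactly the cancellation that the sharp inequality needs: the weight $\prod_j|k_j|^r$ is essentially constant ($\asymp 2^{rm}$) on each hyperbolic layer $|\bs|_1=m$, so the ``dyadic refinement exploiting $\prod_j|k_j|^r\le N^r$'' does not discriminate among the $\asymp m^{d-1}$ blocks in that layer; no matter how you slot in the absolute-convergence inequality or the trivial $|\hat t(\bk)|\le\|t\|_1$, the $m$-th layer returns a factor of $m^{d-1}$ (or at best $m^{\varepsilon}$) beyond $2^{(r+1)m}\|t\|_1$, and the logarithm survives. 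Note moreover that the sharp order of $\sup_t\sum_{\bk}|\hat t(\bk)|/\|t\|_1$ is itself Open Problem~2.2; your plan quietly rests on resolving a strengthening of it. The source's proof works differently: it uses the Nikol'skii duality Theorem \ref{T7.2.2} to rewrite $\sup_{t\in\Tr(N)}\|t^{(r)}\|_\infty/\|t\|_1$ as an infimum of $L_\infty$ norms over admissible reproducing kernels, and then produces a good dual witness from the classical uniform bound $\bigl|\sum_{\bk\in\Gamma(N),\,\bk>0}(k_1\cdots k_d)^{-1}\sin k_1x_1\cdots\sin k_dx_d\bigr|\le C(d)$ recalled in Subsection~\ref{why}. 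That bounded polynomial \emph{is} the hidden cancellation the problem requires; a sum-of-moduli route sees none of it, so your volume-estimate suggestion, while in the right circle of ideas for lower bounds, is orthogonal to what is actually needed here.
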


\begin{thm}\label{T2.4.8} Suppose that $1 \le q \le p < \infty $, $p > 1$,
$r \ge 0$. Then
$$
\sup_{t\in \Tr(N)}\bigl\|t^{(r)}(\mathbf x,\alpha)\bigr\|_p\bigm /
\|t\|_q\asymp N^{r+1/q-1/p}.
$$
\end{thm}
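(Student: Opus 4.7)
I handle the two bounds separately.

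\textbf{Lower bound.} Test on a univariate polynomial placed in one coordinate. Take $T_N\in \Tr(N)$ (univariate) extremal in the chain of Theorems~\ref{T2.4.1} and~\ref{T2.4.4}, so that $\|T_N^{(r)}(\cdot,\alpha_1)\|_p \asymp N^{r+1/q-1/p}\|T_N\|_q$, and set $t_0(\bx):=T_N(x_1)$. Its spectrum $\{(k,0,\ldots,0):|k|\le N\}$ lies in $\Gamma(N)$, so $t_0\in \Tr(N)$. Since the kernel $\mathcal V^r_N$ in~(\ref{2.4.2}) has constant Fourier coefficient $1$ at the zero frequency, the multivariate $(r,\alpha)$-derivative acts on $t_0$ only in the $x_1$-direction, and the $L_p(\T^d)$- and $L_q(\T^d)$-norms of $t_0$ reduce to the corresponding univariate norms up to constants depending only on $d$. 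Taking the ratio $\|t_0^{(r)}\|_p/\|t_0\|_q$ yields the lower bound $\gtrsim N^{r+1/q-1/p}$.

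\textbf{Upper bound.} Put $n:=\lceil\log_2 N\rceil+d$ so that $\Gamma(N)\subset Q_n$, and decompose $t=\sum_{|\bs|_1\le n}A_\bs(t)$ using the smoothed dyadic blocks of Subsection~\ref{multpol}, each $A_\bs(t)\in \Tr(2^\bs,d)$. Apply the parallelepiped Bernstein inequality (Theorem~\ref{T2.4.2}) block-by-block to obtain $\|(A_\bs t)^{(r)}\|_q \lesssim 2^{r|\bs|_1}\|A_\bs t\|_q$. Then invoke the embedding estimate~\eqref{2.4.4R} of Theorem~\ref{T2.4.6} with the decomposition $t^{(r)}=\sum_\bs (A_\bs t)^{(r)}$ (valid since $1\le q<p<\infty$; the diagonal case $q=p$ follows directly from Theorem~\ref{T2.4.3}). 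Combining and factoring out the maximal exponential weight:
\begin{equation*}
\|t^{(r)}\|_p \,\lesssim\, \Big(\sum_{|\bs|_1\le n} 2^{|\bs|_1(rp+p/q-1)}\|A_\bs t\|_q^p\Big)^{1/p}\,\le\, N^{r+1/q-1/p}\Big(\sum_{|\bs|_1\le n}\|A_\bs t\|_q^p\Big)^{1/p},
\end{equation*}
valid because $p(r+1/q-1/p)\ge 0$ and $|\bs|_1\le n\asymp \log N$.

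\textbf{Main obstacle.} The remaining task is the log-free bound $\bigl(\sum_{|\bs|_1\le n}\|A_\bs t\|_q^p\bigr)^{1/p}\lesssim \|t\|_q$. For $2\le q\le p<\infty$ this follows cleanly from Minkowski's integral inequality (giving $(\sum_\bs\|A_\bs t\|_q^2)^{1/2}\lesssim \|(\sum_\bs|A_\bs t|^2)^{1/2}\|_q$ for $q\ge 2$), the Littlewood--Paley equivalence $\|(\sum_\bs|A_\bs t|^2)^{1/2}\|_q\asymp \|t\|_q$ for $1<q<\infty$, and the discrete embedding $\ell^2\subset \ell^p$ for $p\ge 2$. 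The range $1<q<2\le p$ is reduced to the case $q=2$ by composing the $(q,2)$- and $(2,p)$-estimates. The delicate boundary case $q=1$ (admitted by the hypothesis) is the hard part: a crude $\|A_\bs t\|_1\lesssim \|t\|_1$ summed over the $\asymp n^{d-1}$ multi-indices would leak a $(\log N)^{(d-1)/p}$ factor. Extracting the sharp log-free order requires the finer hyperbolic-cross analysis of \cite{Tmon}, Ch.~1, \S 2, notably the near-optimal Fourier-coefficient bound $\sum_{\bk\in\Gamma(N)}|\hat t(\bk)|\le C_\epsilon(\ln N)^\epsilon N\|t\|_1$ displayed after Lemma~\ref{L2.3.2}.
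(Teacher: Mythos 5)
The lower bound argument is sound. The upper bound, however, stalls earlier than you realize. After combining block Bernstein with \eqref{2.4.4R} you extract the maximal weight $2^{n(rp+p/q-1)}$ and reduce to the log-free embedding $\bigl(\sum_{\bs}\|A_\bs t\|_q^p\bigr)^{1/p}\lesssim\|t\|_q$. This does hold once $p\ge\max\{q,2\}$; but a small correction is needed even there: for $q\ge 2$ Minkowski runs the \emph{other} way, $\|(\sum_\bs|A_\bs t|^2)^{1/2}\|_q\le(\sum_\bs\|A_\bs t\|_q^2)^{1/2}$, and the claimed $\ell^2$-version fails for $q>2$ (rescaled de la Vall\'ee Poussin kernels give a one-dimensional counterexample); what is true and sufficient is the pointwise $\ell^q\subset\ell^2$ bound $(\sum_\bs\|A_\bs t\|_q^q)^{1/q}\le\|(\sum_\bs|A_\bs t|^2)^{1/2}\|_q\asymp\|t\|_q$, followed by $\ell^q\hookrightarrow\ell^p$. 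The real gap is the range $1\le q\le p<2$, which the theorem admits and which you never flag. There the needed embedding is simply false: for $d=2$ take $t=\sum_{|\bs|_1=n}R_\bs$, each $R_\bs$ a Rudin--Shapiro polynomial with spectrum in $\rho(\bs)$; there are $\asymp n^{d-1}$ such blocks, $\|A_\bs t\|_q\asymp 2^{n/2}$ for each, while Littlewood--Paley gives $\|t\|_q\asymp n^{(d-1)/2}\,2^{n/2}$, so $\bigl(\sum_\bs\|A_\bs t\|_q^p\bigr)^{1/p}/\|t\|_q\asymp n^{(d-1)(1/p-1/2)}\to\infty$. The same example shows that already your combined step-four bound overshoots $N^{r+1/q-1/p}\|t\|_q$ by this factor, so the loss is in the method and cannot be rescued afterwards; and for $q=1$ the quoted $C_\epsilon(\ln N)^\epsilon N\|t\|_1$ coefficient bound still carries the $\log$ you need to remove.

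For $p<2$ the missing ingredient is the \emph{converse} embedding. Instead of trying to prove a log-free Besov-type bound for $\|t\|_q$ from above, bound it from below through the second relation \eqref{2.4.5} of Theorem~\ref{T2.4.6} (equivalently the Jawerth--Franke embedding Lemma~\ref{L3.5d} with $r=0$): for $1<q<p<\infty$ one has $\|t\|_q\gtrsim\bigl(\sum_\bs\|\delta_\bs(t)\|_p^q\,2^{|\bs|_1(q/p-1)}\bigr)^{1/q}$. Pair this with $\|t^{(r)}\|_p\lesssim\bigl(\sum_\bs 2^{rp^*|\bs|_1}\|\delta_\bs(t)\|_p^{p^*}\bigr)^{1/p^*}$ from Corollary~\ref{C7.2} ($p^*=\min\{p,2\}$). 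Substituting $c_\bs:=2^{|\bs|_1(1/p-1/q)}\|\delta_\bs(t)\|_p$ and writing $\gamma:=r+1/q-1/p\ge 0$, the two sides become $\bigl(\sum_\bs 2^{p^*\gamma|\bs|_1}c_\bs^{p^*}\bigr)^{1/p^*}$ and $N^\gamma\bigl(\sum_\bs c_\bs^q\bigr)^{1/q}$, and for $p\le 2$ (so $p^*=p>q$) the required comparison is just $|\bs|_1\le n$ together with $\ell^q\hookrightarrow\ell^p$. Your plan has no tool of this dual kind in play, and in particular the case $q=1$, where \eqref{2.4.5} is unavailable, genuinely needs a separate argument.
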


\noindent{\bf 3. The Marcinkiewicz theorem.}\index{Theorem!Marcinkiewicz}
The set $\Tr(n)$ of trigonometric polynomials is a space of
dimension $2n+1$. Each polynomial $t\in \Tr(n)$ is uniquely defined by its
Fourier coefficients $\bigl\{\hat t(k)\bigr \}_{|k|\le n}$
and by the Parseval identity we have
\begin{equation}\nonumber
\| t \|_2^2 = \sum_{|k|\le n}\bigl|\hat t(k) \bigr|^2,
\end{equation}
which means that the set $\Tr(n)$ as a subspace of $L_2$ is isomorphic to
$\ell_2^{2n+1}$. The relation (\ref{2.1.5}) shows that a similar isomorphism
can be set up in another way:  mapping a polynomial $t\in \Tr(n)$ to the vector
$m(t) := \bigl\{ t(x^l) \bigr\}_{l=0}^{2n}$ of its values at the points
$$
x^l = 2\pi l/(2n+1), \qquad l = 0, \dots, 2n.
$$
The relation (\ref{2.1.5}) gives
$$
\| t \|_2 = (2n+1)^{-1/2} \| m(t) \|_2.
$$

The following statement is the Marcinkiewicz theorem.

\begin{thm}\label{T2.4.9} Let $1 < p < \infty$;  then for $t\in \Tr(n)$,
 $n > 0$,  we have the relation
$$
C_1 (p) \| t \|_p\le n^{-1/p} \bigl\| m(t) \bigr\|_p\le C_2 (p) \| t \|_p.
$$
\end{thm}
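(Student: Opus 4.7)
The plan is to establish the two inequalities separately: the upper bound by a local Bernstein-type argument that relates the sampled values back to the $L_p$ norm on small intervals, and the lower bound by duality exploiting the reproducing identity \eqref{2.1.4} together with the $L_p$--boundedness of the partial sum operator from Theorem~\ref{T2.1.1}.

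For the upper bound, I partition the torus into the $2n+1$ non-overlapping intervals $I_l$ of length $|I_l|=2\pi/(2n+1)$ centered at $x^l$. For any $x\in I_l$, the fundamental theorem of calculus gives $t(x^l)=t(x)+\int_x^{x^l}t'(y)\,dy$; applying $|a+b|^p\le 2^{p-1}(|a|^p+|b|^p)$ together with H\"older to the integral term yields
\begin{equation*}
|t(x^l)|^p \le 2^{p-1}|t(x)|^p + 2^{p-1}|I_l|^{p-1}\int_{I_l}|t'(y)|^p\,dy.
\end{equation*}
Integrating in $x$ over $I_l$, dividing by $|I_l|$, and summing over $l$ produces
\begin{equation*}
\sum_{l=0}^{2n}|t(x^l)|^p \;\lesssim\; \frac{1}{|I_l|}\|t\|_p^p + |I_l|^{p-1}\|t'\|_p^p.
\end{equation*}
Since the Bernstein inequality of Theorem~\ref{T2.4.1} gives $\|t'\|_p\le Cn\|t\|_p$ and $|I_l|\asymp 1/n$, both right-hand terms are of order $n\|t\|_p^p$; multiplying by $n^{-1}$ and taking $p$-th roots yields $n^{-1/p}\|m(t)\|_p\le C_2(p)\|t\|_p$.

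For the lower bound, I test against arbitrary $f\in L_{p'}$ with $\|f\|_{p'}\le 1$. Pairing the identity \eqref{2.1.4} with $f$ and using that $\mathcal D_n$ is real and even, so that $(2\pi)^{-1}\!\int \mathcal D_n(x-x^l)\bar f(x)\,dx=\overline{S_n(f)(x^l)}$, gives
\begin{equation*}
\langle t,f\rangle = \frac{1}{2n+1}\sum_{l=0}^{2n}t(x^l)\,\overline{S_n(f)(x^l)}.
\end{equation*}
H\"older in $\ell^p$--$\ell^{p'}$ bounds the right-hand side by $(2n+1)^{-1}\|m(t)\|_p\,\|m(S_nf)\|_{p'}$. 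Applying the already-proved upper bound to $S_n(f)\in \Tr(n)$ with exponent $p'$ yields $\|m(S_nf)\|_{p'}\le Cn^{1/p'}\|S_n(f)\|_{p'}$, and Theorem~\ref{T2.1.1} gives $\|S_n(f)\|_{p'}\le C(p')\|f\|_{p'}$ since $1<p'<\infty$. Taking the supremum over admissible $f$ produces $\|t\|_p\le Cn^{-1/p}\|m(t)\|_p$.

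The main obstacle is the lower bound. The naive route, applying the triangle inequality directly to \eqref{2.1.4} with the bound $\|\mathcal D_n\|_q\asymp n^{1-1/q}$ from \eqref{2.1.6}, only delivers an $\ell^1$-type estimate and fails at the correct scaling because of the logarithmic blow-up of $\|\mathcal D_n\|_1$. The duality device repairs this by routing the estimate through $S_n$, and invoking Theorem~\ref{T2.1.1} is exactly what forces the restriction $1<p<\infty$: at the endpoints the partial sum operator is unbounded on $L_p$ and the conclusion of the theorem fails in general.
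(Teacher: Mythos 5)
Your proof is correct and complete. The paper does not actually supply a proof of Theorem~\ref{T2.4.9}; it is cited as ``a classical variant of the Marcinkiewicz theorem,'' so there is no internal argument to compare against. That said, your two-step strategy is one of the standard routes to this result and it works exactly as written.

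For the upper bound, the pointwise estimate $|t(x^l)|^p \le 2^{p-1}|t(x)|^p + 2^{p-1}|I_l|^{p-1}\int_{I_l}|t'|^p$ followed by averaging in $x$ over $I_l$, summing in $l$, and invoking Theorem~\ref{T2.4.1} with $|I_l|\asymp 1/n$ is clean, and the two resulting terms balance at $\asymp n\|t\|_p^p$ as you claim. For the lower bound, the pairing of the reproducing identity \eqref{2.1.4} with a test function $f$ legitimately transfers the Dirichlet kernel onto $f$ (using that $\mathcal D_n$ is real and even, so the $x$-integral is $\overline{S_n(f)(x^l)}$); the discrete H\"older step and the appeal to the already-established upper bound at exponent $p'$ for $S_n(f)\in\Tr(n)$ are both in order, and Theorem~\ref{T2.1.1} closes the loop. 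The concluding remark correctly identifies why the triangle-inequality shortcut is off scale and why the argument genuinely requires $1<p<\infty$ (unboundedness of $S_n$ on $L_1$ and $L_\infty$). No gaps.
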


The following statement  is analogous to Theorem \ref{T2.4.9} but in
contrast to it includes the cases $p=1$ and $p=\infty$.

\begin{thm}\label{T2.4.10} Let $x(l) = \pi l/(2n)$, $l = 1, \dots, 4n$,\newline
$M(t) :=\bigl(t\bigl(x(1)\bigr), \dots, t\bigl(x(4n)\bigr)\bigr)$.
Then for an arbitrary $t\in \Tr(n)$,  $n > 0$,
$1\le p\le \infty$,
$$
C_1\| t \|_p\le n^{-1/p} \bigl\| M(t) \bigr\|_p\le C_2 \| t \|_p.
$$
\end{thm}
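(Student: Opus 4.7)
I would derive both inequalities from the interpolation identity \eqref{2.1.12} (for the lower bound) and the convolution identity $t = V_n(t)$ from Theorem \ref{T2.1.2} (for the upper bound), in each case pairing it with H\"older's inequality and the following single key uniform-in-$y$ estimate for the sampled de la Vall\'ee Poussin kernel:
\[
\sum_{l=1}^{4n} |\mathcal{V}_n(y - x(l))| \le C n, \qquad y \in \T. \quad (\star)
\]
The whole theorem stands or falls on $(\star)$, which is also the step that makes the statement uniform across $p \in [1,\infty]$: running the analogous argument with the Dirichlet kernel in place of $\mathcal{V}_n$ would introduce an extra $\log n$ factor (reflecting $\|\mathcal{D}_n\|_1 \asymp \log n$), and this is precisely why Theorem \ref{T2.4.9} fails at $p = 1, \infty$. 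The mild oversampling of the grid by a factor $\geq 2$ (here $4n$ points instead of the $2n+1$ of Theorem \ref{T2.4.9}) is exactly what lets one replace $\mathcal{D}_n$ by $\mathcal{V}_n$.

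To prove $(\star)$, I would apply the majorant \eqref{2.1.9} with $m = n$: $|\mathcal{V}_n(u)| \le C \min\{n, 1/(n u^{2})\}$ for $|u| \le \pi$. Since the $4n$ points $\{y - x(l)\}$ are equispaced on $\T$ with spacing $\pi/(2n)$, at most $O(1)$ of them lie in $[-1/n, 1/n]$ and each contributes at most $O(n)$, while the remaining ones are bounded by $\sum_{k\ge 1}(2n/\pi)^2/(n k^2) = O(n)$.

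For the lower bound $C_1 \|t\|_p \le n^{-1/p}\|M(t)\|_p$, I plug $t \in \Tr(n)$ into \eqref{2.1.12} to get $|t(x)| \le (4n)^{-1}\sum_l |t(x(l))|\,|\mathcal{V}_n(x-x(l))|$. For $p = \infty$, taking the supremum in $x$ and applying $(\star)$ finishes. For $1 \le p < \infty$, I split $|\mathcal{V}_n(x-x(l))| = |\mathcal{V}_n|^{1/p'}|\mathcal{V}_n|^{1/p}$ and apply H\"older in $l$; the $1/p'$-factor yields $(Cn)^{p-1}$ by $(\star)$. Raising to the $p$-th power, integrating in $x$, and using $(2\pi)^{-1}\int|\mathcal{V}_n(x-x(l))|\,dx \le 3$ from \eqref{2.1.10} gives $\|t\|_p^p \le Cn^{-1}\|M(t)\|_p^p$.

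For the upper bound $n^{-1/p}\|M(t)\|_p \le C_2\|t\|_p$, the case $p = \infty$ is immediate from $|t(x(l))| \le \|t\|_\infty$. For $1 \le p < \infty$, Theorem \ref{T2.1.2} yields $t = V_n(t)$, hence $|t(x(l))| \le (2\pi)^{-1}\int_\T|t(y)||\mathcal{V}_n(x(l)-y)|\,dy$. Applying H\"older in the integral with the same splitting and invoking $\|\mathcal{V}_n\|_1 \le 3$ gives $|t(x(l))|^p \le C(2\pi)^{-1}\int_\T|t(y)|^p|\mathcal{V}_n(x(l)-y)|\,dy$; summing over $l$, interchanging sum and integral, and applying $(\star)$ to the inner sum then yields $\|M(t)\|_p^p \le Cn\|t\|_p^p$. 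Thus the only genuinely delicate point in the whole proof is $(\star)$, which is an elementary but crucial decay-plus-spacing computation.
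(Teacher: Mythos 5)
The paper states Theorem~\ref{T2.4.10} as a classical variant of the Marcinkiewicz theorem and gives no proof of it, so there is no in-text argument to compare against; I can only check your proof on its own terms, and it is correct. Your structure is the standard classical argument: the de la Vall\'ee Poussin reproducing identities \eqref{2.1.12} and $t=V_n(t)$ from Theorem~\ref{T2.1.2} furnish the two directions, the H\"older splitting $|\mathcal{V}_n| = |\mathcal{V}_n|^{1/p'}|\mathcal{V}_n|^{1/p}$ decouples the sample values from the kernel, and everything reduces to $\|\mathcal{V}_n\|_1 \le 3$ on the continuous side and $(\star)$ on the discrete side. Your verification of $(\star)$ from the majorant \eqref{2.1.9} (spacing $\pi/(2n)$, $O(1)$ near points contributing $O(n)$ each, the far points summing to $O(n)$ by the $1/(nu^2)$ tail) is right, and your remark that this is the one step that would pick up a $\log n$ if $\mathcal{D}_n$ were used instead of $\mathcal{V}_n$ is exactly the correct diagnosis of why $p=1,\infty$ are accessible here but not in Theorem~\ref{T2.4.9}.

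One small point worth making explicit: in the upper bound, the constant you write as $C$ in $|t(x(l))|^p \le C\,(2\pi)^{-1}\int |t(y)|^p|\mathcal{V}_n(x(l)-y)|\,dy$ is really $\|\mathcal{V}_n\|_1^{p-1} \le 3^{p-1}$, which depends on $p$; you should record it as such and observe that upon taking the $p$-th root it becomes $3^{1-1/p}\le 3$, so $C_2$ is uniform in $p\in[1,\infty]$. The same book-keeping shows the lower-bound constant $(Cn)^{(p-1)/p}$ from $(\star)$ becomes $C^{1-1/p}\le C$ after the $p$-th root, so $C_1$ is likewise uniform. With that noted, the argument is complete.
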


Similar inequalities hold for polynomials from $\Tr(\bN,d)$.  We formulate  the
 equivalence  of  a
mixed norm of a trigonometric polynomial to its mixed lattice norm.
We use the notation
$$
\ell(\mathbf N,d)=\bigl\{\mathbf a=\{a_{\mathbf n}\},\qquad
\mathbf n=(n_1,\dots,n_d),\qquad 1\le n_j\le N_j,\qquad
j = 1,\dots,d \bigr\} ,
$$
and for $\mathbf a\in \ell(\mathbf N,d)$ we define the mixed norm
$$
\|\mathbf a\|_{\mathbf p,\mathbf N} := \left (\sum_{n_d=1}^{N_d}N_d^{-1}
\left (\dots\left (\sum_{n_1=1}^{N_1}N_1^{-1}|a_{\mathbf n}|^{p_1}
\right)^{p_2/p_1}\dots\right)^{p_d/p_{d-1}}\right)^{1/p_d}
$$
\begin{equation}\nonumber
 = \|\mathbf a\|_{\mathbf p}\prod_{j=1}^d  N_j^{-1/p_j}.
\end{equation}

\begin{thm}\label{T2.4.11} Let $t\in \Tr(\mathbf N,d)$, $\mathbf N > \mathbf 0$.
Then for any $\mathbf 1\le\mathbf p\le \infty$,
$$
C_d^{-1}\bigl\|\bigl\{t(\mathbf x(\mathbf n))\bigr\}_{\mathbf n\in P'(\mathbf N)}
\bigr\|_{\mathbf p,4\mathbf N}\le\|t\|_{\mathbf p}\le C_d
\bigl\|\bigl\{t(\mathbf x(\mathbf n))\bigr\}_{\mathbf n\in P'(\mathbf N)}
\bigr\|_{\mathbf p,4\mathbf N},
$$
where $C_d$  is a number depending only on $d$.
\end{thm}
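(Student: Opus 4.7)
My approach will be to reduce the multivariate assertion to $d$ successive applications of the one-dimensional Marcinkiewicz Theorem~\ref{T2.4.10}, discretizing the variables $x_1,x_2,\ldots,x_d$ one at a time. This is natural because the sampling grid $\{\mathbf{x}(\mathbf{n})\}$ is a tensor product of the univariate grids $\{\pi n_j/(2N_j)\}$ and the mixed norm $\|\cdot\|_{\mathbf{p}}$ is iterated by definition (innermost in $x_1$, outermost in $x_d$).

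First I would fix $(x_2,\ldots,x_d)\in\T^{d-1}$, observe that $t(\cdot,x_2,\ldots,x_d)\in\mathcal{T}(N_1)$, and apply Theorem~\ref{T2.4.10} to obtain the pointwise equivalence
\begin{equation*}
\|t(\cdot,x_2,\ldots,x_d)\|_{L^{p_1}} \asymp (4N_1)^{-1/p_1}\,\bigl\|\bigl(t(x_1(n_1),x_2,\ldots,x_d)\bigr)_{n_1=1}^{4N_1}\bigr\|_{\ell^{p_1}_{n_1}}.
\end{equation*}
Taking the outer $L^{p_2,\ldots,p_d}$ mixed norm on both sides gives $\|t\|_{\mathbf{p}} \asymp (4N_1)^{-1/p_1}\|\mathbf{T}\|_{L^{p_2,\ldots,p_d}(\T^{d-1};\,\ell^{p_1}_{4N_1})}$, where $\mathbf{T}(x_2,\ldots,x_d):=(t(x_1(n_1),x_2,\ldots,x_d))_{n_1}$ is an $\ell^{p_1}_{4N_1}$-valued trigonometric polynomial of degree $N_j$ in each variable $x_j$, $j\ge 2$.

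To iterate I would invoke the Banach-space-valued extension of Theorem~\ref{T2.4.10}: for any Banach space $E$ and any $E$-valued trigonometric polynomial $\mathbf{u}\in\mathcal{T}(m)\otimes E$,
\begin{equation*}
\|\mathbf{u}\|_{L^p(\T;E)} \asymp m^{-1/p}\,\bigl\|\bigl(\mathbf{u}(x(l))\bigr)_{l=1}^{4m}\bigr\|_{\ell^p(E)},\qquad 1\le p\le\infty,
\end{equation*}
with constants independent of $E$. This should transfer from the scalar proof without essential change: the interpolation identity (\ref{2.1.12}) is valid as an identity in $E$, the uniform bound $\|\mathcal{V}_m\|_1\le 3$ from (\ref{2.1.10}) combined with Minkowski's integral inequality gives the upper bound in $\|\cdot\|_{L^p(E)}$, and the matching reverse estimate comes from a vector-valued Nikol'skii-type quadrature argument (using the vector-valued Bernstein inequality obtained by the same convolution identity $\mathbf{u}'=\mathbf{u}*\mathcal{V}_m^1$). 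Applied at step $j$ in the variable $x_j$ with target space $E_{j-1}:=\ell^{p_{j-1}}_{4N_{j-1}}(\cdots(\ell^{p_1}_{4N_1}))$, each iteration replaces $L^{p_j}_{x_j}$ by $(4N_j)^{-1/p_j}\,\ell^{p_j}_{n_j}$. After $d$ such steps I would arrive at
\begin{equation*}
\|t\|_{\mathbf{p}} \asymp \prod_{j=1}^d (4N_j)^{-1/p_j}\,\bigl\|\{t(\mathbf{x}(\mathbf{n}))\}_{\mathbf{n}\in P'(\mathbf{N})}\bigr\|_{\mathbf{p}} = \bigl\|\{t(\mathbf{x}(\mathbf{n}))\}\bigr\|_{\mathbf{p},4\mathbf{N}},
\end{equation*}
with a single constant $C_d$ absorbing the $d$ applications.

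The main technical hurdle will be verifying the Banach-space-valued form of Theorem~\ref{T2.4.10} with constants independent of $E$; this is what keeps the final constant bounded in terms of $d$ alone as the iteration passes through the potentially high-dimensional intermediate target spaces $E_j$. Once it is in place, everything else is careful bookkeeping of the $d$ normalizations $(4N_j)^{-1/p_j}$.
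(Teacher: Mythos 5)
Your proposal is correct and matches the approach the paper points to: the remark immediately after the theorem states that the constant-$\mathbf{p}$ case is an immediate corollary of iterating the one-dimensional Theorem~\ref{T2.4.10}, and your Banach-space-valued extension is exactly what is needed to run the same iteration through a genuinely mixed $\|\cdot\|_{\mathbf p,4\mathbf N}$. The vector-valued Marcinkiewicz estimate you invoke is indeed valid with constants independent of the target space $E$, since both directions rest only on the interpolation identity (\ref{2.1.12}), the reproducing convolution $t=t\ast\mathcal V_m$, and the scalar kernel bounds $\|\mathcal V_m\|_1\le 3$ and $\sup_x\sum_l|\mathcal V_m(x-x(l))|\lesssim m$ coming from (\ref{2.1.9}), none of which sees the codomain.
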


In the case  $\mathbf p  =  (p,\dots,p)$ this  theorem  is  an  immediate
corollary of the  corresponding one-dimensional Theorem   \ref{T2.4.10}.

It turns out that there is no analog of Theorem \ref{T2.4.11} for polynomials
from $\Tr(N)$. We discuss this issue in the next section.

\subsection{Volume estimates}\index{Volume estimates}

\label{vol_est}

  Let $\mathbf s=(s_1,\dots,s_d)$ be a vector with nonnegative integer
coordinates ($\mathbf s \in {\mathbb N}_0^d$).  Denote for a natural number $n$
$$
  \Delta Q_n := Q_n\setminus Q_{n-1} = \cup_{|\bs|_1=n}\rho(\mathbf s)
$$
with $|\bs|_1 = s_1+\dots+s_d$ for $\bs \in {\mathbb N}_0^d$. We call a
set $\Delta Q_n$ {\it hyperbolic layer}. For a set $\La \subset \Z^d$ denote
$$
\Tr(\La) := \{f\in L_1:  \hat f(\mathbf k)=0, \mathbf k\in \mathbb Z^d\setminus \La\} .
$$
For a finite set $\La$ we assign to each $f = \sum_{\mathbf k\in \La} \fk
e^{i(\mathbf k,\mathbf x)}\in \Tr(\La)$ a vector
$$
A(f) := \{(\text{Re}\fk, \text{Im}\fk),\quad \mathbf k\in \La\} \in \R^{2|\L|}
$$
where $|\La|$ denotes the cardinality of $\La$ and define
$$
B_\La(L_p) := \{A(f) : f\in \Tr(\La),\quad \|f\|_p \le 1\}.
$$

In the case $\La =\Pi(\mathbf N):=\Pi(\mathbf N,d):=
[-N_1,N_1]\times\cdots\times[-N_d,N_d]$, $\mathbf N:=(N_1,\dots,N_d)$, the
following volume estimates are known.  
\begin{thm}\label{T2.5.1} For any $1\le p\le \infty$ we have
$$
(vol(B_{\Pi(\mathbf N)}(L_p)))^{(2|\Pi(\mathbf N)|)^{-1}} \asymp |\Pi(\mathbf N)|^{-1/2},
$$
with constants in $\asymp$ that may depend only on $d$.
\end{thm}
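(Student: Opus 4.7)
My plan is to treat $p=2$ by Parseval, derive the corresponding upper and lower bounds at $p\ne 2$ from the monotonicity $\|f\|_p\leq\|f\|_q$ for $p\leq q$, reduce the two remaining endpoint estimates to a single one by polar duality combined with the Blaschke--Santaló inequality, and prove the decisive lower bound at $p=\infty$ via a Rudin--Shapiro-based construction.

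By Parseval, the map $A$ is an isometry of $(\Tr(\Pi(\mathbf N)),\|\cdot\|_2)$ onto the Euclidean space $\R^{2|\Pi(\mathbf N)|}$, so $B_{\Pi(\mathbf N)}(L_2)$ is the Euclidean unit ball of volume $\pi^{|\Pi(\mathbf N)|}/\Gamma(|\Pi(\mathbf N)|+1)$, and Stirling gives the claimed asymptotic at $p=2$. Since $\|f\|_p\leq\|f\|_q$ on $\T^d$ with normalized Haar measure for $1\leq p\leq q\leq\infty$ (Jensen), one has the nested chain
\[
B_{\Pi(\mathbf N)}(L_\infty)\subseteq B_{\Pi(\mathbf N)}(L_q)\subseteq B_{\Pi(\mathbf N)}(L_2)\subseteq B_{\Pi(\mathbf N)}(L_p)\subseteq B_{\Pi(\mathbf N)}(L_1),\qquad 1\leq p\leq 2\leq q\leq\infty.
\]
This already yields the upper bound for $p\geq 2$ and the lower bound for $p\leq 2$, and reduces the remaining cases to the endpoint estimates
\[
\mathrm{vol}\,B_{\Pi(\mathbf N)}(L_\infty)\gtrsim_d |\Pi(\mathbf N)|^{-|\Pi(\mathbf N)|}\qquad\text{and}\qquad\mathrm{vol}\,B_{\Pi(\mathbf N)}(L_1)\lesssim_d|\Pi(\mathbf N)|^{-|\Pi(\mathbf N)|}.
\]

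These two endpoint estimates are linked by polar duality. Under the pairing $\langle A(f),A(g)\rangle=\mathrm{Re}\langle f,g\rangle_{L_2}$, Hölder's inequality gives immediately the containment $B_{\Pi(\mathbf N)}(L_\infty)\subseteq B_{\Pi(\mathbf N)}(L_1)^\circ$, and the Blaschke--Santaló inequality applied to the centrally symmetric convex body $B_{\Pi(\mathbf N)}(L_1)\subset\R^{2|\Pi(\mathbf N)|}$ therefore yields
\[
\mathrm{vol}\,B_{\Pi(\mathbf N)}(L_1)\cdot\mathrm{vol}\,B_{\Pi(\mathbf N)}(L_\infty) \leq \mathrm{vol}\,B_{\Pi(\mathbf N)}(L_1)\cdot\mathrm{vol}\,B_{\Pi(\mathbf N)}(L_1)^\circ\lesssim |\Pi(\mathbf N)|^{-2|\Pi(\mathbf N)|},
\]
so the desired lower bound at $p=\infty$ automatically forces the desired upper bound at $p=1$.

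For the lower bound at $p=\infty$ I would tensorize the univariate Rudin--Shapiro polynomials to obtain $R_{\mathbf N}\in\Tr(\Pi(\mathbf N),d)$ with $|\widehat{R_{\mathbf N}}(\mathbf k)|=1$ for all $\mathbf k\in\Pi(\mathbf N)$ and $\|R_{\mathbf N}\|_\infty\lesssim_d \vartheta(\mathbf N)^{1/2}$. A Kashin-style selector/packing argument---producing $\exp(c|\Pi(\mathbf N)|)$ admissible sign patterns whose rescalings by $c/\sqrt{|\Pi(\mathbf N)|}$ all lie in $B_{\Pi(\mathbf N)}(L_\infty)$ and whose coefficient vectors are pairwise $c/\sqrt{|\Pi(\mathbf N)|}$-separated in $\ell_2$---then exhibits a subset of $B_{\Pi(\mathbf N)}(L_\infty)$ of volume at least $(c/\sqrt{|\Pi(\mathbf N)|})^{2|\Pi(\mathbf N)|}$. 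The main obstacle is making this selector step log-free: a Kahane--Salem--Zygmund random-sign argument only delivers $\|f_\varepsilon\|_\infty\lesssim\sqrt{|\Pi(\mathbf N)|\log|\Pi(\mathbf N)|}$, which would introduce a spurious $(\log|\Pi(\mathbf N)|)^{|\Pi(\mathbf N)|}$ factor in the volume estimate. The sharper deterministic Rudin--Shapiro bound, coupled with a careful packing count for the tensorized sign patterns, is what removes the logarithm; the previous paragraphs then close the argument for every $1\leq p\leq\infty$.
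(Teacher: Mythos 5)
Your high-level reduction is sound, and the polar-duality step is in fact a genuinely different (and rather elegant) route from the paper's: the survey says the upper estimate at $p=1$ ``can be easily reduced to the volume estimate for an octahedron'' (a direct argument, cf.\ \cite{T4'}), whereas you instead deduce it from the lower estimate at $p=\infty$ via $B_{\Pi(\mathbf N)}(L_\infty)\subseteq B_{\Pi(\mathbf N)}(L_1)^\circ$ and Blaschke--Santal\'o. That step is correct, as is the Parseval computation at $p=2$ and the $L_p$-monotonicity chain. Everything therefore hinges on your final step, the lower bound $\mathrm{vol}\,B_{\Pi(\mathbf N)}(L_\infty)\gtrsim_d(c/\sqrt{N})^{2N}$ where $N:=|\Pi(\mathbf N)|$ --- which the paper itself flags as ``the most difficult part'' (Kashin for $d=1$, Temlyakov in general), and here your sketch has a real gap.

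The packing count does not deliver the exponent you need. If you have $M=\exp(cN)$ points of $B_{\Pi(\mathbf N)}(L_\infty)$ that are pairwise $\delta$-separated in $\ell_2$ with $\delta\asymp N^{-1/2}$, the standard packing-to-volume bound gives
\[
\mathrm{vol}\bigl(B_{\Pi(\mathbf N)}(L_\infty)\bigr)\;\gtrsim\;2^{-2N}\,M\,(\delta/2)^{2N}\,\mathrm{vol}(B_2^{2N}),
\]
and since $M^{1/(2N)}$ is only a constant while $\mathrm{vol}(B_2^{2N})^{1/(2N)}\asymp N^{-1/2}$, the $1/(2N)$-th power of the right-hand side is $\asymp N^{-1}$, i.e.\ a full factor $\sqrt N$ short. (Equivalently: all your rescaled sign vectors lie on the sphere of radius $\asymp 1$ and at $\ell_2$-distance only $\asymp N^{-1/2}$ from one another, so the Minkowski-sum argument is forced to pay the Euclidean-ball-volume factor twice.) To reach $N^{-1/2}$ one needs the \emph{convex hull} of the selected points to have volume comparable to that of a full Euclidean ball of radius $\asymp 1$, and that requires a quantitative ``spread'' statement far beyond mere $\delta$-separation. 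Moreover, the Rudin--Shapiro recursion yields \emph{one} polynomial (or a family of size polynomial in $N$), not $\exp(cN)$ sign patterns with a log-free $L_\infty$ bound; asserting the existence of exponentially many such patterns is itself a claim at least as strong as the target volume estimate. So the crucial endpoint estimate is not established by the sketch; this is precisely where Kashin's and Temlyakov's arguments do the nontrivial work, and that work cannot be replaced by a generic Kashin-style packing count.
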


We note that the most difficult part of Theorem \ref{T2.5.1} is the lower
estimate for $p=\infty$. The corresponding estimate was proved in the case $d=1$
in \cite{KaE} and in the general case in \cite{TE2} and \cite{T6}. The upper
estimate for $p=1$ in Theorem \ref{T2.5.1} can be easily reduced to the volume
estimate for an octahedron (see, for instance \cite{T4'}). 

The results of \cite{KTE1} imply the following estimate.
\begin{thm}\label{T2.5.2} For any finite set $\La\subset \Z^d$ and any $1\le p\le 2$ we have
$$
vol(B_\La(L_p))^{(2|\La|)^{-1}} \asymp |\La|^{-1/2}.
$$
\end{thm}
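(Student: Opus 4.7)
The plan is to establish matching upper and lower bounds on $vol(B_\La(L_p))$, identifying $\Tr(\La)$ with $\R^{2|\La|}$ via the coefficient map $A$.

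\emph{Lower bound.} By Parseval's identity, $\|f\|_2 = \|A(f)\|_{\R^{2|\La|}}$, so $B_\La(L_2)$ coincides with the Euclidean unit ball $B_2^{2|\La|}$. Stirling's formula gives
\[
\bigl(vol(B_\La(L_2))\bigr)^{(2|\La|)^{-1}} = \bigl(\pi^{|\La|}/|\La|!\bigr)^{(2|\La|)^{-1}} \asymp |\La|^{-1/2}.
\]
For $1 \le p \le 2$, the probability-space inequality $\|f\|_p \le \|f\|_2$ yields $B_\La(L_2) \subset B_\La(L_p)$, which delivers the lower bound on the full range.

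\emph{Upper bound.} Since $\|f\|_1 \le \|f\|_p$ for $p \ge 1$, the inclusion $B_\La(L_p) \subset B_\La(L_1)$ reduces the task to $p = 1$. I would then invoke the Blaschke--Santal\'o inequality applied to the centrally symmetric convex body $K := B_\La(L_1) \subset \R^{2|\La|}$: $vol(K)\,vol(K^\circ) \le vol(B_2^{2|\La|})^2$. H\"older's inequality gives, for $f, g \in \Tr(\La)$, $|\text{Re}\,\<f, g\>_{L_2}| \le \|f\|_1\|g\|_\infty$, and since the Euclidean pairing satisfies $\<A(f), A(g)\>_{\R^{2|\La|}} = \text{Re}\,\<f, g\>_{L_2}$, this translates into $B_\La(L_\infty) \subset K^\circ$. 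Combining,
\[
vol(B_\La(L_1)) \le vol(B_2^{2|\La|})^2 / vol(B_\La(L_\infty)).
\]

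\emph{Main obstacle.} It remains to establish the companion $L_\infty$-volume lower estimate: for every finite $\La \subset \Z^d$,
\[
vol(B_\La(L_\infty))^{(2|\La|)^{-1}} \ge c\,|\La|^{-1/2}
\]
with an absolute constant $c > 0$. This is the genuine content of \cite{KTE1} and is the direct analog, for arbitrary $\La$, of the hardest case $p = \infty$ of Theorem \ref{T2.5.1}. A standard approach constructs, by a probabilistic/combinatorial selection (in the spirit of Rudin--Shapiro type polynomials or random sign choices), sufficiently many uniformly bounded polynomials in $\Tr(\La)$ whose Fourier coefficient vectors fill a subset of $\R^{2|\La|}$ of volume of order $(c_1/\sqrt{|\La|})^{2|\La|}$. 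Substituting this bound together with $vol(B_2^{2|\La|}) \asymp (c_2/|\La|)^{|\La|}$ into the previous display yields $vol(B_\La(L_1))^{(2|\La|)^{-1}} \lesssim |\La|^{-1/2}$, completing the proof.
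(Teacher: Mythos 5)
Your lower bound is fine, and so is the reduction of the upper bound to $p=1$ and the Blaschke--Santal\'o setup. But the approach founders on the $L_\infty$-volume lower estimate you identify as the ``main obstacle'': that estimate is not merely unproved in your sketch, it is in fact \emph{false} for general $\La\subset\Z^d$, and the paper itself contains the counterexample. Theorem \ref{T2.5.5} (case $d=2$, $\La = \Delta Q_n$) gives
$\bigl(vol(B_{\Delta Q_n}(L_\infty))\bigr)^{1/N} \asymp (2^n n^2)^{-1/2}$
with $N = 2|\Delta Q_n|$ and $|\Delta Q_n| \asymp 2^n n$, so
$\bigl(vol(B_{\Delta Q_n}(L_\infty))\bigr)^{1/N} \asymp |\Delta Q_n|^{-1/2}\, n^{-1/2}$,
which is smaller than $|\La|^{-1/2}$ by the unbounded factor $n^{-1/2}$. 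This is exactly the phenomenon emphasized after Theorem \ref{T2.5.5}: unlike the parallelepiped case, the $L_\infty$ volume estimate is genuinely different from the $L_p$ ($p<\infty$) one. So the Rudin--Shapiro/random-sign construction of a volume-filling family of uniformly bounded polynomials in $\Tr(\La)$, which is what drives Theorem \ref{T2.5.1} for $\La=\Pi(\mathbf N)$, does not exist for arbitrary $\La$. Feeding the true value of $vol(B_{\Delta Q_n}(L_\infty))$ back into your Santal\'o chain yields $vol(B_{\Delta Q_n}(L_1))^{1/N} \lesssim 2^{-n/2}$, which falls short of the target $|\La|^{-1/2}\asymp 2^{-n/2}n^{-1/2}$; the argument does not close.

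There is a second, more structural, issue. The exact polar of $K=B_\La(L_1)$ is not $B_\La(L_\infty)$ but the strictly larger quotient-norm ball
$K^\circ = B^\perp_\La(L_\infty) = \bigl\{A(g): g\in\Tr(\La),\ E^\perp_\La(g)_\infty \le 1\bigr\}$,
with the notation of Subsection \ref{vol_est}. Your H\"older computation only delivers the inclusion $B_\La(L_\infty)\subset K^\circ$, which can carry a substantial volume deficit (again $\La=\Delta Q_n$ shows this). If you work with the correct polar, Blaschke--Santal\'o reduces the claim to $vol(B^\perp_\La(L_\infty))^{(2|\La|)^{-1}}\gtrsim|\La|^{-1/2}$; but by the reverse Santal\'o inequality of Bourgain--Milman (Theorem \ref{T2.5.3}), which is already invoked in the surrounding discussion, this lower bound is \emph{equivalent up to absolute constants} to the upper bound you set out to prove. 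So the duality step is circular and gives no genuine reduction: one still needs an independent argument on one side, and it cannot be the $L_\infty$-ball argument you outline. The substance of Theorem \ref{T2.5.2} is precisely a proof, valid for arbitrary $\La$, that the $L_1$ volume is small (or, dually, that the quotient $L_\infty$ ball $B^\perp_\La(L_\infty)$ is large); this is what the cited results from \cite{KTE1} supply. The constraint $1\le p\le 2$ in the statement is not a cosmetic convenience but a real hypothesis precisely because, as Theorem \ref{T2.5.5} shows, the uniform-in-$\La$ statement fails at $p=\infty$, and your route tries to pass through the endpoint where it fails.
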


The following result of Bourgain-Milman \cite{BM} plays an important role in
the volume estimates of finite dimensional bodies. 
\begin{thm}\label{T2.5.3} For any convex centrally symmetric body $K\subset \R^n$ we have
$$
(vol(K)vol(K^o))^{1/n} \asymp (vol(B^n_2))^{2/n}\asymp 1/n
$$
where $K^o$ is a polar for $K$, that is
$$
K^o:= \{\bx\in \R^n:\sup_{y\in K}(\bx,\by) \le 1\}.
$$
\end{thm}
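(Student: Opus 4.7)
The statement combines three claims: (a) the asymptotic $(vol(B^n_2))^{2/n} \asymp 1/n$, which is elementary from Stirling's formula applied to $vol(B^n_2) = \pi^{n/2}/\Gamma(n/2+1)$; (b) the upper bound $vol(K)vol(K^o) \lesssim (vol(B^n_2))^2$, the Blaschke--Santaló inequality; and (c) the matching lower bound, the deep Bourgain--Milman inequality. The plan is to treat each of these in turn, reducing (b) and (c) to a comparison with the Euclidean ball.

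For the upper bound my plan is to argue by Steiner symmetrization. Fix a hyperplane $H$ through the origin and let $S_H K$ be the Steiner symmetrization of $K$ across $H$. Two facts are classical: $vol(S_H K) = vol(K)$, and $(S_H K)^o$ satisfies $vol((S_H K)^o) \ge vol(K^o)$ (the second coming from a Brunn--Minkowski-type inequality applied section by section to the polar). Iterating along hyperplanes whose normals are dense in $S^{n-1}$ one produces a sequence of centrally symmetric bodies, all with the same volume as $K$ and with non-decreasing polar volume, converging in Hausdorff distance to a Euclidean ball. By continuity of polarity and volume on the class of symmetric bodies of uniformly bounded inradius and outradius, one gets $vol(K)vol(K^o) \le vol(B^n_2)^2$.

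For the lower bound the approach is via Milman's $M$-ellipsoid. The key structural input is that for every centrally symmetric convex body $K\subset \R^n$ there is an ellipsoid $\mathcal E$ with $vol(\mathcal E)=vol(K)$ and covering numbers
\[
 N(K,\mathcal E)\cdot N(\mathcal E,K) \le e^{cn}
\]
for a universal constant $c$. Since covering and packing numbers compare volumes up to factors that cost at most $e^{cn}$, and since duality of covering numbers (Artstein--Milman--Szarek) transfers this estimate to the polars, one obtains $vol(K^o)\ge e^{-c'n}vol(\mathcal E^o)$. Because $\mathcal E$ is an ellipsoid we have the exact identity $vol(\mathcal E)vol(\mathcal E^o)=vol(B^n_2)^2$, and combining with $vol(\mathcal E)=vol(K)$ yields $vol(K)vol(K^o)\ge c_1^n\,vol(B^n_2)^2$, which together with (a) gives $(vol(K)vol(K^o))^{1/n}\gtrsim 1/n$.

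The main obstacle is the construction of the $M$-ellipsoid, which is the heart of the Bourgain--Milman theorem and relies on nontrivial asymptotic geometric analysis: the Dvoretzky--Rogers lemma, the $K$-convexity theorem of Pisier, and concentration on the sphere. Alternative routes (Kuperberg's symplectic approach or Nazarov's complex-analytic proof) avoid some of this machinery but still require substantial work. Since Theorem~\ref{T2.5.3} is quoted from \cite{BM}, in a survey it is natural to outline only these ingredients and refer the reader to the original literature for the detailed construction.
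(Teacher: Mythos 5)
Your outline is essentially correct, but note that the paper does not prove this theorem at all: it states it as a cited result of Bourgain--Milman~\cite{BM} (with the remark that Sch\"utt~\cite{Schu} had earlier proved the special case of unit balls of symmetric norms and their duals), and you correctly recognize this yourself in your closing sentence. So there is no internal proof to compare against; you are reconstructing the ingredients from the literature. As a sketch of those ingredients your version is accurate: Stirling for $(\operatorname{vol} B^n_2)^{2/n}\asymp 1/n$, Steiner symmetrization for the Blaschke--Santal\'o upper bound (with the known facts that $vol(S_H K)=vol(K)$ and $vol((S_H K)^{o})\ge vol(K^{o})$), and the $M$-ellipsoid for the Bourgain--Milman lower bound. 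One historical refinement worth recording: you invoke the Artstein--Milman--Szarek duality of covering numbers to pass from the $M$-ellipsoid estimate for $K$ to one for $K^{o}$. That theorem postdates Bourgain--Milman by nearly two decades, and the original argument instead establishes directly that the $M$-position is essentially self-dual, i.e.\ that if $\mathcal{E}$ is an $M$-ellipsoid for $K$ then $\mathcal{E}^{o}$ is, up to a universal constant, an $M$-ellipsoid for $K^{o}$. Both routes are standard today and deliver the same $c_1^n$ factor, but they are different inputs at the key step of dualizing the covering-number bound.
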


\begin{rem} For the case of $\ell_p^n$ balls with $1\leq p\leq \infty$ or, more
general, unit balls of symmetric norms on $\R^n$ and their duals, Theorem
\ref{T2.5.3} 
has been proved earlier by Sch\"utt \cite{Schu}.
 
\end{rem}

\noindent The following result is from \cite{KTE3}.
\begin{thm}\label{T2.5.4} Let $\La$ have the form
$\La = \cup_{\mathbf s\in S}\rho(\mathbf s)$, $S\subset {\mathbb N}_0^d$ is
a finite set. Then for any $1\le p<\infty$ we have
$$
vol(B_\La(L_p))^{(2|\La|)^{-1}} \asymp |\La|^{-1/2}.
$$
\end{thm}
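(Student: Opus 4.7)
The strategy is to split the range of $p$ and to exploit the structural assumption on $\Lambda$ only where it is truly needed. For $1 \le p \le 2$ nothing new is required: Theorem \ref{T2.5.2} applies to any finite $\Lambda \subset \Z^d$ and already yields the conclusion. For $p > 2$ the upper bound is essentially free as well. Indeed, on $\T^d$ with normalized Lebesgue measure Jensen's inequality gives $\|f\|_2 \le \|f\|_p$, so $B_\Lambda(L_p) \subset B_\Lambda(L_2)$, and Theorem \ref{T2.5.2} with $p = 2$ provides
$$
 vol(B_\Lambda(L_p))^{1/(2|\Lambda|)} \le vol(B_\Lambda(L_2))^{1/(2|\Lambda|)} \lesssim |\Lambda|^{-1/2}.
$$
Thus only the lower bound for $2 < p < \infty$ remains, and this is where the dyadic-block hypothesis on $\Lambda$ will be used.

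For the lower bound I plan to apply Bourgain--Milman (Theorem \ref{T2.5.3}) to $K := B_\Lambda(L_p) \subset \R^{2|\Lambda|}$, which gives $vol(K)\,vol(K^\circ) \gtrsim (c/|\Lambda|)^{2|\Lambda|}$ and reduces the problem to an upper bound on $vol(K^\circ)$. Identifying $\by \in \R^{2|\Lambda|}$ with the polynomial $g_\by \in \Tr(\Lambda)$ whose $\bk$-th Fourier coefficient is $y_\bk^{(1)} + i\, y_\bk^{(2)}$, one checks via Parseval that $(\by,A(f)) = \mathrm{Re}\,\langle g_\by, f\rangle$. Since $\Tr(\Lambda)$ is stable under multiplication by unimodular constants, $\by \in K^\circ$ is equivalent to saying that the linear functional $f \mapsto \langle g_\by, f\rangle$ has norm at most $1$ on $(\Tr(\Lambda), \|\cdot\|_p)$. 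By Hahn--Banach this norm equals $\inf\{\|h\|_{p'} : h \in L_{p'},\ S_\Lambda h = g_\by\}$, where $S_\Lambda$ denotes the Fourier projection onto $\Tr(\Lambda)$; if $S_\Lambda$ is bounded on $L_{p'}$ with norm $C_{p'}$, then every admissible $h$ satisfies $\|h\|_{p'} \ge \|g_\by\|_{p'}/C_{p'}$, whence
$$
K^\circ \;\subset\; C_{p'} \cdot B_\Lambda(L_{p'}).
$$

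The crucial input is the $L_{p'}$-boundedness of $S_\Lambda$ for $p' \in (1,2)$, which is where the hypothesis $\Lambda = \cup_{\bs \in S} \rho(\bs)$ enters. Writing $S_\Lambda f = \sum_{\bs \in S} \delta_\bs(f)$ and combining the Littlewood--Paley theorem (Corollary \ref{C7.1}) with the trivial pointwise estimate $(\sum_{\bs \in S} |\delta_\bs(f)|^2)^{1/2} \le (\sum_\bs |\delta_\bs(f)|^2)^{1/2}$, one obtains $\|S_\Lambda f\|_{p'} \lesssim \|f\|_{p'}$ with constant depending only on $p'$ and $d$. Since $p' \in (1,2)$, Theorem \ref{T2.5.2} yields $vol(B_\Lambda(L_{p'}))^{1/(2|\Lambda|)} \lesssim |\Lambda|^{-1/2}$. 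Feeding the polar inclusion and this estimate into Bourgain--Milman,
$$
vol(B_\Lambda(L_p)) \;\gtrsim\; \bigl(c/|\Lambda|\bigr)^{2|\Lambda|} vol(K^\circ)^{-1} \;\gtrsim\; (c')^{2|\Lambda|}\, |\Lambda|^{-|\Lambda|},
$$
and taking the $(2|\Lambda|)$-th root produces the desired bound $vol(B_\Lambda(L_p))^{1/(2|\Lambda|)} \gtrsim |\Lambda|^{-1/2}$. The principal obstacle is precisely this polar inclusion: it requires a uniform, $|\Lambda|$-independent operator norm for the Fourier projection $S_\Lambda$ on $L_{p'}$, which fails for arbitrary $\Lambda$ but holds here thanks to the dyadic-block structure via Littlewood--Paley.
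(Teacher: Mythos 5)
Your proof is correct, and since the paper cites \cite{KTE3} rather than reproducing the argument, one can only judge it on its own terms; but the surrounding material (Theorem \ref{T2.5.2} for the range $1\le p\le 2$, Bourgain--Milman stated immediately beforehand as Theorem \ref{T2.5.3}, and the Littlewood--Paley Corollary \ref{C7.1}) makes clear this is the intended route. The two steps worth checking carefully are exactly the ones you handle correctly: the identity $(\by,A(f)) = \mathrm{Re}\,\langle g_\by,f\rangle$ under the coordinatization $A$, together with the rotation-invariance of $\Tr(\Lambda)$, shows that $\by\in K^\circ$ forces the functional $\langle g_\by,\cdot\rangle$ to have $\Tr(\Lambda)^*$-norm at most $1$; and the passage from that norm to $\|g_\by\|_{p'}/C_{p'}$ via the bound $\|S_\Lambda\|_{p'\to p'}\le C_{p'}$, where the dyadic-block hypothesis enters through Corollary \ref{C7.1} to give a constant independent of $|S|$ and $|\Lambda|$. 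One might also note explicitly that $A(g_\by)=\by$, so that the containment $K^\circ\subset C_{p'}B_\Lambda(L_{p'})$ really is a set inclusion in $\R^{2|\Lambda|}$, but this is immediate from the definitions. The constant in the final $\gtrsim$ depends on $p$ and $d$ (through $C_{p'}$ and the Bourgain--Milman constant), consistent with the implied constants in the statement.
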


We now proceed to results for the case $d=2$. Denote $N:= 2|\Delta Q_n|$. Let
$$
E^\perp_\La(f)_p :=\inf_{g\perp \Tr(\La)}\|f-g\|_p,
$$
$$
B^\perp_\La(L_p) := \{A(f): f\in \Tr(\La),\quad E^\perp_\La(f)_p\le 1\}.
$$
\begin{thm}\label{T2.5.5} In the case $d=2$ we have for $N:=2|\Delta Q_n)|$
\begin{equation}\label{2.5.1}
(vol(B_{\Delta Q_n}(L_\infty)))^{1/N} \asymp (2^nn^2)^{-1/2}; 
\end{equation}
\begin{equation}\nonumber
(vol(B_{\Delta Q_n}^\perp(L_1)))^{1/N} \asymp 2^{-n/2}. 
\end{equation}
\end{thm}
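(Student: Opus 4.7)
The strategy is to exploit Bourgain--Milman duality to link the two estimates, and then to prove a matching lower bound in each via the block decomposition specific to $d=2$. Note that in $d=2$ the hyperbolic layer $\Delta Q_n$ is a disjoint union of $n+1$ product-rectangular dyadic blocks $\rho(\bs)$, $|\bs|_1=n$, each of cardinality $\asymp 2^n$, so $N = 2|\Delta Q_n| \asymp 2^n n$.

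\textbf{Step 1 (polarity and Bourgain--Milman).} The identification $B_\La(L_\infty)^\circ = B^\perp_\La(L_1)$ up to constants depending only on $d$ follows from two facts: the Parseval pairing $\langle f,g\rangle = (2\pi)^{-d}\int f\bar g$ turns $A$-coordinates into the Euclidean pairing on $\R^N$, and the Hahn--Banach identification $(\Tr(\La), \|\cdot\|_\infty)^* \cong L_1/\Tr(\La)^\perp$ has coset norm exactly $E^\perp_\La(\cdot)_1$. Theorem \ref{T2.5.3} then yields
\[
    vol(B_{\Delta Q_n}(L_\infty))^{1/N} \cdot vol(B^\perp_{\Delta Q_n}(L_1))^{1/N} \asymp \frac{1}{N} \asymp \frac{1}{2^n n},
\]
and since $(2^n n^2)^{-1/2}\cdot 2^{-n/2} \asymp (2^n n)^{-1}$, the two claimed orders are mutually dual. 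Hence it suffices to prove the matching lower bound in each formula; the upper bounds then follow by inversion.

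\textbf{Step 2 (lower bound of \eqref{2.5.1}).} Take as candidate set
\[
K := \Big\{f = \sum_{|\bs|_1=n} f_\bs : f_\bs \in \Tr(\rho(\bs)),\ \|f_\bs\|_\infty \le 1/(Cn)\Big\}.
\]
With $C$ chosen so that the triangle inequality over the $\asymp n$ blocks gives $\|f\|_\infty \le 1$, we have $K \subset B_{\Delta Q_n}(L_\infty)$. Because the blocks are Fourier-disjoint, the image $A(K)$ factors in $\R^N$ as the product $\prod_\bs (Cn)^{-1} A(B_{\rho(\bs)}(L_\infty))$. By Theorem \ref{T2.5.1} each product block contributes $(2^{-n/2}/(Cn))^{2|\rho(\bs)|}$, and the product telescopes to $vol^{1/N} \asymp (Cn)^{-1} 2^{-n/2} \asymp (2^n n^2)^{-1/2}$.

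\textbf{Step 3 (lower bound for $B^\perp_{\Delta Q_n}(L_1)$).} Analogously one seeks $f = \sum_\bs f_\bs$ with each $f_\bs$ filling a volume-maximising portion of $B_{\rho(\bs)}(L_1)$ (Theorem \ref{T2.5.1} again), but now \emph{without} shrinking by $1/n$: one wants $E^\perp_{\Delta Q_n}(\sum_\bs f_\bs)_1 \le C$ whenever $\|f_\bs\|_1 \le 1$ for each $\bs$. The naive triangle bound $\|f\|_1 \le n$ is too weak by a factor of $n$, so one must construct an explicit approximant $g \in \Tr(\Delta Q_n)^\perp$ absorbing the cross-block $L_1$-overlap. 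The natural candidate is built by attaching to each $\rho(\bs)$ a neighbouring block outside $\Delta Q_n$ and smoothing via a de la Vall\'ee Poussin kernel, exploiting the $2$D diagonal arrangement of the blocks along $s_1+s_2=n$. The resulting product set has $vol^{1/N}\asymp 2^{-n/2}$.

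\textbf{Main obstacle.} The crux is the factor-$n$ improvement in Step 3, equivalent via Bourgain--Milman to the $n^{-1/2}$ improvement beyond the $L_2$-ball estimate in \eqref{2.5.1}. It is intrinsic to the specifically $2$D geometry of $\Delta Q_n$ --- $n+1$ blocks of identical $\asymp 2^n$ size arranged linearly along a diagonal in $\Z^2$ --- and reflects the unbounded $L_1$-norm of the de la Vall\'ee Poussin kernel $\mathcal V_{Q_n}$ (Lemma \ref{L2.3.2}). In $d\ge 3$ the number of blocks at level $n$ grows like $n^{d-1}$ with varied shapes, the analogous extension argument breaks down, and no clean $L_1$-bounded approximant $g$ is available; this is consistent with Theorem \ref{T2.5.5} being stated only for $d=2$.
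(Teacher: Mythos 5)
Your Step~1 and Step~2 are sound and give the right architecture. The Nikol'skii/Hahn--Banach duality identifying $A(B_{\Delta Q_n}(L_\infty))^o$ with $B^\perp_{\Delta Q_n}(L_1)$, followed by Theorem~\ref{T2.5.3}, correctly reduces the theorem to the two lower bounds; and the $(Cn)^{-1}$-shrunken product of per-block $L_\infty$-balls in Step~2 is the right construction, though you should note that this leans on the hard $p=\infty$ direction of Theorem~\ref{T2.5.1} applied after modulating $\rho(\bs)$ to a box. (For the record, the survey itself only states Theorem~\ref{T2.5.5} with a citation to \cite{KTE3}, so there is no in-text proof to compare against line by line.)

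The genuine gap is in Step~3, and your ``main obstacle'' paragraph correctly locates it but proposes both the wrong per-block balls and the wrong tool. There is no salvage of $\prod_\bs B_{\rho(\bs)}(L_1)\subset C\,B^\perp_{\Delta Q_n}(L_1)$ by ``attaching a neighbouring block and smoothing by a de la Vall\'ee Poussin kernel'': that kernel carries no positivity and hence no mechanism to convert $\sum_\bs\|f_\bs\|_1\approx n$ into a completion of $L_1$-norm $O(1)$ -- indeed Lemma~\ref{L2.3.2}, which you cite, says precisely that $\|\mathcal V_{Q_n}\|_1\asymp n$ and so this kernel \emph{cannot} help. The missing idea is the Riesz product of Subsection~\ref{subsect:SBI}, Lemma~\ref{L2.6.1}, and the per-block balls should be $L_\infty$-balls (this costs nothing: both $L_1$- and $L_\infty$-balls of $\Tr(\rho(\bs))$ have per-dimension volume $\asymp 2^{-n/2}$). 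Concretely, for real $t_\bs\in\Tr(\rho(\bs))\subset\Tr(\varrho'(\bs))$ with $\|t_\bs\|_\infty\le 1$, fix $a\ge 6$ and set $\Phi_b:=\prod_{\bs\in H_n(a,b)}(1+t_\bs)$ for $0\le b<a$. Positivity gives $\|\Phi_b\|_1=\hat\Phi_b(\mathbf 0)=1$, and Lemma~\ref{L2.6.1} yields $\Phi_b-1=\sum_{\bs\in H_n(a,b)}t_\bs+R_b$ with $R_b\in\Tr(Q_{n+a-6})^\perp\subset\Tr(\Delta Q_n)^\perp$. Since $\varrho'(\bs)\cap\Delta Q_n=\rho(\bs)$ in $d=2$, the function $g:=\sum_{b<a}(\Phi_b-1)+\sum_{\min(s_1,s_2)<a}t_\bs$ agrees with $f:=\sum_\bs t_\bs$ on $\Delta Q_n$ and has $\|g\|_1\le 4a$, whence $E^\perp_{\Delta Q_n}(f)_1\le 4a$. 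Thus the product of per-block $L_\infty$-balls sits inside $4a\cdot B^\perp_{\Delta Q_n}(L_1)$, giving $vol(B^\perp_{\Delta Q_n}(L_1))^{1/N}\gtrsim 2^{-n/2}$ and closing the argument via Step~1. This, and not the de la Vall\'ee Poussin kernel, is also the honest reason the theorem is confined to $d=2$: Lemma~\ref{L2.6.1} is an intrinsically two-dimensional Riesz-product statement.
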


It is interesting to compare the first relation in Theorem \ref{T2.5.5} with
the following estimate for $1\le p<\infty$ that follows from Theorem
\ref{T2.5.4}
\begin{equation}\nonumber
(vol(B_{\Delta Q_n}(L_p)))^{1/N} \asymp (2^nn)^{-1/2}. 
\end{equation}
We see that in the case $\La = \Delta Q_n$ unlike the case
$\La=\Pi(N_1,\dots,N_d)$ the estimate for $p=\infty$ is different from the
estimate for $1\le p<\infty$.

{\bf The discrete $L_\infty$-norm for polynomials from $\Tr(\L)$.}

 We present here some results from \cite{KTE3} (see also \cite{KaTe98} and \cite{KaTe99}). We begin with the following conditional statement.
\begin{thm}\label{T2.5.6} Assume that a finite set $\La\subset \Z^d$ has the following properties:
\begin{equation}\nonumber
(vol(B_\La(L_\infty)))^{1/N} \le K_1N^{-1/2},\quad N:=2|\L|, 
\end{equation}
and a set $\Omega =\{x^1,\dots,x^M\}$ satisfies the condition
\begin{equation}\nonumber
\forall f \in \Tr(\L) \qquad \|f\|_\infty\le K_2\|f\|_{\infty,\Omega},\quad
\|f\|_{\infty,\Omega}:=\max_{x\in \Omega}|f(x)|. 
\end{equation}
Then there exists an absolute constant $C>0$ such that
$$
M\ge Ne^{C(K_1K_2)^{-2}}.
$$
\end{thm}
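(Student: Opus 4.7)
The plan is a volume comparison in the real coefficient space $\R^N$, $N := 2|\Lambda|$, via the identification $f\leftrightarrow A(f)$. The two hypotheses will combine to force the polynomial body $B_\Omega := \{A(f) : f\in\Tr(\Lambda),\ \|f\|_{\infty,\Omega}\le 1\}$ to have small volume; but $B_\Omega$ is an intersection of only $M$ explicit cylindrical constraints, so its polar is the closed convex hull of $M$ two-dimensional ellipses, and via the Bourgain--Milman inequality (Theorem~\ref{T2.5.3}) combined with a sharp Gluskin-type upper bound on the volume of a convex hull with few extreme directions, $M$ must be large.

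First, the hypothesis on $\Omega$ is exactly the inclusion $B_\Omega\subseteq K_2\, B_\Lambda(L_\infty)$: if $\|f\|_{\infty,\Omega}\le 1$ then $\|f\|_\infty\le K_2$. Combined with the volume hypothesis this yields
$$
(\mathrm{vol}\, B_\Omega)^{1/N}\le K_1 K_2\, N^{-1/2}.
$$
Applying Bourgain--Milman to the centrally symmetric convex body $B_\Omega\subset\R^N$,
$$
(\mathrm{vol}\, B_\Omega^o)^{1/N}\gtrsim \frac{1}{N(\mathrm{vol}\, B_\Omega)^{1/N}}\ge\frac{c}{K_1 K_2\sqrt{N}}.
$$

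Next, write $B_\Omega = \bigcap_{j=1}^M E_j$ with $E_j = \{a\in\R^N : \langle a,v_j\rangle^2+\langle a,w_j\rangle^2\le 1\}$, where $v_j, w_j\in\R^N$ represent the linear functionals $a\mapsto\mathrm{Re}\, f_a(x^j)$ and $a\mapsto\mathrm{Im}\, f_a(x^j)$ respectively. A direct computation using $\cos^2(\bk,x^j)+\sin^2(\bk,x^j)=1$ and vanishing of the cross-term gives $v_j\perp w_j$ with $\|v_j\|_2 = \|w_j\|_2 = \sqrt{N/2}$. Since the polar of an intersection of centrally symmetric convex bodies containing $0$ is the closed convex hull of polars, and since each $E_j^o = \{\alpha v_j+\beta w_j : \alpha^2+\beta^2\le 1\}\subseteq\sqrt{2}\,\mathrm{conv}(\pm v_j,\pm w_j)$,
$$
B_\Omega^o \subseteq \sqrt{2}\,\mathrm{conv}(\pm v_1,\pm w_1,\ldots,\pm v_M,\pm w_M).
$$
Now invoke the sharp Carl--Pajor/Gluskin-type volume bound: for $L\ge N$ vectors $u_1,\ldots,u_L\in\R^N$ of Euclidean norm at most $R$,
$$
\bigl(\mathrm{vol}\,\mathrm{conv}(\pm u_1,\ldots,\pm u_L)\bigr)^{1/N} \le \frac{cR\sqrt{1+\log(L/N)}}{N}.
$$
One may assume $M\ge N$, for otherwise the evaluation map $f\mapsto(f(x^j))_{j=1}^M$ has a non-trivial kernel on $\Tr(\Lambda)$ and the hypothesis on $\Omega$ already fails. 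Applied with $R=\sqrt{N/2}$ and $L=2M$ this gives
$$
(\mathrm{vol}\, B_\Omega^o)^{1/N} \le \frac{c'\sqrt{1+\log(M/N)}}{\sqrt N}.
$$

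Comparing the two estimates for $(\mathrm{vol}\, B_\Omega^o)^{1/N}$ and squaring produces $1+\log(M/N)\gtrsim (K_1K_2)^{-2}$, i.e.\ $M\ge N\,e^{C(K_1K_2)^{-2}}$ for an absolute constant $C>0$. The technical heart of the argument is the sharp factor $\sqrt{1+\log(L/N)}$ in the Gluskin-type inequality, rather than the weaker $\sqrt{\log L}$ produced by Urysohn's inequality combined with a Gaussian maximal inequality: only this refined normalization produces the ratio $M/N$ inside the logarithm, and hence the crucial exponential $N$-factor in the final lower bound on $M$. This sharp estimate is classical in local Banach space theory and may be proved, for instance, via dual Sudakov-type inequalities (Pajor--Tomczak-Jaegermann) or by a direct chaining argument for the $\gamma_2$ functional of the dual polytope; the remaining ingredients---the polar computation for the cylinders $E_j$ and the elementary inclusion of a disk in the $\ell_1$-square in a plane---are straightforward.
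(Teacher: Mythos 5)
Your approach is the right one and, as far as can be reconstructed (the paper defers to \cite{KTE3} without reproducing the argument), matches its intent: combine the two hypotheses to bound $\mathrm{vol}(B_\Omega)$ from above, invert through Bourgain--Milman (Theorem~\ref{T2.5.3}), recognize $B_\Omega$ as an intersection of $M$ corank-two cylinders $E_j$ so that $B_\Omega^o$ is (up to $\sqrt 2$) the absolute convex hull of $2M$ vectors of Euclidean norm $\sqrt{N/2}$, and close with the Carl--Pajor / Gluskin volume bound. Your computations of $v_j, w_j$ (real and imaginary parts of the evaluation functional), the orthogonality $v_j\perp w_j$ with $\|v_j\|=\|w_j\|=\sqrt{N/2}$, the identification $E_j^o=\{\alpha v_j+\beta w_j:\alpha^2+\beta^2\le 1\}$, and the inclusion of the disk in $\sqrt2\,\mathrm{conv}(\pm v_j,\pm w_j)$ are all correct. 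You are also right to emphasize that the refined factor $\sqrt{\log(eL/N)}$, rather than the cruder $\sqrt{\log L}$ obtained from Urysohn together with a Gaussian union bound, is what puts the ratio $M/N$ inside the logarithm and hence produces the factor $N$ multiplying the exponential in the conclusion; with the weaker bound one only gets $M\ge e^{C(K_1K_2)^{-2}}$.

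Two small things to fix. First, the dimension count gives injectivity of the evaluation map (equivalently boundedness of $B_\Omega$) only for $M\ge|\Lambda|=N/2$, not $M\ge N$: the $M$ complex-linear functionals must separate a space of complex dimension $N/2$. This does no harm since you need $L=2M\ge N$ for the Carl--Pajor bound, which still follows. Second, the final comparison yields $1+\log(cM/N)\gtrsim(K_1K_2)^{-2}$, i.e.\ $M\ge c_0\,N\,e^{C'(K_1K_2)^{-2}}$ with an absolute $c_0\in(0,1)$. To trade the prefactor $c_0$ for a slightly smaller exponent constant requires $(K_1K_2)^{-2}$ bounded away from zero; this always holds in the applications (in the proof of Theorem~\ref{T2.5.7} one has $K_1\lesssim n^{-1/2}$), and it is worth noting either this restriction or the presence of the absolute constant $c_0$ when one cares about the regime where $K_1K_2$ is not small.
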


We now give some corollaries from Theorem \ref{T2.5.6}. 

\begin{thm}\label{T2.5.7} Assume a finite set $\Omega\subset \mathbb T^2$ has
the following property.
\begin{equation}\label{2.5.6}
\forall t\in \Tr(\Delta Q_n) \qquad \|t\|_\infty \le K_2\|t\|_{\infty,\Omega}. 
\end{equation}
Then
$$
|\Omega| \ge 2|\Delta Q_n|e^{Cn/K_2^2}
$$
with an absolute constant $C>0$.
\end{thm}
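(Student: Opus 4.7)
The plan is to derive Theorem \ref{T2.5.7} as a direct corollary of the conditional statement Theorem \ref{T2.5.6}, applied to $\La = \Delta Q_n$ in dimension $d=2$. The only nontrivial input needed is a sufficiently sharp volume estimate of the form required by the hypothesis of Theorem \ref{T2.5.6}, namely $(vol(B_\La(L_\infty)))^{1/N} \le K_1 N^{-1/2}$, and this is exactly what the first part of Theorem \ref{T2.5.5} provides.

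First I would fix $N := 2|\Delta Q_n|$ and recall the cardinality estimate $|\Delta Q_n| \asymp 2^n n$ in the bivariate case (consistent with the hyperbolic sums collected in \eqref{hcsums}, specialized to $d=2$ and the layer $|\bs|_1 = n$). Next I would invoke the volume bound \eqref{2.5.1} from Theorem \ref{T2.5.5}, which gives
\[
\bigl(vol(B_{\Delta Q_n}(L_\infty))\bigr)^{1/N} \asymp (2^n n^2)^{-1/2}.
\]
Dividing by $N^{-1/2} \asymp (2^n n)^{-1/2}$, one sees that the hypothesis of Theorem \ref{T2.5.6} is satisfied with $K_1 \asymp n^{-1/2}$, i.e., there exists an absolute constant $c_1$ with
\[
\bigl(vol(B_{\Delta Q_n}(L_\infty))\bigr)^{1/N} \le c_1\, n^{-1/2}\, N^{-1/2}.
\]

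With this $K_1$ in hand, the hypothesis \eqref{2.5.6} on $\Omega$ is exactly the second assumption of Theorem \ref{T2.5.6} (with $M = |\Omega|$). Applying that theorem yields
\[
|\Omega| \;\ge\; N\, e^{C(K_1 K_2)^{-2}} \;=\; 2|\Delta Q_n|\, e^{C n / K_2^2},
\]
after absorbing $c_1$ into the absolute constant $C > 0$, which is the claim.

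The only real obstacle is ensuring that the volume estimate is sharp enough to produce the factor $n^{-1/2}$ in $K_1$: if one instead used the weaker bound from Theorem \ref{T2.5.4} (which only gives $(vol(B_{\Delta Q_n}(L_p)))^{1/N} \asymp (2^n n)^{-1/2}$ for $1 \le p < \infty$), one would obtain $K_1 \asymp 1$ and merely $|\Omega| \ge N e^{C/K_2^2}$, losing the crucial $n$ in the exponent. Thus the entire content of the theorem is hidden in the fact that the $L_\infty$-volume of the unit ball on the hyperbolic layer is strictly smaller than the $L_p$-volume for $p<\infty$, a non-trivial phenomenon discussed right after Theorem \ref{T2.5.5}. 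Once that phenomenon is available through \eqref{2.5.1}, the proof is a one-step invocation of Theorem \ref{T2.5.6}.
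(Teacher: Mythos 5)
Your proposal is correct and follows exactly the paper's own proof: invoke the volume estimate \eqref{2.5.1} from Theorem \ref{T2.5.5} to obtain $(vol(B_{\Delta Q_n}(L_\infty)))^{1/N}\le Cn^{-1/2}N^{-1/2}$, i.e.\ $K_1\asymp n^{-1/2}$, and then apply the conditional Theorem \ref{T2.5.6}. Your closing remark about why the $L_\infty$-volume bound (rather than the $L_p$-volume bound) is the essential input is also the correct observation.
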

\begin{proof} By Theorem \ref{T2.5.5} (see (\ref{2.5.1})) we have
$$
(vol(B_{\Delta Q_n}(L_\infty)))^{1/N} \le C(2^nn^2)^{-1/2} \le Cn^{-1/2}N^{-1/2}
$$
with an absolute constant $C>0$. Using Theorem \ref{T2.5.6} we obtain
$$
|\Omega|\ge 2|\Delta Q_n|e^{Cn/K_2^2}.
$$
This proves Theorem \ref{T2.5.7}.
\end{proof}
\begin{rem}\label{R2.5.1} In a particular case $K_2=bn^\al$, $0\le \al\le 1/2$, Theorem \ref{T2.5.7} gives
$$
|\Omega|\ge 2|\Delta Q_n|e^{Cb^{-2}n^{1-2\al}}.
$$
\end{rem}
\begin{cor}\label{C2.5.1} Let a set $\Omega\subset \mathbb T^d$ have a property:
 $$
\forall t \in \Tr(\Delta Q_n) \qquad \|t\|_\infty \le bn^\al\|t\|_{\infty,\Omega}
$$
with some $0\le \al <1/2$. Then 
$$
|\Omega| \ge C_32^nne^{Cb^{-2}n^{1-2\al}} \ge C_1(b,d,\al)|Q_n|e^{C_2(b,d,\al)n^{1-2\al}}.
$$
\end{cor}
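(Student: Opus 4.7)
The proof is essentially a direct specialization of Theorem \ref{T2.5.7} (equivalently, Remark \ref{R2.5.1}) combined with the cardinality estimates for hyperbolic layers and crosses recorded in \eqref{hcsums}. The plan is to substitute the specific bound $K_2 = bn^{\alpha}$ into Theorem \ref{T2.5.7} and then convert the resulting lower bound $2|\Delta Q_n|$ into the desired $2^n n$ scaling (and, correspondingly, into $|Q_n|$).

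Concretely, the hypothesis in the corollary is precisely condition \eqref{2.5.6} of Theorem \ref{T2.5.7} with $K_2 := bn^{\alpha}$. Feeding this into the conclusion of Theorem \ref{T2.5.7} yields
$$
|\Omega| \;\ge\; 2|\Delta Q_n| \, e^{Cn/K_2^{2}} \;=\; 2|\Delta Q_n| \, e^{Cb^{-2} n^{1-2\alpha}},
$$
which already produces the exponential factor appearing in the statement. Note that the hypothesis $0\le \alpha<1/2$ is used only at this step, to guarantee that the exponent $n^{1-2\alpha}$ tends to infinity with $n$ (so the estimate is non-vacuous).

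To convert $2|\Delta Q_n|$ into $C_3 2^n n$, I would invoke the standard bound $|\Delta Q_n| \asymp 2^n n^{d-1}$, which in turn follows from $|Q_n| \asymp 2^n n^{d-1}$ in \eqref{hcsums} and the identity $|\Delta Q_n| = |Q_n| - |Q_{n-1}|$. In the present setting (where Theorem \ref{T2.5.7} is used, so effectively $d=2$), this gives $|\Delta Q_n| \gtrsim 2^n n$ and hence the middle bound $|\Omega| \ge C_3 2^n n \, e^{Cb^{-2}n^{1-2\alpha}}$. For the rightmost bound, I would use $|Q_n| \asymp 2^n n^{d-1}$ again, now as an upper bound, to write $2^n n^{d-1} \ge c(d) |Q_n|$, which allows one to pick $C_1(b,d,\alpha)$ and $C_2(b,d,\alpha) := Cb^{-2}$ so that $C_3 2^n n e^{Cb^{-2} n^{1-2\alpha}} \ge C_1(b,d,\alpha) |Q_n| e^{C_2(b,d,\alpha) n^{1-2\alpha}}$.

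There is no real obstacle here — all the analytic work (the volume estimate \eqref{2.5.1} feeding into Theorem \ref{T2.5.6}, and hence into Theorem \ref{T2.5.7}) has been done upstream. The only bookkeeping subtlety is matching constants: one must be careful to track that the constant $C$ in the exponent of Theorem \ref{T2.5.7} is absolute, so that the rescaled exponent $Cb^{-2} n^{1-2\alpha}$ carries through cleanly into both inequalities of the corollary.
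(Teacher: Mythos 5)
Your overall route is the intended one: substitute $K_2 = bn^{\alpha}$ into Theorem \ref{T2.5.7} (equivalently quote Remark \ref{R2.5.1}), then use $|\Delta Q_n| \asymp 2^n n$ (for $d=2$) and $|Q_n| \asymp 2^n n^{d-1}$. But two steps in your write-up do not actually close, and both matter.

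First, your choice $C_2(b,d,\alpha) := Cb^{-2}$ makes the final inequality false when $d>2$. With $|Q_n|\asymp 2^n n^{d-1}$, the claim $C_3\,2^n n\,e^{Cb^{-2}n^{1-2\alpha}} \ge C_1\,|Q_n|\,e^{C_2 n^{1-2\alpha}}$ reduces to $e^{(Cb^{-2}-C_2)n^{1-2\alpha}} \gtrsim n^{d-2}$; if $C_2 = Cb^{-2}$ the left-hand side is $1$ while the right-hand side diverges for $d>2$. You must pick $C_2$ strictly smaller than $Cb^{-2}$, and this is precisely where the strict hypothesis $\alpha < 1/2$ enters: it is needed so that $n^{1-2\alpha}\to\infty$ and the exponential absorbs the extra $n^{d-2}$. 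Your remark that $\alpha<1/2$ is used ``only'' to make the estimate non-vacuous is therefore not accurate; for $d>2$ it is essential for the second inequality. Relatedly, you invoke ``$2^n n^{d-1} \ge c(d)|Q_n|$'' at a point where the quantity in hand is $2^n n$, not $2^n n^{d-1}$.

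Second, Theorem \ref{T2.5.7} is stated for $\Omega\subset\T^2$, while Corollary \ref{C2.5.1} allows $\Omega\subset\T^d$. Saying the setting is ``effectively $d=2$'' glosses over the passage from $d$ to $2$. The standard fix is a slicing/projection argument: let $\Omega'\subset\T^2$ be the projection of $\Omega$ onto the first two coordinates, observe that any $t\in\Tr(\Delta Q_n)$ in two variables (viewed as a $d$-variate polynomial with $k_3=\cdots=k_d=0$) lies in the $d$-dimensional $\Tr(\Delta Q_n)$, and that $\|t\|_{\infty,\Omega} = \|t\|_{\infty,\Omega'}$ and $\|t\|_{L_\infty(\T^d)} = \|t\|_{L_\infty(\T^2)}$. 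This shows $\Omega'$ satisfies the $d=2$ hypothesis of Theorem \ref{T2.5.7}, and since $|\Omega|\ge|\Omega'|$ the first inequality follows with the $d=2$ layer count $|\Delta Q_n|\asymp 2^n n$. Adding this reduction and repairing the choice of $C_2$ completes the argument.
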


\begin{cor}\label{C2.5.2} Let a set $\Omega \subset \mathbb T^2$ be such that $|\Omega|\le C_5|Q_n|$. Then
$$
\sup_{f\in \Tr(Q_n)}\|f\|_\infty/\|f\|_{\infty,\Omega} \ge Cn^{1/2}.
$$
\end{cor}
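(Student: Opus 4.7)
The plan is to argue by contradiction, reducing the statement on the full step hyperbolic cross $Q_n$ to a statement on the single dyadic layer $\Delta Q_n$, where the volume estimate \eqref{2.5.1} and Theorem \ref{T2.5.6} give sharp control.

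Suppose, for contradiction, that
$$
 \sup_{f\in \Tr(Q_n)} \|f\|_\infty / \|f\|_{\infty,\Omega}  <  b\,n^{1/2}
$$
for some constant $b>0$ to be chosen small. Since $\Delta Q_n = Q_n \setminus Q_{n-1}$, we have the inclusion $\Tr(\Delta Q_n) \subset \Tr(Q_n)$, so the same ratio bound holds on the smaller subspace. In the language of Theorem \ref{T2.5.7} this is exactly the condition \eqref{2.5.6} with $K_2 = b\,n^{1/2}$, i.e.\ the case $\alpha=1/2$ mentioned in Remark \ref{R2.5.1}.

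Applying Theorem \ref{T2.5.7} (equivalently, Remark \ref{R2.5.1} with $\alpha=1/2$) then yields
$$
 |\Omega| \ge 2|\Delta Q_n|\,e^{Cb^{-2}}
$$
with an absolute constant $C>0$. Since $d=2$, we have $|\Delta Q_n|\asymp 2^n n \asymp |Q_n|$ by the cardinality estimates in \eqref{hcsums}, so
$$
 |\Omega| \ge c_1\,|Q_n|\,e^{Cb^{-2}}.
$$
Choosing $b$ small enough so that $c_1 e^{Cb^{-2}} > C_5$ contradicts the hypothesis $|\Omega|\le C_5|Q_n|$. Therefore the desired lower bound $\sup \|f\|_\infty/\|f\|_{\infty,\Omega} \ge C n^{1/2}$ must hold, with $C = C(C_5)$.

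There is no real obstacle here beyond the conceptual step of recognizing that one can pass from $Q_n$ to the single top layer $\Delta Q_n$: the whole content of the corollary lies in the volume estimate \eqref{2.5.1} (which already knows that the $L_\infty$-ball on $\Delta Q_n$ is smaller than the Kashin-type benchmark $N^{-1/2}$ by the factor $n^{-1/2}$) fed into Carl-type Theorem \ref{T2.5.6}. The only arithmetic to check is the comparison $|\Delta Q_n|\asymp|Q_n|$, which is standard for $d=2$.
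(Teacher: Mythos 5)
Your proposal is correct and takes essentially the same route as the paper: identify the supremum ratio as the constant $K_2$ in condition \eqref{2.5.6}, apply Theorem~\ref{T2.5.7} (equivalently Remark~\ref{R2.5.1} with $\alpha=1/2$), and compare the resulting lower bound $|\Omega|\gtrsim |\Delta Q_n|e^{Cn/K_2^2}$ with $|\Omega|\le C_5|Q_n|$ using $|\Delta Q_n|\asymp|Q_n|$. The paper simply solves the resulting inequality for $K_2$ directly rather than wrapping the same computation in a proof by contradiction.
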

\begin{proof} Denote
$$
K_2 := \sup_{f\in \Tr(Q_n)}\|f\|_\infty/\|f\|_{\infty,\Omega}.
$$
Then the condition (\ref{2.5.6}) of Theorem \ref{T2.5.7} is satisfied with this $K_2$. Therefore,
 by Theorem \ref{T2.5.7}
$$
2|\Delta Q_n|e^{Cn/K_2^2} \le |\Omega| \le C_5|Q_n|.
$$
This implies that
$$
K_2\gtrsim n^{1/2}.
$$
\end{proof}
\begin{rem}\label{R2.5.2} One can derive from the known results on recovery of
functions from the classes $\bW^r_\infty$ (see \cite{T7}, \cite{T8}) that for
any $n$ 
there is a set $\Omega_n\subset \T^d$ such that $|\Omega_n| \le C|Q_n|$ and
$$
\sup_{f\in \Tr(Q_n)}(\|f\|_\infty/\|f\|_{\infty,\Omega_n})\lesssim n^{d-1}.
$$
\end{rem}
For further results in this direction we refer the reader to a very recent paper \cite{TeTi16}.

\subsection{Riesz products and the Small Ball Inequality}\index{Riesz products}\index{Inequality!Small Ball}
\label{subsect:SBI}

We consider the special
trigonometric polynomial, which falls into a category of Riesz products (see \cite{Tem3})
$$
\Phi_m(\bx) :=\prod_{k=0}^m (1 + \cos 4^kx_1 \cos 4^{m-k}x_2).
$$
The above polynomial was the first example of the hyperbolic cross Riesz products.
Clearly, $\Phi_m(\bx) \ge 0$. It is known that
\begin{equation}\label{2.6.1}
\Phi_m(\bx) = 1 +\sum_{k=0}^{m}\cos 4^kx_1 \cos 4^{m-k}x_2+
t_m (\bx),\qquad t_m\in \mathcal{T}^{\perp}(4^m).
\end{equation}
In particular, relation (\ref{2.6.1}) implies that
$$
\|\Phi_m\|_1 = 1 .
$$
We now consider a more general Riesz product in the case $d=2$ (see \cite{TE3}
and \cite{TE4}). For any two given integers $a\ge 1$ and $0\le b<a$ denote
$AP(a,b)$ the arithmetic progression of the form
$al+b$, $l=0,1,\dots$. Set
$$
H_n(a,b):=\{\bs: \bs\in \N^2_0,\quad |\bs|_1=n,\quad s_1,s_2\ge a,\quad s_1\in
AP(a,b)\}.
$$
It will be convenient for us to consider subspaces $\Tr(\varrho'(\bs))$ of trigonometric polynomials with harmonics in
$$
\varrho'(\bs) :=\{\bk\in \Z^2:[2^{s_j-2}]\le |k_j|<2^{s_j},\quad j=1,2\}.
$$
For a subspace $Y\subset L_2(\T^2)$ we denote $Y^\perp$ its orthogonal complement.

\begin{lem}\label{L2.6.1} Take any trigonometric polynomials $t_\bs\in \Tr(\varrho'(\bs))$ and form the Riesz product
$$
\Phi(n,\bx):=\prod_{\bs\in H_n(a,b)}(1+t_\bs(\bx)).
$$
Then for any $a\ge 6$ and any $0\le b<a$ this function admits the representation
$$
\Phi(n,\bx) = 1+ \sum_{\bs\in H_n(a,b)} t_\bs(\bx) + R(\bx)
$$
with $R\in \Tr(Q_{n+a-6})^\perp$.
\end{lem}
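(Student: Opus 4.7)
The plan is to expand the Riesz product and show that every higher-order term has spectrum far outside the step hyperbolic cross $Q_{n+a-6}$. Writing
$$
\Phi(n,\bx) = \sum_{E \subseteq H_n(a,b)} \prod_{\bs \in E} t_\bs(\bx)
= 1 + \sum_{\bs \in H_n(a,b)} t_\bs(\bx) + R(\bx), \qquad R(\bx) := \sum_{|E|\ge 2} \prod_{\bs \in E} t_\bs(\bx),
$$
it suffices to prove that for every $E=\{\bs^{(1)},\dots,\bs^{(m)}\}\subset H_n(a,b)$ with $m\ge 2$, the spectrum of $\prod_{\bs\in E} t_\bs$ lies outside $Q_{n+a-6}$.

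Since all $s_1^{(i)}$ lie in $AP(a,b)$ and the $\bs^{(i)}$ are distinct (their first coordinates determine them, as $|\bs^{(i)}|_1=n$), I may order them so that $s_1^{(1)}>s_1^{(2)}>\dots>s_1^{(m)}$ with consecutive gaps $\ge a$; consequently the second coordinates satisfy $s_2^{(1)}<\dots<s_2^{(m)}$, also with gaps $\ge a$. Any $\bk$ in the spectrum of $\prod_{\bs\in E} t_\bs$ has the form $\bk=\sum_i \bk^{(i)}$ with $\bk^{(i)}\in\varrho'(\bs^{(i)})$, i.e.\ $|k^{(i)}_j|\in[2^{s_j^{(i)}-2},2^{s_j^{(i)}})$ (the floor in $[2^{s_j^{(i)}-2}]$ is trivial since $s_j^{(i)}\ge a\ge 6$). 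Isolating the dominant term in the first coordinate and applying the reverse triangle inequality together with the geometric bound $\sum_{i\ge 2}2^{s_1^{(i)}}\le 2^{s_1^{(1)}-a}/(1-2^{-a})$, I obtain
$$
|k_1| \;\ge\; 2^{s_1^{(1)}-2} - \frac{2^{s_1^{(1)}-a}}{1-2^{-a}} \;\ge\; 2^{s_1^{(1)}-2} - 2^{s_1^{(1)}-5} \;>\; 2^{s_1^{(1)}-3},
$$
where the assumption $a\ge 6$ is used precisely to keep the tail $\le 2^{s_1^{(1)}-5}\le \tfrac14\cdot 2^{s_1^{(1)}-2}$. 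A symmetric argument, with the roles of the extreme indices swapped (the dominant contribution to $k_2$ now coming from $\bs^{(m)}$), gives $|k_2|>2^{s_2^{(m)}-3}$.

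To finish, suppose $\bk\in\rho(\bs^*)$. Then $|k_j|<2^{s_j^*}$, so the two lower bounds above force $s_1^*\ge s_1^{(1)}-2$ and $s_2^*\ge s_2^{(m)}-2$. Hence
$$
|\bs^*|_1 \;\ge\; s_1^{(1)}+s_2^{(m)}-4 \;=\; n+(s_1^{(1)}-s_1^{(m)})-4 \;\ge\; n+a-4 \;>\; n+a-6,
$$
so $\bk\notin Q_{n+a-6}$, which is exactly the statement $R\in \Tr(Q_{n+a-6})^\perp$.

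The substantive content is really just a disjoint-spectrum (lacunary) argument; the only obstacle is the careful bookkeeping that confirms the tail of the geometric series is dominated by the principal dyadic term, and that is precisely the role of the hypothesis $a\ge 6$. Note that the proof in fact yields the slightly stronger conclusion $R\in\Tr(Q_{n+a-4})^\perp$; the formulation with $n+a-6$ presumably leaves room for analogous statements with different choices of $\varrho'(\bs)$ or different conventions.
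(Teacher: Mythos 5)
Your proof is correct, and the argument — expanding the Riesz product and showing that every product of $m\ge 2$ factors has Fourier support beyond $Q_{n+a-6}$ via a lacunary/disjoint-spectrum estimate — is the natural one and is essentially the argument in the cited reference \cite{TE4} (the paper itself defers the proof to that source). All the key steps check out: the distinct first coordinates lie in $AP(a,b)$ so consecutive gaps are $\ge a$; the Minkowski-sum description of the support of the product is correct; and the geometric-series bound $\sum_{i\ge 2}2^{s_1^{(i)}}\le \frac{2^{s_1^{(1)}-a}}{1-2^{-a}}\le 2^{s_1^{(1)}-5}$ together with the reverse triangle inequality gives $|k_1|>2^{s_1^{(1)}-3}$ and symmetrically $|k_2|>2^{s_2^{(m)}-3}$, forcing $|\bs^*|_1\ge n+a-4 > n+a-6$. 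One small quantitative slip in your closing remark: the bound $|\bs^*|_1\ge n+a-4$ shows $\bk\notin Q_m$ only for $m\le n+a-5$, so your argument yields the slightly stronger $R\in\Tr(Q_{n+a-5})^\perp$, not $\Tr(Q_{n+a-4})^\perp$; this does not affect the validity of the lemma as stated.
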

\noindent Usually, Lemma \ref{L2.6.1} is used for $t_\bs(\bx)$ being real
trigonometric polynomials with $\|t_\bs(\bx)\|_\infty\le 1$.

The above Riesz products are useful in proving Small Ball Inequalities. We
describe these inequalities for the Haar and the trigonometric  systems. We
begin with formulating this inequality in the case $d=2$ using the dyadic
enumeration of the Haar system \index{Haar system} 
$$
H_I(\bx):=H_{I_1}(x_1)H_{I_2}(x_2),\quad \bx=(x_1,x_2),\quad I=I_1\times I_2.
$$
Talagrand's inequality \index{Talagrand's inequality} claims that for any coefficients $\{c_I\}$ (see \cite{TaE} and \cite{TE5})
\begin{equation}\label{2.6.2}
\begin{split}
\Big\|\sum_{I:|I|=2^{-n}}c_IH_I(\bx)\Big\|_\infty &\ge
 2^{-{(n+1)}}\sum_{I:|I|=2^{-n}}|c_I| \\
 &=\frac{1}{2}\sum\limits_{m=0}^n\Big\|\sum\limits_{I:|I_1|=2^{-m}, |I_2|=2^{m-n}}c_IH_I(\bx)\Big\|_1\,.
\end{split}
\end{equation}
where $|I|$ means the measure of $I$. 

We now formulate an analogue of (\ref{2.6.2}) for the trigonometric system.  For an even number $n$ define
$$
Y_n:=\{\bs=(2n_1,2n_2),\quad n_1+n_2=n/2\}.
$$
Then for any coefficients $\{c_\bk\}$ (see \cite{TE3})
\begin{equation}\label{2.6.3}
\Big\|\sum_{\bs\in Y_n} \sum_{\bk\in \varrho(\bs)}c_\bk
e^{i(\bk,\bx)}\Big\|_\infty \ge
C\sum_{\bs\in Y_n}\Big\|\sum_{\bk\in\varrho(\bs)} c_\bk e^{i(\bk,\bx)}\Big\|_1,
\end{equation}
where $C$ is a positive number. Inequality (\ref{2.6.3}) plays a key role in
the proof of lower bounds for the entropy numbers.

We proceed to the $d$-dimensional version of (\ref{2.6.3}), which we formulate below.  For even $n$, put
$$
Y^d_n:=\{\bs=(2l_1,\dots,2l_d), l_1+\dots+l_d=n/2, l_j\in \N_0, j=1,\dots,d\}.
$$
It is conjectured (see, for instance, \cite{KTE4}) that the following
inequality, which we call ``small ball inequality'', holds for any coefficients $\{c_\bk\}$
\begin{equation}\label{2.6.4}
n^{(d-2)/2}\Big\|\sum_{\bs\in Y^d_n}\sum_{\bk\in\varrho(\bs)}c_\bk
e^{i(\bk,\bx )}\Big\|_\infty \ge C(d)\sum_{\bs\in Y^d_n}\Big\|\sum_{\bk\in\varrho(\bs)}c_\bk e^{i( \bk,\bx)}\Big\|_1.
\end{equation}
\index{Inequality!Small Ball}
We note that a weaker version of (\ref{2.6.4}) with exponent $(d-2)/2$ replaced
by $(d-1)/2$ is a direct corollary of the Parseval's identity, the Cauchy
inequality and monotonicity of the $L_p$ norms. 

The $d$-dimensional version of the small ball inequality (\ref{2.6.2}), similar to the conjecture in (\ref{2.6.4}), reads as follows:
\begin{equation}\label{2.6.5}
 \begin{split}
    n^{(d-2)/2}\Big\|\sum_{I:|I|=2^{-n}}c_IH_I(\bx)\Big\|_\infty &\geq C(d)\sum_{\bs:s_1+\dots+s_d=n}\Big\|\sum_{I:|I_j|=2^{-s_j},j=1,\dots
,d}c_IH_I(\bx)\Big\|_1\\
&=C(d)2^{-n}\sum\limits_{I:|I|=2^{-n}}|c_I|\,.
 \end{split}
\end{equation}
Recently, the authors of \cite{BL} and \cite{BLV} proved (\ref{2.6.5}) with the
exponent $(d-1)/2-\delta(d)$ with some $0<\delta(d)<1/2$ instead of $(d-2)/2$.  See also its implications for Kolmogorov and entropy 
numbers of the mixed smoothness function classes in $L_\infty$ below and Subsection \ref{sect:ESBP}. Note, that there
is no progress in proving (\ref{2.6.4}). 

\subsection{Comments and open problems}\index{Open problems!Trigonometric polynomials} 

Sections 2.1 and 2.2 mostly contain classical results on univariate
trigonometric polynomials and their straight forward generalizations to the case
of multivariate trigonometric polynomials with frequencies from parallelepipeds.
Theorem \ref{T2.2.1} is from \cite{TE2}. Its proof is based on the volume
estimates from Theorem \ref{T2.5.1} and the classical Brun theorem on sections
of convex bodies. 

Lemma \ref{L2.3.1} is a direct corollary of Lemma 1.4 from \cite{Tem4} and the
Littlewood-Paley theorem (see Appendix, Corollary \ref{C7.1}). Another proof of
Lemma \ref{L2.3.1} was given in \cite{Ga2}. It is easy to derive Lemma
\ref{L2.3.1} from Theorem \ref{T2.4.6} (see \cite{TBook}, Ch.3). For Lemma
\ref{L2.3.2} see \cite{TBook}, Ch.3. 

Theorem \ref{T2.4.1} is the classical Bernstein inequality for the univariate
trigonometric polynomials. Theorem \ref{T2.4.2} is a straight forward
generalization of Theorem \ref{T2.4.1}. 
Theorem \ref{T2.4.3} is discussed above. 

{\bf Open problem 2.1.} Find the order of the quantity 
$$
\sup_{t\in \Tr(N)}\bigl\|t^{(r)}(\mathbf x,\alpha)\bigr\|_1\bigm/ \|t\|_1  
$$
as a function on $N$.

{\bf Open problem 2.2.} Find the order of the quantity 
$$
\sup_{t\in \Tr(N)}\sum_{\bk\in\Gamma(N)} |\hat t (\bk)|\bigm/ \|t\|_1  
$$
as a function on $N$.

Theorems \ref{T2.4.4} and \ref{T2.4.5} are classical Nikol'skii inequalities.
Theorem \ref{T2.4.6}, obtained in \cite{Tmon}, is an important tool in
hyperbolic cross approximation. 
Its proof in \cite{Tmon} is based on a nontrivial application of Theorem
\ref{T2.4.5} and the H{\"o}lder inequalities (\ref{7.3}). Theorems \ref{T2.4.7}
and \ref{T2.4.8} are from \cite{Tem4} (see also \cite{Tmon}). Theorems
\ref{T2.4.9} and \ref{T2.4.10} are classical variants of the Marcinkiewicz
theorem. Theorem \ref{T2.4.11} is from \cite{T32}. 

Historical comments on results from Subsection 2.5 are given in the text above. We
only formulate open problems in this regard. 

{\bf Open problem 2.3.} Find the order of 
$$
(vol(B_{\Delta Q_n}(L_\infty)))^{1/N} ,\qquad N:= 2|\Delta Q_n|,
$$
as $n\to\infty$, for $d\ge 3$. 

{\bf Open problem 2.4.} Find the order of 
$$
(vol(B_{\Delta Q_n}^\perp(L_1)))^{1/N} ,\qquad N:= 2|\Delta Q_n|,
$$
as $n\to\infty$, for $d\ge 3$.

Here are two fundamental open problems in connection with Subsection 2.6. Note, that Lemma \ref{L2.6.1} from \cite{TE4} plays 
the key role in the two-dimensional version of (\ref{2.6.4}).

{\bf Open problem 2.5.} Prove the Small Ball Inequality (\ref{2.6.4}) for $d\ge 3$. 

{\bf Open problem 2.6.} Prove the Small Ball Inequality (\ref{2.6.5}) for $d\ge 3$. 

    \newpage
    \section{Function spaces on $\T^d$}\index{Function spaces}
\label{perfs}

\subsection{Spaces of functions with bounded mixed derivative}\index{Function spaces!Bounded mixed derivative}
\index{Bounded mixed derivative}

We begin with the univariate case in order to illustrate the action of the differential operator on periodic functions. For a trigonometric polynomial $f\in \Tr(n)$ we have
$$
(D_x f)(x) =f'(x) = \sum_{|k|\le n} ik{\hat f}(k)e^{ikx}.
$$
We loose the information of $\hat f(0)$ when we differentiate. We can recover $f$ from $f'$ by the following formula
$$
f(x) = \hat f(0) + f'(x)\ast  \sum_{0<|k|\le n} (ik)^{-1}e^{ikx}.
$$
Note that 
$$
\sum_{0<|k|\le n} (ik)^{-1}e^{ikx} = 2\sum_{k=1}^n k^{-1}\sin kx.
$$
Therefore, the following two definitions of the class $W^1_p$, $1\leq p\leq \infty$, are equivalent

(D1)   $\{f: \|f\|_p\le 1,\, \|f'\|_p\le 1\}$;

(D2)    $\{f: f=\varphi \ast F_1,\, \|\varphi\|_p \le 1\}$,
\newline
where
$$
F_1(x) := 1+2\sum_{k=1}^\infty k^{-1}\sin kx = 1+2\sum_{k=1}^\infty k^{-1}\cos (kx-\pi/2).
$$
The second definition is more convenient than the first one for the following two reasons. It is easy to generalize it to the case of fractional (Weil) derivatives and it is easy to extend it to the multivariate case. We now give the general definition, which we use in this survey. 
This definition is based on the integral representation of a function by the
Bernoulli kernels\index{Kernel!Bernoulli}. Define for $x\in \T$ the univariate Bernoulli kernel
\[
F_{r,\alpha}(x):= 1+2\sum_{k=1}^\infty k^{-r}\cos (kx-\alpha \pi/2)
\]
 and define
the multivariate Bernoulli kernels as the corresponding tensor products \index{Tensor product}
\begin{equation}\label{Bernoulli}
F_{r,\alpha}(\bx):=\prod_{j=1}^dF_{r,\alpha_j}(x_j),\quad \bx=(x_1,\dots,x_d)\in
\T^d,\quad \alpha=(\alpha_1,\dots,\alpha_d).
\end{equation}

\begin{defi} \label{def2Sob}
Let $r>0$, $\alpha \in \R$ and $1\le p \le\infty$. Then $\bW^r_{p,\alpha}$ is defined as the
normed space of all $f\in L_p(\T^d)$ such that 
\[
f=  F_{r,\alpha}\ast
\varphi:=(2\pi)^{-d}\int_{\T^d}F_{r,\alpha}(\bx-\by)\varphi(\by)d\by 
\]
for some $\varphi \in L_p(\T^d)$, equipped with the norm
$\| \, f \, \|_{\bW^r_{p,\alpha}}:= \|\varphi\|_p$.
\end{defi}

It is well known and easy to prove that for all $r>0$ and $\alpha \in \R$ we
have $F_{r,\alpha}\in L_1$ (see \cite{TBook}, Ch.\ 1, Theorem 3.1). The extra
parameter $\alpha$ allows us to treat simultaneously classes of functions with
bounded mixed derivative and classes of functions with bounded trigonometric
conjugate of the mixed derivative. 
In the case $\alpha=r$, Definition \ref{def2Sob} is equivalent to the mentioned below generalization of the definition 
\eqref{def[Sobolev1]}--\eqref{def[Sobolev2]} in terms of the Weil fractional derivatives 
\eqref{def[WeilDerivative]}.
In the case $1<p<\infty$, the parameter
$\alpha$ does not play any role because the corresponding classes with different
$\alpha$ are equivalent. In the case of classical derivative with natural $r$ we
set $\alpha=r$. In the case $\alpha =(r,\dots,r)$ we drop it from the notation:
$\bW^r_p:=\bW^r_{p,(r,\dots,r)}$. For simplicity of notations we formulate the
majority of our results for classes $\bW^r_p$. In those cases, when $\alpha$
affects the result we point it out explicitly. 

We note that in the case $r\in \N$ and $1\leq p\leq \infty$ the above class (space) can be described 
in terms of mixed partial derivatives. The Sobolev space  $\Wrp$
of dominating mixed smoothness of order $r$ can be defined as the collection
of all $f \in L_p (\T^d)$ such that
\begin{equation} \label{def[Sobolev1]}
D^{\br(e)} f \in L_p (\T^d)\, ,\qquad \forall e \subset \{1,...,d\},
\end{equation}
where $\br(e)$ denotes the vector with components $r(e)_i = r$ for $i \in e$ and 
$r(e)_i = 0$ for $i \not\in e$. Derivatives have to be understood in the weak sense. 
We endow this space with the norm
\begin{equation} \label{def[Sobolev2]}
\|  f  \|_{\Wrp(\T^d)} := \sum_{e \subset \{1,...,d\}} |  f  |_{\Wrp(e)}, \qquad
|  f  |_{\Wrp(e)}
:= \
\| \, D^{\br(e)} f\, \|_p\,.
\end{equation}
This definition can be generalized to arbitrary $r \in \R$ based the $\br(e)$-Weil fractional derivatives
in the weak sense 
\begin{equation} \label{def[WeilDerivative]}
D^{\br(e)}f 
:= \
\sum_{\bk \in \Z^d(e)} \Big(\prod_{j \in e}(ik_j)^r \Big) \hat{f}(\bk) e^{i(\bk,\cdot)},
\end{equation}
where $(ia)^b := |a|^be^{(i\pi b \operatorname{sign} a)/2}$ for $a, b \in \R$, and  
$\Z^d(e) := \{\bk \in \Z^d:  k_j \neq 0, \ j \in e\}$ 
(see, e.g., \cite{Di00, DU13}).

For general $r > 0$ and $1 < p < \infty $ one may also use the condition
\[
\sum_{\bk \in \zz^d} \, \hat{f}(\bk) \, \prod_{j=1} \big(\max\{1,|k_j|\}\big)^r \,  e^{i(\bk,\cdot)}
\in L_p (\T^d)\, .
\]
In case $r\in \N$ this leads to an equivalent characterization.

By the Littlewood-Paley theorem \index{Littlewood-Paley decomposition!Classical}
\index{Theorem!Littlewood-Paley}, see Theorem \ref{T7.3.3} in the Appendix, we obtain a useful equivalent
norm for $\Wrp$ in case $1<p<\infty$ and $r>0$, namely
\begin{equation}\label{NeqW}
   \|f\|_{\Wrp} \asymp \Big\|\Big(\sum\limits_{\bs \in \N_0^d} 2^{r|\bs|_12}|\delta_{\bs}(f)(x)|^2\Big)^{1/2}\Big\|_p\,.
\end{equation}
This norm is particularly useful for the analysis of the approximation from the step-hyperbolic cross $Q_n$, see
\eqref{HC} above and Section 4 below. 


\subsection{Spaces of functions with bounded mixed difference}\index{Function spaces!Bounded mixed difference}
\label{Chardiff}

Let us
first recall the basic concepts. For the univariate functions $f:\T \to \C$ the
$m$th difference operator $\Delta_h^{m}$ is defined by
\begin{equation*}
\Delta_h^{m}(f,x) := \sum_{j =0}^{m} (-1)^{m - j} \binom{m}{j} f(x +
jh)\quad,\quad x\in \T, h\in [0,1]\,.
\end{equation*}

Let $e$ be any subset of $\{1,...,d\}$. For multivariate functions $f:\T^d\to
\C$ and $h\in [0,1]^d$ the mixed $(m,e)$th difference operator $\Delta_h^{m,e}$
is defined by 
\begin{equation*}
\Delta_h^{m,e} := \
\prod_{i \in e} \Delta_{h_i,i}^m\quad\mbox{and}\quad \Delta_h^{m,\emptyset} =  \operatorname{Id},
\end{equation*}
where $\operatorname{Id}f = f$ and $\Delta_{h_i,i}^m$ is the univariate
operator applied to the $i$-th variable of $f$ with the other variables kept
fixed. Let us refer to the recent survey \cite{PoSiTi13} for general properties of mixed moduli of smoothness in $L_p$. 

We first introduce spaces/classes $\bH^r_p$ of functions with bounded mixed difference. \index{Function spaces!Mixed smoothness H\"older-Nikol'skii}

\begin{defi}
Let $r > 0$ and $1 \le p \le \infty$. Fixing an integer $m > r$, we define the space $\bH^r_p$ as the set of all
all $f\in L_p(\T^d)$ such that for any $e \subset \{1,...,d\}$
\[
\big\|\Delta_\bh^{m,e}(f,\cdot)\big\|_p
\ \le \  
C\, \prod_{i \in e} |h_i|^r
\]
for some positive constant $C$, and introduce the norm in this space
\[
\| \, f \, \|_{\bH^r_p} :=
\sum_{e \subset \{1,...,d\}}
\, | \, f \, |_{\bH^r_p(e)},
\]
where 
\[
| \, f \, |_{\bH^r_p(e)} :=
\sup_{0 < |h_i| \le 2\pi, \ i \in e} \,  
\left(\prod_{i \in e} |h_i|^{-r} \right) \,\big\| \, \Delta_\bh^{m,e}(f,\cdot) \, \big\|_p\,.
\]
\end{defi} 

\begin{rem}
 Let us define the mixed $(m,e)$th modulus of smoothness by 
\begin{equation}\label{modc}
\omega_{m}^e(f,\bt)_p:= \sup_{|h_i| < t_i, i \in
e}\|\Delta_h^{m,e}(f,\cdot)\|_{p}\quad,\quad \bt \in [0,1]^d,
\end{equation}
for $f \in L_p(\T^d)$ (in particular, $\omega_{m}^{\emptyset}(f,t)_p = \|f\|_{p}$)\,. 
Then there holds the following relation
\begin{equation}\nonumber
| \, f \, |_{\bH^r_p(e)} 
\ \asymp \
\sup_{2\pi>t_i > 0, \ i \in e} \,  
\left(\prod_{i \in e} t_i^{-r}\right)\, \omega_{m}^e(f,\bt)_p\,.
\end{equation}
 \end{rem}  

Based on this remark, we will introduce Besov spaces of mixed smoothness
$\bB^r_{p,\theta}$, a generalization of $\bH^r_p$, see \cite{Am76, ScTr87, Vyb06, Ul06}. 

\index{Function spaces!Mixed smoothness Besov}
\begin{defi}
Let $r > 0$ and $1 \leq p \le \infty$. Fixing an integer $m > r$, we define the space $\bB^r_{p,\theta}$ as the set of all
 $f\in L_p(\T^d)$ such that the norm
\[
\| \, f \, \|_{\bB^r_{p,\theta}} :=
\sum_{e \subset \{1,...,d\}}
\, | \, f \, |_{\bB^r_{p,\theta}(e)}
\]
is finite, where for $e \subset \{1,...,d\}$
\[
| \, f \, |_{\bB^r_{p,\theta}(e)} :=
\begin{cases}
& \sup_{2\pi>t_i > 0, \ i \in e} \,  
\left(\prod_{i \in e} t_i^{-r} \right) \, \omega_{m}^e(f,\bt)_p,\quad \theta=\infty; \\[2ex] 
& \left(\int_{(0,2\pi)^d}\omega^e(f,\bt)_p^\theta
\biggl(\prod\limits_{i\in e} t_i^{-\theta r-1}\biggl) \operatorname{d} \bt \right)^{1/\theta},\quad \theta<\infty.
\end{cases}
\]
\end{defi} 
With this definition we have $\bB^r_{p,\infty}=\bH^r_p$. Notice that the definitions of $\bB^r_{p,\theta}$ and $\bH^r_p$ are independent of $m$ in the sense that different values of $m$  induce equivalent quasi-norms of these spaces. 
With a little abuse of notation, denote the corresponding unit ball
$$
\bB^r_{p,\theta}:= \{f: \|f\|_{\bB^r_{p,\theta}}\le 1\}.
$$

\begin{rem}
In many papers on hyperbolic cross approximation, especially  from the former Soviet Union, instead of the spaces
$\bW^r_p$, $\bH^r_p$ and  $\bB^r_{p,\theta}$, the authors considered their subspaces. 
Namely,  they studied functions $f$ in $\bW^r_p$, $\bH^r_p$ and  $\bB^r_{p,\theta}$, 
which satisfy an extra condition:  $f$ has zero mean value in each 
variable $x_i$, $i = 1,...,d$, that is,
\begin{equation}\nonumber
\int_{\T}f(\bx) \operatorname{d} x_i
\ = \ 0. 
\end{equation}
However, this does not affect generality from the point of view of multivariate approximation 
(but not high-dimensional approximation, when we want to control dependence on dimension $d$) 
due to the following observation. Let $\bF_d$ temporarily denote one of the above spaces in $d$-variables. 
Then we have the following ANOVA-like \index{ANOVA decomposition} decomposition for any $f \in \bF_d$
\[
f 
\ = \
c + \sum_{e \not= \varnothing} f_e,
\]
where $f_e$ are functions of $|e|$ variables $x_i$, $i \in e$, with zero mean values in 
the variables $x_i$, which  can be treated as an element from $\bF_{|e|}$. 
For details and bibliography see, \cite{Di84b, Di86, Tmon, TBook}.
 \end{rem}  
 
\begin{rem} It was understood in the beginning of the 1960s that hyperbolic crosses
are closely related with the approximation and numerical integration of functions 
with {\em dominating mixed smoothness} which initiated a systematic study of these function classes. The
following references have to be mentioned in connection with the development of the theory of
function spaces with dominating mixed smoothness: Nikol'skii \cite{Ni62,Ni63,Ni75}, Babenko \cite{Bab2}, Bakhvalov
\cite{Bak63}, Amanov \cite{Am76}, Temlyakov \cite{Tmon,
TBook}, Tikhomirov \cite{Ti87,Ti90}, Schmei{\ss}er, Triebel \cite{ScTr87}, Vyb{\'i}ral \cite{Vyb06} and Triebel
\cite{Tr10}.
\end{rem}

\subsection{Characterization via Fourier transform}\index{Fourier transform}
\label{sect:FS}

In this subsection, we will give a characterization of spaces $\bH^r_p$ and $\bB^r_{p,\theta}$ via Fourier transform. Let us first comment on the classical Korobov space
$\bE^r_d$ introduced in \cite{Korb1}, see also Subsection \ref{why} below. For
$r>0$ we define the Korobov space 
\begin{equation}\nonumber
  \bE_d^{r}:=\Big\{f\in L_1(\T^d)~:~\sup\limits_{\bk \in \Z^d}
  |\hat{f}(\bk)|\cdot \prod\limits_{i=1}^d \max\{1,|k_i|\}^{r} < \infty\Big\}\,.
\end{equation}
The function $F_{r,\alpha}$ defined in \eqref{Bernoulli} clearly belongs to
$\bE_d^{r}$. Using the Abel transformation twice, see Appendix \ref{App_ineq},
we can prove that for $\bs \in \N_0^d$ it holds
$\|A_{\bs}(F_{r,\alpha})\|_1 \asymp 2^{-|\bs|_1 r}$.
In other words, 
\begin{equation}\label{BernoulliBesov}
      \sup\limits_{\bs \in \N_0^d}
2^{r|\bs|_1}\|A_{\bs}(F_{r,\alpha})\|_1 < \infty\,.
\end{equation}
This immediately ensures that $F_{r,\alpha} \in L_1(\T^d)$. Moreover, \eqref{BernoulliBesov}
is exactly the condition for $F_{r,\alpha}$
belonging to $\bH^{r}_{1}$ as we will see below\,. In this sense, the
classical Korobov space $\bE_d^{r}$ is slightly larger than the space $\bH^r_{1}$. In
Subsection \ref{Chardiff} we have  seen that  Besov
spaces $\bB^r_{p,\theta}$  are defined in a classical way by using exclusively
information on the ``time side'', i.e., without any information on the Fourier
coefficients. Such a useful tool
is so far not available for the classical Korobov space $\bE^{r}_d$.

Let us now characterize spaces $\bH^r_p$ and $\bB^r_{p,\theta}$ via dyadic decompositions of the Fourier transform. We begin with the
simplest version in terms of $\delta_\bs(f)$.
It is known that for $r > 0$ and $1 < p < \infty$,
\begin{equation}\label{DefH}
\|f\|_{\bH^r_p}
\ \asymp \
 \sup_\bs \|\delta_\bs(f)\|_p 2^{r|\bs|_1},
\end{equation}
and
\begin{equation}\label{DefB}
\|f\|_{\bB^r_{p,\theta}}
\ \asymp \
 \left(\sum_{\bs}\left(\|\delta_\bs(f)\|_p
2^{r|\bs|_1}\right)^\theta\right)^{1/\theta},
\end{equation}
and that for $r > 0$ and $1 \le p \le \infty$,
\begin{equation}\nonumber
\|f\|_{\bH^r_p}
\ \asymp \
 \sup_\bs \|A_\bs(f)\|_p 2^{r|\bs|_1},
\end{equation}
and
\begin{equation}\nonumber
\|f\|_{\bB^r_{p,\theta}}
\ \asymp \
 \left(\sum_{\bs}\left(\|A_\bs(f)\|_p
2^{r|\bs|_1}\right)^\theta\right)^{1/\theta}
\end{equation}
(see, e.g., \cite{Di86, Di00, Tmon,TBook, ScTr87, Ul06, NUU15}).

 The characterizations in the right hand side of  \eqref{DefH} \eqref{DefB} 
are simple and work well for $1<p<\infty$. In the cases
$p=1$ and $p=\infty$ the operators $\delta_\bs(\cdot)$ are not uniformly bounded
as operators from $L_p$ to $L_p$. This issue is resolved by replacing operators
$\delta_\bs(\cdot)$ by operators $A_\bs(\cdot)$.
Such a
modification gives equivalent definitions of classes $\bB^r_{p,\theta}$ in the
case $1\le p \le \infty$. We now present a general way for characterizing the Besov classes
for $0<p\leq 1$ in the spirit as done in \cite[Chapt.\ 2]{ScTr87}. In order to
proceed to $0<p\leq 1$ we need the concept of a smooth dyadic decomposition of
unity.
    \begin{defi}\label{cunity} Let $\Phi(\R)$ be the class of all
      systems
       $\varphi = \{\varphi_n(x)\}_{n=0}^{\infty} \subset C^{\infty}_0(\R)$
       satisfying
       \begin{description}
       \item(i) ${\supp}\,\varphi_0 \subset \{x:|x| \leq 2\}$\, ,
       \item(ii) ${\supp}\,\varphi_n \subset \{x:2^{n-1} \leq |x|
       \leq 2^{n+1}\}\quad,\quad n= 1,2,... ,$
       \item(iii) For all $\ell \in \N_0$ it holds
       $\sup\limits_{x,n}
       2^{n\ell}\, |D^{\ell}\varphi_n(x)| \leq c_{\ell} <\infty$\, ,
       \item(iv) $\sum\limits_{n=0}^{\infty} \varphi_n(x) = 1$ for all
       $x\in \R$.
       \end{description}
    \end{defi}

    \begin{rem}\label{speciald}
      The class $\Phi(\R)$ is not empty. We consider the following
      standard example.
      Let $\varphi_0(x)\in C^{\infty}_0(\R)$ be a smooth function with $\varphi_0(x) =
      1$ on $[-1,1]$ and $\varphi_0(x) = 0$
      if $|x|>2$. For $n>0$ we define
      $$
         \varphi_n(x) = \varphi_0(2^{-n}x)-\varphi_0(2^{-n+1}x).
      $$      It is easy to verify that the system $\varphi =
      \{\varphi_n(x)\}_{n=0}^{\infty}$ satisfies (i) - (iv).
    \end{rem}
    \noindent Now we fix a system $\{\varphi_n\}_{n\in \zz} \in \Phi(\R)$, where we 
    put $\varphi_n \equiv 0$ if $n<0$. For $\bs = (s_1,...,s_d) \in \zz^d$ let
the building blocks $f_{\bs}$ be given by
    \begin{equation}\label{f2_2}
	f_{\bs}(\bx) = \sum\limits_{\bk\in \zz^d}
	\varphi_{s_1}(k_1)\cdot...\cdot\varphi_{s_d}(k_d)\hat{f}(\bk)e^{i \bk\cdot
\bx}\quad,\quad \bx\in \T^d\,,\bs\in \N_0^d\,.
    \end{equation}
    \begin{defi}\label{d1} Let $0< p,\theta\leq
    \infty$ and $r>\sigma_p:=(1/p-1)_+$. Then $\Brpt$ is defined as the
collection of all $f\in
    L_1(\T^d)$ such that
    \begin{equation}\nonumber
	\|f\|_{\Brpt}^{\varphi}:=\Big(\sum\limits_{\bs\in \N_0^d}
2^{|\bs|_1r\theta}\|f_{\bs}\|_p^{\theta}\Big)^{1/\theta}
    \end{equation}
    is finite (usual modification in case $q=\infty$).
    \end{defi}
Recall, that this definition is independent of the chosen system $\varphi$ in
the sense of equivalent (quasi-)norms. Moreover, in case $\min\{p,q\}\geq 1$ the
defined spaces are Banach spaces, whereas they are quasi-Banach spaces in case
$\min\{p,q\} < 1$. For details confer \cite[2.2.4]{ScTr87}.

As already mentioned above the two approaches for the definition of the Besov spaces $\Brpt$ of mixed smoothness are equivalent 
if $p,\theta \geq 1$. Concerning difference characterizations for the quasi-Banach range of 
parameters there are still some open questions, see \cite[2.3.4, Rem.\
2]{ScTr87} and Theorem \ref{thm95} below. Let us state the following general equivalence result.

\begin{lem}\label{diff} Let $1\leq p,\theta\leq \infty$ and $m\in \N$ with
$m>r>0$. Then 
$$
    \|f\|_{\Brpt}^{\varphi} \asymp \|f\|_{\Brpt}\quad,\quad
f\in L_1(\T^d)\,.
$$
\end{lem}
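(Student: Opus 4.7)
The strategy is to prove the equivalence separately for each ``active coordinate set'' $e \subseteq \{1,\ldots,d\}$. On the difference side the norm is already decomposed into the pieces $|f|_{\Brpt(e)}$ by definition. On the Fourier side, every $\bs \in \N_0^d$ has a natural active set $e(\bs) := \{j : s_j > 0\}$, and by splitting the sum accordingly one obtains
\[
\|f\|_{\Brpt}^{\varphi} \asymp \sum_{e \subseteq \{1,\ldots,d\}} \Big(\sum_{\bs:\, e(\bs)=e} 2^{r|\bs|_1\theta} \|f_{\bs}\|_p^{\theta}\Big)^{1/\theta} =: \sum_{e} |f|^{\varphi}_{\Brpt, e}
\]
(with the usual modification for $\theta=\infty$), the equivalence constant depending only on $d$ and $\theta$. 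Hence it suffices to prove $|f|^{\varphi}_{\Brpt, e} \asymp |f|_{\Brpt(e)}$ for each fixed $e$, and the tensor-product structure of both sides reduces this in turn to an iterated one-dimensional argument in the coordinates of $e$.

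For $|f|_{\Brpt(e)} \lesssim |f|^{\varphi}_{\Brpt, e}$, I would use the decomposition $f = \sum_{\bs} f_{\bs}$ (valid in $L_p$ for $1 \le p < \infty$, and in the weak sense for $p = \infty$) and apply the triangle inequality inside the mixed modulus. Since $f_{\bs}$ has Fourier support in the dyadic block $\prod_j\{|k_j| \leq 2^{s_j+1}\}$, the univariate Bernstein estimate combined with the trivial bound $\|\Delta^m_{h_i,i}\|_{p \to p} \leq 2^m$ gives, coordinate by coordinate,
\[
\|\Delta^m_{h_i,i} f_\bs\|_p \lesssim \min\{1,(2^{s_i}|h_i|)^m\}\|f_{\bs}\|_p,\qquad i \in e.
\]
Iterating over $i \in e$ (the factors commute since they act on different variables) yields $\|\Delta_\bh^{m,e} f\|_p \lesssim \sum_{\bs} \prod_{i \in e}\min\{1,(2^{s_i} |h_i|)^m\}\|f_\bs\|_p$. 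Taking the weighted $L_\theta$-integral in $\bh$ and applying the classical multidimensional Hardy inequality in $\ell_\theta$ (one application per coordinate of $e$; here $m > r$ is used in an essential way) converts the right-hand side into the seminorm $|f|^\varphi_{\Brpt, e}$.

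The reverse bound $|f|^{\varphi}_{\Brpt, e} \lesssim |f|_{\Brpt(e)}$ is the technical core. For each $\bs$ with $e(\bs) = e$ I would construct a Fourier multiplier $\mu_\bs$, supported essentially in the same dyadic block as the symbol of $f_\bs$, such that the corresponding convolution kernel $K_\bs := \F^{-1}\mu_\bs$ satisfies $\|K_\bs\|_1 \leq C$ uniformly in $\bs$ and
\[
f_\bs = K_\bs \ast \Delta^{m, e}_{\bh(\bs)} f,\qquad \bh(\bs) := (c_i\, 2^{-s_i})_{i \in e},
\]
where the constants $c_i$ are chosen so that the symbol $\prod_{i\in e}(1 - e^{i c_i 2^{-s_i}\xi_i})^m$ stays bounded away from zero on the support of the localizer $\prod_{i\in e}\varphi_{s_i}(\xi_i)$. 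Young's inequality then yields $\|f_\bs\|_p \lesssim \omega^{e}_m(f, \bh(\bs))_p$; multiplying by $2^{r|\bs|_1}$, taking the $\ell_\theta$-norm over $\{\bs : e(\bs) = e\}$, and comparing the resulting discrete weighted sum to the integral defining $|f|_{\Brpt(e)}$ via the monotonicity of $\omega^e_m$ completes the bound.

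The principal obstacle is verifying the uniform $L_1$-bound $\|K_\bs\|_1 \leq C$. For $1 < p < \infty$ it would suffice to check a Mikhlin-type condition on $\mu_\bs$, readily supplied by property (iii) of Definition \ref{cunity}; but here we need the endpoints $p \in \{1, \infty\}$ as well, and for those a genuine $L_1$-bound on $K_\bs$ is unavoidable. The remedy is a rescaling argument: after pulling out the dyadic dilations, all kernels $K_\bs$ become dilates/translates of a single Schwartz-type kernel built from a fixed cut-off and the bounded smooth factor $(1 - e^{ic_i\xi})^{-m}$, and the classical $L_1$-estimate for Fourier transforms of compactly supported $C^\infty$ functions then delivers the required uniform bound.
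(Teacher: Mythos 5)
The paper does not actually prove Lemma~\ref{diff}; it is stated as a known equivalence, with the surrounding discussion pointing to \cite{ScTr87}, \cite{Ul06}, \cite{NUU15}. Your proposal reproduces the standard approach from those references (Bernstein-type bound plus a discrete Hardy inequality in one direction; a bounded Fourier multiplier built from reciprocals of the difference symbol plus Young's inequality in the other), and the two core technical estimates are correct, including the argument for the uniform $L_1(\T^d)$ bound on the kernels $K_\bs$ via rescaling and Poisson summation.

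There is, however, a wrong step in the reduction. The claim that ``it suffices to prove $|f|^{\varphi}_{\Brpt, e} \asymp |f|_{\Brpt(e)}$ for each fixed $e$'' asserts a term-by-term equivalence that is false. Already for $e=\emptyset$ this would read $\|f\|_p \asymp \|f_0\|_p$ (with $f_0$ the $\bs=\mathbf 0$ Littlewood--Paley piece), which is plainly wrong: take any trigonometric polynomial with spectrum outside $\supp\varphi_0$, then $f_0 = 0$ yet $\|f\|_p>0$. The same phenomenon persists for every $e$: the modulus piece $|f|_{\Brpt(e)}$ genuinely receives contributions from \emph{all} dyadic blocks $\bs$, not only those with $e(\bs)=e$. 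In fact your own estimate shows this. After the triangle inequality, the Bernstein bound and the Hardy step, the sum over $\bs$ remains unrestricted and the exponential weight is only generated in the coordinates $i\in e$; one arrives at $|f|_{\Brpt(e)} \lesssim \sum_\bs \prod_{i\in e}2^{rs_i}\|f_\bs\|_p$ (for $\theta=1$, mutatis mutandis for general $\theta$), and it is only by the crude bound $\prod_{i\notin e} 2^{rs_i}\geq 1$ (using $r>0$) that you conclude $|f|_{\Brpt(e)} \lesssim \|f\|_{\Brpt}^\varphi$, not $\lesssim |f|^\varphi_{\Brpt,e}$.

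The good news is that nothing in your actual estimates relies on the false term-by-term equivalence. What you prove is $|f|_{\Brpt(e)} \lesssim \|f\|_{\Brpt}^\varphi$ (for every $e$) and $|f|^\varphi_{\Brpt,e} \lesssim |f|_{\Brpt(e)}$ (for every $e$; this latter one \emph{is} term by term, because the multiplier construction uses $\bs$ with $e(\bs)=e$ exactly). Summing each over $e\subseteq\{1,\dots,d\}$ gives the two-sided equivalence of $\|f\|_{\Brpt}^\varphi$ and $\|f\|_{\Brpt}$. So the remedy is simply to state the reduction correctly and not promise a term-by-term equivalence; the multiplier and Hardy arguments are sound as written.
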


\noindent 
As already mentioned above we have the equivalent characterization \eqref{NeqW} for the spaces 
$\bW^r_p$ in case $r > 0$ and $1 < p < \infty$. There is also a characterization in terms of the
so-called rectangular means of differences, i.e.,
\begin{equation}\label{rectm}
\mathcal{R}_{m}^e(f,\bt,\bx):=
\frac{1}{t_1}\int_{[-t_1,t_1]}\cdots\frac{1}{t_d}\int_{[-t_d,t_d]}|\Delta_{
(h_1, ... , h_d) } ^ { m , e } (f , \bx)|dh_d...dh_1\,,\, \bx\in \R^d,
\bt\in (0,1]^d\,.
\end{equation}

The following lemma is a straight-forward modification of \cite[Thm.\
3.4.1]{Ul06}, see also \cite{NUU15}.

\begin{lem}\label{fdiff} Let $1 <p <\infty$ and
$r>0$. Let further $m \in \N$ be a natural number with $m>r$\,. Then 
$$
  \|f\|_{\bW^r_{p}} \asymp \|f\|^{(m)}_{\bW^r_p}\quad,\quad f\in L_1(\R)\,,
$$
where 
$$
  \|f\|^{(m)}_{\bW^r_p}:=\Big\|\Big(\sum\limits_{\bs\in \N_{0}^d}
2^{r|\bs|_12}\mathcal{R}^{e(\bs)}_m(f,2^{-\bs},\cdot)^2\Big)^{1/2}
\Big\|_p
$$ 
and $2^{-\bs}:=(2^{-s_1},...,2^{-s_d})$\,.
\end{lem}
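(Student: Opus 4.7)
My strategy is to reduce both inequalities to the Littlewood--Paley characterization \eqref{NeqW} by comparing the rectangular means $\mathcal{R}_m^{e(\bs)}(f,2^{-\bs},x)$ pointwise with the dyadic blocks $\delta_{\bs}(f)(x)$, up to Peetre-type maximal functions, and then invoking the vector-valued Fefferman--Stein maximal inequality in $L_p(\ell_2)$, available because $1<p<\infty$.

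\textbf{Upper bound} $\|f\|^{(m)}_{\bW^r_p}\lesssim\|f\|_{\bW^r_p}$. Decomposing $f=\sum_{\bs'}\delta_{\bs'}(f)$, the key pointwise estimate to be established is
\[
\mathcal{R}_m^{e(\bs)}\bigl(\delta_{\bs'}(f),2^{-\bs},x\bigr)\;\lesssim\;\prod_{i\in e(\bs)}\min\bigl\{1,2^{-m(s_i-s_i')_+}\bigr\}\,\delta_{\bs'}^*(f)(x),
\]
where $\delta_{\bs'}^*(f)$ is the tensor Peetre maximal function of $\delta_{\bs'}(f)$. The decay factor on the low-frequency side ($s_i'<s_i$) comes from Taylor-expanding $\Delta_{h_i}^m$ and applying Bernstein's inequality on frequency-localized pieces; the trivial bound $1$ handles the complementary range. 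Multiplying by $2^{r|\bs|_1}$, the effective kernel $\prod_i 2^{r(s_i-s_i')}\min\{1,2^{-m(s_i-s_i')_+}\}$ lies in $\ell_1(\zz^d)$ precisely because $0<r<m$. Young's convolution inequality in $\ell_2(\N_0^d)$ therefore reduces the claim to bounding $\bigl\|\bigl(\sum_{\bs'}2^{2r|\bs'|_1}|\delta_{\bs'}^*(f)|^2\bigr)^{1/2}\bigr\|_p$. Combining Fefferman--Stein with the standard control $\delta_{\bs'}^*(f)\lesssim\mathcal{M}(\delta_{\bs'}(f))$ (iterated Hardy--Littlewood maximal operator) and \eqref{NeqW} yields the upper estimate.

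\textbf{Lower bound} $\|f\|_{\bW^r_p}\lesssim\|f\|^{(m)}_{\bW^r_p}$. For the opposite direction I plan to realize $\delta_{\bs}(f)$ as a smooth average of the differences $\Delta_h^{m,e(\bs)}(f,\cdot)$. The Fourier symbol $\prod_{i\in e(\bs)}(e^{ik_ih_i}-1)^m$ of $\Delta_h^{m,e(\bs)}$ has modulus comparable to $\prod_{i\in e(\bs)}(2^{s_i}|h_i|)^m$ on $\rho(\bs)$ when $|h_i|\asymp 2^{-s_i}$, hence, for a smooth weight $\psi(h)$ concentrated at such scales, one obtains a reproducing identity of the form $\delta_{\bs}(f)=T_{\bs}\!\int\psi(2^{\bs}h)\,\Delta_h^{m,e(\bs)}(f,\cdot)\,dh$, where $T_{\bs}$ is a Fourier multiplier uniformly bounded on $L_p$ by a Mihlin--H\"ormander (equivalently, Marcinkiewicz) argument. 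Together with a trivial pointwise bound of the integrand in absolute value by a Peetre-enlarged version of $\mathcal{R}_m^{e(\bs)}(f,2^{-\bs},\cdot)$, this gives $|\delta_{\bs}(f)(x)|\lesssim\mathcal{M}\bigl(\mathcal{R}_m^{e(\bs)}(f,2^{-\bs},\cdot)\bigr)(x)$, and a final application of Fefferman--Stein combined with \eqref{NeqW} closes the estimate.

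\textbf{Main obstacle.} The genuine difficulty is the lower bound: one must recover the signed, oscillatory object $\delta_{\bs}(f)$ from the unsigned averaged quantity $\mathcal{R}_m^{e(\bs)}$, with multiplier norms uniform in $\bs$. Establishing the requisite Mihlin--Marcinkiewicz bound respecting the anisotropic dyadic structure, and carrying out the case-splitting over directions with $s_i=0$ (which mirrors the decomposition over $e\subset\{1,\ldots,d\}$ already present in \eqref{def[Sobolev2]}), form the technical core of the argument. The hypothesis $1<p<\infty$ is indispensable throughout, as Fefferman--Stein on $L_p(\ell_2)$ fails at the endpoints, which also explains why the lemma is not stated for $p\in\{1,\infty\}$.
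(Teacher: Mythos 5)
Your plan is correct and matches the approach in the reference the paper cites — the lemma is not proved in the paper itself but deferred to a ``straight-forward modification of [Ul06, Thm.\ 3.4.1]'', where exactly the scheme you sketch is carried out: compare rectangular means with Littlewood--Paley blocks via Peetre maximal functions, sum via Young/Schur in $\ell_2$, close with vector-valued Fefferman--Stein, and invert via anisotropic multiplier estimates. The constraints $0<r<m$ and $1<p<\infty$ enter exactly where you say.

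Two places deserve more care than your sketch gives them. First, in the lower bound your claim that $\prod_{i\in e(\bs)}(e^{ik_ih_i}-1)^m$ has modulus $\asymp\prod_i(2^{s_i}|h_i|)^m$ ``when $|h_i|\asymp 2^{-s_i}$'' is not quite right as stated: $e^{i\xi}-1$ vanishes on $2\pi\Z$, so the averaged symbol can degenerate unless the cutoff $\psi$ is supported at a fixed \emph{sub}-scale $|h_i|\in[c2^{-s_i},2c\,2^{-s_i}]$ with $c$ small enough that $k_ih_i\in(0,\pi)$ for all $|k_i|\in[2^{s_i-1},2^{s_i})$; only then is $|e^{ik_ih_i}-1|\asymp|k_ih_i|$ and the averaged symbol uniformly bounded below. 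Second, $T_{\bs}$ must simultaneously invert the averaged symbol and project onto $\rho(\bs)$; verifying an anisotropic Marcinkiewicz multiplier bound uniform in $\bs$ (including the coordinates $i\notin e(\bs)$, where $\rho(\bs)$ reduces to $\{k_i=0\}$) is the technical body of the argument and should be spelled out rather than asserted. Also note that the trivial bound $\mathcal R^{\emptyset}_m(f,\bf 1,\cdot)=|f|$ supplies the low-frequency/$L_p$ term and makes the recovery consistent on the blocks where no difference is taken.
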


\subsection{Embeddings}\index{Embedding}

Here we review some useful embeddings between the classes $\Brpt$ and
$\bW^r_{p}$. 

\begin{lem}\label{emb} Let $0<p<u\leq \infty$, $\beta = 1/p-1/u$, $r\in \R$, and $0<\theta\leq \infty$.
\begin{description}
 \item (i) It holds
  $$     
    \Brpt \hookrightarrow \bB^{r-\beta}_{u,\theta}\,.\\
  $$
 \item (ii) If in addition $1<p<u<\infty$ then
  $$
    \bW^r_p \hookrightarrow \bW^{r-\beta}_{u}\,.
  $$
  Both embeddings are non-compact. 
  \item (iii) If $r>1/p$ then the embedding
 \begin{equation}\label{BinC}
    \Brpt \hookrightarrow C(\T^d)
 \end{equation}
 is compact. In case $r=1/p$ and $\theta \leq 1$ the embedding \eqref{BinC}
keeps valid but is not compact. 
 \item(iv) If $1<p<\infty$ and $r>0$ then 
 \begin{equation}\label{chainBFB}
    \bB^r_{p,\min\{p,2\}} \hookrightarrow \Wrp \hookrightarrow
\bB^r_{p,\max\{p,2\}}\,.
 \end{equation}
\end{description}
\end{lem}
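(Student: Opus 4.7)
I would attack all four assertions through the dyadic-block characterizations recorded just above the lemma: for every $f\in L_1(\T^d)$ both $\|f\|_{\bB^r_{p,\theta}}$ and $\|f\|_{\bW^r_p}$ are equivalent to suitable $\ell^\theta$-~or $\ell^2$-norms of the sequences $(\|A_{\bs}(f)\|_p)$ or $(\|\delta_{\bs}(f)\|_p)$, and each block is a trigonometric polynomial whose frequency support lies in a rectangle of side-lengths $\asymp 2^{s_j}$. The two analytic inputs will be the Nikol'skii inequality (Theorem \ref{T2.4.5}) applied block-by-block, and Minkowski's integral inequality applied to the double norm $L^p$ versus $\ell^q$.

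\textbf{Part (i) and non-compactness.} Using the $A_{\bs}$-characterization (valid for all $1\le p\le\infty$, and for $0<p<1$ via the smooth decomposition of Definition \ref{d1}) together with Theorem \ref{T2.4.5} applied to $A_{\bs}(f)$, I would obtain $\|A_{\bs}(f)\|_u \lesssim 2^{\beta|\bs|_1}\|A_{\bs}(f)\|_p$. Plugging this into the Besov characterization and absorbing $2^{\beta|\bs|_1}$ into the smoothness weight $2^{(r-\beta)|\bs|_1}$ yields $\|f\|_{\bB^{r-\beta}_{u,\theta}}\lesssim \|f\|_{\bB^r_{p,\theta}}$. For non-compactness I would pick Dirichlet-type polynomials $t_n\in \Tr(\rho(\bs_n))$ that saturate Nikol'skii (so $\|t_n\|_u\asymp 2^{\beta|\bs_n|_1}\|t_n\|_p$), renormalized so $\|t_n\|_{\bB^r_{p,\theta}}\asymp 1$; placing the $\bs_n$ in pairwise disjoint dyadic shells makes $(t_n)$ a bounded sequence in $\bB^r_{p,\theta}$ whose pairwise differences have $\bB^{r-\beta}_{u,\theta}$-norm $\asymp 1$, ruling out a convergent subsequence. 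The same mechanism handles non-compactness in (ii).

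\textbf{Parts (iii) and (iv).} For (iii) I would first prove $\bB^r_{p,\theta}\hookrightarrow \bB^{1/p}_{p,1}\hookrightarrow C(\T^d)$. The first embedding, valid for $r>1/p$, is a single H\"older step in $\ell^\theta(\bs)$ applied to $2^{|\bs|_1/p}=2^{r|\bs|_1}\cdot 2^{-(r-1/p)|\bs|_1}$, with the second factor summable precisely because $r>1/p$. The second uses $\|A_{\bs}(f)\|_\infty\lesssim 2^{|\bs|_1/p}\|A_{\bs}(f)\|_p$ and uniform convergence of trigonometric partial sums. Compactness follows by a standard truncation: if $S_N$ is the projection onto $\bigoplus_{|\bs|_1\le N}\Tr(\rho(\bs))$, then $\|f-S_N f\|_\infty\to 0$ uniformly on the unit ball of $\bB^r_{p,\theta}$, while $S_N$ has finite-dimensional range. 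The limiting case $r=1/p,\theta\le 1$ is handled with $\ell^\theta\hookrightarrow \ell^1$ replacing H\"older, and non-compactness is witnessed by the same Nikol'skii-saturating blocks in distinct shells. For (iv) I would use the two characterizations
\[
\|f\|_{\bW^r_p}\asymp \bigl\|\|2^{r|\bs|_1}\delta_{\bs}(f)\|_{\ell^2(\bs)}\bigr\|_{L^p}, \qquad \|f\|_{\bB^r_{p,\theta}}\asymp \bigl\|\|2^{r|\bs|_1}\delta_{\bs}(f)\|_{L^p}\bigr\|_{\ell^\theta(\bs)},
\]
after which both embeddings reduce to Minkowski's integral inequality in the form $\|g\|_{L^p(\ell^q)}\le \|g\|_{\ell^q(L^p)}$ when $p\le q$ (and the reverse when $p\ge q$), combined with the pointwise monotonicity of $\ell^q$-norms under counting measure.

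\textbf{The main obstacle: part (ii).} The genuine difficulty is the mixed Sobolev embedding in (ii), because the $\ell^2$-square function that enters the Littlewood--Paley description of $\bW^r_p$ does not split across blocks, so the block-wise Nikol'skii argument of (i) is not enough. My plan is to pass to the Bernoulli-kernel representation of Definition \ref{def2Sob}: writing $f=F_{r,\alpha}\ast\varphi$ with $\varphi\in L_p$, the claim $f\in \bW^{r-\beta}_u$ becomes $G_\beta\ast\varphi\in L_u$, where $G_\beta$ is the tensor-product kernel with Fourier multiplier $\prod_j \max\{1,|k_j|\}^{-\beta}$. Boundedness $G_\beta\ast\colon L_p(\T^d)\to L_u(\T^d)$ with $1/p-1/u=\beta$ is then a mixed weak-type Young / Hardy--Littlewood--Sobolev estimate; the hyperbolic shape of the super-level sets of $G_\beta$ introduces logarithmic losses which are absorbed by the vector-valued Fefferman--Stein maximal inequality applied coordinate-by-coordinate. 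An equivalent, cleaner-looking route identifies $\bW^r_p$ with the mixed Triebel--Lizorkin space $\mathbf F^r_{p,2}$ for $1<p<\infty$ and invokes the Sobolev embedding in that scale as developed in \cite{ScTr87}. Regardless of the route, the technical core --- vector-valued singular-integral estimates in the mixed-smoothness setting --- is where the entire weight of the proof sits.
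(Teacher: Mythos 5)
Your decomposition of the problem is accurate: parts (i), (iii), and (iv) are indeed routine consequences of block-wise Nikol'skii, a standard truncation argument, and Minkowski's integral inequality respectively, and your treatment of each is essentially the right standard proof. The paper states this lemma without proof (citing \cite{ScTr87} and \cite{Vyb06} for this circle of facts), so the closest comparison is with those references; your second proposed route for (ii), namely identifying $\bW^r_p$ with the mixed Triebel--Lizorkin space $\mathbf{F}^r_{p,2}$ for $1<p<\infty$ and invoking the Sobolev-type embedding in the $S^r_{p,q}F$ scale established in \cite[2.4.1]{ScTr87}, is exactly the standard reference and is fully correct.

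Your first sketched route for (ii) is, however, not a real argument as written. You correctly note that a direct Hardy--Littlewood--Sobolev estimate for the hyperbolic kernel $G_\beta$ picks up logarithmic factors: the Bernoulli-type kernel with Fourier multiplier $\prod_j\max\{1,|k_j|\}^{-\beta}$ has super-level sets of measure $\asymp \lambda^{-1/(1-\beta)}(\log\lambda)^{d-1}$, so a weak-type argument based on the distribution function of the kernel genuinely loses a logarithm and does \emph{not} give the claimed $L^p\to L^u$ bound on the nose. The fix is not to ``absorb the loss via Fefferman--Stein applied coordinate-by-coordinate''. The Fefferman--Stein vector-valued maximal inequality enters the correct proof in a qualitatively different way: one first decomposes $f$ into the dyadic rectangular blocks $\delta_\bs(f)$, uses a Plancherel--Polya--Nikol'skii pointwise bound $|\delta_\bs(f)(x)|\lesssim 2^{\beta|\bs|_1}\bigl(M^d(|\delta_\bs(f)|^t)(x)\bigr)^{1/t}$ for small $t$ (valid because each block is band-limited to a rectangle), and only then applies the vector-valued maximal inequality to the full $\ell^2$-square function. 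This route never confronts the hyperbolic geometry of $G_\beta$'s level sets at all; there are no logarithmic losses to absorb. As written, your first sketch would stall precisely at the level-set estimate, whereas bypassing that step via the block decomposition and \cite{ScTr87} (as in your alternative) is the correct argument. So: keep your Triebel--Lizorkin route, discard the ``absorbing log losses'' narrative for the Bernoulli-kernel route.
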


Let us particularly mention the following two non-trivial embeddings between $\bB$
and $\bW$-spaces for different metrics.

 \begin{lem}\label{L3.5c} Let $0 < p < u<\infty$, $1<u<\infty$ and $\beta:=1/p-1/u$. Then for
$r\ge\beta$ we have
  \begin{equation}\label{Fremb}
    \bB^r_{p,u} \hookrightarrow \bW^{r-\beta}_u\,.
 \end{equation}
 \end{lem}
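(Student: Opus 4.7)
The plan is to combine the derivative characterization of $\bW^{r-\beta}_u$ with Bernstein's inequality and the sharp summation inequality contained in Remark \ref{RT2.4.6}. The central point is that the Nikol'skii-type gain of $2^{\beta|\bs|_1}$ when raising integrability from $L_p$ to $L_u$ on a dyadic block exactly cancels the loss of smoothness from $r$ to $r-\beta$.

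First I would work in the Banach range $1\le p < u <\infty$. Let $f\in \bB^r_{p,u}$ and decompose $f=\sum_\bs \delta_\bs(f)$. Since for $1<u<\infty$ one has $\|f\|_{\bW^{r-\beta}_u}\asymp \|D^{r-\beta}_\alpha f\|_u$ (after separating the mean, which is trivial), and since $D^{r-\beta}_\alpha$ commutes with the block projectors, I may apply inequality \eqref{2.4.4R} of Remark \ref{RT2.4.6} with $q$ there equal to our $p$ and $p$ there equal to our $u$ to the decomposition $D^{r-\beta}_\alpha f=\sum_\bs g_\bs$, $g_\bs:=D^{r-\beta}_\alpha \delta_\bs(f)\in \Tr(2^\bs,d)$. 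This yields
\[
\|D^{r-\beta}_\alpha f\|_u \;\lesssim\; \Bigl(\sum_\bs \|g_\bs\|_p^{u}\, 2^{|\bs|_1(u/p-1)}\Bigr)^{1/u}.
\]
Bernstein's inequality (Theorem \ref{T2.4.2}) applied on each block gives $\|g_\bs\|_p\lesssim 2^{(r-\beta)|\bs|_1}\|\delta_\bs(f)\|_p$, and since $u/p-1=u\beta$ I obtain
\[
\|f\|_{\bW^{r-\beta}_u}\;\lesssim\; \Bigl(\sum_\bs 2^{u(r-\beta)|\bs|_1}\|\delta_\bs(f)\|_p^{u}\,2^{u\beta|\bs|_1}\Bigr)^{1/u}=\Bigl(\sum_\bs 2^{ur|\bs|_1}\|\delta_\bs(f)\|_p^{u}\Bigr)^{1/u}\;\asymp\; \|f\|_{\bB^r_{p,u}},
\]
the final equivalence being \eqref{DefB}. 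The hypothesis $r\ge \beta$ only enters to guarantee that $r-\beta\ge 0$, so that the Bernstein step and the derivative characterization of $\bW^{r-\beta}_u$ make sense.

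The main obstacle is the quasi-Banach range $0<p<1$, which the statement also covers. There two adjustments are required. First, the block characterization \eqref{DefB} has to be replaced by the smooth-dyadic-block version of Definition \ref{d1} with $f_\bs$ in place of $\delta_\bs(f)$, since the Dirichlet projections $\delta_\bs$ are no longer uniformly bounded on $L_p$. Second, one needs the $p<1$ analogue of Remark \ref{RT2.4.6}: an estimate
\[
\Bigl\|\sum_\bs t_\bs\Bigr\|_u\;\lesssim\;\Bigl(\sum_\bs \|t_\bs\|_p^{u}\,2^{|\bs|_1(u/p-1)}\Bigr)^{1/u},\qquad t_\bs\in \Tr(2^\bs,d),\quad 0<p<1\le u<\infty.
\]
This is proved by the same scheme as \eqref{2.4.4R}, the key replacement being the Nikol'skii inequality for trigonometric polynomials in $L_p$ with $0<p<1$ (which extends Theorem \ref{T2.4.5} to that range) together with the $p$-triangle inequality $\|\cdot\|_p^p\le \sum\|\cdot\|_p^p$. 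Bernstein's inequality itself extends to $0<p<1$ for polynomials on dyadic frequency supports by the same Fourier-multiplier reasoning. With these two adjustments the calculation above goes through verbatim and delivers $\|f\|_{\bW^{r-\beta}_u}\lesssim \|f\|_{\bB^r_{p,u}}$ in the full range of parameters stated in the lemma.
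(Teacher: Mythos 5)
Your argument is essentially the paper's own: Remark \ref{JawFr} states that Lemma \ref{L3.5c} follows from Theorem \ref{T2.4.6}, namely \eqref{2.4.4}/\eqref{2.4.4R} with (in your notation) the outer index equal to $u$ and the inner to $p$, combined with the relation $\|t^{(r-\beta)}\|_p\asymp 2^{(r-\beta)|\bs|_1}\|t\|_p$ for $t\in\Tr(\rho(\bs))$, which is exactly your Bernstein step. Your computation, including the identity $u/p-1=u\beta$ that makes the logarithmic factors match, is correct for $1<p<u<\infty$. For $0<p\le 1$ the paper itself does not work out the details in this setting and instead cites the atomic-decomposition route of Vyb\'iral and Hansen--Vyb\'iral; your sketch via a $p<1$ Nikol'skii inequality and the $p$-triangle inequality is plausible and in the same spirit, but note that for $p=1$ one must already switch from $\delta_\bs$ to the smooth blocks $A_\bs$ (or $f_\bs$) in the characterization of $\bB^r_{p,u}$, as the Dirichlet projections are not uniformly bounded on $L_1$.
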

 
  \begin{lem}\label{L3.5d} Let $1 < p<u\leq \infty$ and $\beta:=1/p-1/u$. Then for
$r\ge\beta$ we have
  \begin{equation}\nonumber
    \bW^r_{p} \hookrightarrow \bB^{r-\beta}_{u,p}\,.
 \end{equation}
 \end{lem}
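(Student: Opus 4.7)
The plan is to deduce this from relation \eqref{2.4.5} in Theorem \ref{T2.4.6}, exactly as suggested in Remark \ref{RT2.4.6}. The heart of the argument is that for $1<p<\infty$ a function $f\in \bW^r_p$ admits a Bernoulli representation $f=F_{r,\alpha}\ast g$ with $\|g\|_p\asymp \|f\|_{\bW^r_p}$ (cf.\ Definition \ref{def2Sob}), and passing from $f$ to its $(r,\alpha)$-derivative $g$ converts a factor $2^{-r|\bs|_1}$ into a factor $1$ on every dyadic block $\rho(\bs)$.

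First I would fix $1<p<u\leq \infty$ and $r\geq \beta=1/p-1/u$, and let $g:=f^{(r,\alpha)}$ denote the $(r,\alpha)$-derivative of $f$, so that $\|g\|_p\asymp \|f\|_{\bW^r_p}$. A Fourier multiplier argument on each dyadic block (the Bernstein inequality of Theorem \ref{T2.4.2} applied both to $F_{r,\alpha}\ast \delta_\bs g$ and to its inverse on $\Tr(2^\bs,d)$) yields the block-level equivalence
\begin{equation*}
  \|\delta_\bs f\|_u\ \asymp\ 2^{-r|\bs|_1}\|\delta_\bs g\|_u\,,\qquad \bs\in \N_0^d\,.
\end{equation*}
For the endpoint $u=\infty$ the same argument is carried out with the smoothed blocks $A_\bs$ in place of $\delta_\bs$, relying on \eqref{2.2.16} and on the corresponding characterization of $\bB^{r-\beta}_{\infty,p}$ via the $A_\bs$'s recalled after \eqref{DefB}.

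Next I apply \eqref{2.4.5} to $g$ with the choice $q=u$, which is admissible since $1<p<u\leq \infty$. Taking $\varepsilon_\bs:=\|\delta_\bs g\|_u$ puts $g\in F(\varepsilon,u)$, so the theorem gives
\begin{equation*}
  \left(\sum_{\bs}\|\delta_\bs g\|_u^{p}\,2^{|\bs|_1(p/u-1)}\right)^{1/p}\ \lesssim\ \|g\|_p\ \asymp\ \|f\|_{\bW^r_p}\,.
\end{equation*}
Substituting the block equivalence of the previous step and using the identity $p/u-1=-p\beta$, the left-hand side becomes
\begin{equation*}
  \left(\sum_{\bs}\|\delta_\bs f\|_u^{p}\,2^{p(r-\beta)|\bs|_1}\right)^{1/p},
\end{equation*}
which is precisely the quantity equivalent to $\|f\|_{\bB^{r-\beta}_{u,p}}$ via \eqref{DefB}. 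Chaining the three estimates yields $\|f\|_{\bB^{r-\beta}_{u,p}}\lesssim \|f\|_{\bW^r_p}$, proving the embedding.

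The main obstacle is a clean justification of the block-level equivalence $\|\delta_\bs f\|_u\asymp 2^{-r|\bs|_1}\|\delta_\bs g\|_u$ at the endpoint $u=\infty$, where the Marcinkiewicz multiplier theorem is unavailable; there one has to run the argument with the smoothed cutoffs $A_\bs$, using that $\|\mathcal A_\bs\|_1\lesssim 1$ from \eqref{2.2.16} and the matching bound for convolution with the appropriately truncated Bernoulli kernel on $\supp \hat{\mathcal A}_\bs$. Once this blockwise equivalence is established, the embedding drops out of Theorem \ref{T2.4.6} with no further loss, because the exponents of $2^{|\bs|_1}$ match exactly.
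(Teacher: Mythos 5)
Your proposal is correct and follows essentially the same route as the paper: Remark \ref{JawFr} states that Lemma \ref{L3.5d} is a corollary of Theorem \ref{T2.4.6} in the case $r=0$ together with the blockwise equivalence $\|t^{(r)}\|_q\asymp 2^{r|\bs|_1}\|t\|_q$ for $t\in\Tr(\rho(\bs))$, and that is precisely the chain you run (pass to $g=f^{(r,\alpha)}$, apply \eqref{2.4.5} with $q=u$, convert back to $f$ via Bernstein, match exponents). Your extra care with the $u=\infty$ endpoint via the $A_\bs$ characterization is a legitimate and welcome tightening of a detail the paper glosses over, not a different method.
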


\begin{rem}\label{JawFr} \index{Embedding!Jawerth-Franke}
Lemma \ref{L3.5c} follows from Theorem \ref{T2.4.6} (see (\ref{2.4.4}) with $p=u$,
$q=p$). The embedding in Lemma \ref{L3.5c} is nontrivial and very useful in
analysis of approximation of classes with mixed smoothness. In the univariate
case an analog of (\ref{Fremb}) was obtained by Ul'yanov \cite{Ul70} and
Timan \cite{Ti74}. They used different methods of proof. Their techniques work
for the multivariate case of isotropic Besov spaces as well. Franke
\cite{Fr86} proved (\ref{Fremb}) for isotropic Besov spaces on $\R^d$ and
obtained its version with the $\bW$ space replaced by the appropriate
Triebel-Lizorkin spaces. The converse embedding in Lemma \ref{L3.5d} for isotropic spaces 
(a Triebel-Lizorkin space embedded in an appropriate Besov space) has been obtained by Jawerth \cite{Ja77}.  Lemma
\ref{L3.5d} is a corollary of Theorem \ref{T2.4.6}. It directly follows from Theorem \ref{T2.4.6} in the special case
$r=0$. The case $r>0$ follows from the case $r=0$ and the well known 
relation $\|t^{(r)}\|_p\asymp 2^{r|\bs|_1}\|t\|_p$ for $t\in \Tr(\rho(\bs))$. 
A new proof of both relations based on atomic decompositions 
has been given recently by Vyb\'iral \cite{Vy08}. The step from the univariate
and isotropic multivariate cases to the case of mixed smoothness spaces required
a modification of technique. In the periodic case  it was done by Temlyakov \cite{Tem10},
\cite{Tmon} (see Theorem \ref{T2.4.6} above) and in the case of $\R^d$ by Hansen
and Vyb\'iral \cite{HaVy09}.
\end{rem}

Let us finally complement the discussion from the beginning of Section \ref{sect:FS} and state useful embedding
relations in the situation $p=1$.

\begin{lem}\label{emb:p=1} Let $r>0$ and $\alpha \in \R$. Then the following continuous embeddings hold true.
\begin{equation}\label{BWB}
      \bB^r_{1,1}\hookrightarrow \bW^r_{1,\alpha} \hookrightarrow
\bB^{r}_{1,\infty} \hookrightarrow \bE^r_d\,.
  \end{equation}
\end{lem}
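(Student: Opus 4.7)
The plan is to prove the three embeddings in order of difficulty, starting with the middle one. All three arguments will be driven by the block decomposition $f = \sum_\bs A_\bs(f)$, the equivalent norms recalled in Subsection \ref{sect:FS} (in particular $\|f\|_{\bB^r_{1,1}} \asymp \sum_\bs 2^{r|\bs|_1}\|A_\bs(f)\|_1$ and $\|f\|_{\bB^r_{1,\infty}} \asymp \sup_\bs 2^{r|\bs|_1}\|A_\bs(f)\|_1$), and the Abel-summation estimate $\|A_\bs(F_{r,\alpha})\|_1 \asymp 2^{-r|\bs|_1}$ that underlies \eqref{BernoulliBesov}. For the middle embedding $\bW^r_{1,\alpha} \hookrightarrow \bB^r_{1,\infty}$, given $f = F_{r,\alpha}\ast \varphi$ with $\varphi\in L_1$, I would commute $A_\bs$ with the convolution to get $A_\bs(f) = A_\bs(F_{r,\alpha})\ast \varphi$, apply Young's convolution inequality, and invoke \eqref{BernoulliBesov} to conclude $2^{r|\bs|_1}\|A_\bs(f)\|_1 \lesssim \|\varphi\|_1 = \|f\|_{\bW^r_{1,\alpha}}$ uniformly in $\bs$.

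For the rightmost embedding $\bB^r_{1,\infty} \hookrightarrow \bE^r_d$, I would fix $\bk\in \Z^d$ and expand $\hat f(\bk) = \sum_\bs \widehat{A_\bs(f)}(\bk)$, using that $\sum_\bs \hat{\mathcal A}_\bs(\bk) = 1$ for every $\bk$ (a tensor-product limit of de la Vall\'ee Poussin multipliers). The Fourier support of $\mathcal A_\bs$ forces at most $2^d$ terms to contribute, and for every contributing $\bs$ one has $2^{s_j} \asymp \max\{|k_j|,1\}$ (with constants depending only on $d$), hence $2^{r|\bs|_1}\asymp \prod_j \max\{|k_j|,1\}^r$. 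Bounding each term by $|\widehat{A_\bs(f)}(\bk)|\le \|A_\bs(f)\|_1 \le 2^{-r|\bs|_1}\|f\|_{\bB^r_{1,\infty}}$ and summing over the $O(1)$ contributing indices yields $|\hat f(\bk)|\prod_j \max\{|k_j|,1\}^r \lesssim \|f\|_{\bB^r_{1,\infty}}$.

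For the leftmost embedding $\bB^r_{1,1} \hookrightarrow \bW^r_{1,\alpha}$, the plan is to build the density $\varphi$ block-wise. For each $\bs$, I would define $\varphi_\bs$ by $\hat\varphi_\bs(\bk) := \widehat{A_\bs(f)}(\bk)/\hat F_{r,\alpha}(\bk)$, so that $F_{r,\alpha}\ast \varphi_\bs = A_\bs(f)$ by construction. The crux is the Bernstein-type bound $\|\varphi_\bs\|_1 \lesssim 2^{r|\bs|_1}\|A_\bs(f)\|_1$: writing $e(\bs) := \{j: s_j\ge 1\}$, the polynomial $A_\bs(f)$ depends only on the variables $x_j$, $j\in e(\bs)$, with Fourier support contained in a parallelepiped of side-lengths $2^{s_j}$, and inverting $\hat F_{r,\alpha}$ on this support is exactly the $(r,\alpha_j)$-Weil differentiation in the active coordinates. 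Theorem \ref{T2.4.2} applied on $L_1$ in those coordinates then gives the desired estimate. Summing, $\varphi := \sum_\bs \varphi_\bs$ converges absolutely in $L_1$ with $\|\varphi\|_1 \lesssim \|f\|_{\bB^r_{1,1}}$, and $F_{r,\alpha}\ast\varphi = \sum_\bs A_\bs(f) = f$, the latter series converging in $L_1$ because $r>0$ implies $\sum_\bs \|A_\bs(f)\|_1 \le \sum_\bs 2^{r|\bs|_1}\|A_\bs(f)\|_1 < \infty$.

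The main obstacle is precisely the block-wise Bernstein step in the leftmost embedding. One must be careful about the degenerate coordinates with $s_j = 0$: on the Fourier support of $A_\bs(f)$ the factor $\hat F_{r,\alpha}(\bk)$ equals $1$ in those coordinates (no inverse smoothing/differentiation is needed), while it equals $|k_j|^{-r}e^{-i\alpha_j\pi\operatorname{sign}(k_j)/2}$ in the active coordinates. Restricting the Weil derivative to the active coordinates $e(\bs)$ ensures that the effective polynomial degree in each contributing variable is only $2^{s_j}$, so that the $L_1$-Bernstein estimate contributes exactly the factor $2^{r|\bs|_1}$ and no spurious powers of $2$ appear.
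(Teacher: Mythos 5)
Your proposal is correct and takes essentially the same route as the paper: the middle embedding is Young's inequality applied to $A_\bs(f)=A_\bs(F_{r,\alpha})\ast\varphi$ together with \eqref{BernoulliBesov}, the third is the elementary bound $|\widehat{A_\bs(f)}(\bk)|\le\|A_\bs(f)\|_1$ combined with the telescoping $\sum_\bs\hat{\mathcal A}_\bs(\bk)=1$ and the localization of $\supp\hat{\mathcal A}_\bs$, and the first is the block-wise Bernstein inequality from Theorem~\ref{T2.4.2} applied in the active coordinates $e(\bs)$, exactly as the paper's terse remark indicates. The only thing you have done beyond the paper is to spell out the details it leaves implicit (including the correct treatment of the degenerate coordinates $s_j=0$), which is sound.
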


We note that in case $p=1$ we use operators $A_\bs$ instead of $\delta_\bs$ in the characterization of the $\bB$ classes. 
The first relation follows from Theorem \ref{T2.4.2}. The second relation follows from \eqref{BernoulliBesov} 
and the third embedding is a simple consequence of the characterization of $\bB^r_{1,\infty}$ together with 
$|\hat f(\bk)| \le \|f\|_1$.

Note, that the embedding $\bW^r_{1,\alpha} \hookrightarrow \bB^{r}_{1,2}$,
as a formal counterpart of \eqref{chainBFB}, does not hold true here. In fact,
it does not even hold true with $\bB^r_{1,\theta}$ and $2<\theta<\infty$ on the
right-hand side.  In that sense, the embedding \eqref{BWB} is sharp. Note also, that the embeddings in
Lemma \ref{emb:p=1} are strict. The (tensorized and) periodized hat function, see Figure
\ref{fig_Faber1} below, belongs to $\bB^2_{1,\infty}$ but not to $\bW^2_{1,\alpha}$.

   \newpage
   \section{Linear approximation}\index{Linear approximation}

\subsection{Introduction}

By {\it linear approximation} we understand approximation from a fixed finite
dimensional subspace. In the study of approximation of the univariate periodic
functions the idea of representing a function by its Fourier series is very
natural and traditional. It goes back to 
the work of Fourier from 1807. In this case one can use as a natural tool of
approximation the partial sums of the Fourier expansion. In other words this
means that we use the subspace 
$\Tr(n)$ for a source of approximants and use the orthogonal  projection onto
$\Tr(n)$ as the approximation operator. This natural approach is based on a
standard ordering of the trigonometric system: $1$, $e^{ikx}$, $e^{-ikx}$,
$e^{2ikx}$, $e^{-2ikx}$ $\dots$. We loose this natural approach, when we go from
the univariate case to the multivariate case -- there is no natural ordering of
the $\Tr^d$ for $d>1$. The following idea of choosing appropriate trigonometric
subspaces for approximation of a given class $\bF$ of multivariate functions 
was suggested by Babenko \cite{Bab1}. This idea is based on the concept of
the Kolmogorov width introduced in \cite{Ko36}: for a centrally symmetric compact $\bF\subset X$  define
\index{Width!Kolmogorov}
$$
d_m(\bF,X):= \inf_{\varphi_1,\dots,\varphi_m} \sup_{f\in\bF}
\inf_{c_1,\dots,c_m}\Big\|f-\sum_{k=1}^mc_k\varphi_k\Big\|_X.
$$
Consider a Hilbert space $L_2(\T^d)$ and suppose that the function class
$\bF=A(B(L_2))$ of our interest is an image of the unit ball $B(L_2)$ of
$L_2(\T^d)$ under a mapping $A: L_2(\T^d)\to L_2(\T^d)$ of a compact operator
$A$. For instance, in the case of $\bF=\bW^r_2$ the operator 
$A:=A_r$ is the convolution with the kernel $F_r(\bx)$. It is now well known and
was established by Babenko \cite{Bab1} for a special class of operators $A$ that
\be\nonumber
d_m(A(B(L_2)),L_2) = s_{m+1}(A),
\ee
where $s_j(A)$ are the singular numbers of the operator $A$:
$s_j(A)=(\lambda_j(AA^*))^{1/2}$. 

Suppose now that the eigenfunctions of the operator $AA^*$ are the trigonometric
functions 
$e^{i(\bk^j,\bx)}$. Then the optimal in the sense of the Kolmogorov width
$m$-dimensional subspace will be the $\Span \{ e^{i(\bk^j,\bx)}\}_{j=1}^m$.
Applying this approach to the class 
$\bW^r_2$ we obtain that for $m=|\Gamma(N)|$ the optimal subspace for
approximation in $L_2$ is the subspace of hyperbolic cross polynomials
$\Tr(\Gamma(N))$. This observation led to a thorough study of approximation by
the hyperbolic cross polynomials. We discuss it in Subsection
\ref{Subs:appr_hypcross}. 

B.S. Mityagin \cite{Mit} used the harmonic analysis technique, in particular,
the Marcinkiewicz multipliers (see Theorem \ref{T7.3.4}), to prove that
$$
d_m(\bW^r_p,L_p)\asymp m^{-r}(\log m)^{r(d-1)},\quad 1<p<\infty.
$$
He also proved that optimal, in the sense of order, subspaces are $\Tr(Q_n)$
with $|Q_n|\asymp m$ and $|Q_n|\le m$. In addition, the operator
$S_{Q_n}(\cdot)$ of orthogonal projection onto $\Tr(Q_n)$ can be taken as an
approximation operator. The use of harmonic analysis techniques for the $L_p$
spaces lead to the change from smooth hyperbolic crosses $\Gamma(N)$ to step
hyperbolic crosses  $Q_n$. The idea of application of the theory of widths for
finding good subspaces for approximation of classes of functions with mixed
smoothness is very natural and was used in many papers. A typical problem here
is to study 
approximation of classes $\bW^r_p$ in the $L_q$ for all $1\le p,q \le \infty$.
We give a detailed discussion of these results in further subsections. We only
give a brief qualitative remarks on those results in this subsection. As we
mentioned above, in linear approximation we are interested in approximation from
finite dimensional subspaces. The Kolmogorov width provides a way to determine
optimal (usually, in the sense of order) $m$-dimensional subspaces. The
approximation operator, used in the Kolmogorov width, is the operator of best
approximation. 
Clearly, we would like to use as simple approximation operators as possible. As
a result the following widths were introduced and studied. 

The linear width of a class $\bF$ in a normed space $X$ \index{Width!Linear} has been introduced by V.M. Tikhomirov \cite{Ti60} 
in 1960. It is defined by
\begin{equation}\label{lw}
\lambda_m(\bF,X) := \inf_{\substack{A:X \to X\\\text{linear}\\\rank A\le m}} \sup_{f\in \bF}\|f-A(f)\|_X.
\end{equation}
If $\bF$ is the unit ball of a Banach space $\tilde{\bF} \hookrightarrow X$ then we may compare this quantity
to the $n$th approximation number of the embedding $\text{Id}:\tilde{\bF} \to X$, where $\text{Id}f = f$, 
\begin{equation}\label{an}
    a_n(\text{Id}) := \inf\limits_{\substack{A:\tilde{\bF}\to X \\\text{linear}\\\rank A < n}} \|\text{Id}-A\|_{\tilde{\bF} \to X}\,,
\end{equation}
see Pietsch \cite[6.2.3.1]{Pi07} and Pinkus \cite[Def.\ II.7.3]{Pin85}. Note, that here the 
admissible linear operators $A$ map from $\tilde{\bF}$ to $X$ instead of $A:X\to X$ in the defintion 
of the linear width. However, Heinrich \cite[Cor.\ 3.4]{He89} showed, that if $\bF$ is a compact absolutely convex set in $X$ then 
the quantities $a_{m+1}(\text{Id}:\tilde{\bF} \to X)$ and $\lambda_m(\bF,X)$ are equal. Here, we mainly 
consider compact Banach space embeddings in $L_p$, where these quantities coincide.

V.N. Temlyakov \cite{Te82a} introduced the concept of orthowidth (Fourier
width):\index{Width!Fourier, Ortho-}
$$
\varphi_m(\bF,L_p):= \inf_{u_1,\dots,u_m} \sup_{f\in\bF}\Big\|f-\sum_{j=1}^m
\<f,u_j\>u_j\Big\|_p,
$$
where $u_1,\dots,u_m$ is an orthonormal system. 

It is clear that for any class $\bF$ and $1\le p\le \infty$
\begin{equation}\label{ineq[lambda_n>d_m]}
d_m(\bF,L_p) \le \lambda_m(\bF,L_p) \le \varphi_m(\bF,L_p) 
\end{equation}
and for $p=2$
$$
d_m(\bF,L_2) = \lambda_m(\bF,L_2) = \varphi_m(\bF,L_2).
$$

We now give a brief comparison of the above three widths in the case of classes
$W^r_p$ of univariate functions. For convenience we denote
$$
D_1 := \bigl\{ (p, q) :  1\le p\le q\le 2 \text{ or }1\le q\le
p\le \infty \bigr\}, 
$$
$$
D_2 := \bigl\{ (p, q) :  2\le p < q\le \infty \bigr\}, 
$$
$$
D_3 := \bigl\{ (p, q) :  1\le p < 2 \text{ and }2 < q\le \infty \bigr\},
$$
and let $D_4$ be a part of $D_3$ such that $1/q + 1/p\ge 1$ and
$D_5 = D_3\setminus D_4$. It is convenient to represent the corresponding domains in term 
of points $(1/p,1/q)$ on the square $[0,1]^2$ instead of points $(p,q)$ on $[1,\infty]^2$. Denote
$$
D_i^*:= \{(1/p,1/q): (p,q)\in D_i\},\qquad i=1,\dots,5.
$$

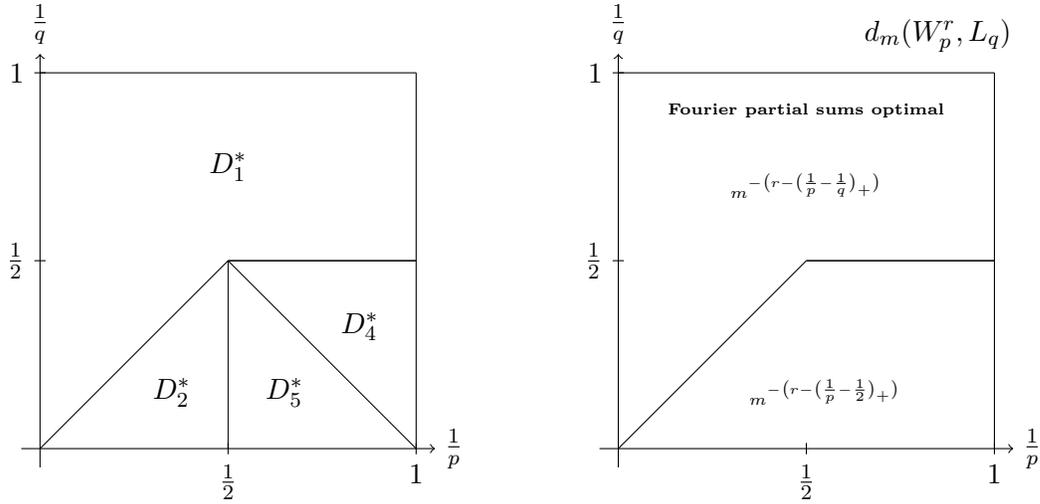
\begin{figure}[H]
\begin{minipage}{0.48\textwidth}
\begin{center}
\begin{tikzpicture}[scale=2.5]

\draw[->] (-0.1,0.0) -- (2.1,0.0) node[right] {$\frac{1}{p}$};
\draw[->] (0.0,-.1) -- (0.0,2.1) node[above] {$\frac{1}{q}$};

\draw (1.0,0.03) -- (1.0,-0.03) node [below] {$\frac{1}{2}$};
\draw (0.03,1) -- (-0.03,1) node [left] {$\frac{1}{2}$};

\draw (0,2) -- (2,2);
\draw (1,1) -- (2,1);
\draw (0,0) -- (1,1);

\draw (1,1) -- (2,0);
\draw (1,0) -- (1,1);
\draw (1,1) -- (2,1);
\draw (2,2) -- (2,0);

\node at (1.3,0.3) {$D^*_5$};
\node at (1.7,0.65){$D^*_4$};          	
\node at (1,1.5) {$D^*_1$};
\node at (0.7,0.3) {$D^*_2$};
\draw (2,0.03) -- (2,-0.03) node [below] {$1$};
\draw (0.03,2) -- (-0.03,2) node [left] {$1$};

\end{tikzpicture}

\end{center}

\end{minipage}
\begin{minipage}{0.48\textwidth}
 \begin{center}
\begin{tikzpicture}[scale=2.5]

\draw[->] (-0.1,0.0) -- (2.1,0.0) node[right] {$\frac{1}{p}$};
\draw[->] (0.0,-.1) -- (0.0,2.1) node[above] {$\frac{1}{q}$};

\draw (1.0,0.03) -- (1.0,-0.03) node [below] {$\frac{1}{2}$};
\draw (0.03,1) -- (-0.03,1) node [left] {$\frac{1}{2}$};

\draw (0,2) -- (2,2);
\draw (1,1) -- (2,1);
\draw (0,0) -- (1,1);

\draw (1,1) -- (2,1);
\draw (2,2) -- (2,1);
\draw (2,1) -- (2,0);

\node at (1,1.4) {\tiny $m^{-\big(r-\big(\frac{1}{p}-\frac{1}{q}\big)_+\big)}$};
\node at (1.1,0.3) {\tiny $m^{-\big(r-\big(\frac{1}{p}-\frac{1}{2}\big)_+\big)}$};
\node at (1,1.8) {\tiny {\bf Fourier partial sums optimal}};
\node at (1.7,2.2) {$d_m(W^r_p,L_q)$};
\draw (2,0.03) -- (2,-0.03) node [below] {$1$};
\draw (0.03,2) -- (-0.03,2) node [left] {$1$};

\end{tikzpicture}

\end{center}

\end{minipage}
\caption{The univariate behavior of $d_m(W^r_p,L_q)$ in different regions}
\end{figure}

It is known that for
$(p, q)\in D_1$ approximation by trigonometric polynomials in $\Tr(n)$ with
$n = \bigl[(m-1)/2\bigr]$ gives the order of decrease of the Kolmogorov
widths $d_m(W^r_p,L_q)$. But for $(p, q)\notin D_1$ this is not the case:
$$
d_{2n+1} (W_p^r, L_q) = o\bigl(E_n (W_p^r)_q \bigr), \qquad
 (p, q)\notin D_1.
$$

For
$(p, q)\in D_1$ the
orders of the Kolmogorov widths can be obtained by   linear
operators $V_n$,  $m = 4n - 1$. It is known that the operators $V_n$ give the
orders of linear widths not
only in the domain $D_1$ but also in the domain $D_2$. In the domain $D_3$  the
relation
$$
\lambda_{2n+1} (W_p^r, L_q) = o\bigl(E_n (W_p^r)_q \bigr)
$$
holds,  which shows,  in particular,  that for $(p, q)\in D_3$ the order of
linear width can not be realized by the operators $V_n$.

For $(p, q)\in D_1\cup D_4$ the orders of
the Kolmogorov widths can be realized by linear methods:  in the case
$(p, q)\in D_1$ by means of the operators $V_n$,  and in the case
$(p, q)\in D_4$
by means of some other linear operators.

For the classes $W_{p}^r$   for all
$(p, q)$, excepting  the case
$(p, q) = (1, 1)$,  $(\infty, \infty)$,  the operators $S_n$  $(m = 2n + 1)$
are optimal
Fourier operators in the sense of order.

The linear operators $A$
providing the orders of the widths $\lambda_m (W_p^r, L_q)$
for $(p, q)\in D_3$,  that
is,  in the case when $A$ differs from $V_n$  ($m = 4n - 1$),  are not
orthogonal projections. Moreover,    operators $A$ can not be bounded uniformly
(over $m$)
as operators from $L_2$ to $L_2$.

Further,  for example for $p = 2$ and $q = \infty$, the Kolmogorov widths
decrease faster than the corresponding linear widths:
$$
d_m (W_2^r, L_{\infty})\asymp m^{-1/2}\lambda_m (W_2^r,
L_{\infty}).
$$
However,  up to now no concrete example of a system $\{\varphi_i \}_{i=1}^m$
is known,
the best approximations by which would give the order of
$d_m (W_2^r, L_{\infty})$ (the same is true for the domain $D_2\cup D_5$).

This discussion shows that the sets $\Tr(n)$ and the operators $V_n$
and $S_n$ are optimal in many cases from the point of view of the
Kolmogorov widths,  linear widths and orthowidths. In the cases when
we can approximate better than by means of the operators $V_n$ and $S_n$,
we must sacrifice some useful properties which these operators have.

We have a similar qualitative picture in the case of approximation of classes of
functions with mixed smoothness. The role of $\Tr(n)$ is played now by
$\Tr(Q_n)$ with $m\asymp |Q_n|$. 
The analog of the univariate de la Vall{\'e}e Poussin kernel $\V_n(x)$ the
kernel $\V_{Q_n}$ (see Section \ref{trigpol}) is not as good as its univariate
version. Lemma \ref{L2.3.2} from Section \ref{trigpol} gives
$$
\|\V_{Q_n}\|_1 \asymp n^{d-1}.
$$
This substantially complicates the study of approximation in $L_1$ and
$L_\infty$ norms. 
Many problems of approximation in these spaces are still open. For $1<p<\infty$
the role of $S_n$ is played by $S_{Q_n}$. 

\subsection{Approximation by the hyperbolic cross polynomials}\index{Hyperbolic cross!Polynomials}
\label{Subs:appr_hypcross}

 The operators $S_{Q_n}$ and $V_{Q_n}$ play an important role in the hyperbolic
cross approximation. These operators can be written in terms of the
corresponding univariate operators in the following form. Denote by $S^i_l$ the
univariate operator 
 $S_l$ acting on functions on the variable $x_i$. Then, it follows from the definition of $Q_n$ that
 \begin{equation}\label{4.2a}
 S_{Q_n} = \sum_{\bs:|\bs|_1\le n} \prod_{i=1}^d (S^i_{2^{s_i}-1} -
S^i_{[2^{s_i-1}]-1}),
 \end{equation}
here $S_{-1}=0$. A similar formula holds for the $V_{Q_n}$. 

\subsubsection*{The Smolyak algorithm}\index{Smolyak's algorithm}

Operators of the
form (\ref{4.2a}) with $S^i$ replaced by other univariate operators are used in
sampling recovery (see Section 5) and other problems. For a generic discussion we refer to 
Novak \cite{Nov00} and Wasilkowski, Wo\'zniakowski \cite{WaWo95}. Sometimes Smolyak's algorithm can also be identified
in the framework of boolean methods, see \cite{DeSc89}.

The approximate
recovery operators of the form (\ref{4.2a}) were first considered by Smolyak
\cite{Sm63}. A standard name for operators of the form (\ref{4.2a}) is {\it
Smolyak-type algorithms}.  Very often analysis of operators $S_{Q_n}$,
$V_{Q_n}$, and other operators of the form (\ref{4.2a}) goes along the same
lines. The following general framework was suggested in \cite{AT}, see also
\cite{SiUl07}. Let three numbers $a>b\ge 0$, and $1\le p\le \infty$ be given.
Consider a family of univariate linear operators $\{Y_s\}_{s=0}^\infty$, which
are defined on the space $W^a_p$ and have the following two properties:

(1) For any $f$ from the class $W^a_p$ we have
$$
\|f-Y_s(f)\|_p \le C_12^{-as}\|f\|_{W^a_p},\qquad s=0,1,2,\dots;
$$

(2) For any trigonometric polynomial $t$ of order $2^u$, we have
$$
\|Y_s(t)\|_p \le C_2 2^{b(u-s)}\|t\|_p,\qquad u\ge s.
$$

Let as above $Y^i_s$ denote the univariate operator 
 $Y_s$ acting on functions on the variable $x_i$. Consider the following $d$-dimensional operator
 $$
 T_n:= \sum_{\bs:|\bs|_1\le n}\Delta_\bs,\qquad \Delta_\bs:= \prod_{i=1}^d
(Y^i_{s_i} - Y^i_{s_i-1}),
 $$
 with $Y_{-1}=0$. We illustrate the above general setting by one result from
\cite{AT}. 
\begin{prop}\label{AT1} Let operators $\{Y_s\}_{s=0}^\infty$ satisfy conditions
(1) and (2). Then for any $r\in (b,a)$ we have for $f\in\bH^r_p$
$$
\|\Delta_\bs(f)\|_p \lesssim 2^{-r|\bs|_1}\|f\|_{\bH^r_p}\quad \text{and}\quad
\|f-T_n(f)\|_p \lesssim 2^{-rn}n^{d-1}\|f\|_{\bH^r_p}.
$$
\end{prop}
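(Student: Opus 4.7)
The plan is to decompose $f$ dyadically via the operators $A_\bm$ from Subsection~\ref{multpol}, bound $\Delta_\bs(A_\bm(f))$ coordinate-by-coordinate using the tensor-product structure of $\Delta_\bs$, and finally sum a geometric series that exploits the strict inequalities $b<r<a$. The starting point is the standard characterization $\|A_\bm(f)\|_p \lesssim 2^{-r|\bm|_1}\|f\|_{\bH^r_p}$ (valid for $1\le p\le\infty$ and $r>0$ and recalled in Subsection~\ref{sect:FS}), together with the expansion $f=\sum_{\bm\in\N_0^d}A_\bm(f)$, so that $\Delta_\bs(f)=\sum_{\bm}\Delta_\bs(A_\bm(f))$.

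For a fixed block, $A_\bm(f)$ is, as a function of each single variable $x_i$, a univariate trigonometric polynomial of order $\lesssim 2^{m_i}$. Since $\Delta_\bs=\prod_{i=1}^d (Y^i_{s_i}-Y^i_{s_i-1})$ is a composition of univariate operators acting on disjoint coordinates, I would apply them iteratively and use Fubini to reduce to a one-variable estimate in each $x_i$. When $m_i\ge s_i$, property~(2) applied to both $Y^i_{s_i}$ and $Y^i_{s_i-1}$ yields $\|(Y^i_{s_i}-Y^i_{s_i-1})(t)\|_p\lesssim 2^{b(m_i-s_i)}\|t\|_p$ for any univariate polynomial $t$ of order $2^{m_i}$. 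When $m_i<s_i$, I would write $Y^i_{s_i}-Y^i_{s_i-1}=(I-Y^i_{s_i-1})-(I-Y^i_{s_i})$ and combine property~(1) with the univariate Bernstein inequality $\|t\|_{W^a_p}\lesssim 2^{am_i}\|t\|_p$ (a direct consequence of Theorem~\ref{T2.4.1}) to obtain $\|(Y^i_{s_i}-Y^i_{s_i-1})(t)\|_p\lesssim 2^{-a(s_i-m_i)}\|t\|_p$, with an easily absorbed boundary-case constant. Iterating the one-variable bounds across coordinates then produces
\[
\|\Delta_\bs(A_\bm(f))\|_p\;\lesssim\;2^{-r|\bm|_1}\|f\|_{\bH^r_p}\prod_{i=1}^d\gamma(s_i,m_i),
\]
where $\gamma(s,m):=2^{b(m-s)}$ for $m\ge s$ and $\gamma(s,m):=2^{-a(s-m)}$ for $m<s$.

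Summing over $\bm$ factorises into univariate sums $\sum_{m\ge 0}\gamma(s,m)2^{-rm}$, each of which splits at $m=s$: the tail $m<s$ is a geometric series of ratio $2^{a-r}>1$ (since $a>r$), dominated by its top term $\lesssim 2^{-rs}$, while the tail $m\ge s$ is a convergent geometric series of ratio $2^{b-r}<1$ (since $r>b$), dominated by its bottom term $\lesssim 2^{-rs}$. Assembling across coordinates yields $\|\Delta_\bs(f)\|_p\lesssim 2^{-r|\bs|_1}\|f\|_{\bH^r_p}$, the first claim. The second claim then follows by a telescoping argument: the identity $f-T_n(f)=\sum_{|\bs|_1>n}\Delta_\bs(f)$ is trivial for trigonometric polynomials and extends to $\bH^r_p$ via the absolute convergence just established, after which the triangle inequality together with the standard estimate $\sum_{|\bs|_1>n}2^{-r|\bs|_1}\asymp 2^{-rn}n^{d-1}$ from \eqref{hcsums} delivers the advertised rate.

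The main obstacle is the two-regime case analysis together with the clean Fubini-based assembly of the one-variable bounds into a genuine product bound; this hinges on the fact that each $Y^i_{s_i}$ acts on a single slice and satisfies its bound uniformly in the remaining variables, so no cross-terms appear when iterating. A secondary, minor point is the justification that $\sum_\bs\Delta_\bs(f)=f$ in $L_p$, which is immediate from absolute convergence of the series and the trivial identity on trigonometric polynomials followed by a density argument.
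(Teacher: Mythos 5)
Your plan is correct and follows the standard route (and essentially the route used in the cited paper \cite{AT} and in the closely related proof of Proposition~\ref{samp_repr1} in Subsection~\ref{SampRep}): expand $f=\sum_{\bm}A_{\bm}(f)$, bound $\Delta_{\bs}(A_{\bm}(f))$ coordinate-by-coordinate using condition~(2) together with Bernstein when $m_i\ge s_i$ and condition~(1) together with Bernstein when $m_i<s_i$, obtain the product kernel $\prod_i\gamma(s_i,m_i)$, and sum the resulting geometric series on each side of $m_i=s_i$ using $b<r<a$; the error bound for $T_n$ then follows from $\eqref{hcsums}$. The two-regime split, the Fubini-style iteration over coordinates, and the absolute-convergence justification of $f=\sum_{\bs}\Delta_{\bs}(f)$ are all exactly the points the published argument turns on, so this is not a different proof but the intended one.
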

We note that the technique developed in \cite{ByUl15}, see Subsection
\ref{SampRep}, allows for extending Proposition \ref{AT1} to classes
$\bB^r_{p,\theta}$. 
\begin{prop}\label{ATB} Let operators $\{Y_s\}_{s=0}^\infty$ satisfy conditions
(1) and (2). Then for any $r\in (b,a)$ we have for $f\in\bB^r_{p,\theta}$
$$
\left(\sum_\bs\left(2^{r|\bs|_1}\|\Delta_\bs(f)\|_p\right)^\theta\right)^{
1/\theta} \lesssim \|f\|_{\bB^r_{p,\theta}}   
$$
and
$$
 \|f-T_n(f)\|_p \lesssim 2^{-rn}n^{(d-1)(1-1/\theta)}\|f\|_{\bB^r_{p,\theta}}.
 $$
\end{prop}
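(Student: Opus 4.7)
The approach is to reduce everything to the equivalent Besov characterization
\[
\|f\|_{\bB^r_{p,\theta}}\asymp \Big(\sum_{\bk\in \N_0^d}\bigl(2^{r|\bk|_1}\|A_\bk(f)\|_p\bigr)^\theta\Big)^{1/\theta},
\]
to establish an almost-diagonal bound for $\Delta_\bs$ acting on the building blocks $A_\bk(f)$, and then to deduce both claims from a discrete Young convolution inequality followed by a two-step H\"older argument on the tail $|\bs|_1>n$. The hypothesis $b<r<a$ is what produces geometric decay away from the diagonal $s=k$ and will be used on both sides of it.

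First I would prove the univariate building-block estimate: for $t\in \Tr(2^u)$,
\[
\|(Y_s-Y_{s-1})(t)\|_p\lesssim \beta(s,u)\,\|t\|_p,
\]
where $\beta(s,u):=2^{b(u-s)}$ if $s\le u$ (directly from (2)) and $\beta(s,u):=2^{-a(s-u)}$ if $s>u$ (from (1) applied to both $t$ and $Y_{s-1}(t)$, combined with the Bernstein inequality $\|t\|_{\bW^a_p}\lesssim 2^{au}\|t\|_p$ from Theorem \ref{T2.4.2}). Since $\Delta_\bs$ is a tensor product of the univariate differences $Y^i_{s_i}-Y^i_{s_i-1}$, an iterated Fubini/Minkowski argument gives the multivariate counterpart
\[
\|\Delta_\bs(A_\bk(f))\|_p\lesssim \Big(\prod_{i=1}^d\beta(s_i,k_i)\Big)\|A_\bk(f)\|_p.
\]
Setting $g_\bs:=2^{r|\bs|_1}\|\Delta_\bs(f)\|_p$ and $h_\bk:=2^{r|\bk|_1}\|A_\bk(f)\|_p$ and inserting the decomposition $f=\sum_\bk A_\bk(f)$, one obtains
\[
g_\bs\lesssim \sum_\bk \prod_{i=1}^d\kappa(s_i,k_i)\,h_\bk, \qquad \kappa(s,k):=2^{r(s-k)}\beta(s,k).
\]
The condition $b<r<a$ then forces $\kappa(s,k)\lesssim 2^{-c|s-k|}$ with $c:=\min(r-b,\,a-r)>0$, so the convolution kernel belongs to $\ell_1(\N_0^d)$. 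Young's inequality for $\ell_\theta$-sequences yields $\|g\|_{\ell_\theta}\lesssim \|h\|_{\ell_\theta}\asymp \|f\|_{\bB^r_{p,\theta}}$, which is the first assertion.

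For the second assertion I would start from the telescoping identity $f-T_n(f)=\sum_{|\bs|_1>n}\Delta_\bs(f)$, whose $L_p$-convergence is an immediate byproduct of the summability just established. Grouping the sum by level $m=|\bs|_1$ (on which there are $\asymp m^{d-1}$ indices), a first H\"older inside each level gives
\[
\sum_{|\bs|_1=m}\|\Delta_\bs(f)\|_p\le 2^{-rm}m^{(d-1)(1-1/\theta)}\Big(\sum_{|\bs|_1=m}g_\bs^\theta\Big)^{1/\theta},
\]
and a second H\"older over $m>n$ with conjugate exponent $\theta'$, exploiting the identity $(1-1/\theta)\theta'=1$ together with the geometric tail $\sum_{m>n}2^{-r\theta' m}\lesssim 2^{-r\theta' n}$, produces the desired factor $2^{-rn}n^{(d-1)(1-1/\theta)}$ multiplied by the Besov norm. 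The extremal cases $\theta\in\{1,\infty\}$ are covered by the usual convention $1/\theta+1/\theta'=1$, and the endpoint $\theta=\infty$ recovers Proposition \ref{AT1}.

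The main obstacle is precisely the convolution bound in the second paragraph: it is the step where both hypotheses (1) and (2) are simultaneously exploited, once on each side of the diagonal $s=k$, and it is the passage from the supremum norm used in Proposition \ref{AT1} to the strictly stronger $\ell_\theta$-summability demanded by the $\bB$-classes that constitutes the technical novelty. Once this almost-diagonal estimate is in place, the rest of the argument reduces to classical discrete inequalities and counting $\#\{\bs\in\N_0^d:|\bs|_1=m\}\asymp m^{d-1}$.
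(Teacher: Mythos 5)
Your argument is correct and is essentially the proof the paper has in mind when it says Proposition~\ref{ATB} follows ``by the technique developed in \cite{ByUl15}'' (see the sketch for Proposition~\ref{samp_repr1}(i) in Subsection~\ref{SampRep}): decompose $f$ into dyadic blocks $A_\bk(f)$, derive the almost-diagonal bound for $\Delta_\bs$ on each block from condition (1) together with Bernstein for $s>k$ and from condition (2) for $s\le k$, tensorize, and then apply Young's inequality in $\ell_\theta$ followed by H\"older over the level set $\{|\bs|_1=m\}$ and once more over $m>n$; the two-sided diagonal treatment is precisely the correct adaptation of the one-sided step in Subsection~\ref{SampRep} to the abstract hypotheses (1)--(2). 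The only point worth a sentence that you pass over is the identity $f=\sum_\bs\Delta_\bs(f)$ in $L_p$: your absolute summability yields convergence of the series, and one identifies its sum with $f$ by density of trigonometric polynomials (for which $\prod_i Y^i_{N_i}\to I$ by condition (1) and Bernstein) together with the uniform bound $\|T_n\|_{\bB^r_{p,\theta}\to L_p}\lesssim 1$ that you have already established.
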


\subsubsection*{Approximation from the hyperbolic cross}\index{Hyperbolic cross!Approximation}

We now proceed to best approximation by the hyperbolic cross polynomials from $\Tr(Q_n)$. 
Denote for a function $f\in L_q$
$$
E_{Q_n}(f)_q := \inf_{t\in \Tr(Q_n)} \|f-t\|_q
$$
and for a function class $\bF$
$$
E_{Q_n}(\bF)_q := \sup_{f\in\bF}E_{Q_n}(f)_q.
$$
We begin with approximation of classes $\bW^r_p$ in $L_q$. It is clear that
approximation of the Bernoulli
kernels $F_r$, which are used for integral representation of a
function from $\bW^r_p$, plays an important role in approximation of classes
$\bW^r_p$.
The following theorem is from \cite{Tem1}
\begin{thm}\label{TBT3.1} For $1\le q \le \infty$ and $r - 1 + 1/q > 0$ we have
$$
E_{Q_n}(F_r)_q\asymp 2^{-(r-1+1/q)n}n^{(d-1)/q}.
$$
\end{thm}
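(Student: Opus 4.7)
The plan is to exploit the tensor-product structure $F_r(\bx)=\prod_{j=1}^d F_r(x_j)$, reduce the analysis to univariate dyadic pieces $A_s(F_r)$, and recombine them via Theorem~\ref{T2.4.6} and Remark~\ref{RT2.4.6}. First I would establish the univariate two-sided block estimate
$$
\|A_s(F_r)\|_q \asymp 2^{-s(r-1+1/q)},\qquad 1\le q\le\infty,
$$
whose upper bound follows from Abel summation on the Bernoulli kernel together with $\|\mathcal A_s\|_1\le 6$, and whose lower bound follows from Parseval ($\|A_s(F_r)\|_2\asymp 2^{-s(r-1/2)}$) combined with the Nikol'skii inequality (Theorem~\ref{T2.4.4}). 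Via $\mathcal A_\bs=\prod_j \mathcal A_{s_j}$ this tensorizes to $\|A_\bs(F_r)\|_q \asymp 2^{-(r-1+1/q)|\bs|_1}$, and in particular $\|A_\bs(F_r)\|_1\asymp 2^{-r|\bs|_1}$.

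For the upper bound I would take as approximant $t_n := \sum_{|\bs|_1\le n}A_\bs(F_r)$, whose Fourier support lies in $\Gamma(2^n)\subset Q_{n+d}$, so up to an irrelevant shift of $n$ it is an admissible element of $\Tr(Q_n)$. For $1<q<\infty$ I apply Remark~\ref{RT2.4.6} (inner exponent $1$, outer exponent $q$) combined with the block estimates:
$$
\Big\|F_r-t_n\Big\|_q
\lesssim \Big(\sum_{|\bs|_1>n}\|A_\bs(F_r)\|_1^{q}\,2^{(q-1)|\bs|_1}\Big)^{1/q}
\asymp \Big(\sum_{|\bs|_1>n} 2^{-(q(r-1)+1)|\bs|_1}\Big)^{1/q}
\asymp 2^{-(r-1+1/q)n}\,n^{(d-1)/q},
$$
where the last step invokes the hyperbolic-cross tail sum from~(\ref{hcsums}). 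For $q=1$ the triangle inequality applied to the same block decomposition directly gives the matching rate $2^{-rn}n^{d-1}$.

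For the lower bound in the range $1<q<\infty$ I argue by duality: $E_{Q_n}(F_r)_q=\sup_{g\in \Tr^\perp(Q_n),\,\|g\|_{q'}\le 1}|\langle F_r,g\rangle|$. Taking the test function $g := (\mathcal D_{Q_{n+d}}-\mathcal D_{Q_n})/\|\mathcal D_{Q_{n+d}}-\mathcal D_{Q_n}\|_{q'}$, disjointness of Fourier supports gives $g\perp \Tr(Q_n)$; Lemma~\ref{L2.3.1} controls the denominator as $\asymp 2^{(1-1/q')n}n^{(d-1)/q'}$; and the numerator
$\langle F_r,\mathcal D_{Q_{n+d}}-\mathcal D_{Q_n}\rangle = \sum_{\bk\in Q_{n+d}\setminus Q_n}\hat F_r(\bk)$
is, for suitable $\alpha$, bounded below by $\asymp 2^{-(r-1)n}n^{d-1}$ by a direct hyperbolic-cross summation of the Bernoulli Fourier weights (again using~(\ref{hcsums})). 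Dividing produces the claimed rate $2^{-(r-1+1/q)n}n^{(d-1)/q}$.

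The main obstacle is the endpoint cases $q\in\{1,\infty\}$, where Lemma~\ref{L2.3.1} no longer supplies the needed Dirichlet-kernel $L_{q'}$-estimate: the kernels $\mathcal D_{Q_n}$ have anomalously large $L_1$- and $L_\infty$-norms, and the duality argument must be rerouted. There one substitutes a Rudin–Shapiro-type polynomial supported on the hyperbolic layer $\Delta Q_{n+1}$ (see Theorem~\ref{T2.2.1} and the discussion around~(\ref{2.6.3})) in order to decouple the $L_1$- and $L_\infty$-sides of the duality; this is the technically most delicate part of the proof, and the resulting constants must be tracked carefully through the tensor structure.
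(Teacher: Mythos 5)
Your outline does give a complete argument on the open range $1<q<\infty$: the block estimate $\|A_\bs(F_r)\|_1\asymp 2^{-r|\bs|_1}$ fed into Remark~\ref{RT2.4.6} yields the upper bound with the correct exponents once one notes that $r-1+1/q>0$ is exactly the condition making $\sum_{|\bs|_1>n}2^{-(q(r-1)+1)|\bs|_1}$ behave like the tail sums in~(\ref{hcsums}); and the duality argument against a phase-corrected Dirichlet kernel, controlled by Lemma~\ref{L2.3.1}, gives the matching lower bound. (The phase correction $\hat g(\bk)=\hat F_r(\bk)/|\hat F_r(\bk)|$ only introduces $\mathrm{sign}\,k_j$-dependent unimodular factors, so the kernel stays comparable in $L_{q'}$ for $1<q'<\infty$ by boundedness of the periodic Hilbert transform — one more reason the endpoints are special.)

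The genuine gap is at $q\in\{1,\infty\}$, and the Rudin--Shapiro proposal would not close it. Two independent problems. First, Theorem~\ref{T2.2.1} requires the subspace $\Psi$ to have dimension $\ge\varepsilon\vartheta(\bN)$, a fixed positive fraction of a full parallelepiped; the hyperbolic layer $\Delta Q_{n+1}$ inside its bounding cube has relative dimension $\asymp 2^{(1-d)n}n^{d-1}\to 0$, so the theorem is vacuous there. Second, and more fundamentally, a Rudin--Shapiro polynomial is built to have an \emph{uncorrelated} sign pattern, which is exactly wrong for the $q=1$ lower bound. Even granting a $g_0\in\Tr(\Delta Q_{n+1})$ with $|\hat g_0(\bk)|\equiv 1$ and $\|g_0\|_\infty\asymp|\Delta Q_{n+1}|^{1/2}$, the sign cancellation forces $|\langle F_r,g_0\rangle|\lesssim\big(\sum_{\bk\in\Delta Q_{n+1}}|\hat F_r(\bk)|^2\big)^{1/2}\asymp 2^{-(r-1/2)n}n^{(d-1)/2}$, and after normalizing by $\|g_0\|_\infty$ the lower bound shrinks to $\asymp 2^{-rn}$: the entire $n^{d-1}$ is lost. (Using the raw $\mathcal D_{Q_{n+d}}-\mathcal D_{Q_n}$ fails symmetrically: $\|\cdot\|_\infty\asymp 2^nn^{d-1}$, so again the quotient is only $\asymp 2^{-rn}$.) The device that actually produces the extra $n^{d-1}$ is the bounded hyperbolic conjugate polynomial recalled in Subsection~9.1, $\Phi_N(\bx)=\sum_{\bk\in\Gamma(N),\,\bk>0}(k_1\cdots k_d)^{-1}\sin k_1x_1\cdots\sin k_dx_d$, whose $L_\infty$-norm is $\le C(d)$ uniformly in $N$ while its Fourier coefficients are of size $\asymp\nu(\bk)^{-1}$ with \emph{aligned} (deterministic, non-random) phases; a phase-adjusted layer piece of $\Phi$, inserted into the Nikol'skii duality theorem (Theorem~\ref{T7.2.2}), gives $\langle F_r,g\rangle\asymp\sum_{\bk\in\Delta Q_{n+1}}\nu(\bk)^{-r-1}\asymp\sum_{|\bs|_1=n+1}2^{-r|\bs|_1}\asymp 2^{-rn}n^{d-1}$. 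On the dual side, the $q=\infty$ upper bound cannot come from your block approximant at all: the text directly after the theorem records that $\|F_r-S_{Q_n}(F_r)\|_\infty\lesssim 2^{-(r-1)n}n^{d-1}$ carries an extra $n^{d-1}$, and the sharp rate $2^{-(r-1)n}$ is obtained there only through the Nikol'skii duality theorem and is non-constructive — no explicit polynomial achieving $E_{Q_n}(F_r)_\infty$ in order is known. The endpoint cases are thus a different argument in kind, not a technical variant of your scheme, and the Rudin--Shapiro/Small Ball circle of ideas you point to is designed to exploit incoherent sign patterns, the opposite of what is needed here.
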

In particular, Theorem \ref{TBT3.1} with $q=1$ implies
$$
E_{Q_n}(\bW^r_\infty)_\infty \lesssim 2^{-rn}n^{d-1}.
$$
By the corollary
to the Littlewood-Paley theorem we get from Theorem \ref{TBT3.1},
\be\nonumber
\bigl\| F_r-S_{Q_n}(F_r)\bigr\|_q\lesssim 2^{-(r-1+1/q)n}n^{(d-1)/q},\qquad
1 < q < \infty .
\ee
In the case of $q=\infty$ we only get
\be\nonumber
\bigl\| F_r-S_{Q_n}(F_r)\bigr\|_\infty\lesssim 2^{-n(r-1)}n^{d-1},\qquad
\ee
which has an extra factor $n^{d-1}$ compared to $E_{Q_n}(F_r)_\infty$. 

 The upper bounds  for  the  best
approximations of the functions $F_r$ in the uniform
metric
were obtained (see \cite{Tem4}) using the Nikol'skii duality theorem (see
Appendix, Theorem \ref{T7.2.2}.  
The use of the duality theorem has the result that we can
determine the order of the best approximation of $F_r $
in the
uniform metric, but we cannot construct a polynomial giving this
approximation. The situation is unusual from the point of view of
approximation of functions of one variable.

\begin{thm}\label{TBT3.2} Suppose that $1 < p,q < \infty$,
$r > (1/p - 1/q)_+$. Then
$$
E_{Q_n}(\bW_p^r)_q\asymp 2^{-n\bigl(r-(1/p-1/q)_+\bigr)}.
$$
\end{thm}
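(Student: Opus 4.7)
The plan is to prove the upper and lower bounds separately, reducing the off-diagonal cases to the diagonal case $p=q$ by suitable embeddings. The cornerstone of the upper bound is the case $p=q$, which I would attack via the Littlewood-Paley characterization \eqref{NeqW}. Since $f-S_{Q_n}(f)=\sum_{|\bs|_1>n}\delta_\bs(f)$, the pointwise factorization
\[
\Big(\sum_{|\bs|_1>n}|\delta_\bs(f)|^2\Big)^{1/2}
\;\le\; 2^{-rn}\Big(\sum_{\bs}2^{2r|\bs|_1}|\delta_\bs(f)|^2\Big)^{1/2},
\]
combined with the $L_p$-version of Littlewood-Paley, yields $\|f-S_{Q_n}(f)\|_p\lesssim 2^{-rn}\|f\|_{\bW^r_p}$, which is the desired bound.

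For the off-diagonal cases of the upper bound I would reduce to the diagonal. When $q\le p$, the trivial embedding $L_p\hookrightarrow L_q$ on the torus gives $E_{Q_n}(\bW^r_p)_q\le C\,E_{Q_n}(\bW^r_p)_p\lesssim 2^{-rn}$, which matches the target rate since $(1/p-1/q)_+=0$. When $p<q$, I would invoke the Sobolev-type embedding of Lemma \ref{emb}(ii), $\bW^r_p\hookrightarrow \bW^{r-\beta}_q$ with $\beta=1/p-1/q$, and apply the diagonal estimate to $\bW^{r-\beta}_q$; the assumption $r>(1/p-1/q)_+$ is precisely what makes $r-\beta>0$, and the resulting rate $2^{-(r-\beta)n}$ carries no spurious logarithmic factors.

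For the lower bound I would construct explicit bad functions in $\bW^r_p$, in each case choosing a multi-index $\bs^*$ with $|\bs^*|_1=n+1$ and all coordinates $\ge 1$. Since $\rho(\bs^*)\cap Q_n=\emptyset$, any polynomial $g$ supported on $\rho(\bs^*)$ is $L_2$-orthogonal to $\Tr(Q_n)$, and the Nikol'skii duality formula (Theorem \ref{T7.2.2})
\[
E_{Q_n}(g)_q=\sup\{|\langle g,h\rangle|:\|h\|_{q'}\le 1,\;h\perp \Tr(Q_n)\},
\]
tested against $h=\bar g/\|g\|_{q'}$, yields $E_{Q_n}(g)_q\gtrsim \|g\|_q$. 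In the regime $p\ge q$ a single exponential suffices: take $f_1=2^{-rn}e^{i(\bk^*,\bx)}$ for $\bk^*\in\rho(\bs^*)$; since $\|e^{i(\bk^*,\bx)}\|_{\bW^r_p}\asymp\prod_j\max(|k^*_j|,1)^r\asymp 2^{rn}$, we have $\|f_1\|_{\bW^r_p}\asymp 1$ and $E_{Q_n}(f_1)_q\gtrsim 2^{-rn}$. In the regime $p<q$ the exponential is too cheap in $L_q$; instead I would use the block Dirichlet kernel $f_2=c_n\mathcal{D}_{\rho(\bs^*)}$ with $c_n$ chosen so $\|f_2\|_{\bW^r_p}\asymp 1$. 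Using $\|\mathcal{D}_{\rho(\bs^*)}\|_p\asymp 2^{|\bs^*|_1(1-1/p)}$ together with the Bernstein scaling on $\Tr(\rho(\bs^*))$ from Theorem \ref{T2.4.2} forces $c_n\asymp 2^{-|\bs^*|_1(r+1-1/p)}$, so that $\|f_2\|_q\asymp 2^{-n(r-1/p+1/q)}$, matching the upper bound.

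The main obstacle, in my view, is securing the sharp upper bound for $p<q$ without a stray $(\log n)^{(d-1)/q}$ factor. A naive approach --- applying the embedding inequality \eqref{2.4.4R} from Remark \ref{RT2.4.6} blockwise to $f-S_{Q_n}(f)$ together with the Littlewood-Paley bound $\|\delta_\bs(f)\|_p\lesssim 2^{-r|\bs|_1}\|f\|_{\bW^r_p}$ --- does produce exactly such a logarithm coming from $\sum_{|\bs|_1>n}2^{-c|\bs|_1}\asymp 2^{-cn}n^{d-1}$. The decisive structural step is instead to route the argument through the Jawerth-type embedding in Lemma \ref{emb}(ii), which packages the exchange of smoothness for integrability without loss. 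The remaining work --- Bernstein/Nikol'skii scalings on single dyadic blocks for the test polynomials and the duality computations for their best $L_q$-approximation --- is routine bookkeeping.
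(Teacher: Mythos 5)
The paper cites Mityagin, Nikol'skaya, and Galeev for this result without reproducing the argument, so there is no in-paper proof to compare against. Your proposal is a correct and natural reconstruction: Littlewood--Paley \eqref{NeqW} handles the diagonal case sharply, and routing the off-diagonal upper bound through the genuinely Triebel--Lizorkin embedding $\bW^r_p\hookrightarrow\bW^{r-\beta}_q$ (Lemma \ref{emb}(ii)) rather than through the blockwise/Besov-type inequality \eqref{2.4.4R} is precisely what avoids the spurious $(\log n)^{(d-1)/q}$ that ruins the naive route; your diagnosis of where the log comes from is right on target.

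Two small points of hygiene. First, in the lower bound the specific duality test you describe does not in general yield what you claim. With the paper's pairing $\langle f,h\rangle=\int f\bar h\,d\mu$ the test functional should be $h=g/\|g\|_{q'}$ (not $\bar g/\|g\|_{q'}$), and even then it returns $\|g\|_2^2/\|g\|_{q'}$, which by H\"older satisfies $\|g\|_2^2/\|g\|_{q'}\le\|g\|_q$ — the wrong direction as a general claim. The inequality $E_{Q_n}(g)_q\gtrsim\|g\|_q$ for all $g\in\Tr(\rho(\bs^*))$ is true, but the clean justification is the Littlewood--Paley projection: $\delta_{\bs^*}$ is bounded on $L_q$ for $1<q<\infty$ (Corollary \ref{C7.1}) and annihilates $\Tr(Q_n)$, so $\|g\|_q=\|\delta_{\bs^*}(g-t)\|_q\lesssim\|g-t\|_q$ for every $t\in\Tr(Q_n)$. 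For your two concrete test functions the H\"older inequality is an equality up to constants, so your computed rates come out correct either way; but stating the general step correctly removes the gap. Second, calling Lemma \ref{emb}(ii) a ``Jawerth-type'' embedding is a misnomer — Jawerth and Franke are the $F\leftrightarrow B$ exchanges in Lemmas \ref{L3.5c}--\ref{L3.5d}; Lemma \ref{emb}(ii) is the plain Sobolev embedding within the $\bW$ (i.e., $F_{p,2}$) scale. The terminology does not affect the mathematics.
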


This theorem was proved by harmonic analysis technique for the case $p=q$ in
\cite{Mit}, \cite{NiN74} and for $q\neq p$ in \cite{Ga78}.

We now consider the cases when one or two
parameters $p$, $q$ take the extreme value $1$ or $\infty$. These
results are not as complete as in the case $1 < p,q < \infty$.

\begin{thm}\label{TBT3.4} We have
$$
E_{Q_n}(\bW_{p}^r)_q\asymp
\begin{cases}
2^{-nr},\quad r>0;\quad p=\infty,\quad 1\le q<\infty; \quad
1<p\le\infty, \quad q=1;\\
2^{-n(r-1+1/q)}n^{\frac{d-1}{q}},\quad r > 1-1/q,\quad p=1,\quad
1 \le q \le \infty ;\\
2^{-n(r-1/p)},\quad r>1/p,\quad 1\le p\le 2,\quad q=\infty.
\end{cases}
$$
\end{thm}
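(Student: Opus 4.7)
The theorem has three cases, governed by which of $p,q$ reaches an extreme value. My strategy is to handle the upper bounds by combining (a) Young's inequality applied to the representation $f = F_r \ast \varphi$, $\|\varphi\|_p \le 1$, with (b) the sharp $L_q$ approximation of the Bernoulli kernel supplied by Theorem~\ref{TBT3.1}, using in some cases monotonicity of $L_p$-norms on $\T^d$ to reduce to the interior case of Theorem~\ref{TBT3.2}. The lower bounds in all three cases come from a single recipe: exhibit a test function in $\bW^r_p$ whose spectrum lies outside $Q_n$, so that $E_{Q_n}(f)_q = \|f\|_q$. The natural choice is either a single-layer polynomial $t \in \Tr(\rho(\bs^*))$ with $|\bs^*|_1 = n+1$ (renormalized via Bernstein, Theorem~\ref{T2.4.2}, so that $\|t\|_{\bW^r_p} \le 1$) or, for the $p=1$ case, a mollified Bernoulli kernel $F_r \ast \V_N/3$ with $N \gg 2^n$.

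\textbf{Cases 1 and 2.} For Case~1, when $p=\infty$ and $1<q<\infty$, the inclusion $\bW^r_\infty \subset C\bW^r_q$ (from $\|\varphi\|_q \le \|\varphi\|_\infty$) together with Theorem~\ref{TBT3.2} yields $E_{Q_n}(\bW^r_\infty)_q \lesssim 2^{-rn}$; the subcase $q=1$ follows by further using $\|\cdot\|_1 \le \|\cdot\|_2$, and the subcase $1<p\le\infty,\,q=1$ follows from $E_{Q_n}(\bW^r_p)_1 \le E_{Q_n}(\bW^r_p)_p$. The matching lower bound uses a single-layer polynomial $t \in \Tr(\rho(\bs^*))$, $|\bs^*|_1 = n+1$, chosen so that $\|t\|_p \asymp \|t\|_q \asymp 1$ (possible because on a single dyadic layer all $L_p$-norms are equivalent up to dimensional constants in the relevant range). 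For Case~2 ($p=1$), let $t_n^* \in \Tr(Q_n)$ realize $E_{Q_n}(F_r)_q$; Young's inequality gives
\[
\|f - t_n^* \ast \varphi\|_q \le \|F_r - t_n^*\|_q \, \|\varphi\|_1 \lesssim 2^{-n(r-1+1/q)}n^{(d-1)/q}
\]
by Theorem~\ref{TBT3.1}. For the lower bound, set $\varphi_0 = \V_N/3$ with $N \gg 2^n$; by \eqref{2.1.10}, $\|\varphi_0\|_1 \le 1$, so $f_0 = F_r \ast \varphi_0 \in \bW^r_1$, and $f_0$ coincides with $F_r$ on frequencies in $\Gamma(N)$. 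Hence $E_{Q_n}(f_0)_q \asymp E_{Q_n}(F_r)_q$, again by Theorem~\ref{TBT3.1}.

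\textbf{Case 3 and the main obstacle.} The boundary $p=1$ is already contained in Case~2 with $q=\infty$, so I focus on $1 < p \le 2$. The natural approach via Young's inequality, $\|f - t_n^* \ast \varphi\|_\infty \le \|F_r - t_n^*\|_{p'}\|\varphi\|_p$, combined with Theorem~\ref{TBT3.1} at $q=p' \ge 2$, produces $2^{-n(r-1/p)} n^{(d-1)/p'}$; this is off by a logarithmic factor from the claimed $2^{-n(r-1/p)}$. To remove the extra log, I invoke Nikol'skii's duality theorem (Theorem~\ref{T7.2.2}):
\[
E_{Q_n}(\bW^r_p)_\infty = \sup_{\substack{g\perp\Tr(Q_n)\\\|g\|_1 \le 1}}\|\tilde F_r \ast g\|_{p'},
\]
and estimate the right-hand side block-by-block using the Littlewood-Paley decomposition and the orthogonality $g \perp \Tr(Q_n)$ to eliminate the layers inside the hyperbolic cross. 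This is the hard step: a direct Hausdorff-Young or Cauchy-Schwarz bound on the remaining layers still leaves the unwanted log factor, so the argument must exploit cancellation or a Marcinkiewicz-multiplier-type estimate tailored to the complement of $Q_n$. The lower bound in Case~3 is cleaner: take a Rudin-Shapiro polynomial $\RR_{\bN} \in \Tr(\rho(\bs^*))$ with $|\bs^*|_1 = n+1$, normalized to have $\|\RR_{\bN}\|_\infty \asymp \|\RR_{\bN}\|_p \cdot 2^{|\bs^*|_1(1/p - 0)}/2^{|\bs^*|_1/2}$ combined with $|\bs^*|_1/p$ Nikol'skii scaling, and embed via Bernstein into $c\bW^r_p$ to obtain the claimed $2^{-n(r-1/p)}$.
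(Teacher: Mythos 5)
Your handling of Cases~1 and~2 is correct and essentially the argument the paper has in mind: Young's inequality together with Theorem~\ref{TBT3.1} for the upper bounds (the paper itself notes ``the upper bounds in the case $p=1$ follow from Theorem~\ref{TBT3.1}''), and test functions with spectrum on the layer $|\bs|_1=n+1$ for the lower bounds. Two small inaccuracies worth fixing: on a dyadic block the $L_p$-norms are \emph{not} all equivalent (they differ by the Nikol'skii gap $2^{|\bs|_1(1/q-1/p)}$); your Case~1 example is saved because you can take a single exponential $e^{i(\bk,\cdot)}$, whose $L_p$-norms genuinely coincide. And ``spectrum outside $Q_n$ forces $E_{Q_n}(f)_q=\|f\|_q$'' is an identity only in $L_2$; for other $q$ you apply a bounded block projection ($\delta_\bs$ via Corollary~\ref{C7.1} when $1<q<\infty$, or $A_\bs$ when $q\in\{1,\infty\}$) to the error, losing only a constant. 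Also normalize the de la Vall\'ee Poussin mollifier by $3^d$, not $3$, in dimension $d$.

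The genuine gap is Case~3 ($1\le p\le 2$, $q=\infty$), on both sides. Your upper-bound sketch is an accurate diagnosis but not a proof: you rightly observe that Young/Hausdorff--Young leave a parasitic power of $\log$ and that Nikol'skii duality is the right reduction, but you stop exactly where the hard analysis begins. As the paper explains, this endpoint (from \cite{Tem3}) is one of the delicate results in the theory: the order of $E_{Q_n}(\cdot)_\infty$ cannot be realized by any \emph{linear} operator from $\bW^r_p$ into $\Tr(Q_n)$ for $d\ge2$, $1<p<\infty$, so any approach that routes through a convolution or projection will necessarily see a logarithm, and the closing of the duality argument uses the uniform boundedness of the special polynomial $\sum_{\bk\in\Gamma(N),\,\bk>0}(k_1\cdots k_d)^{-1}\sin k_1x_1\cdots\sin k_dx_d$. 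Without that (or an equivalent) the block-by-block estimate does not close.

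The Case~3 lower bound is wrong as posed. The Rudin--Shapiro polynomials are \emph{flat}: $\|\RR_\bN\|_q\asymp\vartheta(\bN)^{1/2}$ for every $1\le q\le\infty$, so $\|\RR_\bN\|_\infty/\|\RR_\bN\|_p\asymp 1$, and after Bernstein normalization you only recover $2^{-rn}$, which is strictly weaker than the target $2^{-n(r-1/p)}$ when $p<\infty$. You need a polynomial whose $L_\infty$ norm is \emph{large} relative to its $L_p$ norm --- the opposite extreme. Take $t=\mathcal D_{\rho(\bs^*)}$ with $|\bs^*|_1=n+1$. For $1<p<\infty$, $\|t\|_p\asymp 2^{(1-1/p)|\bs^*|_1}$, and by Theorem~\ref{T7.2.3} with a test function $h$ of unit $L_1$ norm whose Fourier support lies inside the positive octant of $\rho(\bs^*)$ (a modulated tensor Fej\'er kernel) one gets $E_{Q_n}(t)_\infty\ge|\langle t,h\rangle|=|h(0)|\asymp 2^{|\bs^*|_1}$. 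Combined with $\|t\|_{\bW^r_p}\asymp 2^{r|\bs^*|_1}\|t\|_p\asymp 2^{(r+1-1/p)|\bs^*|_1}$ (from \eqref{NeqW}), this gives $E_{Q_n}(t/\|t\|_{\bW^r_p})_\infty\gtrsim 2^{-(r-1/p)(n+1)}$, the claimed rate. The endpoint $p=1$ of Case~3 is Case~2 at $q=\infty$, so you already have it.
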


The upper bounds in the case $p=1$ follow from Theorem \ref{TBT3.1}. The case
$1\le p\le 2$, $q=\infty$, was established in \cite{Tem3}. For the first case
see \cite{TBook}, Ch. 3, Theorem 3.4.

We now proceed to classes $\bH^r_p$. The problem of finding the right orders of
decay of the $E_{Q_n}(\bH^r_p)_q$ turns out to be more difficult than the
corresponding problem for the $\bW^r_p$ classes. Even in the case $1<p,q<\infty$
a new technique was required. 
\begin{thm}\label{TBT3.3} We have
$$
E_{Q_n}(\bH_p^r )_q \asymp
\begin{cases}
2^{-n(r-1/p+1/q)}n^{\frac{d-1}{q}},\quad 1\le p<q< \infty,\qquad
r>1/p-1/q,\\
2^{-rn}n^{\frac{d-1}{2}},\quad 1<q\le p<\infty,\quad p\ge2;
\quad p=\infty, \quad 1<q<\infty,\\
2^{-rn}n^{\frac{d-1}{p}},\quad1\le q\le p\le2,\qquad r>0.
\end{cases}
$$
\end{thm}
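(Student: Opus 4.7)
The plan is to establish matching upper and lower bounds separately in each of the three parameter regimes. For the upper bounds I use a de la Vall\'ee Poussin–style approximant $V_n(f) := \sum_{|\bs|_1 \le n} A_\bs(f)$, which lies in $\Tr(Q_{n+c_0})$ for some absolute $c_0$, so that
\[
E_{Q_n}(f)_q \;\lesssim\; \Big\|\sum_{|\bs|_1 > n - c_0} A_\bs(f)\Big\|_q.
\]
The characterization of $\bH^r_p$ recalled in Section~\ref{sect:FS} gives $\|A_\bs(f)\|_p \lesssim 2^{-r|\bs|_1}\|f\|_{\bH^r_p}$ uniformly in $\bs$, and it remains to convert this block-wise $L_p$-control into an $L_q$-bound on the tail sum. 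The three regimes correspond to three different mechanisms for this conversion.

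For the first regime ($1 \le p < q < \infty$, $r > 1/p - 1/q$) I apply Remark~\ref{RT2.4.6} (inequality \eqref{2.4.4R}) to obtain
\[
\Big\|\sum_{|\bs|_1 > n} A_\bs(f)\Big\|_q \lesssim \Big(\sum_{|\bs|_1 > n} \|A_\bs(f)\|_p^q \, 2^{|\bs|_1(q/p - 1)}\Big)^{1/q} \lesssim \Big(\sum_{|\bs|_1 > n} 2^{-|\bs|_1 q(r - 1/p + 1/q)}\Big)^{1/q},
\]
and \eqref{hcsums} with the positive exponent $q(r - 1/p + 1/q)$ produces the claimed rate. For the second regime (either $1<q\le p<\infty$ with $p \ge 2$, or $p = \infty$, $1<q<\infty$) I invoke Corollary~\ref{C7.1} to the Littlewood--Paley theorem to pass to $\|(\sum_{|\bs|_1>n}|A_\bs(f)|^2)^{1/2}\|_q$, and combine Minkowski's inequality in $L^{q/2}$ (when $q \ge 2$) or the trivial embedding $L_q \hookrightarrow L_2$ on the normalized torus (when $q < 2$) with $\|A_\bs(f)\|_2 \le \|A_\bs(f)\|_p$ to reach $(\sum_{|\bs|_1>n} 2^{-2r|\bs|_1})^{1/2} \asymp 2^{-rn} n^{(d-1)/2}$. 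For the third regime ($1\le q\le p\le 2$) I again use Littlewood--Paley when $p>1$ (resorting to the direct triangle inequality for $p=1$, since $A_\bs$ is uniformly bounded on $L_1$), followed by the pointwise inequality $(\sum a_\bs^2)^{1/2} \le (\sum a_\bs^p)^{1/p}$ valid for $p \le 2$, which yields $(\sum_{|\bs|_1>n}\|A_\bs(f)\|_p^p)^{1/p} \asymp 2^{-rn} n^{(d-1)/p}$; the embedding $\|\cdot\|_q\le C\|\cdot\|_p$ for $q\le p$ then completes the bound.

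For the lower bounds I construct, in each regime, a test function of the form $f_n := c_n \sum_{|\bs|_1 = n+1} t_\bs$ with $t_\bs \in \Tr(\rho(\bs))$ chosen to realize the regime-specific $L_q/L_p$-balance: Rudin--Shapiro–type polynomials in Case~2 (so that the $\ell^2$-aggregation is saturated), normalized Dirichlet kernels $2^{-|\bs|_1(1-1/p)}\mathcal{D}_{\rho(\bs)}$ in Case~1 (yielding the Nikol'skii gain), and analogous constructions in Case~3. The scaling $c_n \asymp 2^{-rn}$ ensures $\|f_n\|_{\bH^r_p} \lesssim 1$, and since $\mathrm{spec}(f_n) \subset Q_{n+c}\setminus Q_n$ and $S_{Q_n}$ is bounded on $L_q$ for $1<q<\infty$, one has $E_{Q_n}(f_n)_q \asymp \|f_n\|_q$, computed by the same Littlewood--Paley/Nikol'skii/Theorem~\ref{T2.4.6} tools as in the upper bound. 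The hard part will be Case~1: the deep inequality \eqref{2.4.4R} substitutes, in the non-Hilbert setting, for the orthogonality argument available at $p=q=2$, and producing a matching lower bound requires a careful Dirichlet-kernel construction whose $L_q$-norm is computed via the same Nikol'skii-type inequality that drives the upper bound. A secondary difficulty is that at the endpoint $p=\infty$ the projections $\delta_\bs$ fail to be $L_\infty$-bounded, which forces one to work throughout with the smoother $A_\bs$ blocks and the $A_\bs$-based characterization of $\bH^r_p$.
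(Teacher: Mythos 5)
Your upper-bound strategy is essentially the one the paper follows: Remark~\ref{RT2.4.6} (i.e., inequality \eqref{2.4.4R}, which is Theorem~\ref{T2.4.6}) is exactly what is used for the case $p<q$ in \cite{Tmon}, and the Littlewood--Paley/Minkowski reductions for $q\le p$ give the claimed rates. A minor remark: in regime~2 you write the embedding as ``$L_q\hookrightarrow L_2$'' for $q<2$; on the normalized torus the embedding goes the other way, $L_2\hookrightarrow L_q$, but the inequality $\|\cdot\|_q\le\|\cdot\|_2$ you subsequently use is the correct one, so this is only a notational slip. The lower-bound constructions you propose in regimes~1 (modulated Dirichlet kernels) and~2 (Rudin--Shapiro polynomials) are indeed the standard ones and can be made to work, in regime~1 using the lower-bound half of Theorem~\ref{T2.4.6} (relation \eqref{2.4.5}) applied to $\mathcal{D}_{\Delta Q_n}$, and in regime~2 via Parseval plus the H\"older/Littlewood--Paley duality step you would need to spell out for $1<q<2$.

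The real gap is in regime~3, not in regime~1 as you suggest. For the off-diagonal subcase $1\le q<p\le 2$, the paper explicitly states that ``the proof of lower bounds required a new technique (see \cite{Tem15})'', and your plan offers only ``analogous constructions'' without saying what they are. Neither of the two standard constructions you use elsewhere gives the right power of $\log$. A Rudin--Shapiro block in $\Tr(\rho(\bs))$ normalized so that $\|\delta_\bs(f_n)\|_p\asymp 2^{-rn}$ gives $\|f_n\|_q\asymp 2^{-rn}\,n^{(d-1)/2}$ regardless of $q\le 2$, which falls short of the target $2^{-rn}\,n^{(d-1)/p}$ by the factor $n^{(d-1)(1/p-1/2)}$. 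The Dirichlet-kernel block normalized the same way gives $\|f_n\|_q\asymp 2^{-rn}\,2^{(1/p-1/q)n}\,n^{(d-1)/q}$, which for $q<p$ has an extra exponentially small factor $2^{-n(1/q-1/p)}$ and is far too small. The required exponent $(d-1)/p$ sits strictly between these two, and hitting it needs the construction from \cite{Tem15}, which is a genuinely different device rather than a variant of the regime~1 or regime~2 test functions. You should also not treat regime~1 as ``the hard part'': it is the case where the machinery of Theorem~\ref{T2.4.6} carries both sides of the estimate, whereas regime~3 with $q<p\le 2$ is where a new idea is needed.
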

In the case $p=q=2$ Theorem \ref{TBT3.3} was proved in \cite{Bu64} and in the
case $1<p=q<\infty$ in \cite{NiN74}. In the case $p=q=1$ it was proved in
\cite{Tem3} and in the case $1\le p<q\le 2$ in \cite{Tem6}. In the case $1\le
p<q<\infty$, Theorem \ref{TBT3.3} was proved in \cite{Tmon} with the use of
Theorem \ref{T2.4.6} from Section \ref{trigpol}. In the case $2\le q<p\le
\infty$ the
required upper bounds follow from the upper bounds in the case $1<q=p<\infty$
and the lower bounds follow from \cite{Tem2}. In the case $1<q<2\le p<\infty$
Theorem \ref{TBT3.3} was proved in \cite{Ga84} and \cite{Di84c}.
In the case $1\le q<p\le 2$ the proof of lower bounds required a new technique
(see \cite{Tem15}).  

\begin{thm}\label{TBT3.3H} Let $2<p\le\infty$ and $r>0$. Then we have
$$
E_{Q_n}(\bH_p^r )_1 \asymp 2^{-rn}n^{\frac{d-1}{2}}.
$$
\end{thm}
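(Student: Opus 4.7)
The plan is to establish matching upper and lower bounds, both built from the mixed Littlewood--Paley decomposition of $\bH^r_p$.

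For the upper bound I exploit the embedding $\bH^r_p \hookrightarrow \bH^r_2$ (which holds for $p \ge 2$ on the normalized torus by Jensen and the characterization \eqref{DefH}) together with the trivial embedding $L_2 \hookrightarrow L_1$. For $f \in \bH^r_p$ one has $\|\delta_\bs(f)\|_2 \lesssim 2^{-r|\bs|_1}$, so Parseval and the pairwise disjointness of the blocks $\rho(\bs)$ yield
\[
\|f - S_{Q_n}(f)\|_2^2 = \sum_{|\bs|_1 > n}\|\delta_\bs(f)\|_2^2 \lesssim \sum_{k > n} 2^{-2rk}k^{d-1} \asymp 2^{-2rn}n^{d-1}
\]
by the standard hyperbolic-cross summation \eqref{hcsums}. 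Since $S_{Q_n}(f) \in \Tr(Q_n)$, I conclude $E_{Q_n}(f)_1 \le \|f - S_{Q_n}(f)\|_2 \lesssim 2^{-rn}n^{(d-1)/2}$, uniformly on the unit ball of $\bH^r_p$.

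For the lower bound, since $\bH^r_\infty \subset \bH^r_p$ for every $p \in (2,\infty]$, it suffices to construct $f_n \in \bH^r_\infty$ with $E_{Q_n}(f_n)_1 \gtrsim 2^{-rn}n^{(d-1)/2}$. I fix $c_0$ so large that $\varrho'(\bs) \cap Q_n = \emptyset$ whenever $|\bs|_1 = n+c_0$, and in each $\Tr(\varrho'(\bs))$ I pick (via Theorem~\ref{T2.2.1} or a tensorised Rudin--Shapiro construction) a real polynomial $t_\bs$ with $\|t_\bs\|_\infty \le 1$ and $\|t_\bs\|_2 \asymp 1$. With $N_n \asymp n^{d-1}$ blocks, the family
\[
f_n^\epsilon(\bx) := c_1 \cdot 2^{-rn} \sum_{|\bs|_1 = n+c_0} \epsilon_\bs t_\bs(\bx),\qquad \epsilon \in \{\pm 1\}^{N_n},
\]
lies uniformly in $\bH^r_\infty$ since $\|A_\bs f_n^\epsilon\|_\infty \lesssim 2^{-r|\bs|_1}$. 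By Nikol'skii duality (Theorem \ref{T7.2.2}),
\[
E_{Q_n}(f_n^\epsilon)_1 = \sup\{|\langle f_n^\epsilon, g\rangle| : \|g\|_\infty \le 1,\ g \in \Tr(Q_n)^\perp\},
\]
and I choose as test function $g = (\Phi-1)/\|\Phi-1\|_\infty$ with the hyperbolic Riesz product $\Phi := \prod_{|\bs|_1 = n+c_0}(1+\gamma\,\epsilon_\bs t_\bs)$ from Lemma~\ref{L2.6.1}. The lemma's decomposition $\Phi = 1 + \gamma\sum_\bs \epsilon_\bs t_\bs + R$ with $R \in \Tr(Q_m)^\perp$ for $m \ge n$ places $g$ in $\Tr(Q_n)^\perp$, while the block $L_2$-orthogonality of the $\varrho'(\bs)$'s reduces the pairing to $\langle f_n^\epsilon, \Phi-1\rangle = c_1\gamma\cdot 2^{-rn}\sum_\bs \|t_\bs\|_2^2 \asymp \gamma\cdot 2^{-rn} n^{d-1}$. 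Dividing by $\|\Phi-1\|_\infty$ will deliver the lower bound.

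The main obstacle is the sharp control of $\|\Phi - 1\|_\infty$ in terms of $\gamma$ and $N_n$. The naive estimate $\|\Phi\|_\infty \le (1+\gamma)^{N_n}$ forces $\gamma \lesssim 1/N_n$ and leaves a lower bound only of order $2^{-rn}$, off by the critical factor $n^{(d-1)/2}$. Closing this gap requires either (i) invoking the small-ball inequality of Subsection~\ref{subsect:SBI}, fully established for $d=2$ via \eqref{2.6.3} and partially known in higher dimensions by \cite{BL}, \cite{BLV} (see Open Problems 2.5--2.6), to show that an appropriate choice of signs keeps $\|\Phi-1\|_\infty \lesssim \gamma\sqrt{N_n}$; or (ii) a Rademacher-averaging argument combined with Khintchine--Kahane that shows $\mathbb{E}_\epsilon \|f_n^\epsilon - h\|_1 \asymp 2^{-rn} n^{(d-1)/2}$ uniformly in $h \in \Tr(Q_n)$, followed by an $\varepsilon$-net concentration argument over the finite-dimensional space $\Tr(Q_n)$ to transfer this to the non-averaged infimum. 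Both routes avoid the logarithmic loss that a direct Salem--Zygmund estimate for $\|f_n^\epsilon\|_\infty$ incurs, and yield the stated sharp rate.
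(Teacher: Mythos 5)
Your upper bound is fine and matches the paper's route: pass to $L_2$, use the characterisation of $\bH^r_p$ via $\delta_\bs$, and sum over $|\bs|_1>n$. The paper phrases this as $E_{Q_n}(\bH^r_p)_1 \le E_{Q_n}(\bH^r_2)_2$ and invokes the second case of Theorem~\ref{TBT3.3}, but the computation is identical.

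Your lower bound, however, is not a proof; you have identified the right obstruction but neither of your proposed repairs is viable. First, Lemma~\ref{L2.6.1} is stated and proved only for $d=2$, so the Riesz product decomposition $\Phi = 1+\sum_\bs t_\bs + R$ with $R\in\Tr(Q_{n+a-6})^\perp$ is not available for general $d$; the statement you are proving is claimed for all $d\ge 2$. Second, the structural problem with pairing by duality in $L_1$ is that the dual test function must be controlled in $L_\infty$, and Riesz products have good $L_1$ normalisation ($\|\Phi\|_1=1$) but poor $L_\infty$ bounds (of order $2^{N_n}$). You noticed this; but your fix~(i), the small ball inequality, goes in the \emph{wrong direction}: it gives \emph{lower} bounds on $\|\sum_\bs\epsilon_\bs t_\bs\|_\infty$, whereas you need an \emph{upper} bound on $\|\Phi-1\|_\infty$, and even the two-dimensional version~\eqref{2.6.3} does not control a product of $1+\gamma\epsilon_\bs t_\bs$; moreover for $d\ge 3$ it is an open problem~(Open Problems 2.5--2.6). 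Your fix~(ii) is an unrealised sketch whose final step (transferring a Rademacher-averaged $L_1$ lower bound to the worst case over $\Tr(Q_n)$ by an $\varepsilon$-net) would concretely need finite-dimensional entropy-type information.

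The paper's actual argument for the lower bound is entirely different: it first records the entropy-number lower bound $\e_n(\bH^r_\infty,L_1)\gtrsim n^{-r}(\log n)^{(r+1/2)(d-1)}$ (Theorem~\ref{Hlo}, proved in \cite{TE1,TE2} using volume estimates of sets of Fourier coefficients, Theorem~\ref{T2.5.1}), then converts this to a lower bound on the Kolmogorov widths $d_m(\bH^r_\infty,L_1)$ via Carl's inequality (Theorem~\ref{thm:carl}), and finally observes $E_{Q_n}(\bH^r_p)_1\ge E_{Q_n}(\bH^r_\infty)_1\ge d_{|Q_n|}(\bH^r_\infty,L_1)$. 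This mechanism---volume estimates $\to$ entropy numbers $\to$ Carl's inequality $\to$ Kolmogorov widths---is precisely the substitute the paper uses because direct Riesz-product/duality arguments cannot capture the $n^{(d-1)/2}$ factor in the $L_1$ error for all~$d$.
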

The upper bounds in Theorem \ref{TBT3.3H} follow from the upper bounds for
$E_{Q_n}(\bH^r_2)_2$ from Theorem \ref{TBT3.3}. The lower bounds are nontrivial.
They follow from the corresponding lower bounds for the Kolmogorov widths
$d_m(\bH^r_\infty,L_1)$, which, as it was observed in \cite{Be5}, follow from the
lower bounds for the entropy numbers $\epsilon_k(\bH^r_\infty,L_1)$ from
\cite{TE1} and \cite{TE2}. 

The following result is known in the case of functions of two variables.

\begin{thm}\label{TBT3.5} Let $d = 2$, and $r > 0$. Then
$$
E_{Q_n}(\bH_{\infty}^r )_q\asymp
\begin{cases}
2^{-rn}n&\qquad\text{ for }\qquad q=\infty,\\
2^{-rn}n^{1/2}&\qquad\text{ for }\qquad q=1.
\end{cases}
$$
\end{thm}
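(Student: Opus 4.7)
The proof splits naturally into upper and lower bounds for each of the two cases. The upper bound for $q=\infty$ is classical: from the characterization $\|A_{\mathbf s}(f)\|_\infty \lesssim 2^{-r|\mathbf s|_1}$ of $\bH^r_\infty$ and the fact that $V_{Q_n}(f)=\sum_{|\mathbf s|_1\le n}A_{\mathbf s}(f)$ lies in a hyperbolic-cross subspace comparable to $\Tr(Q_n)$, the third sum in \eqref{hcsums} yields
\[
\|f-V_{Q_n}(f)\|_\infty \;\le\; \sum_{|\mathbf s|_1>n}\|A_{\mathbf s}(f)\|_\infty \;\lesssim\; \sum_{|\mathbf s|_1>n}2^{-r|\mathbf s|_1}\;\asymp\;2^{-rn}n^{d-1}\;=\;2^{-rn}n.
\]
For the upper bound at $q=1$, I would combine $\|\cdot\|_1\le\|\cdot\|_2$ on the torus with the embedding $\bH^r_\infty\hookrightarrow \bH^r_2$ and the known order of $E_{Q_n}(\bH^r_2)_2$ from Theorem \ref{TBT3.3}, giving $E_{Q_n}(\bH^r_\infty)_1\le E_{Q_n}(\bH^r_\infty)_2\lesssim 2^{-rn}n^{1/2}$. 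The matching lower bound for $q=1$ is already contained in Theorem \ref{TBT3.3H} specialized to $p=\infty$, $d=2$, so nothing new is required there.

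The main work is the lower bound for $q=\infty$. I would test the class against a Riesz product of the type provided by Lemma \ref{L2.6.1}. Fix $a=6$ and take $H:=H_{n+1}(6,0)\subset\{\mathbf s\in\mathbb N_0^2: |\mathbf s|_1=n+1\}$, which still has cardinality $\asymp n$. Choose $t_{\mathbf s}(\mathbf x):=\cos(2^{s_1-1}x_1)\cos(2^{s_2-1}x_2)\in \Tr(\rho(\mathbf s))\subset \Tr(\varrho'(\mathbf s))$, normalized by $\|t_{\mathbf s}\|_\infty=1$ and $\|t_{\mathbf s}\|_2^2=1/4$. Define
\[
\Phi(\mathbf x):=\prod_{\mathbf s\in H}\bigl(1+t_{\mathbf s}(\mathbf x)\bigr)\;\ge\;0,
\qquad
f_n(\mathbf x):=c\,2^{-rn}\sum_{\mathbf s\in H}t_{\mathbf s}(\mathbf x),
\]
with a small constant $c>0$. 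By Lemma \ref{L2.6.1}, $\Phi=1+\sum_{\mathbf s\in H}t_{\mathbf s}+R$ with $R\in \Tr(Q_{n+1})^\perp$, and the Fourier supports of the $t_{\mathbf s}$ lie in $\rho(\mathbf s)\subset Q_{n+1}\setminus Q_n$; hence $\Phi-1\perp \Tr(Q_n)$. Since $\Phi\ge 0$ with $\hat\Phi(0)=1$, we have $\|\Phi\|_1=1$ and therefore $\|\Phi-1\|_1\le 2$. Because $A_{\mathbf s'}(t_{\mathbf s})=0$ unless $\mathbf s'=\mathbf s$ (from the Fourier support conditions on $\mathcal A_{\mathbf s'}$), one checks $\|A_{\mathbf s}(f_n)\|_\infty\lesssim c\,2^{-rn}\lesssim 2^{-r|\mathbf s|_1}$, so $f_n\in \bH^r_\infty$ for a suitable choice of $c$.

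Duality with $\Phi-1$ then gives, for every $t\in \Tr(Q_n)$,
\[
\|f_n-t\|_\infty\;\ge\;\frac{|\langle f_n-t,\Phi-1\rangle|}{\|\Phi-1\|_1}\;=\;\frac{|\langle f_n,\Phi-1\rangle|}{\|\Phi-1\|_1},
\]
where the equality uses $\Phi-1\perp \Tr(Q_n)$. The $t_{\mathbf s}$, $\mathbf s\in H$, are pairwise $L_2$-orthogonal, and $f_n\in \Tr(Q_{n+1})$ is $L_2$-orthogonal to $R\in \Tr(Q_{n+1})^\perp$, so
\[
\langle f_n,\Phi-1\rangle\;=\;c\,2^{-rn}\sum_{\mathbf s\in H}\|t_{\mathbf s}\|_2^2\;\asymp\;c\,2^{-rn}|H|\;\asymp\;c\,2^{-rn}n,
\]
which yields $E_{Q_n}(f_n)_\infty\gtrsim 2^{-rn}n$ and closes the case $q=\infty$. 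The hard part is precisely the availability of the Riesz-product decomposition of Lemma \ref{L2.6.1}: the frequency-separation argument that keeps $\|\Phi-1\|_1$ bounded is two-dimensional in nature, which is exactly why the theorem is stated only for $d=2$.
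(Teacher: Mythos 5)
Your proof is correct, and the heart of it — the lower bound for $q=\infty$ — follows the same route the paper attributes to this theorem: a Riesz product $\Phi=\prod_{\bs\in H}(1+t_\bs)$ over a thinned diagonal $H=H_{n+1}(6,0)$, the decomposition of Lemma~\ref{L2.6.1} giving $\Phi-1\perp\Tr(Q_n)$ with $\|\Phi-1\|_1\le 2$, and $L_1$--$L_\infty$ duality against the test function $f_n=c\,2^{-rn}\sum_{\bs\in H}t_\bs$. Your computation that $A_{\bs'}(t_\bs)=0$ unless $\bs'=\bs$ is in fact exactly right for the cosines sitting at the dyadic frequencies $2^{s_j-1}$, since $\hat{\mathcal A}_{\bs'}$ vanishes at the endpoints $2^{s'_j-2}$ and $2^{s'_j}$ of its support. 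The one place you deviate is the $q=1$ lower bound, which you outsource to Theorem~\ref{TBT3.3H} (whose proof in the survey rests on entropy-number/Kolmogorov-width lower bounds), whereas the original reference handles that case too by a Riesz-product argument; this is a legitimate shortcut here and introduces no circularity, but it does import heavier machinery than the original two-dimensional proof needed.
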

This theorem was proved in \cite{Tem3} in the case $q=\infty$ and in
\cite{TBook}, Chapter 3, Theorem 3.5, in the case $q=1$. The proof of lower
bounds is based on the Riesz products (see Subsection 2.6).

In this subsection we studied approximation in the $L_q$-metric of
functions in the classes $\bW_{p}^r$ and
$\bH_{p}^r$, $1 \le p, q \le \infty$ by trigonometric
polynomials whose harmonics lie in the hyperbolic crosses.
Certain specific features of the multidimensional case were observed
in this study.

As is known, in the univariate case the order of the least
upper bounds of the best approximation by trigonometric polynomials
for both classes are the same for all $1 \le p,q \le \infty$, even though
$H_p^r$ is a wider class than $W_{p}^r$. 
It was determined that the least upper
bounds of the best approximation by polynomials in $\Tr(Q_n)$ are
different for the classes $\bW_{p}^r$ and
$\bH_{p}^r$ for all $1 < p,q < \infty$.
Namely,
$$
E_{Q_n}(\bW_{p}^r)_q= o\bigl(E_{Q_n} (
\bH_{p}^r )\bigr)_q,\qquad d \ge 2 .
$$
This phenomenon is related to the following fact. Let $1<p<\infty$. The property
$f\in \bW^r_p$ implies and is very close to the property
$$
\|f_l\|_p \lesssim 2^{-rl},\quad f_l:= \sum_{|\bs|_1=l} \delta_\bs(f).
$$
Contrary to that the property $f\in\bH^r_p$ is equivalent to 
$$
\|\delta_\bs(f)\|_p \lesssim 2^{-r|\bs|_1}.
$$
Therefore, the difference between classes $\bW^r_p$ and $\bH^r_p$ is determined
by the interplay between conditions on the dyadic blocks and the hyperbolic
layers. The number of the dyadic blocks in the $n$th hyperbolic layer is of $\asymp n^{d-1}$. The
quantities $E_{Q_n}(\bW^r_p)_q$ and $E_{Q_n}(\bH^r_p)_q$ differ by a factor of
the order $n^{(d-1)c(p,q)}$. 

In the case $p=q=1$ the classes $\bW$ and $\bH$ are alike in the sense of best
approximation by the hyperbolic cross polynomials (see Theorems \ref{TBT3.4} and
\ref{TBT3.3} above):
$$
E_{Q_n}(\bW^r_1)_1 \asymp E_{Q_n}(\bH^r_1)_1\asymp 2^{-rn}n^{d-1}.
$$

It turns out that approximation in the uniform metric differs
essentially from approximation in the $L_q$-metric, $1 < q < \infty$, not
only in the methods of proof, but also in that the results are
fundamentally different. For example, in approximation  in the
$L_q$-metric, $1 < q < \infty$, the partial Fourier sums $S_{Q_n} (f)$
give the
order of the best approximation $E_{Q_n}(f)_q$ and thus, if we are not
interested in the dependence of $E_{Q_n}(f)_q$ on $q$,
then we can confine
ourselves to the study of $S_{Q_n}(f)$.

In the univariate case and the uniform metric the partial
sums of the Fourier series give good approximation for the functions
in the classes $W_{p}^r$ and $H_p^r$,
$1 < p < \infty$:
$$
E_n(F)_{\infty}\asymp\sup_{f\in F}\bigl\|f - S_n (f)\bigr\|_{\infty},
$$
where $F$ denotes either $W_{p}^r$ or $H_p^r$.

In the case of the classes $\bW_{p}^r$ and
$\bH_{p}^r$, $1 < p < \infty$, not only
the Fourier sums do not give the orders of the least upper bounds of
the best approximations in the $L_{\infty}$-norm, but also no linear method
gives the orders of the least upper bounds of the best approximations
with respect to the classes $\bW_{p}^r$ or
$\bH_{p}^r$, $d \ge 2$, $1 < p < \infty$ (see \cite{Tmon}, Chapter 2, Section
5).
In other words, the operator of the best approximation in the
uniform metric by polynomials in $\Tr(Q_n)$ cannot be replaced by any linear
operator without detriment to the order of approximation on the classes
$\bW_{p}^r$ and $\bH_{p}^r$, $1 < p < \infty$.

Let us continue with results on the Besov class $\Brpt$. We will see how the
third parameter $\theta$ in this class is reflected on the asymptotic order of
$E_{Q_n}(\Brpt)_q$. 


\begin{thm} \label{Theorem[HC-Approx-Brpt]}
Let $1 < p,q < \infty$,  $1\le \theta < \infty$, $r > (1/p - 1/q)_+$.
 Then we have
\begin{equation}\nonumber
E_{Q_n}(\Brpt)_q
\ \asymp \
\begin{cases}
2^{-(r - 1/p + 1/q) n} \, n^{(1/q - 1/\theta)_+ (d-1)}, \ & p < q;
\\[1ex]
2^{- rn}, & q \le p, \ \theta \le \min\{2,p\};
\\[1ex]
2^{- rn} \, n^{(1/2 - 1/\theta) (d-1)}, & q \le p, \ p \ge 2, \ \theta > 2;
\\[1ex]
2^{- rn} \, n^{(1/p - 1/\theta)(d-1)}, & p = q < 2, \ \theta > p.
\end{cases}
\end{equation}
\end{thm}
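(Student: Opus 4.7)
The plan is to work throughout with the equivalent sequence-space norm
$$
\|f\|_{\mathbf{B}^r_{p,\theta}} \asymp \Bigl(\sum_{\bs\in\N_0^d}\bigl(2^{r|\bs|_1}\|\delta_\bs(f)\|_p\bigr)^{\theta}\Bigr)^{1/\theta}
$$
from \eqref{DefB}, valid on the range $1<p<\infty$, together with the trivial reduction $E_{Q_n}(f)_q\le\|f-S_{Q_n}(f)\|_q=\bigl\|\sum_{|\bs|_1>n}\delta_\bs(f)\bigr\|_q$. The three central ingredients are the Littlewood--Paley equivalence \eqref{NeqW}, the multivariate Nikol'skii inequalities of Theorem \ref{T2.4.5}, and the discretization inequality \eqref{2.4.4R} from Remark \ref{RT2.4.6}. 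The geometric-type sums $\sum_{|\bs|_1>n}2^{-\alpha|\bs|_1}\asymp 2^{-\alpha n}n^{d-1}$ are what produce the logarithmic factors.

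For the upper bounds I split on whether $p\ge q$ or $p<q$. When $q\le p$, monotonicity of $L_q$-norms on $\T^d$ reduces matters to $p=q$, and Littlewood--Paley converts the tail into $\bigl\|\bigl(\sum_{|\bs|_1>n}|\delta_\bs(f)|^2\bigr)^{1/2}\bigr\|_p$. I then apply Minkowski when $p\ge 2$, or the pointwise bound $(\sum a_\bs^2)^{1/2}\le(\sum a_\bs^p)^{1/p}$ when $p\le 2$, to pull $L_p$ inside, and close with a H\"older inequality in $\bs$ of exponent $\theta/\max\{2,p\}$ when $\theta$ is above that threshold, or with the embedding $\ell_\theta\hookrightarrow\ell_{\max\{2,p\}}$ otherwise; this produces exactly the factors $n^{(1/2-1/\theta)(d-1)}$, $n^{(1/p-1/\theta)(d-1)}$, or no log factor at all. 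When $p<q$ I invoke \eqref{2.4.4R} with exponents $(p,q)$ directly on the tail, obtaining
$$
\|f-S_{Q_n}(f)\|_q \lesssim \Bigl(\sum_{|\bs|_1>n}\bigl(2^{r|\bs|_1}\|\delta_\bs(f)\|_p\bigr)^{q}\,2^{-q(r-\beta)|\bs|_1}\Bigr)^{1/q},\qquad \beta:=\tfrac1p-\tfrac1q,
$$
and finish with H\"older (if $\theta>q$) or $\ell_\theta\hookrightarrow\ell_q$ (if $\theta\le q$) to recover $2^{-(r-\beta)n}\,n^{(1/q-1/\theta)_+(d-1)}$.

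All matching lower bounds come from test functions $f$ whose frequency support lies in the single hyperbolic layer $\Delta Q_{n+1}=\cup_{|\bs|_1=n+1}\rho(\bs)$, for which $S_{Q_n}(f)=0$ and hence $E_{Q_n}(f)_q=\|f\|_q$. In the regimes with no log factor I concentrate $f$ on one block: a pure exponential $e^{i(\bk,\cdot)}$ works when $q\le p$, while the Dirichlet kernel $\mathcal{D}_{\rho(\bs)}$ works when $p<q$, since Theorem \ref{T2.4.5} gives $\|\mathcal{D}_{\rho(\bs)}\|_q/\|\mathcal{D}_{\rho(\bs)}\|_p\asymp 2^{|\bs|_1\beta}$, and in both cases the ratio $\|f\|_q/\|f\|_{\mathbf{B}^r_{p,\theta}}$ carries the required main term $2^{-rn}$ or $2^{-(r-\beta)n}$. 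In the regimes with a nontrivial $\theta$-log factor I spread mass across all $N\asymp n^{d-1}$ blocks via $f=\sum_{|\bs|_1=n+1}t_\bs$ with $\|t_\bs\|_p\asymp 1$, so that $\|f\|_{\mathbf{B}^r_{p,\theta}}\asymp 2^{rn}N^{1/\theta}$, and the task becomes producing atoms $t_\bs$ for which $\|f\|_q$ saturates the Littlewood--Paley upper bound $N^{1/\max\{2,p\}}$ up to the Nikol'skii factor $2^{n\beta}$ when $p<q$. The regime $p\ge 2$, $\theta>2$ is handled by single exponentials (Littlewood--Paley gives $\|f\|_q\asymp N^{1/2}$), and the regime $p<q$, $\theta>q$ by normalized Dirichlet-kernel atoms $\mathcal{D}_{\rho(\bs)}/\|\mathcal{D}_{\rho(\bs)}\|_p$ combined with Littlewood--Paley and Nikol'skii.

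The main obstacle is the regime $p=q<2$, $\theta>p$, where one must produce atoms $\{t_\bs\}\subset\Tr(\rho(\bs))$ with $\|t_\bs\|_p\asymp 1$ that attain the extreme Littlewood--Paley bound $\|\sum t_\bs\|_p\asymp N^{1/p}$. For this I would take spatially well-separated translates of the normalized Dirichlet kernel $\mathcal{D}_{\rho(\bs)}/\|\mathcal{D}_{\rho(\bs)}\|_p$, exploiting the spatial concentration of $\mathcal{D}_{\rho(\bs)}$ near the origin so that the translates are almost disjointly supported and $\|\sum t_\bs\|_p^p\asymp\sum\|t_\bs\|_p^p=N$; equivalently, in dimension $d=2$ one may form a Riesz product of the type in Lemma \ref{L2.6.1} with atoms being suitably rescaled Rudin--Shapiro polynomials from Subsection \ref{multpol}, and transfer the $L_1$-normalization of the product to the desired $L_p$ estimate by a duality pairing with a polynomial in $\Tr(\Delta Q_{n+1})$. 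Managing the correct normalizations here, and extending the two-dimensional Riesz-product construction to higher $d$ without leaking mass outside $\Delta Q_{n+1}$, is the delicate technical point.
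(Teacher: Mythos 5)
The paper itself gives no proof of this theorem (it just cites Dinh D\~ung's 1985 habilitation), so your self-contained argument is welcome. Your upper-bound strategy is correct and is exactly the natural chain: Littlewood--Paley (with $\ell_{\min\{p,2\}}$--embedding or Minkowski according to whether $p\le 2$ or $p\ge 2$) and H\"older when $q\le p$, and Remark~\ref{RT2.4.6} (inequality (\ref{2.4.4R}), i.e.\ the Franke-type half of Theorem~\ref{T2.4.6}) when $p<q$, followed in each case by either $\ell_\theta\hookrightarrow\ell_u$ or a H\"older step of exponent $\theta/u$, the tail sums $\sum_{|\bs|_1>n}2^{-\alpha|\bs|_1}\asymp 2^{-\alpha n}n^{d-1}$ producing the logarithms. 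Your lower-bound philosophy --- single-layer test functions $f\in\Tr(\Delta Q_{n+1})$ for which $E_{Q_n}(f)_q\gtrsim\|f\|_q$ by boundedness of $S_{Q_n}$ on $L_q$, $1<q<\infty$ --- is also correct, and your choices of atoms work in the no-log and in the $p\ge 2$, $\theta>2$ cases.

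Where you go wrong is in identifying $p=q<2$, $\theta>p$ as the delicate case and proposing translated Dirichlet atoms or a Riesz product. Neither is needed, and neither is actually sound here. The layer Dirichlet kernel $\mathcal{D}_{\Delta Q_{n+1}}=\sum_{|\bs|_1=n+1}\mathcal{D}_{\rho(\bs)}$ itself already saturates the bound: by Lemma~\ref{L2.3.1}, or directly by applying (\ref{2.4.5}) of Theorem~\ref{T2.4.6} with outer exponent $p$ and inner exponent $\infty$ (legitimate since $1<p<\infty$, and $\|\delta_\bs(\mathcal{D}_{\Delta Q_{n+1}})\|_\infty=|\rho(\bs)|\asymp 2^{n+1}$ for $|\bs|_1=n+1$), one gets $\|\mathcal{D}_{\Delta Q_{n+1}}\|_p\gtrsim 2^{(n+1)(1-1/p)}n^{(d-1)/p}$, while $\|\mathcal{D}_{\rho(\bs)}\|_p\asymp 2^{(n+1)(1-1/p)}$ for every such $\bs$, so that after normalizing $\|\,\cdot\,\|_{\Brpt}=1$ the ratio comes out as $2^{-rn}n^{(d-1)(1/p-1/\theta)}$ with no further work. (This is in fact the same fooling function you already use for $p<q$, $\theta>q$: there the extremal bound $2^{n\beta}N^{1/q}$ also reads off from the $L_q$-norm of $\mathcal{D}_{\Delta Q_{n+1}}$.) Your translation idea, on the other hand, cannot be pushed through as stated: the concentration box of $\mathcal{D}_{\rho(\bs)}$ has side $\asymp 2^{-s_j}$ in the $j$-th coordinate, so for $\bs$ with some $s_j=0$ that box spans a full period in that coordinate; such anisotropic slabs cannot be made pairwise disjoint by translation once they include near-degenerate $\bs$. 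And a Riesz product with Rudin--Shapiro atoms is tuned to extremize $L_1/L_\infty$ ratios, not the $L_p$-to-$\Brpt$ ratio for $1<p<2$; it is the wrong tool for this regime. Replacing the proposed construction in case~4 by the plain Dirichlet kernel closes the argument and, incidentally, makes the proof of all four lower bounds a uniform two-line consequence of Lemma~\ref{L2.3.1} (Dirichlet atoms for log power $(d-1)/p$ or $(d-1)/q$) and of Littlewood--Paley for lacunary exponentials (for log power $(d-1)/2$).
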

Theorem~\ref{Theorem[HC-Approx-Brpt]} was proved in \cite{Di85}. 
The upper bounds are realized by the approximation by the operator $S_{Q_n}$. 
Although in the case $q \le p$, $ \theta < \min\{2,p\}$,
$\Brpt \varsubsetneq \Wrp$, we still have $E_{Q_n}(\Brpt)_q  \asymp E_{Q_n}(\Wrp)_q$. 
While in the case $q \le p$, $ \theta > \min\{2,p\}$ where $\Brpt \not\subset
\Wrp$ and the approximation properties of 
$\Brpt$ are closer to those of $\Hrp$, the asymptotic order of 
$E_{Q_n}(\Brpt)_q$ has the additional logarithm term $n^{(1/\min\{2,p\} -
1/\theta)(d-1)}$.

\begin{thm} \label{Theorem[HC-Approx-Brpt,q=1]}
Let $1 \le p < \infty$,  $1 \le \theta < \infty$, $r > 0$. 
Then we have
\begin{equation}\nonumber
E_{Q_n}(\Brpt)_1
\ \asymp \
\begin{cases}
2^{-rn} \, n^{(1/p - 1/\theta)_+ (d-1)}, \ & p \le 2,  \\[1ex]
2^{-rn}, \ & p > 2 , \ \theta \le 2 \,.
\end{cases}
\end{equation}
\end{thm}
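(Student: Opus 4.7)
The plan is to prove the upper and lower bounds separately, splitting into subcases according to whether the logarithmic factor is trivial or not. For the upper bound I start from the dyadic decomposition $f = \sum_\bs A_\bs(f)$ and, using the spectral support property of the building blocks $A_\bs$, choose the hyperbolic-cross polynomial $P := \sum_{|\bs|_1 \le n - c_d} A_\bs(f) \in \Tr(Q_n)$ for a suitable constant $c_d$. This reduces matters to estimating $\sum_{l > n - c_d} \|f_l\|_1$, where $f_l := \sum_{|\bs|_1 = l} A_\bs(f)$.

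\textbf{Upper bound -- easy regimes.} Whenever $\theta \le \min\{p,2\}$ and $p > 1$, I invoke Lemma~\ref{emb}(iv), which furnishes the embedding $\Brpt \hookrightarrow \Wrp$. Combined with the monotonicity $L_p \hookrightarrow L_1$ on the torus with normalised Lebesgue measure and Theorem~\ref{TBT3.2}, this yields $E_{Q_n}(\Brpt)_1 \le E_{Q_n}(\Wrp)_p \asymp 2^{-rn}$ in one stroke, covering both the case $p > 2$, $\theta \le 2$ and the subcase $1 < p \le 2$, $\theta \le p$. The marginal case $p = 1$ is handled by noticing that $\|A_\bs(f)\|_1 = \|A_\bs(f)\|_p$, whence the triangle inequality in $L_1$ followed by H\"older in $\bs$ with exponent $\theta$ gives $2^{-rn} n^{(d-1)(1-1/\theta)}$, matching the claim since $(1/p - 1/\theta)_+ = 1 - 1/\theta$ for $p = 1$.

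\textbf{Upper bound -- Littlewood-Paley regime ($1 < p \le 2$, $\theta > p$).} Here the triangle inequality $\|f_l\|_1 \le \sum_{|\bs|_1 = l}\|A_\bs(f)\|_1$ is not sharp enough. Instead, the Littlewood-Paley theorem combined with the pointwise bound $(\sum a_\bs^2)^{1/2} \le (\sum a_\bs^p)^{1/p}$ valid for $p \le 2$ gives
\[
\|f_l\|_1 \le \|f_l\|_p \asymp \Big\|\Big(\sum_{|\bs|_1 = l}|A_\bs(f)|^2\Big)^{1/2}\Big\|_p \le \Big(\sum_{|\bs|_1 = l}\|A_\bs(f)\|_p^p\Big)^{1/p}.
\]
Applying H\"older in $\bs$ with exponent $\theta/p > 1$ picks up a factor $l^{(d-1)(1/p - 1/\theta)}$ since $\#\{\bs : |\bs|_1 = l\} \asymp l^{d-1}$; a second application of H\"older in $l > n$ with exponent $\theta$, combined with $\bigl(\sum_\bs (2^{r|\bs|_1}\|A_\bs(f)\|_p)^\theta\bigr)^{1/\theta} \asymp \|f\|_{\Brpt}$ and the elementary summation $\sum_{l > n} 2^{-\alpha l} l^\beta \asymp 2^{-\alpha n} n^\beta$, closes the estimate with the desired exponent.

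\textbf{Lower bound and main obstacle.} The base rate $2^{-rn}$ is realised by a single Fourier mode: for $\bk \in \rho(\bs)$ with $|\bs|_1 = n+1$, set $f(\bx) := c\, e^{i(\bk,\bx)}$; then $\|f\|_{\Brpt} \asymp c\, 2^{rn}$, and duality against $g(\bx) = e^{-i(\bk,\bx)}$ yields $E_{Q_n}(f)_1 \ge c$, so taking $c \asymp 2^{-rn}$ does the job. The hard part is producing the logarithmic factor in the regime $p \le 2$, $\theta > p$. Here the plan is to exploit the duality
\[
E_{Q_n}(\Brpt)_1 \asymp \sup\{\|g\|_{\bB^{-r}_{p',\theta'}} \,:\, \|g\|_\infty \le 1, \ g \perp \Tr(Q_n)\}
\]
and construct $g$ from a single layer $|\bs|_1 = n+1$ as a signed combination of Dirichlet-type building blocks, with signs chosen from a Riesz product as in Lemma~\ref{L2.6.1} to control $\|g\|_\infty$ while maximising $\|g\|_{\bB^{-r}_{p',\theta'}}$. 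The main obstacle is making these constants behave correctly across dimensions: one needs the $\Brpt$-norm of the test function to remain bounded while its distance from $\Tr(Q_n)$ in $L_1$ carries the exact exponent $(d-1)(1/p-1/\theta)$. For $d = 2$ Lemma~\ref{L2.6.1} is tailor-made, but for $d \ge 3$ the volume estimates of Subsection~\ref{vol_est} and Rudin-Shapiro-type randomisations (cf.\ Subsection~\ref{subsect:SBI}) must replace the classical Riesz-product argument.
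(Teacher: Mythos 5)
Your upper-bound argument is correct and, at least for $p\le 2$, gives a self-contained proof along the same lines as the paper's (approximation by $S_{Q_n}$ plus Littlewood--Paley and H\"older). For $p>2,\ \theta\le 2$ your embedding $\Brpt\hookrightarrow\Wrp$ together with Theorem~\ref{TBT3.2} is exactly what the paper does, up to the inessential choice of target space ($\bW^r_2$ versus $\bW^r_p$). One cosmetic point: in the Littlewood--Paley step you should work with $\delta_\bs(f)$ rather than $A_\bs(f)$ so that Theorem~\ref{T7.3.3} applies verbatim; for $1<p<\infty$ the two descriptions are equivalent by~\eqref{DefB}, while for $p=1$ your use of $A_\bs$ is the right and only choice.

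The lower bound for $p\le 2$, $\theta>p$ is where there is a genuine gap. In your duality formula the witness $g$ must satisfy $\|g\|_\infty\le 1$, not $\|g\|_1\le 1$, because the approximation error is measured in $L_1$. The Riesz products of Lemma~\ref{L2.6.1} are $L_1$-normalised by construction (nonnegativity gives $\|\Phi\|_1=\hat\Phi(\mathbf 0)=1$) and are the standard device for lower-bounding $E_{Q_n}(\cdot)_\infty$; they cannot supply an $L_\infty$-bounded dual witness, so your invocation of that lemma is misdirected. A layer-wise construction from Dirichlet or Rudin--Shapiro blocks with random signs, under the constraint $\|g\|_\infty\le 1$, also falls short of the required block size $\|\delta_\bs(g)\|_{p'}\gtrsim n^{-(d-1)/p'}$ once $p<2$. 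The route the paper intends bypasses all of this: take the extremal single-layer polynomial from the $\Hrp$-lower bound $E_{Q_n}(\Hrp)_1\asymp 2^{-rn}n^{(d-1)/p}$, $1\le p\le 2$ (Theorem~\ref{TBT3.3}, third line), and observe that for polynomials on a single hyperbolic layer inequality~\eqref{RoStB1} gives $\|f\|_{\Brpt}\lesssim n^{(d-1)/\theta}\|f\|_{\Hrp}$. Dividing by $n^{(d-1)/\theta}$ produces a unit-ball element of $\Brpt$ whose $E_{Q_n}$-error is $\gtrsim 2^{-rn}n^{(d-1)(1/p-1/\theta)}$, as required; the genuinely hard part is then the $\Hrp$ bound itself, which is proved in~\cite{Tem15} and can be cited as a black box.
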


Theorem~\ref{Theorem[HC-Approx-Brpt,q=1]} was proved in the case $p=1$ in
\cite{Ro04} and in the case 
$1 < p \le 2$ in \cite{Ro08}. 
In the case $p>2$ and $\theta\le 2$ the upper bound follows from the embedding of the 
$\Brpt$ into $\bW^r_2$ and Theorem \ref{TBT3.2}. The lower bound in this case is trivial. 
Similarly to
Theorem~\ref{Theorem[HC-Approx-Brpt]}, the upper bounds in this theorem are
realized by the approximation by the operator $S_{Q_n}$. The lower bounds are
proved by the construction of a ``fooling'' test function.

\begin{thm} \label{Theorem[HC-Approx-Brpt,q=infty]}
Let $1 \le p \le 2$,  $1 \le \theta \le 2$, $r > 1/p$. 
Then we have
\begin{equation} \label{[HC-Approx-Brpt,q=infty]}
E_{Q_n}(\Brpt)_\infty
\ \asymp \
2^{-(r-1/p)n}.
\end{equation}
\end{thm}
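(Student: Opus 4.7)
\textbf{Upper bound.} The plan is to reduce to the already-known $\bW$-case via a chain of embeddings. First, Lemma~\ref{emb}(i) with $u = 2$ gives $\bB^r_{p,\theta} \hookrightarrow \bB^{r'}_{2,\theta}$ where $r' := r - (1/p - 1/2)$. Since $\theta \le 2$, the monotonicity of Besov scales in the fine-index $\theta$ yields $\bB^{r'}_{2,\theta} \hookrightarrow \bB^{r'}_{2,2}$, and the Hilbert case of Lemma~\ref{emb}(iv) (applied with $p = 2$) identifies $\bB^{r'}_{2,2}$ with $\bW^{r'}_2$ up to equivalent norms. The hypothesis $r > 1/p$ is exactly $r' > 1/2$, so Theorem~\ref{TBT3.4} (applied with $p = 2$, $q = \infty$) yields
\[
E_{Q_n}(\bB^r_{p,\theta})_\infty
\ \lesssim \ E_{Q_n}(\bW^{r'}_2)_\infty
\ \asymp \ 2^{-n(r' - 1/2)}
\ = \ 2^{-n(r - 1/p)}.
\]

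\textbf{Lower bound.} I construct a test function concentrated on a single dyadic block. Let $\bs^* := (n+1, 0, \ldots, 0)$, so that $|\bs^*|_1 = n+1$ and $\rho(\bs^*) \cap Q_n = \emptyset$. Take a univariate modulated Fej\'er kernel
\[
\phi(x_1) := e^{iMx_1}\,\mathcal{K}_N(x_1)
\]
with $N \asymp 2^n$ and $M$ chosen so that the spectrum $[M-N, M+N]$ lies strictly inside $(2^n, 2^{n+1})$. Since $|e^{iMx_1}| = 1$, standard Fej\'er asymptotics give $\|\phi\|_1 = 1$ and $\|\phi\|_q = \|\mathcal{K}_N\|_q \asymp 2^{n(1 - 1/q)}$ for every $1 \le q \le \infty$; in particular $\|\phi\|_2^2 / \|\phi\|_p \asymp 2^{n/p}$, i.e.\ Nikol'skii's inequality on $\Tr(\rho(\bs^*))$ is saturated. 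Set
\[
f(\bx) := c \cdot 2^{-r(n+1)} \|\phi\|_p^{-1}\, \phi(x_1),
\]
viewed as a function on $\T^d$. Since $f$ has Fourier support contained in the single block $\rho(\bs^*)$, the characterization \eqref{DefB} when $1 < p < \infty$, and its $A_\bs$-analogue when $p = 1$ (which picks up only a bounded number of nonzero blocks, each with smoothness exponent comparable to $2^{r|\bs^*|_1}$), yields uniformly in $\theta \ge 1$
\[
\|f\|_{\bB^r_{p,\theta}}
\ \asymp \ 2^{r(n+1)} \|f\|_p
\ = \ c,
\]
so $f \in \bB^r_{p,\theta}$ once $c$ is chosen as a sufficiently small absolute constant.

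To bound $E_{Q_n}(f)_\infty$ from below I invoke $L_\infty$--$L_1$ duality against the test function $g(\bx) := \phi(x_1)$, extended trivially in $x_2, \ldots, x_d$: for every $\tau \in \Tr(Q_n)$ one has $|\langle f, g\rangle| = |\langle f - \tau, g\rangle| \le \|f - \tau\|_\infty \|g\|_1$. The spectrum of $g$, contained in $\{(k_1, 0, \ldots, 0) : k_1 \in (2^n, 2^{n+1})\}$, is disjoint from $Q_n$, so $g \in \Tr^\perp(Q_n)$, and $\|g\|_1 = \|\phi\|_1 = 1$. The pairing evaluates to
\[
|\langle f, g \rangle|
\ = \ c \cdot 2^{-r(n+1)} \|\phi\|_p^{-1} \|\phi\|_2^2
\ \asymp \ c \cdot 2^{-r(n+1)} \cdot 2^{n/p}
\ \asymp \ 2^{-(r - 1/p)n},
\]
hence $E_{Q_n}(f)_\infty \ge |\langle f, g\rangle| \gtrsim 2^{-(r-1/p)n}$, matching the upper bound.

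\textbf{Main obstacle.} The delicate point is obtaining the lower bound without a spurious logarithmic loss: a naive choice of block polynomial (for example the plain Dirichlet kernel supported on $\rho(\bs^*)$) only yields $2^{-(r-1/p)n}/n$, because the Dirichlet kernel carries a $\log$-factor in its $L_1$-norm and therefore fails to saturate the Nikol'skii inequality sharply in the ratio $\|\phi\|_2^2/(\|\phi\|_p \|\phi\|_1)$. The modulated Fej\'er kernel remedies this, at the cost of carefully shifting its spectrum so that it lies strictly inside $\rho(\bs^*)$ and strictly outside $Q_n$ simultaneously, which is precisely what ensures both $g \in \Tr^\perp(Q_n)$ and the sharp asymptotic $\|\phi\|_2^2/\|\phi\|_p \asymp 2^{n/p}$.
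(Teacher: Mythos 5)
Your proof is correct and follows essentially the same route as the paper: the upper bound comes from embedding $\Brpt$ into $\bW^{r-1/p+1/2}_2$ (your chain $\Brpt \hookrightarrow \bB^{r'}_{2,\theta}\hookrightarrow \bB^{r'}_{2,2}=\bW^{r'}_{2}$ lands on exactly that space) and then invoking Theorem~\ref{TBT3.4} with $p=2$, $q=\infty$; the lower bound the paper attributes to the univariate ($d=1$) case, and your modulated Fej\'er-kernel test function together with $L_\infty$--$L_1$ duality is precisely that univariate argument made explicit, including the correct observation that a plain Dirichlet block kernel would introduce a spurious logarithmic loss.
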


Theorem~\ref{Theorem[HC-Approx-Brpt,q=infty]} was proved in \cite{Ro08}. The
upper bound follows from the embedding of the 
$\Brpt$ into $\bW^{r-1/p+1/2}_2$ and Theorem \ref{TBT3.4}. The lower bound
follows from relation \eqref{[HC-Approx-Brpt,q=infty]} for the univariate
case ($d=1$).

We note that in some cases the upper bounds are trivial. For instance, in the case $\theta =1$, using the Nikol'skii inequalities, we obtain for $1\le p\le q\le \infty$, $r>1/p-1/q$, that
\begin{equation}\nonumber
 \begin{split}
E_{Q_n}(f)_q &\le \sum_{|\bs|_1\ge n} \|A_\bs(f)\|_q \lesssim \sum_{|\bs|_1\ge n} 2^{|\bs|_1(1/p-1/q)}\|A_\bs(f)\|_p\\
&\le 2^{(-r+1/p-1/q)n}\sum_{|\bs|_1\ge n}2^{r|\bs|_1} \|A_\bs(f)\|_p \ll 2^{(-r+1/p-1/q)n}\|f\|_{\bB^r_{p,1}}.
\end{split}
\end{equation}
The corresponding lower bounds follow from the univariate case. Thus, we obtain for $1\le p\le q\le \infty$, $r>1/p-1/q$ (see \cite{Di85} 
for $q<\infty$ and \cite{Rom10} for $q=\infty$)
$$
    E_{Q_n}(\bB^r_{p,1})_q \asymp 2^{(-r+1/p-1/q)n}.
$$

In other cases the lower bounds follow from the corresponding examples, used for the $\bH$ classes, and a simple inequality for $f\in \Tr(\Delta Q_n)$
\begin{equation}\label{RoStB1}
\|f\|_{\bB^r_{p,\theta}} \lesssim n^{(d-1)/\theta}\|f\|_{\bH^r_p}. 
\end{equation}
For instance, in this way we obtain (see \cite{Rom10})

$$
E_{Q_n}({\bB}_{\infty,\theta}^r)_q\asymp 2^{-rn} n^{\left(\frac{1}{2}-\frac{1}{\theta}\right)_+(d-1)} 
$$
for $1<q<\infty$, $r>0$, $1\leq \theta <\infty$;
$$
E_{Q_n}({\bB}_{p,\theta}^r)_1\asymp 2^{-rn} n^{\left(\frac{1}{2}-\frac{1}{\theta}\right)} 
$$
if $d=2$ and  $2<\theta <\infty$, $2<p\leq \infty$, $r>0$; and (see \cite{Rom11})
$$
E_{Q_n}({\bB}_{\infty,\theta}^r)_\infty\asymp 2^{-rn} n^{1-\frac{1}{\theta}} 
$$
if $d=2$ and $1\leq \theta <\infty$, $r>0$.

Here is a result from \cite{Rom06}.
\begin{thm}\label{Rom5} Let $1<q<\infty$, $r>1-\frac{1}{q}$, $1\leq \theta <\infty$. Then we have
$$
E_{Q_n}({\bB}_{1,\theta}^r)_q\asymp 2^{-\left(r-1+\frac{1}{q} \right)n} n^{\left(\frac{1}{q}-\frac{1}{\theta}\right)_+(d-1)} .
$$
\end{thm}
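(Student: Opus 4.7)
The plan is to adapt the strategy from Theorem \ref{Theorem[HC-Approx-Brpt]} (the $p<q$ case) to the boundary situation $p=1$, which is not covered there. The decomposition $f = \sum_\bs A_\bs(f)$ will be our main workhorse on both sides.

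For the upper bound, we approximate $f$ by the partial sum $P_n := \sum_{|\bs|_1 \le n} A_\bs(f)$, which lies in $\Tr(Q_{n+d})$ (up to a harmless shift). Setting $a_\bs := 2^{r|\bs|_1}\|A_\bs(f)\|_1$ we have $\|(a_\bs)\|_{\ell_\theta} \asymp \|f\|_{\bB^r_{1,\theta}}$ by the Fourier-analytic characterization of $\bB^r_{1,\theta}$ stated in Section \ref{sect:FS}. Since $1\le q<\infty$, we apply Remark \ref{RT2.4.6} (with its inner parameter equal to $1$ and outer parameter equal to $q$) to the tail $f-P_n = \sum_{|\bs|_1>n}A_\bs(f)$ and obtain
\begin{equation*}
 \|f-P_n\|_q^q \lesssim \sum_{|\bs|_1>n} a_\bs^q\, 2^{-|\bs|_1 q(r-1+1/q)}\,.
\end{equation*}
If $\theta\le q$ we use the trivial $\ell_\theta \hookrightarrow \ell_q$ embedding and pull out the maximal weight at $|\bs|_1=n+1$, obtaining the bound $2^{-n(r-1+1/q)}\|f\|_{\bB^r_{1,\theta}}$. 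If $\theta>q$ we apply Hölder with exponents $\theta/q$ and $(\theta/q)'=\theta/(\theta-q)$ and invoke the hyperbolic-layer sum \eqref{hcsums}, namely $\sum_{|\bs|_1>n} 2^{-\alpha|\bs|_1}\asymp 2^{-\alpha n} n^{d-1}$ for $\alpha>0$, which is applicable since $r>1-1/q$. A direct computation reveals that the exponents of $n$ combine as $(\theta-q)/(q\theta)=1/q-1/\theta$, producing the conjectured rate.

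For the lower bound, we construct ``fooling'' functions supported in frequency on the first discarded layer $\{|\bs|_1 = n+1\}$, so that their spectra lie outside $Q_n$. Since $1<q<\infty$, the partial-sum operator $S_{Q_n}$ is uniformly bounded on $L_q$ (Littlewood--Paley, Corollary \ref{C7.1}), hence for such $f$ we get $E_{Q_n}(f)_q\gtrsim \|(I-S_{Q_n})(f-t)\|_q\gtrsim \|f\|_q$ for every $t\in\Tr(Q_n)$. When $\theta\le q$ we take $f=2^{-r(n+1)}g_{\bs^*}$ with a single $\bs^*$, $|\bs^*|_1=n+1$, where $g_{\bs^*}\in\Tr(\rho(\bs^*))$ satisfies $\|g_{\bs^*}\|_1\asymp 1$ and (by Theorem \ref{T2.4.5}) $\|g_{\bs^*}\|_q\asymp 2^{|\bs^*|_1(1-1/q)}$; then $\|f\|_{\bB^r_{1,\theta}}\asymp 1$ and $\|f\|_q \asymp 2^{-n(r-1+1/q)}$. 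When $\theta>q$ we take
\begin{equation*}
  f \;=\; c \sum_{|\bs|_1=n+1} 2^{-r|\bs|_1}\,\psi_\bs(\bx-\bx_\bs),
\end{equation*}
where each $\psi_\bs$ is a Fejér-type block in $\rho(\bs)$ and the shifts $\bx_\bs$ are chosen so that the essential spatial supports are quasi-disjoint, so that $\|f\|_q^q \asymp c^q 2^{-rq(n+1)}\sum_{|\bs|_1=n+1}\|\psi_\bs\|_q^q$. Normalizing $\|f\|_{\bB^r_{1,\theta}} \asymp c\,n^{(d-1)/\theta}=1$ yields exactly $\|f\|_q \asymp 2^{-n(r-1+1/q)}n^{(d-1)(1/q-1/\theta)}$.

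The main obstacle is the lower bound in the case $\theta>q$: the naive choice with $\psi_\bs = \mathcal A_{\bs}$ all centered at the origin fails, because the de la Vallée Poussin-type bumps have oscillatory tails that make $\|\sum_{|\bs|_1=n+1} c_\bs \psi_\bs\|_q$ hard to compute. One must either choose the translates $\bx_\bs$ carefully so that the peaks of the bumps are disjoint in $\T^d$ (feasible since $n^{d-1}2^{-(n+1)}\ll 1$ for large $n$), or randomize with $\pm 1$ signs and invoke Khintchine-type inequalities together with Littlewood--Paley. Both techniques are standard in this area (cf.\ \cite{Rom06}, \cite{Tmon}), but in either implementation verifying the two-sided estimate $\|f\|_q^q \asymp c^q\sum_\bs 2^{-rq|\bs|_1}\|\psi_\bs\|_q^q$ is the delicate step that absolutely requires $1<q<\infty$.
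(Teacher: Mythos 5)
Your argument is correct and follows the same road the paper indicates: the upper bound via Remark~\ref{RT2.4.6} (the one--sided version of Theorem~\ref{T2.4.6} with inner index $1$ and outer index $q$), and a lower bound from a fooling function concentrated on the first discarded hyperbolic layer, combined with the uniform $L_q$-boundedness of $S_{Q_n}$ for $1<q<\infty$. The split $\theta\le q$ versus $\theta>q$ in the upper bound and the H\"older/\eqref{hcsums} bookkeeping are all exactly right.

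One remark on the lower bound for $\theta>q$. Your worry about controlling $\|f\|_q$ for the sum over the layer $|\bs|_1=n+1$ can be sidestepped entirely: instead of translating the bumps or randomizing signs, invoke the second relation \eqref{2.4.5} of Theorem~\ref{T2.4.6}. Choosing $\psi_\bs\in\Tr(\rho(\bs))$ with $\|\psi_\bs\|_1\asymp 1$ and $\|\psi_\bs\|_\infty\asymp 2^{|\bs|_1}$ (e.g., modulated Fej\'er-type blocks fitting exactly in $\rho(\bs)$), the untranslated sum $f=c\,2^{-r(n+1)}\sum_{|\bs|_1=n+1}\psi_\bs$ has $\delta_\bs(f)=c\,2^{-r(n+1)}\psi_\bs$ with disjoint spectra, so $\|\delta_\bs(f)\|_\infty\asymp c\,2^{(1-r)(n+1)}$. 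Applying \eqref{2.4.5} with outer index $q$ and inner index $\infty$ directly yields
$\|f\|_q\gtrsim c\,2^{-(n+1)(r-1+1/q)}\,n^{(d-1)/q}$,
valid for all $1<q<\infty$, with no geometric argument about overlapping peaks and no restriction to $q\ge 2$. This is also the route consistent with the paper's ``\,$\bH$-example plus \eqref{RoStB1}\,'' hint: the same $f$ (before the $c$-normalization) gives the lower bound for $E_{Q_n}(\bH^r_1)_q$, and scaling by $n^{-(d-1)/\theta}$ via \eqref{RoStB1} places it in the $\bB^r_{1,\theta}$ unit ball. Your translate/Khintchine approach works as well when done carefully, but the $q<2$ case is precisely where the pointwise $\ell_2\hookrightarrow\ell_q$ trick fails and the disjoint-support argument needs to control the tail contamination; the Theorem~\ref{T2.4.6} route packages all of that for you.
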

We note that the upper bounds in Theorem \ref{Rom5} are derived from Theorem \ref{T2.4.6}.

\subsection{The Kolmogorov widths} \index{Width!Kolmogorov}

We begin with results on $\bW^r_p$ classes. Denote
$$
r(p, q) := \begin{cases} (1/p-1/q)_+\qquad&\text{ for }1\le p\le q \le2; \qquad
1\le q\le p\le\infty, \\
\max\{1/2, 1/p\}\qquad&\text{ otherwise }.\end{cases}
$$
\begin{thm}\label{TBT4.4} Let $r(p,q)$ be   as above.
Then for $1<p,q<\infty$, $r>r(p,q)$
$$
d_m(\bW_{p}^r,L_q) \asymp
\left( \frac{(\log m)^{d-1}}{m}\right)^{r-\bigl(1/p-\max\{1/2,1/q\}\bigr)_+}.
$$
\end{thm}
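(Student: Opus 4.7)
The plan is to reduce everything to the asymptotics of finite-dimensional Kolmogorov widths $d_m(B_p^N,\ell_q^N)$ by combining the Littlewood--Paley characterization \eqref{NeqW} of $\bW^r_p$ with the Marcinkiewicz-type discretization of Theorem \ref{T2.4.11}. Fix $n$ with $|Q_n|\asymp 2^n n^{d-1}\asymp m$; the target then reads $2^{-n(r-(1/p-\max\{1/2,1/q\})_+)}$.

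For the upper bound I would split into two regimes depending on whether the exponent $(1/p-\max\{1/2,1/q\})_+$ coincides with $(1/p-1/q)_+$. In the ``easy'' regime $q\le\max\{p,2\}$ the two exponents agree, and the uniform boundedness of $S_{Q_n}:L_q\to L_q$ combined with Theorem \ref{TBT3.2} gives directly
$$
d_m(\bW^r_p,L_q)\;\le\;\sup_{f\in\bW^r_p}\|f-S_{Q_n}(f)\|_q\;\lesssim\;E_{Q_n}(\bW^r_p)_q\;\asymp\;2^{-n(r-(1/p-1/q)_+)}.
$$
In the complementary ``hard'' regime $p<q$ with $q>2$, the full block $\Tr(Q_n)$ is wasteful and the approximating subspace is constructed block by block: in each $\Tr(\rho(\bs))$ with $|\bs|_1\le n$ one chooses a subspace $V_{\bs}$ of dimension $m_{\bs}$ realizing the width $d_{m_{\bs}}(\Tr(\rho(\bs))_p,L_q)$, which via Theorem \ref{T2.4.11} reduces, up to an explicit Marcinkiewicz rescaling in $N_{\bs}\asymp 2^{|\bs|_1}$, to the finite-dimensional width $d_{m_{\bs}}(B_p^{N_{\bs}},\ell_q^{N_{\bs}})$. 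Reassembling the block errors in $L_q$ via the square-function characterization \eqref{NeqW}, optimizing $\{m_{\bs}\}$ under the budget $\sum_{|\bs|_1\le n}m_{\bs}\le m$, and controlling the tail $|\bs|_1>n$ through the Nikol'skii inequality (Theorem \ref{T2.4.5}) together with \eqref{chainBFB} delivers the sharp upper bound.

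For the lower bound I would use a ``bad subspace'' argument. For each $\bs$ with $|\bs|_1=n$, the Bernstein inequality (Theorem \ref{T2.4.2}) yields $\|t\|_{\bW^r_p}\lesssim 2^{rn}\|t\|_p$ for $t\in\Tr(\rho(\bs))$, so a rescaled $L_p$-ball in $\Tr(\rho(\bs))$ of radius $c\,2^{-rn}$ embeds into $\bW^r_p$. By Theorem \ref{T2.4.11} its Kolmogorov width in $L_q$ is essentially $2^{-rn}$ times $d_{m_{\bs}}(B_p^{N_{\bs}},\ell_q^{N_{\bs}})$ with $N_{\bs}\asymp 2^n$ (up to Marcinkiewicz factors). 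Taking the $\asymp n^{d-1}$ disjoint blocks with $|\bs|_1=n$ in parallel, via a direct-sum / Pietsch-type argument that distributes the budget $m$ uniformly, yields both the correct finite-dimensional rate and the required logarithmic factor $(\log m)^{(d-1)\alpha}$, where $\alpha$ denotes the target exponent.

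The main obstacle is the hard regime, specifically $1<p<2<q<\infty$, where the sharp exponent $r-1/p+1/2$ is strictly smaller than the exponent $r-1/p+1/q$ coming from $E_{Q_n}$. On the upper-bound side one must invoke Kashin--Gluskin-type random subspaces realizing the critical rate $d_{\lfloor N/2\rfloor}(B_p^N,\ell_q^N)\asymp N^{-(1/p-1/2)}$ and glue these constructions consistently across dyadic layers while keeping the total dimension $\asymp m$. On the lower-bound side one must exclude any cleverer $m$-dimensional subspace; this rests on the volume estimates of Subsection \ref{vol_est} and sharp finite-dimensional lower bounds for widths of $\ell_p^N$-balls in $\ell_q^N$. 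Once these finite-dimensional ingredients are in place, the logarithmic factor follows from $|Q_n|\asymp 2^n n^{d-1}$ and the block-counting; the borderline case $q=\infty$ is excluded from the statement and remains partially open (cf.\ Open problem 1.3).
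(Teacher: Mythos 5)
Your high-level strategy is indeed the route the paper follows: for $1<p\le q\le2$ and $1<q\le p<\infty$ the hyperbolic cross projection $S_{Q_n}$ suffices via Theorem~\ref{TBT3.2}, while for $p<q$, $q>2$ one reduces layer by layer (Littlewood--Paley plus the Marcinkiewicz theorem) to finite-dimensional Kolmogorov widths, sums over a dyadic budget $m_l\asymp |\Delta Q_n|\,2^{\kappa(n-l)}$, and invokes the Kashin--Gluskin--Garnaev theorem. Two remarks, though. First, the paper packages the key block estimate not as $d_{m_{\bs}}(B_p^{N_{\bs}},\ell_q^{N_{\bs}})$ but as $d_m(\Tr(\bN,d)_2,L_\infty)$ (Theorem~\ref{TBT2.4.6}), and for $2\le p\le q$ it passes through $\bW^r_p\hookrightarrow\bW^r_2$ and $\|\cdot\|_q\le\|\cdot\|_\infty$; your claimed rate $d_{\lfloor N/2\rfloor}(B_p^N,\ell_q^N)\asymp N^{-(1/p-1/2)}$ is off as stated (e.g.\ for $p=2$, $q=\infty$ the correct order at $m\asymp N/2$ is $N^{-1/2}$, and the normalizing Marcinkiewicz factor $N^{1/p-1/q}$ has to be bookkept separately). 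Second, and more substantively, the lower bound is where a real gap appears: a competing $m$-dimensional subspace $V\subset L_q$ need not decompose along the Littlewood--Paley blocks, so the informal ``distribute the budget uniformly over the $\asymp n^{d-1}$ blocks with $|\bs|_1=n$'' step does not follow from any off-the-shelf direct-sum lemma. The paper flags this explicitly right after Theorem~\ref{TBT1.4.6}: for $1<p<q\le2$ the new element of the technique is precisely that the block from which one draws the ``bad'' test polynomial must be chosen \emph{depending on} the system $\varphi_1,\dots,\varphi_m$, which is what a V-independent, uniformly distributed budget cannot deliver. Also, the $L_q$-reassembly of block errors should use Corollary~\ref{C7.2} or Theorem~\ref{T2.4.6}, not \eqref{NeqW}, which characterizes $\bW^r_p$.
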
 
As we already mentioned in the Introduction in the case $p=q=2$
Theorem \ref{TBT4.4} follows from a general result by Babenko \cite{Bab1} and in
the case $1<p=q<\infty$ it was proved by Mityagin \cite{Mit}. Note that in
\cite{Mit} only the case of natural $r$ was considered. The result was extended
to real $r$ in \cite{Ga78}. In the case $1<p<q\le 2$
the theorem was obtained in \cite{Tem2} and \cite{Tem6}. In the case $1<
p<q<\infty$, $2\le q<\infty$ it was obtained in \cite{Te82a} and \cite{Tem10}.
In the case $1<q<p<\infty$ Theorem \ref{TBT4.4} was proved in \cite{Ga85}. In all cases $1 < p \le q \le 2$ and
$1 < q \le p < \infty$, included in
Theorem \ref{TBT4.4}, the upper estimates follow from approximation by the
hyperbolic cross polynomials from $\Tr(Q_n)$ with $m\asymp |Q_n|$, $|Q_n|\le m$.

 {
 In the case $2 \leq p \leq q<\infty$ we encounter an interesting and important phenomenon in Theorem \ref{TBT4.4}. The main rate of convergence is $r$ and the rate does
neither depend on $p$ nor on $q$. In the univariate case this effect has been first observed  by Kashin in his seminal paper \cite{Ka77}. It makes use of the Maiorov 
discretization technique \index{Maiorov's discretization technique}\cite{Mai75} where the problem of $n$-widths for function classes is reduced to the study of $n$-widths in finite dimensional normed spaces, see 
Theorem \ref{TBT1.4.6} below. In the multivariate case (suppose $2 \leq p \leq q<\infty$) the proof of Theorem \ref{TBT4.4} (and Theorem \ref{TBT4.4'} below) is based on the following result.
\begin{thm}\label{TBT2.4.6} One has the estimate
$$
d_m \bigl(\Tr(\bN,d)_2, L_{\infty}\bigr)\lesssim\bigl(\vartheta(\bN)/
m\bigr)^{1/2}\ln \bigl(e\vartheta(\bN)/m) \bigr)^{1/2}.
$$
\end{thm}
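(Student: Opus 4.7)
The plan is to reduce the theorem to a finite-dimensional Kashin-type width estimate via Marcinkiewicz discretization, and then invoke a probabilistic construction of nearly-Euclidean sections. First, I would apply the multivariate Marcinkiewicz theorem (Theorem \ref{T2.4.11}) at the grid $\{\mathbf{x}(\mathbf{n})\}_{\mathbf{n} \in P'(\mathbf{N})}$, which has cardinality $M \asymp \vartheta(\mathbf{N})$. For every $t \in \mathcal{T}(\mathbf{N},d)$ this yields the equivalences
$$
\|t\|_\infty \asymp \max_{\mathbf{n} \in P'(\mathbf{N})}|t(\mathbf{x}(\mathbf{n}))|, \qquad \|t\|_2 \asymp M^{-1/2}\Big(\sum_{\mathbf{n} \in P'(\mathbf{N})}|t(\mathbf{x}(\mathbf{n}))|^2\Big)^{1/2}.
$$
Under the sampling map $\Phi: t \mapsto (t(\mathbf{x}(\mathbf{n})))_{\mathbf{n}}$, $\mathcal{T}(\mathbf{N},d)$ is isomorphic to its image $V \subset \mathbb{C}^M$, with $\dim V = \vartheta(\mathbf{N}) \asymp M$. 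The $L_2$-unit ball $\mathcal{T}(\mathbf{N},d)_2$ is mapped (up to equivalent constants) onto $V \cap c\,M^{1/2}B_2^M$, and the target $L_\infty$-norm becomes the $\ell_\infty^M$-norm.

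Second, this reduces the problem to bounding the finite-dimensional Kolmogorov width
$$
d_m\bigl(c\,M^{1/2}B_2^M \cap V,\; \ell_\infty^M\bigr) \;\leq\; c\,M^{1/2}\, d_m\bigl(B_2^M,\ell_\infty^M\bigr).
$$
The required classical Kashin/Gluskin-type estimate asserts that, for $1 \le m \le M$,
$$
d_m\bigl(B_2^M,\ell_\infty^M\bigr) \;\lesssim\; \sqrt{\frac{1+\log(M/m)}{m}}.
$$
Multiplying by $M^{1/2}$ and recalling $M \asymp \vartheta(\mathbf{N})$ produces the bound $(\vartheta(\mathbf{N})/m)^{1/2}\bigl(\ln(e\vartheta(\mathbf{N})/m)\bigr)^{1/2}$ asserted by the theorem.

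Third, for the finite-dimensional inequality itself I would pass to the dual formulation and construct, by a probabilistic argument, an $(M-m)$-dimensional subspace $E \subset \mathbb{R}^M$ on which the $\ell_2$ and $\ell_\infty$ norms are comparable with the correct constants. Specifically, I would take $E$ to be the kernel of an $m \times M$ random matrix with independent standard Gaussian (or $\pm 1$ Bernoulli) entries and control $\sup_{x \in E \cap S^{M-1}} \|x\|_\infty$ by combining a union bound over a fine $\varepsilon$-net on $E \cap S^{M-1}$ with standard Gaussian tail estimates, using the fact that a net of cardinality $\lesssim (3/\varepsilon)^{M-m}$ suffices. The orthogonal projection onto $E^\perp$ then supplies an admissible rank-$m$ approximation map realizing the desired width bound.

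The main obstacle is the probabilistic step: squeezing out the sharp logarithmic factor $(1+\log(M/m))^{1/2}$ requires a careful balancing of the net cardinality against the Gaussian concentration exponent (alternatively a chaining argument in the spirit of Gluskin's refinement of Kashin's theorem). Once this is in place, all the remaining steps are routine, since Theorem \ref{T2.4.11} already provides the clean two-sided discrete/continuous equivalence for $\mathcal{T}(\mathbf{N},d)$ in both the $L_2$ and $L_\infty$ norms, with constants depending only on $d$.
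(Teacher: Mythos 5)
Your approach matches the paper's exactly: the authors state that Theorem \ref{TBT2.4.6} is derived from the Kashin--Gluskin--Garnaev bound (Theorem \ref{TBT1.4.6}) by discretization via the multivariate Marcinkiewicz Theorem \ref{T2.4.11}, and your probabilistic sketch of Theorem \ref{TBT1.4.6} is the standard argument that the paper simply cites. One small precision worth making explicit in your step two: the optimal $m$-dimensional flat realizing $d_m(B_2^M,\ell_\infty^M)$ need not lie in $V=\Phi(\Tr(\bN,d))$, so its elements cannot be pulled back by $\Phi^{-1}$; what actually carries the reduction is the de la Vall\'ee Poussin representation \eqref{2.2.12}, which furnishes a linear left inverse $\Psi$ of the sampling map $\Phi$ defined on all of $\C^M$ and bounded from $\ell_\infty^M$ into $L_\infty$ (using the kernel decay in \eqref{2.1.9}), so that $\Psi$ transports the optimal flat in $\C^M$ to an $m$-dimensional subspace of trigonometric polynomials achieving the claimed bound.
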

We illustrate on the example of estimating from above the $d_m(\bW^r_2,L_q)$,
$2\le q<\infty$, how Theorem \ref{TBT2.4.6}  is applied. First of all, we derive the
following lemma from Theorem \ref{TBT2.4.6}. 
\begin{lem}\label{L4.3.1} Let $2\le q<\infty$.   We have for $\Delta Q_n:=Q_n\setminus Q_{n-1}$
$$
d_m(\Tr(\Delta Q_n)_2,L_q) \lesssim (|\Delta Q_n|/m)^{1/2} (\ln (e|\Delta Q_n|/m))^{1/2}.
$$
\end{lem}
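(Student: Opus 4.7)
\textbf{Proof proposal for Lemma \ref{L4.3.1}.}
The plan is to reduce the claim to the parallelepiped estimate of Theorem \ref{TBT2.4.6} block by block, and then reassemble via a Littlewood-Paley type inequality, exploiting the fact that $q\ge 2$. Write $N:=|\Delta Q_n|$ and $L:=\#\{\bs\in\N_0^d:|\bs|_1=n\}\asymp n^{d-1}$, and note $N\asymp 2^nL$. The cases $m\ge N$ (where the width vanishes) and $0\le m\le L$ (where the trivial estimate $\|f\|_q\lesssim 2^{n(1/2-1/q)}$ obtained from \eqref{2.4.4R} is majorized by the right-hand side, since $(N/m)^{1/2}\ge (N/L)^{1/2}\asymp 2^{n/2}$) are handled separately. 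So assume $L< m< N$ and set $m_{\bs}:=\lfloor m/L\rfloor\in[1,2^n]$, which satisfies $\sum_{|\bs|_1=n}m_{\bs}\le m$ and $m_{\bs}\asymp m/L$.

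\textbf{Block-wise approximation.} For each $\bs$ with $|\bs|_1=n$ the spectrum $\rho(\bs)$ is contained in the parallelepiped $\Pi(2^{\bs})$, for which $\vartheta(2^{\bs})\asymp 2^n$. Since $\Tr(\rho(\bs))_2\subset\Tr(\Pi(2^{\bs}))_2$, Theorem \ref{TBT2.4.6} yields a fixed $m_{\bs}$-dimensional subspace $M_{\bs}\subset\Tr(\rho(\bs))$ such that
$$
\sup_{t\in\Tr(\rho(\bs))_2}\inf_{g\in M_{\bs}}\|t-g\|_\infty
\ \lesssim\ \bigl(2^n/m_{\bs}\bigr)^{1/2}\bigl(\ln(e\,2^n/m_{\bs})\bigr)^{1/2}
\ \lesssim\ (N/m)^{1/2}\bigl(\ln(eN/m)\bigr)^{1/2}.
$$
Set $M:=\sum_{|\bs|_1=n}M_{\bs}$, a fixed subspace of dimension $\le m$. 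Given $f\in\Tr(\Delta Q_n)_2$, choose $g_{\bs}\in M_{\bs}$ with $\|\delta_{\bs}(f)-g_{\bs}\|_\infty\lesssim (N/m)^{1/2}(\ln(eN/m))^{1/2}\|\delta_{\bs}(f)\|_2$, and put $g:=\sum_{\bs}g_{\bs}\in M$.

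\textbf{Reassembly via Littlewood-Paley.} Since $f-g=\sum_{\bs}\bigl(\delta_{\bs}(f)-g_{\bs}\bigr)$ with each summand in $\Tr(\rho(\bs))$, the corollary to the Littlewood-Paley theorem (Corollary \ref{C7.1}) combined with Minkowski's inequality in $L_{q/2}$ (valid because $q\ge 2$) gives
$$
\|f-g\|_q \ \lesssim\ \Big\|\Big(\sum_{|\bs|_1=n}|\delta_{\bs}(f)-g_{\bs}|^2\Big)^{1/2}\Big\|_q
\ \le\ \Big(\sum_{|\bs|_1=n}\|\delta_{\bs}(f)-g_{\bs}\|_q^2\Big)^{1/2}.
$$
Using $\|\cdot\|_q\le\|\cdot\|_\infty$ (with normalized Lebesgue measure on $\T^d$) and the block-wise bound, then Parseval's identity $\sum_{\bs}\|\delta_{\bs}(f)\|_2^2=\|f\|_2^2\le 1$, we obtain
$$
\|f-g\|_q\ \lesssim\ (N/m)^{1/2}\bigl(\ln(eN/m)\bigr)^{1/2}\Big(\sum_{\bs}\|\delta_{\bs}(f)\|_2^2\Big)^{1/2}
\ \lesssim\ (N/m)^{1/2}\bigl(\ln(eN/m)\bigr)^{1/2},
$$
which is the required bound.

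\textbf{Main obstacle.} The only non-routine point is verifying that the block parameters fit together: one must check that $m_{\bs}\asymp m/L$ is a legitimate choice (i.e.\ $1\le m_{\bs}\le 2^n$, so the parallelepiped-level estimate of Theorem \ref{TBT2.4.6} is genuinely informative), and that the low range $m\le L$ is indeed absorbed by the trivial Nikol'skii-type estimate. These are the boundary arguments that determine whether the logarithmic factor matches; once they are in place, the rest is a clean combination of Theorem \ref{TBT2.4.6} with the Littlewood-Paley norm equivalence for $q\ge 2$.
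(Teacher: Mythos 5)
Your proof follows essentially the same route as the paper's: reduce to the dyadic blocks $\rho(\bs)$ with $[m/|\theta_n|]$ degrees of freedom each, invoke Theorem \ref{TBT2.4.6} blockwise, and reassemble with the Littlewood--Paley square function (effectively Corollary \ref{C7.2} with $p^*=2$, since $q\ge 2$). The paper compresses this into the single estimate $d_m(\Tr(\Delta Q_n)_2,L_q)\lesssim\max_{\bs\in\theta_n}d_{[m/|\theta_n|]}(\Tr(\rho(\bs))_2,L_q)$ and then applies Theorem \ref{TBT2.4.6}; your version just spells out the boundary cases in $m$ and the Minkowski/Parseval bookkeeping.

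There is one step you should tighten. You assert that Theorem \ref{TBT2.4.6} ``yields a fixed $m_\bs$-dimensional subspace $M_\bs\subset\Tr(\rho(\bs))$'' with an $L_\infty$ error bound. The theorem produces an $m_\bs$-dimensional subspace of $L_\infty$ (or, after a de la Vall\'ee Poussin correction, of $\Tr(2^{\bs+1},d)$), not one sitting inside $\Tr(\rho(\bs))$, and there is no uniformly bounded linear projection of $L_\infty$ onto $\Tr(\rho(\bs))$ --- the operator $\delta_\bs$ has Lebesgue-constant growth in $L_\infty$. Since your reassembly step crucially uses that $\delta_\bs(f)-g_\bs\in\Tr(\rho(\bs))$ for the blockwise Littlewood--Paley decomposition, you cannot obtain $g_\bs$ this way in the $L_\infty$ metric. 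The repair is immediate and is implicit in the paper's formulation: measure the blockwise approximation error in $L_q$ rather than $L_\infty$, using the chain
\[
d_{m_\bs}\bigl(\Tr(\rho(\bs))_2,L_q\bigr)\;\le\;
d_{m_\bs}\bigl(\Tr(2^{\bs},d)_2,L_q\bigr)\;\le\;
d_{m_\bs}\bigl(\Tr(2^{\bs},d)_2,L_\infty\bigr),
\]
and then replace the approximating subspace by its $\delta_\bs$-image, which is legitimate because $\delta_\bs$ is bounded on $L_q$ for $1<q<\infty$ (Corollary \ref{C7.1}). After this change your inequality $\|\delta_\bs(f)-g_\bs\|_q\lesssim(N/m)^{1/2}(\ln(eN/m))^{1/2}\|\delta_\bs(f)\|_2$ holds with $g_\bs\in\Tr(\rho(\bs))$, and the rest of your argument goes through unchanged.
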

\begin{proof} By Corollaries 1 and 2 of the Littlewood-Paley theorem it is easy to see that
$$
d_m(\Tr(\Delta Q_n)_2,L_q) \lesssim \max_{\bs\in \theta_n} d_{[m/|\theta_n|]}(\Tr(\rho(\bs))_2,L_q),
$$
where $\theta_n:=\{\bs:|\bs|_1=n\}$. 
Applying Theorem \ref{TBT2.4.6} we obtain Lemma \ref{L4.3.1}.
\end{proof}
Second, let $r>1/2$ and $\kappa>0$ be such that $r>1/2+\kappa$. For $n\in\N$ set
$$
m_l:=[|\Delta Q_n| 2^{\kappa(n-l)}],\quad l=n+1,\dots;
$$
$$
m:= |Q_n| +\sum_{l>n} m_l \le C(\kappa,d)2^nn^{d-1}.
$$
Then
$$
d_m(\bW^r_2,L_q) \le \sum_{l>n}d_{m_l}(\Tr(\Delta Q_l)_2,L_q) 2^{-rl} \lesssim 2^{-rn}\asymp m^{-r}(\log m)^{(d-1)r}.
$$

Theorem \ref{TBT2.4.6} is a corollary of the following fundamental result of
Kashin \cite{Ka77}, Gluskin \cite{Gl83} and Garnaev, Gluskin \cite{GaGl84}. See
also \cite{CaPa84}, \cite{PaJa86} and the recent papers  \cite{Vy08b}, \cite{FoPaRaUl10} for
dual versions of the result.
\begin{thm}\label{TBT1.4.6} For any natural numbers $d,  m$,  $m < d$ we have
$$
d_m(B_2^d,  \ell_{\infty}^d)\le C m^{-1/2} \bigl(\ln (ed/m) \bigr)^{1/2}.
$$
\end{thm}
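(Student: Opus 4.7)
The plan is to translate the Kolmogorov-width question into its Gelfand-width dual and then produce a suitable subspace probabilistically. By the classical finite-dimensional duality between Kolmogorov and Gelfand widths (see \cite{Pin85}),
$$
d_m(B_2^d, \ell_\infty^d) \ = \ c^m(B_1^d, \ell_2^d) \ := \ \inf\Bigl\{\sup_{x \in L \cap B_1^d}\|x\|_2 \ :\ L \subset \R^d,\ \operatorname{codim} L \le m\Bigr\},
$$
so it suffices to exhibit an $(d-m)$-dimensional subspace $L \subset \R^d$ on which
$$
\|x\|_1 \ \ge \ c\sqrt{m/\ln(ed/m)}\,\|x\|_2, \qquad x \in L,
$$
since this rearranges to $\|x\|_2 \le C\sqrt{\ln(ed/m)/m}\,\|x\|_1$ for all $x \in L$.

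The construction I would use is probabilistic: let $A$ be an $m \times d$ matrix with independent $N(0,1)$ entries, and set $L := \ker A$, which is almost surely $(d-m)$-dimensional and whose distribution is $O(d)$-invariant. Writing $T_K := \{x \in S^{d-1} : \|x\|_1 \le K\}$ for the ``dangerous'' set, it is enough to show that for $K \asymp \sqrt{m/\ln(ed/m)}$ one has $T_K \cap \ker A = \varnothing$ with positive probability. I would invoke Gordon's escape-through-the-mesh inequality, which gives for any $T \subset S^{d-1}$
$$
\mathbb{E}\min_{x \in T}\|Ax\|_2 \ \ge \ \gamma_m - w(T),
$$
where $\gamma_m = \mathbb{E}\|g\|_2 \asymp \sqrt{m}$ for $g \sim N(0, I_m)$ and $w(T) = \mathbb{E}\sup_{x \in T}\langle g, x\rangle$ is the Gaussian mean width. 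Combining with standard Gaussian concentration of the minimum around its mean, it thus suffices to ensure $w(T_K) \le \gamma_m/2$.

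The remaining and main step is estimating $w(T_K)$. The crude bound $w(T_K) \le K\cdot \mathbb{E}\|g\|_\infty \le CK\sqrt{\ln d}$ would only yield Kashin's original weaker estimate $d_m \lesssim \sqrt{\ln d / m}$. To capture the sharper logarithmic factor $\ln(ed/m)$ --- the Garnaev--Gluskin refinement --- I would exploit the fact that vectors $x \in T_K$ are effectively $O(m/\ln(ed/m))$-sparse, restricting the supremum to unions of coordinate blocks of size $\sim m$ and paying only the combinatorial entropy $\log\binom{d}{m} \asymp m\ln(ed/m)$; equivalently, one runs Dudley's entropy integral for the Gaussian process $x \mapsto \langle g, x\rangle$ on $T_K$ using sharp covering-number estimates for the identity $\ell_1^d \to \ell_2^d$ (Carl-type inequalities). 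This refined Gaussian-width estimate is the technical heart of the argument; the duality step and the application of Gordon's inequality are comparatively routine.
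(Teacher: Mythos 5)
Your argument is correct, and since the survey does not reproduce a proof of this theorem but only attributes it to Kashin \cite{Ka77}, Gluskin \cite{Gl83}, and Garnaev--Gluskin \cite{GaGl84} and notes that those arguments are probabilistic, the comparison is really with those references rather than with text in this paper. Your route is the now-standard \emph{post hoc} streamlining of the Garnaev--Gluskin bound: reduce by finite-dimensional duality to the Gelfand width $c^m(B_1^d,\ell_2^d)$, take a Gaussian kernel, and invoke Gordon's escape-through-the-mesh inequality together with the sharp Gaussian mean-width estimate $w\bigl(\{x\in S^{d-1}:\|x\|_1\le K\}\bigr)\lesssim K\sqrt{\ln(ed/K^2)}$. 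This differs from the historical proofs in that Gordon's minimax comparison theorem appeared only in 1988, four years after \cite{GaGl84}; Kashin and Garnaev--Gluskin instead ran direct net/union-bound arguments over random orthogonal matrices or random Gaussian subspaces to exhibit a good codimension-$m$ subspace. What your approach buys is conceptual economy: the combinatorial bookkeeping collapses into the single width estimate, and the concentration step is a one-line Lipschitz argument ($A\mapsto\min_{x\in T}\|Ax\|_2$ is $1$-Lipschitz in Frobenius norm). What you should still verify to make the argument self-contained: (i) the covering lemma $\{x:\|x\|_1\le\sqrt{s},\,\|x\|_2\le 1\}\subset 2\,\overline{\mathrm{conv}}\,(\Sigma_s\cap B_2^d)$ (Maurey's empirical method), which underlies the width bound; (ii) that with $K\asymp\sqrt{m/\ln(ed/m)}$ the substitution $\ln(ed/K^2)\asymp\ln(ed/m)$ holds up to constants (absorbing the iterated logarithm); and (iii) the degenerate regime $K\le 1$, in which the dangerous set is actually empty because $\|x\|_1\ge\|x\|_2=1$ on $S^{d-1}$, making the claimed inequality trivial. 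The duality reduction is also correct; the paper itself points to \cite{CaPa84}, \cite{PaJa86}, \cite{Vy08b}, \cite{FoPaRaUl10} for such dual formulations.
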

Theorem \ref{TBT2.4.6} is derived from Theorem \ref{TBT1.4.6} by discretization
using the multivariate version of the Marcinkiewicz Theorem \ref{T2.4.11} from
Section \ref{trigpol}. We note here that
it would be natural to try to apply the discretization technique to the
hyperbolic cross polynomials in $\Tr(Q_n)$. However, it follows from the
discussion in Subsection 2.5 of Section \ref{trigpol} that this way does not
work. This brings technical difficulties in the analysis of the case
$2<q<\infty$. 

We note that the proof of Theorem \ref{TBT1.4.6} both in \cite{Ka77} and in
\cite{GaGl84} is probabilistic. The authors prove existence of matrices with
special properties. This makes Theorem \ref{TBT1.4.6} and all results obtained
with its help non-constructive. It turns out that matrices similar to those
constructed in \cite{Ka77} and \cite{GaGl84} are very important in sparse
approximation, namely, in compressed sensing \index{Compressed sensing}\cite{FoRa13}, \cite{Tbook}. Such matrices are called the
matrices with the Restricted Isometry Property (RIP) \index{Restricted Isometry Property} or simply the RIP matrices.
The reader can find a discussion of such matrices and an introduction to
compressed sensing in \cite[Chapt.\ 6 \& 9]{FoRa13}, \cite[Chapt.\ 5]{Tbook}, \cite{FoPaRaUl10}. We point out that the
problem of deterministic construction of good (near optimal) RIP matrices is a fundamental
open problem of the compressed sensing theory, see \cite[p. 170]{FoRa13}. 

The lower bounds in the case $1<p<q\le 2$ in Theorem \ref{TBT4.4} required a new technique. A new
element of the technique is that in the multivariate case a subspace, from which
we choose a ``bad'' function,
depends on the system $\ff_1,\dots,\ff_m$  of functions used for approximation
in the definition of the Kolmogorov width. In the univariate case we can always
take a ``bad'' function from $\Tr(n)$ with appropriate $n\asymp m$. 

 Let us now proceed with some limiting cases.

\begin{thm}\label{TBT4.4'} Let $r>1$ and $2\le q<\infty$.
Then
$$
d_m(\bW_{1}^r,L_q) \asymp
\left( \frac{(\log m)^{d-1}}{m}\right)^{r-1/2}(\log m)^{(d-1)/2}.
$$
\end{thm}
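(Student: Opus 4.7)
The target order $2^{-(r-1/2)n}n^{(d-1)/2}$ with $m\asymp|Q_n|$ lies outside the range of Theorem~\ref{TBT4.4}, and the proof is a limiting $p\to 1$ version of the Kashin--Gluskin layerwise argument sketched after Lemma~\ref{L4.3.1}, adapted to the $\bW^r_1$ endpoint. The condition $r>1$ enters as the sharp threshold that makes the dimension/error balance close.

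Upper bound. For $f\in\bW^r_1$ write $f=\varphi\ast F_r$ with $\|\varphi\|_1\le 1$; since $\|A_\bs(F_r)\|_1\lesssim 2^{-r|\bs|_1}$ (see \eqref{BernoulliBesov}), Young's inequality gives $\|A_\bs(f)\|_1\lesssim 2^{-r|\bs|_1}$, and Theorem~\ref{T2.4.5} applied blockwise upgrades this to $\|A_\bs(f)\|_2\lesssim 2^{-(r-1/2)|\bs|_1}$. The blocks $A_\bs(f)$ for distinct $\bs$ have essentially disjoint Fourier supports and are $L_2$-orthogonal, so the layer sum $g_l:=\sum_{|\bs|_1=l}A_\bs(f)\in\Tr(\Delta Q_l)$ obeys $\|g_l\|_2\lesssim l^{(d-1)/2}2^{-(r-1/2)l}$. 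The plan is then to approximate $f$ by $S_{Q_n}f$ (the exact projection onto $\Tr(Q_n)$, of dimension $\asymp|Q_n|\asymp m$) plus, for each $l>n$, a Kashin--Gluskin projection of $g_l$ into a subspace of $\Tr(\Delta Q_l)$ of dimension $m_l:=\lfloor|\Delta Q_l|2^{-\kappa(l-n)}\rfloor$. By Lemma~\ref{L4.3.1} the layer error is
\[
   \|g_l\|_2\cdot d_{m_l}\bigl(\Tr(\Delta Q_l)_2,L_q\bigr)
   \lesssim l^{(d-1)/2}2^{-(r-1/2)l}\cdot 2^{\kappa(l-n)/2}(l-n+1)^{1/2}.
\]
Choosing $1<\kappa<2r-1$, which is possible precisely because $r>1$, forces the total dimension $|Q_n|+\sum_{l>n}m_l\lesssim|Q_n|\asymp m$ and makes the geometric series over $l>n$ converge to $\lesssim n^{(d-1)/2}2^{-(r-1/2)n}$, as required.

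Lower bound. I would exploit the inclusion
\[
   \bigl\{F_r\ast\varphi:\varphi\in\Tr(\Delta Q_n),\,\|\varphi\|_1\le 1\bigr\}\subset\bW^r_1.
\]
On $\Delta Q_n$ all Fourier weights $\prod_j\max\{|k_j|,1\}$ are comparable to $2^n$, so $|\hat F_r(\bk)|\asymp 2^{-rn}$ uniformly; the multiplier $\bk\mapsto 2^{rn}\hat F_r(\bk)$ is $L_q$-bounded together with its pointwise inverse on $\Delta Q_n$ by the Marcinkiewicz multiplier Theorem~\ref{T7.3.4}. Picking $n$ with $|\Delta Q_n|\asymp Cm$ for a sufficiently large absolute constant $C$ (so that $\Tr(\Delta Q_n)$ is strictly larger than any $m$-dimensional subspace), one obtains
\[
   d_m(\bW^r_1,L_q)\gtrsim 2^{-rn}\,d_m\bigl(\Tr(\Delta Q_n)_1,L_q\bigr).
\]
The remaining finite-dimensional lower bound $d_m(\Tr(\Delta Q_n)_1,L_q)\gtrsim 2^{n/2}n^{(d-1)/2}$ I would extract from the volume estimate $(\mathrm{vol}\,B_{\Delta Q_n}(L_1))^{1/(2|\Delta Q_n|)}\asymp|\Delta Q_n|^{-1/2}$ of Theorem~\ref{T2.5.4}, combined with the standard volume comparison lower bound for Kolmogorov widths (cf.\ Subsection~\ref{vol_est}) and the control $\|\varphi\|_q\lesssim\|\varphi\|_2\lesssim|\Delta Q_n|^{1/2}\|\varphi\|_1$ on the competing $L_q$-ball. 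Multiplication by $2^{-rn}$ together with $|\Delta Q_n|\asymp 2^n n^{d-1}$ then yields the matching lower bound.

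The hard part. The lower bound is the principal obstacle: since there is no Marcinkiewicz-type discretization for $\Tr(\Delta Q_n)$ (Theorem~\ref{T2.5.7}), one cannot simply reduce to a Kashin--Gluskin-type lower bound on $\ell_1^N$, and the volume technique of Subsection~\ref{vol_est} must be used intrinsically on the hyperbolic layer. The upper bound is also delicate in that staying inside $\Tr(Q_n)$ is not enough; the layered Kashin--Gluskin step beyond $Q_n$ requires the precise balance $1<\kappa<2r-1$, which is exactly where the hypothesis $r>1$ is used.
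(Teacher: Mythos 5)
Your upper bound is essentially correct. Writing $f=F_r\ast\varphi$ with $\|\varphi\|_1\le1$, the identity $A_\bs(f)=\varphi\ast A_\bs(F_r)$, Young's inequality and \eqref{BernoulliBesov} give $\|A_\bs(f)\|_1\lesssim2^{-r|\bs|_1}$; the Nikol'skii inequality (Theorem~\ref{T2.4.5}) then yields $\|A_\bs(f)\|_2\lesssim 2^{-(r-1/2)|\bs|_1}$ and, by finite overlap, $\|g_l\|_2\lesssim l^{(d-1)/2}2^{-(r-1/2)l}$. The layerwise Kashin--Gluskin scheme with $m_l=\lfloor|\Delta Q_l|2^{-\kappa(l-n)}\rfloor$ and the window $1<\kappa<2r-1$ (forcing $r>1$) then balances the dimension count against the error, and the far tail, where $m_l$ drops below $1$, is controlled by a direct Nikol'skii estimate on $\|g_l\|_q$ and contributes a strictly smaller amount. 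Up to the usual bookkeeping that $A_\bs(f)\in\Tr(2^\bs,d)$ lives in a slightly wider dyadic annulus than $\rho(\bs)$, this produces a $\lesssim|Q_n|$-dimensional approximant with error $\lesssim2^{-(r-1/2)n}n^{(d-1)/2}$.

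Your lower bound is wrong, and the defect is structural rather than technical. The test set $F_r\ast\Tr(\Delta Q_n)_1$ cannot produce the factor $(\log m)^{(d-1)/2}$. By the hyperbolic-cross Nikol'skii inequality (Theorem~\ref{T2.4.8} with $r=0$, $q=1$ there) already
\[
   d_0\bigl(\Tr(\Delta Q_n)_1,L_q\bigr)\;=\;\sup\Bigl\{\,\|\varphi\|_q/\|\varphi\|_1\;:\;\varphi\in\Tr(\Delta Q_n)\Bigr\}\;\asymp\;2^{n(1-1/q)}\, ,
\]
because the extremizers are Fej\'er-type bumps concentrated on a single block $\rho(\bs)$, say $\bs=(n,0,\dots,0)$, rather than spread over the $\asymp n^{d-1}$ blocks of the layer; the chain $\|\varphi\|_2\lesssim|\Delta Q_n|^{1/2}\|\varphi\|_1$ you invoke is off by exactly the factor $n^{(d-1)/2}$, and the first link $\|\varphi\|_q\lesssim\|\varphi\|_2$ is reversed when $q>2$. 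Since $d_m\le d_0$, the claimed finite-dimensional bound $d_m(\Tr(\Delta Q_n)_1,L_q)\gtrsim 2^{n/2}n^{(d-1)/2}$ is already impossible for $q=2$, and more generally your reduction can yield at most $2^{-rn}d_0\asymp 2^{-(r-1+1/q)n}\le 2^{-(r-1/2)n}$, missing the $(\log m)^{(d-1)/2}$ factor entirely. That factor is precisely the endpoint jump separating $p=1$ from $1<p\le2$ in the $\bW$-scale (compare Theorem~\ref{TBT4.4}, where no extra logarithm occurs as $p\to1^+$), and it is also absent for $\bB^r_{1,1}$ in Theorem~\ref{thm[d_mB,p=1]}; it is a feature of $\bW^r_1\hookrightarrow\bH^r_1$ at $p=1$ and requires a test set whose elements are genuinely distributed across many hyperbolic layers. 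No refinement of constants or choice of $n$ can rescue a single-layer construction.
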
 
\noindent This theorem was obtained in \cite{Te82a} and \cite{Tem10}.

\begin{thm}\label{KashT95}  For all $1\le q <\infty$ and $r>0$ we have
$$
d_m(\bW^r_\infty,L_q) \asymp  m^{-r}(\log m)^{r(d-1)}.
$$
\end{thm}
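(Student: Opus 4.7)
The upper bound is routine and the lower bound is the main technical point.

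\textbf{Upper bound.} Choose $n$ so that $|Q_n| \le m < |Q_{n+1}|$; by the cardinality estimate $|Q_n| \asymp 2^n n^{d-1}$ this forces $2^n \asymp m/(\log m)^{d-1}$. Since $\dim \Tr(Q_n) \le m$, the orthogonal projection $S_{Q_n}$ is an admissible competitor in the definition of the Kolmogorov width, so
\[
d_m(\bW^r_\infty,L_q) \;\le\; E_{Q_n}(\bW^r_\infty)_q.
\]
By Theorem~\ref{TBT3.4} (the case $p=\infty$, $1\le q<\infty$, $r>0$), the right-hand side is $\asymp 2^{-rn}$, and substituting $2^{-rn}\asymp m^{-r}(\log m)^{r(d-1)}$ yields the required upper bound.

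\textbf{Reduction for the lower bound.} Since $\|g\|_1 \lesssim \|g\|_q$ on $\T^d$ for $q\ge 1$, the identity map $L_q \hookrightarrow L_1$ sends approximation problems to easier ones, so
\[
d_m(\bW^r_\infty,L_1) \;\lesssim\; d_m(\bW^r_\infty,L_q).
\]
It therefore suffices to prove the lower bound in $L_1$, that is, $d_m(\bW^r_\infty,L_1) \gtrsim 2^{-rn}$ for $m\asymp |Q_n|$.

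\textbf{Main step.} I would use the standard inequality $d_m(K,X) \ge b_{m+1}(K,X)$ between the Kolmogorov and Bernstein widths and look for an $(m+1)$-dimensional subspace $L_{m+1}\subset \Tr(Q_n)$ on which
\[
\|t\|_{\bW^r_\infty} \;\lesssim\; 2^{rn}\,\|t\|_1, \qquad t\in L_{m+1}.
\]
Such a subspace would immediately yield $b_{m+1}(\bW^r_\infty,L_1) \gtrsim 2^{-rn}$. The construction proceeds by discretization inside the hyperbolic layer $\Delta Q_n$: combining the volume estimate of Theorem~\ref{T2.5.4} (for $\Lambda = \Delta Q_n$) with the Bourgain--Milman inequality of Theorem~\ref{T2.5.3}, one produces a linear subspace of dimension $\gtrsim |\Delta Q_n| \asymp m$ on which the $L_1$ and $L_\infty$ norms of hyperbolic cross polynomials are equivalent up to an absolute constant and on which the Bernstein inequality holds in its sharp form $\|t\|_{\bW^r_\infty} \lesssim 2^{rn}\|t\|_\infty$ (without the parasitic $(\log N)^{d-1}$ factor of Theorem~\ref{T2.4.3}). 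This is the Kashin-type discretization scheme (cf.\ Theorem~\ref{TBT1.4.6}) adapted to the hyperbolic cross setting.

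\textbf{Main obstacle.} The naive route of applying Theorem~\ref{T2.4.3} ($p=\infty$) and the Nikol'skii inequality of Theorem~\ref{T2.4.7} on all of $\Tr(Q_n)$ loses a factor $n^{d-1}$ (indeed up to $n^{2(d-1)}$), which destroys the target rate. The entire difficulty is therefore to find a rich enough subspace of $\Tr(\Delta Q_n)$ that simultaneously equates the $L_1$ and $L_\infty$ norms up to absolute constants and admits a log-free Bernstein inequality; this is precisely the role played by the volume bound of Theorem~\ref{T2.5.4} (which circumvents the failure of a Marcinkiewicz-type discretization for $\Tr(Q_n)$ described in Subsection~\ref{vol_est}). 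Absent such a volume-based construction, no direct harmonic-analytic argument seems to give the sharp exponent.
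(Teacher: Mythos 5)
Your upper bound and your reduction of the lower bound to the case $X=L_1$ are both correct, and you have correctly identified that the argument must rest on the volume estimates of Subsection~\ref{vol_est} rather than on a naive combination of the Nikol'skii inequality of Theorem~\ref{T2.4.7} and the Bernstein inequality of Theorem~\ref{T2.4.3}. However, the ``main step'' as described does not close, and it is not the route the paper takes.

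The construction you propose -- a subspace $L\subset\Tr(\Delta Q_n)$ of dimension $\gtrsim|\Delta Q_n|$ on which $\|t\|_1\asymp\|t\|_\infty$ with absolute constants, together with a log-free Bernstein inequality $\|t^{(r)}\|_\infty\lesssim 2^{rn}\|t\|_\infty$ -- runs into two obstructions. First, $\|t\|_1\asymp\|t\|_\infty$ with absolute constants on a subspace of dimension comparable to $N:=2|\Delta Q_n|$ is almost certainly unavailable: already for $d=2$, Theorem~\ref{T2.5.4} gives $(\mathrm{vol}\,B_{\Delta Q_n}(L_1))^{1/N}\asymp(2^nn)^{-1/2}$ while Theorem~\ref{T2.5.5} gives $(\mathrm{vol}\,B_{\Delta Q_n}(L_\infty))^{1/N}\asymp(2^nn^2)^{-1/2}$, so the two unit balls differ in logarithmic volume radius by a factor $n^{1/2}$ and cannot be equivalent on a dense subspace. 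A Kashin-type splitting gives $\|\cdot\|_1\asymp\|\cdot\|_2$, not $\|\cdot\|_1\asymp\|\cdot\|_\infty$, and even with $L_1\asymp L_2$ on $L$ you would still need $\|t^{(r)}\|_\infty\lesssim\|t^{(r)}\|_2$, which is a constraint on $t^{(r)}$ and not inherited from the structure of $L$. Second, for $d\ge 3$ the $L_\infty$ volume of $B_{\Delta Q_n}(L_\infty)$ is Open problem~2.3, so any argument that uses it would require solving an open problem.

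The paper's actual route is different. The lower bound cited from~\cite{KTE1,KTE2} is a pure volume argument in $L_1$: the estimate $(\mathrm{vol}\,B_\Lambda(L_p))^{1/(2|\Lambda|)}\asymp|\Lambda|^{-1/2}$ of Theorem~\ref{T2.5.2}, valid for any $\Lambda\subset\Z^d$ and $1\le p\le 2$, combined with the Bourgain--Milman inequality (Theorem~\ref{T2.5.3}), is used to bound from below the entropy numbers $\e_n(\bW^r_\infty,L_1)\gtrsim n^{-r}(\log n)^{r(d-1)}$ (this is Theorem~\ref{Wlo}/\ref{Twext}); the Kolmogorov widths are then pushed down by the entropy numbers via Carl's inequality (Theorem~\ref{thm:carl}), using the monotonicity of $d_m$ together with the already-established upper bound to convert the ``maximal'' form of Carl's inequality into a pointwise one. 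This works for all $d$ because it never touches the $L_\infty$ volume that your construction would need. If you want a self-contained repair of your ``main step'', the right substitute is precisely this volume-to-entropy-to-width chain, not the Bernstein-width-on-a-special-subspace scheme.
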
 
The upper bounds in Theorem \ref{KashT95} follow from the case $1<p=q<\infty$.
The corresponding lower bounds in
Theorem \ref{KashT95} were proved in \cite{KTE1} in the case $1<q<\infty$ and in
\cite{KTE2}
in the case $q=1$. These proofs are based on finite dimensional geometry results
on volume estimates (see Subsection 2.5 of Section \ref{trigpol}). As a
corollary of the
lower bound in Theorem \ref{KashT95} and upper bounds from Theorem \ref{TBT4.4}
we obtain the following result.

\begin{thm}\label{S3}  For all $1< p \le \infty$ and $r>0$ we have
$$
d_m(\bW^r_p,L_1) \asymp  m^{-r}(\log m)^{r(d-1)}.
$$
\end{thm}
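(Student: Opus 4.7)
The plan is to derive both directions directly from the two preceding theorems, using only the monotonicity $\|\cdot\|_1 \le \|\cdot\|_q$ on $\T^d$ (valid for every $q\ge 1$) and the corresponding embedding of Sobolev classes of mixed smoothness.

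For the lower bound, the point is that $\bW^r_\infty \subset \bW^r_p$ whenever $1\le p\le\infty$: indeed $\|g\|_p \le \|g\|_\infty$ applied to $g$ and to each of its mixed derivatives $\partial^{\br(e)} g$ (that define the $\bW^r_p$-norm) gives $\|f\|_{\bW^r_p} \le \|f\|_{\bW^r_\infty}$. Monotonicity of the Kolmogorov width under inclusion of the approximated set therefore yields
$$
d_m(\bW^r_p, L_1) \;\ge\; d_m(\bW^r_\infty, L_1) \;\asymp\; m^{-r}(\log m)^{r(d-1)},
$$
where the last relation is Theorem \ref{KashT95} specialized to $q=1$.

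For the upper bound, since $\|f\|_1 \le \|f\|_q$ on $\T^d$ we have $d_m(\bW^r_p, L_1)\le d_m(\bW^r_p, L_q)$ for every $q\ge 1$, so it suffices to find a single $q\in(1,\infty)$ for which Theorem \ref{TBT4.4} produces the unloaded exponent $r$, i.e.\ for which $r-(1/p-\max\{1/2,1/q\})_+=r$. If $2\le p<\infty$, we take $q=2$: then $r(p,2)=0$ and Theorem \ref{TBT4.4} gives $d_m(\bW^r_p,L_2)\lesssim m^{-r}(\log m)^{r(d-1)}$. If $1<p\le 2$, we take $q=p$: then $r(p,p)=0$ and Theorem \ref{TBT4.4} gives $d_m(\bW^r_p,L_p)\lesssim m^{-r}(\log m)^{r(d-1)}$. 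The remaining case $p=\infty$ is already contained in Theorem \ref{KashT95} (with $q=1$). In each case the asserted upper bound follows.

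The only conceptual subtlety is the choice of the auxiliary exponent $q$ so as to avoid the $(1/p-1/q)_+$-type loss in the exponent provided by Theorem \ref{TBT4.4}; once $q$ is chosen appropriately, no further work is required. There is no real obstacle --- the statement is, as announced in the text, a direct corollary of the already deep Theorems \ref{TBT4.4} and \ref{KashT95}.
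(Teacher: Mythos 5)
Your proposal is correct and matches the paper's own (one-line) indication exactly: the lower bound comes from the inclusion $\bW^r_\infty\subset \bW^r_p$ together with Theorem~\ref{KashT95} at $q=1$, and the upper bound from Theorem~\ref{TBT4.4} with a suitably chosen $q$ (here $q=\min\{p,2\}$, so that $r(p,q)=0$ and the exponent is $r$) followed by the monotonicity $\|\cdot\|_1\le\|\cdot\|_q$. Your choice of $q$ and the verification that the smoothness restriction $r>r(p,q)$ reduces to $r>0$ is precisely what is needed; nothing is missing.
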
 

The first result on the right order of $d_m(\bW^r_p,L_\infty)$ was obtained in
\cite{VT59} in the case $d=2$.
\begin{thm}\label{Te96} Let $d=2$ and $2\le p\le \infty$, $r>1/2$. Then
$$
d_m(\bW^r_p,L_\infty) \asymp m^{-r} (\log m)^{r+1/2}.
$$
\end{thm}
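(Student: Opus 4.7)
For the upper bound, by the embedding $\bW^r_p \hookrightarrow \bW^r_2$ (valid for $p\ge 2$ since $\|\cdot\|_2 \le \|\cdot\|_p$ on the torus) it suffices to prove $d_m(\bW^r_2, L_\infty) \lesssim m^{-r}(\log m)^{r+1/2}$. Fix $n$ with $m \asymp |Q_n| \asymp 2^n n$ and decompose $f = \sum_{l\ge 0} f_l$, $f_l := \sum_{|\bs|_1=l}\delta_\bs(f)$; the Littlewood--Paley characterization \eqref{NeqW} (Parseval at $p=2$) gives $\|f_l\|_2 \lesssim 2^{-rl}\|f\|_{\bW^r_2}$. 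I build the approximating subspace as $V = \bigoplus_l V_l$: for $l\le n$ set $V_l := \Tr(\Delta Q_l)$ in full (so that $f_l\in V_l$ contributes no error, consuming exactly $|Q_n|$ dimensions), and for $l > n$ set $V_l := \bigoplus_{|\bs|_1=l} V_\bs$, where each $V_\bs\subset L_\infty$ is a near-optimizer for $d_{\mu_l}(\Tr(\rho(\bs))_2, L_\infty)$ with $\mu_l := \lfloor 2^{l-\beta(l-n)}\rfloor$ and fixed $\beta\in(1,2r)$ (a choice made possible by $r>1/2$).

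The crucial input is Theorem~\ref{TBT2.4.6}, applied to $\Tr(\rho(\bs))\subset \Tr(2^\bs,2)$ with $\vartheta\asymp 2^{|\bs|_1}$, which yields
\[
d_{\mu_l}(\Tr(\rho(\bs))_2,L_\infty) \lesssim (2^l/\mu_l)^{1/2}\sqrt{\log(e\,2^l/\mu_l)} \asymp 2^{\beta(l-n)/2}\sqrt{\beta(l-n)+1}.
\]
Combining layer-wise by the triangle inequality and Cauchy--Schwarz over the $l+1$ blocks $\{|\bs|_1=l\}$ (using orthogonality $\sum_\bs\|\delta_\bs f\|_2^2 = \|f_l\|_2^2$),
\[
\|f_l - P_{V_l} f_l\|_\infty \le \sum_{|\bs|_1=l} d_{\mu_l}\|\delta_\bs f\|_2 \le (l+1)^{1/2} d_{\mu_l}\|f_l\|_2 \lesssim 2^{-rn}\sqrt{l+1}\cdot 2^{-(r-\beta/2)(l-n)}\sqrt{\beta(l-n)+1}.
\]
Summing over $l > n$ gives a geometric series (since $\beta<2r$) with dominant term at $l=n+1$ of size $\asymp 2^{-rn}\sqrt n \asymp m^{-r}(\log m)^{r+1/2}$, while the dimension bookkeeping $\sum_{l>n}(l+1)\mu_l \asymp 2^n n$ uses $\beta>1$ for geometric convergence, and together with $|Q_n|$ stays within $\lesssim m$.

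For the lower bound, I would reduce via $\bW^r_\infty \hookrightarrow \bW^r_p$ (since $\|\cdot\|_p\le\|\cdot\|_\infty$) to proving $d_m(\bW^r_\infty, L_\infty) \gtrsim m^{-r}(\log m)^{r+1/2}$. The central tool is the sharp $d=2$ volume estimate from Theorem~\ref{T2.5.5},
\[
\bigl(vol(B_{\Delta Q_n}(L_\infty))\bigr)^{1/N} \asymp (2^n n^2)^{-1/2},\qquad N=2|\Delta Q_n|.
\]
The extra factor $n^{-1/2}$ present in the $L_\infty$ volume compared to the $L_p$ ($1\le p<\infty$) counterpart $(2^n n)^{-1/2}$ of Theorem~\ref{T2.5.4} is exactly what produces the extra $\sqrt{\log m}$ in the final Kolmogorov order. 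The concrete route is to apply standard volume-versus-width inequalities to the rescaled $N$-dimensional body $K := c\cdot 2^{-rn} B_{\Delta Q_n}(L_\infty) \subset \bW^r_\infty$ (the inclusion being legitimized by the Bernstein inequality of Theorem~\ref{T2.4.3} transferring between the $\bW^r_\infty$-norm and the $L_\infty$-norm on $\Tr(\Delta Q_n)$), combining the sharp volume input with Kashin--Garnaev--Gluskin-type finite-dimensional lower-bound machinery (cf.\ Theorem~\ref{TBT1.4.6}).

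The main obstacle is the lower bound: its sharpness rests entirely on the specifically $L_\infty$-flavoured volume estimate $(2^n n^2)^{-1/2}$, whose proof relies on the special Riesz-product constructions described in Subsection~\ref{subsect:SBI}. Its extension to $d\ge 3$ remains Open problem~2.3, and this is precisely the reason Theorem~\ref{Te96} is confined to $d=2$.
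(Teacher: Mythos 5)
Your upper bound is essentially sound. The reduction to $p=2$ via $\bW^r_p \hookrightarrow \bW^r_2$ is correct, and the block-by-block dimension allocation with $\mu_l = \lfloor 2^{l-\beta(l-n)}\rfloor$, $\beta\in(1,2r)$, combined with Theorem~\ref{TBT2.4.6} and Cauchy--Schwarz over the $\asymp l$ blocks at level $l$, delivers $2^{-rn}\sqrt{n}\asymp m^{-r}(\log m)^{r+1/2}$ with dimension budget $\asymp 2^n n\asymp m$. This is in the spirit of the discretization route described by the paper after Theorem~\ref{TBT2.4.6} (cf.\ Lemma~\ref{L4.3.1}), adapted to the $L_\infty$ target; the paper attributes the general-$d$ form to \cite{Be5}, \cite{TrBe04}, \cite{DuLiKuLi99}. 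One detail you glossed over: for levels $l$ large enough that $\mu_l<1$ you must allocate zero dimensions and bound those blocks by the Nikol'skii inequality, $\|\delta_\bs f\|_\infty\lesssim 2^{l/2}\|\delta_\bs f\|_2$; since $r>1/2$ this tail is harmless.

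The lower bound, however, has a genuine and fatal gap. First, the inclusion $c\cdot 2^{-rn}B_{\Delta Q_n}(L_\infty)\subset\bW^r_\infty$ does not follow from Theorem~\ref{T2.4.3}. That theorem says, for $t\in\Tr(N)$ with $N=2^n$ and $p=\infty$, $\|t^{(r)}\|_\infty \lesssim N^r(\log N)^{d-1}\|t\|_\infty = 2^{rn}\,n\,\|t\|_\infty$ in $d=2$ --- you have dropped the $(\log N)^{d-1}$ factor, which is precisely the hyperbolic-cross phenomenon that distinguishes the $p=\infty$ Bernstein inequality from its $1<p<\infty$ counterpart. The correct scaling is $c\lesssim 2^{-rn}n^{-1}$. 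Second, and more fundamentally, a scaled $L_\infty$-ball is intrinsically useless for this lower bound: if $K = c' B_{\Delta Q_n}(L_\infty)$ then $d_m(K,L_\infty)\le d_0(K,L_\infty)=\sup_{f\in K}\|f\|_\infty=c'$, so the best this route could ever yield is $d_m\gtrsim 2^{-rn}\asymp m^{-r}(\log m)^r$, already short of the target $m^{-r}(\log m)^{r+1/2}$ even before the Bernstein correction; with the correct scaling $c'\asymp 2^{-rn}n^{-1}$ you get only $m^{-r}(\log m)^{r-1}$, off by $(\log m)^{3/2}$. No ``standard volume-versus-width inequality'' can repair this, because the obstruction is not the volume of $K$ but the trivial bound $d_m\le d_0$. (Also note that Theorem~\ref{TBT1.4.6} is an \emph{upper} bound for $d_m(B_2^d,\ell_\infty^d)$; it is not a lower-bound tool.)

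What the paper's proof actually does (see the discussion right after Theorem~\ref{Te96}, and the papers \cite{VT59}, \cite{TE4}) is to use the Small Ball Inequality for the trigonometric system, inequality \eqref{2.6.3}, directly. The fooling function is constructed \emph{adaptively}, depending on the $m$-dimensional subspace being bested, via the Riesz-product machinery of Lemma~\ref{L2.6.1}; the Riesz product $\Phi(n,\cdot)$ serves as a dual test functional satisfying $\|\Phi\|_1\lesssim 1$, and the inequality $\langle f-g,\Phi\rangle\le\|f-g\|_\infty\|\Phi\|_1$ is what forces $\|f-g\|_\infty$ to be large uniformly in $g$. This is a structurally different and substantially more delicate argument than a volume estimate: the volume estimate $\operatorname{vol}(B_{\Delta Q_n}(L_\infty))^{1/N}\asymp(2^n n^2)^{-1/2}$ of Theorem~\ref{T2.5.5} is used elsewhere (e.g.\ for entropy numbers of $\bW^r_1$, Theorems~\ref{KaTentr1}, \ref{KaTentr2}), not here. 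Your intuition that the ``extra $\sqrt{\log m}$'' comes from a genuinely $L_\infty$-specific phenomenon confined to $d=2$ is right, and the obstruction for $d\ge3$ is indeed the unproved $d$-dimensional Small Ball Inequality, but the vehicle is inequality \eqref{2.6.3}, not Theorem~\ref{T2.5.5}.
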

The most difficult part of Theorem \ref{Te96} is the lower bounds. The proof of
the lower bounds is based on the Small Ball Inequality (see \eqref{2.6.3}) from Section
2). In the case $2\le p<\infty$ Theorem \ref{Te96} is proved in \cite{VT59} and
in the case $p=\infty$ in \cite{TE4}. The region
$$
R_1 := \{(1/p,1/q) : \quad 0<1/p\le 1/q<1\quad \text{or} \quad 1/2\le 1/q \le 1/p<1\}
$$
is covered by Theorem \ref{TBT4.4}. For this region the upper bounds follow
from Theorem \ref{TBT3.2}, which means that in this case the subspaces
$\Tr(Q_n)$ of the hyperbolic cross polynomials are optimal in the sense of
order. Theorem \ref{TBT4.4} shows that for the region $R_2:=(0,1)^2 \setminus
R_1$ the subspaces $\Tr(Q_n)$ are not optimal in the sense of order.  Theorem
\ref{TBT4.4} gives the orders of the $d_m(\bW^r_p,L_q)$ for all $(1/p,1/q)$ from
the open square $(0,1)^2$ under some restrictions on $r$. The situation on the
boundary of this square is more difficult. 
Theorem \ref{TBT4.4'} covers the segment
$
S_1:=\{(1,1/q):0<1/q\le 1/2\}.
$
The segment 
$
S_2 :=\{(0,1/q):0<1/q\le 1\}
$
is covered by Theorem \ref{KashT95}. The segment $S_3:=\{(1/p,1):0\le 1/p<1\}$
is covered by Theorem \ref{S3}. Finally, the segment $S_4:=\{(1/p,0):0\le 1/p\le
1/2\}$ in the case $d=2$ is covered by Theorem \ref{Te96}. In all other cases
the right order of the $d_m(\bW^r_p,L_q)$ is not known. 

 Let us discuss an extension of Theorem \ref{Te96} to the case $d\ge 3$. The following upper bounds are known (see \cite{Be5}, the book \cite{TrBe04}, and \cite{DuLiKuLi99} 
for the special case $r=1$)
\be\nonumber
d_m(\bW^r_p,L_\infty) \lesssim m^{-r}(\log m)^{(d-1)r+1/2},\quad 2\le p\le \infty,\quad r>1/2.
\ee
Recent results on the Small Ball Inequality for the Haar system\index{Haar system} (see \cite{BL}, \cite{BLV}) allow us to improve a trivial 
lower bound to the following one for $r=1$ and all $p<\infty$, $d\geq 3$:
$$
d_m(\bW^1_p,L_\infty) \gtrsim m^{-1}(\log m)^{d-1+\delta(d)},\quad \delta(d)>0.
$$
Theorem \ref{Te96} and the above upper and lower bounds support the following
\begin{conj} Let $d\geq 3$, $2\le p\le \infty$, $r>1/2$. Then
$$
    d_m(\bW^r_p,L_\infty) \asymp  m^{-r}(\log m)^{(d-1)r+1/2}\,.
$$
\end{conj}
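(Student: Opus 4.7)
The upper bound $d_m(\bW^r_p,L_\infty) \lesssim m^{-r}(\log m)^{(d-1)r+1/2}$ is already in hand, and a trivial lower bound of order $m^{-r}(\log m)^{(d-1)r}$ is freely available (e.g.\ from Theorem \ref{TBT4.4}), so the task reduces to extracting the additional $(\log m)^{1/2}$. The natural strategy is to extend the proof of Theorem \ref{Te96} (case $d=2$, from \cite{TE4}) to $d\ge 3$. The pivot of that argument is the Small Ball Inequality for the trigonometric system \eqref{2.6.3}; its $d$-dimensional version is precisely the conjecture \eqref{2.6.4} (Open Problem 2.5). Granted \eqref{2.6.4}, one can either pursue a direct Riesz product/duality route, or pass through the chain ``Small Ball Inequality $\Rightarrow$ sharp lower bound on $\varepsilon_n(\bW^r_p,L_\infty) \Rightarrow$ Carl's inequality $\Rightarrow$ lower bound on $d_m$'', in the spirit of how entropy bounds of \cite{TE1,TE2} were used to prove Theorem \ref{TBT3.3H}.

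For the direct route, fix $n$ with $|Q_n| \asymp m$ and, given any $m$-dimensional $L\subset L_\infty$, select by a dimension count a real polynomial $f \in \bigoplus_{\bs\in Y^d_n}\Tr(\varrho(\bs))$ with $f\perp L$ in $L_2$. Represent $\operatorname{dist}_{L_\infty}(f,L) = \sup\{|(f,h)| : h\in L_1,\,\|h\|_1\le 1,\, h\perp L\}$ and take for $h$ a normalized $d$-dimensional Riesz product indexed by $Y^d_n$, constructed via the $d$-dimensional analogue of Lemma \ref{L2.6.1}: by the projection formula from that lemma one gets $(f,h) = \sum_{\bs\in Y^d_n}(\delta_\bs(f),t_\bs)$ modulo an admissible remainder. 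Choosing the building blocks $t_\bs\in\Tr(\varrho'(\bs))$ with $\|t_\bs\|_\infty\le 1$ so as to nearly dualize $\delta_\bs(f)$, the Small Ball Inequality \eqref{2.6.4} delivers the decisive $L_1$-bound on $h$, saving a factor $n^{(d-2)/2}$ over the trivial one. Combined with the normalization $\|f\|_{\bW^r_p}\asymp 2^{rn}\|f\|_p$ on $\Tr(\Delta Q_n)$ and the Nikol'skii inequalities of Section 2.4, this supplies the missing $n^{1/2}=(\log m)^{1/2}$.

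\textbf{Main obstacle.} The only serious obstacle is \eqref{2.6.4} itself, which is open for $d\ge 3$. The work of Bilyk, Lacey and Vagharshakyan \cite{BL,BLV} on the Haar analogue \eqref{2.6.5} achieves exponent $(d-1)/2-\delta(d)$ instead of the conjectured $(d-2)/2$, and in the excerpt above this already yields a partial bound of order $m^{-1}(\log m)^{d-1+\delta(d)}$ in the case $r=1$. If the probabilistic/combinatorial machinery of \cite{BL,BLV} could be transported to the trigonometric setting, the plan outlined above would produce the weaker lower bound $d_m(\bW^r_p,L_\infty)\gtrsim m^{-r}(\log m)^{(d-1)r+1/2-\delta(d)}$. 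Closing the residual $(\log m)^{\delta(d)}$ gap and reaching the sharp trigonometric Small Ball exponent $(d-2)/2$ for $d\ge 3$ appears to require genuinely new ideas, and this is what I expect to be the hard part.
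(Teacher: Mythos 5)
The statement you are asked about is a \emph{conjecture}, not a theorem: the paper does not prove it, and indeed flags it as Open Problem 4.2 / Open Problem 1.3. Your proposal correctly recognizes this. You have accurately reconstructed the paper's own heuristic: the upper bound is known (from Belinskii and Dunker--Linde--Lifshits--K\"uhn), the trivial lower bound from Theorem~\ref{TBT4.4} misses a factor $(\log m)^{1/2}$, the $d=2$ case (Theorem~\ref{Te96}) was settled via the trigonometric Small Ball Inequality~\eqref{2.6.3}, and the $d\ge 3$ version~\eqref{2.6.4} — Open Problem 2.5 — is exactly the missing ingredient. Your Riesz-product/duality outline (choose $f\perp L$ in a hyperbolic-layer subspace, dualize via an $L_1$-normalized Riesz product built from Lemma~\ref{L2.6.1}, invoke~\eqref{2.6.4} to gain $n^{1/2}$) is the natural extension of the $d=2$ argument and is consistent with the paper's discussion, including the observation that the Haar-side partial progress of \cite{BL,BLV} already yields the weaker bound $m^{-1}(\log m)^{d-1+\delta(d)}$ for $r=1$. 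One small caveat: that partial lower bound for $r=1$ is obtained directly from the Haar Small Ball Inequality, not by transporting \cite{BL,BLV} to the trigonometric system (which remains untouched); for general $r>1/2$ the trigonometric inequality~\eqref{2.6.4} appears genuinely necessary, as you note. In short, your ``proof'' is not a proof — no one has one — but it is an accurate account of why the conjecture is believed and where exactly it is stuck, in agreement with the paper.
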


We summarize the above results on the $d_m(\bW^r_p,L_q)$ in the following figure.

\begin{figure}[H]
\begin{center}
\begin{tikzpicture}[scale=2.5]

\draw (-0.1,0) -- (0,0);
\draw[->] (2,0.0) -- (2.1,0.0) node[right] {$\frac{1}{p}$};
\draw[very thick][dashed]  (0,0.0) -- (1,0.0);
\draw[very thick][dashed]  (1,.0) -- (2,0.0);
\draw[->] (0.0,-.1) -- (0.0,2.1) node[above] {$\frac{1}{q}$};

\draw (1.0,0.03) -- (1.0,-0.03) node [below] {$\frac{1}{2}$};
\draw (0.03,1) -- (-0.03,1) node [left] {$\frac{1}{2}$};

\draw[ultra thick](0,2) -- (2,2);
\draw (1,1) -- (2,1);
\draw (0,0) -- (1,1);

\draw (1,1) -- (2,1);
\draw[very thick][dashed] (2,2) -- (2,1);
\draw[ultra thick] (2,1) -- (2,0);
\draw[ultra thick] (0,2) -- (0,0);
\node at (1,1.4) {\tiny $\Big(\frac{(\log
m)^{d-1}}{m}\Big)^{r-(\frac{1}{p}-\frac{1}{q})_+}$};
\node at (1.1,0.3) {\tiny $\Big(\frac{(\log
m)^{d-1}}{m}\Big)^{r-(\frac{1}{p}-\frac{1}{2})_+}$};
\node at (1,1.8) {\tiny {\bf  {Hyperbolic cross optimal}}};
\draw (2,0.03) -- (2,-0.03) node [below] {$1$};
\draw (0.03,2) -- (-0.03,2) node [left] {$1$};

\end{tikzpicture}

\end{center}
   \caption{The asymptotical order of $d_m(\bW^r_p,L_q)$} \label{dmW}
\end{figure}
\noindent A straight line on the boundary indicates that in the respective parameter region the correct order is known. In the
dashed line region we do not know the correct order.

We now proceed to classes $\bH^r_p$. The first result on the right order of the
Kolmogorov width for $\bH^r_p$ classes was obtained in \cite{Tem2}. The proper
lower bound for $d_m(\bH^r_\infty,L_2)$ was proved in \cite{Tem2}. 

\begin{thm}\label{TBT4.5} Let $r(p,q)$ be the same as in Theorem \ref{TBT4.4}.
Then for
$1< p\le q<\infty$, $r > r(p,q)$
$$
d_m(\bH_p^r,L_q)\asymp
\left(\frac{(\log m)^{d-1}}{m}\right)^{r-\bigl(1/p-\max\{1/2,1/q\}\bigr)_+}
(\log m)^{(d-1)\max\{1/2,1/q\}}.
$$
\end{thm}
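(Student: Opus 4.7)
The goal is to sharpen the upper bounds obtained from Theorem \ref{TBT3.3} (which are already best possible in the cases $p=q$ and $p<q\le 2$) and to prove matching lower bounds in all three sub-regimes of $1<p\le q<\infty$, namely (a) $p\le q\le 2$, (b) $2\le p\le q$, and (c) $p<2<q$. The guiding principle is that the main power $r-(1/p-\max\{1/2,1/q\})_+$ already appears in the $\bW$-case (Theorem \ref{TBT4.4}) and the proof should, in each region, borrow the same finite-dimensional width machinery; the only novelty is the extra logarithm $(\log m)^{(d-1)\max\{1/2,1/q\}}$ coming from the fact that $\bH^r_p$ does not control the size of a full hyperbolic layer $f_l=\sum_{|\bs|_1=l}\delta_\bs(f)$, only of its single dyadic blocks $A_\bs(f)$. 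I take $n$ with $|Q_n|\asymp m$, so that $2^n\asymp m/(\log m)^{d-1}$ and $n\asymp\log m$.

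\textbf{Upper bounds.} In region (a), I choose $\Tr(Q_n)$ itself as the $m$-dimensional approximating subspace and invoke Theorem \ref{TBT3.3}: this delivers the order $2^{-n(r-1/p+1/q)}n^{(d-1)/q}$, which is exactly the claimed $((\log m)^{d-1}/m)^{r-1/p+1/q}(\log m)^{(d-1)/q}$. In region (b), the subspace $\Tr(Q_n)$ alone gives the extra factor $n^{(d-1)/2}$ already (again by Theorem \ref{TBT3.3} with $q=p$, followed by the trivial embedding $L_q\hookrightarrow L_p$ when $q\le p$; for $q>p$ one must be more careful — see below). In region (c) and in the subcase $2\le p<q$ of (b), I split
\[
f=S_{Q_n}(f)+\sum_{l>n}P_l,\qquad P_l:=\sum_{|\bs|_1=l}A_\bs(f)\in\Tr(\Delta Q_l),
\]
use $\|A_\bs(f)\|_p\lesssim 2^{-r|\bs|_1}$ together with Littlewood--Paley (or Theorem \ref{T2.4.6}) to obtain $\|P_l\|_2\lesssim 2^{-r'l}l^{(d-1)/2}$ with $r'=r-(1/p-1/2)_+$, and then approximate each $P_l$ inside $\Tr(\Delta Q_l)$ by the Kashin--Gluskin bound of Lemma \ref{L4.3.1}, i.e.
\[
d_{m_l}(\Tr(\Delta Q_l)_2,L_q)\lesssim (|\Delta Q_l|/m_l)^{1/2}(\ln(e|\Delta Q_l|/m_l))^{1/2},\qquad 2\le q<\infty.
\]
Choosing $m_l\asymp |\Delta Q_l|\,2^{-\kappa(l-n)}$ for a small $\kappa>0$ with $\kappa<r'$, the geometric series $\sum_{l>n}m_l$ is $\lesssim |Q_n|\asymp m$, and the summed error is of order $2^{-r'n}n^{(d-1)/2}$, which matches the claim in both (b) and (c).

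\textbf{Lower bounds.} The universal lower bound $d_m(\bH^r_p,L_q)\gtrsim E_{Q_n}(\bH^r_p)_q$ is \emph{not} true, so one must build a fooling subspace directly. For a fixed $n$ with $|Q_n|\asymp m$ (with a small constant), I consider the test family
\[
V:=\Big\{\,t=\sum_{|\bs|_1=n}t_\bs\ :\ t_\bs\in\Tr(\rho(\bs)),\ \|t_\bs\|_p\le 2^{-rn}\Big\}\subset\bH^r_p.
\]
By Nikol'skii (Theorem \ref{T2.4.5}) the polynomials in $V$ lie in a subspace of dimension $\asymp|Q_n|\gg m$, and using the quasi-orthogonality of blocks together with the Littlewood--Paley description of $L_q$ (or the Marcinkiewicz-type discretization from Theorem \ref{T2.4.11}) the $L_q$ norm of $t\in V$ is bounded below by $2^{-rn}n^{(d-1)\max\{1/2,1/q\}}$ after rescaling by the additional Nikol'skii loss $2^{n(1/p-1/q)_+}$. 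Since $\dim V\gtrsim m\log^{d-1}$, a standard dimension-counting argument reduces the problem to a finite-dimensional width of $B^N_p$ in $\ell^N_q$ which one estimates by the Kashin--Gluskin--Garnaev duality (Theorem \ref{TBT1.4.6}) or, in the hardest sub-regime, by Carl's inequality together with the entropy bounds from Section 6; the ``correct'' logarithmic exponent $\max\{1/2,1/q\}$ enters precisely because $N=|\Delta Q_n|\asymp 2^n n^{d-1}$ has the extra $n^{d-1}$ factor coming from the number of dyadic blocks.

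\textbf{Main obstacle.} The genuine difficulty is the lower bound in region (c), $1<p<2<q<\infty$. Here neither direct comparison with $E_{Q_n}$ nor the simple Nikol'skii trick suffices, because a priori the optimal $m$-dimensional subspace is not concentrated in $\Tr(Q_n)$; one needs to show that even allowing arbitrary $\varphi_1,\dots,\varphi_m$ the approximation cannot beat the finite-dimensional benchmark, which is the multivariate analogue of the hardest case in the $\bW$-theorem of \cite{Ga85} and requires the Maiorov-type discretization combined with a careful choice of the hyperbolic layer depth to capture the correct $(\log m)^{(d-1)/2}$ exponent. All bookkeeping for general $r$ above the threshold $r(p,q)$ reduces to this model inequality.
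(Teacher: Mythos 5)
Your upper bound argument is correct and follows the paper's own strategy for the $\bW$-case adapted to $\bH$: on $p\le q\le 2$ and $p=q\ge 2$ the projection $S_{Q_n}$ (Theorem~\ref{TBT3.3}) is optimal; on $p<q$ with $q>2$ the split into hyperbolic layers $P_l$, the estimate $\|P_l\|_2\lesssim 2^{-r'l}l^{(d-1)/2}$ via Nikol'skii and Littlewood--Paley, and the geometric allocation $m_l\asymp|\Delta Q_l|2^{-\kappa(l-n)}$ fed into Lemma~\ref{L4.3.1} give the claimed rate. Your test family $V$ for the lower bound is also the right object.

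The genuine gap is in the finite-dimensional reduction of the lower bound. You write that the problem ``reduces ... to a finite-dimensional width of $B^N_p$ in $\ell^N_q$ which one estimates by the Kashin--Gluskin--Garnaev duality (Theorem~\ref{TBT1.4.6}).'' That is the reduction for $\bW^r_p$, not for $\bH^r_p$, and it cannot produce the extra $(\log m)^{(d-1)\max\{1/2,1/q\}}$ that distinguishes Theorem~\ref{TBT4.5} from Theorem~\ref{TBT4.4}. The defining feature of $\bH^r_p$ is that the restriction is a \emph{sup} over blocks, $\sup_\bs 2^{r|\bs|_1}\|\delta_\bs(f)\|_p\lesssim 1$, whereas $\bW^r_p$ gives an $\ell_p$- or $\ell_2$-type constraint across the layer. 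After the Marcinkiewicz discretization Theorem~\ref{T2.4.11}, your set $V$ becomes (up to rescaling) the mixed-norm ball $B^{N_0,s}_{p,\infty}$ with $N_0\asymp 2^n$ points per block and $s\asymp n^{d-1}$ blocks, and it is precisely this $\ell_\infty$ across the $s$ blocks that inflates the width by the factor $s^{\max\{1/2,1/q\}}\asymp n^{(d-1)\max\{1/2,1/q\}}$. The correct finite-dimensional tool in the range $1<p\le q\le 2$ is exactly Galeev's Lemma~\ref{Lemma[Galeev]} for $d_m(B^{n,s}_{1,\infty},\ell^{n,s}_{2,q})\asymp s^{1/q}$ (as the paper itself indicates, citing \cite{Ga90}), not Theorem~\ref{TBT1.4.6} (which gives upper, not lower, bounds for widths of $\ell_2$-balls). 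Collapsing the mixed-norm ball to $B^N_p$ simply erases the $\bH$-structure and would only reproduce the $\bW$-rate of Theorem~\ref{TBT4.4}. Your diagnostic ``$N=|\Delta Q_n|$ has the extra $n^{d-1}$ factor'' does not explain the phenomenon either, since the same $N$ underlies the $\bW$-estimate. Two further quantitative slips: the Nikol'skii rescaling $2^{n(1/p-1/q)_+}$ produces the exponent $r-(1/p-1/q)_+$, which is not the claimed $r-(1/p-\max\{1/2,1/q\})_+$ once $q>2$; and the identification of $p<2<q$ as ``the genuine difficulty'' is questionable --- the paper's chronology places the case $1<p\le q<2$ last (Galeev 1990), which is the regime where the mixed-norm width $d_m(B^{n,s}_{1,\infty},\ell^{n,s}_{2,q})$ machinery is truly required.
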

    In the case $1\le p<2$, $q=2$ Theorem \ref{TBT4.5} was obtained in \cite{Tem6}. 
 In the case $1< p<q <\infty$, $q\ge 2$ -- in \cite{Te82a} and \cite{Tem10}. 
 In the case $1<p\le q<2$ -- in \cite{Ga90}.  

\begin{thm}\label{TBT4.5'} Let $1<q<p\le\infty$, $p\ge 2$ and $r>0$.
Then
$$
d_m(\bH_p^r,L_q)\asymp
\left(\frac{(\log m)^{d-1}}{m}\right)^{r}
(\log m)^{(d-1)/2}.
$$
\end{thm}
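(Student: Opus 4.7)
The proof splits into matching upper and lower bounds of quite different natures.

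\emph{Upper bound.} Choose $n$ so that $|Q_n| \asymp m$; then $n \asymp \log m$ and $2^n \asymp m(\log m)^{-(d-1)}$. The parameter pair $1 < q < p \le \infty$ with $p \ge 2$ falls into the second or third case of Theorem~\ref{TBT3.3}, so $E_{Q_n}(\bH^r_p)_q \lesssim 2^{-rn} n^{(d-1)/2}$. Taking $\Tr(Q_n)$ as an admissible $m$-dimensional subspace in the definition of $d_m$ yields
\begin{equation}\nonumber
d_m(\bH^r_p, L_q) \le E_{Q_n}(\bH^r_p)_q \lesssim \left(\frac{(\log m)^{d-1}}{m}\right)^r (\log m)^{(d-1)/2}.
\end{equation}

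\emph{Reduction for the lower bound.} On $\T^d$ with normalized Haar measure one has $\|g\|_p \le \|g\|_\infty$ for every $p \in [1,\infty]$, hence $\|f\|_{\bH^r_p} \le \|f\|_{\bH^r_\infty}$ and the unit ball of $\bH^r_\infty$ is contained in that of $\bH^r_p$. Consequently
\begin{equation}\nonumber
d_m(\bH^r_\infty, L_q) \le d_m(\bH^r_p, L_q),
\end{equation}
and it suffices to establish the lower bound in the extremal case $p = \infty$.

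\emph{Lower bound for $p = \infty$.} The plan is to combine Carl's inequality
\begin{equation}\nonumber
\sup_{1 \le k \le N} k^r \epsilon_k(\bH^r_\infty, L_q) \lesssim_r \sup_{1 \le k \le N} k^r d_k(\bH^r_\infty, L_q)
\end{equation}
with the matching entropy lower bound $\epsilon_k(\bH^r_\infty, L_q) \gtrsim k^{-r}(\log k)^{r(d-1) + (d-1)/2}$ for $1 < q < \infty$. The entropy bound itself will be produced by a Riesz-product packing argument on a single hyperbolic layer $\Delta Q_n$ with $2^n n^{d-1} \asymp k$: one picks building blocks $t_\bs \in \Tr(\varrho'(\bs))$ with $\|t_\bs\|_\infty \le 1$ for each $\bs$, $|\bs|_1 = n$, equips them with independent $\pm 1$ sign patterns (and Rudin--Shapiro-type sign choices inside each $\rho(\bs)$ when a larger packing is required), and applies Lemma~\ref{L2.6.1} to obtain exponentially many Riesz-product perturbations that lie in the $\bH^r_\infty$-ball of radius $\asymp 2^{-rn}$; the Littlewood--Paley characterisation of $L_q$ for $1 < q < \infty$ then provides the lower estimates of order $\asymp 2^{-rn} n^{(d-1)/2}$ on the pairwise $L_q$-distances. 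This is the direct $L_q$-analogue of the argument outlined after Theorem~\ref{TBT3.3H} (via \cite{TE1, TE2, Be5}), where the case $q = 1$ was treated.

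\emph{Main obstacle.} The delicate step is obtaining the extra $(\log k)^{(d-1)/2}$ factor in the entropy lower bound. In dimension $d = 2$ this follows from the small-ball machinery behind~\eqref{2.6.3}. For $d \ge 3$ the analogous Small Ball Inequality remains Open problem~2.5, so the argument cannot go through that route; instead, I would combine the volume estimates of Subsection~\ref{vol_est} (lower bounds on the volume of unit balls of $\Tr(\Delta Q_n)$ in $L_q$) with the Riesz-product apparatus of Lemma~\ref{L2.6.1}, thereby bypassing the Small Ball Inequality. Once the sharp entropy lower bound is in hand, the transfer to $d_m$ via Carl's inequality, applied with the exact exponent $\alpha = r$ and combined with monotonicity of the widths, loses no logarithmic factors and delivers the claimed lower bound.
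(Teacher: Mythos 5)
Your overall strategy is a legitimate unified route that differs from the paper's patchwork (the paper delegates to \cite{Tem2} for $2\le q\le p<\infty$, to Galeev and Dinh D\~ung for $1<q<2\le p<\infty$, and to \cite{TE1,TE2} for $p=\infty$), and it mirrors exactly the observation credited to \cite{Be5} in the discussion after Theorem~\ref{Tq=1}: pass from entropy lower bounds to Kolmogorov width lower bounds via Carl's inequality. The upper bound via Theorem~\ref{TBT3.3} and the monotonicity reduction to $p=\infty$ are both correct.

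However, the ``Main obstacle'' paragraph manufactures a problem that does not exist. The entropy lower bound you need is already available for \emph{all} $d\ge 2$, with no recourse to the Small Ball Inequality: Theorem~\ref{Hlo} states $\e_n(\bH^r_\infty,L_1)\gtrsim n^{-r}(\log n)^{(r+1/2)(d-1)}$ and, as the paper records, this is proved (in \cite{TE1,TE2}) purely from the volume estimates of Subsection~\ref{vol_est} (Theorem~\ref{T2.5.1}). Since $\|\cdot\|_1\le\|\cdot\|_q$ on $(\T^d,\mu)$ for every $q\ge 1$, the same lower bound holds for $\e_n(\bH^r_\infty,L_q)$; there is no need to build an $L_q$-packing through Littlewood--Paley as you sketch, and certainly no need for the Small Ball Inequality, which enters only when the target space is $L_\infty$ (where the sharp entropy order for $d\ge 3$ is indeed open, cf.\ the discussion around Conjecture~\ref{CHe}). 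Your worry about $d\ge 3$ thus conflates the $L_\infty$ problem with the $L_q$, $q<\infty$, situation, which is settled.

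One further technical slip: in the final step you say you apply Carl's inequality ``with the exact exponent $\alpha=r$.'' With $\alpha=r$, both sides of \eqref{1.3} are of order $(\log N)^{(r+1/2)(d-1)}$ once you plug in the entropy lower bound on the left and the already-proved width upper bound on the right, so you get consistency rather than a lower bound for $d_m$. The standard transfer uses $\alpha>r$: then $N^{\alpha}\e_N\lesssim\max_{m\le N}m^{\alpha}d_{m-1}$, and combining the entropy lower bound at $N$ with the width upper bound at each $m$ forces the maximizer $m^{*}$ to satisfy $m^{*}\asymp N$, from which $d_{m^{*}-1}\gtrsim (m^{*})^{-r}(\log m^{*})^{(r+1/2)(d-1)}$; monotonicity of $d_m$ then yields the bound for all $m$. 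This is a fixable detail, but as written the argument does not close.
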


In the case $2\le q\le p \le \infty$, $q<\infty$, Theorem \ref{TBT4.5'} was
obtained in \cite{Tem2}.  In the case $1<q<2\le p<\infty$ Theorem \ref{TBT4.5'} was
obtained in \cite{Ga85}, \cite{Ga84} and in
\cite{Di84c}. In the case $p=\infty$ and
$1<q<\infty$ -- in \cite{TE1} and \cite{TE2}. 

\begin{thm}\label{Tp=1} Let $2\le q<\infty$, $r>1$. Then
$$
d_m(\bH_1^r,L_q)\asymp
\left(\frac{(\log m)^{d-1}}{m}\right)^{r-1/2}
(\log m)^{(d-1)/2}.
$$
\end{thm}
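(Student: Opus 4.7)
The lower bound is immediate from the embedding chain in Lemma~\ref{emb:p=1}: since $\bW^r_{1,\alpha} \hookrightarrow \bB^r_{1,\infty} = \bH^r_1$, the unit ball $\bW^r_1$ is (up to a constant) contained in $\bH^r_1$, and therefore
$$
d_m(\bH^r_1, L_q) \gtrsim d_m(\bW^r_1, L_q) \asymp \bigl((\log m)^{d-1}/m\bigr)^{r-1/2}(\log m)^{(d-1)/2}
$$
for $2 \le q < \infty$ and $r>1$ by Theorem~\ref{TBT4.4'}.

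\textbf{Upper bound.} For $f\in \bH^r_1$ write $f = \sum_{\bs}A_{\bs}(f)$ and group by layers $f_l := \sum_{|\bs|_1=l}A_{\bs}(f)$. Since the Fourier supports of different $A_{\bs}(f)$ within a single layer are pairwise disjoint and each lies in a dyadic block of size $\asymp 2^{|\bs|_1}$, Parseval together with the Nikol'skii inequality (Theorem~\ref{T2.4.5}) gives
$$
\|A_{\bs}(f)\|_2 \lesssim 2^{|\bs|_1/2}\|A_{\bs}(f)\|_1 \lesssim 2^{l(1/2-r)}, \qquad |\bs|_1=l,
$$
so $\|f_l\|_2 \lesssim 2^{l(1/2-r)} l^{(d-1)/2}$. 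Moreover each $f_l$ lives in (a constant-size extension of) $\Tr(\Delta Q_l)$.

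\textbf{Choice of approximating subspace.} For $l\le n$ we keep the whole space $\Tr(\Delta Q_l)$, contributing zero error and total dimension $\asymp |Q_n|\asymp 2^n n^{d-1}$. For $l>n$ we apply the Kashin-type estimate of Lemma~\ref{L4.3.1} to obtain an $m_l$-dimensional subspace $V_l\subset \Tr(\Delta Q_l)$ with
$$
\inf_{g\in V_l}\|f_l-g\|_q \lesssim \bigl(|\Delta Q_l|/m_l\bigr)^{1/2}\bigl(\ln(e|\Delta Q_l|/m_l)\bigr)^{1/2}\|f_l\|_2.
$$
Fix $1<\kappa<2r-1$ (possible precisely because $r>1$) and set $m_l := [|\Delta Q_l|\,2^{-\kappa(l-n)}]$ for $l>n$. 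Since $|\Delta Q_l|\asymp 2^l l^{d-1}$, one checks $\sum_{l>n} m_l \lesssim 2^n n^{d-1}$, so the total dimension satisfies $m\asymp 2^n n^{d-1}$.

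\textbf{Summing the errors.} Plugging in the choice of $m_l$ and the bound on $\|f_l\|_2$, the contribution of layer $l>n$ is at most
$$
2^{\kappa(l-n)/2}(l-n)^{1/2}\cdot 2^{l(1/2-r)}l^{(d-1)/2} \asymp 2^{-n(r-1/2)} n^{(d-1)/2}\cdot 2^{-(l-n)(r-1/2-\kappa/2)}(l-n)^{1/2}(l/n)^{(d-1)/2}.
$$
Since $r-1/2-\kappa/2>0$, this geometric series is dominated by its first term $2^{-n(r-1/2)}n^{(d-1)/2}$. Translating back via $2^n\asymp m/(\log m)^{d-1}$ yields the claimed upper bound $\bigl((\log m)^{d-1}/m\bigr)^{r-1/2}(\log m)^{(d-1)/2}$. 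The main obstacle is the deployment of Kashin's finite-dimensional $d_m$-estimate layer-by-layer with a geometrically decreasing allocation of degrees of freedom; the constraint $r>1$ enters precisely to ensure that both the dimension budget ($\kappa>1$) and the convergence of the layer-wise error series ($\kappa<2r-1$) can be met simultaneously.
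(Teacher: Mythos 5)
Your proof is essentially correct, and the technique is precisely the one the paper illustrates for $d_m(\bW^r_2,L_q)$ in the discussion around Theorem~\ref{TBT2.4.6} and Lemma~\ref{L4.3.1}: decompose into hyperbolic layers, pass to $L_2$ via Nikol'skii, and apply the Kashin-type bound on each layer with a geometrically decaying allocation $m_l$ of degrees of freedom. The lower bound via the embedding $\bW^r_1 \hookrightarrow \bH^r_1$ of Lemma~\ref{emb:p=1} together with Theorem~\ref{TBT4.4'} is the natural one, since the rates coincide for $p=1$, $q\ge 2$, and it is not circular. The paper itself only cites \cite{Te82a},\cite{Tem10} for this theorem; your argument is the standard one behind those references.

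Two small points worth fixing. First, the Fourier supports of the $A_\bs(f)$ within a single layer $|\bs|_1=l$ are \emph{not} pairwise disjoint: $\hat{\mathcal{A}}_\bs$ is supported on $\{\bk: 2^{s_j-2}<|k_j|<2^{s_j}\}$, so two $\bs,\bs'$ on the same simplex with $|s_j-s'_j|=1$ in some coordinate have overlapping supports. What is true, and what your argument actually needs, is that the overlap is bounded (by a constant depending only on $d$), which still yields $\|f_l\|_2^2 \lesssim \sum_{|\bs|_1=l}\|A_\bs(f)\|_2^2 \lesssim l^{d-1}2^{2l(1/2-r)}$ by quasi-orthogonality; so the conclusion stands but the word ``disjoint'' should be replaced by ``finite overlap''. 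Second, for $l-n$ large the prescribed $m_l=[\,|\Delta Q_l|2^{-\kappa(l-n)}\,]$ drops below~$1$ and Lemma~\ref{L4.3.1} degenerates; in that range one should instead bound $\|f_l\|_q$ directly by the hyperbolic-cross Nikol'skii inequality (Theorem~\ref{T2.4.8}), giving $\|f_l\|_q \lesssim 2^{l(1-1/q-r)}l^{(d-1)/2}$, which decays geometrically since $r>1\ge 1-1/q$ for $q\ge 2$; taking $\kappa$ close to~$1$ makes this tail subordinate to $2^{-n(r-1/2)}n^{(d-1)/2}$. Both are routine repairs and do not change the architecture of your argument.
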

This theorem is from \cite{Te82a} and \cite{Tem10}.

\begin{thm}\label{Tq=1} Let $2\le p\le \infty$, $r>0$. Then
$$
d_m(\bH_p^r,L_1)\asymp
\left(\frac{(\log m)^{d-1}}{m}\right)^{r}
(\log m)^{(d-1)/2}.
$$
\end{thm}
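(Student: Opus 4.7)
The plan is to prove the two bounds separately, reducing each side to the previously settled extremal case of the parameter $p$ by means of an embedding, and then to invoke Theorem~\ref{TBT4.5} for the upper estimate and Carl's inequality combined with a matching entropy number lower bound for the lower estimate.

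For the upper bound, I use that $L_p(\T^d) \hookrightarrow L_2(\T^d)$ when $p \ge 2$, which yields the continuous embedding $\bH^r_p \hookrightarrow C(d,p)\,\bH^r_2$, and hence
$$
d_m(\bH^r_p, L_1) \lesssim d_m(\bH^r_2, L_1) \le d_m(\bH^r_2, L_2),
$$
the second step being the trivial inclusion $L_2 \hookrightarrow L_1$ on $\T^d$. Theorem~\ref{TBT4.5} applied at $p = q = 2$ (where $r(2,2) = 0$ and the condition $r>r(p,q)$ reduces to $r>0$) gives
$$
d_m(\bH^r_2, L_2) \asymp \left(\frac{(\log m)^{d-1}}{m}\right)^{r}(\log m)^{(d-1)/2},
$$
which is the desired upper estimate for all $2\le p\le\infty$. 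In particular, this also secures the matching upper bound in the endpoint case $p=\infty$, a fact that will be needed below.

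For the lower bound, the opposite inclusion $L_\infty \hookrightarrow L_p$ gives $\bH^r_\infty \subset C(d,p)\,\bH^r_p$, so $d_m(\bH^r_p, L_1) \gtrsim d_m(\bH^r_\infty, L_1)$, and it suffices to treat the endpoint $p=\infty$. Following the observation of Belinskii~\cite{Be5}, this is obtained by combining the matching upper bound from the previous paragraph with Carl's inequality (see Section~6) and the sharp lower estimate
$$
\epsilon_k(\bH^r_\infty, L_1) \gtrsim \left(\frac{(\log k)^{d-1}}{k}\right)^{r}(\log k)^{(d-1)/2}
$$
for the dyadic entropy numbers established in \cite{TE1, TE2}. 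Concretely, Carl's inequality is used in the form $\sup_{1\le k\le n} k^\alpha \epsilon_k \le c(\alpha)\sup_{1\le k\le n} k^\alpha d_k$ for $\alpha>r$; together with the already proved upper bound on $d_k$, which controls the growth of $k^\alpha d_k$, the supremum on the right is realized (up to constants) at $k=n$, delivering the claimed asymptotic lower bound on $d_n(\bH^r_\infty, L_1)$.

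The principal difficulty is concentrated in the entropy-number lower bound at $p=\infty$. It is proved by constructing a Riesz product supported on a hyperbolic layer $\Delta Q_n$ (Subsection~\ref{subsect:SBI}) and exploiting the fact that, in contrast with the $|\Pi(\mathbf N)|^{-1/2}$ estimate valid for parallelepipeds (Theorem~\ref{T2.5.1}), the volume of $B_{\Delta Q_n}(L_\infty)$ carries an extra factor $n^{-1/2}$ in dimension two (Theorem~\ref{T2.5.5}), with corresponding analogues in higher dimensions. This extra volume deficit is the source of the additional $(\log m)^{(d-1)/2}$ factor in the final rate. The two embedding reductions and the entropy-to-width transfer via \cite{Be5} are then essentially routine.
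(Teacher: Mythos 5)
Your proof follows the same route the paper indicates. The upper bound is obtained by embedding $\bH^r_p\hookrightarrow C\,\bH^r_2$ and using the $(p,q)=(2,2)$ case of the hyperbolic-cross/width estimate (Theorem~\ref{TBT3.3}, equivalently Theorem~\ref{TBT4.5}); the lower bound by embedding $\bH^r_\infty\subset C\,\bH^r_p$, invoking the entropy lower bound $\epsilon_k(\bH^r_\infty,L_1)\gtrsim k^{-r}(\log k)^{(d-1)(r+1/2)}$ from Theorem~\ref{Hlo}, and transferring through Carl's inequality --- precisely the observation of Belinskii cited in the text after the theorem. The main steps are correct and coincide with the paper's.

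Two imprecisions are worth flagging, though neither is fatal. The phrase ``the supremum on the right is realized ... at $k=n$'' as written presupposes the conclusion; the standard Carl's-inequality transfer instead uses the already-known upper bound $d_k\lesssim k^{-r}(\log k)^{(d-1)(r+1/2)}$ to force the index $m^*$ realizing $\max_{m\le n}m^\alpha d_{m-1}$ (for $\alpha>r$) to satisfy $m^*\ge\gamma n$ for some fixed $\gamma>0$, and then finishes via the monotonicity of the widths and the regularity of the target rate. More substantially, your final paragraph misattributes the source of the extra $(\log m)^{(d-1)/2}$: the entropy lower bound from \cite{TE1,TE2} rests on the parallelepiped volume estimate of Theorem~\ref{T2.5.1} (valid for all $d$, with $L_\infty$ the delicate case), not on the $\Delta Q_n$ volume estimate of Theorem~\ref{T2.5.5}, which is proved only for $d=2$; the higher-dimensional analogue of the latter is precisely Open Problem~2.3, so the ``corresponding analogues in higher dimensions'' you appeal to are not available.
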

The upper bounds in Theorem \ref{Tq=1} follow from the upper bounds for
$E_{Q_n}(\bH^r_2)_2$ from Theorem \ref{TBT3.3}.   It was observed in \cite{Be5}
that the lower bounds follow from the corresponding lower bounds for the entropy
numbers $\epsilon_k(\bH^r_\infty,L_1)$ from \cite{TE1} and \cite{TE2}. 

 \begin{thm}\label{Te96H} Let $d=2$, $2\le p\le \infty$ and $r>1/2$ when $p<\infty$, and $r>0$ when $p=\infty$. Then
$$
d_m(\bH^r_p,L_\infty) \asymp m^{-r} (\log m)^{r+1}.
$$
\end{thm}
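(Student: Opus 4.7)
\emph{Upper bound.} If $p = \infty$, the estimate is immediate from Theorem~\ref{TBT3.5}. Choose $n$ so that $m \asymp |Q_n| \asymp 2^n n$, that is $n \asymp \log m$, and note $\Tr(Q_n)$ is an admissible $m$-dimensional subspace; hence
$$
d_m(\bH^r_\infty, L_\infty) \le E_{Q_n}(\bH^r_\infty)_\infty \asymp 2^{-rn} n \asymp m^{-r}(\log m)^{r+1}.
$$
For $2 \le p < \infty$ direct approximation by $\Tr(Q_n)$ is too weak: the Nikol'skii inequality (Theorem~\ref{T2.4.7}) only yields $E_{Q_n}(\bH^r_p)_\infty \lesssim 2^{-(r-1/p)n}n$, losing a factor $2^{n/p}$. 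Instead, I would decompose $f = \sum_{l \ge 0} f_l$, where $f_l := \sum_{|\bs|_1 = l}\delta_\bs(f) \in \Tr(\Delta Q_l)$, use the Littlewood--Paley theorem together with $\|\delta_\bs(f)\|_p \lesssim 2^{-r|\bs|_1}$ to bound $\|f_l\|_p \lesssim l^{1/2}\,2^{-rl}$ (valid for $p \ge 2$), and then approximate each $f_l$ from a Kashin-type subspace $V_{l,k_l} \subset \Tr(\Delta Q_l)$ furnished by Lemma~\ref{L4.3.1} (specialized to $q = \infty$, $d = 2$):
$$
d_{k_l}(\Tr(\Delta Q_l)_2, L_\infty) \lesssim \bigl(|\Delta Q_l|/k_l\bigr)^{1/2}\bigl(\log(e|\Delta Q_l|/k_l)\bigr)^{1/2},
$$
combined with the inclusion $\Tr(\Delta Q_l)_p \subset C\,\Tr(\Delta Q_l)_2$ for $p \ge 2$. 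Taking $k_l = |\Delta Q_l|$ (exact recovery) for $l \le n$ and $k_l \asymp 2^l l \cdot 2^{-\alpha(l-n)}$ for $l > n$ with $1 < \alpha < 2r$ --- which is where the hypothesis $r > 1/2$ enters --- one has total dimension $\asymp 2^n n \asymp m$, and the contribution of level $l > n$ is at most $l\,2^{-rn}2^{(\alpha/2 - r)(l-n)}(\alpha(l-n))^{1/2}$. Summing a convergent geometric series in $l > n$ gives total error $\lesssim n\,2^{-rn} \asymp m^{-r}(\log m)^{r+1}$.

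\emph{Lower bound.} Since $\T^2$ has finite measure, $\|\cdot\|_p \le C\|\cdot\|_\infty$, so $\bH^r_\infty \hookrightarrow \bH^r_p$ with constant depending only on $p$; hence by monotonicity $d_m(\bH^r_\infty, L_\infty) \lesssim d_m(\bH^r_p, L_\infty)$ and it suffices to prove the lower bound for $p = \infty$. For that case I would use the two-dimensional Small Ball Inequality (\ref{2.6.3})
$$
\Big\|\sum_{\bs \in Y_n} g_\bs \Big\|_\infty \gtrsim \sum_{\bs \in Y_n} \|g_\bs\|_1 \qquad (g_\bs \in \Tr(\varrho(\bs)))
$$
in conjunction with the Riesz-product machinery of Subsection~\ref{subsect:SBI}. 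The strategy is to consider the family of test functions $f = 2^{-rn}\sum_{\bs \in Y_n}\phi_\bs$ with $\phi_\bs \in \Tr(\varrho(\bs))$, $\|\phi_\bs\|_\infty \lesssim 1$, which lies in $c\,\bH^r_\infty$ by construction; the small ball inequality supplies the crucial $L_\infty$-lower bound, while a dimension-counting/averaging argument (using that the ambient subspace spanned by the $\phi_\bs$ has dimension $\asymp 2^n n$) rules out approximation to accuracy better than $c'\,2^{-rn}n \asymp m^{-r}(\log m)^{r+1}$ from any $m$-dimensional subspace with $m \ll |Q_n|$.

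\emph{Principal obstacle.} The technical heart is the lower bound, which crucially depends on the small ball inequality (\ref{2.6.3}) available only for $d = 2$; the absence of its higher-dimensional analogue (Open problems~2.5--2.6) is precisely what prevents the extension of Theorem~\ref{Te96H} to $d \ge 3$.
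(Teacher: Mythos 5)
Your lower-bound reduction is sound and matches the paper's stated strategy. Since $\|\cdot\|_{\bH^r_p} \lesssim \|\cdot\|_{\bH^r_\infty}$ on $\T^2$, one has $\bH^r_\infty \subset C\,\bH^r_p$, hence $d_m(\bH^r_\infty,L_\infty) \lesssim d_m(\bH^r_p,L_\infty)$, and it suffices to treat $p=\infty$, for which the paper indeed attributes the lower bound to the Small Ball Inequality \eqref{2.6.3} and Riesz-product machinery. Your sketch is consistent with this, though it is only a sketch: the step where the small ball inequality is converted into a lower bound against an \emph{arbitrary} $m$-dimensional subspace (rather than against $\Tr(Q_n)$) needs the carefully chosen Riesz product $\Phi(n,\cdot)$ with $\|\Phi\|_1=1$ as a dual norming object and a fooling function adapted to the subspace; the paper does not reproduce it either, only citing \cite{VT59}. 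Your upper bound for $p=\infty$ via Theorem~\ref{TBT3.5} is correct and is exactly what the paper notes.

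The genuine gap is in the upper bound for $2\le p<\infty$. You invoke Lemma~\ref{L4.3.1} ``specialized to $q=\infty$,'' but the lemma is stated and proved only for $2\le q<\infty$. Its proof passes through Corollaries~\ref{C7.1}--\ref{C7.2} of the Littlewood--Paley theorem, which are simply unavailable in $L_\infty$ — this is precisely one of the difficulties the paper emphasizes in the $L_\infty$ case. In fact the $L_\infty$ width of a hyperbolic layer is genuinely larger: assembling $\approx l$ blocks in the uniform norm and using Theorem~\ref{TBT2.4.6} blockwise (there is no Littlewood--Paley to spare you the triangle inequality) produces
\[
d_m\bigl(\Tr(\Delta Q_l)_2,L_\infty\bigr)\;\lesssim\; l^{1/2}\Bigl(\frac{|\Delta Q_l|}{m}\Bigr)^{1/2}\bigl(\log(e|\Delta Q_l|/m)\bigr)^{1/2},
\]
carrying an extra $l^{1/2}$ relative to the $L_q$, $q<\infty$, version. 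This $l^{1/2}$ loss is not an artifact: Theorem~\ref{T2.5.5} shows $(\mathrm{vol}(B_{\Delta Q_l}(L_\infty)))^{1/N}\asymp(2^l l^2)^{-1/2}$ versus $(2^l l)^{-1/2}$ for $L_p$, $p<\infty$, and Corollary~\ref{C2.5.2} shows any $O(|Q_n|)$-size node set over-discretizes the $L_\infty$ norm on $\Tr(Q_n)$ by $\gtrsim n^{1/2}$. Had you used the lemma as stated, your level-$l$ error contribution would be $l^{1/2}(\alpha(l-n))^{1/2}2^{-rn}2^{(\alpha/2-r)(l-n)}$ — with $l^{1/2}$, not $l$ — and the geometric sum would deliver $n^{1/2}2^{-rn}\asymp m^{-r}(\log m)^{r+1/2}$, \emph{contradicting} the theorem. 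Your write-up slipped from $l^{1/2}$ to $l$ in the displayed bound, and this slip exactly compensates the invalid extrapolation of Lemma~\ref{L4.3.1}, so the stated answer $m^{-r}(\log m)^{r+1}$ reappears for the wrong reasons. The correct argument must replace Lemma~\ref{L4.3.1} with the $L_\infty$ layer-width estimate above; with that $l^{1/2}$ in place and $\|f_l\|_2\le\|f_l\|_p\lesssim l^{1/2}2^{-rl}$ (valid from $\|\delta_\bs(f)\|_p\lesssim 2^{-r|\bs|_1}$ and Corollary~\ref{C7.2} with $p\ge2$), your dimension allocation $k_l\asymp 2^l l\,2^{-\alpha(l-n)}$, $1<\alpha<2r$, yields the bound $n2^{-rn}$ as intended, and this is where $r>1/2$ enters.
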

The most difficult part of Theorem \ref{Te96H} is the lower bounds for $p=\infty$. The proof of
the lower bounds is based on the Small Ball Inequality (see \eqref{2.6.3} from Section
2). Theorem \ref{Te96H} is proved in \cite{VT59} with the assumption $r>0$ for the lower bounds in case $p=\infty$ and with the assumption $r>1/2$ for the upper bounds. 
We note that in case $p=\infty$ the matching upper bounds follow from Theorem \ref{TBT3.5} under assumption $r>0$.   

In case $d\ge 3$ the right order of the $d_m(\bH^r_p,L_\infty)$, $2\le p\le \infty$, is not known. The following upper bounds are known (see \cite{Be5, Be91})
\be\nonumber
d_m(\bH^r_p,L_\infty) \lesssim m^{-r}(\log m)^{(d-1)(r+1/2)+1/2},\quad 2\le p\le \infty,\quad r>1/2.
\ee
Theorem \ref{Te96H} and the above upper bounds support the following conjecture.

\begin{conj}\label{CH} Let $d\ge 3$, $2\le p\le \infty$, $r>1/2$. Then
$$
 d_m(\bH^r_p,L_\infty) \asymp m^{-r}(\log m)^{(d-1)(r+1/2)+1/2}.
 $$
 \end{conj}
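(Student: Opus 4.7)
The upper bound $d_m(\bH^r_p,L_\infty) \lesssim m^{-r}(\log m)^{(d-1)(r+1/2)+1/2}$ is already recorded just above the conjecture (from \cite{Be5, Be91}), so the substance of the conjecture is the matching lower bound. My plan is to extend the two-dimensional argument of \cite{VT59, TE4} that proves Theorem \ref{Te96H}.

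First, I would reduce the Kolmogorov-width lower bound to an $L_\infty$ estimate on a hyperbolic layer. Fix $n$ with $|Q_n| \asymp m/\varepsilon$ and an integer $a \ge 6$, and work inside the subspace of polynomials supported on $\bigcup_{\bs \in H_n(a,b)} \varrho'(\bs) \subset \Delta Q_n$. For each $\bs \in H_n(a,b)$, take $t_\bs$ proportional to a Rudin--Shapiro polynomial in $\Tr(\varrho'(\bs))$ (see Section 2.2), normalized so that $\|t_\bs\|_p \asymp 2^{-r|\bs|_1}$; since all $L_q$ norms of a Rudin--Shapiro polynomial on a set of cardinality $\asymp 2^{|\bs|_1}$ are equivalent, we get $\|t_\bs\|_1 \asymp \|t_\bs\|_\infty \asymp 2^{-r|\bs|_1}$ as well. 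The candidate function $f = \sum_{\bs}\varepsilon_\bs t_\bs$, with signs $\varepsilon_\bs = \pm 1$, then satisfies $\|f\|_{\bH^r_p} \lesssim 1$ via the characterization $\|f\|_{\bH^r_p} \asymp \sup_\bs 2^{r|\bs|_1}\|A_\bs(f)\|_p$.

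Second, since $|H_n(a,b)| \asymp n^{d-1}$ exceeds $m$ as soon as $n$ is a little larger than $\log_2 m$, the signs $\varepsilon_\bs$ (or, more generously, the full coefficient freedom inside each block) provide enough degrees of freedom to make $f$ $L_2$-orthogonal to any given rank-$m$ system $\{\varphi_j\}_{j=1}^m$. Then $\inf_{g \in \operatorname{span}\{\varphi_j\}}\|f - g\|_\infty \gtrsim \|f\|_\infty$ follows by duality against the Riesz product
$$
\Phi(n,\bx) \;=\; \prod_{\bs\in H_n(a,b)}\bigl(1+t_\bs(\bx)/\|t_\bs\|_\infty\bigr)
$$
of Lemma \ref{L2.6.1}, which has $\|\Phi\|_1 \asymp 1$ and reproduces the $t_\bs$ on the Fourier side. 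Granted the Small Ball Inequality (\ref{2.6.4}),
$$
\|f\|_\infty \;\gtrsim\; n^{-(d-2)/2}\sum_{\bs\in H_n(a,b)}\|t_\bs\|_1 \;\asymp\; n^{-(d-2)/2}\cdot n^{d-1}\cdot 2^{-rn} \;=\; n^{d/2}\, 2^{-rn},
$$
which, under $m \asymp 2^n n^{d-1}$, translates precisely to $d_m(\bH^r_p,L_\infty) \gtrsim m^{-r}(\log m)^{(d-1)(r+1/2)+1/2}$.

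The decisive obstacle is exactly the multivariate Small Ball Inequality (\ref{2.6.4}) for the trigonometric system in dimension $d \ge 3$, which is Open problem 2.5 in this survey. The Haar-system analogue has only been improved to the weaker exponent $(d-1)/2 - \delta(d)$ by Bilyk--Lacey--Vagharshakyan \cite{BL, BLV}, and no mechanism is currently known to transfer such gains to the trigonometric setting. Without (\ref{2.6.4}), the trivial Parseval plus Cauchy--Schwarz substitute loses a factor $n^{1/2}$ and yields only the lower bound already implied by $\bW^r_p \hookrightarrow \bH^r_p$ together with Theorem \ref{KashT95}. Closing the remaining $(\log m)^{1/2}$ gap is the precise content of Conjecture \ref{CH}, and appears to require genuinely new input on (\ref{2.6.4}).
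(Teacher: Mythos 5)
This is a conjecture; the paper gives no proof of it, stating instead that the upper bound is known from Belinskii and that the $d=2$ case (Theorem~4.19) is settled via the two-dimensional Small Ball Inequality~\eqref{2.6.3}. You correctly identify this, and your conclusion — that the missing ingredient is the $d$-dimensional Small Ball Inequality~\eqref{2.6.4}, Open Problem~2.5, and that no proof currently exists — is exactly right. Your arithmetic also checks out: with $m\asymp 2^n n^{d-1}$, the target $m^{-r}(\log m)^{(d-1)(r+1/2)+1/2}$ reduces to $2^{-rn}n^{d/2}$, and the gap between the conjectured exponent $(d-2)/2$ in \eqref{2.6.4} and the trivial Parseval exponent $(d-1)/2$ is precisely one power of $n^{1/2}$.

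There is, however, a tangle in the middle of your sketch that you should clean up. You invoke the Riesz product of Lemma~\ref{L2.6.1} to handle the duality step $\inf_g\|f-g\|_\infty\gtrsim\|f\|_\infty$, and then \emph{separately} invoke \eqref{2.6.4} to lower-bound $\|f\|_\infty$. But Lemma~\ref{L2.6.1} is a strictly two-dimensional statement — it is the very device used to \emph{prove} \eqref{2.6.3} in $d=2$, and its higher-dimensional analogue is no more available than \eqref{2.6.4} itself. Moreover, if a $d$-variate Riesz product $\Phi=\prod_{\bs}(1+u_\bs)$ with $\|\Phi\|_1\lesssim 1$ did exist, then duality against $\Phi$ would directly yield $\|f\|_\infty\gtrsim\sum_\bs\langle t_\bs,u_\bs\rangle$, making \eqref{2.6.4} superfluous for your Rudin--Shapiro choice of~$t_\bs$. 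So as written the argument double-dips: it assumes two open inputs when the logic uses, and would need, at most one. A cleaner presentation would state that the $d=2$ lower bound in \cite{VT59,TE4} relies on the hyperbolic-cross Riesz product (Lemma~\ref{L2.6.1}) both for the test functional and, via \eqref{2.6.3}, for the pointwise lower bound; that \emph{no} $d\ge 3$ analogue of either step exists; and that closing either gap (the Riesz product or~\eqref{2.6.4}) would give the conjectured exponent. There is also a smaller loose end in the orthogonalization step: the dual element you pair against must be orthogonal to $\mathrm{span}\{\varphi_j\}$, but the Riesz product has a fixed spectral profile and cannot itself be orthogonalized; the orthogonalization has to happen on the coefficients of $f$, and you would then need a careful argument that the surviving inner product $\langle f,\Phi\rangle$ stays large — this is precisely the delicate part that the two-dimensional references handle and that your sketch glosses over.
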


We summarize the above results
on the $d_m(\bH^r_p,L_q)$ in the following picture.

\begin{figure}[H]
 \begin{minipage}{0.48\textwidth}
\begin{tikzpicture}[scale=2.5]
\draw (0,0) -- (0,-0.1);
\draw (-0.1,0) -- (0.0,0.0);
\draw[->] (2,0.0) -- (2.1,0.0) node[right] {$\frac{1}{p}$};
\draw[->] (0.0,2) -- (0.0,2.1) node[above] {$\frac{1}{q}$};
\draw[very thick][dashed] (0,0) -- (2,0);
\draw[very thick][dashed] (0,0) -- (0,2);
\draw (0.03,1) -- (-0.03,1) node [left] {$\frac{1}{2}$};
\draw (1,0.03) -- (1,-0.03) node [below] {$\frac{1}{2}$};

\draw[ultra thick] (0,2) -- (1,2);
\draw[very thick][dashed] (1,2) -- (2,2);
\draw (1,1) -- (1,2);
\draw (0,0) -- (2,2);
\draw[very thick][dashed] (2,2) -- (2,1);
\draw[ultra thick] (2,1) -- (2,0);

\node at (0.5,1.4) {\small $\alpha = r, \beta = \frac{1}{2}$};
\node at (1.3,1.6) {\huge ?};
\node at (1.1,0.15) {\small $\alpha = r-\Big(\frac{1}{p}-\max\big\{\frac{1}{2},\frac{1}{q}\big\}\Big)_+$};
\node at (1.1,0.5) {\small $\beta = \max\big\{\frac{1}{2},\frac{1}{q}\big\}$};
\draw (2,0.03) -- (2,-0.03) node [below] {$1$};
\draw (0.03,2) -- (-0.03,2) node [left] {$1$};
\end{tikzpicture}
\end{minipage}
\begin{minipage}{0.48\textwidth}
 \begin{center}
    $$
        d_m(\bH^r_p,L_q) \asymp \Big(\frac{\log^{(d-1)} m}{m}\Big)^{\alpha} (\log m)^{(d-1)\beta} 
    $$
 \end{center}

\end{minipage}
\caption{The asymptotical order of $d_m(\bH^r_p,L_q)$}\label{dmH}
\end{figure}

The region $P_1:=\{(1/p,1/q):0\le 1/p<1/q<1,1/p\le 1/2\}$ is covered by  Theorem
\ref{TBT4.5'}. For this region the upper bounds follow from Theorem
\ref{TBT3.3}. Therefore, in this case the subspaces $\Tr(Q_n)$ are optimal in
the sense of order. In the case of region $P_2:=\{(1/p,1/q):1/2<1/p<1/q\le1\}$
the right order of $d_m(\bH^r_p,L_q)$ is not known. The
region $P_3:=\{(1/p,1/q): 0<1/q\le 1/p<1\}$ is covered by Theorem \ref{TBT4.5}.
The boundary segment $A_1:=\{(1,1/q):0<1/q\le 1/2\}$ is covered by Theorem
\ref{Tp=1}, the segment $A_2:=\{(1/p,1): 0\le 1/p\le 1/2\}$ is covered by
Theorem \ref{Tq=1}, and, finally, the segment $A_3:=\{(1/p,0): 0\le 1/p\le
1/2\}$ in the case $d=2$ is covered by Theorem \ref{Te96H}. In all other cases
the right orders of $d_m(\bH^r_p,L_q)$ are not known.
 
Let us continue with results on $d_m(\Brpt,L_q)$. As for $E_{Q_n}(\Brpt)_q$,
the third parameter $\theta$ is reflected on the asymptotic order of
$d_m(\Brpt,L_q)$.

\begin{thm} \label{thm[d_mB,p<q]}
Let $r > 0$, $1 < p \le q < \infty$, $1\le \theta < \infty$, $\beta := \frac{1/p - 1/q}{1 - 2/q}$.
Then we have
\begin{equation}\nonumber
d_m(\Brpt,L_q)
\ \asymp \
\left(\frac{(\log m)^{d-1}}{m}\right)^{r - [1/p - \max\{1/2,1/q\}]_+}
(\log m)^{(d-1)\delta},
\end{equation}
where
\begin{equation}\nonumber
\delta
 := \
\begin{cases}
{(1/2 - 1/\theta)_+}, & \ \mbox{if} 
  \  2 \le p < q, \ r > \beta,  \ 
\\[1ex]
{[\max\{1/2,1/q\} - 1/\theta]_+}, & \ \mbox{if} \   
p \le q \le 2, \ r > 1/p - 1/q, \
\mbox{or} \\
 &  \quad \  p \le 2 < q, \  r > 1/p.
\end{cases}
\end{equation}
\end{thm}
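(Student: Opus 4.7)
The proof splits into upper and lower bounds, and the arguments differ across the three sub-cases.

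\emph{Upper bound.} The point of departure is the dyadic Besov norm equivalence
\[
 \|f\|_{\bB^r_{p,\theta}}^\theta \asymp \sum_{\bs\in \N_0^d} 2^{r|\bs|_1 \theta}\|A_\bs(f)\|_p^\theta,
\]
which allows one to write $f=\sum_l A_l(f)$ with $A_l(f):=\sum_{|\bs|_1=l}A_\bs(f)\in \Tr(\Delta Q_l)$. Fix $n$ with $|Q_n|\asymp m$, and take as approximating subspace $\Tr(Q_n)$ together with, for each $l>n$, a near-optimal $m_l$-dimensional subspace for approximating $\Tr(\Delta Q_l)$ in $L_q$, where $m_l:=[|\Delta Q_l|\,2^{\kappa(n-l)}]$ for a small $\kappa>0$ (analogous to the computation following Lemma \ref{L4.3.1}). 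In the sub-cases $p\le q\le 2$ and $p\le 2<q$ the required layerwise widths are provided by Nikol'skii's inequality (Theorem \ref{T2.4.5}) for the passage $L_p \to L_q$; in the sub-case $2\le p<q$ the essential input is Lemma \ref{L4.3.1} itself, which provides the gain $(|\Delta Q_l|/m_l)^{1/2}$ on each layer and removes the $(1/p-1/q)$ loss, producing the main rate $r$. Within each layer of cardinality $\asymp l^{d-1}$ blocks, standard Minkowski/Hölder manipulations pass the $\ell_\theta$ summability given by the Besov norm to control of $\|A_l(f)\|_q$, and produce the extra factor $l^{(d-1)(\max\{1/2,1/q\}-1/\theta)_+}$. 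Summation over $l$ together with the choice $n\asymp \log m$ then yields the announced $(\log m)^{(d-1)\delta}$.

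\emph{Lower bound.} When $\delta=0$ one appeals to the matching lower bound for $\bW^r_p$ from Theorem \ref{TBT4.4} via the embedding $\bB^r_{p,\min\{p,2\}}\hookrightarrow \bW^r_p$ of Lemma \ref{emb}~(iv) together with monotonicity of $\bB^r_{p,\theta}$ in $\theta$. When $\delta>0$, one uses the lower bound for $\bH^r_p=\bB^r_{p,\infty}$ from Theorems \ref{TBT4.5}--\ref{TBT4.5'} together with \eqref{RoStB1}
\[
 \|f\|_{\bB^r_{p,\theta}}\lesssim l^{(d-1)/\theta}\|f\|_{\bH^r_p},\qquad f\in \Tr(\Delta Q_l):
\]
rescaling an extremal $\bH^r_p$-function concentrated on a single hyperbolic layer $\Delta Q_l$ with $l\asymp \log m$ by the factor $l^{-(d-1)/\theta}$ produces a test function in the $\bB^r_{p,\theta}$-unit ball, and comparing the two known asymptotics yields exactly the advertised exponent $(d-1)(\max\{1/2,1/q\}-1/\theta)$ on the logarithm.

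\emph{Main obstacle.} The technical heart is the sub-case $2\le p<q$ with the smoothness restriction $r>\beta=(1/p-1/q)/(1-2/q)$. Here one needs both the sharp Kashin--Gluskin finite-dimensional bounds (Theorem \ref{TBT1.4.6}, applied through the Maiorov-type discretization of Theorem \ref{TBT2.4.6}) and a delicate balancing of $n$ against the layerwise allocation $m_l$; the threshold $r>\beta$ is precisely what ensures $\sum_{l>n}m_l\lesssim m$ while the total approximation error still decays like $m^{-r}$. The matching lower bound in this sub-case is non-constructive and ultimately rests on the volume estimates of Subsection \ref{vol_est}.
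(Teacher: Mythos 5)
Your outline gets the $p\le q\le 2$ sub-case essentially right (layer decomposition, Nikol'skii gain $1/p-1/q$, Besov $\ell_\theta$ summability giving the $(1/q-1/\theta)_+$ power of the logarithm), but the other two sub-cases have genuine gaps in the upper bound.

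\emph{Sub-case $p\le 2<q$.} Nikol'skii $L_p\to L_q$ gives the loss $1/p-1/q$, hence the exponent $r-1/p+1/q$ on the main polynomial rate; the theorem asserts the strictly better rate $r-1/p+1/2$ (recall $q>2$ so $1/q<1/2$). Nikol'skii alone can never improve the $-1/q$ to $-1/2$. What the paper actually does here is reduce via the embedding $\Brpt\hookrightarrow \bB^{r-1/p+1/2}_{2,\theta}$ (Lemma~\ref{emb}) to the boundary case $p=2<q$, and then uses a genuine finite-dimensional width gain ($L_2\to L_q$ \`a la Theorem~\ref{TBT1.4.6}/Lemma~\ref{L4.3.1}) on each layer. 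Without some width improvement beyond Nikol'skii you cannot reach $r-1/p+1/2$.

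\emph{Sub-case $2\le p<q$.} Passing to $L_2$ as you propose (via $\|\cdot\|_2\le\|\cdot\|_p$ for $p\ge 2$ and then Lemma~\ref{L4.3.1}) does produce the main rate $r$, but the layerwise summation $\sum_{l>n} 2^{-rl}\,(|\Delta Q_l|/m_l)^{1/2}$ converges only for $r>1/2$. The theorem allows $r>\beta=\frac{1/p-1/q}{1-2/q}$, and $\beta<1/2$ whenever $p>2$. The paper closes this gap by discretizing to $B^n_p\to\ell^n_q$ directly (Corollary~\ref{C7.1}) and invoking the sharper Kashin--Gluskin-type estimate of Lemma~\ref{Lemma[Kashin-Gluskin]}, whose decay exponent is exactly $\beta$ rather than $1/2$; this is where the smoothness restriction $r>\beta$ in the statement originates. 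Going through $L_2$ simply cannot reach $r\in(\beta,1/2]$. (A minor remark: in your layer allocation the quantity $m_l$ should be $[|\Delta Q_n|\,2^{\kappa(n-l)}]$ with the\emph{ fixed} layer $n$, not $[|\Delta Q_l|\,2^{\kappa(n-l)}]$; otherwise $\sum_{l>n}m_l$ diverges.)

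\emph{Lower bounds.} Your embedding argument $\bW^r_p\hookrightarrow\bB^r_{p,\theta}$ only applies when $\theta\ge\max\{p,2\}$, whereas the $\delta=0$ sub-cases in the statement occur precisely for small $\theta$ (e.g.\ $\theta\le\min\{2,q\}$), where $\bB^r_{p,\theta}$ is \emph{smaller} than $\bW^r_p$ and the containment is reversed; you cannot transfer the $\bW$ lower bound by monotonicity of $d_m$ there. The rescaling idea via \eqref{RoStB1} from an $\bH^r_p$-layer extremal is sound in spirit for $\delta>0$, but the paper's route is a direct discretization to finite-dimensional widths (Pietsch--Stesin, Galeev's lemma, and $\|\cdot\|_q\ge\|\cdot\|_2$), which also handles the small-$\theta$ cases uniformly. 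Finally, the paper does not invoke volume estimates for these lower bounds; those appear for the $\bW^r_\infty$/$\bW^r_1$ widths and for entropy numbers, not here.
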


In Theorem \ref{thm[d_mB,p<q]}, the cases $2 \le p < q$ and  $ p \le 2 < q$ were proved in
\cite{Ga01}, the upper bound of the case $p \le q \le 2$ was proved in
\cite{Di85, Di86}, the lower bound of the case $p \le q \le 2$  was proved in
\cite{Ro91} for $\theta \ge 1$.

\begin{thm} \label{thm[d_mB,p>q]}
Let $r > 0$, $1\le \theta < \infty$. Let $2 \le q \le p < \infty$ or $1 < q < 2
\le p < \infty$ and $\theta \ge 2$.
Then we have
\begin{equation}\nonumber
d_m(\Brpt,L_q)
\ \asymp \
\left(\frac{(\log m)^{d-1}}{m}\right)^r
(\log m)^{(d-1)(1/2 - 1/\theta)_+}\,.
\end{equation}
\end{thm}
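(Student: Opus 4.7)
My plan is to approximate $f\in \Brpt$ by its partial sum $S_{Q_n}(f)$, where $n$ is chosen so that $m\asymp|Q_n|\asymp 2^n n^{d-1}$. For $1<q<\infty$ the Littlewood--Paley theorem gives
\[
  \|f-S_{Q_n}(f)\|_q \;\lesssim\; \Big\|\Big(\sum_{|\bs|_1>n}|\delta_\bs(f)|^2\Big)^{1/2}\Big\|_q.
\]
For $q\ge 2$ I apply Minkowski's inequality in $L_{q/2}$; for $1<q<2\le p$ I use $\|\cdot\|_q\le\|\cdot\|_2$ together with the orthogonality of the blocks $\delta_\bs(f)$ in $L_2$. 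Since $q\le p$ in both sub-cases, this yields the bound $\bigl(\sum_{|\bs|_1>n}\|\delta_\bs(f)\|_p^2\bigr)^{1/2}$. Splitting $\|\delta_\bs(f)\|_p=2^{-r|\bs|_1}\cdot\bigl(2^{r|\bs|_1}\|\delta_\bs(f)\|_p\bigr)$ and applying H\"older's inequality with exponents $\theta/2$ and $\theta/(\theta-2)$ (for $\theta>2$), combined with the sum estimate $\sum_{|\bs|_1>n}2^{-a|\bs|_1}\asymp 2^{-an}n^{d-1}$ from \eqref{hcsums}, gives
\[
  \|f-S_{Q_n}(f)\|_q \;\lesssim\; 2^{-rn}\,n^{(d-1)(1/2-1/\theta)}\,\|f\|_{\Brpt}.
\]
For $\theta\le 2$ (only in the case $2\le q\le p$), the monotonicity $(\sum a_\bs^2)^{1/2}\le(\sum a_\bs^\theta)^{1/\theta}$ produces the same bound with exponent $(1/2-1/\theta)_+=0$. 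Rewriting $2^{-rn}\asymp m^{-r}(\log m)^{(d-1)r}$ finishes the upper bound.

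\textbf{Lower bound.} For the lower bound I would work inside the single hyperbolic layer $V:=\Tr(\Delta Q_n)$, whose dimension is of the right order $\asymp m$. For $f=\sum_{|\bs|_1=n}f_\bs$ with $f_\bs\in\Tr(\rho(\bs))$, the block characterization \eqref{DefB} gives
\[
  \|f\|_{\Brpt}\;\asymp\;2^{rn}\Big(\sum_{|\bs|_1=n}\|f_\bs\|_p^\theta\Big)^{1/\theta},
\]
so the admissible test set $\{f\in V:\|f\|_{\Brpt}\le 1\}$ is, up to the factor $2^{-rn}$, the $\ell_\theta$-direct sum of $L_p$-unit balls of the block spaces $\Tr(\rho(\bs))$, with $\asymp n^{d-1}$ summands. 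Applying the multivariate Marcinkiewicz equivalence of Theorem \ref{T2.4.11} on each block $\rho(\bs)$ (of dimension $N_\bs\asymp 2^n$) reduces the Kolmogorov width $d_m$ to a finite-dimensional width of the direct sum $\bigoplus_{|\bs|_1=n}(\ell_p^{N_\bs}\to\ell_q^{N_\bs})$ subjected to an $\ell_\theta$-constraint on the block sizes. The sharp finite-dimensional lower bounds of Kashin--Gluskin--Garnaev type (behind Theorem \ref{TBT1.4.6}) combined with the volume estimates on dyadic blocks collected in Subsection \ref{vol_est} then yield $\gtrsim 2^{-rn}n^{(d-1)(1/2-1/\theta)_+}$, following the scheme of Galeev \cite{Ga01} and \cite{Di85}.

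\textbf{Main obstacle.} The most delicate point is recovering the sharp logarithmic exponent $(1/2-1/\theta)_+$ in the lower bound. For $\theta\le 2$ (only in the sub-case $2\le q\le p<\infty$), the bound must collapse to the pure $\bW^r_p$-rate $m^{-r}(\log m)^{(d-1)r}$ with no extra log factor, despite the test space decomposing into $\asymp n^{d-1}$ essentially independent blocks; here one has to exploit the tight $\ell_\theta$-constraint, noting that after normalization the $L_q$-energy of a near-extremal $f$ must concentrate on $O(1)$ blocks. For $\theta>2$ the extra factor $n^{(d-1)(1/2-1/\theta)}$ is produced by the looser $\ell_\theta$-constraint allowing block budgets of size $\asymp 2^{-rn}n^{-(d-1)/\theta}$. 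In the sub-case $1<q<2\le p$ with $\theta\ge 2$, where Minkowski in $L_{q/2}$ is unavailable, the finite-dimensional discretization is supplemented by the Nikol'skii-type bounds of Theorem \ref{TBT2.4.6} and a volume-based lower bound argument on dyadic blocks, again along the lines of \cite{Ga01}.
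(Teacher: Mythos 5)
Your upper bound is correct and matches the paper's route (approximation by $S_{Q_n}$, Littlewood--Paley, Minkowski/orthogonality, H\"older in $\theta$); the numerics come out right. The overall lower-bound strategy — test inside the hyperbolic layer $\Tr(\Delta Q_n)$, discretize block-by-block, reduce to a finite-dimensional Kolmogorov width of a direct-sum body with an $\ell_\theta$-constraint on the blocks, following Galeev — is also the plan the paper describes.

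However, in the lower bound you invoke the wrong finite-dimensional machinery. The Kashin--Gluskin--Garnaev bound (Theorem \ref{TBT1.4.6}) is an estimate on $d_m(B^d_2,\ell^d_\infty)$ and is the workhorse for the \emph{opposite} regime $p<q$; it appears in the proof of Theorem \ref{thm[d_mB,p<q]} via Lemma \ref{Lemma[Kashin-Gluskin]}. The volume estimates of Subsection \ref{vol_est} are likewise tailored to the endpoint cases ($p=1$, $q=\infty$), not to $1<p,q<\infty$. The hypothesis of the theorem you are proving is $q\le p$ (either $2\le q\le p<\infty$ or $1<q<2\le p<\infty$), and in that regime the finite-dimensional widths $d_m(B^n_p,\ell^n_q)$ are governed by the Pietsch--Stesin equivalence $d_m(B^n_p,\ell^n_q)\asymp (n-m)^{1/q-1/p}$ (Lemma \ref{Lemma[Pietsch-Stesin]}), combined, for the $\theta$-dependence, with a Galeev-type direct-sum lemma across the $\asymp n^{d-1}$ blocks of the layer. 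Substituting KGG into your argument would produce the wrong exponent (or, for $q\le p$, would not apply at all), so this step of the proposal as written does not go through; replacing it by Pietsch--Stesin restores the argument, and is what the paper does (citing \cite{Ga01}).
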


In Theorem \ref{thm[d_mB,p>q]}, the upper bounds were proved in \cite{Di85,
Di86}, the lower bounds were proved in \cite{Ga01}.

Let us give a brief comment on the proofs of Theorems \ref{thm[d_mB,p<q]} and
\ref{thm[d_mB,p>q]} for which $1 < p,q < \infty$. For details, the reader can
see \cite{Di85, Di86, Ga01, Ro91}. We first treat the upper bounds.
The cases $p \le q \le 2$, $2 \le q \le p < \infty$ and $1 < q < 2 \le p < \infty$ are derived from the linear
approximation by the operators $S_{Q_n}$, the case $p \le 2 < q$ can be reduced by the embedding 
$\Brpt \hookrightarrow {\bf B}^{r-1/p+1/2}_{2,\theta}$ (see Lemma \ref{emb}) to the case $2 \le p < q$. 
By using Corollary~\ref{C7.1} the case $2 \le p < q$ can be reduced to the upper bounds of 
Kolmogorov widths of finite-dimensional sets which can be estimated by the following 
Kashin-Gluskin's Lemma \ref{Lemma[Kashin-Gluskin]}, see \cite{Ka77}, \cite{Gl83} below.  Below, in Lemmas \ref{Lemma[Kashin-Gluskin]}-\ref{Lemma[MaRu16]}, Theorem \ref{TBT1.4.6} 
and Lemma \ref{Lemma[Gluskin]} we will state results on $n$-widths and linear widths of finite dimensional sets. For a rather complete survey on these results see e.g.\ Vyb\'iral \cite{Vy08b}.

\begin{lem} \label{Lemma[Kashin-Gluskin]}
Let $2 \le p < q < \infty$ and $1/p + 1/q \ge 1$ and $\beta := \frac{1/p - 1/q}{1 - 2/q}$. Then we have for $n > m$, 
\begin{equation} \nonumber
d_m(B^n_p, \ell^n_q)
\ \asymp \
\min\{1, n^{2\beta/q}m^{-\beta}\}.  
\end{equation}
\end{lem}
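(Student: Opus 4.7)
The plan is to establish the two bounds $d_m(B_p^n, \ell_q^n) \lesssim \min\{1, n^{2\beta/q}m^{-\beta}\}$ and $d_m(B_p^n, \ell_q^n) \gtrsim \min\{1, n^{2\beta/q}m^{-\beta}\}$ separately. The trivial bound $d_m \le 1$ follows from $B_p^n \subset B_q^n$ (since $p \le q$ implies $\|x\|_q \le \|x\|_p$), so the real content is to establish $d_m \asymp n^{2\beta/q}m^{-\beta}$ in the regime where this quantity is $\lesssim 1$.

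For the upper bound I would follow the probabilistic strategy of Kashin and Gluskin. Let $G$ be a random $m \times n$ matrix with independent standard Gaussian entries, take $X_m := (\ker G)^{\perp}$ as the approximating subspace, and reduce the task to showing that with positive probability
\begin{equation*}
\sup_{x \in B_p^n \cap \ker G} \|x\|_q \;\lesssim\; n^{2\beta/q} m^{-\beta}.
\end{equation*}
This reduction is carried out by estimating $\|Gx\|_2$ from below (Gaussian matrices are well-invertible on $(\ker G)^{\perp}$ with overwhelming probability) and $\sup_{x \in B_p^n} \|Gx\|_q / \|x\|_q$ from above using Gaussian moment bounds, a Dudley-type chaining estimate for the process $x \mapsto \|Gx\|_q$ indexed by $B_p^n$, and Gaussian Lipschitz concentration to convert expectation into a high-probability tail bound. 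The exponent $\beta = (1/p-1/q)/(1-2/q)$ will emerge from optimizing the two resulting estimates, i.e.\ as the interpolation parameter between the base case $p=2$ (where the bound reduces to the classical Kashin--Gluskin estimate underlying Theorem~\ref{TBT1.4.6}) and the endpoint behaviour at $q = \infty$.

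For the lower bound I would use the Bernstein $m$-width, which satisfies $b_m(B_p^n,\ell_q^n) \le d_m(B_p^n,\ell_q^n)$ and is defined as the supremum of all $r > 0$ such that some $(m+1)$-dimensional subspace $V \subset \R^n$ obeys $r(V \cap B_q^n) \subset B_p^n \cap V$; the problem is thus to exhibit such a subspace of the right size on which the $\ell_p^n$ and $\ell_q^n$ norms are comparable up to the factor $n^{2\beta/q} m^{-\beta}$. A random Gaussian subspace again supplies such a $V$, via a finite-dimensional Kashin-type splitting. An alternative volume-comparison route is available: if $d_m(B_p^n,\ell_q^n) \le t$, then $B_p^n \subset X_m + t B_q^n$ for some $m$-dimensional $X_m$, so taking $(n{-}m)$-dimensional Lebesgue volume under the quotient $\pi: \R^n \to \R^n / X_m$ gives $\mathrm{vol}(\pi(B_p^n)) \le t^{n-m}\,\mathrm{vol}(\pi(B_q^n))$, and Meyer--Pajor type estimates for volumes of projections of $\ell_p^n$-balls then force $t$ to be at least of the claimed order.

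The main obstacle is the sharpness of the upper bound: one must obtain the correct polynomial dependence in both $n$ and $m$ without any parasitic logarithmic factor (in contrast to the situation for $q = \infty$, cf.\ Theorem~\ref{TBT1.4.6}). This requires estimating $\|Gx\|_q$ \emph{directly} rather than going through $\|Gx\|_\infty$, and for $p > 2$ it requires controlling the supremum not over the Euclidean ball but over $B_p^n$, which is the larger body; the natural device is to decompose $B_p^n$ into a Euclidean piece $n^{1/2-1/p} B_2^n$ and a coordinate piece via duality, and to balance the two pieces precisely, the optimal balance being encoded in the exponent $\beta$ and requiring the restriction $1/p + 1/q \ge 1$.
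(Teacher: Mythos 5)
Note first that the paper does not supply a proof of this lemma: it is quoted from \cite{Ka77} and \cite{Gl83} as a known result, so there is no internal argument to compare yours against; the review below therefore concerns the internal coherence of your plan.

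There is a genuine gap at the heart of your upper-bound reduction. Taking $X_m = (\ker G)^{\perp}$ as the approximating subspace and then controlling $\sup_{x \in B_p^n \cap \ker G}\|x\|_q$ is not an estimate of the Kolmogorov width $d_m(B_p^n,\ell_q^n)$ but of the \emph{Gelfand} width $c_m(B_p^n,\ell_q^n)$. These are different quantities: approximating $x$ by its orthogonal projection onto $X_m$ gives error $P_{\ker G}x$, which lies in $\ker G$ but is in general not in $B_p^n$ for $p\ne2$, so $\sup_{x\in B_p^n}\|P_{\ker G}x\|_q$ is not dominated by $\sup_{B_p^n\cap\ker G}\|\cdot\|_q$. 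The correct duality is $d_m(B_p^n,\ell_q^n)=c_m(B_{q'}^n,\ell_{p'}^n)$, so the quantity one should estimate on the kernel of a Gaussian map is $\sup_{x\in B_{q'}^n\cap\ker G}\|x\|_{p'}$, with \emph{both} indices dualized. The two coincide exactly on the line $1/p+1/q=1$ (where $q'=p$, $p'=q$), but the lemma covers $1/p+1/q\ge1$; away from that line you have either to work on the dual side or to argue separately that the two widths agree in order, which is not automatic.

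There is a cleaner route for the upper bound that avoids both the mix-up and the need to redo a Gaussian chaining argument: reduce to the $p=2$ case, which you may take as a black box. For $x\in B_p^n$ and a parameter $k\in\{1,\dots,n\}$, set $x=x^{(1)}+x^{(2)}$, where $x^{(1)}$ retains the $k$ largest coordinates in modulus. Then $\|x^{(1)}\|_2\le k^{1/2-1/p}$, while $\|x^{(2)}\|_\infty\le k^{-1/p}$ and $\|x^{(2)}\|_p\le1$ give $\|x^{(2)}\|_q\le k^{1/q-1/p}$ by interpolation. Hence for any $m$-dimensional subspace $X_m$,
\begin{equation*}
\sup_{x\in B_p^n}\inf_{y\in X_m}\|x-y\|_q\ \le\ k^{1/2-1/p}\sup_{z\in B_2^n}\inf_{y\in X_m}\|z-y\|_q + k^{1/q-1/p},
\end{equation*}
so $d_m(B_p^n,\ell_q^n)\le k^{1/2-1/p}\,d_m(B_2^n,\ell_q^n)+k^{1/q-1/p}$. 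Using $d_m(B_2^n,\ell_q^n)\asymp\min\{1,n^{1/q}m^{-1/2}\}$ --- which, as you rightly flag, carries no logarithm for $2\le q<\infty$ and is \emph{not} the $q=\infty$ estimate of Theorem~\ref{TBT1.4.6} --- and optimizing $k$ to balance the two terms gives $k\asymp(n^{1/q}m^{-1/2})^{-2q/(q-2)}$ and the value $n^{2\beta/q}m^{-\beta}$. This is exactly your remark about balancing a Euclidean piece against a coordinate piece, except that the correct dilation is $k^{1/2-1/p}$ with $k$ optimized, not $n^{1/2-1/p}$; the exponent $\beta$ then emerges from the optimization rather than from an abstract interpolation of parameters, and the whole upper bound never touches a random matrix. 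Your lower-bound plan (Bernstein widths or a volume/quotient argument) is reasonable and is where the hypothesis $1/p+1/q\ge1$ actually bites, since the upper bound above does not use it.
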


Notice also that the restriction on the smoothness $r > \beta$ in the case $2
\le p < q$ is important in the proof of the upper bound, for details see
\cite{Gl83}.

We next consider the lower bounds in Theorem \ref{thm[d_mB,p>q]}. The cases $p \le q \le 2$,  $2 \le q \le p <
\infty$ and $1 < q < 2 \le p < \infty$ can be reduced to the  Kolmogorov widths
of finite-dimensional sets which can be upper
estimated for the cases $2 \le q \le p < \infty$ and $1 < q < 2 \le p < \infty$
 by Pietsch-Stesin's lemma below. The cases $ 2 \le p < q$, $ p \le 2 < q$ can be reduced by the norm inequality  
$\|\cdot\|_q \ge \|\cdot\|_2$ to the cases $p \le q \le 2$, $p \ge q \ge 2$ with  $q = 2$. 

\begin{lem} \label{Lemma[Pietsch-Stesin]}
Let $1 \le q \le p \le \infty$. Then we have for $n > m$, 
\begin{equation} \nonumber
d_m(B^n_p, \ell^n_q)
\ \asymp \
(n - m)^{1/q - 1/p}.
\end{equation}
\end{lem}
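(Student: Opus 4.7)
The proof splits into an upper and a lower bound.

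\textbf{Upper bound.} I would take the coordinate subspace $L_m:=\Span\{e_1,\dots,e_m\}$ as approximating subspace. For any $x\in B^n_p$, the truncation $T_m x := (x_1,\dots,x_m,0,\dots,0)$ lies in $L_m$, and
$$
\|x-T_m x\|_q^{\,q}\ =\ \sum_{i=m+1}^n |x_i|^q.
$$
Since $q \le p$ (the case $q=p$ being trivial), H\"older's inequality with conjugate exponents $p/q \ge 1$ and $p/(p-q)$ gives $\sum_{i=m+1}^n |x_i|^q \le (n-m)^{1-q/p}\|x\|_p^{\,q}$. Taking $q$-th roots and the supremum over $B^n_p$ yields $d_m(B^n_p,\ell^n_q) \le (n-m)^{1/q-1/p}$.

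\textbf{Lower bound.} The plan is to dualize and reduce the Kolmogorov width to a norm-ratio problem on $(n-m)$-dimensional subspaces. From the $\ell_q$-$\ell_{q'}$ duality (with $1/q+1/q'=1$) one obtains, for any $m$-dimensional subspace $L_m\subset\R^n$,
$$
\inf_{y\in L_m}\|x-y\|_q\ =\ \sup\{\langle x,z\rangle:\,z\in L_m^{\perp},\,\|z\|_{q'}\le 1\},
$$
where $L_m^{\perp}$ is the usual orthogonal complement in $\R^n$. Combining with $\ell_p$-$\ell_{p'}$ duality gives
$$
\sup_{x\in B^n_p}\,\inf_{y\in L_m}\|x-y\|_q\ =\ \sup\{\|z\|_{p'}:\,z\in L_m^{\perp},\,\|z\|_{q'}\le 1\}.
$$
Letting $E=L_m^{\perp}$ range over all $(n-m)$-dimensional subspaces and setting $k:=n-m$, one rewrites
$$
d_m(B^n_p,\ell^n_q)\ =\ \inf_{\dim E=k}\ \sup_{z\in E\setminus\{0\}}\frac{\|z\|_{p'}}{\|z\|_{q'}}.
$$
Since $q\le p$ forces $p'\le q'$ and $1/p'-1/q'=1/q-1/p\ge 0$, it then suffices to show that for every $k$-dimensional subspace $E\subset\R^n$ there exists $z\in E$ with $\|z\|_{p'}/\|z\|_{q'}\gtrsim k^{1/p'-1/q'}$. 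This is the classical Pietsch--Stesin ingredient; I would deduce it from Tikhomirov's theorem on $m$-widths of $(m+1)$-dimensional bodies applied inside $E$, combined with a coordinate-projection / averaging argument producing a vector whose absolute values are essentially balanced on a support of size at least $k$, so that the norm ratio is saturated up to an absolute constant.

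\textbf{Main obstacle.} The upper bound is a routine H\"older computation. The substantive difficulty lies entirely in the lower bound, specifically in extracting from an arbitrary $k$-dimensional subspace of $\R^n$ a vector whose $\ell_{p'}/\ell_{q'}$-ratio is saturated, up to absolute constants, at the sharp value $k^{1/q-1/p}$. A purely linear-algebraic dimension count yields only the trivial ratio $1$; the extra factor $k^{1/q-1/p}$ requires a genuine topological (Borsuk / Tikhomirov) input together with an averaging step.
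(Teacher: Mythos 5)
Your upper bound via coordinate truncation and H\"older, and the duality reduction that rewrites $d_m(B^n_p,\ell^n_q)$ as $\inf_{\dim E=k}\,\sup_{z\in E\setminus\{0\}}\|z\|_{p'}/\|z\|_{q'}$ with $k=n-m$, are both correct. (The paper simply cites this lemma without proof, so there is no internal argument to compare against.) The gap is in the final, decisive step. You need, for every $k$-dimensional $E\subset\R^n$, a vector $z\in E$ with $\|z\|_{p'}/\|z\|_{q'}\gtrsim k^{1/p'-1/q'}$, and the tool you name does not deliver it. Tikhomirov's width-of-a-ball theorem, applied in $\R^n$, would require an $(m+1)$-dimensional $L$ on which $\|z\|_q\ge r\|z\|_p$ for all $z\in L$ with $r=(n-m)^{1/q-1/p}$; coordinate subspaces only give $r=1$ (a one-hot vector has $\|z\|_p=\|z\|_q$), and to do better one would need $\ell_q$-to-$\ell_p$ Kashin-type subspaces at every intermediate dimension, a far heavier and non-elementary input. "Applying the theorem inside $E$" is not meaningful either, since a $k$-dimensional $E$ has no subspaces of dimension $k+1$ into which to place a ball.

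The good news is that the missing ingredient is strictly simpler than the topology you are invoking, and your closing phrase ("a vector whose absolute values are essentially balanced on a support of size at least $k$") is exactly the right intuition. Concretely: $B^n_\infty\cap E$ is a compact $k$-dimensional polytope in $E$, cut out by the $2n$ constraints $\pm x_i\le1$. At any vertex $v$ at least $k$ of these constraints are active (fewer active coordinate functionals cannot have rank $k$ on $E$, so $v$ would lie in a positive-dimensional face); hence $|v_i|=1$ for at least $k$ indices $i$, while $\|v\|_\infty=1$, so $\|v\|_{p'}\ge k^{1/p'}$. Combining this with the log-convexity inequality $\|v\|_{q'}\le\|v\|_\infty^{1-p'/q'}\|v\|_{p'}^{p'/q'}$ (valid for $p'\le q'$) gives $\|v\|_{p'}/\|v\|_{q'}\ge\bigl(\|v\|_{p'}/\|v\|_\infty\bigr)^{1-p'/q'}\ge k^{1/p'-1/q'}$, with constant exactly $1$. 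Inserting this into your duality identity closes the lower bound. So the plan is right, but the crucial ingredient is a vertex count for the section $B^n_\infty\cap E$, not Tikhomirov's theorem or a Borsuk-type argument.
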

\noindent
For the case $p \le q \le 2$, the
upper bounds of Kolmogorov widths of finite-dimensional sets are estimated by
Kolmogorov-Petrov-Smirnov-Mal'tsev-Stechkin's equation 
$d_m(B^n_1, \ell^n_2) = (1 - m/n)^{-1/2}$ (see, e.g., \cite{Ga01} for
references). 

Another important tool is given by Galeev's lemma \cite{Ga01} below. Let for $1 \le p,q \le \infty$ the finite dimensional normed space 
$\ell^{n,s}_{p,q}$ of vectors $\bx \in \R^{sn}$ be equipped with the norm 
\[
\|x\|_{\ell^{n,s}_{p,q}}
:= \
\biggl(\sum_{j=1}^s \biggl(\ \sum_{(j-1)n < k \le jn} \ |x_k|^p \biggl)^{q/p}
\biggl)^{1/q}. 
\]
$B^{n,s}_{p,q}$ denotes its unit ball.
\noindent

 \begin{lem} \label{Lemma[Galeev]}
Let $1 < q < \infty$. Then we have for $m\leq ns/2$ that  
\begin{equation} \nonumber
d_m(B^{n,s}_{1,\infty}, \ell^{n,s}_{2,q})
\ \asymp \
s^{1/q}.
\end{equation}
\end{lem}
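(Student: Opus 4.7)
The upper bound is immediate by taking the zero subspace: for $x \in B^{n,s}_{1,\infty}$ one has $\|x^{(j)}\|_2 \le \|x^{(j)}\|_1 \le 1$ in each of the $s$ blocks, so $\|x\|_{\ell^{n,s}_{2,q}} = \bigl(\sum_{j=1}^s \|x^{(j)}\|_2^q\bigr)^{1/q} \le s^{1/q}$, and hence $d_m \le d_0 \le s^{1/q}$.

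For the lower bound, the natural first step is a reduction to a coordinate subspace. Let $S \subset \R^{ns}$ be the $s$-dimensional coordinate subspace spanned by $e_{(j-1)n+1}$ for $j=1,\dots,s$. The orthogonal projection $\pi_S$ is norm-contractive for $\|\cdot\|_{\ell^{n,s}_{2,q}}$, since $\|\pi_S(v)^{(j)}\|_2 = |v_{(j-1)n+1}| \le \|v^{(j)}\|_2$, and it maps $B^{n,s}_{1,\infty}$ onto $B^s_\infty$. Consequently, for any subspace $L$ with $\dim L = m$ one has $\dim \pi_S(L) \le m$ and
\begin{equation*}
d_m(B^{n,s}_{1,\infty},\ell^{n,s}_{2,q}) \;\ge\; d_m(B^s_\infty,\ell^s_q) \;\asymp\; (s-m)^{1/q},
\end{equation*}
by the Pietsch--Stesin Lemma~\ref{Lemma[Pietsch-Stesin]}. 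This already delivers the claimed $s^{1/q}$-lower bound in the range $m \le s/2$.

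To reach the full range $m \le ns/2$, I plan to combine the above with a probabilistic argument over block-sparse test vectors. Consider random $X = (\varepsilon_j e_{k_j})_{j=1}^s \in \R^{ns}$ with Rademacher signs $\varepsilon_j$ and uniform coordinates $k_j \in \{1,\dots,n\}$; every such $X$ has $\|X\|_{\ell^{n,s}_{1,\infty}} = 1$. A direct calculation based on the independence of the $\varepsilon_j$ and of the $k_j$ gives
\begin{equation*}
\mathbb{E}\,\|(I-\pi_L)X\|_2^2 \;=\; n^{-1}\operatorname{tr}(I-\pi_L) \;=\; (ns-m)/n \;\ge\; s/2
\end{equation*}
whenever $m \le ns/2$, so some realisation $X^\ast$ satisfies $\|X^\ast - \pi_L X^\ast\|_2 \ge \sqrt{s/2}$. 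For $q \ge 2$ this suffices: the elementary inequality $\|v\|_{\ell^{n,s}_{2,q}} \ge s^{1/q-1/2}\|v\|_2$ applied to $v = X^\ast - y$ ($y \in L$) upgrades $\sqrt{s/2}$ to $s^{1/q}/\sqrt{2}$. The main difficulty is the range $1 < q < 2$: the $\ell_2$-estimate alone loses a factor $s^{1/2-1/q}$, and one must pass to the dual, using that $d_m$ equals the Gelfand width $c_m(\ell^{n,s}_{2,q'} \hookrightarrow \ell^{n,s}_{\infty,1})$ (since the polar of $B^{n,s}_{1,\infty}$ is $B^{n,s}_{\infty,1}$), and exhibit in each subspace of $\R^{ns}$ of codimension $\le m$ an element $z$ with $\|z\|_{\ell^{n,s}_{2,q'}} \le 1$ and $\sum_j \|z^{(j)}\|_\infty \gtrsim s^{1/q}$. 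Producing such a sufficiently ``block-sparse'' atom inside the relevant subspace via a pigeon-hole / volumetric argument is the heart of the proof.
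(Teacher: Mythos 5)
Your upper bound and both lower-bound ingredients are correct as far as they go. The coordinate-projection step rightly reduces to $d_m(B^s_\infty,\ell^s_q)\asymp (s-m)^{1/q}$ via Lemma~\ref{Lemma[Pietsch-Stesin]}, covering all $1<q<\infty$ when $m\le s/2$. The probabilistic calculation is sound: for $X^{(j)}=\varepsilon_j e_{k_j}$ one has $\mathbb{E}\, X^T(I-\pi_L)X = n^{-1}\mathrm{tr}(I-\pi_L)=(ns-m)/n\ge s/2$, and for $q\ge 2$ the inequality $\|v\|_{\ell^{n,s}_{2,q}}\ge s^{1/q-1/2}\|v\|_2$ then upgrades $\sqrt{s/2}$ to $s^{1/q}/\sqrt 2$. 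Your duality identification is also right: the polar of $B^{n,s}_{1,\infty}$ is $B^{n,s}_{\infty,1}$ and $(\ell^{n,s}_{2,q})'=\ell^{n,s}_{2,q'}$, so $d_m(B^{n,s}_{1,\infty},\ell^{n,s}_{2,q}) = c_m(B^{n,s}_{2,q'},\ell^{n,s}_{\infty,1})$.

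The gap you flag, however, is real and substantial, not a routine finishing step. For $1<q<2$ and $s/2<m\le ns/2$ neither argument applies: the only one-line comparison for $q<2$ is $\|v\|_{\ell^{n,s}_{2,q}}\ge\|v\|_2$, and since $\|(I-\pi_L)X\|_2\le\|X\|_2=\sqrt s$ this caps out at $\sqrt s$, short of $s^{1/q}$. On the Gelfand side you would have to show that every subspace of codimension at most $ns/2$ contains a $z$ with $\|z\|_{\ell^{n,s}_{2,q'}}\le 1$ and $\sum_j\|z^{(j)}\|_\infty\gtrsim s^{1/q}$, i.e.\ a vector spread flatly over the $s$ blocks yet peaked inside each block; that existence statement \emph{is} essentially the content of the lemma, and there is no obvious pigeonhole or volume count for it. That the difficulty is genuine is visible at the endpoint $q=1$ right below (Lemmas~\ref{Lemma[Izaak]} and \ref{Lemma[MaRu16]}): Izaak got the analogous bound only with a $\sqrt{\log\log s}/\log s$ loss, and closing it required a new probabilistic construction by Malykhin and Ryutin. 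The survey cites Galeev~\cite{Ga01} without reproducing the argument, so there is no in-text proof to compare against, but as submitted your proposal establishes the lower bound only for $q\ge 2$, or for all $q$ in the restricted range $m\le s/2$.
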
 
Izaak \cite{Iz94, Iz96} extended the above result to $q=1$ and obtained the following. 

\begin{lem} \label{Lemma[Izaak]}
For $m\leq ns/2$ it holds
\begin{equation} \nonumber
s\frac{\sqrt{\log\log s}}{\log s} \lesssim d_m(B^{n,s}_{1,\infty}, \ell^{n,s}_{2,1}) \lesssim s. 
\end{equation}
\end{lem}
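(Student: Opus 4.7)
The upper bound is immediate from norm comparison. For any $x \in B^{n,s}_{1,\infty}$ one has $\|x^{(j)}\|_2 \le \|x^{(j)}\|_1 \le 1$ in each block, so
$$
\|x\|_{\ell^{n,s}_{2,1}} = \sum_{j=1}^s \|x^{(j)}\|_2 \le s \cdot \max_j \|x^{(j)}\|_1 = s \, \|x\|_{\ell^{n,s}_{1,\infty}}.
$$
Thus the zero operator (of rank $0\le m$) already yields $d_m(B^{n,s}_{1,\infty},\ell^{n,s}_{2,1}) \le s$.

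For the lower bound I would work dually. The polar of $B^{n,s}_{1,\infty}$ with respect to the standard pairing on $\R^{ns}$ is $B^{n,s}_{\infty,1}$, and the dual norm of $\|\cdot\|_{\ell^{n,s}_{2,1}}$ is $\|\cdot\|_{\ell^{n,s}_{2,\infty}}$. Combining this with the standard duality formula for the error of best approximation by a subspace gives
$$
d_m(B^{n,s}_{1,\infty},\ell^{n,s}_{2,1}) \;=\; \inf_{\substack{L\subset \R^{ns}\\ \dim L = m}} \;\sup_{\substack{y\in L^\perp\\ \|y\|_{\ell^{n,s}_{2,\infty}}\le 1}} \sum_{j=1}^s \|y^{(j)}\|_\infty.
$$
The task reduces to: for every subspace $L$ with $\dim L = m \le ns/2$, exhibit $y\in L^\perp$ with $\|y^{(j)}\|_2 \le 1$ for all $j$ and $\sum_j \|y^{(j)}\|_\infty \gtrsim s\sqrt{\log\log s}/\log s$.

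The strategy I would try is blockwise selection followed by a projection argument. In each block $j$ one picks a ``spiky'' candidate $u^{(j)}$ of unit $\ell_2$ norm and $\ell_\infty$ norm close to $1$ (essentially a coordinate direction), then assembles $y = \sum_j \alpha_j u^{(j)}$ and projects this vector onto $L^\perp$. Since $L^\perp$ has codimension at most $ns/2$, a counting / probabilistic argument along the lines of the volume and concentration techniques used for Lemma~\ref{Lemma[Galeev]} should guarantee that a positive fraction of the blocks retain a significant portion of their $\ell_\infty$ content after projection, while their $\ell_2$ norms remain bounded by $1$.

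The main obstacle is controlling the loss incurred by the projection step. Unlike the case $q>1$ treated in Lemma~\ref{Lemma[Galeev]}, where the Khintchine-type concentration for $\ell_q$ with $q>1$ gives the clean order $s^{1/q}$, for $q=1$ the projection of a spiky vector onto $L^\perp$ tends to spread its mass and reduce $\|y^{(j)}\|_\infty$ relative to $\|y^{(j)}\|_2$; balancing the size of the spikes, the codimension of $L$, and the pigeonhole selection of ``good'' blocks is precisely what produces the $\sqrt{\log\log s}/\log s$ deficit. Closing the gap between $s\sqrt{\log\log s}/\log s$ and the conjectural sharp lower bound $s$ appears to require either a direct primal construction of an $(m+1)$-dimensional subspace on which $\|\cdot\|_{\ell^{n,s}_{2,1}}/\|\cdot\|_{\ell^{n,s}_{1,\infty}}\gtrsim s$ (bounding the Bernstein number $b_{m+1}$ from below), or a refined volume/entropy estimate for the dual ball $B^{n,s}_{2,\infty}$ intersected with a random subspace of prescribed codimension.
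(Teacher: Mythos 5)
Your upper bound argument is correct and complete: for $x\in B^{n,s}_{1,\infty}$ each block obeys $\|x^{(j)}\|_2\le\|x^{(j)}\|_1\le 1$, so $\|x\|_{\ell^{n,s}_{2,1}}\le s$, and hence $d_m\le d_0\le s$ trivially. Your dualization of the lower bound is also set up correctly: since the dual norm of $\ell^{n,s}_{1,\infty}$ is $\ell^{n,s}_{\infty,1}$ and the dual norm of $\ell^{n,s}_{2,1}$ is $\ell^{n,s}_{2,\infty}$, the identity
\[
d_m\bigl(B^{n,s}_{1,\infty},\ell^{n,s}_{2,1}\bigr)
\;=\;
\inf_{\substack{L\subset\R^{ns}\\\dim L=m}}\;
\sup_{\substack{y\in L^\perp\\\max_j\|y^{(j)}\|_2\le 1}}\;
\sum_{j=1}^s\|y^{(j)}\|_\infty
\]
is the right reformulation.

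However, you have not actually proved the lower bound: what you give is a description of a strategy (plant spikes, project onto $L^\perp$, invoke concentration), and then you state explicitly that the main obstacle is the loss in the projection step and that closing the gap would require further ideas. That step is precisely where the whole content of the result lies. The factor $\sqrt{\log\log s}/\log s$ does not drop out of a routine adaptation of the $q>1$ argument behind Lemma~\ref{Lemma[Galeev]}: for $q=1$ the outer $\ell_1$-sum over blocks loses exactly the concentration that makes the $q>1$ case clean, and ``a counting/probabilistic argument should guarantee a positive fraction of good blocks'' is an aspiration, not a proof. Note that the paper itself offers no proof of this lemma, only a reference to Izaak's original papers, and that removing the $\sqrt{\log\log s}/\log s$ deficit took Malykhin and Ryutin two decades and a genuinely new probabilistic method (Lemma~\ref{Lemma[MaRu16]}) --- good evidence that the naive spike-and-project scheme does not push through as stated. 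Until you can exhibit, for every subspace $L\subset\R^{ns}$ with $\dim L=m\le ns/2$, an explicit $y\in L^\perp$ satisfying $\max_j\|y^{(j)}\|_2\le 1$ and $\sum_j\|y^{(j)}\|_\infty\gtrsim s\sqrt{\log\log s}/\log s$, and can trace exactly where the logarithmic loss arises in that construction, the lower bound remains a genuine gap in your argument.
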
 

The case $q=1$ is particularly important for proving some of the sharp lower bounds for the linear widths of the H\"older-Nikolskii 
\index{Function spaces!Mixed smoothness H\"older-Nikol'skii}classes  
in Theorem \ref{thm[lambda_nH]} below. See the proof of the main theorem in \cite{Ga96}. Note that there is a very tiny gap in the result by Izaak which has been
closed only recently by Malykhin and Ryutin \cite{MaRu16} with a new method based on a probabilistic argument. In fact, we have 

\begin{lem} \label{Lemma[MaRu16]}
For $m\leq ns/2$ it holds
\begin{equation} \nonumber
d_m(B^{n,s}_{1,\infty}, \ell^{n,s}_{2,1}) \asymp s. 
\end{equation}
\end{lem}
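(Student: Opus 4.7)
The upper bound $d_m \le s$ is immediate: for any $\bx \in B^{n,s}_{1,\infty}$ each block obeys $\|\bx_j\|_2 \le \|\bx_j\|_1 \le 1$, hence $\|\bx\|_{\ell^{n,s}_{2,1}} = \sum_j \|\bx_j\|_2 \le s$, and so $d_m \le d_0 \le s$ for every $m$.

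For the lower bound in the range $m \le ns/2$ the plan is to dualize. By the standard Kolmogorov--Gelfand duality $d_m(T) = c^m(T^*)$, applied to the identity $\ell^{n,s}_{1,\infty} \to \ell^{n,s}_{2,1}$ together with the identifications $(\ell^{n,s}_{2,1})^* = \ell^{n,s}_{2,\infty}$ and $(\ell^{n,s}_{1,\infty})^* = \ell^{n,s}_{\infty,1}$, it suffices to prove that for every subspace $V \subset \R^{ns}$ with $\dim V \ge ns/2$ there is a nonzero $\by \in V$ satisfying $\sum_j \|\by_j\|_\infty \ge c\,s \max_j \|\by_j\|_2$ for an absolute constant $c>0$. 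Such a $\by$ is to be constructed probabilistically: let $\bz = (\bz_1,\dots,\bz_s)$ have independent blocks $\bz_j = \varepsilon_j \mathbf e_{i_j}$ with $\varepsilon_j$ uniform on $\{\pm1\}$ and $i_j$ uniform on $[n]$. If $P_V$ denotes the $\ell_2$-orthogonal projector onto $V$, then $\mathbb{E}[\bz\bz^T] = n^{-1} I$ yields
\[
\mathbb{E}\sum_{j=1}^s \varepsilon_j (P_V\bz)_{j,i_j} \;=\; \frac{\operatorname{tr} P_V}{n} \;=\; \frac{\dim V}{n} \;\ge\; \frac{s}{2},
\]
and since $\varepsilon_j (P_V\bz)_{j,i_j} \le \|(P_V\bz)_j\|_\infty$, this lower bounds $\mathbb{E}\sum_j \|(P_V\bz)_j\|_\infty$ by $s/2$; a parallel calculation gives $\mathbb{E}\sum_j \|(P_V\bz)_j\|_2^2 = \dim V/n \le s$, so ``on average'' $\max_j \|(P_V\bz)_j\|_2 = O(1)$.

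The main obstacle is the uniform control of $\max_j \|(P_V \bz)_j\|_2$: for adversarial $V$ the mass of $P_V\bz$ may concentrate in a few blocks, pushing some $\|(P_V\bz)_j\|_2$ up to order $\sqrt{s}$ and destroying the sharp constant after normalization. Izaak's earlier argument circumvents this by a deterministic selection of ``balanced'' blocks, which unavoidably costs the factor $\sqrt{\log\log s}/\log s$. The novelty of Malykhin--Ryutin is a simultaneous concentration argument---replacing the combinatorial selection by a refined probabilistic truncation and union bound across the $s$ blocks---that produces a realization $\by = P_V\bz$ with $\max_j \|\by_j\|_2$ bounded by an absolute constant while still retaining $\sum_j \|\by_j\|_\infty \gtrsim s$. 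Passing back through duality then delivers the sharp bound $d_m \gtrsim s$, matching the trivial upper bound and removing the logarithmic loss in Lemma \ref{Lemma[Izaak]}.
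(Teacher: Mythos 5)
The paper gives no proof of this lemma; it simply cites Malykhin and Ryutin \cite{MaRu16} and records that the result was obtained ``with a new method based on a probabilistic argument.'' So I assess your proposal on its own terms. Your upper bound is correct ($\|\bx_j\|_2\le\|\bx_j\|_1$ blockwise, hence $d_0\le s$), and your dualization to the Gelfand width $c_m(\ell^{n,s}_{2,\infty}\to\ell^{n,s}_{\infty,1})$ is also correct: you need every subspace $V$ with $\dim V\geq ns/2$ to contain a nonzero $\by$ with $\sum_j\|\by_j\|_\infty\gtrsim s\max_j\|\by_j\|_2$. Your moment computations are correct as well: $\mathbb{E}\sum_j\|(P_V\bz)_j\|_\infty\ge\mathbb{E}\,\bz^T P_V\bz=\dim V/n\ge s/2$, and $\mathbb{E}\sum_j\|(P_V\bz)_j\|_2^2=\dim V/n\le s$.

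However, precisely at the step that separates the lemma from Izaak's Lemma \ref{Lemma[Izaak]} — producing a \emph{single} realization with $\max_j\|(P_V\bz)_j\|_2=O(1)$ while $\sum_j\|(P_V\bz)_j\|_\infty\gtrsim s$ — you give no argument. The moment bounds you compute yield only $\mathbb{E}\max_j\|(P_V\bz)_j\|_2\le\sqrt s$, and dividing through reproduces exactly the Izaak-type logarithmic loss. You correctly identify this as the crux and correctly attribute the fix to Malykhin--Ryutin, but the phrase ``a refined probabilistic truncation and union bound across the $s$ blocks'' is a one-line description of the entire content of their paper, not a proof: for adversarial $V$ the projection really can concentrate its energy on a few blocks, and ruling this out compatibly with the first-moment bound is the nontrivial step. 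As written, your proposal is a correct setup and reduction plus an acknowledgement that the actual proof lies elsewhere, so the lower bound has a genuine gap.
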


\begin{thm} \label{thm[d_mB,p=1]}
Let $1 < q \le 2$, $r > 1 - 1/q$, $1 \le \theta \le q$ or $2 \le q < \infty$,
$r > 1$, $1 \le \theta < \infty$.
Then we have
\begin{equation}\nonumber
d_m({\bf B}^r_{1,\theta},L_q)
\ \asymp \
\left(\frac{(\log m)^{d-1}}{m}\right)^{r - [1 - \max\{1/2,1/q\}]_+}
(\log m)^{(d-1)\delta},
\end{equation}
where
\begin{equation}\nonumber
\delta
 := \
\begin{cases}
{(1/2 - 1/\theta)_+}, & \ \mbox{if} 
  \  2 \le q < \infty,  
\\[1ex]
0, & \ \mbox{if} \ 1 < q \le 2.
\end{cases}
\end{equation}
\end{thm}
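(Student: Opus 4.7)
My plan is to handle the upper and lower estimates separately, in each case reducing the endpoint $p=1$ situation to already-established results for $p>1$ via Jawerth-type embeddings and Maiorov-type discretization.

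\textbf{Upper bounds.} The key tool is Lemma~\ref{L3.5c}. In the first regime $1<q\le 2$, $1\le \theta\le q$, I would compose the monotonicity ${\bf B}^r_{1,\theta}\hookrightarrow {\bf B}^r_{1,q}$ (valid since $\theta\le q$) with the Jawerth embedding ${\bf B}^r_{1,q}\hookrightarrow \bW^{r-1+1/q}_q$ (Lemma~\ref{L3.5c} with $u=q$, $\beta=1-1/q$), and then apply Theorem~\ref{TBT4.4} at $p=q$. Since $\max\{1/2,1/q\}=1/q$ for $q\le 2$, the formula of Theorem~\ref{TBT4.4} collapses to
\[
 d_m(\bW^{r-1+1/q}_q,L_q)\asymp \bigl(m^{-1}(\log m)^{d-1}\bigr)^{r-1+1/q},
\]
which is exactly the claimed rate. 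In the second regime $2\le q<\infty$, I would instead use Lemma~\ref{L3.5c} with $u=2$ to obtain ${\bf B}^r_{1,\theta}\hookrightarrow {\bf B}^{r-1/2}_{2,\theta}$, and then invoke Theorem~\ref{thm[d_mB,p<q]} (when $q>2$) or Theorem~\ref{thm[d_mB,p>q]} (when $q=2$) with source smoothness $r-1/2$; the assumption $r>1$ guarantees that the smoothness restrictions in those theorems are met, and the resulting logarithmic factor $(\log m)^{(d-1)(1/2-1/\theta)_+}$ matches $\delta$.

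\textbf{Lower bounds.} These are the genuinely new part. Fix $n$ with $|Q_n|\asymp m$ and consider the discretizing subspace
\[
 V_n := \bigoplus_{|\bs|_1=n}\Tr(\rho(\bs)),\qquad s:=\#\{\bs:|\bs|_1=n\}\asymp n^{d-1}.
\]
For $f\in V_n$, the Besov norm satisfies $\|f\|_{{\bf B}^r_{1,\theta}}\asymp 2^{rn}\bigl(\sum_{|\bs|_1=n}\|\delta_\bs(f)\|_1^\theta\bigr)^{1/\theta}$, so after rescaling by $2^{-rn}$ the intersection of the unit ball of ${\bf B}^r_{1,\theta}$ with $V_n$ is comparable to a tensor-product body whose coefficient side mimics $B^{N,s}_{1,\theta}$, with $N\asymp 2^n/n^{d-1}$. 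The multivariate Marcinkiewicz theorem (Theorem~\ref{T2.4.11}) applied block-by-block, together with the dual volume/norm estimates underlying Theorem~\ref{TBT2.4.6}, transfers the width problem on $V_n\hookrightarrow L_q$ to a finite-dimensional width of the form $d_m(B^{N,s}_{1,\theta},\ell^{N,s}_{p^\ast,q})$ with an appropriate inner exponent $p^\ast$ (namely $p^\ast=2$ when $q\ge 2$ via the Littlewood-Paley-Rubio de Francia inequality, and $p^\ast=q$ when $q\le 2$). Plugging in Pietsch-Stesin (Lemma~\ref{Lemma[Pietsch-Stesin]}) for $1<q\le 2$ and the Galeev--Izaak--Malykhin--Ryutin chain (Lemmas~\ref{Lemma[Galeev]}--\ref{Lemma[MaRu16]}) for $2\le q<\infty$ yields precisely the asserted factor $s^{(1/2-1/\theta)_+}=n^{(d-1)(1/2-1/\theta)_+}$ after undoing the rescaling by $2^{-rn}$ and using $2^{-n}\asymp m^{-1}(\log m)^{d-1}$.

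\textbf{Main obstacle.} The upper bounds are essentially routine once the right embedding is identified; the real difficulty is the finite-dimensional lower bound at $p=1$. The discretization step loses a factor $n^{d-1}$ when replacing $\|\delta_\bs(f)\|_1$ by the genuine $L_1$-norm of the sum (there is no Marcinkiewicz theorem for the full hyperbolic cross, cf.\ Theorem~\ref{T2.5.7}), so one must either restrict to a single hyperbolic layer $\Delta Q_n$ as above (where block-$L_1$ norms are additive up to constants) or use Riesz-product style test functions (as in the lower bounds of Theorem~\ref{Tp=1} and Theorem~\ref{Tq=1}). In the critical subrange $2\le q<\infty$ with $\theta$ near $2$ or near $\infty$ the sharp logarithmic exponent $(1/2-1/\theta)$ hinges on the delicate Malykhin-Ryutin sharpening (Lemma~\ref{Lemma[MaRu16]}) of Izaak's bound, and matching this exponent to the upper bound coming from the Jawerth embedding is where the proof is the tightest.
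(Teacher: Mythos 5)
The paper itself gives no proof here — it attributes the result to Romanyuk and only records that the upper bounds come from the operators $V_{Q_n}$ — so there is no internal proof to compare against word-for-word. Your outline is the standard one for such results (embedding for the upper bound, single-layer discretization plus finite-dimensional widths for the lower bound) and is almost certainly what Romanyuk does; your upper-bound route (embed into a $p>1$ class and quote Theorems~\ref{TBT4.4}, \ref{thm[d_mB,p<q]}, \ref{thm[d_mB,p>q]}) is a legitimate alternative to the more direct $V_{Q_n}$-estimate the paper alludes to, and the range checks ($r>1-1/q$ when $q\le 2$, $r>1$ when $q\ge 2$, via $\beta=1/2$ in the Kashin--Gluskin restriction) come out correctly. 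That said, a few specific points need fixing.

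First, a citation slip: the embedding $\bB^r_{1,\theta}\hookrightarrow \bB^{r-1/2}_{2,\theta}$ (fine index $\theta$ preserved) is Lemma~\ref{emb}(i); Lemma~\ref{L3.5c} is the Franke embedding $\bB^r_{1,u}\hookrightarrow \bW^{r-\beta}_u$, whose conclusion lands in a $\bW$-space and forces the fine index on the source to equal the target integrability. Your first regime uses Lemma~\ref{L3.5c} correctly (landing in $\bW^{r-1+1/q}_q$), but in the second regime you need Lemma~\ref{emb}(i) instead.

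Second, you have misdiagnosed the ``main obstacle'' and, in doing so, asserted something false. There is no reverse triangle inequality for block $L_1$-norms, even inside a single hyperbolic layer $\Delta Q_n$: the map $t\mapsto\{\delta_\bs t\}$ is not bounded on $L_1$, so $\sum_\bs\|\delta_\bs t\|_1\not\lesssim\|t\|_1$. Fortunately, the lower bound does not need this. What you actually need is the \emph{one-sided} estimate, for $t=\sum_{|\bs|_1=n}t_\bs$ with $t_\bs\in\Tr(\rho(\bs))$,
\[
\|t\|_{\bB^r_{1,\theta}}\ \lesssim\ 2^{rn}\Big(\sum_{|\bs|_1=n}\|t_\bs\|_1^\theta\Big)^{1/\theta},
\]
and this is immediate from the $A_\bs$-characterization together with $\|A_\bs\|_{L_1\to L_1}\le 6^d$ (each $A_\bs(t)$ only ``sees'' finitely many neighbouring blocks $t_{\bs'}$). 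No additivity is required; you only need the test body $F_n:=\{t:\|t_\bs\|_1\le 1\ \forall\bs\}$, suitably rescaled, to sit inside the Besov ball.

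Third, the reduction to $d_m(B^{N,s}_{1,\theta},\ell^{N,s}_{p^\ast,q})$ is stated but not carried out; in particular the Izaak and Malykhin--Ryutin bounds (Lemmas~\ref{Lemma[Izaak]}, \ref{Lemma[MaRu16]}) treat the target $\ell^{n,s}_{2,1}$, i.e.\ $q=1$, which is only relevant to the linear widths of the H\"older--Nikol'skii classes in Theorem~\ref{thm[lambda_nH]}; in the present theorem the target exponent is $q>1$ and Galeev's Lemma~\ref{Lemma[Galeev]} already gives $d_m(B^{n,s}_{1,\infty},\ell^{n,s}_{2,q})\asymp s^{1/q}$, from which the $\theta<\infty$ cases follow via $B^{n,s}_{1,\infty}\subset s^{1/\theta}B^{n,s}_{1,\theta}$ (and for $1<\theta<2$, $2\le q$, one just needs the main rate, for which a single block or the Rudin--Shapiro device in Lemma~\ref{L6.2.2} suffices). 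So the chain of finite-dimensional lemmas you invoke is partly misdirected, and the precise bookkeeping that turns $|\Delta Q_n|\asymp 2^n n^{d-1}$, $N\asymp 2^n$ (per block, not $2^n/n^{d-1}$ as you write), $s\asymp n^{d-1}$ into the stated power of $\log m$ still needs to be written out.
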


Theorem \ref{thm[d_mB,p=1]} is proved in \cite{Ro05}.

\begin{thm} \label{thm[d_mB,p=q=infty]}
Let $r > 0$.
Then we have
\begin{equation}\nonumber
d_m({\bf B}^r_{\infty,1},L_\infty)
\ \asymp \
\left(\frac{(\log m)^{d-1}}{m}\right)^r.
\end{equation}
\end{thm}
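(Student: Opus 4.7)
First I would use the Littlewood--Paley-type characterization $\|f\|_{\bB^r_{\infty,1}} \asymp \sum_{\bs} 2^{r|\bs|_1}\|A_\bs(f)\|_\infty$ recalled in Section~\ref{sect:FS}. For $f$ in the unit ball of $\bB^r_{\infty,1}$ the decomposition $f = \sum_\bs A_\bs(f)$ converges absolutely in $L_\infty$, so truncating at $|\bs|_1 \le n$ gives an element of $\Tr(Q_{n+d})$ with
\[
\Big\|f - \sum_{|\bs|_1 \le n}A_\bs(f)\Big\|_\infty \le \sum_{|\bs|_1 > n}\|A_\bs(f)\|_\infty \le 2^{-rn}\,\|f\|_{\bB^r_{\infty,1}}.
\]
Choosing $m \asymp |Q_{n+d}| \asymp 2^n n^{d-1}$ yields $d_m(\bB^r_{\infty,1}, L_\infty) \le E_{Q_{n+d}}(\bB^r_{\infty,1})_\infty \lesssim 2^{-rn} \asymp ((\log m)^{d-1}/m)^r$.

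\textbf{Lower bound.} The starting observation is that on a single dyadic block $\Tr(\rho(\bs_0))$ with $|\bs_0|_1 = n$ the $\bB^r_{\infty,1}$-norm collapses to $\|f\|_{\bB^r_{\infty,1}} \asymp 2^{rn}\|f\|_\infty$: only the $O(2^d)$ operators $A_\bs$ with $\bs$ adjacent to $\bs_0$ act nontrivially, and each is bounded on $L_\infty$. Hence the ball $\{f \in \Tr(\rho(\bs_0)) : \|f\|_\infty \le c\cdot 2^{-rn}\}$ lies inside the unit ball of $\bB^r_{\infty,1}$, and a Bernstein-width argument yields $d_k(\bB^r_{\infty,1}, L_\infty) \ge c\cdot 2^{-rn}$ already for $k < |\rho(\bs_0)| \asymp 2^n$.

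To push the lower bound up to $m \asymp 2^n n^{d-1}$ I would employ a finite-dimensional reduction: identify $B_{\bB^r_{\infty,1}} \cap \Tr(\Delta Q_n)$ (up to constants) with the $\ell^1(L_\infty)$-ball
\[
\Big\{(a_\bs)_{\bs \in H_n} \colon a_\bs \in \Tr(\rho(\bs)),\ \sum_\bs\|a_\bs\|_\infty \le 2^{-rn}\Big\},
\]
discretize each block via Marcinkiewicz on the enclosing parallelepiped $\Pi(2^{\bs})$ (Theorem~\ref{T2.4.11}), and then bound the resulting discrete Kolmogorov width using the Kashin--Gluskin--Galeev--Izaak--Malykhin--Ryutin toolbox (Lemmas~\ref{Lemma[Kashin-Gluskin]}--\ref{Lemma[MaRu16]}) applied to the mixed $\ell^1(\ell^\infty)$-norm in the source and a discrete $\ell^\infty$-norm in the target.

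The hard part will be handling the passage from the block-wise discretized $\ell^\infty$-norms back to the joint $L_\infty$-norm on $\T^d$: unlike for a parallelepiped, the $L_\infty$-norm on $\Tr(\Delta Q_n)$ admits no Marcinkiewicz discretization of cardinality comparable to $|Q_n|$ (this is exactly what Theorem~\ref{T2.5.7} forbids), so a naive reduction loses a spurious factor $n^{d-1}$ in the rate and only recovers the weaker Bernstein-width bound. In $d=2$ the Riesz products from Lemma~\ref{L2.6.1} can be used to engineer explicit test functions whose $L_\infty$-mass is concentrated on a single dyadic block while being $L_2$-orthogonal to the competing $m$-dimensional subspace, circumventing the obstruction; in $d \ge 3$ the analogous constructive step is blocked by the still-open Small Ball Inequalities (Open problems~2.5, 2.6), and one has to fall back on a probabilistic argument using random signs on the frequencies within the chosen $\rho(\bs_0)$ to realize block-concentration together with orthogonality to $L$, while controlling the overall $\bB^r_{\infty,1}$-norm via the fact that all contributing $A_\bs$-operators live within $O(2^d)$ of $\bs_0$.
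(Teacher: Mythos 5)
Your upper bound is correct and matches the paper's approach: the operator $\sum_{|\bs|_1\le n}A_\bs$ is exactly $V_{Q_n}$, and the $\ell^1$-summability of $\{2^{r|\bs|_1}\|A_\bs(f)\|_\infty\}$ gives the error $\lesssim 2^{-rn}$ at once. Your single-block Bernstein observation is also correct as far as it goes.

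Where you go astray is in your assessment of what the lower bound costs. You import the difficulty of the $\bW^r_p$- and $\bH^r_p$-settings in $L_\infty$ -- where the lack of a Marcinkiewicz discretization on $\Tr(\Delta Q_n)$ (Theorem \ref{T2.5.7}) and the Small Ball Inequality (Open problems 2.5, 2.6) are genuine bottlenecks -- and conclude that $d\ge3$ is inaccessible. But the theorem is stated for all $d$ and attributed to \cite{Ro06}, with the lower bound pointed to a classical example in \cite{TBook}; no Small Ball is involved. The class $\bB^r_{\infty,1}$ is so small that the whole lower bound collapses to a trace computation in $L_2$. Because of the $\ell^1$-normalization $\|f\|_{\bB^r_{\infty,1}}\asymp\sum_\bs 2^{r|\bs|_1}\|A_\bs(f)\|_\infty$, a single exponential $e^{i(\bk,\cdot)}$ with $\bk\in\rho(\bs)$ satisfies $\|e^{i(\bk,\cdot)}\|_{\bB^r_{\infty,1}}\asymp 2^{r|\bs|_1}$: only $O_d(1)$ of the $A_{\bs'}$ act on it, each with $|\hat{\mathcal A}_{\bs'}(\bk)|\le 1$. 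Hence the set $\{\,c\,2^{-rn}\,e^{i(\bk,\cdot)}:\bk\in\Delta Q_n\,\}$, and therefore its balanced convex hull, which is $c\,2^{-rn}$ times the $\ell^1$-ball of Fourier coefficients supported in $\Delta Q_n$, sits inside the unit ball of $\bB^r_{\infty,1}$ for a suitable $c=c(d,r)>0$. These exponentials are orthonormal in $L_2$, so the $L_2$-Kolmogorov width of this set is $c\,2^{-rn}\,d_m(B_1^N,\ell_2^N)=c\,2^{-rn}\sqrt{1-m/N}$ with $N\asymp|\Delta Q_n|\asymp 2^n n^{d-1}$, by the Kolmogorov--Petrov--Smirnov--Mal'tsev--Stechkin formula quoted just before Lemma \ref{Lemma[Galeev]}. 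Choosing $n$ with $|\Delta Q_n|\ge 2m$ yields $d_m(\bB^r_{\infty,1},L_2)\gtrsim 2^{-rn}\asymp((\log m)^{d-1}/m)^r$, and since $\|\cdot\|_{L_\infty}\ge\|\cdot\|_{L_2}$ the same lower bound holds for $d_m(\bB^r_{\infty,1},L_\infty)$, in every dimension. Your $\ell^1(L_\infty)$-block description of the source norm is correct, but for the lower bound you should have passed to the much leaner $\ell^1$-of-coefficients sub-ball and measured the width in $L_2$, where everything is diagonal; by insisting on staying in $L_\infty$ you ran into real but, for this theorem, irrelevant obstructions.
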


Theorem \ref{thm[d_mB,p=q=infty]} was proved in \cite{Ro06}.
Notice that in both Theorems \ref{thm[d_mB,p=1]} and \ref{thm[d_mB,p=q=infty]} 
the upper bounds are derived from the linear approximation by the operators
$V_{Q_n}$. We note that the following relation 
\begin{equation}\nonumber
d_m({\bf B}^r_{\infty,1},L_q)
\ \asymp \
\left(\frac{(\log m)^{d-1}}{m}\right)^r,\quad 2\le q\le \infty
\end{equation}
holds. The upper bounds are derived from the linear approximation by the operators
$V_{Q_n}$. The lower bounds follow from Example 1 of Chapter 3 in \cite{TBook}. 


\subsection{Orthowidths}\index{Width!Fourier, Ortho-}
We begin with the classes $\bW^r_p$. For numbers $m$ and $r$ and parameters $1
\le p,q \le \infty$ we define the
functions
$$
w(m,r,p,q) :=\bigl(m^{-1}(\log m)^{d-1}\bigr)^{r-(1/p-1/q)_+}
(\log m)^{(d-1)\xi(p,q)},
$$
where $(a)_+:= \max\{a,0\}$ and
$$
\xi(p,q) =
\begin{cases}
0&\qquad\text{ for }\qquad 1< p\le q<\infty,\qquad 1\le q<p\le\infty,\\
1-1/p&\qquad\text{ for }\qquad 1 < p \le \infty,\qquad q = \infty,\\
1/q&\qquad\text{ for }\qquad p=1,\qquad 1\le q\le\infty.
\end{cases}
$$

\begin{thm}\label{TBT4.1} Suppose that $r > (1/p-1/q)_+$ and
$(p,q)\ne(1,1)$, $(\infty,\infty )$, $(1,\infty)$. Then
$$
\varphi_m(\bW_{p}^r,L_q) \asymp w(m,r,p,q),
$$
and orthogonal projections $S_{Q_n}$ onto the subspaces $\Tr(Q_n)$ of trigonometric
polynomials with harmonics in the step hyperbolic crosses with $n$ such that
$|Q_n|\le m$, $|Q_n|\asymp m$
give the order of the quantities
$\varphi_m(\bW_{p}^r,L_q)$.
\end{thm}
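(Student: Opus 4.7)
The strategy is to establish the upper bound by approximation with the orthogonal projection $S_{Q_n}$, where $n$ is chosen so that $|Q_n|\asymp m$ and hence $2^{-n}\asymp m^{-1}(\log m)^{d-1}$, and to prove the matching lower bound by a kernel--dimension argument that is valid against every orthonormal system of size $m$. In this way the single family of subspaces $\Tr(Q_n)$ plays a dual role, both as the realizer of the upper bound and as a ``fooling space'' for the lower bound.

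For the upper bound I would treat three parameter regimes separately. In the \emph{interior case} $1<p,q<\infty$ we have $\xi(p,q)=0$; Corollary~\ref{C7.1} of the Littlewood--Paley theorem gives uniform $L_q$-boundedness of $S_{Q_n}$, so $\|f-S_{Q_n}(f)\|_q\lesssim E_{Q_n}(f)_q$, and Theorem~\ref{TBT3.2} yields the exponent $r-(1/p-1/q)_+$; the cases with $p=\infty$ or $q=1$ reduce to this by the soft embedding $\bW^r_p\hookrightarrow \bW^r_{q'}$ for any $q'\le p$. At the \emph{uniform endpoint} $q=\infty$, $1<p<\infty$, $S_{Q_n}$ is no longer $L_\infty$-bounded, and I would instead invoke the Jawerth--Franke embedding of Lemma~\ref{L3.5d}, $\bW^r_p\hookrightarrow \bB^{r-1/p}_{\infty,p}$, and estimate $\|f-S_{Q_n}(f)\|_\infty$ via a Hölder splitting on dyadic blocks,
$$\sum_{|\bs|_1>n}\|A_\bs(f)\|_\infty \le \Bigl(\sum_{|\bs|_1>n}2^{p(r-1/p)|\bs|_1}\|A_\bs(f)\|_\infty^{p}\Bigr)^{\!1/p}\Bigl(\sum_{|\bs|_1>n}2^{-p'(r-1/p)|\bs|_1}\Bigr)^{\!1/p'}\!,$$
whose second factor is $\asymp 2^{-n(r-1/p)}n^{(d-1)/p'}$ by \eqref{hcsums}, and $1/p'=1-1/p=\xi(p,\infty)$. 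At the \emph{integrable endpoint} $p=1$, $1<q<\infty$, I would write $f=F_r\ast\varphi$ with $\|\varphi\|_1\le 1$, apply Young's inequality $\|F\ast\varphi\|_q\le\|F\|_q\|\varphi\|_1$ to $F=F_r-S_{Q_n}(F_r)$, and then bound $\|F\|_q\lesssim E_{Q_n}(F_r)_q\asymp 2^{-n(r-1+1/q)}n^{(d-1)/q}$ using the $L_q$-boundedness of $S_{Q_n}$ and Theorem~\ref{TBT3.1}, recovering $n^{(d-1)/q}=n^{(d-1)\xi(1,q)}$.

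For the lower bound, given any orthonormal system $\{u_j\}_{j=1}^m\subset L_2(\T^d)$ I would form the projector $Pf=\sum_{j=1}^m\langle f,u_j\rangle u_j$, which has rank $m$. Choose $n$ with $|Q_n|\ge 2m$; then the $L_2$-subspace $H:=\Tr(Q_n)\cap\ker P$ has dimension at least $|Q_n|/2\asymp m$, and every $f\in H$ satisfies $\|f-Pf\|_q=\|f\|_q$. The problem therefore reduces to producing $f\in H$ with a large ratio $\|f\|_q/\|f\|_{\bW^r_p}$. Restricting further to a single dyadic block, the subspace $H\cap\Tr(\rho(\bs))$ with $|\bs|_1=n$ still has a positive proportion of the dimension of $\Tr(\rho(\bs))$, so Theorem~\ref{T2.2.1} extracts from it a polynomial $f$ with $\|f\|_\infty=1$ and $\|f\|_2\gtrsim 1$. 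Bernstein's inequality (Theorem~\ref{T2.4.3}) controls $\|f\|_{\bW^r_p}\lesssim 2^{rn}\|f\|_p$, while the Nikol'skii inequalities on hyperbolic crosses (Theorems~\ref{T2.4.7}--\ref{T2.4.8}) together with the volume estimates of Subsection~\ref{vol_est} supply the lower bound on $\|f\|_q/\|f\|_p$ carrying the precise $\log$-factor $n^{(d-1)\xi(p,q)}$; the concrete extremal polynomial (Rudin--Shapiro-type for $q=\infty$, Dirichlet-like for $p=1$, a generic $\pm 1$-polynomial in the interior) is dictated by the region.

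The principal obstacle is the endpoint analysis at $p\in\{1,\infty\}$ or $q\in\{1,\infty\}$, where $S_{Q_n}$ ceases to be $L_q$-bounded and the de la Vall\'ee Poussin kernel itself has $\|\mathcal V_{Q_n}\|_1\asymp n^{d-1}$ by Lemma~\ref{L2.3.2}. The exponent $\xi(p,q)$ records exactly this loss, and matching the upper and lower bounds at these corners requires the genuinely hyperbolic-cross tools (Jawerth--Franke, Nikol'skii, and the volume estimates of Subsection~\ref{vol_est}) rather than the soft Littlewood--Paley machinery that suffices in the interior. It is also precisely this loss that forces the three excluded pairs $(1,1)$, $(\infty,\infty)$, $(1,\infty)$ out of the statement: at those points the two competing $\log^{d-1}$-losses collide and the above method no longer produces matching powers.
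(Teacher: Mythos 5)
Your upper‑bound plan is essentially the right one and matches the tools the paper relies on (Littlewood--Paley for $1<p,q<\infty$, Theorem~\ref{TBT3.1} combined with Young's inequality at $p=1$, and the Jawerth--Franke embedding of Lemma~\ref{L3.5d} followed by a H\"older splitting at $q=\infty$); compare the argument in the proof of Theorem~\ref{Sobsamp_infty}. Note, however, that $f-S_{Q_n}(f)=\sum_{|\bs|_1>n}\delta_\bs(f)$, not $\sum_{|\bs|_1>n}A_\bs(f)$ -- the $A_\bs$ blocks belong to $V_{Q_n}$, which is not an orthogonal projection and hence not admissible here, so your H\"older display should run over $\delta_\bs$, with the $L_\infty$-norm of each $\delta_\bs(f)$ controlled via the Nikol'skii inequality.

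The lower bound is where the proposal breaks. Your plan is to choose $n$ with $|Q_n|\ge 2m$, set $H=\Tr(Q_n)\cap\ker P$ (which indeed has dimension $\ge|Q_n|/2$), and then ``restrict further to a single dyadic block'' $H\cap\Tr(\rho(\bs))$ with $|\bs|_1=n$ in order to invoke Theorem~\ref{T2.2.1}. This step fails for $d\ge 2$. A single block has $\dim\Tr(\rho(\bs))\asymp 2^n\asymp m/(\log m)^{d-1}$, which is \emph{much smaller} than $m$, so the rank-$m$ projection $P$ can annihilate that block entirely; $\dim H$ being large tells you nothing about $H\cap\Tr(\rho(\bs))$ (consider a projection whose range pairs vectors across blocks). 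If you enlarge $n$ until $2^n\ge 2m$ to force a big intersection inside a single block, you pay $2^{-rn}\asymp m^{-r}$ and lose the entire factor $(\log m)^{(d-1)r}$, which is the whole content of the theorem. Theorem~\ref{T2.2.1} also requires a subspace of a \emph{parallelepipedal} $\Tr(\bN,d)$, so you cannot apply it to $H\cap\Tr(\Delta Q_n)$ over a full hyperbolic layer without the volume-type substitutes of Subsection~\ref{vol_est}, which the paper only has in the needed form for $d=2$.

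The paper's lower-bound strategy is genuinely different and worth noting. Rather than searching for a good element inside $\ker P$, it builds explicit test functions whose Fourier support is chosen \emph{adaptively in the system} $u_1,\dots,u_m$: a single well-placed exponential $e^{i(\bk^0,\bx)}$ with $\bk^0$ selected by an averaging argument so that $\sum_j|\langle e^{i(\bk^0,\cdot)},u_j\rangle|^2$ is small, and, for the regimes where a logarithmic gain $n^{(d-1)\xi(p,q)}$ must appear, sums $g(\bx+\by^*)=\sum_{\bs\in\theta_n^1}e^{i(\bk^\bs,\bx)}$ with $\by^*$, $\{\bk^\bs\}$, and the index set $\theta_n^1\subset\{\bs:|\bs|_1=n\}$ all dictated by $\{u_j\}$. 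That construction distributes the ``bad'' directions across $\asymp n^{d-1}$ blocks simultaneously instead of forcing everything into one block, which is exactly what your argument cannot do. This is also what makes the proof technically heavier: one has to track the interaction of the adapted frequencies with the various $L_p$- and $L_q$-norms, and it is here that the excluded corners $(1,1)$, $(\infty,\infty)$, $(1,\infty)$ and the Riesz-product/volume machinery of Subsections~\ref{vol_est}--\ref{subsect:SBI} come into play. So while your heuristic about the endpoint losses is sound, the kernel-subspace mechanism you propose cannot deliver them; the lower bound must be redone along the lines sketched in the discussion following Theorem~\ref{ATH}.
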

Theorem \ref{TBT4.1} in the case $1<p\le q<\infty$ was obtained in \cite{Te82a}
and \cite{Tem10} and in the general case in \cite{TE1} and \cite{TE2}. 

The following theorem from \cite{AT} covers the case $(p,q) = (1,1)$ or
$(\infty,\infty)$.
\begin{thm}\label{AT} The following order estimates hold for $p=1$ and
$p=\infty$
$$
\ff_m(\bW^r_p,L_p)\asymp m^{-r}(\log m)^{(d-1)(r+1)},\qquad r>0.
$$
\end{thm}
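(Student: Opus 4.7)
With $n$ chosen so that $|Q_n|\asymp m$, the target rate is $m^{-r}(\log m)^{(d-1)(r+1)}\asymp 2^{-rn}n^{d-1}$, and the proof splits into matching upper and lower bounds.

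\textbf{Upper bound.} Every $f\in\bW^r_p$ is of the form $f=F_{r,\alpha}\ast\varphi$ with $\|\varphi\|_p\le 1$, so Young's inequality reduces the analysis of any convolution-invariant orthogonal projection $P$ to estimating $\|F_{r,\alpha}-PF_{r,\alpha}\|_1$. Using the tensor-product structure $A_\bs(F_{r,\alpha})=\prod_j A_{s_j}(F_{r,\alpha_j})$ together with the univariate estimate $\|A_s(F_{r,\alpha_j})\|_1\asymp 2^{-rs}$ from \eqref{BernoulliBesov}, one gets
$$
\|F_{r,\alpha}-V_{Q_n}F_{r,\alpha}\|_1
\le \sum_{|\bs|_1>n}\|A_\bs(F_{r,\alpha})\|_1
\lesssim \sum_{|\bs|_1>n}2^{-r|\bs|_1}
\asymp 2^{-rn}n^{d-1},
$$
by \eqref{hcsums}. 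Since the de la Vall\'ee-Poussin operator $V_{Q_n}$ is not itself an orthogonal projection, the plan is to build an orthonormal system of size $\asymp|Q_n|$ by orthonormalising an appropriate tensorised smooth multi-scale system on $\T^d$ (a wavelet-type basis adapted to the hyperbolic-cross geometry, in the spirit of Section~\ref{Sect:Smolyak}) whose span absorbs the de la Vall\'ee-Poussin image; the corresponding orthogonal projection then inherits the $2^{-rn}n^{d-1}$ error bound uniformly for $f\in\bW^r_p$ with $p\in\{1,\infty\}$.

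\textbf{Lower bound.} Let $P=\sum_{j=1}^m\langle\cdot,u_j\rangle u_j$ be an arbitrary rank-$m$ orthogonal projection on $L_2(\T^d)$. I choose $n$ so that $\dim_\R\Tr(Q_n)\ge 4m$; then $T:=\ker P\cap\Tr(Q_n)$ has real dimension $\gtrsim|Q_n|$, and for every $t\in T$ one has $\|t-Pt\|_p=\|t\|_p$, reducing matters to exhibiting $t\in T$ with $\|t\|_{\bW^r_p}\le 1$ and $\|t\|_p\gtrsim 2^{-rn}n^{d-1}$. For this I would invoke the Kashin--Temlyakov volume estimates of Subsection~\ref{vol_est}: Theorem~\ref{T2.5.5} (combined with the Bourgain--Milman inequality, Theorem~\ref{T2.5.3}) guarantees that inside $\Tr(\Delta Q_n)$ there is a large-dimensional affine section of the $L_p$-unit ball on which the $L_p$-norm is bounded below at the optimal order. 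Since this section has dimension $\gtrsim|Q_n|\ge 2m$, it intersects $\ker P$ non-trivially; rescaling by $2^{-rn}$ turns the resulting polynomial into the desired witness in $\bW^r_p$.

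\textbf{Main obstacle.} The lower bound is the serious difficulty. A naive Bernstein argument (Theorem~\ref{T2.4.3}) applied to a generic $t\in T\cap\bW^r_p$ produces only $\|t\|_p\gtrsim 2^{-rn}n^{-(d-1)}$, short of the target by a factor $n^{2(d-1)}$. The missing $n^{d-1}$ is precisely the growth $\|\mathcal V_{Q_n}\|_1\asymp n^{d-1}$ from Lemma~\ref{L2.3.2} that obstructs the standard de la Vall\'ee-Poussin based reductions at $p\in\{1,\infty\}$ and is the source of the $(\log m)^{d-1}$ gap between Theorem~\ref{AT} and Theorem~\ref{TBT4.1}. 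Recovering it requires the geometric volume information of Subsection~\ref{vol_est} and, in particular, the failure of Marcinkiewicz discretization for hyperbolic-cross polynomials (Theorem~\ref{T2.5.7}), rather than any purely Fourier-analytic manipulation.
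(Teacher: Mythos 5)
Your upper-bound computation for $\|F_{r,\alpha}-V_{Q_n}F_{r,\alpha}\|_1\lesssim 2^{-rn}n^{d-1}$ is correct, and you are right that some wavelet-type orthonormal trigonometric system (in the spirit of the system $\cU^d$) must replace the Dirichlet projection $S_{Q_n}$. However, you stop exactly at the nontrivial step: the orthogonal projection onto the span of the image of $V_{Q_n}$ \emph{is} $S_{Q_n}$, whose $L_1\to L_1$ and $L_\infty\to L_\infty$ norms grow like $n^{d-1}$, so ``absorbing the de la Vall\'ee Poussin image'' gives you nothing unless the new projection is uniformly bounded. The paper (following \cite{AT}) achieves this via the Smolyak framework of Proposition~\ref{AT1}: one needs univariate orthogonal projections $Y_s$ satisfying both the approximation property (1) and the crucial growth property (2), and then the tensorized $T_n$ \emph{is itself} an orthogonal projection with the right error bound. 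Naming that mechanism, rather than gesturing at orthonormalisation, is what the upper bound actually requires.

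The lower-bound proposal has a genuine structural flaw. Your argument, as written, only uses that $P$ has rank $m$: you pass to $T=\ker P\cap\Tr(Q_n)$ (large codimension, fine) and then use $\|t-Pt\|_p=\|t\|_p$ for $t\in T$. Nothing in that chain invokes $P^*=P$, $P^2=P$. So if the argument produced a $t\in T$ with $\|t\|_{\bW^r_p}\le 1$ and $\|t\|_p\gtrsim 2^{-rn}n^{d-1}$, it would prove the same lower bound for the linear width $\lambda_m$ — and, via the standard subspace inscription argument, essentially for the Kolmogorov width $d_m$. But for $d=2$ and $p=\infty$ the paper tells us $d_m(\bW^r_\infty,L_\infty)\asymp m^{-r}(\log m)^{r+1/2}$ (Theorem~\ref{Te96}), a factor $(\log m)^{1/2}$ \emph{smaller} than the orthowidth rate $m^{-r}(\log m)^{r+1}$ in Theorem~\ref{AT}. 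An argument blind to orthogonality cannot distinguish $\varphi_m$ from $d_m$, so it cannot prove this theorem. Moreover, Theorem~\ref{T2.5.5} is stated only for $d=2$; the corresponding volume estimates for $d\ge 3$ are open (Open problems 2.3–2.4), so your route is unavailable in exactly the dimensions for which the theorem is claimed. The paper's actual proof constructs explicit ``fooling'' polynomials — single exponentials $e^{i(\bk^0,\bx)}$ or sums $\sum_{\bs\in\theta_n^1}e^{i(\bk^\bs,\bx+\by^*)}$ — whose parameters $\bk^\bs,\by^*$ are chosen \emph{in terms of} the given orthonormal system $u_1,\dots,u_m$; using the Fourier localization of the $u_j$'s (specifically, that $\sum_j\sum_\bk|\hat u_j(\bk)|^2=m$ forces most frequencies to be barely covered) is precisely how the extra $n^{d-1}$ is recovered, and it is an argument that is specific to orthogonal projections. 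That is the ingredient your proposal is missing.
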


The only case, which is not completely studied, is the case $p=1$ and
$q=\infty$. We have here the following partial result, which shows an
interesting behavior of $\ff_m(\bW^r_1,L_\infty)$ in this case. 

\begin{thm}\label{TBT4.2}  For  odd $r > 1$ and arbitrary $d$ we have
$$
\varphi_m(\bW_{1}^r,L_{\infty})\asymp  \bigl(m^{-1}(\log m)^{d-1}
\bigr)^{r-1},
$$
and for even $r$ and $d = 2$ we have
$$
\varphi_m(\bW_{1}^r,L_{\infty} )\asymp m^{1-r}(\log m)^r .
$$
\end{thm}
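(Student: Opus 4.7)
\emph{Upper bounds.} I would use $S_{Q_n}$ as the competing rank-$m$ orthogonal projector, with $n$ chosen so that $m\asymp|Q_n|\asymp 2^n n^{d-1}$. Writing $f=F_{r,r}*\varphi$ with $\|\varphi\|_1\le 1$ and invoking Young's convolution inequality,
\[
\|f-S_{Q_n}f\|_\infty\le\|F_{r,r}-S_{Q_n}F_{r,r}\|_\infty\,\|\varphi\|_1,
\]
so the upper bound reduces to an $L_\infty$ estimate of the hyperbolic-cross tail of the Bernoulli kernel. In the case $d=2$ with $r$ even, the crude layerwise bound $\|\delta_{\bs}(F_{r,r})\|_\infty\lesssim 2^{-(r-1)|\bs|_1}$ combined with the triangle inequality gives $\|F_{r,r}-S_{Q_n}F_{r,r}\|_\infty\lesssim 2^{-(r-1)n}n$, and $m\asymp 2^n n$ then produces $m^{1-r}(\log m)^r$ as required. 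In the case of odd $r$ and arbitrary $d$ the naive triangle inequality loses an extra factor $n^{d-1}$; the plan is to recover it via summation by parts over the hyperbolic layers, exploiting the fact that for odd $r$ (with $\alpha=r$) the univariate Bernoulli kernel $2\sum_{k\ge 1}k^{-r}\sin(kx)$ equals, up to a constant, a Bernoulli polynomial, whose oscillatory tails admit Abel-type estimates strong enough, after tensorization, to eliminate the spurious log factor and yield $\|F_{r,r}-S_{Q_n}F_{r,r}\|_\infty\lesssim 2^{-(r-1)n}$.

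\emph{Lower bounds.} These are the main obstacle and require exploiting the rank-$m$ constraint on an orthogonal projector directly, since $\ff_m(\bW^r_1,L_\infty)$ is strictly larger than $d_m(\bW^r_1,L_\infty)$ in the relevant regime (the latter is dominated in order by $d_m(\bW^r_\infty,L_\infty)\asymp m^{-r}(\log m)^{r+1/2}$ for $d=2$ by Theorem~\ref{Te96}), so the chain $\ff_m\ge d_m$ is too weak. My plan: fix an arbitrary orthonormal system $u_1,\dots,u_m\subset L_2(\T^d)$ with associated rank-$m$ orthoprojector $P$; choose $n$ so that $\dim\Tr(Q_n)\ge 2m$; and exhibit $t\in\Tr(Q_n)$ of unit $\bW^r_1$-norm such that $\|t-Pt\|_\infty$ achieves the claimed lower bound. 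Since $P|_{\Tr(Q_n)}$ has rank at most $m$, the intersection $\operatorname{ker}P\cap\Tr(Q_n)$ has dimension $\ge m$, so on this subspace $\|t-Pt\|_\infty=\|t\|_\infty$; the construction of such $t$ would proceed via Riesz-product test functions (Subsection~\ref{subsect:SBI}), combining Lemma~\ref{L2.6.1} with the parity structure of $F_{r,r}$ for odd $r$, and via a two-dimensional small-ball-inequality argument for even $r$ in dimension two. In both cases duality $\|t\|_\infty=\sup_{\|g\|_1\le 1}|\langle t,g\rangle|$ and the Bernoulli representation $\|t\|_{\bW^r_1}=\inf\{\|\psi\|_1:t=F_{r,r}*\psi\}$ convert the task into an $L_1$-concentration problem for Riesz products, treatable by the technique of Subsection~\ref{subsect:SBI}; Theorem~\ref{T2.2.1} may additionally be used to certify the existence of polynomials in $\operatorname{ker}P|_{\Tr(Q_n)}$ of appreciable $L_\infty$-norm by a dimension count.

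\emph{Main obstacle.} The sharpness in the odd-$r$, arbitrary-$d$ case hinges on producing a Riesz-product test function compatible with the rank-$m$ constraint of $P$ on $\Tr(Q_n)$; the hardness stems from the fact that the usual reduction of orthoprojection lower bounds to polynomial-approximation lower bounds via the de la Vall\'ee Poussin kernel fails here because $\|\mathcal V_{Q_n}\|_1\asymp n^{d-1}$ (Lemma~\ref{L2.3.2}), and the Marcinkiewicz-type discretization is unavailable for $\Tr(Q_n)$ (Theorem~\ref{T2.5.7}). This is precisely why the even-$r$ statement is confined to $d=2$, where the small-ball inequality~\eqref{2.6.3} is known (its extension to $d\ge 3$ is Open problem~2.5), whereas the odd-$r$ case benefits from the parity cancellation in $F_{r,r}$ that circumvents the need for a full-strength higher-dimensional small-ball inequality.
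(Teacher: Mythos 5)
Your proposal correctly identifies that the upper bound should come from $S_{Q_n}$ and that the odd-$r$ improvement has to beat the crude Littlewood--Paley estimate $\|F_r - S_{Q_n}F_r\|_\infty \lesssim 2^{-(r-1)n}n^{d-1}$, and the idea of exploiting the sine structure of the blocks is sound. But as written the gap is real: the Abel-type univariate tail bound is precisely what already gives the layerwise estimate $\|\delta_{\bs}(F_r)\|_\infty \asymp 2^{(1-r)|\bs|_1}$, and tensorizing \emph{that} reproduces the spurious $n^{d-1}$. What actually saves the odd case is the three-regime pointwise bound
$$
\Big|\sum_{2^{s-1}\le k<2^s}k^{-r}\sin(kx)\Big| \lesssim 2^{-rs}\min\bigl(2^{2s}|x|,\,2^s,\,1/|x|\bigr),
$$
in particular the vanishing-at-zero regime $2^{2s}|x|$; after tensorizing, the per-coordinate factor concentrates near the resonant scale $2^{s_j}\approx 1/|x_j|$ and decays geometrically away from it, so that summing over the $\asymp n^{d-1}$ layers $|\bs|_1 > n$ produces no extra power of $n$. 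That is a nontrivial multi-case computation (sine decay on both sides of the resonance), not a one-line Abel argument; for even $r$ the cosine blocks do \emph{not} vanish at $x=0$ and the $n^{d-1}$ count survives — which is exactly why the even-$r$ rate has the extra logarithm. Your sketch does not even state the pointwise block bound, let alone verify the tensor sum.

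The lower-bound half has more serious problems. First, your parenthetical is backwards: with the normalized measure $\|\varphi\|_1\le\|\varphi\|_\infty$, so $\bW^r_\infty\subset\bW^r_1$ and hence $d_m(\bW^r_1,L_\infty)\ge d_m(\bW^r_\infty,L_\infty)$, not $\le$. Second, the invocation of Theorem~\ref{T2.2.1} is misplaced: that theorem is for parallelepiped polynomials $\Tr(\bN,d)$ and provides a lower bound on $\|t\|_2$ for unit $\|t\|_\infty$, which is not the ratio $\|t\|_\infty/\|t\|_{\bW^r_1}$ that you need; transplanting it to $\Tr(Q_n)$ meets the very obstacle the paper emphasizes (the anomalously small $L_\infty$-volume of $B_{\Delta Q_n}$ in Theorem~\ref{T2.5.5}), so the ``pick $t$ in $\ker P\cap\Tr(Q_n)$'' route is not secured by this citation. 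The paper's own account of the orthowidth lower-bound technique (text following Theorem~\ref{TBT4.3}) is genuinely different: the test function is built \emph{in terms of} the orthonormal system, a single exponential $e^{i(\bk^0,\bx)}$ with $\bk^0$ chosen so that $P$ misses it, or a sparse sum $g(\bx+\by^*)=\sum_{\bs\in\theta^1_n}e^{i(\bk^\bs,\bx+\by^*)}$ with $\bk^\bs,\by^*$ adapted to $\{u_j\}$ — not a kernel element extracted by a dimension count. Third, your appeal to the two-dimensional small-ball inequality for the even-$r$, $d=2$ lower bound is a red herring: that inequality governs $\bW^r_p$ in $L_\infty$ for $p\ge 2$ and enters results such as Theorem~\ref{Te96}; the class here is $\bW^r_1$. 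In fact, for $r\equiv 0\pmod 4$ (so $\hat F_r\ge 0$) a far more elementary ``trace'' argument already gives the lower bound in \emph{all} dimensions: writing $G(\bx,\by)=[(I-P)F_r(\cdot-\by)](\bx)$ one has
$$
\sup_{\bx,\by}|G(\bx,\by)| \;\ge\; \int G(\bx,\bx)\,d\mu(\bx) \;=\; \sum_{\bk}\hat F_r(\bk)\bigl(1-p(\bk)\bigr),\qquad p(\bk):=\sum_j|\hat u_j(\bk)|^2,
$$
and since $\sum_\bk p(\bk)=m$, at most $2m$ indices have $p(\bk)>1/2$, so the right-hand side is $\gtrsim \sum_{\bk\notin\Gamma(N)}\hat F_r(\bk)\asymp N^{1-r}(\log N)^{d-1}$ with $|\Gamma(N)|\asymp 2m$. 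No positivity trick of this kind is available for odd $r$ (the phases $i^{\pm 1}$ produce cancellation), which is consistent with the orthowidth being smaller by a factor $(\log m)^{d-1}$ in that case — but this underlines that the odd-$r$ lower bound requires a genuinely different construction which your sketch does not supply. In sum, the broad plan is headed in plausible directions, but the specific mechanisms you cite — Theorem~\ref{T2.2.1} on $\Tr(Q_n)$, the small-ball inequality, the Riesz products together with $\ker P$ — do not fit the problem, and the actual technical content of both bounds is missing.
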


We now proceed to classes $\bH^r_p$. For numbers $m$ and $r$ and parameters $1
\le p,q \le \infty$ we define the
functions
$$
h(m,r,p,q)=\bigl(m^{-1}(\log m)^{d-1}\bigr)^{r-(1/p-1/q)_+}
(\log m)^{(d-1)\eta(p,q)},
$$
where
$$
\eta(p,q) =
\begin{cases}
1/q&\qquad\text{ for }\qquad 1<p<q<\infty;\qquad p=1,\qquad 1\le q<\infty;\\
1&\qquad\text{ for }\qquad 1\le p\le \infty,\quad\qquad q=\infty;\\
1/2&\qquad\text{ for }\qquad 1\le q\le p\le\infty,\qquad p\ge 2,\qquad
q<\infty;\\
1/q&\qquad\text{ for }\qquad 1\le q\le p \le2.
\end{cases}
$$

\begin{thm}\label{TBT4.3} Suppose that $r > (1/p-1/q)_+$ and
$(p,q)\neq(1,1),(\infty ,\infty )$.
Then
$$
\varphi_m(\bH_p^r,L_q) \asymp h(m,r,p,q)
$$
and subspaces, optimal in the sense of order, are given by $\Tr(Q_n)$
with appropriate $n$ (as in Theorem \ref{TBT4.1}).
\end{thm}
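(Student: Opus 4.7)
The plan is to establish the matching upper and lower bounds separately: the upper bound constructively by the orthogonal projections $S_{Q_n}$, and the lower bound by combining the standard inequality $\varphi_m\ge d_m$ with volume estimates from Subsection \ref{vol_est} in those sub-regions where the Kolmogorov width alone is not sharp for the orthowidth.

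For the upper bound, choose $n$ so that $|Q_n|\le m<|Q_{n+1}|$, hence $m\asymp 2^n n^{d-1}$ by (\ref{hcsums}). The orthogonal projection $S_{Q_n}$ onto $\Tr(Q_n)$ has rank $|Q_n|\le m$ and is therefore admissible in the definition of $\varphi_m$. For $f\in\bH^r_p$ the characterization of Subsection \ref{Chardiff} gives $\|A_\bs(f)\|_p\lesssim 2^{-r|\bs|_1}$, and $f-S_{Q_n}(f)$ equals (up to a constant shift in $n$) the sum $\sum_{|\bs|_1>n}A_\bs(f)$, because $A_\bs(f)\in\Tr(2^{\bs})$. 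In the interior range $1<p\le q<\infty$ I would apply Remark \ref{RT2.4.6}, namely (\ref{2.4.4R}), to obtain
$$
\|f-S_{Q_n}(f)\|_q\lesssim \Bigl(\sum_{|\bs|_1>n}\|A_\bs(f)\|_p^q\,2^{|\bs|_1(q/p-1)}\Bigr)^{1/q}\lesssim 2^{(-r+1/p-1/q)n}n^{(d-1)/q},
$$
which matches $h(m,r,p,q)$ with $\eta(p,q)=1/q$. In the range $1\le q\le p\le\infty$ with $p\ge 2$ I would instead use the Littlewood-Paley square-function characterization (\ref{NeqW}) applied to the $\bH$-condition, producing the factor $n^{(d-1)/2}$ corresponding to $\eta(p,q)=1/2$. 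For $q=\infty$, the block-wise Nikol'skii inequality (Theorem \ref{T2.4.5}) upgrades each $\|A_\bs(f)\|_p$ to $\|A_\bs(f)\|_\infty\lesssim 2^{|\bs|_1/p}\|A_\bs(f)\|_p$, and summation over the hyperbolic layers contributes the extra $n^{d-1}$ factor giving $\eta(p,q)=1$; the cases involving $p=1$ are handled by a parallel Nikol'skii argument $L_1\to L_q$ on each block.

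For the lower bound, the chain $\varphi_m\ge d_m$ combined with the Kolmogorov-width estimates of Theorems \ref{TBT4.5}, \ref{TBT4.5'}, \ref{Tp=1}, \ref{Tq=1}, \ref{Te96H} already yields the correct order of $h(m,r,p,q)$ in many sub-cases. In the remaining sub-cases the orthogonality of the approximating system $\{u_j\}_{j=1}^m$ must be used explicitly: I would fix $n$ with $2^nn^{d-1}\asymp m$, consider the finite dimensional subspace $\Tr(\Delta Q_n)$ of dimension $N\asymp m$, and use the fact that the orthogonal projection $S_U$ onto $U=\mathrm{span}\{u_j\}$ cannot cover too much of $\Tr(\Delta Q_n)$. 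Testing with a suitable polynomial whose Fourier coefficients realize maximal volume in the body $B_{\Delta Q_n}(L_q)$ (Theorems \ref{T2.5.1}, \ref{T2.5.2}, \ref{T2.5.4}), and estimating its $\bH^r_p$-norm by Bernstein-Nikol'skii inequalities, yields the required logarithmic factor.

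The main obstacle is the lower bound in the endpoint regimes involving $L_1$ or $L_\infty$. Since the hyperbolic-cross de la Vall\'ee Poussin operator satisfies $\|V_{Q_n}\|_1\asymp n^{d-1}$ (Lemma \ref{L2.3.2}), the classical reduction of the lower bound to a finite dimensional problem via $V_{Q_n}$ loses a factor $n^{d-1}$, which is exactly what separates the endpoint exponent $\eta(p,q)=1$ at $q=\infty$ from the interior exponent $\eta(p,q)=1/q$. The geometric substitute, based on the Bourgain-Milman inequality (Theorem \ref{T2.5.3}) and the volume estimates on $B_{\Delta Q_n}$, recovers this factor, but aligning the resulting logarithmic power with $\eta(p,q)$ in each sub-region of parameters requires careful bookkeeping of which volume estimate is used and how the orthonormal system is tested against polynomials supported on a specific hyperbolic layer.
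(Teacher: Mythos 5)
Your upper bound via $S_{Q_n}$ is essentially right (modulo using $A_\bs$ rather than $\delta_\bs$ in the endpoint cases $p\in\{1,\infty\}$, where the $\delta_\bs$ operators are not bounded), and it matches what the paper does. The problem is the lower bound, where your plan has a genuine gap.

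You propose to use $\varphi_m\ge d_m$ in ``many sub-cases'' and volume estimates for the rest, but the chain $\varphi_m\ge d_m$ is not sharp precisely in the sub-regions that make Theorem \ref{TBT4.3} nontrivial and distinct from the Kolmogorov-width result. For instance, when $1<p<2<q<\infty$ Theorem \ref{TBT4.5} gives $d_m(\bH^r_p,L_q)\asymp(m^{-1}(\log m)^{d-1})^{r-1/p+1/2}(\log m)^{(d-1)/2}$, which decays strictly faster than $h(m,r,p,q)=(m^{-1}(\log m)^{d-1})^{r-1/p+1/q}(\log m)^{(d-1)/q}$, so the Kolmogorov lower bound is strictly weaker than what is claimed for $\varphi_m$. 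At $q=\infty$ the situation is even worse: for $d\ge3$ the order of $d_m(\bH^r_p,L_\infty)$ is an open problem (Conjecture \ref{CH}), yet the orthowidth order $\varphi_m(\bH^r_p,L_\infty)\asymp(m^{-1}(\log m)^{d-1})^{r-1/p}(\log m)^{d-1}$ is fully established in the theorem, so no argument routed through $d_m$ can possibly close that case.

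The remedy you sketch --- volume estimates of $B_{\Delta Q_n}(L_q)$ together with Bourgain--Milman --- is the wrong tool for $\varphi_m$. In the paper, volume estimates and Theorem \ref{T2.5.3} enter the lower-bound proofs for \emph{entropy numbers} and \emph{Kolmogorov widths} (via Carl's inequality and the finite-dimensional width comparisons), not for orthowidths. The paper states explicitly how the lower bound for Theorems \ref{TBT4.1} and \ref{TBT4.3} is actually proved: by constructing specific test functions adapted to the orthonormal system $\{u_j\}_{j=1}^m$ appearing in the definition of $\varphi_m$. The simplest such function is a single exponential $e^{i(\bk^0,\bx)}$ with $\bk^0$ chosen (by a pigeonhole/averaging argument over the hyperbolic layer) so that its orthogonal projection onto $\mathrm{span}\{u_j\}$ has small $L_2$-norm; a more delicate example is $g(\bx+\by^*)$ with $g(\bx)=\sum_{\bs\in\theta_n^1}e^{i(\bk^\bs,\bx)}$, where the frequencies $\bk^\bs\in\rho(\bs)$, the shift $\by^*$, and the index set $\theta_n^1\subset\theta_n=\{\bs:|\bs|_1=n\}$ are all determined by $\{u_j\}$. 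These functions have their $\bH^r_p$-norm controlled by Bernstein--Nikol'skii estimates on each hyperbolic block, and the projection onto $\mathrm{span}\{u_j\}$ is bounded by the averaging argument over the $\sim n^{d-1}$ dyadic blocks in the layer; this is exactly what produces the logarithmic factor $\eta(p,q)$, which, unlike the Kolmogorov exponent $\max\{1/2,1/q\}$, carries the value $1$ at $q=\infty$ and $1/q$ when $p<q$. You need to replace your volume-estimate contingency by this explicit-example construction; without it, the lower bound in the cases $q=\infty$, $p=1$, and $1<p<2<q<\infty$ (among others) is not established.
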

Theorem \ref{TBT4.3} in the case $1\le p<q<\infty$ and $1<p=q\le 2$ was obtained
in \cite{Te82a} and \cite{Tem10}. In the case $1<q<p<2$ it was proved in
\cite{Ga88}. In the general form it was obtained in \cite{TE1} and \cite{TE2}. 

The following theorem from \cite{AT} covers the case $(p,q) = (1,1)$ or
$(\infty,\infty)$.
\begin{thm}\label{ATH} The following order estimates hold for $p=1$ and
$p=\infty$
$$
\ff_m(\bH^r_p,L_p)\asymp m^{-r}(\log m)^{(d-1)(r+1)},\qquad r>0.
$$
\end{thm}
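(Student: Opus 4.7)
I would split the claim into matching lower and upper bounds, each realized at $n\asymp \log m$ with $|Q_n|\asymp m$.

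For the \textbf{lower bound}, I would reduce to the already known $\bW^r_p$--estimate of Theorem~\ref{AT} by establishing the continuous embedding $\bW^r_p \hookrightarrow C\cdot\bH^r_p$ for $p\in\{1,\infty\}$. Writing $f = F_{r,r}\ast\varphi$ with $\|\varphi\|_p=\|f\|_{\bW^r_p}$ and using that convolutions commute, $A_{\bs}(f) = A_{\bs}(F_{r,r})\ast\varphi$, so Young's inequality combined with \eqref{BernoulliBesov} gives
$$
\|A_{\bs}(f)\|_p \le \|A_{\bs}(F_{r,r})\|_1\|\varphi\|_p \lesssim 2^{-r|\bs|_1}\|f\|_{\bW^r_p}.
$$
Taking the supremum over $\bs$ yields $\|f\|_{\bH^r_p}\lesssim \|f\|_{\bW^r_p}$. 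Since $\varphi_m$ is monotone in the class, this embedding together with Theorem~\ref{AT} gives $\varphi_m(\bH^r_p,L_p)\gtrsim \varphi_m(\bW^r_p,L_p)\asymp m^{-r}(\log m)^{(d-1)(r+1)}$.

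For the \textbf{upper bound} I would use as admissible orthonormal operator the Fourier projection $S_{Q_n}$ onto $\Tr(Q_n)$, of rank $\asymp m$, and show that
$$
\sup_{f\in\bH^r_p}\|f-S_{Q_n}(f)\|_p \lesssim 2^{-rn}n^{d-1}\asymp m^{-r}(\log m)^{(d-1)(r+1)}.
$$
Using the atomic decomposition $f = \sum_{\bs}A_{\bs}(f)$ and the Fourier support of $\A_{\bs}$: blocks with $|\bs|_1\le n$ lie in $\Tr(Q_n)$ and are fixed by $S_{Q_n}$, while blocks with $|\bs|_1>n+d$ have Fourier support disjoint from $Q_n$ and are annihilated by $S_{Q_n}$. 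Hence
$$
f-S_{Q_n}(f) = \sum_{|\bs|_1>n+d}A_{\bs}(f) + \sum_{n<|\bs|_1\le n+d}\bigl(A_{\bs}(f)-S_{Q_n}(A_{\bs}(f))\bigr).
$$
The tail sum is controlled by the $\bH^r_p$--characterization and \eqref{hcsums} via $\sum_{|\bs|_1>n}\|A_{\bs}(f)\|_p\lesssim \sum_{|\bs|_1>n}2^{-r|\bs|_1}\asymp 2^{-rn}n^{d-1}$.

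\textbf{The main obstacle} is the boundary layer sum: $\|S_{Q_n}\|_{p\to p}$ fails to be uniformly bounded for $p\in\{1,\infty\}$, so a direct triangle-inequality argument through a best-approximation polynomial would lose a factor $\|\D_{Q_n}\|_1$ that is too large. The remedy, following the proof of Theorem~\ref{AT}, is to exploit that each boundary block $A_{\bs}(f)$ is a polynomial in $\Tr(\rho^*(\bs))$ with $\|A_{\bs}(f)\|_p\lesssim 2^{-r|\bs|_1}\lesssim 2^{-rn}$, that $\{\bs:n<|\bs|_1\le n+d\}$ has cardinality only $\asymp n^{d-1}$, and that the operators commute: $S_{Q_n}\circ A_{\bs}=A_{\bs}\circ S_{Q_n}$. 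Combined with the reproducing property of a tensorized de la Vall\'ee Poussin kernel on $\Tr(\rho^*(\bs))$ and the uniform bound \eqref{2.2.16} for $A_{\bs}$, one obtains a per-block estimate $\|A_{\bs}(f)-S_{Q_n}(A_{\bs}(f))\|_p\lesssim \|A_{\bs}(f)\|_p$; summed over the $\asymp n^{d-1}$ boundary indices this produces exactly the extra $n^{d-1}$ factor. The whole $(\log m)^{d-1}$--overhead of the orthowidth relative to the Kolmogorov width in Theorem~\ref{TBT4.5} thus comes precisely from this boundary accounting.
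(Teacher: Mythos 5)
Your lower bound is correct: $\bW^r_p\hookrightarrow C\,\bH^r_p$ for $p\in\{1,\infty\}$ follows from $A_\bs(f)=A_\bs(F_{r,r})\ast\varphi$, Young's inequality and \eqref{BernoulliBesov}, and then monotonicity of $\varphi_m$ in the class plus Theorem~\ref{AT} gives the claimed lower estimate.

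The upper bound, however, has a genuine gap: the paper states explicitly, right after Theorem~\ref{ATH}, that ``in the cases $p=q=1$ and $p=q=\infty$ the operators $S_{Q_n}$ do \emph{not} provide optimal in the sense of order approximation for classes $\bW$ and $\bH$'' and that a wavelet-type orthonormal system of trigonometric polynomials is used instead in \cite{AT}. Your per-block claim $\|A_\bs(f)-S_{Q_n}(A_\bs(f))\|_p\lesssim\|A_\bs(f)\|_p$ is false for $p\in\{1,\infty\}$. For $\bs$ in the boundary layer, $S_{Q_n}$ acting on $\Tr(\rho'(\bs))$ is convolution with the \emph{sharp} Fourier cut-off kernel supported on $Q_n\cap\rho'(\bs)$, which is a union of blocks $\rho(\bt)$ with $\bt$ near $\bs$ and $|\bt|_1\le n$; the $L_1$-norm of such a Dirichlet-type kernel is $\asymp\prod_j(s_j+1)$, and this is not $O(1)$ but can be as large as $n^d$. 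The de la Vall\'ee Poussin reproducing property you invoke does not rescue this, because $S_{Q_n}\circ V_{\rho'(\bs)}$ is still convolution with a compactly supported Fourier multiplier that has a jump discontinuity at the boundary of $Q_n$, forcing a logarithmic $L_1$-norm. Already in $d=1$ this surfaces: $\sup_{f\in H^r_\infty}\|f-S_{2^n-1}f\|_\infty\asymp 2^{-rn}n$, losing a factor $\log m$ against the claimed $m^{-r}$, while for general $d$ the boundary sum $\sum_{|\bs|_1\asymp n}\prod_j(s_j+1)2^{-r|\bs|_1}\asymp 2^{-rn}n^{2d-1}$ produces an excess $(\log m)^{d-1}$ over the target $m^{-r}(\log m)^{(d-1)(r+1)}$. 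The correct construction, as in \cite{AT} and as discussed around Proposition~\ref{AT1}, replaces the univariate Fourier partial sums $S_{2^s}$ by partial sums with respect to a wavelet-type orthonormal trigonometric system whose projections \emph{are} uniformly bounded in $L_1$ and $L_\infty$; then $T_n=\sum_{|\bs|_1\le n}\Delta_\bs$ is simultaneously an orthogonal projection of rank $\asymp|Q_n|$ and is uniformly bounded, and Proposition~\ref{AT1} delivers the bound $2^{-rn}n^{d-1}$ on $\bH^r_p$. You appear to have conflated this operator with $S_{Q_n}$.
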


In the cases $p=q=1$ and $p=q=\infty$ the operators $S_{Q_n}$ do not provide
optimal in the sense of order approximation for classes $\bW$ and $\bH$. In
these cases other orthonormal system -- the wavelet type system of orthogonal
trigonometric polynomials -- was used (see \cite{AT}). The proofs of upper
bounds in Theorems \ref{AT} and \ref{ATH} are based on the approach discussed in
the beginning of Subsection 4.2 (see Proposition \ref{AT1}).

The main point of the proof of Theorems \ref{TBT4.1} and \ref{TBT4.3} is in
lower bounds. The lower bound proofs are based on special examples. Some of
these examples are simple, like, $e^{i(\bk^0,\bx)}$ with $\bk^0$ determined by
the system $u_1, u_2,\dots,u_m$ from the definition of the orthowidth $\ff_m$.
Other examples are more involved. For instance, a function $g(\bx+\by^*)$ of the
form
$$
g(\bx):=\sum_{\bs\in \theta_n^1} e^{i(\bk^\bs,\bx) },
$$
with $\by^*$, $\{\bk^\bs\}_{\bs\in\theta_n^1}$, $\theta_n^1\subset
\theta_n:=\{\bs:|\bs|_1=n\}$ are determined by the system $u_1,
u_2,\dots,u_m$. The reader can find a detailed discussion of these examples in
\cite{TE2} and \cite{TBook}.

We note that the case $1\le q<p<2$ turns out to be difficult for classes
$\bH^r_p$. Even 
the corresponding result for $E_{Q_n}(\bH^r_p)_q$ was difficult and required a
new technique. The right order of the Kolmogorov widths in this case is still
unknown.

We now proceed to classes $\Brpt$. For numbers $m$ and $r$ and parameters 
$1 < p,q < \infty$, $1 \le \theta < \infty$, we define the
functions
\[
g(m,r,p,q, \theta) =
\left(\frac{(\log m)^{d-1}}{m}\right)^{r - (1/p - 1/q)_+}
(\log m)^{(d-1)\delta(p,q,\theta)},
\]
where
\begin{equation}\nonumber
\delta(p,q,\theta) =
\begin{cases}
(1/q - 1/\theta)_+ &\qquad\text{ if }\qquad  p < q;\\
1 &\qquad\text{ if }\qquad  q\le p,\ \theta \le \min\{p,2\}; \\
1/2 - 1/\theta &\qquad\text{ if }\qquad  q\le p,\ p \ge 2, \ \theta\ge 2;\\
(1/p - 1/\theta)_+ &\qquad\text{ if }\qquad  q\le p \le 2.
\end{cases}
\end{equation}

\begin{thm} \label{thm[phi_mB]}
 Let  $1 < p,q < \infty$, $1 \le \theta < \infty$, $r > (1/p-1/q)_+$.
Then
\[
\varphi_m(\Brpt,L_q) 
\asymp 
g(m,r,p,q,\theta)
\]
and subspaces, optimal in the sense of order, are given by $\Tr(Q_n)$
with appropriate $n$ (as in Theorem \ref{TBT4.1}).
\end{thm}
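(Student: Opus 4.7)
The plan is to follow the overall strategy used for the analogous results for $\bW^r_p$ (Theorem \ref{TBT4.1}) and $\bH^r_p$ (Theorem \ref{TBT4.3}), obtained in \cite{Te82a,Tem10,TE1,TE2}, adapting the arguments to track the secondary parameter $\theta$.

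For the upper bound, I would take $n$ so that $|Q_n| \le m$ and $|Q_n| \asymp m$, and use the orthonormal system $\{(2\pi)^{-d/2}e^{i(\bk,\cdot)}: \bk \in Q_n\}$, padded with arbitrary orthonormal elements to reach cardinality $m$. The associated projection is $S_{Q_n}$, so it suffices to estimate $\sup_{f \in \Brpt}\|f - S_{Q_n}(f)\|_q$. Since $1 < q < \infty$, the Littlewood-Paley characterization (Corollary \ref{C7.1}) gives
\[
\|f - S_{Q_n}(f)\|_q \asymp \Big\|\Big(\sum_{|\bs|_1 > n}|\delta_\bs(f)|^2\Big)^{1/2}\Big\|_q\,.
\]
From here, I would combine three ingredients: (a) when $p < q$, the Franke-type embedding in Lemma \ref{L3.5c} to pass to $\bB^{r-(1/p-1/q)}_{q,\theta}$; (b) Hölder's inequality over $\bs$ to interchange the $\ell_2$, $\ell_\theta$, and $\ell_p$ norms, producing the $(\log m)^{(d-1)\delta(p,q,\theta)}$ factor depending on how $\theta$ compares to $2$, $p$ and $q$; (c) the geometric sum $\sum_{|\bs|_1 > n} 2^{-\alpha|\bs|_1} \asymp 2^{-\alpha n} n^{d-1}$ from \eqref{hcsums} for the $\bs$-summation. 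A uniform treatment is also available via Proposition \ref{ATB} applied to the operator family $Y_s = S_{2^s-1}$.

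For the lower bound, I would use the duality/test-function approach of \cite{TE1,TE2}. Given any orthonormal system $u_1,\ldots,u_m \subset L_2(\T^d)$, the goal is to construct $f \in c\cdot\Brpt$ (for some fixed $c>0$) such that $\|f - \sum_j \langle f,u_j\rangle u_j\|_q \gtrsim g(m,r,p,q,\theta)$. Two types of test functions suffice, following the pattern of Subsection 4.3 and Theorem \ref{TBT4.3}: first, single characters $e^{i(\bk^0,\bx)}$ with $\bk^0 \in \Gamma(N)\setminus Q_n$ chosen relative to the system $\{u_j\}$, which yield the main rate $(m^{-1}(\log m)^{d-1})^{r-(1/p-1/q)_+}$; and second, shifted Dirichlet-type sums $f(\bx) = g(\bx + \by^*)$ with $g(\bx) = \sum_{\bs \in \theta_n^1}\sum_{\bk \in \rho(\bs)}c_{\bk,\bs}\,e^{i(\bk,\bx)}$, where the subset $\theta_n^1 \subset \{\bs: |\bs|_1 = n\}$ of the hyperbolic layer and the shift $\by^*$ are chosen so that the projection $\sum_j\langle f,u_j\rangle u_j$ in $L_2$ captures only a small fraction of the mass of $f$. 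The Nikol'skii-duality (Appendix \ref{T7.2.2}) then converts the $L_2$ defect into an $L_q$ defect of the correct order.

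The main obstacle is in the lower bound, specifically in extracting the precise $\theta$-dependent logarithmic exponent $\delta(p,q,\theta)$. For $\bH^r_p = \bB^r_{p,\infty}$ the $\bH$-norm involves $\sup_\bs$, which lets one concentrate a test function on a single dyadic block inside a chosen hyperbolic layer; for general $1\le \theta < \infty$ the $\ell_\theta$-normalization of the Besov norm forces the mass of the test function to be distributed across $\asymp n^{d-1}$ blocks of the layer $\{\bs : |\bs|_1 = n\}$, and the allowed amplitude per block scales like $n^{-(d-1)/\theta}$. Adjusting the construction to this $\ell_\theta$-geometry, while ensuring that the orthonormal system $\{u_j\}$ cannot cover the chosen frequency set, is what produces the four different exponents in $\delta(p,q,\theta)$. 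The required case split reflects the competition between the $\ell_\theta$ summation in the Besov norm and the $\ell_2$ summation naturally appearing from the $L_2$-orthogonality of the projection, mediated by Hölder's inequality and the Littlewood-Paley theorem for the range $1 < p,q < \infty$.
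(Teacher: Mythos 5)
Your proposal is broadly in line with the classical route, and your description of the upper bound (approximation by $S_{Q_n}$, Littlewood--Paley, the Franke-type embedding Lemma \ref{L3.5c} or Proposition \ref{ATB}, and the geometric summation from \eqref{hcsums}) matches what is used for the Besov case. Your description of the lower-bound examples (single characters $e^{i(\bk^0,\bx)}$ and shifted sums $g(\bx+\by^*)$ over a subset $\theta_n^1 \subset \{\bs : |\bs|_1=n\}$) also matches the test functions that the paper explicitly mentions for Theorems \ref{TBT4.1} and \ref{TBT4.3}, and Romanyuk-type modifications of these via the estimate \eqref{RoStB1} are indeed used to track $\theta$.

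The gap is in the lower bound for the parameter ranges $1<q<p\le 2$ and $q=p\le 2$ with $q>\theta$. You claim, uniformly across all cases, that the test-function construction "while ensuring that the orthonormal system $\{u_j\}$ cannot cover the chosen frequency set, is what produces the four different exponents in $\delta(p,q,\theta)$." The paper's own commentary after the theorem statement contradicts this: for exactly those two subcases it attributes the lower bound not to a fooling-polynomial argument but to a result on nuclear widths due to Galeev \cite{Ga90b}, where one replaces orthogonal projectors by general nuclear operators and bounds the resulting smaller quantity from below. That this was necessary in the cited literature is a strong indication that the standard test-function technique does not directly yield the sharp exponent $\delta=(1/p-1/\theta)_+$ in the regime $q\le p\le 2$ for small $\theta$, and your proposal offers no mechanism to close this case. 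If you want a self-contained proof along your lines, you would need to either show that the test-function construction really does attain the claimed rate there (which the literature does not support), or import the nuclear-width bound as a separate lemma. A further sanity-check worth making before trusting any uniform argument: in the case $q\le p$, $\theta\le\min\{p,2\}$, the unit ball of $\bB^r_{p,\theta}$ is contained (up to a constant) in that of $\bW^r_p$ by \eqref{chainBFB}, so $\varphi_m(\Brpt,L_q)\lesssim\varphi_m(\bW^r_p,L_q)\asymp w(m,r,p,q)$ with $\xi(p,q)=0$ by Theorem \ref{TBT4.1}; any lower-bound argument is constrained by this monotonicity, and a careless test-function construction can easily overshoot it.
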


Theorem \ref{thm[phi_mB]} was obtained
in \cite{Di86} excepting the cases $q < p \le 2$ and $q = p \le 2$, $q > \theta$. 
The upper bounds of the last two cases  were proved in \cite{Di85}, the lower
bounds follow from a result on nuclear widths in 
\cite{Ga90b} (the  nuclear width which is smaller than orthowidth is defined in  
a way similar to the definition of orthowidth by replacing orthogonal
projectors by more general nuclear operators, for details see \cite{Ga90b}).

\begin{thm} \label{thm[phi_mB,q=1]}
 Let  $1 < p \le \infty$, $1 \le \theta < \infty$, $r > 0$.
Then
\[
\varphi_m(\Brpt,L_1) 
\asymp 
\left(\frac{(\log m)^{d-1}}{m}\right)^r
(\log m)^{(d-1)(1/p - 1/\theta)_+}
\]
and subspaces, optimal in the sense of order, are given by $\Tr(Q_n)$
with appropriate $n$ (as in Theorem \ref{TBT4.1}).
\end{thm}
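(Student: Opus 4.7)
I take $m\asymp|Q_n|\asymp 2^nn^{d-1}$ and any orthonormal basis $\{u_j\}_{j=1}^m$ of $\Tr(Q_n)$; the corresponding orthogonal projection is $S_{Q_n}$. For the upper bound it then suffices to show
\[
\|f-S_{Q_n}(f)\|_1\;\lesssim\;2^{-rn}\,n^{(d-1)(1/p-1/\theta)_+}\,\|f\|_{\Brpt}.
\]
When $\theta\le\min\{p,2\}$ (so $(1/p-1/\theta)_+=0$) this already follows from the embedding chain $\Brpt\hookrightarrow\bB^r_{p,\min\{p,2\}}\hookrightarrow\Wrp$ (Lemma~\ref{emb}(iv)) together with Theorem~\ref{TBT4.1} and the fact that $\xi(p,1)=0$. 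For the lower bound, given an arbitrary orthonormal system $u_1,\ldots,u_m$, a bad test function $g\in c\,\Brpt$ must be exhibited for which the induced orthogonal projection leaves an $L_1$-residual of the claimed order.

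\textbf{Upper bound, remaining ranges.} Decompose $f-S_{Q_n}(f)=\sum_{|\bs|_1>n}\delta_\bs(f)$ and use $\|\cdot\|_1\le\|\cdot\|_p$ on $\T^d$ together with the characterization $\|f\|_{\Brpt}^\theta\asymp\sum_\bs\|\delta_\bs(f)\|_p^\theta 2^{r|\bs|_1\theta}$. For $1<p\le 2$ and $\theta>p$, group by level $k=|\bs|_1$ and combine Littlewood--Paley in $L_p$ with the pointwise bound $\bigl(\sum_\bs a_\bs^2\bigr)^{1/2}\le\bigl(\sum_\bs a_\bs^p\bigr)^{1/p}$ (valid for $p\le 2$) and Hölder within each layer (which has $\asymp k^{d-1}$ indices) to obtain
\[
\Big\|\sum_{|\bs|_1=k}\delta_\bs(f)\Big\|_p\;\lesssim\;k^{(d-1)(1/p-1/\theta)}\Big(\sum_{|\bs|_1=k}\|\delta_\bs(f)\|_p^\theta\Big)^{1/\theta}.
\]
A second Hölder across the levels $k>n$, weighted by $2^{rk}$ with exponents $\theta,\theta'$, then produces the target $n^{(d-1)(1/p-1/\theta)}2^{-rn}\|f\|_{\Brpt}$. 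For the range $2<p\le\infty$ with $\theta>p$ a refined argument via the $A_\bs$-characterization (exploiting the uniform bound $\|A_\bs\|_{L_q\to L_q}\le C$ and Nikol'skii inequalities within each dyadic hyperbolic layer) is required, since a naive application of Littlewood--Paley plus Minkowski only yields the suboptimal exponent $(1/2-1/\theta)$.

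\textbf{Lower bound.} For any orthonormal system, set $\alpha(\bk):=\sum_{j=1}^m|\hat u_j(\bk)|^2$. Bessel's inequality gives $\alpha\le 1$ and $\sum_\bk\alpha(\bk)=m$. For each $\bs\in\theta_n:=\{\bs:|\bs|_1=n\}$ choose $\bk^\bs\in\rho(\bs)$ minimizing $\alpha$; since $|\rho(\bs)|\asymp 2^n$, averaging yields $\sum_{\bs\in\theta_n}\alpha(\bk^\bs)\le m/2^n\lesssim n^{d-1}$. Form $g(\bx):=\sum_{\bs\in\theta^*}c_\bs e^{i(\bk^\bs,\bx)}$ for a suitable subset $\theta^*\subseteq\theta_n$ and coefficients $c_\bs$ calibrated so $\|g\|_{\Brpt}\asymp 1$; in the critical case $\theta\ge p$ I take $|\theta^*|\asymp n^{d-1}$ with $c_\bs$ all equal to $c\asymp 2^{-rn}n^{-(d-1)/\theta}$. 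Parseval then gives the translation-invariant identity
\[
\int_{\T^d}\|g(\cdot+\by)-P_m g(\cdot+\by)\|_2^2\,d\by\;=\;\sum_\bk(1-\alpha(\bk))|\hat g(\bk)|^2,
\]
so that $\|g_{\by^*}-P_m g_{\by^*}\|_2\gtrsim\|g\|_2\asymp 2^{-rn}n^{(d-1)(1/2-1/\theta)}$ for some shift $\by^*$, provided the constant $C$ with $m\le C^{-1}|Q_n|$ is small enough. Finally, the $L_2$-distance must be converted to the required $L_1$-lower bound; this is done either by exploiting a Khintchine-type equivalence $\|g_{\by^*}-P_m g_{\by^*}\|_1\asymp\|g_{\by^*}-P_m g_{\by^*}\|_2$ after an additional averaging over random signs of the $c_\bs$, or via a duality argument producing a specific $\phi\in L_\infty$ (a Riesz product concentrated on the $\bk^\bs$) that witnesses the claimed exponent.

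\textbf{Main obstacle.} The sharp log factor $(1/p-1/\theta)_+$ is the crux. A direct triangle inequality gives $(1-1/\theta)$, and Minkowski combined with Littlewood--Paley gives $(1/2-1/\theta)$; both are too large as soon as $p>1$, so the upper bound must be organized as above, with pointwise $\ell^p\supset\ell^2$ used for $p\le 2$ and an $A_\bs$-refinement for $p>2$. Symmetrically, the lower-bound construction must precisely match these exponents: the sizes $|\theta^*|$ and coefficients $c_\bs$ have to be chosen case-by-case, and the $L_2\to L_1$ passage is delicate because $\|P_m g\|_\infty$ is not automatically controlled, forcing the use of averaging over signs or of a Riesz-product dual.
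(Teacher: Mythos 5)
The paper gives no proof of this theorem; it simply cites Romanyuk \cite{Ro08}, so a direct comparison to the paper's own argument is not possible. Your general scheme (upper bound via $S_{Q_n}$, lower bound via the averaging trick over $\rho(\bs)$ to locate $\bk^\bs$ with small $\alpha$-mass and a shifted test function) does coincide with the Kashin--Temlyakov methodology that the paper sketches after Theorem~\ref{TBT4.3}. However, your upper-bound case split is not exhaustive and, more seriously, appears to pursue a target that is internally inconsistent with the rest of the survey. You cover $\theta\le\min\{p,2\}$ (embedding into $\Wrp$, exponent $0$) and $\theta>p$ (layer-by-layer H\"older, exponent $1/p-1/\theta$), but when $p>2$ this leaves $2<\theta\le p$ untreated, and there $(1/p-1/\theta)_+=0$. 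Yet $\sup_{f\in\Brpt}\|f-S_{Q_n}(f)\|_1\ge E_{Q_n}(\Brpt)_1$, and the paper reports (remark after Theorem~\ref{Rom5}) that for $d=2$, $p>2$, $\theta>2$ one has $E_{Q_n}(\Brpt)_1 \asymp 2^{-rn}n^{1/2-1/\theta}$, i.e.\ a strictly positive exponent, so $S_{Q_n}$ cannot realize the claimed rate. Worse, Theorem~\ref{thm[lambda_nB,q=1]} gives $\lambda_m(\Brpt,L_1)\asymp (\cdot)^r(\log m)^{(d-1)(1/2-1/\theta)}$ for $p\ge 2$, $\theta\ge 2$, and always $\lambda_m\le\varphi_m$; for $2<\theta\le p$ this contradicts the statement $\varphi_m\asymp(\cdot)^r$. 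Your proposal cannot close this gap because the gap is in the claim itself, not in your method; you should have flagged this inconsistency rather than promising a ``refined $A_\bs$-argument.''

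The lower-bound construction is also too weak for part of the asserted range. With single-exponential blocks and $|\theta^*|\asymp n^{d-1}$ and $c_\bs$ constant calibrated so that $\|g\|_{\Brpt}\asymp 1$, you correctly get $\|g\|_2\asymp 2^{-rn}n^{(d-1)(1/2-1/\theta)}$. Converting $L_2$ to $L_1$ by sign-averaging at best preserves this, so you obtain the exponent $1/2-1/\theta$. But for $1<p<2$ and $\theta>p$ the theorem asserts $(1/p-1/\theta)_+=1/p-1/\theta$, which is strictly bigger. Single frequencies cannot reach it since for a one-term block $\|\delta_\bs(g)\|_p=\|\delta_\bs(g)\|_2$; you would need structured blocks $t_\bs\in\Tr(\rho(\bs))$ with $\|t_\bs\|_p\ll\|t_\bs\|_2$ (e.g.\ Dirichlet-kernel fragments), coupled with an explicit control of $\|P_m\|_{L_2\to L_\infty}$ on the span of the chosen modes to make the duality pairing work in $L_1$. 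As written, both the upper- and lower-bound sketches are complete only in the regime $\theta\le\min\{p,2\}$; the other ranges are either unfinished (the announced refinements are not supplied), or unreachable because the target exponent is not correct for them.
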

\noindent Theorem \ref{thm[phi_mB,q=1]} was proved in \cite{Ro08}. 

 The lower bounds (the main part) in the following result (see \cite{Rom11}) follow from 
\eqref{RoStB1} and the corresponding example for the class $\bH^r_p$. 

\begin{thm}\label{RomFour} Let $1\leq p<\infty$, $r>\frac{1}{p}$,
$1\leq \theta <\infty$. Then
$$
\varphi_m({\bB}_{p,\theta}^r, L_\infty) \asymp \left( \frac{\left(\log m \right)^{d-1}}{m}\right)^{r-\frac{1}{p}} \left(\log m \right)^{(d-1)\left(1-\frac{1}{\theta}\right)},
$$
and subspaces, optimal in the sense of order, are given by $\mathcal{T}(Q_n)$ with appropriate $n$
(as in Theorem 4.32).
\end{thm}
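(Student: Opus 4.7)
The proof of Theorem \ref{RomFour} naturally splits into two halves: an upper bound realised by the orthogonal projection $S_{Q_n}$ onto $\mathcal T(Q_n)$ (with $n$ chosen so that $|Q_n|\asymp m$, hence $n\asymp\log m$ and $2^n\asymp m/(\log m)^{d-1}$), and a matching lower bound transferred from the already-known result for $\varphi_m(\bH^r_p,L_\infty)$ via the layer inequality (\ref{RoStB1}). Both halves rely only on the hyperbolic-cross toolbox of Section \ref{trigpol}.

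For the upper bound I plan to work with the $A_\bs$-decomposition $f=\sum_\bs A_\bs(f)$ together with the equivalent quasi-norm $\|f\|_{\bB^r_{p,\theta}}\asymp\bigl(\sum_\bs(2^{r|\bs|_1}\|A_\bs(f)\|_p)^\theta\bigr)^{1/\theta}$, which is available in the full range $1\le p\le\infty$. This covers the boundary case $p=1$ in one stroke, bypassing the failure of $L_1$-boundedness of $\delta_\bs$. Since the Fourier support of $A_\bs$ sits inside $Q_n$ whenever $|\bs|_1\le n-d$, one has $\|f-S_{Q_n}(f)\|_\infty\le\sum_{|\bs|_1>n-d}\|A_\bs(f)\|_\infty$. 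The multivariate Nikol'skii inequality (Theorem \ref{T2.4.5}) yields $\|A_\bs(f)\|_\infty\lesssim 2^{|\bs|_1/p}\|A_\bs(f)\|_p$, and Hölder's inequality with exponents $\theta,\theta'$, combined with the hyperbolic tail estimate $\sum_{|\bs|_1>n}2^{-a|\bs|_1}\asymp 2^{-an}n^{d-1}$ from (\ref{hcsums}) applied with $a=(r-1/p)\theta'>0$ (here the hypothesis $r>1/p$ is decisive), produces $\|f-S_{Q_n}(f)\|_\infty\lesssim 2^{-(r-1/p)n}\,n^{(d-1)(1-1/\theta)}\,\|f\|_{\bB^r_{p,\theta}}$. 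Converting through $n\asymp\log m$, $2^{-(r-1/p)n}\asymp((\log m)^{d-1}/m)^{r-1/p}$ and $n^{(d-1)/\theta'}\asymp(\log m)^{(d-1)(1-1/\theta)}$ yields the announced upper bound and identifies $\mathcal T(Q_n)$ as order-optimal.

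For the lower bound I plan to invoke Theorem \ref{TBT4.3} with $q=\infty$ and $\eta(p,\infty)=1$, which supplies, for every rank-$m$ orthonormal system $\{u_j\}$, an extremiser $g\in\mathcal T(\Delta Q_n)$ with $n\asymp\log m$, $\|g\|_{\bH^r_p}\lesssim 1$, and $\|g-\sum_j\langle g,u_j\rangle u_j\|_\infty\gtrsim((\log m)^{d-1}/m)^{r-1/p}(\log m)^{d-1}$. Applying (\ref{RoStB1}) to this single-layer $g$ yields $\|g\|_{\bB^r_{p,\theta}}\lesssim n^{(d-1)/\theta}$, so that $\tilde g:=g/(c\,n^{(d-1)/\theta})$ lies in the unit ball of $\bB^r_{p,\theta}$ for an appropriate absolute constant $c$. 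Dividing the error estimate by the same normalising factor produces $\varphi_m(\bB^r_{p,\theta},L_\infty)\gtrsim n^{-(d-1)/\theta}\varphi_m(\bH^r_p,L_\infty)\asymp((\log m)^{d-1}/m)^{r-1/p}(\log m)^{(d-1)(1-1/\theta)}$, matching the upper bound. The main technical point demanding care is the verification that the extremiser in the proof of Theorem \ref{TBT4.3} can in fact be chosen in a single hyperbolic layer $\mathcal T(\Delta Q_n)$ rather than only in the full $\mathcal T(Q_n)$; this concentration, built into the classical constructions based on hyperbolic-cross Riesz products (in the spirit of Lemma \ref{L2.6.1}) or Rudin-Shapiro polynomials supported on $\Delta Q_n$, is precisely what converts the loss factor $n^{(d-1)/\theta}$ coming from (\ref{RoStB1}) into the sharp logarithmic term $(\log m)^{(d-1)(1-1/\theta)}$.
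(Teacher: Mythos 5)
Your lower-bound argument coincides with the paper's own route: after Theorem~\ref{thm[phi_mB,q=1]} the survey states explicitly that the lower bound in Theorem~\ref{RomFour} ``follows from \eqref{RoStB1} and the corresponding example for the class $\bH^r_p$'' (citing \cite{Rom11}), and the $\bH^r_p$-extremiser described in the discussion after Theorem~\ref{TBT4.3} is of the form $g(\bx)=\sum_{\bs\in\theta_n^1}e^{i(\bk^\bs,\bx)}$ with $\theta_n^1\subset\theta_n=\{\bs:|\bs|_1=n\}$, so it already lives in $\Tr(\Delta Q_n)$ and the concern you flag is settled; dividing by $n^{(d-1)/\theta}$ via \eqref{RoStB1} is exactly what the paper intends.

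The upper bound, however, contains a genuine gap in the step
$\|f-S_{Q_n}(f)\|_\infty\le\sum_{|\bs|_1>n-d}\|A_\bs(f)\|_\infty$.
This would require $\|A_\bs(f)-S_{Q_n}(A_\bs(f))\|_\infty\lesssim\|A_\bs(f)\|_\infty$ for the transition blocks $n-d<|\bs|_1\le n+c(d)$, which is false: $S_{Q_n}$ restricted to $\Tr(2^{\bs},d)$ is a sum of finitely many dyadic Fourier truncations $\delta_{\bs'}$, each with $L_\infty$-operator norm $\asymp\prod_j(s'_j+1)$ (the $L_1$-norm of its Dirichlet-type kernel), which can be as large as $(n/d)^d$. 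Carrying this factor through would destroy the claimed power of $\log m$.

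The repair is standard and short: start instead from the exact orthogonal decomposition
$f-S_{Q_n}(f)=\sum_{|\bs|_1>n}\delta_\bs(f)$, then estimate each block by convolution against the Dirichlet difference kernel, $\|\delta_\bs(f)\|_\infty=\|\mathcal{D}_{\rho(\bs)}\ast f\|_\infty\lesssim\|\mathcal{D}_{\rho(\bs)}\|_{p'}\sum_{\bs'\approx\bs}\|A_{\bs'}(f)\|_p\asymp 2^{|\bs|_1/p}\sum_{\bs'\approx\bs}\|A_{\bs'}(f)\|_p$, using that the Fourier supports of $\rho(\bs)$ and $\hat{\mathcal{A}}_{\bs'}$ overlap for only $O(1)$ values of $\bs'$. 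This estimate holds for all $1\le p<\infty$ (so it covers $p=1$). From there your H\"older step with exponents $\theta,\theta'$ and the tail estimate from \eqref{hcsums} go through verbatim and yield $2^{-(r-1/p)n}n^{(d-1)(1-1/\theta)}\|f\|_{\bB^r_{p,\theta}}$, i.e.\ the announced upper bound with $S_{Q_n}$ (and hence $\Tr(Q_n)$) order-optimal.
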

The right orders of the $\varphi_m({\bB}_{p,\theta}^r, L_q)$ in the cases $p=q=1$ and $p=q=\infty$, 
where the wavelet-type systems are used to prove the upper bounds, were obtained by D.B. Bazarkhanov (see \cite{Ba10a,Ba10b}). 
D.B. Bazarkhanov also obtained right orders of the Fourier widths for the Triebel-Lizorkin classes.

Results of Subsection 4.4 on the orthowidths show that if we want to approximate classes $\bW^r_p$ or $\bH^r_p$ in $L_q$
with $(p,q)$ distinct from $(1,1)$ and $(\infty,\infty)$ by operators of orthogonal projection of rank $m = |Q_n|$, then
the best (in the sense of order) operator is  $S_{Q_n}$. The operator $S_{Q_n}$ is a very natural operator for
approximating classes $\bW^r_p$ and $\bH^r_p$. Therefore, we can ask the following question: How much can we weaken the
assumption that the rank $m=|Q_n|$ linear operator is an orthogonal projection and still get that $S_{Q_n}$ is the best
(in the sense of order)? Here is a result in this direction. In \cite{TBook} along with the quantities
$\varphi_m(\bF,L_p)$ we consider the quantities
$$
\varphi_m^B (\bF,L_q ) =\inf_{G\in \mathcal L_m(B)_q}
\sup_{f\in \bF\cap D(G)}\bigl\|f - G(f)\bigr\|_q,
$$
where $B \ge 1$ is a number and $\mathcal L_m (B)_q$ is the set of
linear operators $G$
with domains $D(G)$ containing all trigonometric polynomials, and
with ranges contained in an $m$-dimensional subspace of $L_q$, such that
$\|Ge^{i(\bk,\bx)}\|_2 \le B$ for all $\bk$.
It is clear that $\mathcal L_m(1)_2$ contains the operators of orthogonal
projection onto $m$-dimensional subspaces, as well as operators given
by multipliers $\{\lambda_l\}$ with $|\lambda_l|\le 1$ for all $l$
with respect to an orthonormal system of functions. 
It is known (see \cite{TBook}) that in the case $(p,q)$ distinct from $(1,1)$ and $(\infty,\infty)$
$S_{Q_n}$ gives the order of $\varphi_m^B (\bF,L_q )$ for both $\bF=\bW^r_p$ and $\bF=\bH^r_p$. 
Another result of that same flavor is about nuclear widths mentioned above (see \cite{Ga90b}).


\subsection{The linear widths}\index{Width!Linear}
\label{linearw}

As in the previous subsections, we begin with results on  classes $\Wrp$.

\begin{thm} \label{thm[lambda_nW]}
Let $r > (1/p - 1/q)_+$, $1 < p < \infty$ and $1 \le q < \infty$.
Then we have 
\begin{equation} \label{lambda_nW}
\lambda_m(\Wrp,L_q)
\ \asymp \
\begin{cases}
\left(\frac{(\log m)^{d-1}}{m}\right)^{r - (1/p - 1/q)_+}, & \ \mbox{for}  
\quad q \le 2, \ \mbox{or} \quad p \ge 2;
\\[1ex]
\left(\frac{(\log m)^{d-1}}{m}\right)^{r - 1/p + 1/2}, & \ \mbox{for}  
\quad 1/p + 1/q \ge 1, \ q > 2, \ r > 1/p; 
\\[1ex]
\left(\frac{(\log m)^{d-1}}{m}\right)^{r - 1/2 + 1/q}, & \ \mbox{for}  
\quad 1/p + 1/q \le 1, \ p < 2, \ r > 1 - 1/q.
\end{cases}
\end{equation}
\end{thm}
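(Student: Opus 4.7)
The plan is to combine the dyadic Littlewood--Paley representation \eqref{NeqW} of $\bW^r_p$ with sharp finite-dimensional linear-width estimates for the pairs $(B^N_p,\ell^N_q)$, connected to trigonometric polynomials by Marcinkiewicz's discretization (Theorem \ref{T2.4.11}). The unified scheme is: given a block-rank allocation $\{m_\bs\}_\bs$ with $\sum_\bs m_\bs\lesssim m$, build a rank-$m$ linear operator $A$ on $L_q$ by $A(f)=\sum_\bs A_\bs(\delta_\bs f)$, where $A_\bs$ is an optimal rank-$m_\bs$ linear operator on the finite-dimensional space $\Tr(\rho(\bs))$ realizing (via discretization) $\lambda_{m_\bs}(B^{|\rho(\bs)|}_p,\ell^{|\rho(\bs)|}_q)$. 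Upper bounds then follow by optimization over $\{m_\bs\}$ under $\sum m_\bs\le m$; lower bounds follow by choosing trial polynomials $t\in \Tr(\rho(\bs^\ast))$ on a single dyadic layer, applying Marcinkiewicz to identify the $L_p$- and $L_q$-norms with discrete $\ell_p$- and $\ell_q$-norms, and invoking the same finite-dimensional lower bound.

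First I would dispose of Case 1. The lower bound is free from $\lambda_m\ge d_m$ and Theorem \ref{TBT4.4}, whose exponents match those in \eqref{lambda_nW} exactly when $q\le 2$ or $p\ge 2$. For the upper bound when $q\le 2$, take $|Q_n|\asymp m$ and use the operator $V_{Q_n}$: by the multivariate analogue of Theorem \ref{T2.1.2} and the Littlewood--Paley theorem it is bounded on $L_q$ uniformly in $n$, hence $\|f-V_{Q_n}f\|_q\le (1+\|V_{Q_n}\|_{q\to q})E_{Q_n}(f)_q$, and Theorem \ref{TBT3.2} provides the required rate. When $p\ge 2$ and $q\ge 2$, $S_{Q_n}$ plays the same role, since it is bounded on $L_q$ for $1<q<\infty$ and reproduces $\Tr(Q_n)$.

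For Cases 2 and 3 (the genuinely new range $1<p<2<q<\infty$), I would run the blockwise construction above with the input
\[
\lambda_m(B^N_p,\ell^N_q)\asymp
\begin{cases}
m^{-1/2+1/q}\,N^{1/p-1/2}, & 1/p+1/q\le 1,\\[1mm]
m^{-1/2}\,N^{1/q}, & 1/p+1/q\ge 1,
\end{cases}
\]
(valid for $m\le cN$; this is the companion to Lemma \ref{Lemma[Kashin-Gluskin]} for linear widths, going back to Kashin and Gluskin). Using \eqref{NeqW} we have, morally, $\|\delta_\bs f\|_p\lesssim 2^{-r|\bs|_1}$ for $f\in\bW^r_p$; splitting $f$ into its $S_{Q_n}f$-part (reproduced exactly) plus a tail over $|\bs|_1>n$ with $|Q_n|\asymp m$, and allocating $m_\bs\asymp m\,2^{-\beta(|\bs|_1-n)}$ with an exponent $\beta$ dictated by the position of the appropriate branch, the tail contributes a geometric series whose sum matches the main rate; the restrictions $r>1/p$ (Case 2) and $r>1-1/q$ (Case 3) are precisely the summability thresholds that force this series to converge with the leading block dominating. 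The lower bound is obtained by a Maiorov-type argument: fix $\bs^\ast$ with $|\bs^\ast|_1=n$ so that $|Q_n|\asymp m$, take $f=c\,2^{-rn}t$ with $t\in \Tr(\rho(\bs^\ast))$ so that $f\in\bW^r_p$; Theorem \ref{T2.4.11} then reduces $\lambda_m(\bW^r_p,L_q)\gtrsim 2^{-rn}\lambda_m(B^{N}_p,\ell^{N}_q)$ with $N\asymp 2^n$, and the displayed finite-dimensional lower bound finishes the argument after accounting for $n\asymp \log(m/(\log m)^{d-1})$.

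The main obstacles are two. First, the finite-dimensional asymptotics of $\lambda_m(B^N_p,\ell^N_q)$ for $p<2<q$ are delicate and split at $1/p+1/q=1$; the transition between the two branches must be mirrored in the optimal block-rank allocation, and the upper-bound construction must avoid cancellation across blocks (so one needs either a $\Tr$-adapted orthogonal-like decomposition or a block Marcinkiewicz argument on each $\rho(\bs)$). Second, in the lower bound one must extract the logarithmic factor with the correct exponent: since a single layer $|\bs^\ast|_1=n$ gives only $N\asymp 2^n$ (no $n^{d-1}$), one should work instead on a thickened layer $\Delta Q_n$ of cardinality $\asymp 2^n n^{d-1}$, and verify that the relevant finite-dimensional linear-width estimate survives with $N\asymp 2^n n^{d-1}$ in place of $N\asymp 2^n$; this step is what converts the single-scale rate into $((\log m)^{d-1}/m)^{\alpha}$.
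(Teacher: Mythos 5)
Your overall scheme --- the Littlewood--Paley representation \eqref{NeqW}, per-block Marcinkiewicz discretization via Theorem~\ref{T2.4.11}, reduction to finite-dimensional linear widths, and rank allocation across dyadic blocks --- is exactly the one behind Galeev's proofs cited in the survey, and your Case~1 treatment (the $\lambda_m\ge d_m$ lower bound from Theorem~\ref{TBT4.4}, and a hyperbolic-cross projection for the upper bound) is the paper's. But several steps as written would not go through.

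On the upper bound side: (i) your allocation $m_\bs\asymp m\,2^{-\beta(|\bs|_1-n)}$ over-spends, since each layer $|\bs|_1=l>n$ contains $\asymp l^{d-1}$ blocks and therefore $\sum_\bs m_\bs\asymp m\sum_{l>n}l^{d-1}2^{-\beta(l-n)}\asymp m(\log m)^{d-1}\gg m$; the per-layer budget must be shared among the $\asymp l^{d-1}$ blocks of the layer. (ii) Your displayed formulas for $\lambda_m(B^N_p,\ell^N_q)$ neither reproduce the paper's Lemma~\ref{Lemma[Gluskin]} nor agree with each other at $1/p+1/q=1$ (try $p=3/2$, $q=3$, $m\asymp cN$: the two branches differ by a power of $N$). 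The genuine Gluskin estimate is a maximum of competing terms whose regime depends on how $m$ compares with $N$ and $N^{q/2}$, and tracking that piecewise structure through the allocation is exactly what produces the split between Case~2 and Case~3. (iii) The theorem includes $q=1$, where your claim that $V_{Q_n}$ is uniformly bounded fails: by Lemma~\ref{L2.3.2}, $\|\mathcal V_{Q_n}\|_1\asymp n^{d-1}$, so $\|V_{Q_n}\|_{L_1\to L_1}\asymp(\log m)^{d-1}$ and naive projection loses that factor; this case needs Romanyuk's separate argument, which the paper cites.

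On the lower bound side: for Case~2 the paper avoids discretization entirely --- since $q>2$, $\|\cdot\|_2\le\|\cdot\|_q$ on $\T^d$, so $\lambda_m(\bW^r_p,L_q)\ge\lambda_m(\bW^r_p,L_2)=d_m(\bW^r_p,L_2)$, and Theorem~\ref{TBT4.4} with target $L_2$ gives $\big((\log m)^{d-1}/m\big)^{r-1/p+1/2}$ directly. For Case~3, your proposed repair --- thicken to $\Delta Q_n$ with $|\Delta Q_n|\asymp 2^n n^{d-1}$ and apply Marcinkiewicz there --- is precisely the step the paper singles out as impossible: there is no Marcinkiewicz theorem for polynomials with hyperbolic cross spectrum (see the remark closing Subsection~2.4, and Theorem~\ref{T2.5.7}, Corollary~\ref{C2.5.2}, which show that a proportional discrete point set on $\Tr(\Delta Q_n)$ forces a norm-comparison blow-up of order $n^{1/2}$). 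The logarithmic power in Case~3 must instead come from a block-decomposed argument that discretizes each $\rho(\bs)$ separately and invokes mixed-norm finite-dimensional width estimates in the spirit of Lemmas~\ref{Lemma[Galeev]}--\ref{Lemma[MaRu16]}; that is the content of Galeev's proof that your sketch glosses over.
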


Theorem \ref{thm[lambda_nW]} for $1 < q < \infty$ was proved by
Galeev~\cite{Ga87,Ga96}, and for $q=1$ by Romanyuk~\cite{Ro08}. It is interesting to
notice that by putting 
$M = \frac{m}{(\log m)^{d-1}}$, the relations \eqref{lambda_nW} look like the
asymptotic order of 
$\lambda_m(W^r_p,L_q)$ in the univariate case ($d=1$), see e.g. \cite{ET92}.

We summarize the above results on the $\lambda_m(\bW^r_p,L_q)$ in the following picture.

\begin{figure}[H]
\begin{minipage}{0.48\textwidth}
\begin{center}
\begin{tikzpicture}[scale=2.5]
\draw[->] (2,0.0) -- (2.1,0.0) node[right] {$\frac{1}{p}$};
\draw (0,0) -- (-0.1,0.0);
\draw (0,0) -- (0,-0.1);
\draw (0.0,-.1) -- (0.0,2.0);
\draw[very thick][dashed] (-0,0.0) -- (2,0.0);
\draw[->] (0.0,2) -- (0.0,2.1) node[above] {$\frac{1}{q}$};
\draw[ultra thick] (0,0) -- (0,2);
\draw (1.0,0.03) -- (1.0,-0.03) node [below] {$\frac{1}{2}$};
\draw (0.03,1) -- (-0.03,1) node [left] {$\frac{1}{2}$};

\node at (1.7,2.2) {$\lambda_m(\bW^r_p,L_q)$};

\draw[ultra thick] (0,2) -- (2,2);
\draw (1,1) -- (2,1);

\draw (1,1) -- (2,0);
\draw (1,0) -- (1,1);
\draw (1,1) -- (2,1);
\draw[very thick][dashed] (2,2) -- (2,0);

\node at (1.4,0.2) {\tiny ${r-\frac{1}{2}+\frac{1}{q}}$};
\node at (1.6,0.8) {\tiny ${r-\frac{1}{p}+\frac{1}{2}}$};
\node at (1,1.4) {\tiny $\Big(\frac{(\log
m)^{d-1}}{m}\Big)^{r-(\frac{1}{p}-\frac{1}{q})_+}$};

\node at (1,1.8) {\tiny {\bf  {Hyperbolic cross optimal}}};
\draw (2,0.03) -- (2,-0.03) node [below] {$1$};
\draw (0.03,2) -- (-0.03,2) node [left] {$1$};

\end{tikzpicture}

\end{center}

\end{minipage}
\begin{minipage}{0.48\textwidth}
 \begin{center}
\begin{tikzpicture}[scale=2.5]

\draw (-0.1,0) -- (0,0);
\draw[->] (2,0.0) -- (2.1,0.0) node[right] {$\frac{1}{p}$};
\draw[very thick][dashed]  (0,0.0) -- (1,0.0);
\draw[very thick][dashed]  (1,.0) -- (2,0.0);
\draw[->] (0.0,-.1) -- (0.0,2.1) node[above] {$\frac{1}{q}$};

\draw (1.0,0.03) -- (1.0,-0.03) node [below] {$\frac{1}{2}$};
\draw (0.03,1) -- (-0.03,1) node [left] {$\frac{1}{2}$};

\draw[ultra thick](0,2) -- (2,2);
\draw (1,1) -- (2,1);
\draw (0,0) -- (1,1);

\draw (1,1) -- (2,1);
\draw[very thick][dashed] (2,2) -- (2,1);
\draw[ultra thick] (2,1) -- (2,0);
\draw[ultra thick] (0,2) -- (0,0);
\node at (1,1.4) {\tiny $\Big(\frac{(\log
		m)^{d-1}}{m}\Big)^{r-(\frac{1}{p}-\frac{1}{q})_+}$};
\node at (1.1,0.3) {\tiny $\Big(\frac{(\log
		m)^{d-1}}{m}\Big)^{r-(\frac{1}{p}-\frac{1}{2})_+}$};
\node at (1,1.8) {\tiny {\bf  {Hyperbolic cross optimal}}};
\draw (2,0.03) -- (2,-0.03) node [below] {$1$};
\draw (0.03,2) -- (-0.03,2) node [left] {$1$};
\node at (1.7,2.2) {$d_m(\bW^r_p,L_q)$};
\end{tikzpicture}

\end{center}

\end{minipage}
\caption{Comparison of $\lambda_m(\bW^r_p,L_q)$ and $d_m(\bW^r_p,L_q)$}
\end{figure}

\noindent The region
$$
R_1 := \{(1/p,1/q) : \quad 0<1/p \le 1/2, \  \quad \text{or} \quad  1/2 \le 1/q \le 1\}
$$
is covered by Theorem \ref{thm[lambda_nW]}. For this region the upper bounds
follow from Theorem \ref{TBT3.2} and the fact that the operators $S_{Q_n}$ 
are uniformly bounded from $L_q$ to $L_q$, $1<q<\infty$. It means that in this case the subspaces
$\Tr(Q_n)$ of the hyperbolic cross polynomials are optimal in the sense of
order. Theorem \ref{thm[lambda_nW]} shows that for the small square $(1/2,1)
\times (0,1/2)$ the subspaces $\Tr(Q_n)$ are not optimal in the sense of order. 
Theorem \ref{thm[lambda_nW]} gives the orders of the $\lambda_m(\Wrp,L_q)$ for
all $(1/p,1/q)$ from the  square $(0,1) \times (0,1]$ under some restrictions on
$r$. In all other cases the right orders of $\lambda_m(\Wrp,L_q)$ are not known
(see Open problem 4.8).

\begin{thm} \label{thm[lambda_nH]}  Let $1 \leq p,q \leq \infty$ and $r > (1/p - 1/q)_+$.
Then we have
 \begin{equation}\nonumber
\begin{split}
&\lambda_m(\Hrp,L_q)\\
&\hspace{0.1cm}\ \asymp \
\begin{cases}
\left(\frac{(\log m)^{d-1}}{m}\right)^r 
(\log m)^{(d-1)/2}, & \ \mbox{for}  
 \quad 1\leq q\leq p\leq \infty,\\
 &\hspace{1cm} p\geq 2, q<\infty; 
\\[1ex]
\left(\frac{(\log m)^{d-1}}{m}\right)^{r - 1/p + 1/q} 
(\log m)^{(d-1)/q}, & \ \mbox{for}  
\quad 1<p \le q \le 2;  \\
&\hspace{1cm} 2\leq p<q<\infty;
\\[1ex]
\left(\frac{(\log m)^{d-1}}{m}\right)^{r - 1/p + 1/2} (\log m)^{(d-1)/2}, & \
\mbox{for}  
\quad 1/p + 1/q \ge 1,\\
&\hspace{1cm} 2<q<\infty, \ r > 1/p;
\\[1ex]
\left(\frac{(\log m)^{d-1}}{m}\right)^{r - 1/2 + 1/q}
(\log m)^{(d-1)/q}, & \ \mbox{for}  \quad
1/p+1/q<1, q<\infty,\\
&\hspace{1cm} 1<p\leq 2, r>1-1/q.
\end{cases}
\end{split}
\end{equation}
\end{thm}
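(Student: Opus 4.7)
The four regimes split naturally into two matters: those where the hyperbolic cross subspace $\Tr(Q_n)$ is optimal in the sense of order (cases 1 and 2), and those where it is not (cases 3 and 4). My plan is to handle the upper bounds by exhibiting concrete linear operators of rank $m \asymp |Q_n|$ (hence $n \asymp \log m - (d-1)\log\log m$), and then to prove matching lower bounds by discretizing via Marcinkiewicz-type theorems and appealing to finite-dimensional width results.

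\textbf{Upper bounds, cases 1 and 2.} Here I would simply take the de la Vall\'ee Poussin operator $A := V_{Q_n}$, which is linear of rank $\asymp |Q_n| \asymp m/(\log m)^{d-1}$ and uniformly bounded on every $L_q$ with $1 < q < \infty$ (and satisfies $\|V_{Q_n}\|_{q\to q} \lesssim n^{d-1}$ for the endpoint cases, which can be absorbed into the logarithmic factor). The error bound is then
\begin{equation*}
\sup_{f\in \Hrp}\|f - V_{Q_n}(f)\|_q \;\lesssim\; \sup_{f\in \Hrp} E_{Q_n}(f)_q,
\end{equation*}
and Theorem \ref{TBT3.3} (together with $2^{-rn} \asymp m^{-r}(\log m)^{r(d-1)}$ and $n^{(d-1)\alpha} \asymp (\log m)^{(d-1)\alpha}$) gives exactly the stated rates in cases 1 and 2.

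\textbf{Upper bounds, cases 3 and 4.} In these regimes the best approximation by hyperbolic cross polynomials is too coarse, so I would use a Smolyak-type operator built block-wise with Kashin--Gluskin compression. Concretely, decompose $f = \sum_\bs A_\bs(f)$ and write
\begin{equation*}
T_m(f) := \sum_{|\bs|_1 \le n} A_\bs(f) + \sum_{|\bs|_1 > n} B_{m_\bs}(A_\bs(f)),
\end{equation*}
where $B_{m_\bs}$ is a rank-$m_\bs$ linear map realizing (in the sense of order) the finite-dimensional linear width $\lambda_{m_\bs}(\Tr(\rho(\bs))_p, L_q)$. Using the Marcinkiewicz discretization (Theorem \ref{T2.4.11}) one transfers this problem to $\lambda_{m_\bs}(B^{|\rho(\bs)|}_p, \ell^{|\rho(\bs)|}_q)$, which is controlled in case 3 by the Kashin--Gluskin/Garnaev--Gluskin estimate (Theorem \ref{TBT1.4.6} and its dual versions), and in case 4 by the corresponding Pietsch--Stesin-type bound. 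Balancing $\sum m_\bs \asymp m$ via a geometric choice $m_\bs = [|\rho(\bs)| 2^{\kappa(n - |\bs|_1)}]$ (as in the standard argument for the Kolmogorov width) and summing $2^{-r|\bs|_1}$ factors from the $\Hrp$-norm yields the rates in cases 3 and 4. The restriction $r > 1/p$ (resp.\ $r > 1 - 1/q$) is exactly what is needed for this geometric tail to converge.

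\textbf{Lower bounds.} For cases 1 and 2, the lower bound is the orthowidth/Kolmogorov lower bound already contained in Theorems \ref{TBT4.5} and \ref{TBT4.5'} (since $\lambda_m \ge d_m$), with ``bad'' test functions supported on $\Tr(\Delta Q_n)$ built from Rudin--Shapiro-type blocks $\sum_{\bk \in \rho(\bs)} \varepsilon_{\bs,\bk} e^{i(\bk,\bx)}$, normalized in $\Hrp$. For cases 3 and 4, the heart of the matter is to establish that no rank-$m$ \emph{linear} operator can do better, which is strictly stronger than the Kolmogorov lower bound. Here I would discretize: pick a ``fooling'' subspace $\Psi_n \subset \Tr(\Delta Q_n)$ of block-wise Rudin--Shapiro polynomials, normalize in $\Hrp$, and use the Marcinkiewicz inequality to embed $\Psi_n$ isomorphically into $\ell^{n,s}_{p,\infty}$-type spaces with $n = |\rho(\bs)|$ and $s \asymp |\theta_n|$ (the number of hyperbolic layers of size $n$). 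The needed lower bound for $\lambda_m(B^{n,s}_{1,\infty},\ell^{n,s}_{2,q})$ is then supplied by Galeev's Lemma \ref{Lemma[Galeev]} for $1 < q < \infty$ and, crucially for $q = 1$ in case 4, by the recent Malykhin--Ryutin Lemma \ref{Lemma[MaRu16]}.

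\textbf{Main obstacle.} The hardest step, and the one where the entire argument turns, is the lower bound in cases 3 and 4. It is not enough to reduce to a finite-dimensional Kolmogorov-width problem on $B^n_p$; we need a finite-dimensional \emph{linear}-width (or equivalently, approximation-number) lower bound on a vector-valued space $\ell^{n,s}_{p,q}$, which requires sharp results like Galeev's lemma and, at the $q = 1$ endpoint, the delicate Malykhin--Ryutin bound. The role of the smoothness thresholds $r > 1/p$ and $r > 1 - 1/q$ is subtle --- they appear in the upper-bound construction to make the block-wise summation converge, but they must also be respected in the lower bound so that the chosen fooling subspace lies in the unit ball of $\Hrp$ and has the right dimension relative to $m$.
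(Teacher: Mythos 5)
Your overall blueprint matches the one the paper delegates to Temlyakov \cite{TE2}, Galeev \cite{Ga96} and Malykhin--Ryutin \cite{MaRu16}: $S_{Q_n}$-type hyperbolic-cross approximation gives the upper bounds whenever $p\ge 2$ or $q\le 2$, block-wise compression with finite-dimensional linear width estimates handles the upper bounds when $p\le 2<q$, and for the lower bounds one either uses $\lambda_m\ge d_m$ directly or descends to multi-indexed $\ell^{n,s}_{p,q}$-spaces and invokes the Galeev and Malykhin--Ryutin estimates. However your assignment of which cases need which lower-bound technology is reversed in a way that would make your proof fail.

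First, case 3 ($1/p+1/q\ge 1$, $p\le 2<q<\infty$) does \emph{not} need a lower-bound argument ``strictly stronger than Kolmogorov''. A direct comparison with Theorem \ref{TBT4.5} (with $\max\{1/2,1/q\}=1/2$ and $(1/p-1/2)_+ = 1/p-1/2$) shows $d_m(\bH^r_p,L_q)\asymp \big((\log m)^{d-1}/m\big)^{r-1/p+1/2}(\log m)^{(d-1)/2}$, which is exactly the asserted rate for $\lambda_m$ in case 3; so $\lambda_m\ge d_m$ already suffices there. Second, and more damaging, the $2\le p<q<\infty$ subregime of case 2 \emph{cannot} be handled by the Kolmogorov lower bound, contrary to your plan of using Theorems \ref{TBT4.5}, \ref{TBT4.5'}. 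There $d_m(\bH^r_p,L_q)\asymp \big((\log m)^{d-1}/m\big)^{r}(\log m)^{(d-1)/2}$ (since $(1/p-1/2)_+=0$), while the claimed $\lambda_m\asymp \big((\log m)^{d-1}/m\big)^{r-1/p+1/q}(\log m)^{(d-1)/q}$; the ratio $\lambda_m/d_m \asymp (m/(\log m)^{d-1})^{1/p-1/q}(\log m)^{(d-1)(1/q-1/2)}$ tends to infinity, so $d_m$ is strictly smaller in order. This is precisely the regime (together with case 4) where the Malykhin--Ryutin Lemma \ref{Lemma[MaRu16]} for $d_m(B^{n,s}_{1,\infty},\ell^{n,s}_{2,1})$ is indispensable, closing the logarithmic gap left by Izaak's Lemma \ref{Lemma[Izaak]}. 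Lastly, your citation of finite-dimensional tools for the case 3--4 upper bounds is off: Theorem \ref{TBT1.4.6} and Lemma \ref{Lemma[Pietsch-Stesin]} are Kolmogorov-width estimates (the latter in the regime $q\le p$), whereas what the block-wise compression actually requires is the Gluskin \emph{linear}-width Lemma \ref{Lemma[Gluskin]} for $\lambda_m(B^n_p,\ell^n_q)$ in the range $p<2\le q$, together with its dual for $1/p+1/q\le 1$.
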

\noindent  The first line is due to Temlyakov \cite{TE2}. We make a remark on this result, which is Theorem 3.2 from
\cite{TE2}. The most important part of this result is the lower bound for $\lambda_m(\bH^r_\infty,L_1)$, which is proved in 
\cite{TE2}. For the upper bounds in Theorem 3.2 from \cite{TE2} the reference is given. 
However, it is not pointed out there that the upper bounds are proved for $q<\infty$. In other words, 
the condition $q<\infty$ is missing there. In the case $p=q=\infty$ the following relation holds for $d=2$, $r>0$
$$
\lambda_m(\bH^r_\infty,L_\infty) \asymp m^{-r}(\log m)^{r+1}.
$$
Comments on the lower bound are given after Theorem \ref{Te96H} above. Approximation by $V_{Q_n}$ with an appropriate $n$ implies the upper bound. 
The right order of $\lambda_m(\bH^r_\infty,L_\infty)$ in case $d\ge 3$ is not known. The cases $1<p\leq q\leq 2$ as well as $1/p+1/q\geq 1$, $2<q<\infty$, $r>1/p$ in 
Theorem \ref{thm[lambda_nH]} are due to Galeev \cite{Ga96}. The remaining cases have been settled very recently by Malykhin and Ryutin \cite{MaRu16}. These authors provided 
sharp lower bounds (based on Lemma \ref{Lemma[MaRu16]} above) to well-known upper bounds given by Galeev \cite{Ga96}. 
Note, that the case $1\leq q\leq p<2$ is still open. For an upper bound we refer the reader to the third case in Theorem \ref{TBT3.3} above.

The fact that the class $\Hrp$ are properly larger than the class $\Wrp$ is reflected to
$\lambda_m(\Hrp,L_q)$ by the additional logarithm term $(\log
m)^{\max\{1/2,1/q\}\,(d-1)}$. We summarize the above results on the
$\lambda_m(\bH^r_p,L_q)$ in the following picture,

\begin{figure}[H]
\begin{minipage}{0.48\textwidth}
 \begin{center}
\begin{tikzpicture}[scale=2.5]
\draw[->] (2.0,0.0) -- (2.1,0.0) node[right] {$\frac{1}{p}$};
\draw[->] (0,2.0) -- (0,2.1) node[above] {$\frac{1}{q}$};
\draw (0,0) -- (-0.1,0.0);
\draw (0,0) -- (0,-0.1);
\draw[very thick][dashed] (1,0.0) -- (2.0,0.0);
\draw[very thick][dashed] (0,0.0) -- (1,0.0);
\draw[very thick][dashed] (0.0,0) -- (0.0,2.0);

\node at (1.7,2.2) {$\lambda_m(\bH^r_p,L_q)$};
\draw (0.03,1) -- (-0.03,1) node [left] {$\frac{1}{2}$};
\draw (1,0.03) -- (1,-0.03) node [below] {$\frac{1}{2}$};

\draw[ultra thick] (0,2) -- (1,2);
\draw[very thick][dashed] (1,2) -- (2,2);
\draw (1,1) -- (1,2);
\draw (0,0) -- (2,2);
\draw[very thick][dashed] (2,2) -- (2,1);
\draw[very thick][dashed] (2,1) -- (2,0);
\draw (1,1) -- (2,0);
\draw (1,1) -- (2,1);
\draw (1,0) -- (1,1); 
\node at (0.22,1.4) {\small $ \alpha = r  $};
\node at (0.22,1.2) {\small $\beta = \frac{1}{2}$};
\node at (0.45,0.82) {\small \bf  {HC opt.}};
\node at (1.6,0.85) { \tiny$\alpha = r-\frac{1}{p}+\frac{1}{2}$};
\node at (1.7,0.65) {\tiny $\beta = \frac{1}{2}$};
\node at (1.8,1.55) {\tiny \bf  {HC opt.}};
\node at (1.6,1.15) { \tiny $\alpha = r-\frac{1}{p}+\frac{1}{q}$};
\node at (1.7	,1.35) { \tiny $\beta = \frac{1}{q}$};

\node at (1.2,1.5) {\huge ?};
\node at (0.55,0.1) {\tiny $\alpha = r-1/p+1/q$};
\node at (0.5,0.27) {\tiny $\beta = 1/q$};
\node at (0.73,0.48) {\tiny \bf  {HC opt.}};

\node at (1.45,0.1) {\tiny $\alpha = r-1/2+1/q$};
\node at (1.3,0.27) {\tiny $\beta = 1/q$};


\draw (2,0.03) -- (2,-0.03) node [below] {$1$};
\draw (0.03,2) -- (-0.03,2) node [left] {$1$};

\end{tikzpicture}
\end{center}
\end{minipage}
\begin{minipage}{0.48\textwidth}
\begin{center}
\begin{tikzpicture}[scale=2.5]
\draw (0,0) -- (0,-0.1);
\draw (-0.1,0) -- (0.0,0.0);
\draw[->] (2,0.0) -- (2.1,0.0) node[right] {$\frac{1}{p}$};
\draw[->] (0.0,2) -- (0.0,2.1) node[above] {$\frac{1}{q}$};
\draw[very thick][dashed] (0,0) -- (2,0);
\draw[very thick][dashed] (0,0) -- (0,2);
\draw (0.03,1) -- (-0.03,1) node [left] {$\frac{1}{2}$};
\draw (1,0.03) -- (1,-0.03) node [below] {$\frac{1}{2}$};

\draw[ultra thick] (0,2) -- (1,2);
\draw[very thick][dashed] (1,2) -- (2,2);
\draw (1,1) -- (1,2);
\draw (0,0) -- (2,2);
\draw[very thick][dashed] (2,2) -- (2,1);
\draw[ultra thick] (2,1) -- (2,0);

\node at (0.5,1.4) {\small $\alpha = r, \beta = \frac{1}{2}$};
\node at (1.3,1.6) {\huge ?};
\node at (1.1,0.15) {\small $\alpha = r-\Big(\frac{1}{p}-\max\big\{\frac{1}{2},\frac{1}{q}\big\}\Big)_+$};
\node at (1.1,0.5) {\small $\beta = \max\big\{\frac{1}{2},\frac{1}{q}\big\}$};
\draw (2,0.03) -- (2,-0.03) node [below] {$1$};
\draw (0.03,2) -- (-0.03,2) node [left] {$1$};
\draw[dashed, color = red] (1,1) -- (2,1);
\node at (1.7,2.2) {$d_m(\bH^r_p,L_q)$};
\node at (1.7,1.3) {\small \bf  {HC opt.}};
\node at (0.45,0.82) {\small \bf  {HC opt.}};
\end{tikzpicture}
\end{center}
\end{minipage}
\caption{Comparison of $\lambda_m(\bH^r_p,L_q)$ and $d_m(\bH^r_p,L_q)$} \label{compH}
\end{figure}

where $\alpha$ and $\beta$ refers to the asymptotic order
$$\Big(\frac{(\log m)^{d-1}}{m}\Big)^{\alpha}(\log m)^{(d-1)\beta}.$$
The region
$$
R_1 := \{(1/p,1/q) : \quad 0<1/p \le \min\{1/2,1/q\} \quad \text{or} \quad  1/2
\le 1/q \le 1/p \le 1\}
$$
is covered by Theorem \ref{thm[lambda_nH]}. For this region the upper bounds
follow from Theorem \ref{TBT3.2}, which means that in this case the subspaces
$\Tr(Q_n)$ of the hyperbolic cross polynomials are optimal in the sense of
order. 
The region
$$
R_2 := \{(1/p,1/q) : \quad 0<1/q <1/p <1, \   1/p + 1/q  \ge 1\}
$$
is covered by Theorem \ref{thm[lambda_nH]}.
Theorem \ref{thm[lambda_nH]}
shows that for this  region the subspaces $\Tr(Q_n)$ are not optimal in the
sense of order. Theorem \ref{thm[lambda_nH]} gives the orders of 
the $\lambda_m(\Hrp,L_q)$ for all $(1/p,1/q)$ from the  regions  $R_1$ and
$R_2$ under some restrictions on $r$. In all other cases the right orders of
$\lambda_m(\bH^r_p,L_q)$ are not known (see Open problem 4.8).


\begin{thm} \label{thm[lambda_nB]}
Let $r > (1/p - 1/q)_+$, $1 < p,q < \infty$ and $1 \le \theta < \infty$.
Then we have
\begin{equation}\nonumber
\begin{split}
&\lambda_m(\Brpt,L_q)\\
&\hspace{0.1cm}\ \asymp \
\begin{cases}
\left(\frac{(\log m)^{d-1}}{m}\right)^r  
(\log m)^{(1/2 - 1/\theta)_+(d-1)}, & \ \mbox{for}  
\ \ 2 \le q < p; 
\\
& \quad \quad q \le 2 \le p, \ \theta \ge 2;
\\[1ex]
 \left(\frac{(\log m)^{d-1}}{m}\right)^{r - 1/p + 1/q}  
, & \ \mbox{for}  
\quad  2 \le p < q,
\\
& \quad \quad  
 \ 2 \le \theta \le q, 
\ r > 1 - 1/q; 
\\[1ex]
\left(\frac{(\log m)^{d-1}}{m}\right)^{r - 1/p + 1/q}  
(\log m)^{(1/q - 1/\theta)_+(d-1)}, & \ \mbox{for}  
\quad  1 < p < q \le 2;
\\[1ex]
\left(\frac{(\log m)^{d-1}}{m}\right)^{r - 1/p + 1/2} 
(\log m)^{(1/2 - 1/\theta)_+(d-1)}, & \ \mbox{for}  
\quad 1/p + 1/q \ge 1, \ q \ge 2, 
\\
& \qquad \quad  r > 1/p;
\\[1ex]
\left(\frac{(\log m)^{d-1}}{m}\right)^{r - 1/2 + 1/q}, & \ \mbox{for}  
\quad 1/p + 1/q \le 1, \ 1< p \le 2, 
\\
& \qquad \quad 2 \le \theta \le q, \ r > 1 - 1/q.
\end{cases}
\end{split}
\end{equation}
\end{thm}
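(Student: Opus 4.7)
The strategy is to establish the upper and lower bounds separately, relying on three ingredients developed earlier: best-approximation estimates for $\Brpt$ by hyperbolic cross polynomials (Theorem \ref{Theorem[HC-Approx-Brpt]}), the embeddings of Lemma \ref{emb} (especially the Jawerth--Franke type embeddings of Lemmas \ref{L3.5c} and \ref{L3.5d}), and the finite-dimensional width results of Lemmas \ref{Lemma[Kashin-Gluskin]}--\ref{Lemma[MaRu16]} together with Theorem \ref{TBT1.4.6}. Since $\Brpt$ is monotone in $\theta$, we have the sandwich $\lambda_m(\bB^r_{p,\min\{2,p\}},L_q) \lesssim \lambda_m(\Wrp,L_q) \lesssim \lambda_m(\bB^r_{p,\max\{2,p\}},L_q)$, so we can calibrate against Theorems \ref{thm[lambda_nW]} and \ref{thm[lambda_nH]}; the whole task is then to identify the correct $\theta$-dependent logarithmic factor.

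For the upper bounds I would proceed case by case. Choose $n$ with $|Q_n| \asymp m$ and split $f = S_{Q_n}(f) + (f-S_{Q_n}(f))$. In cases where $\Tr(Q_n)$ is order-optimal --- namely case (iii) and also the part of case (i) lying in $q \le p$ --- the rank-$m$ operator $S_{Q_n}$ itself suffices, and the rate follows directly from Theorem \ref{Theorem[HC-Approx-Brpt]} combined with the factor $|Q_n|\asymp 2^n n^{d-1}$. In case (ii) apply Lemma \ref{emb}(i) to embed $\Brpt \hookrightarrow \bB^{r-1/p+1/q}_{q,\theta}$ and reduce to the $p=q$ subcase of (i). In cases (iv) and (v), where the subspace of hyperbolic cross polynomials is not order-optimal, take $n$ with $|Q_n|$ slightly larger than $m$ and construct the rank-$m$ approximant by choosing, in each dyadic block $\Tr(\rho(\bs))$ with $|\bs|_1=n$, a rank-$m_\bs$ projection realising the Kashin--Gluskin bound of Lemma \ref{Lemma[Kashin-Gluskin]} (via Marcinkiewicz' Theorem \ref{T2.4.11}); the budget $\sum_\bs m_\bs = m$ is distributed to balance the two terms, which is where the conditions $r > 1/p$ in (iv) and $r > 1 - 1/q$ in (v) enter. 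The tail $\sum_{|\bs|_1>n}\|A_\bs(f)\|_q$ is absorbed via Proposition \ref{AT1}/\ref{ATB}.

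For the lower bounds I would use Maiorov's discretization. Fix $n$ with $|Q_n| \asymp m$ (or with $|Q_n|$ appropriately larger), a subset $\theta_n^1 \subset \{\bs:|\bs|_1=n\}$, and a ``fooling subspace'' $\Psi \subset \bigoplus_{\bs \in \theta_n^1}\Tr(\rho(\bs))$. The multivariate Marcinkiewicz theorem and the $L_q$--$\ell_q$ equivalence within each dyadic block let us identify the unit ball of $\Psi \cap \Brpt$ with an appropriately scaled mixed-norm ball $B^{n,s}_{p,\theta}$, and similarly the target $L_q$ norm with $\ell^{n,s}_{q,q}$ or a related mixed norm. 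Hence
\begin{equation*}
\lambda_m(\Brpt,L_q) \gtrsim c(n)\cdot \lambda_m\bigl(B^{n,s}_{p,\theta},\ell^{n,s}_{q,q}\bigr),
\end{equation*}
and the required lower bounds reduce to finite-dimensional results: Pietsch--Stesin (Lemma \ref{Lemma[Pietsch-Stesin]}) in case (i), Kashin--Gluskin (Lemma \ref{Lemma[Kashin-Gluskin]}) in cases (ii) and (iv), the $d$-width estimate $d_m(B^n_1,\ell^n_2) \asymp (1 - m/n)^{1/2}$ in case (iii), and Galeev's/Malykhin--Ryutin's Lemmas \ref{Lemma[Galeev]}--\ref{Lemma[MaRu16]} in the critical boundary cases of (ii) and (v).

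The main obstacle will be achieving matching constants in the boundary regimes $2\le p<q$ with small $\theta$ and $1/p+1/q\le 1$ with $p\le 2$, where neither a pure embedding argument nor a pure Smolyak-type construction is optimal. In these regimes one must simultaneously (a) choose the dyadic mass allocation $\{m_\bs\}$ so that the Kashin--Gluskin estimate is tight across all relevant blocks and (b) produce a fooling function whose $\Brpt$ norm is dictated by the mixed-$\ell_\theta$ constraint across the $n^{d-1}$ blocks of the hyperbolic layer, so that the discretized lower bound captures the full $(1/2-1/\theta)_+$ logarithmic factor. The Malykhin--Ryutin estimate (Lemma \ref{Lemma[MaRu16]}) plays the decisive role in closing the previously existing gap in case (ii).
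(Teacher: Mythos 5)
Your high-level skeleton (upper bounds via $S_{Q_n}$ or finite-dimensional reductions, lower bounds via $\lambda_m \ge d_m$ and discretization) matches the paper's plan, but several of the specific tools you reach for are the wrong ones, and two of those errors are fatal to the argument.

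First, for the upper bounds in cases (iv) and (v) you propose to use the Kashin--Gluskin bound (Lemma~\ref{Lemma[Kashin-Gluskin]}), calling it a ``rank-$m_\bs$ projection.'' That lemma estimates \emph{Kolmogorov} widths $d_m(B^n_p,\ell^n_q)$, not linear widths; the $m$-dimensional subspace it produces is optimal only when the approximant is chosen nonlinearly (depending on the point being approximated), and there is in general no rank-$m$ linear map realizing that error. Since Kolmogorov widths can be strictly smaller than linear widths in exactly the regime $p\le 2<q$ at issue here, an upper bound for $d_m$ is not an upper bound for $\lambda_m$. You need the genuinely linear estimate in Gluskin's Lemma~\ref{Lemma[Gluskin]}, which is stated right after; the paper's proof explicitly routes the $p\le 2 < q$ upper bounds through that lemma, and for case (v) where $1/p+1/q\le 1$ one uses it via the duality $\lambda_m(B^n_p,\ell^n_q)=\lambda_m(B^n_{q'},\ell^n_{p'})$.

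Second, your reduction for case (ii) ($2\le p<q$, $2\le\theta\le q$) via the embedding $\Brpt\hookrightarrow\bB^{r-1/p+1/q}_{q,\theta}$ followed by the $p=q$ subcase loses a logarithmic factor: the $p=q\ge 2$ linear-width rate carries the term $(\log m)^{(1/2-1/\theta)_+(d-1)}$, which is strictly worse than the target rate $\left(\frac{(\log m)^{d-1}}{m}\right)^{r-1/p+1/q}$ whenever $\theta>2$. In fact there is nothing to reduce: since $p\ge 2$ here, $S_{Q_n}$ already suffices — by Theorem~\ref{Theorem[HC-Approx-Brpt]} the best hyperbolic cross approximation $E_{Q_n}(\Brpt)_q$ has exponent $(1/q-1/\theta)_+(d-1)=0$ in the log because $\theta\le q$, so the $S_{Q_n}$ upper bound is already sharp.

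Third, the Malykhin--Ryutin estimate (Lemma~\ref{Lemma[MaRu16]}) does not ``close the gap in case (ii)'': it is a statement about $d_m(B^{n,s}_{1,\infty},\ell^{n,s}_{2,1})$, i.e.\ a mixed-norm ball with outer $\ell_\infty$, and it enters precisely in the \emph{H\"older--Nikol'skii} ($\theta=\infty$) Theorem~\ref{thm[lambda_nH]}, not in the finite-$\theta$ Theorem~\ref{thm[lambda_nB]}. The paper attributes case (ii), and more generally $1/p+1/q\ge 1$, $q\ge 2$, of the present theorem to Romanyuk's earlier work \cite{Ro01a}, with lower bounds coming from $\lambda_m\ge d_m$ and the Kolmogorov-width results in Theorems~\ref{thm[d_mB,p<q]} and~\ref{thm[d_mB,p>q]}, or from the norm inequality $\|\cdot\|_q\ge\|\cdot\|_2$ reducing to $q=2$, plus Gluskin's linear-width lemma for $1/p+1/q\le 1$.
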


It is too complicated to summarize the above results 
on the $\lambda_m(\bB^r_{p,\theta},L_q)$ in a picture, since there are three parameters
$p,\theta, q$ (with fixed $r$) which require a three-dimensional picture.

\begin{thm} \label{thm[lambda_nB,q=1]}
Let $r > 0$, $2 \le p \le \infty$ and $2 \le \theta < \infty$.
Then we have
\begin{equation}\nonumber
\lambda_m(\Brpt,L_1)
\ \asymp \
\left(\frac{(\log m)^{d-1}}{m}\right)^r  
(\log m)^{(1/2 - 1/\theta)(d-1)}.
\end{equation}
\end{thm}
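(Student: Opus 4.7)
The goal is to establish matching upper and lower bounds for $\lambda_m(\Brpt, L_1)$ in the range $2 \le p \le \infty$, $2 \le \theta < \infty$, with the common order $\bigl((\log m)^{d-1}/m\bigr)^r(\log m)^{(1/2-1/\theta)(d-1)}$.

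\textbf{Upper bound.} I will show that the linear operator $S_{Q_n}$ (orthogonal $L_2$-projection onto $\Tr(Q_n)$), with $n$ chosen so that $|Q_n|\asymp 2^nn^{d-1}\asymp m$, realises the rate. This operator has rank $\asymp m$, so by the Heinrich identification of $\lambda_m$ with the approximation number $a_{m+1}$ it is admissible. For $f\in\Brpt$, Hölder on the probability space $\T^d$ and Parseval give
\[
\|f-S_{Q_n}(f)\|_1 \le \|f-S_{Q_n}(f)\|_2
= \Big(\sum_{|\bs|_1>n}\|\delta_{\bs}(f)\|_2^2\Big)^{1/2}.
\]
Since $p\ge 2$, blockwise $\|\cdot\|_2\le\|\cdot\|_p$ yields the embedding $\Brpt\hookrightarrow \bB^r_{2,\theta}$. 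For $\theta=2$ the bound $\lesssim 2^{-rn}\|f\|_{\bB^r_{p,\theta}}$ is immediate from $\|\delta_\bs(f)\|_2\le 2^{-r|\bs|_1}\|f\|_{\bB^r_{2,2}}$. For $\theta>2$, Hölder with exponents $\theta/2$ and $\theta/(\theta-2)$ gives
\[
\sum_{|\bs|_1>n}\|\delta_{\bs}(f)\|_2^{2}
\le \Big(\sum_{\bs}\bigl(2^{r|\bs|_1}\|\delta_{\bs}(f)\|_2\bigr)^{\theta}\Big)^{2/\theta}
\Big(\sum_{|\bs|_1>n}2^{-2r\theta|\bs|_1/(\theta-2)}\Big)^{(\theta-2)/\theta},
\]
and the second sum is $\asymp 2^{-2r\theta n/(\theta-2)}n^{d-1}$ by \eqref{hcsums}. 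Since $(\theta-2)/(2\theta)=1/2-1/\theta$, combining with $n\asymp\log m$ produces exactly the claimed upper bound.

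\textbf{Lower bound.} Using $\lambda_m\ge d_m$ and the inclusion $\bB^r_{\infty,\theta}\hookrightarrow \Brpt$, it suffices to bound $d_m(\bB^r_{\infty,\theta},L_1)$ from below. I follow a Maiorov-type discretization on the hyperbolic layer $\Delta Q_n$ with $n\asymp\log m$. Take test functions
\[
f \;=\; C\,n^{-(d-1)/\theta}\,2^{-rn}\sum_{|\bs|_1=n}\varepsilon_{\bs}\,g_{\bs},
\qquad g_{\bs}\in\Tr(\rho(\bs)),
\]
with Rudin--Shapiro-type building blocks $g_{\bs}$ so that $\|g_{\bs}\|_\infty\asymp 1$ while $\|g_{\bs}\|_2\asymp 2^{|\bs|_1/2}$. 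The $\bB^r_{\infty,\theta}$-norm constraint then becomes a weighted $\ell_\theta$-constraint on the sign array $\{\varepsilon_{\bs}\}$ of cardinality $s\asymp n^{d-1}$, and the $L_1$-norm on the target side plays the role of an outer $\ell_1$-norm over the layer. The lower bound thus reduces to a finite-dimensional $n$-width in the spirit of Lemmas \ref{Lemma[Galeev]}--\ref{Lemma[MaRu16]}, from which the sharp factor $n^{(d-1)(1/2-1/\theta)}$ is extracted; rescaling and using $n\asymp\log m$, $2^n\asymp m/n^{d-1}$ gives the required lower bound.

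\textbf{Main obstacle.} The principal difficulty is the lower bound in the $L_1$-metric. By Lemma \ref{L2.3.2} the de la Vallée Poussin kernel $\V_{Q_n}$ is \emph{not} uniformly bounded in $L_1$, and Theorem \ref{T2.5.7} shows there is no Marcinkiewicz-type discretization of $L_1$ on hyperbolic cross polynomials. Consequently the reduction to a finite-dimensional width has to bypass Marcinkiewicz; the standard remedy is to construct ``fooling'' polynomials whose $L_\infty$-norm (not just $L_1$-norm) is controlled via Riesz products of the type in Lemma \ref{L2.6.1}, so that one can dualise from $L_\infty$ back to $L_1$. Calibrating this construction so that the $\ell_\theta$-summability along the layer of cardinality $\asymp n^{d-1}$ produces precisely the logarithm exponent $(1/2-1/\theta)(d-1)$—and not the $(d-1)/2$ exponent familiar from the $\bH^r_\infty=\bB^r_{\infty,\infty}$ case in Theorem~\ref{Tq=1}—is the technical heart of the argument.
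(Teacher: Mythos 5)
Your upper bound is correct and is essentially the paper's argument: project by $S_{Q_n}$ with $|Q_n|\asymp m$, pass to $L_2$, use $\Brpt\hookrightarrow\bB^r_{2,\theta}$ (since $p\ge 2$), and finish with H\"older over the tail $|\bs|_1>n$. The exponent bookkeeping is right and the passage to approximation numbers via Heinrich's identification is the correct justification that the unbounded-on-$L_1$ operator $S_{Q_n}$ is admissible. This matches what the survey attributes to \cite{Ro08}.

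The lower bound is where your proposal falls short, and in a way that is specific to $L_1$ and to $d>2$. First, a local error: for blocks $g_\bs$ supported on $\rho(\bs)$ you write ``$\|g_\bs\|_\infty\asymp 1$ while $\|g_\bs\|_2\asymp 2^{|\bs|_1/2}$,'' which is impossible on a probability space since $\|\cdot\|_2\le\|\cdot\|_\infty$. The relevant Rudin--Shapiro property is $\|g_\bs\|_\infty\asymp\|g_\bs\|_2\asymp 2^{|\bs|_1/2}$; the asymmetry you wrote is a sign that the calibration has not actually been carried out. Second, and more fundamentally, the workaround you suggest for the failure of Marcinkiewicz discretization in $L_1$—namely Riesz products in the sense of Lemma~\ref{L2.6.1}—is a two--dimensional device: $H_n(a,b)\subset\N_0^2$, the lemma is stated and used only for $d=2$ in the survey (cf.\ the discussion of the Small Ball Inequality and Theorems~\ref{Te96}, \ref{Te96H}). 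The theorem you are proving is for arbitrary $d$, so this route does not close.

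The route that is dimension--free and is the one implicit in the survey's organization goes through entropy/volume and Carl's inequality rather than through Riesz products or a reduction to the Galeev/Malykhin--Ryutin finite--dimensional widths. By $\lambda_m\ge d_m$ and $\bB^r_{\infty,\theta}\hookrightarrow\Brpt$ it suffices to lower-bound $d_m(\bB^r_{\infty,\theta},L_1)$. As recorded in the remark following Theorem~\ref{thm:ent_Brpt}, for $r>1/\theta-1/2$ (which holds here since $\theta\ge 2$ and $r>0$) the renormalization inequality \eqref{RoStB1}, $\|f\|_{\bB^r_{p,\theta}}\lesssim l^{(d-1)/\theta}\|f\|_{\bH^r_p}$ for $f\in\Tr(\Delta Q_l)$, applied to the proof of Theorem~\ref{Hlo} (which rests on the $L_\infty$ volume estimates of Theorem~\ref{T2.5.1}, valid for all $d$) gives
\begin{equation}\nonumber
\e_n(\bB^r_{\infty,\theta},L_1)\ \gtrsim\ n^{-r}(\log n)^{(d-1)(r+1/2-1/\theta)}.
\end{equation}
Carl's inequality (Theorem~\ref{thm:carl}), together with the already established upper bound, then forces $d_m(\bB^r_{\infty,\theta},L_1)\gtrsim m^{-r}(\log m)^{(d-1)(r+1/2-1/\theta)}$, which is exactly the claimed order (this is the same Belinskii mechanism \cite{Be5} used in Theorem~\ref{Tq=1} for the $\theta=\infty$ case). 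If you want a fooling-function argument instead of entropy+Carl, the same renormalization \eqref{RoStB1} converts the $\bH^r_\infty$ witnesses used for Theorem~\ref{Tq=1} into $\bB^r_{\infty,\theta}$ witnesses at the cost of a factor $l^{-(d-1)/\theta}$, which again produces $(\log m)^{(d-1)(1/2-1/\theta)}$ rather than the $(\log m)^{(d-1)/2}$ of the $\bH$ case. Either way, the key $L_1$ input is a volume estimate, not a discretization or a Riesz product.
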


\begin{thm} \label{thm[lambda_nB,p=1]}
Let $r > 1 - 1/q$, $1 < q \le 2$ and $1 \le \theta \le q$.
Then we have
\begin{equation}\nonumber
\lambda_m({\bf B}^r_{1,\theta},L_q)
\ \asymp \
\left(\frac{(\log m)^{d-1}}{m}\right)^{r - 1 + 1/q}  
(\log m)^{(1/q - 1/\theta)(d-1)}.
\end{equation}
\end{thm}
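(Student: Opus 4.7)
The plan is to prove matching upper and lower bounds, exploiting the hyperbolic cross structure and reducing to finite-dimensional linear width estimates of mixed-norm unit balls.

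\emph{Upper bound.} First, choose $n$ so that $m \asymp |Q_n| \asymp 2^n n^{d-1}$. For each $\bs$ with $|\bs|_1 \le n$, let $G_\bs:\Tr(\rho(\bs)) \to L_q$ be a rank-$k_\bs$ linear operator that achieves (up to constants) the finite-dimensional linear width $\lambda_{k_\bs}(\Tr(\rho(\bs))_1, L_q)$. By the Marcinkiewicz discretization in Theorem~\ref{T2.4.11}, this quantity is equivalent to $2^{|\bs|_1(1-1/q)}\lambda_{k_\bs}(B_1^{N_\bs}, \ell_q^{N_\bs})$ with $N_\bs \asymp 2^{|\bs|_1}$, and for $1 < q \le 2$ sharp estimates are provided by the Kashin--Gluskin-type results (cf.\ Lemma~\ref{Lemma[Kashin-Gluskin]} and its $q \le 2$ analogue, Lemma~\ref{Lemma[Pietsch-Stesin]}). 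Set
$$
T(f) := \sum_{|\bs|_1 \le n} G_\bs(A_\bs(f)),
$$
which is linear of rank at most $\sum_{|\bs|_1 \le n} k_\bs$. Distributing this budget geometrically across the layers $|\bs|_1 = l \le n$ so that $\sum k_\bs \asymp m$, one estimates
$$
\|f - T(f)\|_q \le \sum_{|\bs|_1 \le n} \|A_\bs(f) - G_\bs(A_\bs(f))\|_q + \sum_{|\bs|_1 > n} \|A_\bs(f)\|_q.
$$
The tail is handled via Nikol'skii's inequality $\|A_\bs(f)\|_q \lesssim 2^{|\bs|_1(1 - 1/q)}\|A_\bs(f)\|_1$ combined with H\"older's inequality in the index $\theta$ and the characterization $\|f\|_{\bB^r_{1,\theta}} \asymp (\sum_\bs (2^{r|\bs|_1}\|A_\bs(f)\|_1)^\theta)^{1/\theta}$; using $r > 1 - 1/q$ this produces the main rate $2^{-n(r-1+1/q)}$ with the correct logarithmic power after converting $n \asymp \log m$. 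The head sum is treated analogously via another H\"older step in $\theta$, plugging in the block-wise widths $\lambda_{k_\bs}(\Tr(\rho(\bs))_1, L_q)$.

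\emph{Lower bound.} Apply Maiorov-style discretization on a single hyperbolic layer $\Delta Q_n$. A scaled family of ``fooling'' functions supported on the blocks $\{\rho(\bs):|\bs|_1 = n\}$, each of the form $f_\bs = c_\bs \sum_{\bk \in \rho(\bs)} \epsilon^\bs_\bk e^{i(\bk,\bx)}$ with suitably chosen $|c_\bs|$ and $\epsilon^\bs_\bk = \pm 1$ (Rudin--Shapiro-type), can be normalized so that the collection embeds (up to absolute constants) into $\bB^r_{1,\theta}$. Marcinkiewicz's theorem (Theorem~\ref{T2.4.11}) applied block-wise identifies the restriction of $\lambda_m$ to this model set with the linear width of the mixed-norm ball $B^{N,s}_{1,\infty}$ in $\ell^{N,s}_{q,q}$ (with $N \asymp 2^n$ and $s \asymp n^{d-1}$, reflecting the number of dyadic blocks on the critical layer). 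Sharp lower bounds for such finite-dimensional widths come from Galeev's Lemma~\ref{Lemma[Galeev]} and, crucially for the endpoint behavior in the $\theta$-parameter, from Izaak's Lemma~\ref{Lemma[Izaak]} and its recent closure by Malykhin--Ryutin in Lemma~\ref{Lemma[MaRu16]}. Rescaling by $2^{-nr}$ turns these finite-dimensional estimates into the required lower bound for $\lambda_m(\bB^r_{1,\theta}, L_q)$.

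\emph{Main obstacle.} The rate $m^{-(r-1+1/q)}$ follows from the soft embedding $\bB^r_{1,\theta} \hookrightarrow \bW^{r-1+1/q}_q$ (Lemma~\ref{L3.5c}) together with Theorem~\ref{thm[lambda_nW]}; this route, however, destroys the fine block structure and therefore does not deliver the sharp logarithmic power depending on $\theta$. The difficulty is to retain this structure on both sides. On the upper-bound side, the balancing of the per-block ranks $k_\bs$ must be coordinated with the extremal configuration in H\"older's inequality for the $\bB^r_{1,\theta}$-quasi-norm. On the lower-bound side, one must prevent a logarithmic loss when aggregating across the $\asymp n^{d-1}$ blocks of the layer $\Delta Q_n$, and precisely here the Malykhin--Ryutin sharpening (Lemma~\ref{Lemma[MaRu16]}) closes the previously open gap of order $\sqrt{\log\log s}/\log s$ in Izaak's bound, yielding the exact logarithmic exponent.
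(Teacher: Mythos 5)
There is a genuine gap: the approach is far heavier than necessary and, more seriously, it invokes tools that do not apply to the parameter range of the theorem.

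The paper's proof of this case is a simple sandwich. Since $p=1$ and $1<q\le 2$, the upper bound comes from the rank-$|Q_n|$ orthogonal projection $S_{Q_n}$ (bounded on $L_q$ by Corollary~\ref{C7.1}), which reduces the matter to $E_{Q_n}(\bB^r_{1,\theta})_q$; by Theorem~\ref{Rom5} this is $\asymp 2^{-(r-1+1/q)n} n^{(1/q-1/\theta)_+(d-1)}$, and for $\theta\le q$ the exponent of $n$ vanishes. The lower bound comes from $\lambda_m\ge d_m$ (inequality \eqref{ineq[lambda_n>d_m]}) together with Theorem~\ref{thm[d_mB,p=1]}, which gives $d_m(\bB^r_{1,\theta},L_q)\asymp \big((\log m)^{d-1}/m\big)^{r-1+1/q}$ for $1<q\le 2$, $1\le\theta\le q$. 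These two ends already coincide in order, so $\lambda_m$ is pinned without any new finite-dimensional linear-width estimates.

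Your proposal misidentifies the obstacle and therefore deploys the wrong tools. First, Lemma~\ref{Lemma[MaRu16]} (Malykhin--Ryutin) concerns $d_m(B^{n,s}_{1,\infty},\ell^{n,s}_{2,1})$, i.e.\ the case $q=1$; the paper uses this sharpening for linear widths of the H\"older--Nikol'skii classes (Theorem~\ref{thm[lambda_nH]}), and even there the critical target is $L_1$. It has no role in the present regime $1<q\le 2$, where Theorem~\ref{thm[d_mB,p=1]} already provides the matching Kolmogorov lower bound. Second, on the upper side, the block-by-block Kashin--Gluskin budget you describe is the mechanism the paper reserves for $p\le 2<q$ (cf.\ Lemma~\ref{Lemma[Gluskin]} and the surrounding discussion), where linear widths genuinely improve on hyperbolic-cross projections; for $q\le 2$ no such improvement occurs and $S_{Q_n}$ is sharp, so the balancing of per-block ranks $k_\bs$ is unnecessary. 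Third, your diagnosis that the ``soft'' embedding route fails to deliver ``the sharp logarithmic power depending on $\theta$'' is incorrect here: in the stated range $\theta\le q\le 2$ the factor $(\log m)^{(1/q-1/\theta)(d-1)}$ has non-positive exponent, and in fact the $d_m$ lower bound and $E_{Q_n}$ upper bound agree without any $\theta$-dependent logarithmic correction (compare with the $(\cdot)_+$ in Theorem~\ref{Rom5}), so there is no such power to recover. Finally, applying Galeev-type lemmas is a detour: they are Kolmogorov-width bounds for mixed-norm balls, and what you need for a $\lambda_m$ lower bound here is already supplied, in packaged form, by $\lambda_m\ge d_m$ and Theorem~\ref{thm[d_mB,p=1]}.
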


\begin{thm} \label{thm[lambda_nB,p=q=infty]}
Let $r > 1/2$.
Then we have
\begin{equation}\nonumber
\lambda_m({\bf B}^r_{\infty,1},L_\infty)
\ \asymp \
\left(\frac{(\log m)^{d-1}}{m}\right)^r.
\end{equation}
\end{thm}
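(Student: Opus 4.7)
The strategy is to establish the upper bound constructively via the de la Vall\'ee Poussin operator $V_{Q_n}$ for the step hyperbolic cross, and to derive the lower bound from the already established order for the Kolmogorov width.

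For the upper bound, I would choose $n$ such that $|Q_n| \asymp 2^n n^{d-1} \asymp m$, so that $V_{Q_n}$ is a linear operator of rank $\lesssim m$. Since $\mathcal{V}_{Q_n} = \sum_{|\bs|_1 \le n} \mathcal{A}_\bs$ and the operators $A_\bs$ satisfy $\|A_\bs\|_{\infty \to \infty} \le 6^d$ by \eqref{2.2.16}, the error admits a block decomposition of the form
\[
f - V_{Q_n}(f) = \sum_{|\bs|_1 > n} A_\bs(f) + (\text{boundary terms from the telescoping}),
\]
where the boundary terms are controlled similarly. Using the equivalent characterization of the Besov norm in terms of the $A_\bs$-blocks (valid in the endpoint case $p=\infty$),
\[
\|f\|_{\bB^r_{\infty,1}} \asymp \sum_{\bs} 2^{r|\bs|_1}\|A_\bs(f)\|_\infty,
\]
one obtains
\[
\|f - V_{Q_n}(f)\|_\infty \le \sum_{|\bs|_1 > n} \|A_\bs(f)\|_\infty \le 2^{-r(n+1)} \sum_{|\bs|_1 > n} 2^{r|\bs|_1}\|A_\bs(f)\|_\infty \lesssim 2^{-rn} \|f\|_{\bB^r_{\infty,1}}.
\]
With the chosen $n$, this gives $\|f - V_{Q_n}(f)\|_\infty \lesssim ((\log m)^{d-1}/m)^r$, yielding the desired upper bound for $\lambda_m(\bB^r_{\infty,1}, L_\infty)$.

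For the lower bound I would invoke the general inequality \eqref{ineq[lambda_n>d_m]}, namely $\lambda_m(\bF, L_q) \ge d_m(\bF, L_q)$, together with Theorem~\ref{thm[d_mB,p=q=infty]}, which states that for $r > 0$,
\[
d_m({\bB}^r_{\infty,1}, L_\infty) \asymp \left(\frac{(\log m)^{d-1}}{m}\right)^r.
\]
This yields the matching lower bound immediately.

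In this case the proof is almost entirely a consequence of earlier results in the survey and presents no serious technical obstacle; the only mild care needed is to verify that the rank of $V_{Q_n}$ is indeed $\lesssim m$ (which follows from the fact that its range lies in $\Tr$ of a cross of cardinality $\asymp 2^n n^{d-1}$) and that the telescoping boundary terms in the decomposition of $f - V_{Q_n}(f)$ are controlled by a geometric tail of the Besov norm, which is where the assumption $r>0$ (and implicitly the compact embedding into $C(\T^d)$ from Lemma~\ref{emb}(iii)) enters to guarantee pointwise convergence of $\sum_\bs A_\bs(f)$ to $f$.
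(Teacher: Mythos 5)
Your proposal is correct and follows the same route sketched in the paper: the upper bound is realized by the linear operator $V_{Q_n}$ with $|Q_n|\asymp m$ combined with the $A_\bs$-block characterization of $\bB^r_{\infty,1}$, and the lower bound comes from $\lambda_m\ge d_m$ together with Theorem~\ref{thm[d_mB,p=q=infty]}. One small remark: since $\sum_{\bs}A_\bs(f)=f$ and $V_{Q_n}(f)=\sum_{|\bs|_1\le n}A_\bs(f)$ by construction, the error decomposition $f-V_{Q_n}(f)=\sum_{|\bs|_1>n}A_\bs(f)$ is exact and there are no telescoping boundary terms to worry about; and note that the argument as you give it actually only needs $r>0$, so the hypothesis $r>1/2$ in the statement is not used by this proof.
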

In Theorems \ref{thm[lambda_nB]} -- \ref{thm[lambda_nB,p=q=infty]}, 
the case  $1/p + 1/q \ge 1, \  2 \le q < \infty$ was proved  in\cite{Ro01a}, 
the case $q=1, \ p \ge 2$ in \cite{Ro08}, the other cases 
were proved in \cite{Ro01b}.

Let us give a brief comment on the proofs of Theorems \ref{thm[lambda_nW]} --
\ref{thm[lambda_nB,p=q=infty]}. For details, the reader can see \cite{Ga96}, \cite{Ro01a},
\cite{Ro01b}, \cite{Ro08}.
For the cases $p \ge 2$ or $q \le 2$, the upper bounds are derived from the linear
approximation by the operators $S_{Q_n}$, and the lower bounds
from the inequality \eqref{ineq[lambda_n>d_m]} and corresponding lower bounds
for Kolmogorov widths in Theorems \ref{TBT4.4} -- \ref{thm[d_mB,p=q=infty]}. For
the case $p \le 2 < q$ the upper bounds can be reduced to
the upper bounds of linear $n$-widths of finite-dimensional sets which can be
estimated by the following Gluskin's lemma \cite{Gl83}.
\begin{lem} \label{Lemma[Gluskin]}
Let $1 \le p < 2 \le q < \infty$ and $1/p + 1/q \ge 1$. Then we have for $n >
m$, 
\begin{equation}\nonumber 
\lambda_m(B^n_p, \ell^n_q)
\ \asymp \
\max \big\{m^{1/q - 1/p}, (1 - n/m)^{1/2}\, \min\{1, m^{1/q}n^{-1/2}\}\big\}.  
\end{equation}
\end{lem}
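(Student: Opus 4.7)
The plan is to prove matching upper and lower bounds for the finite-dimensional width $\lambda_m(B^n_p,\ell^n_q)$, split across the regimes in which each term of the maximum dominates. The two bounds require entirely different machinery, and the heart of the argument lies in matching the factor $m^{1/q}n^{-1/2}$ that appears in the intermediate regime.

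For the upper bound, I would exhibit two families of rank-$m$ linear operators and take whichever gives the smaller error in each regime. (i) A truncation-type construction: since for $x\in B^n_p$ the decreasing rearrangement satisfies $x^*_k \lesssim k^{-1/p}$, approximating by the top $m$ coordinates produces a tail whose $\ell_q$-norm is $\lesssim m^{1/q-1/p}$; the non-linear sorting step can be replaced by a fixed rank-$m$ linear operator via a block-average construction, yielding $\lambda_m \lesssim m^{1/q-1/p}$. (ii) A Kashin--Gluskin random matrix construction: choose a random $(n-m)\times n$ matrix $\Phi$ with normalized Gaussian or $\pm 1$ entries and set $A:=\mathrm{Id}-\Phi^*(\Phi\Phi^*)^{-1}\Phi$, the orthogonal projection along $\ker\Phi$. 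Concentration of measure yields with positive probability that $\|x-Ax\|_q \lesssim (1-m/n)^{1/2}\min\{1,m^{1/q}n^{-1/2}\}$ uniformly on $B^n_p$; this is the same probabilistic input underlying Theorem \ref{TBT1.4.6}.

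For the lower bound, I would invoke the duality identity $\lambda_m(B^n_p,\ell^n_q) \asymp \lambda_m(B^n_{q'},\ell^n_{p'})$, which under the hypothesis $1/p+1/q\ge 1$ transforms the problem into the range $1\le q' \le 2 \le p' \le\infty$. Combined with the trivial inequality $\lambda_m \ge d_m$ and Lemma \ref{Lemma[Pietsch-Stesin]}, this yields the first term $m^{1/q-1/p}$ in the maximum. For the second term, any rank-$m$ approximant leaves at least one $(n-m)$-dimensional subspace on which the error seminorm coincides with the full $\ell^n_q$-norm; combining the Bourgain--Milman inequality (Theorem \ref{T2.5.3}) with the sharp estimate $\mathrm{vol}(B^n_p)^{1/n}\asymp n^{-1/p}$ produces a ``bad'' direction of the required size.

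The principal obstacle is the lower bound in the intermediate regime in which the factor $m^{1/q}n^{-1/2}$ appears: duality alone cannot produce it, and a plain volume argument only recovers the endpoints. Gluskin's original treatment \cite{Gl83} bridges this gap by a probabilistic section argument --- after slicing by the kernel of a putative rank-$m$ operator, one shows that a random $\pm 1$ vector survives as a test function, exploiting that $\|(\varepsilon_i)\|_q \asymp n^{1/q}$ with high probability and that its $\ell_p$-norm is of order $n^{1/p}$. Making this random construction robust under the approximation (i.e., ensuring that the test vector is not well-approximated by any fixed rank-$m$ subspace) is the technical heart of the proof and mirrors precisely the role of randomness in the upper bound construction (ii).
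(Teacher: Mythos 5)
The survey does not prove this lemma itself --- it cites Gluskin \cite{Gl83} and states the result as a black box --- so there is no ``paper's proof'' to compare against. Your upper-bound sketch (a derandomized select-and-block-average construction giving the truncation term, plus a random projection along the kernel of a $\pm1$ or Gaussian matrix giving the Kashin-type term) is broadly consistent with Gluskin's method, which is indeed probabilistic.

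However, your proof of the lower bound contains a concrete error. You claim that after invoking the duality $\lambda_m(B^n_p,\ell^n_q)\asymp\lambda_m(B^n_{q'},\ell^n_{p'})$, the trivial inequality $\lambda_m\ge d_m$ combined with Lemma~\ref{Lemma[Pietsch-Stesin]} gives the term $m^{1/q-1/p}$. But since $1\le p<2\le q<\infty$, the conjugate exponents satisfy $q'\in(1,2]$ and $p'\in(2,\infty]$, so the dual problem has source exponent $q'<p'=$ target exponent, exactly the \emph{same} ordering as the primal. Lemma~\ref{Lemma[Pietsch-Stesin]} requires the target exponent to be $\le$ the source exponent, so it applies to neither the primal nor the dual, and the step is vacuous. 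Moreover, even in a regime where Pietsch--Stesin did apply, it would produce $(n-m)^{1/q-1/p}$, not $m^{1/q-1/p}$; since $1/q-1/p<0$ these are genuinely different quantities. The correct and much simpler argument for the first term is a Bernstein-width estimate: restrict to any coordinate subspace $V$ of dimension $m+1$; a rank-$m$ operator has a nonzero vector $x$ in its kernel inside $V$, for which the error equals $\|x\|_q$, and by H\"older on $m+1$ coordinates $\|x\|_q/\|x\|_p\ge (m+1)^{1/q-1/p}$. This needs no duality and no Pietsch--Stesin. On the second term you candidly flag that your volume argument does not reach the factor $m^{1/q}n^{-1/2}$; the invocation of Bourgain--Milman together with $\mathrm{vol}(B^n_p)^{1/n}\asymp n^{-1/p}$ as currently stated does not produce a quantitative estimate for the Gelfand width $c_m(B^n_p,\ell^n_q)$ at the required scale, and the actual argument in \cite{Gl83} is the probabilistic section argument you allude to, which must be carried out in full.
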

The lower bounds for  the case $1/p + 1/q \ge 1$, $q \ge 2$ can be reduced by
the norm inequality  
$\|\cdot\|_q \ge \|\cdot\|_2$ to the case $p \le 2$, $q = 2$, for the case $1/p
+ 1/q \le 1$, $p \le 2$ to the upper bounds of linear $n$-widths of
finite-dimensional sets which can be estimated by Lemma \ref{Lemma[Gluskin]}.

There are cases in which the right order of $\lambda_m(\Brpt,L_q)$ are 
not known.

\subsection{Open problems}\index{Open problems!Linear approximation}

 We presented historical comments and a discussion of results, including open problems, in the above text of Section 4.
We summarize here the most important comments on open problems. A number of asymptotic characteristics is discussed in
this section: the Kolmogorov widths, the linear widths, and the orthowidths (the Fourier widths). It seems like the most
complete results are obtained for the orthowidths (see Subsection 4.4). However, even in the case of orthowidths there
are still unresolved problems. We mention the one for the $\bW$ classes. 
 \newline\newline
{\bf Open problem 4.1.} Find the order of $\varphi_m(\bW^r_1,L_\infty)$ for  all
$r>1$ and $d$.\newline
Results of Subsection 4.3 show that the right order of the Kolmogorov widths $d_m(\bW^r_p,L_q)$ are known for all
$1<p,q<\infty$ and $r>r(p,q)$. However, in the case of extreme values of $p$ or $q$ ($p$ or $q$ takes a value $1$ or
$\infty$) not much is known, see also Figure \ref{dmW}. Here are some open problems.
\newline
 {\bf Open problem 4.2.} Find the order of  $d_m(\bW^r_p,L_\infty)$ and $d_m(\bH^r_p,L_\infty)$ for $2\le
p\le\infty$ and $r>1/p$ in dimension $d\ge 3$. 
 \newline
 {\bf Open problem 4.3.} Find the order of $d_m(\bW^r_p,L_\infty)$ for $1\le p<2$.
 \newline
  {\bf Open problem 4.4} Find the order of $d_m(\bW^r_1,L_q)$ for $1\le q<2$.  
  
  It turns out that the problem of the right orders of the Kolmogorov widths for the $\bH$ classes is more difficult
than this problem for the $\bW$ classes. In addition to some open problems in the case of extreme values of $p$ and $q$
the following case is not settled.
 \newline
{\bf Open problem 4.5.} Find the order of $d_m(\bH^r_p,L_q)$ for $1\le q<p<2$, see also Figure \ref{dmH}.  

For the linear widths we have a picture similar to that for the Kolmogorov widths. We point out that the study of
approximation of functions with mixed smoothness in the uniform norm ($L_\infty$ norm) is very hard. Even the right
orders of approximation by polynomials of special form -- the hyperbolic cross polynomials -- are not known in this
case. 
\newline
{\bf Open problem 4.6.}
 Find the order of $E_{Q_n}(\bW^r_q)_\infty$ for $2<q\le\infty$. 
\newline
{\bf Open problem 4.7.} Find the order of $E_{Q_n}(\bH^r_\infty)_\infty$ for $d\ge
3$.
\newline
{\bf Open Problem 4.8.} Find the right order of $\lambda_m(\bH^r_p,L_q)$ in the missing cases in Figure \ref{compH}.

    \newpage
    \section{Sampling recovery}\index{Sampling}\index{Sampling!Recovery}
\label{Sect:Smolyak}

In Section 4 we discussed approximation of functions with mixed smoothness by elements of 
finite dimensional subspaces. On the base of three asymptotic characteristics we concluded that subspaces $\Tr(Q_n)$ of
the hyperbolic cross polynomials are optimal in the sense of order in many situations. The Kolmogorov width $d_m(\bF,Y)$
gives the lower bound for approximation from any $m$-dimensional linear subspace. The linear width $\lambda_m(\bF,Y)$ 
gives the lower bound for approximation from any $m$-dimensional linear subspace by linear operators. Finally, the
orthowidth  $\varphi_m(\bF,Y)$ gives the lower bound for approximation from any $m$-dimensional linear subspace by
operators of orthogonal projections (assuming that the setting makes sense). In addition, from an applied
point of view, restrictions on approximation methods in a form of linear operator and orthogonal projection, there is
one more natural setting. In this setting we approximate (as above) by elements from finite dimensional subspaces but
our methods of approximation are restricted to linear methods, which may only use the 
function values at a certain set of points. We discuss this setting in detail in this section. 
We begin with precise definitions. The general goal of this section is to recover a multivariate continuous
periodic function $f:\T^d \to \C$ belonging to a function class $\bF$ from a
finite set of $m$ function values. To be more precise, we consider linear
reconstruction algorithms of type
$$
  \Psi_m(f,X_m):=\sum\limits_{i=1}^m f(\bx^i)\psi_i(\cdot)
$$
for multivariate functions. The set of sampling nodes $X_m:=\{\bx^i\}_{i=1}^m
\subset \T^d$ and associated (continuous) functions $\Psi_m:=\{\psi_i\}_{i=1}^m$
is fixed in advance. To guarantee reasonable access to function values we need
that the function class $\bF$ consists of functions or equivalence classes which
have a continuous representative. When considering classes $\bW^r_p$ or
$\bB^r_{p,\theta}$ the embedding into $C(\T^d)$ holds whenever
$r>1/p$, see Lemma \ref{emb} above. 

As usual we are interested in the minimal error (sampling 
numbers/widths) in a Banach space $Y$

\begin{equation}\label{sampling-number}
    \varrho_m(\bF,Y) := \inf\limits_{X_m}\inf\limits_{\Psi_m}
\Psi_m(\bF,X_m)_Y\,,
\end{equation}
where 
$$
\Psi_m(\bF,X_m)_Y:=\sup\limits_{f \in \bF} \|f-\Psi_m(f,X_m)\|_Y\,.
$$
The quantities $\varrho_m(\bF,Y)$ are called {\it sampling widths} or {\it
sampling numbers}. The following inequalities hold
\be\label{dlr}
d_m(\bF,Y)\le \lambda_m(\bF,Y) \le \varrho_m(\bF,Y)\,,
\ee
see \eqref{lw} and the relation to the approximation numbers \eqref{an}. Note, that the right
inequality in \eqref{dlr} can not directly be deduced from the formal defintion of the linear width, since the above 
defined sampling operators do not make sense as operators from $L_p$ to $L_p$.

There are no general inequalities relating the characteristics $\varphi_m(\bF,Y)$ and $\varrho_m(\bF,Y)$. In both cases
in addition to the linearity assumption on the approximation operator we impose additional restrictions but those
restrictions are of a very different nature -- orthogonal projections and sampling operators. However, it turns out that
similarly to 
the orthowidth setting, where $\Tr(Q_n)$ are optimal (in the sense of order) in all cases with a few exceptions, the
subspaces $\Tr(Q_n)$ are optimal (in the sense of order) from the point of view of $\varrho_m(\bF,Y)$ in all cases where
we know their right orders. 

We make a more detailed comment on this issue on the example of the univariate problem. Classically, the interpolation problem by polynomials (algebraic and trigonometric) was only considered in the space of continuous functions and the error of approximation was measured in the uniform norm (here, for notational convenience, we denote it $L_\infty$ norm). Restriction to continuous functions is very natural because we need point evaluations of the function in the definition of the recovery operator $\Psi_m$. We are interested in the recovery error estimates not only in the uniform norm $L_\infty$, but in the whole range of $L_q$, $1\le q\le \infty$. It was understood in the first papers on this topic in the mid of 1980s (see, for instance, \cite{T7}) that recovery in $L_q$, $q<\infty$, instead of $L_\infty$ brings difficulties and new phenomena. However, the problem of optimal recovery on classes $W^r_p$ and $H^r_p$ in $L_q$ was solved for all $1\le p,q\le \infty$, $r>1/p$ (see \cite{TBook}):
$$
\varrho_m(W^r_p,L_q) \asymp \varrho_m(H^r_p,L_q) \asymp m^{-r+(1/p-1/q)_+}.
$$
Thus, in the univariate case the asymptotic characteristics $\varrho_m$ and $\varphi_m$ behave similarly and the optimal subspaces for recovery ($1\le p,q\le\infty$) and orthowidth ($1\le p,q\le\infty$, $(p,q)\neq (1,1)$, $(p,q)\neq (\infty,\infty)$) are the trigonometric polynomials. 

It was established in Section 4 that in some cases the right order of the linear width can be realized by an orthogonal
projection operator and in other cases it cannot be realized that way. It means that in the first case $\lambda_m(\bF,Y)
\asymp \varphi_m(\bF,Y)$ and in the second case $\lambda_m(\bF,Y) = o(\varphi_m(\bF,Y))$. It is an interesting problem:
When the linear width can be realized (in the sense of order) by a sampling operator? In other words: When
$\lambda_m(\bF,Y) \asymp \varrho_m(\bF,Y)$?
 Note, that in the univariate setting an
optimal algorithm in the sense of order of $\lambda_m$ for the classes $W^r_p$ in
$L_p$, $1<p<\infty$, $r>1/p$, consists in the standard equidistant interpolation
method. In that sense, $\lambda_m$ and
$\varrho_m$ are equal in order. However, when considering multivariate classes the
notion of ``equidistant'' is not clear anymore. In other words, what are
optimal point sets $X_m$ in the $d$-dimensional cube to sample the function and
build optimal sampling algorithms in the sense of order of $\varrho_m$? Is such an
operator then also optimal  in the sense of order of $\lambda_m$? For the classes
$\bW^r_p$ this represents a well-known open problem. 

Let us begin our discussion of known results with H\"older-Nikol'skii classes $\bH^r_p$.  Temlyakov
\cite{Te85b} proved for $1\leq p\leq \infty$ and $r>1/p$ the relation
\begin{equation}\label{eq:SampH}
    \varrho_m(\bH^r_p,L_p) \lesssim m^{-r}(\log m)^{(r+1)(d-1)}\,.
\end{equation}
The correct order is not known here, except in the situation $d=2$, $p=\infty$,
and $r>1/2$, see Remark \ref{d=2} below. Let the Smolyak-type sampling operators $T_n$ (see Subsection 4.2) be induced
by the univariate linear operators
$$
Y_s(f) := R_{2^s}(f) := 2^{-s-2}\sum_{l=1}^{2^{s+2}}f(x(l))\mathcal V_{2^s}(x-x(l)),\quad x(l):=\pi l 2^{-s-1}.
$$
Then (\ref{eq:SampH}) follows from the error bound \cite{Te85b} 
\begin{equation}\label{ineq:SampHT}
    \sup_{f\in\bH^r_p}\|f-T_n(f)\|_p \lesssim 2^{-rn}n^{d-1}\,.
\end{equation}
Note that with the above specification of $Y_s$ we have $\Delta_\bs(f) \in \Tr(2^{\bs+\bf 1})$ and $T_n(f)\in
\Tr(Q_{n+d})$. Therefore, \eqref{ineq:SampHT} implies
$$
E_{Q_n}(\bH^r_\infty)_\infty \lesssim 2^{-rn}n^{d-1},
$$
which is known to be the right order in the case $d=2$ (see Theorem \ref{TBT3.5}). Comparing (\ref{ineq:SampHT}) with Theorem \ref{TBT3.3} we see that the above sampling operator $T_n$ with
$m\asymp |Q_n|$ does not provide the order of best approximation $E_{Q_n}(\bH^r_p)_p$ for $1<p<\infty$. We note
that the first result on recovering by sampling operators of the type of $T_n$ (Somolyak-type operators) was obtained by
Smolyak \cite{Sm63}. He proved the error bound
\begin{equation}\label{eq:SampWS}
    \sup_{f\in\bW^r_\infty}\|f-T_n(f)\|_\infty \lesssim 2^{-rn}n^{d-1}\,.
\end{equation}
The error bound (\ref{eq:SampWS}) and similar bound for $1\le p< \infty$ follow  from  (\ref{ineq:SampHT}) by embedding, see
\eqref{chainBFB}.

The first correct order
\begin{equation}\label{eq:SampH[p<q<2]}
    \varrho_m(\bH^r_p,L_q)
\ \asymp \ 
m^{-(r- 1/p + 1/q)}(\log m)^{(r-1/p + 2/q)(d-1)}\quad,\quad m\in \N\,,
\end{equation}
for $1< p < q \le 2$, $r > 1/p,$ was proven by Dinh D\~ung \cite{Di91}.
The upper bound is given by the above sampling operator $T_n$ with $m\asymp |Q_n|$: for $1\le
p<q<\infty$,
\be\label{sampub}
 \sup\limits_{f\in \bH^r_p}\|f-T_n(f)\|_q \lesssim 2^{-(r-1/p+1/q)n}n^{(d-1)/q}.
\ee
 The proof is based on the case $p=q$, discussed above, and Remark \ref{RT2.4.6}  to Theorem \ref{T2.4.6} (see inequality (\ref{2.4.4R})). Indeed, $f\in \bH^r_p$ implies $\|v_\bj(f)\|_p \lesssim 2^{-r|\bj|_1}$ (see (\ref{v_s(f)<H}) below). Application of inequality (\ref{2.4.4R}) completes the proof.   The lower
bound for $1< p < q
\le 2$ follows from the inequality $\varrho_m \ge d_m$ and the lower bound of  $d_m(\bH^r_p,L_q)$ for $ 1< p < q \le 2$
obtained by Galeev \cite{Ga90}.
Comparing (\ref{sampub}) with Theorem \ref{TBT3.3} we see that the sampling operator $T_n$ provides the best order of
approximation $E_{Q_n}(\bH^r_p)_q$ in the case $1\le p<q<\infty$. 

Surprisingly, even in the Hilbert
space setting, i.e., for Sobolev classes $\bW^r_2$ there are only
partial results for $\varrho_m(\bW^r_2,L_2)$. Let us mention the
following result due to Temlyakov \cite{T8}. The following situation deals
with the error norm $L_{\infty}$ and provides a sharp result. For $r>1/2$ we
have
\begin{equation}\label{Linfty}
    \varrho_m(\bW^r_2,L_{\infty}) \asymp m^{-(r-1/2)}(\log m)^{r(d-1)}\,.
\end{equation}
Interestingly, we have here $\varrho_m \asymp \lambda_m$. The difficult part of (\ref{Linfty}) is the upper bound. Its proof uses Theorem \ref{T2.4.6}.   
The lower bounds for
$\lambda_m$ were reduced in \cite{T8} to known lower bounds for $d_m(\bW^r_1,L_2)$ using the
Ismagilov duality result for the $\lambda_m$ \cite{Is74}. Notice that using
properties of $2$-summing operators the corresponding lower bound can be derived from a result proven recently 
by Cobos, K\"uhn, Sickel \cite{CoKuSi15}. They showed the beautiful identity 
\be\label{CKS}
      \lambda_m(\bW^r_2,L_{\infty}) = \Big(\sum\limits_{j=m}^{\infty}
\lambda_j(\bW^r_2,L_2)^2\Big)^{1/2}\,,
\ee
which immediately gives the lower bound in \eqref{Linfty}.

The known results on the hyperbolic cross approximation (see Theorem 3.7 from \cite{TBook}, Chapter 3) imply: for $1<
p<\infty$, $r>1/p$
$$
E_{Q_n}(\bW^r_p)_\infty = o\left(\sup_{f\in \bW^r_p}\|f-T_n(f)\|_\infty\right).
$$

Similarly to the class $\Hrp$, there holds the correct order relation
\begin{equation}
    \varrho_m(\Wrp,L_q)
\ \asymp \ 
m^{-(r- 1/p + 1/q)}(\log m)^{(r-1/p + 1/q)(d-1)}
\end{equation}
if either $1< p < q \le 2$ or $2\leq p<q< \infty$ and $r > 1/p$, see Theorem \ref{SampWidthW}(i)-(ii) below 
which has been proved recently in \cite{ByUl15} for $r > 1/p$, and in 
\cite{Di16_1} for $r > \max\{1/p,1/2\}$.
The special case $p=2<q$ is proved in \cite[Thm.\ 6.10]{ByDuUl14}. 
If we replace the uniform error norm by the $L_p$-error in \eqref{Linfty} we can
only say the
following for $r>\max\{1/p,1/2\}$ (including $p=2$), namely 
\begin{equation}\label{op}
    \varrho_m(\bW^r_p,L_p) \lesssim m^{-r} (\log m)^{(r+1/2)(d-1)}\,.
\end{equation}
This result has been first observed by Sickel \cite{Si3} for the case $d=2$. It was extended later 
to arbitrary $d$ by Sickel, Ullrich \cite{SiUl07}, \cite{Ul08}. A corresponding lower bound is not known. Note, that the
linear widths in this
situation are smaller than the right-hand side of \eqref{op}. In fact, there
we have $(\log m)^{(d-1)r}$ instead of $(\log
m)^{(r+1/2)(d-1)}$. Let us mention once more that even in the case $p=2$ it is
not known 
\begin{itemize}
 \item whether the upper bound in \eqref{op} is sharp and 
 \item whether the sampling widths $\varrho_m$ coincide with the 
linear widths $\lambda_m$ in the sense of order. 
\end{itemize}

It is remarkable to notice that we so far only have sharp bounds for the order of
$\varrho_m(\bW^r_p,L_q)$ and $\varrho_m(\bH^r_p,L_q)$ in case $p<q$ and for these cases the Smolyak algorithm is
optimal in the sense of order.
 
In general we do not assume that
the approximant $\Psi_m(f,X_m)$ on $f$
satisfies the interpolation property 
\begin{equation}\label{intprop}
     \Psi_m(f,X_m)(\bx^i) = f(\bx^i)\quad,\quad i=1,...,m\,.
\end{equation}
The already described upper bounds for the sampling widths in the various
situations are based on sampling algorithms on sparse grids obtained by
applying the Smolyak algorithm to univariate sampling operators, see Subsection
\ref{Subs:appr_hypcross} above. In the sequel
we will describe how to create and analyze such operators. As an ingredient we may
use the univariate classical de la Vall\'ee Poussin interpolation which is
described in the
next subsection. The second step is a tensorization procedure. Afterwards we
are going to prove a characterization of the spaces of interest in terms of
those sampling operators which finally yield the stated error bounds. 

\subsection{The univariate de la Vall\'ee Poussin interpolation}
\index{Kernel!de la Vall\'ee Poussin} For $m\in \N$ let $\mathcal{V}_m:=\mathcal{V}_{m,2m}$ be the univariate de la
Vall\'ee Poussin kernel as introduced in Paragraph \ref{univpol}.3. An
elementary calculation, see Paragraph 2.1.3, shows that 
$$
  \mathcal{V}_{m}(t):= \frac1{m} \sum_{k= m}^{2m-1}\mathcal{D}_k(t)
  = \frac{\sin (m t/2)\sin (3m t/2)}{m \sin^2 (t/2)}\quad,\quad m\in \N\,.
$$
In the manner of \eqref{2.1.12} we define for $f\in C(\T)$ the interpolation
operator
\begin{equation}\label{dvp}
  V_m(f,J_{3m}) := \frac{1}{3m}\sum\limits_{\ell=0}^{3m-1}
f\Big(\frac{2\pi\ell}{3m}\Big)\mathcal{V}_{m}
\Big(\cdot-\frac{2\pi\ell}{3m}\Big)\quad , \quad m\in \N\,,
\end{equation}
with respect to the equidistant grid 
$$
    J_N:=\Big\{\frac{2\pi \ell}{N}~:~\ell = 0,...,N-1\Big\}\subset \T\quad,\quad N=3m.
$$
Instead of $J_{3m}$ one can also use more redundant grids in the definition of
\eqref{dvp} like, e.g., $J_{4m}$ in \cite{TBook} or $J_{8m}$ in \cite{Te85b}.
Note, that the particular choice $J_{3m}$ in \eqref{dvp} leads to
$\mathcal{V}_{m}(0) = 3m$ and $\mathcal{V}_{m}(2\pi\ell/3m) = 0$ if $\ell \neq 0$.
Using that, \eqref{dvp} implies the interpolation property \eqref{intprop}. It
is straight-forward to compute the Fourier coefficients of the approximant
$V_m(f,J_{3m})$. This directly leads to the following crucial reproduction
result similar to Theorem \ref{T2.1.2}.

\begin{lem}\label{reprpoly} Let $m\in \N$. The operator $V_m(\cdot,J_{3m})$
does not change trigonometric polynomials of degree at most $m$, i.e., if $f \in
\mathcal{T}(m)$ is a trigonometric polynomial then
$$
      V_m(f,J_{3m}) = f\,.
$$
\end{lem}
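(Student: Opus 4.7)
The plan is to verify the reproduction property by direct Fourier analysis, exploiting the discrete orthogonality of exponentials on the uniform grid $J_{3m}$ and the explicit knowledge of the Fourier coefficients of $\mathcal{V}_m$.

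First I would record that, by the definition $\mathcal{V}_m = \mathcal{V}_{m,2m} = m^{-1}\sum_{l=m}^{2m-1}\mathcal{D}_l$, the Fourier expansion of $\mathcal{V}_m$ is a finite sum
$$
\mathcal{V}_m(t) \;=\; \sum_{|j|<2m}\hat{\mathcal{V}}_m(j)\,e^{ijt},
$$
with $\hat{\mathcal{V}}_m(j)=1$ whenever $|j|\le m$ (and $\hat{\mathcal{V}}_m(j)=(2m-|j|)/m$ when $m<|j|<2m$). This is the key reproduction feature inherited from the construction in Paragraph \ref{univpol}.3.

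Next, for $f\in\mathcal{T}(m)$ write $f(x)=\sum_{|k|\le m}\hat f(k)e^{ikx}$ and substitute both expansions into the defining formula \eqref{dvp}. Interchanging the finite sums yields
$$
V_m(f,J_{3m})(x) \;=\; \sum_{|j|<2m}\hat{\mathcal{V}}_m(j)\,e^{ijx}\sum_{|k|\le m}\hat f(k)\cdot\frac{1}{3m}\sum_{\ell=0}^{3m-1} e^{i(k-j)\frac{2\pi\ell}{3m}}.
$$
The inner geometric sum equals $1$ if $k\equiv j\pmod{3m}$ and $0$ otherwise; this is the only place where the particular choice of the grid enters.

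The heart of the argument is then a congruence/aliasing check: for each $j$ with $|j|<2m$, identify the admissible $k$ with $|k|\le m$ and $k\equiv j\pmod{3m}$. If $|j|\le m$, the only solution is $k=j$, since any other representative $k=j+3m n$ with $n\neq 0$ satisfies $|k|\ge 3m-|j|\ge 2m>m$. If $m<|j|<2m$, then no $k$ with $|k|\le m$ is congruent to $j$ mod $3m$: for $n=0$ already $|j|>m$; for $n=\pm1$ one checks $|j\pm 3m|>m$; larger $|n|$ is worse. (This is exactly the reason for using a grid of cardinality $3m>m+(2m-1)$.) Consequently the double sum collapses to
$$
V_m(f,J_{3m})(x) \;=\; \sum_{|j|\le m}\hat{\mathcal{V}}_m(j)\,\hat f(j)\,e^{ijx} \;=\; \sum_{|j|\le m}\hat f(j)\,e^{ijx} \;=\; f(x),
$$
using $\hat{\mathcal{V}}_m(j)=1$ for $|j|\le m$.

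The only genuine obstacle is the aliasing verification in the second range $m<|j|<2m$; once that is dispatched, the rest is bookkeeping with discrete orthogonality. No macros beyond those already defined are required.
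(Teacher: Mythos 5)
Your proof is correct and is exactly the ``straight-forward computation of Fourier coefficients'' that the paper alludes to just before stating Lemma~\ref{reprpoly} but does not write out: expand $f$ and $\mathcal{V}_m$ into their finite Fourier series, interchange sums, use the discrete orthogonality $\frac{1}{3m}\sum_{\ell=0}^{3m-1}e^{i(k-j)2\pi\ell/3m}=\mathbf{1}[k\equiv j \pmod{3m}]$, and then check that no aliasing occurs because $|k|+|j|\le m+(2m-1)<3m$ while $\hat{\mathcal{V}}_m\equiv 1$ on $\{|j|\le m\}$. Since the paper omits the details you have supplied, your write-up fills in precisely the intended computation and is consistent with the approach indicated there.
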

Let us also recall the classical trigonometric interpolation from
\eqref{2.1.4} by 
$$
D_m(f,J_{2m+1}):=\frac{1}{2m+1}\sum\limits_{\ell=0}^{2m}
f\Big(\frac{2\pi\ell}{2m+1}\Big)\mathcal{D}
_m\Big(\cdot-\frac{2\pi\ell}{2m+1}\Big)\quad , \quad m\in \N\,.
$$
This approximant is also interpolating its argument in the nodes $J_{2m+1}$ and
Lemma \ref{reprpoly} keeps valid also for this interpolation operator. However,
it has two disadvantages. The first one is the fact that subsequent
dyadic grids are not nested, i.e., when comparing $D_{2^{j+1}}$ and $D_{2^j}$.
However, this can be fixed by using the modified Dirichlet kernel
$\mathcal{D}^1_n$, see \eqref{nestedDirichlet}. The second issue is related to
the effects described in Theorems \ref{2.1.1} and
\ref{2.1.2}. 
In the case of extreme value of $p$ or $q$ it might be a problem to work with the Dirichlet kernels. 
In fact, it is
known that the classical trigonometric interpolation
does not provide the optimal rate of recovery in case $p=1$ and $p=\infty$. That
might be no problem when
considering Sobolev classes $\bW^r_p$ for $1<p<\infty$. However, the cases $p=1$
and $p=\infty$ represent important special cases for Besov classes $\Brpt$. All
the mentioned de la Vall\'ee Poussin sampling operators show the following
behavior in the univariate setting, see for instance \cite{TBook}, \cite{SiSp99}
and \cite{Ul04} for further details.

\begin{thm}\label{univ_bound} {\em (i)} Let $1\leq p,\theta \leq \infty$ and
$r>1/p$ then 
$$
    \|f-V_m(f,J_{3m})\|_p \lesssim m^{-r}\|f\|_{B^r_{p,\theta}}\,.
$$
{(ii)} Let $1<p<\infty$ and $r>1/p$ then 
$$
  \|f-V_m(f,J_{3m})\|_p \lesssim m^{-r}\|f\|_{W^r_p}\,.
$$
\end{thm}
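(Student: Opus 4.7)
The plan is to prove part (i); part (ii) then follows from the embedding $\bW^r_p \hookrightarrow \bB^r_{p,\max\{p,2\}}$ recorded in Lemma \ref{emb}(iv). I would start by writing $f = \sum_{s\geq 0} A_s(f)$ using the univariate analogues of the building blocks from Subsection \ref{multpol}, so that $A_s(f) \in \Tr(2^s)$ and
$$
\|f\|_{\bB^r_{p,\theta}} \;\asymp\; \Bigl(\sum_{s\geq 0} 2^{sr\theta}\,\|A_s(f)\|_p^\theta\Bigr)^{1/\theta}.
$$
Fix $n$ with $2^n \leq m < 2^{n+1}$. For every $s \leq n$ one has $A_s(f) \in \Tr(m)$, so Lemma \ref{reprpoly} yields $V_m(A_s(f),J_{3m}) = A_s(f)$, and hence
$$
f - V_m(f,J_{3m}) \;=\; \sum_{s>n}\bigl(A_s(f) - V_m(A_s(f),J_{3m})\bigr).
$$

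The technical core is the dyadic estimate
\begin{equation}\label{keydy}
\|V_m(g,J_{3m})\|_p \;\lesssim\; (2^s/m)^{1/p}\,\|g\|_p, \qquad g \in \Tr(c\,2^s),\ 2^s > m,\ 1 \leq p \leq \infty.
\end{equation}
I would prove \eqref{keydy} starting from the explicit aliasing identity
$$
V_m(e^{ikx},J_{3m}) \;=\; \hat{\mathcal V}_m(k')\,e^{ik'x}, \qquad k' \equiv k \pmod{3m},\ k' \in (-3m/2,\,3m/2],
$$
with $\hat{\mathcal V}_m(k') = 0$ for $|k'| \geq 2m$, together with the observation that each residue class modulo $3m$ meets the dyadic block $\{2^{s-2} \leq |k| < 2^s\}$ in at most $O(2^s/m)$ integers. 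At $p=2$, Parseval combined with Cauchy-Schwarz across these residue classes yields \eqref{keydy}; at $p=\infty$ the estimate is the uniform boundedness of the Lebesgue constant of $V_m(\cdot,J_{3m})$, a classical consequence of \eqref{2.1.9} and $\|\mathcal V_m\|_1 \leq 3$. Riesz-Thorin interpolation on the finite-dimensional space $\Tr(c\,2^s)$ covers $p \in [2,\infty]$; the range $p \in [1,2)$ is handled by the duality identity
$$
\bigl\langle V_m(g,J_{3m}),\,h\bigr\rangle \;=\; \frac{1}{3m}\sum_\ell g(x_\ell)\,\overline{(\mathcal V_m\ast h)(x_\ell)},
$$
where $\mathcal V_m\ast h \in \Tr(2m-1)$, followed by the same aliasing-plus-Cauchy-Schwarz analysis applied to the adjoint.

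Combining \eqref{keydy} with the triangle inequality and H\"older's inequality in $s$ (using conjugate exponents $\theta$ and $\theta'$), together with the Besov characterization above, gives
$$
\|f - V_m(f,J_{3m})\|_p \;\lesssim\; \sum_{s>n}(2^s/m)^{1/p}\,\|A_s(f)\|_p \;\lesssim\; m^{-r}\,\|f\|_{\bB^r_{p,\theta}},
$$
the last step summing a geometric series thanks to $r > 1/p$. The hard part is the dyadic estimate \eqref{keydy} at the endpoint $p=1$: the naive bound $\|V_m(g,J_{3m})\|_1 \leq m^{-1}\sum_\ell |g(x_\ell)|$ combined with Nikol'skii's inequality only produces $\|V_m(g)\|_1 \lesssim 2^s\|g\|_1$, off from \eqref{keydy} by a full factor of $m$; closing this gap requires the duality-and-aliasing argument above, since classical Marcinkiewicz-type equivalences fail here (the sampling grid $J_{3m}$ has too few points relative to the degree $2m-1$ of $V_m(g,J_{3m})$).
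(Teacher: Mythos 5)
Your strategy follows the route the survey itself points to: decompose $f=\sum_{s}A_s(f)$ with $A_s(f)\in\Tr(2^s)$ (or $v_s(f)$ in the paper's notation), kill the low-frequency blocks via the reproduction property of $V_m(\cdot,J_{3m})$ on $\Tr(m)$, and estimate the tail by a dyadic bound for the over-degree action of $V_m$. The paper does not itself prove Theorem \ref{univ_bound} -- it cites \cite{TBook,SiSp99,Ul04} -- but the dyadic estimate \eqref{keydy} at the heart of your argument is exactly the paper's inequality \eqref{sampV}, and the paper quotes it (from \cite{TBook}, Ch.\,1, Lemma 6.2, and \cite{Di91}, Cor.\,3) precisely so that bounds of this form follow by the geometric-series summation you carry out. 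Your reduction of (ii) to (i) via $\bW^r_p\hookrightarrow\bB^r_{p,\max\{p,2\}}$ (Lemma \ref{emb}(iv)) is correct, and the H\"older-in-$s$ assembly at the end is fine for all $1\le\theta\le\infty$.

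The gap is in the proof of \eqref{keydy} when $1\le p<2$. The duality identity you write is correct, and after a discrete H\"older you are reduced to two separate discretization bounds:
\begin{equation*}
\Bigl(\frac{1}{3m}\sum_\ell|(\mathcal V_m*h)(x_\ell)|^{p'}\Bigr)^{1/p'}\lesssim\|\mathcal V_m*h\|_{p'}
\qquad\text{and}\qquad
\Bigl(\frac{1}{3m}\sum_\ell|g(x_\ell)|^p\Bigr)^{1/p}\lesssim(2^s/m)^{1/p}\,\|g\|_p .
\end{equation*}
The first is a one-sided Marcinkiewicz--Zygmund inequality for degree $2m-1$ over $3m$ equidistant nodes, and, contrary to your parenthetical remark, this direction \emph{does} hold with only $3m$ points (it is the reverse direction -- continuous $\lesssim$ discrete -- that wants roughly $4m$ nodes); one sees it by expanding $\phi=\mathcal V_m*h$ in the representation \eqref{2.1.12} and summing the majorant in \eqref{2.1.9}. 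The genuine obstacle is the second bound: the undersampled discretization of the high-degree $g\in\Tr(c\,2^s)$ at the coarse grid $J_{3m}$. Nothing in ``the same aliasing-plus-Cauchy-Schwarz analysis'' produces it, because for $p<2$ there is no Parseval on $g$'s side, and a pure Fourier-coefficient count only yields $\|g\|_A$, which is off by a full factor of $m$ exactly as you observed for the direct estimate. What is actually needed is a Lebesgue-constant-type kernel estimate: expand $g$ via its own de la Vall\'ee Poussin representation \eqref{2.1.12} at $O(2^s)$ nodes $y_j$ and prove, using the quadratic-decay majorant $|\mathcal V_N(t)|\lesssim\min\{N,\,1/(Nt^2)\}$ from \eqref{2.1.9},
\begin{equation*}
\sup_{y}\;\frac{1}{3m}\sum_\ell\bigl|\mathcal V_{2^s}(x_\ell-y)\bigr|\;\lesssim\;\frac{2^s}{m}\qquad(2^s\ge m),
\end{equation*}
which then delivers the case $p=1$, and interpolation against your $p=\infty$ bound finishes $1\le p\le 2$. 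Until this kernel estimate is stated and proved, the endpoint $p=1$ -- which you correctly flag as the hard part -- is not closed, and the adjoint you invoke is in any case \emph{not} $V_m(\cdot,J_{3m})$ again but a kernel operator built from $\mathcal V_{2^s}$ sampled at only $3m$ points, so ``the same analysis'' does not literally transfer.
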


Let us emphasize that the use of a sampling operator in $L_p$, $p<\infty$, immediately brings 
problems. For instance, it is easy to prove that 
$$
\|V_m(f,J_{3m})\|_\infty \le C\|f\|_\infty.
$$
It is also easy to understand that we do not have an analog of the above inequality in the $L_p$ spaces with $p<\infty$.
It was understood in early papers on this topic (see \cite{Te85b}) that the technically convenient way out of the above
problem is to consider a 
superposition of a sampling operator and the de la Vall{\'e}e Poussin operator. In particular, the following inequality
was established (see Lemma 6.2 from \cite{TBook}, Chapter 1, and Corollary 3 from \cite{Di91})
\be\label{sampV}
\|V_m(V_n(f),J_{3m})\|_p \le C\|f\|_p(n/m)^{1/p},\quad n\ge m.
\ee

\subsection{Frequency-limited sampling representations - discrete Littlewood-Paley}\index{Sampling!Representation!Frequency limited}
\index{Littlewood-Paley decomposition!Discrete}
\label{SampRep}

The definition of the $\bH$, $\bB$ and $\bW$ classes in terms of $\{\delta_\bs(f)\}_{\bs}$ is convenient for 
analyzing performance of the operators $S_{Q_n}$ and not very convenient for analyzing operators $T_n$. 
The idea is to replace the convolutions $\delta_{\bs}(f)$ in \eqref{DefB}, \eqref{NeqW} in the definition of the
Besov and Sobolev space by discrete convolutions of type \eqref{dvp}. Having
such a sampling representation at hand one can easily obtain bounds for the
sampling approximation error as we will show below. 

In order to obtain proper characterizations of $\Brpt$ we need the following
dyadic differences of $\eqref{dvp}$ given by 
$$
  v_0(f):=V_1(f,J_{3})\quad,\quad v_j(f) := V_{2^j}(f,J_{3\cdot
2^j})-V_{2^{j-1}}(f,J_{3\cdot 2^{j-1}})\quad,\quad j\geq 1\,.
$$
In other words we set $Y_j:= V_{2^j}(\cdot,J_{3\cdot
2^j})$   from the Smolyak-type algorithm construction in Subsection 4.2.
For the $d$-variate situation we need their tensor product
$\Delta_\bj :=v_{\bj}:=\bigotimes_{n=1}^d v_{j_n}$\,. This operator has to be understood
componentwise. For $f\in C(\T^d)$ the function $(\bigotimes_{i=1}^d v_{j_n})f$
is the trigonometric polynomial from $\mathcal{T}(2^{\bj +1},d)$ which we obtain
by applying each $v_{j_n}$ to the respective component of $f$. This gives 
the representation
\begin{equation}\label{tensor1}
   f = \sum\limits_{\bj \in \N_0^d} v_{\bj}(f)\,.
\end{equation}    In case of $\bH^r_p$ the inequality
\begin{equation}  \label{v_s(f)<H}
\sup_{\bj \in \N_0^d} \ 2^{r|\bj|_1}\|v_\bj(f)\|_p \lesssim \|f\|_{\bH^r_p}
\end{equation}
is due to Temlyakov~\cite{Te85b}. The inverse inequality (see \cite{Di91})
\begin{equation}  \label{v_s(f)>H}
\|f\|_{\bH^r_p} \lesssim \sup_{\bj \in \N_0^d} \ 2^{r|\bj|_1}\|v_\bj(f)\|_p
\end{equation}
follows from a simple observation: suppose that 
$$
f= \sum_\bj t_\bj,\quad \|t_\bj\|_p \le 2^{-r|\bj|_1},\quad t_\bj\in \Tr(2^{\bj},d),
$$
then $\|f\|_{\bH^r_p}\le C(d)$.  

The following proposition is a nontrivial generalization of inequalities (\ref{v_s(f)<H}) and (\ref{v_s(f)>H}) to the
case of $\bB$ and $\bW$ classes. 

\begin{prop} \label{samp_repr1} {(i)} Let $1\leq p,\theta \leq \infty$ and
$r>1/p$.
Then we have for any $f\in \Brpt$ 
\begin{equation}\label{lhs1}
\Big(\sum_{\bj \in \N_0^d}2^{r|\bj|_1 \theta}\|v_\bj(f)\|^{\theta}_p
\Big)^{1/\theta} \asymp \|f\|_{\Brpt}
\end{equation}
with the sum being replaced by a supremum for $\theta = \infty$.\\
{\em (ii)} Let $1<p<\infty$ and $r>\max\{1/p,1/2\}$ then we have for
any $f\in C(\T^d)$
\begin{equation}\label{lhs2}
\Big\|\Big(\sum_{\bj \in \N_0^d}2^{r|\bj|_1 2}
|v_{\bj}(f)|^2\Big)^{1/2}\Big\|_p \asymp \|f\|_{\Wrp}\,.
\end{equation}
\end{prop}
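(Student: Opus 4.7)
My approach to Proposition \ref{samp_repr1} proceeds by comparing the discrete building blocks $v_\bj$ with the Fourier-localized blocks $A_\bs$ (for part (i)) and the Littlewood--Paley blocks $\delta_\bs$ (for part (ii)), the comparison being governed by a single localization estimate combined with a geometric-series convolution argument.

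\textbf{Step 1 (localization of $v_\bj$).} By Lemma \ref{reprpoly} (applied coordinate-wise), the tensorized de la Vall\'ee Poussin interpolation operator reproduces $\Tr(2^\bj,d)$, so the differences $v_\bj$ annihilate every trigonometric polynomial whose frequency in coordinate $n$ lies in $\{|k_n|\le 2^{j_n-1}\}$ for some $n$. Since $A_\bs(f)\in\Tr(2^\bs,d)$, this gives $v_\bj(A_\bs(f))=0$ unless $\bs\ge\bj$ componentwise. For $\bs\ge\bj$, writing $A_\bs(f)=V_{2^{\bs}}(A_\bs(f), J_{3\cdot 2^{\bs}})$ (tensorized) and applying the key stability estimate \eqref{sampV} in each coordinate yields
\begin{equation}\nonumber
\|v_\bj(A_\bs(f))\|_p \,\lesssim\, 2^{(|\bs|_1 - |\bj|_1)/p}\,\|A_\bs(f)\|_p.
\end{equation}

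\textbf{Step 2 (proof of (i)).} Expanding $f = \sum_\bs A_\bs(f)$, multiplying by $2^{r|\bj|_1}$ and using Step 1 one obtains
\begin{equation}\nonumber
2^{r|\bj|_1}\|v_\bj(f)\|_p \,\lesssim\, \sum_{\bs\ge\bj} 2^{-(r-1/p)(|\bs|_1-|\bj|_1)}\bigl(2^{r|\bs|_1}\|A_\bs(f)\|_p\bigr).
\end{equation}
Since $r>1/p$, the kernel $2^{-(r-1/p)|\bn|_1}\mathbf{1}_{\bn\ge 0}$ is summable on $\N_0^d$, hence Young's inequality in $\ell^\theta(\N_0^d)$ gives the upper bound in \eqref{lhs1}. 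For the reverse direction, use $f=\sum_\bj v_\bj(f)$ together with $v_\bj(f)\in\Tr(2^{\bj+1},d)$ to see that $A_\bs(v_\bj(f))\neq 0$ only when $\bs\le \bj+c\mathbf 1$, and $\|A_\bs(v_\bj(f))\|_p\lesssim\|v_\bj(f)\|_p$ by the uniform bound \eqref{2.2.16}. A symmetric convolution estimate, now geometric in the opposite direction and using $r>0$, shows that $\{2^{r|\bs|_1}\|A_\bs(f)\|_p\}\in\ell^\theta$ is controlled by the LHS of \eqref{lhs1}, giving $\|f\|_{\Brpt}$ via the equivalent characterization with $A_\bs$.

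\textbf{Step 3 (proof of (ii) and the main obstacle).} Here we combine the strategy above with the Littlewood--Paley characterization \eqref{NeqW}, replacing $\ell^\theta$-summation by an $L_p(\ell^2)$-square function. The non-trivial step is a \emph{vector-valued} upgrade of the scalar convolution bound in Step 2: for a sequence $\{g_\bs\}\subset L_p$ with $g_\bs$ spectrally supported in $\rho(\bs)$,
\begin{equation}\nonumber
\Big\|\Big(\sum_{\bj}\Big|\sum_{\bs\ge\bj} 2^{-(r-1/p)(|\bs|_1-|\bj|_1)}\,2^{r|\bs|_1}g_\bs\Big|^2\Big)^{1/2}\Big\|_p \,\lesssim\, \Big\|\Big(\sum_\bs 2^{2r|\bs|_1}|g_\bs|^2\Big)^{1/2}\Big\|_p .
\end{equation}
For $1<p<\infty$ this can be handled either by a tensorized Fefferman--Stein maximal inequality applied to Peetre-type majorants of $v_\bj(A_\bs(f))$, or by Rademacher randomization reducing the problem to scalar $L_p$-boundedness. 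The assumption $r>\max\{1/p,1/2\}$ enters twofold: $r>1/p$ secures convergence of the scalar geometric series from Step 1 and ensures that point evaluations make sense on $\Wrp$, while $r>1/2$ provides the stronger geometric margin required to absorb the $\ell^2$-aggregation (as opposed to the $\ell^\theta$-aggregation in (i)). The converse inequality proceeds analogously via the dual square-function estimate, using $f=\sum_\bj v_\bj(f)$ and the boundedness of $\delta_\bs$ on $L_p$. The principal technical obstacle is precisely the vector-valued inequality displayed above, as the sampling operators $v_\bj$ do not commute with the Fourier projectors $\delta_\bs$ and the orthogonality arguments available for $\varphi_m$ cannot be used; this is exactly the technique developed in \cite{ByUl15}.
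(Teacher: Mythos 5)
Your proof is correct and follows essentially the same route as the paper's own argument: the paper proves the upper bound in (i) by decomposing $f=\sum_\bm f_\bm$ via the smooth dyadic decomposition of Definition \ref{d1}, using Lemma \ref{reprpoly} to reduce the sum over $\bm$ to $\bm \gtrsim \bj$ (this is \eqref{tria}), then applying \eqref{sampV} coordinate-wise to get the factor $2^{(|\bm|_1-|\bj|_1)/p}$, and finally a discrete Hardy-type inequality via H\"older with exponent $(\theta,\theta')$ to sum the geometric series — exactly your Steps 1–2 with the cosmetic substitution of $A_\bs$ for $f_\bm$ and Young's inequality in $\ell^\theta$ for the explicit H\"older-and-interchange argument (these are equivalent). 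Your reverse direction for (i) is also sound and goes the same way as \eqref{v_s(f)>H}; for (ii) both you and the paper defer to the vector-valued Littlewood--Paley/maximal-function machinery of \cite{ByUl15}, which is indeed the genuine technical core of that part.
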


 The idea to replace the Littlewood-Paley convolutions $A_{\bj}(f)$, see
\eqref{2.2.16}, in the description of Besov classes by dyadic blocks $v_{\bj}(f)$ coming out of a discrete
convolution goes back to Dinh D\~ung \cite{Di91,Di01}.
The relation \eqref{lhs1} in Proposition \ref{samp_repr1} was proved in \cite{Di01}, and 
the relation \eqref{lhs2} has been proved recently in \cite{ByUl15}, see also \cite{ByDuUl14} for the case
$p=2$. As one would expect we do not necessarily have to choose de la Vall{\'e}e Poussin type building blocks $v_{\bj}(f)$ in order 
to obtain relations of type \eqref{lhs1} and \eqref{lhs2} if $1<p<\infty$. One may use building blocks
based on the classical trigonometric interpolation, i.e.,  Dirichlet kernels \eqref{2.1.4}, \eqref{nestedDirichlet}, 
which has been recently proved in \cite[Thms.\ 5.13, 5.14]{ByUl15}.\index{Kernel!Dirichlet} 
The relation in Proposition \ref{samp_repr1}(ii) requires further tools from
Fourier analysis, i.e., maximal functions of Peetre and Hardy-Littlewood
type, see \cite[Thm.\ 5.7]{ByUl15}.

\bproof For the convenience of the reader let us give
a short proof of the upper bound in (\ref{lhs1}) to show the main techniques. 
This proof alike the proof of (\ref{eq:SampH}) is based on relation (\ref{sampV}).
We start with the decomposition $f = \sum_{\bm \in
\N_0^d} f_{\bm}$, where $f_{\bm}$ is given by \eqref{f2_2}. Due to the
reproduction property in Lemma \ref{reprpoly} we obtain 
\begin{equation}\label{tria}
    \|v_{\bj}(f)\|_p \leq \sum\limits_{\substack{m_n\geq j_n-3\\n=1,...,d}}
\|v_{\bj}(f_{\bm})\|_p\,.
\end{equation}
Choose now $1/p<a<r$. Relation (\ref{sampV}) yields 
\begin{equation}\label{tria2}
    \|v_{\bj}(f_{\bm})\|_p \lesssim  
2^{(|\bm|_1-|\bj|_1)/p}\|f_{\bm}\|_p
\end{equation}
which gives 
$$
    2^{|\bj|_1 /p}\|v_{\bj}(f)\|_p \lesssim \sum\limits_{\substack{m_n\geq
j_n-3\\n=1,...,d}}2^{-|\bm|_1(a-1/p)}2^{|\bm|_1a}\|f_{\bm}\|_p\,.
$$
What remains is a standard argument based on a discrete Hardy type inequality,
see \cite[Lem.\ 2.3.4]{DeLo93}. H\"older's inequality with $1/\theta + 1/\theta'
= 1$ yields
$$
    2^{|\bj|_1/p}\|v_{\bj}(f)\|_p \lesssim
2^{-|\bj|_1(a-1/p)}\Big(\sum\limits_{\substack{m_n\geq
j_n-3\\n=1,...,d}}2^{|\bm|_1a\theta}\|f_{\bm}\|^{\theta}_p\Big)^{1/\theta}\,.
$$
Taking the $\ell_\theta$-norm (with respect to $\bj$) on both sides and
interchanging the summation on the right-hand side yields
\begin{equation}\nonumber
\begin{split}
\sum\limits_{\bj \in \N_0^d} 2^{|\bj|_1r\theta}\|v_{\bj}(f)\|^{\theta}_p
&\lesssim  \sum\limits_{\bm}
2^{|\bm|_1a\theta}\|f_{\bm}\|_p^{\theta}\sum\limits_{\substack{j_n\leq
m_n+3\\n=1,...,d}}2^{|\bj|_1(r-a)\theta}\\
&\lesssim \sum\limits_{\bm} 2^{|\bm|_1r\theta}\|f_{\bm}\|_p^{\theta}\,.
\end{split}
\end{equation} 
\eproof 

The method in the proof of Proposition \ref{samp_repr1} can be adapted to more
general Smolyak algorithms, see also Propositions \ref{AT1}, \ref{ATB} above and
\cite{SiUl07}.  

In the following one-sided relation the condition on $r$ can be relaxed to $r>0$. 

\begin{prop}\label{samp_repr2} Let $\{t_{\bj}\}_{\bj}$ be a sequence of
trigonometric polynomials with $t_{\bj} \in \mathcal{T}(2^{\bj},d)$ such that
the respective right-hand side \eqref{rhs1} or \eqref{rhs2} below is
finite.\\
{\em (i)} Let $1\leq p,\theta \leq \infty$ and $r>0$. Then
$f=\sum_{\bj\in \N_0^d} t_{\bj}$ belongs to $\Brpt$ and 
\begin{equation}\label{rhs1}
    \|f\|_{\Brpt} \lesssim \Big(\sum_{\bj \in \N_0^d}2^{r|\bj|_1
\theta}\|t_\bj\|^{\theta}_p
\Big)^{1/\theta}\,.
\end{equation}
{\em (ii)} Let $1<p<\infty$ and $r>0$. Then
$f=\sum_{\bj\in \N_0^d} t_{\bj}$ belongs to $\Wrp$ and 
\begin{equation}\label{rhs2}
 \|f\|_{\Wrp} \lesssim \Big\|\Big(\sum_{\bj \in \N_0^d}2^{r|\bj|_1 2}
|t_{\bj}(\cdot)|^2\Big)^{1/2}\Big\|_p\,.
\end{equation}
\end{prop}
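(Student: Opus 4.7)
The plan is to derive both estimates from the equivalent characterizations of $\Brpt$ and $\Wrp$ in terms of dyadic Fourier-analytic building blocks (namely $A_{\bs}(f)$ and $\delta_{\bs}(f)$) introduced in Subsection~\ref{sect:FS}. The crucial starting observation is a \emph{frequency localization}: since $t_{\bj} \in \mathcal{T}(2^{\bj},d)$ has Fourier support in $\prod_n [-2^{j_n},2^{j_n}]$, while the Fourier transform of the kernel $\mathcal{A}_{\bs}$ is supported in the dyadic shell $\{2^{s_n-2}< |k_n|<2^{s_n}\}$ (and $\delta_{\bs}$ likewise), one obtains
\[
A_{\bs}(t_{\bj})=0 \quad \text{unless} \quad \bs \le \bj + c\mathbf{1},
\]
and the same for $\delta_{\bs}(t_{\bj})$, with $c$ an absolute constant (say $c=2$).

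For part~(i), I use the characterization $\|f\|_{\Brpt} \asymp (\sum_\bs 2^{r|\bs|_1 \theta}\|A_{\bs}(f)\|_p^\theta)^{1/\theta}$ together with the uniform $L_p$-boundedness $\|A_{\bs}\|_{p\to p}\le 6^d$ from \eqref{2.2.16}. Writing $f=\sum_\bj t_{\bj}$ and using the localization above,
\[
\|A_{\bs}(f)\|_p \le \sum_{\bj\ge \bs-c\mathbf{1}} \|A_{\bs}(t_{\bj})\|_p \lesssim \sum_{\bj\ge \bs-c\mathbf{1}} \|t_{\bj}\|_p.
\]
Since $r>0$, a standard multi-dimensional discrete Hardy inequality (proved coordinate-wise via Hölder, with the $r>0$ hypothesis absorbed into summable geometric factors) yields
\[
\sum_\bs 2^{r|\bs|_1\theta}\Big(\sum_{\bj\ge \bs-c\mathbf{1}}\|t_{\bj}\|_p\Big)^\theta \lesssim \sum_\bj 2^{r|\bj|_1\theta}\|t_{\bj}\|_p^\theta,
\]
which is \eqref{rhs1}. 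The case $\theta=\infty$ is analogous with the sum replaced by a supremum.

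For part~(ii), I use the Littlewood-Paley norm equivalence \eqref{NeqW}. Writing $\bl=\bj-\bs$ and exploiting the localization,
\[
\delta_{\bs}(f) = \sum_{\bl\ge -c\mathbf{1}} \delta_{\bs}(t_{\bs+\bl}),
\]
so Minkowski's inequality in $\ell^2$ (applied to the index $\bs$) gives
\[
\Big\|\Big(\sum_\bs 2^{2r|\bs|_1}|\delta_{\bs}(f)|^2\Big)^{1/2}\Big\|_p \le \sum_{\bl\ge -c\mathbf{1}} \Big\|\Big(\sum_\bs 2^{2r|\bs|_1}|\delta_{\bs}(t_{\bs+\bl})|^2\Big)^{1/2}\Big\|_p.
\]
For each fixed $\bl$, the vector-valued Fefferman-Stein inequality (which underlies the Littlewood-Paley theorem quoted in the Appendix and requires $1<p<\infty$) allows one to remove the operator $\delta_{\bs}$, bounding the inner expression by $\|(\sum_\bs 2^{2r|\bs|_1}|t_{\bs+\bl}|^2)^{1/2}\|_p$. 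A change of summation variable $\bj=\bs+\bl$ and the elementary estimate $|\bj-\bl|_1 \ge |\bj|_1 - |\bl|_1$ produces a factor $2^{-r|\bl|_1}$ (plus a finite adjustment from the components with $l_n<0$), which is summable over $\bl\ge -c\mathbf{1}$ precisely because $r>0$. This yields \eqref{rhs2}.

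The main obstacle will be part~(ii): unlike part~(i), where one only needs the uniform $L_p$-boundedness of $A_{\bs}$, handling the square function forces one to commute an $L_p$-norm with an $\ell^2$-sum of dyadic projections. This is exactly the role of the Fefferman-Stein vector-valued maximal inequality, which is why the restriction $1<p<\infty$ is essential in (ii) but not in (i). Once this vector-valued tool is in place, the remaining arguments (frequency localization plus summability in the shift $\bl$) are standard, and the hypothesis $r>0$ is precisely what is needed for the resulting geometric series in $|\bl|_1$ to converge.
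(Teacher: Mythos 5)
Your proof is correct and follows essentially the same route the paper intends: frequency localization of $t_{\bj}$ against the dyadic shells, uniform boundedness of $A_{\bs}$ (for the Besov case) resp.\ a vector-valued Littlewood--Paley bound for $\{\delta_{\bs}\}$ on $L_p(\ell^2)$ (for the Sobolev case), and a discrete Hardy-type summation that exploits $r>0$. This is exactly what the paper means by ``the proof is similar to Proposition~\ref{samp_repr1}'': one replaces the sampling estimate \eqref{sampV} in that proof by the purely Fourier-analytic boundedness of $A_{\bs}$ (or by vector-valued Littlewood--Paley, which \cite{ByUl15} encodes via Peetre/Hardy--Littlewood maximal functions -- an equivalent route to your Riesz-projection argument). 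Two tiny points to tidy up: the localization constant is $c=1$ (one needs $2^{s_n-2}<2^{j_n}$, i.e.\ $s_n\le j_n+1$), and after the reindexing $\bs=\bj-\bl$, since $\bs\ge\mathbf 0$ one has the exact identity $|\bs|_1=|\bj|_1-\sum_n l_n$ rather than the triangle inequality involving $|\bl|_1$; the factor $\sum_{\bl\ge -c\mathbf{1}} 2^{-r\sum_n l_n} = \bigl(2^{rc}/(1-2^{-r})\bigr)^d$ is then finite because $r>0$, as you assert.
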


The proof is similar to the one of Proposition \ref{samp_repr1}, for details see \cite{ByDuUl14, ByUl15}. 
 
\subsection{Sampling on the Smolyak grids}\index{Sampling!Smolyak grids}
\label{sampsmol}

It is now a standard way to obtain good errors of recovering functions with mixed smoothness by using 
the Smolyak-type algorithms described in Subsection \ref{Subs:appr_hypcross}. Different realizations differ
by the family of operators $\{Y_s\}_{s=0}^\infty$ used in the construction. Smolyak \cite{Smo} and other authors 
(see, for instance, \cite{Te85b,Di91,Di01}) used interpolation-type operators based on the Dirichlet or de la Vall\'ee
Poussin kernels.

For $n \in \N_0^d$ we define the sampling operator $T_n$ by 
\begin{equation}\label{sampop_per}
T_n(f) 
:= \ 
\sum\limits_{\substack{\bj \in \N_0^d\\|\bj|_1 \leq n}} v_{\bj}(f)\quad,\quad
f\in C(\T^d)\,,
\end{equation}
where $v_{\bj}(f)$ are defined in Subsection \ref{SampRep}.

From Lemma \ref{reprpoly} we see that $T_n$
does not change trigonometric polynomials from hyperbolic crosses $Q_{n-3d}$,
see \eqref{HC}.
In addition, $T_n$ interpolates $f$ at every grid
point $\by \in \wt{SG}^d(n)$, i.e.,
\begin{equation} \label{InterpolationT_n(f)}
T_n(f,\by) = f(\by), \quad \by \in \wt{SG}^d(n)\,,
\end{equation}
where the Smolyak Grid $\wt{SG}^d(n)$ of level $n$ is given by 
\begin{equation}\label{SG}
    \wt{SG}^d(n):= \bigcup\limits_{j_1+...+j_d \leq n} J_{3\cdot 2^{j_1}}
\times...\times J_{3\cdot 2^{j_d}}\,.
\end{equation}

\begin{figure}[H]
\begin{minipage}{0.48\textwidth}
\vspace{0.33cm}
\includegraphics[scale = 1.3]{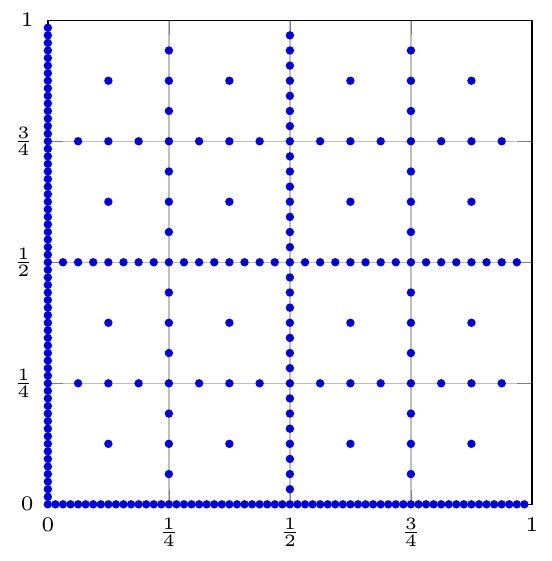}
\end{minipage}
\begin{minipage}{0.48\textwidth}
\begin{tikzpicture}[scale=0.051]
\pgfmathsetmacro{\m}{6};
\pgfmathsetmacro{\rf}{1};
\pgfmathsetmacro{\rs}{1};
\pgfmathtruncatemacro{\xl}{2^(\m / \rf)}
\pgfmathtruncatemacro{\yl}{2^(\m / \rs)}
\pgfmathtruncatemacro{\limf}{round(\m / \rf)}

\pgfmathtruncatemacro{\lima}{round(\m / \rf)}
\pgfmathtruncatemacro{\limb}{round(\m / \rs)}

\foreach \x in {1,...,\lima}
{
\pgfmathtruncatemacro{\lim}{round((\m-\rf*\x)/(\rs))}
\foreach \y in {1,...,\limb}
{
\pgfmathtruncatemacro{\xm}{\x-1}
\pgfmathtruncatemacro{\ym}{\y-1}
    \draw[dashed] (2^\xm,2^\ym)-- (2^\x,2^\ym) --(2^\x,2^\y) -- (2^\xm,2^\y) --
(2^\xm,2^\ym);
        \draw[dashed] (-2^\xm,-2^\ym)-- (-2^\x,-2^\ym) --(-2^\x,-2^\y) --
(-2^\xm,-2^\y) -- (-2^\xm,-2^\ym) ;
        \draw[dashed] (-2^\xm,2^\ym)-- (-2^\x,2^\ym) --(-2^\x,2^\y) --
(-2^\xm,2^\y) -- (-2^\xm,2^\ym) ;
        \draw[dashed] (2^\xm,-2^\ym)-- (2^\x,-2^\ym) --(2^\x,-2^\y) --
(2^\xm,-2^\y) -- (2^\xm,-2^\ym) ;
}
}

\foreach \x in {0,...,\limf}
{
\pgfmathtruncatemacro{\lim}{round((\m-\rf*\x)/(\rs))}
\foreach \y in {0,...,\lim}
{
\pgfmathtruncatemacro{\xm}{\x-1}
\pgfmathtruncatemacro{\ym}{\y-1}
\ifnum \x<1 \relax%
     \ifnum \y<1 \relax%
         draw[color=red,very thick] (-1,-1)-- (1,-1) --(1,1) -- (-1,1) --
(-1,-1);
    \else
        \draw[color=red,very thick] (-1,-2^\ym)-- (1,-2^\ym) --(1,-2^\y) --
(-1,-2^\y) -- (-1,-2^\ym) ;
        \draw[color=red,very thick] (-1,2^\ym)-- (1,2^\ym) --(1,2^\y) --
(-1,2^\y) -- (-1,2^\ym) ;
     \fi
\else%
    \ifnum \y<1 \relax%
            \ifnum \x<1 \relax%

            \else
            \draw[color=red,very thick] (2^\xm,-1)-- (2^\x,-1) --(2^\x,1) --
(2^\xm,1) -- (2^\xm,-1) ;
            \draw[color=red,very thick] (-2^\xm,-1)-- (-2^\x,-1) --(-2^\x,1) --
(-2^\xm,1) -- (-2^\xm,-1);
            \fi
    \else
    \draw[color=red,very thick] (2^\xm,2^\ym)-- (2^\x,2^\ym) --(2^\x,2^\y) --
(2^\xm,2^\y) -- (2^\xm,2^\ym);
        \draw[color=red,very thick] (-2^\xm,-2^\ym)-- (-2^\x,-2^\ym)
--(-2^\x,-2^\y) -- (-2^\xm,-2^\y) -- (-2^\xm,-2^\ym) ;
        \draw[color=red,very thick] (-2^\xm,2^\ym)-- (-2^\x,2^\ym)
--(-2^\x,2^\y) -- (-2^\xm,2^\y) -- (-2^\xm,2^\ym) ;
        \draw[color=red,very thick] (2^\xm,-2^\ym)-- (2^\x,-2^\ym)
--(2^\x,-2^\y) -- (2^\xm,-2^\y) -- (2^\xm,-2^\ym) ;
    \fi
\fi

}}

\end{tikzpicture} 

\end{minipage}
\caption{A sparse grid and associated hyperbolic cross in $d=2$}\label{sg_hc}\index{Sparse grid}\index{Hyperbolic cross}
\end{figure}

The following estimate concerning the grid size is known, see for instance
\cite{BuGr04} or the recent paper \cite[Lem.\ 3.10]{DU13}. Clearly,
$\mbox{card }
\wt{SG}^d(n) = 3^d\mbox{card }SG^d(n)$, where $SG^d(n)$ is given in this section below (see also \eqref{eq53}). It holds 
\begin{equation}\label{Sg}
  2^{n}\binom{n+d-1}{d-1} \leq \mbox{card }SG^d(n) \leq
2^{n+1}\binom{n+d-1}{d-1}\,.
\end{equation}
This gives $\mbox{card }SG^d(n) \asymp 2^n n^{d-1}$\,.
Here the nestedness of the univariate grids plays an important role. In fact,
we can replace $j_1+...+j_d \leq n$ by $j_1+...+j_d = n$ in \eqref{SG}\,.

Let us begin with the recovery on the Smolyak grids of Sobolev classes $\Wrp$ in $L_q$. 

\begin{thm}\label{Sobsamp_q} Let $1<p,q<\infty$. \\
{\em (i)} In case $p\geq q$ and $r>\max\{1/p,1/2\}$ it holds
$$
    \sup\limits_{f\in \Wrp}\|f-T_n(f)\|_q \asymp 2^{-nr}n^{(d-1)/2}\,.
$$
{\em (ii)} In case $1<p<q<\infty$ and $r>1/p$ we have 
$$
    \sup\limits_{f\in \Wrp}\|f-T_n(f)\|_q \asymp 2^{-n(r-1/p+1/q)}\,.
$$
\end{thm}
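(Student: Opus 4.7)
The scheme for both parts rests on the sampling identity
\[
    f - T_n(f) = \sum_{|\bj|_1 > n} v_{\bj}(f)
\]
combined with the Littlewood--Paley-type characterization
\[
    \Bigl\|\Bigl(\sum_{\bj \in \N_0^d} 2^{2r|\bj|_1}|v_{\bj}(f)|^2\Bigr)^{1/2}\Bigr\|_p \asymp \|f\|_{\Wrp}
\]
from Proposition \ref{samp_repr1}(ii), which is available under the standing hypothesis $r > \max\{1/p,1/2\}$.

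For the upper bound in (i), since $q \leq p$ I would first invoke $L_p(\T^d) \hookrightarrow L_q(\T^d)$ to reduce to $q=p$. Writing $f - T_n(f) = \sum_{m > n} w_m$ with $w_m := \sum_{|\bj|_1=m}v_{\bj}(f)$, the triangle inequality reduces the problem to bounding each $\|w_m\|_p$. A pointwise Cauchy--Schwarz on the $\asymp m^{d-1}$ multi-indices $\{\bj : |\bj|_1 = m\}$ gives
\[
    |w_m(\bx)| \lesssim m^{(d-1)/2}\Bigl(\sum_{|\bj|_1=m}|v_{\bj}(f)(\bx)|^2\Bigr)^{1/2},
\]
which produces precisely the factor $m^{(d-1)/2}$. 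Combining this with the trivial pointwise inequality $\sum_{|\bj|_1=m}|v_{\bj}(f)|^2 \leq 2^{-2rm}\sum_{\bj}2^{2r|\bj|_1}|v_{\bj}(f)|^2$ and Proposition \ref{samp_repr1}(ii) yields $\|w_m\|_p \lesssim m^{(d-1)/2}2^{-rm}\|f\|_{\Wrp}$; summing the geometric series $\sum_{m>n}m^{(d-1)/2}2^{-rm} \asymp n^{(d-1)/2}2^{-rn}$ completes the upper estimate.

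For (ii) I would use the Jawerth--Franke embedding $\Wrp \hookrightarrow \bB^{s}_{q,p}$ with $s := r - 1/p + 1/q > 0$ (Lemma \ref{L3.5d}), which together with Proposition \ref{samp_repr1}(i) furnishes the $\ell^p$-summability
\[
    \Bigl(\sum_{\bj \in \N_0^d} 2^{sp|\bj|_1}\|v_{\bj}(f)\|_q^p\Bigr)^{1/p} \lesssim \|f\|_{\Wrp}.
\]
The remaining task is to sum $\sum_{|\bj|_1 > n}v_{\bj}(f)$ in $L_q$ without losing a logarithm. For $1<p\leq 2$ this can be accomplished by vector-valued Littlewood--Paley in $L_q$ combined with the pointwise inequality $(\sum_{\bj}|a_{\bj}|^2)^{1/2} \leq (\sum_{\bj}|a_{\bj}|^p)^{1/p}$ and the triangle inequality in $L_{q/p}$ (admissible because $q/p\geq 1$), which reduces matters to the weighted $\ell^p$ sum $\sum_{|\bj|_1>n}\|v_{\bj}(f)\|_q^p$. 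Factoring out $\sup_{|\bj|_1>n}2^{-sp|\bj|_1} = 2^{-sp(n+1)}$ from the Jawerth--Franke inequality then gives the clean rate $2^{-sn}\|f\|_{\Wrp}$. The range $2<p<q$ demands a complementary vector-valued argument in a similar spirit. For the matching lower bounds, in (ii) I would test against a normalized Dirichlet kernel on a single dyadic block $\rho(\bj)$ with $|\bj|_1=n+1$: it saturates the Nikol'skii inequality (Theorems \ref{T2.4.5}, \ref{T2.4.8}), giving $\|f\|_{\Wrp}\asymp 1$ and $\|f\|_q \asymp 2^{-n(r-1/p+1/q)}$, and since $f \notin \Tr(Q_{n-3d})$ the interpolant $T_n(f)$ cannot annihilate the leading contribution. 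For (i), the additional $n^{(d-1)/2}$ factor is recovered by a Rudin--Shapiro type superposition of blocks indexed by the $\asymp n^{d-1}$ tuples $\bj$ with $|\bj|_1=n+1$, exploiting the $\ell^2$-Littlewood--Paley structure of $\Wrp$ to inflate the $L_p$-norm by the factor $n^{(d-1)/2}$ relative to a single block.

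I expect the principal obstacle to be the upper bound in (ii): the naive triangle-plus-H\"older estimate on the Besov embedding yields a spurious $n^{(d-1)/p'}$ factor, and removing it requires the genuinely vector-valued argument in $L_q$ together with the finer $\ell^p$-summability that Jawerth--Franke provides over the pointwise $\bH$-bound; the case $p>2$ in particular requires a further refinement. A second delicate point is the lower bound in (i): one has to verify that the Rudin--Shapiro test polynomial is not cancelled by $T_n$ on the layer $\Delta Q_{n+1}$, a sampling analogue of the finite-dimensional obstructions discussed in Subsection \ref{vol_est}.
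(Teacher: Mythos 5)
Your upper bound in part (i) is correct and is essentially the paper's argument: the paper performs one Cauchy--Schwarz across the full tail $\{|\bj|_1 > n\}$ and then evaluates $\bigl(\sum_{|\bj|_1>n}2^{-2r|\bj|_1}\bigr)^{1/2} \asymp 2^{-rn}n^{(d-1)/2}$ via \eqref{|j|>n}, whereas you do Cauchy--Schwarz layer by layer and sum the resulting geometric series. These are algebraic rearrangements of the same computation based on Proposition \ref{samp_repr1}(ii), and both are valid.

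The upper bound in part (ii), however, has a genuine gap. Your vector-valued Littlewood--Paley argument hinges on the $\ell^p \hookrightarrow \ell^2$ pointwise inequality, which requires $p\leq 2$; you acknowledge that $2<p<q$ ``demands a complementary argument,'' but that is precisely where the difficulty lies, and it is not a routine variation. To see why this is a real obstruction, note that after Jawerth--Franke and Proposition \ref{samp_repr1}(i) you have the $\ell^p$-summability of $\{2^{s|\bj|_1}\|v_\bj(f)\|_q\}_\bj$ with $s=r-1/p+1/q$. The natural substitute for your $\ell^p\hookrightarrow\ell^2$ step would be to pass through Minkowski to $\bigl(\sum_{|\bj|_1>n}\|v_\bj(f)\|_q^2\bigr)^{1/2}$ and then use H\"older with exponents $p/(p-2)$, $p/2$ to peel off the weight $2^{-2s|\bj|_1}$; but $\sum_{|\bj|_1>n}2^{-2sp|\bj|_1/(p-2)}\asymp 2^{-2spn/(p-2)}n^{d-1}$, and so one picks up an extra factor $n^{(d-1)(p-2)/(2p)}$ that is not present in the statement. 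Removing this log requires a substantially different idea (see the cited references \cite{ByUl15,Di16_1}), and your sketch does not supply it. Until that case is handled, the proof of (ii) only covers $1<p\leq 2<q$ and $1<p<q\leq 2$.

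On the lower bounds: your single-block Dirichlet kernel for (ii) is over-engineered. Since $T_n(f)\in\Tr(Q_{n+c})$ for a dimensional constant $c$, one has immediately $\sup_{f\in\Wrp}\|f-T_n(f)\|_q \geq E_{Q_{n+c}}(\Wrp)_q \asymp 2^{-n(r-1/p+1/q)}$ from Theorem \ref{TBT3.2}, with no need to verify non-cancellation by $T_n$ for a specific polynomial. For (i), the Rudin--Shapiro superposition you propose is plausible in spirit but the non-cancellation step you flag (``a sampling analogue of the finite-dimensional obstructions'') is precisely what is hard there; the paper instead extracts the $n^{(d-1)/2}$ lower bound from numerical integration lower bounds over $(n,\ell)$-nets (Theorems \ref{T6.2.1a}, \ref{T4.2}) or, equivalently, from local bump function constructions (Theorem \ref{Thm:cubsmolyak}) that are supported away from the sampling grid, so that $T_n$ annihilates them by construction and the cancellation question never arises. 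Adopting that route would make your lower bound in (i) rigorous with little effort.
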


\bproof Let us demonstrate the general principle on the special case
$1<p=q<\infty$. Starting with \eqref{error} (see below) we estimate using H\"older's
inequality 
\begin{equation}
  \begin{split}
    \|f-T_n(f)\|_p &\leq  \Big\|\sum\limits_{|\bj|_1>n}
|v_{\bj}(f)(\cdot)|\Big\|_p\\
&\lesssim
\Big\|\Big(\sum\limits_{|\bj|_1>n}2^{-2r|\bj|_1}\Big)^{1/2}
\cdot\Big(\sum\limits_ { |\bj|_1>n}
2^{2r|\bj|_1}|v_{\bj}(f)(\cdot)|^2\Big)^{1/2}\Big\|_p
  \end{split}
\end{equation}
Proposition \ref{samp_repr1} together with \eqref{|j|>n} finishes the
proof.\eproof

In case $q=\infty$ we observe an extra $\log$-term. The result in Theorem
\ref{Sobsamp_infty} below has been obtained by Temlyakov, see
\cite{T8}. For the convenience of the reader we give a short proof of this result based on the non-trivial embedding in
Lemma \ref{L3.5d}, which in turn is a corollary of Theorem \ref{T2.4.6}.

\begin{thm}\label{Sobsamp_infty} For $1<p<\infty$ and $r>1/p$ we have 
$$
    \sup\limits_{f\in \bW^r_p}\|f-T_n(f)\|_{\infty} \asymp
2^{-n(r-1/p)}n^{(d-1)(1-1/p)}\,. 
$$
\end{thm}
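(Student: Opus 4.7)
The proof splits into an upper and lower bound; the short elegant part is the upper bound via Lemma~\ref{L3.5d}, and the lower bound requires a specific ``fooling function''.

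For the upper bound, the starting point is the non-trivial Jawerth–Franke-type embedding in Lemma~\ref{L3.5d}, applied with $u = \infty$ and $\beta = 1/p$: since $r > 1/p = \beta$ we have $\bW^r_p \hookrightarrow \bB^{r-1/p}_{\infty,p}$, so $\|f\|_{\bB^{r-1/p}_{\infty,p}} \lesssim \|f\|_{\bW^r_p} \le 1$ for $f\in \bW^r_p$. From \eqref{tensor1} and the definition \eqref{sampop_per} of $T_n$ one has
\[
f - T_n(f) \;=\; \sum_{|\bj|_1 > n} v_{\bj}(f).
\]
Taking $L_\infty$-norms, using the triangle inequality and the discrete H\"older inequality with exponents $p$ and $p' = p/(p-1)$, I would estimate
\[
\|f - T_n(f)\|_\infty
\le
\Big(\sum_{|\bj|_1 > n} 2^{(r-1/p)|\bj|_1 p}\|v_{\bj}(f)\|_\infty^{p}\Big)^{1/p}
\Big(\sum_{|\bj|_1 > n} 2^{-(r-1/p)|\bj|_1 p'}\Big)^{1/p'}.
\]
The first factor is bounded by $\|f\|_{\bB^{r-1/p}_{\infty,p}}\lesssim 1$ thanks to Proposition~\ref{samp_repr1}(i) (with $\theta = p$ and smoothness $r - 1/p > 0$). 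For the second factor, since $(r-1/p)p' > 0$, the third relation of \eqref{hcsums} yields
\[
\sum_{|\bj|_1 > n} 2^{-(r-1/p)|\bj|_1 p'} \asymp 2^{-(r-1/p)np'}\, n^{d-1},
\]
so that the second factor is $\asymp 2^{-n(r-1/p)}\, n^{(d-1)/p'} = 2^{-n(r-1/p)}\, n^{(d-1)(1-1/p)}$, giving the desired upper bound.

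For the lower bound the plan is to exhibit a test function $f_n \in \bW^r_p$ for which $\|f_n - T_n(f_n)\|_\infty$ achieves the stated rate. A natural candidate is to select, in each hyperbolic block $\rho(\bs)$ with $|\bs|_1 = n+1$, a trigonometric polynomial $\Phi_\bs \in \Tr(\rho(\bs))$ that realizes the Nikol'skii gap between $\|\cdot\|_p$ and $\|\cdot\|_\infty$ (so $\|\Phi_\bs\|_\infty \asymp 2^{|\bs|_1/p}\|\Phi_\bs\|_p$, e.g.\ a Rudin–Shapiro-type block, cf.\ Theorem~\ref{T2.2.1}), and then form a superposition
\[
f_n = c_n \sum_{|\bs|_1 = n+1} \Phi_\bs
\]
whose $\bW^r_p$-norm is controlled by $\lesssim 1$ via the Littlewood–Paley characterization \eqref{NeqW}. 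The frequencies of $f_n$ lie outside $Q_n$ (up to a bounded shift absorbed in constants), so that $T_n(f_n)$ only picks up a small correction and $\|f_n - T_n(f_n)\|_\infty \gtrsim \|f_n\|_\infty$. A careful counting of the $\asymp n^{d-1}$ blocks and an appropriate choice of sign/phase pattern in the superposition should produce the extra factor $n^{(d-1)(1-1/p)}$ matching the upper bound.

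The hard part will be the lower bound construction: one has to simultaneously saturate the Nikol'skii inequality on $\asymp n^{d-1}$ dyadic blocks while keeping the Sobolev norm of the superposition bounded, and to check that $T_n$ does not cancel the $L_\infty$-mass (this is where the interpolation/reproducing property \eqref{InterpolationT_n(f)} and the precise structure of the sparse grid enter). By contrast the upper bound is essentially a one-line H\"older estimate once Lemma~\ref{L3.5d} and Proposition~\ref{samp_repr1} are in hand.
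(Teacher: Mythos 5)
Your upper bound argument matches the paper's essentially line for line: triangle inequality, insertion of the weight $2^{\pm(r-1/p)|\bj|_1}$, H\"older with exponents $p$ and $p'$, the sampling representation \eqref{lhs1} of Proposition~\ref{samp_repr1}(i) with target $L_\infty$ and index $\theta=p$, and finally the Jawerth--Franke-type embedding $\bW^r_p \hookrightarrow \bB^{r-1/p}_{\infty,p}$ of Lemma~\ref{L3.5d}. That half is correct and needs no change.

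The lower bound is where you part ways from the paper, and your sketch has a concrete flaw. Rudin--Shapiro polynomials are precisely the wrong extremizers: as recorded in Paragraph 4d of Subsection~\ref{multpol}, $\|\RR_{\mathbf N}\|_p \asymp \vartheta(\mathbf N)^{1/2}$ for \emph{all} $1\le p\le\infty$, i.e.\ they are $L_p$-flat and do \emph{not} realize the Nikol'skii gap $\|t\|_\infty \asymp 2^{|\bs|_1/p}\|t\|_p$ on a block $\rho(\bs)$ --- if anything they realize the opposite extreme. To saturate that gap you would need concentrated blocks of Dirichlet or Fej\'er type. But even with the right blocks the construction is in genuine tension with itself: aligning the $L_\infty$-peaks of the $\asymp n^{d-1}$ polynomials $\Phi_\bs$, $|\bs|_1=n+1$, is what makes $\|f_n\|_\infty$ large, yet it simultaneously concentrates the Littlewood--Paley square function $\big(\sum_\bs 2^{2r|\bs|_1}|\delta_\bs f_n|^2\big)^{1/2}$ at that same point and tends to inflate $\|f_n\|_{\bW^r_p}$; reconciling these two demands is exactly where the exponent $(d-1)(1-1/p)$ must be earned, and a naive block count will not produce it. The paper avoids the construction entirely: the lower bound is first obtained for $p=2$ by reading off the linear-widths lower bound for $\lambda_m(\bW^r_2,L_\infty)$ with $m\asymp 2^n n^{d-1}$ (itself reduced to $d_m(\bW^r_1,L_2)$ via Ismagilov duality), and is then propagated to all $1<p<\infty$ by combining the $p=2$ lower bound with the \emph{upper} bounds you have just proved, using that $\log\!\big[2^{-n(r-1/p)}n^{(d-1)(1-1/p)}\big]$ is linear in $1/p$; alternatively, the stated inequality \eqref{2.17*}, which holds for \emph{any} linear map $\bW^r_p\to\Tr(Q_n)$, gives the result directly. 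If you insist on a hands-on test function, the right object is a partial Bernoulli-type kernel on the hyperbolic layer, with norms estimated as in Lemma~\ref{L2.3.1} and Theorem~\ref{T2.4.7}, not a Rudin--Shapiro superposition.
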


\bproof The proof from \cite{T8} goes as follows. First, the upper bounds are proved. The key role in that proof is played by Theorem \ref{T2.4.6}. Second,
the lower bound in the case $p=2$ is proved. It is derived from the lower bound for 
$\lambda_m(\bW^r_2,L_\infty)$. Finally, using the lower bound for $p=2$ and the upper bounds
for $1<p<\infty$ we derive the lower bounds for $1<p'<\infty$. We only illustrate how to prove 
the upper bounds. In the proof below instead of direct use of Theorem \ref{T2.4.6} we use its corollary in the form of Lemma \ref{L3.5d}.

We use the triangle inequality to obtain
\begin{equation}\nonumber
 \begin{split}
\|f-T_n(f)\|_{\infty} &\leq \sum\limits_{|\bj|_1>n} \|v_{\bj}(f)\|_{\infty} \\
&= \sum\limits_{|\bj|_1>n}2^{-(r-1/p)|\bj|_1}2^{(r-1/p)|\bj|_1}\|v_{\bj}(f)\|_{\infty}\,.
 \end{split}
\end{equation}
Using H\"older's inequality with respect to $1/p+1/p'=1$ we obtain
$$
    \|f-T_n(f)\|_{\infty} \lesssim \Big(\sum\limits_{|\bj|_1>n}
2^{-(r-1/p)|\bj|_1p'}\Big)^{1/p'}\Big(\sum\limits_{|\bj|_1>n}2^{p(r-1/p)|\bj|_1}
\|v_{\bj}(f)\|_\infty^p\Big)^{1/p}\,.
$$
Applying (\ref{lhs1}) we have
$$
    \|f-T_n(f)\|_{\infty} \lesssim 2^{-n(r-1/p)}n^{(d-1)(1-1/p)}\|f\|_{\bB^{r-1/p}_{\infty,p}}\,.
$$
Finally, the embedding $\bW^r_p \hookrightarrow \bB^{r-1/p}_{\infty,p}$ (see Lemma \ref{L3.5d}) concludes the
proof. \eproof

The results in \cite{Tmon} (see Theorems 5.4 and 3.1 of Chapter 2) imply the inequality
 \be\label{2.17*}
 \sup_{f\in\bW^r_p} \|f-U_n(f)\|_\infty \gtrsim 2^{-(r-1/p)n}n^{(d-1)(1-1/p)},\quad 1<p<\infty,
\ee
which is valid for any sequence of linear operators $U_n$ : $\bW^r_p \to \Tr(Q_n)$.
Theorem \ref{Sobsamp_infty} and relation (\ref{2.17*}) show that the sequence of operators $T_n$ is optimal (in the
sense of order) among all linear operators of approximating by means of polynomials in $\Tr(Q_{n+d})$.

Let us now proceed to the $\bH$-classes. 

\begin{thm} \label{Theorem[T_n(H)<]}
Let $1 \le p, q \le \infty$ and $r > 1/p$. Then we have the following. 
\begin{itemize}
\item[{\rm (i)}] For $p \ge q$,
\begin{equation*} 
\sup_{f \in \bH^r_p}\|f - T_n(f)\|_q
 \ \asymp \ 
2^{- r n} n^{d-1}.
\end{equation*}
\item[{\rm (ii)}] For $p < q$, 
\begin{equation*} 
\sup_{f \in \bH^r_p}\|f - T_n(f)\|_q
 \ \asymp \ 
\begin{cases}
2^{- (r - 1/p + 1/q) n} n^{(d-1)/q}, \ & q < \infty, \\
2^{- (r - 1/p) n} n^{d-1}, \ &  q = \infty.
\end{cases}
\end{equation*}
\end{itemize} 
\end{thm}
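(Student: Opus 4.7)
\textbf{Proof plan for Theorem \ref{Theorem[T_n(H)<]}.}

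The central identity for all upper bounds is the representation
\[
f - T_n(f) \;=\; \sum_{|\bs|_1 > n} v_{\bs}(f),
\]
valid by \eqref{tensor1} and the definition \eqref{sampop_per} of $T_n$, together with the key pointwise-in-$\bs$ estimate
\[
\|v_{\bs}(f)\|_p \;\lesssim\; 2^{-r|\bs|_1}\,\|f\|_{\bH^r_p},
\]
which is inequality \eqref{v_s(f)<H}. With these in hand, the three upper bounds follow by three different summation arguments, and the third identity in \eqref{hcsums}, $\sum_{|\bs|_1>n}2^{-r|\bs|_1}\asymp 2^{-rn}n^{d-1}$.

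For case (i) I would combine the triangle inequality in $L_p$ with the continuous embedding $L_p \hookrightarrow L_q$ on $\T^d$ (valid since $p\ge q$) to get $\|f-T_n(f)\|_q \le \sum_{|\bs|_1>n}\|v_{\bs}(f)\|_p \lesssim 2^{-rn}n^{d-1}\|f\|_{\bH^r_p}$. For case (ii) with $q<\infty$, since each $v_{\bs}(f)$ lies in $\Tr(2^{\bs+\mathbf 1},d)$, I would apply Remark \ref{RT2.4.6}, i.e.\ inequality \eqref{2.4.4R}, to the tail series, obtaining
\[
\|f-T_n(f)\|_q \;\lesssim\; \Big(\sum_{|\bs|_1>n} \|v_{\bs}(f)\|_p^{\,q}\,2^{|\bs|_1(q/p-1)}\Big)^{1/q}
\;\lesssim\; \|f\|_{\bH^r_p}\Big(\sum_{|\bs|_1>n}2^{-q(r-1/p+1/q)|\bs|_1}\Big)^{1/q},
\]
and the assumption $r>1/p$ ensures the exponent is positive, so \eqref{hcsums} yields the claimed $2^{-(r-1/p+1/q)n}n^{(d-1)/q}$. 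For case (ii) with $q=\infty$, I would first apply the Nikol'skii inequality (Theorem~\ref{T2.4.5}) to pass from $L_p$ to $L_\infty$ on each dyadic block, $\|v_{\bs}(f)\|_\infty \lesssim 2^{|\bs|_1/p}\|v_{\bs}(f)\|_p \lesssim 2^{-(r-1/p)|\bs|_1}$, and then sum with the triangle inequality.

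For the lower bounds in case (ii), I would use that $T_n(f)\in \Tr(Q_{n+3d})$ and hence $\|f-T_n(f)\|_q \ge E_{Q_{n+3d}}(f)_q$; the corresponding entries of Theorem~\ref{TBT3.3} (with the case $q=\infty$ handled by Theorems~\ref{TBT3.5}/\ref{TBT3.3H} in low dimensions and by the standard fooling construction of the form $c\,2^{-rn}\sum_{|\bs|_1\approx n}\phi_{\bs}$ in general) match the upper bounds exactly. The main obstacle is the lower bound in case (i): here the best-approximation lower bound $E_{Q_n}(\bH^r_p)_q$ is \emph{strictly smaller} than the upper bound $2^{-rn}n^{d-1}$ (typically by a factor like $n^{(d-1)/2}$), so one must exploit the fact that $T_n$ is a \emph{sampling} operator and construct a fooling function that vanishes on the entire Smolyak grid $\wt{SG}^d(n)$. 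My plan is to set
\[
g(\bx) \;=\; c\,2^{-r(n+1)}\sum_{|\bs|_1 = n+1}\phi_{\bs}(\bx),
\qquad
\phi_{\bs}(\bx) := \prod_{i=1}^d \sin\!\big(3\cdot 2^{s_i-1}x_i\big).
\]
The key observation is that $\sin(3\cdot 2^{s-1}x)$ vanishes on $J_{3\cdot 2^j}$ for every $j\le s$ (by repeated use of the double-angle formula), so $\phi_{\bs}$ vanishes on $J_{3\cdot 2^{j_1}}\times\cdots\times J_{3\cdot 2^{j_d}}$ whenever some $j_i\le s_i$; and for $|\bj|_1\le n<|\bs|_1$ such an $i$ must exist. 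Consequently $g\equiv 0$ on $\wt{SG}^d(n)$, so by the interpolation property \eqref{InterpolationT_n(f)} we have $T_n(g)=0$ and $\|g-T_n(g)\|_q=\|g\|_q$. One then verifies $\|g\|_{\bH^r_p}\lesssim 1$ via the $A_{\bs}$-characterization (each $\phi_{\bs}$ is spectrally localized near $\rho(\bs)$ with $L_p$-norm of order $1$) and $\|g\|_q\gtrsim 2^{-rn}n^{d-1}$ by exploiting the quasi-orthogonality of the $\phi_{\bs}$ and a lower Littlewood--Paley estimate in $L_q$. This last step --- turning the vanishing property into a genuine $L_q$ lower bound of the full order $2^{-rn}n^{d-1}$ rather than merely the best-approximation order --- is the technical heart of the argument.
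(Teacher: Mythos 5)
Your treatment of the upper bounds and of the lower bounds in case~(ii), $q<\infty$, matches the paper's strategy: the triangle inequality together with \eqref{v_s(f)<H} and the embedding $L_p\hookrightarrow L_q$ for $p\ge q$; the Nikol'skii inequality for $q=\infty$; Remark~\ref{RT2.4.6} (inequality \eqref{2.4.4R}) for $p<q<\infty$; and, for the case-(ii) lower bounds, the reduction $\|f-T_n f\|_q\ge E_{Q_{n+c d}}(f)_q$ followed by Theorem~\ref{TBT3.3}. For $q=\infty$ the paper appeals to Proposition~\ref{TmonT5.7}, a general lower bound valid for \emph{any} linear operator $U_n:\bH^r_p\to\Tr(Q_n)$; your plan there is vaguer but not structurally different.

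The real gap is in your lower bound for case~(i). Your fooling function $g=c\,2^{-r(n+1)}\sum_{|\bs|_1=n+1}\phi_{\bs}$ with $\phi_{\bs}=\prod_i\sin(3\cdot 2^{s_i-1}x_i)$ does vanish on $\wt{SG}^d(n)$ and does have $\|g\|_{\bH^r_p}\lesssim 1$, but it \emph{cannot} satisfy $\|g\|_q\gtrsim 2^{-rn}n^{d-1}$ for any $1\le q<\infty$. The $\phi_{\bs}$ are mutually orthogonal single-frequency products, $\|\delta_{\bs}(g)\|_q\asymp 2^{-rn}$ on $\asymp n^{d-1}$ blocks and $0$ elsewhere, so the Littlewood--Paley estimates (Corollary~\ref{C7.2} and its dual) give $\|g\|_q\asymp 2^{-rn}n^{(d-1)/\min\{q,2\}}$ up to constants for $1<q<\infty$ -- always strictly smaller than $2^{-rn}n^{d-1}$. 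In particular $\|g\|_2\asymp 2^{-rn}n^{(d-1)/2}$, and $\|g\|_1\le\|g\|_2$ is smaller still (you cannot use $|\int g|$ either, since $g$ has mean zero). Quasi-orthogonality is precisely the enemy here: it forces square-root cancellation in the sum over the $\asymp n^{d-1}$ hyperbolic-layer blocks, and no lower square-function estimate can recover the missing factor $n^{(d-1)/2}$.

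The paper's route (Theorem~\ref{T4.4} plus the observation after Theorem~\ref{T5.2} that $SG^d(n)$ is an $(n,\ell)$-net for any $\ell$) avoids this by passing through numerical integration: for $q\ge 1$ one has $\|f-T_nf\|_q\ge\|f-T_nf\|_1\ge\bigl|\int f-\int T_nf\bigr|$, and $\int T_nf$ is a cubature rule on the Smolyak grid, so the cubature lower bound $\gtrsim 2^{-rn}n^{d-1}$ for $\bH^r_p=\bB^r_{p,\infty}$ applies. The fooling function behind Theorem~\ref{T4.4} is the ``local'' construction \eqref{g_1}: a \emph{non-negative} sum of atoms $a_{\bj,\bk}$ supported in dyadic cubes that miss the grid (there are $\asymp n^{d-1}$ layers $\bj$ and $\asymp 2^n$ available cubes per layer), normalized so that $\|g\|_{\bH^r_p}\lesssim 1$ via Proposition~\ref{atomic}. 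Because the atoms are non-negative, the contributions to $\int g$ add without cancellation and one obtains $\int g\asymp 2^{-rn}n^{d-1}$, hence $\|g\|_q\ge\|g\|_1\ge\int g\gtrsim 2^{-rn}n^{d-1}$. That positivity -- replacing a sum of oscillating trigonometric polynomials by a sum of bumps -- is the idea missing from your proposal, and it is exactly what makes the factor $n^{d-1}$ (as opposed to $n^{(d-1)/2}$ or $n^{(d-1)/q}$) attainable.
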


The upper bounds in Theorem \ref{Theorem[T_n(H)<]} for the case $p \ge q$ are already in \eqref{ineq:SampHT}.
The corresponding lower bounds follow from  more general results on lower bounds for numerical integration (see, for
instance, Theorem \ref{T4.4} below). In particular, Theorem \ref{T4.4} provides these lower bounds not only for the
operator $T_n$ but for any recovering operator, which uses the same nodes as $T_n$.  In the case $p <
q$ the lower bounds follow from Theorem \ref{TBT3.3}. For the upper bounds see \cite{Di90, Di91} and \cite{TBook},
Chapter 4, \S 5, Remark 2. Finally, in the case $q = \infty$ the upper bounds can
be easily derived from \eqref{v_s(f)<H}. The lower bounds follow from a general statement (see \cite{Tmon}, Chapter 2,
Theorem 5.7):
 \begin{prop}\label{TmonT5.7} Let $U_n$ be a bounded linear operator from $\bH^r_p$ to $\Tr(Q_n)$. Then for $1\le p\le
\infty$
 $$
 \sup_{f\in\bH^r_p}\|f-U_n(f)\|_\infty \ge C(d,p) 2^{-(r-1/p)n}n^{d-1}.
 $$
\end{prop}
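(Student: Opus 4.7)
My plan is to exhibit, for any bounded linear $U_n:\bH^r_p\to\Tr(Q_n)$, a single function $g\in\Tr(\Delta Q_{n+1})$ (so $\hat g$ is supported in $\Delta Q_{n+1}=Q_{n+1}\setminus Q_n$) such that the family of translates $g_\by(\bx):=g(\bx-\by)$ defeats $U_n$ in the following averaged sense:
\begin{equation*}
 (2\pi)^{-d}\int_{\T^d}\!\bigl[g_\by(\by)-U_n(g_\by)(\by)\bigr]\,d\by\;=\;g(\mathbf 0).
\end{equation*}
This identity immediately yields $\sup_\by\|g_\by-U_n(g_\by)\|_\infty\ge|g(\mathbf 0)|$. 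Since $\|\cdot\|_{\bH^r_p}$ is translation invariant, we obtain $\sup_{f\in\bH^r_p}\|f-U_n(f)\|_\infty\ge|g(\mathbf 0)|/\|g\|_{\bH^r_p}$, and the proof reduces to an explicit lower bound on this quotient.

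For the identity I would write $U_n=\sum_{j}\ell_j(\cdot)\phi_j$ with $\phi_j\in\Tr(Q_n)$ and bounded linear functionals $\ell_j$ on $\bH^r_p$, expand each $g_\by$ via its Fourier series, and rearrange:
\begin{equation*}
U_n(g_\by)(\by)=\sum_{\bk}\hat g(\bk)\,\psi_{\bk}(\by)\,e^{-i(\bk,\by)},\qquad\psi_{\bk}:=\sum_{j}\ell_j(e^{i(\bk,\cdot)})\phi_j\in\Tr(Q_n).
\end{equation*}
Each nonzero term has $\bk\in\Delta Q_{n+1}$, so $\psi_\bk(\by)e^{-i(\bk,\by)}$ has Fourier support in $Q_n-\bk$, which omits the origin; its integral over $\T^d$ therefore vanishes, while $g_\by(\by)=g(\mathbf 0)$ is constant in $\by$, yielding the displayed identity. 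For the explicit choice I would take $g=\sum_{|\bs|_1=n+1}\mathcal D_{\rho(\bs)}$: then $g(\mathbf 0)=\sum_\bs|\rho(\bs)|\asymp 2^{n+1}n^{d-1}$, and by the $\delta_\bs$-characterization of $\bH^r_p$ combined with Lemma \ref{L2.3.1} (which gives $\|\mathcal D_{\rho(\bs)}\|_p\asymp 2^{(1-1/p)|\bs|_1}$ for $1<p<\infty$), $\|g\|_{\bH^r_p}\asymp 2^{(r+1-1/p)(n+1)}$. The quotient equals $n^{d-1}\,2^{-(r-1/p)n}$ up to constants, as required.

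The main obstacle is handling the endpoints $p=1$ and $p=\infty$, where the $\delta_\bs$-characterization of $\bH^r_p$ is unavailable and one must instead use the $A_\bs$-blocks with uniformly bounded kernels $\mathcal A_\bs$. For $p=\infty$ the computation adapts directly using $\|A_\bs(g)\|_\infty\asymp|\rho(\bs)|$. For $p=1$ the building block $\mathcal D_{\rho(\bs)}$ does not have the correct $L_1$-norm and must be replaced by a polynomial in $\Tr(\rho(\bs))$ whose $L_1$-norm is of order one (e.g.\ built from the $\mathcal A_\bs$-type blocks); the averaging identity is independent of $p$ and requires no modification.
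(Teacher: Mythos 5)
Your averaging argument is correct and is, in essence, the standard argument in \cite{Tmon}. The identity
\begin{equation*}
(2\pi)^{-d}\int_{\T^d}\bigl[g_\by(\by)-U_n(g_\by)(\by)\bigr]\,d\by=g(\mathbf 0)
\end{equation*}
holds for any trigonometric polynomial $g$ with $\hat g$ vanishing on $Q_n$, because $g_\by(\by)=g(\mathbf 0)$ is constant while each term $\hat g(\bk)e^{-i(\bk,\by)}\psi_\bk(\by)$ has no zero Fourier coefficient (its spectrum lives in $Q_n-\bk$, which misses the origin when $\bk\notin Q_n$). Translation invariance of $\|\cdot\|_{\bH^r_p}$ then reduces the whole proposition to a lower bound on $|g(\mathbf 0)|/\|g\|_{\bH^r_p}$, and your choice $g=\sum_{|\bs|_1=n+1}\mathcal D_{\rho(\bs)}$ gives $g(\mathbf 0)\asymp 2^{n}n^{d-1}$ and $\|g\|_{\bH^r_p}\asymp 2^{(r+1-1/p)n}$ for $1<p<\infty$, so the quotient is of the right order. (One small attribution correction: the norm $\|\mathcal D_{\rho(\bs)}\|_p\asymp 2^{(1-1/p)|\bs|_1}$ follows from the tensor-product structure and (\ref{2.1.6}), not from Lemma \ref{L2.3.1}, which concerns $\|\mathcal D_{Q_n}\|_p$ and carries an extra factor $n^{(d-1)/p}$.)

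For $p=\infty$ your adaptation via $A_\bs$ works verbatim: $A_\bs$ is uniformly bounded on $L_\infty$, each fixed $\bs$ picks up $O(1)$ of the $\mathcal D_{\rho(\bs')}$, and $\|g\|_{\bH^r_\infty}\lesssim 2^{(r+1)n}$.

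The $p=1$ case is the one real point that deserves to be spelled out, because your phrase ``built from the $\mathcal A_\bs$-type blocks'' is ambiguous. The obstruction is not, as one might first guess, impossible to overcome inside $\Tr(\rho(\bs))$: the Nikol'skii bound $|t(\mathbf 0)|\lesssim 2^{|\bs|_1}\|t\|_1$ is in fact attained up to constants by a modulated Fej\'er kernel
\begin{equation*}
t_\bs(\bx):=\prod_{j=1}^d \mathcal K_{N_j-1}(x_j)\cos(c_{s_j}x_j)\in\Tr(\rho(\bs)),\quad N_j\asymp 2^{s_j-2},
\end{equation*}
with $\|t_\bs\|_1\le 1$ and $t_\bs(\mathbf 0)\asymp 2^{|\bs|_1}$. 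With $g=\sum_{|\bs|_1=n+1}t_\bs$ one then has $g(\mathbf 0)\asymp 2^{n}n^{d-1}$, each $A_{\bs'}(g)$ collects $O(1)$ blocks of $L_1$-norm $O(1)$ and vanishes unless $|\bs'|_1=n+1+O(d)$, so $\|g\|_{\bH^r_1}\lesssim 2^{rn}$ and the quotient is $n^{d-1}2^{-(r-1)n}$ as required. Note that simply using $\mathcal A_\bs$ themselves as building blocks would \emph{not} keep the construction inside $\Tr(\Delta Q_{n+1})$ (their spectra spill down two dyadic levels), so one would then need to shift the layer by $O(d)$ to preserve the orthogonality $\hat g|_{Q_n}\equiv 0$ that makes the averaging identity work; the Fej\'er-modulated construction avoids this complication entirely and in fact gives a single test function valid for all $1\le p\le\infty$.
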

Theorem \ref{Theorem[T_n(H)<]} and Proposition \ref{TmonT5.7} show that the sequence of operators $T_n$ is optimal
(in the sense of order) among all linear operators of approximating by means of the hyperbolic cross polynomials in
$\Tr(Q_{n+d})$.

Finally, we study the situation $\theta<\infty$. We will see that the
third index $\theta$ influences the estimates significantly. We refer to
\cite{Di90, Di91} and the recent papers \cite{SiUl07, Ul08, SU11,
ByDuUl14, Di11, DU14, VT152}. 

\begin{thm} \label{Theorem[T_n(B)<]}
Let $1 \leq  p, q, \theta \le \infty$ and $r > 1/p$. Then we have the
following relations
\begin{itemize}
\item[{\rm (i)}] For $p \ge q$,
\begin{equation*} 
\sup_{f \in \Brpt}\|f - T_n(f)\|_q
 \ \asymp \ 
 2^{-rn}n^{(d-1)(1-1/\theta)}\,. 
 \end{equation*}
\item[{\rm (ii)}] For $p < q$, 
\begin{equation*} 
\sup_{f \in \Brpt}\|f - T_n(f)\|_q
 \ \asymp \ 
\begin{cases}
2^{- (r - 1/p + 1/q) n} n^{(d-1)(1/q - 1/\theta)_+}, \ & q < \infty, \\
2^{- (r - 1/p) n} n^{(d-1)(1 - 1/\theta)}, \ &  q = \infty.
\end{cases}
\end{equation*}
\end{itemize} 
\end{thm}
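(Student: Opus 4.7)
The plan is to exploit the frequency-limited sampling representation from Proposition~\ref{samp_repr1}(i), which gives
\[
f-T_n(f) \;=\; \sum_{|\bj|_1>n} v_\bj(f)\qquad\text{and}\qquad
\Big(\sum_{\bj\in\N_0^d} 2^{r|\bj|_1\theta}\|v_\bj(f)\|_p^\theta\Big)^{1/\theta}
\;\asymp\;\|f\|_{\Brpt},
\]
with the usual modification for $\theta=\infty$. Abbreviate $b_\bj:=2^{r|\bj|_1}\|v_\bj(f)\|_p$, so that $\|(b_\bj)_\bj\|_{\ell_\theta}\lesssim\|f\|_{\Brpt}$. Throughout I would use the tail estimate $\sum_{|\bj|_1>n}2^{-\sigma|\bj|_1}\asymp 2^{-\sigma n}n^{d-1}$ for $\sigma>0$ recorded in~\eqref{hcsums}.

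For the upper bound in case (i), since $\T^d$ carries the normalised measure, $\|v_\bj(f)\|_q\le\|v_\bj(f)\|_p$, so the triangle inequality gives
\[
\|f-T_n(f)\|_q\;\le\;\sum_{|\bj|_1>n}\|v_\bj(f)\|_p\;=\;\sum_{|\bj|_1>n}2^{-r|\bj|_1}b_\bj.
\]
H\"older with exponents $\theta'$ and $\theta$ combined with the tail bound yields the upper estimate $2^{-rn}n^{(d-1)(1-1/\theta)}\|f\|_{\Brpt}$. For case (ii) with $p<q<\infty$, the membership $v_\bj(f)\in\Tr(2^{\bj+\mathbf{1}},d)$ together with Remark~\ref{RT2.4.6} (applied with the roles of $p$ and $q$ swapped) delivers
\[
\|f-T_n(f)\|_q\;\lesssim\;\Big(\sum_{|\bj|_1>n}2^{-\alpha q|\bj|_1}b_\bj^q\Big)^{1/q},\qquad \alpha:=r-1/p+1/q>0.
\]
If $\theta\le q$, the embedding $\ell_\theta\hookrightarrow\ell_q$ followed by pulling out the supremum $2^{-\alpha n}$ costs no logarithm; if $\theta>q$, a H\"older step with $1/q=1/s+1/\theta$, i.e.\ $s=\theta q/(\theta-q)$, together with the $\ell_s$-tail bound produces the factor $n^{(d-1)(1/q-1/\theta)}$. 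The two subcases combine to $n^{(d-1)(1/q-1/\theta)_+}$. Finally, for $q=\infty$ the factorwise Nikol'skii inequality (Theorem~\ref{T2.4.5}) gives $\|v_\bj(f)\|_\infty\lesssim 2^{|\bj|_1/p}\|v_\bj(f)\|_p$, and applying the argument from case (i) with $r$ replaced by $r-1/p$ produces the bound $2^{-(r-1/p)n}n^{(d-1)(1-1/\theta)}$.

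The main obstacle is the matching lower bound. Observe that in case (i) the logarithmic factor $n^{(d-1)(1-1/\theta)}$ is strictly larger than the one appearing in $E_{Q_n}(\Brpt)_q$ in Theorem~\ref{Theorem[HC-Approx-Brpt]}, so the trivial bound $\|f-T_n(f)\|_q\ge E_{Q_{n+d}}(f)_q$ is not sharp and one must use the specific structure of $T_n$. The plan is to construct fooling trigonometric polynomials of the form
\[
f\;=\;c\sum_{|\bj|_1=n+k}2^{-r|\bj|_1}\phi_\bj,\qquad \phi_\bj\in\Tr(\rho(\bj)),\quad \|\phi_\bj\|_p\asymp 1,
\]
for a small constant $k=k(d)$, with the $\phi_\bj$ chosen (using the univariate Dirichlet or de la Vall\'ee Poussin interpolation theory, tensorised in the standard way via Lemma~\ref{reprpoly}) so that $T_n(\phi_\bj)$ is negligible in $L_q$ compared to $\phi_\bj$ itself. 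A direct count of the hyperbolic layer $\{\bj:|\bj|_1=n+k\}$ (cardinality $\asymp n^{d-1}$) then gives $\|f\|_{\Brpt}\asymp n^{(d-1)/\theta}$, while a lower estimate of $\|f-T_n(f)\|_q$ via a Nikol'skii-type / Littlewood--Paley computation produces $\asymp 2^{-rn}n^{d-1}$ in case (i) and the correct $q$-dependent exponents in case (ii). Dividing yields the claimed lower bounds. The delicate technical point is the verification that $T_n(\phi_\bj)$ does not inadvertently restore a significant portion of $\phi_\bj$ on the dyadic block $\rho(\bj)$; this rests on analysing the interaction between the Smolyak grid $\wt{SG}^d(n)$ and the Fourier supports of the building blocks $v_\bj$, which is where the bulk of the technical work lies.
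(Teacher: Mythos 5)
Your upper-bound argument is essentially the same as the paper's. Both start from $f-T_n(f)=\sum_{|\bj|_1>n}v_\bj(f)$, invoke the frequency-limited Littlewood--Paley-type sampling representation of Proposition~\ref{samp_repr1}(i), and then produce the rate from the standard tail estimate~\eqref{|j|>n} after a H\"older step. Your treatment of each sub-case (i), (ii) with $q<\infty$ via Remark~\ref{RT2.4.6}, and (ii) with $q=\infty$ via Nikol'skii is correct and corresponds to one of the two routes the paper sketches; the computations of the exponents (including the H\"older split $1/q=1/s+1/\theta$ producing $n^{(d-1)(1/q-1/\theta)_+}$) check out.

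The lower bound, however, is where your proposal has a genuine gap and also diverges from what the paper does. You propose building fooling functions out of \emph{trigonometric} polynomials $\phi_\bj\in\Tr(\rho(\bj))$ with $|\bj|_1=n+k$ and then argue that $T_n(\phi_\bj)$ is ``negligible.'' That step does not come for free and is actually quite dangerous: a high-frequency trigonometric polynomial sampled on the Smolyak grid $\wt{SG}^d(n)$ generically suffers aliasing, and there is no reason for $T_n(\phi_\bj)$ to be small in $L_q$---indeed, $T_n$ restores polynomials from $Q_{n-3d}$ exactly, and aliased copies of $\phi_\bj$ can land inside that hyperbolic cross with coefficients that are not easy to control uniformly in $\bj$. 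You flag this yourself as ``where the bulk of the technical work lies,'' but without resolving it the lower bound is not established.

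The paper avoids this obstacle entirely. For $p\ge q$ the lower bound is inherited from numerical integration on Smolyak grids (Theorem~\ref{T4.4}, proved via Theorem~\ref{Thm:cubsmolyak}), and for $p<q$ from the $(n,\ell)$-net lower bound Theorem~\ref{T5.2} from~\cite{VT152}. In the first route the fooling function is \emph{not} a trigonometric polynomial but a sum of compactly supported smooth bumps $a_{\bj,\bk}$ placed in dyadic cubes that avoid the Smolyak grid (see~\eqref{g_1} and Proposition~\ref{atomic}). Because $T_n$ is a sampling operator interpolating on $\wt{SG}^d(n)$, one then has $T_n(f)\equiv0$ by construction, so there is no aliasing to control; the only work is to certify membership in $\Brpt$ (done by the atomic inequality~\eqref{flelambda}) and to lower-bound $\|f\|_q$ (trivial for nonnegative disjointly supported bumps, using $\|f\|_q\ge\|f\|_1=\int f$). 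This is the cleaner and correct route, and it is what makes the ``not easier than Smolyak sampling'' lower bound robust even against recovery operators more general than $T_n$. You should replace your trigonometric-polynomial construction with this time-limited (atomic) fooling-function construction.
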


For the respective lower bounds we refer to \cite[Sect.\ 5]{DU14} and
\cite{VT152}, see also Section \ref{cubsmol} below. Note, that lower bounds for
numerical integration also serve as lower bounds for approximation. Regarding upper bounds we have several options how to proceed. Starting with
\begin{equation}\label{error}
    \|f-T_n(f)\|_q = \Big\|\sum\limits_{|\bj|_1>n}v_{\bj}(f)\Big\|_q
\end{equation}
one option is to use embeddings of Besov-Sobolev
classes into $L_q$, see Lemma \ref{emb}, in order to replace the right-hand
side of \eqref{error} by an expression of type \eqref{rhs1} or \eqref{rhs2}.
Here we need Proposition \ref{samp_repr2} above combined with the embedding
relations from Lemma \ref{emb}. Another option is to simply use the triangle
inequality for the error \eqref{error} and afterwards the Bernstein-Nikol'skii
inequality (Theorem \ref{T2.4.1}) to change the $L_q$-norm to an $L_p$-norm for
instance. Then we get immediately an expression of type \eqref{rhs1} or
\eqref{rhs2}. In the latter expressions $r$ will be $1/p-1/q$ in the case
$p<q$ (not the $r$ from the class above). Using H\"older's inequality and
standard procedures we end up with expressions like on the left-hand side of
\eqref{lhs1}, \eqref{lhs2} multiplied with a certain rate which is always
generated from infinite sums of type 
\begin{equation}\label{|j|>n}
      \Big(\sum\limits_{|\bj|_1>n} 2^{-|\bj|_1s\eta}\Big)^{1/\eta} \asymp
2^{-sn}n^{(d-1)/\eta}\,.
\end{equation}
Finally we apply our Proposition \ref{samp_repr1}.\\

A general linear sampling operator on Smolyak grids $SG^d(n)$ is given by 
\begin{equation} \label{def[Psi(f,SG^d(n))]}
\Psi(f,SG^d(n))
\ = \
\sum_{\by \in SG^d(n)} f(\by) \psi_\by\,,
\end{equation}
where $\Psi_n = \{\psi_\by\}_{\by \in SG^d(n)}$ denotes a family of functions indexed by the grid points in $SG^d(n)$, see \eqref{eq53} below. 
On the basis of the time-limited  B-spline sampling representation \eqref{eq:B-splineRepresentation} which is given 
in Subsection \ref{timelimsamprep} below, we construct the sampling algorithms $R_n$ on 
the Smolyak grids $SG^d(n)$ by 
\begin{equation}\label{def[R_n]}
R_n(f) 
:= \ 
\sum\limits_{\substack{\bj \in \N_0^d\\|\bj|_1 \leq n}} q_{\bj}(f)\quad,\quad
f\in C(\T^d)\,,
\end{equation}
which induces a linear sampling algorithm  of the form  \eqref{def[Psi(f,SG^d(n))]}
where $\psi_\by$ are explicitly constructed as linear combinations of
at most $n_0$ B-splines $N_{\bs,\bk}$ for some $n_0 \in \N$ which is independent of $\bs,\bk,n$ and $f$. This fact can be proven in the same way as the proof of its counterpart for non-periodic functions \cite{Di11}.
Differing from $T_n$, the operator $R_n$ does not possess an interpolation property similar 
 to \eqref{InterpolationT_n(f)}, which is why it is called {\em quasi-interpolation}.

\begin{thm}\label{Theorem[analogs]}
All Theorems \ref{Sobsamp_q} --  \ref{Theorem[T_n(B)<]} hold true with frequency-limited sampling operators  $T_n$ replaced by the time-limited sampling operators $R_n$ 
(with proper restrictions on $r$ according to the smoothness of the B-spline).  
\end{thm}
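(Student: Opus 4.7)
The plan is to mirror the proofs of Theorems \ref{Sobsamp_q}--\ref{Theorem[T_n(B)<]} step by step, replacing each ingredient tied to the frequency-limited blocks $v_{\bj}(f)$ by its B-spline/quasi-interpolation counterpart coming from the time-limited representation \eqref{eq:B-splineRepresentation}. Since $R_n$ has the same Smolyak skeleton $R_n = \sum_{|\bj|_1\le n} q_{\bj}(f)$ as $T_n = \sum_{|\bj|_1\le n} v_{\bj}(f)$, the whole architecture of the proofs (triangle inequality on the tail $|\bj|_1>n$, H\"older in the index $\bj$, and the geometric series \eqref{|j|>n}) carries over verbatim once the analogues of Propositions \ref{samp_repr1} and \ref{samp_repr2} are available for the dyadic blocks $q_{\bj}(f)$.

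The first step is to establish, for the B-spline quasi-interpolation $q_{\bj}$ of order high enough that the B-spline reproduces polynomials of the relevant degree (this is where the smoothness restriction on $r$ enters), the one-sided inequalities
\[
\|q_{\bj}(f)\|_p \lesssim 2^{(|\bm|_1-|\bj|_1)/p}\|f_{\bm}\|_p\quad\text{for } \bm \ge \bj - c\mathbf{1},
\]
which is the direct analogue of \eqref{tria2} and which follows from the $L_p$-stability of scaled B-splines together with the fact that $q_{\bj}$ annihilates high-frequency contributions (up to the smoothness of the B-spline). Combining these with the standard discrete Hardy argument, as in the proof of Proposition \ref{samp_repr1}, yields
\[
\Big(\sum_{\bj}2^{r|\bj|_1\theta}\|q_{\bj}(f)\|_p^{\theta}\Big)^{1/\theta}\asymp \|f\|_{\bB^r_{p,\theta}}
\]
and the analogous Littlewood--Paley-type characterization of $\bW^r_p$ with $|q_{\bj}(f)|^2$ replacing $|v_{\bj}(f)|^2$ (here one additionally uses the Peetre/Hardy--Littlewood maximal function machinery as in \cite{ByUl15}, now applied to B-spline blocks). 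The converse one-sided bound, i.e.\ the analogue of Proposition \ref{samp_repr2}, is obtained in the same way since $q_{\bj}(f)\in \Tr(c\cdot 2^{\bj},d)$ up to polynomial tails and the estimates in Proposition \ref{samp_repr2} only used this dyadic frequency localization.

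Given these two ingredients, the upper bounds in Theorems \ref{Sobsamp_q}, \ref{Sobsamp_infty}, \ref{Theorem[T_n(H)<]} and \ref{Theorem[T_n(B)<]} transfer literally: in every case one writes $\|f - R_n(f)\|_q \le \|\sum_{|\bj|_1>n} q_{\bj}(f)\|_q$, applies a triangle/H\"older step together with the Nikol'skii inequality (Theorem \ref{T2.4.5}) to change the inner norm from $L_q$ to $L_p$ when $p<q$, invokes the B-spline analogue of \eqref{lhs1}/\eqref{lhs2}, and finishes with the tail sum \eqref{|j|>n}. The matching lower bounds require no new work: they come either from the general lower bounds for linear approximation from $\Tr(Q_n)$-type subspaces (which $R_n$ also targets, up to a bounded factor in the degree) or from lower bounds for numerical integration on the grid $SG^d(n)$ (cf.\ Theorem \ref{T4.4}), both of which are insensitive to whether the reconstruction uses $v_{\bj}$ or $q_{\bj}$.

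The main obstacle I expect is isolating cleanly the exact smoothness threshold on $r$ required for each theorem, because unlike the de la Vall\'ee Poussin blocks $v_{\bj}$ (which are polynomial-exact up to degree $2^{j_i}$), a B-spline of fixed order $m_0$ only provides polynomial reproduction up to degree $m_0-1$. This forces the restriction ``$r$ compatible with the B-spline order'' mentioned in the statement and, more technically, requires the quasi-interpolation error estimate $\|f - q_{\bj}(f)\|_p \lesssim 2^{-r|\bj|_1}\|f\|_{\bB^r_{p,\theta}}$ to be verified only in the range $1/p < r < m_0$. Once this range is fixed, the analogue of the stability bound \eqref{sampV}, namely $\|q_{\bj}(q_{\bm}(f))\|_p \lesssim 2^{(|\bm|_1-|\bj|_1)/p}\|f\|_p$ for $\bm \ge \bj$, can be proven by a direct computation with tensor products of univariate B-spline kernels, and the rest of the argument is formally identical to the frequency-limited case.
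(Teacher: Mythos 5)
Your overall plan is the right one, and it matches the paper's approach at the level of architecture: prove B-spline analogues of Propositions \ref{samp_repr1} and \ref{samp_repr2}, then rerun the telescoping/H\"older/geometric-series argument with $q_{\bj}$ in place of $v_{\bj}$. Proposition \ref{samp_repr1Q} in the paper is exactly the B-spline version of \eqref{lhs1} and \eqref{lhs2} that you postulate, and your remarks on the smoothness threshold $1/p<r<2\ell$ match the restrictions stated there.

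However, there is a genuine gap in your route to the converse (Proposition~\ref{samp_repr2}-type) inequality. You claim ``$q_{\bj}(f)\in \Tr(c\cdot 2^{\bj},d)$ up to polynomial tails and the estimates in Proposition \ref{samp_repr2} only used this dyadic frequency localization.'' This is false: $q_{\bj}(f)$ is a finite sum of scaled, shifted B-splines $N_{\bj,\bk}$, hence a compactly supported piecewise polynomial (periodized), and its Fourier coefficients are \emph{not} supported in a dyadic frequency box; they are nonzero on all of $\Z^d$ and only decay polynomially. Proposition~\ref{samp_repr2} crucially assumes $t_{\bj}\in \Tr(2^{\bj},d)$ and its proof exploits this band-limitation via Nikol'skii-type arguments, so it cannot be applied to $q_{\bj}(f)$ even ``up to tails''. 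The correct replacement, which is what the paper's cited references use, is the dual, \emph{time-side} localization: one invokes atomic/molecular decomposition results (the prototypes in the paper are Propositions~\ref{contvsdisc} and \ref{atomic}, where the building blocks are compactly supported bumps rather than band-limited polynomials), together with the specific fact that the component $q_{\bs}(f)$ splits into finitely many B-splines $N_{\bs,\bk}$ with non-overlapping interiors of support. Relatedly, for the $\Wrp$-case the Littlewood--Paley characterization (the analogue of \eqref{lhs2}) requires Peetre and Hardy--Littlewood maximal function machinery applied to the B-spline blocks, which is not a formal transcription of the de la Vall\'ee Poussin argument. Your stability estimate $\|q_{\bj}(q_{\bm}(f))\|_p \lesssim 2^{(|\bm|_1-|\bj|_1)/p}\|f\|_p$ for $\bm\ge\bj$ is the right analogue of \eqref{sampV}, but should be proved from the local support structure of the B-splines (Marcinkiewicz-type discretization on the lattice $I^d(\bs)$) rather than from a band-limitation argument. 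Once you substitute atomic decomposition for the erroneous frequency-localization step, the remaining machinery does transfer as you describe.
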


Theorem \ref{Theorem[analogs]} is proved in \cite{ByUl16}, \cite{Di16_1} for Sobolev classes $\Wrp$.
The upper bounds in  Theorem \ref{Theorem[analogs]} have been proven
in \cite[Theorem 3.1]{Di11} for Besov classes $\Brpt$ and  the lower bounds follow from \cite[Theorem 5.1]{DU14}.

Both sampling operators $T_n$ and $R_n$ provide approximate recovery with similar error bounds for classes $\bW^r_p$ and $\bB^r_{p,\theta}$. 
The common feature of the operators $T_n$ and $R_n$ is that they use the Smolyak grid points as the sampling points.
This motivates us to ask the following question. Are there (non-)linear operators, 
that use the Smolyak grids points for sampling, which give better bounds than $T_n$ and $R_n$? 
It is proved in \cite{Di16_1,DU14} that the answer to this question is ``No''. We refer to a more 
general result in \cite[Sect.\ 5]{DU14} and \cite{VT152}, to establish the lower bounds in the theorems above. Note also that in case
$p\geq q\geq 1$ the lower bounds for numerical integration also serve as lower bounds for approximation, see also Subsection \ref{cubsmol} below. 
In general, the proof of the corresponding lower bounds is based on the construction of test functions from a class of our interest which is zero at the grid points. 
A nontrivial part of it is the proof of the fact that the constructed function belongs to the class. The inverse time-limited B-spline representation theorems 
(see \cite{Di16_1,DU14}) and Proposition \ref{atomic}, see \cite{UU14}, is used at this step. 

Moreover, it is proved in \cite{VT152} that even if we allow more general than the Smolyak grids sampling sets we do 
not gain better error bounds. We give a precise formulation of this result.}  
Let $\bs = (s_1,\dots,s_d)$, $s_j\in \N_0$, $j=1,\dots,d$. We associate with $\bs$ a web
$W(\bs)$ as follows: denote 
$$
w(\bs,\bx) := \prod_{j=1}^d \sin (2^{s_j}x_j)
$$
and define
$$
W(\bs) := \{\bx: w(\bs,\bx)=0\}.
$$
We say that a set of nodes $X_m:=\{\bx^i\}_{i=1}^m$ is
an $(n,\ell)$-net if $|X_m\setminus W(\bs)| \le 2^\ell$ for all $\bs$ such that
$|\bs|_1=n$. It is easy
to
check that $SG^d(n)\subset W(\bs)$ with any $\bs$ such
that $|\bs|_1=n$. This means that $SG^d(n)$ is an $(n,\ell)$-net for any $\ell$.
\begin{thm}\label{T5.2} For any recovering operator $\Psi(\cdot,X_m)$ with respect to a \newline $(n,n-1)$-net $X_m$ we
have for $1\le p< q<\infty$, $r>\beta$,
$$
\Psi(\bB^r_{p,\theta},X_m)_q \gtrsim 2^{-n(r-\beta)}n^{(d-1)(1/q-1/\theta)},\quad \beta:=1/p-1/q.
$$
\end{thm}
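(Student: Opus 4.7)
The strategy is the classical one for sampling lower bounds: construct a ``fooling'' function $f \in \bB^r_{p,\theta}$ which vanishes on $X_m$ and has prescribed lower bound on its $L_q$-norm. Indeed, for any $f$ with $f|_{X_m} = 0$ and any recovery operator $\Psi(\cdot,X_m)$, one has $\Psi(f,X_m) = \Psi(-f,X_m) = \Psi(0,X_m) =: g$, so at least one of $\|f-g\|_q$, $\|-f-g\|_q$ is $\ge \|f\|_q$. Hence it suffices to construct $f \in \bB^r_{p,\theta}$ with $\|f\|_{\bB^r_{p,\theta}} \lesssim 1$, $f|_{X_m}=0$, and
\[
\|f\|_q \gtrsim 2^{-n(r-\beta)} n^{(d-1)(1/q-1/\theta)}.
\]

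The fooling function will take the form $f = \sum_{|\bs|_1 = n} c\, g_{\bs}$ where $g_{\bs}$ is a trigonometric polynomial associated to the dyadic block $\rho(\bs)$. For each $\bs$ with $|\bs|_1 = n$, I build $g_{\bs}$ as a product $g_{\bs}(\bx) = w(\bs,\bx)\, h_{\bs}(\bx)$ where $w(\bs,\bx) = \prod_{j=1}^d \sin(2^{s_j}x_j)$ and $h_{\bs}$ ranges over an auxiliary space of trigonometric polynomials with frequencies in a box $\{|k_j| < 2^{s_j - 1}\}$; the latter has dimension $\asymp 2^n$ and the product $w(\bs,\cdot)h_{\bs}$ has frequencies supported in a controlled neighbourhood of $\rho(\bs)$. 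By construction $g_{\bs}$ vanishes on the web $W(\bs)$. The $(n,n-1)$-net hypothesis states that $|X_m \setminus W(\bs)| \le 2^{n-1}$, so requiring $h_{\bs}$ to kill these remaining sampling nodes imposes at most $2^{n-1}$ linear conditions; since the ambient space has dimension $\gtrsim 2^n$, a subspace $H_{\bs}$ of dimension $\gtrsim 2^{n-1}$ survives. Within $H_{\bs}$ I invoke Theorem~\ref{T2.2.1} (or the underlying volume estimate of Theorem~\ref{T2.5.1}) to produce $h_{\bs}$ with $\|h_{\bs}\|_\infty = 1$ and $\|h_{\bs}\|_2 \gtrsim 1$; combined with Nikol'skii's inequalities (Theorem~\ref{T2.4.5}) this yields $\|g_{\bs}\|_p \asymp 2^{-n/p'}$ and $\|g_{\bs}\|_q \asymp 2^{-n/q'}$, so the Nikol'skii ratio $\|g_{\bs}\|_q / \|g_{\bs}\|_p \asymp 2^{n\beta}$ is realized.

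Normalizing so that $c\, \|g_{\bs}\|_p \asymp 2^{-rn} n^{-(d-1)/\theta}$ (which gives $c \asymp 2^{-n(r-1/p)} n^{-(d-1)/\theta}$ after absorbing the Nikol'skii factor) yields $\|f\|_{\bB^r_{p,\theta}} \lesssim 1$ via the equivalent norm~\eqref{DefB} applied to the $\asymp n^{d-1}$ blocks at level $n$. For the lower bound on $\|f\|_q$, the frequencies of the $g_{\bs}$'s sit in essentially disjoint dyadic neighbourhoods of $\rho(\bs)$, so by Littlewood--Paley (and Khintchine with a sign-choice, if necessary), together with the lower Nikol'skii bound $\|g_{\bs}\|_q \gtrsim 2^{n\beta}\|g_{\bs}\|_p$, one estimates $\|f\|_q \gtrsim 2^{-n(r-\beta)} n^{(d-1)(1/q - 1/\theta)}$, matching the claim.

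The main obstacle is the fact that the frequency support of $w(\bs,\cdot)h_{\bs}$ is not contained in $\rho(\bs)$ but only in its dyadic dilate (because the multiplication by $w$ shifts frequencies by $\pm 2^{s_j}$), so the $g_{\bs}$'s for different $\bs$ with $|\bs|_1 = n$ have overlapping frequency supports. The careful bookkeeping needed is that only boundedly many (depending on $d$) other $\bs'$ with $|\bs'|_1 \in \{n-d,\dots,n+d\}$ can contribute harmonics to any single $\rho(\bs')$, so the ``almost orthogonal'' structure is preserved up to constants. This allows the Besov norm and $L_q$ norm to be computed as if the supports were disjoint, and the bound $r > \beta$ is precisely what guarantees convergence in these near-block estimates.
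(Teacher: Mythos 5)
The general architecture of your argument --- a fooling function vanishing on $X_m$, built as a sum over $|\bs|_1=n$ of $g_\bs = w(\bs,\cdot)h_\bs$ with $h_\bs$ in a subspace that kills the $\le 2^{n-1}$ nodes off the web $W(\bs)$, and with $h_\bs$ produced via the volume estimate (Theorem \ref{T2.2.1}) --- is exactly the web-based scheme the paper uses for the cubature lower bounds (Theorems \ref{T4.2}, \ref{T4.4}, \ref{T6.2.1a}, \ref{T6.2.3}) and is the right way into this statement. The problem is in the norm claims you extract from it.

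Theorem \ref{T2.2.1} produces $h_\bs$ with $\|h_\bs\|_\infty = 1$ \emph{and} $\|h_\bs\|_2 \gtrsim 1$. That is a \emph{flat} polynomial: by interpolation between $L_2$ and $L_\infty$, one gets $\|h_\bs\|_p \asymp 1$ for every $1\le p\le\infty$, hence also $\|g_\bs\|_p \asymp 1$ (multiplying by $w(\bs,\cdot)$ only shifts the spectrum and introduces a bounded constant). Your claim $\|g_\bs\|_p \asymp 2^{-n/p'}$, and hence the ratio $\|g_\bs\|_q/\|g_\bs\|_p \asymp 2^{n\beta}$, is therefore false --- and in fact even if the norms were as you claim, the ratio would be $2^{-n/q'+n/p'}=2^{n(1/q-1/p)}=2^{-n\beta}$, not $2^{n\beta}$, so the sign is also backwards. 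The deeper issue is that $\|g_\bs\|_q/\|g_\bs\|_p \approx 2^{n\beta}$ would require $g_\bs$ to be maximally \emph{concentrated} (Fej\'er-like, $|g_\bs|\approx 1$ on a set of measure $\asymp 2^{-n}$); Theorem \ref{T2.2.1} guarantees precisely the opposite. Consequently the exponential gain $2^{n\beta}$ --- i.e.\ the improvement from $2^{-nr}$ to $2^{-n(r-\beta)}$ in the target rate --- does not come out of this construction. Carrying your $c$-normalization through with the correct $\|g_\bs\|_p\asymp 1$ and a Littlewood--Paley lower bound for $q\ge 2$ yields at best $\|f\|_q \gtrsim 2^{-rn}n^{(d-1)(1/2-1/\theta)}$, which is weaker (in $n$) than the claimed $2^{-n(r-\beta)}n^{(d-1)(1/q-1/\theta)}$. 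A fooling function that saturates Nikol'skii is needed, and nothing in the present argument produces one inside the vanishing subspace. This is a genuine gap, not a repairable slip in constants.
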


The reader may find some further results on Smolyak type algorithms in papers \cite{TeKuSh10} and \cite{TeNaSh15}.

The exact recovery of trigonometric polynomials with frequencies in hyperbolic crosses from a discrete set of samples also plays a role in many applications. 
Several authors, see for instance \cite{Ha92}, \cite{FeKuPo06}, \cite{DoKuPo10} and the references therein, considered the problem of 
adapting the well-known fast Fourier transform to the sparse grid spacial discretization (HCFFT) for the recovery of multivariate trigonometric polynomials with frequencies on a hyperbolic cross, see Figure \ref{sg_hc}. 
However, there are some stability issues as \cite{KaKu11} shows. This is related to the fact that the grid size still grows exponentially in $d$, however 
considerably slower than the full grid. In \cite{KaKu12} the authors proposed to use discretization points generated by a oversampled lattice rule 
coming from numerical integration (see also Subsection \ref{samp_lattices}). Due to the lattice structure one may use the classical one dimensional FFT here. 
 
\subsection{Sampling widths}\index{Width!Sampling}
\label{sampwidths}
 
Let us transfer the preceding results to the language of sampling widths
$\varrho_m(\bF,X)$, see \eqref{sampling-number}. Based on
the results in the previous subsection we can give
reasonable upper bounds in those situations. Note that, although
we
provided lower bounds for the Smolyak worst case sampling error, we do not have
sharp lower bounds for the sampling numbers $\varrho_m(\bF,X)$ in most of the
situations. Due to the specific framework (linear sampling
numbers) we can of course use linear widths  for the
estimates from below. This leads to sharp results in some of the cases, see
also Section \ref{quasiB} for results where $p,q,\theta <1$. Note
that the upper bounds in
Theorems \ref{Theorem[T_n(H)<]}, \ref{Theorem[T_n(B)<]}, \ref{Sobsamp_q} and
\ref{Sobsamp_infty} can be directly transferred to upper estimates for sampling
numbers $\varrho_m$ by taking the number $$m:=\# \wt{SG}^{d}(n)
\asymp2^{n}{n}^{d-1}$$
of grid points in a Smolyak grid of level $n$ into account. Note, that this
follows from \eqref{Sg}. 

\begin{figure}[H]

\begin{minipage}{0.48\textwidth}
\begin{tikzpicture}[scale=2.5]

\draw[->] (-0.1,0.0) -- (2.1,0.0) node[right] {$\frac{1}{p}$};
\draw[->] (0.0,-.1) -- (0.0,2.1) node[above] {$\frac{1}{q}$};

\draw (1.0,0.03) -- (1.0,-0.03) node [below] {$\frac{1}{2}$};
\draw (0.03,1) -- (-0.03,1) node [left] {$\frac{1}{2}$};

\node at (1.7,2.2) {$\varrho_m(\bW^r_p,L_q)$};

\draw (0,2) -- (2,2);
\draw (1,1) -- (2,1);
	\node at (.6,0.15) {\tiny  $\alpha=r-\frac{1}{p}+\frac{1}{q}$};
	\node at (1.62,1.2) {\tiny  $\alpha=r-\frac{1}{p}+\frac{1}{q}$};

\draw (1,0) -- (1,1);
\draw (1,1) -- (2,1);
\draw (2,2) -- (2,0);
\draw (0,0) -- (2,2);

\node at (1.5,0.5) {{\huge ?}};


\node at (0.65,1.45) {{\huge ?}};


\draw (2,0.03) -- (2,-0.03) node [below] {$1$};
\draw (0.03,2) -- (-0.03,2) node [left] {$1$};

\end{tikzpicture}

\end{minipage}
\begin{minipage}{0.48\textwidth}
\begin{tikzpicture}[scale=2.5]

\draw[->] (-0.1,0.0) -- (2.1,0.0) node[right] {$\frac{1}{p}$};
\draw[->] (0.0,-.1) -- (0.0,2.1) node[above] {$\frac{1}{q}$};

\draw (1.0,0.03) -- (1.0,-0.03) node [below] {$\frac{1}{2}$};
\draw (0.03,1) -- (-0.03,1) node [left] {$\frac{1}{2}$};

\node at (1.7,2.2) {$\lambda_m(\bW^r_p,L_q)$};

\draw (0,2) -- (2,2);
\draw (1,1) -- (2,1);

\draw (1,1) -- (2,0);
\draw (1,0) -- (1,1);
\draw (1,1) -- (2,1);
\draw (2,2) -- (2,0);

\node at (1.4,0.2) {\tiny ${\alpha=r-\frac{1}{2}+\frac{1}{q}}$};
\node at (1.6,0.8) {\tiny ${\alpha=r-\frac{1}{p}+\frac{1}{2}}$};
  \node at (1,1.6) {\tiny$\Big(\frac{(\log
m)^{d-1}}{m}\Big)^{r-(\frac{1}{p}-\frac{1}{q})_+}$};

\draw (2,0.03) -- (2,-0.03) node [below] {$1$};
\draw (0.03,2) -- (-0.03,2) node [left] {$1$};

\end{tikzpicture}
\end{minipage}
\caption{Comparison of $\varrho_m(\bW^r_p,L_q)$ and $\lambda_m(\bW^r_p,L_q)$, rate $\big(m^{-1}\log^{d-1}m\big)^{\alpha}$}\label{fig11}
\end{figure}

\begin{thm} \label{SampWidthW}
(i) Let $1<p<q\leq 2$ and $r>1/p$. Then 
 $$
    \varrho_m(\Wrp, L_q)\asymp m^{-(r-1/p+1/q)}(\log m)^{(d-1)(r-1/p+1/q)}\,.
 $$
(ii) Let $2\leq p<q<\infty$ and $r>1/p$. Then 
$$
   \varrho_m(\Wrp, L_q)\asymp m^{-(r-1/p+1/q)}(\log m)^{(d-1)(r-1/p+1/q)}\,. 
$$
(iii) Let $r > 1/2$.
Then we have  
\begin{equation*} 
\varrho_m(\bW^r_2,L_\infty)
 \ \asymp \ 
m^{-(r-1/2)}(\log m)^{(d-1)r}\,.
 \end{equation*}
\end{thm}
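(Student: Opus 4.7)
The plan is to obtain matching upper and lower bounds for $\varrho_m(\bW^r_p,L_q)$ in each of the three regimes. The upper bounds will always come from the Smolyak sampling operator $T_n$ defined in \eqref{sampop_per}, with the level $n$ chosen so that the grid cardinality $\#\wt{SG}^d(n) \asymp 2^n n^{d-1}$ matches $m$; by \eqref{Sg} this forces $2^{-n} \asymp (\log m)^{d-1}/m$ and $n \asymp \log m$. The lower bounds will come from the inequality $\varrho_m(\bF,X) \geq \lambda_m(\bF,X)$ in \eqref{dlr} combined with the linear width estimates already collected in Section \ref{linearw}.

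For part (i), with $1<p<q\le 2$ and $r>1/p$, and for part (ii), with $2\le p<q<\infty$ and $r>1/p$, I would invoke Theorem \ref{Sobsamp_q}(ii) to get
\[
  \sup_{f\in\bW^r_p}\|f - T_n(f)\|_q \asymp 2^{-n(r-1/p+1/q)},
\]
which, after substituting $2^{-n}\asymp (\log m)^{d-1}/m$, yields the claimed upper bound $m^{-(r-1/p+1/q)}(\log m)^{(d-1)(r-1/p+1/q)}$. For the matching lower bound I would apply the first case of Theorem \ref{thm[lambda_nW]}: in both parameter regions one of the hypotheses $q\le 2$ or $p\ge 2$ is satisfied, so $\lambda_m(\bW^r_p,L_q) \asymp ((\log m)^{d-1}/m)^{r-(1/p-1/q)_+}$, and since $p<q$ one has $(1/p-1/q)_+ = 1/p-1/q$, producing exactly the required lower bound.

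Part (iii) is the delicate one. The upper bound is Theorem \ref{Sobsamp_infty} at $p=2$,
\[
  \sup_{f\in\bW^r_2}\|f - T_n(f)\|_\infty \asymp 2^{-n(r-1/2)}\,n^{(d-1)/2},
\]
which under $2^{-n}\asymp (\log m)^{d-1}/m$ and $n\asymp \log m$ becomes $m^{-(r-1/2)}(\log m)^{(d-1)r}$. For the lower bound Theorem \ref{thm[lambda_nW]} does not apply directly since it requires $q<\infty$, so I would instead combine \eqref{dlr} with the Cobos--K\"uhn--Sickel identity \eqref{CKS} and the sharp asymptotic $\lambda_j(\bW^r_2,L_2) = d_j(\bW^r_2,L_2) \asymp ((\log j)^{d-1}/j)^r$ from Theorem \ref{TBT4.4} (recall that linear and Kolmogorov widths coincide in a Hilbert target space). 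Since $r>1/2$ the resulting tail sum converges, and an elementary integral estimate gives
\[
  \sum_{j=m}^{\infty}\Big(\frac{(\log j)^{d-1}}{j}\Big)^{2r} \asymp m^{1-2r}(\log m)^{2r(d-1)},
\]
so that $\lambda_m(\bW^r_2,L_\infty) \asymp m^{-(r-1/2)}(\log m)^{(d-1)r}$, which transfers to $\varrho_m$ by \eqref{dlr}.

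The main obstacle lies in the upper bound for part (iii): its derivation through Theorem \ref{Sobsamp_infty} passes through the discrete Littlewood--Paley characterization of Proposition \ref{samp_repr1}(ii), and then through the non-trivial Jawerth--Franke embedding $\bW^r_p \hookrightarrow \bB^{r-1/p}_{\infty,p}$ of Lemma \ref{L3.5d}, which itself rests on Theorem \ref{T2.4.6}; the interplay of the extra $n^{(d-1)/2}$ factor with the $n^{(d-1)(1-1/p)}$ loss after H\"older's inequality is what produces the exponent $r$ (rather than $r-1/2$) on the logarithm. By contrast, parts (i) and (ii) reduce transparently to the already proved sampling rate for $T_n$ and to the readily available linear-width lower bound, so no new ingredient beyond the bookkeeping of the substitution $m\asymp 2^n n^{d-1}$ is required.
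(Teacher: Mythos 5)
Your proof is correct and follows essentially the same route the paper indicates: for parts (i) and (ii), upper bounds from Theorem \ref{Sobsamp_q}(ii) after substituting $2^{-n}\asymp (\log m)^{d-1}/m$, and lower bounds from the linear widths of Theorem \ref{thm[lambda_nW]} via \eqref{dlr}; for (iii), the upper bound from Theorem \ref{Sobsamp_infty} at $p=2$. The one place you deviate slightly is the lower bound of (iii): the paper's primary reference is Ismagilov's duality $\lambda_m(\bW^r_2,L_\infty)\asymp\lambda_m(\bW^r_1,L_2)$ combined with Theorem \ref{TBT4.4'}, whereas you invoke the Cobos--K\"uhn--Sickel identity \eqref{CKS} together with Theorem \ref{TBT4.4} and a tail-sum estimate. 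Both routes are explicitly acknowledged in the paper (see the discussion right after \eqref{CKS}), and your tail-sum calculation $\sum_{j\ge m}((\log j)^{d-1}/j)^{2r}\asymp m^{1-2r}(\log m)^{2r(d-1)}$ for $r>1/2$ is correct, so the two arguments yield the same sharp bound; your version has the small advantage of being a one-line exact identity rather than a two-sided duality, but requires the convergence observation $r>1/2$ for the square-summable tail, which is the same threshold the duality route needs via Theorem \ref{TBT4.4'}. A cosmetic remark: in your last paragraph the ``extra $n^{(d-1)/2}$ factor'' and the ``$n^{(d-1)(1-1/p)}$ loss'' coincide at $p=2$, so there is no genuine interplay between two separate factors; the logarithmic exponent $r$ simply arises from $(\log m)^{(d-1)(r-1/2)}\cdot(\log m)^{(d-1)/2}$ upon the substitution $2^{-n}\asymp (\log m)^{d-1}/m$, $n\asymp\log m$.
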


The results in (i) and (ii) have been proved recently in \cite{ByUl15} for $r > 1/p$, and  in 
\cite{Di16_1} for $r > \max\{1/p,1/2\}$. The special case $p=2<q$ is proved in \cite[Thm.\ 6.10]{ByDuUl14}. The result (iii) is proved in \cite{T8}. Clearly, (i) and (ii) follow from Subsection \ref{linearw} together with Theorem
\ref{Sobsamp_q}. The upper bound in (iii) follows from
Theorem \ref{Sobsamp_infty}. The lower bound is based on Ismagilov's \cite{Is74}
duality theorem 
$$
 \lambda_m(\Wr2,L_\infty) \asymp \lambda_m({\bf W}^r_1,L_2)
$$
(see also \cite{CoKuSi15}) together with Theorem~\ref{TBT4.4'}\,. This together
with  Theorem \ref{TBT4.4} also shows a sharp lower bound. 

For completeness we reformulate \eqref{eq:SampH[p<q<2]} as follows. 
\begin{thm} \label{Theorem[rho_n(H)]}
Let $ 1 < p < q \le 2$ and $r > 1/p$. Then we have  
\begin{equation*} 
\varrho_m(\Hrp,L_q)
 \ \asymp m^{-(r-1/p+1/q)} (\log m)^{(d-1)(r-1/p+2/q)}\,.
\end{equation*}
\end{thm}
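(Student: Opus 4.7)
The plan is to derive matching upper and lower bounds and read off the claim. The upper bound is realized by the Smolyak-type sampling operator $T_n$ from \eqref{sampop_per} taken with level $n$ chosen so that the total number of grid points $m\asymp \# \wt{SG}^{d}(n)\asymp 2^{n}n^{d-1}$. By the $p<q$ case of inequality \eqref{sampub} (equivalently, Theorem \ref{Theorem[T_n(H)<]}(ii) specialized to $q<\infty$), one has
\[
\sup_{f\in \bH^r_p}\|f-T_n(f)\|_q \lesssim 2^{-(r-1/p+1/q)n}\,n^{(d-1)/q}.
\]
Solving $m\asymp 2^n n^{d-1}$ gives $2^{-n}\asymp m^{-1}(\log m)^{d-1}$ and $n\asymp \log m$, so substitution yields
\[
\varrho_m(\bH^r_p,L_q)\lesssim m^{-(r-1/p+1/q)}(\log m)^{(d-1)(r-1/p+1/q)}(\log m)^{(d-1)/q},
\]
which is exactly the claimed upper bound since $(d-1)(r-1/p+1/q)+(d-1)/q=(d-1)(r-1/p+2/q)$.

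For the lower bound I would use the chain $\varrho_m(\bF,L_q)\ge \lambda_m(\bF,L_q)\ge d_m(\bF,L_q)$ from \eqref{dlr} to reduce to a known lower bound for Kolmogorov widths. In the present range $1<p<q\le 2$, Theorem \ref{TBT4.5} applies with $r(p,q)=1/p-1/q$ and $\max\{1/2,1/q\}=1/q$, giving
\[
d_m(\bH^r_p,L_q)\asymp \left(\frac{(\log m)^{d-1}}{m}\right)^{r-(1/p-1/q)}(\log m)^{(d-1)/q},
\]
which after regrouping the logarithms reads $m^{-(r-1/p+1/q)}(\log m)^{(d-1)(r-1/p+2/q)}$. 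This matches the upper bound precisely, so both inequalities have the same order and we conclude.

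The routine algebraic identity $(d-1)(r-1/p+1/q)+(d-1)/q=(d-1)(r-1/p+2/q)$ is the bookkeeping glue; the substantive content is concentrated in the two black-box ingredients. The main obstacle is really hidden one level deeper: the upper bound hinges on the sampling error estimate for $T_n$, which in turn rests on the discrete Littlewood--Paley type representation of $\bH^r_p$ provided by inequalities \eqref{v_s(f)<H}--\eqref{v_s(f)>H} together with the embedding in Remark \ref{RT2.4.6} (inequality \eqref{2.4.4R}) that exchanges the $\ell_p$-aggregation of dyadic blocks for an $L_q$-norm with the sharp logarithmic factor $n^{(d-1)/q}$. Similarly, the lower bound is not elementary: Theorem \ref{TBT4.5} in the regime $1<p<q\le 2$ is the Galeev lower bound \cite{Ga90}, whose proof requires a non-trivial choice of a bad test subspace depending on the approximating system. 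Once these two inputs are accepted, the theorem follows by the short dimension-counting argument above.
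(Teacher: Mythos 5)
Your proof follows exactly the route the paper itself describes: the upper bound via Smolyak's operator $T_n$ with $m\asymp 2^n n^{d-1}$ and the sampling error bound \eqref{sampub} (equivalently Theorem \ref{Theorem[T_n(H)<]}(ii)), and the lower bound via $\varrho_m\ge d_m$ together with Galeev's estimate for $d_m(\bH^r_p,L_q)$ in the range $1<p<q\le 2$ (Theorem \ref{TBT4.5}). The algebra matching the two sides is correct.
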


\begin{figure}[H]

\begin{minipage}{0.48\textwidth}
 \begin{center}
\begin{tikzpicture}[scale=2.5]
\draw[->] (1,0.0) -- (2.1,0.0) node[right] {$\frac{1}{p}$};
\draw (0,0.0) -- (1,0.0);
\draw[->] (0.0,-.1) -- (0.0,2.1) node[above] {$\frac{1}{q}$};

\node at (1.7,2.2) {$\varrho_m(\bH^r_p,L_q)$};
\draw (0.03,1) -- (-0.03,1) node [left] {$\frac{1}{2}$};
\draw (1,0.03) -- (1,-0.03) node [below] {$\frac{1}{2}$};

\draw (0,2) -- (2,2);
\draw (1,1) -- (1,2);
\draw (0,0) -- (2,2);
\draw (2,2) -- (2,1);	
\draw (2,1) -- (2,0);
\draw (1,1) -- (2,0);
\draw (1,1) -- (2,1);
\node at (0.5,1.2) {\huge ?};
\node at (1.7,0.65) {\huge ?};

\node at (1.6,1.15) { \tiny $\alpha = r-\frac{1}{p}+\frac{1}{q}$};
\node at (1.7	,1.35) { \tiny $\beta = \frac{1}{q}$};

\node at (1.2,1.5) {\huge ?};
\node at (1.0,0.4) {\huge ?};

\draw (2,0.03) -- (2,-0.03) node [below] {$1$};
\draw (0.03,2) -- (-0.03,2) node [left] {$1$};

\end{tikzpicture}
\end{center}
\end{minipage}
\begin{minipage}{0.48\textwidth}
 \begin{center}
\begin{tikzpicture}[scale=2.5]
\draw[->] (2.0,0.0) -- (2.1,0.0) node[right] {$\frac{1}{p}$};
\draw[->] (0,2.0) -- (0,2.1) node[above] {$\frac{1}{q}$};
\draw (0,0) -- (-0.1,0.0);
\draw (0,0) -- (0,-0.1);
\draw[very thick][dashed] (1,0.0) -- (2.0,0.0);
\draw[very thick][dashed] (0,0.0) -- (1,0.0);
\draw[very thick][dashed] (0.0,0) -- (0.0,2.0);

\node at (1.7,2.2) {$\lambda_m(\bH^r_p,L_q)$};
\draw (0.03,1) -- (-0.03,1) node [left] {$\frac{1}{2}$};
\draw (1,0.03) -- (1,-0.03) node [below] {$\frac{1}{2}$};

\draw[ultra thick] (0,2) -- (1,2);
\draw[very thick][dashed] (1,2) -- (2,2);
\draw (1,1) -- (1,2);
\draw (0,0) -- (2,2);
\draw[very thick][dashed] (2,2) -- (2,1);
\draw[very thick][dashed] (2,1) -- (2,0);
\draw (1,1) -- (2,0);
\draw (1,1) -- (2,1);
\draw (1,0) -- (1,1); 
\node at (0.22,1.4) {\small $ \alpha = r  $};
\node at (0.22,1.2) {\small $\beta = \frac{1}{2}$};
\node at (1.6,0.85) { \tiny$\alpha = r-\frac{1}{p}+\frac{1}{2}$};
\node at (1.7,0.65) {\tiny $\beta = \frac{1}{2}$};
\node at (1.6,1.15) { \tiny $\alpha = r-\frac{1}{p}+\frac{1}{q}$};
\node at (1.7	,1.35) { \tiny $\beta = \frac{1}{q}$};

\node at (1.2,1.5) {\huge ?};
\node at (0.55,0.1) {\tiny $\alpha = r-1/p+1/q$};
\node at (0.5,0.27) {\tiny $\beta = 1/q$};

\node at (1.45,0.1) {\tiny $\alpha = r-1/2+1/q$};
\node at (1.3,0.27) {\tiny $\beta = 1/q$};


\draw (2,0.03) -- (2,-0.03) node [below] {$1$};
\draw (0.03,2) -- (-0.03,2) node [left] {$1$};

\end{tikzpicture}
\end{center}
\end{minipage}

\caption{Comparison of $\varrho_m(\bH^r_p,L_q)$ and $\lambda_m(\bH^r_p,L_q)$}\label{fig12}
\end{figure}
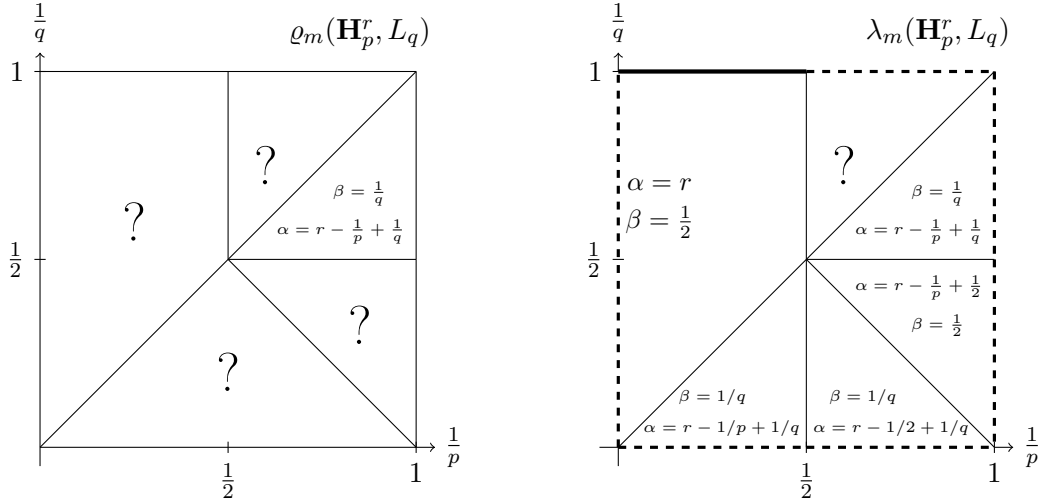

\noindent In the figure $\alpha$ and $\beta$ refer to the asymptotic order
$$\Big(\frac{(\log m)^{d-1}}{m}\Big)^{\alpha}(\log m)^{(d-1)\beta}.$$
 
\begin{rem}\label{d=2} It is pointed out in \cite{VT152} that the upper bound
in \eqref{eq:SampH} and the lower bound for the Kolmogorov widths in
Theorem \ref{Te96H}, proved in \cite{VT59}, imply for $d=2$, $r>1/2$,
$$
    \varrho_m(\bH^r_{\infty},L_{\infty}) \asymp m^{-r}(\log m)^{r+1}\,.
$$
For $d>2$ the correct order is not known. 
\end{rem}

\begin{thm} \label{Th[rho_n]}
Let $1 < p, q, \theta \le \infty$ and $r > 1/p$.
Then we have the following. 
\begin{itemize}
\item[{\rm (i)}] For $p \ge q$,
\begin{equation*} 
\varrho_m({\bf B}^r_{p,1},L_q)
 \ \asymp \ 
 (m^{-1} \log^{d-1}m)^r\quad,\quad
\begin{cases}
 2 \le q < p < \infty, \\
1 < p = q \le \infty.
\end{cases}
\end{equation*}
\item[{\rm (ii)}] For $1 < p < q < \infty$, 
\begin{equation*} 
\varrho_m(\bBr,L_q)
 \ \asymp \ 
(m^{-1} \log^{d-1}m)^{r - 1/p + 1/q}(\log^{d-1}m)^{(1/q -
1/\theta)_+}\quad,\quad 
 \begin{cases}
2 \le p, \ 2 \le \theta \le q,  \\
q \le 2.
\end{cases}
 \end{equation*}
\end{itemize} 
\end{thm}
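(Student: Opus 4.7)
\textbf{Proof plan for Theorem \ref{Th[rho_n]}.}

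The plan is to obtain the upper bounds from the Smolyak sampling operator $T_n$ studied in Subsection \ref{sampsmol}, and the matching lower bounds from the general inequalities $\varrho_m(\bF,L_q) \ge \lambda_m(\bF,L_q) \ge d_m(\bF,L_q)$ together with the widths results of Subsection \ref{linearw} and the Kolmogorov width results of Subsection 4.3.

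\emph{Upper bounds.} First I would take $n$ to be the largest integer such that $\#\widetilde{SG}^d(n) \le m$. By \eqref{Sg} this yields $m \asymp 2^n n^{d-1}$, equivalently $n \asymp \log m$ and $2^{-n} \asymp m^{-1}(\log m)^{d-1}$. Plugging $T_n$ into the definition of $\varrho_m$ and invoking Theorem \ref{Theorem[T_n(B)<]}: in case (i), where $p\ge q$ and $\theta=1$, the Smolyak error bound is $2^{-rn} n^{(d-1)(1-1/\theta)} = 2^{-rn}$, which translates into $(m^{-1}\log^{d-1}m)^r$; in case (ii), where $1<p<q<\infty$, it is $2^{-(r-1/p+1/q)n} n^{(d-1)(1/q-1/\theta)_+}$, which translates into $(m^{-1}\log^{d-1}m)^{r-1/p+1/q}(\log^{d-1}m)^{(1/q-1/\theta)_+}$. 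This matches the claimed upper bounds.

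\emph{Lower bounds for (i).} Since $\varrho_m \ge d_m$, I would invoke the Kolmogorov width results of Subsection 4.3 with $\theta=1$. For $2\le q<p<\infty$ Theorem \ref{thm[d_mB,p>q]} gives $d_m \asymp (m^{-1}\log^{d-1}m)^r (\log m)^{(1/2-1/\theta)_+(d-1)}$, and since $(1/2-1)_+=0$ this simplifies to the required $(m^{-1}\log^{d-1}m)^r$. For the diagonal $1<p=q\le \infty$, I would split into subcases: $1<p=q\le 2$ is covered by Theorem \ref{thm[d_mB,p<q]} (the exponent $[1/p-\max\{1/2,1/q\}]_+$ vanishes on the diagonal, and $(1/q-1/\theta)_+=0$ for $\theta=1$); $2\le p=q<\infty$ is again covered by Theorem \ref{thm[d_mB,p>q]}; and the limiting case $p=q=\infty$ is exactly Theorem \ref{thm[d_mB,p=q=infty]}.

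\emph{Lower bounds for (ii).} Here I would use $\varrho_m \ge \lambda_m$ together with Theorem \ref{thm[lambda_nB]}. When $1<p<q\le 2$, the third clause of Theorem \ref{thm[lambda_nB]} gives exactly $\lambda_m \asymp (m^{-1}\log^{d-1}m)^{r-1/p+1/q}(\log^{d-1}m)^{(1/q-1/\theta)_+}$. When $2\le p<q<\infty$ and $2\le \theta\le q$, the second clause of Theorem \ref{thm[lambda_nB]} gives $\lambda_m \asymp (m^{-1}\log^{d-1}m)^{r-1/p+1/q}$; here the factor $(\log^{d-1}m)^{(1/q-1/\theta)_+}$ in the claimed upper bound disappears because $\theta\le q$ implies $1/\theta\ge 1/q$, so $(1/q-1/\theta)_+=0$ and the matching is consistent.

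\emph{Main obstacle.} There is no deep new difficulty here beyond combining existing ingredients — the analytic work was already carried out in Theorem \ref{Theorem[T_n(B)<]} (upper bound) and in the width theorems of Subsections 4.3--4.5 (lower bound). The main delicate point is the careful bookkeeping of the exponent $(1/q-1/\theta)_+$: one must verify in each admissible parameter region that the upper bound produced by $T_n$ really coincides with the lower bound furnished by $\lambda_m$ or $d_m$. This is essentially the reason why the theorem lists only those sub-regimes where the ranges of $p,q,\theta$ force the exponents appearing in Theorems \ref{thm[d_mB,p>q]}, \ref{thm[d_mB,p=q=infty]} and \ref{thm[lambda_nB]} to coincide with the Smolyak exponent $(1/q-1/\theta)_+$; outside of these regimes there is still a gap between known upper and lower bounds, and the asymptotic order of $\varrho_m(\Brpt,L_q)$ is not yet established.
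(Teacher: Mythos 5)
Your proposal is correct and follows essentially the same route as the paper's (very terse) proof, which simply reads: the theorem follows from Theorem \ref{Theorem[T_n(B)<]} for the upper bounds together with the lower bounds for linear widths in Theorem \ref{thm[lambda_nB]}. The one divergence is that for case (i) you derive the lower bound from Kolmogorov widths $d_m$ (Theorems \ref{thm[d_mB,p<q]}, \ref{thm[d_mB,p>q]}, \ref{thm[d_mB,p=q=infty]}) rather than from $\lambda_m$ as the paper indicates; this is actually a cleaner way to handle the diagonal $p=q$ and the endpoint $p=q=\infty$, since Theorem \ref{thm[lambda_nB]} is stated only for $1<p,q<\infty$ and its listed parameter regions do not explicitly include $p=q$, whereas the Kolmogorov-width theorems cover these corners directly and, via $\varrho_m \ge \lambda_m \ge d_m$, still furnish a matching lower bound.
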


\bproof 
This theorem directly follows from Theorem \ref{Theorem[T_n(B)<]} together with the lower bounds for linear
widths in Theorem \ref{thm[lambda_nB]}. Notice that it is also easily obtained from a non-periodic version in 
\cite{Di11}.
\eproof

\subsection{Time-limited sampling representations--B-splines}\index{Sampling!Representation!Time limited}\index{B-spline}
\label{timelimsamprep}
This section is devoted to a second method of constructing sampling operators based
on Smolyak's algorithm. This time the approximant is not longer a trigonometric
polynomial (like in \eqref{sampop_per}), it is rather a superposition of tensor products of compactly suppoerted
functions such as hat functions and more general B-splines. 
The potential of this technique for the approximation and integration of functions with dominating mixed smoothness has
been recently observed by Triebel \cite{Tr10,Tr12} and, independently, Dinh D\~ung \cite{Di11}. The latter reference
deals with B-spline representation, where the Faber-Schauder system is a special case. One striking advantage of
this approach is its potential for non-periodic spaces of dominating mixed smoothness, see \cite{Di11}. Apart
from that the relations \eqref{f52},\eqref{rws2}, and \eqref{flelambda} below are very well suited for construction for
construction ``fooling functions'' for sampling recovery and numerical integration, see Section \ref{sect:lbounds_int}
and Remark \ref{uppersmol} below. In this subsection, for convenience we will use $\T$ for the interval $[0,1]$ (instead of $[0,2\pi]$) with the usual identification
of the end points. 

\subsubsection*{The Faber-Schauder basis}\index{Faber-Schauder system}

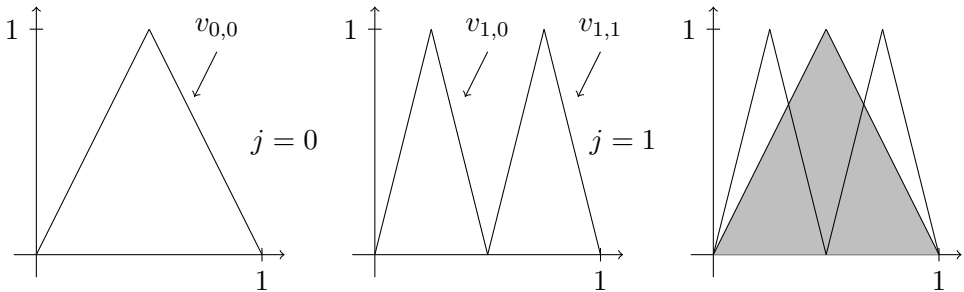
\begin{figure}[H]\label{fig:3}
\centering
\begin{tikzpicture}[scale=3]

\draw[->] (-0.1,0.0) -- (1.1,0.0);
\draw[->] (0.0,-0.1) -- (0.0,1.1); 

\draw (1.0,0.03) -- (1.0,-0.03) node [below] {$1$};
\draw (0.03,1.0) -- (-0.03,1.00) node [left] {$1$};
\draw[->] (0.8,0.9) -- (0.7,0.7);

\node at (0.8,1) {$v_{0,0}$};

\node at (1.1,0.5) {$j=0$};

\draw (0,0) -- (0.5,1);
\draw (0.5,1) -- (1,0);

\draw[->] (1.4,0.0) -- (2.6,0.0); 
\draw[->] (1.5,-0.1) -- (1.5,1.1); 

\draw (2.5,0.03) -- (2.5,-0.03) node [below] {$1$};
\draw (1.53,1.0) -- (1.47,1.00) node [left] {$1$};
\draw[->] (2.5,0.9) -- (2.4,0.7) ;
\draw[->] (2,0.9) -- (1.9,0.7) ;

\node at (2.5,1) {$v_{1,1}$};
\node at (2,1) {$v_{1,0}$};

\node at (2.6,0.5) {$j=1$};

\draw (1.5,0,0) -- (1.75,1.0);
\draw (1.75,1) -- (2,0.0);
\draw (2,0) -- (2.25,1.0);
\draw (2.25,1) -- (2.5,0.0);

\draw[->] (2.9,0.0) -- (4.1,0.0); 
\draw[->] (3.0,-0.1) -- (3.0,1.1);

\draw (4.0,0.03) -- (4.0,-0.03) node [below] {$1$};
\draw (3.03,1.0) -- (2.97,1.00) node [left] {$1$};

\fill[lightgray] plot[domain=3:3.5] (\x,-6+2*\x)%
                      -- plot[domain=4:3.5] (\x,0);%


\draw (3,0) -- (3.5,1);
\draw (3.5,1) -- (4,0);

\draw (3.0,0,0) -- (3.25,1.0);
\draw (3.25,1) -- (3.5,0.0);
\draw (3.5,0) -- (3.75,1.0);
\draw (3.75,1) -- (4,0.0);

\end{tikzpicture}
  \caption{The univariate Faber-Schauder basis, levels $j = 0,1$}
\label{fig_Faber1}
\end{figure}

Let us briefly recall the basic facts about the Faber-Schauder basis taken from
\cite[3.2.1, 3.2.2]{Tr10}. Faber \cite{Fa09} observed that every continuous
(non-periodic) function $f$ on $[0,1]$ can be represented (point-wise) as
\begin{equation}\label{f51}
    f(x) = f(0)\cdot (1-x)+f(1)\cdot x -
\frac{1}{2}\sum\limits_{j=0}^{\infty}\sum\limits_{k=0}^{2^j-1}
\Delta^2_{2^{-j-1}}(f,2^{-j}k)v_{j,k}(x)
\end{equation}
with convergence at least point-wise. Consequently, every periodic function on
$C(\T)$ can be represented by
\begin{equation}\label{f5}
    f(x) = f(0) - \frac{1}{2}\sum\limits_{j=0}^{\infty}\sum\limits_{k=0}^{2^j-1}
    \Delta^2_{2^{-j-1}}(f,2^{-j}k)v_{j,k}(x)\,.
\end{equation}

\begin{defi} The univariate periodic Faber-Schauder system is given by the system
of functions on $\T$
$$
      \{1,v_{j,k}:j\in \n, k\in \D_j\}\,,
$$
where $\D_j := \{0,...,2^j-1\}$ if $j\in \N_0$, $\D_{-1}:=\{0\}$ and 
\begin{equation}\label{f19}
    v_{j,m}(x)=\left\{\begin{array}{lcl}
	   2^{j+1}(x-2^{-j}m)&:&2^{-j}m \leq x \leq 2^{-j}m+2^{-j-1},\\
	   2^{j+1}(2^{-j}(m+1)-x)&:&2^{-j}m+2^{-j-1}\leq x \leq 2^{-j}(m+1),\\
	   0&:& \mbox{otherwise}\,.
        \end{array}\right.
\end{equation}
For notational reasons we let $v_{-1,0}:=1$ and obtain the Faber-Schauder system
$$
    \mathcal{F}:=\{v_{j,k}:j\in \N_{-1}, k\in \D_j\}\,.
$$
We denote by 
$$
    v:=v_{0,0}
$$
the Faber basis function on level zero. 
\end{defi}

\subsubsection*{The tensor Faber-Schauder system}
\index{Faber-Schauder system!Tensorized}
Let now $f(x_1,...,x_d)$ be a $d$-variate function $f\in C(\T^d)$. By fixing
all variables except $x_i$ we obtain by $g(\cdot) =
f(x_1,...x_{i-1},\cdot,x_{i+1},...,x_d)$ a univariate periodic
continuous function. By applying \eqref{f5} in every such component we obtain the point-wise
representation 
\begin{equation}\label{repr}
  f(\bx) = \sum\limits_{\bs \in \N_{-1}^d} \sum\limits_{\bk\in \D_{\bs}} d^2_{\bs,\bk}(f)
  v_{\bs,\bk}(\bx)\quad,\quad \bx\in \T^d\,,
\end{equation}
where $\D_{\bs} = \D_{s_1} \times...\times \D_{s_d}$, 
$$
  v_{\bs,\bk}(x_1,...,x_d):=v_{s_1,k_1}(x_1)\cdot...\cdot
v_{s_d,k_d}(x_d)\quad,\quad \bs\in \N_{-1}^d, \bk\in \D_{\bs}\,,
$$
and 
\begin{equation}\label{f_100}
  d^2_{\bs,\bk}(f):=
  (-2)^{-|e(\bs)|}\Delta^{2,e(\bs)}_{2^{-(\bs+1)}}(f,\bx_{\bs,\bk})\quad,\quad
  \bs\in \N_{-1}^d, \bk\in \D_{\bs}\,.
\end{equation}
Here we put $e(\bs) = \{i:s_i \neq -1\}$ and $\bx_{\bs,\bk} = (2^{-(s_1)_+}k_1,...,2^{-(s_d)_+}k_d)$\,.

\subsubsection*{The Faber-Schauder basis for Besov spaces}\index{Faber-Schauder system!Besov spaces}

Our next goal is to discretize the spaces $\bB^r_{p,\theta}$ using the
Faber-Schauder system $\mathcal{F}^d:=\{v_{\bs,\bk}\,:\,\bs\in \N_{-1}^d, \bk\in \D_{\bs}\}$. We obtain a sequence space isomorphism
performed by the
coefficient
mapping $d^2_{\bs,\bk}(f)$ above. In \cite[3.2.3, 3.2.4]{Tr10} and \cite[Thm.\
4.1]{Di11} this was done for the non-periodic setting $\bB^r_{p,\theta}(Q_d)$.
For the results stated below we refer to the recent paper \cite{HiMaOeUl14}.

\begin{defi}\label{defsequ} Let $0<p,\theta\leq \infty$ and $r\in \R$. Then
$\bb^r_{p,\theta}$ is
the collection of all sequences $\{\lambda_{\bs,\bk}\}_{\bs\in \N_{-1}^d, \bk\in \D_{\bs}}$
  such that
$$
  \|\lambda_{\bs,\bk}\|_{\bb^r_{p,\theta}}:=\Big[\sum\limits_{\bs\in
\N_{-1}^d}2^{|\bs|_1(r-1/p)q}\Big(\sum\limits_{\bk\in
\D_{\bs}}|\lambda_{\bs,\bk}|^p\Big)^{q/p} \Big]^{1/q}
$$
is finite.
\end{defi}

\begin{figure}[ht]
\begin{minipage}{0.48\textwidth}
  \includegraphics[scale = 0.45]{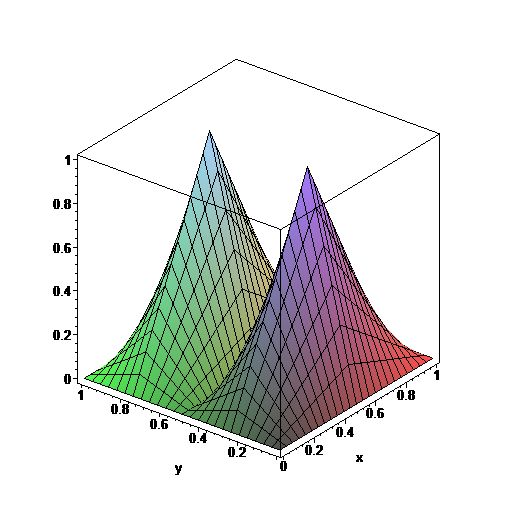}
\end{minipage}
\begin{minipage}{0.48\textwidth}
  \includegraphics[scale = 0.3]{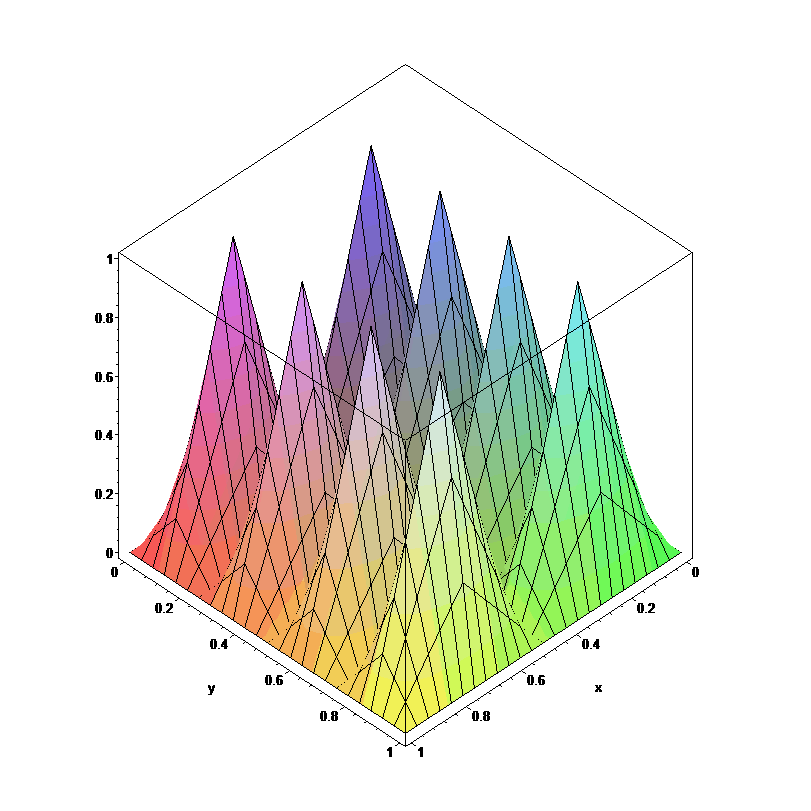}
\end{minipage}
\caption{Faber-Schauder levels in $d=2$}
\end{figure}

\begin{prop}\label{discvscont} Let $1 \leq p,\theta \leq \infty$ and
$1/p<r<2$.
Then there
exists a constant $c>0$ such that
\begin{equation}\label{f7}
    \big\|d^2_{\bs,\bk}(f)\|_{\bb^r_{p,\theta}} \leq c
    \|f\|_{\bB^r_{p,\theta}}
\end{equation}
for all $f\in C(\T^d)$.

\end{prop}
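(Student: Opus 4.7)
The plan is to combine the integral characterization of $\bB^r_{p,\theta}$ via mixed moduli of smoothness (Subsection~\ref{Chardiff}) with a Marcinkiewicz-Zygmund-type discretization that converts the continuous $L_p$-norm of a mixed second difference into a discrete $\ell_p$-sum over a dyadic grid. Throughout, I work with the equivalent norm
$$
\|f\|^\theta_{\bB^r_{p,\theta}} \asymp \sum_{e \subseteq \{1,\dots,d\}} \int_{(0,2\pi)^d} \omega_2^e(f,\bt)_p^\theta \prod_{i \in e} t_i^{-\theta r - 1}\, \dint \bt,
$$
with the usual modification for $\theta = \infty$. The hypothesis $r > 1/p$ ensures $\bB^r_{p,\theta}\hookrightarrow C(\T^d)$ so that the Faber--Schauder coefficients $d^2_{\bs,\bk}(f)$ are well defined via point evaluations, while $r < 2$ guarantees that second-order mixed differences capture the smoothness of $f$.

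The core technical step is the discretization bound: for each $\bs \in \N_{-1}^d$ with $e := e(\bs)$ and $|\bs_+|_1 := \sum_{i \in e} s_i$,
$$
\Big(\sum_{\bk \in \D_{\bs}} |d^2_{\bs,\bk}(f)|^p\Big)^{1/p} \lesssim 2^{|\bs_+|_1/p}\, \big\|\Delta^{2,e}_{2^{-(\bs+1)}}(f)\big\|_p.
$$
The grid $\{\bx_{\bs,\bk}\}_{\bk \in \D_{\bs}}$ is a translation-invariant lattice of spacing $2^{-s_i}$ in each direction $i \in e$, and integrating over lattice translations yields the identity
$$
\int_{\prod_{i \in e}[0,\, 2^{-s_i}]} \sum_{\bk \in \D_{\bs}} \big|\Delta^{2,e}_{2^{-(\bs+1)}}(f,\bx_{\bs,\bk} + \by)\big|^p\, \dint\by = \big\|\Delta^{2,e}_{2^{-(\bs+1)}}(f)\big\|_p^p.
$$
Passing from this average to the pointwise value at $\by = 0$ is the core difficulty; my plan is to first establish the discretization for trigonometric polynomials of coordinate degree bounded by $2^{\bs_+}$, where the multivariate Marcinkiewicz theorem (Theorem~\ref{T2.4.11}) applies directly, and then extend to general $f \in \bB^r_{p,\theta}$ by density combined with the uniform boundedness of the discretization operator; in the case $\theta = \infty$ a separate de la Vall\'ee Poussin approximation argument is needed. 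Alternatively, one may invoke a Hardy-Littlewood maximal inequality iterated in each coordinate.

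Once the discretization bound is in hand, plugging it into the definition of $\|\cdot\|_{\bb^r_{p,\theta}}$ and grouping the outer sum over $\bs \in \N_{-1}^d$ by $e = e(\bs)$ gives, using the identity $|\bs|_1 = |\bs_+|_1 - (d - |e|)$ together with the fact that $2^{-(d-|e|)(r - 1/p)\theta} \leq 1$ under $r > 1/p$,
$$
\|d^2_{\bs,\bk}(f)\|^\theta_{\bb^r_{p,\theta}} \lesssim \sum_{e \subseteq \{1,\dots,d\}} \sum_{\bs' \in \N_0^{|e|}} 2^{|\bs'|_1 r \theta}\, \omega_2^e(f, 2^{-\bs'-1})_p^\theta,
$$
where I used $\|\Delta^{2,e}_{\bh}(f)\|_p \leq \omega_2^e(f,\bh)_p$. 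The monotonicity of $\omega_2^e$ in each coordinate then allows a standard dyadic discretization of the above integral to dominate this inner sum by $\|f\|^\theta_{\bB^r_{p,\theta}}$, finishing the proof. The main obstacle, as noted, is the Marcinkiewicz-Zygmund step: in contrast to the classical setting where the argument is a trigonometric polynomial and the estimate is immediate, the mixed difference $\Delta^{2,e}_{2^{-(\bs+1)}}(f,\cdot)$ is merely continuous, so the discretization requires either the polynomial-then-density approach (delicate at $\theta = \infty$) or a maximal-function argument, and the constants must remain uniform in $\bs$ and $d$ to conclude.
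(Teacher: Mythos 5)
There is a genuine gap. Your entire argument funnels through the fixed-scale discretization estimate
\begin{equation*}
\Big(\sum_{\bk \in \D_{\bs}} |d^2_{\bs,\bk}(f)|^p\Big)^{1/p} \lesssim 2^{|\bs_+|_1/p}\, \big\|\Delta^{2,e}_{2^{-(\bs+1)}}(f)\big\|_p,
\end{equation*}
and this inequality is false for general $f\in C(\T^d)$. Already for $d=1$, $p=1$, take $f$ to be a sum of bumps of height $A$ and width $\varepsilon$ centred at the half-grid points $2^{-s-1}(2k+1)$, vanishing at the grid points $2^{-s}k$. Then $d^2_{s,k}(f) = A$ for every $k$, so the left-hand side is $2^s A$, whereas $\|\Delta^2_{2^{-s-1}}f\|_1 \lesssim \|f\|_1 \asymp 2^s\varepsilon A$, so the right-hand side is $\lesssim 2^{2s}\varepsilon A$; letting $\varepsilon\to 0$ explodes the ratio. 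Such $f$ of course have huge Besov norm, but that information lives at scales finer than $s$, and your scale-by-scale argument throws it away before the step where you need it.

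The two routes you propose for the discretization both fail for the same reason. In the polynomial-plus-density argument, Theorem~\ref{T2.4.11} gives a sampling inequality at resolution $\bs$ only for polynomials of coordinate degree $\lesssim 2^{\bs}$; the approximating sequence $t_n\to f$ has unbounded degree, so the inequality does not hold for $t_n$ uniformly in $n$ and there is no limit to pass. Saying ``uniform boundedness of the discretization operator'' hides precisely the false inequality above: the sampling map $g\mapsto\{g(\bx_{\bs,\bk})\}_\bk$ is not bounded from $L_p$ into $\ell_p$, only its restriction to a bandlimited subspace is. The maximal-function alternative has the same flaw: Plancherel--P\'olya / Peetre maximal estimates also require band limitation to control a pointwise sample by an $L_p$-norm at a given resolution.

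What actually repairs the proof --- and what is done in the references the paper cites (\cite{Tr10}, \cite{Di11}, \cite{HiMaOeUl14}) --- is to first expand $f=\sum_{\bj}f_{\bj}$ into bandlimited building blocks $f_\bj$ (Littlewood--Paley or the $\varphi$-based blocks \eqref{f2_2}), and then estimate $d^2_{\bs,\bk}(f_\bj)$ block by block \emph{before} summing in $\bs$. For $\bj\le\bs$ (low frequency), each second difference of $f_\bj$ at scale $2^{-(\bs+1)}$ gains a factor $\prod_{i\in e}2^{2(j_i-s_i)}$, and here the restriction $r<2$ is what makes the resulting geometric tail in $(\bs-\bj)$ converge. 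For $\bj>\bs$ (high frequency), one does not sample $\Delta^{2,e}f_\bj$ at all; one uses pointwise bounds $|\Delta^{2,e}f_\bj(\bx)|\lesssim\|f_\bj\|_\infty$ and the Nikol'skii inequality $\|f_\bj\|_\infty\lesssim 2^{|\bj|_1/p}\|f_\bj\|_p$, and here the restriction $r>1/p$ is what makes the tail in $(\bj-\bs)$ converge. Only after these two cross-scale geometric series are controlled does one pack the result into the sequence norm $\bb^r_{p,\theta}$; the final bookkeeping step that you outline (grouping by $e(\bs)$ and using $2^{-(d-|e|)(r-1/p)\theta}\le 1$) is fine, but it sits atop a lemma that does not hold.
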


Let us also give a converse statement. Note, that we do not need the condition
$r>1/p$ here. 

\begin{prop}\label{contvsdisc} Let $1\leq p,\theta \leq \infty$ and $0<r<1+1/p$.
Let further $\{\lambda_{\bs,\bk}\}$ be a sequence belonging to
$\bb^r_{p,\theta}$. Then the function 
$$
   f := \sum\limits_{\bs\in \N_{-1}^d}\sum\limits_{\bk \in
\mathcal{D}_{\bs}} \lambda_{\bs,\bk}v_{\bs,\bk}
$$
belongs to $\bB^r_{p,\theta}$ and 
\begin{equation}\label{f52}
    \|f\|_{\bB^r_{p,\theta}} \leq
c\|\lambda_{\bs,\bk}(f)\|_{\bb^r_{p,\theta}}\,.
\end{equation}
\end{prop}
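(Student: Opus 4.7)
The plan is to set
$$
f_{\bs} := \sum_{\bk \in \D_{\bs}} \lambda_{\bs,\bk}\, v_{\bs,\bk},
\qquad
f := \sum_{\bs \in \N_{-1}^d} f_{\bs},
$$
and to bound each Besov seminorm $|f|_{\bB^r_{p,\theta}(e)}$ separately using the equivalent discrete characterization
$$
  |f|_{\bB^r_{p,\theta}(e)}^{\theta}
  \;\asymp\; \sum_{\bt \in \N_0^e} 2^{r|\bt|_1 \theta}\, \omega^{e}_{2}(f, 2^{-\bt})_p^{\theta}
$$
(obtained from the integral definition by a standard dyadic discretisation of the weight $\prod_{i\in e} t_i^{-\theta r-1}$ along $(2^{-t_i-1},2^{-t_i}]$; valid since $m=2>r$, because $r<1+1/p\le 2$). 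The structural input to be exploited is that, at each fixed Faber level $\bs$, the tensor hats $v_{\bs,\bk}$ have essentially disjoint supports, which permits clean $L_p$-estimates both for $f_{\bs}$ and for its mixed differences.

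\textbf{Level-wise bounds.} Disjointness yields $\|f_{\bs}\|_p \asymp a_{\bs} := 2^{-|\bs|_1/p}\big(\sum_{\bk}|\lambda_{\bs,\bk}|^p\big)^{1/p}$ (components $s_i=-1$ are absorbed using $v_{-1,0}\equiv 1$; the case $p=\infty$ is read as the supremum). A direct computation on the univariate hat gives
$$
\big\|\Delta^2_h v_{s,k}\big\|_p \;\asymp\; 2^{-s/p}\,\min\big\{(|h|2^s)^{1+1/p},\,1\big\},
$$
since the second difference is nonzero only on a set of measure $\asymp|h|$ near the two kinks. Tensorising and using disjointness once more,
$$
  \omega^{e}_{2}(f_{\bs}, 2^{-\bt})_p \;\lesssim\; a_{\bs}\prod_{i\in e}\min\big\{2^{(s_i-t_i)(1+1/p)},\,1\big\};
$$
note that this estimate vanishes if some $s_i=-1$ with $i\in e$, because $v_{-1,0}$ is constant in that variable. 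Writing $\tilde b_{\bs}:= 2^{|\bs|_1(r-1/p)}\big(\sum_{\bk}|\lambda_{\bs,\bk}|^p\big)^{1/p}$, so that $\|\tilde b\|_{\ell_\theta}=\|\lambda\|_{\bb^r_{p,\theta}}$ and $a_{\bs}=2^{-r|\bs|_1}\tilde b_{\bs}$, H\"older in $\bs$ (using $r>0$) gives $\sum_{\bs}\|f_{\bs}\|_p\lesssim\|\lambda\|_{\bb^r_{p,\theta}}$, which establishes absolute $L_p$-convergence of the series and handles the case $e=\emptyset$.

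\textbf{Discrete convolution in the level index.} The crucial observation is the identity
$$
  2^{rt_i}\cdot 2^{-rs_i}\min\big\{2^{(s_i-t_i)(1+1/p)},1\big\} \;=\; \psi(s_i-t_i),
\qquad
\psi(u):=\begin{cases} 2^{-ru}, & u\ge 0,\\ 2^{u(1+1/p-r)}, & u<0,\end{cases}
$$
where $\psi$ decays exponentially on both sides of zero: the right tail from $r>0$, the left tail from $r<1+1/p$; hence $\psi\in\ell_1(\Z)$. Splitting $\bs=(\bs_e,\bs_{e^c})$ and using $a_{\bs}=2^{-r|\bs|_1}\tilde b_{\bs}$, the weighted modulus rewrites as
$$
   2^{r|\bt|_1}\sum_{\bs} a_{\bs}\prod_{i\in e}\min\big\{2^{(s_i-t_i)(1+1/p)},1\big\}
   \;=\; \sum_{\bs_{e^c}} 2^{-r|\bs_{e^c}|_1}\bigl(\Psi \ast_{\bs_e} \tilde b_{\cdot,\bs_{e^c}}\bigr)(\bt),
$$
with $\Psi(\bu):=\prod_{i\in e}\psi(u_i)$ and $\ast_{\bs_e}$ the discrete convolution in the $e$-coordinates. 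Taking $\ell_\theta(\bt)$-norm, then Minkowski in $\bs_{e^c}$, then Young's inequality $\ell_1\ast\ell_\theta\hookrightarrow\ell_\theta$ in $\bs_e$, and finally H\"older in $\bs_{e^c}$ (again using $r>0$), yields
$$
   |f|_{\bB^r_{p,\theta}(e)} \;\lesssim\; \|\Psi\|_{\ell_1}\cdot\|\lambda\|_{\bb^r_{p,\theta}}.
$$
Summing over the $2^d$ subsets $e\subset\{1,\dots,d\}$ finishes the proof (the case $\theta=\infty$ is obtained by replacing $\ell_\theta$-norms by suprema throughout).

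\textbf{Main obstacle.} The delicate point is the summation step. A naive H\"older application to $\sum_{\bs}$ fails because along the $\bs_e$-directions the modulus bound saturates at $1$ once $s_i>t_i$ and provides no decay, so the would-be conjugate weight is not $\theta'$-summable there. The remedy is to isolate the $e$-coordinates, recognise the product-convolution structure, and use Young's inequality to preserve the $\ell_\theta$-norm in $\bs_e$, retaining H\"older only in $\bs_{e^c}$ where $r>0$ provides genuine geometric decay. That the convolution kernel $\psi$ in fact lies in $\ell_1(\Z)$ is precisely the quantitative content of the smoothness restriction $r<1+1/p$; this threshold reflects the limited Besov regularity $v_{s,k}\in\bB^{1+1/p}_{p,\infty}$ of the hat atoms and is sharp, which is why the endpoint $r=1+1/p$ is excluded from the statement.
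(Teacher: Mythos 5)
Your argument is correct. The paper itself states Proposition~\ref{contvsdisc} without proof, deferring to \cite{HiMaOeUl14}, so a literal head-to-head comparison is not possible, but what you do is the standard direct route: bounded-overlap of the level-$\bs$ tensor hats gives $\|f_\bs\|_p\asymp a_\bs$, the pointwise second-difference computation on a hat gives $\|\Delta_h^2 v_{s,k}\|_p\asymp 2^{-s/p}\min\{(|h|2^s)^{1+1/p},1\}$, tensorisation gives the level-wise modulus bound, and the cross-level summation is handled via the discrete convolution kernel $\psi$ and Young's inequality $\ell_1\ast\ell_\theta\hookrightarrow\ell_\theta$ (equivalently, a discrete Hardy inequality) rather than a naive H\"older that indeed has no decay in the directions $s_i>t_i$. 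Your identification of where the two endpoints enter is exactly right: $r>0$ summability of $\psi$ on the right and $r<1+1/p$ on the left, the latter being the Besov-$\bB^{1+1/p}_{p,\infty}$ ceiling of the hat atom, which is why equality is excluded; note also that $r>1/p$ is not needed here, unlike in Proposition~\ref{discvscont}. The one technical point to make precise in a full write-up is the use of the triangle inequality $\omega_2^e(f,\cdot)_p\le\sum_\bs\omega_2^e(f_\bs,\cdot)_p$ for the infinite sum: this should be run on finite partial sums with uniform constants and a limit, or preceded by an a.e.-convergence argument, since absolute convergence of the right-hand side is itself part of what is being proved; this is routine and does not affect the substance.
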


\subsubsection*{The Faber-Schauder basis for Sobolev spaces}\index{Faber-Schauder system!Sobolev spaces}

Let us now come to the Sobolev spaces of mixed smoothness. We will state
counterparts of the relations in Part (ii) of the Propositions
\ref{samp_repr1}, \ref{samp_repr2} above. Partial results have been already
obtained in \cite{Tr10} if $r=1$\,.

\begin{defi}\label{defsequw} Let $1<p < \infty$ and $r\in \R$. Then
$\bw^r_{p}$ is the collection of all sequences $\{\lambda_{\bs,\bk}\}_{\bs\in
\N_{-1}^d, \bk\in \D_{\bs}}$
  such that
$$
  \|\lambda_{\bs,\bk}\|_{\bw^r_{p}}:=\Big\|\Big[\sum\limits_{\bs\in
\N_{-1}^d}2^{|\bs|_1r2}\Big(\sum\limits_{\bk\in
\D_{\bs}}|\lambda_{\bs,\bk}(f)v_{\bs,\bk}|^2\Big) \Big]^{1/2}\Big\|_p
$$
is finite.
\end{defi}

\begin{prop}\label{sob_faber_repr1} Let $1<p<\infty$ and $\max\{1/p,1/2\}<r<2$
then we have for
any $f\in C(\T^d)$
\begin{equation}\label{lws2}
\|d^2_{\bs,\bk}(f)\|_{\bw^r_p} \lesssim \|f\|_{\Wrp}\,.
\end{equation}
\end{prop}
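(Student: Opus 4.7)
The plan is to reduce the estimate to the Littlewood--Paley characterization of $\Wrp$ from \eqref{NeqW}, namely $\|f\|_{\Wrp}\asymp\|(\sum_{\bj}2^{2r|\bj|_1}|f_{\bj}|^2)^{1/2}\|_p$, via a pointwise molecular estimate combined with a vector-valued maximal inequality. Since the hat functions $\{v_{\bs,\bk}\}_{\bk\in\D_{\bs}}$ have essentially disjoint supports with $v_{\bs,\bk_0}(\bx)\in[0,1]$, the inner sum in $\|\cdot\|_{\bw^r_p}$ collapses at almost every $\bx$ to $|d^2_{\bs,\bk_0(\bs,\bx)}(f)|^2$, where $\bk_0(\bs,\bx)$ denotes the unique active index. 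I would then split $\sum_{\bs}$ according to $e(\bs)=\{i:s_i\geq 0\}$. When $e(\bs)\subsetneq\{1,\dots,d\}$, the coefficient $d^2_{\bs,\bk}(f)$ only depends on the trace of $f$ on the slice $\{x_i=0:i\notin e(\bs)\}$; since $r>1/p$ ensures coordinate-wise continuity via Lemma \ref{emb}, this trace lies in the $|e(\bs)|$-variate mixed Sobolev class with norm $\lesssim\|f\|_{\Wrp}$, so an induction on $d$ reduces everything to the interior sum over $\bs\in\N_0^d$.

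For the interior part I would expand $f=\sum_{\bj\in\N_0^d}f_{\bj}$ by Littlewood--Paley and prove the pointwise bound
\begin{equation}\nonumber
|d^2_{\bs,\bk_0(\bs,\bx)}(f_{\bj})|\lesssim\prod_{i=1}^d c(s_i-j_i)\cdot f_{\bj}^{*,a}(\bx),\qquad c(l):=\begin{cases}2^{-2l},& l\geq 0,\\ 2^{-la},& l<0,\end{cases}
\end{equation}
where $f_{\bj}^{*,a}(\bx):=\sup_{\by}|f_{\bj}(\by)|\prod_i(1+2^{j_i}|y_i-x_i|)^{-a}$ is the Peetre maximal function. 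The factor $\min\{2^{-2(s_i-j_i)},1\}$ encodes Bernstein's inequality (Theorem \ref{T2.4.2}) applied to the second-difference multiplier $(e^{ih_ik_i}-1)^2$, whose symbol is bounded by $\min\{(h_ik_i)^2,4\}$ on $\Tr(\rho(\bj))$; the factor $\prod_i 2^{(j_i-s_i)_+a}$ arises from evaluating the Peetre maximal function at the displaced node $\bx_{\bs,\bk_0}$, which lies within distance $\lesssim 2^{-s_i}$ of $x_i$ in each coordinate. Combining the two factors yields the $c(l)$ above.

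I would then pick $a\in(\max\{1/p,1/2\},r)$; this interval is nonempty precisely because of the hypothesis $r>\max\{1/p,1/2\}$. The weighted convolution kernel $\alpha(\mathbf{l}):=\prod_i 2^{rl_i}c(l_i)$ lies in $\ell_1(\Z^d)$: on the positive axis $2^{rl}c(l)=2^{-(2-r)l}$ is summable because $r<2$, and on the negative axis $2^{(r-a)l}$ is summable because $a<r$. Young's convolution inequality $\ell_1\ast\ell_2\to\ell_2$ applied pointwise in $\bj$ gives
\begin{equation}\nonumber
\Big(\sum_{\bs\in\N_0^d}2^{2r|\bs|_1}|d^2_{\bs,\bk_0(\bs,\bx)}(f)|^2\Big)^{1/2}\lesssim\|\alpha\|_{\ell_1}\Big(\sum_{\bj\in\N_0^d}2^{2r|\bj|_1}|f_{\bj}^{*,a}(\bx)|^2\Big)^{1/2}.
\end{equation}
Taking $L_p$-norms and invoking the vector-valued Peetre/Fefferman--Stein maximal inequality (valid for $a>\max\{1/p,1/2\}$ in the $L_p(\ell^2)$ setting), together with \eqref{NeqW}, completes the argument. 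The principal obstacle is the joint selection of $a$: it must simultaneously exceed $\max\{1/p,1/2\}$ (for the Peetre inequality in mixed $L_p(\ell^2)$) and lie strictly below $r$ (for the negative-$l$ summability of $\alpha$), while $r<2$ controls the positive-$l$ tail. This is precisely the reason for the smoothness window $\max\{1/p,1/2\}<r<2$. A secondary technical issue is handling the boundary terms $e(\bs)\subsetneq\{1,\dots,d\}$ via the restriction/induction step described above.
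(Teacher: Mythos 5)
Your argument is essentially sound and gives a genuinely different route to the estimate than the one indicated in the paper. Following Proposition \ref{sob_faber_repr1}, the text states that the inequality ``can be immediately deduced from its one-dimensional counterpart via the tensor-product structure of the spaces and the corresponding operators'' (deferring to \cite{ByUl16} and \cite{SiUl09}); that is, the intended proof exploits the factorization $\bW^r_p(\T^d) \cong W^r_p(\T)\otimes\cdots\otimes W^r_p(\T)$ together with the fact that the Faber-Schauder coefficient operators factor coordinate-wise, reducing everything to the (simpler) univariate case. You instead give a direct multivariate Fourier-analytic proof: decompose $f=\sum_{\bj}f_{\bj}$ via Littlewood--Paley, establish the pointwise molecular estimate $|d^2_{\bs,\bk_0(\bs,\bx)}(f_{\bj})|\lesssim \prod_i c(s_i-j_i)\,f_{\bj}^{*,a}(\bx)$ (the quadratic decay $2^{-2(s_i-j_i)}$ for $s_i\geq j_i$ coming from Bernstein applied to the second-difference multiplier, the polynomial loss $2^{(j_i-s_i)a}$ for $s_i<j_i$ coming from the displacement of the evaluation point $\bx_{\bs,\bk_0}$), convolve with the $\ell_1$-kernel $\alpha$, and close with the vector-valued Peetre/Fefferman--Stein inequality and the characterization \eqref{NeqW}. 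Your parameter bookkeeping is exactly right: the free parameter $a$ must satisfy $\max\{1/p,1/2\}<a<r$ (the left constraint for the $L_p(\ell^2)$ maximal inequality, the right for summability of $\alpha$ on the negative axis), and the right tail of $\alpha$ forces $r<2$, recovering the hypothesis. The paper's route is structurally shorter once the 1d result is granted; your route is self-contained but requires the multiplier and maximal-function machinery.

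One small point worth flagging: your treatment of the boundary levels $s_i=-1$ via traces and induction on $d$ is itself a tensor-product argument in disguise --- the claim that restriction to $\{x_i=0:i\notin e(\bs)\}$ is a bounded map into the lower-dimensional mixed Sobolev class is precisely the statement that $W^r_p(\T)\hookrightarrow C(\T)$ for $r>1/p$ tensorized against the remaining factors, which is not directly one of the embeddings in Lemma \ref{emb}. That step is correct but deserves to be justified by the univariate embedding plus the tensor structure rather than a citation of Lemma \ref{emb} as stated; you implicitly use the same factorization the paper leans on.
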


\begin{prop}\label{sob_faber_repr2}  Let $1<p<\infty$ and $0<r<\min\{1+1/p,
3/2\}$. Let furthermore $\lambda_{\bs,\bk}\in \bw_{p}^r$. Then
$f=\sum\limits_{\bs\in
\N_{-1}^d}\sum\limits_{\bk\in
\D_{\bs}} \lambda_{\bs,\bk}v_{\bs,\bk}$ belongs to $\Wrp$ and 
\begin{equation}\label{rws2}
 \|f\|_{\Wrp} \lesssim \|\lambda_{\bs,\bk}\|_{\bw_p^r}.
\end{equation}
\end{prop}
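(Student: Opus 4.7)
The plan is to derive the bound from the Littlewood-Paley characterization \eqref{NeqW} of $\bW^r_p$: writing $f = \sum_{\bs' \in \N_{-1}^d} f_{\bs'}$ with Faber blocks $f_{\bs'} := \sum_{\bk' \in \D_{\bs'}} \lambda_{\bs',\bk'} v_{\bs',\bk'}$, I would reduce the task to controlling $\big\|(\sum_\bs 2^{2r|\bs|_1}|\delta_\bs(f)|^2)^{1/2}\big\|_p$ by $\|\lambda\|_{\bw_p^r}$. The heart of the argument is the pointwise convolution/maximal estimate
\[
|\delta_{\bs}(f_{\bs'})(\bx)| \;\lesssim\; \Big(\prod_{i=1}^{d} 2^{-2(s_i - s'_i)_+}\Big)\, M(f_{\bs'})(\bx),
\]
where $M$ is the Hardy-Littlewood strong maximal operator. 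The decay factor reflects the $(1+|k|)^{-2}$ Fourier decay of the hat function in each coordinate, with the convention that the factor is $1$ in any component with $s'_i=-1$ (since $v_{-1,0}=1$ is the constant atom and only the zero-frequency mode contributes there).

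First I would verify that $f$ is well-defined as a continuous function via a direct $L_\infty$ estimate on the partial sums using the near-disjointness of the supports $I_{\bs,\bk}:=\mathrm{supp}\,v_{\bs,\bk}$ at each level $\bs$ (so $\delta_\bs$ can legitimately be applied term by term). Next I would prove the pointwise bound above by expanding $v_{s'_i,k'_i}$ in Fourier series, testing against the univariate dyadic Littlewood-Paley projector at frequency $\sim 2^{s_i}$, and invoking the standard estimate $|\psi\ast g|(\bx) \lesssim \|\psi\|_1 M(g)(\bx)$ for a rescaled kernel $\psi$; the tensor-product structure of $v_{\bs',\bk'}$ makes the computation separable. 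The Fefferman-Stein vector-valued maximal inequality in $L_p(\ell_2)$ (valid since $1<p<\infty$) together with a discrete Hardy inequality in $\bs$ then produces
\[
\Big\|\Big(\sum_\bs 2^{2r|\bs|_1}|\delta_\bs(f)|^2\Big)^{1/2}\Big\|_p \;\lesssim\; \Big\|\Big(\sum_{\bs'} 2^{2r|\bs'|_1}|f_{\bs'}|^2\Big)^{1/2}\Big\|_p.
\]

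Finally, since for each fixed $\bs'$ exactly one Faber atom $v_{\bs',\bk'}$ is nonvanishing at any given $\bx$ (modulo boundary identifications), we have the pointwise identity $|f_{\bs'}(\bx)|^2 = \sum_{\bk' \in \D_{\bs'}} |\lambda_{\bs',\bk'}|^2 |v_{\bs',\bk'}(\bx)|^2$, which matches exactly the inner expression in Definition \ref{defsequw}. Assembling the two displays yields $\|f\|_{\bW^r_p} \lesssim \|\lambda\|_{\bw_p^r}$. The main obstacle I anticipate is the sharp tracking of the threshold $r < \min\{1+1/p, 3/2\}$: the bound $r<3/2$ is precisely what makes the Hardy summation $\sum_{n\geq 0} 2^{(r-2)n}$ convergent in each coordinate direction (driven by the second-order Fourier decay of the hat function), while the bound $r<1+1/p$ must be traced through the interaction of the maximal inequalities with the mixed dyadic structure and the endpoint components $s'_i=-1$, where the reduction is effectively to a lower-dimensional Faber system in the remaining variables.
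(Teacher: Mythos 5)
The paper does not spell out a proof of this proposition: it appeals to the tensor-structure results of Sickel and Ullrich \cite[Thm.\ 2.1, 2.5]{SiUl09} (reducing the $d$-dimensional statement to its univariate counterpart), with the case $r=1$ credited to \cite{Tr10} and the full range deferred to \cite{ByUl16}. Your direct $d$-dimensional Littlewood--Paley program is a genuinely different route in principle, but the pointwise estimate at its heart is false, and repairing it is not cosmetic.

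Take $d=1$. Your claim reads $|\delta_s(v_{j,m})(x)|\lesssim 2^{-2(s-j)_+}\,M(v_{j,m})(x)$; test it at the kink $x_0=2^{-j}m+2^{-j-1}$, where $M(v_{j,m})(x_0)\sim 1$. The Fourier coefficients of the hat align in phase at $x_0$, so $|\delta_s(v_{j,m})(x_0)|\sim 2^s\cdot 2^{j}/2^{2s}=2^{-(s-j)}$ --- a full power of $2^{s-j}$ larger than your bound allows on the support of the hat. This gap cannot be waved away: if a pointwise bound with decay $2^{-2(s-j)_+}$ against the strong maximal function held, Fefferman--Stein plus a discrete Hardy inequality would prove \eqref{rws2} up to $r<2$, contradicting the sharpness of the Faber--Schauder range for $\Wrp$ established by Seeger and Ullrich \cite{SeUl15,SeUl15_2}. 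Your diagnosis of the thresholds is also off: $\sum_{n\ge0}2^{(r-2)n}$ converges precisely when $r<2$, not $r<\tfrac{3}{2}$; and the constraint $r<1+1/p$ (the binding one when $p>2$) is simply the threshold for a single hat $v_{j,m}$ to lie in $W^r_p$, with nothing to do with the $s'_i=-1$ components. The true source of $\tfrac{3}{2}$ is an $L_2$-averaged gain: $\|\delta_s(v_{j,m})\|_2/\|v_{j,m}\|_2\sim 2^{-\tfrac{3}{2}(s-j)}$ because the bandwidth of $\delta_s$ grows with $s$, and the $\ell_2$ square function in \eqref{NeqW} exploits exactly this gain over the pointwise rate, something a comparison against $M$ alone cannot see. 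Finally, your decay factor vanishes identically when $s_i<s'_i$, yet the hats have nonzero mean, so the low-frequency projections of $f_{\bs'}$ are genuinely present; without a separate decay on that side the sum over $\bs'>\bs$ does not close.
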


In case $r=1$ both assertions can be found in \cite[Chapt.\ 3]{Tr10}. In the stated form the relation will be
rigorously proved in \cite{ByUl16}. In fact, the result can be immediately deduced from its one dimensional counterpart
via the tensor-product structure of the spaces and the corresponding operators, see \cite[Thm.\ 2.1, 2.5]{SiUl09}.
Note, that Propositions \ref{sob_faber_repr1}, \ref{sob_faber_repr2} imply that the tensorized Faber-Schauder system
represents an unconditional basis in $\bW^r_p$ in the respective parameter domain. Let 
us illustrate this parameter domain in the following diagram.

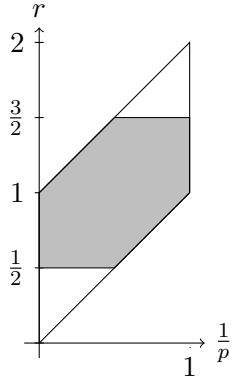
\begin{figure}[H]
 \begin{center}
\begin{tikzpicture}[scale=2]
\draw[->] (-0.1,-1.0) -- (1.1,-1.0) node[right] {$\frac{1}{p}$};
\draw[->] (0.0,-1.1) -- (0.0,1.1) node[above] {$r$};

\draw (1.0,-1.03) -- (1.0,-1.03) node [below] {$1$};
\draw (0.03,1.0) -- (-0.03,1.00) node [left] {$2$};
\draw (0.03,.5) -- (-0.03,.5) node [left] {$\frac{3}{2}$};
\draw (0.03,0) -- (-0.03,0) node [left] {$1$};
\draw (0.03,-.5) -- (-0.03,-.5) node [left] {$\frac{1}{2}$};
\draw (0.03,-1.0) -- (-0.03,-1.00) ;

\draw[fill=lightgray] (0.0,-.5) -- (0.0,0.0) -- (.5,.5) -- (1.0,0.5) -- (1.0,0.0) -- (.5,-.5) --
(0.0,-.5);
\draw (0.0,-1.0) -- (0.0,0.0) -- (1.0,1.0) -- (1.0,0.0) -- (0.0,-1.0);
\draw (0.85,0.65) node {};
\draw (0.15,-0.65) node {};
\end{tikzpicture}
\end{center}
\caption{The parameter domain for the unconditional Faber-Schauder basis in $\bW^r_p$}
\end{figure}

\begin{rem}\index{Haar system} There seems to be a fundamental difference between Besov spaces and
Sobolev spaces in this context. A corresponding problem for Haar bases has been
studied by Seeger, Ullrich in the recent papers \cite{SeUl15, SeUl15_2} which leads to the conclusion that
the above described region is sharp. It also indicates that the parameter domain for the representation in Proposition
\ref{samp_repr1}, (ii) is sharp, i.e., it can not be extended to $1/p<r\leq 1/2$ if $p>2$. This fundamental difference
seems to be reflected in the unknown behavior of optimal cubature/sampling
recovery in the region of small smoothness (lower triangles), see Section
\ref{numint} below. 
\end{rem}

\subsubsection*{B-spline representations and general atoms}\index{Sampling!Representation!Time limited}
\index{B-spline}
\label{Bspline}

When using the Faber-Schauder system for the discretization of function
spaces we always get the restriction $r<2$, see Propositions \ref{discvscont}
and \ref{contvsdisc} above, due to the limited smoothness of the tensorized hat
functions. The question arises whether one can use smoother basis functions such
as B-splines. A smooth hierarchical basis approach simililar to the one we will describe below has been developed by Bungartz \cite{Bu_habil}. Here 
we will focus on D. D\~ung's \cite{Di15,Di16_1} approach towards a B-spline quasi-interpolation representation for periodic continuous functions on
$\T^d$. For a non-periodic counterpart, see \cite{Di11}. 
For a given $\ell \in \N,$ denote by  $M=M_{2\ell}$ the cardinal B-spline of order $2\ell$ defined as the $2\ell$-fold
convolution $M:= (\chi_{[0,1]} \ast \cdots \ast \chi_{[0,1]})$, 
where $\chi_{[0,1]}$ denotes the characteristic function of the interval $[0,1]$\,.
%
%

Let $\Lambda = \{\lambda(j)\}_{|j| \le \mu}$ be a given finite even sequence, i.e., 
$\lambda(-j) = \lambda(j)$ for some $\mu \ge \ell - 1$. We define the linear operator $Q$ for functions $f$ on $\R$ by  
\begin{equation} \label{def:Q}
Q(f,x):= \ \sum_{k \in \Z} \Lambda (f,k)M(x-k), \quad
\Lambda (f,k):= \ \sum_{|j| \le \mu} \lambda (j) f(k-j + \ell).
\end{equation}
The operator $Q$ is called a 
{\em quasi-interpolation operator} if  $Q(g) = g$ for every polynomial $g$ of degree at most $2\ell - 1$. 
Since $M(2\ell\,2^s x)=0$ for every $s \in \N_0$ and every $x \notin (0,1)$, we can extend  the univariate B-spline  
$M(2\ell\,2^s\cdot)$ to an $1$-periodic function on the whole 
$\R$ which can be considered as a function on $\T$. Denote this function on $\T$ by $N_s$ and define 
$N_{s,k}(x):= \ N_s(x - (2\ell)^{-1}2^{-s}k), \ s\in \N_0, \ k \in I(s)$, 
where $I(s) := \{0,1,..., 2\ell2^s - 1\}$. 
The quasi-interpolation operator $Q$ induces the periodic quasi-interpolation operator $Q_s$ on $\T$ which is defined for $s\in
\N_0$ and a function $f$ on $\T$
through
\begin{equation} \nonumber
Q_s(f)  := \ 
\sum_{k \in I(s)} a_{s,k}(f)N_{s,k} \quad , \quad  
a_{s,k}(f):= \ 
\sum_{|j| \le \mu} \lambda (j) f((2\ell)^{-1}2^{-s}(k-j+ \ell)).
\end{equation}

\noindent A procedure similar to \eqref{tensor1} yields an operator $q_{\bs}$
such that every continuous function $f$ on $\T^d$ is represented as
$B$-spline series 
\begin{equation} \label{eq:B-splineRepresentation}
f \ = \sum_{\bs \in \N_0^d} \ q_\bs(f) = 
\sum_{\bs \in \N_0^d} \sum_{\bk \in I^d(\bs)} c_{\bs,\bk}(f)N_{\bs,\bk}, 
\end{equation}  
converging in the norm of $C(\T^d)$, where $I^d(\bs):=\prod_{i=1}^d I(s_i)$ and the coefficient functionals
$c_{\bs,\bk}(f)$ are explicitly constructed as
linear combinations of at most $n_0$ function
values of $f$ for some $n_0 \in \N$ which is independent of $\bs,\bk$ and $f$. 
The following proposition represents a counterpart of Proposition
\ref{samp_repr1} and a generalization of Propositions \ref{sob_faber_repr1} and \ref{sob_faber_repr2}.

\begin{prop} \label{samp_repr1Q} {\em (i)} Let $1\leq p,\theta \leq \infty$ and
 $1/p < r < \min\{2\ell, 2\ell - 1 + 1/p\}$. Then we have for any $f\in \Brpt$, 
\begin{equation}\label{lhs1Q}
\Big(\sum_{\bj \in \N_0^d}2^{r|\bj|_1 \theta}\|q_\bj(f)\|^{\theta}_p
\Big)^{1/\theta} \asymp \|f\|_{\Brpt}
\end{equation}
with the sum being replaced by a supremum for $\theta = \infty$.\\
{\em (ii)} Let $1<p<\infty$ and $\max\{1/p,1/2\} < r < 2\ell - 1$. Then we have for
any $f\in \Wrp$,
\begin{equation}\label{lhs2Q}
\Big\|\Big(\sum_{\bj \in \N_0^d}2^{r|\bj|_1 2}
|q_{\bj}(f)|^2\Big)^{1/2}\Big\|_p \asymp \|f\|_{\Wrp}\,.
\end{equation}
\end{prop}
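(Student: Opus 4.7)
The plan is to mirror the proof of Proposition \ref{samp_repr1}, replacing the de la Vallée Poussin building blocks $v_{\bj}$ by the B-spline quasi-interpolation differences $q_{\bj}$. The key technical ingredients I would establish first are: (a) a uniform boundedness $\|q_{\bs}(f)\|_p \lesssim \|f\|_p$ for $1\leq p\leq \infty$; (b) a Jackson-type ``quasi-annihilation'' estimate showing that when $\bm \le \bj$ coordinate-wise and $g \in \mathcal{T}(\Pi(2^{\bm},d))$, then $\|q_{\bj}(g)\|_p \lesssim 2^{-2\ell|\bj-\bm|_1}\|g\|_p$, coming from the Strang-Fix property $Q(g)=g$ for polynomials of degree $<2\ell$; and (c) a Bernstein-type inequality $\|q_{\bj}(g)\|_p \lesssim 2^{(|\bm|_1-|\bj|_1)/p}\|g\|_p$ for $g \in \mathcal{T}(\Pi(2^{\bm},d))$ with $\bm \geq \bj$, which plays the role of the key inequality \eqref{sampV}. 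Ingredient (c) is the analog for $q_{\bj}$ of Lemma~6.2 of Chapter~1 in \cite{TBook} and follows from the fact that $q_{\bj}$ is a linear combination of $O(2^{|\bj|_1})$ shifted B-splines at scale $2^{-\bj}$, so the $L_p$-norm can be transferred to an $\ell_p$-norm of samples via a periodic Marcinkiewicz-type equivalence.

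For the upper bound in (i), I would start from the Fourier decomposition $f = \sum_{\bm \in \N_0^d} f_{\bm}$ (as in the proof of Proposition \ref{samp_repr1}), write $q_{\bj}(f) = \sum_{\bm} q_{\bj}(f_{\bm})$, and split the sum according to whether $\bm$ lies coordinate-wise below or above $\bj$. Using (b) on the ``low'' part and (c) on the ``high'' part, the two-sided geometric decay (valid precisely for $1/p<r<2\ell$) allows one to absorb everything into a convolution-type bound. Multiplying by $2^{r|\bj|_1}$, taking the $\ell_\theta$-quasi-norm in $\bj$ and applying a discrete Hardy inequality exactly as in the text's proof of Proposition \ref{samp_repr1} yields the bound by $\|f\|_{\Brpt}$ via the characterization \eqref{DefB}. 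The upper bound on $r$ coming from $\min\{2\ell,2\ell-1+1/p\}$ is precisely what makes the geometric series from (b) summable against the weight $2^{r|\bj|_1}$ in all relevant Nikol'skii transitions between $L_p$ and $L_\infty$.

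For the converse direction in (i), I would use the B-spline series \eqref{eq:B-splineRepresentation} to write $f = \sum_{\bj} q_{\bj}(f)$ with $q_{\bj}(f) \in \mathrm{span}\{N_{\bj,\bk}\}$, and invoke the inverse B-spline representation theorem (Proposition \ref{atomic}, see \cite{UU14}) which, exactly in the spirit of Proposition \ref{samp_repr2}, gives
\[
\|f\|_{\Brpt} \lesssim \Big(\sum_{\bj\in \N_0^d} 2^{r|\bj|_1\theta}\|q_{\bj}(f)\|_p^\theta\Big)^{1/\theta}.
\]
The verification of the hypothesis of that atomic/quasi-interpolation theorem reduces to the standard Bernstein and Nikol'skii inequalities for the shifted, dilated B-splines $N_{\bj,\bk}$, whose supports have measure $\asymp 2^{-|\bj|_1}$, and to the limitation $r<2\ell-1+1/p$ guaranteeing smoothness compatibility with the spline order.

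Part (ii) follows the same scheme with the scalar $\ell_\theta$-norm in $\bj$ replaced by the $L_p$-norm of the dyadic square function, and the Besov characterization from Subsection \ref{sect:FS} replaced by the Littlewood-Paley equivalence \eqref{NeqW}. The only genuinely new analytic input is a vector-valued Fefferman-Stein maximal inequality for the sample-to-spline map underlying $q_{\bj}$, analogous to \cite[Thm.~5.7]{ByUl15}; this forces $1<p<\infty$ and, combined with the embedding $\bW^r_p \hookrightarrow \bB^r_{p,\max\{p,2\}}$ from \eqref{chainBFB}, allows one to deduce (ii) from (i) together with the square-function bound on the building blocks. The main obstacle throughout is ingredient (b): unlike the operators $V_m$, the quasi-interpolants $Q_s$ do not reproduce trigonometric polynomials exactly, so the annihilation of low-frequency blocks must be quantified by Strang-Fix via a careful Taylor expansion of the periodic B-spline symbol near the origin, and the upper limit on $r$ must be tuned against the spline order $2\ell$.
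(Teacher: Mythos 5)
Your high-level plan --- mirroring the proof of Proposition \ref{samp_repr1} with a Strang--Fix annihilation estimate replacing the exact de la Vall\'ee Poussin reproduction and a \eqref{sampV}-type sample bound --- is sensible, and is indeed the spirit of what the paper describes. But note that the paper does not actually prove Proposition \ref{samp_repr1Q}: it refers to \cite{Di16_1} for the proof and only indicates which structural ingredients enter. Beyond that, your outline has two concrete gaps.

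First, the converse direction of (i) cannot be reduced to Proposition \ref{atomic} as you propose. That proposition is stated for $C^\infty$ bump atoms $a_{\bs,\bk}$, and it holds for every $r>0$ precisely because the atoms have unlimited smoothness. The periodized B-spline $N_{\bs,\bk}$ of order $2\ell$ has classical smoothness $C^{2\ell-2}$, hence Besov regularity $2\ell - 1 + 1/p$; an atomic or inverse-representation argument with B-spline atoms therefore carries the cap $r < 2\ell - 1 + 1/p$, which is exactly the threshold in (i). Citing the $C^\infty$-atom proposition hides this restriction entirely. What is actually needed is a finite-smoothness inverse B-spline representation theorem (the paper credits this to \cite{Di16_1,DU14}); Proposition \ref{atomic} does not apply.

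Second, the argument for (ii) is internally inconsistent and, as stated, does not work. You claim both that (ii) ``follows the same scheme'' with the scalar $\ell_\theta$-norm replaced by the $L_p$-square-function norm, and that (ii) can be ``deduced from (i)'' using the embedding chain $\bW^r_p \hookrightarrow \bB^r_{p,\max\{p,2\}}$. The embedding chain only produces one-sided sequence bounds on $\big\{2^{r|\bj|_1}\|q_{\bj}(f)\|_p\big\}_{\bj}$; it cannot yield the $L_p$-equivalence of the $\ell_2$-valued square function, which requires interchanging the $L_p$-norm and the $\ell_2$-sum and is not implied by any $\ell_\theta$-estimate of block norms. A direct vector-valued Littlewood--Paley/Fefferman--Stein argument on the spline side is indispensable, and the paper stresses that this argument relies on precisely the two structural facts your outline never uses: an explicit formula for the coefficient functionals $c_{\bk,\bs}(f)$ in \eqref{eq:B-splineRepresentation}, and the splitting of each $q_{\bs}(f)$ into a bounded (uniformly in $\bs$) number of sums of B-splines with pairwise non-overlapping interiors of supports. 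The latter is what allows the pointwise majorization by a Peetre/Hardy--Littlewood maximal function and explains the stricter cap $r < 2\ell - 1$ in (ii) compared with (i); without it the maximal-function step, and hence part (ii), does not close.
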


Proposition \ref{samp_repr1Q} as well as a counterpart for the 
B-spline representation \eqref{eq:B-splineRepresentation} of 
Proposition \ref{samp_repr2} have been proven in \cite{Di16_1}.
As in the proofs of  Proposition \ref{samp_repr1}(ii) and 
Propositions \ref{sob_faber_repr1} and \ref{sob_faber_repr2}, the proof of 
Proposition \ref{samp_repr1Q}(ii) requires tools from
Fourier analysis, i.e., maximal functions of Peetre and Hardy-Littlewood
type. Moreover, it is essentially based on a special explicit formula for the coefficients 
$c_{\bk,\bs}(f)$ in the representation \eqref{eq:B-splineRepresentation}, and on the specific property of the representation \eqref{eq:B-splineRepresentation} that the component functions
$q_\bs(f)$ can be split into a finite sum of 
the B-splines $N_{\bs,\bk}$ having non-overlap interiors of their supports. One can probably extend the smoothness range in Proposition 
\ref{samp_repr1Q}, (ii) to $\max\{1/p,1/2\} < r < 2\ell-1 + \min\{1/p,1/2\}$ using similar techniques as in \cite{ByUl16}.

There are indeed many ways to construct quasi-interpolation
operators built on $B$-splines, see, e.g., \cite{CD87,C92,BF73}. 
We give two examples of quasi-interpolation operators. For more examples, see
\cite{C92}.
A piecewise linear quasi-interpolation operator is defined as
\begin{equation} \nonumber
Q(f,x):= \ \sum_{k \in \Z} f(k) M(x-k), 
\end{equation} 
where $M$ is the   
symmetric piecewise linear B-spline $\ell = 1$).
It is related to the classical Faber-Schauder basis of the hat functions 
(see, e.g., \cite{Di11}, \cite{Tr10}, for details). 
Another example is the cubic quasi-interpolation operator generated by the symmetric cubic B-spline $M$ ($\ell=2$): 
\begin{equation*} 
Q(f,x):= \ \sum_{k \in \Z} \frac {1}{6} \{- f(k-1) + 8f(k) - f(k+1)\} M(x-k). 
\end{equation*} 
We are interested in the most general form for $C^{\infty}$-bumps
for all parameters $r>0$. This has been shown
by Vyb\'iral \cite{Vyb06} based on the approach of Frazier, Jawerth
\cite{FrJa90}\,. In fact, consider a smooth bump
function $\varphi$ (atom) supported in $[0,1]^d$. We
define
$$
   a_{\bs,\bk}(x_1,...,x_d) := \varphi(2^{s_1}x_1-k_1)\cdot...\cdot
\varphi(2^{s_d}x_d-k_d)\quad,\quad \bx\in \R^d,\,\bs\in \N_0^d,\, \bk\in
\Z^d\,.
$$

\begin{prop}\label{atomic} Let $1\leq p, \theta \leq \infty$ and
$r>0$. Let further $\{\lambda_{\bs,\bk}\}_{\bs,\bk}$ be a sequence belonging to
$\bb^r_{p,\theta}$. Then the function 
\begin{equation}\label{tf}
	f:=\sum\limits_{\bs\in \N_0^d} \sum\limits_{\bk\in \mathcal{D}_{\bs}}
\lambda_{\bs,\bk}a_{\bs,\bk}(x)
\end{equation}
exists and is supported in $[0,1]^d$\,. Moreover, 
\begin{equation}\label{flelambda}
 \|f\|_{\Brpt} \lesssim \Big(\sum\limits_{\bs\in \N_0^d}
 2^{|\bs|_1(r-1/p)\theta}\Big[\sum\limits_{\bk\in
 \Z^d}|\lambda_{\bs,\bk}|^{p}\Big]^{\theta/p}\Big)^{1/\theta}\,.
\end{equation}
\end{prop}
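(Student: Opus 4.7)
The approach is to use the Fourier-analytic characterization of $\Brpt$ from Definition~\ref{d1}, which states that
$$
\|f\|_{\Brpt}^\theta \asymp \sum_{\bs' \in \N_0^d} 2^{r|\bs'|_1\theta} \|A_{\bs'}(f)\|_p^\theta\quad (\text{or the sup for } \theta=\infty),
$$
where $A_{\bs'}$ are smooth Littlewood-Paley projectors. Writing $f = \sum_{\bs} f_{\bs}$ with $f_{\bs} := \sum_{\bk \in \mathcal{D}_{\bs}} \lambda_{\bs,\bk} a_{\bs,\bk}$, the strategy is to (i) control $\|f_{\bs}\|_p$ using disjoint supports, (ii) bound $\|A_{\bs'}(f_{\bs})\|_p$ with decay in $|\bs' - \bs|$, and (iii) sum up via a Schur-type estimate. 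The support statement is immediate: each $a_{\bs,\bk}$ with $\bk \in \mathcal{D}_{\bs}$ is supported in $[0,1]^d$, and so is any finite partial sum; the limit inherits the support once convergence is verified.

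First I would establish, for each fixed $\bs \in \N_0^d$, the elementary estimate
$$
\|f_{\bs}\|_p \lesssim 2^{-|\bs|_1/p} \Big(\sum_{\bk \in \mathcal{D}_{\bs}} |\lambda_{\bs,\bk}|^p\Big)^{1/p}.
$$
This follows because the atoms $a_{\bs,\bk}$ for fixed $\bs$ have essentially disjoint (or boundedly overlapping) supports of volume $\asymp 2^{-|\bs|_1}$, and $\|a_{\bs,\bk}\|_p \asymp 2^{-|\bs|_1/p}\|\varphi\|_p$. The case $p = \infty$ reduces to the supremum bound via the same disjoint-support argument.

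The main step is to prove the decay estimate
$$
\|A_{\bs'}(f_{\bs})\|_p \lesssim \prod_{j=1}^d 2^{-N(s'_j - s_j)_+} \|f_{\bs}\|_p
$$
for an arbitrary large $N$. This is where the smoothness of the tensor-product bump $a_{\bs,\bk}$ enters: the Fourier coefficients of $\varphi(2^{s_j}\cdot - k_j)$ decay faster than any polynomial, so convolution with a smooth frequency-cutoff $\mathcal{A}_{\bs'}$ yields rapid decay in coordinates where $s'_j > s_j$. For coordinates with $s'_j \le s_j$ we simply use the uniform $L_p$-boundedness of $A_{\bs'}$. This decay property can be proven coordinate-by-coordinate using the tensor structure of both the atom and the operator $A_{\bs'}$. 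This will be the main technical obstacle, as it requires care in tracking the kernel estimates and the precise form of the operator $A_{\bs'}$ (see \eqref{2.2.16}); a standard argument uses the convolution structure and integration by parts exploiting smoothness of $\varphi$.

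Given these ingredients, the final assembly is a discrete Schur-test (or Young's inequality for sequences). Setting $a_{\bs} := 2^{r|\bs|_1}\|f_{\bs}\|_p$ and $K(\bs',\bs) := 2^{r(|\bs'|_1 - |\bs|_1)}\prod_j 2^{-N(s'_j - s_j)_+}$, one obtains $2^{r|\bs'|_1}\|A_{\bs'} f\|_p \lesssim \sum_{\bs} K(\bs',\bs) a_{\bs}$. Because $r > 0$ provides geometric decay for $\bs' \le \bs$ (coordinate-wise) and choosing $N > r$ secures geometric decay in the opposite direction, the kernel $K$ satisfies $\sup_{\bs'}\sum_{\bs} K + \sup_{\bs}\sum_{\bs'} K < \infty$. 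Hence the linear map induced by $K$ is bounded on $\ell^{\theta}$, giving $\|\{2^{r|\bs'|_1}\|A_{\bs'}(f)\|_p\}\|_{\ell^\theta} \lesssim \|\{a_{\bs}\}\|_{\ell^\theta}$. Combining with step (i) yields the required inequality \eqref{flelambda}; convergence of the series defining $f$ in $\Brpt$ (hence in $L_p$) is then a byproduct of the absolute convergence of the right-hand side of \eqref{flelambda}, interpreting coefficients outside $\mathcal{D}_{\bs}$ as zero so that the sum over $\Z^d$ coincides with the sum over $\mathcal{D}_{\bs}$.
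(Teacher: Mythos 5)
Your proof is correct; the paper states this proposition without an accompanying proof, citing Vyb\'iral \cite{Vyb06}, Frazier--Jawerth \cite{FrJa90} and \cite{UU14}, and your atomic-decomposition argument (disjoint supports within a scale, the almost-orthogonality estimate $\|A_{\bs'}f_{\bs}\|_p \lesssim \prod_j 2^{-N(s'_j - s_j)_+}\|f_{\bs}\|_p$, and a Schur/Young summation using $r>0$ and $N>r$) is exactly the classical method those references employ. One remark on the step you flag as the main technical obstacle: the cleanest way to justify the cross-scale decay for $s'_j>s_j$ is not pointwise kernel estimates but the observation that $\mathcal A_{s'_j}$ annihilates trigonometric polynomials of degree $<2^{s'_j-2}$, so $\|A_{s'_j}g\|_p\lesssim E_{2^{s'_j-2}}(g)_p$, and a dilated $C^\infty$ bump at scale $2^{-s_j}$ has best trigonometric approximation error $\lesssim_{N}2^{-N(s'_j-s_j)}\|g\|_p$ for any $N$; for $s'_j\le s_j$ one only uses $\|\mathcal A_{s'_j}\|_1\lesssim 1$, exactly as you intend. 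With both directions in hand the Schur kernel $K(\bs',\bs)=\prod_j 2^{r(s'_j-s_j)-N(s'_j-s_j)_+}$ is a discrete convolution dominated by an absolutely summable sequence once $N>r>0$, which closes the argument on every $\ell^\theta$, $1\le\theta\le\infty$.
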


\subsection{Open problems}\index{Open problems!Sampling recovery}

Below the reader may find a list of important open problems in this field. Afterwards we will comment on some of those.
\\

\noindent{\bf Open problem 5.1} Find the right order of the optimal sampling recovery $\varrho_m(\bW^r_p,L_p)$ in case 
$1\le p\le \infty$ and $r>1/p$.\\

\noindent{\bf Open problem 5.2} Find the right order of the optimal sampling recovery $\varrho_m(\bW^r_p,L_q)$,
$1<p<2<q<\infty$, see also Figure \ref{fig11}.\\

\noindent{\bf Open problem 5.3} Find the right order of the optimal sampling recovery $\varrho_m(\bH^r_p,L_q)$ in the
question-marked regions in Figure \ref{fig12} (left picture). \\

\noindent{\bf Open problem 5.4} Find the right order of the optimal sampling recovery $\varrho_m(\bW^r_p,L_p)$ in the
case of small smoothness, $2<p<\infty$, $1/p<r\leq 1/2$.\\

\noindent{\bf Open problem 5.5} Find the right order of the optimal sampling recovery $\varrho_m(\bB^r_{p,\theta},L_p)$
for $\theta>1$ and $\varrho_m(\bB^r_{1,1},L_1)$. The latter problem seems to reduce to the problem of finding a lower
bound for $\lambda_m(\bB^r_{1,1},L_1)$, see Section 4.\\

The following simple example refers to Open problem 5.2 (see also the lower right triangle in Figure \ref{fig11}). It
shows that even in case $d=1$ sampling numbers $\varrho_m$ and linear widths $\lambda_m$ do not coincide. 

\begin{thm}\label{lambda<rho} Let $1<p<2<q<\infty$ and $r>1/p$. Then we have

\begin{equation}
 \begin{split}
  \lambda_m(\bW^r_p,L_q) &\asymp m^{-(r-1/2+1/q)}(\log m)^{(d-1)(r-1/2+1/q)}\\
  &=o(m^{-(r-1/p+1/q)})\\
  &=o(\varrho_m(\bW^r_p,L_q))\,.
 \end{split}
\end{equation}
\end{thm}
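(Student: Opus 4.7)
The plan is to prove the upper bound on $\lambda_m$ and the lower bound on $\varrho_m$ separately, and then to verify the asymptotic comparison by elementary arithmetic.

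First, I would obtain the upper bound $\lambda_m(\bW^r_p,L_q)\lesssim m^{-(r-1/2+1/q)}(\log m)^{(d-1)(r-1/2+1/q)}$ directly from Theorem~\ref{thm[lambda_nW]}. In the parameter range $1<p<2<q<\infty$ with $1/p+1/q\le 1$ (and $r>1/p\ge 1-1/q$), this is precisely the third case of that theorem, and the matching lower bound is contained there as well. (In the complementary range $1/p+1/q\ge 1$, the second case of Theorem~\ref{thm[lambda_nW]} yields instead the rate $m^{-(r-1/p+1/2)}(\log m)^{(d-1)(r-1/p+1/2)}$, which is also $o(m^{-(r-1/p+1/q)})$ since $q>2$, so the conclusion $\lambda_m=o(\varrho_m)$ persists.)

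Second, I would establish the lower bound $\varrho_m(\bW^r_p,L_q)\gtrsim m^{-(r-1/p+1/q)}$ by a fooling-function argument. Given any set $X_m=\{\bx^1,\dots,\bx^m\}\subset\T^d$, partition $\T^d$ into $\asymp m$ disjoint axis-aligned cubes of side length $h\asymp m^{-1/d}$; by pigeonhole, at least one such cube $B=\prod_{j=1}^d[a_j,a_j+h]$ avoids $X_m$. Fix a non-trivial $\phi\in C_0^\infty(0,1)$ and set
\[
f_{X_m}(\bx)\ :=\ h^{d(r-1/p)}\prod_{j=1}^d\phi\bigl((x_j-a_j)/h\bigr).
\]
A direct scaling computation—using Definition~\ref{def2Sob} (integer $r$) or the Fourier-analytic norm equivalence \eqref{NeqW} together with dilation homogeneity (general $r>1/p$)—shows that the dominant contribution to $\|f_{X_m}\|_{\bW^r_p}$ comes from the pure mixed derivative term and is of order $h^{d(r-1/p)}\cdot h^{d(1/p-r)}\asymp 1$, while $\|f_{X_m}\|_q\asymp h^{d(r-1/p)}\cdot h^{d/q}\asymp m^{-(r-1/p+1/q)}$. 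Since $f_{X_m}$ vanishes at every node in $X_m$, any linear sampling operator $\Psi_m(\cdot,X_m)$ annihilates it, so $\|f_{X_m}-\Psi_m(f_{X_m},X_m)\|_q=\|f_{X_m}\|_q$. Taking the infima over $X_m$ and $\Psi_m$ delivers the lower bound.

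Finally, combining the two estimates gives
\[
\frac{\lambda_m(\bW^r_p,L_q)}{m^{-(r-1/p+1/q)}}\ \asymp\ m^{-(1/p-1/2)}(\log m)^{(d-1)(r-1/2+1/q)}\ \longrightarrow\ 0
\]
as $m\to\infty$, because $1/p-1/2>0$ (which is exactly the hypothesis $p<2$) dominates any polylogarithmic growth. This yields the chain $\lambda_m(\bW^r_p,L_q)=o(m^{-(r-1/p+1/q)})=o(\varrho_m(\bW^r_p,L_q))$ claimed in the statement.

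The main obstacle is the mixed-Sobolev scaling estimate for the tensorized bump in step two, specifically showing that $\|f_{X_m}\|_{\bW^r_p}$ is genuinely of order $h^{d(1/p-r)}$ before normalization and that this is not inflated by the lower-order summands in \eqref{def[Sobolev2]}. For integer $r$ one checks that each summand $\|D^{\br(e)}f_{X_m}\|_p$ is of order $\prod_{j\in e}h^{1/p-r}\prod_{j\notin e}h^{1/p}$, which for small $h$ is dominated by the term $e=\{1,\dots,d\}$; for non-integer $r$ one transfers the computation to the frequency side via the Littlewood–Paley equivalence \eqref{NeqW}, where the dilation $\varphi\mapsto\varphi(\cdot/h)$ interacts with the dyadic blocks in a controlled way. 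The remaining steps are routine.
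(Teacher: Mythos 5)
Your argument is correct, but for the crucial lower bound $\varrho_m(\bW^r_p,L_q)\gtrsim m^{-(r-1/p+1/q)}$ you take a genuinely different and more elaborate route than the paper. The paper disposes of this in one line by observing that already the \emph{univariate} sampling widths satisfy $\varrho_m(W^r_p(\T),L_q(\T))\asymp m^{-(r-1/p+1/q)}$ (this is precisely the formula quoted in the introduction to Section~5), and that the $d$-variate quantity dominates the univariate one: a function $g(x_1)$ sits in $\bW^r_p(\T^d)$ with comparable norm, a $d$-variate sampling rule reads $g$ only through the $m$ projected nodes $\{x^i_1\}$, and averaging the reconstruction over the remaining $d-1$ variables produces a univariate sampling rule with no larger $L_q$-error (contractivity of conditional expectation). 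You instead build a multivariate fooling bump from scratch—locate a node-free cube of side $h\asymp m^{-1/d}$ by pigeonhole, tensorize a smooth bump, and compute the mixed-Sobolev and $L_q$ scalings. Your normalization $A=h^{d(r-1/p)}$ is right, the lower-order summands $\|D^{\br(e)}f_{X_m}\|_p\asymp h^{(d-|e|)r}$ with $|e|<d$ are indeed negligible as $h\to 0$, and $\|f_{X_m}\|_q\asymp h^{d(r-1/p+1/q)}\asymp m^{-(r-1/p+1/q)}$, so the computation closes. Both proofs are valid; the paper's is shorter because it delegates the hard work to the classical one-variable theorem, while yours is self-contained and makes the mechanism of the lower bound transparent (and in fact directly yields the same lower bound for $\kappa_m$ as well). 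Your parenthetical observation that when $1/p+1/q\geq 1$ the first displayed asymptotics should instead be read from the second case of Theorem~\ref{thm[lambda_nW]}—giving $m^{-(r-1/p+1/2)}$ up to logs, which is still $o(m^{-(r-1/p+1/q)})$ because $q>2$—is a careful catch; the paper states the theorem without that caveat but the essential conclusion $\lambda_m=o(\varrho_m)$ is unaffected.
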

\bproof Already in the univariate case it holds
$$m^{-(r-1/p+1/q)} \lesssim \varrho_m(\bW^r_p,L_q)\,.$$
\eproof

The case $p=q$ looks rather simple. However, Open Problem 5.l seems to be a hard problem. We state the following
conjecture. 

\begin{conj}\label{W1} Let $1<p<\infty$ and $r>\max\{1/p,1/2\}$. Then 
$$
    \varrho_m(\Wrp, L_p) \asymp m^{-r} (\log m)^{(d-1)(r+1/2)}\,.
$$
\end{conj}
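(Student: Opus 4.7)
The upper bound $\varrho_m(\Wrp,L_p) \lesssim m^{-r}(\log m)^{(d-1)(r+1/2)}$ is already recorded in \eqref{op}: it is delivered by Smolyak's algorithm $T_n$ at level $n$ with $m \asymp |SG^d(n)| \asymp 2^n n^{d-1}$, via a pointwise Cauchy--Schwarz argument applied to $\|\sum_{|\bj|_1>n}v_{\bj}(f)\|_p$ together with the square-function characterization \eqref{lhs2} of $\|f\|_{\Wrp}$. The decisive factor $n^{(d-1)/2}$ arises from $(\sum_{|\bj|_1>n}2^{-2r|\bj|_1})^{1/2}$ and is the only quantitative gain of the Smolyak algorithm over optimal linear-subspace approximation. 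The entire content of the conjecture is therefore the matching lower bound; the trivial estimates $\varrho_m \ge \lambda_m \asymp m^{-r}(\log m)^{(d-1)r}$ from Theorem~\ref{thm[lambda_nW]} and $\varrho_m \ge \kappa_m(\Wrp) \asymp m^{-r}(\log m)^{(d-1)/2}$ are both strictly weaker, and so the additional logarithmic factor must be extracted by a genuinely sampling-specific argument.

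The plan for the lower bound is a fooling-function construction via the atomic decomposition in Proposition~\ref{atomic}. Fix a smooth bump $\varphi$ with $\supp \varphi \subset [0,1]$ and set $a_{\bs,\bk}(\bx) := \prod_{j=1}^d \varphi(2^{s_j}x_j-k_j)$; for fixed $\bs$ these $2^{|\bs|_1}$ atoms have pairwise disjoint supports of volume $2^{-|\bs|_1}$. Given an arbitrary sampling set $X_m \subset \T^d$ with $m \asymp 2^n n^{d-1}$, a pigeonhole/counting argument produces, for every $\bs$ with $|\bs|_1 = n$, subsets $A_{\bs} \subset \mathcal{D}_{\bs}$ of positive proportion whose atoms have supports disjoint from $X_m$. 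With signs $\varepsilon_{\bs,\bk} \in \{\pm 1\}$ to be chosen, the candidate test function
$$
f \ := \ c\, 2^{-rn} n^{-(d-1)/2}\!\!\sum_{|\bs|_1=n}\sum_{\bk \in A_{\bs}}\varepsilon_{\bs,\bk}\,a_{\bs,\bk}
$$
vanishes on $X_m$ by construction, and by \eqref{flelambda} with $\theta=2$ combined with $\bB^r_{p,2}\hookrightarrow \Wrp$ from Lemma~\ref{emb}(iv) (and an analogous adjustment via $\bB^r_{p,p}\hookrightarrow \Wrp$ in the range $1<p<2$) one verifies $\|f\|_{\Wrp}\le 1$ for a suitable absolute constant $c$. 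Since any linear sampling algorithm $\Psi(\cdot,X_m)$ then returns zero on $f$, the desired lower bound reduces to showing $\|f\|_p \gtrsim 2^{-rn}n^{(d-1)/2}$, or equivalently
$$
\Big\|\!\sum_{|\bs|_1=n} g_{\bs}\Big\|_p \ \gtrsim \ n^{d-1}, \qquad g_{\bs} := \sum_{\bk \in A_{\bs}}\varepsilon_{\bs,\bk}\, a_{\bs,\bk}
$$
for a suitable sign choice.

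The fundamental difficulty is precisely that the single-layer construction above cannot satisfy this last inequality. For $p=2$ a direct Parseval/almost-orthogonality computation, exploiting the essentially disjoint Fourier spectra of the $g_{\bs}$ in distinct dyadic rectangles $\rho(\bs)$, gives $\|\sum_{|\bs|_1=n} g_{\bs}\|_2 \asymp n^{(d-1)/2}$, short of the target by exactly the factor $n^{(d-1)/2}$ which the conjecture predicts. This reflects the Bernstein-type relation $\|t\|_p \asymp 2^{-rn}\|t\|_{\Wrp}$ for $t\in \Tr(\Delta Q_n)$: a fooling function concentrated on one hyperbolic layer realizes only the linear-width rate, and the sampling constraint alone does not bite harder than the general $m$-dimensional linear constraint on that layer. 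A more delicate construction distributing mass across several layers $|\bs|_1 \in [n-cn,n]$ together with a careful accounting of atoms versus sample points across scales appears necessary, in the spirit of the Riesz-product techniques of Subsection~\ref{subsect:SBI} and the discrete Marcinkiewicz-type phenomenon in Corollary~\ref{C2.5.2} which precisely captures the sampling-versus-continuous gap in $d=2$.

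The main obstacle is that making such a refined construction yield the full $n^{(d-1)/2}$ gain in $L_p$ is in essence equivalent to proving a multivariate small ball inequality of the strength of the conjectured \eqref{2.6.4}--\eqref{2.6.5}: any such lower bound for the $L_p$-norm of a sum of blocks from distinct dyadic rectangles on the hyperbolic layer $|\bs|_1 = n$, improving on the $L_2$ Parseval bound by a factor $n^{(d-1)/2}$, would in particular imply a trigonometric small ball inequality of the type \eqref{2.6.3}. This is known only in dimension $d=2$ (Talagrand's inequality \eqref{2.6.2} and its trigonometric analog \eqref{2.6.3}), and constitutes Open problems~1.2, 2.5 and 2.6 in higher dimensions; the partial bounds of \cite{BL,BLV} with exponent $(d-1)/2-\delta(d)$, if inserted into the argument above, would yield only the weaker lower bound $m^{-r}(\log m)^{(d-1)r+\delta(d)}$. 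Consequently, for $d=2$ the program outlined here should succeed throughout the range $1<p<\infty$, $r>\max\{1/p,1/2\}$, but for $d\ge 3$ Conjecture~\ref{W1} seems fundamentally entangled with the multivariate small ball inequality and unlikely to yield without it.
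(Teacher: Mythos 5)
Your proposal correctly recognizes that Conjecture~\ref{W1} is left open by the paper: the upper bound is \eqref{op} via the Smolyak operator $T_n$, and the lower bound exceeds both the linear widths $\lambda_m(\Wrp,L_p)\asymp m^{-r}(\log m)^{(d-1)r}$ from Theorem~\ref{thm[lambda_nW]} and the cubature error $\kappa_m(\Wrp)\asymp m^{-r}(\log m)^{(d-1)/2}$, so any proof must exploit the sampling restriction itself. You also correctly compute that a single-layer fooling function in $\Tr(\Delta Q_n)$ can at best realize the linear-width rate, because the Bernstein equivalence gives $\|t\|_p\asymp 2^{-rn}\|t\|_{\Wrp}$ on that subspace. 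This part of your discussion is sound.

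Two of your claims, however, are incorrect and contradict the paper. First, the assertion that closing the gap is ``in essence equivalent to'' the multivariate Small Ball Inequality \eqref{2.6.4}--\eqref{2.6.5}, together with the resulting claim that ``for $d=2$ the program outlined here should succeed throughout the range $1<p<\infty$'', cannot both be right: Talagrand's inequality \eqref{2.6.2} and its trigonometric analog \eqref{2.6.3} \emph{are} known for $d=2$, yet the paper states explicitly in its discussion of \eqref{op} that even for $p=2$ (hence in particular for $d=2$, $p=2$) it is unknown whether the upper bound is sharp. An equivalence of the kind you assert would have already settled the two-dimensional case. The deeper issue is that the Small Ball Inequality is an $L_\infty$ phenomenon, while $\varrho_m(\Wrp,L_p)$ for finite $p$ lives in a reflexive regime where that machinery has no natural foothold.

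Second, your single-layer reduction is not merely ``short of the target''; it is provably unreachable. After normalizing $\|f\|_{\Wrp}\asymp 1$ via the square-function characterization \eqref{lhs2}, you correctly arrive at the requirement $\bigl\|\sum_{|\bs|_1=n}g_\bs\bigr\|_p\gtrsim n^{d-1}$. Parseval gives $\bigl\|\sum g_\bs\bigr\|_2\asymp n^{(d-1)/2}$ independently of the sign choices. Since $\|\cdot\|_p\le\|\cdot\|_2$ on $\T^d$ for $p\le 2$, the target $n^{d-1}$ cannot be attained for any $1<p\le 2$ by \emph{any} choice of signs or atoms — the construction fails outright for half the range in the conjecture, including the Hilbert case $p=2$. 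For $p>2$, interpolation $\|\cdot\|_p\le\|\cdot\|_2^{2/p}\|\cdot\|_\infty^{1-2/p}$ together with the trivial bound $\bigl\|\sum g_\bs\bigr\|_\infty\le n^{d-1}$ caps the one-layer approach at $n^{(d-1)(1-1/p)}<n^{d-1}$, so the conjectured rate is unreachable on a single layer for any finite $p$. The multi-layer fix you gesture at is left entirely unspecified, and in particular you do not address the crux: how to control the $\Wrp$-norm of a multi-layer fooling function so that the adversary (the arbitrary $m$-point sampling set) costs you exactly the $(\log m)^{(d-1)/2}$ factor and not more.
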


The upper bound is known, see  Theorem \ref{Sobsamp_q}(i). Problematic is the
lower bound. The linear widths are smaller than the bound in
Conjecture \ref{W1}, see Theorem \ref{thm[lambda_nW]}. In other words, we
conjecture two things. First, sampling is worse than approximation also in
this situation and second, Smolyak's algorithm is optimal for sampling numbers.
Note, that the Hilbert space situation $\bW^r_2$ in $L_2$ is also open in this
respect. 

Let us comment on Open problem 5.4. In the case of small smoothness we consider the situation $2<p<\infty$ and
$1/p<r\leq 1/2$. This problem is also relevant for numerical integration, see Section
\ref{numint} below, where we were able to give a partial answer.
In fact, it is possible to prove an interesting upper bound for
$\kappa_m(\bW^r_p)$ in the situation $2<p<\infty$ and $1/p<r\leq 1/2$. 

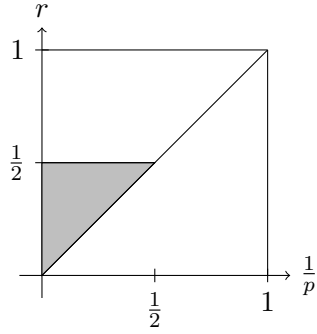
\begin{figure}[H]
  \begin{center}
  \begin{tikzpicture}[scale=3]

\draw[->] (-0.1,0.0) -- (1.1,0.0) node[right] {$\frac{1}{p}$};
\draw[->] (0.0,-.1) -- (0.0,1.1) node[above] {$r$};
\draw (1.0,0.03) -- (1.0,-0.03) node [below] {$1$};
\draw (0.03,1.0) -- (-0.03,1.00) node [left] {$1$};
\draw (0.03,0.5) -- (-0.03,0.5) node [left] {$\frac{1}{2}$};
\draw (1.0,0) -- (1.0,1.0);
\draw (0,1) -- (1.0,1.0);
\filldraw[fill=lightgray,draw=black] (0,0.5)--(0.5,0.5)--(0,0)--(0,0.5); 
\draw (0,0.5) -- (0.5,0.5);
\draw (0,0) -- (1,1);
\draw (1/2,0.03) -- (1/2,-0.03) node [below] {$\frac{1}{2}$};
\end{tikzpicture}
\caption{The region of ``small smoothness''}
\end{center}
\end{figure}

Let us start with an interesting
observation for small $r$ in the Besov setting. Here the ``difference'' between
$\lambda_m$ and $\varrho_m$ will be much smaller than in Theorem
\ref{lambda<rho}
and only  apparent if $d\geq 2$\,. 

\begin{thm} Let $2< p \leq \infty$ and $1/p<r<1/2$. Then 
\begin{equation}
 \begin{split}
  \lambda_m(\bB^r_{p,p},L_p) &\asymp m^{-r}(\log
m)^{(d-1)(r+1/2-1/p)}\\
  &=o(m^{-r}(\log m)^{(d-1)(1-1/p)})\\
  &=o(\varrho_m(\bB^r_{p,p},L_p)).
 \end{split}
\end{equation}
\end{thm}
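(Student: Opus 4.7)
My plan is to (i) read off the linear width from Theorem \ref{thm[lambda_nB]}, (ii) use the hypothesis $r<1/2$ to compare exponents, and (iii) establish the required sampling lower bound by constructing a fooling function built from smooth tensor atoms placed on sample-free cubes.

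For (i), with $p=q\geq 2$ and $\theta=p\geq 2$ the parameter configuration falls inside the first regime of Theorem \ref{thm[lambda_nB]} (the endpoint $q=p$ being accessible via the embedding chain $\bB^r_{p,2}\hookrightarrow\bW^r_p\hookrightarrow\bB^r_{p,p}$ together with Theorem \ref{thm[lambda_nW]}), which gives
\[
\lambda_m(\bB^r_{p,p},L_p)\;\asymp\; (m^{-1}\log^{d-1}m)^r(\log m)^{(d-1)(1/2-1/p)}\;=\;m^{-r}(\log m)^{(d-1)(r+1/2-1/p)}.
\]
For (ii), the assumption $r<1/2$ makes $r+1/2-1/p<1-1/p$, which yields the first ``$o$'' in the chain.

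The substantive step is (iii): for arbitrary $m$-point $X_m=\{\bx^i\}\subset\T^d$ and linear reconstruction $\Psi_m$, produce $f\in\bB^r_{p,p}$ with $f\equiv 0$ on $X_m$ and $\|f\|_p/\|f\|_{\bB^r_{p,p}}\gtrsim m^{-r}(\log m)^{(d-1)(1-1/p)}$. Pick $n$ with $2^n\geq 2^{d+1}m$ (so $n\asymp\log m$); for every stripe $\bs$ with $|\bs|_1=n$ the $2^n$ dyadic rectangles $I_{\bs,\bk}$ partition $\T^d$ and at most $m$ contain a sample point, so the sample-free index set $F_\bs$ satisfies $|F_\bs|\gtrsim 2^n\asymp m$. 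Choose a tensor bump $\phi\in C_0^\infty((0,1)^d)$ with $\phi\equiv 1$ on $[1/4,3/4]^d$, set $\varphi_{\bs,\bk}(\bx):=\phi(2^{s_1}x_1-k_1,\ldots,2^{s_d}x_d-k_d)$, and define
\[
f(\bx):=\lambda\sum_{|\bs|_1=n}\sum_{\bk\in F_\bs}\varphi_{\bs,\bk}(\bx),\qquad\lambda>0.
\]
Since every $\bx^i$ lies in a cube that is excluded from $F_\bs$, we have $f(\bx^i)=0$, hence $\|f-\Psi_m(f,X_m)\|_p=\|f\|_p$. Proposition \ref{atomic} with $\theta=p$ controls the Besov norm via
\[
\|f\|_{\bB^r_{p,p}}\;\lesssim\;\lambda\cdot 2^{n(r-1/p)}\Bigl(\sum_{|\bs|_1=n}|F_\bs|\Bigr)^{1/p}\;\asymp\;\lambda\cdot 2^{n(r-1/p)}(mn^{d-1})^{1/p}.
\]

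For the matching lower bound on $\|f\|_p$, let $\bk(\bx,\bs)$ denote the unique index with $\bx\in I_{\bs,\bk}$, $B_\bs$ the union of sample-containing cubes at stripe $\bs$ (so $\mu(B_\bs)\leq m/2^n\leq 2^{-d-1}$), and $E_\bs$ the union of inner $[1/4,3/4]$-boxes (so $\mu(E_\bs)=2^{-d}$). Set $A(\bx):=\#\{\bs:|\bs|_1=n,\,\bk(\bx,\bs)\in F_\bs,\,\bx\in E_{\bs,\bk(\bx,\bs)}\}$; on the event counted by $A$, the corresponding atom equals $\lambda$ at $\bx$. Averaging,
\[
\int_{\T^d}A(\bx)\,d\bx\;\geq\;\sum_{|\bs|_1=n}\bigl(\mu(E_\bs)-\mu(B_\bs)\bigr)\;\geq\;K_n\cdot 2^{-d-1},\qquad K_n:=\binom{n+d-1}{d-1}\asymp n^{d-1}.
\]
As $A\leq K_n$ pointwise, a reverse Markov inequality produces a set of measure $\gtrsim 2^{-d-2}$ on which $A\gtrsim n^{d-1}$, whence $\|f\|_p\gtrsim\lambda n^{d-1}$. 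Dividing and using $2^n\asymp m$ gives
\[
\frac{\|f\|_p}{\|f\|_{\bB^r_{p,p}}}\;\gtrsim\;\frac{n^{(d-1)(1-1/p)}}{2^{n(r-1/p)}m^{1/p}}\;\asymp\;m^{-r}(\log m)^{(d-1)(1-1/p)},
\]
which is the sampling lower bound.

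The main obstacle is the ``double-averaging'' estimate for $\|f\|_p$: one must secure a dimension-only-dependent positive-measure set on which \emph{both} restrictions (lying in a sample-free cube and being centrally located in it) hold simultaneously for $\gtrsim n^{d-1}$ stripes. The calibration $2^n\geq 2^{d+1}m$ is precisely what forces $\mu(B_\bs)<\mu(E_\bs)$, so that the reverse-Markov step is productive; everything else—Proposition \ref{atomic}, the algebra collapsing $2^{n(r-1/p)}m^{1/p}\asymp m^r$, and the citation of Theorem \ref{thm[lambda_nB]}—is essentially mechanical.
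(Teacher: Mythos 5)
Your argument is correct, but it takes a longer route than the paper. The paper's own proof is a one-liner: it observes that numerical integration is weaker than $L_p$-sampling recovery, $\kappa_m(\bB^r_{p,p}) \lesssim \varrho_m(\bB^r_{p,p},L_p)$ (since for any reconstruction $\Psi_m(f,X_m)=\sum_i f(\bx^i)\psi_i$ with $L_p$-error $\leq\epsilon$, integrating the approximant yields a cubature formula with error $\leq \|f-\Psi_m(f)\|_1 \leq \|f-\Psi_m(f)\|_p$), then cites the known two-sided bound $\kappa_m(\bB^r_{p,p}) \asymp m^{-r}(\log m)^{(d-1)(1-1/p)}$ from Theorem~\ref{Thm:Fro1}(ii). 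You instead re-derive the relevant lower bound directly for sampling by building a fooling function from tensor atoms in sample-free dyadic cubes. This is in fact the same construction that underlies the paper's $\kappa_m$ lower bound (see the test functions \eqref{g_1} in Subsection~\ref{sect:lbounds_int} and Proposition~\ref{atomic}), so you have essentially unrolled the dependency chain and re-proved Theorem~\ref{T6.2.3}. What the paper's route buys is concision and reuse; what yours buys is self-containedness and an explicit $L_p$-bound rather than an $L_1$-bound.

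Two local remarks. First, your reverse-Markov estimate for $\|f\|_p$ is more machinery than needed: since the bump $\phi$ is chosen nonnegative, $\|f\|_p \geq \|f\|_1 = \int f = \lambda\sum_{\bs}|F_\bs|\int\varphi_{\bs,\bk} \gtrsim \lambda\cdot n^{d-1}$ directly, because each atom contributes $\asymp 2^{-n}$ to the integral and $|F_\bs|\gtrsim 2^n$. Second, the embedding chain $\bB^r_{p,2}\hookrightarrow\bW^r_p\hookrightarrow\bB^r_{p,p}$ does not actually deliver the stated formula for $\lambda_m(\bB^r_{p,p},L_p)$: it only gives a lower bound $\gtrsim m^{-r}(\log m)^{(d-1)r}$ (missing the extra $(\log m)^{(d-1)(1/2-1/p)}$) and says nothing about the upper bound. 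The clean way to cover the endpoint $q=p>2$ is to take the lower bound from the Kolmogorov width, Theorem~\ref{thm[d_mB,p>q]} (whose hypotheses $2\leq q\leq p$, $\theta\geq 2$ do include $q=p=\theta$), and the upper bound from the hyperbolic-cross projection, Theorem~\ref{Theorem[HC-Approx-Brpt]} (case $q\leq p$, $p\geq 2$, $\theta>2$).
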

\bproof Note that $$m^{-r}(\log m)^{(d-1)(1-1/p)} \asymp \kappa_m(\bB^r_{p,p}) \lesssim \varrho_m(\bB^r_{p,p},L_p) \lesssim
m^{-r}(\log m)^{(d-1)(r+1-1/p)}\,.$$
\eproof

%

       \newpage
        \section{Entropy numbers}\index{Entropy number}

\subsection{General notions and inequalities}
\label{entropy_gen}

The concept of entropy is also known as {\it Kolmogorov entropy} and {\it
metric entropy}. This concept allows us to measure how big is a compact set. In
the case of finite dimensional compacts it is convenient  to compare compact
sets by their volumes. In the case of infinite dimensional Banach spaces this
way does not work. The concept of entropy is a good replacement of the concept
of volume in infinite dimensional Banach spaces.

Let $X$ be a Banach space and let $B_X$ denote the unit ball of $X$ with the
center at $0$. Denote by $B_X(y,r)$ a ball with center $y$ and radius $r$:
$\{x\in X:\|x-y\|\le r\}$. For a compact set $A$ and a positive number $\eps$ we
define the covering number $N_\eps(A)$
 as follows
$$
N_\eps(A) := N_\eps(A,X)  
:=\min \{n : \exists y^1,\dots,y^n :A\subseteq \cup_{j=1}^n B_X(y^j,\eps)\}.
$$

It is convenient to consider along with the entropy $H_\eps(A,X):= \log_2
N_\eps(A,X)$ the entropy numbers $\e_n(A,X)$:
$$
\e_n(A,X) :=  \inf \{\eps>0 : \exists y^1,\dots ,y^{2^{n}} \in X : A \subseteq
\cup_{j=1}
^{2^{n}} B_X(y^j,\eps)\}.
$$
The definition of the entropy numbers can be written in a form similar to the definition of the Kolmogorov widths
(numbers):
$$
\e_n(A,X) :=\inf_{y^1,\dots,y^{2^n}}\sup_{f\in A}\inf_j\|f-y^j\|_X.
$$

In the case of finite dimensional spaces ${\mathbb R}^d$ equipped with different
norms, say, norms $\|\cdot\|_X$ and $\|\cdot\|_Y$ the volume argument gives some
bounds on the $N_\eps(B_Y,X)$. For a Lebesgue measurable set $E\subset {\mathbb
R}^d$ we denote its Lebesgue measure by $vol(E):=vol_d(E)$.  Let further $C=A\oplus B:=\{c:c=a+b, a\in A,b\in B\}$.
\begin{thm}\label{Theorem 32.1} For any two norms $X$ and $Y$ and any $\eps>0$
we have
\begin{equation}\nonumber
\frac{1}{\eps^d}\frac{vol(B_Y)}{vol(B_X)} \le N_\eps(B_Y,X) \le
\frac{vol(B_Y(0,2/\eps)\oplus B_X)}{vol(B_X)}. 
\end{equation}
\end{thm}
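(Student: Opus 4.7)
The proof is a standard volume comparison in two directions. For the lower bound, I would simply observe that if $B_Y \subseteq \bigcup_{j=1}^N B_X(y^j,\eps)$ is any $\eps$-cover by $X$-balls, then taking Lebesgue measure on both sides and using translation invariance together with $vol(\eps B_X) = \eps^d\, vol(B_X)$ gives
\[
vol(B_Y) \le \sum_{j=1}^N vol(B_X(y^j,\eps)) = N\,\eps^d\, vol(B_X),
\]
so that $N \ge \eps^{-d}\, vol(B_Y)/vol(B_X)$. Minimizing over covers yields the lower estimate.

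For the upper bound, the plan is to pass from covering numbers to packing numbers. First I would pick a maximal collection $y^1,\dots,y^N \in B_Y$ that is $\eps$-separated in the $X$-norm, i.e.\ $\|y^i - y^j\|_X \ge \eps$ for $i\ne j$. By maximality, every point of $B_Y$ lies within $X$-distance $\eps$ of some $y^j$, so the balls $B_X(y^j,\eps)$ form a cover and hence $N_\eps(B_Y,X) \le N$. Next, the $X$-balls $y^j + (\eps/2) B_X$ are pairwise disjoint by the separation condition, and each is contained in $B_Y \oplus (\eps/2)B_X$. Hence
\[
N \cdot (\eps/2)^d\, vol(B_X) \;=\; \sum_{j=1}^N vol\bigl(y^j + (\eps/2)B_X\bigr) \;\le\; vol\bigl(B_Y \oplus (\eps/2)B_X\bigr).
\]
Finally I would rescale by the factor $2/\eps$ using $vol(tA) = t^d vol(A)$ for $t>0$, which converts the right-hand side into $(\eps/2)^d\, vol\bigl(B_Y(0,2/\eps) \oplus B_X\bigr)$ and cancels the $(\eps/2)^d$ on the left, yielding the claimed upper bound.

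Neither step is really an obstacle; the only point that requires a moment of care is the existence of a maximal $\eps$-separated subset of $B_Y$, which is immediate in finite dimensions since $B_Y$ is compact (any $\eps$-separated set in $B_Y$ has cardinality bounded by the lower volume estimate applied with roles swapped, so Zorn or a greedy construction terminates). Everything else is pure rescaling and additivity of Lebesgue measure.
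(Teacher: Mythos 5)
Your proof is correct and is precisely the standard volume comparison argument for covering numbers (the paper states this theorem without proof, but this is the argument any author would use). Both steps — the easy lower bound by subadditivity of volume over an $\eps$-cover, and the upper bound via a maximal $\eps$-separated set whose $\eps/2$-balls pack disjointly into $B_Y \oplus (\eps/2)B_X$ followed by scaling — are carried out cleanly, and the rescaling identity $B_Y \oplus (\eps/2)B_X = (\eps/2)\bigl(B_Y(0,2/\eps) \oplus B_X\bigr)$ that you invoke at the end is exactly what is needed.
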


Let us formulate one immediate corollary of Theorem \ref{Theorem 32.1}.
\begin{cor}\label{Corollary 32.1} For any $d$-dimensional Banach space $X$ we
have
$$
\eps^{-d} \le N_\eps(B_X,X) \le (1+2/\eps)^d,
$$
and, therefore,
$$
\e_n(B_X,X) \le 3(2^{-n/d}).
$$
\end{cor}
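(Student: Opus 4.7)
The plan is to apply Theorem \ref{Theorem 32.1} directly with $Y=X$ and then translate the covering number estimate into a bound on the entropy numbers via the definition $\e_n(A,X) = \inf\{\eps>0 : N_\eps(A,X) \le 2^n\}$.

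First I would substitute $Y=X$ in the two-sided bound of Theorem \ref{Theorem 32.1}. The lower bound immediately collapses to
$$
N_\eps(B_X,X) \ge \eps^{-d}\,\frac{vol(B_X)}{vol(B_X)} = \eps^{-d}.
$$
For the upper bound I would use the elementary Minkowski-sum identity $\lambda B_X \oplus \mu B_X = (\lambda+\mu)B_X$ valid for any $\lambda,\mu\ge 0$ (which holds because $B_X$ is convex and contains the origin). Applied to $\lambda=2/\eps$ and $\mu=1$ this gives
$$
vol\bigl(B_X(0,2/\eps)\oplus B_X\bigr) = (1+2/\eps)^d\,vol(B_X),
$$
so Theorem \ref{Theorem 32.1} yields $N_\eps(B_X,X) \le (1+2/\eps)^d$.

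To derive the claim on the entropy numbers, I would set $\eps := 3\cdot 2^{-n/d}$ and split into two cases. If $2^{n/d}\ge 3$, then $2/\eps = \tfrac{2}{3}2^{n/d}$ and hence $1+2/\eps \le 2^{n/d}$, so the upper bound above gives $N_\eps(B_X,X) \le 2^n$, which by the definition of $\e_n$ yields $\e_n(B_X,X) \le \eps = 3\cdot 2^{-n/d}$. If instead $2^{n/d}<3$, the stated bound $3\cdot 2^{-n/d} > 1$ is trivially satisfied since $\e_n(B_X,X)\le 1$ (the single ball $B_X$ covers itself). Combining both cases yields $\e_n(B_X,X)\le 3\cdot 2^{-n/d}$.

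The proof is essentially routine once Theorem \ref{Theorem 32.1} is in hand; the only subtle point is the identity $\lambda B_X\oplus \mu B_X=(\lambda+\mu)B_X$, which requires convexity (not merely central symmetry) of $B_X$. This is the one place to be careful, but for a unit ball of a Banach space it is automatic.
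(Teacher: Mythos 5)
Your proof is correct and matches the paper's intent: the paper presents this as an ``immediate corollary'' of Theorem \ref{Theorem 32.1} with no further argument, and your computation (substituting $Y=X$, using the Minkowski-sum identity $(2/\eps)B_X\oplus B_X=(1+2/\eps)B_X$, and choosing $\eps=3\cdot 2^{-n/d}$ with the two-case check on whether $2^{n/d}\ge 3$) is exactly the routine derivation the authors have in mind.
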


 Let us consider some typical $d$-dimensional Banach spaces. These are the
spaces $\ell^d_p$: the linear space ${\mathbb R}^d$ equipped with the norms
$$
|x|_p:=\|x\|_{\ell^d_p}:= (\sum_{j=1}^d|x_j|^p)^{1/p},\quad 1\le p<\infty,
$$
$$
|x|_\infty:=\|x\|_{\ell^d_\infty}:= \max_{j}|x_j|.
$$
Denote $B^d_p := B_{\ell^d_p}$.
\begin{thm}\label{Theorem 32.2} Let $0<p<q\leq \infty$. Then we have the
following bounds for the entropy numbers $\e_n(B_p^d,\ell_q^d)$
$$\e_n(B_p^d,\ell_q^d) \asymp 
\begin{cases}
1,&1\leq n \leq \log_2 d;\\
\left[\frac{\log(\frac{d}{n}+1)}{n}\right]^{\frac{1}{p}-\frac{1}{q}},&\log_2
d\leq n \leq d;\\
d^{\frac{1}{q}-\frac{1}{p}}2^{-n/d},&d\leq n.
\end{cases} $$
\end{thm}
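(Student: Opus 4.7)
The plan is to split the range of $n$ into the three regimes given in the statement and handle each by a different technique, all built on Theorem~\ref{Theorem 32.1} together with the classical asymptotics $\mathrm{vol}(B_p^d)^{1/d} \asymp d^{-1/p}$ (which follows from Stirling's formula).

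For the large-$n$ regime $n \ge d$ I would use Theorem~\ref{Theorem 32.1} in both directions. The lower bound of Theorem~\ref{Theorem 32.1} together with $\mathrm{vol}(B_p^d)^{1/d} \asymp d^{-1/p}$ gives
$\e_n(B_p^d,\ell_q^d)\gtrsim d^{1/q-1/p}2^{-n/d}$ immediately. For the matching upper bound I would apply the Brunn--Minkowski inequality to $(2/\e)B_p^d\oplus B_q^d$ inside the upper half of Theorem~\ref{Theorem 32.1}, which yields $N_\e(B_p^d,\ell_q^d)\lesssim \bigl(1+d^{1/q-1/p}/\e\bigr)^d$. Choosing $\e\asymp d^{1/q-1/p}2^{-n/d}$, which in this regime is $\lesssim d^{1/q-1/p}$, then gives $N_\e\leq 2^n$, as required.

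For the small-$n$ regime $1\le n\le \log_2 d$ the upper bound is essentially trivial: since $p<q$ we have $B_p^d\subset B_q^d$, so $\e_n\le 1$. For the matching lower bound I would test against the $d$ unit coordinate vectors $e_1,\ldots,e_d\in B_p^d$, whose pairwise $\ell_q^d$-distance is $2^{1/q}$; hence any $\ell_q^d$-ball of radius $<2^{1/q-1}$ covers at most one of them, giving $N_\e(B_p^d,\ell_q^d)\ge d$ for such $\e$, so $\e_n\asymp 1$ whenever $2^n<d$.

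The main and hardest case is the intermediate regime $\log_2 d\le n\le d$. For the upper bound I would construct, for a parameter $k$ to be chosen with $\log d\lesssim k\lesssim d$, an explicit net consisting of vectors supported on at most $k$ of the $d$ coordinates whose nonzero entries lie on a uniform grid in $[-1,1]$ of step $\asymp [\log(d/k+1)/k]^{1/p}$; balancing the sparsification error and the discretization error in $\ell_q^d$ then gives an approximation error $\e\asymp [\log(d/k+1)/k]^{1/p-1/q}$, while a direct count using $\binom{d}{k}\le (ed/k)^k$ controls the cardinality of the net by $2^{Ck\log(d/k+1)}$. Solving $n=Ck\log(d/k+1)$ for $k$ yields exactly the bound $\e_n\lesssim [\log(d/n+1)/n]^{1/p-1/q}$. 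For the matching lower bound I would project to an arbitrary $k$-dimensional coordinate subspace (with $k\asymp n/\log(ed/n)$) and use the volumetric estimate from the $n\ge d$ regime on that subspace, combined with a counting argument over the $\binom{d}{k}$ coordinate subspaces, to force any covering of $B_p^d$ by $2^n$ balls to have $\e\gtrsim [\log(d/n+1)/n]^{1/p-1/q}$.

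The principal obstacle is clearly the intermediate regime: the upper bound requires the correct sparse-net construction and, crucially, the right calibration between the sparsity level $k$ and the desired cardinality $2^n$, while the matching lower bound rests on a nontrivial reduction from the full $d$-dimensional problem to the volume-comparison bound on a well-chosen coordinate subspace of dimension $k\asymp n/\log(ed/n)$. Once this balancing is done, the constants in the $\asymp$ relations in all three regimes glue together consistently at the transition points $n\asymp\log d$ and $n\asymp d$.
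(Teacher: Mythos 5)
The paper does not actually prove Theorem~\ref{Theorem 32.2}; it states the result and cites Sch\"utt \cite{Schu}, K\"uhn \cite{Ku01}, Gu\'edon--Litvak \cite{GuLi00}, Edmunds--Netrusov \cite{EdNe98} and Edmunds--Triebel \cite{ET}. Your three-regime split is the standard route found in those sources, and the small-$n$ argument is fine, but two of your steps have genuine problems.

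In the regime $n\ge d$ you cannot obtain the volume \emph{upper} bound from Brunn--Minkowski: that inequality asserts $\mathrm{vol}(A\oplus B)^{1/d}\ge\mathrm{vol}(A)^{1/d}+\mathrm{vol}(B)^{1/d}$, a \emph{lower} bound on the Minkowski sum, which is useless in the numerator of the upper half of Theorem~\ref{Theorem 32.1}. What you need instead is the H\"older containment $B_q^d\subset d^{1/p-1/q}B_p^d$, which gives $(2/\eps)B_p^d\oplus B_q^d\subset\bigl(2/\eps+d^{1/p-1/q}\bigr)B_p^d$; together with $\mathrm{vol}(B_p^d)/\mathrm{vol}(B_q^d)\asymp\bigl(d^{1/q-1/p}\bigr)^d$ this yields $N_\eps\lesssim\bigl(1+d^{1/q-1/p}/\eps\bigr)^d$. (A direct count of $\delta\Z^d\cap B_p^d$ with $\delta\asymp\eps\,d^{-1/q}$ is an equally short substitute.)

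More seriously, the middle-regime lower bound you describe does not go through. Projecting $B_p^d$ onto one $k$-dimensional coordinate subspace and invoking the $k$-dimensional volume bound with the full budget $2^n$ gives only $\eps\gtrsim k^{1/q-1/p}\,2^{-n/k}$; with $k\asymp n/\log(ed/n)$ this equals $k^{1/q-1/p}\cdot n/(ed)$, off from the target by the factor $n/d\ll1$. ``Counting over the $\binom{d}{k}$ coordinate subspaces'' cannot repair this, since all these copies of $B_p^k$ lie inside the same $B_p^d$ and are covered simultaneously by the same $2^n$ balls. The argument that works (as in \cite{EdNe98,Ku01,GuLi00}) is a combinatorial separated set: take the normalized indicator vectors $k^{-1/p}\mathbf 1_S$ over $k$-subsets $S$, note that $\|k^{-1/p}\mathbf 1_S-k^{-1/p}\mathbf 1_T\|_q\ge k^{1/q-1/p}$ once $|S\triangle T|\ge k$, and extract by a Gilbert--Varshamov count a subfamily of cardinality $2^{c\,k\log(d/k+1)}$ that is pairwise $k^{1/q-1/p}$-separated in $\ell_q^d$; this forces $\e_n\gtrsim k^{1/q-1/p}$ for $n\asymp k\log(d/k+1)$, which is the claimed rate. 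A smaller issue on the upper-bound side: with sparsity $k$ and a Stechkin truncation error $\asymp k^{-(1/p-1/q)}$ the grid step should be $\delta\asymp k^{-1/p}$ (not $[\log(d/k+1)/k]^{1/p}$), and you must count only grid points inside $B_p^k$, giving $\asymp(k^{-1/p}/\delta)^k=2^{O(k)}$ per support and $\binom{d}{k}2^{O(k)}=2^{O(k\log(d/k+1))}$ centers in total.
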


Theorem \ref{Theorem 32.2} has been obtained by Sch\"utt \cite{Schu} for the
Banach-space range of parameters. He considered the more general situation of
symmetric Banach spaces, see also \cite{Ho} and \cite{Mai}. For the quasi-Banach
situation we refer to \cite{ET} and finally to  K\"uhn \cite{Ku01}, Gu\'edon, Litvak \cite{GuLi00} and Edmunds, Netrusov \cite[Thm.\ 2]{EdNe98}, where the lower bound
in the middle case is provided.

We define by $\sigma(x)$ the normalized $(d-1)$-dimensional measure on the
sphere $S^{d-1}$ -- the boundary of $B^d_2$. 
 
\begin{thm}\label{Theorem 32.3} Let $X$ be $\bR^d$ equipped with $\|\cdot\|$ and
$$
M_X:= \int_{S^{d-1}}\|x\|d\sigma(x).
$$
Then we have
$$
 \e_n(B^d_2,X) \lesssim M_X \left\{\begin{array}{ll}   (d/n)^{1/2}, & n\le d\\
 2^{-n/d} , &  n\ge d.\end{array} \right.
$$
\end{thm}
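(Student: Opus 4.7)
The two cases correspond to two different regimes that I would handle by separate arguments and then patch at the threshold $n=d$. For $n\le d$ the target is the dual Sudakov inequality of Pajor--Tomczak-Jaegermann, while for $n\ge d$ the bound will follow by concatenating that covering with the volume estimate of Corollary \ref{Corollary 32.1}. As a preliminary step I would relate $M_X$ to the Gaussian $\ell$-norm $\ell^{*}(B_X):=\mathbb{E}\,\|g\|_X$ for $g\sim N(0,I_d)$: writing $g=|g|_2\,\omega$ with $|g|_2$ and $\omega=g/|g|_2$ independent and $\omega$ uniformly distributed on $S^{d-1}$, and using the standard estimate $\mathbb{E}|g|_2\asymp\sqrt{d}$, one obtains $\ell^{*}(B_X)\asymp\sqrt{d}\,M_X$.

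For the range $n\le d$ the plan is to prove the dual Sudakov-type estimate
\[
\log N_{\varepsilon}(B_2^d,X)\le c\,(\ell^{*}(B_X)/\varepsilon)^{2},\qquad \varepsilon>0,
\]
from which inverting $\log N_{\varepsilon}=n$ yields $\e_n(B_2^d,X)\lesssim \ell^{*}(B_X)/\sqrt{n}\lesssim M_X\sqrt{d/n}$. The key step, and the main obstacle, is a Gaussian packing argument: I would fix a maximal $\varepsilon$-separated set $\{x_1,\dots,x_N\}\subset B_2^d$ in the norm $\|\cdot\|_X$, so that the translates $x_i+(\varepsilon/2)B_X$ are pairwise disjoint, and exploit $\sum_i \gamma^{t}_d(x_i+(\varepsilon/2)B_X)\le 1$ under the rescaled Gaussian measure $\gamma^{t}_d$ of variance $t^2$. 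The Ehrhard shift inequality for symmetric convex $A$,
\[
\gamma^{t}_d(x+A)\ge e^{-|x|_2^{2}/(2t^{2})}\,\gamma^{t}_d(A),
\]
combined with $|x_i|_2\le 1$, yields $N e^{-1/(2t^2)}\gamma^{t}_d((\varepsilon/2)B_X)\le 1$. Choosing $t\asymp \varepsilon/\ell^{*}(B_X)$ makes $\gamma^{t}_d((\varepsilon/2)B_X)\ge 1/2$ by Markov's inequality applied to $\|g\|_X$, and rearranging gives $\log N\lesssim (\ell^{*}(B_X))^{2}/\varepsilon^{2}$ as required. The only nontrivial trick is the rescaling of the Gaussian so that Markov's inequality becomes effective on $B_X$ at the right scale.

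For the range $n\ge d$ I would simply concatenate two coverings. Apply the case $n\le d$ at the threshold $n=d$ to obtain $2^d$ translates of $cM_X\cdot B_X$ whose union covers $B_2^d$. Each such translate is a copy of $cM_X\cdot B_X$, so Corollary \ref{Corollary 32.1} applied inside the $d$-dimensional Banach space $X$ gives $\e_{n-d}(cM_X\cdot B_X,X)\le 3cM_X\cdot 2^{-(n-d)/d}\lesssim M_X\cdot 2^{-n/d}$. Composing the two covers produces a covering of $B_2^d$ by $2^n$ translates of $B_X$-balls of radius $\lesssim M_X\cdot 2^{-n/d}$, which finishes the proof.
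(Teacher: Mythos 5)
Your argument is correct and is precisely the dual Sudakov proof of Pajor and Tomczak-Jaegermann that the paper cites as \cite{PaJa86} but does not reproduce: the polar-decomposition identity $\ell^{*}(B_X)\asymp\sqrt{d}\,M_X$, the maximal-packing plus Gaussian-shift bound giving $\log N_{\varepsilon}(B_2^d,X)\lesssim\bigl(\ell^{*}(B_X)/\varepsilon\bigr)^{2}$, and the concatenation with Corollary~\ref{Corollary 32.1} for $n\ge d$ are all sound. A small attribution quibble only: the shift bound $\gamma^{t}_d(x+A)\ge e^{-|x|_2^{2}/(2t^{2})}\gamma^{t}_d(A)$ for symmetric convex $A$ follows by completing the square and using $\int_A\cosh\langle u,x\rangle\,d\gamma^{t}_d(u)\ge\gamma^{t}_d(A)$; it is the elementary Gaussian shift lemma rather than Ehrhard's inequality, but the inequality itself is correct and the argument goes through.
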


 Theorem \ref{Theorem 32.3} is a dual version of
the corresponding result from \cite{Su}. Theorem \ref{Theorem 32.3} was proved
in \cite{PaJa86}. 

There are several general results  which give 
lower estimates of the Kolmogorov widths $d_n(F,X)$ in terms of the entropy
numbers $\e_k(F,X)$. 

\begin{thm}\label{thm:carl} For any $r>0$ we have 
\begin{equation}\label{1.3}
\max_{1\le k \le n} k^{r} \e_k(F,X) \le C(r) \max _{1\le m \le n} m^{r}
d_{m-1}(F,X).
\end{equation}
\end{thm}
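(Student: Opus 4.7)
The goal is to prove Carl's inequality. Set $D := \max_{1 \le m \le n} m^r d_{m-1}(F,X)$; it suffices to show that for every $k$ with $1 \le k \le n$ one has $\e_k(F,X) \le C(r)\,D\,k^{-r}$. The strategy is the classical one: use the Kolmogorov widths to approximate $F$ at many dyadic scales by finite-dimensional affine sets, and on each such finite-dimensional set apply the volumetric covering bound of Corollary \ref{Corollary 32.1} to produce an efficient $\e$-net. Summing the log-cardinalities geometrically across scales and balancing radii should yield the bound.

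\textbf{Step 1: Dyadic approximating subspaces.} For each $j \ge 0$ with $2^j \le n$, the definition of $d_{2^j-1}(F,X)$ produces a subspace $L_j \subset X$ with $\dim L_j \le 2^j-1$ and a (near-best) map $P_j : F \to L_j$ satisfying
\[
\|f - P_j f\|_X \;\le\; 2\,d_{2^j-1}(F,X) \;\le\; 2^{1+r}\,D\,2^{-jr}, \qquad f \in F.
\]
Choose $J$ with $2^J \le k < 2^{J+1}$ and telescope
\[
f \;=\; P_0 f \;+\; \sum_{j=0}^{J-1}\bigl(P_{j+1}f - P_j f\bigr) \;+\; \bigl(f - P_J f\bigr).
\]
The tail satisfies $\|f - P_J f\|_X \le 2^{1+r}D\,2^{-Jr}$. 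The $j$-th difference $P_{j+1}f - P_j f$ lies in the subspace $M_j := L_{j+1}+L_j$ of dimension at most $2^{j+2}$ and has norm at most $R_j := 3 \cdot 2^{1+r}\,D\,2^{-jr}$; likewise $P_0 f \in L_0$ has norm $\le \|f\|_X + 2^{1+r}D$.

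\textbf{Step 2: Cover each scale by volume.} Corollary \ref{Corollary 32.1} tells us that the ball of radius $R_j$ in the finite-dimensional normed space $M_j$ is covered by $(1+2R_j/\eps_j)^{2^{j+2}}$ balls of radius $\eps_j$. Select the radii
\[
\eps_j \;:=\; c_r\,D\,2^{-jr}\,2^{-(J-j)},\qquad 0 \le j \le J-1,
\]
with $c_r$ a constant depending only on $r$. Then $2R_j/\eps_j \asymp 2^{J-j}$, so the number of covering balls at level $j$ is bounded by $2^{C(J-j+1)\,2^{j+2}}$, i.e. the contribution to the total log-cardinality is at most $C(J-j+1)\,2^{j+2}$. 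Summing the geometric-like series $\sum_{j=0}^{J-1}(J-j+1)\,2^{j+2}$ gives a quantity $\lesssim 2^J \le k$, and a similar argument bounds the cardinality needed for $P_0 f$. Taking products of these covers over $j$ yields at most $2^k$ joint centers in $X$, after possibly adjusting $c_r$.

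\textbf{Step 3: Total error and conclusion.} Each $f \in F$ is approximated in $X$ by the sum of the nearest centers at each level, with error at most
\[
\sum_{j=0}^{J-1} \eps_j \;+\; \|f - P_J f\|_X \;\le\; c_r D \sum_{j=0}^{J-1} 2^{-jr}\,2^{-(J-j)} \;+\; 2^{1+r}\,D\,2^{-Jr} \;\le\; C(r)\,D\,2^{-Jr} \;\le\; C'(r)\,D\,k^{-r},
\]
using $2^J \asymp k$ and convergence of the geometric sum in $j$. Hence $\e_k(F,X) \le C'(r)\,D\,k^{-r}$, which is the claim.

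\textbf{Main obstacle.} The essential difficulty is the bookkeeping in Step 2: one must choose the scales $\eps_j$ so that the dimensional factors $2^{j+2}$ (which grow) are tamed by the logarithmic factors $\log(1 + R_j/\eps_j) \sim (J-j)$ (which shrink), in such a way that the total log-cardinality does not exceed $k$ while the aggregated error $\sum_j \eps_j$ remains of order $D k^{-r}$. The dependence of $C(r)$ on $r$ is absorbed by the convergence of $\sum_j 2^{-jr}$, which requires $r > 0$; the argument breaks down without a strictly positive smoothness exponent.
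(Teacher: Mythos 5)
The paper itself does not prove Theorem~\ref{thm:carl} --- it simply cites Carl \cite{C} (and Carl--Stephani \cite{CS}) --- so there is no internal proof to compare against. Your strategy (dyadic telescoping through near-optimal Kolmogorov subspaces, then a volumetric $\eps$-net on each finite-dimensional slice via Corollary~\ref{Corollary 32.1}) is indeed the standard route to Carl's inequality, so the architecture is right. But the bookkeeping in Steps 2--3 has a genuine gap, and it is not cosmetic.

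The problem is the choice $\eps_j = c_r\,D\,2^{-jr}\,2^{-(J-j)}$. Write this as $\eps_j = c_r D\,2^{-J}\,2^{j(1-r)}$. For the approximation error you assert that
\[
\sum_{j=0}^{J-1}\eps_j \;=\; c_r D\,2^{-J}\sum_{j=0}^{J-1}2^{j(1-r)} \;\lesssim_r\; D\,2^{-Jr},
\]
which is true only when $r<1$. When $r>1$ the inner sum converges to a constant, so the left side is $\asymp_r D\,2^{-J}$, whereas the right side $D\,2^{-Jr}$ is strictly smaller by a factor $2^{-J(r-1)}\to 0$; the inequality therefore fails. (For $r=1$ you pick up an extra factor $J\asymp\log k$.) The fix is to let the dyadic weight decay faster in $J-j$: set $\eps_j = c_r D\,2^{-jr}\,2^{-(J-j)\beta}$ with any fixed $\beta>r$ (say $\beta=r+1$). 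Then $\sum_j\eps_j = c_r D\,2^{-J\beta}\sum_j 2^{j(\beta-r)}\lesssim_r D\,2^{-Jr}$ for every $r>0$, and the covering counts $\log_2(1+2R_j/\eps_j)\lesssim_r (J-j)\beta + \text{const}$ still give $\sum_j 2^{j+2}\log_2(1+2R_j/\eps_j)\lesssim_r 2^J$.

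A second, smaller issue: even with this fix, the total log-cardinality is $\lesssim_r 2^J$ with an $r$-dependent constant bigger than $1$, so taking $2^J\le k<2^{J+1}$ gives at most $2^{C(r)k}$ centers, not $2^k$. You cannot repair this by ``adjusting $c_r$'' alone; you must choose $J$ so that $2^J\le k/C(r)$, which costs an extra $C(r)^r$ in the final constant --- harmless since the theorem allows $C(r)$, but the argument as written conflates those two knobs. With both corrections, the proof goes through; as it stands, it proves the statement only for $r<1$, and only after the $J$-rescaling is made explicit.
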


\noindent Theorem \ref{thm:carl} is due to Carl \cite{C}, which is\index{Inequality!Carl}
actually stated for general $s-$numbers  and unit balls $F$ stemming from a norm, see also Carl, Stephani \cite{CS}. Recently, 
Hinrichs et al. \cite{HiKoVy15} proved a version for quasi-Banach spaces (non-convex $F$), which is particularly interesting for Gelfand numbers
$c_m$ on the right-hand side, see Subsection \ref{sect:snumbers} below.

Let us introduce a nonlinear Kolmogorov's $(N,m)$-width:
$$
d_m(F,X,N) :=  \inf_{\L_N, \#\L_N \le N} \sup_{f\in F} \inf_{L\in \L_N}
\inf_{g\in L} \|f-g\|_X,
$$
where $\L_N$ is a set of at most $N$ $m$-dimensional subspaces $L$. It is clear
that
$$
d_m(F,X,1) = d_m(F,X).
$$
The new feature of $d_m(F,X,N)$ is  that we allow to choose a subspace 
$L\in \L_N$ depending on $f\in F$. It is clear that the bigger $N$ the more
flexibility we have 
to approximate $f$. The following inequality from \cite{T1} (see also
\cite{Tbook}, Section 3.5) 
is a generalization of inequality (\ref{1.3}).   

\begin{thm}\label{thm:gen_carl} Let $r>0$. Then 
 \begin{equation}\label{1.4}
   \max_{1\le k \le n} k^r \e_k(F,X) \le C(r,K) \max _{1\le m \le n} 
   m^r d_{m-1}(F,X,K^m),
  \end{equation}
where we denote
$$
d_0(F,X,N) := \sup_{f\in F}\|f\|_X .
$$
\end{thm}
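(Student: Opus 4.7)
\textbf{Proof plan for Theorem \ref{thm:gen_carl}.} The strategy is to imitate the standard proof of Carl's inequality (Theorem~\ref{thm:carl}), but at each dyadic level pay an extra factor $K^{m}$ to account for the fact that now the ``best'' $m$-dimensional subspace is chosen from a family of $K^m$ candidates. The payoff is that this extra factor only rescales the base of the exponential in the volumetric covering estimate (Corollary~\ref{Corollary 32.1}), so it is absorbed into the constant $C(r,K)$ without affecting the power of $k$.

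First I would normalize. Set $\sigma := \max_{1\le m\le n} m^r d_{m-1}(F,X,K^m)$; assuming $\sigma<\infty$ we may rescale so that $\sigma=1$. The case $m=1$ then gives $\sup_{f\in F}\|f\|_X \le 1$, i.e.\ $F\subset B_X$. Next, for each $j\ge 0$ with $m_j:=2^j\le n$, apply the hypothesis with $m=m_j$: there exist families $\L_j$ of $|\L_j|\le K^{m_j}$ subspaces of dimension at most $m_j-1$, together with selectors $f\mapsto (L_j(f),h_j(f))$, $L_j(f)\in\L_j$, $h_j(f)\in L_j(f)$, such that $\|f-h_j(f)\|_X \le 2^{-jr}$. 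Take $h_0(f):=0$ at level $j=0$. Writing the telescoping identity $h_J(f)=\sum_{j=0}^{J-1}(h_{j+1}(f)-h_j(f))$, each increment lies in the sum-subspace $L_j(f)+L_{j+1}(f)$, which has dimension at most $m_j+m_{j+1}=3\cdot 2^j$ and norm at most $2\cdot 2^{-jr}$ by the triangle inequality.

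The key quantitative step is the covering at level $j$. As $(L_j(f),L_{j+1}(f))$ ranges over $\L_j\times\L_{j+1}$, there are at most $K^{3\cdot 2^j}$ candidate sum-subspaces, each of dimension at most $3\cdot 2^j$. For each such subspace $V$, Corollary~\ref{Corollary 32.1} yields an $\eta_j$-net in $X$ for the ball $B_V(0,2\cdot 2^{-jr})$ of cardinality at most $(1+4\cdot 2^{-jr}/\eta_j)^{3\cdot 2^j}$. Choosing $\eta_j:=2^{-(j+1)r}$ this is at most $C_0(r)^{3\cdot 2^j}$ with $C_0(r)=1+4\cdot 2^r$. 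Pooling over the $K^{3\cdot 2^j}$ subspaces produces a set $\mathcal S_j\subset X$ with $|\mathcal S_j|\le (C_0(r)K)^{3\cdot 2^j}$ containing, for every $f\in F$, some $s_j(f)$ with $\|(h_{j+1}(f)-h_j(f))-s_j(f)\|_X\le 2^{-(j+1)r}$. Setting $\tilde h_J(f):=\sum_{j=0}^{J-1}s_j(f)$, the triangle inequality and $\|f-h_J(f)\|_X\le 2^{-Jr}$ give $\|f-\tilde h_J(f)\|_X \le C_1(r)\cdot 2^{-Jr}$, while
\[
\#\{\tilde h_J(f):f\in F\} \;\le\; \prod_{j=0}^{J-1}|\mathcal S_j| \;\le\; (C_0(r)K)^{3\sum_{j=0}^{J-1}2^j} \;\le\; (C_0(r)K)^{3\cdot 2^J}.
\]
Taking $k_J:=\lceil 3\log_2(C_0(r)K)\cdot 2^J\rceil$, this yields $\e_{k_J}(F,X)\le C_1(r)\cdot 2^{-Jr}$, hence $k_J^{\,r}\e_{k_J}(F,X)\le C(r,K)$. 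For an arbitrary $1\le k\le n$, sandwich $k_J\le k<k_{J+1}$ and use monotonicity of entropy numbers together with $k_{J+1}\le 2k_J$ to extend the bound to all $k$, at the cost of a factor $2^r$ in the constant. After undoing the normalization one recovers \eqref{1.4}.

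The main obstacle, as I see it, is the bookkeeping in the covering step: one has to notice that the nonlinearity contributes an additional $K^{m_j}$ factor to the cardinality of the $j$-th net, and to verify that this factor merges with the geometric factor $C_0(r)^{3\cdot 2^j}$ into a single exponential of the form $(C_0(r)K)^{3\cdot 2^j}$, so that only the base (and hence the constant $C(r,K)$) is affected, not the exponent $r$ in $k^r\e_k$. Everything else is a routine adaptation of Carl's dyadic telescoping argument.
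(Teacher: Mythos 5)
The strategy is sound — imitating the standard dyadic-telescoping proof of Carl's inequality, paying an extra $K^{3\cdot 2^j}$ in the count of candidate sum-subspaces at level $j$, and absorbing that factor into the base of the exponential — but your choice of net resolution $\eta_j$ makes the accumulated error fail to decay, so the central estimate $\|f-\tilde h_J(f)\|_X\le C_1(r)\,2^{-Jr}$ is false as written. You approximate each increment $h_{j+1}(f)-h_j(f)$ to within $\eta_j=2^{-(j+1)r}$, so the triangle inequality actually gives
\begin{equation*}
\|f-\tilde h_J(f)\|_X \;\le\; 2^{-Jr}+\sum_{j=0}^{J-1}\eta_j \;=\; 2^{-Jr}+\sum_{j=0}^{J-1}2^{-(j+1)r},
\end{equation*}
and the geometric sum on the right converges to the \emph{positive constant} $2^{-r}/(1-2^{-r})$ rather than tending to $0$ as $J\to\infty$. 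Thus $\e_{k_J}(F,X)$ is only shown to be bounded by a fixed constant, not by $C_1(r)\,2^{-Jr}$, and the conclusion $k_J^{\,r}\,\e_{k_J}(F,X)\le C(r,K)$ collapses.

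The fix is to let the net resolution at level $j$ depend on the target level $J$: one must take $\eta_j$ small enough that $\sum_{j<J}\eta_j\lesssim 2^{-Jr}$, e.g.\ $\eta_j\asymp 2^{-Jr}(J-j+1)^{-2}$ or $\eta_j\asymp 2^{-Jr}\,2^{-\alpha(J-j)}$ for some $\alpha>0$. This makes the $j$-th covering finer relative to the increment size by a factor $\asymp 2^{(J-j)r}$, so the cardinality of $\mathcal S_j$ grows to roughly $(C(r)K)^{(J-j+1)\cdot 3\cdot 2^j}$; but because the dimension at level $j$ is only $O(2^j)$, the logarithm of the total count is $\sum_{j<J}O(2^j)\bigl((J-j)r+\log K\bigr)$, which by the change of index $i=J-j$ is $O(2^J)\sum_i 2^{-i}(ir+\log K)=O_{r,K}(2^J)$. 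So the resulting $k_J$ is still $\asymp_{r,K}2^J$, the error is now genuinely $\lesssim 2^{-Jr}$, and the rest of your argument (sandwiching a general $k$ between consecutive $k_J$'s and undoing the normalization) goes through. This balancing — trading a finer covering at small $j$ against the small dimension $2^j$ — is the essential mechanism in Carl's proof; a $J$-independent $\eta_j$ cannot reproduce it.
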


The possibility of replacing $K^m$ by $(Kn/m)^m$ in (\ref{1.4}) was discussed in
\cite{T1}, in Section 3.5 of \cite{Tbook}, and in \cite{Tappr}.  The
corresponding remark is from \cite{Tappr}.
\begin{rem}\label{R1.1c} Examining the proof of (\ref{1.4}) one can check that
the following inequality holds
$$
n^r \e_n(F,X) \le C(r,K) \max_{1\le m \le n} 
m^r d_{m-1}(F,X,(Kn/m)^m).
$$
\end{rem} 
Finally, Theorem
\ref{T2.1} is from \cite{Tappr}.
\begin{thm}\label{T2.1} Let a compact $F\subset X$ and  a number $r>0$ be such
that for some
$n \in \N$
$$
  d_{m-1}(F,X,(Kn/m)^m) \le m^{-r},\quad m\le n.
$$
Then for $k\le n$
$$
\e_k(F,X) \le C(r,K) \left(\frac{\log(2n/k)}{k}\right)^r.
$$
\end{thm}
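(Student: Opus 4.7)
The plan is to build an explicit iterative covering of $F$ at dyadically shrinking scales, at each stage invoking the nonlinear approximation hypothesis together with the finite dimensional volume estimate in Corollary \ref{Corollary 32.1}.

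Assume, without loss of generality (by rescaling), that $F\subset B_X(0,1)$. Fix the dyadic scales $\eps_j := 2^{1-jr}$ and dimensions $m_j := 2^j$ for $j\geq 1$, so that $m_j^{-r} = \eps_j/2$. Start with the trivial cover $\mathcal{C}_0 = \{0\}$ of cardinality $M_0 = 1$ and radius $\eps_0 = 1$. I pass from $\mathcal{C}_j$ to $\mathcal{C}_{j+1}$ as follows: for each center $g\in \mathcal{C}_j$, apply the hypothesis at $m=m_{j+1}$ to obtain a family of $N_{j+1} := (Kn/m_{j+1})^{m_{j+1}}$ subspaces $L_1,\ldots,L_{N_{j+1}}$ of dimension $\leq m_{j+1}-1$ such that every $f\in F$ is within $m_{j+1}^{-r}$ of some $L_i$. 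For each pair $(g,L_i)$, the points of $L_i$ that can approximate an element of $F\cap B_X(g,\eps_j)$ lie in $L_i\cap B_X(g,\eps_j+\eps_{j+1}/2)$; by Corollary \ref{Corollary 32.1}, this finite dimensional set admits a covering by at most $C_0^{m_{j+1}}$ balls of radius $\eps_{j+1}/2$ in $X$, with $C_0$ an absolute constant (since $\eps_j/\eps_{j+1}=2^r$ is fixed). Combining with the $\eps_{j+1}/2$ approximation error gives an $\eps_{j+1}$-cover of $F\cap B_X(g,\eps_j)$, hence
\[
M_{j+1} \leq M_j\cdot (C_0\,Kn/m_{j+1})^{m_{j+1}}.
\]

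I terminate at the first index $J$ with $m_J \geq \lceil k/(2\log(2n/k))\rceil$; since $k\leq n$ one has $m_J\leq n$ throughout, so the hypothesis is applicable at every stage. Taking logarithms,
\[
\log_2 M_J \leq \sum_{j=1}^J m_j \log_2(C_0\,Kn/m_j).
\]
Because $m_j$ doubles at each step while $\log(Kn/m_j)$ decreases only sublinearly, the summands form a geometrically increasing sequence and the total is controlled up to a constant (depending on $r,K$) by its final term
\[
m_J\log_2(C_0\,Kn/m_J) \asymp \frac{k}{\log(2n/k)}\cdot \log(2n/k) = k.
\]
Thus $M_J \leq 2^{C(r,K)\,k}$, while the terminal radius satisfies $\eps_J \asymp m_J^{-r} \asymp (\log(2n/k)/k)^r$. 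Renaming $k\mapsto k/C(r,K)$ (which affects the argument of the logarithm only by a factor depending on $r$ and $K$) yields the claimed bound $\e_k(F,X) \leq C(r,K)\,(\log(2n/k)/k)^r$.

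\textbf{Main obstacle.} The technical crux is the bookkeeping of the sum $\sum_j m_j\log(C_0\,Kn/m_j)$: one must verify that, with the geometric choice $m_j=2^j$, this sum is indeed dominated (up to a factor depending only on $r$ and $K$) by its last term, and simultaneously that the terminal index $J$ produces a dimension $m_J\leq n$ matching the target scale $(\log(2n/k)/k)^r$. A conceptually important subtlety is that the $(Kn/m)^m$ subspaces delivered by the hypothesis approximate \emph{all of $F$} globally, not the local pieces $F\cap B_X(g,\eps_j)$ individually; consequently every ball in the current cover must be refined against the same global family of subspaces, and it is precisely this global structure that produces the clean multiplicative bound $M_{j+1}\leq M_j\cdot (C_0\,Kn/m_{j+1})^{m_{j+1}}$ rather than something worse depending on $M_j$.
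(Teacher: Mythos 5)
Your iterative scale-doubling cover is the right architecture for Carl-type entropy inequalities and, as far as one can infer (the survey only cites the proof to \cite{Tappr} without reproducing it), matches the intended route: refine at $m_{j+1}=2m_j$ using the nonlinear width hypothesis, cover the relevant slab of each delivered subspace via the finite-dimensional volume bound of Corollary \ref{Corollary 32.1}, telescope the cardinalities, and stop at the scale where $m_J\log(Kn/m_J)\asymp k$. The recursion $M_{j+1}\le M_j(C_0Kn/m_{j+1})^{m_{j+1}}$ and the evaluation $\sum_{j\le J}2^j(b-j)=2^{J+1}(b-J+1)-2(b+1)$ (with $b=\log_2(C_0Kn)$), which shows the sum is dominated by a bounded multiple of its last term, are both correct — though the summands are not literally ``geometrically increasing'': the ratio $a_{j+1}/a_j=2(b-j-1)/(b-j)$ can fall to $1$ near the top of the range. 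Your closing aside about the global family of subspaces being essential to the clean multiplicative recursion is a red herring: one would obtain the same count $M_{j+1}\le M_j\,N_{j+1}\,C_0^{m_{j+1}}$ even with a different family of $N_{j+1}$ subspaces per ball; the global hypothesis only supplies the uniform bound on $N_{j+1}$.

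There is, however, a genuine error in the termination: the claim ``since $k\le n$ one has $m_J\le n$ throughout'' is false. From $m_{J-1}<\lceil k/(2\log(2n/k))\rceil$ one only obtains $m_J<k/\log(2n/k)\le n/\log 2\approx 1.44\,n$, so when $k$ is close to $n$ the last refinement would invoke the hypothesis at an $m>n$ where it is not assumed. (Concretely, for $n=k=100$ one has $\lceil k/(2\log(2n/k))\rceil=72$, hence $m_{J-1}=64$ and $m_J=128>n$.) The repair is to stop as well whenever $m_{j+1}>n$; in that case $m_J\ge n/2$, so $\eps_J\le 2^{r+1}n^{-r}\le C(r)\,(\log(2n/k)/k)^r$ because $k\le n$, while $\log_2 M_J\lesssim n\log(2C_0K)\lesssim k$ because this regime only arises when $k>2n/e$. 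With this repair and the final renaming $k\mapsto k/C(r,K)$ — which should be acknowledged as load-bearing rather than cosmetic, and which requires $k\gtrsim C(r,K)$, the complementary small-$k$ range being dispatched trivially via $\e_k(F,X)\le\sup_{f\in F}\|f\|_X\le 1$ from the $m=1$ case of the hypothesis — your argument closes.
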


We discuss an application which motivated a study of $d_m(F,X,N)$ with
$N=(Kn/m)^m$. Let $\D=\{g_j\}_{j=1}^n$ be a system of normalized elements of
cardinality $|\D|=n$ in a Banach space $X$. Consider best $m$-term
approximations of $f$ with respect to $\D$
$$
\sigma_m(f,\D)_X:= \inf_{\{c_j\};\Lambda:|\Lambda|=m}\|f-\sum_{j\in
\Lambda}c_jg_j\|.
$$
For a function class $F$ set
$$
\sigma_m(F,\D)_X:=\sup_{f\in F}\sigma_m(f,\D)_X.
$$
Then it is clear that for any system $\D$, $|\D|=n$,
$$
d_m(F,X,\binom{n}{m})\le \sigma_m(F,\D)_X.
$$
Next,
$$
\binom{n}{m} \le (en/m)^m.
$$
Thus Theorem \ref{T2.1} implies the following theorem (see \cite{Tappr}).
\begin{thm}\label{T3.1} Let a compact $F\subset X$ be such that there exists a
normalized system $\D$, $|\D|=n$, and  a number $r>0$ such that 
$$
  \sigma_m(F,\D)_X \le m^{-r},\quad m\le n.
$$
Then for $k\le n$
\begin{equation}\nonumber
\e_k(F,X) \le C(r) \left(\frac{\log(2n/k)}{k}\right)^r.
\end{equation}
\end{thm}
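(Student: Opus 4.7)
The plan is to deduce Theorem \ref{T3.1} directly from Theorem \ref{T2.1} by translating the best $m$-term approximation hypothesis into a bound on the nonlinear Kolmogorov width $d_m(F,X,N)$ with $N=(Kn/m)^m$. The hint provided immediately before the statement already carries most of the proof: we need only combine it with Theorem \ref{T2.1} and resolve a small index shift.

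First, I would observe that any best $m$-term approximant $\sum_{j\in\Lambda}c_jg_j$ with $|\Lambda|=m$ lies in the $m$-dimensional subspace $L_\Lambda:=\Span\{g_j:j\in\Lambda\}$, and there are at most $\binom{n}{m}$ such subspaces. Taking $\mathcal L_{\binom{n}{m}}:=\{L_\Lambda:\Lambda\subset\{1,\dots,n\},\,|\Lambda|=m\}$ as an admissible collection in the definition of the nonlinear width yields
$$
d_m\bigl(F,X,\tbinom{n}{m}\bigr)\ \le\ \sigma_m(F,\D)_X\ \le\ m^{-r},\qquad m\le n.
$$

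Second, I would shift the index by one to match the form of Theorem \ref{T2.1}. For $2\le m\le n+1$ the same argument applied with $m-1$ in place of $m$ gives
$$
d_{m-1}\bigl(F,X,\tbinom{n}{m-1}\bigr)\ \le\ \sigma_{m-1}(F,\D)_X\ \le\ (m-1)^{-r}\ \le\ 2^{r}m^{-r}.
$$
Using the elementary estimate $\binom{n}{m-1}\le\binom{n+1}{m}\le\bigl(e(n+1)/m\bigr)^m\le (Kn/m)^m$ with $K:=2e$, together with the monotonicity of $d_{m-1}(F,X,\cdot)$ in the number of admissible subspaces, we obtain
$$
d_{m-1}\bigl(F,X,(Kn/m)^m\bigr)\ \le\ C(r)\,m^{-r},\qquad 2\le m\le n.
$$
For $m=1$ we only need $d_0(F,X,\cdot)=\sup_{f\in F}\|f\|_X$ to be finite, which holds since $F$ is compact; after rescaling we may assume this supremum is bounded by a constant, and this factor is absorbed into $C(r)$.

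Finally, applying Theorem \ref{T2.1} to the family of bounds just derived gives
$$
\e_k(F,X)\ \le\ C(r)\,\Bigl(\frac{\log(2n/k)}{k}\Bigr)^{r},\qquad k\le n,
$$
which is precisely the conclusion of Theorem \ref{T3.1}. There is no genuine obstacle here — the proof is a short reduction — but the one point that requires a moment of care is the index shift from $d_m$ (which is what $\sigma_m$ naturally controls) to $d_{m-1}$ (which is what Theorem \ref{T2.1} requires); this is handled by reindexing $m\mapsto m-1$ and paying a constant factor $2^r$, and by invoking compactness of $F$ to dispense with the trivial case $m=1$.
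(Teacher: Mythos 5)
Your proof is correct and follows the same path the paper itself indicates: reduce to Theorem~\ref{T2.1} via the bound $d_m(F,X,\binom{n}{m})\le\sigma_m(F,\D)_X$ and the estimate $\binom{n}{m}\le(en/m)^m$. You simply make explicit the index shift from $d_m$ to $d_{m-1}$ (costing a factor $2^r$ absorbed into the constant) and note the rescaling needed to match the hypotheses of Theorem~\ref{T2.1} exactly, which the paper leaves implicit.
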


Theorem \ref{T3.1} is useful in proving lower bounds for best $m$-term
approximations. Recently, it was shown in \cite{VT156} how Theorem \ref{T3.1} can be used for proving sharp upper bounds for the entropy numbers of classes of mixed smoothness. 

We now proceed to two multiplicative inequalities for the $L_p$ spaces. Let $D$
be a domain in $\bR^d$ and let $L_p:= L_p(D)$ denote the corresponding $L_p$
space, $1\le p\le \infty$, with respect to the Lebesgue measure. We note that
the inequalities below hold for any measure $\mu$ on $D$. 

\begin{thm}\label{T35.2} Let $A\subset L_1\cap L_\infty$. Then for any $1\le
p\le\infty$ we have $A\subset L_p$ and
$$
\e_{n+m}(A,L_p) \le 2\e_n(A,L_1)^{1/p}\e_m(A,L_\infty)^{1-1/p}.
$$
\end{thm}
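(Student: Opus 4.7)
The plan is to use the classical log-convexity of $L_p$ norms (Lyapunov-type interpolation): for any measurable function $g \in L_1 \cap L_\infty$ and $1 \le p \le \infty$,
$$
\|g\|_p \le \|g\|_1^{1/p} \|g\|_\infty^{1-1/p},
$$
combined with a product-cover construction in the two extremes $L_1$ and $L_\infty$. Fix arbitrary $\eps_1 > \e_n(A,L_1)$ and $\eps_\infty > \e_m(A,L_\infty)$. By definition there exist centers $y^1,\dots,y^{2^n}\in L_1$ covering $A$ in $L_1$ by balls of radius $\eps_1$, and centers $z^1,\dots,z^{2^m}\in L_\infty$ covering $A$ in $L_\infty$ by balls of radius $\eps_\infty$.

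First, I would reduce to the case where the centers themselves lie in $A$ at the cost of a factor $2$: for each index $i$ for which the $L_1$-ball around $y^i$ meets $A$, pick $\tilde y^i \in A$ in that ball, and similarly define $\tilde z^j \in A$. Then the $\tilde y^i$ form a cover of $A$ by $L_1$-balls of radius $2\eps_1$, and analogously in $L_\infty$. Next, for each pair $(i,j)$ such that there exists $f \in A$ simultaneously satisfying $\|f-\tilde y^i\|_1 \le 2\eps_1$ and $\|f-\tilde z^j\|_\infty \le 2\eps_\infty$, select one such witness $w^{i,j} \in A$. The total number of witnesses is at most $2^n \cdot 2^m = 2^{n+m}$.

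Now for an arbitrary $f \in A$, pick indices $i,j$ with $\|f-\tilde y^i\|_1\le 2\eps_1$ and $\|f-\tilde z^j\|_\infty\le 2\eps_\infty$. The witness $w^{i,j}$ satisfies the same inequalities, so by the triangle inequality
$$
\|f - w^{i,j}\|_1 \le 4\eps_1, \qquad \|f - w^{i,j}\|_\infty \le 4\eps_\infty.
$$
Applying log-convexity to $g = f - w^{i,j}$ gives
$$
\|f - w^{i,j}\|_p \le (4\eps_1)^{1/p}(4\eps_\infty)^{1-1/p} = 4\,\eps_1^{1/p}\eps_\infty^{1-1/p}.
$$
Letting $\eps_1 \downarrow \e_n(A,L_1)$ and $\eps_\infty \downarrow \e_m(A,L_\infty)$ yields a bound with constant $4$. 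To sharpen this to the claimed constant $2$, I would redo the argument more carefully: rather than inflating both covers by a factor of $2$, one can use the original centers $y^i, z^j$ directly and, for each compatible pair, choose a witness $w^{i,j}\in A$ realizing $\|w^{i,j}-y^i\|_1\le \eps_1$ and $\|w^{i,j}-z^j\|_\infty\le \eps_\infty$; then $\|f-w^{i,j}\|_1\le 2\eps_1$ and $\|f-w^{i,j}\|_\infty\le 2\eps_\infty$, giving
$$
\|f-w^{i,j}\|_p \le (2\eps_1)^{1/p}(2\eps_\infty)^{1-1/p} = 2\,\eps_1^{1/p}\eps_\infty^{1-1/p},
$$
as desired, with at most $2^{n+m}$ centers $w^{i,j}\in A \subset L_p$.

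The only real obstacle is bookkeeping: making sure that the witnesses can be chosen as elements of (or close enough to) $A$ so that both the $L_1$- and $L_\infty$-estimates transfer simultaneously, and that the factor in front is exactly $2$ and not $4$. Once the product cover is in place, the interpolation inequality does all the analytic work, and the inclusion $A \subset L_p$ follows automatically from $\|f\|_p \le \|f\|_1^{1/p}\|f\|_\infty^{1-1/p} < \infty$ for each $f \in A$.
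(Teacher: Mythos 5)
Your proof is correct and is the standard argument for this multiplicative entropy inequality: a product cover indexed by compatible pairs of $L_1$- and $L_\infty$-centers, with the witnesses chosen inside $A$ so that the triangle inequality carries both the $L_1$- and $L_\infty$-bounds simultaneously, and Lyapunov's log-convexity of $\|\cdot\|_p$ finishing the estimate. Your final version with the constant $2$ (rather than $4$) is exactly the right refinement: replacing each center pair $(y^i,z^j)$ by a single witness $w^{i,j}\in A$ that is $\eps_1$-close to $y^i$ in $L_1$ and $\eps_\infty$-close to $z^j$ in $L_\infty$ costs only one triangle-inequality factor of $2$ in each norm, and since $A\subset L_1\cap L_\infty\subset L_p$ the witnesses lie in $L_p$ as required. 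The inclusion $A\subset L_p$ and the bound follow as you say by letting $\eps_1\downarrow\e_n(A,L_1)$ and $\eps_\infty\downarrow\e_m(A,L_\infty)$. The paper states the theorem without an accompanying proof, so there is nothing to contrast with, but your argument is the one typically given (cf.\ Temlyakov's monograph), and the intermediate ``factor $4$'' detour could simply be omitted.
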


It will be convenient for us to formulate one more inequality in terms of
entropy numbers of operators. Let $S$ be a linear operator from $X$ to $Y$. We
define the $n$th entropy number of $S$ as
$$
\e_n(S:X\to Y) := \e_n(S(B_X),Y)
$$
where $S(B_X)$ is the image of $B_X$ under mapping $S$. 
\begin{thm}\label{T35.3} For any $1\le p \le \infty$ and any Banach space $Y$ we
have
$$
\e_{n+m}(S:L_p\to Y)\le 2\e_n(S:L_1\to Y)^{1/p}\e_m(S:L_\infty\to Y)^{1-1/p}.
$$
\end{thm}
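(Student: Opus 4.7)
The plan is to reduce Theorem \ref{T35.3} to the classical truncation decomposition that underlies Theorem \ref{T35.2}, but now carried out one level up, at the level of unit balls rather than a fixed compact $A\subset L_1\cap L_\infty$. The key observation is that for every $f\in B_{L_p}$ and every threshold $\lambda>0$, the split
$$
f \;=\; g_\lambda + h_\lambda, \qquad g_\lambda:=f\mathbf 1_{\{|f|\le\lambda\}},\quad h_\lambda:=f\mathbf 1_{\{|f|>\lambda\}},
$$
places the two pieces into $L_\infty$ and $L_1$, respectively, with explicit norm control: $\|g_\lambda\|_\infty\le\lambda$ and
$$
\|h_\lambda\|_1 \;=\;\int_{\{|f|>\lambda\}}|f|\,d\mu \;\le\;\lambda^{1-p}\int|f|^p\,d\mu \;\le\;\lambda^{1-p}.
$$
Thus $g_\lambda/\lambda\in B_{L_\infty}$ and $h_\lambda/\lambda^{1-p}\in B_{L_1}$.

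Next I would invoke the definition of the entropy numbers of the operator to fix $2^n$ centers $y^1,\dots,y^{2^n}\in Y$ covering $S(B_{L_1})$ in $Y$ to within $\e_n(S:L_1\to Y)$ and $2^m$ centers $z^1,\dots,z^{2^m}\in Y$ covering $S(B_{L_\infty})$ to within $\e_m(S:L_\infty\to Y)$. By linearity of $S$ and the above scalings, there are indices $i=i(f),j=j(f)$ for which
$$
\|S(g_\lambda)-\lambda z^j\|_Y \le \lambda\,\e_m(S:L_\infty\to Y), \qquad \|S(h_\lambda)-\lambda^{1-p}y^i\|_Y \le \lambda^{1-p}\e_n(S:L_1\to Y).
$$
The $2^{n+m}$ combined centers $\{\lambda z^j+\lambda^{1-p}y^i\}_{i,j}\subset Y$ therefore cover $S(B_{L_p})$ to within radius
$$
R(\lambda) \;:=\; \lambda\,\e_m(S:L_\infty\to Y) \;+\; \lambda^{1-p}\,\e_n(S:L_1\to Y).
$$

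Finally, I optimize in $\lambda$ by balancing the two summands: the choice
$$
\lambda^p \;=\; \frac{\e_n(S:L_1\to Y)}{\e_m(S:L_\infty\to Y)}
$$
makes $\lambda\,\e_m(S:L_\infty\to Y)=\lambda^{1-p}\e_n(S:L_1\to Y)=\e_n(S:L_1\to Y)^{1/p}\e_m(S:L_\infty\to Y)^{1-1/p}$, yielding the asserted bound with constant $2$. The argument is essentially free of obstacles; the only care needed is the bookkeeping for the degenerate cases $p=1$, $p=\infty$ (where one of the pieces may be taken to be $0$), and for the situation when one of the two entropy numbers vanishes or is infinite, all of which are handled by the natural limiting conventions in the choice of $\lambda$. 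Note that the same proof in fact yields Theorem \ref{T35.2} upon specializing to $S=\mathrm{Id}$ restricted to $A\subset L_1\cap L_\infty$, so the two statements are really one argument.
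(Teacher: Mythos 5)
Your proof is correct and follows the standard approach for establishing such multiplicative inequalities for entropy numbers, which is precisely the approach the paper's cited literature uses. The truncation decomposition $f=g_\lambda+h_\lambda$ with the norm controls $\|g_\lambda\|_\infty\le\lambda$, $\|h_\lambda\|_1\le\lambda^{1-p}$ (valid by Chebyshev on the level set $\{|f|>\lambda\}$), the product covering by $\{\lambda z^j+\lambda^{1-p}y^i\}$, and the equalizing choice $\lambda^p=\e_n/\e_m$ are all exactly right, and they deliver the constant $2$ as required.

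Two very minor points of bookkeeping worth adding if you were to write this out in full. First, since the entropy numbers are defined as infima, you should pick $\e$-approximate optimal coverings $\{y^i\}$, $\{z^j\}$ realizing radii $\e_n(S:L_1\to Y)+\e$ and $\e_m(S:L_\infty\to Y)+\e$ respectively, run the argument with those, and pass $\e\to0$ at the end. Second, your remark that the same argument specializes to Theorem \ref{T35.2} (with $S=\mathrm{Id}$ and the unit balls replaced by the compact set $A$) is accurate, and in fact the paper states both results as a pair for exactly this reason; the only difference in that version is that you truncate the elements of $A$ rather than of the unit ball of $L_p$.
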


\subsection{Entropy numbers for $\bW$ classes in $L_q$} \index{Entropy number}

It is well known that in the univariate case   
\begin{equation}\label{36.1}
\e_n(W^r_{p},L_q)\asymp n^{-r}
\end{equation}
 holds for all $1\le p,q \le \infty$ and $r>(1/p-1/q)_+$. We note that condition
$r>(1/p-1/q)_+$ is a necessary and sufficient condition for a compact embedding
of $W^r_{p}$ into $L_q$. Thus (\ref{36.1}) provides a complete description of
the rate of $\e_n(W^r_{p},L_q)$ in the univariate case. We point out that
(\ref{36.1}) shows that the rate of decay of $\e_n(W^r_{p},L_q)$ depends only on
$r$ and does not depend on $p$ and $q$. In this sense the strongest upper bound
(for $r>1$) is $\e_n(W^r_{1},L_\infty) \lesssim n^{-r}$ and the strongest lower
bound is $\e_n(W^r_{\infty},L_1)\gtrsim n^{-r}$. 

There are different generalizations of classes $W^r_{p}$ to the case of
multivariate functions. In this section we only discuss classes $\bW^r_{p}$ of
functions with bounded mixed derivative and $\bB^r_{p,\theta}$ of functions with
bounded mixed difference. These classes are of special interest for several
reasons: 
\begin{itemize}
 \item[(A)] The problem of the rate of decay of $\e_n(\bW^r_{p},L_q)$ in a
particular case $r=1$, $p=2$, $q=\infty$ is equivalent (see \cite{KL}) to a
fundamental problem of probability theory (the small ball problem). Both of
these problems are still open for $d>2$.
 \item[(B)] The problem of the rate of decay of $\e_n(\bW^r_{p},L_q)$ and
$\e_n(\Brpt,L_q)$ turns out to be a very rich and difficult problem. There are
still many open problems. Those problems that have been resolved required
different nontrivial methods for different pairs $(p,q)$.
\end{itemize}
 
 We begin with classes $\Wrp$. 

\begin{thm}\label{thm:ent_pq} For $1<p,q<\infty$ and $r>(1/p-1/q)_+ $ one has
\begin{equation}\label{36.2}
\e_n(\bW^r_{p},L_q) \asymp n^{-r}(\log n)^{r (d-1)}.
\end{equation}
\end{thm}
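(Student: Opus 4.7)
The plan is to establish the upper and lower bounds separately.

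For the upper bound, I would first try Carl's inequality (Theorem \ref{thm:carl}) in conjunction with the known Kolmogorov widths from Theorem \ref{TBT4.4}. Writing $d_m(\bW^r_p,L_q) \lesssim (m^{-1}(\log m)^{d-1})^{\alpha}$ with $\alpha = r - (1/p - \max\{1/2,1/q\})_+$, Carl's inequality applied with exponent $\alpha$ delivers
\[
n^\alpha \, \e_n(\bW^r_p,L_q) \;\lesssim\; \max_{1 \le m \le n} m^\alpha \, d_{m-1}(\bW^r_p,L_q) \;\lesssim\; (\log n)^{\alpha(d-1)},
\]
which matches the target rate whenever $\alpha = r$, i.e., whenever $p \ge 2$ or $q \le p$. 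In the residual range (essentially $1 < p < 2 < q < \infty$ and related configurations) I would use a dyadic block decomposition: choose $N$ with $2^N N^{d-1} \asymp n$ and split $f = S_{Q_N}(f) + (f - S_{Q_N}(f))$. The high-frequency remainder is controlled by Theorem \ref{TBT3.2} through $\|f - S_{Q_N}(f)\|_q \lesssim 2^{-N(r-(1/p-1/q)_+)}$, which stays within the target. For the low-frequency part $S_{Q_N}(f) = \sum_{|\bs|_1 \le N} A_{\bs}(f)$, I would distribute an entropy budget $n_{\bs}$ per block with $\sum_{\bs} n_{\bs} \le n$, use the Marcinkiewicz discretisation (Theorem \ref{T2.4.11}) to reduce each block to a finite-dimensional problem on $\Tr(\rho(\bs))$ (of dimension $\asymp 2^{|\bs|_1}$), apply the sharp finite-dimensional estimates of Theorem \ref{Theorem 32.2}, and invoke the Nikol'skii-type inequality (Theorem \ref{T2.4.5}) to pass from the $L_p$-scaling of $A_{\bs}(f)$ to the $L_q$-error. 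The aggregation of block errors across $|\bs|_1 \le N$ would be performed via the Littlewood-Paley theorem and the block-norm comparison of Theorem \ref{T2.4.6} (and Remark \ref{RT2.4.6}), so as not to incur spurious logarithmic factors.

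For the lower bound, I would use the volume comparison principle of Theorem \ref{Theorem 32.1} combined with the volume estimates for hyperbolic layers in Theorem \ref{T2.5.4}. Choose $N$ with $2|\Delta Q_N| \asymp n$, so that $2^N \asymp n/(\log n)^{d-1}$. Using the Littlewood-Paley characterisation \eqref{NeqW}, the inclusion
\[
c \cdot 2^{-rN} \cdot \{t \in \Tr(\Delta Q_N) : \|t\|_p \le 1\} \;\subset\; \bW^r_p
\]
holds for some $c = c(r,p,d) > 0$. Theorem \ref{T2.5.4} yields $vol(B_{\Delta Q_N}(L_s))^{1/(2|\Delta Q_N|)} \asymp |\Delta Q_N|^{-1/2}$ for every $1 \le s < \infty$, so the ratio of the $L_p$- and $L_q$-volumes is of order one. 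Theorem \ref{Theorem 32.1} then gives
\[
N_{\e}(\bW^r_p,L_q) \;\gtrsim\; \bigl(c \cdot 2^{-rN}/\e\bigr)^{2|\Delta Q_N|},
\]
and equating $N_{\e} = 2^n$ forces $\e \gtrsim 2^{-rN} \asymp n^{-r}(\log n)^{r(d-1)}$, as required.

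The main obstacle is the upper bound in the regime where Carl's inequality is not sharp, i.e., when $1/p > \max\{1/2,1/q\}$. The bit-allocation among the $\asymp (\log n)^{d-1}$ hyperbolic layers must be tuned so that neither the per-block finite-dimensional entropy nor the aggregation step introduces additional logarithmic losses; pinning the log-exponent to exactly $r(d-1)$ is where the Littlewood-Paley square function, the fine Nikol'skii inequalities, and Theorem \ref{T2.4.6} do the essential work. On the lower-bound side, the key point is that the geometry of the hyperbolic layer $\Delta Q_N$ — unlike that of a parallelepiped — is captured correctly only by Theorem \ref{T2.5.4}, and this is precisely what makes the volume ratio behave as $O(1)$ uniformly in $p,q$.
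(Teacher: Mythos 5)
Your plan is sound, and the most interesting feature is that your lower-bound argument follows a route that is genuinely different from, and arguably cleaner than, the one sketched in the paper, at the price of relying on a later and deeper volume estimate.

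For the upper bound, your strategy agrees in spirit with the paper: the survey states that the upper bound is proved ``by the standard method of reduction by discretization to estimates of the entropy numbers of finite-dimensional sets,'' with Theorem \ref{Theorem 32.2} playing the key role, which is exactly what your dyadic-block fallback does. The Carl's-inequality shortcut for the favourable parameter ranges is a nice observation, but note that it has a small-smoothness gap you did not flag: in the regime $2 \le p < q < \infty$, Theorem \ref{TBT4.4} gives the width bound only under $r > r(p,q) = 1/2$, which is stronger than the target condition $r > (1/p-1/q)_+$. So Carl plus \ref{TBT4.4} does not deliver the full range of the theorem even where $\alpha = r$; you must also run the discretisation argument for $2 \le p < q$ with $1/p - 1/q < r \le 1/2$, not only for $1<p<2<q$. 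In your sketch of the discretisation, the essential subtlety for $p<2$ is that membership in $\bW^r_p$ does not give a per-block bound $\|\delta_{\bs}(f)\|_p \lesssim 2^{-r|\bs|_1}$; rather the blocks are tied together by the $\ell_2$-valued Littlewood--Paley square function \eqref{NeqW}, equivalently by the embedding $\bW^r_p \hookrightarrow \bB^r_{p,\max\{p,2\}}$ of \eqref{chainBFB}. Your proposal to use Theorem \ref{T2.4.6} for the aggregation is the right move, but the budget allocation $\{n_{\bs}\}$ must be chosen to respect this $\ell_2$-constraint rather than a crude per-block sup; this is where the details bite.

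For the lower bound, the paper reduces to $q=1$ (Theorem \ref{Wlo}, which by $\|\cdot\|_1 \le \|\cdot\|_q$ implies the case $q > 1$) and proves it using volume estimates for Fourier-coefficient balls of \emph{parallelepiped} sets, namely Theorem \ref{T2.5.1}, in a comparatively intricate argument from \cite{TE1,TE2}. You instead invoke Theorem \ref{T2.5.4}, the volume estimate for the \emph{hyperbolic layer} $\Delta Q_N$, and obtain a direct, one-paragraph volume comparison: since $\bigl(\mathrm{vol}\, B_{\Delta Q_N}(L_s)\bigr)^{1/(2|\Delta Q_N|)} \asymp |\Delta Q_N|^{-1/2}$ uniformly for $1 \le s < \infty$, the $L_p$-to-$L_q$ volume ratio contributes only a $c^D$ factor which is absorbed, and $\e_n \gtrsim 2^{-rN}$ with $2^N N^{d-1}\asymp n$ falls out immediately once you note that the Littlewood--Paley norm forces $c\,2^{-rN}B_{\Delta Q_N}(L_p)\subset \bW^r_p$. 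This is both simpler and slightly stronger (it treats every $1\le q<\infty$ in the same breath), but Theorem \ref{T2.5.4} is a nontrivial result of \cite{KTE3} that postdates the original proof; your route is therefore a genuine alternative proof rather than a rediscovery of the paper's argument. Both routes are valid modulo the results they cite.
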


\begin{thm}\label{Wlo} For $r>0$ and $1\le p <\infty$ one has
$$	
\e_n(\bW^r_{p},L_1) \gtrsim n^{-r}(\log n)^{r (d-1)}.
$$
\end{thm}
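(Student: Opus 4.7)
The plan is to establish the lower bound via a volume argument inside the space of hyperbolic cross polynomials $\mathcal T(Q_n)$, combined with a Bernstein inequality to embed a scaled ball into $\bW^r_p$. The case $p=1$, which falls outside the clean range of Bernstein estimates, will be reduced to $p=2$ by monotonicity of $\bW$-classes.

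First, I treat $1<p<\infty$. Let $N := |Q_n| \asymp 2^n n^{d-1}$, and view $\mathcal T(Q_n)$ as the real Banach space $\R^{2N}$ via the real/imaginary parts of the Fourier coefficients. Since $Q_n \subset \Gamma(2^n)$, the Bernstein inequality (Theorem~\ref{T2.4.3}) yields for every $f \in \mathcal T(Q_n)$
$$\|f\|_{\bW^r_p} \asymp \|f^{(r)}(\cdot,\alpha)\|_p \lesssim 2^{rn}\|f\|_p.$$
Thus for a sufficiently small constant $c=c(r,d,p)>0$, the set $A_n := c\,2^{-rn}\,B_{Q_n}(L_p)$, with $B_{Q_n}(L_p)$ as defined in Subsection~\ref{vol_est}, is contained in $\bW^r_p$, and consequently
$$\e_m(\bW^r_p,L_1) \ge c\,2^{-rn}\,\e_m\bigl(B_{Q_n}(L_p),L_1\bigr).$$

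Second, I invoke the volume estimate of Theorem~\ref{T2.5.4}, which applies since $Q_n = \bigcup_{|\bs|_1\le n}\rho(\bs)$. It gives both
$$\bigl[\mathrm{vol}\,B_{Q_n}(L_p)\bigr]^{1/(2N)} \asymp N^{-1/2} \asymp \bigl[\mathrm{vol}\,B_{Q_n}(L_1)\bigr]^{1/(2N)},$$
so the volume ratio $\mathrm{vol}\,B_{Q_n}(L_p)/\mathrm{vol}\,B_{Q_n}(L_1) \ge c_1^{\,2N}$ for some constant $c_1 = c_1(d,p)>0$. Applying the volume comparison (Theorem~\ref{Theorem 32.1}) in $\R^{2N}$ yields
$$N_\eps\bigl(B_{Q_n}(L_p),L_1\bigr) \ge \eps^{-2N}\,\frac{\mathrm{vol}\,B_{Q_n}(L_p)}{\mathrm{vol}\,B_{Q_n}(L_1)} \ge (c_1/\eps)^{2N},$$
so $\e_m\bigl(B_{Q_n}(L_p),L_1\bigr) \ge c_1\,2^{-m/(2N)}$. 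Choosing $m \asymp N$ makes this bounded away from $0$, and combined with the previous display we obtain $\e_m(\bW^r_p,L_1) \gtrsim 2^{-rn}$. Rewriting $n$ in terms of $m$ via $m \asymp 2^n n^{d-1}$ gives $2^{-rn} \asymp m^{-r}(\log m)^{r(d-1)}$, which is the desired bound for every $1<p<\infty$.

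Finally, for $p=1$, the inequality $\|g\|_1 \le \|g\|_p$ on $\T^d$ for every $p\ge 1$ implies $\|f\|_{\bW^r_1} \le \|f\|_{\bW^r_p}$, so $\bW^r_p \subset \bW^r_1$ and hence
$$\e_n(\bW^r_1,L_1) \ge \e_n(\bW^r_2,L_1) \gtrsim n^{-r}(\log n)^{r(d-1)},$$
which completes the proof. The main technical obstacle is that the Bernstein inequality required in Step~1 is currently not available in the correct form in $L_1$ (Open problem~2.1), which is why the $p=1$ case must be handled by the embedding reduction rather than directly. A second subtle point is that the volume comparison in Step~2 succeeds only because Theorem~\ref{T2.5.4} provides two-sided estimates with constants that are independent of $n$; without such uniformity one would lose an $n$-dependent factor in the exponent that would overwhelm the $2^{-rn}$ gain.
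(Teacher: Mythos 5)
Your argument is correct, and it rests on the same core mechanism the paper attributes to \cite{TE1,TE2}: Bernstein scaling to embed a multiple of $B_{Q_n}(L_p)$ into $\bW^r_p$, a volume lower bound in $\R^{2|Q_n|}$ to show that $\asymp |Q_n|$ points are needed for an $\eps$-cover, and then the monotonicity $\bW^r_2\subset\bW^r_1$ to cover $p=1$, where the $L_1$-Bernstein inequality for $\Tr(N)$ is not available (Open problem~2.1). The one place where you genuinely diverge is in which volume estimate you invoke. You use Theorem~\ref{T2.5.4}, which gives $\bigl(\mathrm{vol}\,B_{\Lambda}(L_p)\bigr)^{1/(2|\Lambda|)}\asymp|\Lambda|^{-1/2}$ for all $1\le p<\infty$ and all finite unions of dyadic blocks $\Lambda=\cup_{\bs\in S}\rho(\bs)$ simultaneously. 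That result is from \cite{KTE3} (2003) and postdates \cite{TE1,TE2}. The survey indicates that the original proof relied on Theorem~\ref{T2.5.1}, the parallelepiped volume estimate, which then has to be threaded through the dyadic-block decomposition of $Q_n$ by hand. Your version, in exchange for using the stronger and later lemma, collapses that additional work into the single sharp two-sided estimate $(\mathrm{vol}\,B_{Q_n}(L_p))^{1/(2N)}\asymp N^{-1/2}\asymp(\mathrm{vol}\,B_{Q_n}(L_1))^{1/(2N)}$. Both routes deliver the same conclusion; yours is the shorter and more modular argument, the original is self-contained against the toolbox available at the time.

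One small point worth making explicit when you write this up: the inclusion $B_{Q_n}(L_p)\subset B_{Q_n}(L_1)$ already forces the volume ratio to be $\le 1$, so the role of Theorem~\ref{T2.5.4} is precisely to prevent that ratio from degrading faster than $c_1^{2N}$; that is exactly the ``uniformity in $n$'' you correctly flag as the crux of Step~2. Also, your chain $\|f\|_{\bW^r_p}\asymp\|f^{(r)}(\cdot,\alpha)\|_p$ is an equality (not merely an equivalence) when $\alpha$ is the one used in the definition of $\bW^r_p$, since $F_{r,\alpha}*f^{(r)}(\cdot,\alpha)=f$ for $f\in\Tr(Q_n)$; the $\asymp$ is only needed when changing $\alpha$, which for $1<p<\infty$ is harmless.
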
 

The problem of estimating $\e_n(\bW^r_{p},L_q)$ has a long history. The first
result on the right order of $\e_n(\bW^r_{2},L_2)$ was obtained by Smolyak
\cite{Smo}.  The case $1<q=p<\infty$, $r>0$ was
established by Dinh D\~ung \cite{Di85}. Theorem \ref{thm:ent_pq} was established
by Temlyakov, see \cite{TE1},
\cite{TE2} for all $1<p,q<\infty$ and  $r>1$. Belinskii \cite{Bel} extended
(\ref{36.2}) to
the
case $r>(1/p-1/q)_+$. Later, this result was extended by Vyb\'iral to a
non-periodic setting, see
\cite{Vyb06}.

It is known in approximation theory (see \cite{TBook}) that investigation of
asymptotic characteristics of classes $\bW^r_{p}$ in $L_q$ becomes more
difficult when $p$ or $q$ takes value $1$ or $\infty$ than when $1<p,q<\infty$.
This is true for $\e_n(\bW^r_{p},L_q)$, too. Theorem \ref{Wlo} is taken from
\cite{TE1,TE2}. It was discovered that in some of these extreme cases ($p$ or
$q$ equals $1$ or $\infty$) relation (\ref{36.2}) holds and in other cases it
does not hold. We describe the picture in detail. It was proved in \cite{TE2}
that (\ref{36.2}) holds for $q=1$, $1<p<\infty$, $r>0$. It was also proved that
(\ref{36.2}) holds for $q=1$, $p=\infty$ (see \cite{Bel} for $r>1/2$ and 
\cite{KTE2} for $r>0$). Summarizing, we state that (\ref{36.2}) holds for
$1<p,q<\infty$ and $q=1$, $1<p\le\infty$ for all $d$ (with appropriate 
restrictions on $r$). This easily implies that (\ref{36.2}) also holds for
$p=\infty$, $1\le q<\infty$. We formulate this as a theorem.

\begin{thm}\label{Twext} Let $q=1$, $1<p\le\infty$ or $p=\infty$, $1\le
q<\infty$ and $r>0$. Then
$$
\e_n(\bW^r_{p},L_q) \asymp n^{-r}(\log n)^{r (d-1)}.
$$
\end{thm}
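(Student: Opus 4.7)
The plan is to deduce the case $p=\infty$, $1\le q<\infty$ from the already-established cases $1<p,q<\infty$ and $q=1$, $1<p\le\infty$ (in particular $q=1$, $p=\infty$) by simple monotonicity/embedding arguments on the torus $\T^d$. There is essentially nothing new to prove here; the difficult analytic work (discretization, volume estimates, Riesz products, Small Ball considerations) has already been carried out in the cases quoted above, and the role of this step is only to transfer those bounds.

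For the \emph{upper bound}, I would fix $p'$ with $1<p'<\infty$ and $p'\ge q$ (e.g.\ $p'=\max\{q,2\}$). Since $\T^d$ has finite measure, H\"older's inequality gives $\|g\|_{p'}\le C\|g\|_\infty$, so applying this to $f$ and to each of its mixed derivatives $D^{\mathbf{r}(e)}f$, $e\subset\{1,\dots,d\}$, yields the continuous embedding $\bW^r_\infty\hookrightarrow \bW^r_{p'}$ (with some constant depending on $d$ only). Monotonicity of entropy numbers with respect to set inclusion then gives
\[
\e_n(\bW^r_\infty,L_q)\;\lesssim\;\e_n(\bW^r_{p'},L_q).
\]
If $1<q<\infty$, the right-hand side is $\lesssim n^{-r}(\log n)^{r(d-1)}$ by Theorem~\ref{thm:ent_pq} (note that $r>(1/p'-1/q)_+=0$ since $p'\ge q$). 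If $q=1$, we instead invoke the case $q=1$, $1<p'<\infty$, $r>0$ quoted in the paragraph preceding the theorem (from \cite{TE2}), which delivers the same bound.

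For the \emph{lower bound}, I would use the dual direction of Jensen/H\"older: on $\T^d$ one has $\|g\|_1\le C\|g\|_q$ for every $q\ge 1$, so any $\eps$-net in the $L_q$-metric is also a $C\eps$-net in the $L_1$-metric. Consequently
\[
\e_n(\bW^r_\infty,L_1)\;\lesssim\;\e_n(\bW^r_\infty,L_q),
\]
and the left-hand side is $\gtrsim n^{-r}(\log n)^{r(d-1)}$ by the case $q=1$, $p=\infty$, $r>0$ already cited above (Belinskii \cite{Bel} for $r>1/2$ and \cite{KTE2} for $r>0$). Combining these two inequalities completes the proof.

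The only point that deserves care is checking that the embedding constants and the restriction $r>0$ are uniform in the choice of $p'$; this is automatic from the H\"older step and the fact that we choose $p'\ge q$, making the compatibility condition $r>(1/p'-1/q)_+$ vacuous. There is no genuine obstacle, which is precisely why the authors label this implication ``easy.''
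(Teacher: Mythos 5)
Your proposal is correct and is essentially the argument the paper has in mind when it writes that the previously cited cases ``easily imply'' the case $p=\infty$, $1\le q<\infty$. The two monotonicity observations you use --- $\bW^r_\infty\hookrightarrow \bW^r_{p'}$ (so entropy numbers decrease in the source) and $\|\cdot\|_1\lesssim\|\cdot\|_q$ on the finite-measure torus (so entropy numbers increase in the target) --- precisely transfer the matching upper bound from Theorem~\ref{thm:ent_pq} (or from the $q=1$ case) and the lower bound from the $p=\infty$, $q=1$ case, with the restriction $r>(1/p'-1/q)_+$ rendered vacuous by the choice $p'\ge q$. No gap.
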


For all other pairs $(p,q)$, namely, for
$q=\infty$, $1\le p\le\infty$ and 
$p=1$, $1\le q\le \infty$ the rate of $\e_n(\bW^r_{p},L_q)$ is not known in the
case $d>2$. It is an outstanding open problem. 

\subsubsection*{Entropy numbers in $L_\infty$}

Let us start with the following result for the case $d=2$. 

\begin{thm}\label{Td=2} Let $d=2$, $1<p\le\infty$, $r>\max\{1/p,1/2\}$. Then
\begin{equation}\label{36.3}
\e_n(\bW^r_{p},L_\infty)\asymp n^{-r}(\log n)^{r+1/2}\,.
\end{equation}
\end{thm}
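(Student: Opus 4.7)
The proof splits into an upper and a lower bound.

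\textbf{Upper bound.} I would deduce this from Carl's inequality (Theorem~\ref{thm:carl}) combined with the Kolmogorov width bound of Theorem~\ref{Te96}. For $d=2$, $2\le p\le \infty$ and $r>1/2$, the latter gives $m^{r}d_{m-1}(\bW^r_p,L_\infty)\lesssim(\log m)^{r+1/2}$, so \eqref{1.3} yields
\[
n^{r}\e_n(\bW^r_p,L_\infty)\;\le\;C(r)\,\max_{1\le m\le n} m^{r}d_{m-1}(\bW^r_p,L_\infty)\;\lesssim\;(\log n)^{r+1/2}.
\]
In the remaining range $1<p<2$, $r>1/p$, Theorem~\ref{Te96} is not directly available. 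Here I would instead perform a block-by-block entropy estimate: decompose $f\in \bW^r_p$ as $f=\sum_{\bs}\delta_\bs(f)$, use the Nikol'skii inequality $\|\delta_\bs(f)\|_\infty\lesssim 2^{|\bs|_1/p}\|\delta_\bs(f)\|_p$ (Theorem~\ref{T2.4.5}) on each $\Tr(\rho(\bs))$ together with the finite-dimensional entropy estimate of Theorem~\ref{Theorem 32.2}, and sum block contributions up to a truncation level $N\asymp \log n$ with a carefully chosen entropy budget per block.

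\textbf{Lower bound for $1<p\le 2$.} Since $L_2\hookrightarrow L_p$ on $\T^d$, one has the inclusion $\bW^r_{2}\subset \bW^r_p$, hence $\e_n(\bW^r_p,L_\infty)\ge \e_n(\bW^r_2,L_\infty)$, and it suffices to prove the bound for $p=2$. I would argue by volume inside $V:=\Tr(\Delta Q_N)$, of real dimension $D_N=2|\Delta Q_N|\asymp 2^N N$. By Parseval the $\bW^r_{2}$--norm on $V$ is equivalent to $2^{rN}\|\cdot\|_{2}$, so $\bW^r_{2}\cap V\supset c\, 2^{-rN}B_V(L_2)$ and
\[
vol(\bW^r_{2}\cap V)^{1/D_N}\;\gtrsim\; 2^{-rN}\,vol(B_V(L_{2}))^{1/D_N}\;\asymp\; 2^{-rN}(2^{N}N)^{-1/2}.
\]
The crucial input is the planar case of Theorem~\ref{T2.5.5},
\[
vol(B_V(L_\infty))^{1/D_N}\;\asymp\;(2^{N}N^{2})^{-1/2},
\]
whose extra factor $N^{-1/2}$ encodes the Small Ball Inequality~\eqref{2.6.3}. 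The volume ratio is therefore $\gtrsim 2^{-rN}N^{1/2}$, and Theorem~\ref{Theorem 32.1} gives $\e_n(\bW^r_{2},L_\infty)\gtrsim 2^{-rN}N^{1/2}$ whenever $n\le c_0 D_N$. Choosing $n\asymp D_N$, so that $N\asymp \log n$ and $2^{-rN}\asymp n^{-r}(\log n)^{r}$, returns the claimed rate $n^{-r}(\log n)^{r+1/2}$.

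\textbf{The hard case $2<p\le\infty$.} For these values $\bW^r_p\subsetneq \bW^r_{2}$, so the preceding reduction fails. Moreover the naive volume argument for $\bW^r_\infty\cap V$ is insufficient: the only inclusion one gets from the Bernstein--type estimate of Theorem~\ref{T2.4.3} is $\bW^r_\infty\cap V\supset c\, 2^{-rN}N^{-1}B_V(L_\infty)$, which upon comparison with \eqref{2.5.1} yields only $\e_n\gtrsim n^{-r}(\log n)^{r-1}$, short of the required rate by $(\log n)^{3/2}$. The plan is to remove this loss by invoking the Small Ball Inequality~\eqref{2.6.3} a second time, beyond what is already built into Theorem~\ref{T2.5.5}: use the Riesz--product machinery of Lemma~\ref{L2.6.1} and \eqref{2.6.3} to convert $L_1$--lower bounds on the blocks $t_\bs\in \Tr(\varrho(\bs))$, $\bs\in Y_N$, into $L_\infty$--separation of linear combinations, calibrate the scale so that the constraint $\|D^{(r,r)}\sum t_\bs\|_\infty\le 1$ is saturated, and combine this with the refined volumetric estimate of Theorem~\ref{T2.5.5} to extract the missing $(\log n)^{1/2}$. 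Making this calibration precise is the principal obstacle, and the sharp form of the planar Small Ball Inequality is indispensable here; this is precisely the reason the result is restricted to $d=2$, the corresponding statement for $d\ge 3$ being open (cf.\ Open problem~1.6 and the Conjecture following Theorem~\ref{Te96H}).
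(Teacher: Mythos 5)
Your decomposition of the problem is sensible, and the pieces you do prove are correct; but the proof you leave unfinished is precisely the heart of the theorem, so this is a genuine gap rather than a stylistic omission.

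\paragraph{What is correct.} Your lower bound for $1<p\le 2$ by volume comparison in $V=\Tr(\Delta Q_N)$ is essentially the paper's argument: the decisive input is the volume estimate \eqref{2.5.1} from Theorem~\ref{T2.5.5}, in which the extra factor $N^{-1/2}$ already encodes the two-dimensional Small Ball Inequality \eqref{2.6.3}, and your computation $vol(B_Y)^{1/D_N}/vol(B_X)^{1/D_N}\gtrsim 2^{-rN}N^{1/2}$ feeding into Theorem~\ref{Theorem 32.1} with $n\asymp D_N\asymp 2^{N}N$ is the right calculation. Your upper bound for $2\le p\le\infty$ via Carl's inequality (Theorem~\ref{thm:carl}) and the Kolmogorov width estimate of Theorem~\ref{Te96} is a legitimate alternative to the paper's route (which cites Belinskii and rests on the dual Sudakov bound of Theorem~\ref{Theorem 32.3}); since the upper bound in Theorem~\ref{Te96} is obtained by constructive approximation rather than from entropy numbers, there is no circularity. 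One small correction: the Nikol'skii inequality you invoke for $1<p<2$ is not quite the one you would need — for a hyperbolic layer $f_\ell=\sum_{|\bs|_1=\ell}\delta_\bs(f)$ in $d=2$ the passage from $L_p$ to $L_\infty$ carries an extra $\ell^{1-1/p}$ (Theorem~\ref{T2.4.7}), and the $\bW$-class does not control $\|\delta_\bs(f)\|_p$ block-by-block but only through the Littlewood--Paley square function \eqref{NeqW}. So the sketch for $1<p<2$ is not routine, and you should either carry it out carefully or cite Belinskii.

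\paragraph{The genuine gap.} The lower bound for $2<p\le\infty$ is the difficult part, and you leave it unproved. Your inclusion argument via $\bW^r_2\subset\bW^r_p$ is valid only for $p\le 2$; for $p>2$ the class $\bW^r_p$ is \emph{smaller}, and no inclusion helps. You correctly diagnose that the naive volume bound, using only the Bernstein inequality of Theorem~\ref{T2.4.3} (which gives $\bW^r_\infty\cap V\supset c\,2^{-rN}N^{-1}B_V(L_\infty)$), loses a factor $(\log n)^{3/2}$ against the target rate. But the remedy you propose — ``invoke the Small Ball Inequality a second time, beyond what is already built into Theorem~\ref{T2.5.5}'' — is stated as a plan and not carried out. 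This is not a small technical detail: the construction of the fooling function in $\bW^r_\infty\cap V$, its placement relative to a given $\varepsilon$-net, and the Riesz-product separation via Lemma~\ref{L2.6.1} and \eqref{2.6.3} are the substance of \cite{TE4} (and of \cite{TE3} for $p<\infty$), and producing them from scratch is where the actual proof lives. As it stands, your argument proves the two-sided estimate only for $1<p\le 2$; for $2<p\le\infty$ you have established the upper bound and an obstacle, but no lower bound.
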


The first result on the right order of
$\e_n(\bW^r_{p},L_q)$ in the case $q=\infty$ was obtained by Kuelbs and Li
\cite{KL} for $p=2$, $r=1$. It was proved in \cite{TE3} that (\ref{36.3})
holds for $1<p<\infty$, $r>1$. We note that the upper bound in (\ref{36.3}) was
proved under condition $r>1$ and the lower bound in (\ref{36.3}) was proved
under condition $r>1/p$. Belinskii \cite{Bel} proved the upper bound in
(\ref{36.3}) for $1<p<\infty$ under condition $r>\max\{1/p,1/2\}$. Relation
(\ref{36.3}) for $p=\infty$ under assumption $r>1/2$ was proved in \cite{TE4}. 

 An analogue of the upper bound in (\ref{36.3}) for any $d$ was obtained by
Belinskii \cite{Be5, Bel} and, independently, by Dunker, Linde, Lifshits, K\"uhn \cite{DuLiKuLi99} (in case $r=1$)
\begin{equation}\label{36.4'} 
\e_n(\bW^r_{p},L_\infty)\lesssim n^{-r}(\log n)^{(d-1)r+1/2},\quad
r>\max\{1/p,1/2\}.
\end{equation}
That proof is based on Theorem \ref{Theorem 32.3} (see also the book \cite{TrBe04} for a detailed description of the technique). 
Recent results on the Small Ball Inequality for the Haar system\index{Haar system} (see \cite{BL}, \cite{BLV} and the remark after \eqref{2.6.5} in Subsection \ref{subsect:SBI} above) 
allow us to improve a trivial lower bound to the 
following one for $r=1$ and all $p<\infty$, $d\geq 3$:
\be\nonumber
\e_n(\bW^1_p,L_\infty) \gtrsim n^{-1}(\log n)^{d-1+\delta(d)}\quad,\quad 0<\delta(d)<1/2.
\ee
Theorem \ref{Td=2} and the above upper and lower bounds support the following conjecture.

\begin{conj}\label{small_ball_conj} Let $d\ge 3$, $2\le p\le \infty$, $r>1/2$. Then
$$
 \e_n(\bW^r_p,L_\infty) \asymp n^{-r}(\log n)^{(d-1)r+1/2}.
 $$
 \end{conj}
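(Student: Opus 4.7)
The plan is to prove the matching upper and lower bounds separately, with the upper bound essentially available in the literature. For the upper bound I would invoke \eqref{36.4'}, due to Belinskii \cite{Be5, Bel} and Dunker-Linde-Lifshits-K\"uhn \cite{DuLiKuLi99}, which already yields $\e_n(\bW^r_p, L_\infty) \lesssim n^{-r}(\log n)^{(d-1)r + 1/2}$ in the range $r > \max\{1/p, 1/2\}$ that we are assuming. That argument proceeds by Littlewood-Paley decomposition of $f \in \bW^r_p$ into dyadic hyperbolic layers $\Delta Q_n$ and applies the dual Sudakov inequality (Theorem \ref{Theorem 32.3}) blockwise, so no new input is required in this direction.

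The hard direction is the lower bound, and I would follow the template of \cite{TE4} that succeeds for $d=2$. The first step is to pass to a discrete model: restrict attention to trigonometric polynomials whose frequencies lie in the blocks $\rho(\bs)$ with $\bs \in Y^d_n$ from Subsection \ref{subsect:SBI}. Classical duality and/or Carl's inequality (Theorem \ref{thm:carl}) reduce the entropy lower bound to exhibiting an exponentially large $n^{-r}(\log n)^{(d-1)r + 1/2}$-separated family inside the $\bW^r_p$-ball. The natural candidates are Riesz products $\Phi(n,\bx) = \prod_{\bs \in Y^d_n}(1 + \varepsilon_\bs t_\bs(\bx))$ produced by Lemma \ref{L2.6.1}, with factors $t_\bs \in \Tr(\varrho'(\bs))$, $\|t_\bs\|_\infty \le 1$, and signs $\varepsilon_\bs \in \{\pm 1\}$; selecting the signs by a random/probabilistic argument supplies exponentially many well-separated elements.

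The decisive step is to bound $\bigl\|\sum_{\bs \in Y^d_n}\varepsilon_\bs t_\bs\bigr\|_\infty$ from below in terms of $\sum_\bs \|t_\bs\|_1$, and this is exactly where the $d$-dimensional Small Ball Inequality \eqref{2.6.4} enters with the critical exponent $(d-2)/2$. Combined with the volume estimates of Theorems \ref{T2.5.3} and \ref{T2.5.4} and the mixed-smoothness rescaling $\|t_\bs\|_{\bW^r_p} \asymp 2^{r|\bs|_1}\|t_\bs\|_p$, \eqref{2.6.4} produces the extra factor $(\log n)^{1/2}$ on top of the base rate $n^{-r}(\log n)^{(d-1)r}$ coming from $|Y^d_n| \asymp n^{d-1}$. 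The transfer from $p = \infty$ to the remaining range $2 \le p < \infty$ is then routine: Lemma \ref{L2.6.1} ensures that only blocks $\rho(\bs)$ with $|\bs|_1 = n$ contribute to the essential part of $\Phi(n,\bx)$, so the Littlewood-Paley characterization \eqref{NeqW} of $\bW^r_p$ shows that all norms $\|\cdot\|_{\bW^r_p}$ with $2 \le p \le \infty$ are equivalent on this sparse subspace.

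The main obstacle is, unsurprisingly, the Small Ball Inequality \eqref{2.6.4} itself, which is precisely Open problem 2.5 and is unresolved for $d \ge 3$. The partial progress in \cite{BL, BLV} on the Haar analogue \eqref{2.6.5}, with exponent $(d-1)/2 - \delta(d)$ in place of $(d-2)/2$, has already been leveraged to produce the weaker bound $\e_n(\bW^1_p, L_\infty) \gtrsim n^{-1}(\log n)^{d-1+\delta(d)}$ quoted before the conjecture, and a parallel transference to the trigonometric side would yield Conjecture \ref{small_ball_conj} with exponent $(d-1)r + \delta(d)$ rather than $(d-1)r + 1/2$. Thus the entire problem reduces, cleanly and entirely, to establishing \eqref{2.6.4} with the sharp constant exponent $(d-2)/2$.
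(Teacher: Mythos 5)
The statement you are addressing is labeled a \textbf{conjecture} in the paper, and the paper offers no proof of it; instead, the surrounding text records the known upper bound (via \eqref{36.4'}), the known partial lower bound coming from \cite{BL, BLV}, the complete $d=2$ result (Theorem \ref{Td=2}), and the observation that the sharp lower bound for $d\ge 3$ would follow from the unproved Small Ball Inequality \eqref{2.6.4} (Open Problem 2.5). Your proposal correctly mirrors this state of affairs: you acknowledge that the upper bound is available, that the lower bound would be obtained by the Riesz-product construction via Lemma \ref{L2.6.1} in tandem with \eqref{2.6.4}, and that the obstruction is precisely that \eqref{2.6.4} is open. That is exactly what the paper says, so as a reading of the problem your proposal is accurate and the overall architecture (upper bound known / lower bound conditional on the Small Ball Inequality) matches the paper's own reasoning.

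Two smaller quibbles worth flagging. First, Carl's inequality (Theorem \ref{thm:carl}) is not the right tool in the direction you want: it bounds entropy numbers from above by widths, so it converts entropy lower bounds into width lower bounds and not the reverse. For an entropy \emph{lower} bound one works directly from a packing argument (an exponentially large separated set, which you also invoke, together with volume estimates), so drop the Carl reference. Second, your transfer from $p=\infty$ to $2\le p<\infty$ via the Littlewood-Paley norm on the sparse hyperbolic-layer subspace is more elaborate than needed: since the Lebesgue measure on $\T^d$ is normalized, $\|\varphi\|_p\le\|\varphi\|_\infty$ and hence $\bW^r_\infty\subset\bW^r_p$ for all $p<\infty$, so any lower bound for $\e_n(\bW^r_\infty,L_\infty)$ is immediately a lower bound for $\e_n(\bW^r_p,L_\infty)$. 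This is also why, in the paper, the strongest (and hardest) case of the lower bound is $p=\infty$ and the others follow free of charge. To be clear: nothing in your sketch produces a proof, and it should not, because none exists; but it correctly reduces the conjecture to Open Problem 2.5, which is what the paper claims.
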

It is known that the corresponding lower bound in Conjecture \ref{small_ball_conj}
would follow from the $d$-dimensional version of the Small Ball Inequality for
the trigonometric system (\ref{2.6.4}).

The case $p=1$, $1\le q\le \infty$ was settled by Kashin and Temlyakov
\cite{KTE3}. The authors proved the following results.
\begin{thm}\label{KaTentr1} Let $d=2$, $1\le q<\infty$, $r>\max\{1/2,1-1/q\}$.
Then
\begin{equation}\label{36.3'} 
\e_n(\bW^r_{1},L_q)\asymp n^{-r}(\log n)^{r+1/2}
\end{equation}
\end{thm}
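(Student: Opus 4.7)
The class $\bW^r_1$ is the image of the unit ball $B(L_1)$ under the Bernoulli convolution operator $T_r\phi:=F_r*\phi$, so $\e_n(\bW^r_1,L_q)=\e_n(T_r:L_1\to L_q)$. My plan for the upper bound is to exploit duality of entropy numbers of operators. The Banach-space adjoint $T_r^*:L_{q'}\to L_\infty$ (with $1/q+1/q'=1$) is convolution with the reflected Bernoulli kernel $\widetilde F_r(\bx):=F_r(-\bx)$, whose image $T_r^*(B(L_{q'}))$ coincides up to constants with the class $\bW^r_{q'}$. For $1<q<\infty$, the target $L_q$ is $K$-convex, so the known duality theorem for entropy numbers of operators into a $K$-convex space yields
\begin{equation*}
\e_n(\bW^r_1,L_q)\;\asymp\;\e_n(\bW^r_{q'},L_\infty),\qquad q'=q/(q-1)\in(1,\infty].
\end{equation*}
Since the condition $r>\max\{1-1/q,1/2\}$ is exactly $r>\max\{1/q',1/2\}$, Theorem \ref{Td=2} applied with parameter $p=q'$ identifies the right-hand side as $\asymp n^{-r}(\log n)^{r+1/2}$. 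The endpoint $q=1$ is handled separately via the trivial embedding $L_2\hookrightarrow L_1$ on $\T^2$: $\e_n(\bW^r_1,L_1)\le c\,\e_n(\bW^r_1,L_2)$, and the preceding argument with $q=2$ delivers the required upper bound since $r>1/2$.

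For the lower bound I would rely on the genuinely two-dimensional volume estimates in Theorem \ref{T2.5.5}, in particular
\begin{equation*}
\bigl(\mathrm{vol}(B^\perp_{\Delta Q_n}(L_1))\bigr)^{1/N}\asymp 2^{-n/2},\qquad N=2|\Delta Q_n|\asymp 2^{n+1}n.
\end{equation*}
Compared with the estimate $(\mathrm{vol}(B_{\Delta Q_n}(L_q)))^{1/N}\asymp (2^nn)^{-1/2}$ from Theorem \ref{T2.5.4}, the ratio produces the extra factor $n^{1/2}$ that is the source of the anomalous logarithm $(\log n)^{1/2}$ in the answer. Concretely, every $f\in B^\perp_{\Delta Q_n}(L_1)$ has the form $S_{\Delta Q_n}(h)$ for some $h\in B(L_1)$; after rescaling by a factor $\sim 2^{-rn}$ and invoking a Bernstein-type bound on $\Tr(\Delta Q_n)$ one exhibits a subset of a constant multiple of $\bW^r_1$ whose $L_q$-volume, tested against the $L_q$-unit ball of $\Tr(\Delta Q_n)$, leads via the standard volume-to-entropy inequality (Theorem \ref{Theorem 32.1}) to
$$\e_k(\bW^r_1,L_q)\gtrsim 2^{-rn}\,n^{1/2}$$
at the critical scale $k\asymp N\asymp 2^nn$, which in terms of $k$ reads exactly $k^{-r}(\log k)^{r+1/2}$. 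An essentially equivalent direct route is via the two-dimensional Small Ball Inequality \eqref{2.6.3}: constructing explicit Riesz-product type elements (cf.\ Subsection \ref{subsect:SBI}) yields an exponential family of points in $\bW^r_1$ that are pairwise $\sim 2^{-rn}n^{1/2}$ separated in $L_q$.

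The hard part will be the translation of the $\perp$-type volume estimate into an honest inclusion of a Euclidean-type ball into $\bW^r_1$: elements of $B^\perp_{\Delta Q_n}(L_1)$ are only \emph{projections} $S_{\Delta Q_n}(h)$ of $L_1$-unit-ball elements onto $\Tr(\Delta Q_n)$, and the plain $L_1$-norm of such a projection---which controls membership in $\bW^r_1$ via Bernstein---is not bounded by $\|h\|_1$ in general. The gain of $n^{1/2}$ in the lower bound is a genuinely two-dimensional phenomenon underpinned by the two-dimensional Small Ball Inequality \eqref{2.6.3}; this explains why the method does not transfer to $d\ge 3$ and why the analog of Theorem \ref{KaTentr1} in higher dimensions remains open (compare Open Problems 2.4 and 2.6).
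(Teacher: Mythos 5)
Your lower bound has a genuine gap, and you have accurately diagnosed where it is: the volume estimate for $B^\perp_{\Delta Q_n}(L_1)$ from Theorem \ref{T2.5.5} (equivalently, by Bourgain--Milman, the anomalous $L_\infty$ volume estimate \eqref{2.5.1}, which is what the paper actually singles out as ``the key role'') does \emph{not} immediately place a large Euclidean ball inside a rescaling of $\bW^r_1\cap\Tr(\Delta Q_n)$, because $E^\perp_{\Delta Q_n}(t)_1\le 1$ does not control $\|t\|_1$. The repair is not to build polynomial test elements of $\bW^r_1$ at all. Instead, choose a $\delta$-separated (in $L_q$) family $t_1,\dots,t_M\in B^\perp_{\Delta Q_n}(L_1)$ with $M\geq 2^{cN}$, which the volume ratio $\bigl(\mathrm{vol}\,B^\perp_{\Delta Q_n}(L_1)/\mathrm{vol}\,B_{\Delta Q_n}(L_q)\bigr)^{1/N}\asymp n^{1/2}$ supplies at scale $\delta\asymp n^{1/2}$; then lift each $t_i$ to $h_i\in L_1$ with $\|h_i\|_1\lesssim 1$ and $S_{\Delta Q_n}(h_i)=t_i$, and set $g_i:=F_r*h_i\in C\bW^r_1$. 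For $1<q<\infty$ the projection $S_{\Delta Q_n}$ is $L_q$-bounded and the symbol of $F_r$ on $\Delta Q_n$ is $\asymp 2^{-rn}$ up to a Marcinkiewicz multiplier, so $\|g_i-g_j\|_q\gtrsim\|S_{\Delta Q_n}(g_i-g_j)\|_q\asymp 2^{-rn}\|t_i-t_j\|_q\gtrsim 2^{-rn}n^{1/2}$, which upon $k\asymp N\asymp 2^n n$ gives $\e_k\gtrsim k^{-r}(\log k)^{r+1/2}$. This argument visibly breaks down at $q=1$, where $S_{\Delta Q_n}$ is unbounded on $L_1$; so although you carefully separated the case $q=1$ in the \emph{upper} bound, you have left the $q=1$ \emph{lower} bound unaddressed, and a more delicate duality computation (pairing $g_i-g_j$ against near-extremal polynomials in $B_{\Delta Q_n}(L_\infty)$ directly, so that the separation is witnessed by the $L_1/L_\infty$ pairing rather than by projection) is needed there; this is where the $L_\infty$-volume bound is irreplaceable.

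On the upper bound: invoking entropy-number duality to write $\e_n(\bW^r_1,L_q)\asymp\e_n(\bW^r_{q'},L_\infty)$ and then calling Theorem \ref{Td=2} is a slick shortcut, but observe two things. First, the entropy duality conjecture is proved when one space is Hilbert (Artstein--Milman--Szarek), and for the general $K$-convex case one needs the stronger form of the later Artstein--Milman--Szarek--Tomczak-Jaegermann result; it is not a casual black box, and you should state and cite the precise version you are relying on, since anything weaker (the older $K$-convex results with logarithmic loss) would ruin the $(\log n)^{1/2}$ accounting. Second, if that duality equivalence does hold, then it gives the two-sided estimate $\asymp$, and the entire volume-based lower bound becomes redundant for $1<q<\infty$---your proposal never notices this and pursues both. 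The paper itself does not go through duality: it proves the upper bound by discretization and an entropy interpolation of the type of Theorems \ref{T35.2}/\ref{T35.3}, and proves the lower bound directly from \eqref{2.5.1}, consistent with the genuinely two-dimensional character of the anomalous $(\log n)^{1/2}$ that you correctly trace to the Small Ball Inequality \eqref{2.6.3} and to the $d\ge 3$ obstruction recorded in Open Problems 2.3--2.6.
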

\begin{thm}\label{KaTentr2} Let $d=2$, $r>1$. Then 
\begin{equation}\label{36.4} 
\e_n(\bW^r_{1,0},L_\infty)\asymp n^{-r}(\log n)^{r+1} .
\end{equation}
\end{thm}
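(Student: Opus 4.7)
The plan is to establish matching upper and lower bounds for $\e_n(\bW^r_{1,0},L_\infty)$, exploiting the specialized two-dimensional tools developed in Subsections 2.5 and 2.6.

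For the upper bound $\e_n(\bW^r_{1,0},L_\infty) \lesssim n^{-r}(\log n)^{r+1}$, I would use a dyadic block decomposition. For $f\in \bW^r_{1,0}$, write $f=\sum_{\bs\in \N_0^2} A_{\bs}(f)$ with $A_{\bs}(f)\in \Tr(\rho(\bs))$, and regroup into hyperbolic layers $t_l = \sum_{|\bs|_1=l} A_{\bs}(f)\in \Tr(\Delta Q_l)$. The uniform $L_1$-boundedness $\|A_{\bs}\|_{1\to 1}\le 6^d$ from \eqref{2.2.16} together with the Bernoulli-kernel representation in Definition~\ref{def2Sob} yields $\|A_{\bs}(f)\|_1\lesssim 2^{-r|\bs|_1}$, hence $\|t_l\|_1\lesssim l\,2^{-rl}$. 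Distribute the budget $n\asymp\sum_l n_l$ among the layers, and estimate each finite-dimensional entropy $\e_{n_l}(\Tr(\Delta Q_l)\cap\{\|\cdot\|_1\lesssim l\,2^{-rl}\},L_\infty)$ by combining the Marcinkiewicz-type discretization of Theorem~\ref{T2.4.11}, which reduces the problem to an $\ell_1^N\hookrightarrow \ell_\infty^N$-entropy with $N\asymp 2^l l$, the Nikol'skii-type bound $\|\cdot\|_\infty\lesssim 2^l l\,\|\cdot\|_1$ on $\Tr(\Delta Q_l)$ in $d=2$ coming from Theorem~\ref{T2.4.7}, and Sch\"utt's finite-dimensional estimate (Theorem~\ref{Theorem 32.2}). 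Optimizing the split $\{n_l\}$ around the critical layer $l\asymp \log n$ produces the claimed rate; the extra $(\log n)^{1/2}$ relative to Theorem~\ref{KaTentr1} appears through the $L_\infty$-target factor $l$ in the Nikol'skii estimate.

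For the lower bound $\e_n(\bW^r_{1,0},L_\infty)\gtrsim n^{-r}(\log n)^{r+1}$, I would invoke the sharp two-dimensional volume bound from Theorem~\ref{T2.5.5},
\[
(\mathrm{vol}(B^\perp_{\Delta Q_l}(L_1)))^{1/N}\asymp 2^{-l/2},\qquad N=2|\Delta Q_l|\asymp 2^l l,
\]
and convert it into an entropy lower bound via the volumetric comparison of Theorem~\ref{Theorem 32.1} applied to the embedding $\Tr(\Delta Q_l)\hookrightarrow L_\infty$: since $B^\perp_{\Delta Q_l}(L_1)$ parametrizes the polynomials in $\Tr(\Delta Q_l)$ that belong (after normalization) to the unit $\bW^r_{1,0}$-ball multiplied by $2^{rl}$, a small $L_1$-orthogonal volume forces many disjoint $L_\infty$-balls of radius $\eps\cdot 2^{-rl}$ to cover it. Choosing $l$ so that $|\Delta Q_l|\asymp n/(\log n)$, i.e., $2^l l\asymp n$, and tracking the volume/entropy inequality yields the desired lower bound. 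An alternative and more constructive route, in the spirit of the proof of Theorem~\ref{Td=2}, would build a well-separated family in $L_\infty$ via hyperbolic-cross Riesz products as in Lemma~\ref{L2.6.1} and apply the bivariate small-ball inequality \eqref{2.6.3} to certify the separation.

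The main obstacle is pinning down the sharp $(\log n)^{r+1}$ factor as opposed to the $(\log n)^{r+1/2}$ of Theorem~\ref{KaTentr1}. This loss is intrinsic to the $L_\infty$-target: both the Littlewood-Paley theorem and the uniform $L_1$-boundedness of the hyperbolic-cross de la Vall\'ee Poussin kernel fail, indeed Lemma~\ref{L2.3.2} gives $\|\mathcal V_{Q_n}\|_1\asymp n^{d-1}$, so neither the tail-summation in the upper bound nor the separation in the lower bound can be imported from the finite-$q$ setting. Capturing the correct logarithmic power therefore requires either the refined two-dimensional volume estimate \eqref{2.5.1} or the fully proved bivariate small-ball inequality \eqref{2.6.3}; neither is available in dimension $d\ge 3$, which is precisely why the analogous result for higher $d$ remains open (Open problem 1.6).
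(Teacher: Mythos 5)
Your lower-bound plan correctly lands on the $d=2$ volume estimates of Theorem~\ref{T2.5.5} combined with Theorem~\ref{Theorem 32.1}, which is indeed the route the paper attributes to Kashin--Temlyakov. However, you quote only the $L_1^\perp$ volume bound and describe its role backwards (``a small $L_1$-orthogonal volume forces many disjoint $L_\infty$-balls'' --- it should be the opposite). What actually drives the argument is the \emph{ratio} of the two volumes appearing in Theorem~\ref{T2.5.5}: the set $B^\perp_{\Delta Q_l}(L_1)$ has \emph{large} volume, $(\mathrm{vol})^{1/N}\asymp 2^{-l/2}$, while the $L_\infty$-ball is \emph{small} --- and, crucially, smaller by an extra factor $l^{-1/2}$ than the $L_q$-balls with $q<\infty$ ($(2^l l^2)^{-1/2}$ in \eqref{2.5.1} versus $(2^l l)^{-1/2}$ from Theorem~\ref{T2.5.4}). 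This surplus $l^{1/2}$ is precisely the source of the extra $(\log n)^{1/2}$ over Theorem~\ref{KaTentr1}, and it is exactly the novelty the paper stresses in the remark around \eqref{36.9}; your write-up should put the weight on \eqref{2.5.1} rather than on the $L_1^\perp$ estimate, which is the more routine half.

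Your upper-bound plan has two genuine gaps. First, you invoke the Marcinkiewicz discretization of Theorem~\ref{T2.4.11} for $\Tr(\Delta Q_l)$, but that theorem is only for rectangular index sets $\Pi(\bN,d)$; the paper explicitly warns at the end of Subsection~2.4 that no analogue exists for hyperbolic-cross polynomials, and Corollary~\ref{C2.5.2} shows that any $O(|Q_l|)$-point sampling set necessarily loses at least $\sqrt{l}$ in $L_\infty$. The cure is to discretize each rectangular block $\Tr(\rho(\bs))$, $|\bs|_1=l$, separately via Theorem~\ref{T2.4.11}, and recombine the $\asymp l$ blocks of a layer by the $L_\infty$ triangle inequality; this $\asymp l$ (rather than the $l^{1/2}$ the square function gives for $1<q<\infty$) is where the extra $\log$ enters in the upper bound. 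Second, the Nikol'skii bound $\|\cdot\|_\infty\lesssim 2^l l\|\cdot\|_1$ you quote does not follow from Theorem~\ref{T2.4.7}: specializing to $q=1$, $r=0$, $d=2$ gives $N\cdot(\log N)^{0}\asymp 2^l$ with \emph{no} logarithmic factor, so your attribution of the extra $(\log n)^{1/2}$ to a Nikol'skii loss is simply wrong. Finally, the Riesz-product/small-ball alternative you mention is the engine of Theorem~\ref{Td=2} and only yields the power $(\log n)^{r+1/2}$ valid for $1<p\le\infty$; it does not see the larger exponent $r+1$ needed here at $p=1$, which genuinely requires the volume machinery.
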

The most difficult part of Theorems \ref{KaTentr1} and \ref{KaTentr2} -- the lower bounds -- is proved with the help of the volume 
estimates of the appropriate sets of the Fourier coefficients of bounded trigonometric polynomials.  
 With the notation from Subsection
\ref{vol_est} the volume estimates of the sets $B_\L(L_p)$ and related
questions have been studied in a number of papers: the case $\L=[-n,n]$,
$p=\infty$ in \cite{KaE}; the case $\L=[-N_1,N_1]\times[-N_2,N_2]$, $p=\infty$
in \cite{TE2}, \cite{T6}; the case of arbitrary $\L$ and $p=1$ in \cite{KTE1}.
In particular, the results of \cite{KTE1} imply for $d=2$ and $1\le p<\infty$
that
$$
(vol(B_{\Delta Q_n}(L_p)))^{(2|\Delta Q_n|)^{-1}} \asymp |\Delta Q_n|^{-1/2}
\asymp (2^nn)^{-1/2}.
$$
It was proved in \cite{KTE3} that in the case $p=\infty$ the volume estimate is
different (see Theorem \ref{T2.5.5}):
\begin{equation}\label{36.9}
(vol(B_{\Delta Q_n}(L_\infty)))^{(2|\Delta Q_n|)^{-1}} \asymp   (2^nn^2)^{-1/2}.
\end{equation}
We note that in the case $\L=[-N_1,N_1]\times[-N_2,N_2]$ the volume estimate is
the same for all $1\le p\le \infty$. 
The volume estimate (\ref{36.9}) plays the key role in the proof of
(\ref{36.3'}) and (\ref{36.4}).

Let us make an observation on the base of the above discussion. In the
univariate case the entropy numbers $\e_n(W^r_{p},L_q)$ have the same order of
decay with respect to $n$ for all pairs $(p,q)$, $1\le p,q\le\infty$. In the
case $d=2$ we have three different orders of decay of $\e_n(\bW^r_{p},L_q)$
which depend on the pair $(p,q)$. For instance, in the case $1<p,q<\infty$ it is
$n^{-r}(\log n)^r$, in the case $p=1$, $1<q<\infty$, it is $n^{-r}(\log
n)^{r+1/2}$ and in the case $p=1$, $q=\infty$ it is $n^{-r}(\log n)^{r+1}$. 

We discussed above known results on the rate of decay of  $\e_n(\bW^r_{p},L_q)$.
In the case $d=2$ the picture is almost complete. In the case $d>2$ the
situation is fundamentally different. The problem of the right order of decay of
$\e_n(\bW^r_{p},L_q)$ is still open for $p=1$, $1\le q\le \infty$ and
$q=\infty$, $1\le p\le\infty$. In particular, it is open in the case $p=2$,
$q=\infty$, $r=1$ that is related to the small ball problem. We discussed in more
detail the case $q=\infty$, $1\le p\le \infty$. We pointed out above that in the
case $d=2$ the proof of lower bounds (the most difficult part) was based on the
Small Ball Inequalities for the Haar system for $r=1$ and for the trigonometric
system for all $r$. 

\subsection{Entropy numbers for $\bH$ and $\bB$ classes in $L_q$}\index{Entropy number}
Let us proceed with the entropy numbers of the $\bH$ classes in $L_q$.

\begin{thm}\label{Hup} For $r>1$ one has
$$
\e_n(\bH^r_{1},\bB^0_{\infty,2}) \lesssim n^{-r}(\log
n)^{(r+\frac{1}{2})(d-1)}.
$$
\end{thm}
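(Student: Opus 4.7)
The plan is to exploit the discrete characterization $\|f\|_{\bB^0_{\infty,2}} \asymp (\sum_{\bs} \|A_\bs(f)\|_\infty^2)^{1/2}$, reduce block by block to a finite-dimensional covering problem via the multivariate Marcinkiewicz Theorem \ref{T2.4.11}, and then invoke Sch\"utt's entropy estimate from Theorem \ref{Theorem 32.2} with a carefully chosen bit allocation across the hyperbolic layers.

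First I would use the $\bH^r_1$-condition in the form $\|A_\bs(f)\|_1 \lesssim 2^{-r|\bs|_1}$ together with $A_\bs(f)\in \Tr(2^{\bs},d)$ and identify each block, via discretization on the lattice $P'(2^{\bs})$, with a vector in $\R^{N_\bs}$ where $N_\bs\asymp 2^{|\bs|_1}$. By Theorem \ref{T2.4.11} the $L_1$- and $L_\infty$-norms of the block are equivalent, up to universal constants, to the normalized discrete $\ell_1$- and $\ell_\infty$-norms. Thus the $\bs$-th building block of the embedding $\bH^r_1 \hookrightarrow \bB^0_{\infty,2}$ is, up to absolute constants, the inclusion of the scaled ball $2^{-r|\bs|_1} B^{N_\bs}_1$ into $\ell^{N_\bs}_\infty$, and the target $\bB^0_{\infty,2}$-norm is the $\ell_2$-combination of these $L_\infty$-contributions.

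Next I would split $f = \sum_\bs A_\bs(f)$ into a head $|\bs|_1 \le L$ and a tail $|\bs|_1 > L$ and choose $L\asymp \log n$ so that the total dimension $\sum_{|\bs|_1\le L} N_\bs \asymp 2^L L^{d-1}$ matches $n$. For the head I would allocate $m_\bs$ bits to the $\bs$-th block with $\sum_\bs m_\bs \le n$ and apply Theorem \ref{Theorem 32.2} in the middle range, yielding
\[
\e_{m_\bs}\bigl(2^{-r|\bs|_1} B^{N_\bs}_1,\ell^{N_\bs}_\infty\bigr) \lesssim 2^{-r|\bs|_1}\,\frac{\log(eN_\bs/m_\bs+1)}{m_\bs}.
\]
Combining these estimates via $\ell_2$ at each level $\ell=|\bs|_1$ picks up a factor $\sqrt{\#\{\bs:|\bs|_1=\ell\}} \asymp \ell^{(d-1)/2}$, which together with the standard hyperbolic-cross logarithm $\ell^{r(d-1)}$ produces the announced exponent $(r+1/2)(d-1)$. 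The tail is then controlled crudely by the Nikol'skii inequality $\|A_\bs(f)\|_\infty \lesssim 2^{|\bs|_1}\|A_\bs(f)\|_1 \lesssim 2^{(1-r)|\bs|_1}$, which (using $r>1$) contributes $L^{(d-1)/2}2^{(1-r)L}$ to the $\bB^0_{\infty,2}$-norm and is subdominant for $L\asymp \log n$.

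The hard part will be the allocation of the $m_\bs$ and the summation over levels that must yield exactly the sharp exponent $(r+1/2)(d-1)$ rather than the easier $(r+1)(d-1)$ or $r(d-1)$ one gets by less careful choices. The correct balance exploits the fact that at level $\ell$ the extra $1/2$ originates from the $\ell_2$-combination of $\asymp \ell^{d-1}$ independent $L_\infty$-contributions, rather than from the $L_1\to L_\infty$ conversion or from the $\ell$-summation; this requires choosing $m_\bs$ so that the per-block error at level $\ell$ decays geometrically in $\ell$ but leaves a polynomial-in-$\ell$ gain after the $\ell_2$-combination. A secondary technical point is to keep track of the constants introduced by the Marcinkiewicz discretization so that they do not destroy the $\ell_2$-structure of the target norm.
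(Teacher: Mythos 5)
Your overall framework—Marcinkiewicz discretization of the blocks $A_\bs(f)$, reduction to entropy numbers of scaled $\ell_1$-balls in $\ell_\infty^{N_\bs}$ via Sch\"utt's Theorem~\ref{Theorem 32.2}, and bit allocation across hyperbolic layers—is exactly the approach the paper invokes for Theorems~\ref{Hup} and~\ref{thm:ent_pq} (``the standard method of reduction by discretization \ldots Theorem~\ref{Theorem 32.2} plays a key role''). The difficulty, which you yourself flag as ``the hard part'', is the bit allocation, and here your proposal contains two genuine errors that prevent it from reaching the stated rate.

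First, you propose to use Sch\"utt's estimate \emph{in the middle range} for the head $|\bs|_1\le L$ subject to $\sum_\bs m_\bs\le n$. The middle range means $m_\bs\le N_\bs\asymp 2^{|\bs|_1}$, and then the per-block block error in $\ell^{N_\bs}_\infty$ is at best $\asymp 2^{(1-r)|\bs|_1}N_\bs^{-1}=2^{-(r+1)|\bs|_1}$. The $\ell_2$-combination over the whole head is then
$\bigl(\sum_{\ell\le L}\ell^{d-1}2^{-2(r+1)\ell}\bigr)^{1/2}\asymp 1$,
a positive constant independent of $n$; the contribution from $\ell=1,2$ does not decay. To make the head negligible one must put $m_\bs\asymp 2^{L}\gg N_\bs$ bits on \emph{every} block with $|\bs|_1<L$ (still yielding total $\asymp\sum_{\ell\le L}2^L\ell^{d-1}\asymp 2^LL^{d-1}\asymp n$), landing in the third, $m\ge N$ regime of Theorem~\ref{Theorem 32.2}, where $\e_m(B_1^N,\ell_\infty^N)\asymp N^{-1}2^{-m/N}$; this is what drives the low-level block errors to $2^{-r\ell}\,2^{-2^{L-\ell}}$, i.e.~doubly exponentially small.

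Second, your claim that the tail ``is subdominant for $L\asymp\log n$'' is false. With $2^L\asymp n/L^{d-1}$, the crude Nikol'skii bound yields
$L^{(d-1)/2}2^{(1-r)L}\asymp n^{1-r}(\log n)^{(d-1)(r-1/2)}$,
which exceeds the target $n^{-r}(\log n)^{(d-1)(r+1/2)}$ by a factor of about $2^L\asymp n/(\log n)^{d-1}$. No choice of $L$ fixes this within the bit budget: making the tail subdominant by the crude estimate requires $2^L\gtrsim n^{r/(r-1)}$, which blows the budget $\sum_{\ell\le L}N_\ell\ell^{d-1}$ past $n$. The tail must therefore carry bits as well. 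The correct allocation is $m_\ell\asymp 2^{L-\gamma(\ell-L)}$ for $\ell>L$ with some $0<\gamma<r-1$: then $\epsilon_\ell\asymp 2^{-rL+(1-r+\gamma)(\ell-L)}(\ell-L)$, the $\ell_2$-combination over the tail is dominated by $\ell\approx L$ and gives $L^{(d-1)/2}2^{-rL}$, and the total bit count $\sum_{\ell>L}m_\ell\ell^{d-1}\asymp 2^LL^{d-1}$ remains $\asymp n$. Only with both corrections does one arrive at $\e_n\lesssim L^{(d-1)/2}2^{-rL}\asymp n^{-r}(\log n)^{(r+1/2)(d-1)}$.
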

Note that the space $\bB^0_{\infty,2}$ is ``close'' to $L_\infty(\T^d)$, however not comparable. In fact, we have
$$
   \bB^{0}_{\infty,1} \hookrightarrow L_{\infty}(\T^d) \hookrightarrow \bB^{0}_{\infty,\infty}\,. 
$$
\begin{thm}\label{Hlo} For $r>0$ one has
$$
\e_n(\bH^{r}_{\infty},L_1) \gtrsim n^{-r}(\log n)^{(r+\frac{1}{2})(d-1)}.
$$
\end{thm}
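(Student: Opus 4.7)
The plan is to prove the lower bound by a volume comparison argument inside the finite-dimensional space $\Tr(\Delta Q_N)$ of polynomials with frequencies in the $N$-th hyperbolic layer. For fixed $N\in \N$ set
\[
K := 2^{-rN}\Big\{f = \sum_{|\bs|_1=N}t_\bs \,:\, t_\bs \in \Tr(\rho(\bs)),\ \|t_\bs\|_\infty \le 1\Big\}.
\]
First I would verify that $K \subset C_d\,\bH^r_\infty$ for a constant $C_d$ depending only on $d$. Using the equivalent norm $\|f\|_{\bH^r_\infty} \asymp \sup_{\bs'} 2^{r|\bs'|_1}\|A_{\bs'}(f)\|_\infty$ from Subsection 3.3, combined with the uniform $L_\infty$-boundedness of $A_{\bs'}$ in \eqref{2.2.16} and the fact that $\hat{\mathcal A}_{\bs'}$ is supported on $\{\bk:2^{s'_j-2}<|k_j|<2^{s'_j}\}$ --- so $A_{\bs'}$ vanishes on $\Tr(\rho(\bs))$ unless $|s'_j-s_j|\le 1$ for every $j$ --- the required inequality follows by the triangle inequality applied to at most $3^d$ surviving terms.

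Next I would carry out a volume comparison. In the Fourier coefficient space $\R^{2|\Delta Q_N|}$ the set $K$ is a Cartesian product over $\bs$, and by (a variant of) Theorem \ref{T2.5.1} applied to each block,
\[
vol\bigl(B_{\rho(\bs)}(L_\infty)\bigr)^{1/(2|\rho(\bs)|)} \asymp |\rho(\bs)|^{-1/2} \asymp 2^{-N/2},
\]
giving $vol(K)^{1/(2|\Delta Q_N|)}\asymp 2^{-(r+1/2)N}$. On the other hand, Theorem \ref{T2.5.4} with $\Lambda = \Delta Q_N$ yields $vol(B_{\Delta Q_N}(L_1))^{1/(2|\Delta Q_N|)}\asymp |\Delta Q_N|^{-1/2} \asymp (2^N N^{d-1})^{-1/2}$. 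Plugging these into the lower bound of Theorem \ref{Theorem 32.1} and taking $(2|\Delta Q_N|)$-th roots gives
\[
N_\varepsilon(K,L_1)^{1/(2|\Delta Q_N|)} \gtrsim \varepsilon^{-1}\cdot 2^{-rN}\,N^{(d-1)/2}.
\]
Choosing $\varepsilon$ a small constant multiple of $2^{-rN}N^{(d-1)/2}$ produces $\log_2 N_\varepsilon(K,L_1)\ge 2|\Delta Q_N|$, hence $\e_n(K,L_1)\gtrsim 2^{-rN}N^{(d-1)/2}$ for $n\asymp |\Delta Q_N|\asymp 2^N N^{d-1}$. Combining with $K\subset C_d\,\bH^r_\infty$ and the identity $n^{-r}(\log n)^{(r+1/2)(d-1)} \asymp 2^{-rN}N^{(d-1)/2}$ at $n \asymp 2^N N^{d-1}$, monotonicity of $\e_n$ in $n$ delivers the claimed bound for every $n\in \N$.

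The main obstacle is the volume estimate $vol(B_{\rho(\bs)}(L_\infty))^{1/(2|\rho(\bs)|)}\asymp 2^{-N/2}$ invoked above, since $\rho(\bs)$ is a disjoint union of $2^d$ parallelepipeds while Theorem \ref{T2.5.1} is stated for a single one. I would address this either by adapting the Kashin--Temlyakov random-signs construction underlying Theorem \ref{T2.5.1} directly to the set $\rho(\bs)$, or --- more economically --- by replacing each $\rho(\bs)$ in the definition of $K$ by a single symmetric ``corner'' parallelepiped $\Pi^+(\bs)\cup(-\Pi^+(\bs))\subset \rho(\bs)$, which costs only a factor $2^{d-1}$ in the real dimension of each block and therefore preserves all asymptotic rates, since $d$ is fixed.
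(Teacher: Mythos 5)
Your plan reproduces the method the paper itself attributes to Theorems \ref{Hlo} and \ref{Wlo}: build a flat, high-dimensional subset $K$ of a constant multiple of $\bH^r_\infty$ supported on the hyperbolic layer $\Delta Q_N$, bound $\operatorname{vol}(K)$ from below via the $L_\infty$-ball volume estimate of Theorem \ref{T2.5.1}, compare with the $L_1$-ball volume upper bound from Theorem \ref{T2.5.4} (Theorem \ref{T2.5.2} with $p=1$ would also do, for arbitrary $\Lambda$), and read off an entropy bound from the lower half of Theorem \ref{Theorem 32.1}. The inclusion $K\subset C_d\,\bH^r_\infty$ is argued correctly through the $A_{\bs'}$-characterization and \eqref{2.2.16}, the volume exponent bookkeeping is right, and the choice $\varepsilon\asymp 2^{-rN}N^{(d-1)/2}$ with $n\asymp|\Delta Q_N|\asymp 2^N N^{d-1}$ matches the claimed rate. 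So this is indeed the paper's route.

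You have also correctly flagged the one genuine gap: Theorem \ref{T2.5.1} is stated for rectangular blocks $\Pi(\bN)$, while $\rho(\bs)$ is a union of up to $2^d$ quadrant blocks. Your second, ``more economical'' fix does not close it as written, though: $\Pi^+(\bs)\cup(-\Pi^+(\bs))$ is a disjoint union of \emph{two} rectangular blocks, not a single parallelepiped, and it is not translatable onto a set of the form $\Pi(\bN)$; so Theorem \ref{T2.5.1} still cannot be applied to it directly, and restricting to real polynomials to merge the two halves also fails because $\|\mathrm{Re}\,g\|_\infty$ does not dominate $\|g\|_\infty$. The cleanest repair keeps the original $K$ and argues as follows: write $\rho(\bs)=\bigsqcup_{\varepsilon\in\{+,-\}^d}\Pi^\varepsilon(\bs)$ as a union of its quadrant blocks; by subadditivity of $\|\cdot\|_\infty$, the body $B_{\rho(\bs)}(L_\infty)$ contains the Cartesian product $\prod_\varepsilon 2^{-d}\,B_{\Pi^\varepsilon(\bs)}(L_\infty)$. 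Each $\Pi^\varepsilon(\bs)$ is, after translation by an integer vector (equivalently, multiplication by a unimodular exponential, which preserves both $\|\cdot\|_\infty$ and Lebesgue volume in coefficient space), a rectangular block of cardinality $\asymp_d 2^{|\bs|_1}$ to which the Rudin--Shapiro/Theorem \ref{T2.5.1} volume estimate applies. This gives $\bigl(\operatorname{vol} B_{\rho(\bs)}(L_\infty)\bigr)^{1/(2|\rho(\bs)|)}\gtrsim_d 2^{-|\bs|_1/2}$, which is exactly the input your product-over-$\bs$ computation requires; the remainder of the argument then runs as you describe.
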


Theorems \ref{Hup} and $\ref{Hlo}$ were obtained in \cite{TE1} and \cite{TE2}.
Theorem \ref{Hlo} was proved earlier by N.S. Bakhvalov \cite{Bakh4} in the
following special cases: (I) $d=2$; (II) any $d$ but $L_1$ is replaced by $L_2$.
These theorems give the right order of $\e_n(\bH^r_{p},L_q)$ for all
$1\le p,q\le \infty$, except the case $q=\infty$. In this case we have the following result by 
Belinskii \cite{Bel}.

\begin{thm}\label{Bel_H} Let $1<p<\infty$ and $r>\max\{1/p,1/2\}$. Then 
$$
   \Big(\frac{(\log n)^{d-1}}{n}\Big)^r (\log n)^{(d-1)/2} \lesssim \epsilon_n(\bH^r_p,L_\infty) \lesssim \Big(\frac{(\log n)^{d-1}}{n}\Big)^r(\log n)^{d/2}\,.
$$
\end{thm}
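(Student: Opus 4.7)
My plan is to treat the upper and lower bound separately, each by the block decomposition technique adapted to the $L_\infty$ norm.

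For the upper bound, I would start from the dyadic characterization $\|\delta_{\bs}(f)\|_p \lesssim 2^{-r|\bs|_1}$ for $f \in \bH^r_p$ (cf.\ \eqref{DefH}) and fix a cutoff $N$ with $2^N N^{d-1} \asymp n$. Splitting the Fourier series at $|\bs|_1 = N$, the tail $\sum_{|\bs|_1 > N}\delta_{\bs}(f)$ is discarded and its $L_\infty$-norm is controlled by the Nikol'skii inequality (Theorem \ref{T2.4.5}): $\|\delta_{\bs}(f)\|_\infty \lesssim 2^{|\bs|_1/p}\|\delta_{\bs}(f)\|_p \lesssim 2^{|\bs|_1(1/p-r)}$, which sums (using $r > 1/p$) to $\lesssim 2^{N(1/p-r)} N^{d-1}$, well within the target. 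For the main part, each $\delta_{\bs}(f)$ lies (after Nikol'skii correction when $p < 2$) in $C\, 2^{-r|\bs|_1} B_2(\Tr(\rho(\bs)))$, and I apply Theorem \ref{Theorem 32.3} with target $L_\infty$. The Gaussian width of $B_2(\Tr(\rho(\bs)))$ in $L_\infty$ is $M_X \asymp \sqrt{|\bs|_1}$ by the Salem–Zygmund estimate for random trigonometric polynomials supported on $\rho(\bs)$, giving
$$
\epsilon_{n_{\bs}}\bigl(2^{-r|\bs|_1}B_2(\Tr(\rho(\bs))), L_\infty\bigr) \lesssim 2^{-r|\bs|_1} \sqrt{|\bs|_1 \cdot 2^{|\bs|_1}/n_{\bs}}.
$$
I then allocate $n_{\bs}$ across $|\bs|_1 \le N$ so that $\sum n_{\bs} \lesssim n$ and combine the approximations not by a naive triangle inequality in $L_\infty$ but by viewing the residuals on distinct frequency blocks as orthogonal contributions, controlling the $L_\infty$-norm of the total residual via the Gaussian supremum on $\Tr(Q_N)$, which produces only a $\sqrt{\log|Q_N|} \asymp \sqrt{N}$ factor and yields the target $((\log n)^{d-1}/n)^r (\log n)^{d/2}$.

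For the lower bound I would construct an $\epsilon$-separated family in $\bH^r_p$ of cardinality at least $2^n$. For each $\bs$ with $|\bs|_1 = N$ choose a Rudin–Shapiro polynomial $R_{\bs}$ on $\rho(\bs)$ (paragraph 4d of Subsection 2.2) with $\|R_{\bs}\|_\infty \lesssim 2^{|\bs|_1/2}$ and set
$$
f_{\omega}(\bx) \;=\; c\sum_{|\bs|_1 = N}\;\sum_{\bk\in\rho(\bs)} 2^{-N(r+1/2)} \omega_{\bs,\bk}\,\epsilon_{\bs,\bk}\, e^{i(\bk,\bx)},
$$
where $\epsilon_{\bs,\bk}\in\{\pm 1\}$ are fixed Rudin–Shapiro signs and $\omega \in \{\pm 1\}^{|S|}$ with $|S| = |\Delta Q_N| \asymp 2^N N^{d-1}$. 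With $c$ small enough, Khintchine/Salem–Zygmund concentration (plus a union bound over the $\asymp N^{d-1}$ blocks) ensures $f_{\omega} \in \bH^r_p$ for a majority of $\omega$. A Gilbert–Varshamov code inside this majority yields $2^{c|S|}$ codewords $\omega$ with pairwise Hamming distance $\ge |S|/4$, and Parseval's identity gives
$$
\|f_{\omega}-f_{\omega'}\|_\infty \;\ge\; \|f_{\omega}-f_{\omega'}\|_2 \;\gtrsim\; 2^{-rN}\sqrt{|S|} \;\asymp\; 2^{-rN} N^{(d-1)/2}(\log n)^{1/2}\text{-free}.
$$
Setting $n := c|S| \asymp 2^N N^{d-1}$, so that $\log n \asymp N$, this realises the claimed lower bound $((\log n)^{d-1}/n)^r (\log n)^{(d-1)/2}$.

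The hard part will be the upper bound: directly summing block $L_\infty$-errors via the triangle inequality over the $\asymp n^{d-1}$ blocks at level $N$ costs a prohibitive polynomial-in-$N$ factor, so one genuinely must combine the block entropy estimates through a joint sub-Gaussian chaining/concentration argument for the residual as a random trigonometric polynomial on $\T^d$, exploiting that its $L_\infty$-norm behaves like $\sqrt{N}$ rather than like $N^{(d-1)}$. This is the subtle step where the $(\log n)^{d/2}$ exponent in the upper bound (in place of the lower-bound exponent $(d-1)/2$) enters, and it is the reason why the gap between upper and lower bound in Theorem \ref{Bel_H} has not been closed by these methods.
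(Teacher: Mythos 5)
Your upper bound route via Theorem \ref{Theorem 32.3} is, at a high level, the method the paper credits to Belinskii (see the remark after~\eqref{36.4'}), but you make it harder than it is by applying the dual Sudakov estimate block by block over the $\asymp N^{d-1}$ sets $\rho(\bs)$ with $|\bs|_1=N$ and then agonising over how to combine them. The cleaner move is to apply Theorem \ref{Theorem 32.3} to the whole hyperbolic layer $\Delta Q_\ell$ at once: the $L_\infty$-mean width of $B_2(\Tr(\Delta Q_\ell))$ is still $M_X\asymp\sqrt{\log|\Delta Q_\ell|}\asymp\sqrt{\ell}$ by Salem--Zygmund, so the union over $\rho(\bs)$'s inside a layer costs nothing, and you then combine only the $\asymp N\asymp\log n$ layers $\ell$ by the usual entropy-budget allocation. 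That allocation over $\asymp\log n$ levels is the genuine source of the extra $(\log n)^{1/2}$ that turns the exponent $(d-1)/2$ into $d/2$; your claim that a ``joint sub-Gaussian chaining over the $n^{d-1}$ blocks at level $N$'' is the subtle step misidentifies where the loss occurs. For the lower bound, the paper obtains it for free: since $\bH^r_\infty\subset\bH^r_p$ for $p<\infty$ and $\|\cdot\|_\infty\ge\|\cdot\|_1$, one has $\epsilon_n(\bH^r_p,L_\infty)\ge\epsilon_n(\bH^r_\infty,L_1)$, and Theorem \ref{Hlo} gives the claimed bound directly. Your Gilbert--Varshamov construction is a valid alternative for $1<p<\infty$ (Khintchine keeps each block in $L_p$ for $p<\infty$, so your class-membership step is fine), but the displayed separation calculation is wrong: with the scaling $2^{-N(r+1/2)}$ and Hamming distance $\gtrsim|S|$ you get $\|f_\omega-f_{\omega'}\|_2\gtrsim 2^{-N(r+1/2)}\sqrt{|S|}\asymp 2^{-rN}N^{(d-1)/2}$, \emph{not} $2^{-rN}\sqrt{|S|}$, which differs by a spurious $2^{N/2}$; the final answer you quote is correct but the intermediate line is not, and the passage ``$(\log n)^{1/2}$-free'' should be deleted.
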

 The following theorem shows that in the case $d=2$ the upper bounds in Theorem \ref{Bel_H} are sharp (in the sense of order). 
 
 \begin{thm}\label{THlo2} In the case $d=2$ for any $1\le p\le \infty$, $r>1/p$ we have
 $$
 \e_n(\bH^r_p,L_\infty) \asymp n^{-r}(\log n)^{r+1}.
 $$
 \end{thm}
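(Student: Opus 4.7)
The plan is to establish matching upper and lower bounds for $\e_n(\bH^r_p,L_\infty)$, exploiting the monotonicity $\bH^r_\infty\subset \bH^r_p\subset \bH^r_1$ coming from $L_\infty\hookrightarrow L_p\hookrightarrow L_1$ on $\T^2$. This reduces the lower bound to the case $p=\infty$, while the upper bound is most delicate for $p=1$; the intermediate range $1<p<\infty$ is already contained in Theorem \ref{Bel_H} specialized to $d=2$ (which gives $\lesssim (n^{-1}\log n)^{r}(\log n)^{d/2}= n^{-r}(\log n)^{r+1}$).

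For the lower bound, the plan is to pack a very large family of well-separated functions inside $\bH^r_\infty$ using the two-dimensional Small Ball Inequality \eqref{2.6.3}; this is the only place where the restriction $d=2$ enters essentially. Fix an even $n$ and the index set $Y_n=\{\bs=(2l_1,2l_2):l_1+l_2=n/2\}$, of cardinality $\asymp n$. By standard volume estimates for the $L_\infty$- and $L_1$-unit balls in $\Tr(\rho(\bs))$ (of the type of Theorems \ref{T2.5.1} and \ref{T2.5.4}, for which both per-dimension volumes are $\asymp |\rho(\bs)|^{-1/2}$), one can produce for each $\bs\in Y_n$ a family $\mathcal G_\bs\subset \Tr(\rho(\bs))$ of cardinality $\ge 2^{c_2|\rho(\bs)|}$ with $\|g\|_\infty\le 1$ for every $g\in \mathcal G_\bs$ and $\|g-g'\|_1\ge c$ for all distinct $g,g'\in \mathcal G_\bs$. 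For every selection $\mathbf c=(g_\bs)_{\bs\in Y_n}\in\prod_\bs \mathcal G_\bs$ form
\[
  f_{\mathbf c} \ :=\ c_0\,2^{-rn}\sum_{\bs\in Y_n}g_\bs;
\]
then $\|A_\bs(f_{\mathbf c})\|_\infty\lesssim 2^{-rn}$ for each relevant $\bs$, so for a sufficiently small absolute constant $c_0>0$ one has $f_{\mathbf c}\in \bH^r_\infty$. Applying \eqref{2.6.3} to $f_{\mathbf c}-f_{\mathbf c'}$ yields
\[
  \|f_{\mathbf c}-f_{\mathbf c'}\|_\infty \ \gtrsim\ 2^{-rn}\sum_{\bs\in Y_n}\|g_\bs-g'_\bs\|_1\ \gtrsim\ 2^{-rn}\cdot\#\{\bs\in Y_n:g_\bs\neq g'_\bs\}.
\]
A Varshamov--Gilbert-type thinning of the product code $\prod_\bs \mathcal G_\bs$ (of total size $\ge 2^{c_2\,n\cdot 2^n}$) extracts a subfamily of $2^{c_1\,n\,2^n}$ configurations whose pairwise Hamming distance is $\gtrsim n$; the corresponding functions are pairwise $L_\infty$-separated by $\gtrsim 2^{-rn}n$. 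Reinterpreting this as an entropy bound gives $\e_k(\bH^r_\infty,L_\infty)\gtrsim 2^{-rn}n$ at $k\asymp n\,2^n$, i.e., $\gtrsim k^{-r}(\log k)^{r+1}$.

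For the upper bound at $p=\infty$, the plan is to combine the dyadic block decomposition $f=\sum_\bs A_\bs(f)$ with the finite-dimensional entropy estimate $\e_{k_\bs}(B_{\rho(\bs)}(L_\infty),L_\infty)\lesssim 2^{-k_\bs/\dim_\bs}$ from Corollary \ref{Corollary 32.1}, where $\dim_\bs\asymp 2^{|\bs|_1}$. For $f\in \bH^r_\infty$ one has $\|A_\bs(f)\|_\infty\lesssim 2^{-r|\bs|_1}$, so choosing $k_\bs:=\lceil r(n-|\bs|_1)\dim_\bs+\dim_\bs\log(1+n)\rceil$ for $|\bs|_1\le n$ makes the block-wise error of order $2^{-rn}/(1+n)$, summable over the $O(n^2)$ admissible $\bs$ to $\lesssim 2^{-rn}n$; the tail $\sum_{|\bs|_1>n}\|A_\bs(f)\|_\infty\lesssim \sum_{|\bs|_1>n}2^{-r|\bs|_1}\asymp 2^{-rn}n$ is absorbed directly, which is where $d=2$ contributes exactly one extra logarithmic factor. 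The total budget $\sum_{|\bs|_1\le n}k_\bs\asymp n\,2^n$ then yields $\e_k(\bH^r_\infty,L_\infty)\lesssim 2^{-rn}n\asymp k^{-r}(\log k)^{r+1}$ at $k\asymp n\,2^n$. The endpoint $p=1$ (where $r>1$ is needed) is more delicate: the naive Nikol'skii inequality $\|A_\bs(f)\|_\infty\lesssim 2^{(1-r)|\bs|_1}$ for $f\in \bH^r_1$ makes the tail contribution $2^{-(r-1)n}n$, too large by a factor of $2^n$. One closes this gap by following the Kashin--Temlyakov approach from the proof of Theorem \ref{KaTentr2} for $\bW^r_1$, coupling the dyadic-block decomposition with the sharper volume estimate $(\operatorname{vol} B_{\Delta Q_n}(L_\infty))^{1/N}\asymp (2^n n^2)^{-1/2}$ from Theorem \ref{T2.5.5} to generate the extra logarithmic gain.

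The main obstacle is the lower bound. The crux is to produce $\asymp n\,2^n$ independent bits of information in a collection of functions inside $\bH^r_\infty$ while preserving $L_\infty$-separation of order $2^{-rn}n$: a naive sign construction $f_{\mathbf c}=2^{-rn}\sum_{\bs\in Y_n}\eps_\bs t_\bs$ with a single fixed $t_\bs\in\Tr(\rho(\bs))$ delivers only $\asymp n$ bits and the corresponding entropy bound is vacuous at the scale $k\asymp n\,2^n$. The correct remedy is to vary $g_\bs$ within a full $L_1$-packing of $B_{\rho(\bs)}(L_\infty)$ of cardinality exponential in $|\rho(\bs)|=O(2^n)$ per block; only through this per-block combinatorial richness does the counting match the required entropy scale. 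The restriction to $d=2$ is precisely the limitation of the Small Ball Inequality \eqref{2.6.3}; the higher-dimensional version \eqref{2.6.4} is the content of Open problem 2.5 and would immediately yield the analog of Theorem \ref{THlo2} in $d\ge 3$.
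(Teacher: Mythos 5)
Your plan matches the paper's approach on both sides: the lower bound via the two-dimensional Small Ball Inequality \eqref{2.6.3} from \cite{TE3} (which is precisely what the paper cites for this bound), and the upper bound via block-wise discretization and finite-dimensional entropy estimates. The lower bound is correct: packing each $\Tr(\rho(\bs))$ with an exponential-size family that is $L_1$-separated and $L_\infty$-bounded, then separating in $L_\infty$ via \eqref{2.6.3} applied to differences, is the right construction, and you are right that per-block combinatorial richness (not merely $\pm$-signs) is what produces the needed $\asymp n2^n$ bits.

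The upper bound, however, has a quantitative flaw in the budget allocation. With $k_\bs=\lceil r(n-|\bs|_1)\dim_\bs+\dim_\bs\log(1+n)\rceil$ the total budget is
\[
\sum_{|\bs|_1\le n}k_\bs\;\asymp\;\sum_{m\le n}(m+1)2^m\bigl[r(n-m)+\log(1+n)\bigr]\;\asymp\;n2^n\log n,
\]
not $n2^n$: the uniform $\dim_\bs\log(1+n)$ buffer already costs $\asymp n2^n\log n$ on the top layer $|\bs|_1\approx n$, where $\sum\dim_\bs\asymp n2^n$. This puts the resulting estimate off by a factor $\asymp(\log\log m)^r$. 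To fix it, allocate no buffer to the top layer (the trivial bound $\|A_\bs(f)\|_\infty\lesssim 2^{-rn}$ summed over $\asymp n$ blocks at $|\bs|_1=n$ already contributes only $\lesssim n2^{-rn}$), and for $|\bs|_1<n$ take a buffer that decays as $|\bs|_1\to n$, e.g.\ $k_\bs=\dim_\bs\bigl[r(n-|\bs|_1)+2\log_2(1+n-|\bs|_1)+c\bigr]$. Then the block errors become $\asymp 2^{-rn}/(1+n-|\bs|_1)^2$, which still sums to $\lesssim n2^{-rn}$, while the budget $\sum_m (m+1)2^m[r(n-m)+2\log_2(1+n-m)+c]\asymp n2^n$ is as required.

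Two smaller issues: your case-splitting for the upper bound leaves the range $2<p<\infty$, $1/p<r\le 1/2$ uncovered (Theorem \ref{Bel_H} needs $r>\max\{1/p,1/2\}$ and the chain $\bH^r_p\subset\bH^r_1$ requires $r>1$), so you would need the direct discretization argument there too; and the $p=1$ endpoint, which you correctly identify as the delicate one because of the $2^{(1-r)|\bs|_1}$ loss from Nikol'skii's inequality, is only sketched. These are genuine omissions rather than mere stylistic gaps, even though the paper's own text is also terse about the upper bound.
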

One can prove the upper bounds in Theorem \ref{THlo2} using the discretization method based directly on discretization and on Theorem 6.3. The difficult part of Theorem \ref{THlo2} is the lower bound for $p=\infty$. This lower bound was proved in \cite{TE3}.

In the same way as we argued before formulating Conjecture \ref{small_ball_conj} we can argue in support of the following conjecture. 
 \begin{conj}\label{CHe} Let $d\ge 3$, $1\le p\le \infty$, $r>1/p$. Then
$$
 \e_n(\bH^r_p,L_\infty) \asymp n^{-r}(\log n)^{(d-1)(r+1/2)+1/2}.
 $$
 \end{conj}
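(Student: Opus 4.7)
In the main range $1 < p < \infty$ and $r > \max\{1/p, 1/2\}$, Theorem \ref{Bel_H} already delivers the conjectured upper estimate: a direct expansion gives
$$
\Big(\frac{(\log n)^{d-1}}{n}\Big)^r (\log n)^{d/2}
= n^{-r}(\log n)^{(d-1)r + d/2}
= n^{-r}(\log n)^{(d-1)(r+1/2)+1/2},
$$
so the conjectured order is an exact match with Belinskii's bound. To cover the endpoint cases $p=1, \infty$ and the full range $r > 1/p$, I would revisit the proof of Theorem \ref{Bel_H}: decompose $f \in \bH^r_p$ dyadically as $f = \sum_\bs A_\bs(f)$, cut off the tail $|\bs|_1 > n$ using the Nikol'skii-type inequality (Theorem \ref{T2.4.7}) to pass from $L_p$ to $L_\infty$, and estimate the entropy of the finite dimensional balls of polynomials in $\Tr(\Delta Q_l)$ via Theorem \ref{Theorem 32.3}, with the mean width $M_X$ of the $L_\infty$-ball controlled through Lemma \ref{L2.3.1}. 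Summing over $\ell \leq n$ produces the extra factor $(\log n)^{d/2}$: the $(d-1)/2$ comes from the $\asymp n^{d-1}$ dyadic sub-blocks in a single hyperbolic layer, while the extra $1/2$ comes from the $(d/n)^{1/2}$ factor in Theorem \ref{Theorem 32.3}.

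\textbf{Lower bound.} Since $\bH^r_\infty \hookrightarrow \bH^r_p$ for every $1 \le p \le \infty$, the whole problem reduces to proving
$$
\e_n(\bH^r_\infty, L_\infty) \gtrsim n^{-r}(\log n)^{(d-1)(r+1/2)+1/2}.
$$
I would model the argument on the $d=2$ proof of Theorem \ref{THlo2} from \cite{TE3}. The decisive step is to establish a $d$-dimensional analogue of the second volume estimate in Theorem \ref{T2.5.5}, namely
$$
\big(vol(B^\perp_{\Delta Q_N}(L_1))\big)^{1/(2|\Delta Q_N|)} \asymp 2^{-N/2} N^{-(d-2)/2},
$$
by constructing Riesz products (in the spirit of Lemma \ref{L2.6.1}) indexed by a maximal ``anti-chain'' such as $Y^d_N \subset \{|\bs|_1 = N\}$ and controlling their $L_\infty$-norms via the Small Ball Inequality \eqref{2.6.4}. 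Once such a volume lower bound is available, a standard packing argument yields a family of $\exp(c 2^N N^{d-1})$ trigonometric polynomials supported in $\Delta Q_N$, mutually $\asymp 2^{-rN}N^{(d-2)/2}$-separated in $L_\infty$, and all belonging to a constant multiple of $\bH^r_\infty$; choosing $N$ such that $2^N N^{d-1} \asymp n$ then produces the claimed lower bound with the sharp exponent $(d-1)(r+1/2) + 1/2$.

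\textbf{The main obstacle.} The whole lower-bound scheme rests on the $d$-dimensional Small Ball Inequality \eqref{2.6.4} for the trigonometric system, which is Open problem 2.5 and is not known for any $d \ge 3$. Using only the trivial bound derived from Parseval, Cauchy-Schwarz and monotonicity of the $L_p$-norms -- which replaces $n^{(d-2)/2}$ by $n^{(d-1)/2}$ in \eqref{2.6.4} -- the above scheme recovers merely the exponent $(d-1)(r+1/2)$, falling short of the conjectured one by the critical $+1/2$. The recent Haar-system progress of \cite{BL, BLV} giving exponent $(d-1)/2 - \delta(d)$ in \eqref{2.6.5} would, even if transferred to the trigonometric setting, yield only a sub-optimal exponent $(d-1)(r+1/2) + (1/2 - \delta(d))$. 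Thus, exactly as for the companion Conjecture \ref{small_ball_conj} for $\bW^r_p$, a proof of Conjecture \ref{CHe} in dimension $d \ge 3$ is, in the present state of knowledge, equivalent to a resolution of the small-ball problem for the trigonometric system.
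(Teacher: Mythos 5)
The statement you are asked to prove is not a theorem in the paper but a conjecture, explicitly labelled as such and left unproven; there is no target proof to compare against. Your reading of the situation is essentially the authors' own. The upper bound (in the range $1<p<\infty$, $r>\max\{1/p,1/2\}$) is exactly Belinskii's estimate in Theorem \ref{Bel_H}; the lower bound reduces, via the monotonicity $\bH^r_\infty\subset\bH^r_p$ of the unit balls, to the case $p=\infty$; and the missing ingredient for the lower bound when $d\ge 3$ is the $d$-dimensional Small Ball Inequality for the trigonometric system \eqref{2.6.4}, which is Open Problem 2.5. The paper itself says that Conjecture \ref{CHe} is argued ``in the same way'' as Conjecture \ref{small_ball_conj}, where it is noted that the lower bound would follow from \eqref{2.6.4}. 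So your identification of the obstruction is correct, and the scheme you sketch (a $d$-dimensional analogue of the $L_1^\perp$ volume estimate from Theorem \ref{T2.5.5}, obtained via Riesz products and the Small Ball Inequality, followed by a packing argument) is the natural generalization of the $d=2$ proof of Theorem \ref{THlo2}.

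Two quantitative slips should be corrected, and one claim is overstated. First, Bilyk--Lacey--Vagharshakyan replace the trivial exponent $(d-1)/2$ in \eqref{2.6.5} by $(d-1)/2-\delta(d)$; the gain over the trivial bound is $\delta(d)$, so the entropy lower bound it would produce (were it transferable to the trigonometric setting) is $(d-1)(r+1/2)+\delta(d)$, not $(d-1)(r+1/2)+(1/2-\delta(d))$ as you wrote. This is exactly the form the paper records for $d_m(\bW^1_p,L_\infty)\gtrsim m^{-1}(\log m)^{d-1+\delta(d)}$. Second, with $n\asymp 2^N N^{d-1}$ and $\log n\asymp N$, the separation producing the conjectured rate must be $\asymp 2^{-rN}N^{d/2}$, not $\asymp 2^{-rN}N^{(d-2)/2}$: indeed $n^{-r}(\log n)^{(d-1)(r+1/2)+1/2}\asymp 2^{-rN}N^{-r(d-1)}\cdot N^{(d-1)(r+1/2)+1/2}=2^{-rN}N^{d/2}$, and for $d=2$ this gives $2^{-rN}N$, recovering the exponent $r+1$ of Theorem \ref{THlo2}; your expression is short by a factor of $N$. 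Third, your claim that the remaining parameter region (the endpoints $p\in\{1,\infty\}$ and the small-smoothness range $1/p<r\le 1/2$ for $p\ge 2$) can be handled by ``revisiting the proof of Theorem \ref{Bel_H}'' is stated without justification: for $p=1$ the embedding $\bH^r_p\subset\bH^r_1$ goes the wrong way for an upper bound, and Theorem \ref{Hup} is an estimate in $\bB^0_{\infty,2}$, not $L_\infty$. Even the upper bound in the full conjectured range is not explicitly recorded in the paper and would require its own argument.
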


We discussed above results on the right order of decay of the entropy numbers.
Clearly, each order relation $\asymp$ is a combination of the upper bound
$\lesssim$ and the matching lower bound $\gtrsim$. We now briefly discuss
methods that were used for proving upper and lower bounds. The upper bounds in
Theorems \ref{Hup} and \ref{thm:ent_pq} were proved by the standard method of
reduction
by discretization to estimates of the entropy numbers of finite-dimensional
sets.   Theorem \ref{Theorem 32.2} plays a key role in this method. It is clear
from the above discussion that it was sufficient to prove the lower bound in
(\ref{36.2}) in the case $q=1$. The proof of this lower bound (see Theorems
\ref{Hlo} and \ref{Wlo}) is more difficult and is based on nontrivial estimates
of the volumes of the sets of Fourier coefficients of bounded trigonometric
polynomials. Theorem \ref{T2.5.1} plays a key role in this method. 

We continue with the $\Brpt$ classes. A non-periodic version of the following result has been proved by 
Vyb\'iral \cite[Thms.\ 3.19, 4.11]{Vyb06}.

\begin{thm}\label{thm:ent_Brpt} Let $0<p,\theta \leq \infty$ and $1<q<\infty$.\\
{\em (i)} If $r>(1/p-1/q)_+$. Then 
$$    \e_n(\Brpt,L_q) \gtrsim n^{-r}(\log n)^{(d-1)(r+1/2-1/\theta)_+}\,.
$$
{\em (ii)} If $r>(1/p-1/q)_+ + 1/\min\{p,q,\theta\} - 1/\min\{p,q\} + 1/q-
1/\max\{q,2\}$ then
$$
    \e_n(\Brpt,L_q) \lesssim n^{-r}(\log n)^{(d-1)(r+1/2-1/\theta)}\,.
$$
{\em (iii)} If $r>\max\{1/p-1/2, 1/\theta-1/2\}$ then
$$
    \e_n(\Brpt,L_2) \asymp n^{-r}(\log n)^{(d-1)(r+1/2-1/\theta)}\,.
$$
\end{thm}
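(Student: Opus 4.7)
The strategy is the standard two-step reduction: first discretize the unit ball of $\Brpt$ in $L_q$ via the dyadic block characterization, then apply Sch\"utt's finite-dimensional entropy result (Theorem \ref{Theorem 32.2}). By \eqref{DefB}, or by its atomic counterpart Proposition \ref{atomic} in the quasi-Banach range $\min\{p,\theta\}<1$, the unit ball of $\Brpt$ is, up to a constant, the set
$$
U := \Big\{\, f = \sum_{\bs \in \N_0^d} f_\bs \,:\, f_\bs \in \Tr(\rho(\bs)),\ \Big(\sum_\bs \big(2^{r|\bs|_1}\|f_\bs\|_p\big)^\theta\Big)^{1/\theta} \le 1 \,\Big\}.
$$
Using the Marcinkiewicz-type discretization (Theorem \ref{T2.4.11}, valid for $1<q<\infty$) on each block, the normed space $(\Tr(\rho(\bs)),\|\cdot\|_q)$ is isomorphic, with constants depending only on $d$, to a weighted $\ell_q^{N_\bs}$ with $N_\bs \asymp 2^{|\bs|_1}$, and similarly for the $L_p$-norm one arrives at a $\ell_p^{N_\bs}$-ball of suitable radius. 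After these reductions the problem becomes a problem on a countable product of finite-dimensional $\ell_p$-balls.

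For the upper bound in (ii) I would split $\bs$ into three zones for a given entropy budget $2^k$: a low-frequency zone $|\bs|_1 \le n_0$, where an $\eps$-net for the full coefficient ball is used, an intermediate zone $n_0 < |\bs|_1 \le n_1$, where one spends $k_\bs$ bits and invokes the middle regime of Theorem \ref{Theorem 32.2} to obtain a cost $\asymp 2^{-r|\bs|_1}[\log(N_\bs/k_\bs+1)/k_\bs]^{1/p-1/q}$ (with $L_p\to L_q$ conversion handled by the Nikol'skii inequality, Theorem \ref{T2.4.5}, when $p<q$), and a tail zone $|\bs|_1 > n_1$, which is controlled directly from the $\Brpt$-norm. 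Optimizing the allocation $\{k_\bs\}$ by a Lagrange-type argument and summing over the $\asymp n^{d-1}$ blocks in each hyperbolic layer yields the factor $(\log n)^{(d-1)(r+1/2-1/\theta)}$. The technical assumption on $r$ in (ii) is precisely what is needed for the tail zone to be geometrically summable in the worst interplay of the three parameters $p,q,\theta$.

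For the lower bound in (i) I would work inside a single hyperbolic layer $\theta_n := \{\bs : |\bs|_1 = n\}$ with $|\theta_n| \asymp n^{d-1}$. After discretization, the set
$$
V_n := \Big\{\, f = \sum_{\bs \in \theta_n} f_\bs \,:\, f_\bs \in \Tr(\rho(\bs)),\ 2^{rn}\|f_\bs\|_p \le C\,|\theta_n|^{-1/\theta} \,\Big\} \subset c\,\Brpt
$$
contains in $L_q$ a Cartesian product of $|\theta_n|$ copies of an $\ell_p^{N}$-ball of radius $\asymp 2^{-rn}|\theta_n|^{-1/\theta}$ (adjusted by the Nikol'skii factor when $p<q$), with $N \asymp 2^n$. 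Choosing $n$ so that the total dimension $|\theta_n|N \asymp 2^n n^{d-1}$ matches the prescribed entropy level $k$, i.e.\ $2^n n^{d-1} \asymp k/\log k$, and invoking the lower bound in the middle regime of Theorem \ref{Theorem 32.2} per block (then multiplied across the product) gives the stated lower bound. The exponent $(r+1/2-1/\theta)_+$ arises from combining the factor $2^{-rn}$, the normalization $|\theta_n|^{-1/\theta}$, and the Sch\"utt factor; when $1/\theta > r+1/2$ the single layer is no longer the worst case and one has to fall back on the scalar block estimate (essentially Theorem \ref{thm:ent_pq} applied to a single $\rho(\bs)$), which explains the positive-part truncation.

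For (iii), the lower bound is already furnished by (i); the upper bound is obtained by specializing the three-zone argument of (ii) to $q=2$, where Parseval replaces Marcinkiewicz and there is no loss between adjacent blocks. The assumption $r>\max\{1/p-1/2,1/\theta-1/2\}$ is precisely the condition under which the intermediate and tail zones are both summable and under which the positive part in $(r+1/2-1/\theta)_+$ drops, so that the $L_2$-upper bound matches the lower bound from (i). The main obstacle in the whole argument is the fine tuning of the bit allocation $\{k_\bs\}$ in (ii): the correct $\log$-exponent is extremely sensitive to the three parameters $p,q,\theta$, and for $q\neq 2$ the discretization through Theorem \ref{T2.4.11} introduces a $(\log n)^?$-slack between the middle-zone cost and the block norms that is exactly what prevents (ii) and (i) from matching outside the Hilbert case.
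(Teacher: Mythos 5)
The paper does not give a self-contained proof of Theorem~\ref{thm:ent_Brpt}; it explicitly cites Vyb\'iral \cite{Vyb06} (for the non-periodic analogue, Thms.~3.19, 4.11) and records that his proof proceeds through \emph{wavelet isomorphisms}, finite-dimensional entropy estimates, and \emph{complex interpolation}, supplemented by Dinh D\~ung's \cite{Di01} treatment of the sub-case $\theta\ge\min\{q,2\}$. Your proposal takes a different, purely Fourier-side route (block-wise Marcinkiewicz discretization plus a three-zone bit-allocation optimization against Sch\"utt's estimate), which is closer in spirit to the proofs the survey records for the $\bW$ and $\bH$ cases in \cite{TE1,TE2,Bel}. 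That is a legitimate competing route and in the regimes $p\ge 2,\ \theta\ge 2$ (where the paper observes the $\Brpt$ bound can be reduced to Theorem~\ref{thm:ent_pq}) it is essentially equivalent to the known argument; but outside that regime there are two genuine gaps.

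First, the upper bound (ii). Your explanation of the smoothness threshold $r>(1/p-1/q)_+ + 1/\min\{p,q,\theta\} - 1/\min\{p,q\} + 1/q - 1/\max\{q,2\}$ as ``geometric summability of the tail zone'' does not account for its precise structure. A straight triangle-inequality tail bound after Nikol'skii conversion would only require $r>(1/p-1/q)_+$, and the block-wise Sch\"utt cost cannot by itself produce the correction $1/q - 1/\max\{q,2\}$ (which is nonzero exactly for $q<2$) together with the $1/\min\{p,q,\theta\}$ term. In Vyb\'iral's argument these arise because one interpolates the sequence-space identity $\Brpt \hookrightarrow L_q$ between extremal positions and tracks the entropy numbers through interpolation; without an interpolation step your three-zone optimization will in general land on a different, more restrictive condition on $r$, or on a weaker $\log$-exponent. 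The case $q\geq 2$ with $\theta\geq\min\{p,2\}$ is the one your argument can be expected to reproduce, and that matches the older literature; the other parameter corners are not reachable in this way.

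Second, the lower bound (i). The unit ball of $\Brpt$ does contain the block-cube $V_n$ you describe, and the entropy of $V_n$ in $L_2$ can indeed be handled as a product of Sch\"utt-type finite-dimensional problems via Parseval. But the $L_q$-norm of $\sum_{\bs\in\theta_n} f_\bs$ is, by Littlewood--Paley, $\bigl\|(\sum_\bs |f_\bs|^2)^{1/2}\bigr\|_q$, which is not a product norm for $q\neq 2$. For $q\ge 2$ you can salvage the argument by $\|\cdot\|_q\gtrsim\|\cdot\|_2$, which transfers the $L_2$ lower bound upward in $q$. For $1<q<2$ this inequality runs the wrong way and your Cartesian-product reduction is no longer justified as stated; you need either the volume-comparison machinery of Subsection~\ref{vol_est} (Theorems~\ref{T2.5.1},~\ref{T2.5.4}) applied directly at level $q$, as is done for the $\bH$ lower bounds in Theorem~\ref{Hlo} in $L_1$, or a separate lower bound in a smaller $L_{q_0}$ that you then transport upward. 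You do not mention either, and as written the case $1<q<2$ of (i) is unproved. (Note also that your fallback ``scalar block estimate'' for the regime $1/\theta>r+1/2$ needs a $q$-robust block lower bound, which is again a $1<q<2$ issue.) Finally, in your discretization step you invoke Theorem~\ref{T2.4.11} only for the rectangular blocks $\rho(\bs)$ rather than for the full hyperbolic cross, which is fine (the survey's no-go on Marcinkiewicz for hyperbolic crosses, Theorem~\ref{T2.5.7}, does not apply there), but it is worth noting explicitly that this avoids the obstruction discussed in Subsection~\ref{vol_est}.
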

Note, that the third index $\theta$ influences the power of the logarithm. There is an interesting effect of small smoothness observed by Vyb\'iral in \cite[Thm.\ 4.11]{Vyb06}. The condition in Theorem 
\ref{thm:ent_Brpt}, (iii) seems to be crucial in the following sense. If $\theta < p$ and $1/p-1/2 < r \leq 1/\theta-1/2$ then for any $\varepsilon>0$ there is 
$c_{\varepsilon}>0$ such that
\begin{equation}\label{entr_ss}
   \e_n(\Brpt,L_2) \leq c_{\varepsilon}n^{-r}(\log n)^{\varepsilon}\,.
\end{equation}
The influence of the small mixed smoothness $r$ seems to be dominated by the influence of the fine parameter $\theta$. 
The correct order is not known, see also Open problem 6.4 below. 

Note, that if $r>1/\theta-1/2$ the following stronger lower bound
$$
\epsilon_n(\bB^r_{\infty,\theta},L_1)  \gtrsim n^{-r}(\log
n)^{(d-1)(r+1/2-1/\theta)}
$$
follows directly from the proof of Theorem \ref{Hlo} in \cite{TE2} and the
inequality: for $f\in \Tr(\Delta Q_l)$
$$
\|f\|_{\bB^r_{p,\theta}} \lesssim l^{(d-1)/\theta}\|f\|_{\bH^r_p}.
$$
In the case $p\ge2$ and $\theta\ge 2$ the corresponding upper bound for the  $ \e_n(\Brpt,L_q)$ can be derived from
Theorem \ref{thm:ent_pq} with the help of inequalities
 $$
 \|\sum_{|\bs|_1=l}\delta_\bs(f)\|_p \lesssim \left(\sum_{|\bs|_1=l}\|\delta_\bs(f)\|_p^2\right)^{1/2} \lesssim
\left(\sum_{|\bs|_1=l}\|\delta_\bs(f)\|_p^\theta\right)^{1/\theta}l^{(d-1)(1/2-1/\theta)}.
 $$

Theorem \ref{thm:ent_Brpt} has
been proved in \cite{Vyb06} in a more general and non-periodic situation. The
author used discretization techniques (wavelet isomorphisms), entropy results
for finite dimensional spaces, and complex interpolation to obtain the above
result. 

If either $r>1/p$ and $\theta \geq p$ or $r>(1/p-1/q)_+$ and $\theta \geq
\min\{q,2\}$ the result in Theorem \ref{thm:ent_Brpt} has been shown by Dinh
D\~ung \cite{Di01}. One should keep in mind that under these parameter
assumptions it is possible to obtain that $r+1/2-1/\theta<0$. Of course, the
exponent of the logarithm can not be negative. Inspecting the proof of
\cite[Thm.\ 8]{Di01} it turns out that Dinh D\~ung proved a slightly weaker
upper bound in the situation $r>(1/p-1/q)_+$ and $\theta\geq \min\{q,2\}$,
namely
$$
    \e_n(\Brpt,L_q) \lesssim n^{-r}(\log n)^{(d-1)(r+1/\min\{q,2\}-1/\theta)}\,.
$$
 Very recently, in the paper \cite{VT156} the approximation classes $\bW^{a,b}_p$ have been studied from the point of view of the entropy numbers. 
We do not give a detailed definition of these classes here, we only note that these classes are defined in a way similar to the classes 
$\bW^{a,b}_A$ (see Lemma \ref{L2.1} below) with $\|\cdot\|_A$ replaced by $\|\cdot\|_p$. The paper \cite{VT156} develops a new method of proving the upper bounds for the entropy numbers. This method, which is based on the general Theorem 6.9, allows us to prove all known upper bounds for classes $\bW^r_p$.  

\subsection{Entropy numbers and the Small Ball Problem}\index{Entropy number}\index{Small Ball Problem}
\label{sect:ESBP}

We already pointed out that the case $r=1$, $p=2$, $q=\infty$ is equivalent to the Small Ball Problem from
probability theory. 

We discuss related results in detail. Consider the centered
Gaussian process ${\bf B}_d := (B_\bx)_{\bx\in[0,1]^d}$ with covariance
$$
 {\mathbb{E}}(B_\bx B_\by) =\prod_{i=1}^d \min\{x_i,y_i\},\quad \bx=(x_1,\dots,x_d),\quad
\by=(y_1,\dots,y_d).
$$
This process is called Brownian sheet. It is known that the sample paths of
${\bf B}_d$ are almost surely continuous. We consider them as random elements of
the space $C([0,1]^d)$. The small ball problem is the problem of the asymptotic
behavior of the small ball probabilities
$$
 {\mathbb{P}}(\sup_{\bx\in[0,1]^d}|B_\bx| \le \e)
$$
as $\e$ tends to zero. We introduce notation
$$
\phi(\e) := -\ln  {\mathbb{P}}(\sup_{\bx\in[0,1]^d}|B_\bx| \le \e).
$$
The following relation is a fundamental result of probability theory: for $d=2$
and $\e<1/2$
\begin{equation}\label{36.5}
\phi(\e) \asymp \e^{-2}(\ln (1/\e))^3.
\end{equation}
The upper bound in (\ref{36.5}) was obtained by  Lifshits and Tsirelson
\cite{LiTs} and by Bass \cite{Bass}. The lower bound in (\ref{36.5}) was
obtained by Talagrand \cite{TaE}. 

Kuelbs and Li \cite{KL} discovered the fact that there is a tight relationship
between $\phi(\e)$ and the entropy $H_\e(\bW^1_{2},L_\infty)$. We note that they
considered the general setting of a Gaussian measure on a Banach space. We only
formulate a particular result of our interest in terms convenient for us.  Roughly speaking (i.e. under the additional 
condition $\phi(\epsilon) \sim \phi(\epsilon/2)$), they
proved the equivalence relations: for any $d$
$$
\phi(\e)\lesssim \e^{-2}(\ln(1/\e))^\beta \quad \iff \quad
\e_n(\bW^1_{2},L_\infty)\lesssim n^{-1}(\ln n)^{\beta/2};
$$
$$
\phi(\e)\gtrsim \e^{-2}(\ln(1/\e))^\beta \quad \iff \quad
\e_n(\bW^1_{2},L_\infty)\gtrsim n^{-1}(\ln n)^{\beta/2}.
$$
These relations and (\ref{36.5}) imply for $d=2$
\begin{equation}\label{36.6}
\e_n(\bW^1_{2} {(\T^2)},L_\infty {(\T^2)}) \asymp n^{-1}(\ln n)^{3/2}.
\end{equation}

The proof of the most difficult part of (\ref{36.5}) -- the lower bound -- is based
on a special inequality for the Haar polynomials proved by Talagrand \cite{TaE}
(see \cite{TE5} for a simple proof). 
The reader can find this inequality in Subsection 2.6 of Section \ref{trigpol}.
We note that the lower bound in (\ref{36.6}) can be deduced directly from the
corresponding Small Ball Inequality (\ref{2.6.2}). However, this way does not
work for deducing the lower bound in (\ref{36.3}) for general $r$. This
difficulty was overcome in \cite{TE3} by proving the Small Ball Inequality for
the trigonometric system (see Subsection 2.6 of Section \ref{trigpol},
inequality (\ref{2.6.3})).  

 The other way around, results on entropy numbers can be used to estimate the small ball probabilities. From the relation in \eqref{36.4'} 
one obtains the bounds 
\begin{equation}\label{sbp}
     \epsilon^{-2}(\ln(1/\epsilon))^{2d-2} \lesssim \phi(\epsilon) \lesssim \epsilon^{-2}(\ln(1/\epsilon))^{2d-1}\,,
\end{equation}
see also \cite{LiLin99}. Note, that there is so far no ``pure'' probabilistic proof for the upper bound. By the recent improvements on the 
Small Ball Inequality (see the remark after \eqref{2.6.5} in Subsection \ref{subsect:SBI} above) one can slightly improve the lower bound in \eqref{sbp}
to get 
\begin{equation}\nonumber
     \epsilon^{-2}(\ln(1/\epsilon))^{2d-2+\delta} \lesssim \phi(\epsilon) \lesssim \epsilon^{-2}(\ln(1/\epsilon))^{2d-1}\,,
\end{equation}
for some $0<\delta = \delta(d) < 1$\,.

\subsection{Open problems}\index{Open problems!Entropy numbers}

We presented historical comments and a discussion in the above text of Section 6. We formulate here the most important
open problems on the entropy numbers of classes of functions with mixed smoothness. 
\newline
{\bf Open problem 6.1.} Find the order of
$\epsilon_n(\bW^r_{\infty},L_\infty)$ in the case $d=2$ and $0<r\le 1/2$.
\newline
{\bf Open problem 6.2.} For $d>2$ find the order of
$\epsilon_n(\bW^r_{1},L_q)$, $1\le q\le \infty$.
\newline
{\bf Open problem 6.3.} For $d>2$ find the order of
$\epsilon_n(\bW^r_{p},L_\infty)$  and $\epsilon_n(\bH^r_{p},L_\infty)$, $1\le p\le \infty$, $r>1/p$.
\newline
{\bf Open problem 6.4.} Find the correct order of the entropy numbers $\e_n(\bB^r_{p,\theta},L_2)$ in case of small smoothness $0<\theta < p\leq 2$ and 
$1/p-1/2<r\leq 1/\theta - 1/2$\,. \index{Small smoothness}

     \newpage
   \section{Best $m$-term approximation}\index{Best $m$-term approximation}\index{Nonlinear approximation}
\label{Sect:bestmterm}

\subsection{Introduction}

The last two decades have seen great successes in studying nonlinear  
approximation
which was motivated by numerous applications.  
Nonlinear
approximation is important in applications because of its concise
representations  and increased computational efficiency.
Two types of  nonlinear approximation are frequently employed in  
applications.
Adaptive methods are used in PDE solvers, while $m$-term approximation,
considered here, is used in image/signal/data processing, as well as in  
the design
of neural networks. Another name for $m$-term approximation is {\it sparse
approximation}.

The fundamental question of nonlinear approximation  
is how to devise good constructive methods (algorithms) of nonlinear  
approximation. This problem has two levels of nonlinearity. The first level of  
nonlinearity is $m$-term approximation with regard to bases. In this problem one
can use the unique function expansion with regard to a given basis to build an
approximant. Nonlinearity enters by looking for $m$-term approximants with
terms  (i.e. basis elements in approximant) allowed to depend on a given 
function.  Since the elements of the basis used in the $m$-term approximation
are allowed to depend on the function being approximated, this type of
approximation is very efficient.  On the second level of  nonlinearity, we
replace a basis by a more general system which is not necessarily minimal (for
example, redundant system, dictionary). This setting is much  more complicated
than the first one (bases case), however, there is a  solid justification of
importance of redundant systems in both theoretical questions and in practical  
applications. We only give here a brief introduction to this important area of
research
and refer the reader to the book \cite{Tbook} for further results. 

One of the major questions in approximation (theoretical and numerical) is:
what is an optimal method? We discuss here this question in a theoretical
setting with the only criterion of quality of approximating method its accuracy.
One more important point in the setting of optimization problem is to specify
a set of methods over which we are going to optimize. Most of the problems
which approximation theory deals with are of this nature. Let us give some
examples from classical approximation theory. These examples will help us to
motivate the question we are studying in this section.

{\bf Example 1.} When we are searching for $n$-th best trigonometric
approximation of a given function we are optimizing in the sense of accuracy
over the subspace of trigonometric polynomials of degree $n$.
 
 {\bf Example 2.} When we are solving the problem on Kolmogorov's $n$-width for
a given function class we are optimizing in the sense of accuracy for a given
class over all subspaces of dimension $n$.
 
 {\bf Example 3.} When we are finding best $m$-term approximation of a given
function with regard to a given system of functions (dictionary) we are
optimizing over all $m$-dimensional subspaces spanned by elements from a given
dictionary.
 
Example 2 is a development of Example 1 in the sense that in Example 2 we are
looking for an optimal $n$-dimensional subspace instead of being confined to a
given one (trigonometric polynomials of degree $n$). Example 3 is a nonlinear
analog of Example 1, where instead of trigonometric system we take a dictionary
$\Di$ and allow approximating elements from $\Di$ to depend on a function. In
\cite{T69} we made some steps in a direction of developing Example 3 to a
setting which  is a nonlinear analog of Example 2. In other words, we want to
optimize over some sets of dictionaries. We discuss   two classical structural
properties of dictionaries:

1. Orthogonality;

2. Tensor product structure (multivariate case).

Denote by $\Di$ a dictionary in a Banach space $X$ and by
$$
\sigma_m(f,\Di)_X := \inf\limits_{\substack{g_i\in \Di,c_i\\i=1,\dots,m}} \|f -\sum_{i=1}^m
c_ig_i\|_X
$$
best $m$-term approximation of $f$ with regard to $\Di$. For a function class
$F\subset X$ and a collection $\di$ of dictionaries we consider
$$
\sigma_m(F,\Di)_X := \sup_{f\in F}\sigma_m(f,\Di)_X \quad ,
$$
$$
\sigma_m(F,\di)_X := \inf_{\Di \in \di}\sigma_m(F,\Di)_X \quad.
$$
Thus the quantity $\sigma_m(F,\di)_X$ gives the sharp lower bound for best
$m$-term approximation of a given function class $F$ with regard to any
dictionary  $\Di \in \di$.

Denote by $\og$ the set of all orthonormal dictionaries defined on a given
domain.  Kashin \cite{K} proved that for the class $H^{r,\alpha}$, $r=0,1,\dots,
\quad \alpha \in [0,1]$, of univariate functions such that
$$
\|f\|_\infty +\|f^{(r)}\|_\infty \le 1 \quad \text{and}\quad 
|f^{(r)}(x)-f^{(r)}(y)| \le |x-y|^\alpha,\quad x,y \in [0,1]
$$
we have
\be\label{tag1.1}
\sigma_m(H^{r,\alpha},\og)_{L_2} \ge C(r,\alpha)m^{-r-\alpha} .
\ee
It is well known that in the case $\alpha \in (0,1)$ the class $H^{r,\alpha}$ is equivalent to the class
$H^{r+\alpha}_\infty$. In the case $\alpha =1$ the class $H^{r,1}$ is close to the class $W^{r+1}_\infty$. 
It is interesting to remark that we cannot prove anything like (\ref{tag1.1})
with $L_2$
replaced by $L_p$, $p<2$. We proved (see \cite{KTE1}) that there exists $\Phi
\in \og$
such that for any $f\in L_1(0,1)$ we have $\sigma_1(f,\Phi)_{L_1}=0$.
It is pointed out in \cite{T69} that the proof from \cite{KTE1} also works for
$L_p$, $p<2$, instead of $L_1$: 
\begin{rem}\label{R1.1} For any $1\le p <2$ there exists a complete in
$L_2(0,1)$ orthonormal system $\Phi$ such that for each $f \in L_p(0,1)$ we have
$\sigma_1(f,\Phi)_{L_p}=0$.
\end{rem}
This remark means that to obtain nontrivial lower bounds for
$\sigma_m(f,\Phi)_{L_p}$ , $p<2$,  we need to impose additional restrictions on
$\Phi \in \og$. Some ways of imposing restrictions were discussed in \cite{KTE1}
and \cite{T69}.

\subsection{Orthogonal bases}\index{Best $m$-term approximation!Orthogonal bases}

We discuss  approximation of multivariate functions. It is convenient for us to
present results in the periodic case. We consider classes of functions with
bounded mixed derivative $\bW^r_p$   and classes with restriction of Lipschitz
type on mixed difference  $\bH^r_p$ and $\bB^r_{p,\theta}$. These classes are
well known (see for instance \cite{TBook}) for their importance in numerical
integration, in finding universal methods for approximation of functions of
several variables, in the average case setting of approximation problems for the
spaces equipped with the Wiener sheet measure (see \cite{W}) and in other
problems. It is proved in \cite{T69} that
\be\label{tag1.2}
\sigma_m(\bH^r_p,\og)_2 \gtrsim m^{-r}(\log m)^{(d-1)(r+1/2)}, \quad 1\le p
< \infty, \ee
\be\label{tag1.3}
\sigma_m(\bW^r_{p},\og)_2 \gtrsim m^{-r}(\log m)^{(d-1)r}, \quad 1\le p <
\infty. 
\ee

It is also proved in \cite{T69} that the orthogonal basis $\cU^d$ which we
construct below provides optimal upper estimates (like (\ref{tag1.2}) and
(\ref{tag1.3})) in best $m$-term approximation of the classes $\bH^r_p$ and
$\bW^r_{p}$ in the $L_q$-norm, $2\le q < \infty$. Moreover, we proved there that
for all $1<p,q<\infty$ the order of best $m$-term approximation
$\sigma_m(\bH^r_p,\cU^d)_{L_q}$ and $\sigma_m(\bW^r_{p},\mathcal U^d)_{L_q}$
can be achieved by a greedy type algorithm $G^q(\cdot,\cU^d)$.
Assume a given system $\Psi$ of functions $\psi_I$ indexed by dyadic intervals
can be enumerated in such a way that $\{\psi_{I^j}\}_{j=1}^\infty$ is a basis
for $L_q$. Then we define the Thresholding Greedy Algorithm (TGA)
 $G^q(\cdot,\Psi)$ as follows. Let
$$
f = \sum_{j=1}^\infty c_{I^j}(f,\Psi)\psi_{I^j} 
$$
and
$$
c_I(f,q,\Psi) := \|c_I(f,\Psi)\psi_I\|_q .
$$
Then $c_I(f,q,\Psi) \to 0$ as $|I| \to 0$. Denote $\L_m$ a set of $m$ dyadic
intervals
 $I$ such that
\be\nonumber
\min_{I\in \L_m} c_I(f,q,\Psi) \ge \max_{J \notin \L_m}c_J(f,q,\Psi) . 
\ee
We define $G^q(\cdot,\Psi)$ by formula
$$
G^q_m(f,\Psi) := \sum_{I\in \L_m} c_I(f,\Psi) \psi_I .
$$
 
The question of constructing a procedure (theoretical algorithm) which realizes
(in the sense of order) the best possible accuracy is a very important one and
we discuss it in detail in this section. Let $A_m(\cdot,\Di)$ be a mapping which
maps each $f\in X$ to a linear combination of $m$ elements from a given
dictionary $\Di$. Then the best we can hope for with this mapping is to have for
each $f \in X$
\be\label{tag1.5}
\|f -A_m(f,\Di)\|_X = \sigma_m(f,\Di)_X 
\ee
or a little weaker
\be\label{tag1.6}
\|f -A_m(f,\Di)\|_X \le C(\Di,X) \sigma_m(f,\Di)_X .  
\ee
There are some known trivial and nontrivial examples when (\ref{tag1.5}) holds
in a Hilbert space $X$. We do not touch this kind of relations here. Concerning
(\ref{tag1.6}) it is proved in \cite{T3} that for any basis $\Psi$ which is
$L_p$-equivalent to the univariate Haar basis we have
\be\label{tag1.7}
\|f -G^q_m(f,\Psi)\|_q\le C(p)\sigma_m(f,\Psi)_q, \quad 1<q<\infty. 
\ee
However, as it is shown in \cite{T4} and \cite{T69}, the inequality
(\ref{tag1.7}) does not hold for particular dictionaries with tensor product
structure.

We define the system $\mathcal U:=\{U_I\}$ in the univariate case. Denote
\begin{equation}\nonumber
\begin{split}
U^+_n(x) &:= \sum_{k=0}^{2^n-1}e^{ikx} = \frac{e^{i2^nx}-1}{e^{ix}-1},\quad
n=0,1,2,\dots;\\
U^+_{n,k}(x) &:= e^{i2^nx}U^+_n(x-2\pi k2^{-n}),\quad k=0,1,\dots ,2^n-1;\\
 U^-_{n,k}(x) &:= e^{-i2^nx}U^+_n(-x+2\pi k2^{-n}),\quad k=0,1,\dots ,2^n-1.
\end{split}
\end{equation}
It will be more convenient for us to normalize in $L_2$ the system of functions
$\{U^+_{m,k},U^-_{n,k}\}$ and  enumerate it by dyadic intervals. We write
\begin{equation}\nonumber
\begin{split}
 U_I(x) &:= 2^{-n/2}U^+_{n,k}(x)\quad \text{with}\quad
 I=[ (k+1/2)2^{-n}, (k+1)2^{-n});\\
U_I(x) &:= 2^{-n/2}U^-_{n,k}(x)\quad \text{with}\quad
 I=[ k2^{-n},(k+1/2)2^{-n});
\end{split}
\end{equation}
and
$$
U_{[0,1)}(x) :=1.
$$
Denote
\begin{equation}\nonumber
\begin{split}
  D^+_n &:= \{I: I=[(k+1/2)2^{-n},(k+1)2^{-n}),\quad k=0,1,\dots,2^n-1\};\\
   D^-_n&:= \{ I:I=[k2^{-n},(k+1/2)2^{-n}),\quad k=0,1,\dots,2^n-1\};\\
\end{split}
\end{equation}
and 
$$
D:= \cup_{n\ge 0}(D^+_n\cup D^-_n)\cup D_0 
$$
with $D_0 := [0,1)$. In the multivariate case of $\bx=(x_1,\dots,x_d)$ we define the system $\mathcal
U^d$
as the tensor product of the univariate systems $\mathcal U$. Let
$I=I_1\times\dots\times I_d$, $I_j \in D$, $j=1,\dots,d$, then 
$$
U_I(\bx) := \prod_{j=1}^d U_{I_j}(x_j) .
$$
We have for instance (see \cite{T69})
\be\label{tag1.8}
\sup_{f\in L_q}\|f-G^q_m(f,\cU^d)\|_{L_q}/\sigma_m(f,\cU^d)_{L_q} \gtrsim (\log
m)^{(d-1)|1/2-1/q|} . 
\ee
The inequality (\ref{tag1.8}) shows that using the algorithm $G^q(\cdot,\cU^d)$
we lose for sure for some functions $f\in L_q$, $q\neq 2$. In light of
(\ref{tag1.8}) the results of \cite{T69} look encouraging for using
$G^q(\cdot,\cU^d)$.
\begin{thm}\label{uw} Define
$$
    r(W,p,q)= \left\{\begin{array}{rcl}
                          \max\{1/p,1/2\}-1/q&:&q\geq 2\,,\\
                          (\max\{2/q,2/p\}-1)/q&:&p<2\,\,.	
                      \end{array}\right..
$$
Then for $1<p,q <\infty$ and $r>r(W,p,q)$ we have
\be\label{tag1.10}
\sup_{f\in \bW^r_{p}}\|f-G_m^q(f,\cU^d)\|_q\asymp \sigma_m(\bW^r_{p},\cU^d)_q
\asymp m^{-r}(\log m)^{(d-1)r} . 
\ee
\end{thm}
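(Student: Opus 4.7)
The plan is to prove the three-way order equivalence by splitting it into three pieces: the trivial inequality $\sigma_m(\bW^r_p,\cU^d)_q \le \sup_{f\in \bW^r_p}\|f - G^q_m(f,\cU^d)\|_q$ coming from the definition of best $m$-term approximation, an upper bound $\sup_{f\in \bW^r_p}\|f - G^q_m(f,\cU^d)\|_q \lesssim m^{-r}(\log m)^{(d-1)r}$ for the thresholding greedy algorithm, and a matching lower bound $\sigma_m(\bW^r_p,\cU^d)_q \gtrsim m^{-r}(\log m)^{(d-1)r}$.

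For the greedy upper bound, I would first expand $f = \sum_I c_I(f) U_I$ and group terms by dyadic scale by setting $f_\bs := \sum_{\bs(I)=\bs} c_I U_I$, where $\bs(I) = (s_1(I_1),\dots,s_d(I_d))$. The family $\{U_I : \bs(I) = \bs\}$ is $L_2$-orthonormal and together with its conjugate partner spans $\Tr(\rho(\bs))$, and the Littlewood-Paley characterization \eqref{NeqW} gives $\|f\|_{\bW^r_p} \asymp \|(\sum_\bs 2^{2r|\bs|_1}|f_\bs|^2)^{1/2}\|_p$. Using the explicit form of $U^{\pm}_{n,k}$ as modulated Dirichlet kernels together with Theorem \ref{T2.4.5}, one computes $\|U_I\|_q \asymp 2^{|\bs(I)|_1(1/2 - 1/q)}$, which in particular shows that $\cU^d$ is democratic at each dyadic level. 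Rearranging the contributions $\|c_I U_I\|_q$ in decreasing order, the greedy algorithm $G^q_m$ keeps the $m$ largest. A layer-by-layer counting argument, making use of $|Q_n| \asymp 2^n n^{d-1}$ and the embeddings implied by Theorem \ref{T2.4.6} to pass from the weighted $L_p$-bound on $\{f_\bs\}$ to an $L_q$-bound on the tail, yields the claimed rate; the smoothness condition $r > r(W,p,q)$ is exactly what is required to make the Nikol'skii-type tail sums converge.

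For the lower bound, the orthogonality of $\cU^d$ together with \eqref{tag1.3} gives $\sigma_m(\bW^r_p,\cU^d)_2 \gtrsim m^{-r}(\log m)^{(d-1)r}$. When $q \ge 2$, the pointwise inequality $\|h\|_2 \le \|h\|_q$ on the normalized torus shows that every $L_q$-approximant is an $L_2$-approximant with no larger error, so the $L_2$ bound transfers. When $1 < q < 2$ (which under the hypotheses only occurs with $p < 2$), I would construct a fooling polynomial $t_n \in \Tr(\Delta Q_n)$ as a Rudin-Shapiro-type signed sum of basis elements $U_I$ at hyperbolic level $|\bs|_1 = n$, with signs chosen to control $\|t_n\|_p$ so that a suitable rescaling lies in $\bW^r_p$, while removing any $m \asymp |Q_n|/2$ terms leaves an $L_q$-residual of the target order. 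The residual lower bound would follow from Khintchine's inequality combined with the Littlewood-Paley square-function estimate applied to the surviving signed sum.

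The principal obstacle will be the greedy upper bound: the thresholding rule is a crude selection, and in general can fall short of the best $m$-term error by a non-trivial factor (compare \eqref{tag1.8}). The success in our range relies on the democracy of $\cU^d$ within each layer, so that within any fixed scale the greedy and best $m$-term selections agree up to a constant, and the only genuine loss occurs across scales, precisely captured by the logarithmic factor $(\log m)^{(d-1)r}$. The secondary technical point is the $L_p \to L_q$ transfer for sums of layer pieces, where the failure of a uniform $L_1$-bound for the de la Vall\'ee Poussin kernel on hyperbolic crosses (Lemma \ref{L2.3.2}) prevents extension to $p = 1$ or $q = \infty$; the strict conditions $1 < p, q < \infty$ together with $r > r(W,p,q)$ are what permit the Littlewood-Paley and Nikol'skii machinery (Theorem \ref{T2.4.6}) to be applied.
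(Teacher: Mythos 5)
The paper itself does not prove Theorem~\ref{uw} in the text; it cites \cite{T69}. Your decomposition into (a) the trivial inequality $\sigma_m(\bW^r_p,\cU^d)_q \le \sup_f\|f-G^q_m(f,\cU^d)\|_q$, (b) an upper bound for the greedy error, and (c) a matching lower bound for $\sigma_m$ is the right architecture, and the ingredients you invoke for (b) --- the identity $\|U_I\|_q \asymp 2^{|\bs(I)|_1(1/2-1/q)}$, uniform $L_q$-size within each hyperbolic layer, the Littlewood--Paley description \eqref{NeqW}, and Theorem~\ref{T2.4.6} for the layer-to-$L_q$ passage --- are exactly the tools that make the greedy estimate go through in the stated smoothness range. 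The $q\ge 2$ lower bound via $\|h\|_2\le\|h\|_q$ together with the orthogonal-width estimate \eqref{tag1.3} is clean and correct.

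The genuine gap is your $q<2$ lower bound. You propose a Rudin--Shapiro fooling polynomial and then invoke Khintchine to bound the residual from below, but these two steps are in tension: Rudin--Shapiro fixes one \emph{deterministic} sign pattern (chosen to give the sharp $L_\infty$, hence $L_p$, upper bound for membership in $\bW^r_p$), whereas Khintchine controls an \emph{average over random signs} and says nothing about that particular pattern. More fundamentally, what you need for $\sigma_m$ is a worst-case bound: for your fixed $f$, \emph{every} choice of $m$ indices $\Lambda$ and \emph{every} choice of coefficients $c_I$ on $\Lambda$ must leave a residual with $L_q$-norm $\gtrsim m^{-r}(\log m)^{(d-1)r}$. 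Khintchine plus a single-scale square-function estimate does not deliver this, because for $q<2$ the $L_q$-norm of a trigonometric sum is not bounded below in terms of its coefficients: $\cU^d$ consists of modulated Dirichlet kernels with overlapping supports, unlike a compactly supported wavelet basis $\Phi$ for which Theorem~\ref{T7.6} would give such a pointwise lower bound. The published argument handles $q<2$ with a different mechanism --- consult \cite{T69} and the volume-estimate machinery of Subsection~\ref{vol_est} (e.g.\ Theorem~\ref{T2.5.2}) or a duality pairing against a bounded $L_{q'}$ functional --- rather than concentration of random sums. As stated, your sketch does not close this case.
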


\begin{thm}\label{uh} Define
$$
    r(H,p,q) = \left\{\begin{array}{rcl}
                          (1/p-1/q)_+&:&q\geq 2\,,\\
                          (\max\{2/q,2/p\}-1)/q&:&q<2\,\,.	
                      \end{array}\right..
$$
Then for $1<p,q <\infty$ and $r>r(H,p,q)$ we have
\be\label{tag1.9}
\sup_{f\in \bH^r_p}\|f-G_m^q(f,\cU^d)\|_q\asymp \sigma_m(\bH^r_p,\cU^d)_q \asymp
m^{-r}(\log m)^{(d-1)(r+1/2)} . 
\ee
\end{thm}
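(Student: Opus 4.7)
The plan is to establish the double asymptotic equivalence through three steps: a coefficient characterization of $\bH^r_p$ in the basis $\cU^d$, an upper bound for the greedy algorithm's worst-case error over $\bH^r_p$, and a matching lower bound for $\sigma_m(\bH^r_p,\cU^d)_q$. The proof naturally mirrors the structure of Theorem \ref{uw} for $\bW^r_p$, with the additional $(\log m)^{(d-1)/2}$ factor reflecting that $\bH^r_p$-membership imposes only $\ell_\infty$-type control of the dyadic blocks $\delta_\bs(f)$ across hyperbolic layers $\{|\bs|_1=n\}$, rather than the stronger $\ell_2$-type control present for $\bW^r_p$.

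For the upper bound, I would first use that each $U_I$ with $I\in D^\pm_{s_1}\times\cdots\times D^\pm_{s_d}$ has Fourier support essentially in $\rho(\bs)$, so the $\cU^d$-expansion of $f$ is grouped by dyadic blocks with $\sum_{I\in D^\pm_\bs} c_I(f,\cU^d)U_I = \delta_\bs(f)$ (up to a bounded operator). Combined with the Marcinkiewicz-type Theorem \ref{T2.4.11} and the Nikol'skii inequality (Theorem \ref{T2.4.5}), this yields, for $f\in \bH^r_p$, estimates of the form $\|c_I(f,\cU^d)U_I\|_q\lesssim 2^{-r|\bs|_1}2^{|\bs|_1(1/p-1/q)_+/\text{(appropriate index)}}$ uniformly in $I\in D^\pm_\bs$. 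The greedy algorithm $G^q_m$ sorts the $c_I(f,q,\cU^d)$; using that each hyperbolic layer $\{|\bs|_1=n\}$ contains $\asymp 2^n n^{d-1}$ intervals with comparable contribution and that the total mass on a layer is $\lesssim 2^{-rn}n^{(d-1)/2}$ (with the $(d-1)/2$ coming from $\ell_2$-summation over the $n^{d-1}$ blocks), a careful counting via H\"older's inequality and discrete Hardy-type inequalities gives the upper bound $m^{-r}(\log m)^{(d-1)(r+1/2)}$. Here the Littlewood-Paley theorem is used to pass from layered control to the $L_q$-norm.

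For the lower bound $\sigma_m(\bH^r_p,\cU^d)_q\gtrsim m^{-r}(\log m)^{(d-1)(r+1/2)}$, I would construct a fooling function on a single hyperbolic layer. Specifically, take $f_n := c\,2^{-rn}\sum_{|\bs|_1=n}\sum_{I\in D^\pm_\bs}\varepsilon_I U_I$ with a choice of signs $\varepsilon_I=\pm 1$ giving a Rudin-Shapiro-type bound on $\|f_n\|_p$ (so that $f_n\in \bH^r_p$ after proper normalization). For $q\ge 2$, the bound $\|\cdot\|_q\ge\|\cdot\|_2$ reduces the lower estimate to the $L_2$ case, where $\sigma_m(f_n,\cU^d)_2$ can be computed by Parseval: removing the $m$ largest coefficients of $f_n$ (all of equal size) from a layer containing $\asymp 2^n n^{d-1}$ equal-magnitude entries leaves a residual of size $\asymp 2^{-rn}n^{(d-1)/2}$, matching the desired bound after taking $m\asymp 2^n n^{d-1}$. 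For $q<2$, this $L_2$-argument is replaced by an $L_q$-duality argument using the volume estimates of Theorem \ref{T2.5.4} applied to $\La=\cup_{|\bs|_1=n}\rho(\bs)=\Delta Q_n$, or equivalently by transfer from lower bounds already obtained for entropy numbers or orthowidths in Subsection \ref{vol_est}.

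The main obstacle is the case $q<2$, where several difficulties compound. First, the basis $\cU^d$ is not unconditional in $L_q$, so the standard sorting argument for greedy approximation loses a factor (cf.\ \eqref{tag1.8}) unless one exploits the near-equal size of $\bH^r_p$ coefficients within a hyperbolic layer. Second, the restriction $r>(\max\{2/q,2/p\}-1)/q$ enters precisely because in this regime one must interpolate the proof between the ``safe'' $L_2$ rearrangement argument and the target $L_q$ setting, and the threshold arises from matching exponents in the Nikol'skii inequality used to pass from $c_I$ to $\|c_I U_I\|_q$. Third, for the lower bound in the range $q<2$, the direct reduction to $q=2$ fails, and one has to invoke the volume-estimate machinery from Subsection \ref{vol_est} or an entropy-based lower bound (via Carl's inequality, Theorem \ref{thm:carl}) to produce a test function whose $L_q$-error under any $m$-term approximant from $\cU^d$ is controlled from below.
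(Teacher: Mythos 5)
The survey does not reproduce a proof of this theorem; it cites Temlyakov \cite{T69}, where the result was originally established. Your plan correctly identifies the main technical ingredients: the Marcinkiewicz-type discretization which, within each dyadic block $\bs$, turns $\{U_I\}_{I\in D^\pm_\bs}$ into an $\ell_p$-isomorphic family (so $\cU^d$ ``sees'' $\delta_\bs(f)$ as an $\ell_p$-sequence), Littlewood-Paley aggregation across blocks, a Stechkin-type count for the greedy tail, and a flat Rudin-Shapiro-signed test function on one hyperbolic layer. For the lower bound with $q\ge 2$ the reduction $\|\cdot\|_q\ge\|\cdot\|_2$ combined with the orthogonality of $\cU^d$ (so that the $L_2$ residual after removing $m$ coefficients is Parseval-computable) is exactly the right closing move, and your reading of the extra $(\log m)^{(d-1)/2}$ as an $\ell_\infty$-versus-$\ell_2$ layer phenomenon is the correct heuristic.

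There are, however, two genuine gaps. First, in the upper-bound sketch you reason as though every hyperbolic layer of an arbitrary $f\in\bH^r_p$ has $\asymp 2^n n^{d-1}$ coefficients of comparable size. That describes the \emph{extremal flat} function, not the general element of the class; for ``spiky'' $f$ (which realize $E_{Q_n}(\bH^r_p)_q$) the block coefficients are wildly uneven, and if one pretends otherwise and performs a layer-by-layer cut one only gets the hyperbolic-cross rate, which for $p<q$ has main exponent $r-1/p+1/q < r$ and is strictly worse than the claimed $m^{-r}(\log m)^{(d-1)(r+1/2)}$. The correct argument must handle the adaptive redistribution of greedy's budget across spiky and flat blocks; it is a block-by-block Stechkin count on the $\ell_p$-bounded coefficient vectors followed by $\ell_{\min\{q,2\}}$-aggregation via Corollary \ref{C7.1}/\ref{C7.2}, and your sketch omits this. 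Second, the case $q<2$ is left essentially open. For $q<2$, Corollary \ref{C7.2} supplies only an $\ell_q$-aggregation over dyadic blocks, so the $(d-1)/2$ exponent does \emph{not} come from the argument you run for $q\ge 2$; and the threshold $r(H,p,q) = (\max\{2/q,2/p\}-1)/q$ is never traced to any step. You correctly flag $q<2$ as the hard part, but enumerating candidate tools (volume estimates, entropy/Carl, duality) without committing is not a proof; the machinery actually used in \cite{T69} lives in the $\ell_p$-discretization of blocks and the $\bH$-class structure rather than in the volume estimates of Subsection 2.5, which are geared toward $L_\infty$ and $\bW$.
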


Comparing (\ref{tag1.9}) with (\ref{tag1.2}) and (\ref{tag1.10}) with
(\ref{tag1.3}), we conclude that the dictionary $\cU^d$ is the best (in the
sense of order) among all orthogonal dictionaries for $m$-term approximation of
the classes $\bH^r_p$ and $\bW^r_{p}$ in $L_q$ where $1<p<\infty$ and $2\le q
<\infty$. The dictionary $\cU^d$ has one more important feature. The near best
$m$-term approximation of functions from $\bH^r_p$ and $\bW^r_{p}$ in the
$L_q$-norm can be realized by the simple greedy type algorithm
$G^q(\cdot,\cU^d)$ for all $1<p,q<\infty$.  For further results in this direction for the 
system $\mathcal{U}^d$ we refer to \cite{Bal15}, \cite{BaSm15}.

It is known that the system $\cU^d$ and its analog built on the base of the de
la Valle{\'e} Poussin kernels instead of the Dirichlet kernels play important
role in the bilinear approximation \cite{Tem15} (for bilinear approximation see
subsection 7.4 below). The de la Valle{\'e} Poussin kernels are especially
important when we deal with either $L_1$ or $L_\infty$ spaces. 
This setting has been considered by Dinh D\~ung in \cite{Di00,
Di01_2}. Instead of the system $\mathcal{U}^d$ he considered translates of the
de la Valle\'e Poussin kernels $\mathcal{V}^d$. In contrast to $\mathcal{U}^d$
this one is linearly dependent (redundant). However, it admits similar
discretization
techniques as orthonormal wavelet bases. In addition, the spaces studied
slightly differ from the ones considered by Temlyakov.  
\begin{thm}\label{thm:mterm_Dung} Let $1<p,q<\infty$, $0<\theta\leq \infty$ and
$r>0$. \\
{\em (i)} It holds
$$
      \sigma_m(\bB^r_{p,\theta},\mathcal{V}^d)_q\gtrsim m^{-r}(\log
m)^{(d-1)(r+1/2-1/\theta)}\,.
$$
(ii) If $r>(1/p-1/q)_+$ and $\theta \geq \min\{q,2\}$. Then 
$$
    \sigma_m(\bB^r_{p,\theta},\mathcal{V}^d)_q \lesssim m^{-r}(\log
m)^{(d-1)(r+1/\min\{q,2\}-1/\theta)}\,.
$$
(iii) Let $r>\max\{0,1/p-1/q,1/p-1/2\}$ and $2\leq \theta \leq \infty$. Then it
holds
$$
    \sigma_m(\bB^r_{p,\theta},\mathcal{V}^d)_q \asymp m^{-r}(\log
m)^{(d-1)(r+1/2-1/\theta)}\,.
$$
(iv) Let $r>1/p$. Then 
$$
    \sigma_m(\bB^r_{p,\theta},\mathcal{V}^d)_q \lesssim m^{-r}(\log
m)^{(d-1)(r+1/2-1/\max\{p,\theta\})}\,.
$$

\end{thm}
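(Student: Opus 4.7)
The plan is to exploit that the redundant system $\mathcal{V}^d$ of translates of tensor-product de la Vallée Poussin kernels admits a discretization of Besov classes entirely analogous to the Smolyak-type sampling representation of Proposition \ref{samp_repr1}. Concretely, each $f \in \bB^r_{p,\theta}$ decomposes as $f = \sum_{\bs \in \N_0^d} v_{\bs}(f)$ with $v_{\bs}(f) \in \Tr(2^{\bs+1},d)$ expressible as a linear combination of $\asymp 2^{|\bs|_1}$ elements of $\mathcal{V}^d$, and the norm equivalence
\[
 \Big(\sum_{\bs} 2^{r|\bs|_1 \theta}\|v_{\bs}(f)\|_p^{\theta}\Big)^{1/\theta}\asymp \|f\|_{\bB^r_{p,\theta}}
\]
holds. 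The whole argument then reduces to a hybrid linear/nonlinear selection from this expansion, combined with the Nikol'skii inequalities of Theorem \ref{T2.4.5} to pass from the $L_p$-norm on polynomial blocks to the target $L_q$-norm.

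For the upper bounds (ii) and (iv), I would fix a level $n$ with $|Q_n|\asymp m$, keep \emph{all} $v_{\bs}(f)$ for $|\bs|_1 \le n$ (the linear hyperbolic-cross part, which uses $\asymp m$ atoms) and then perform a greedy $m$-term selection among the atoms making up $\sum_{|\bs|_1 > n} v_{\bs}(f)$. Applying Theorem \ref{T2.4.5} to each block gives $\|v_{\bs}(f)\|_q \lesssim 2^{|\bs|_1(1/p-1/q)_+}\|v_{\bs}(f)\|_p$, and a Hölder/rearrangement argument on the atoms—essentially the same one used to derive Theorem \ref{TBT3.3H} and Theorem \ref{Theorem[T_n(B)<]}—yields the claimed $m^{-r}(\log m)^{(d-1)(r+1/\min\{q,2\}-1/\theta)}$ rate in (ii). For (iv), where $q$ may equal $\infty$ or $p$ may be small, one uses instead the embedding $\bB^r_{p,\theta}\hookrightarrow \bB^{r-1/p+1/\max\{p,\theta\}}_{\max\{p,\theta\},\max\{p,\theta\}}$ from Lemma \ref{emb}, then reduces to the previous case at the exponent $\max\{p,\theta\}$; this is exactly the mechanism that produces the $1/\max\{p,\theta\}$ in the logarithmic exponent.

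For the lower bound (i), I would construct a fooling function concentrated in a single hyperbolic layer. Fix $n$ with $2^n n^{d-1}\asymp m$ and consider functions of the form $f = c\sum_{|\bs|_1=n} t_{\bs}$ with $t_{\bs}\in \Tr(\rho(\bs))$ carefully chosen (e.g.\ Rudin--Shapiro type polynomials from Paragraph 2.2.4d or Riesz-product constituents from Lemma \ref{L2.6.1}) and normalized so that $\|t_{\bs}\|_p \asymp 2^{-r|\bs|_1}\cdot 2^{|\bs|_1/p}$, which places $f$ in the unit ball of $\bB^r_{p,\theta}$ up to a constant depending on $\theta$ via the factor $n^{(d-1)/\theta}$. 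The nonlinear obstruction is then obtained by the volume-comparison argument of Subsection \ref{vol_est} (Theorems \ref{T2.5.2}, \ref{T2.5.4}): the set of trigonometric polynomials representable as $m$-term linear combinations from $\mathcal{V}^d$ supported in the layer $\Delta Q_n$ forms a union of low-dimensional subspaces whose total volume in $B_{\Delta Q_n}(L_q)$ is too small to cover a ball of the required radius, forcing $\sigma_m(f,\mathcal{V}^d)_q \gtrsim 2^{-rn} n^{(d-1)/2}\cdot n^{-(d-1)/\theta}$, which matches $m^{-r}(\log m)^{(d-1)(r+1/2-1/\theta)}$.

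Statement (iii) is then immediate by combining (i) and (ii): the assumptions $2\le \theta\le \infty$ and $r>\max\{0,1/p-1/q,1/p-1/2\}$ are exactly what is needed so that $\min\{q,2\}=2$ makes the upper bound in (ii) (or (iv) in the limiting cases) match the lower bound in (i). The main obstacle I foresee is the sharp lower bound in (i) in the quasi-extreme cases where $\theta$ is small and $p$ is close to $1$ or $\infty$: there the standard orthogonality trick is unavailable, and one must either invoke the refined volume estimate of Theorem \ref{T2.5.5} (only known in $d=2$) or use the embedding $\bB^r_{p,\theta}\supset \bB^{r}_{p,\infty}\cdot n^{-(d-1)/\theta}$ on the polynomial layer, which costs an extra logarithmic factor; reconciling this with the target exponent $r+1/2-1/\theta$ is the delicate step and is the point at which the argument of \cite{Di00,Di01_2} must be carried out most carefully.
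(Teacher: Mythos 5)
Your overall framework — discretize via de la Vall\'ee Poussin interpolation blocks $v_{\bs}(f)$, retain the linear hyperbolic cross part for $|\bs|_1\leq n$ with $|Q_n|\asymp m$, and then allocate extra atoms greedily (or by a layer-by-layer budget) to the tail, with Nikol'skii inequalities controlling the norm change from $p$ to $q$ — is the right skeleton for the upper bounds, and matches the per-block counting $\#\{\text{atoms in }v_{\bs}(f)\}\asymp 2^{|\bs|_1}$. The idea of a fooling function concentrated in a single hyperbolic layer is also the right starting point for the lower bound. However, there are two genuine gaps.

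\emph{The embedding in (iv) is wrong when $\theta>p$.} The embedding you invoke, $\bB^r_{p,\theta}\hookrightarrow \bB^{r-1/p+1/\max\{p,\theta\}}_{\max\{p,\theta\},\max\{p,\theta\}}$, is the smoothness-reducing embedding of Lemma~\ref{emb}(i). When $\theta>p$, the new smoothness $r'=r-1/p+1/\theta$ is strictly less than $r$, so any downstream $m$-term estimate for $\bB^{r'}_{\theta,\theta}$ produces a main rate $m^{-r'}$, not $m^{-r}$: the embedding throws away exactly the factor you need. In fact, feeding $\bB^{r'}_{\theta,\theta}$ into (ii) gives $m^{-r+1/p-1/\theta}(\log m)^{(d-1)(r-1/p+1/2)}$, which disagrees with the target both in the main rate and in the logarithm. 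The case $\theta\le p$ is fine, but there the only embedding you need is the \emph{monotone} one $\bB^r_{p,\theta}\hookrightarrow\bB^r_{p,p}$, which preserves $r$ and $p$. For $\theta\ge p$ no embedding helps, and the exponent $r+1/2-1/\theta$ (note the $1/2$, not $1/\min\{q,2\}$) must come directly from the block-threshold argument: a per-block $\ell_2$/Parseval estimate for the atom selection which is independent of the target $q$. This is the part where your proposal currently has a hole; it cannot be reduced to (ii) by an embedding.

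\emph{The lower bound (i) is not established by the volume argument, and your normalization is off.} The volume estimates of Subsection~\ref{vol_est} (Theorems~\ref{T2.5.2}, \ref{T2.5.4}, \ref{T2.5.5}) bound volumes of the coefficient sets $B_\Lambda(L_p)$ and are used to produce lower bounds for Kolmogorov widths and entropy (distance to a \emph{single} low-dimensional subspace). For $\sigma_m$ with respect to a specific dictionary, an $m$-term approximant lies in a large union of subspaces, so one needs a counting/covering argument, not just a volume comparison. Moreover, the dictionary $\mathcal{V}^d$ is linearly dependent and its elements have \emph{full} rectangular frequency support (a translated tensor de la Vall\'ee Poussin kernel at scale $\bj$ has nonzero Fourier coefficients throughout $\Pi(2^{\bj},d)$), so an $m$-term combination is not frequency-sparse and its restriction to a hyperbolic layer need not be low-rank. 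These two facts undermine the line ``the total volume of the $m$-term combinations in $B_{\Delta Q_n}(L_q)$ is too small.'' The original lower-bound mechanism in \cite{Di00,Di01_2} is the pseudo-dimension / nonlinear-width machinery sketched in Subsection~\ref{nonlinear widths}, which handles exactly this union-of-subspaces counting; that is what one should reproduce here. Finally, your stated normalization $\|t_{\bs}\|_p\asymp 2^{-r|\bs|_1}2^{|\bs|_1/p}$ is incorrect: with $\asymp n^{d-1}$ blocks at layer $n$ one gets $\|f\|_{\bB^r_{p,\theta}}\asymp n^{(d-1)/\theta}2^{n/p}$, so the $2^{n/p}$ factor would have to be divided out, spoiling the target rate. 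The correct choice is $\|t_\bs\|_p\asymp 2^{-rn}$ followed by a global division by $n^{(d-1)/\theta}$ only.
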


Dinh D\~ung essentially studied embeddings between Besov spaces and $L_q$. As a
consequence of well-known embeddings he obtained also the following result for
the $\bW$ spaces which has to be compared with Theorem \ref{uw}
above.

\begin{thm}\label{thm:mterm_Dung2} Let $1<p,q<\infty$ and
$r>\max\{0,1/p-1/q,1/p-1/2,1/2-1/q\}$. Then 
$$
    \sigma_m(\Wrp,\mathcal{V}^d)_q \asymp m^{-r}(\log m)^{(d-1)r}\,.
$$
\end{thm}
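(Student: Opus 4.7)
The plan is to deduce the theorem from Theorem \ref{thm:mterm_Dung} via the embedding chain in Lemma \ref{emb}(iv),
\[
\bB^r_{p,\min\{p,2\}}\hookrightarrow \Wrp\hookrightarrow \bB^r_{p,\max\{p,2\}},
\]
together with the norm-equivalence $\bW^r_2=\bB^r_{2,2}$ and the trivial embedding $\Wrp\hookrightarrow \bW^r_2$ for $p\ge 2$ (which holds because $L_p\subset L_2$ on the torus with normalized measure).

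For the upper bound I split on the position of $p$ relative to $2$. When $1<p\le 2$ the relation $\Wrp\hookrightarrow\bB^r_{p,2}$ reduces the problem to Theorem \ref{thm:mterm_Dung}(iii) applied with third index $\theta=2$; the required smoothness $r>\max\{0,1/p-1/q,1/p-1/2\}$ is implied by our hypothesis (since $1/p\ge 1/2$ makes $1/2-1/q$ redundant), and the resulting exponent is $(d-1)(r+1/2-1/\theta)|_{\theta=2}=(d-1)r$. When $p>2$ I instead employ $\Wrp\hookrightarrow \bW^r_2=\bB^r_{2,2}$ and apply Theorem \ref{thm:mterm_Dung}(iii) to $\bB^r_{2,2}$; its admissibility condition is $r>\max\{0,1/2-1/q\}$, which is precisely the reason the summand $1/2-1/q$ appears in the hypothesis of Theorem \ref{thm:mterm_Dung2}. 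In either case $\sigma_m(\Wrp,\mathcal{V}^d)_q\lesssim m^{-r}(\log m)^{(d-1)r}$.

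For the lower bound I again distinguish two sub-cases. If $p\ge 2$, then $\bB^r_{p,2}\hookrightarrow \Wrp$, and Theorem \ref{thm:mterm_Dung}(i) with $\theta=2$ transfers the matching lower bound $m^{-r}(\log m)^{(d-1)r}$ from the smaller Besov class directly to $\Wrp$. If $1<p<2$, the embedding only yields $\bB^r_{p,p}\hookrightarrow \Wrp$, and Theorem \ref{thm:mterm_Dung}(i) with $\theta=p<2$ gives the inferior rate $m^{-r}(\log m)^{(d-1)(r+1/2-1/p)}$. To close this gap I would construct a ``fooling'' test function directly in $\Wrp$, in the spirit of the lower-bound proof of Theorem \ref{uw}: take a polynomial $f_n=c_n\sum_{|\bs|_1=n}\eta_{\bs}$ supported on the hyperbolic layer $|\bs|_1=n$, where the $\eta_{\bs}\in\Tr(\rho(\bs))$ are suitably randomized (Rudin-Shapiro type) and $c_n\asymp 2^{-rn}n^{-(d-1)/2}$ is chosen so that $\|f_n\|_{\Wrp}\asymp 1$ via the Littlewood-Paley equivalence \eqref{NeqW}. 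Since $|\Delta Q_n|\asymp 2^n n^{d-1}$ and each element of $\mathcal{V}^d$ concentrates in a single dyadic parallelepiped, for $m\le c\,2^n n^{d-1}$ any $m$-term approximation from $\mathcal{V}^d$ must leave out a positive proportion of the hyperbolic layer, forcing an $L_q$-residual of order $m^{-r}(\log m)^{(d-1)r}$.

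The main obstacle is exactly this last step: the lower bound for $1<p<2$ cannot be obtained by a Besov embedding alone and demands an explicit extremal function in $\Wrp$, together with a combinatorial argument showing that the redundant, non-orthogonal system $\mathcal{V}^d$ does not permit better than the claimed rate. All the other pieces are direct applications of Lemma \ref{emb}(iv) and Theorem \ref{thm:mterm_Dung}, with the case distinction $p\lessgtr 2$ precisely mirroring the two halves of the exponent $\max\{1/p,1/2\}$ that would appear in a unified form of the hypothesis.
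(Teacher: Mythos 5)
Your upper bound argument and the lower bound for $p\ge 2$ are correct and are the ``well-known embedding'' route the paper alludes to, but the obstacle you identify for $1<p<2$ is not real: there is no need for an explicit fooling function. The same monotonicity of $L_p$-norms on $\T^d$ that you exploit for the upper bound when $p>2$ gives, when $p\le 2$, the embedding $\bW^r_2\hookrightarrow\Wrp$ (if $\|\varphi\|_2\le 1$ then $\|\varphi\|_p\le 1$), and by the Littlewood-Paley characterization \eqref{NeqW} one has $\bW^r_2=\bB^r_{2,2}$. Hence for every $1<p<2$,
\[
\sigma_m(\Wrp,\mathcal V^d)_q \gtrsim \sigma_m(\bB^r_{2,2},\mathcal V^d)_q \gtrsim m^{-r}(\log m)^{(d-1)(r+1/2-1/2)}=m^{-r}(\log m)^{(d-1)r},
\]
by Theorem \ref{thm:mterm_Dung}(i) applied with $p=2$, $\theta=2$, which only needs $r>0$. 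Your split $p\lessgtr 2$ and your reading of the hypothesis $r>\max\{0,1/p-1/q,1/p-1/2,1/2-1/q\}$ as $r>\max\{0,1/\bar p-1/q,1/\bar p-1/2\}$ with $\bar p=\min\{p,2\}$ are otherwise exactly right, and once you notice the missing embedding the whole theorem is a corollary of Lemma \ref{emb}(iv), $L_p$-monotonicity, and Theorem \ref{thm:mterm_Dung}(i),(iii) with $\theta=2$ --- no randomized polynomials or combinatorics about $\mathcal V^d$ are required.
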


Let now $\Phi = \{\psi_I\}_I$ be a tensorized orthonormal wavelet basis (indexed
by a dyadic parallelepiped $I$) with sufficient smoothness, decay
and vanishing moments.
In the non-periodic setting the estimates for $\sigma_m$ in Theorems \ref{uw}, \ref{uh},
\ref{thm:mterm_Dung} can be also observed in a non-periodic setting, see
\cite{T69,HaSi10,HaSi11, HaSi12} using the wavelet basis $\Phi$ as a dictionary.
Here
we get the sharp bounds.

\begin{thm}\label{T7.6} Let $1<p,q<\infty$ and $0<\theta \leq \infty$ \\
{\em (i)} Let $r>\max\{1/p-1/q, 1/\min\{p,\theta\}-1/\max\{q,2\},0\}$ then 
$$
   \sigma_m(\Brpt,\Phi)_q \asymp m^{-r}(\log m)^{(d-1)(r+1/2-1/\theta)_+}\,.
$$
{\em (ii)} Let $r>(1/p-1/q)_+$ then
$$
    \sigma_m(\Wrp,\Phi)_q \asymp m^{-r}(\log m)^{(d-1)r}\,.
$$
\end{thm}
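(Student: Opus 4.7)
The plan is to reduce the function space problem to a sequence space problem via the wavelet isomorphism, and then invoke known sharp bounds on best $m$-term approximation in the corresponding sequence spaces.

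First I would record the discretization. Because $\Phi = \{\psi_I\}_I$ is a tensorized orthonormal wavelet basis with sufficient smoothness, decay and vanishing moments, one has the sequence space isomorphisms
\[
\|f\|_{\bB^r_{p,\theta}} \asymp \|\{\lambda_I(f)\}\|_{\bb^r_{p,\theta}}
\quad\text{and}\quad
\|f\|_{\bW^r_p} \asymp \|\{\lambda_I(f)\}\|_{\bw^r_p},
\]
where $\lambda_I(f) = \langle f,\psi_I\rangle$ and the sequence spaces $\bb^r_{p,\theta}$, $\bw^r_p$ are the natural tensorized versions (analogous to Definitions \ref{defsequ} and \ref{defsequw}, now indexed by dyadic parallelepipeds $I$ of multi-scale $\bs$). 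Likewise, the $L_q$-norm of any wavelet expansion is equivalent to a sequence space norm $\|\cdot\|_{f^0_{q,2}}$ of Triebel-Lizorkin type with $r=0$, which by Lemmas \ref{L3.5c}, \ref{L3.5d} (i.e.\ the Jawerth-Franke embeddings) can be sandwiched between $\|\cdot\|_{\bb^0_{q,\min\{q,2\}}}$ and $\|\cdot\|_{\bb^0_{q,\max\{q,2\}}}$. After this step, $\sigma_m(\mathbf{F},\Phi)_q$ is equivalent to the best $m$-term approximation of the unit ball of the source sequence space by $m$-sparse vectors in the target sequence space.

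Next I would derive the upper bounds by a thresholding greedy strategy applied to the rearranged wavelet coefficients. For (ii), starting from \eqref{rhs2} and the Littlewood-Paley/square-function characterization of $\bW^r_p$, a standard greedy/restricted nonlinear approximation argument in $f^0_{p,2}$-type sequence spaces (as in Temlyakov \cite{T3}, developed further in Hansen-Sickel \cite{HaSi11}) yields the rate $m^{-r}(\log m)^{(d-1)r}$, which matches (up to the logarithmic factor coming from the cardinality of a hyperbolic layer) the linear width in Theorem \ref{TBT4.4}. For (i), the argument is essentially identical but organized block by block on dyadic hyperbolic layers $|\bs|_1=j$; the sum/supremum structure of $\bb^r_{p,\theta}$ then produces the extra exponent $(1/2-1/\theta)_+$ (the gap between the $\ell_\theta$ structure of the source class and the $\ell_{\min\{q,2\}}$ structure of the target). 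The smoothness restriction $r > \max\{1/p-1/q,1/\min\{p,\theta\}-1/\max\{q,2\},0\}$ is exactly what ensures the relevant compact embedding, so that the geometric series over $j$ converges.

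For the lower bounds I would construct fooling functions concentrated on a single hyperbolic layer. Fix $n$ with $2^n n^{d-1}\asymp m$ and consider, for $|\bs|_1 = n+c$, the sum
\[
f_{\bs}\;=\;\sum_{\bk}\varepsilon_{\bk}\,\psi_{\bs,\bk},\qquad \varepsilon_{\bk}\in\{\pm 1\},
\]
normalized so that the corresponding class norm equals $1$; since $|\rho(\bs)|\asymp 2^{|\bs|_1}$ and there are $\asymp n^{d-1}$ such $\bs$, removing any $m$ coefficients still leaves a fixed fraction of mass on each block, and the sequence-space norm of the remainder in the $L_q$-side is bounded below by the claimed rate. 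For (ii) one uses a single block of size comparable to $m$ together with the Khintchine-type inequality (or equivalently the sharp embedding from Lemma \ref{L3.5d}), producing $m^{-r}(\log m)^{(d-1)r}$. For (i) one combines blocks across the hyperbolic layer with $\pm1$ coefficients and invokes the isomorphism $\bb^r_{p,\theta}\to \bb^0_{q,\min\{q,2\}}$, with the exponent $(1/2-1/\theta)_+$ coming from H\"older's inequality in the $\ell_\theta\to\ell_{\min\{q,2\}}$ comparison on $\asymp n^{d-1}$ equal blocks.

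The main obstacle is the lower bound in the situation $p<q$ when $\theta>2$, where the fooling construction must simultaneously exploit the block structure across different dyadic layers and the failure of unconditional convergence of the $m$-term reconstruction in $L_q$; handling this requires the same ``bad $\varepsilon_{\bk}$-sign'' argument used in Theorem \ref{uh}, combined with the refined embedding of Lemma \ref{L3.5c}. The upper bounds in the endpoint ranges, in particular when $\theta$ is small and $r$ is close to the embedding threshold, likewise rely on using \eqref{flelambda} of Proposition \ref{atomic} to reassemble a greedy-selected sparse wavelet sum into a function whose class norm is controlled by its sequence norm.
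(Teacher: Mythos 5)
Your overall strategy — discretize via the tensorized wavelet isomorphism, use a thresholding/greedy selection for the upper bound, and use $\pm1$ fooling functions on a hyperbolic layer for the lower bound — is exactly the route taken in the Hansen--Sickel papers \cite{HaSi10,HaSi11,HaSi12} to which the survey attributes this statement, so the skeleton is right. However, there is a concrete error in your exponent bookkeeping that matters precisely in the regime the theorem is designed to capture. You claim that, starting from the $\bW^r_p$ exponent $r$, the fine index $\theta$ contributes an \emph{additive} ``extra exponent $(1/2-1/\theta)_+$'', i.e.\ total log-power $r+(1/2-1/\theta)_+$. But the theorem asserts $\bigl(r+1/2-1/\theta\bigr)_+$, with the positive part wrapped around the entire expression. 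For $\theta\geq 2$ the two formulas agree, but for $\theta<2$ your version gives $r$ while the correct answer is $r+1/2-1/\theta<r$, vanishing once $1/\theta\geq r+1/2$; this is exactly the small-$\theta$ improvement discussed in \eqref{nolog} immediately after Theorem \ref{T7.6}. In the language of your sequence-space reduction, for $\theta<2$ the $\ell_\theta$-structure on the source side does not degrade the $\bW^r_p$ rate — it \emph{improves} it, because the tail of the sorted coefficients already decays faster than what the square-function structure alone provides (cf.\ the Stechkin Lemma \ref{Lqp} in the sub-2 regime). Your fooling-function construction with $\pm1$ coefficients on all $\asymp n^{d-1}$ blocks is the right test function for $\theta\geq 2$ but not for $\theta<2$; in the latter case only $\lesssim (\log m)^{(d-1)\cdot(\text{fraction})}$ blocks or a single block should be used, and this changes the outcome. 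You would also need a separate argument showing that H\"older across blocks in the source norm costs $(d-1)/\theta$ rather than just $(d-1)(1/2-1/\theta)_+$, with the balancing against the thresholding cutoff then producing the combined $(r+1/2-1/\theta)_+$.

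Two smaller issues: the sandwich $\bb^0_{q,\min\{q,2\}}\hookrightarrow f^0_{q,2}\hookrightarrow \bb^0_{q,\max\{q,2\}}$ that you use to replace the $L_q$ side is a Littlewood--Paley--type inclusion chain (this is the sequence-space analog of \eqref{chainBFB} with $p=q$ and $r=0$), not a Jawerth--Franke embedding; Lemmas \ref{L3.5c} and \ref{L3.5d} change the integrability parameter and are not what is needed here. And the restriction $r>\max\{1/p-1/q,\,1/\min\{p,\theta\}-1/\max\{q,2\},\,0\}$ is strictly stronger than the compact-embedding threshold $r>(1/p-1/q)_+$; the extra term is a cost of the greedy/thresholding reconstruction step, not a condition for the embedding, so your phrase ``is exactly what ensures the relevant compact embedding'' is not accurate.
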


Note, that contrary to Theorems \ref{uw} and \ref{uh} Theorem \ref{T7.6} does not address the greedy approximation. As a result,
the assumptions in  Theorem \ref{T7.6} are (so far) the weakest compared to
Theorems \ref{uw}, \ref{thm:mterm_Dung}, \ref{thm:mterm_Dung2}. 

Let us also address a small smoothness effect in this framework 
(observed by Hansen, Sickel \cite[Cor.\ 5.11]{HaSi12}) according to the parameter situation 
already mentioned after Theorem \ref{thm:ent_Brpt} in the Entropy section (see also Open problem 6.4).
In fact, in case $0<\theta<p\leq 2$ and $1/p-1/2 < r \leq 1/\theta-1/2$ we have 
\begin{equation}\label{nolog}
    \sigma_m(\bB^r_{p,\theta}, \Phi)_2 \asymp m^{-r}\,.
\end{equation}
The bound is a simple consequence of recent results \cite{HaSi10,HaSi12} on best $m$-term approximation for non-compact embeddings $\bW^r_p \hookrightarrow L_q$ if $1<p<q<\infty$ and $r=1/p-1/q$, and
$\bB^r_{p,p} \hookrightarrow L_q$ if $0<p\leq \max\{p,1\}<q<\infty$ and $r=1/p-1/q$, see Theorems \ref{noncomp1},
\ref{noncomp2} below. Clearly, in the above parameter situation we may use the trivial embedding $\bB^r_{p,\theta} \hookrightarrow \bB^r_{p^{\ast},p^{\ast}}$, where $p^{\ast}$ is chosen such that 
$r = 1/p^{\ast}-1/2$ and hence we may use the non-compact embedding result \eqref{nonc} below. Note, that in \eqref{nolog} the typical logarithmic factor with the $d$ power is not present. 
From the viewpoint of high-dimensional approximation this is an important phenomenon. Let us also refer to the discussions in Section \ref{Sect:highdim} and Subsection \ref{sparsetrig} below (after Theorem \ref{T2.8I}). 

In the recent paper \cite{ByUl16} best $m$-term approximation with respect to the tensor product Faber-Schauder system $\mathcal{F}^d$, 
see Subsection \ref{timelimsamprep}, has been addressed. Via a greedy type algorithm one can prove for $1<p<q \leq \infty$, $\max\{1/p,1/2\}<r<2$ that
$$
    \sigma_m(\bW^r_p,\mathcal{F}^d)_q \lesssim m^{-r} (\log m)^{(d-1)(r+1/2)}\,.
$$
Note, that every coefficient in the Faber-Schauder expansion is given by a linear combination of (discrete) function values \eqref{f_100}. Hence, the best $m$-term approximant is build from 
$c(d)m$ properly chosen function values and corresponding hat functions.

\subsection{Some related problems}

 Approximations with respect to the basis $\cU^d$ and with respect to the
trigonometric system
are useful in the following two fundamental problems. 

{\bf Bilinear approximation.}\index{Best $m$-term approximation!Bilinear approximation} For a function $f(\bx,\by)\in L_{p_1,p_2}$,
$\bx=(x_1,\dots,x_a)$, $\by=(y_1,\dots,y_a)$ we define the best bilinear
approximation as follows
$$
\tau_m(f)_{p_1,p_2}
:=\inf_{\substack{u_i(\bx),v_i(\by)\\i=1,\dots,m}}\Big\|f(\bx,\by)-\sum_{i=1}^m
u_i(\bx)v_i(\by)\Big\|_{p_1,p_2}.
$$
where $\|\cdot\|_{p_1,p_2}$ denotes the mixed norm: $L_{p_1}$ in $\bx$ on
$\T^a$ and in $L_{p_2}$ in $\by$ on $\T^a$. 

The classical result, obtained by E. Schmidt \cite{S}, gives the following
theorem.
\begin{thm}\label{Schmidt} Let $K(\bx,\by) \in L_{2,2}$. Then
$$
\tau_m(K)_{2,2} = \Big(\sum_{j=m+1}^\infty \lambda_j(K)\Big)^{1/2},
$$
where $\{\lambda_j(K)\}_{j=1}^\infty$ is a non-increasing sequence of
eigenvalues of the operator $K^*K$ with $K$ being an integral operator with the
kernel $K(\bx,\by)$. 
\end{thm}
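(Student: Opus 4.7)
The plan is to translate the bilinear approximation problem into an operator approximation problem and then invoke the Schmidt/singular value decomposition for Hilbert–Schmidt operators.

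First I would associate to the kernel $K\in L_{2,2}(\T^a\times \T^a)$ the integral operator
\[
T_K : L_2(\T^a) \to L_2(\T^a), \qquad (T_K g)(\bx) := (2\pi)^{-a}\int_{\T^a} K(\bx,\by)\,g(\by)\,d\by.
\]
Since $K\in L_{2,2}$, the operator $T_K$ is Hilbert–Schmidt, and its Hilbert–Schmidt norm equals the mixed-norm of the kernel, $\|T_K\|_{HS} = \|K\|_{2,2}$. The key observation is that a rank-$m$ bilinear expression $\sum_{i=1}^m u_i(\bx) v_i(\by)$ is exactly the kernel of the finite-rank operator $S=\sum_{i=1}^m (\cdot,\overline{v_i})\,u_i$, and conversely every bounded operator of rank $\le m$ has a kernel of this form (modulo $L_{2,2}$). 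Therefore
\[
\tau_m(K)_{2,2}^{\,2} \;=\; \inf_{\operatorname{rank} S \le m} \|T_K - S\|_{HS}^{2}.
\]

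Next I would apply the spectral theorem to the compact, self-adjoint, nonnegative operator $T_K^*T_K$: it admits an orthonormal eigenbasis $\{\psi_j\}$ with eigenvalues $\lambda_1(K)\ge \lambda_2(K)\ge \cdots \ge 0$, and setting $s_j=\sqrt{\lambda_j(K)}$ and $\phi_j := s_j^{-1} T_K\psi_j$ (for $s_j>0$) one obtains the Schmidt expansion
\[
K(\bx,\by) \;=\; \sum_{j=1}^{\infty} s_j\,\phi_j(\bx)\,\overline{\psi_j(\by)},
\]
with convergence in $L_{2,2}$, together with Parseval's identity $\|K\|_{2,2}^{\,2}=\sum_j \lambda_j(K)$.

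For the upper bound I would take $S_m$ to be the partial sum of the first $m$ terms of the Schmidt expansion; this is a rank-$m$ bilinear expression, and the Parseval identity applied to $K - \text{kernel}(S_m)$ immediately gives
\[
\|K-\operatorname{kernel}(S_m)\|_{2,2}^{\,2} \;=\; \sum_{j=m+1}^{\infty} \lambda_j(K).
\]
For the matching lower bound, given any operator $S$ of rank $\le m$, its kernel is $\sum_{i=1}^{m} u_i\otimes \overline{v_i}$ and $\ker S$ has codimension $\le m$, so there exists a unit vector $g\in \operatorname{span}\{\psi_1,\dots,\psi_{m+1}\}\cap \ker S$. A short calculation with $(T_K-S)g = T_K g$ combined with the min–max/variational characterization of $\lambda_{m+1}$ shows $\|T_K-S\|_{HS}^{\,2}\ge \sum_{j=m+1}^\infty \lambda_j(K)$; equivalently one can argue directly from the Eckart–Young theorem for Hilbert–Schmidt operators, which I would cite and whose short proof via the min–max principle I would sketch.

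The main obstacle is not any single hard estimate but getting the identification between rank-$m$ bilinear kernels in $L_{2,2}$ and rank-$m$ Hilbert–Schmidt operators fully rigorous (including the verification that an infimum in the bilinear formulation coincides with an infimum over all Hilbert–Schmidt operators of rank $\le m$, rather than only those whose factors happen to lie in $L_2$). Once this identification is in place, the result is just the Eckart–Young theorem in the Hilbert–Schmidt setting, applied to $T_K$.
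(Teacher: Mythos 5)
Your overall strategy is the standard one and the paper gives no proof of its own — it simply cites E.~Schmidt's 1907 paper \cite{S} — so the comparison can only be against a correct argument, not against ``the paper's proof.'' The identification of rank-$m$ bilinear kernels with rank-$\le m$ Hilbert--Schmidt operators, the spectral decomposition of $T_K^*T_K$, and the upper bound by truncating the Schmidt expansion are all correct and are exactly how this theorem is proved.

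There is, however, a genuine gap in your sketch of the lower bound. You propose to pick a single unit vector $g\in\ker S\cap\operatorname{span}\{\psi_1,\dots,\psi_{m+1}\}$ and apply the min--max characterization of $\lambda_{m+1}$. That reasoning yields $\|(T_K-S)g\|^2=\|T_Kg\|^2\ge\lambda_{m+1}(K)$, which is a lower bound for the \emph{operator} norm squared, $\|T_K-S\|_{\mathrm{op}}^2\ge\lambda_{m+1}(K)$ -- a single eigenvalue, not the tail sum $\sum_{j>m}\lambda_j(K)$ that you need to bound $\|T_K-S\|_{HS}^2$ from below. One test vector cannot produce the full Hilbert--Schmidt tail. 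The statement ``a short calculation\dots shows $\|T_K-S\|_{HS}^2\ge\sum_{j=m+1}^\infty\lambda_j(K)$'' does not follow from what precedes it.

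To repair the lower bound you need an argument genuinely adapted to the Hilbert--Schmidt norm. The cleanest route uses the Weyl-type singular value inequality: for any bounded $A,B$ one has $s_{i+j-1}(A+B)\le s_i(A)+s_j(B)$; applying this with $A=T_K-S$, $B=S$, $j=m+1$ gives $s_{i+m}(T_K)\le s_i(T_K-S)$ for all $i$ (since $s_{m+1}(S)=0$ when $\operatorname{rank}S\le m$), whence
\[
\|T_K-S\|_{HS}^2=\sum_{i\ge1}s_i(T_K-S)^2\ge\sum_{i\ge1}s_{i+m}(T_K)^2=\sum_{j>m}\lambda_j(K).
\]
Alternatively one can argue via orthogonal decomposition of $\|T_K-S\|_{HS}^2$ along $P$ and $I-P$ where $P$ projects onto $(\ker S)^\perp$, together with the extremal characterization of $\sum_{j>m}\lambda_j$ as the minimum of $\operatorname{tr}\bigl((I-P)T_K^*T_K(I-P)\bigr)$ over rank-$\le m$ projections $P$. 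Either way, this is the part of Eckart--Young that is not purely ``min--max on a single vector,'' and it is the step your proposal elides.

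One minor remark: the reduction from the bilinear infimum to the infimum over all rank-$\le m$ Hilbert--Schmidt operators is fine, and the concern you raise about factors lying in $L_2$ is not an obstruction -- any rank-$\le m$ operator with $L_{2,2}$ kernel (equivalently, Hilbert--Schmidt) automatically has the form $\sum_{i=1}^m u_i\otimes\overline{v_i}$ with $u_i,v_i\in L_2$, and replacing a general finite-rank operator by a Hilbert--Schmidt one of the same rank only decreases the HS distance, so both infima coincide.
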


It is clear that for a periodic function $f\in L_q(\T^a)$,  one has
$$
 \tau_m(f(\bx-\by))_{q,\infty} \le \sigma_m(f,\Tr^a)_q.
$$
This observation and some known results on the $m$-term approximations  
allowed us to prove sharp upper bounds for the best bilinear approximation (see
\cite{T29} and \cite{Tmon}). We note here that it easily follows from the
definition of classes $\bW^r_p$ that 
$$
\lambda_m(\bW^r_1,L_q) \le \tau_m(F_r(\bx-\by))_{q,\infty}\le
\sigma_m(F_r,\Tr)_q.
$$
In the case $d=2$ ($a=1$) it is the classical problem of bilinear approximation.
It turned out that the best $m$-term approximation with respect to the
trigonometric system 
gives the best bilinear approximation in the sense of order for some classes of
functions. In the case of approximation in the
$L_2$-space the bilinear approximation problem is closely related to the problem
of singular value decomposition (also called Schmidt expansion) of the
corresponding integral operator with the kernel $f(x_1,x_2)$ (see Theorem
\ref{Schmidt} above). There are known results on the rate of decay of errors of
best bilinear approximation in $L_{p_1,p_2}$ under different smoothness
assumptions on $f$. We only mention some known results for classes of functions
which are studied in this paper. The problem of estimating $\tau_m(f)_2$ in case
$d=2$ (best $m$-term bilinear approximation $\tau_m(f)$ in $L_2$) is a classical
one and was considered for the first time by E. Schmidt \cite{S} in 1907. For
many function classes $\bF$ an asymptotic behavior of
$\tau_m(\bF)_p:=\sup_{f\in \bF}\tau_m(f)_{p,p}$ is known. For instance, the
relation
\begin{equation}\nonumber
\tau_m(\bW^r_p)_q \asymp m^{-2r + (1/p-\max\{1/2,1/q\})_+}
\end{equation}
for $r>1$ and $1\le p\le q \le \infty$ follows from more general results in
\cite{T32}.

{\bf Tensor product approximation.}\index{Best $m$-term approximation!Tensor product approximation}\index{Tensor product}
For a function $f(x_1,\dots,x_d)$ denote
$$
\Th_m(f)_X:=\inf\limits_{\substack{\{u^i_j\}\\  j=1,\dots,m\\ i=1,\dots,d}}\Big\|f(x_1,\dots,x_d) -
\sum_{j=1}^m\prod_{i=1}^d u^i_j(x_i)\Big\|_X.
$$

In the case $d>2$ almost nothing is known for the tensor product approximation.
There is (see \cite{T35}) an upper estimate in the case $q=p=2$
\begin{equation}\nonumber
\Th_m(\bW^r_2)_2 := \sup\limits_{f\in \bW^r_2} \Th_m(f)_{L_2} \lesssim
m^{-rd/(d-1)}\,. 
\end{equation}
For recent results in this direction see \cite{BT} and \cite{ScUs14}.

\subsection{A comment on Stechkin's lemma}\index{Stechkin's lemma}

The following simple lemma (see, for instance, \cite[p. 92]{Tmon}) turns out to be very useful in nonlinear approximation. Note, that here the case $p<1$ is particularly 
important and leads to better convergence rates. 
\begin{lem}\label{Lqp} Let $a_1\ge a_2\ge \cdots \ge a_M\ge 0$ and $0 < p\le q\le \infty$.
Then for all $m<M$ one has
$$
\left(\sum_{k=m}^M a_k^q\right)^{1/q} \le m^{-\beta} \left(\sum_{k=1}^M a_k^p\right)^{1/p},\quad \beta:=1/p-1/q.
$$
\end{lem}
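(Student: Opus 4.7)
The plan is to exploit monotonicity of the sequence in a simple two-step way. First I would establish the pointwise decay bound
\[
a_m \;\le\; m^{-1/p}\Big(\sum_{k=1}^m a_k^p\Big)^{1/p} \;\le\; m^{-1/p}\,S^{1/p},\qquad S:=\sum_{k=1}^M a_k^p,
\]
which is immediate from the fact that $a_m$ is the minimum of $a_1,\dots,a_m$, hence $m\,a_m^p\le\sum_{k=1}^m a_k^p$. This single observation already handles the endpoint $q=\infty$, since $\beta=1/p$ in that case and $\sup_{k\ge m}a_k=a_m$.

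For the main range $p\le q<\infty$ I would split the summands as $a_k^q=a_k^{q-p}\cdot a_k^p$. Since $k\ge m$ and $q-p\ge 0$, monotonicity gives $a_k^{q-p}\le a_m^{q-p}$, and therefore
\[
\sum_{k=m}^M a_k^q \;\le\; a_m^{q-p}\sum_{k=m}^M a_k^p \;\le\; a_m^{q-p}\,S.
\]
Inserting the bound $a_m\le m^{-1/p}S^{1/p}$ from the first step yields
\[
\sum_{k=m}^M a_k^q \;\le\; m^{-(q-p)/p}\,S^{(q-p)/p}\cdot S \;=\; m^{-(q-p)/p}\,S^{q/p}.
\]
Taking $q$-th roots and noting that $(q-p)/(pq)=1/p-1/q=\beta$ finishes the proof.

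There is no genuine obstacle here: the only thing one has to be careful about is the endpoint case $q=\infty$, where the $\ell_q$-norm becomes a supremum and the exponent $(q-p)/p$ must be interpreted as a limit; but as indicated above, the supremum case is in fact the easiest and follows directly from the monotonicity bound on $a_m$. The argument is otherwise elementary and uses nothing beyond the ordering of the $a_k$ and the concavity-type trick of pulling out $a_m^{q-p}$.
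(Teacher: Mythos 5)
Your proof is correct and follows essentially the same route as the paper: both arguments rely on the pointwise estimate $a_m\le m^{-1/p}\bigl(\sum_{k=1}^M a_k^p\bigr)^{1/p}$ coming from monotonicity, and both then split $a_k^q=a_k^{q-p}a_k^p$ and pull $a_m^{q-p}$ out of the tail sum. The only difference is that you spell out the endpoint $q=\infty$ explicitly, which the paper leaves implicit; that is a harmless (and sensible) addition.
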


\begin{proof} Denote
$$
A:=\left(\sum_{k=1}^M a_k^p\right)^{1/p}.
$$
Monotonicity of $\{a_k\}$ implies
$$
ma_m^p \le A^p\quad \text{and}\quad a_m \le Am^{-1/p}.
$$
Then
$$
\sum_{k=m}^M a_k^q \le a_m^{q-p}\sum_{k=m}^M a_k^p \le a_m^{q-p}A^p.
$$
The above two inequalities imply Lemma \ref{Lqp}.
\end{proof}

For the first time a version of Lemma \ref{Lqp} for $1 \leq p \leq 2, q = 2$, was proved and used in
nonlinear approximation in \cite{T29}. In the case $1 \leq p \leq q \leq \infty$ Lemma \ref{Lqp} is proved in \cite{Tmon}. The
same proof gives Lemma \ref{Lqp} (see above) for $0 < p \leq q \leq \infty$.

It has been observed recently that the constant in Lemma \ref{Lqp} is actually better than stated ($c_{p,q}=1$ in Lemma \ref{Lqp}). 
Using tools from convex optimization one can prove the following stronger version of Lemma \ref{Lqp}, see \cite{Fo12}, \cite[Thm.\ 2.5]{FoRa13}.

\begin{lem} \label{Lqp2} Let $a_1\ge a_2\ge \cdots \ge a_M\ge 0$ and $0 < p < q\le \infty$.
Then for all $m<M$ one has
$$
\left(\sum_{k=m+1}^M a_k^q\right)^{1/q} \le c_{p,q}m^{-\beta} \left(\sum_{k=1}^M a_k^p\right)^{1/p},\quad \beta:=1/p-1/q\,,
$$
where 
$$
    c_{p,q}:=\Big[\Big(\frac{p}{q}\Big)^{p/q}\Big(1-\frac{p}{q}\Big)^{1-p/q}\Big]^{1/p} \leq 1. 
$$
\end{lem}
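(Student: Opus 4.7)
The plan is to exploit monotonicity twice (once to bound a tail in the $q$-norm by a tail in the $p$-norm with a prefactor $a_m^{q-p}$, and once to relate $a_m$ to the head sum), and then to optimize the resulting elementary one-variable inequality. I will treat $q<\infty$; the case $q=\infty$ follows by passing to the limit (or more directly, from $a_{m+1}\le a_m\le (S_p/m)^{1/p}$), and the inequality $c_{p,q}\le 1$ is immediate from $t^t(1-t)^{1-t}\le 1$ for $t\in(0,1)$.

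First, set $S_p:=\sum_{k=1}^M a_k^p$, $S_q:=\sum_{k=m+1}^M a_k^q$, and $r:=a_m$. Since $a_k\le r$ for $k\ge m+1$ and $q>p$, we have $a_k^q=a_k^p\cdot a_k^{q-p}\le r^{q-p}a_k^p$, so
\[
S_q \ \le\ r^{q-p}\sum_{k=m+1}^M a_k^p.
\]
On the other hand, monotonicity gives $\sum_{k=1}^m a_k^p\ge m r^p$, hence $\sum_{k=m+1}^M a_k^p\le S_p-mr^p$. Combining these,
\[
S_q\ \le\ r^{q-p}\bigl(S_p-m r^p\bigr).
\]
Note the constraint $r\in[0,(S_p/m)^{1/p}]$, which is automatically satisfied (and keeps the right-hand side nonnegative).

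Second, I would maximize the right-hand side in $r$, treating $r$ as a free parameter. Substituting $u=r^p$ reduces the problem to maximizing $\varphi(u):=u^{q/p-1}(S_p-mu)$ on $[0,S_p/m]$. Setting $\varphi'(u)=0$ yields $u_\star=\frac{(q-p)S_p}{qm}$, which lies in the admissible interval, and a direct computation gives
\[
\varphi(u_\star)\ =\ \frac{p}{q}\left(\frac{q-p}{q}\right)^{(q-p)/p}S_p^{q/p}\,m^{-(q-p)/p}.
\]
Since $q\beta=(q-p)/p$, and since
\[
\Bigl[(p/q)^{p/q}(1-p/q)^{1-p/q}\Bigr]^{q/p}\ =\ \frac{p}{q}\left(\frac{q-p}{q}\right)^{(q-p)/p}\ =\ c_{p,q}^{\,q},
\]
we obtain $S_q\le c_{p,q}^{\,q}\,S_p^{q/p}\,m^{-q\beta}$, and taking $q$-th roots gives the claim.

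There is no serious obstacle here; the only mild subtlety is that the maximizer $r_\star=u_\star^{1/p}$ need not be realized by any admissible sequence (the associated "block" sequence $a_1=\cdots=a_{m+\ell}=\alpha$, $a_k=0$ otherwise, requires $\ell=mp/(q-p)\in\N$). This is harmless, because we only need the one-sided estimate $S_q\le\sup_{r\in[0,(S_p/m)^{1/p}]}r^{q-p}(S_p-mr^p)$, and the unconstrained maximum lies inside this interval. The main content of the improvement over Lemma \ref{Lqp} is precisely this optimization: the crude choice $r=(S_p/m)^{1/p}$ (which discards the head contribution $mr^p$ entirely) recovers the classical constant $1$, while the optimal choice realizes the sharp constant $c_{p,q}$.
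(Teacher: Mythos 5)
Your proof is correct, and it is worth noting that the paper itself gives no proof of Lemma \ref{Lqp2} --- it only cites \cite{Fo12} and \cite[Thm.\ 2.5]{FoRa13}, remarking that the sharper constant is obtained ``using tools from convex optimization.'' What the paper does prove in full is the weaker Lemma \ref{Lqp}, via the two estimates $m a_m^p \le A^p$ and $\sum_{k\ge m} a_k^q \le a_m^{q-p}\sum_{k\ge m}a_k^p \le a_m^{q-p}A^p$, i.e.\ by discarding the head sum $\sum_{k\le m}a_k^p$ entirely and then plugging in the crude bound $a_m \le (A^p/m)^{1/p}$. Your argument is the natural refinement: you keep the head contribution, obtaining $S_q \le a_m^{q-p}(S_p - m a_m^p)$, and then optimize the resulting scalar function $\varphi(u)=u^{q/p-1}(S_p-mu)$ over $u=a_m^p\in[0,S_p/m]$. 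The computation of the critical point $u_\star = (q-p)S_p/(qm)$, the value $\varphi(u_\star) = (p/q)\,((q-p)/q)^{(q-p)/p}\,S_p^{q/p}\,m^{-(q-p)/p} = c_{p,q}^q\,S_p^{q/p}\,m^{-q\beta}$, and the handling of $q=\infty$ and of the inequality $c_{p,q}\le 1$ via $t^t(1-t)^{1-t}\le 1$ all check out. This elementary one-variable optimization makes the sharp constant transparent and is arguably more self-contained than the convex-duality route in the cited sources, and it cleanly exhibits that the classical constant $1$ in Lemma \ref{Lqp} is exactly what you recover if you throw away the head term (equivalently, evaluate $\varphi$ at the endpoint $u=S_p/m$ instead of at the interior maximizer).
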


Note, that in case $q=2$ and $p=1$ we obtain $c_{1,2}=1/2$\,. This result is useful if the $p$ is not determined in advance. In fact, in various situations it is possible 
to minimize the right-hand side in Lemma \ref{Lqp2} by choosing $p$ depending on $m$ via balancing the quantities involved in the right-hand side (including $c_{p,q}$). 

Lemma \ref{Lqp} and its versions are used in many papers dealing with nonlinear approximation and, more recently,
the approximation of high-dimensional parametric elliptic partial differential equations as in \eqref{spde} below, 
see e.g.\ \cite[Lem.\ 3.6]{CD15}, \cite[(3.13)]{CoDeSch11}, \cite[(21)]{BaCoDeMi16}, \cite{TrWeZh15} and Subsection \ref{approximation in infinite dimensions}. 
This is related to the novel field of {\em uncertainty quantification}. In some of these papers it is called Stechkin's lemma. 
We make a historical remark in this regard. As far as we know S.B. Stechkin did not formulate Lemma \ref{Lqp} even in a special case. 
Stechkin \cite{St} proved the following lemma. 
\begin{lem}\label{StL} ({\bf The Stechkin lemma}) Let $a_1\ge a_2\ge \dots$ be a sequence of
nonnegative numbers. Then the following inequalities hold
$$
\frac{1}{2} \sum_{n=1}^{\infty}n^{-1/2}\left (\sum_{k=n}^{\infty} a_k^2\right)^{1/2} \le \sum_{k=1}^{\infty}a_k \le \frac{2}{\sqrt{3}} \sum_{n=1}^{\infty}n^{-1/2}\left (\sum_{k=n}^{\infty} a_k^2\right)^{1/2}.
$$
\end{lem}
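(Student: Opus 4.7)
My plan is to derive both bounds from a single monotonicity identity. Set $r_n := \sum_{k\ge n} a_k^{\,2}$, so that $S_n = \sqrt{r_n}$ and $a_n^{\,2} = r_n - r_{n+1}$. Since $a_{2n-1}$ is the smallest of the $n$ consecutive entries $a_n,\ldots,a_{2n-1}$,
\[
n\,a_{2n-1}^{\,2} \;\le\; \sum_{k=n}^{2n-1}a_k^{\,2} \;=\; r_n - r_{2n} \;\le\; S_n^{\,2},
\]
i.e.\ $a_{2n-1}\le n^{-1/2}S_n$. The companion bound $S_n^{\,2}\le n\,a_n^{\,2}+S_{2n}^{\,2}$ (which uses $a_k\le a_n$ for $k\ge n$) will drive the opposite direction.

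For the upper bound $\sigma\le \tfrac{2}{\sqrt 3}\,\Sigma$ (writing $\sigma=\sum_k a_k$, $\Sigma=\sum_n n^{-1/2}S_n$), I would first sum $a_{2n-1}\le n^{-1/2}S_n$ over $n$ to obtain $\sum_n a_{2n-1}\le \Sigma$, then pair adjacent indices using $a_{2n}\le a_{2n-1}$:
\[
\sigma = \sum_{n\ge 1}(a_{2n-1}+a_{2n}) \;\le\; 2\sum_n a_{2n-1}\;\le\;2\,\Sigma.
\]
This already yields $\sigma\le 2\Sigma$; to sharpen the constant from $2$ down to $2/\sqrt 3$ I would apply the Cauchy--Schwarz inequality
\[
\sigma^{\,2} \;=\; \Big(\sum_n \sqrt{w_n}\cdot\tfrac{a_n}{\sqrt{w_n}}\Big)^{\!2} \;\le\; \Big(\sum_n w_n\Big)\Big(\sum_n \tfrac{a_n^{\,2}}{w_n}\Big)
\]
with the power weight $w_n\asymp n^{-7/6}$, chosen to saturate on the continuous extremal $a(t)=t^{-7/6}$ (for which the analogous integral inequality becomes an equality). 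Abel summation converts $\sum_n a_n^{\,2}/w_n = \sum_n n^{7/6}(r_n-r_{n+1})$ into a comparable expression of the form $\sum_n n^{1/6}r_n$, and a matching Cauchy--Schwarz $\Sigma^{\,2}\le (\sum_n n^{-7/6})(\sum_n n^{1/6}r_n)$ closes the loop to produce the sharp constant.

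For the lower bound $\tfrac12\Sigma\le\sigma$, I would take square roots in $S_n^{\,2}\le n\,a_n^{\,2}+S_{2n}^{\,2}$ via $\sqrt{A+B}\le\sqrt A+\sqrt B$, obtaining $S_n\le\sqrt n\,a_n+S_{2n}$, and iterate dyadically:
\[
S_n \;\le\; \sqrt n\sum_{j\ge 0}2^{j/2}\,a_{2^j n}.
\]
Dividing by $\sqrt n$ and summing in $n$, the elementary subsampling bound $\sum_n a_{2^j n}\le 2^{-j}\sigma$ (from monotonicity) gives $\Sigma\le\sigma\sum_{j\ge0}2^{-j/2}=(2+\sqrt 2)\sigma$. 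To squeeze the constant down to the sharp $2$, I would replace the crude $\sqrt{A+B}\le\sqrt A+\sqrt B$ at each iteration step by a Cauchy--Schwarz with weights modeled on the other continuous extremal $a(t)=t^{-5/8}$ (for which the integral ratio $\Sigma/\sigma$ equals $2$).

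The main obstacle is extracting Stechkin's precise constants $\tfrac12$ and $\tfrac{2}{\sqrt 3}$ rather than the weaker $\tfrac{1}{2+\sqrt 2}$ and $\tfrac12$ that the elementary pairing and dyadic arguments above immediately yield. In both directions the sharpening hinges on matching the Cauchy--Schwarz weights to the two continuous extremal sequences $t^{-5/8}$ and $t^{-7/6}$, combined with the Abel summation identity $\sum_n(r_n-r_{n+1})w_n = \sum_n r_n(w_n-w_{n-1})$ that converts the mixed quantities $\sum_n a_n^{\,2}w_n$ into the moments $\sum_n r_n(w_n-w_{n-1})$ needed to match $\Sigma^{\,2}$.
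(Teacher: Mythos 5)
Your two elementary estimates are correct and worth keeping: the pairing
\[
a_{2n-1}\le n^{-1/2}S_n,\qquad \sigma=\sum_{n\ge1}(a_{2n-1}+a_{2n})\le 2\sum_n n^{-1/2}S_n
\]
is a valid proof of $\sigma\le 2\Sigma$, and the dyadic iteration of $S_n^2\le n\,a_n^2+S_{2n}^2$ together with the subsampling bound $\sum_n a_{2^jn}\le 2^{-j}\sigma$ gives $\Sigma\le(2+\sqrt2)\sigma$. So you have correctly proved a version of Stechkin's inequality, but with constants $2$ and $1/(2+\sqrt2)$ in place of the claimed $2/\sqrt3$ and $1/2$; the sharpening is where the argument breaks down.

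The most serious problem is in the step you call ``closing the loop'' in the upper bound. Your two Cauchy--Schwarz applications both go the wrong way. The first gives
\[
\sigma^2\le\Bigl(\sum_n n^{-7/6}\Bigr)\Bigl(\sum_n n^{7/6}\,a_n^2\Bigr),
\]
and after Abel summation the right-hand factor is comparable to $\sum_n n^{1/6}r_n$. The ``matching'' inequality you then invoke, $\Sigma^2\le(\sum_n n^{-7/6})(\sum_n n^{1/6}r_n)$, is \emph{also} an upper bound. Knowing that both $\sigma^2$ and $\Sigma^2$ are bounded above by the same quantity $AB$ says nothing about whether $\sigma\le C\,\Sigma$; you would need a \emph{lower} bound on $\Sigma^2$ in terms of $\sum_n n^{1/6}r_n$, and Cauchy--Schwarz alone cannot produce one. (A hypothetical reverse inequality $\Sigma^2\gtrsim AB$ would amount to near-equality in Cauchy--Schwarz for $\Sigma$, which forces $n^{-1/2}S_n$ to be essentially proportional to the chosen weight --- a constraint that fails for general decreasing $a$.) The same directional obstruction recurs if you try weights other than $n^{-7/6}$: any pure Cauchy--Schwarz scheme inevitably bounds both sides from above. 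The lower-bound sharpening has a similar problem, only more vaguely stated: you assert that replacing $\sqrt{A+B}\le\sqrt A+\sqrt B$ by a weighted Cauchy--Schwarz ``modeled on $t^{-5/8}$'' will bring $2+\sqrt 2$ down to $2$, but no mechanism is given, and the obvious weighted variants of the dyadic argument still produce constants strictly above $2$.

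A secondary issue worth flagging: the ``continuous extremal'' heuristic on which your weight choice rests does not transfer cleanly to the discrete inequality because the sum $\sum_{n\ge1}n^{-1/2}S_n$ starts at $n=1$. In the continuous integral over $(0,\infty)$ the power laws $t^{-7/6}$ and $t^{-5/8}$ are indeed stationary points of the ratio $\sigma/\Sigma$, but for the discrete sequences $a_n=n^{-7/6}$ and truncated $a_n=n^{-5/8}$ one computes $\sigma/\Sigma$ and $\Sigma/\sigma$ strictly below $2/\sqrt3$ and $2$ respectively; Stechkin's constants are valid but are not matched by these specific test sequences. This makes the plan of tuning weights to a presumed discrete extremizer less safe than it appears. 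To prove the constants as stated you need a different idea --- roughly, an argument that simultaneously exploits the monotonicity of $(a_n)$ and the discrete structure of the sums, rather than two independent Cauchy--Schwarz inequalities --- or you should simply present the crude constants $2$ and $2+\sqrt2$, which your argument does establish.
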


This two-sided estimate gives a criterion of absolute convergence of (multivariate) Fourier series in terms of the 
  trigonometric best $m$-term approximation in $L_2(\T^d)$. This in turn gives a characterization of the Wiener algebra \index{Wiener algebra} $\mathcal{A}(\T^d)$ 
in terms of approximation spaces $\mathbf{A}^{1/2}_1(L_2(\T^d))$, where
$$
\| \, f \, \|_{\mathbf{A}^s_p (L_2(\T^d))} := \| \, f \, \|_2+ \Big(
\sum_{m=1}^\infty \frac 1 m \, [m^s\, \sigma_m (f,\mathcal{T}^d)_2]^p\Big)^{1/p}
<\infty\,,
$$
(compare with Theorem \ref{noncomp2} below). Classically 
(see \cite{St}, \cite[Ex.\ 1 in 3.2]{Pi}, \cite{DT2}, \cite[Thm.\ 4]{D}) 
authors considered the following generalization of Lemma \ref{StL}, which is stronger in a certain sense than Lemma \ref{Lqp}. 
These generalizations are used to characterize nonlinear 
approximation spaces, see for instance Theorem \ref{noncomp2} below. For any $x \in \ell_q$ we denote 
$$
    \sigma_m(x)_q := \inf\{ \|x-z\|_q~:~z \in \ell_q, m\text{-sparse}\}\,.
$$

\begin{lem}\label{pietsch} Let $0<p<q\leq \infty$. Then $x\in \ell_q$ belongs to $\ell_p$ if and only if 
\be\label{appr_space}
    \Big[\sum\limits_{m=1}^{\infty} \big(m^{1/p-1/q} \sigma_m(x)_q\big)^p\frac{1}{m}\Big]^{1/p} <\infty\,.
\ee
Then \eqref{appr_space} is an equivalent quasi-norm in $\ell_p$. 
\end{lem}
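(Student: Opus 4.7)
The plan is to work throughout with the non-increasing rearrangement $a_1\ge a_2\ge\cdots$ of $|x|$, so that $\sigma_m(x)_q=\bigl(\sum_{k>m}a_k^q\bigr)^{1/q}$ (with the usual modification $\sigma_m(x)_\infty=a_{m+1}$ when $q=\infty$). A short computation shows $\bigl(m^{1/p-1/q}\sigma_m(x)_q\bigr)^p\cdot m^{-1}=m^{-p/q}\sigma_m(x)_q^p$, so the quantity in \eqref{appr_space} to the $p$-th power is $S:=\sum_{m\ge 1}m^{-p/q}\sigma_m(x)_q^p$. Since the sequence $m\mapsto\sigma_m(x)_q$ is nonincreasing, a standard dyadic comparison reduces the problem to showing
\[
S\ \asymp\ \sum_{j\ge 0}2^{j(1-p/q)}\sigma_{2^j}(x)_q^p\ \asymp\ \sum_{k}a_k^p\ =\ \|x\|_p^p,
\]
with implicit constants depending only on $p,q$, from which both directions of the lemma and the equivalence of quasi-norms follow at once.

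For the forward implication $S\lesssim\|x\|_p^p$, set $d_i:=\sum_{2^i<k\le 2^{i+1}}a_k^p$, so that $\sum_i d_i\le\|x\|_p^p$. Monotonicity gives $2^{i-1}a_{2^i}^p\le d_{i-1}$, hence $a_{2^i}\le 2^{(1-i)/p}d_{i-1}^{1/p}$, and therefore
\[
\sigma_{2^j}(x)_q^q\ \le\ \sum_{i\ge j}2^i a_{2^i+1}^q\ \lesssim\ \sum_{i\ge j}2^{i(1-q/p)}d_{i-1}^{q/p}.
\]
Because $p/q<1$, the quasi-triangle inequality $(\sum b_i)^{p/q}\le\sum b_i^{p/q}$ yields $\sigma_{2^j}(x)_q^p\lesssim\sum_{i\ge j}2^{i(p/q-1)}d_{i-1}$; plugging this into $\sum_j 2^{j(1-p/q)}\sigma_{2^j}(x)_q^p$, interchanging the order of summation, and evaluating the resulting geometric sum $\sum_{j\le i}2^{j(1-p/q)}\asymp 2^{i(1-p/q)}$, the factor exactly cancels $2^{i(p/q-1)}$, leaving the bound $\sum_i d_{i-1}\le\|x\|_p^p$. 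I expect this to be the main technical step, since the naive pointwise estimate $\sigma_m(x)_q\le m^{-(1/p-1/q)}\|x\|_p$ coming from Lemma \ref{Lqp} only shows the summand in $S$ is bounded, not summable; the genuine improvement comes from replacing $\|x\|_p$ by the localized $L_p$-masses $d_{i-1}$ on dyadic shells and exploiting $p/q<1$ twice (once for subadditivity, once to make the geometric series converge).

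For the reverse implication $\|x\|_p^p\lesssim S$, the key observation is the trivial pigeonhole estimate
\[
\sigma_m(x)_q^q\ =\ \sum_{k>m}a_k^q\ \ge\ m\,a_{2m}^q,\qquad\text{hence}\qquad a_{2m}\ \le\ m^{-1/q}\sigma_m(x)_q,
\]
valid for $q<\infty$ (and trivial for $q=\infty$). Applying this at $m=2^{j-1}$ yields $a_{2^j}^p\lesssim 2^{-(j-1)p/q}\sigma_{2^{j-1}}(x)_q^p$, and combining with the elementary dyadic upper bound $\sum_k a_k^p\lesssim\sum_{j\ge 0}2^j a_{2^j}^p$ gives
\[
\|x\|_p^p\ \lesssim\ \sum_{j\ge 0}2^{j(1-p/q)}\sigma_{2^{j-1}}(x)_q^p\ \asymp\ S.
\]
Chaining the two inequalities proves $\|x\|_p^p\asymp S$; in particular $x\in\ell_p$ iff \eqref{appr_space} is finite, and the right-hand side of \eqref{appr_space} defines an equivalent quasi-norm on $\ell_p$. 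The case $q=\infty$ runs along the same lines with $\sigma_m(x)_\infty=a_{m+1}$ and the obvious replacements of tail $q$-norms by suprema.
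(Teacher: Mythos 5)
The paper does not actually prove this lemma; it attributes it to Stechkin, Pietsch, and DeVore--Temlyakov and merely cites those sources, so your argument has to stand on its own. The forward implication is carried out correctly: the dyadic reduction of $S=\sum_m m^{-p/q}\sigma_m(x)_q^p$ to $\sum_j 2^{j(1-p/q)}\sigma_{2^j}(x)_q^p$, the localization $a_{2^i}\le 2^{(1-i)/p}d_{i-1}^{1/p}$, the $p/q$-subadditivity step, and the interchange of sums with the geometric factor $\sum_{j\le i}2^{j(1-p/q)}\asymp 2^{i(1-p/q)}$ canceling $2^{i(p/q-1)}$ are all sound and give $S\lesssim\|x\|_p^p$.

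The reverse direction has a genuine gap at $j=0$. You invoke $a_{2^j}^p\lesssim 2^{-(j-1)p/q}\sigma_{2^{j-1}}(x)_q^p$ for all $j\ge 0$, but for $j=0$ this reads $\sigma_{2^{-1}}(x)_q$, which is not defined. More to the point, the first term $a_1=\|x\|_\infty$ simply cannot be controlled by any $\sigma_m(x)_q$ with $m\ge 1$: take $x=e_1$, for which $\sigma_m(x)_q=0$ for every $m\ge 1$, so $S=0$ while $\|x\|_p=1$. (For $q=\infty$ the point is especially visible, since there $S=\sum_{k\ge 2}a_k^p=\|x\|_p^p-a_1^p$.) Consequently the chain "$\|x\|_p^p\asymp S$'' as written is false; what your argument actually gives after splitting off $j=0$ is $\|x\|_p^p\lesssim a_1^p+S\le\|x\|_q^p+S$. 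This still proves the stated ``if and only if'' (the hypothesis $x\in\ell_q$ supplies $a_1<\infty$), but it shows that \eqref{appr_space} on its own is only a quasi-seminorm vanishing on $1$-sparse vectors; the correct equivalent quasi-norm is $\|x\|_q+\bigl[\sum_m m^{-1}(m^{1/p-1/q}\sigma_m(x)_q)^p\bigr]^{1/p}$, exactly analogous to the $\|f\|_2$ term in the definition of $\mathbf{A}_p^s(L_2(\T^d))$ that the surrounding text writes out. You should state this explicitly rather than asserting $\|x\|_p^p\asymp S$; the defect is present in the lemma's wording in the paper as well, but your proof should either patch it or flag it.
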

Lemma \ref{pietsch} means in particular that for any $x\in \ell_p$ one has
\be\label{appr_space2}
    \Big[\sum\limits_{m=1}^{\infty} [m^{1/p-1/q} \sigma_m(x)_q]^p\frac{1}{m}\Big]^{1/p} \leq C(p,q)\|x\|_p\,.
\ee
One can derive Lemma \ref{Lqp} (with the constant $1$ replaced by $C(p,q)$) from (\ref{appr_space2}) by an argument, which is very similar to the direct proof of Lemma \ref{Lqp}.

\subsection{Sparse trigonometric approximation}\index{Best $m$-term approximation!Sparse trigonometric approximation}
\label{sparsetrig}

Sparse trigonometric approximation of periodic functions began by the paper of
S.B. Stechkin \cite{St}, who used it in the criterion for absolute convergence
of trigonometric series. R.S. Ismagilov \cite{I} found nontrivial estimates for
$m$-term approximation of functions with singularities of the type $|x|$ and
gave interesting and important applications to the widths of Sobolev classes. He
used a deterministic method based on number theoretical constructions. 
His method was developed by V.E. Maiorov \cite{M}, who used a method based on
Gaussian sums.  Further strong results were obtained in \cite{DT1} with the help
of a non-constructive result from finite dimensional Banach spaces due to E.D.
Gluskin \cite{G}. Other powerful non-constructive method, which is based on a
probabilistic argument, was used by Y. Makovoz \cite{Mk} and by E.S. Belinskii
\cite{Be1}  (see the book \cite{TrBe04} for a detailed description of the technique). 
Different methods were created in \cite{T29}, \cite{KTE1},
\cite{T1}, \cite{Tappr} for proving lower bounds for function classes.
It was discovered in \cite{DKTe} and \cite{T12} that greedy algorithms\index{Greedy algorithm} can be
used for constructive $m$-term approximation with respect to the trigonometric
system. We demonstrated in \cite{VT152} how greedy algorithms can be used to
prove optimal or best known upper bounds for $m$-term approximation of classes
of functions with mixed smoothness. It is a simple and powerful method of
proving upper bounds.   The reader can find a detailed study of $m$-term
approximation of classes of functions with mixed smoothness, including small
smoothness, in 
the paper \cite{Rom1} by A.S. Romanyuk and in recent papers \cite{VT152},
\cite{T152sm}. We note that in the case $2<q<\infty$ the upper bounds in
\cite{Rom1} are not constructive. 

The following two theorems are from \cite{VT152}. We use the notation 
$\beta:=\beta(p,q):= 1/p-1/q$ and $\eta:=\eta(p):= 1/p-1/2$. In the case of
trigonometric system $\Tr^d$ we drop it from the notation:
$$
\sigma_m(\bF)_q := \sigma_m(\bF,\Tr^d)_q.
$$
\begin{thm}\label{T2.8I} We have
$$
 \sigma_m(\bW^r_p)_{q}
  \asymp  \left\{\begin{array}{ll} m^{-r+\beta}(\log m)^{(d-1)(r-2\beta)}, &
1<p\le q\le 2,\quad r>2\beta,\\
 m^{-r+\eta}(\log m)^{(d-1)(r-2\eta)}, & 1<p\le 2\le q<\infty,\quad r>1/p,\\ 
 m^{-r}(\log m)^{r(d-1)}, & 2\le p\le q<\infty, \quad r>1/2.\end{array} \right.
$$
\end{thm}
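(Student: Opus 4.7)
The plan is to treat the three regimes separately, since each relies on a different combination of linear and nonlinear tools, and then to handle the matching lower bounds by one common finite-dimensional volume/width route.

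\textbf{Upper bounds.} In the regime $2\le p\le q<\infty$ of case (iii) the claimed rate is already realised by the purely linear approximation from $\Tr(Q_n)$ with $|Q_n|\asymp m$: by Theorem \ref{TBT3.2} one has
\[
\sigma_m(\bW^r_p)_q \;\le\; E_{Q_n}(\bW^r_p)_q \;\asymp\; 2^{-rn} \;\asymp\; m^{-r}(\log m)^{r(d-1)},
\]
so no nonlinearity is needed. In the two remaining regimes with $1<p<2$, I plan to use a constructive block-thresholding scheme in the spirit of \cite{DKTe, T12, VT152}. Concretely: project $f\in\bW^r_p$ onto $\Tr(Q_n)$ for $n$ to be chosen, decompose $S_{Q_n}(f)=\sum_{|\bs|_1\le n}\delta_\bs(f)$, and in each block $\rho(\bs)$ retain only the $m_\bs$ Fourier coefficients of largest modulus, subject to $\sum_\bs m_\bs\le m$. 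The three ingredients are (a) the blockwise Nikol'skii inequality $\|t\|_q\lesssim 2^{|\bs|_1(1/p-1/q)}\|t\|_p$ on $\Tr(\rho(\bs))$ from Theorem \ref{T2.4.5}; (b) the Sobolev bound $\|\delta_\bs(f)\|_p\lesssim 2^{-r|\bs|_1}\|f\|_{\bW^r_p}$ in the averaged sense provided by the Littlewood--Paley formula \eqref{NeqW}, with Theorem \ref{T2.4.6} (or the Jawerth--Franke embedding of Lemma \ref{L3.5d}) used to re-assemble the block contributions in $L_q$; (c) Stechkin's lemma (Lemma \ref{Lqp2}) applied inside each block, which turns the $\ell_{p'}$-control of Fourier coefficients coming from Hausdorff--Young on $\Tr(\rho(\bs))$ into a quantitative tail bound after thresholding. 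Optimising in $n$ and in the distribution $\{m_\bs\}_{|\bs|_1\le n}$ under the sparsity constraint yields the claimed exponents: the polynomial gain $m^{\beta}$ (respectively $m^{\eta}$) comes from the Stechkin step, while the logarithmic power is produced by the $\asymp n^{d-1}$ blocks on each layer $|\bs|_1=\ell$. The smoothness thresholds $r>2\beta$ and $r>1/p$ are exactly what makes the geometric sums generated by this balancing convergent.

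\textbf{Lower bounds.} For each $m$ and each regime I plan to exhibit a fooling function $f_m\in c\,\bW^r_p$ supported in frequency on a single hyperbolic layer $\Delta Q_n=Q_n\setminus Q_{n-1}$, with $n$ chosen so that $|\Delta Q_n|\asymp 2^n n^{d-1}$ is comparable with $m$ up to the relevant logarithmic factor. The analysis rests on two pillars. First, the volume estimates of Theorems \ref{T2.5.2} and \ref{T2.5.4} say that the unit $L_p$-ball on the Fourier-coefficient space of $\Tr(\Delta Q_n)$ has volume-radius $\asymp|\Delta Q_n|^{-1/2}$; $L_p$-renormalisation into $\bW^r_p$ costs a factor $2^{-rn}$. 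Second, the nonlinear Kolmogorov width inequality of Remark \ref{R1.1c} and Theorem \ref{T2.1}, together with the Kashin--Gluskin--Garnaev type finite-dimensional bounds for $d_m(B_p^N,\ell_q^N)$ (Theorem \ref{TBT1.4.6} and Lemmas \ref{Lemma[Kashin-Gluskin]}, \ref{Lemma[Gluskin]}), reduces the desired lower bound for $\sigma_m$ to a nonlinear $(m,K^m)$-width estimate on the finite-dimensional pair $B_{\Delta Q_n}(L_p)\subset B_{\Delta Q_n}(L_q)$. Discretising blockwise on each parallelepiped $\Pi(2^{\bs},d)$ with $|\bs|_1=n$ via the Marcinkiewicz Theorem \ref{T2.4.11}, and re-assembling through Theorem \ref{T2.4.6}, translates the finite-dimensional bound back into a lower estimate on polynomials living on $\Delta Q_n$. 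The $m^{\beta}$ (resp.\ $m^{\eta}$) factor appears from the $(m,K^m)$-width and the logarithmic factor from the $\asymp n^{d-1}$ blocks on the layer; matching powers with the upper bound is then a direct computation.

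\textbf{Main obstacle.} The main difficulty lies on the lower-bound side for $p<q$: the Marcinkiewicz theorem fails on $\Tr(Q_n)$ by Theorem \ref{T2.5.7}, so there is no direct discretisation of the $L_q$-norm of a hyperbolic-cross polynomial, which is the standard route used in the univariate theory and in the $p=q$ case. The blockwise Marcinkiewicz plus volume-estimate detour described above bypasses this obstruction, but the interaction between different blocks under an $m$-sparse approximant (which is free to spread its support across the $\asymp n^{d-1}$ blocks on the layer $|\bs|_1=n$ without respecting the dyadic partition) is the delicate point, and handling it is precisely where the nonlinear width inequality of Theorem \ref{T2.1} together with the Gluskin-type finite-dimensional lemmas enters. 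A secondary obstacle is the tightness of the smoothness thresholds $r>2\beta$ and $r>1/p$: below them the Stechkin balancing in the upper bound breaks down, one is in the small-smoothness regime, and separate constructions (cf.\ \cite{T152sm}) would be required.
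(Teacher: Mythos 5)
Several points in your proposal do not hold up; I sketch them in order of seriousness.

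\textbf{Case (iii) is not linear for $p<q$.} You claim that for $2\le p\le q<\infty$ the rate is already attained by $E_{Q_n}(\bW^r_p)_q$. By Theorem \ref{TBT3.2}, $E_{Q_n}(\bW^r_p)_q\asymp 2^{-n(r-1/p+1/q)}$, which for $m\asymp|Q_n|$ gives $m^{-(r-1/p+1/q)}(\log m)^{(d-1)(r-1/p+1/q)}$; this is strictly worse than $m^{-r}(\log m)^{r(d-1)}$ unless $p=q$. The survey points this out explicitly right after the theorem: $\sigma_m(\bW^r_p)_q=o\big(E_{Q_n}(\bW^r_p)_q\big)$ for $1<p<q<\infty$. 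So genuine nonlinearity is required throughout case~(iii), not only in the other two regimes.

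\textbf{The block-thresholding mechanism does not produce the claimed gain.} Two separate issues. (a) Your Stechkin step points the wrong way. In all three regimes $p\le q$. For $p\le 2$, Hausdorff--Young on $\Tr(\rho(\bs))$ gives control of $\|\widehat{\delta_\bs(f)}\|_{\ell^{p'}}$ with $p'\ge q'$. After keeping the $m_\bs$ largest coefficients you need a decaying tail bound in $\ell^{q'}$ (if $q\ge 2$, via $\|h\|_q\le\|\hat h\|_{\ell^{q'}}$) or in $\ell^2$ (if $q\le 2$, via $\|h\|_q\le\|h\|_2$ and Parseval). In both cases the target exponent is \emph{smaller} than $p'$. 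Lemma \ref{Lqp2} only converts an $\ell^s$-bound into a tail estimate in a \emph{larger} $\ell^t$ exponent ($s\le t$); it gives nothing if the tail is measured in a smaller exponent, since $\ell^{p'}\supsetneq\ell^{q'}$ for $p'>q'$ and retaining the largest entries of an $\ell^{p'}$-bounded decreasing sequence yields no quantitative $\ell^{q'}$-decay. (b) Even where such a route makes sense, Fourier-coefficient thresholding delivers $m^{-1/q}$ per step via Hausdorff--Young, not $m^{-1/2}$; to match cases (ii), (iii) for $q>2$ one needs $m^{-1/2}$. The survey's constructive proof (Theorem \ref{T2.8C}, built on Theorem \ref{T2.5C} and Lemma \ref{L2.1}) controls the $A$-norm $\|f_l\|_A=\sum_\bk|\hat f_l(\bk)|$ of each whole hyperbolic layer $f_l=\sum_{|\bs|_1=l}\delta_\bs(f)$ and applies a Weak Chebyshev Greedy-type algorithm, which achieves $\|f_l-G^q_{m_l}(f_l)\|_q\lesssim m_l^{-1/2}q^{1/2}\|f_l\|_A$ uniformly for $2\le q<\infty$. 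That $m^{-1/2}$ (not $m^{-1/q}$) is exactly where the missing $m^{-(1/2-1/q)}$ factor originates, and the greedy selection rule uses dual (peak) functionals, not magnitude ordering of Fourier coefficients. This is a genuinely different mechanism than thresholding.

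\textbf{Lower bounds.} Remark \ref{R1.1c} and Theorem \ref{T2.1} are upper-bound tools: they convert nonlinear-width estimates into entropy estimates, not the reverse. The usable starting point is the reverse inequality $d_m(F,X,\binom{n}{m})\le\sigma_m(F,\Di)_X$ for a finite dictionary $\Di$, and one then needs a lower bound for $d_m(\cdot,\cdot,N)$ for the large $N$ in question — a step you do not supply. For case (iii) the survey bypasses this: the matching lower bound is \eqref{tag1.3}, a lower bound in $L_2$ valid for \emph{all} orthonormal dictionaries from \cite{T69}, together with $\|\cdot\|_q\ge\|\cdot\|_2$ for $q\ge 2$. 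For cases (i), (ii) it cites \cite{T29}, \cite{Tmon}, \cite{T1} without reproducing the arguments. Your volume-plus-finite-dimensional-width route is not obviously wrong, but the specific tools you invoke do not assemble into a lower bound as stated.
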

The third line of Theorem \ref{T2.8I} combined with (\ref{tag1.3}) shows that
the trigonometric system is an optimal in the sense of order orthonormal system
for the $m$-term approximation of the classes $\bW^r_p$ in $L_q$ for $2\le
p,q<\infty$. The case $1<p\le q\le 2$ in Theorem \ref{T2.8I}, which corresponds to the first
line, was proved in \cite{T29} (see also \cite{Tmon}, Ch.4). The proofs from
\cite{T29} and \cite{Tmon} are constructive. 

Comparing Theorems \ref{T2.8I} and \ref{baT2.3} (below) with Theorem \ref{TBT3.2} we conclude that in the case 
$1<q\le p<\infty$ we have
$$
\sigma_m(\bW^r_p)_q \asymp E_{Q_n}(\bW^r_p)_q,\quad m\asymp |Q_n|\asymp 2^nn^{d-1}.
$$
In the case $1<p<q<\infty$ we have 
$$
\sigma_m(\bW^r_p)_q = o\left( E_{Q_n}(\bW^r_p)_q\right),\quad m\asymp |Q_n|\asymp 2^nn^{d-1}.
$$
Note, that in case $1<p<q\leq 2$ we encounter a ``multivariate phenomenon''. In fact, the improvement 
for $\sigma_m(\bW^r_p)_q$ happens in the power of the $\log$. Hence, it is not present in the univariate setting with $d=1$. 

Comparing Theorems \ref{T2.8I} and \ref{baT2.3} (below) with Theorem \ref{uw} we see that the trigonometric system
$\Tr^d$ is as good as the wavelet type system $\mathcal U^d$ for $m$-term approximation in the range of parameters
$1<q\le p<\infty$ and $2\le p\le q<\infty$ and for other parameters from $1<p,q<\infty$ the wavelet type system
$\mathcal U^d$ provides better $m$-term approximation than the trigonometric system $\Tr^d$. Probably, this phenomenon is 
related to the fact that the trigonometric system is an
orthonormal uniformly bounded system. Interestingly, for the $\bH$ classes (see below) we observe that wavelet type systems provide a better 
$m$-term approximation also in case $1<p=q<2$\,.

For $1<p\le q\le 2$ the case of small smoothness $\beta <r\le 2\beta$ is settled. In this case 
we have the constructive bound (see \cite{T152sm} and \cite{Be88})
\begin{equation}\label{nolog1}
\sigma_m(\bW^r_p)_q \asymp  m^{-(r-\beta)}.
\end{equation}
Moreover, if $1<p \leq 2 < q<\infty$ and $\beta=1/p-1/q<r\leq 1/p-q'/(2q)$ Theorem \ref{T2.12sm} below implies 
\begin{equation}\label{nolog2}
  \sigma_m(\bW^r_p)_q \asymp m^{-(r-\beta)q/2}\,.
\end{equation}
This (non-constructive) result actually goes back to Belinskii \cite{Be88} for the larger range $\beta < r \leq q'\beta$. However, the paper \cite{Be88} is hard to access. 
In the case of small smoothness the relations \eqref{nolog1} and \eqref{nolog2} show an interesting and important phenomenon. 
The asymptotic rate of decay of $\sigma_m(\bW^r_p)_q$ does not depend on dimension $d$. 
The dependence on $d$ is hidden in the constants. See also \eqref{nolog} above for a similar phenomenon in the $\bB$-classes. 

We note that in the case $q>2$
Theorem \ref{T2.8I} is proved in \cite{T1}. However, the proof there is not
constructive -- it uses a non-constructive result from \cite{DT1}. In
\cite{VT152} we provided a constructive proof, which is based on greedy
algorithms.  Also, this proof works under weaker conditions on $r$: $r>1/p$
instead of $r>1/p+\eta$ for $1<p\le2$. In \cite{VT152} we concentrated on the
case $q\ge 2$. 

Let us continue with sparse trigonometric approximation in case $q=\infty$. 
In this case the results are not as complete as those for $1<q<\infty$. We give here one result from \cite{VT152} (the reference also contains a 
historical discussion).

\begin{thm}\label{T2.9I} We have
$$
 \sigma_m(\bW^r_p)_{\infty}
  \lesssim  \left\{\begin{array}{ll}   m^{-r+\eta}(\log m)^{(d-1)(r-2\eta)+1/2},
& 1<p\le 2,\quad r>1/p,\\ 
 m^{-r}(\log m)^{r(d-1)+1/2}, & 2\le p<\infty, \quad r>1/2.\end{array} \right.
$$
\end{thm}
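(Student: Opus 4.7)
The proof of Theorem~\ref{T2.9I} proceeds by a multilevel sparse-selection strategy on the hyperbolic-cross decomposition of $f\in\bW^r_p$, following the circle of ideas developed in \cite{T1, DKTe, T12, VT152}.

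\textbf{Layer decomposition and Littlewood--Paley control.} First I would decompose
\[
 f = \sum_{l\ge 0} f_l, \qquad f_l := \sum_{|\bs|_1 = l} \delta_\bs(f) \in \Tr(\Delta Q_l),
\]
with $|\Delta Q_l|\asymp 2^l l^{d-1}$. For $2\le p < \infty$ the Littlewood--Paley equivalence \eqref{NeqW}, together with $L_p \hookrightarrow L_2$ on the torus, yields
\[
 \sum_{l\ge 0} 2^{2rl}\|f_l\|_2^2 \;\lesssim\; \|f\|_{\bW^r_p}^2.
\]
For $1<p\le 2$ I would instead invoke the Jawerth--Franke embedding $\bW^r_p\hookrightarrow\bB^{r-\eta}_{2,p}$ of Lemma~\ref{L3.5d}, where $\eta = 1/p-1/2$, to get
\[
 \sum_{\bs} 2^{(r-\eta)p|\bs|_1}\|\delta_\bs(f)\|_2^p \;\lesssim\; \|f\|_{\bW^r_p}^p.
\]

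\textbf{Sparse trigonometric $L_\infty$-approximation on one block.} The main technical lemma is: for any $t\in \Tr(G)$ with $|G|=N$ and $1\le k\le N$,
\[
 \sigma_k(t,\Tr^d)_\infty \;\lesssim\; \Big(\frac{N}{k}\log N\Big)^{1/2} \|t\|_2.
\]
One way to obtain this is via a random Bernoulli thinning of the spectrum $G$ combined with a Dudley--Pisier supremum bound for trigonometric sums on an $\asymp N$-point grid, in the spirit of the probabilistic selection proofs underlying Theorems~\ref{TBT1.4.6}--\ref{TBT2.4.6}. Alternatively, one may invoke the convergence analysis of the Weak Chebyshev Greedy Algorithm for the trigonometric dictionary from \cite{DKTe, T12}, which produces a constructive realization. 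The factor $\sqrt{\log N}$ here is precisely the mechanism generating the additional $(\log m)^{1/2}$ in Theorem~\ref{T2.9I} relative to the finite-$q$ estimates of Theorem~\ref{T2.8I}.

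\textbf{Budget allocation and summation in $l$.} I would choose $n$ with $2^n n^{d-1}\asymp m$. For $l\le n$ retain the layer $f_l$ intact (total cost $|Q_n|\lesssim m$ terms, zero error); for $l>n$ apply the preceding lemma to $f_l$ with budget $m_l \asymp |\Delta Q_l|\cdot 2^{-\alpha(l-n)}$ for a suitable $\alpha>1$, so that $\sum_{l>n}m_l\lesssim m$. This produces
\[
 \|f-\widetilde f_m\|_\infty \;\lesssim\; \sum_{l>n}\Big(\frac{|\Delta Q_l|}{m_l}\log|\Delta Q_l|\Big)^{1/2}\|f_l\|_2 \;\asymp\; \sum_{l>n} 2^{\alpha(l-n)/2}\sqrt{l}\,\|f_l\|_2.
\]
In the regime $p\ge 2$, Cauchy--Schwarz against the weighted $\ell_2$-bound of the first step, for $\alpha\in(1,2r)$, gives $\lesssim 2^{-rn}n^{1/2}\asymp m^{-r}(\log m)^{r(d-1)+1/2}$. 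For $1<p\le 2$, I would instead apply the lemma blockwise to each $\delta_\bs(f)$, assign $k_\bs \asymp |\rho(\bs)|\cdot 2^{-\alpha(|\bs|_1-n)}$ terms, and use H\"older with exponent $p$ against the Besov bound of the first step; a careful interplay between the $\ell_p$-summability over $\bs$ and the combinatorial factor $\asymp l^{d-1}$ of the $l$th hyperbolic layer then produces the target $m^{-(r-\eta)}(\log m)^{(d-1)(r-2\eta)+1/2}$.

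\textbf{Main obstacle.} The crux is the sparse $L_\infty$ estimate of the second step with the correct power of $\log N$: in the case $q<\infty$ (Theorem~\ref{T2.8I}) the Littlewood--Paley square function together with the Nikol'skii inequalities (Theorems~\ref{T2.4.7}, \ref{T2.4.8}) suffice, but in the uniform norm one is forced to control the supremum of a random (or greedy residual) trigonometric polynomial over $\T^d$, and this is where the $\sqrt{\log}$ loss enters and cannot be avoided. A more delicate secondary point is the precise bookkeeping in the case $1<p\le 2$: the exponent $(d-1)(r-2\eta)+\tfrac12$ arises from an exact cancellation between the $\ell_p\to\ell_1$ loss $l^{(d-1)(1-1/p)}$ over the $\asymp l^{d-1}$ blocks in a layer, the greedy logarithmic loss $\sqrt{l}$, and the weight $2^{(r-\eta)l}$ coming from the Jawerth--Franke embedding. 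The smoothness restriction $r>1/p$ (respectively $r>1/2$ for $p\ge 2$) is exactly what is needed so that the geometric sums in $l>n$ converge and the constructive WCGA selection is well-defined.
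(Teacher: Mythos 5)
Your overall architecture — decompose into hyperbolic layers $f_l=\sum_{|\bs|_1=l}\delta_\bs(f)$, keep $Q_n$ exactly with $2^n n^{d-1}\asymp m$, invoke a greedy sparse--$L_\infty$ lemma of the form $\sigma_k(t)_\infty\lesssim k^{-1/2}(\log\vartheta)^{1/2}\|t\|_A$ (the paper's Theorem~\ref{T2.5C}; your $L_2$-scaled version follows from it via $\|t\|_A\le N^{1/2}\|t\|_2$), split the residual budget geometrically $m_l\asymp|\Delta Q_l|\,2^{-\alpha(l-n)}$ with $1<\alpha<2r$, and sum — is exactly the paper's (Lemma~\ref{L2.1}), and your treatment of the case $2\le p<\infty$ is correct. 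What you cannot do is replace the layerwise application of the greedy lemma by a \emph{blockwise} one in the case $1<p\le 2$, which is precisely what you propose there.

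The problem is quantitative. Because the greedy savings is sublinear in the budget (the error scales as $k^{-1/2}$), splitting a layer budget $m_l$ uniformly over the $\asymp l^{d-1}$ blocks $\rho(\bs)$ of the $l$th layer and running the greedy algorithm on each block separately costs a factor $(l^{d-1})^{1/2}$ relative to running it once on all of $f_l$: if each of $K$ disjoint-frequency pieces $t_i$ is given $M/K$ terms, the per-piece error is $(M/K)^{-1/2}\|t_i\|_A$ and the total is $K^{1/2}M^{-1/2}\sum_i\|t_i\|_A = K^{1/2}\cdot M^{-1/2}\|\sum_i t_i\|_A$, strictly worse than the joint estimate. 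Carrying your H\"older bookkeeping through at $p=2$ (where $\eta=0$ and the answer should coincide with your $p\ge 2$ route) one finds $2^{-rn}n^{d/2}$ rather than $2^{-rn}n^{1/2}$, and for general $1<p\le 2$ one lands on the exponent $(d-1)(r-2/p+3/2)+\tfrac12$ in place of the claimed $(d-1)(r-2\eta)+\tfrac12=(d-1)(r-2/p+1)+\tfrac12$, a loss of $(\log m)^{(d-1)/2}$. So the claimed ``exact cancellation'' does not occur.

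The repair is the one the paper encodes in the class $\bW^{a,b}_A$ of Lemma~\ref{L2.1}: stay layerwise and estimate the $A$-norm of the whole layer \emph{before} invoking the greedy lemma. Concretely,
\begin{equation*}
\|f_l\|_A \;=\; \sum_{|\bs|_1=l}\|\delta_\bs(f)\|_A \;\le\; \sum_{|\bs|_1=l}|\rho(\bs)|^{1/2}\|\delta_\bs(f)\|_2
\;\lesssim\; 2^{l/2}\, l^{(d-1)(1-1/p)}\Big(\sum_{|\bs|_1=l}\|\delta_\bs(f)\|_2^p\Big)^{1/p},
\end{equation*}
and the Jawerth--Franke embedding $\bW^r_p\hookrightarrow\bB^{r-\eta}_{2,p}$ of Lemma~\ref{L3.5d} gives $\bigl(\sum_{|\bs|_1=l}\|\delta_\bs(f)\|_2^p\bigr)^{1/p}\lesssim 2^{-(r-\eta)l}$, hence $\|f_l\|_A\lesssim 2^{-(r-1/p)l}l^{(d-1)(1-1/p)}$, i.e. $\bW^r_p\subset\bW^{a,b}_A$ with $a=r-1/p$, $b=1-1/p$. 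Lemma~\ref{L2.1} then yields $m^{-(a+1/2)}(\log m)^{(d-1)(a+b)+1/2}=m^{-(r-\eta)}(\log m)^{(d-1)(r-2\eta)+1/2}$, which is the statement. The key point you miss is that the Hölder factor $l^{(d-1)(1-1/p)}$ arising from the $A$-norm block sum is, for $p<2$, strictly smaller than the $(l^{d-1})^{1/2}$ that either the blockwise allocation or the Cauchy--Schwarz step $\|f_l\|_A\le|\Delta Q_l|^{1/2}\|f_l\|_2$ incurs; only the $A$-norm route picks up the sharp exponent. (A layerwise route carried entirely in $\|f_l\|_2$, using $\sum_l 2^{(r-\eta)pl}\|f_l\|_2^p\lesssim1$, also falls short, by a smaller margin $(\log m)^{(d-1)\eta}$.)
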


We used recently developed techniques on greedy approximation in
Banach spaces to prove Theorems \ref{T2.8I} and \ref{T2.9I}. It is important
that greedy approximation allows us not only to prove the above theorems but
also to provide a constructive way for building the corresponding $m$-term
approximants. 
We call this algorithm {\it constructive} because it provides an explicit construction with feasible one parameter
optimization steps. We stress that in the setting of approximation in an infinite dimensional Banach space, which is
considered in our paper, the use of term {\it algorithm} requires some explanation. In this paper we discuss only
theoretical aspects of the efficiency (accuracy) of $m$-term approximation and possible ways  to  realize this
efficiency.    The {\it greedy algorithms}   give a procedure to construct an approximant, which turns out to be a good
approximant. The procedure of
constructing a greedy approximant is not a numerical algorithm ready for computational implementation. Therefore, it
would be more precise to call this procedure a {\it theoretical greedy algorithm} or {\it stepwise optimizing process}.
Keeping this remark in mind we, however, use the term {\it greedy algorithm} in this paper because it has been used in
previous papers and has become a standard name for procedures alike Weak Chebyshev Greedy Algorithm
\index{Greedy algorithm!Weak Chebychev}(see below) and for
more general procedures of this type (see for instance
\cite{D}, \cite{Tbook}). Also, the theoretical algorithms, which we use, become 
algorithms in a strict sense if instead of an infinite dimensional setting we consider a finite dimensional setting,
replacing, for instance, the $L_p$ space by its restriction on the set of trigonometric polynomials. 
We give a precise formulation from \cite{VT152}.
\begin{thm}\label{T2.8C} For $q\in(1,\infty)$ and $\mu>0$ there exist
constructive methods $A_m(f,q,\mu)$, which provide for $f\in \bW^r_p$ an
$m$-term approximation such that
\begin{equation}\nonumber
\begin{split}
&\|f-A_m(f,q,\mu)\|_q\\
&~~~\lesssim  \left\{\begin{array}{ll} m^{-r+\beta}(\log m)^{(d-1)(r-2\beta)}, &
1<p\le q\le 2,\quad r>2\beta+\mu,\\
 m^{-r+\eta}(\log m)^{(d-1)(r-2\eta)}, & 1<p\le 2\le q<\infty,\quad
r>1/p+\mu,\\ 
 m^{-r}(\log m)^{r(d-1)}, & 2\le p\le q<\infty, \quad r>1/2+\mu.\end{array}
\right.
\end{split}
\end{equation}
\end{thm}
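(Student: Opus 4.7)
The plan is to realize the rates of Theorem \ref{T2.8I} constructively by combining a linear hyperbolic cross projection with the Weak Chebyshev Greedy Algorithm (WCGA) in $L_q$ with respect to the real trigonometric dictionary. The parameter $\mu>0$ encodes the slack in smoothness that absorbs boundary logarithmic factors.

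First I would fix $N = N(m,\mu) \in \N$ such that $|Q_N| \asymp m^{1+c(\mu)}$ with $c(\mu)\downarrow 0$ as $\mu\downarrow 0$, and write $f = S_{Q_N}(f) + (f - S_{Q_N}(f))$. Since $1<q<\infty$, the projector $S_{Q_N}$ is uniformly bounded on $L_q$ by Corollary \ref{C7.1}, so it is a constructive linear step. By Theorem \ref{TBT3.2} the linear tail satisfies
$$
\|f - S_{Q_N}(f)\|_q \lesssim 2^{-N(r - (1/p-1/q)_+)},
$$
and the choice of $N$ together with the strict-inequality slack $\mu$ (which ensures $r > r_{\mathrm{crit}}+\mu$ in each regime) makes this tail dominated by the target rate on the right-hand side of Theorem \ref{T2.8C}.

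Next I would apply $m$ iterations of WCGA to the polynomial $g := S_{Q_N}(f) \in \Tr(Q_N)$ in $L_q$ with the finite dictionary $\{c_\bk e^{i(\bk,\bx)} \colon \bk \in Q_N\}$ of $L_q$-normalized trigonometric atoms. Each iteration is algorithmically explicit: (i) evaluate the norming functional of the current residual on every atom — a one-parameter search over the finite set $Q_N$ — and retain the atom attaining the maximum up to a fixed weakness factor $t\in(0,1]$; (ii) recompute the $L_q$-best approximation from the linear span of the atoms already selected, a convex finite-dimensional minimization. This produces a constructive $m$-term trigonometric approximant $A_m(f,q,\mu)$. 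The error is controlled via the general convergence theory of WCGA in uniformly smooth Banach spaces, exploiting that $L_q$ has modulus of smoothness of order $u^{\min(q,2)}$ for $1<q<\infty$, which yields bounds of the form $\|g - G_m(g)\|_q \lesssim m^{-\gamma}\,\|g\|_{\mathcal{A}}^{\,1-\gamma}\,\|g\|_q^{\,\gamma}$ for suitable dictionary-dependent norm $\|\cdot\|_{\mathcal{A}}$.

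Finally, to match the three claimed rates I would estimate $\|g\|_{\mathcal{A}}$ via the dyadic decomposition of $\bW^r_p$ from \eqref{NeqW}: for each block $\delta_{\bs}(g) = \delta_{\bs}(f)$ with $|\bs|_1\le N$, Littlewood--Paley, Hausdorff--Young, and the Nikol'skii inequalities (Theorems \ref{T2.4.4}--\ref{T2.4.6}) convert the smoothness $\|f\|_{\bW^r_p}\le 1$ into weighted $\ell_\tau$-summability of Fourier coefficients, with $\tau=\tau(p,q)$ selected to match the uniform-smoothness index of $L_q$. Plugging these bounds into the WCGA rate and optimizing gives the three cases: $m^{-r+\beta}(\log m)^{(d-1)(r-2\beta)}$ when $1<p\le q\le 2$, $m^{-r+\eta}(\log m)^{(d-1)(r-2\eta)}$ when $1<p\le 2\le q<\infty$, and $m^{-r}(\log m)^{r(d-1)}$ when $2\le p\le q<\infty$. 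The main obstacle is the last step: the generic WCGA bound in $L_q$ only delivers the $m^{-1/\max(2,q')}$ rate for arbitrary $\mathcal{A}$-elements, so one must exploit the fine mixed-smoothness structure — essentially distributing the $m$ greedy picks across the $\asymp n^{d-1}$ dyadic blocks $\rho(\bs)$, $|\bs|_1=l$, in a way matched to the weighted sparsity of $\{\delta_{\bs}(f)\}$ — and it is precisely at this stage that the $\mu$-slack in the smoothness is consumed, via the loss of boundary log-factors in the Nikol'skii embeddings used to pass between $\|\cdot\|_{\mathcal{A}}$ and $\|\cdot\|_p$ on each block.
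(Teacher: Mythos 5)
Your central construction---project to $\Tr(Q_N)$ and run one global WCGA pass on $g=S_{Q_N}(f)$---cannot produce the claimed rates, and this is not a detail you can tidy up afterwards. The WCGA error bound in $L_q$ for $q\ge 2$ is of the form $\|g-G_m(g)\|_q\lesssim m^{-1/2}\|g\|_A$ (Theorem \ref{T2.5C}), and when $r>1/p$ (which covers all of case 2 with $p\le 2$) or $r>1/2$ (case 3) one has $\|f\|_A\lesssim 1$ uniformly over $f\in\bW^r_p$, so the global pass is capped at $m^{-1/2}$. The targets $m^{-(r-1/p+1/2)}(\log m)^{\cdots}$ in case 2 and $m^{-r}(\log m)^{\cdots}$ in case 3 are \emph{strictly faster} than $m^{-1/2}$, hence out of reach by a global $A$-norm argument. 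The paper's construction is architecturally different: it never forms a single $g=S_{Q_N}(f)$. Instead (Lemma \ref{L2.1}) it decomposes $f=\sum_l f_l$ into hyperbolic layers $f_l=\sum_{|\bs|_1=l}\delta_\bs(f)$, includes $S_n(f)$ for free, and then runs the greedy routine $G^q_{m_l}$ \emph{separately on each tail layer} $f_l$ with $l>n$, with a per-layer budget $m_l=[2^{n-\mu(l-n)}l^{d-1}]$ that decays geometrically in $l$. Because $\|f_l\|_A\lesssim 2^{-al}l^{(d-1)b}$ decays geometrically while $m_l^{-1/2}$ only grows at rate $2^{\mu(l-n)/2}$, the product is summable in $l$ and the total error beats $m^{-1/2}$ decisively. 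This layerwise geometric allocation is precisely what is missing from your proposal: your closing paragraph does recognize that the picks must be ``distributed,'' but you suggest distributing them across the $\asymp n^{d-1}$ blocks $\rho(\bs)$ inside a single layer $|\bs|_1=n$, whereas the gain actually comes from allocating geometrically across the tail of layers $l=n,n+1,\dots$.

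Your reading of $\mu$ is also off. It is not consumed ``via the loss of boundary log-factors in the Nikol'skii embeddings''; it is the decay exponent in the per-layer budget $m_l\asymp 2^{n-\mu(l-n)}l^{d-1}$. Choosing $\mu>0$ makes $\sum_{l\ge n}m_l\asymp 2^n n^{d-1}$ (so you stay within budget), and the hypothesis $r>r_{\mathrm{crit}}+\mu$ is exactly what guarantees $a>\mu$ (with $a=r-1/p$ in case 2, $a=r-1/2$ in case 3), so that in the error sum the factor $2^{\mu(l-n)/2}\cdot 2^{-a(l-n)}$ is a convergent geometric series. Finally, a smaller point: your uniform-smoothness argument only gives modulus of order $u^q$ for $1<q\le 2$, so the WCGA rate there is $m^{1/q-1}$, not $m^{-1/2}$, and the paper does not in fact use the $A$-norm/WCGA machinery for the first line ($1<p\le q\le 2$) at all---that case is constructive by a separate route (from \cite{T29}, \cite{Tmon}). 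The greedy construction of Lemma \ref{L2.1} is only invoked for $q\ge 2$.
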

Similar modification of Theorem \ref{T2.9I} holds for $q=\infty$.
We do not have matching lower bounds for the upper bounds in Theorem
\ref{T2.9I} in the case of approximation in the uniform norm $L_\infty$. 

We now demonstrate some important features of new techniques used in the proof of the above Theorem \ref{T2.8C}. First
of all, constructive methods $A_m(f,q,\mu)$ are built on the base of greedy-type algorithms. The reader can find a
comprehensive study of greedy algorithms in \cite{Tbook}. One example of greedy-type algorithm -- the Thresholding
Greedy Algorithm\index{Greedy algorithm!Thresholding} --  is discussed in Subsection 7.2 above. Another example -- the Weak Chebyshev Greedy Algorithm -- is
discussed at the end of this subsection. Both of these algorithms are used in building $A_m(f,q,\mu)$. Second, it is
known that the $A$-norm with respect to a given dictionary plays an important role in greedy approximation. In the case
of the trigonometric system $\Tr^d$ the $A$-norm is defined as follows
$$
\|f\|_A := \sum_\bk |\hat f(\bk)|.
$$
The following theorem and lemma from \cite{VT152} play the key role in proving
Theorem \ref{T2.8C}. Denote $\bar m := \max\{m,1\}$. 

\begin{thm}\label{T2.5C} There exist constructive greedy-type approximation
methods $G^q_m(\cdot)$, which provide $m$-term polynomials with respect to
$\Tr^d$ with the following properties: for $2\le q<\infty$
\be\label{2.14p}
\|f-G^q_m(f)\|_q \le C_1(d)(\bar m)^{-1/2}q^{1/2}\|f\|_A,\quad \|G^q_m(f)\|_A
\le C_2(d)\|f\|_A,
\ee
  and for $q=\infty$, $f\in \Tr(\bN,d)$
\be\label{2.14'}
\|f-G^\infty_m(f)\|_\infty \le C_3(d)(\bar m)^{-1/2}(\ln
\vartheta(\bN))^{1/2}\|f\|_A,\quad \|G^\infty_m(f)\|_A \le C_4(d)\|f\|_A.
\ee
\end{thm}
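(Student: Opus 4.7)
The plan is to construct $G^q_m$ via a constructive \emph{Incremental Greedy Algorithm} (with a small weakness parameter), applied to the trigonometric dictionary
\[
\mathcal{D} := \{\sigma e^{i(\bk,\bx)} : \sigma \in \{\pm 1\},\; \bk \in \Z^d\},
\]
viewed as a subset of $L_q(\T^d)$. The two desired inequalities correspond to two separate structural features of such an algorithm: the first follows from its convergence rate in uniformly smooth Banach spaces (applied with the sharp modulus of smoothness of $L_q$), while the second is automatic from the fact that every iterate lies in the scaled symmetric convex hull $\|f\|_A \cdot \operatorname{conv}(\mathcal{D})$.

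\medskip
\textbf{The algorithm and the $A$-norm bound.} Set $M := \|f\|_A$. Define $G_0 := 0$ and, at step $k \ge 1$, choose $\phi_k \in \mathcal{D}$ approximately minimizing
\[
\Big\| f - \Big(1 - \tfrac{1}{k}\Big) G_{k-1} - \tfrac{M}{k}\phi\Big\|_q
\]
over $\phi \in \mathcal{D}$ (up to a small weakness tolerance $\eta_k \to 0$ that does not affect the asymptotics), and put $G_k := (1-1/k) G_{k-1} + (M/k)\phi_k$. Then $G_k$ is, by induction on $k$, a convex combination of elements of $M \cdot \mathcal{D}$; computing Fourier coefficients directly yields
\[
\|G_k\|_A \;\le\; \sum_{j=1}^k \lambda_j^{(k)} \|M\phi_j\|_A \;=\; M \;=\; \|f\|_A,
\]
where $\lambda_j^{(k)} \ge 0$ and $\sum_j \lambda_j^{(k)} = 1$. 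This gives the second inequalities in both \eqref{2.14p} and \eqref{2.14'} with $C_2(d) = C_4(d) = 1$.

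\medskip
\textbf{Convergence in $L_q$ for $2 \le q < \infty$.} For $2 \le q < \infty$, the modulus of smoothness of $L_q$ satisfies $\rho_{L_q}(u) \le (q-1)u^2/2$ (a classical consequence of Hanner's inequality). Using that $f \in M \cdot \operatorname{conv}(\mathcal{D})$, standard analysis of the Incremental Algorithm in uniformly smooth Banach spaces (cf.\ \cite{Tbook}) produces, via the norming functional of $f - G_{k-1}$, the recursion
\[
\|f - G_k\|_q^2 \;\le\; \Big(1 - \tfrac{2}{k}\Big) \|f - G_{k-1}\|_q^2 + \frac{C(q-1)M^2}{k^2}.
\]
Induction on $k$ yields $\|f - G_m\|_q^2 \le C' q M^2 / \bar m$, so that setting $G^q_m(f) := G_m$ establishes the first inequality of \eqref{2.14p} with an appropriate $C_1(d)$.

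\medskip
\textbf{The $L_\infty$ case via Nikol'skii.} For $f \in \Tr(\bN,d)$ restrict the dictionary to $\mathcal{D}_{\bN} := \{\pm e^{i(\bk,\bx)} : \bk \in \Pi(\bN,d)\}$, so that every iterate $G_k$ lies in $\Tr(\bN,d)$; the two bounds in \eqref{2.14p} are unaffected since $\|f\|_A$ is computed over $\Pi(\bN,d)$ anyway. By the multivariate Nikol'skii inequality (Theorem~\ref{T2.4.5}), for $g \in \Tr(\bN,d)$ and any $2 \le q \le \infty$,
\[
\|g\|_\infty \;\le\; C(d)\, \vartheta(\bN)^{1/q}\, \|g\|_q.
\]
Choose $q := \max\{2, \ln \vartheta(\bN)\}$, so that $\vartheta(\bN)^{1/q} \le e$, and apply the above to $g = f - G^q_m(f)$: combined with the $L_q$-estimate just proved,
\[
\|f - G^\infty_m(f)\|_\infty \;\le\; C(d)\, C_1(d)\, q^{1/2}\, \bar m^{-1/2}\, \|f\|_A \;\le\; C_3(d)\, (\ln \vartheta(\bN))^{1/2}\, \bar m^{-1/2}\, \|f\|_A,
\]
yielding \eqref{2.14'}; the $A$-norm bound is inherited.

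\medskip
\textbf{Main obstacle.} The delicate point is the sharp tracking of the factor $q^{1/2}$ in the $L_q$-analysis. This requires exploiting the quadratic behaviour of $\rho_{L_q}$ with the \emph{linear}-in-$q$ smoothness constant coming from Hanner's inequality, and running the induction in the recursion above in a way that prevents spurious amplification of the $q$-dependence at each step. The passage from $L_q$ to $L_\infty$ via Nikol'skii with the calibration $q \asymp \ln \vartheta(\bN)$ is then routine, as is the verification that the incremental iterates remain in $\Tr(\bN,d)$ once the dictionary is restricted to $\mathcal{D}_{\bN}$.
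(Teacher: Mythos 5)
Your proposal is correct and follows essentially the same route as the paper (which is drawn from \cite{VT152}): a relaxed/incremental greedy scheme that keeps iterates in the convex hull of the dictionary scaled by $\|f\|_A$, the sharp power-type smoothness bound $\rho_{L_q}(u)\lesssim qu^2$ for $2\le q<\infty$ to extract the $q^{1/2}$, and the Nikol'skii inequality with the calibration $q\asymp\ln\vartheta(\bN)$ to deduce the $L_\infty$ estimate. The derivation of the $A$-norm bound as an immediate consequence of the convex-combination structure is exactly the intended reasoning, and the reduction to $\rho_m$/Nikol'skii for $q=\infty$ matches the paper. One small slip: the dictionary $\{\pm e^{i(\bk,\bx)}\}$ is too small in the complex setting, since $f/\|f\|_A$ is a convex combination of the phase-rotated exponentials $\frac{\hat f(\bk)}{|\hat f(\bk)|}e^{i(\bk,\bx)}$ and the phases are arbitrary; one should instead take $\mathcal D=\{e^{i\theta}e^{i(\bk,\bx)}:\theta\in\R,\ \bk\in\Z^d\}$ or pass to the normalized real trigonometric system $\mathcal{RT}^d_q$ (as the paper does for the WCGA results in Section~7). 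With that fix the argument goes through verbatim; the squared recursion you write can be derived by multiplying the one-step smoothness inequality by $\|f-G_{k-1}\|_q$ and is the standard way to extract the $(\bar m)^{-1/2}$ rate with the $\gamma^{1/2}\asymp q^{1/2}$ constant.
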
 

Taking into account importance of the $A$-norm for greedy approximation we prove some error bounds for approximation of
classes $\bW^{a,b}_A$ (see the definition in Lemma \ref{L2.1}), which are in a style of classes $\bW^r_p$ with parameter
$a$ being similar to parameter $r$ and parameter $b$ being the one controlling the logarithmic type smoothness. 
\begin{lem}\label{L2.1}  Define for $f\in L_1$
$$
f_l:=\sum_{|\bs|_1=l}\delta_\bs(f), \quad l\in \N_0,\quad \N_0:=\N\cup \{0\}.
$$
Consider the class
$$
\bW^{a,b}_A:=\{f: \|f_l\|_A \le 2^{-al}({\bar l})^{(d-1)b}\}.
$$
Then for $2\le q\le\infty$ and $0<\mu<a$ there is a constructive method
$A_m(\cdot,q,\mu)$ based on greedy algorithms, which provides the bound for
$f\in \bW^{a,b}_A$
\begin{equation}\nonumber
\|f-A_m(f,q,\mu)\|_q \lesssim  m^{-a-1/2} (\log m)^{(d-1)(a+b)},\quad 2\le
q<\infty,      
\end{equation}
\begin{equation}\label{2.14}
\|f-A_m(f,\infty,\mu)\|_\infty \lesssim  m^{-a-1/2} (\log m)^{(d-1)(a+b)+1/2}.  
\end{equation}

\end{lem}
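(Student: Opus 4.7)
The plan is to split the approximant into a linear hyperbolic-cross projection that captures most of the mass of $f$ without consuming any greedy budget, plus a small greedy correction applied to the tail, where the $m^{-1/2}$ gain of Theorem \ref{T2.5C} is multiplied by an already tiny $A$-norm. Concretely, write $f = \sum_{l\geq 0} f_l$, pick $L=L(m)\in \N$ so that $|Q_L| \asymp 2^L L^{d-1} \asymp m/2$ (hence $L \asymp \log m$), and set $m' := m - |Q_L| \asymp m/2$. Then $S_{Q_L}(f) = \sum_{l\leq L} f_l$ uses $|Q_L|$ frequencies and introduces no error, while the residual $R := f - S_{Q_L}(f) = \sum_{l>L} f_l$ satisfies, by the hypothesis $\|f_l\|_A \leq 2^{-al}\bar l^{(d-1)b}$ and geometric summation in $l$,
\[
\|R\|_A \;\leq\; \sum_{l>L} 2^{-al}\bar l^{(d-1)b} \;\lesssim\; 2^{-aL}L^{(d-1)b} \;\asymp\; m^{-a}(\log m)^{(d-1)(a+b)}.
\]

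For $2\leq q<\infty$ I would simply put $A_m(f,q,\mu) := S_{Q_L}(f) + G^q_{m'}(R)$, where $G^q_{m'}$ is the greedy operator of Theorem \ref{T2.5C}. Since the estimate in that theorem requires only a finite $A$-norm (not polynomial input), this yields
\[
\|f - A_m(f,q,\mu)\|_q \;=\; \|R - G^q_{m'}(R)\|_q \;\lesssim\; (m')^{-1/2}q^{1/2}\|R\|_A \;\lesssim\; m^{-a-1/2}(\log m)^{(d-1)(a+b)},
\]
which is the first claimed bound. The parameter $\mu$ does not enter here and can be safely ignored.

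For $q=\infty$ the greedy estimate in Theorem \ref{T2.5C} is available only on a trigonometric polynomial and carries the additional factor $(\log\vartheta(\bN))^{1/2}$, so one has to pre-truncate the tail. Set $L' := L + \lceil (\log_2 m)/(2\mu)\rceil$, so that $L' \asymp \log m$ (with constant depending on $\mu$), define $R_\mu := \sum_{L < l \leq L'} f_l \in \Tr(Q_{L'})$ (for which $\log\vartheta \asymp L'$), and put $A_m(f,\infty,\mu) := S_{Q_L}(f) + G^\infty_{m'}(R_\mu)$. The error then splits as
\[
\|f - A_m(f,\infty,\mu)\|_\infty \;\leq\; \|R_\mu - G^\infty_{m'}(R_\mu)\|_\infty + \sum_{l > L'}\|f_l\|_A,
\]
and Theorem \ref{T2.5C} together with the estimate on $\|R\|_A$ bounds the first summand by
\[
(m')^{-1/2}(L')^{1/2}\|R_\mu\|_A \;\lesssim\; m^{-a-1/2}(\log m)^{(d-1)(a+b)+1/2},
\]
while the second summand is $\lesssim 2^{-aL'}(L')^{(d-1)b}$.

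The main obstacle is the $L_\infty$ bookkeeping: one has to calibrate $L'$ so that the deep tail (which decays only as $2^{-aL'}$) is dominated by the greedy error (which decays as $m^{-1/2}(\log m)^{1/2}\cdot 2^{-aL}$). The restriction $\mu < a$ provides exactly the slack needed, since the increment $L'-L \asymp (\log m)/(2\mu)$ produces $2^{-a(L'-L)} \lesssim m^{-a/(2\mu)} \leq m^{-1/2}$, which converts the deep tail $2^{-aL'}(L')^{(d-1)b} \asymp m^{-a(1+1/(2\mu))}(\log m)^{(d-1)(a+b)}$ into an error no worse than the target bound \eqref{2.14}. Constructivity of $A_m(\cdot,q,\mu)$ follows from that of the projector $S_{Q_L}$ and the constructive greedy procedure underlying Theorem \ref{T2.5C}.
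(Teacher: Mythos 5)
Your proof is correct, but it takes a genuinely different route from the paper. The paper fixes $m\asymp 2^n n^{d-1}$, includes $S_n(f)$, and distributes the remaining greedy budget \emph{geometrically across levels} via $m_l:=[2^{\,n-\mu(l-n)}l^{\,d-1}]$, then applies the greedy operator from Theorem \ref{T2.5C} \emph{to each hyperbolic-layer polynomial $f_l$ separately} and sums the resulting errors. The exponent $\mu<a$ is precisely what makes $\sum_{l>n}m_l\lesssim 2^n n^{d-1}$ while keeping the geometrically weighted error sum $\sum_{l\ge n}(\bar m_l)^{-1/2}2^{-al}l^{(d-1)b}\lesssim 2^{-n(a+1/2)}n^{(d-1)(b-1/2)}$ convergent; and because each $f_l\in\Tr(2^{\,l\mathbf 1},d)$ is already a polynomial, the $L_\infty$ case goes through with $\ln\vartheta\asymp l$ inserted inside the same level-by-level sum, and $\mu$ plays exactly the same role in both metrics. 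Your version instead spends the whole greedy budget $m'$ in a single shot on the accumulated tail $R=\sum_{l>L}f_l$, using that $\|R\|_A$ already has the right size $m^{-a}(\log m)^{(d-1)(a+b)}$. For $2\le q<\infty$ this is cleaner: no $\mu$-bookkeeping at all, one application of Theorem \ref{T2.5C}, and you rightly observe that the estimate there only needs $\|R\|_A<\infty$, not that $R$ be a polynomial. For $q=\infty$ you pay for this by having to pre-truncate to $R_\mu\in\Tr(Q_{L'})$ so that the $\ln\vartheta$-factor is available, and this is where your $\mu$ enters: choosing $L'-L\asymp(\log m)/(2\mu)$ with $\mu<a$ forces the discarded deep tail below $m^{-a-1/2}(\log m)^{(d-1)(a+b)}$, which is dominated by the greedy error; the arithmetic you give checks out. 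In sum, both arguments prove the lemma constructively; the paper's per-layer scheme keeps all objects polynomial automatically at the cost of a budget calibration present in both metrics, whereas your single-shot scheme avoids the $m_l$ calibration entirely in $L_q$ but needs an analogous calibration, in the form of the truncation level $L'$, only in $L_\infty$.
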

 \begin{proof} We prove the lemma for $m\asymp 2^nn^{d-1}$, $n\in \N$. Let $f\in
\bW^{a,b}_A$.  
We approximate $f_l$ in $L_q$. By Theorem \ref{T2.5C} we obtain for $q\in
[2,\infty)$
\begin{equation}\label{2.15}
\|f_l-G^q_{m_l}(f_l)\|_q \lesssim (\bar m_l)^{-1/2}\|f_l\|_A \lesssim (\bar
m_l)^{-1/2}2^{-al}l^{(d-1)b}.
\end{equation}
We take $\mu\in (0,a)$ and specify
$$
m_l := [2^{n-\mu (l-n)}l^{d-1}],\quad l=n,n+1,\dots.
$$
In addition we include in the approximant 
$$
S_n(f) := \sum_{|\bs|_1\le n}\delta_\bs(f).
$$
Define
$$
A_m(f,q,\mu) := S_n(f)+\sum_{l>n} G_{m_l}^q(f_l).
$$
Then, we have built an $m$-term approximant of $f$ with 
$$
m\lesssim 2^nn^{d-1}  +\sum_{l\ge n} m_l \lesssim 2^nn^{d-1}.
$$
 The error of this approximation in $L_q$ is bounded from above by
$$ 
\|f-A_m(f,q,\mu)\|_q \le \sum_{l\ge n} \|f_l-G^q_{m_l}(f_l)\|_q \lesssim
\sum_{l\ge n} (\bar m_l)^{-1/2}2^{-al}l^{(d-1)b}
$$
$$
\lesssim \sum_{l\ge n}2^{-1/2(n-\mu(l-n))}l^{-(d-1)/2}2^{-al}l^{(d-1)b}
\lesssim 2^{-n(a+1/2)}n^{(d-1)(b-1/2)}.
$$
This completes the proof of lemma in the case $ 2\le q<\infty$.

 Let us discuss the case $q=\infty$. The proof repeats the proof in the above
case $q<\infty$ with the following change. Instead of using (\ref{2.14p}) for
estimating an $m_l$-term approximation of $f_l$ in $L_q$ we use  (\ref{2.14'})
to estimate an  $m_l$-term approximation of $f_l$ in $L_\infty$. Then bound
(\ref{2.15}) is replaced by 
\begin{equation}\label{2.16}
\|f_l-G^\infty_{m_l}(f_l)\|_\infty \lesssim (\bar m_l)^{-1/2}(\ln
2^l)^{1/2}\|f_l\|_A \lesssim (\bar m_l)^{-1/2}l^{1/2}2^{-al}l^{(d-1)b}.
\end{equation}  
The extra factor $l^{1/2}$ in (\ref{2.16}) gives an extra factor $(\log
m)^{1/2}$ in (\ref{2.14}). 
\end{proof}

We now consider the case $\sigma_m(\bW^r_{1})_p$, which is not covered by 
Theorem  \ref{T2.8I}. The function $F_r(\bx)$ belongs to the closure in $L_q$
of  $\bW^r_{1}$, $r>1-1/q$, and, therefore, on the one hand
$$
\sigma_m(\bW^r_{1})_q \ge \sigma_m(F_r(\bx))_q.
$$
On the other hand, it follows from the definition of $\bW^r_{1}$ that for any
$f\in \bW^r_{1}$
$$
\sigma_m(f)_q \le \sigma_m(F_r(\bx))_q.
$$
Thus,
\be\nonumber
\sigma_m(\bW^r_{1})_q = \sigma_m(F_r(\bx))_q.
\ee
The following results on $\sigma_m(F_r(\bx))_q$ are from \cite{VT152}.

\begin{thm}\label{T2.10} We have
$$
 \sigma_m(F_r(\bx))_q
  \asymp  \left\{\begin{array}{ll}   m^{-r+1-1/q}(\log m)^{(d-1)(r-1+2/q)}, &
1<q\le 2,\quad r>1-1/q,\\ 
 m^{-r+1/2}(\log m)^{r(d-1)}, & 2\le q<\infty, \quad r>1.\end{array} \right.
$$
The upper bounds are provided by a constructive method $A_m(\cdot,q,\mu)$ based
on greedy algorithms. 
\end{thm}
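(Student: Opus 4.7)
The plan is to prove both bounds of Theorem \ref{T2.10} by exploiting the identity $\sigma_m(\bW^r_1)_q = \sigma_m(F_r)_q$ recorded just before the statement, together with the explicit Fourier-side structure $|\hat F_r(\bk)|\asymp \nu(\bk)^{-r}$, where $\nu(\bk):=\prod_{j=1}^d\max\{|k_j|,1\}$. The two regimes $2\le q<\infty$ and $1<q\le 2$ are then handled by quite different methods.

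For the upper bounds, a direct summation using $|\rho(\bs)|\asymp 2^{|\bs|_1}$, $|\hat F_r(\bk)|\asymp 2^{-r|\bs|_1}$ on $\rho(\bs)$, and $\#\{\bs:|\bs|_1=l\}\asymp l^{d-1}$ shows that $\|f_l\|_A\asymp 2^{-(r-1)l}l^{d-1}$, so $F_r\in\bW^{r-1,1}_A$ in the notation of Lemma \ref{L2.1}. For $2\le q<\infty$ (where $r>1$) I would then invoke Lemma \ref{L2.1} with $a=r-1$, $b=1$, and any fixed $\mu\in(0,r-1)$, obtaining the constructive, greedy-based bound $\sigma_m(F_r)_q\lesssim m^{-r+1/2}(\log m)^{(d-1)r}$. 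For $1<q\le 2$ the upper bound is already supplied by the linear partial-sum operator $S_{Q_n}$: Theorem \ref{TBT3.1} gives $E_{Q_n}(F_r)_q\asymp 2^{-(r-1+1/q)n}n^{(d-1)/q}$, and substituting $m\asymp|Q_n|\asymp 2^n n^{d-1}$ produces exactly the claimed rate $m^{-r+1-1/q}(\log m)^{(d-1)(r-1+2/q)}$.

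For the lower bounds, the regime $2\le q<\infty$ is immediate from $\|\cdot\|_q\ge\|\cdot\|_2$ on $\T^d$ together with orthogonality, since the $L_2$-best $m$-term approximant retains the $m$ Fourier coefficients of largest modulus, which (up to constants) form a hyperbolic cross $\{\nu(\bk)\le N\}$ with $m\asymp N(\log N)^{d-1}$; the formula \eqref{hcsums} applied with exponent $2r$ then yields $\sigma_m(F_r)_2\asymp m^{-r+1/2}(\log m)^{(d-1)r}$. In the regime $1<q\le 2$ I would start from the Hausdorff--Young inequality $\|F_r-g\|_q\ge\|\hat F_r-\hat g\|_{\ell_{q'}}$, reducing the problem to best $m$-term approximation of the sequence $\{\hat F_r(\bk)\}$ in $\ell_{q'}$, for which \eqref{hcsums} with exponent $rq'$ supplies the correct main rate $m^{-r+1-1/q}$ but only the weaker logarithm $(\log m)^{(d-1)r}$. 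The sharp logarithmic exponent $(d-1)(r-1+2/q)$ must then be recovered blockwise, using the tensor-product identity $\|\delta_\bs(F_r)\|_q\asymp 2^{-(r-1+1/q)|\bs|_1}$ (which follows from the univariate Bernoulli-block estimate $\|\delta_s(F_{r,\alpha})\|_q\asymp 2^{-(r-1+1/q)s}$, itself a consequence of Lemma \ref{L2.3.1}) together with the fact that $|\hat F_r|$ is essentially constant on each $\rho(\bs)$.

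The main obstacle is precisely this last refinement. The Hausdorff--Young reduction loses a logarithmic factor of order $(\log m)^{(d-1)(2/q-1)}$ because it compresses all $\asymp 2^{|\bs|_1}$ coefficients of a single dyadic block $\rho(\bs)$ into one $\ell_{q'}$-entry and so cannot see how an $m$-term polynomial distributes its support across the $\asymp n^{d-1}$ blocks of a layer $\Delta Q_n$. Closing this gap will, I expect, require a dimension-counting argument in the spirit of Subsection \ref{vol_est} and Theorem \ref{T2.2.1} applied within $\Tr(\Delta Q_n)$: no $m$-term polynomial can fully cover more than $\asymp m/2^n$ of the $\asymp n^{d-1}$ dyadic blocks at that level (with $m\asymp 2^n n^{d-1}$), and each uncovered block contributes at least its full $L_q$-mass $\asymp 2^{-(r-1+1/q)n}$ to the error; summing over the $\asymp n^{d-1}$ missed blocks then produces the extra factor $n^{(d-1)/q}$ needed to match the upper bound from Theorem \ref{TBT3.1}.
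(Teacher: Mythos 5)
Your decomposition of the problem into four sub-arguments is the right one, and three of them are sound: (a) the upper bound for $2\le q<\infty$ via the computation $\|(F_r)_l\|_A\asymp 2^{-(r-1)l}l^{d-1}$ and Lemma~\ref{L2.1} with $a=r-1$, $b=1$ is exactly the mechanism the paper sets up immediately before the theorem; (b) the upper bound for $1<q\le 2$ matching the linear rate $E_{Q_n}(F_r)_q$ of Theorem~\ref{TBT3.1} is correct, and since for the Bernoulli kernel the $m$ largest Fourier coefficients coincide (up to constants) with a hyperbolic cross, the greedy selection and $S_{Q_n}$ agree, so this is consistent with the stated constructive realization; (c) the lower bound for $2\le q<\infty$ via $\|\cdot\|_q\ge\|\cdot\|_2$, the $L_2$-optimality of keeping the largest coefficients, and \eqref{hcsums} is correct and gives exactly $m^{-r+1/2}(\log m)^{(d-1)r}$. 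You also correctly compute that Hausdorff--Young loses a log factor in the case $1<q<2$, yielding $(\log m)^{(d-1)r}$ where $(\log m)^{(d-1)(r-1+2/q)}$ is needed.

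The genuine gap is the lower bound for $1<q<2$, and your proposed fix does not close it. First, the counting $m/2^n\asymp n^{d-1}$ at the critical level $n$ (with $m\asymp 2^nn^{d-1}$) is vacuous: that is precisely the number of dyadic blocks in $\Delta Q_n$, so nothing forces any block at level $n$ to be missed; to extract a nontrivial counting constraint you would have to track the allocation of the $m$ frequencies across many levels simultaneously, which you have not done. Second, and more seriously, even granting that $\asymp n^{d-1}$ blocks at level $n$ were completely uncovered, the step ``summing the missed block masses produces the factor $n^{(d-1)/q}$'' requires an $\ell_q$-combination $\|f\|_q\gtrsim\bigl(\sum_\bs\|\delta_\bs(f)\|_q^q\bigr)^{1/q}$, which is false for $1<q<2$ (take one Fourier mode per block: the left side is $\asymp N^{1/2}$, the right side $N^{1/q}$). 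What the Littlewood--Paley machinery actually provides (the dual of Corollary~\ref{C7.2}, since $q^*=\min\{q,2\}=q$ there) is only an $\ell_2$-type combination, and that yields the exponent $(d-1)(r-1/2+1/q)$, which lies strictly between your Hausdorff--Young bound $(d-1)r$ and the target $(d-1)(r-1+2/q)$ whenever $1<q<2$. So the ``$\ell_q$-addition of block masses'' is not a repair, it is the crux of what must be proved, and it cannot follow from generic harmonic-analytic inequalities; it has to exploit the particular tensor-product and coherent-sign structure of $F_r$. The survey does not reproduce that argument (it cites \cite{VT152}), so your proposal is incomplete precisely where one should expect it to be hardest, and the heuristic you supply there would need to be replaced by an actual proof, most plausibly one that reduces to a sharp finite-dimensional $\ell$-norm estimate after discretization, in the style of the $\bH^r_p$ case in Theorem~\ref{T2.12}.
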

\begin{thm}\label{T2.11} We have
$$
 \sigma_m(F_r(\bx))_\infty
  \lesssim    
 m^{-r+1/2}(\log m)^{r(d-1)+1/2},   \quad r>1. 
$$
The  bounds are provided by a constructive method $A_m(\cdot,\infty,\mu)$ based
on greedy algorithms. 
\end{thm}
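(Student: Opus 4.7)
The plan is to reduce Theorem \ref{T2.11} to an application of Lemma \ref{L2.1} (with $q=\infty$) by identifying $F_r$ as an element of an appropriate class $\bW^{a,b}_A$ with $a = r-1$ and $b=1$. Since Lemma \ref{L2.1} already provides the constructive algorithm $A_m(\cdot,\infty,\mu)$ built on greedy procedures, the only thing that needs to be verified is the $A$-norm control of the hyperbolic layers of $F_r$.

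First I would decompose $F_r$ into hyperbolic layers. Recall from \eqref{Bernoulli} that $F_r(\bx)=\prod_{j=1}^d F_{r,r}(x_j)$, so that
$$
|\hat F_r(\bk)| \asymp \prod_{j=1}^d \bigl(\max\{|k_j|,1\}\bigr)^{-r}.
$$
For $\bk \in \rho(\bs)$ one has $|k_j|\asymp 2^{s_j}$, hence $|\hat F_r(\bk)|\asymp 2^{-r|\bs|_1}$. Since $|\rho(\bs)|\asymp 2^{|\bs|_1}$, this yields
$$
\|\delta_\bs(F_r)\|_A \;=\!\!\sum_{\bk\in \rho(\bs)}|\hat F_r(\bk)|\;\asymp\; 2^{-(r-1)|\bs|_1}.
$$
Summing over $\bs$ with $|\bs|_1 = l$, of which there are $\asymp l^{d-1}$, and using
$
F_{r,l} := \sum_{|\bs|_1 = l}\delta_\bs(F_r),
$
we obtain
$$
\|F_{r,l}\|_A \;\lesssim\; l^{d-1}\,2^{-(r-1)l}.
$$
This shows that $F_r \in \bW^{a,b}_A$ with $a=r-1$ and $b=1$; the hypothesis $r>1$ in the theorem is exactly what ensures $a>0$ so that a parameter $\mu\in(0,a)$ can be chosen in Lemma \ref{L2.1}.

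Next I would invoke Lemma \ref{L2.1} directly with $q=\infty$, $a=r-1$, $b=1$, and any $\mu\in (0,r-1)$. The bound \eqref{2.14} yields a constructive $m$-term trigonometric approximant $A_m(F_r,\infty,\mu)$ with
$$
\|F_r - A_m(F_r,\infty,\mu)\|_\infty \;\lesssim\; m^{-a-1/2}(\log m)^{(d-1)(a+b)+1/2} \;=\; m^{-r+1/2}(\log m)^{r(d-1)+1/2},
$$
which is exactly the asserted bound. The algorithm is the same as in Lemma \ref{L2.1}: compute $S_n(F_r)$ together with greedy approximants $G^\infty_{m_l}(F_{r,l})$ of each hyperbolic layer for $l>n$, with $m_l = [2^{n-\mu(l-n)}l^{d-1}]$ adjusted to balance the total number of terms at $m\asymp 2^n n^{d-1}$.

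The main obstacle, and the only genuinely nontrivial point, is the $A$-norm estimate $\|F_{r,l}\|_A\lesssim l^{d-1}2^{-(r-1)l}$. Here one must be careful because the $A$-norm is not controlled by the $L_p$-norm for $p<\infty$ in a mixed-smoothness setting, and the naive Cauchy--Schwarz bound $\|F_{r,l}\|_A\le |\Delta Q_l|^{1/2}\|F_{r,l}\|_2$ would lose an unacceptable factor. The crucial observation is that the Fourier coefficients of $F_r$ are explicitly given by a product structure, so one may compute $\|\delta_\bs(F_r)\|_A$ directly (essentially summing a geometric-type product) rather than through an $L_2$ bound. Once this is in place, the rest is the routine balancing already carried out in the proof of Lemma \ref{L2.1}, and the extra $(\log m)^{1/2}$ factor in the upper bound reflects precisely the use of the $L_\infty$ bound \eqref{2.14'} from Theorem \ref{T2.5C} for the greedy algorithm on the trigonometric dictionary.
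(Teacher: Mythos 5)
Your proposal is correct and follows the same route the paper indicates for this result: verify via the explicit product form of the Fourier coefficients of the Bernoulli kernel that $\|\delta_\bs(F_r)\|_A\asymp 2^{-(r-1)|\bs|_1}$, hence $\|F_{r,l}\|_A\lesssim l^{d-1}2^{-(r-1)l}$, placing $F_r\in\bW^{r-1,1}_A$, and then invoke Lemma \ref{L2.1} in the $q=\infty$ case to obtain the bound $m^{-r+1/2}(\log m)^{r(d-1)+1/2}$ via the constructive greedy method $A_m(\cdot,\infty,\mu)$. This is precisely the scheme underlying Theorems \ref{T2.10}--\ref{T2.11} in the cited source \cite{VT152}, and your remark that the $A$-norm layer estimate must be computed directly from the explicit Fourier coefficients rather than via an $L_2$/Cauchy--Schwarz bound correctly identifies the step where a naive argument would fail.
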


We now proceed to classes $\bH^r_p$ and $\bB^r_{p,\theta}$.
  The following theorem was proved in
\cite{T29} (see also \cite{Tmon}, Ch.4). The proofs from \cite{T29} and
\cite{Tmon} are constructive.

\begin{thm}\label{T2.7} Let $1<p\le q\le 2$, $r>\beta$. Then
$$
\sigma_m(\bH^r_{p})_q \asymp m^{-r+\beta}(\log m)^{(d-1)(r-\beta+1/q)}.
$$
\end{thm}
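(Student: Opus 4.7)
The proof establishes matching upper and lower bounds.

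The upper bound follows immediately from Theorem~\ref{TBT3.3}. Choose $n=n(m)$ with $|Q_n|\le m<|Q_{n+1}|$, so that $m\asymp 2^n n^{d-1}$. Since $S_{Q_n}(f)\in \Tr(Q_n)$ is an $m$-term trigonometric polynomial, the relevant case of Theorem~\ref{TBT3.3} (first case if $p<q$, third if $p=q$) yields
\[
\sigma_m(\bH^r_p)_q\;\le\;E_{Q_n}(\bH^r_p)_q\;\asymp\;2^{-(r-\beta)n}\,n^{(d-1)/q},
\]
which, after substituting $n\asymp\log m$ and $m\asymp 2^n n^{d-1}$, becomes $m^{-(r-\beta)}(\log m)^{(d-1)(r-\beta+1/q)}$.

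For the lower bound, I would fix $l$ with $|\Delta Q_l|\asymp Cm$ for a sufficiently large absolute constant $C>0$, and construct the fooling function
\[
f_l(\bx)\;:=\;c_0\,2^{-rl}\sum_{|\bs|_1=l}\frac{\mathcal{D}_{\rho(\bs)}(\bx)}{\|\mathcal{D}_{\rho(\bs)}\|_p},
\]
where $\mathcal{D}_{\rho(\bs)}$ is the block Dirichlet kernel from Paragraph~1h of Subsection~\ref{ssect:hyppoly}, satisfying $\|\mathcal{D}_{\rho(\bs)}\|_p\asymp 2^{l(1-1/p)}$ for $1<p<\infty$. By construction $\|\delta_\bs(f_l)\|_p=c_0\,2^{-rl}$ when $|\bs|_1=l$ and vanishes otherwise, so $f_l\in\bH^r_p$ for a suitably small absolute $c_0>0$, and all nonzero Fourier coefficients of $f_l$ have the \emph{uniform} modulus $\asymp 2^{-rl-l(1-1/p)}$ spread across $\asymp 2^l l^{d-1}$ frequencies. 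This uniformity is the essential structural property.

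Given any $m$-term trigonometric polynomial $P$, the Littlewood--Paley-based $L_q$-boundedness of the spectral projection onto $\bigcup_{|\bs|_1=l}\rho(\bs)$ (Corollary~\ref{C7.1}) allows one, at the cost of an absolute multiplicative constant, to replace $P$ by its spectral restriction to this set, which still has at most $O(m)$ non-zero coefficients. The residue $g=f_l-P$ then has Fourier coefficients agreeing with those of $f_l$ on at least $|\Delta Q_l|/2$ frequencies. Combining the Littlewood--Paley characterization of $\|g\|_q$ with the block-wise Marcinkiewicz discretization of Theorem~\ref{T2.4.11}, together with the observation that a partial Dirichlet kernel on $\rho(\bs)$ retaining at least half of its coefficients still has $L_q$-norm comparable to $\|\mathcal{D}_{\rho(\bs)}\|_q$, one would derive
\[
\|g\|_q\;\gtrsim\;2^{-(r-\beta)l}\,l^{(d-1)/q},
\]
in which the critical ratio $\|\mathcal{D}_{\rho(\bs)}\|_q/\|\mathcal{D}_{\rho(\bs)}\|_p\asymp 2^{l\beta}$ is precisely the factor needed to convert the $L_p$-scale of the class into the $L_q$-scale of the approximation error.

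The main obstacle is this final step: the robust block-wise lower bound on the partial Dirichlet kernels, uniform over the adversary's choice of deleted frequencies. The uniform-modulus structure of $\hat f_l$ is essential---it prevents $P$ from concentrating the $L_q$-mass of $f_l$ into a few large terms---while the Dirichlet-kernel $L_p$-to-$L_q$ ratio supplies the $2^{l\beta}$ factor that distinguishes the rate from the $p=q$ case. The constructive verification of this residue estimate is carried out in \cite{T29} and \cite{Tmon}, Chapter~4.
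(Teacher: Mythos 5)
Your upper bound is correct: $\sigma_m(\bH^r_p)_q \leq E_{Q_n}(\bH^r_p)_q$ with $m\asymp|Q_n|$ and Theorem~\ref{TBT3.3} do the job. The fooling function $f_l$ and the reduction via the Littlewood--Paley projection onto $\Delta Q_l$ are also the right start for the lower bound, and the cardinality bookkeeping is fine. However, the combination step you gesture at---``Littlewood--Paley characterization of $\|g\|_q$ with block-wise Marcinkiewicz discretization, plus the partial-Dirichlet-kernel observation''---does not produce the stated power of the logarithm, and this is a genuine gap. Test the two natural readings of your sketch, writing $a\asymp 2^{-rl-l(1-1/p)}$ for the common size of $|\hat f_l(\bk)|$ and $S_\bs:=S\cap\rho(\bs)$. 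Testing $g$ globally against $\mathcal{D}_S/\|\mathcal{D}_S\|_{q'}$ with the only robust estimate $\|\mathcal{D}_S\|_{q'}\le |S|^{1-1/q'}$ gives $\|g\|_q\gtrsim a\,2^{l/q'}\,l^{(d-1)/q'}$, which for $q<2$ is a factor $l^{(d-1)(1/q-1/q')}$ short of the target $a\,2^{l/q'}\,l^{(d-1)/q}$. Alternatively, bounding each good block by $\|\delta_\bs(g)\|_q\gtrsim a\,2^{l(1-1/q)}$ and then combining with the only block-wise lower bound available at $q\le 2$, namely $\|g\|_q\gtrsim\big(\sum_\bs\|\delta_\bs(g)\|_q^2\big)^{1/2}$ (the embedding $L_q\hookrightarrow\bB^0_{q,2}$), yields $l^{(d-1)/2}$---again short by $l^{(d-1)(1/q-1/2)}$. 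Neither route, nor any ``partial Dirichlet kernel keeps its $L_q$-norm'' lemma, supplies the missing power.

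What actually closes the gap is relation~\eqref{2.4.5} of Theorem~\ref{T2.4.6}, the deeper half of that theorem, which the paper singles out as the key tool in hyperbolic cross approximation; applied with exponent pair $(q,2)$, $1<q<2$, it gives
\begin{equation*}
\|g\|_q\;\gtrsim\;\Big(\sum_{|\bs|_1=l}\|\delta_\bs(g)\|_2^{q}\,2^{l(q/2-1)}\Big)^{1/q}.
\end{equation*}
From here no dual test function and no counting of good blocks is needed: Parseval alone gives $\|\delta_\bs(g)\|_2\gtrsim a|S_\bs|^{1/2}$, and since $\sum_\bs|S_\bs|=|S|\gtrsim 2^l l^{d-1}$ with each $|S_\bs|\lesssim 2^l$, concavity of $t\mapsto t^{q/2}$ gives $\sum_\bs|S_\bs|^{q/2}\gtrsim|S|\cdot 2^{l(q/2-1)}$; assembling the pieces yields $\|g\|_q\gtrsim a\,2^{l(1-1/q)}l^{(d-1)/q}\asymp 2^{-(r-\beta)l}l^{(d-1)/q}$, as required. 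So your fooling function and projection are right, but the ingredient that both dispenses with the robustness issue you flag and delivers the extra $l^{(d-1)(1/q-1/2)}$ is the inequality of Theorem~\ref{T2.4.6}, not Marcinkiewicz discretization or a norm comparison for partial Dirichlet kernels.
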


The following analog of Theorem \ref{T2.8I} for classes $\bH^r_p$ was proved
in \cite{Rom1}. The proof in \cite{Rom1} in the case $q>2$ is not constructive.
\begin{thm}\label{T2.12} One has
$$
 \sigma_m(\bH^r_{p})_{q}
  \asymp  \left\{\begin{array}{ll} m^{-r+\beta}(\log m)^{(d-1)(r-\beta+1/q)}, &
1<p\le q\le 2,\quad r>\beta,\\
 m^{-r+\eta}(\log m)^{(d-1)(r-1/p+1)}, & 1<p\le 2\le q<\infty,\quad r>1/p,\\ 
 m^{-r}(\log m)^{(d-1)(r+1/2)}, & 2\le p\le q<\infty, \quad r>1/2.\end{array}
\right.
$$
\end{thm}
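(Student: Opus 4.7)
The plan is to establish all three cases by combining upper bounds obtained through a reduction to the $A$-norm classes $\bW^{a,b}_A$ of Lemma \ref{L2.1} with matching lower bounds based on fooling-function constructions. The first case $1 < p \le q \le 2$, $r > \beta$, is already settled in Theorem \ref{T2.7} (with a constructive proof from \cite{T29}, \cite{Tmon}), so the task reduces to the two regimes $1 < p \le 2 \le q < \infty$ with $r > 1/p$ and $2 \le p \le q < \infty$ with $r > 1/2$.

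For the upper bounds in these regimes, the idea is to embed $\bH^r_p$ into a suitable $\bW^{a,b}_A$ and then invoke Lemma \ref{L2.1}. Starting from the defining bound $\|\delta_\bs(f)\|_p \lesssim 2^{-r|\bs|_1}$ for $f \in \bH^r_p$ (see \eqref{DefH}), the Bernstein--Nikol'skii inequality of Theorem \ref{T2.4.5} together with Cauchy--Schwarz applied on the Fourier side gives in the case $p \ge 2$ the estimate
\[
\|\delta_\bs(f)\|_A \le |\rho(\bs)|^{1/2}\|\delta_\bs(f)\|_2 \lesssim 2^{|\bs|_1/2}\|\delta_\bs(f)\|_p \lesssim 2^{|\bs|_1(1/2-r)},
\]
so $\|f_l\|_A \lesssim l^{d-1} 2^{-(r-1/2)l}$, i.e.\ $\bH^r_p \hookrightarrow \bW^{r-1/2,1}_A$; in the case $1 < p \le 2$ the same computation with an extra factor $2^{|\bs|_1 \eta}$ coming from Nikol'skii between $L_p$ and $L_2$ yields $\bH^r_p \hookrightarrow \bW^{r-1/p,1}_A$. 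Substituting $(a,b)=(r-1/2,1)$ or $(a,b)=(r-1/p,1)$ into the bound of Lemma \ref{L2.1} for $2 \le q < \infty$ produces exactly the claimed upper bounds $m^{-r}(\log m)^{(d-1)(r+1/2)}$ and $m^{-r+\eta}(\log m)^{(d-1)(r-1/p+1)}$, with the added bonus that the resulting approximation is realized by the constructive greedy method of Theorem \ref{T2.5C}.

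For the lower bounds in the two regimes with $q > 2$, I would follow the template of \cite{Rom1}: for each level $n$ produce a test function $g_n \in c\, \bH^r_p$ whose Fourier support lies in the hyperbolic layer $\Delta Q_n$, and show that no choice of $m \asymp 2^n n^{d-1}/2$ frequencies can approximate $g_n$ in $L_q$ better than the claimed order. A natural candidate uses Riesz products of the type constructed in Lemma \ref{L2.6.1} with Rudin--Shapiro blocks on each $\rho(\bs)$, $|\bs|_1 = n$; Theorem \ref{T2.2.1} together with the Nikol'skii--type inequalities of Subsection 2.4 will ensure $\|g_n\|_{\bH^r_p} \lesssim 1$ while prescribing good lower bounds on the $L_q$-norms of its pieces. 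The main obstacle is to convert this construction into a sharp lower bound for $\sigma_m(g_n)_q$, since $\sigma_m$ is neither directly comparable with $d_m$ nor with $\lambda_m$; the proper tool is the volume estimate of $B_{\Delta Q_n}(L_q)$ from Theorem \ref{T2.5.4}, fed into a Maiorov-type discretization that reduces the problem to $m$-term approximation in the finite-dimensional spaces $\ell_2^N \hookrightarrow \ell_q^N$ and invokes Gluskin's lemma (Lemma \ref{Lemma[Gluskin]}). This volume/discretization step is precisely where the proof becomes non-constructive and extracts the extra logarithmic factor $(\log m)^{(d-1)/2}$ in the third case that distinguishes the $\bH$ classes from the $\bW$ classes.
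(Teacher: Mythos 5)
Your upper-bound argument is correct and coincides with the paper's constructive route from \cite{VT152} (stated there via Proposition \ref{HCp}): the reductions $\|\delta_\bs(f)\|_A \le |\rho(\bs)|^{1/2}\|\delta_\bs(f)\|_2 \lesssim 2^{|\bs|_1(1/2-r)}$ for $p\ge 2$, and with the extra Nikol'skii factor $2^{|\bs|_1\eta}$ for $1<p\le 2$, give exactly $\bH^r_p\hookrightarrow \bW^{r-1/2,1}_A$ resp.\ $\bW^{r-1/p,1}_A$, after which Lemma \ref{L2.1} produces the stated rates. The computations check.

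For the lower bounds, however, your proposal is off target in two respects. First, the third case $2\le p\le q<\infty$ needs no new construction at all: the paper already supplies the lower bound $m^{-r}(\log m)^{(d-1)(r+1/2)}$ through \eqref{tag1.2}, since $\sigma_m(\bH^r_p)_q\ge\sigma_m(\bH^r_p)_2\ge\sigma_m(\bH^r_p,\og)_2$ for $q\ge 2$; equivalently, $\sigma_m(\bH^r_\infty)_q\gtrsim m^{-r}(\log m)^{(d-1)(r+1/2)}$ for all $q>1$ as cited from \cite{KTE1} around Theorem \ref{baT2.5}, combined with $\bH^r_\infty\subset\bH^r_p$. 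Second, the toolkit you sketch for the remaining case does not work as described: the Riesz products of Lemma \ref{L2.6.1} are a strictly two-dimensional device in this paper; and Gluskin's Lemma \ref{Lemma[Gluskin]} bounds \emph{linear} widths $\lambda_m(B^n_p,\ell^n_q)$, which by itself does not transfer to a lower bound on $m$-term approximation with respect to a prescribed system (the approximant spaces in $\sigma_m$ are restricted to coordinate subspaces of the dictionary, while $\lambda_m$ minimizes over arbitrary rank-$m$ operators). The Maiorov/volume machinery you cite is likewise designed for width estimates, not for fooling a best $m$-frequency selection. The genuine work in the case $1<p\le 2\le q<\infty$ is precisely the fooling-function construction that pushes the rate up to $m^{-r+\eta}(\log m)^{(d-1)(r-1/p+1)}$; the paper simply attributes this to \cite{Rom1}, and your sketch does not supply the missing argument.
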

The third line of Theorem \ref{T2.12} combined with (\ref{tag1.2}) shows that
the trigonometric system is an optimal one in the sense of order orthonormal system
for the $m$-term approximation of the classes $\bH^r_p$ in $L_q$ for $2\le
p,q<\infty$. Interestingly, in case $1<p<q \leq 2$ it holds, in contrast to the situation for the $\bW^r_p$ classes,
$$
    \sigma_m(\bH^r_p)_q \asymp E_{Q_n}(\bH^r_p)_q\,,\quad m\asymp |Q_n|\asymp 2^nn^{d-1}\,,
$$
see Theorem \ref{TBT3.3}. Hence, 
the ``multivariate phenomenon'' mentioned above after Theorem \ref{T2.8I} is not present here. 

The following proposition is from \cite{VT152}.
\begin{prop}\label{HCp} The upper bounds in Theorem \ref{T2.12} are provided by
a constructive method $A_m(\cdot,q,\mu)$ based on greedy algorithms. 
\end{prop}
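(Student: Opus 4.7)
The plan is to mimic the constructive scheme from Theorem \ref{T2.8C} and Lemma \ref{L2.1}, replacing the $\bW^r_p$ input with an $\bH^r_p$ input and recomputing the resulting parameters $(a,b)$ of the auxiliary $\bW^{a,b}_A$-type class. Given $f \in \bH^r_p$, the starting point is the dyadic-block characterization $\|\delta_{\bs}(f)\|_p \lesssim 2^{-r|\bs|_1}$ from \eqref{DefH}. I would form the hyperbolic layers $f_l = \sum_{|\bs|_1 = l}\delta_{\bs}(f)$ and estimate $\|f_l\|_A$. For a polynomial $t \in \Tr(\rho(\bs))$, the Cauchy--Schwarz and Parseval inequalities give $\|t\|_A \le |\rho(\bs)|^{1/2}\|t\|_2 \lesssim 2^{|\bs|_1/2}\|t\|_2$, and the Nikol'skii inequality (Theorem \ref{T2.4.5}) converts $\|\delta_{\bs}(f)\|_p$ into $\|\delta_{\bs}(f)\|_2$. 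This yields $\|\delta_{\bs}(f)\|_A \lesssim 2^{|\bs|_1(\max\{1/p,1/2\}-r)}$, hence
\[
\|f_l\|_A \;\lesssim\; l^{d-1}\, 2^{-al}, \qquad a \,:=\, r - \max\{1/p,1/2\},
\]
so $f_l$ fits the framework of Lemma \ref{L2.1} with $b = 1$.

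For the second and third ranges of Theorem \ref{T2.12} (i.e.\ $q \ge 2$), I would directly apply Lemma \ref{L2.1}, which via Theorem \ref{T2.5C} builds the constructive greedy approximant $A_m(\cdot, q, \mu)$ and yields the error bound
\[
\|f - A_m(f,q,\mu)\|_q \;\lesssim\; m^{-a-1/2}(\log m)^{(d-1)(a+1)}.
\]
Plugging in $a = r - 1/p$ (case $1<p\le 2\le q<\infty$) gives the exponent $-r+\eta$ on $m$ and logarithmic exponent $(d-1)(r - 1/p + 1)$, matching line two of Theorem \ref{T2.12}. Plugging in $a = r-1/2$ (case $2\le p\le q<\infty$) gives $m^{-r}(\log m)^{(d-1)(r+1/2)}$, matching line three.

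For the first range $1 < p \le q \le 2$, the $A$-norm route is too crude; here I would invoke the constructive technique of \cite{T29} (see also \cite{Tmon}, Ch.~4), which discretizes each block $\delta_\bs(f)$ via the Marcinkiewicz-type Theorem \ref{T2.4.11}, then runs a finite-dimensional greedy selection inside $\Tr(\rho(\bs))$ combined with the Littlewood--Paley decomposition controlling the $L_q$-norm of the aggregate. The allocation of $m_\bs$ terms across the blocks $\{\rho(\bs):|\bs|_1 = l\}$ is chosen to balance the contribution $\|\delta_\bs(f) - G^q_{m_\bs}(\delta_\bs(f))\|_q$ using the Nikol'skii inequality $\|\delta_\bs(f)\|_q \lesssim 2^{|\bs|_1\beta}\|\delta_\bs(f)\|_p$, producing exactly the rate $m^{-r+\beta}(\log m)^{(d-1)(r-\beta+1/q)}$.

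The main obstacle will be the first range: giving a genuinely constructive distribution of approximation budgets $m_\bs$ across the $\asymp l^{d-1}$ dyadic blocks inside the $l$-th hyperbolic layer, and then across layers, while keeping the Littlewood--Paley aggregation step compatible with the greedy output. The second obstacle, common to all three ranges, is enforcing the mild smoothness restriction $\mu$ below the threshold on $r$ so that the geometric tail $\sum_{l > n} 2^{-\mu(l-n)}$ converges with the correct prefactor, exactly as in the proof of Lemma \ref{L2.1}.
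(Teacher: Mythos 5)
Your proposal is correct and follows the same strategy as the source \cite{VT152} and the surrounding text in the paper. For $q\ge 2$ you reduce to Lemma \ref{L2.1} by computing the $A$-norm of the hyperbolic layers $f_l$: with Cauchy--Schwarz, Parseval and Nikol'skii you get $\|\delta_\bs(f)\|_A\lesssim 2^{|\bs|_1(\max\{1/p,1/2\}-r)}$, hence $f\in\bW^{a,1}_A$ with $a=r-\max\{1/p,1/2\}$, and Lemma \ref{L2.1} then produces the exponents of lines two and three of Theorem \ref{T2.12}; for $1<p\le q\le 2$ you correctly point out that the proof in \cite{T29} (restated as Theorem \ref{T2.7}) is already constructive, which is exactly why the paper only needed to add the $q>2$ case.
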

In the case $q=\infty$ we have (see \cite{VT152}).
\begin{thm}\label{T2.13} We have
$$
 \sigma_m(\bH^r_{p})_{\infty}
  \lesssim  \left\{\begin{array}{ll}   m^{-r+\eta}(\log
m)^{(d-1)(r-1/p+1)+1/2}, & 1<p\le 2,\quad r>1/p,\\ 
 m^{-r}(\log m)^{(r+1/2)(d-1)+1/2}, & 2\le p<\infty, \quad r>1/2.\end{array}
\right.
$$
The upper bounds  are provided by a constructive method $A_m(\cdot,\infty,\mu)$
based on greedy algorithms. 
\end{thm}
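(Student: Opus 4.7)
The plan is to reduce the $\bH^r_p$ estimate to the already-established Lemma \ref{L2.1} for the auxiliary classes $\bW^{a,b}_A$, by controlling the $A$-norm of the hyperbolic layers $f_l = \sum_{|\bs|_1=l}\delta_\bs(f)$ for $f\in \bH^r_p$. Once $f$ is placed in some $\bW^{a,b}_A$, applying $A_m(\cdot,\infty,\mu)$ from Lemma \ref{L2.1} yields a constructive $m$-term trigonometric approximant with the claimed error.

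The main step is the following layer estimate. Fix $f\in \bH^r_p$. Using the characterization \eqref{DefH} (recall $1<p<\infty$), we have $\|\delta_\bs(f)\|_p \lesssim 2^{-r|\bs|_1}$. Since $\delta_\bs(f)\in \Tr(\rho(\bs))$ with $|\rho(\bs)|\asymp 2^{|\bs|_1}$, Cauchy--Schwarz over the Fourier support gives
\[
\|\delta_\bs(f)\|_A \;\le\; |\rho(\bs)|^{1/2}\,\|\delta_\bs(f)\|_2 \;\lesssim\; 2^{|\bs|_1/2}\,\|\delta_\bs(f)\|_2.
\]
For $1<p\le 2$ we combine this with the Nikol'skii inequality $\|\delta_\bs(f)\|_2\lesssim 2^{|\bs|_1(1/p-1/2)}\|\delta_\bs(f)\|_p$ from Theorem \ref{T2.4.5} to obtain
\[
\|\delta_\bs(f)\|_A \;\lesssim\; 2^{|\bs|_1/p}\,\|\delta_\bs(f)\|_p \;\lesssim\; 2^{-(r-1/p)|\bs|_1}.
\]
For $2\le p<\infty$ the embedding $L_p\hookrightarrow L_2$ gives $\|\delta_\bs(f)\|_2\lesssim \|\delta_\bs(f)\|_p$, and hence
\[
\|\delta_\bs(f)\|_A \;\lesssim\; 2^{-(r-1/2)|\bs|_1}.
\]
Since there are $\asymp l^{d-1}$ indices $\bs$ with $|\bs|_1=l$, the triangle inequality yields in the two cases respectively
\[
\|f_l\|_A \;\lesssim\; l^{d-1}\,2^{-(r-1/p)l}, \qquad \|f_l\|_A \;\lesssim\; l^{d-1}\,2^{-(r-1/2)l}.
\]
Thus $\bH^r_p\hookrightarrow \bW^{a,b}_A$ (continuously) with $(a,b)=(r-1/p,1)$ in the first case and $(a,b)=(r-1/2,1)$ in the second, where $a>0$ by the assumption $r>1/p$ (respectively $r>1/2$).

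Applying Lemma \ref{L2.1} in the $q=\infty$ form \eqref{2.14} with some $0<\mu<a$ gives the constructive greedy-based method $A_m(\cdot,\infty,\mu)$ with error
\[
\|f-A_m(f,\infty,\mu)\|_\infty \;\lesssim\; m^{-a-1/2}(\log m)^{(d-1)(a+b)+1/2}.
\]
Substituting $(a,b)=(r-1/p,1)$ gives $m^{-r+\eta}(\log m)^{(d-1)(r-1/p+1)+1/2}$ (since $\eta=1/p-1/2$), and substituting $(a,b)=(r-1/2,1)$ gives $m^{-r}(\log m)^{(r+1/2)(d-1)+1/2}$. These are precisely the two bounds asserted by Theorem \ref{T2.13}.

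The main obstacle, and the only nontrivial quantitative step, is the $A$-norm layer estimate above: controlling $\|\delta_\bs(f)\|_A$ by $\|\delta_\bs(f)\|_p$ with the correct $2^{|\bs|_1/p}$ loss. Cauchy--Schwarz combined with the Nikol'skii inequality does the job cleanly here; the extra factor $l^{d-1}$ from summing over the hyperbolic layer is absorbed into the parameter $b=1$ of $\bW^{a,b}_A$, after which the heavy lifting (the greedy construction in the $A$-metric together with the bounds in Theorem \ref{T2.5C}) has already been done in the proof of Lemma \ref{L2.1}.
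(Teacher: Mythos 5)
Your proof is correct and follows precisely the route the paper intends: embed $\bH^r_p$ into an $\bW^{a,b}_A$ class by a layer-wise $A$-norm estimate, then invoke the constructive greedy-algorithm bound of Lemma~\ref{L2.1} with $q=\infty$. The only quantitative input is $\|\delta_\bs(f)\|_A \lesssim 2^{|\bs|_1/p}\|\delta_\bs(f)\|_p$ combined with $\|\delta_\bs(f)\|_p \lesssim 2^{-r|\bs|_1}$ from \eqref{DefH}, giving $a=r-1/p$ (resp.\ $a=r-1/2$ for $p\geq 2$) and $b=1$, which substituted into \eqref{2.14} yields exactly the two asserted rates; the hypothesis $r>1/p$ (resp.\ $r>1/2$) is exactly what makes $a>0$ so that a valid $\mu\in(0,a)$ exists.
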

For a non-constructive proof of the bound of Theorem \ref{T2.13} in the case
$2\le p<\infty$ see \cite{Be3}. 

We now proceed to classes $\bB^r_{p,\theta}$.
It will be convenient for us to use the following slight modification of
classes $\bB^r_{p,\theta}$. Define
$$
\|f\|_{\bH^r_{p,\theta}}:= \sup_n
\left(\sum_{\bs:|\bs|_1=n}\left(\|\delta_\bs(f)\|_p
2^{r|\bs|_1}\right)^\theta\right)^{1/\theta}
$$
and 
$$
\bH^r_{p,\theta}:= \{f: \|f\|_{\bH^r_{p,\theta}}\le 1\}.
$$

The best $m$-term approximations of classes $\bB^r_{p,\theta}$ are studied in
detail by A.S. Romanyuk \cite{Rom1}.

There is the following extension 
of Theorem \ref{T2.12} (see \cite{Rom1} for the $\bB^r_{p,\theta}$ classes and \cite{VT152} for the $\bH^r_{p,\theta}$ classes).
\begin{thm}\label{T2.12B} One has
$$
\sigma_m(\bB^r_{p,\theta})_{q}\asymp m^{-r+\beta}(\log
m)^{(d-1)(r-\beta+1/q-1/\theta)_+}, \quad 1<p\le q\le 2,\quad r>\beta\,,
$$
and
\begin{equation*}
\begin{split}
\sigma_m(\bB^r_{p,\theta})_{q} &\asymp  \sigma_m(\bH^r_{p,\theta})_{q} \\
  &\asymp  \left\{\begin{array}{ll}  
 m^{-r+\eta}(\log m)^{(d-1)(r-1/p+1-1/\theta)}, & 1<p\le 2\le q<\infty,\quad
r>1/p,\\ 
 m^{-r}(\log m)^{(d-1)(r+1/2-1/\theta)}, & 2\le p\le q<\infty, \quad
r>1/2.\end{array} \right.
\end{split}
\end{equation*}
\end{thm}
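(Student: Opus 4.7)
The plan is to adapt the proof of Theorem \ref{T2.12} for the H\"older--Nikol'skii classes by systematically inserting H\"older's inequality with respect to the parameter $\theta$ at the level of hyperbolic layers, in order to capture the $(1/q-1/\theta)$ refinement in the logarithmic exponent. Throughout I write $f = \sum_{l\ge 0} f_l$ with $f_l := \sum_{|\bs|_1=l}\delta_{\bs}(f)$, and use the baseline estimate: for $f\in \bB^r_{p,\theta}$, H\"older with exponent $\theta$ on a set of cardinality $\asymp l^{d-1}$ gives
\begin{equation}\nonumber
\sum_{|\bs|_1=l}\|\delta_{\bs}(f)\|_p \lesssim l^{(d-1)(1-1/\theta)}\,2^{-rl}\,\|f\|_{\bB^r_{p,\theta}}.
\end{equation}

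\textbf{Upper bounds when $p\le 2\le q$ or $2\le p\le q$.} First I would estimate $\|f_l\|_A$ and then invoke Lemma \ref{L2.1}. The Nikol'skii inequality for $t\in \Tr(\rho(\bs))$ yields $\|t\|_A\lesssim 2^{|\bs|_1\max\{1/p,1/2\}}\|t\|_p$: using $\|t\|_A\le|\rho(\bs)|^{1/2}\|t\|_2$ together with $\|t\|_2\lesssim 2^{|\bs|_1(1/p-1/2)_+}\|t\|_p$. Summing over $|\bs|_1=l$ and applying the H\"older estimate above shows that $\bB^r_{p,\theta}\subset \bW^{a,b}_A$ with $a=r-\max\{1/p,1/2\}$ and $b=1-1/\theta$. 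Lemma \ref{L2.1} then yields constructively, via the greedy algorithm of Theorem \ref{T2.5C}, the announced upper bounds $m^{-(r-\eta)}(\log m)^{(d-1)(r-1/p+1-1/\theta)}$ and $m^{-r}(\log m)^{(d-1)(r+1/2-1/\theta)}$ respectively. The same argument applied to $\bH^r_{p,\theta}$ (which contains $\bB^r_{p,\theta}$ since the $\ell_\theta$-norm of a vector dominates its $\ell_\infty$-norm on each hyperbolic layer) gives the matching upper bound, yielding one direction of the equivalence $\sigma_m(\bB^r_{p,\theta})_q\asymp \sigma_m(\bH^r_{p,\theta})_q$.

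\textbf{Upper bound when $1<p\le q\le 2$.} Here the $A$-norm route is too lossy, and I would instead imitate the layer-wise constructive scheme from \cite{T29} used to prove Theorem \ref{T2.7}. For each $l\ge n$ I would apply the optimal $m_l$-term approximation of $f_l\in\Tr(\cup_{|\bs|_1=l}\rho(\bs))$ in $L_q$ (which is governed by Theorem \ref{T2.4.6}, giving a gain of $2^{l\beta}$ over a straightforward bound), then choose the distribution $\{m_l\}_{l\ge n}$ to balance errors while respecting $\sum_l m_l\lesssim m\asymp 2^n n^{d-1}$. The H\"older estimate in $\theta$ displayed above produces the exponent $(r-\beta+1/q-1/\theta)_+$ on the logarithm, the $(\cdot)_+$ reflecting that for $\theta\le q$ the sum $\sum_l(\cdot)^\theta$ is dominated by its tail, whereas for $\theta>q$ the $\ell_q$-norm over hyperbolic layers absorbs the factor.

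\textbf{Lower bounds.} For each parameter range I would construct a test function concentrated on a single hyperbolic layer $\{|\bs|_1=n\}$ with $2^n n^{d-1}\asymp m$, of the form $f=c_n\sum_{|\bs|_1=n}g_{\bs}$ with $g_{\bs}\in \Tr(\rho(\bs))$ suitably normalized (Rudin--Shapiro-type polynomials for the cases $q\ge 2$, and Dirichlet-type blocks for $q\le 2$). The normalization $c_n\asymp 2^{-rn}n^{-(d-1)/\theta}$ ensures membership in $\bB^r_{p,\theta}$, and the lower bound for $\sigma_m$ of such a function follows from known lower bounds for $m$-term trigonometric approximation on a single hyperbolic layer, as established in \cite{T29}, \cite{Rom1}, and the duality/volume-estimate arguments of Subsection 2.5.

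\textbf{Main obstacle.} The hardest point is the upper bound in the range $1<p\le q\le 2$: the power $r-\beta+1/q-1/\theta$ on the logarithm has the delicate $(\cdot)_+$ truncation, which signals that two competing mechanisms (the tail of the $\ell_\theta$-sum over layers and the efficiency of best $m$-term approximation within a single layer) must be balanced precisely. A secondary subtlety is showing equality of the asymptotic orders $\sigma_m(\bB^r_{p,\theta})_q\asymp\sigma_m(\bH^r_{p,\theta})_q$ with $\theta$-dependent exponents, since the inclusion $\bB^r_{p,\theta}\hookrightarrow \bH^r_{p,\theta}$ is strict and the matching lower bound for the smaller Besov class requires test functions whose $\bB$-norm—not just $\bH$-norm—is properly controlled, forcing the $n^{-(d-1)/\theta}$ normalization above.
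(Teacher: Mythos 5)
Your proposal is correct and follows essentially the route that the paper takes (through the constructive greedy-algorithm framework of Lemma~\ref{L2.1} and Theorem~\ref{T2.5C} for $q\ge 2$, which is the \cite{VT152} argument the paper cites, and the layer-wise scheme of \cite{T29}/\cite{Tmon} for $1<p\le q\le 2$). The reduction $\bB^r_{p,\theta}\subset \bW^{a,b}_A$ with $a=r-\max\{1/p,1/2\}$ and $b=1-1/\theta$ via Cauchy--Schwarz, Nikol'skii, and H\"older with exponent $\theta$ on each hyperbolic layer reproduces both lines of the second display exactly, and the single-layer test-function construction with normalization $c_n\asymp 2^{-rn}n^{-(d-1)/\theta}$ is the standard way to obtain the matching lower bounds for both $\bB^r_{p,\theta}$ and $\bH^r_{p,\theta}$ simultaneously.
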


Interestingly, if $\theta$ is getting small in the first statement in Theorem \ref{T2.12B} 
we get rid of the $\log$ in some cases. Or, in different words, 
if $0<r-\beta< 1/\theta-1/p$ is small then the $\log$-term diappears, which is a similar small smoothness 
phenomenon as in \eqref{nolog1} and \eqref{nolog2}. The dimension $d$ plays no role in the asymptotic rate of convergence, it is 
only hidden in the constants, see also Theorem \ref{baT2.5B} where a similar effect occurs for $q\leq p$.

The following proposition is from \cite{VT152}.
\begin{prop}\label{BCp} The upper bounds in Theorem \ref{T2.12B} are provided
by a constructive method based on greedy algorithms. 
\end{prop}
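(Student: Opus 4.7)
The plan is to reduce the approximation of functions in $\bB^r_{p,\theta}$ and $\bH^r_{p,\theta}$ to the approximation of elements of suitable classes $\bW^{a,b}_A$ defined via the $A$-norm, and then invoke the constructive method $A_m(\cdot,q,\mu)$ from Lemma~\ref{L2.1}. The whole argument will then be constructive, since Lemma~\ref{L2.1} itself is built on the greedy-type approximants from Theorem~\ref{T2.5C}.

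First I would establish an embedding of type $\bH^r_{p,\theta}\hookrightarrow \bW^{a,b}_A$ (and the same for $\bB^r_{p,\theta}$). Writing $f_l=\sum_{|\bs|_1=l}\delta_\bs(f)$, and using the obvious bound $\|\delta_\bs(f)\|_A\le |\rho(\bs)|^{1/2}\|\delta_\bs(f)\|_2\lesssim 2^{|\bs|_1/2}\|\delta_\bs(f)\|_2$ together with the Nikol'skii inequality (Theorem~\ref{T2.4.5}), I obtain
\[
\|\delta_\bs(f)\|_A\lesssim 2^{|\bs|_1/\min\{p,2\}}\|\delta_\bs(f)\|_p.
\]
Summing over $|\bs|_1=l$ and applying H\"older's inequality (the number of dyadic blocks with $|\bs|_1=l$ is $\asymp l^{d-1}$) gives, for $f\in\bH^r_{p,\theta}$ (and hence also for $f\in\bB^r_{p,\theta}$, in view of the embedding $\bB^r_{p,\theta}\hookrightarrow\bH^r_{p,\theta}$ in the sense used in Section~\ref{Sect:bestmterm}),
\[
\|f_l\|_A\lesssim 2^{-(r-1/\min\{p,2\})l}\,l^{(d-1)(1-1/\theta)}.
\]
This exhibits $f$ as an element of $\bW^{a,b}_A$ with $a:=r-1/\min\{p,2\}$ and $b:=1-1/\theta$, up to a universal constant. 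The hypothesis $r>1/p$ (resp. $r>1/2$) is exactly what is needed to guarantee $a>0$.

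Next I would apply Lemma~\ref{L2.1} with these specific values of $a$ and $b$. For $2\le q<\infty$ and any $0<\mu<a$ it produces a constructive $m$-term trigonometric polynomial $A_m(f,q,\mu)$ such that
\[
\|f-A_m(f,q,\mu)\|_q\lesssim m^{-a-1/2}(\log m)^{(d-1)(a+b)}.
\]
Substituting the values above one gets $m^{-r+\eta}(\log m)^{(d-1)(r-1/p+1-1/\theta)}$ when $p\le 2$, and $m^{-r}(\log m)^{(d-1)(r+1/2-1/\theta)}$ when $p\ge 2$, which matches precisely the second and third lines of Theorem~\ref{T2.12B}. This settles the cases $1<p\le 2\le q<\infty$ and $2\le p\le q<\infty$ constructively.

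The case $1<p\le q\le 2$ is not covered by Lemma~\ref{L2.1}, because the key greedy bound (\ref{2.14p}) in Theorem~\ref{T2.5C} requires $q\ge 2$. Here the plan is different: I would re-use the constructive proof of Theorem~\ref{T2.7} from \cite{T29}, \cite{Tmon}~Ch.~4 layer by layer, and then add the $\theta$-sensitivity through H\"older. More precisely, on each hyperbolic layer $\Delta Q_n$ the method of \cite{T29} produces a constructive $m$-term approximant whose $L_q$-error is controlled by $2^{-(r-\beta)n}$ times a power of $n$. Summing these layer-wise approximants and using the elementary inequality $\|g\|_{\bB^r_{p,\theta}}\lesssim n^{(d-1)/\theta}\|g\|_{\bH^r_p}$ for $g\in\Tr(\Delta Q_n)$ (which is exactly analogous to~(\ref{RoStB1})) produces the bound $m^{-r+\beta}(\log m)^{(d-1)(r-\beta+1/q-1/\theta)_+}$ matching the first line of Theorem~\ref{T2.12B}. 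I expect the main technical obstacle here to be the careful choice of the layer budgets $m_l$ so that H\"older with respect to the $\theta$-index produces the correct exponent $(r-\beta+1/q-1/\theta)_+$ rather than the slightly weaker exponent obtained by summing the separate bounds; the appearance of the positive part is due to the fact that if $\theta$ is so small that $r-\beta+1/q-1/\theta\le 0$ one should not lose any logarithm at all, which forces one to choose the $m_l$ concentrated on a single dominant layer rather than spread geometrically in $l$.
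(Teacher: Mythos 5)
Your handling of the cases $q\geq 2$ (the second and third lines of Theorem~\ref{T2.12B}) is essentially the paper's approach from \cite{VT152}: the embedding into $\bW^{a,b}_A$ with $a=r-1/\min\{p,2\}$ and $b=1-1/\theta$ is correct, the H\"older computation on the $\asymp l^{d-1}$ blocks per layer is right, and Lemma~\ref{L2.1} then produces exactly the claimed rates through the greedy construction of Theorem~\ref{T2.5C}. This part is sound and matches the paper.

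For the case $1<p\le q\le 2$ (first line), your strategy of reusing the constructive layer-by-layer scheme from \cite{T29} and \cite{Tmon}, Ch.\ 4, is the right one, but the specific tool you cite is in the wrong direction. The inequality $\|g\|_{\bB^r_{p,\theta}}\lesssim n^{(d-1)/\theta}\|g\|_{\bH^r_p}$ for $g\in\Tr(\Delta Q_n)$, i.e. the analogue of \eqref{RoStB1}, converts an $\bH$-norm bound \emph{into} a $\bB$-norm bound; this is what one uses to prove lower bounds (place a fooling function in $\bH^r_p$ and then verify its membership in $\bB^r_{p,\theta}$). For an upper bound you need the converse transfer: starting from $\|f\|_{\bB^r_{p,\theta}}\le 1$ you must extract layer-wise information on $\|f_l\|_q$ or on $(\sum_{|\bs|_1=l}\|\delta_{\bs}(f)\|_p^{p^*})^{1/p^*}$ via H\"older between the $\ell_{\theta}$-index over blocks and the $\ell_{p^*}$-norm from Corollary~\ref{C7.2}, and only then budget the $m_l$ accordingly. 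You flag this as the ``main technical obstacle'' without resolving it, and indeed the positive-part exponent $(r-\beta+1/q-1/\theta)_+$ is exactly where the choice of budgets (geometric spread versus concentration on one dominant layer, depending on the sign of $r-\beta+1/q-1/\theta$) enters. As written, the proposal does not close this case.
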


The following result is from \cite{Baz26a}.

\begin{thm}\label{TBaz16} For $1\le p\le q\le 2$, $1<q$, $1\le \theta\le\infty$ one has
$$
  \sigma_m(\bH^r_{p,\theta})_{q} 
  \asymp  \left\{\begin{array}{ll}  
 m^{-r+\beta}(\log m)^{(d-1)(r-\beta+1/q-1/\theta)_+}, & \beta<r\neq 1/p-2/q+1/\theta,\\ 
 m^{-r+\beta}(\log\log m)^{1/\theta)}, & \beta<r= 1/p-2/q+1/\theta.\end{array} \right.
$$
\end{thm}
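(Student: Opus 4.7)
\textbf{Proof plan for Theorem \ref{TBaz16}.}

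The plan is to establish the matching upper and lower bounds via a careful dyadic-block analysis, reducing the global $m$-term approximation problem to finite-dimensional allocation questions across hyperbolic layers, and treating the borderline case $r = 1/p - 2/q + 1/\theta$ by a refined counting argument.

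First, for the \emph{upper bound}, I would write every $f\in \bH^r_{p,\theta}$ as $f=\sum_{\bs\in\N_0^d}\delta_\bs(f)$ with
$\delta_\bs(f)\in \Tr(\rho(\bs))$ and $\|\delta_\bs(f)\|_p\le 2^{-r|\bs|_1}a_\bs$, where $\sup_n\big(\sum_{|\bs|_1=n}a_\bs^\theta\big)^{1/\theta}\le 1$.
Fix an integer $n$ with $2^n n^{d-1}\asymp m$, keep all Fourier coefficients inside the step hyperbolic cross $Q_n$ (contributing $\asymp |Q_n|\asymp m$ terms), and, for each layer $\ell>n$, allocate $M_\ell$ terms across the blocks of that layer.
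Within a single block $\bs$ with $|\bs|_1=\ell$, I would use the classical Stechkin-type estimate
$\sigma_{m_\bs}(\delta_\bs(f),\Tr^d)_q \lesssim m_\bs^{-\beta}\|\delta_\bs(f)\|_p$ valid for $1\le p\le q\le 2$, $q>1$
(obtained via Hausdorff--Young and a rearrangement argument on the Fourier coefficients).
Since $q\le 2$, the layer errors can be superposed by the Littlewood--Paley--Minkowski inequality
$\big\|\sum_{|\bs|_1=\ell}g_\bs\big\|_q \lesssim \big(\sum_{|\bs|_1=\ell}\|g_\bs\|_q^q\big)^{1/q}$.
Optimising $\{m_\bs\}_{|\bs|_1=\ell}$ subject to $\sum m_\bs\le M_\ell$ via a Lagrange argument and H\"older's inequality (in the dual exponent to $\theta$) yields the per-layer bound
$2^{-r\ell}\cdot M_\ell^{-\beta}\,\ell^{(d-1)(1/q-1/\theta)_+}$,
which then has to be optimised in the allocation $\sum_{\ell>n}M_\ell \lesssim m$.

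Second, for the \emph{critical case} $r=1/p-2/q+1/\theta$, the two natural allocation strategies (concentrating all mass on the first high-frequency layer versus spreading geometrically across layers) produce the same rate up to a slowly varying factor.  The key step will be to show that the effective number of layers that contribute is $\asymp \log n \asymp \log\log m$, so that summing the layer errors in $\ell_\theta$ gives the $(\log\log m)^{1/\theta}$ correction; off the critical line the geometric series in $2^{-(r-(1/p-2/q+1/\theta))\ell}$ converges (or diverges) and yields the $(\log m)^{(d-1)(r-\beta+1/q-1/\theta)_+}$ factor from the top (or bottom) layer only.

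For the \emph{lower bound}, I would construct a family of test functions of the form $\varphi_{\ell}=C\,2^{-r\ell}\ell^{-(d-1)/\theta}\sum_{|\bs|_1=\ell}t_\bs$ with $t_\bs\in\Tr(\rho(\bs))$ chosen as shifted Rudin--Shapiro blocks or Dirichlet-type polynomials normalised in $L_p$, thereby lying in $\bH^r_{p,\theta}$. Using the uniform boundedness of the de la Vall\'ee Poussin operator $V_{Q_{\ell+d}}$ as an operator into $\Tr(Q_{\ell+d})$ (for $q>1$, via the Littlewood--Paley theorem) one reduces to proving a lower bound for best $m$-term trigonometric approximation in the finite-dimensional polynomial subspace $\Tr(\Delta Q_\ell)$. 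This in turn is controlled from below by volume estimates of $B_{\Delta Q_\ell}(L_q)$ from Theorem \ref{T2.5.4} (for $q<\infty$), combined with a pigeonhole/Gluskin type argument on $m$-sparse sections. Choosing $\ell$ optimally in terms of $m$ — and, at the critical smoothness, testing against a superposition of $\varphi_\ell$ for $\ell$ in a geometric window of length $\asymp\log n$ — yields the matching lower bound, including the $(\log\log m)^{1/\theta}$ factor on the critical line.

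The main obstacle is the borderline exponent. Away from it the proof is a routine (though technical) optimisation; at $r=1/p-2/q+1/\theta$ the divergent-geometric-series mechanism collapses, and both bounds must be derived by a more delicate layer-window argument, carefully matching the number of simultaneously active layers in the construction (lower bound) with the allocation across layers in the approximation (upper bound). A secondary technical point is the extension to $p=1$, which is excluded in Theorem \ref{T2.12B}: here the Stechkin-type inequality for $m$-term approximation of polynomials in $\Tr(\rho(\bs))$ needs an extra Nikol'skii-type passage from $L_1$ to a slightly larger $L_{1+\eps}$ (together with the fact that $q>1$ is assumed) to avoid the pathology of Remark \ref{R1.1}; this replacement has to be carried out while keeping constants uniform in $\bs$ so that the final log-exponents are not perturbed.
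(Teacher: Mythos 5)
The paper does not prove Theorem \ref{TBaz16}; it states the result and cites Bazarkhanov \cite{Baz26a}, so I assess your proposal on its own terms, and there is a genuine gap in the upper bound. The per-block Stechkin estimate you invoke,
$$
\sigma_{m_\bs}(\delta_\bs(f),\Tr^d)_q \lesssim m_\bs^{-\beta}\,\|\delta_\bs(f)\|_p,\qquad \beta=1/p-1/q,\quad 1\le p\le q\le 2,
$$
is false in exactly the regime your allocation produces, namely $m_\bs$ much smaller than the block cardinality $N_\bs:=|\rho(\bs)|\asymp 2^{|\bs|_1}$. Take $\delta_\bs(f)=\mathcal D_{\rho(\bs)}$, the Dirichlet kernel of the block; then $\|\mathcal D_{\rho(\bs)}\|_p\asymp N_\bs^{1-1/p}$. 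For any $m$-term trigonometric polynomial $s$ with $m\le N_\bs/2$, at least $N_\bs-m$ Fourier coefficients of $\mathcal D_{\rho(\bs)}-s$ equal $1$ in modulus, so by Hausdorff--Young (Theorem \ref{T7.3.1})
$$
\|\mathcal D_{\rho(\bs)}-s\|_q \ge \big\|\widehat{\mathcal D_{\rho(\bs)}-s}\big\|_{\ell^{q'}} \ge (N_\bs-m)^{1/q'} \gtrsim N_\bs^{1-1/q}.
$$
Thus $\sigma_m(\mathcal D_{\rho(\bs)})_q\gtrsim N_\bs^{1-1/q}$ for all $m\le N_\bs/2$, which contradicts the claimed bound $m^{-\beta}N_\bs^{1-1/p}$ unless $N_\bs\,m\lesssim1$. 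In words: within a single dyadic block, $m$-term trigonometric approximation in $L_q$ with $q\le2$ offers no gain over discarding the whole block until $m$ nearly exhausts it; the Hausdorff--Young rearrangement step you describe runs in the useless direction because $p\le q\le 2$ forces $p'\ge q'\ge2$. The Lagrange allocation $\{m_\bs\}$ built on this false primitive cannot yield the advertised layer bound.

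What actually produces the $(1/\theta-1/q)$ improvement in the log exponent is block-level sparsity, not within-block sparsity: choose which whole blocks $\rho(\bs)$ to keep. Discarding a set of blocks and controlling the remainder via the embedding from Theorem \ref{T2.4.6} (equivalently \eqref{2.4.4R}) reduces the problem to bounding $\sum_{|\bs|_1=\ell,\ \bs\ \text{discarded}}\|\delta_\bs(f)\|_p^q$ given $\big(\sum_{|\bs|_1=\ell}\|\delta_\bs(f)\|_p^\theta\big)^{1/\theta}\le 2^{-r\ell}$; this is a Stechkin-type inequality at the layer level, Lemma \ref{Lqp} with exponents $\theta$ and $q$, and it gives a genuine gain precisely when $\theta<q$. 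When $\theta\ge q$ the projection $S_{Q_n}$ already achieves the target rate, consistent with Theorem \ref{Theorem[HC-Approx-Brpt]}. Your lower-bound sketch and the geometric-window heuristic at the critical exponent $r=1/p-2/q+1/\theta$ are reasonable in spirit (the $\sup$ over $n$ in the $\bH^r_{p,\theta}$ norm indeed lets you stack fooling functions over $\asymp\log n\asymp\log\log m$ layers without leaving the unit ball), but with the upper-bound mechanism broken the proposal does not go through as written.
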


\index{Small smoothness} In another small smoothness range the following result is known (see \cite[Thm.\ 2.1]{Rom1} engl. version).
\begin{thm}\label{T2.12sm} Let $1\le p\le 2<q<\infty$ and $1\le\theta\le\infty$.
Then
$$
 \sigma_m(\bB^r_{p,\theta})_{q} 
  \asymp  \left\{\begin{array}{ll} m^{-1/2}(\log m)^{d(1-1/\theta)}, &  
r=1/p,\\
 m^{-q(r-\beta)/2}(\log m)^{(d-1)(q-1)(r-1/p+q'/(q\theta'))_+},  &  \beta<
r<1/p.\end{array} \right.
$$
\end{thm}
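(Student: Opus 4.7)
The plan is to establish this via constructive greedy approximation for the upper bound, in the spirit of Proposition \ref{BCp} and the proof of Theorems \ref{T2.8I}, \ref{T2.12B}, combined with a careful allocation of approximating terms across hyperbolic layers; the lower bound will proceed via Rudin-Shapiro type test functions concentrated in a single hyperbolic layer $\Delta Q_N$.

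For the upper bound, given $f \in \bB^r_{p,\theta}$, I would decompose $f = \sum_n f_n$ with $f_n := \sum_{|\bs|_1 = n} \delta_\bs(f)$. Since $1\le p\le 2$, Hausdorff-Young applied to each dyadic block yields $\|\delta_\bs(f)\|_A \lesssim 2^{|\bs|_1/p}\|\delta_\bs(f)\|_p$. Combining with Besov regularity $(\sum_{|\bs|_1=n}\|\delta_\bs(f)\|_p^\theta)^{1/\theta}\lesssim 2^{-rn}$ and Hölder's inequality over the set $\{\bs:|\bs|_1=n\}$ of cardinality $\asymp n^{d-1}$, one arrives at an $A$-norm estimate of the form $\|f_n\|_A\lesssim 2^{n(1/p-r)}\,n^{(d-1)/\theta'}$. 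Now Theorem~\ref{T2.5C} produces a constructive $m_n$-term trigonometric approximant of $f_n$ in $L_q$ with error $\lesssim m_n^{-1/2}\|f_n\|_A$. The budget $m=\sum_n m_n$ is distributed optimally among levels: in the small smoothness regime $\beta<r<1/p$, the exponential factor $2^{n(1/p-r)}$ makes high levels expensive, so one truncates at a threshold $N\asymp (q/2)\log_2 m$ and controls the tail via the embedding $\bB^r_{p,\theta}\hookrightarrow \bB^{r-\beta}_{q,\theta}$ from Lemma~\ref{emb}. For the endpoint case $r=1/p$, the blockwise $A$-norm loses its exponential growth in $n$, reducing to $\|f_n\|_A\lesssim n^{(d-1)/\theta'}$, and summing this over $n\le N\asymp\log m$ is what produces the advertised logarithmic factor $(\log m)^{d(1-1/\theta)}$ once the refined balance described below is carried out.

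For the lower bound, the plan is to exhibit a family of test functions realizing the claimed rate. The natural candidates are Rudin-Shapiro type polynomials $t_\bs\in\mathcal{T}(\rho(\bs))$ with $\|t_\bs\|_\infty\lesssim 2^{|\bs|_1/2}$ and flat Fourier coefficients, supported in a well-chosen subset of $\Delta Q_N\cap\{|\bs|_1=N\}$, glued into $g_N := c_N\sum_{\bs} t_\bs$ with a normalizing constant chosen so that $\|g_N\|_{\bB^r_{p,\theta}}\le 1$. By the flat coefficient structure and the specific $L_q$-sizes of such polynomials (Paragraph~\textbf{4h} of Subsection~\ref{multpol}), removing any $m$ frequencies from the roughly $2^N N^{d-1}$ equal Fourier coefficients leaves a residual of $L_q$-norm comparable to the claimed rate, once $2^N N^{d-1}\gg m$. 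Optimizing $N$ in terms of $m$ provides the matching lower bound.

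The main obstacle lies in obtaining the sharp form of the logarithmic exponent $(d-1)(q-1)(r-1/p+q'/(q\theta'))_+$ in the second case. The naive block-by-block greedy strategy sketched above produces the correct power of $m$ but typically a suboptimal log exponent, because the crude estimate $\|f_n\|_A\lesssim 2^{n(1/p-r)}n^{(d-1)/\theta'}$ does not exploit cancellations between blocks at the same level. The refinement (following \cite{Rom1}) requires a more delicate allocation: rather than applying Theorem~\ref{T2.5C} separately to each hyperbolic layer with independently chosen $m_n$, one applies the greedy algorithm to a carefully weighted aggregate, exploiting the finer $\bB^r_{p,\theta}$-structure across levels, and uses the fact that in the range $q>2$ one can interpolate between the $L_2$ and $L_\infty$ behavior of Rudin-Shapiro-like polynomials. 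Matching this construction with the extremal test function in the lower bound is where the precise exponent $q'/(q\theta')$ emerges.
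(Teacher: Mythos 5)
Your upper-bound strategy is sound in outline--Hausdorff--Young on each block gives $\|\delta_\bs(f)\|_A \lesssim 2^{|\bs|_1/p}\|\delta_\bs(f)\|_p$, H\"older over the $\asymp n^{d-1}$ blocks of level $n$ yields $\|f_n\|_A \lesssim 2^{n(1/p-r)}n^{(d-1)/\theta'}$, and balancing a greedy budget against the tail at the threshold $2^N \asymp m^{q/2}$ does reproduce the main power $m^{-q(r-\beta)/2}$. You are also right that the logarithmic exponent is where this breaks down: a Lagrange-optimal block-by-block split $m_n \propto \|f_n\|_A^{2/3}$ already loses an extra $N^{3/2}$ relative to applying the greedy algorithm (Theorem~\ref{T2.5C}) once to the aggregate $S_{Q_N}(f)$, and even the aggregate version only gives the exponent $(d-1)/\theta'$ rather than the sharp $(d-1)(q-1)(r-1/p+q'/(q\theta'))_+$, which is strictly smaller for $r<1/p$. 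You correctly flag this gap, but it is not merely a bookkeeping refinement--the Hausdorff--Young/H\"older bound on $\|S_{Q_N}(f)\|_A$ is intrinsically too coarse to detect the sharp $\theta$-dependence. Note also that the paper does not prove this theorem at all; it cites Romanyuk \cite[Thm.~2.1]{Rom1} and, for the constructive $\theta=\infty$ case, Temlyakov \cite[Thm.~3.4]{T152sm}.

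The more serious problem is your lower bound. A single-layer Rudin--Shapiro test function $g$ supported on $\Delta Q_N$ with coefficients $\pm c$ and $\|g\|_{\bB^r_{p,\theta}}\le 1$ forces $c\asymp 2^{-(r+1/2)N}N^{-(d-1)/\theta}$ over $M\asymp 2^N N^{d-1}$ frequencies. For a polynomial with $\pm c$ coefficients on any set of size $M-m$, the $L_q$-norm for $2<q<\infty$ is comparable to the $L_2$-norm $c(M-m)^{1/2}$ (Salem--Zygmund/Khintchine type behavior); the adversary choosing the $m$-term approximant will pick a residual set for which no $L_q$ gain over $L_2$ is possible. Hence this family gives only
$$\sigma_m(g)_q \gtrsim cM^{1/2} \asymp 2^{-rN}N^{(d-1)(1/2-1/\theta)} \asymp m^{-r}(\log m)^{(d-1)(r+1/2-1/\theta)}$$
after the optimal choice $N\asymp\log m$. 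But $q(r-\beta)/2 < r$ throughout the range $\beta<r<1/p$, $1\le p\le 2$, $q>2$ (indeed $q(r-\beta)/2<r$ is equivalent to $r<(q-p)/(p(q-2))$, which is implied by $r<1/p$ when $p\le 2$). So your test function produces a lower bound that is \emph{strictly weaker} than the target $m^{-q(r-\beta)/2}$; no amount of optimizing over $N$ repairs this, because the $L_q$ and $L_2$ errors coincide for $\pm$-flat residuals. The matching lower bound requires test functions distributed across many hyperbolic layers with carefully tuned, non-flat coefficient profiles (or a fooling-function construction along the lines of those used for small smoothness in \cite{Rom1} and \cite{T152sm}), not a single-layer Rudin--Shapiro build. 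This is a genuine gap, not a matter of tightening constants.
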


The case $1<p\leq 2<q<\infty$, $\theta=\infty$, $\beta<r<1/p$ was considered by Temlyakov 
in \cite[Thm.\ 3.4]{T152sm}. There the upper bounds were achieved by a constructive 
greedy-type algorithm (compared to the proof of \cite[Thm.\ 2.1]{Rom1}, non-constructive upper bounds). 
Theorem \ref{T2.12sm} is also true in more general settings, see the recent paper by Stasyuk \cite{Sta16}.

\subsubsection*{The case $q\leq p$}
We formulate some known results in the case $1<q\le p\le\infty$ and describe interesting effects when 
comparing sparse trigonometric approximation and hyperbolic cross projections. So far we have seen, that 
sparse trigonometric approximation seems to show a significant improvement in comparison with hyperbolic cross 
projections only if $p<q$. However, the comment after Theorem \ref{T2.12} shows that even this is not always the case. In addtion, we will see
below that in the framework of $\bB$-classes sparse trigonometric approximation may beat hyperbolic cross projections even in case $q\leq p$. Finally, 
we will compare sparse trigonometric approximation with best $m$-term approximation using wavelet type dictionaries. 
\begin{thm}\label{baT2.3} Let $1<q\le p<\infty$, $r>0$. Then
$$
\sigma_m(\bW^r_p)_q \asymp m^{-r}(\log m)^{(d-1)r}.
$$
\end{thm}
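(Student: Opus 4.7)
The assertion is a matching upper and lower bound of order $m^{-r}(\log m)^{(d-1)r}$, and the two halves rely on rather different inputs.

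For the upper bound I would exploit the trivial fact that any element of $\Tr(Q_n)$ is an $m$-term trigonometric polynomial for $m = |Q_n|$, so
$$
\sigma_m(f,\Tr^d)_q \ \le\ E_{Q_n}(f)_q \qquad \text{whenever } m \ge |Q_n|.
$$
By Theorem \ref{TBT3.2}, valid for $1<p,q<\infty$ and $r>(1/p-1/q)_+$, one has $E_{Q_n}(\bW^r_p)_q \asymp 2^{-n(r-(1/p-1/q)_+)}$. In the present regime $q\le p$ the correction $(1/p-1/q)_+$ vanishes and $E_{Q_n}(\bW^r_p)_q \asymp 2^{-nr}$. Choosing $n$ so that $|Q_n|\asymp 2^n n^{d-1} \asymp m$, i.e.\ $n \asymp \log m$, turns this into $\sigma_m(\bW^r_p)_q \lesssim m^{-r}(\log m)^{(d-1)r}$, which is the desired upper estimate.

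For the lower bound I would first note that the Kolmogorov width already has the claimed order: inserting $q\le p$ into Theorem \ref{TBT4.4} makes the exponent $(1/p-\max\{1/2,1/q\})_+$ vanish, so $d_m(\bW^r_p,L_q) \asymp m^{-r}(\log m)^{(d-1)r}$. Since best $m$-term approximation is adaptive and may in principle be strictly smaller than any Kolmogorov width, the bound $\sigma_m \ge d_m$ is unavailable; instead I would route the estimate through the nonlinear Kolmogorov width / Carl machinery of Theorems \ref{thm:gen_carl} and \ref{T2.1}. Concretely, discarding the Fourier coefficients of any competing $m$-term approximant that lie outside a sufficiently large step hyperbolic cross $Q_N$ (with $|Q_N|$ polynomial in $m$) costs at most an error of the target order, again by Theorem \ref{TBT3.2}. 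The reduced problem is then controlled by $d_m(\bW^r_p,L_q,\binom{|Q_N|}{m})$, and Theorem \ref{T2.1} converts any upper bound on this nonlinear width into an upper bound on the entropy numbers $\varepsilon_k(\bW^r_p,L_q)$. The sharp entropy lower bound $\varepsilon_k(\bW^r_p,L_q) \gtrsim k^{-r}(\log k)^{r(d-1)}$ of Theorem \ref{thm:ent_pq} then forbids any $\sigma_m$ faster than $m^{-r}(\log m)^{(d-1)r}$.

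The main obstacle is the bookkeeping in this truncation step: $N$ must be large enough that the tail error is negligible against the conjectured rate, yet small enough that $|Q_N|$ remains polynomial in $m$, because Theorem \ref{T2.1} only accepts dictionaries of size $(Kn/m)^m$. A technically cleaner alternative, which avoids this trade-off entirely, is to exhibit a single ``fooling'' function $f^* \in \bW^r_p$ (for instance the Bernoulli kernel $F_r$ of Subsection 3.1, or a Riesz-product construction of the type used in Subsection \ref{subsect:SBI}) whose Fourier mass is spread essentially uniformly over the hyperbolic layer $\Delta Q_n$. The volume estimates of Theorem \ref{T2.5.4} then force any $m$-term trigonometric approximant with $m \ll |Q_n|$ to incur an $L_q$-error of order $2^{-nr} \asymp m^{-r}(\log m)^{(d-1)r}$, matching the upper bound.
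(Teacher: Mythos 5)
Your upper bound argument is correct and coincides with the paper's: when $q\le p$ the embedding $\bW^r_p\hookrightarrow\bW^r_q$ reduces to the case $p=q$, where $E_{Q_n}(\bW^r_q)_q\lesssim 2^{-rn}$ (Theorem~\ref{TBT3.2} with $p=q$), and choosing $n$ with $|Q_n|\asymp m$ gives the stated rate. Nothing to add there.

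On the lower bound, you correctly recognize that $\sigma_m\ge d_m$ is unavailable and that one must go either through nonlinear widths and entropy or through volume estimates; this is indeed the circle of ideas in the cited reference \cite{KTE1} (whose very title refers to entropy in $L^1$). However, both of your concrete routes have real gaps. For the Carl/entropy route, the truncation step you propose — discarding the approximant's Fourier coefficients outside a large step hyperbolic cross $Q_N$ — is not justified as stated. The best $m$-term approximant $g$ is not an element of $\bW^r_p$, and there is no a priori control on where its $m$ frequencies sit or on $\|S_{Q_N}^{\perp}g\|_q$; making this rigorous is exactly the nontrivial part, and you acknowledge it but do not resolve it. For the fooling-function route, the suggestion of the Bernoulli kernel $F_r$ is wrong for $p>1$: $F_r$ does not belong to $\bW^r_p$ (it is $F_r*\delta_0$ and $\delta_0\notin L_p$); it is the extreme element only for the closure of $\bW^r_1$. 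Its $m$-term approximation rate is given in Theorem~\ref{T2.10} and equals $m^{-r+1-1/q}(\log m)^{(d-1)(r-1+2/q)}$ for $1<q\le2$, which is not $m^{-r}(\log m)^{(d-1)r}$. More fundamentally, a single fixed $f^*$ can in principle be approximated adaptively, so a single-function lower bound requires both that the Fourier mass of $f^*$ in $\Delta Q_n$ be essentially flat \emph{and} that $f^*$ be normalized in $\bW^r_p$, not $\bW^r_1$. Neither Riesz products (which are designed for $L_\infty$) nor $F_r$ accomplish this for general $1<q\le p<\infty$.

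The way the volume estimates of Theorem~\ref{T2.5.4} actually enter is a worst-case argument over a continuum of bad functions rather than a single fooling function: one considers the set $2^{-rn}B_{\Delta Q_n}(L_p)\subset\bW^r_p$, observes that any $m$-term approximant with error $\varepsilon$ induces a covering of this set by $\binom{|\Delta Q_n|}{m}$ many $\varepsilon$-neighborhoods of $m$-dimensional coordinate subspaces in the Fourier coefficient space, and compares the resulting volume upper bound with the lower bound $\mathrm{vol}(B_{\Delta Q_n}(L_p))^{(2|\Delta Q_n|)^{-1}}\asymp|\Delta Q_n|^{-1/2}$. This forces $\varepsilon\gtrsim 2^{-rn}$ as long as $m\le c\,|\Delta Q_n|$, i.e.\ $m\asymp 2^n n^{d-1}$, which reproduces the rate. (For $q\ge2$ a shortcut is available via \eqref{tag1.3}: $\sigma_m(\bW^r_p)_q\ge\sigma_m(\bW^r_p)_2\ge\sigma_m(\bW^r_p,\og)_2\gtrsim m^{-r}(\log m)^{(d-1)r}$.) In short: your structural diagnosis of the lower bound is sound, but the single-fooling-function plan does not work as written and needs to be replaced by the worst-case volume argument (or a correct invocation of the orthonormal-dictionary lower bound when $q\ge2$).
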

The upper bound in Theorem \ref{baT2.3} follows from error bounds for
approximation by the hyperbolic cross polynomials (see \cite{Tmon}, Ch.2, \S2)
$$
E_{Q_n}(\bW^r_q,L_q) \lesssim 2^{-rn},\quad 1<q<\infty.
$$
The lower bound in Theorem \ref{baT2.3} was proved in \cite{KTE1}.

The following result for $\bH^r_p$ classes is known.
\begin{thm}\label{baT2.5} Let $q\le p$, $2\le p\le \infty$, $1<q<\infty$, $r>0$.
Then
$$
\sigma_m(\bH^r_{p})_q \asymp m^{-r}(\log m)^{(d-1)(r+1/2)}.
$$
\end{thm}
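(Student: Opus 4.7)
The plan is to match upper and lower bounds of order $m^{-r}(\log m)^{(d-1)(r+1/2)}$. For the upper bound, select $n$ so that $|Q_n|\le m<|Q_{n+1}|$; then $m\asymp 2^n n^{d-1}$. Every $t\in\Tr(Q_n)$ is in particular an $m$-term trigonometric polynomial, so $\sigma_m(f,\Tr^d)_q\le E_{Q_n}(f)_q$. Theorem~\ref{TBT3.3} covers precisely the parameter ranges at hand: in the regime $1<q\le p<\infty$ with $p\ge 2$, and in the regime $p=\infty,\,1<q<\infty$, it gives $E_{Q_n}(\bH^r_p)_q\asymp 2^{-rn}n^{(d-1)/2}$. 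Converting to $m$ yields the claimed upper bound.

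For the lower bound I would exhibit a single test function $f\in\bH^r_p$ for which every $m$-term trigonometric approximant has error of the right size. Pick $n$ with $|\Delta Q_n|\ge 2m$ (still $m\asymp 2^n n^{d-1}$), let $\tau_\bs\in\Tr(\rho(\bs))$ be a Rudin--Shapiro type polynomial (cf.\ Paragraph 4d of Subsection~\ref{multpol}) with $|\hat\tau_\bs(\bk)|=1$ on $\rho(\bs)$ and $\|\tau_\bs\|_p\asymp 2^{|\bs|_1/2}$ for $1\le p\le\infty$, and set
\[
f(\bx)\,:=\,c\cdot 2^{-(r+1/2)n}\sum_{|\bs|_1=n}\tau_\bs(\bx).
\]
Using \eqref{2.2.16} one checks $\|A_\bs(f)\|_p\lesssim 2^{-(r+1/2)n}\|\tau_\bs\|_p\lesssim 2^{-rn}$ for $|\bs|_1=n$, so $f\in\bH^r_p$ for a suitable absolute constant $c$. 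Now fix $\Lambda\subset\Z^d$ with $|\Lambda|\le m$ and $g\in\Tr(\Lambda)$; by Corollary~\ref{C7.1} the projector onto $\Tr(\Delta Q_n)$ is bounded on $L_q$ for $1<q<\infty$, so we may reduce to $g\in\Tr(\Lambda\cap\Delta Q_n)$. For $q\ge 2$, H\"older on $\T^d$ yields $\|f-g\|_q\ge\|f-g\|_2$, and $L_2$-orthogonality forces
\[
\|f-g\|_2\ \ge\ \Big(\sum_{\bk\in\Delta Q_n\setminus\Lambda}|\hat f(\bk)|^2\Big)^{1/2}\ \asymp\ 2^{-(r+1/2)n}|\Delta Q_n\setminus\Lambda|^{1/2}\ \gtrsim\ 2^{-rn}n^{(d-1)/2},
\]
which is the required bound.

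The main obstacle is the range $1<q<2$, where $L_q\not\hookrightarrow L_2$. Here I would argue by duality: $\|f-g\|_q=\sup\{|\langle f-g,h\rangle|:\|h\|_{q'}\le 1\}$ with $q'>2$. Taking $h$ with Fourier support in $\Delta Q_n\setminus\Lambda$ annihilates $\langle g,h\rangle$, and the specific choice $h=\overline{f|_{\Delta Q_n\setminus\Lambda}}/\|f|_{\Delta Q_n\setminus\Lambda}\|_{q'}$ reduces the task to establishing
\[
\|f|_{\Delta Q_n\setminus\Lambda}\|_{q'}\ \lesssim\ \|f|_{\Delta Q_n\setminus\Lambda}\|_2\ \asymp\ 2^{-rn}n^{(d-1)/2}\quad\text{uniformly in }\Lambda.
\]
To handle this uniformity I would replace the deterministic Rudin--Shapiro signs in $\tau_\bs$ by independent Rademacher variables, invoke Khintchine--Kahane block-by-block over the rectangular blocks $\rho(\bs)$, and combine it with a randomisation/averaging argument producing one sign pattern that is simultaneously good for all admissible $\Lambda$. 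This is the route taken in the lower-bound proof of Romanyuk~\cite{Rom1}, and is related to the techniques used for Theorem~\ref{T2.13} in \cite{Be3}.
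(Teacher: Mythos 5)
Your upper bound is correct and matches the paper: with $m\asymp|Q_n|\asymp 2^n n^{d-1}$, the inclusion $\sigma_m(f,\Tr^d)_q\le E_{Q_n}(f)_q$ combined with the second case of Theorem~\ref{TBT3.3} gives $E_{Q_n}(\bH^r_p)_q\asymp 2^{-rn}n^{(d-1)/2}$ exactly on the parameter set $q\le p$, $2\le p\le\infty$, $1<q<\infty$, which converts to the claimed rate. Your lower-bound argument for $q\ge 2$ is also sound: the projection onto $\Tr(\Delta Q_n)$ is bounded on $L_q$, so one may assume $g\in\Tr(\Lambda\cap\Delta Q_n)$, and then $\|f-g\|_q\ge\|f-g\|_2$ combined with Parseval and $|\Delta Q_n\setminus\Lambda|\gtrsim m$ gives the bound. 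The verification that $f\in c\,\bH^r_p$ via the Rudin--Shapiro norms is fine (for $|\bs'|_1\notin\{n,\dots,n+d\}$ one gets $A_{\bs'}(f)=0$ and only finitely many adjacent $\bs'$ contribute).

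The gap is in the case $1<q<2$, and it is genuine. After the duality reduction you need to produce \emph{one} sign pattern $(\epsilon_\bk)_{\bk\in\Delta Q_n}$ for which $\|\sum_{\bk\in\Delta Q_n\setminus\Lambda}\epsilon_\bk e^{i(\bk,\cdot)}\|_{q'}\lesssim|\Delta Q_n|^{1/2}$ \emph{simultaneously} for all $\Lambda$ with $|\Lambda|\le m$, where $q'>2$. Khintchine gives the right expectation, but the Talagrand/bounded-difference concentration for the norm as a function of $\epsilon$ is governed by the $\ell_2\to L_{q'}$ synthesis norm over $\Delta Q_n$, and since $\Delta Q_n$ contains full rectangular blocks $\rho(\bs)$ of size $\asymp 2^n$, that synthesis norm is $\asymp|\rho(\bs)|^{1/2-1/q'}$, not $O(1)$: $\Delta Q_n$ is far from being a $\Lambda(q')$ set. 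Plugging this in, the tail for a single $\Lambda$ decays like $\exp(-c\,m^{2/q'})$, which is far from enough to beat the $\binom{|\Delta Q_n|}{m}\gtrsim e^{cm}$ choices of $\Lambda$ in a union bound once $q'>2$. A second, separate obstruction: to get the theorem for all $p$ in one stroke you want the construction inside $\bH^r_\infty$, but replacing the deterministic Rudin--Shapiro signs by independent Rademacher signs loses a $\sqrt{\log}$ factor in $\|\tau_\bs\|_\infty$ (Salem--Zygmund), so the random test function is not in $c\,\bH^r_\infty$ without adjustment. Finally, the attribution is off: the paper does not credit the $q<2$ lower bound to Romanyuk~\cite{Rom1} (his results in this section, e.g.\ Theorem~\ref{T2.12}, concern $p<q$), but to Kashin--Temlyakov~\cite{KTE1}. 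Their route is different in kind: one first proves the entropy lower bound $\e_n(\bH^r_\infty,L_1)\gtrsim n^{-r}(\log n)^{(d-1)(r+1/2)}$ via the volume estimates for sets of Fourier coefficients of bounded polynomials (Theorem~\ref{T2.5.1} and its corollaries), and then transfers this to $m$-term lower bounds in $L_1$ — hence in every $L_q$, $q\ge 1$, since $\|\cdot\|_q\ge\|\cdot\|_1$. That entropy/volume machinery is precisely what handles the range $1<q<2$ that your direct test-function construction does not reach.
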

The lower bound for all $q>1$
$$
\sigma_m(\bH^r_\infty)_q \gtrsim m^{-r}(\log m)^{(d-1)(r+1/2)}
$$
was obtained in \cite{KTE1}. The matching upper bounds follow from
approximation by the hyperbolic cross polynomials (see \cite{Tmon}, Ch.2,
Theorem 2.2)
$$
E_{Q_n}(\bH^r_q)_q:=\sup_{f\in\bH^r_q}E_{Q_n}(f)_q \asymp
n^{(d-1)/2}2^{-rn},\quad 2\le q<\infty.
$$
The rate observed in Theorem \ref{baT2.5} does not extend to $1<q\leq p\leq 2$. As one would expect, 
we obtain that the hyperbolic cross projections are optimal also for sparse 
trigonometric approximation in case $1<p=q\leq 2$ and $r>0$, see 
Theorem \ref{TBT3.3} and the lower bounds in Theorems \ref{T2.7} and 
\ref{T2.12}. In other words, we have in this case
$$
    \sigma_m(\bH^r_p)_p \asymp E_{Q_n}(\bH^r_p)_p \asymp m^{-r}(\log m)^{(d-1)(r+1/p)}\,,\quad m\asymp |Q_n|\asymp 2^nn^{d-1}\,.
$$
Even more interesting is the following observation. Here we have a situation where $p=q$ and wavelet type dictionaries $\Phi$ yield a substantially better 
rate of convergence than the trigonometric system. Indeed, Theorem \ref{T7.6} gives 
$$
  \sigma_m(\bH^r_p,\Phi)_p \asymp m^{-r}(\log m)^{(d-1)(r+1/2)}\quad, \quad m\in \N\,,
$$
in case $1<p=q <2$ and $r>1/p-1/2$\,. Analogously to the comment after Theorem \ref{T2.8I} this 
is a multivariate phenomenon since the improvement happens in the power of the $\log$. Such a situation 
was not known before as far as we know. Although Stasyuk \cite[Rem.\ 2]{Sta11} comments on such a phenomenon, his example 
does not work. His Theorems \cite[Thm.\ 1]{Sta11}, \cite[Thm.\ 3.1]{Sta15} do not hold in case of small smoothness. Indeed, sparse trigonometric and 
sparse Haar wavelet approximation yield the same rate for his example $\bB^r_{\infty,\theta}$, $q<\infty$, $\theta<2$ 
and $0<r<1/\theta-1/2$.
 
The following result for $\bB$ classes was proved in \cite{Rom1}. For an extension to $q<p=\infty$ we refer to 
\cite{RomRom10}.
\begin{thm}\label{baT2.5B} Let $1<q\le p<\infty$, $2\le p< \infty$,
$1<q<\infty$, $r>0$. Then
$$
\sigma_m(\bB^r_{p,\theta})_q \asymp m^{-r}(\log m)^{(d-1)(r+1/2-1/\theta)_+}.
$$
\end{thm}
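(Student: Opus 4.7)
\textbf{Proof plan for Theorem \ref{baT2.5B}.} The proof splits into an upper and a lower bound, with very different techniques.

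For the upper bound, the first reduction is to the case $q = p$. Since $1 < q \le p$ and $\T^d$ has finite measure, the monotonicity inequality $\|g\|_q \le c\,\|g\|_p$ applied to the approximation error gives $\sigma_m(\bF)_q \lesssim \sigma_m(\bF)_p$ for any class $\bF$; so it suffices to produce an $m$-term trigonometric approximant that works in $L_p$. Write $f = \sum_\bs \delta_\bs(f)$ and split into $|\bs|_1 \le n$ (which we keep in full, contributing $|Q_n| \asymp 2^n n^{d-1}$ frequencies) and $|\bs|_1 > n$ (approximated blockwise by a greedy procedure). For each block, Theorem \ref{T2.5C} gives $\|\delta_\bs(f) - G^p_{m_\bs}(\delta_\bs(f))\|_p \lesssim m_\bs^{-1/2} \|\delta_\bs(f)\|_A$; since $p \ge 2$, Cauchy--Schwarz followed by $\|\delta_\bs(f)\|_2 \le \|\delta_\bs(f)\|_p$ yields $\|\delta_\bs(f)\|_A \lesssim 2^{|\bs|_1/2}\|\delta_\bs(f)\|_p$. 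Using $\|\delta_\bs(f)\|_p \le a_\bs$ with $\big\|\{2^{r|\bs|_1} a_\bs\}\big\|_{\ell_\theta} \lesssim \|f\|_{\bB^r_{p,\theta}}$, I choose $m_\bs \asymp 2^{|\bs|_1 - (1-\lambda)(|\bs|_1 - n)} (|\bs|_1)^{d-1}$ for a suitable $\lambda > 0$, balance the $\ell_\theta$ sum against the budget $\sum_{|\bs|_1 > n} m_\bs \lesssim 2^n n^{d-1}$, and invoke H\"older's inequality in the shell variable to produce the factor $n^{(d-1)(1/2-1/\theta)_+}$. The case $r + 1/2 - 1/\theta \le 0$ (small $r$, small $\theta$) requires a separate argument: one replaces the greedy step on each dyadic block by direct Nikol'skii-type estimates and exploits the embedding $\bB^r_{p,\theta} \hookrightarrow \bB^r_{p,p^*}$ for an appropriate $p^*$, cf.\ Lemma \ref{emb}, to reduce to a setting where the Stechkin-type inequality (Lemma \ref{Lqp}) gives the pure $m^{-r}$ rate without logarithmic loss.

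For the lower bound, I plan to construct a concrete family of ``fooling'' trigonometric polynomials from a single hyperbolic layer. Fix $n$ with $m \asymp 2^n n^{d-1}$ and set
$$
f = \sum_{|\bs|_1 = n} c_\bs \, t_\bs, \qquad t_\bs \in \Tr(\rho(\bs)),
$$
where $t_\bs$ is chosen so that $\|t_\bs\|_p \asymp 2^{|\bs|_1/2}$ while $\|t_\bs\|_q$ is ``large'' in a $1$-bounded-subspace sense. For the range where the exponent $(r+1/2-1/\theta)_+$ is positive, I use Rudin--Shapiro-type blocks (see Subsection \ref{multpol}, item 4d) or the volume-based construction of Theorem \ref{T2.2.1} to guarantee that no $m$-term trigonometric polynomial can capture more than a controlled fraction of the Fourier mass of $f$. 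Specifically, I normalize the $c_\bs$ so that $\|f\|_{\bB^r_{p,\theta}} \le 1$, which (by the disjointness of supports in frequency and the block characterization of $\bB^r_{p,\theta}$) forces $|c_\bs| \asymp 2^{-(r+1/2)n} n^{-(d-1)/\theta}$. A volume/discretization argument as in \cite{KTE1}, combined with the lower bound for $\sigma_m(\bH^r_p)_q$ from Theorem \ref{baT2.5}, then yields $\sigma_m(f)_q \gtrsim 2^{-rn} n^{(d-1)(1/2-1/\theta)}$. When $r + 1/2 - 1/\theta \le 0$, the matching lower bound $\sigma_m(\bB^r_{p,\theta})_q \gtrsim m^{-r}$ is trivial: one single well-chosen $2^{-r|\bs|_1}$-scaled exponential already sits in $\bB^r_{p,\theta}$ at unit norm and cannot be captured by fewer than one term of the trigonometric dictionary.

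The main obstacle will be the upper bound in the small-smoothness range $0 < r \le (1/\theta - 1/2)_+$, since Theorem \ref{T2.12B} does not directly apply there, and the naive Stechkin argument distributes $m$-term budgets suboptimally. My plan is to handle this by first using the embedding $\bB^r_{p,\theta} \hookrightarrow \bB^r_{p,\theta'}$ with $\theta' = \min\{\theta,2\}$, which absorbs the logarithm, and then applying the greedy scheme from Theorem \ref{T2.5C} blockwise with an adapted budget; a careful accounting, along the lines of Lemma \ref{L2.1} and Proposition \ref{BCp}, should remove the apparent logarithmic loss in this regime. A secondary technical point is to ensure that the greedy-based approximant actually lives in $L_q$ with a uniform bound, which is the reason for working at $p = q$ and transferring via the norm inequality, rather than running the greedy algorithm directly in $L_q$ where the Nikol'skii step would be weaker.
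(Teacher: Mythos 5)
The paper does not supply a proof of Theorem \ref{baT2.5B}: it only cites Romanyuk \cite{Rom1} (and \cite{RomRom10} for $p=\infty$). So your proposal has to be judged on its own merits, and there is a real gap. The greedy upper-bound scheme you describe is exactly the one behind Lemma \ref{L2.1} and Propositions \ref{HCp}, \ref{BCp}: keep $\Tr(Q_n)$, then greedy-approximate the tail in $L_p$, passing $\|\cdot\|_A$ to $\|\cdot\|_p$ by Cauchy--Schwarz and H\"older in the layer variable. Carried out with the correct layerwise budgets $m_l \asymp 2^{n-\mu(l-n)}l^{d-1}$ (not the per-block $m_\bs$ you wrote --- your choice $m_\bs\asymp 2^{|\bs|_1-(1-\lambda)(|\bs|_1-n)}|\bs|_1^{d-1}$ with $\lambda>0$ makes the total budget $\sum_{|\bs|_1>n}m_\bs$ diverge), the tail error becomes $\sum_{l>n} m_l^{-1/2}\,2^{l/2}\,l^{(d-1)(1-1/\theta)}\,2^{-rl}$, and this sum converges only if $r>1/2+\mu/2$, i.e., only for $r>1/2$. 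That is precisely the smoothness restriction in Theorem \ref{T2.12B}; the present statement relaxes it to $r>0$, so a new idea is required for $0<r\le 1/2$.

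That range is the crux of the theorem, and it is where your plan fails. For $\theta<2$ and small $r$ the target $m^{-r}(\log m)^{(d-1)(r+1/2-1/\theta)_+}$ is strictly better than both the linear hyperbolic-cross bound $m^{-r}(\log m)^{(d-1)r}$ (Theorem \ref{Theorem[HC-Approx-Brpt]}) and what the $r>1/2$ greedy scheme delivers. Your proposed remedy --- the embedding $\bB^r_{p,\theta}\hookrightarrow\bB^r_{p,p^*}$ with $p^*\ge\theta$ --- moves in the wrong direction: increasing the secondary index enlarges the class, hence can only worsen the upper bound. Also, your H\"older step yields the exponent $(d-1)(1/2-1/\theta)$, not its positive part; the $(\cdot)_+$ comes from taking a maximum with a separate bound, not from H\"older. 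Finally, for the lower bound in the regime $r+1/2-1/\theta<0$, the ``one well-chosen exponential'' argument is vacuous: any single exponential is itself a dictionary element, so it is captured exactly by a one-term approximant and gives $\sigma_m=0$ for every $m\ge1$. The correct trivial bound $m^{-r}$ comes from restricting to univariate functions and invoking $\sigma_m(B^r_{p,\theta}(\T))_q\gtrsim m^{-r}$; your reduction to the $\bH$-class example via \eqref{RoStB1} is sound for the complementary range $r+1/2-1/\theta\ge 0$.
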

\index{Small smoothness}
Let us comment on two different effects here. If $\theta<2$ we observe an improvement in comparison with the respective bound in 
Theorem \ref{Theorem[HC-Approx-Brpt]} for the approximation from the hyperbolic cross. Let us emphasize that $q\leq p$ here, 
so this effect is new and has not been observed before. The improvement happens in the $\log$ factor. Thus, we again encounter a pure multivariate phenomenon here 
which is not present if $d=1$. 

Secondly, the rates of convergence in Theorems \ref{baT2.5} and \ref{baT2.5B} for $p\geq 2$ 
coincide with the rates of convergence for sparse approximation with wavelet type 
dictionaries, see Theorem \ref{T7.6}. As we have seen above for $\bH^r_p$ classes 
this is not the case if $1<q\leq p<2$. Here the 
wavelet type dictionaries show a better behavior in the $\log$.

\subsection{Different types of Greedy Algorithms}
\noindent We are interested in the following fundamental problem of sparse
approximation. 

{\bf Problem.} How to design a practical algorithm that builds sparse
approximations comparable to best $m$-term approximations? 
 
It was demonstrated in the paper \cite{T144} that the Weak Chebyshev Greedy
Algorithm (WCGA), which we define momentarily, is a solution to the above
problem for a special class of dictionaries.  
 
Let $X$ be a real Banach space with norm $\|\cdot\|:=\|\cdot\|_X$. We say that
a set of elements (functions) $\D$ from $X$ is a dictionary  if each $g\in \D$
has norm   one ($\|g\|=1$),
and the closure of $\Span \D$ is $X$.
For a nonzero element $g\in X$ we let $F_g$ denote a norming (peak) functional
for $g$:
$$
\|F_g\|_{X^*} =1,\qquad F_g(g) =\|g\|_X.
$$
The existence of such a functional is guaranteed by the Hahn-Banach theorem.

Let $t\in(0,1]$ be a given weakness parameter. Define the Weak Chebyshev Greedy
Algorithm (WCGA) (see \cite{T15}) as a generalization for Banach spaces of the
\index{Greedy algorithm!Weak orthogonal matching pursuit}
Weak Orthogonal Matching Pursuit (WOMP). In a Hilbert space the WCGA coincides
with the WOMP. The WOMP is very popular in signal processing, in particular, in
compressed sensing. In case $t=1$,   WOMP is called  Orthogonal Matching Pursuit
(OMP).

 {\bf Weak Chebyshev Greedy Algorithm (WCGA).}\index{Greedy algorithm!Weak Chebychev}
Let $f_0$ be given. Then for each $m\ge 1$ we have the following inductive
definition.

(1) $\varphi_m :=\varphi^{c,t}_m \in \D$ is any element satisfying
$$
|F_{f_{m-1}}(\varphi_m)| \ge t\sup_{g\in\D}  | F_{f_{m-1}}(g)|.
$$

(2) Define
$
\Phi_m := \Phi^t_m := \Span \{\varphi_j\}_{j=1}^m,
$
and define $G_m := G_m^{c,t}$ to be the best approximant to $f_0$ from $\Phi_m$.

(3) Let
$
f_m := f^{c,t}_m := f_0-G_m.
$

The trigonometric system is a classical system that is known to be difficult to
study. In \cite{T144} we study among other problems the problem of nonlinear
sparse approximation with respect to it. Let  ${\mathcal R}{\mathcal T}$ denote
the real trigonometric system 
$1,\sin x,\cos x, \dots$ on $[0,2\pi]$ and let ${\mathcal R}{\mathcal T}_p$ to
be its version normalized in $L_p([0,2\pi])$. Denote ${\mathcal R}{\mathcal
T}_p^d := {\mathcal R}{\mathcal T}_p\times\cdots\times {\mathcal R}{\mathcal
T}_p$ the $d$-variate trigonometric system. We need to consider the real
trigonometric system because the algorithm WCGA is well studied for the real
Banach space. In order to illustrate performance of the WCGA we   discuss in
this section  the above mentioned problem for the trigonometric system. 
   The following 
Lebesgue-type inequality for the WCGA was proved in \cite{T144}.
\begin{thm}\label{gaT1.2} Let $\D$ be the normalized in $L_p$, $2\le p<\infty$,
real $d$-variate trigonometric
system. Then    
for any $f_0\in L_p$ the WCGA with weakness parameter $t$ gives
\begin{equation}\label{I1.4}
\|f_{C(t,p,d)m\ln (m+1)}\|_p \le C\sigma_m(f_0,\D)_p .
\end{equation}
\end{thm}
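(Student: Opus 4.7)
\textbf{Proof plan for Theorem \ref{gaT1.2}.} The plan is to apply the general Lebesgue-type inequality framework for the WCGA in uniformly smooth Banach spaces and reduce the trigonometric case to verifying two quantitative properties of the dictionary $\mathcal{RT}_p^d$. Recall that for $2\le p<\infty$ the space $L_p$ is uniformly smooth with modulus of smoothness $\rho_{L_p}(u)\le \gamma_p u^2$, which will control the per-step decrement of the residual through the standard identity
$$
\|f_m\|_p^2 \le \|f_{m-1}\|_p^2 - c_p\, t^2\, \bigl(F_{f_{m-1}}(\varphi_m)\bigr)^2\|f_{m-1}\|_p^{0}
$$
(more precisely, the recursion one obtains by optimizing the one-parameter quantity $\|f_{m-1}-\lambda\varphi_m\|_p$ using the smoothness inequality). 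The first step of the plan is therefore to rewrite the problem as controlling how many greedy iterations are needed to drive $\|f_m\|_p$ below $C\sigma_m(f_0,\mathcal{D})_p$.

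Second, I would fix a near-best $m$-term approximant $g^\ast=\sum_{k\in T}c_k\varphi_k\in\Tr(T)$ with $|T|=m$ and $\|f_0-g^\ast\|_p\le 2\sigma_m(f_0,\mathcal{D})_p$, and use the dual relation $F_{f_{m-1}}(f_{m-1})=\|f_{m-1}\|_p$ together with $F_{f_{m-1}}(f_0-g^\ast)\le \|f_0-g^\ast\|_p$ to produce the lower bound
$$
\sup_{g\in\mathcal{D}}|F_{f_{m-1}}(g)| \;\gtrsim\; \frac{\|f_{m-1}\|_p-\|f_0-g^\ast\|_p}{\|g^\ast\|_A^{\ast}},
$$
where $\|\cdot\|_A^{\ast}$ is the appropriate Kashin/Nikol'skii-type constant that relates the $A$-norm of a trigonometric polynomial supported on an $m$-set $T$ to its $L_{p'}$-norm of the action of $F_{f_{m-1}}$. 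For the real trigonometric system $\mathcal{RT}_p^d$, the key quantitative input is that for any $T\subset\Z^d$ with $|T|=m$ one has, by Parseval and H\"older,
$$
\Big\|\sum_{k\in T} a_k e^{i(k,\bx)}\Big\|_A \;\le\; m^{1/2}\,\Big\|\sum_{k\in T} a_k e^{i(k,\bx)}\Big\|_2 \;\le\; m^{1/2}\,\Big\|\sum_{k\in T} a_k e^{i(k,\bx)}\Big\|_p,
$$
which plays the role of an ``incoherence constant of order $m^{1/2}$'' for the trigonometric dictionary in $L_p$, $p\ge 2$.

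Third, combining this incoherence estimate with the smoothness-driven recursion and standard Temlyakov-type bookkeeping (Lemma 2.3.7 of \cite{Tbook}, essentially a discrete Bernoulli lemma), I would obtain an inequality of the form
$$
a_{m+N}\;\le\;\Big(1-\frac{c\,t^2}{m}\Big)^{N}\,a_m \;+\; C\,\sigma_m(f_0,\mathcal{D})_p,
$$
for a suitable sequence $a_N$ controlling $\|f_N\|_p$. Iterating $N\asymp m\ln(m+1)$ times (where the $\ln$ absorbs the initial ratio $\|f_0\|_p/\sigma_m$ via a preliminary step that truncates $f_0$ to a polynomial of controlled degree and uses the trivial bound $\|f_0\|_p\le \|f_0\|_A\lesssim (\deg)^{d/p'}\|f_0\|_p$) drives the first term below $\sigma_m(f_0,\mathcal{D})_p$ and yields \eqref{I1.4}.

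The main obstacle will be the third step: making the per-step gain $(1-ct^2/m)$ genuinely multiplicative, because the incoherence constant $m^{1/2}$ inevitably enters the denominator of the improvement factor, so one has to absorb a factor that scales like $m$ into the iteration count, which is exactly what produces the $m\ln(m+1)$ (rather than $m$). A secondary technical nuisance will be showing that the preliminary ``reduction to a polynomial environment'' does not spoil the constants; here the uniform boundedness of the de la Vall\'ee Poussin operator on $L_p$, $1<p<\infty$, and Theorem \ref{T2.4.6} allow us to replace $f_0$ by a polynomial of degree at most $2^{cm}$ without affecting the sparse approximation error, so that the logarithmic factor in $N$ comes out explicitly as $\ln(m+1)$.
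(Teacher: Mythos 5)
Your first two ingredients are the right ones and match the approach of \cite{T144}: the per-step decrement of the WCGA residual coming from the power-$2$ modulus of smoothness of $L_p$, $2\le p<\infty$, and the Hausdorff--Young/Parseval bound $\|t\|_A\le |T|^{1/2}\|t\|_2\le |T|^{1/2}\|t\|_p$ for a trigonometric polynomial $t$ with $|T|$ nonzero frequencies, which is exactly the ``incoherence constant'' one needs for the real $d$-variate trigonometric system. Your second step---lower bounding $\sup_{g\in\D}|F_{f_{n-1}}(g)|$ by pairing the norming functional against a near-best $m$-term approximant $g^\ast$---is also the standard mechanism.

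The gap is in your third step, and it is not merely bookkeeping. From the recursion $a_{m+N}\le(1-ct^2/m)^N a_m + C\,\sigma_m(f_0,\D)_p$ with $N\asymp m\ln(m+1)$ one obtains only $\|f_N\|_p\lesssim m^{-ct^2}\|f_0\|_p + C\sigma_m(f_0,\D)_p$, so to conclude \eqref{I1.4} you would need a universal bound of the form $\|f_0\|_p\lesssim m^{ct^2}\,\sigma_m(f_0,\D)_p$, which is false: take $f_0$ equal to an $m$-term trigonometric polynomial, so that $\sigma_m(f_0,\D)_p=0$; then the theorem asserts exact recovery $f_{C m\ln(m+1)}=0$, whereas your purely norm-based geometric recursion never reaches zero. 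The proposed ``preliminary truncation to a polynomial of degree $2^{cm}$'' does not repair this---the inequality $\|f\|_p\le\|f\|_A\lesssim(\deg)^{d/p'}\|f\|_p$ says nothing about the ratio $\|f_0\|_p/\sigma_m(f_0,\D)_p$, which remains unbounded even after truncation. What \cite{T144} actually does differently is a phase-by-phase accounting that tracks the $\ell^1$-mass $\sum_{k\in T\setminus\Phi_n}|c_k|$ of the near-best $m$-term approximant over the part of its support $T$ not yet captured by the WCGA: as $T$ is progressively exhausted by the Chebyshev projection steps, the effective incoherence constant improves from $m^{1/2}$ to $|T\setminus\Phi_n|^{1/2}$, and the iterations are organized into $O(\ln m)$ phases of length $O(m)$ each, in each of which this residual $\ell^1$-mass is halved. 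It is this ``support-peeling'' mechanism---not a geometric decay of $\|f_n\|_p$ itself---that produces the factor $\ln(m+1)$ with constants independent of $f_0$, replacing the $\ln(\|f_0\|_p/\sigma_m)$ your recursion would force.
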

The Open Problem 7.1 (p. 91) from \cite{T18} asks if (\ref{I1.4}) holds without
an extra 
$\ln (m+1)$ factor. Theorem \ref{gaT1.2} is the first result on the
Lebesgue-type inequalities for the WCGA with respect to the trigonometric
system. It provides a progress in solving the above mentioned open problem, but
the problem is still open. 

Theorem \ref{gaT1.2} shows that the WCGA is very well designed for the
trigonometric system. It was shown in \cite{T144}  that an analog of
(\ref{I1.4}) holds for 
uniformly bounded orthogonal systems.  

As a direct corollary of Theorems \ref{gaT1.2} and \ref{T2.8I} we obtain the
following result (see \cite{VT152}).
\begin{thm}\label{TCW} Let $q\in [2,\infty)$. Apply the WCGA with weakness
parameter $t\in (0,1]$ to $f\in L_q$ with respect to the real trigonometric
system ${\mathcal R}{\mathcal T}_q^d$. If $f\in \bW^r_p$, then we have
$$
 \|f_m\|_{q}
  \lesssim  \left\{\begin{array}{ll}  
 m^{-r+\eta}(\log m)^{(d-1)(r-2\eta)+r-\eta}, & 1<p\le 2,\quad r>1/p,\\ 
 m^{-r}(\log m)^{rd}, & 2\le p<\infty, \quad r>1/2.\end{array} \right.
$$
\end{thm}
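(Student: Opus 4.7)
The strategy is to combine the Lebesgue-type inequality for the WCGA (Theorem \ref{gaT1.2}) with the sharp best $m$-term approximation bounds from Theorem \ref{T2.8I}. The plan has three steps.

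First, I note that for any $q\in[2,\infty)$ and any $f\in L_q$, Theorem \ref{gaT1.2} yields the inequality
$$
\|f_N\|_q \,\le\, C\,\sigma_m(f,\mathcal{RT}^d_q)_q \qquad\text{whenever}\qquad N\ge C(t,q,d)\,m\ln(m+1).
$$
Inverting this relation, for a given number of iterations $N$ I would choose $m$ to be the largest integer with $C(t,q,d)m\ln(m+1)\le N$, which gives $m\asymp N/\ln N$ and hence $\ln m \asymp \ln N$. Consequently
$$
\|f_N\|_q \,\lesssim\, \sigma_{\lfloor cN/\ln N\rfloor}(f,\mathcal{RT}^d_q)_q
$$
for a suitable constant $c=c(t,q,d)>0$. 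The passage from the complex trigonometric dictionary used in Theorem \ref{T2.8I} to the real normalized dictionary $\mathcal{RT}^d_q$ only affects constants, since the two bases span the same subspaces of polynomials up to a fixed multiplicative factor in the normalization.

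Second, I would insert the bounds from Theorem \ref{T2.8I} for $f\in \bW^r_p$. In the range $1<p\le 2\le q<\infty$ with $r>1/p$, that theorem gives $\sigma_m(\bW^r_p)_q\lesssim m^{-r+\eta}(\log m)^{(d-1)(r-2\eta)}$ with $\eta=1/p-1/2$, while in the range $2\le p\le q<\infty$ with $r>1/2$ it gives $\sigma_m(\bW^r_p)_q\lesssim m^{-r}(\log m)^{r(d-1)}$. Note that the embeddings in Lemma \ref{emb}(iii) guarantee $\bW^r_p\hookrightarrow C(\T^d)\hookrightarrow L_q$ under these smoothness conditions, so WCGA is well-defined on $\bW^r_p\subset L_q$.

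Third, I would substitute $m\asymp N/\ln N$ into the two bounds. For the first case,
$$
(N/\ln N)^{-r+\eta}\bigl(\log(N/\ln N)\bigr)^{(d-1)(r-2\eta)}
\asymp N^{-r+\eta}(\ln N)^{r-\eta}(\ln N)^{(d-1)(r-2\eta)}
= N^{-r+\eta}(\ln N)^{(d-1)(r-2\eta)+r-\eta},
$$
which is the announced bound (after relabeling $N$ as $m$). For the second case,
$$
(N/\ln N)^{-r}\bigl(\log(N/\ln N)\bigr)^{r(d-1)}
\asymp N^{-r}(\ln N)^{r}(\ln N)^{r(d-1)}
= N^{-r}(\ln N)^{rd},
$$
which again matches. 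The only potentially delicate point is bookkeeping for the choice of $m$ in terms of $N$ to ensure that $\ln m\asymp \ln N$ and that both inequalities $C(t,q,d)m\ln m\le N$ and $m\ge c N/\ln N$ can be enforced simultaneously; this is routine for $N\ge 2$ by a monotonicity argument. No deeper obstacle arises, since the whole theorem is essentially a direct corollary of Theorems \ref{gaT1.2} and \ref{T2.8I}.
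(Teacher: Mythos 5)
Your proposal is correct and coincides with the paper's own route: the paper introduces Theorem \ref{TCW} as ``a direct corollary of Theorems \ref{gaT1.2} and \ref{T2.8I},'' which is exactly what you carry out (invert the Lebesgue-type inequality via monotonicity of $\|f_m\|_q$, insert the sharp $\sigma_m$ bounds, and substitute $m\asymp N/\ln N$), and your log-power arithmetic checks out in both regimes. The one small bookkeeping point you leave implicit is the case $p>q\ge 2$, which is allowed by the statement but not covered by the third line of Theorem \ref{T2.8I}; it reduces to $p=q$ via the inclusion $\bW^r_p\subset\bW^r_q$ coming from $\|\cdot\|_q\le\|\cdot\|_p$ and yields the same $m^{-r}(\log m)^{r(d-1)}$ bound.
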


\subsection{Open problems}\index{Open problems!Nonlinear approximation}

It is well known that the extreme cases, when one of the parameters $p$ or $q$
takes values $1$ or $\infty$ are difficult in the hyperbolic cross approximation
theory. Often, study of these cases requires special techniques. Many of the
problems, which involve the extreme values of parameters, are still open. Also
the case of small smoothness is still open in many settings.\index{Small smoothness}

{\bf Open problem 7.1.} Find a constructive method, which provides the order of
$\sigma_m(\bW^r_{p})_q$, $2\le p\le q <\infty$, $\beta<r\le 1/2$. 

{\bf Open problem 7.2.} Find the order of $\sigma_m(\bW^r_{p})_\infty$, $1\le
p\le \infty $, $r>1/p$. 

{\bf Open problem 7.3.} Find the order of $\sigma_m(\bW^r_{p})_1$, $1\le p\le
\infty $, $r>0$. 

{\bf Open problem 7.4.} Find the order of $\sigma_m(\bW^r_{\infty})_q$, $1\le
q\le \infty $, $r>0$. 

We formulated the above problems for the $\bW$ classes. Those problems are open
for the $\bH$ and $\bB$ classes as well.  In addition the following problem is
open for the $\bH$ and $\bB$ classes.
 
{\bf Open problem 7.5.} Find the order of $\sigma_m(\bH^r_{p})_q$ and
$\sigma_m(\bB^r_{p,\theta})_q$ for $1\le q<p \le 2 $, $r>0$.

   \newpage
    \section{Numerical integration}\index{Numerical integration} 

\label{numint}

\subsection{The problem setting}
A cubature rule $\Lambda_m(f,\xi)$ approximates the integral $I(f) =
\int_{[0,1]^d} f(\bx) \, d\bx$ 
by computing a weighted sum of finitely many function values at 
$X_m = \{\bx^1,...,\bx^m\}$ , i.e., \begin{equation}\label{cub_def}
  \Lambda_m(f,X_m) := \sum\limits_{i=1}^m \lambda_i f(\bx^i),
\end{equation}
where the $d$-variate function $f$ is assumed to belong to some (quasi-)normed
function space $\mathbf{F} \subset C([0,1]^d)$. 
The optimal error with respect to the class $\mathbf{F}$ is given
by 
\begin{equation}\label{eq:minimal}
  \kappa_m(\mathbf{F}) \,:=\,
\inf\limits_{X_m=\{\bx^1,\dots,\bx^m\}\subset[0,1]^d}\,
	\inf\limits_{\lambda_1,\dots,\lambda_m\in\R}
\Lambda_m(\mathbf{F},X_m)\,,
\end{equation}
where
$$
	\Lambda_m(\mathbf{F},X_m):=\sup\limits_{f\in\mathbf{F} }
	\left|I(f)-\sum\limits_{i=1}^m \lambda_i f(\bx^i)\right|\,.
$$
It is clear that $f\in \mathbf{F}$ has to be defined at the nodes
$\{\bx^1,...,\bx^m\}$. For that reason we always assume $\mathbf{F}
\hookrightarrow C(\T^d)$. For simplicity, in contrast to the other sections, we
represent the $d$-torus here as $\T^d = [0,1]^d$. Our main interest in this
section is to present known optimal results (in the sense of order) in the
number of nodes $m$ for numerical integration of classes of functions with
bounded mixed smoothness. 

\subsection{Lower bounds}
\label{sect:lbounds_int}

The lower bounds that we want to present are valid for \emph{arbitrary} 
cubature formulas. For this we study the quantity $\kappa_m(\mathbf{F})$ 
for the spaces $\Brpt$ and $\Wrp$. We will provide two approaches for
``fooling'' the given cubature formula in order to obtain asymptotically sharp
lower bounds. 

\subsubsection*{Fooling polynomials}\index{Fooling function!Polynomial}

Let us start with the following approach which has been a big
breakthrough at that time. The theorem is from \cite{Tem22}. The idea is to
construct ``bad'' trigonometric
polynomials which fool the given cubature formula in the following way.

\begin{thm}\label{T6.2.1a} Let $1\leq p<\infty$ and $r>1/p$. Then we have 
$$
\kappa_m(\bW_{p}^r) \gtrsim m^{-r}
(\log m)^{(d-1)/2}\,.
$$
\end{thm}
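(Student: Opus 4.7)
The overall strategy is to construct, for any prescribed cubature rule $\Lambda_m$ with nodes $X_m = \{\bx^1,\dots,\bx^m\}$ and weights $\lambda_1,\dots,\lambda_m$, a trigonometric-polynomial fooling function $\tilde f$ satisfying $I(\tilde f) = 1$, $\tilde f(\bx^i) = 0$ for every $i$, and $\|\tilde f\|_{\bW^r_p} \lesssim m^{r}(\log m)^{-(d-1)/2}$. Rescaling by this norm produces an admissible $f = \tilde f/\|\tilde f\|_{\bW^r_p} \in \bW^r_p$ on which the cubature error equals $1/\|\tilde f\|_{\bW^r_p} \gtrsim m^{-r}(\log m)^{(d-1)/2}$; taking the infimum over $\Lambda_m$ then yields the desired lower bound. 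Note that the hypothesis $r > 1/p$ guarantees $\bW^r_p \hookrightarrow C(\T^d)$, so that point evaluations make sense.

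First I would fix the correct scale. Choose $n \in \N$ with $2^{n} n^{d-1} \asymp m$, so that $n \asymp \log m$. The hyperbolic cross $Q_n$ has cardinality $\asymp 2^n n^{d-1} \asymp m$ and is partitioned into $\asymp n^{d-1} \asymp (\log m)^{d-1}$ dyadic blocks $\rho(\bs)$ with $|\bs|_1 \le n$; an auxiliary enlargement ensures $|Q_{n+c}| \ge 2m$, which is what makes the interpolation constraints below solvable by a dimension count. Then look for $\tilde f$ of the form $\tilde f = 1 + \sum_{|\bs|_1 \le n,\,\bs\neq 0} h_\bs$ with $h_\bs \in \Tr(\rho(\bs))$. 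The normalization $I(\tilde f) = 1$ is automatic since $\bf 0 \notin \rho(\bs)$, and the $m$ constraints $\sum_\bs h_\bs(\bx^i) = -1$ are linear in the $\sum_{\bs}|\rho(\bs)|\asymp|\Delta Q_n|$ free Fourier coefficients, hence solvable.

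Next I would exploit a Rademacher/Khintchine-type averaging across the $\asymp n^{d-1}$ hyperbolic layers to produce the $(\log m)^{(d-1)/2}$ savings. Concretely, first solve the interpolation problem blockwise: for each $\bs$ with $|\bs|_1 = n$ find $h_\bs \in \Tr(\rho(\bs))$ whose values at $\{\bx^i\}$ absorb part of $-1/\#\{\bs\}$, with $\|h_\bs\|_p \lesssim 1/\#\{\bs\}$. Then consider the random-sign family $\tilde f_\varepsilon = 1 + \sum_\bs \varepsilon_\bs h_\bs$, $\varepsilon_\bs \in \{\pm 1\}$, and retain the sign pattern yielding the smallest $\bW^r_p$-norm. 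The Littlewood-Paley characterization \eqref{NeqW} shows
$$
\|\tilde f_\varepsilon\|_{\bW^r_p} \asymp \Big\|\Big(\sum_{|\bs|_1\le n} 2^{2r|\bs|_1}|\delta_\bs(\tilde f_\varepsilon)|^2\Big)^{1/2}\Big\|_p,
$$
and Khintchine's inequality averages out the signs so that the inner square function gathers a factor $(n^{d-1})^{1/2}$ rather than $n^{d-1}$; combined with the worst-case weight $2^{rn}$ on the layer $|\bs|_1=n$ and the scaling $2^n \asymp m/(\log m)^{d-1}$, the norm is bounded by $2^{rn} \cdot n^{(d-1)/2} \cdot (\text{per-block norm})\lesssim m^r (\log m)^{-(d-1)/2}$, as required.

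The hard part will be the joint control of \emph{interpolation at $m$ prescribed nodes} and \emph{small $\bW^r_p$-norm} of the $h_\bs$. Within a single dyadic block $\rho(\bs)$ one has only $\asymp 2^n$ degrees of freedom against $m\asymp 2^n n^{d-1}$ node constraints, so no single-block construction suffices; the constraints must be distributed across blocks, and the distribution must be compatible with Khintchine-style averaging. This is where the volume estimates of $B_{\Delta Q_n}(L_\infty)$ (Theorem~\ref{T2.5.5}) and the dual counting arguments used for entropy numbers (cf.\ Subsection~\ref{entropy_gen}) enter essentially: they provide the conditioning required for the matrix of evaluation functionals and thereby an effective upper bound on $\sum_\bs \|h_\bs\|_p^2$ under the given interpolation constraints. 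For $p=2$ this reduces to a singular-value analysis of an $m\times |Q_n|$ weighted exponential matrix; for general $1\le p<\infty$ an additional interpolation/duality step is needed to transfer the $L_2$ estimate.
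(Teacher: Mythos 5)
Your underlying strategy (construct a fooling function that vanishes at the nodes and has unit integral, then minimize its $\bW^r_p$-norm) is a legitimate general route, but the way you propose to execute it has several gaps that would make the argument fail, and the paper in fact follows a different and more robust path.

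The most important structural difference is the scale. The paper's proof of Theorem~\ref{T6.2.1a} does \emph{not} take $2^n n^{d-1}\asymp m$; it takes $m\le 2^{n-1}<2m$, so $2^n\asymp m$. This is essential: with this choice, for each $\bs$ with $|\bs|_1=n$ and $s_j\ge 1$ the individual block $\Tr(2^{\bs},d)$ alone has dimension $\asymp 2^n$, which already exceeds the number of integration nodes $m\le 2^{n-1}$. One then never needs to ``distribute constraints across blocks'' at all: within each block separately there is room to find a polynomial vanishing on the node set. With your scale $2^n\asymp m/n^{d-1}$, any single block $\rho(\bs)$ has $\asymp m/n^{d-1}\ll m$ degrees of freedom, so the blockwise interpolation you propose (``find $h_\bs\in\Tr(\rho(\bs))$ whose values at $\{\bx^i\}$ absorb part of $-1/\#\{\bs\}$'') is impossible by dimension count, precisely the obstruction you acknowledge but do not resolve.

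Second, the paper does not impose the interpolation condition $\tilde f(\bx^i)=0$ on the \emph{sum}. Instead it relaxes to the one-sided inequality: for each $\bs$ it constructs $t_\bs\in\Tr(2^{\bs},d)$ with $\|t_\bs\|_\infty\le 1$ and
$$
\hat t_\bs(\mathbf 0)-\Lambda_m(t_\bs,X_m)\ge C(d)>0
$$
(Lemma~\ref{L6.2.2}). This is where Theorem~\ref{T2.2.1} (based on the volume estimates of Subsection~\ref{vol_est}) enters essentially: the subspace of $\Tr(2^{\bs},d)$ vanishing at the $m\le 2^{n-1}$ nodes has dimension at least a fixed fraction of $\vartheta(2^{\bs})\asymp 2^n$, so Theorem~\ref{T2.2.1} produces a $u$ there with $\|u\|_\infty=1$ and $\|u\|_2\ge C(\varepsilon,d)>0$; then $t_\bs=|u|^2$ vanishes at every node, has $\|t_\bs\|_\infty\le 1$, and $\hat t_\bs(\mathbf 0)=\|u\|_2^2\ge C^2>0$. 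No ill-conditioned linear system needs to be inverted, and no a priori control of the solution norm is required; the uniform bound $\|t_\bs\|_\infty\le 1$ is built in. This sidesteps exactly the hard part you flag (``joint control of interpolation at $m$ prescribed nodes and small $\bW^r_p$-norm'').

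Third, your Khintchine step is self-defeating. If you genuinely need $\sum_\bs h_\bs(\bx^i)=-1$ at every node, then inserting random signs gives $\tilde f_\varepsilon(\bx^i)=1+\sum_\bs\varepsilon_\bs h_\bs(\bx^i)\ne 0$ for generic $\varepsilon$, so the fooling property is destroyed. And if instead one works with the paper's $t_\bs$ (each vanishing at the nodes individually), randomizing signs keeps the vanishing but ruins the integral: $\hat t_\varepsilon(\mathbf 0)=\sum_\bs\varepsilon_\bs\hat t_\bs(\mathbf 0)$ has magnitude only $\asymp n^{(d-1)/2}$ by Khintchine, rather than the full $n^{d-1}$ obtained by keeping all signs equal. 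The gain of $n^{(d-1)/2}$ in the final bound is therefore \emph{not} obtained from Rademacher averaging. It comes from the Besov embedding: for $2\le p<\infty$, $\bB^r_{p,2}\hookrightarrow\bW^r_p$ (the left inclusion in \eqref{chainBFB}), so
$$
\|t\|_{\bW^r_p}\lesssim\|t\|_{\bB^r_{p,2}}\lesssim 2^{rn}\,n^{(d-1)/2}
$$
since there are $\asymp n^{d-1}$ blocks of roughly equal weight and the Besov norm takes an $\ell_2$ sum over them. Dividing $\hat t(\mathbf 0)-\Lambda_m(t,X_m)\gtrsim n^{d-1}$ by this norm gives $\gtrsim 2^{-rn}n^{(d-1)/2}\asymp m^{-r}(\log m)^{(d-1)/2}$. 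The case $1\le p<2$ then follows by monotonicity of the classes in $p$.

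In short: your proposal identifies the right tool (volume estimates / Theorem~\ref{T2.2.1}) and the right target exponent, but the randomization mechanism is incompatible with the interpolation constraint, the choice of $n$ starves the blockwise construction, and the $\ell_2$-in-blocks savings should come from the $\bB^r_{p,2}\hookrightarrow\bW^r_p$ embedding rather than from Khintchine.
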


The proof of Theorem \ref{T6.2.1a} is based on the existence of special
trigonometric polynomials given by Theorem \ref{T2.2.1} above. In fact, Theorem
\ref{T2.2.1} is used to prove the following assertion.

\begin{lem}\label{L6.2.2} Let the coordinates of the vector $\bs$ be natural
numbers and $|\bs|_1 = n$. Then for any $N\le 2^{n-1}$ and an
arbitrary cubature formula $\Lambda_N(\cdot,X_N)$ with $N$ nodes there is a
$t_{ \bs}\in \Tr(2^{ \bs} ,d)$ such that
$\|t_{ \bs}\|_{\infty} \le 1$ and
\be\nonumber
\hat t_{ \bs} (0) - \Lambda_N (t_{ \bs},X_N) \ge C(d) > 0.
\ee
\end{lem}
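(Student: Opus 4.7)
\textbf{Proof plan for Lemma \ref{L6.2.2}.}

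The plan is to construct $t_{\bs}$ as the squared modulus of a suitable trigonometric polynomial that vanishes on the node set $X_N$, and then invoke Theorem \ref{T2.2.1} to ensure the construction produces a polynomial with a sizeable mean value. Set $\bN := 2^{\bs - \mathbf{1}} = (2^{s_1-1},\dots,2^{s_d-1})$, so that $\Tr(\bN,d)$ has complex dimension $\vartheta(\bN) = \prod_{j=1}^d (2^{s_j}+1)$, which satisfies $2^n \le \vartheta(\bN) \le 2^{n+d}$ since each $s_j \ge 1$.

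First I would consider the evaluation map $E : \Tr(\bN,d) \to \C^N$, $E(\tau) = (\tau(\bx^1),\dots,\tau(\bx^N))$, and set
$$\Psi := \ker E = \{\tau \in \Tr(\bN,d) : \tau(\bx^i) = 0,\ i=1,\dots,N\}.$$
Since $E$ is $\C$-linear, $\dim_\C \Psi \ge \vartheta(\bN) - N \ge 2^n - 2^{n-1} = 2^{n-1}$. Therefore
$$\frac{\dim_\C \Psi}{\vartheta(\bN)} \ge \frac{2^{n-1}}{2^{n+d}} = 2^{-d-1} =: \varepsilon(d) > 0.$$
Applying Theorem \ref{T2.2.1} to this subspace yields a polynomial $\tau \in \Psi$ with
$$\|\tau\|_\infty = 1, \qquad \|\tau\|_2 \ge C(\varepsilon(d),d) =: c(d) > 0,$$
and $\tau(\bx^i) = 0$ for each $i = 1,\dots,N$ by construction.

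Next I would define $t_{\bs}(\bx) := |\tau(\bx)|^2 = \tau(\bx)\overline{\tau(\bx)}$. Since the frequencies of $\tau$ and $\overline{\tau}$ lie in $[-2^{s_j-1},2^{s_j-1}]$ in each coordinate, those of the product lie in $[-2^{s_j},2^{s_j}]$, so $t_{\bs} \in \Tr(2^{\bs},d)$. Clearly $t_{\bs} \ge 0$ and $\|t_{\bs}\|_\infty = \|\tau\|_\infty^2 \le 1$. Moreover,
$$
\hat t_{\bs}(\mathbf 0) \;=\; (2\pi)^{-d}\int_{\T^d}|\tau(\bx)|^2\,d\bx \;=\; \|\tau\|_2^2 \;\ge\; c(d)^2 \;=:\; C(d) > 0.
$$
Finally, because $\tau$ vanishes at every node, so does $t_{\bs}$, hence
$$
\Lambda_N(t_{\bs}, X_N) = \sum_{i=1}^N \lambda_i\, t_{\bs}(\bx^i) = 0,
$$
and therefore $\hat t_{\bs}(\mathbf 0) - \Lambda_N(t_{\bs}, X_N) = \hat t_{\bs}(\mathbf 0) \ge C(d) > 0$, as required.

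The only nontrivial point in this plan is verifying that the dimension constraint $\dim_\C \Psi \ge \varepsilon(d)\,\vartheta(\bN)$ needed to invoke Theorem \ref{T2.2.1} is indeed satisfied; this is precisely the role of the hypothesis $N \le 2^{n-1}$, which guarantees that after imposing $N$ complex vanishing conditions we still retain a constant fraction (depending only on $d$) of the ambient dimension. Everything else is essentially formal: the passage from $\tau$ to $|\tau|^2$ converts an $L_2$ lower bound into a lower bound on the zeroth Fourier coefficient while preserving both the $L_\infty$ bound and the vanishing on $X_N$, at the cost of merely doubling the frequency support from $\bN = 2^{\bs-\mathbf 1}$ to $2^{\bs}$.
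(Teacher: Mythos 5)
Your proof is correct and follows the paper's approach. Both rely on Theorem \ref{T2.2.1} applied to the kernel of the evaluation map at the nodes inside $\Tr(2^{\bs-\mathbf 1},d)$, where the hypothesis $N\le 2^{n-1}$ together with $2^n\le\vartheta(\mathbf N)\le 2^{n+d}$ ensures the kernel occupies a constant fraction $2^{-d-1}$ of the ambient space; passing to $t_{\bs}=|\tau|^2$ then doubles the frequency support to $2^{\bs}$, forces $\Lambda_N(t_{\bs},X_N)=0$, and converts the $L_2$ lower bound into $\hat t_{\bs}(\mathbf 0)=\|\tau\|_2^2\ge C(d)$.
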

\noindent Now we choose for a given $m$ a number $n$ such that
$$
 m \le 2^{n-1}  < 2m.
$$
Consider the polynomial
$$
 t( \bx) =\sum_{| \bs|_1=n}t_{ \bs} ( \bx) ,
$$
where $t_{ \bs}$ are polynomials from Lemma \ref{L6.2.2} with $N = m$.
Then
\be\nonumber
\hat t( 0) - \Lambda_m (t,X_m)\ge C(d)n^{d-1}.
\ee
The proof of Theorem \ref{T6.2.1a} was completed by establishing that if
$2\leq p <\infty$ then
\be\nonumber
\|t\|_{\bW^r_p} \lesssim \|t\|_{\bB^r_{p,2}} \lesssim 2^{rn} n^{(d-1)/2}.
\ee
Theorem \ref{T6.2.1a} gives the same lower bound for different parameters $1\le
p<\infty$. It is clear that the bigger the $p$ the stronger the statement. 
It was pointed out in \cite{VT152} that the above example  also provides the
lower bound for the Besov classes. 
     
\begin{thm}\label{T6.2.3} Let $1\leq p,\theta \leq \infty$ and $r>1/p$. We have
the following lower bound for the Besov classes $\Brpt$
$$
\kappa_m(\Brpt) \gtrsim
m^{-r}(\log m)^{(d-1)(1-1/\theta)},\quad 1 \le p \le \infty,\quad 1\le
\theta\le\infty .
$$
\end{thm}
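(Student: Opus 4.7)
My plan is to follow the scheme of Theorem \ref{T6.2.1a} and reuse the very same fooling polynomial, only replacing the final Sobolev norm estimate by a Besov one that captures the dependence on the third index $\theta$. First I would fix $n \in \N$ with $m \le 2^{n-1} < 2m$ and, for every $\bs \in \N^d$ with $|\bs|_1 = n$, invoke Lemma \ref{L6.2.2} (with $N = m$) to obtain polynomials $t_\bs \in \Tr(2^\bs,d)$ satisfying $\|t_\bs\|_\infty \le 1$ and $\hat t_\bs(0) - \Lambda_m(t_\bs,X_m) \ge C(d) > 0$. Summing over the hyperbolic layer gives
\[
t(\bx) := \sum_{|\bs|_1 = n} t_\bs(\bx), \qquad \hat t(0) - \Lambda_m(t,X_m) \,\gtrsim\, n^{d-1},
\]
since $\#\{\bs \in \N^d : |\bs|_1 = n\} \asymp n^{d-1}$. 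The crux is then to show
\[
\|t\|_{\bB^r_{p,\theta}} \,\lesssim\, 2^{rn}\,n^{(d-1)/\theta},
\]
uniformly in $1 \le p \le \infty$ and $1 \le \theta \le \infty$. Combined with the previous displayed estimate and the normalisation $t/\|t\|_{\bB^r_{p,\theta}} \in \bB^r_{p,\theta}$, this immediately yields
\[
\kappa_m(\bB^r_{p,\theta}) \,\gtrsim\, \frac{n^{d-1}}{2^{rn}\,n^{(d-1)/\theta}} \,\asymp\, m^{-r}\,(\log m)^{(d-1)(1-1/\theta)},
\]
using $m \asymp 2^n$ and $n \asymp \log m$.

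To prove the Besov norm estimate I would use the $A_\bs$-characterization
\[
\|t\|_{\bB^r_{p,\theta}} \,\asymp\, \Big(\sum_{\bs' \in \N_0^d} \big(2^{r|\bs'|_1}\,\|A_{\bs'}(t)\|_p\big)^{\theta}\Big)^{1/\theta},
\]
which, in contrast to the characterisation via $\delta_{\bs'}$, is available uniformly in $p\in[1,\infty]$ (and in particular covers the extreme cases $p=1,\infty$ that enter the statement of the theorem). For a fixed $\bs'$ the Fourier support of the kernel $\mathcal A_{\bs'}$ lies in $\{\bk : 2^{s'_j-2}<|k_j|<2^{s'_j}\}$ while $t_\bs \in \Tr(2^\bs,d)$, so $A_{\bs'}(t_\bs) \equiv 0$ unless $\bs' \le \bs + \mathbf 2$ coordinatewise. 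Together with the uniform bound $\|A_{\bs'}\|_{p\to p}\le 6^d$ from \eqref{2.2.16} and $\|t_\bs\|_p \le \|t_\bs\|_\infty \le 1$, this gives
\[
\|A_{\bs'}(t)\|_p \,\le\, 6^d\cdot \#\{\bs\in \N^d : |\bs|_1=n,\ \bs\ge \bs'-\mathbf 2\} \,\lesssim\, (n-|\bs'|_1+d)_+^{d-1},
\]
and $A_{\bs'}(t)=0$ as soon as $|\bs'|_1>n+2d$.

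Grouping by level $\ell=|\bs'|_1$, with $\#\{\bs' \in \N_0^d : |\bs'|_1=\ell\} \asymp (\ell+1)^{d-1}$, I obtain
\[
\|t\|_{\bB^r_{p,\theta}}^{\theta} \,\lesssim\, \sum_{\ell=0}^{n+2d} (\ell+1)^{d-1}\,2^{r\ell\theta}\,(n-\ell+d)^{(d-1)\theta}.
\]
Because $r>0$, the geometric factor $2^{r\ell\theta}$ concentrates the sum near $\ell=n$, where the combinatorial factor $(n-\ell+d)^{(d-1)\theta}$ is bounded; the dominant term is $\asymp n^{d-1}\,2^{rn\theta}$, which after taking $\theta$-th roots yields the desired bound $\|t\|_{\bB^r_{p,\theta}}\lesssim 2^{rn}n^{(d-1)/\theta}$. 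The case $\theta=\infty$ follows from exactly the same analysis with $\sup$ in place of the $\ell_\theta$-sum, the supremum being attained for $\ell\asymp n$ with value $\asymp 2^{rn}$, consistent with the exponent $(1-1/\infty)=1$ in the statement.

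The technically delicate point is the combinatorial bookkeeping of which layers $\bs$ with $|\bs|_1=n$ can interact with a given $A_{\bs'}$; here one exploits in an essential way the frequency-support disjointness of the kernels $\mathcal A_{\bs'}$ and the elementary inequality $\#\{\bs\in \N^d: |\bs|_1=n,\ \bs\ge \bs'-\mathbf 2\} \lesssim (n-|\bs'|_1+d)^{d-1}$. Everything else is a direct imitation of Temlyakov's argument from Theorem \ref{T6.2.1a} combined with the dyadic $A_\bs$-characterisation of the Besov classes. The condition $r>1/p$ appears only to ensure that $\Brpt \hookrightarrow C(\T^d)$ so that point evaluations in the cubature rule are well defined.
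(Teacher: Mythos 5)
Your proposal is correct and follows the same route as the paper: it reuses verbatim the fooling polynomial $t=\sum_{|\bs|_1=n}t_\bs$ from the proof of Theorem~\ref{T6.2.1a} and merely replaces the Sobolev norm estimate with the Besov one $\|t\|_{\bB^r_{p,\theta}}\lesssim 2^{rn}n^{(d-1)/\theta}$, which is precisely what the paper states after Theorem~\ref{T6.2.3} (``Indeed, it is easy to check that\ldots''). You have in fact filled in the details the paper omits, and done so correctly: the $A_{\bs'}$-characterization is the right tool to cover $p\in\{1,\infty\}$, and the level-wise bookkeeping $\|A_{\bs'}(t)\|_p\lesssim(n-|\bs'|_1+O(d))_+^{d-1}$ together with $\#\{|\bs'|_1=\ell\}\asymp(\ell+1)^{d-1}$ and the geometric weight $2^{r\ell\theta}$ yields exactly $n^{(d-1)}2^{rn\theta}$, closing the argument.
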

\noindent Indeed, it is easy to check that
\be\nonumber
  \|t\|_{\bB^r_{p,\theta}} \lesssim 2^{rn} n^{(d-1)/\theta}.
\ee
The following lower bounds for numerical integration with respect to a special
class of nodes were obtained recently in \cite{VT152}. 
Let $\bs = (s_1,\dots,s_d)$, $s_j\in \N_0$, $j=1,\dots,d$. We associate with
$\bs$ a
$W(\bs)$ as follows: denote 
$$
w(\bs,\bx) := \prod_{j=1}^d \sin (2\pi2^{s_j}x_j)
$$
and define
$$
W(\bs) := \{\bx: w(\bs,\bx)=0\}.
$$

\begin{defi}\label{D4.1} 
We say that a set of nodes $X_m:=\{\bx^i\}_{i=1}^m$ is
an $(n,\ell)$-net if $|X_m\setminus W(\bs)| \le 2^\ell$ for all $\bs$ such that
$|\bs|_1=n$.
\end{defi}

\begin{thm}\label{T4.2} For any cubature formula $\Lambda_m(\cdot,X_m)$ with
respect to a $(n,n-1)$-net $X_m$ we have for $1\le p<\infty$ that 
$$
\Lambda_m(\bW^r_p,X_m) \gtrsim 2^{-rn}n^{(d-1)/2}\quad,\quad m\in \N\,.
$$
\end{thm}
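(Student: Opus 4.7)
The plan is to construct, for any cubature formula $\Lambda_m(\cdot,X_m)$ attached to an $(n,n-1)$-net, a ``fooling'' trigonometric polynomial $\psi$ vanishing on $X_m$ (so that $\Lambda_m(\psi,X_m)=0$) with $\|\psi\|_{\bW^r_p}\lesssim 2^{rn}n^{(d-1)/2}$ and $|I(\psi)|\gtrsim n^{d-1}$. Dividing gives the required lower bound $2^{-rn}n^{(d-1)/2}$ on the cubature error $|I(\psi)-\Lambda_m(\psi,X_m)|/\|\psi\|_{\bW^r_p}$ independently of the weights $\lambda_i$. By the monotonicity $\bW^r_{p_1}\supset\bW^r_{p_2}$ for $p_1\le p_2$, it suffices to treat $1<p<\infty$; the case $p=1$ then follows from $p=2$.

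The polynomial $\psi$ will be assembled blockwise over the layer $\{\bs:|\bs|_1=n\}$. For each such $\bs$, set $Y_\bs:=X_m\setminus W(\bs)$; by the $(n,n-1)$-net hypothesis $|Y_\bs|\le 2^{n-1}$. The real trigonometric polynomials in $\Tr(2^\bs,d)$ (of real dimension $\vartheta(2^\bs)\asymp 2^n$) vanishing on $Y_\bs$ form a subspace of dimension at least $\vartheta(2^\bs)-|Y_\bs|\ge \tfrac12\vartheta(2^\bs)$. I would then apply Theorem \ref{T2.2.1} to this subspace to produce $u_\bs$ with $\|u_\bs\|_\infty\le 1$ and $\|u_\bs\|_2\ge c(d)>0$. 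Set $\psi_\bs:=u_\bs\cdot w(\bs,\cdot)$: this vanishes on $Y_\bs$ through $u_\bs$ and on $X_m\cap W(\bs)$ through $w(\bs,\cdot)$, hence on all of $X_m$. Each $\psi_\bs$ has its Fourier support concentrated in at most $2^d$ dyadic blocks $\rho(\bs')$ with $|\bs'|_1$ within an additive constant of $n$, and these supports are essentially disjoint across different $\bs$ in the layer.

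Form $\psi:=\sum_{|\bs|_1=n}\epsilon_\bs\psi_\bs$ with signs $\epsilon_\bs\in\{\pm 1\}$ chosen to make $\hat\psi_\bs(0)$ add coherently. The Littlewood--Paley characterization \eqref{NeqW} for $1<p<\infty$, combined with the pointwise bound $\sum_{|\bs|_1=n}|\psi_\bs(\bx)|^2\le n^{d-1}$ (since $\|\psi_\bs\|_\infty\le 1$ and the layer contains $\asymp n^{d-1}$ indices), yields $\|\psi\|_{\bW^r_p}\lesssim 2^{rn}n^{(d-1)/2}$. On the other hand $|I(\psi)|=|\sum_{\bs}\hat\psi_\bs(0)|$, and by applying Theorem \ref{T2.2.1} to the subspace of polynomials in $\Tr(2^\bs,d)$ satisfying both the $|Y_\bs|\le 2^{n-1}$ vanishing conditions on $Y_\bs$ and one additional affine constraint prescribing a specific Fourier coefficient $\hat u_\bs(\bk_\bs)$ at a corner frequency $\bk_\bs=\pm 2^\bs$ of $w(\bs,\cdot)$, one secures $|\hat\psi_\bs(0)|\gtrsim c(d)$ for each $\bs$, giving $|I(\psi)|\gtrsim n^{d-1}$.

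The main obstacle will be the last point: converting the $L_2$-lower bound on $u_\bs$ furnished by Theorem \ref{T2.2.1} into a uniform lower bound on the ``corner'' Fourier coefficients that determine the mean of $\psi_\bs=u_\bs\cdot w(\bs,\cdot)$, while keeping $u_\bs$ in the subspace vanishing on $Y_\bs$ and retaining the $L_\infty$ normalization. The key point is that the dimensional slack is enormous: after imposing the $|Y_\bs|\le 2^{n-1}$ vanishing conditions and a handful of additional affine Fourier constraints at the $2^d$ corner frequencies of $w(\bs,\cdot)$, the remaining subspace still has dimension comparable to $\vartheta(2^\bs)$, so Theorem \ref{T2.2.1} continues to apply. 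Summing the blockwise estimates and invoking the Littlewood--Paley bound on $\|\psi\|_{\bW^r_p}$ then delivers the lower bound $\Lambda_m(\bW^r_p,X_m)\gtrsim 2^{-rn}n^{(d-1)/2}$ uniformly in the cubature weights.
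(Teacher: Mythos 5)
Your blockwise setup exploiting the web $W(\bs)$ and the $(n,n-1)$-net condition, together with passing the norm through Littlewood--Paley (or, as in the source \cite{VT152}, through the embedding $\bB^r_{p,2}\hookrightarrow\bW^r_p$ for $p\ge2$), is exactly the right skeleton, and the reduction to $p\ge 2$ via inclusion of the classes is sound. The gap is in the mean bound $|\hat\psi_\bs(0)|\gtrsim c(d)$. With $\psi_\bs=u_\bs\,w(\bs,\cdot)$, the number $\hat\psi_\bs(0)$ is a fixed signed combination of the $2^d$ Fourier coefficients of $u_\bs$ at the corner frequencies $\pm 2^\bs$. Theorem~\ref{T2.2.1} supplies only $\|u_\bs\|_\infty\le 1$ and $\|u_\bs\|_2\ge c(d)$, which says nothing about any \emph{individual} Fourier coefficient and is perfectly compatible with all $2^d$ corner coefficients vanishing identically. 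Your proposed repair---imposing additional ``affine constraints'' at the corners before invoking Theorem~\ref{T2.2.1}---does not close this: to stay within the hypotheses of the theorem (a \emph{linear} subspace of proportional dimension) one may only force coefficients to be zero, and after annihilating $2^d-1$ of the corner coefficients the $L_2$ lower bound still yields no control on the last remaining one.

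The step your plan is missing is the squaring used in the proof of Lemma~\ref{L6.2.2} (which underlies Theorem~\ref{T6.2.1a}) and carried over to the present theorem in \cite{VT152}. Take $v_\bs\in\Tr(2^{\bs-1},d)$ vanishing on $Y_\bs:=X_m\setminus W(\bs)$ with $\|v_\bs\|_\infty\le 1$ and $\|v_\bs\|_2\ge c(d)$; the count $\vartheta(2^{\bs-1})\ge 2^n$ and $|Y_\bs|\le 2^{n-1}$ leaves a subspace of dimension at least a fixed $d$-dependent fraction of $\vartheta(2^{\bs-1})$, so Theorem~\ref{T2.2.1} applies. Now set $\psi_\bs:=\bigl|v_\bs\, w(\bs,\cdot)\bigr|^2$. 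This is nonnegative, vanishes on all of $X_m$ (hence $\Lambda_m(\psi_\bs,X_m)=0$ for \emph{any} choice of weights), satisfies $\|\psi_\bs\|_\infty\le 1$, and, since $w(\bs,\cdot)^2 = 2^{-d}\prod_{j}\bigl(1-\cos(4\pi\, 2^{s_j}x_j)\bigr)$ while $|v_\bs|^2\in\Tr(2^\bs,d)$ carries no frequency at $\pm 2^{s_j+1}$ in the $j$-th variable, orthogonality gives $\hat\psi_\bs(0)=2^{-d}\|v_\bs\|_2^2\ge 2^{-d}c(d)^2$. Summing $\psi:=\sum_{|\bs|_1=n}\psi_\bs$ immediately yields $\int\psi\,d\mu\gtrsim n^{d-1}$ with no sign tuning whatsoever, and the norm estimate $\|\psi\|_{\bW^r_p}\lesssim 2^{rn}n^{(d-1)/2}$ for $p\ge 2$ then completes the argument.
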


In the same way as a slight modification of the proof of Theorem \ref{T6.2.1a}
gave Theorem \ref{T6.2.3} a similar modification of the proof of Theorem
\ref{T4.2} gives the following result.

\begin{thm}\label{T4.4} Let $1\leq p,\theta \leq \infty$ and $r>1/p$. Then for
any cubature formula $\Lambda_m(\cdot,X_m)$ with respect to a $(n,n-1)$-net
$X_m$ we have that 
$$
\Lambda_m(\bB_{p,\theta}^r,X_m) \gtrsim 2^{-rn}n^{(d-1)(1-1/\theta)}\quad,\quad
n\in \N\,.
$$
\end{thm}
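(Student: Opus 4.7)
The plan is to adapt the proof of Theorem~\ref{T4.2}, changing only the final norm computation in exactly the same spirit as Theorem~\ref{T6.2.3} is deduced from Theorem~\ref{T6.2.1a}. First I would, for every $\bs \in \N^d$ with $|\bs|_1 = n$, produce a trigonometric polynomial $t_{\bs} \in \Tr(2^{\bs}, d)$ with $\|t_{\bs}\|_\infty \le 1$ such that
\[
\hat{t}_{\bs}(\mathbf{0}) - \Lambda_m(t_{\bs}, X_m) \ge C(d) > 0.
\]
This is precisely the block construction already carried out in Theorem~\ref{T4.2}: one exploits the algebraic fact that $w(\bs,\cdot)$ vanishes on $W(\bs)$ together with the $(n,n-1)$-net hypothesis, which forces $|X_m\setminus W(\bs)|\le 2^{n-1}$, and then invokes Theorem~\ref{T2.2.1} inside a subspace of $\Tr(2^{\bs},d)$ of dimension of order $2^n\gg 2^{n-1}$ cut out by the at most $2^{n-1}$ evaluation constraints coming from $X_m\setminus W(\bs)$. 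The condition $r>1/p$ is used only through $\Brpt\hookrightarrow C(\T^d)$ (Lemma~\ref{emb}), which makes the point evaluations meaningful.

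Next I would set $t(\bx):=\sum_{|\bs|_1=n}t_{\bs}(\bx)$ and choose $n$ with $m\le 2^{n-1}<2m$, so that summing the previous estimate over the $\asymp n^{d-1}$ lattice points $\bs$ with $|\bs|_1=n$ yields
\[
I(t)-\Lambda_m(t,X_m)=\hat{t}(\mathbf{0})-\Lambda_m(t,X_m)\gtrsim n^{d-1}.
\]
For the norm estimate---the only place where the argument differs from Theorem~\ref{T4.2}---I would arrange the construction so that each $t_{\bs}$ is the sum of a constant and an element of $\Tr(\rho(\bs))$, whence the blocks $\delta_{\bs'}(t)$ satisfy $\|\delta_{\bs'}(t)\|_p\lesssim\|t_{\bs'}\|_\infty\le 1$ uniformly in $\bs'$ with $|\bs'|_1=n$ (and vanish otherwise). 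The Littlewood--Paley type characterisation~\eqref{DefB} of $\Brpt$ then gives
\[
\|t\|_{\Brpt}\asymp\Big(\sum_{|\bs'|_1=n}2^{r|\bs'|_1\theta}\,\|\delta_{\bs'}(t)\|_p^{\theta}\Big)^{1/\theta}\lesssim 2^{rn}\,n^{(d-1)/\theta},
\]
with the usual convention $\theta=\infty$. Dividing the integration-error lower bound by this norm and recalling $2^n\asymp m$ produces
\[
\Lambda_m(\Brpt,X_m)\gtrsim\frac{n^{d-1}}{2^{rn}\,n^{(d-1)/\theta}}=2^{-rn}\,n^{(d-1)(1-1/\theta)},
\]
which is the asserted bound.

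The principal obstacle lies entirely in the first step: arranging for $t_{\bs}$ to have simultaneously bounded sup-norm, a strictly positive integral, and an annihilating effect on the cubature rule in the presence of the $(n,n-1)$-net structure. This is exactly the geometric/combinatorial core of Theorem~\ref{T4.2}, carried out via Theorem~\ref{T2.2.1}; once this output is available, the passage from the $\bW^r_p$ norm (implicit fine index $2$) to the Besov norm with fine index $\theta$ is routine and only alters the logarithmic exponent from $1/2$ to $1-1/\theta$.
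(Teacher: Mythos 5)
Your overall plan is the right one and matches what the paper indicates (modify the fooling-polynomial construction of Theorem~\ref{T4.2} exactly as Theorem~\ref{T6.2.3} modifies Theorem~\ref{T6.2.1a}). However, there are two genuine slips in the execution.

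First, the step ``choose $n$ with $m\le 2^{n-1}<2m$'' is lifted from the proof of Theorem~\ref{T6.2.1a}/\ref{T6.2.3}, where $m$ is given and $n$ is chosen, but it has no place here: in Theorem~\ref{T4.4} the parameter $n$ is fixed by the $(n,n-1)$-net hypothesis and the conclusion is stated in terms of $n$, not $m$. In fact the whole point of Theorems~\ref{T4.2}--\ref{T4.4} is that $m$ may be much larger than $2^{n-1}$ (e.g.\ you may add $2^{n-1}$ arbitrary points to a Smolyak grid $SG^d(n)$ without improving the bound). What the net hypothesis supplies, for this given $n$ and for each $\bs$ with $|\bs|_1=n$, is the count $|X_m\setminus W(\bs)|\le 2^{n-1}$; combined with the annihilating factor $w(\bs,\cdot)^2$ this is exactly what replaces the crude ``$N\le 2^{n-1}$ nodes'' bookkeeping of Lemma~\ref{L6.2.2}. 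You should simply run the argument for the given $n$.

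Second, the claim that the construction can be ``arranged so that each $t_{\bs}$ is the sum of a constant and an element of $\Tr(\rho(\bs))$'' is not available: the polynomial produced via Theorem~\ref{T2.2.1} is of the form $t_\bs=|v_\bs|^2$ with $v_\bs\in\Tr(2^{\bs-1},d)$ vanishing at the obstructing nodes (times the web factor $w(\bs,\cdot)$), so its frequency support fills out essentially all of $\Pi(2^{\bs},d)$, not just $\{0\}\cup\rho(\bs)$. Even granting that form, your assertion that $\delta_{\bs'}(t)$ ``vanish[es] otherwise'' is false, since $\delta_{\mathbf 0}(t)=\hat t(\mathbf 0)\gtrsim n^{d-1}$ is precisely the quantity that must be nonzero. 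The Besov-norm bound $\|t\|_{\Brpt}\lesssim 2^{rn}n^{(d-1)/\theta}$ is nonetheless correct, but the clean route to it from the only information you really have---$t_\bs\in\Tr(2^{\bs},d)$, $\|t_\bs\|_p\le\|t_\bs\|_\infty\le1$---is the superposition bound of Proposition~\ref{samp_repr2}(i) (the Besov analogue of the estimate \eqref{2.4.4R}), which gives
\[
\|t\|_{\Brpt}\lesssim\Big(\sum_{|\bs|_1=n}2^{r|\bs|_1\theta}\|t_\bs\|_p^{\theta}\Big)^{1/\theta}
\lesssim 2^{rn}\,n^{(d-1)/\theta}\,,
\]
without any artificial restriction on the spectral support of $t_\bs$. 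With these two points corrected, dividing the cubature error $\gtrsim n^{d-1}$ by this norm yields the asserted $2^{-rn}n^{(d-1)(1-1/\theta)}$.
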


In case $\mathbf{W}^r_{1,0}$ with $r>1$ things can be ``improved''. 
We obtain a larger lower bound than the one in Theorem \ref{T4.2} by slightly
shrinking the class of cubature formulas via the following assumption
$$
\sum_{\mu=1}^m |\lambda_\mu| \le B.
$$
The corresponding minimal error with respect to a class
$\mathbf{F}$ is then defined by 
$$
     \kappa_m^B(\mathbf{F}) := \inf_{\sum_{\mu=1}^m |\lambda_{\mu}|\leq
B}\inf\limits_{X_m=\{\bx^1,...,\bx^m\}}  \Lambda_m(\mathbf{F},X_m).
$$
The following result is from \cite{Tem25}.
\begin{thm}\label{Thm:Wr10} Let $r>1$. Then there is a constant $C = C(r,B,d)$
such that 
$$
   \kappa^B_m(\mathbf{W}^r_{1,0}) \geq  C(r,B,d)m^{-r}(\log m)^{d-1}\,.
$$
\end{thm}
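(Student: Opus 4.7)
The plan is to exploit duality to convert the integration error on $\mathbf{W}^r_{1,0}$ into an $L_\infty$ quantity associated with the cubature formula, and then to destroy that quantity with a hyperbolic-cross Riesz product testing polynomial.

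First I would reduce to a dual problem. A function $f \in \mathbf{W}^r_{1,0}$ with $\|f\|_{\mathbf{W}^r_{1,0}}\le 1$ can be written as $f = F_{r,\alpha}\ast \varphi$ with $\|\varphi\|_1 \le 1$, and the zero-mean constraint $\hat f(0)=0$ translates into $\hat\varphi(0)=0$. For any cubature $\Lambda_m(\cdot,X_m)$ with $\sum_\mu |\lambda_\mu| \le B$ one then computes
\[
I(f)-\Lambda_m(f,X_m) \;=\; -\int_{\T^d}\varphi(\by)\,h_\Lambda(\by)\,d\by, \qquad h_\Lambda(\by):=\sum_{\mu=1}^m\lambda_\mu F_{r,\alpha}(\bx^\mu-\by).
\]
Because $\varphi$ has zero mean I may subtract any constant from $h_\Lambda$, so taking the supremum over admissible $\varphi$ yields the identity
\[
\sup_{f\in \mathbf{W}^r_{1,0}}|I(f)-\Lambda_m(f,X_m)| \;=\; \inf_{c\in\R}\|h_\Lambda-c\|_\infty .
\]
Thus it suffices to show that $\|h_\Lambda-c\|_\infty \gtrsim m^{-r}(\log m)^{d-1}$ uniformly in $c$ and in any admissible $(X_m,\lambda)$.

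Next I would test $h_\Lambda-c$ in the dual pairing against a trigonometric polynomial $t$ with $\hat t(0)=0$ and $\|t\|_1\le 1$, which eliminates $c$ and gives
\[
\|h_\Lambda-c\|_\infty \;\ge\; \Bigl|\!\sum_{\bk\ne 0}\hat F_{r,\alpha}(\bk)\,\tilde\lambda(\bk)\,\overline{\hat t(\bk)}\Bigr|,\qquad \tilde\lambda(\bk):=\sum_{\mu}\lambda_\mu e^{-2\pi i(\bk,\bx^\mu)}.
\]
The crucial uniform bound from the weight constraint is $|\tilde\lambda(\bk)|\le B$ for every $\bk$, which plays the role of a budget restricting how adversarially the cubature formula can be arranged in the frequency domain.

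The main step, which I expect to be the hardest, is the construction of $t$. Choose the scale $n$ with $2^n n^{d-1}\asymp m$ and, for each multi-index $\bs$ with $|\bs|_1=n$, select $\bk^{\bs}\in\rho(\bs)$ and a sign $\varepsilon_\bs=\pm 1$ so that
\[
\varepsilon_\bs\cdot\hat F_{r,\alpha}(\bk^{\bs})\,\tilde\lambda(\bk^\bs)\;=\;\varepsilon_\bs\cdot\hat F_{r,\alpha}(\bk^{\bs})\,\tilde\lambda(\bk^\bs)\;\ge\;0,
\]
and then build the hyperbolic-cross Riesz product
\[
\Phi(\bx)\;:=\;\prod_{|\bs|_1=n}\bigl(1+\varepsilon_\bs\cos 2\pi(\bk^{\bs},\bx)\bigr),\qquad t:=\Phi-1 .
\]
Provided the family $\{\bk^{\bs}\}_{|\bs|_1=n}$ is sufficiently lacunary (which can be arranged inside the dyadic boxes $\rho(\bs)$ exactly in the spirit of Lemma \ref{L2.6.1}), the product expands as $\Phi = 1 + \tfrac12\sum_{|\bs|_1=n}\varepsilon_\bs\cos 2\pi(\bk^{\bs},\bx)+R$ with $R\in\Tr(Q_{n+c})^\perp$, $R$ also of Riesz-product type, so $\|t\|_1\le\|\Phi\|_1+1\le 2$ and $\hat t(0)=0$. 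Because the single-frequency terms add constructively by the choice of signs, the pairing produces
\[
\Bigl|\sum_{\bk\ne 0}\hat F_{r,\alpha}(\bk)\tilde\lambda(\bk)\overline{\hat t(\bk)}\Bigr|\;\gtrsim\;\sum_{|\bs|_1=n}|\hat F_{r,\alpha}(\bk^{\bs})|\cdot|\tilde\lambda(\bk^{\bs})|\,-\,(\text{remainder}),
\]
with the remainder coming from $R$; the higher-order frequencies live outside $Q_{n+c}$ in hyperbolic layers $|\bs'|_1>n+c$ and their contribution to the pairing, using $|\hat F_{r,\alpha}(\bk)|\asymp\nu(\bar\bk)^{-r}$ together with \eqref{hcsums} and the uniform bound $|\tilde\lambda(\bk)|\le B$, is absorbed into a smaller-order term once $r>1$. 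Averaging over a probabilistic choice of $\bk^{\bs}\in\rho(\bs)$ (or an explicit Dirichlet-type selection) guarantees $|\tilde\lambda(\bk^{\bs})|\gtrsim 1$ for at least a constant proportion of $\bs$ with $|\bs|_1=n$, and combined with $|\hat F_{r,\alpha}(\bk^{\bs})|\asymp 2^{-rn}$ this produces the bound
\[
\inf_{c}\|h_\Lambda-c\|_\infty \;\gtrsim\; 2^{-rn}\cdot n^{d-1}\;\asymp\; m^{-r}(\log m)^{d-1},
\]
finishing the proof. The delicate point — and the main obstacle — is the selection of frequencies $\bk^{\bs}$ and signs $\varepsilon_\bs$ so that simultaneously the Riesz product stays $L_1$-bounded (via a Lemma \ref{L2.6.1}-type lacunarity argument) and the additive contributions from all $\asymp n^{d-1}$ dyadic blocks survive; it is precisely this simultaneous control across blocks that upgrades the $(\log m)^{(d-1)/2}$ of Theorem \ref{T6.2.1a} to the full $(\log m)^{d-1}$, and it is where the hypothesis $\sum_\mu|\lambda_\mu|\le B$ is essential, since it forces the uniform bound $|\tilde\lambda(\bk)|\le B$ on all ``Fourier coefficients'' of the cubature measure.
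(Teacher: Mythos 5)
Your duality step contains a substantive error that stems from a misreading of the notation, and the error is fatal to the whole architecture of the proof. In this paper $\mathbf W^r_{1,0}$ means $\mathbf W^r_{1,\alpha}$ with the parameter $\alpha=0$ from Definition~\ref{def2Sob} (see the remark after Proposition~\ref{W1inf}); it is \emph{not} the zero-mean subspace $\{f:\hat f(0)=0\}$. So $I(f)=\hat f(0)=\hat\varphi(0)$ is unconstrained, and the correct dual problem is
\[
\Lambda_m(\mathbf W^r_{1,0},X_m)=\|1-h_\Lambda\|_\infty,
\]
with no infimum over constants $c$. Your quantity $\inf_c\|h_\Lambda-c\|_\infty$ arises from artificially restricting to the zero-mean subclass, so it only \emph{lower}-bounds the true error. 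That would be fine if it were large enough -- but it is not: take $\lambda_\mu=0$ for all $\mu$ (which is admissible under $\sum_\mu|\lambda_\mu|\le B$); then $h_\Lambda\equiv 0$ and $\inf_c\|h_\Lambda-c\|_\infty=0$, whereas $\|1-0\|_\infty=1$. The theorem is true for the $\mathbf W^r_{1,0}$ of the paper precisely because the constant ``$1$'' must be integrated, and your reduction throws that information away. Since you then pair against a zero-mean test polynomial $t$ (so that $\langle 1-h_\Lambda,t\rangle=\langle h_\Lambda,t\rangle$), the ``$1$'' never re-enters, and the inequality you are trying to prove is false as stated.

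The second fatal gap is the claim that $|\tilde\lambda(\bk^\bs)|\gtrsim 1$ for a constant fraction of $\bs$ with $|\bs|_1=n$. This fails for exactly the cubature formulas that make the theorem interesting. Take a rank-$1$ lattice rule with $m$ points and equal weights $1/m$: then $\tilde\lambda(\bk)=1$ for $\bk$ in the dual lattice and $\tilde\lambda(\bk)=0$ otherwise. With $m\asymp 2^n n^{d-1}\asymp |\Delta Q_n|$, the dual lattice meets the hyperbolic layer $\Delta Q_n$ in only $O(1)$ points, so for all but $O(1)$ of the $\asymp n^{d-1}$ blocks $\rho(\bs)$ one has $\tilde\lambda\equiv 0$ on $\rho(\bs)$. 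No averaging or probabilistic selection of $\bk^\bs\in\rho(\bs)$ can fix this -- the sum $\sum_{|\bs|_1=n}|\tilde\lambda(\bk^\bs)|$ is $O(1)$, not $\gtrsim n^{d-1}$, and the whole scheme only yields $\gtrsim 2^{-rn}$, not $\gtrsim 2^{-rn}n^{d-1}$. Finally, and separately, the lacunarity you invoke ``in the spirit of Lemma~\ref{L2.6.1}'' to ensure that the Riesz remainder $R$ lies outside $Q_{n+c}$ is a genuinely two-dimensional device (arithmetic progressions on the diagonal $|\bs|_1=n$); establishing the analogous control for the products over the full $(d-1)$-dimensional simplex of $\bs$'s in $d\ge 3$ is precisely the kind of obstacle behind the open Small Ball Inequality (\ref{2.6.4}). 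So even setting aside the two points above, that step is not available in the generality your proof needs. A proof of Theorem~\ref{Thm:Wr10} must keep the constant mode in play (working with $\|1-h_\Lambda\|_\infty$, not $\inf_c\|h_\Lambda-c\|_\infty$) and cannot rely on most of $\tilde\lambda$ being of unit size inside a single hyperbolic layer.
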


\subsubsection*{Local fooling functions}\index{Fooling function!Local}

The idea of using fooling functions to prove lower bounds for asymptotic
characteristics of functions goes back to Bakhvalov \cite{Bakh4}. The following
approach relies on atomic decompositions, a modern tool in function space
theory, to control the norms of superpositions of local bumps, see Section
\ref{Bspline}. This
powerful approach allows as well to treat the quasi-Banach situation where
$p<1$, see for instance \cite{UU14}. However, what concerns this survey
we will restrict ourselves to the Banach space case. 
With the results from Proposition \ref{atomic} we are in a position to define
test functions of type \eqref{tf} in order to prove the required lower bounds. 
By \eqref{flelambda} we are able to control the norm $\|\cdot\|_{\Brpt}$.
Following \cite[Thm.\ 4.1]{DU14} and \cite{UU14} we will use test functions of
type 
\begin{equation}\label{g_1}
g_{r,\theta} := C2^{-r\ell}\ell^{-(d-1)/\theta}
\sum\limits_{|\bj|_1=\ell+1}\sum\limits_{\bk \in K_j(X_m)} a_{\bj,\bk}\,,
\end{equation}
where $K_{\bj}(X_m) \subset \{0,...,2^{j_1-1}\times ... \times
\{0,...,2^{j_d-1}\}$ 
depends on the set of integration nodes $X_m:=\{\bx^1,...,\bx^m\}$ with $m =
2^{\ell}$\,. 

Let us now give a different proof of Theorems \ref{T6.2.1a} and \ref{T6.2.3}.
One only needs to prove Theorem \ref{T6.2.3}. In fact, Theorem
\ref{T6.2.1a} follows from Theorem \ref{T6.2.3} together with the embeddings 
$\ensuremath{{\mathbf B}_{2,2}^r} \hookrightarrow \Wrp$ if $p\leq 2$
and $\ensuremath{{\mathbf B}_{p,2}^r} \hookrightarrow \Wrp$ if $p>2$\,.
Following the arguments in
\cite[Thm.\ 4.1]{DU14}
let $m$ be given and $X_m = \{\bx^1,...,\bx^m\} \subset [0,1]^d$ be an arbitrary
set of $m = 2^{\ell}$ points. Since $Q_{\bj,\bk} \cap Q_{\bj,\bk'} = \emptyset$
for $\bk\neq \bk'$ we have for every $|\bj|_1 = \ell+1$ a set of $2^{\ell}$
cubes of the form $2^{-\bj}\bk + 2^{-\bj}Q$ which do not intersect the nodes
$X_m$. We choose the test function \eqref{g_1} where $K_{\bj}(X_m)$ is the set
of
those $\bk$ referring to those cubes. By \eqref{flelambda} the $\Brpt$ norm of
those functions is uniformly bounded (in $\ell$). The result in case $p,\theta
\ge 1$ follows from the 
observation 
$$
   \int_{[0,1]^d} g_{r,\theta}\,d\bx \asymp 2^{-\ell
r}\ell^{(d-1)(1-1/\theta)}\,.
$$
Of course, a cubature rule admitted in \eqref{eq:minimal}, which uses the points
$X_m$, produces a zero output. For parameters $\min\{p,\theta\}<1$ the fooling function
\eqref{g_1} has to be slightly modified.

\subsection{Cubature on Smolyak grids}\index{Cubature formula!Smolyak grids}
\label{cubsmol}

 Here we will discuss cubature on Smolyak grids. The Smolyak grid of
level $\ell$ is given by 

\begin{equation}\label{eq53}
SG^d(\ell)
:= \bigcup\limits_{k_1+...+k_d \leq \ell} I_{k_1} \times ... \times I_{k_d}
\end{equation}
where $I_{k}:=\{2^{-k}n:n = 0,...,2^k-1\}$. We consider the cubature formulas 
$\Lambda_m(f,SG^d(\ell))$ on Smolyak grids $SG^d(\ell)$ given by
\begin{equation}\label{cfsmolyak}
\Lambda_m(f,SG^d(\ell))
\ = \
\sum_{\bx^i \in SG^d(\ell)}  \lambda_i f(\bx^i).
\end{equation}

\begin{figure}[H]
 \begin{center}
 \includegraphics[scale = 1.3]{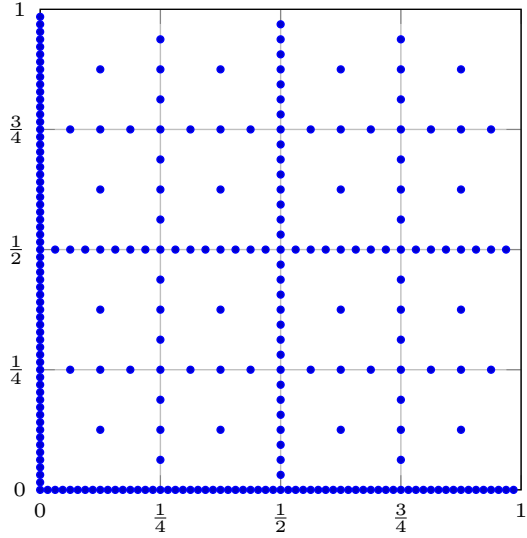}
 \caption{A sparse grid in $d=2$ with $N=256$ points}\index{Sparse grid}
 \end{center}
 \end{figure}

Here we have a degree of freedom when choosing
the weights $\{\lambda_i\}_{\bx^i \in G^d(\ell)}$. However, it turns out
that any cubature formula on Smolyak grids behaves asymptotically worse than the
optimal rules
discussed below.
The following theorem presents the approach from
\cite{DU14}. It relies on the same principle as above of using superpositions of
local bump functions to fool the algorithm. Taking into account that a sparse
grid of level $\ell$ contains $m\asymp 2^{\ell}\ell^{d-1}$ points we obtain the
following general result.

\begin{thm}\label{Thm:cubsmolyak} Let $1\leq p,\theta \leq \infty$ and
$r>1/p$.\\ 
{\em (i)} Then
\begin{equation}\label{f1}
  \begin{split}
     \Lambda_m(\Brpt,SG^d(\ell)) &\gtrsim
2^{-\ell r}\ell^{(d-1)(1-1/\theta)}
     \\&\asymp m^{-r}(\log m)^{(d-1)(r+ 1-1/\theta)}\,.
  \end{split}  
\end{equation}
{\em (ii)} If $1<p <\infty$ and $r>0$ then 
\begin{equation}\label{f2}
  \begin{split}
     \Lambda_m(\Wrp,SG^d(\ell)) &\gtrsim 2^{-\ell r}\ell^{(d-1)/2}
     \\&\asymp m^{-r}(\log m)^{(d-1)(r+1/2)}\,.
  \end{split}  
\end{equation}

\end{thm}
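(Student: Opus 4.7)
\medskip

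\noindent \textbf{Proof plan for Theorem \ref{Thm:cubsmolyak}.}
The strategy is to adapt the local fooling function approach from the preceding subsection to exploit the specific geometric structure of the Smolyak grid $SG^d(\ell)$, which has cardinality $m \asymp 2^{\ell}\ell^{d-1}$. The plan is to construct a test function $g_{r,\theta}$ of the form \eqref{g_1}, supported in dyadic cubes $Q_{\bj,\bk}=2^{-\bj}\bk + 2^{-\bj}[0,1]^d$ with $|\bj|_1 = \ell+1$, whose supports avoid \emph{every} Smolyak grid point. Since $g_{r,\theta}$ then vanishes on $SG^d(\ell)$, any cubature formula of the form \eqref{cfsmolyak} yields $\Lambda_m(g_{r,\theta},SG^d(\ell)) = 0$, and the error reduces to $I(g_{r,\theta})$, which I will estimate from below.

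\medskip

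\noindent The main obstacle is the following combinatorial fact, which replaces the trivial pigeonhole used in the previous subsection (where $m \le 2^{\ell+1}$): for every $\bj \in \N^d$ with $|\bj|_1 = \ell+1$ and $\min_i j_i \ge 1$, the index set
\[
K_\bj := \{ \bk \,:\, Q_{\bj,\bk} \cap SG^d(\ell) = \emptyset \}
\]
satisfies $|K_\bj| \gtrsim 2^{\ell}$. To see this, note that any Smolyak grid point comes from $I_{k_1}\times\dots\times I_{k_d}$ with $|\bk|_1 \le \ell < |\bj|_1$, so there is some coordinate $i^{*}$ with $k_{i^{*}} < j_{i^{*}}$; in that coordinate the Smolyak spacing $2^{-k_{i^{*}}}$ is strictly larger than the dyadic cube side $2^{-j_{i^{*}}}$, which leaves a positive fraction of the $2^{\ell+1}$ cubes entirely free of grid points. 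A careful inclusion-exclusion, summing the contributions of all sub-grids $I_{\bk}$ with $|\bk|_1 \le \ell$, shows that at least a fixed positive fraction of cubes is empty. This step is where the sparse structure of $SG^d(\ell)$ (as opposed to a full product grid) is decisively used.

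\medskip

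\noindent With $|K_\bj| \gtrsim 2^{\ell}$ in hand, I proceed as in the proof of Theorem \ref{T6.2.3}: the atoms $a_{\bj,\bk}$ (smooth bumps supported in $Q_{\bj,\bk}$) give, via Proposition \ref{atomic}, the norm bound
\[
\|g_{r,\theta}\|_{\bB^r_{p,\theta}} \;\lesssim\; \Big(\sum_{|\bj|_1 = \ell+1} 2^{|\bj|_1(r-1/p)\theta} \bigl(|K_\bj|\,(2^{-r\ell}\ell^{-(d-1)/\theta})^{p}\bigr)^{\theta/p}\Big)^{1/\theta} \;\lesssim\; 1,
\]
after a short calculation using $|\{\bj : |\bj|_1 = \ell+1\}| \asymp \ell^{d-1}$. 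For the integral, the disjointness of the supports within each layer and $\int a_{\bj,\bk}\,d\bx \asymp 2^{-|\bj|_1}$ give
\[
I(g_{r,\theta}) \;\asymp\; 2^{-r\ell}\ell^{-(d-1)/\theta}\cdot \ell^{d-1}\cdot 2^{\ell}\cdot 2^{-(\ell+1)} \;\asymp\; 2^{-r\ell}\ell^{(d-1)(1-1/\theta)},
\]
which proves part (i).

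\medskip

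\noindent For part (ii), I would use the same fooling function with $\theta = 2$, but verify the Sobolev norm bound via the Littlewood-Paley equivalence \eqref{NeqW} rather than through an embedding (which only suffices when $p \ge 2$ via Lemma \ref{emb}(iv)). Since the atoms are frequency-localized, $\delta_{\bs}(g) \neq 0$ essentially only for $\bs$ close to some $\bj$ with $|\bj|_1 = \ell+1$, and the square function is pointwise bounded by
\[
\Big(\sum_\bs 2^{2r|\bs|_1}|\delta_\bs(g)|^2\Big)^{1/2}(\bx) \;\lesssim\; \ell^{-(d-1)/2}\cdot \bigl|\{\bj : \bx \in \operatorname{supp}\sum_{\bk\in K_\bj} a_{\bj,\bk}\}\bigr|^{1/2} \;\lesssim\; 1,
\]
uniformly in $\bx$, since the number of $\bj$ with $|\bj|_1 = \ell+1$ is at most $\asymp \ell^{d-1}$. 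Taking the $L_p$ norm gives $\|g\|_{\bW^r_p}\lesssim 1$, and the integral calculation from part (i) with $\theta = 2$ yields the claimed lower bound $2^{-r\ell}\ell^{(d-1)/2}$.
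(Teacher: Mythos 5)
Your overall plan for part (i) -- build a compactly supported fooling function from atoms $a_{\bj,\bk}$ on those dyadic cubes at level $|\bj|_1=\ell+1$ that avoid the Smolyak grid, then control the Besov norm via Proposition~\ref{atomic} -- is exactly the route the paper takes (the approach of \cite{DU14}). You also correctly pinpoint the one place where the Smolyak structure is essential: since $m\asymp 2^\ell\ell^{d-1}$ exceeds the $2^{\ell+1}$ cubes available, the crude pigeonhole used for arbitrary $m=2^\ell$ point sets no longer applies, and you must show $|K_\bj|\gtrsim 2^\ell$ for \emph{each} $\bj$. Your justification of this step, however, is only asserted (``a careful inclusion-exclusion \dots shows that at least a fixed positive fraction is empty''). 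That inclusion-exclusion is in fact unnecessary and your phrase ``fixed positive fraction'' is imprecise (the fraction is $\asymp 2^{-d}$, not uniform in $d$, which is still fine for fixed $d$ but should be said). The clean argument is: any $\bk'$ with $|\bk'|_1\le\ell<|\bj|_1$ has some coordinate $i$ with $k'_i<j_i$, so a point of $I_{\bk'}$ can only lie in $Q_{\bj,\bn}$ if $n_i$ is \emph{even}; hence every cube with all odd $n_i$ avoids $SG^d(\ell)$, giving $|K_\bj|\ge 2^{\ell+1-d}$. With this in place, your norm computation and the integral estimate $I(g_{r,\theta})\asymp 2^{-r\ell}\ell^{(d-1)(1-1/\theta)}$ are correct.

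Part (ii) is where you genuinely depart from the paper and where your proposed argument has a gap. You claim the atoms are ``frequency-localized,'' so that $\delta_\bs(g)\ne 0$ essentially only for $\bs$ near some $\bj$, and then bound the Littlewood--Paley square function pointwise by $1$. But the $a_{\bj,\bk}$ of Proposition~\ref{atomic} are compactly supported $C^\infty$ bumps, hence emphatically \emph{not} band-limited: $\delta_\bs(a_{\bj,\bk})$ is nonzero for all $\bs$, and the claimed pointwise bound does not follow from anything in the paper (a Lizorkin--Triebel-type atomic characterization of $\bW^r_p$ would be needed, and the paper only states the Besov version). The paper instead derives (ii) from (i) via the embeddings $\bB^r_{2,2}=\bW^r_2\hookrightarrow\bW^r_p$ for $1<p\le 2$ and $\bB^r_{p,2}\hookrightarrow\bW^r_p$ for $p\ge 2$ (Lemma~\ref{emb}(iv)), applied to the $\theta=2$ case of part (i). Your aside that this embedding route ``only suffices when $p\ge 2$'' is therefore incorrect; for $p\le 2$ one simply uses the $p=2$ fooling function and the compact embedding $\bW^r_2\hookrightarrow\bW^r_p$ on the torus. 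I would replace your Littlewood--Paley paragraph by this embedding argument.
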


\begin{rem}\label{uppersmol} Those lower bounds show that numerical integration
on Smolyak grids is ``not easier'' than Smolyak sampling, see Section
\ref{Sect:Smolyak} above. However, it is also not harder. Note, that the results
in Section \ref{Sect:Smolyak} show that there is a sampling algorithm of the
form $$A_m(f,G^d(\ell))=\sum\limits_{\bx^i \in G^d(\ell)}
f(\bx^i)a_{\bx^i}(\cdot)$$
which approximates $f$ well in $L_1(\T^d)$. Taking 
$$
    \Big|\int_{\T^d}f(\bx)d\bx -  \int_{\T^d}\sum\limits_{\bx^i \in G^d(\ell)}
f(\bx^i)a_{\bx^i}(\bx) d\bx\Big| \leq \|f-A_m(f,G^d(\ell))\|_1
$$
into account we obtain with $\lambda_{\bx^i}:=\int_{\T^d}a_{\bx^i}(\bx)d\bx$ a
cubature formula of type \eqref{cfsmolyak} where the error is bounded by the
approximation error in $L_1(\T^d)$. Hence, by the results presented in Section
\ref{Sect:Smolyak} we see that the lower bounds \eqref{f1} and \eqref{f2} are
sharp in the sense, that there is a cubature formula on the Smolyak grid with
matching upper bounds. 
\end{rem}

In \cite{VT152} Temlyakov observed that the same bounds can be also obtained
from the approach based on the general Theorem \ref{T2.2.1}. Let us  briefly
discuss how Theorems \ref{T4.2} and
\ref{T4.4} imply bounds \eqref{f1} and \eqref{f2} (see \cite{VT152}). It is easy
to
check that $SG^d(n)\subset W(\bs)$ with any $\bs$ such
that $|\bs|_1=n$. Indeed, let $\xi(\bn,\bk)\in SG^d(n)$. Take any $\bs$ with
$|\bs|_1=n$. Then $|\bs|_1=|\bn|_1$ and there exists $j$ such that $s_j\ge
n_j$. For this $j$ we have
$$
\sin 2^{s_j}\xi(\bn,\bk)_j = \sin 2^{s_j}\pi k_j 2^{-n_j} =0\quad \text{and}
\quad w(\bs,\xi(\bn,\bk)=0.
$$
This means that $SG^d(n)$ is an $(n,l)$-net for any $l$.
We note that $|SG^d(n)|\asymp 2^n n^{d-1}$. It is known (see \cite{T7}) that
there
exists a cubature formula $(\Lambda,SG^d(n))$ such that
\be\label{4.6}
\Lambda(\bH^r_p,G^d(n)) \lesssim 2^{-rn}n^{d-1},\quad 1\le p\le \infty,\quad
r>1/p.
\ee
Theorem \ref{T4.4} with $\theta=\infty$ shows that the bound (\ref{4.6}) is
sharp. Moreover, 
Theorem \ref{T4.4} shows that even an addition of extra $2^{n-1}$ arbitrary 
nodes to $SG^d(n)$ will not improve the bound in (\ref{4.6}). 

Novak and Ritter \cite{NoRi96} studied Smolyak cubature based on the one-dimensional 
Clenshaw-Curtis rule with numerical experiments. Later, Gerstner and Griebel \cite{GeGr98} investigated and compared several 
variants of Smolyak cubature rules based on different one-dimensional quadrature schemes like Gauss (Patterson), trapezoidal and Clenshaw-Curtis quadrature 
\index{Cubature formula!Clenshaw-Curtis}rules. 
Their numerical experiments show that Gauss (Patterson) performs best (among the other methods mentioned) for the considered examples. 
The Clenshaw-Curtis Smolyak cubature rule has been also considered 
from the viewpoint of tractability for a class of infinite times differentiable functions. 
In Hinrichs, Novak, M. Ullrich \cite{HiNoUl14} the authors prove ``weak tractability'' in this setting based on the polynomial exactness of this cubature rule.

\subsection{The Fibonacci cubature formulas}\index{Cubature formula!Fibonacci}
\label{Sect:Fib}

The Fibonacci cubature formulas have been studied by several authors, see for
instance Bakhvalov \cite{Bakh1}, Temlyakov \cite{Tem24, VT49, Tem25},
D\~ung, Ullrich \cite{DU14} and the references in \cite[Chapt.\ IV]{TBook}.
This cubature formula behaves asymptotically optimal within the bivariate
function classes of interest here. It represents a rank-$1$-lattice rule.
Note, that for higher dimensions optimal rank-$1$-lattice rules are not known. 

\begin{thm}\label{T6.3.1} {\em (i)} Let $d=2$. For $1<p<\infty$ and
$r>\max\{1/p,1/2\}$ we
have
$$
\kappa_m(\bW^r_p(\T^2))\asymp m^{-r} (\log m)^{1/2}.
$$
{\em (ii)} For $1 \leq p,\theta \le \infty$, and $r>1/p$ we have
$$
\kappa_m(\bB^r_{p,\theta}(\T^2))\asymp m^{-r} (\log
m)^{1-1/\theta}.
$$
\end{thm}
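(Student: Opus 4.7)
The plan is to combine the already-proved general lower bounds from Subsection 8.2 with a sharp upper bound analysis of the Fibonacci cubature $\Phi_n$, exploiting its exactness on hyperbolic cross polynomials. The lower bounds in both parts are in fact immediate consequences of the fooling-polynomial results: for $d=2$, Theorem~\ref{T6.2.1a} gives $\kappa_m(\bW^r_p(\T^2)) \gtrsim m^{-r}(\log m)^{1/2}$ and Theorem~\ref{T6.2.3} gives $\kappa_m(\bB^r_{p,\theta}(\T^2)) \gtrsim m^{-r}(\log m)^{1-1/\theta}$. Hence the whole work reduces to proving matching upper bounds realized by $\Phi_n$ with $m := b_n$.

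For the upper bound I would use the crucial exactness property: there is a constant $c > 0$ such that $\Phi_n$ is exact on $\Tr(\Gamma(cb_n))$, equivalently $\Phi_n(e^{i(\bk,\bx)}) = \delta_{\bk,0}$ whenever $\max\{|k_1|,1\}\max\{|k_2|,1\} \leq cb_n$ (this is the Fibonacci dual-lattice gap). Writing the dyadic decomposition $f = \sum_{\bs} \delta_\bs(f)$ and choosing $C = C(c)$ so that $\bigcup_{|\bs|_1 \leq n-C} \rho(\bs) \subset \Gamma(cb_n)$, exactness kills every low-frequency block, leaving
$$\Phi_n(f) - \hat f(0) = \sum_{|\bs|_1 > n-C} \Phi_n(\delta_\bs(f)) = \sum_{|\bs|_1 > n-C}\sum_{\bk \in \rho(\bs) \cap L_n} \hat f(\bk),$$
where $L_n := \{\bk \in \Z^2 : k_1 + k_2 b_{n-1} \equiv 0 \pmod{b_n}\}$ is the Fibonacci resonance set. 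A standard lattice count gives $\#(\rho(\bs) \cap L_n) \lesssim 1 + 2^{|\bs|_1}/b_n$, and $\min_{\bk \in L_n \setminus\{0\}}\max\{|k_1|,1\}\max\{|k_2|,1\} \asymp b_n$.

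For the Sobolev case (part (i)), I would argue first at $p=2$: Cauchy-Schwarz weighted by $\prod_j\max\{|k_j|,1\}^{2r}$ yields
$$|\Phi_n(f) - \hat f(0)|^2 \leq \|f\|_{\bW^r_2}^2 \sum_{\bk \in L_n\setminus\{0\}} \Big(\prod_j\max\{|k_j|,1\}\Big)^{-2r},$$
and the hyperbolic-cross counting $\#\{\bk : \prod_j\max\{|k_j|,1\} \leq N\} \asymp N\log N$ combined with the gap $\gtrsim b_n$ yields the tail $\lesssim b_n^{-2r}\log b_n$, giving exactly $m^{-r}(\log m)^{1/2}$. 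The case $p \geq 2$ follows from $\bW^r_p \hookrightarrow \bW^r_2$; the case $1 < p < 2$ needs the Littlewood-Paley characterization \eqref{NeqW} and the embedding $\bW^r_p \hookrightarrow \bB^r_{p,2}$ from Lemma~\ref{emb}(iv), reducing to the Besov estimate below with $\theta = 2$. For part (ii), I would bound each block by
$$|\Phi_n(\delta_\bs(f))| \lesssim \min\Big\{1, \tfrac{2^{|\bs|_1}}{b_n}\Big\}^{1-1/p}\|\delta_\bs(f)\|_p$$
via Nikol'skii-type control of $\Phi_n$, multiply and divide by $2^{r|\bs|_1}$, and apply H\"older in $\bs$ with exponents $\theta',\theta$; the factor $(\log m)^{1-1/\theta}$ then arises precisely from $\#\{\bs : |\bs|_1 = \ell\} = \ell+1$ summed over the dyadic annulus $\ell \in (n,2n)$, raised to the power $1-1/\theta$.

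The main obstacle is obtaining the \emph{sharp} logarithmic exponent: a crude estimate $|\Phi_n(\delta_\bs(f))| \lesssim \|\delta_\bs\|_\infty$ combined with Bernstein--Nikol'skii produces an extra logarithmic factor and falls short already at $p=2$. The right power of $\log m$ emerges only from the fine interplay between (a) the small resonance-count $\#(\rho(\bs) \cap L_n) \in \{0,1,2\}$ in the transition region $|\bs|_1 \approx n$, (b) the precise hyperbolic-cross counting at the scale of $b_n$, and (c) for $p \neq 2$, a careful H\"older balance against the Besov sequence $\ell_\theta$-norm. Passing from $p = 2$ to the full Banach-space range $1 < p < \infty$ in part (i), where the class $\bW^r_p$ is not a Hilbert space and Cauchy-Schwarz is unavailable, is the other technically delicate step; here the Littlewood--Paley square function characterization of $\bW^r_p$ (available for $1<p<\infty$) and the embedding $\bW^r_p \hookrightarrow \bB^r_{p,\max\{p,2\}}$ channel the problem back to the block-wise Besov estimate described above.
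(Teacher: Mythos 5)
Your high-level strategy is exactly the paper's: the lower bounds come from the fooling-polynomial Theorems~\ref{T6.2.1a} and \ref{T6.2.3}, and the upper bounds from the Fibonacci rule $\Phi_n$ via its exactness on $\Tr(\Gamma(\gamma b_n))$ and the resonance identity $\Phi_n(f)-\hat f(0)=\sum_{\bk\in L_n\setminus\{0\}}\hat f(\bk)$; this is precisely the content of Proposition~\ref{Prop:Fib}, whose detailed proof the paper defers to \cite{Tem24,Tem25,DU14}. Your $p=2$ Cauchy--Schwarz computation, including the two-regime lattice count $\#(\rho(\bs)\cap L_n)\lesssim 1+2^{|\bs|_1}/b_n$, is correct and gives the sharp $m^{-r}(\log m)^{1/2}$; the reductions $\bW^r_p\hookrightarrow\bW^r_2$ ($p\geq 2$) and $\bW^r_p\hookrightarrow\bB^r_{p,2}$ ($1<p<2$) are also sound.

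There is, however, a concrete error in the Besov block estimate. The functional $t\mapsto\Phi_n(t)-\hat t(0)$ on $\Tr(\rho(\bs))$ has norm at most $\#(\rho(\bs)\cap L_n)$ on $L_1$ (each $|\hat t(\bk)|\le\|t\|_1$) and at most $1$ on $L_\infty$; Riesz--Thorin then gives norm $\lesssim\#(\rho(\bs)\cap L_n)^{1/p}$ on $L_p$ --- the exponent is $1/p$, \emph{not} $1-1/p$. Moreover the count behaves like $\max\{1,2^{|\bs|_1}/b_n\}$ above the exactness threshold and is exactly $0$ (not $\asymp (2^{|\bs|_1}/b_n)^{\text{pos}}$) below it, so the factor $\min\{1,2^{|\bs|_1}/b_n\}^{1-1/p}$ is incorrect in both regimes. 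As written your bound would assert $|\Phi_n(\delta_\bs(f))|\lesssim\|\delta_\bs(f)\|_p$ for all high $\bs$, which fails already for $p=1$ where the sharp constant is the full count, and which would make the subsequent sum over shells converge for every $r>0$ rather than displaying the actual threshold. With the corrected block bound $\gamma_\bs\lesssim\#(\rho(\bs)\cap L_n)^{1/p}$ the H\"older-in-$\bs$ balance still yields $(\log m)^{1-1/\theta}$, the dominant contribution sits at $\ell\approx n$ where $\gamma_\bs\asymp 1$, and the hypothesis $r>1/p$ is exactly what is needed for the tail $\sum_{\ell>n}\ell\, 2^{(1/p-r)\ell\theta'}$ to converge.
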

Note that all the lower estimates are provided by the results in Subsection \ref{sect:lbounds_int} above.
The upper bounds in both above theorems are obtained by the use of the Fibonacci
cubature formulas. The Fibonacci  cubature formulas are defined as follows 
$$     
\Phi_n(f) := b_n^{-1} \sum_{\mu=1}^{b_n}  f(\mu/b_n,  \,\{\mu
b_{n-1}/b_n \}), 
$$

\begin{figure}[H]
 \begin{center}
 \includegraphics[scale = 1.3]{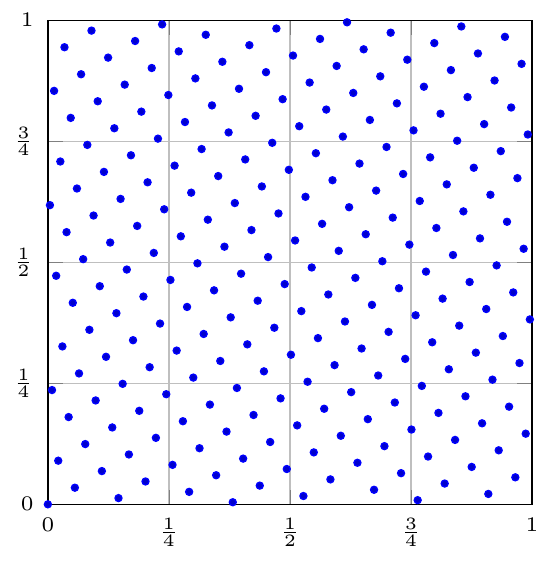}
 \caption{A Fibonacci lattice with $N=233$ points}\index{Lattice!Fibonacci}
 \end{center}
 \end{figure}

\noindent where $b_0=b_1=1$,  $\;  b_n=b_{n-1}+b_{n-2}$ are the Fibonacci
numbers and
$\{x\}$ is the fractional part of the number $x$. Those cubature formulas are
designed for periodic functions and they are exact on the trigonometric
polynomials with frequencies in the hyperbolic cross $\Gamma(\gamma b_n)$,
where $\gamma>0$ is a universal constant.  

For a function class $\bF(\T^2)$ of bivariate functions we denote
$$
     \Phi_n(\bF(\T^2)) :=    \sup_{f    \in     \bF(\T^2)}\Big|\Phi_n(f)     -  
 \int_{\T^2} \! f(\bx) \, d\bx\Big|\,,
$$
where $\T^2=[0,1]^2$ represents the $2$-torus.

\begin{prop}\label{Prop:Fib} {\em (i)} For $1<p<\infty$ and $r>\max\{1/p,1/2\}$
we
have
$$
\Phi_n(\bW^r_p(\T^2))\asymp b_n^{-r} (\log b_n)^{1/2}.
$$
{\em (ii)} For $1 \leq p,\theta \le \infty$, and $r>1/p$ we have
$$
\Phi_n(\bB^r_{p,\theta}(\T^2))\asymp b_n^{-r} (\log
b_n)^{1-1/\theta}.
$$
\end{prop}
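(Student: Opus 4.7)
The lower bounds are immediate from the general results of Subsection \ref{sect:lbounds_int}. Indeed, $\Phi_n$ is a specific cubature formula with $b_n$ nodes, hence
$$
\Phi_n(\bF(\T^2)) \;\ge\; \kappa_{b_n}(\bF(\T^2))
$$
for any class $\bF$. Specializing Theorem \ref{T6.2.1a} and Theorem \ref{T6.2.3} to $d=2$ and $m=b_n$ gives precisely the claimed lower bounds $b_n^{-r}(\log b_n)^{1/2}$ and $b_n^{-r}(\log b_n)^{1-1/\theta}$, respectively.

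For the upper bounds the starting point is the character-orthogonality identity for a rank-$1$ lattice rule: writing $\bm=(1,b_{n-1})$,
$$
\Phi_n\bigl(e^{i 2\pi(\bk,\cdot)}\bigr) \;=\; \begin{cases} 1, & \bk\in L,\\ 0, & \bk\notin L,\end{cases}
\qquad L:=\{\bk\in\Z^2 \,:\, k_1+k_2 b_{n-1}\equiv 0\!\!\pmod{b_n}\}.
$$
For $f\in C(\T^2)$ with absolutely convergent Fourier series (which holds for our classes in view of the embeddings $\bB^r_{p,\theta}\hookrightarrow C(\T^2)$ for $r>1/p$), summation yields the key error representation
$$
R_n(f):=\Phi_n(f)-\int_{\T^2}\! f\,d\bx \;=\; \sum_{\bk\in L\setminus\{0\}} \hat f(\bk).
$$
In particular $R_n(t)=0$ for every $t\in\Tr(\Gamma(\gamma b_n))$, reflecting the claimed exactness.

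The main obstacle is the number-theoretic step: one must establish the Fibonacci-specific combinatorial lemma
\begin{equation*}
L\cap \rho(\bs)=\emptyset\quad\text{if } |\bs|_1 \le \log_2(\gamma b_n),\qquad \sup_{\bs} \#\bigl(L\cap\rho(\bs)\bigr)\le C,
\end{equation*}
with absolute constants $\gamma,C>0$. The first inclusion follows from the standard fact that every nonzero $\bk\in L$ satisfies $\bar k_1\bar k_2 \gtrsim b_n$, which in turn encodes the exactness on $\Gamma(\gamma b_n)$. The uniform bound on $\#(L\cap\rho(\bs))$ is the genuinely delicate point and rests on the Diophantine quality of the rationals $b_{n-1}/b_n$, which converge to $1/\varphi$ (where $\varphi$ is the golden ratio): because all partial quotients in the continued fraction of $1/\varphi$ equal $1$, consecutive nonzero points of $L$ in any dyadic box $\rho(\bs)$ stay apart by a controlled amount. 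This is the classical Bakhvalov--Korobov-type estimate for the Fibonacci lattice and is the technical heart of the proof.

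Granted these two facts, the upper bound is routine. Decomposing $f=\sum_\bs \delta_\bs(f)$ and using $|\hat g(\bk)|\le \|g\|_1$ together with the inclusion $\|\delta_\bs(f)\|_1\lesssim \|\delta_\bs(f)\|_p$ on the bounded torus, one obtains
$$
|R_n(f)| \;\le\; \sum_{|\bs|_1 > \log_2(\gamma b_n)}\,\sum_{\bk\in L\cap\rho(\bs)} |\widehat{\delta_\bs(f)}(\bk)| \;\lesssim\; \sum_{|\bs|_1 > n_0}\|\delta_\bs(f)\|_p, \quad n_0:=\lfloor\log_2(\gamma b_n)\rfloor.
$$
For $f\in \bB^r_{p,\theta}(\T^2)$, H\"older's inequality with $1/\theta+1/\theta'=1$ combined with the asymptotics $\sum_{|\bs|_1>n_0} 2^{-r\theta'|\bs|_1}\asymp 2^{-r\theta' n_0} n_0$ from \eqref{hcsums} (with $d=2$) and the characterization \eqref{DefB} gives
$$
|R_n(f)| \;\lesssim\; 2^{-r n_0} n_0^{1-1/\theta} \|f\|_{\bB^r_{p,\theta}} \;\asymp\; b_n^{-r}(\log b_n)^{1-1/\theta},
$$
as desired. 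For $f\in \bW^r_p(\T^2)$ with $1<p<\infty$, I reduce to the Besov case with third index $\theta=2$ via the embeddings $\bW^r_p\hookrightarrow \bB^r_{p,2}$ for $1<p\le 2$ (Lemma \ref{emb}(iv)) and $\bW^r_p\hookrightarrow \bW^r_2=\bB^r_{2,2}$ for $p\ge 2$ (using $L_p\hookrightarrow L_2$ on the bounded torus), obtaining in both subcases
$$
|R_n(f)|\;\lesssim\; b_n^{-r}(\log b_n)^{1/2}.
$$
The smoothness restrictions $r>1/p$ (Besov) and $r>\max\{1/p,1/2\}$ (Sobolev) are exactly what is needed to justify these embeddings and absolute convergence.
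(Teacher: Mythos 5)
The lower-bound half of your argument is fine: $\Phi_n$ is a cubature rule with $b_n$ nodes, so $\Phi_n(\bF)\ge\kappa_{b_n}(\bF)$, and Theorems~\ref{T6.2.1a},~\ref{T6.2.3} with $d=2$, $m=b_n$ give exactly the claimed lower bounds. The error representation $R_n(f)=\sum_{\bk\in L\setminus\{0\}}\hat f(\bk)$ and the exactness on $\Gamma(\gamma b_n)$ (equivalently $\prod_j\max\{1,|k_j|\}\gtrsim b_n$ on $L\setminus\{0\}$) are also the correct starting point, and your Sobolev-reduction via $\bW^r_p\hookrightarrow\bB^r_{p,2}$ ($p\le 2$) and $\bW^r_p\hookrightarrow\bW^r_2=\bB^r_{2,2}$ ($p\ge 2$) is a clean way to deduce (i) from (ii). But the central claim of your upper-bound proof, $\sup_\bs\#\bigl(L\cap\rho(\bs)\bigr)\le C$, is false. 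The dual lattice $L$ has index $b_n$ in $\Z^2$, so the box $\rho(\bs)$ with $|\bs|_1=n_0+m$ (where $2^{n_0}\asymp b_n$) has $\asymp 2^{|\bs|_1}$ integer points and hence, by the admissibility / parallelepiped property (cf.\ Theorem~\ref{L6.4.1}(2)), roughly $2^m$ points of $L$. No special Diophantine property of $1/\varphi$ changes this count; it grows like $2^{|\bs|_1-n_0}$ and is only bounded near the critical level $|\bs|_1\approx n_0$.

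Consequently the displayed inequality $\sum_{\bk\in L\cap\rho(\bs)}|\widehat{\delta_\bs(f)}(\bk)|\lesssim\|\delta_\bs(f)\|_p$ is wrong: even using $|\hat g(\bk)|\le\|g\|_1$ one only gets the cardinality factor $\#(L\cap\rho(\bs))$ in front, and the subsequent H\"older step does not salvage it. The standard repair is Hausdorff--Young: for $q:=\min\{p,2\}$ and any finite $E\subset\Z^2$, $\sum_{\bk\in E}|\hat g(\bk)|\le|E|^{1/q}\|g\|_q\le|E|^{1/q}\|g\|_p$, which yields $\sum_{\bk\in L\cap\rho(\bs)}|\widehat{\delta_\bs(f)}(\bk)|\lesssim 2^{(|\bs|_1-n_0)/q}\|\delta_\bs(f)\|_p$; the remainder of your computation then goes through and gives $b_n^{-r}(\log b_n)^{1-1/\theta}$ \emph{but only under the hypothesis $r>1/q=1/\min\{p,2\}$}. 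For $p\le 2$ this is exactly $r>1/p$, so that half of (ii) would be settled once the cardinality estimate is corrected. For $p>2$, however, the proposition allows the small-smoothness range $1/p<r\le 1/2$, which this argument does not reach. In that regime even your opening assertion of absolute Fourier convergence fails (functions in $\bH^r_p$, $p>2$, $r\le 1/2$, need not lie in the Wiener algebra; compare $\sum_{\bk\in\rho(\bs)}|\hat f(\bk)|\lesssim 2^{(1/2-r)|\bs|_1}$), so the error sum is only conditionally convergent. Covering $p>2$, $1/p<r\le 1/2$ genuinely requires the finer arithmetic of the Fibonacci dual lattice beyond the admissibility bound (the analysis in \cite{Tem24}, \cite{TBook} Ch.~4, \cite{DU14}); exactness on the hyperbolic cross plus a density count is not enough.
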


Proposition \ref{Prop:Fib} and Theorem \ref{T6.3.1} show that the
Fibonacci cubature formulas are optimal in the sense of order in the stated situations. 

\subsubsection*{Small smoothness and limiting cases}\index{Small smoothness}

Here we are particularly interested in the limiting situations
$\mathbf{B}^{1/p}_{p,1}(\T^2)$, $\mathbf{W}^r_1(\T^2)$ and the situation of
``small smoothness'' occurring whenever we deal with $\Wrp(\T^2)$ with
$2<p<\infty$ and $1/p<r\leq 1/2$, which is not covered by Theorem \ref{T6.3.1} and Proposition \ref{Prop:Fib}.

\begin{figure}[H]
  \begin{center}
  \begin{tikzpicture}[scale=3]

\draw[->] (-0.1,0.0) -- (1.1,0.0) node[right] {$\frac{1}{p}$};
\draw[->] (0.0,-.1) -- (0.0,1.1) node[above] {$r$};
\draw (1.0,0.03) -- (1.0,-0.03) node [below] {$1$};
\draw (0.03,1.0) -- (-0.03,1.00) node [left] {$1$};
\draw (0.03,0.5) -- (-0.03,0.5) node [left] {$\frac{1}{2}$};
\draw (1.0,0) -- (1.0,1.0);
\draw (0,1) -- (1.0,1.0);
\filldraw[fill=lightgray,draw=black] (0,0.5)--(0.5,0.5)--(0,0)--(0,0.5); 
\draw (0,0.5) -- (0.5,0.5);
\draw (0,0) -- (1,1);
\draw (1/2,0.03) -- (1/2,-0.03) node [below] {$\frac{1}{2}$};
\end{tikzpicture}
\caption{The region of ``small smoothness''}
\end{center}
\end{figure}
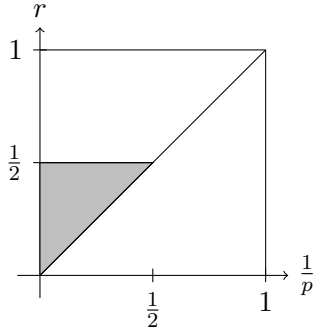

\begin{prop}\label{W1inf} {\em (i)} Let $r>1$ then it holds 
$$
    \Phi_n(\mathbf{W}^r_{1,\alpha}(\T^2)) \asymp b_n^{-r}\log b_n\,.
$$
{\em (ii)} If $r>1/2$ we have 
$$
    \Phi_n(\mathbf{W}^r_{\infty}(\T^2)) \asymp b_n^{-r}\sqrt{\log b_n}\,.
$$
{\em (iii)} If $1\leq p <\infty$ then 
$$
    \Phi_n(\mathbf{B}^{1/p}_{p,1}(\T^2) \asymp b_n^{-1/p}\,.
$$

\end{prop}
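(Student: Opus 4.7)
The plan is to base all three estimates on the Fourier representation of the Fibonacci cubature error. Since $\Phi_n$ is a rank-one lattice rule with dual lattice $L_n^\perp := \{\bk\in\Z^2 : k_1 + k_2 b_{n-1}\equiv 0\pmod{b_n}\}$, for every $f\in C(\T^2)$ the error admits the clean form
\[
R_n(f) := \Phi_n(f)-\int_{\T^2} f(\bx)\,d\bx \;=\; \sum_{\bk\in L_n^\perp\setminus\{0\}} \hat f(\bk).
\]
Two number-theoretic facts about $L_n^\perp$ will drive everything: (a) every nonzero $\bk\in L_n^\perp$ satisfies $\prod_{j=1}^2\max(|k_j|,1)\gtrsim b_n$, a diophantine consequence of $b_{n-1}/b_n$ approximating the golden ratio, so $R_n$ annihilates $\Tr(\Gamma(c b_n))$; and (b) for every dyadic box $\rho(\bs)$ the points of $L_n^\perp\cap\rho(\bs)$ form a one-dimensional arithmetic progression in $\Z^2$, hence $M_\bs:=|L_n^\perp\cap\rho(\bs)|\lesssim 1 + 2^{|\bs|_1}/b_n$, and after a modulation the polynomial $D_{\bs,n}(\bx):=\sum_{\bk\in L_n^\perp\cap\rho(\bs)}e^{-i(\bk,\bx)}$ is a univariate Dirichlet kernel of length $M_\bs$ with $\|D_{\bs,n}\|_q\asymp M_\bs^{1-1/q}$ for $1<q\le\infty$.

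For (i) I would write $f=F_{r,\alpha}\ast\varphi$ with $\|\varphi\|_1\le 1$, use $|\hat\varphi(\bk)|\le 1$ together with the sharp bound $|\hat F_{r,\alpha}(\bk)|\asymp\prod_j\max(|k_j|,1)^{-r}$, and sum the hyperbolic series on $L_n^\perp\setminus\{0\}$ by dyadically decomposing $\prod_j\max(|k_j|,1)\asymp 2^j$ and invoking the counting from (a) and (b); this yields the bound $b_n^{-r}\log b_n$ whenever $r>1$. For (ii), with $\|\varphi\|_\infty\le 1$, I switch to the dual picture: setting $\tilde D_n(\bx):=\sum_{\bk\in L_n^\perp\setminus\{0\}}e^{-i(\bk,\bx)}$ (a distribution), one has $R_n(f)=\int_{\T^2}\varphi\cdot(F_r\ast\tilde D_n)\,d\bx$, so
\[
|R_n(f)|\le\|F_r\ast\tilde D_n\|_1\le\|F_r\ast\tilde D_n\|_2\asymp\Bigl(\sum_{\bk\in L_n^\perp\setminus\{0\}}|\hat F_r(\bk)|^2\Bigr)^{1/2}\!,
\]
and the same hyperbolic counting shows the right-hand side is $\asymp b_n^{-r}\sqrt{\log b_n}$ as soon as $r>1/2$.

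For (iii) the starting point is the block decomposition $f=\sum_\bs\delta_\bs(f)$. Fact (a) kills every block with $|\bs|_1\le\log_2 b_n-c$; for the remaining blocks
\[
|R_n(\delta_\bs f)|=\Bigl|\int_{\T^2}\delta_\bs(f)\cdot\overline{D_{\bs,n}}\,d\bx\Bigr|\le\|\delta_\bs(f)\|_p\,\|D_{\bs,n}\|_{p'},
\]
and the Dirichlet-kernel identity from (b) gives $\|D_{\bs,n}\|_{p'}\asymp M_\bs^{1-1/p'}=M_\bs^{1/p}$ for the \emph{entire} range $1<p<\infty$ (crucially including $p>2$, where a naive Cauchy-Schwarz would only produce the useless bound $M_\bs^{1/2}$). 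Combined with $M_\bs\lesssim 2^{|\bs|_1}/b_n$ this yields $|R_n(\delta_\bs f)|\lesssim b_n^{-1/p}\cdot 2^{|\bs|_1/p}\|\delta_\bs(f)\|_p$, so summing produces $|R_n(f)|\lesssim b_n^{-1/p}\|f\|_{\bB^{1/p}_{p,1}}$; the case $p=1$ uses $\|D_{\bs,n}\|_\infty\le M_\bs$ in place of the Dirichlet identity. For the matching lower bounds I use fooling functions supported on $L_n^\perp$. In (iii) I pick $\bs^*$ with $|\bs^*|_1\asymp\log_2 b_n$ such that $L_n^\perp\cap\rho(\bs^*)\ne\emptyset$; the polynomial $D_{\bs^*,n}$ satisfies $R_n(D_{\bs^*,n})=M_{\bs^*}$ while $\|D_{\bs^*,n}\|_{\bB^{1/p}_{p,1}}\asymp 2^{|\bs^*|_1/p}\|D_{\bs^*,n}\|_p\asymp b_n^{1/p}M_{\bs^*}^{1-1/p}$, producing exactly the ratio $b_n^{-1/p}$. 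For (i) and (ii) a single block is too small to capture the logarithmic factor, so I test instead against a polynomial of the form $\sum_{\bk\in L_n^\perp\setminus\{0\},\ \bk\in\Gamma(Cb_n\log b_n)}\varepsilon_\bk e^{i(\bk,\bx)}$ with signs $\varepsilon_\bk$ aligned with $\hat F_r(\bk)$ and total normalization dictated by the relevant $\bW^r_{1,\alpha}$- or $\bW^r_\infty$-norm of the Bernoulli-type construction. The main obstacle is the careful verification of the Dirichlet-kernel norm identity $\|D_{\bs,n}\|_{p'}\asymp M_\bs^{1/p}$ and the analogous sharp norms of the lower-bound polynomials; both reduce to one-dimensional computations along the arithmetic progression inside $L_n^\perp\cap\rho(\bs)$, but demand delicate bookkeeping of the Fibonacci geometry across hyperbolic layers.
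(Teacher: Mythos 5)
Your upper-bound arguments for all three parts are essentially correct and follow the standard approach (Fourier error formula for lattice rules, dyadic decomposition of the dual lattice over hyperbolic layers, the diophantine property that $L_n^\perp\setminus\{0\}$ stays outside $\Gamma(cb_n)$, and the counting bound $M_\bs\lesssim 1+2^{|\bs|_1}/b_n$). The paper itself gives no self-contained proof here: it just cites \cite{Tem24}, \cite{Tem25}, for (i)--(ii), notes that the lower bound in (i) for $\alpha=0$ follows from Theorem~\ref{Thm:Wr10}, and cites (3.17) of \cite{DU14} and \cite[Thm.~7.3]{UU14} for (iii).

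Two genuine gaps remain in your sketch. First, your ``fact (b)'' — that $L_n^\perp\cap\rho(\bs)$ is always a one-dimensional arithmetic progression — is false. The dual Fibonacci lattice is roughly a (non-axis-aligned) square lattice of spacing $\asymp\sqrt{b_n}$, so when both $2^{s_1}\gg\sqrt{b_n}$ and $2^{s_2}\gg\sqrt{b_n}$ the intersection is a two-dimensional patch of lattice points, not a progression. Fortunately your conclusion survives: after the linear change of variables $\bx\mapsto(\langle\bk^1,\bx\rangle,\langle\bk^2,\bx\rangle)$ (which pushes forward Haar measure to Haar measure because $\det = b_n\in\Z\setminus\{0\}$), $D_{\bs,n}$ becomes a tensor product of two one-dimensional Dirichlet kernels of lengths $M_1,M_2$ with $M_1 M_2\asymp M_\bs$, and $\|D_{\bs,n}\|_q\asymp(M_1 M_2)^{1-1/q}=M_\bs^{1-1/q}$ still holds for $1<q\le\infty$. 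You should say this explicitly, because the $L_q$ bound on $D_{\bs,n}$ is the single estimate that part (iii) really rests on.

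Second, and more seriously, the lower bounds for (i) and (ii) are not actually proved by your sketch; you only gesture at a fooling polynomial with ``aligned signs'' and an unspecified normalization — but this is precisely where the content of the theorem lives. For (ii) the class is $\mathbf W^r_\infty = \{F_r\ast\varphi:\|\varphi\|_\infty\le 1\}$, and by duality $\Phi_n(\mathbf W^r_\infty) = \|F_r\ast\widetilde D_n\|_1$. Your Cauchy--Schwarz step only gives $\|F_r\ast\widetilde D_n\|_1\le\|F_r\ast\widetilde D_n\|_2\asymp b_n^{-r}(\log b_n)^{1/2}$, an \emph{upper} bound; that the $L_1$- and $L_2$-norms of this polynomial are actually comparable is a nontrivial assertion. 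Picking $\hat\varphi(\bk)$ to align with the phases of $\hat F_r(\bk)$ controls the $A$-norm of $\varphi$, not its $L_\infty$-norm, and the interpolation inequality $\|g\|_1\ge\|g\|_2^2/\|g\|_\infty$ together with the crude $\|g\|_\infty\le\sum|\hat g(\bk)|\asymp b_n^{-r}\log b_n$ loses the $(\log b_n)^{1/2}$ factor. The standard tool here is a Riesz-product construction of the form $\prod_{\bs\in H}(1+t_\bs(\bx))$ as in Lemma~\ref{L2.6.1}, which is what keeps $\|\varphi\|_\infty$ uniformly bounded while picking off $\asymp\log b_n$ nontrivial blocks; you would have to carry this out. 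For (i), the cleanest route at $\alpha=0$ is direct: all $\hat F_{r,0}(\bk)$ are nonnegative, so $\|F_{r,0}\ast\widetilde D_n\|_\infty\ge(F_{r,0}\ast\widetilde D_n)(0)=\sum_{\bk\in L_n^\perp\setminus\{0\}}\hat F_{r,0}(\bk)\asymp b_n^{-r}\log b_n$, and an approximate unit in $L_1$ gives the lower bound for $\Phi_n(\mathbf W^r_{1,0})$. For general $\alpha$ the phases do not all align, and the sign-alignment idea no longer gives a quick answer; the paper also does not claim it, restricting the Theorem~\ref{Thm:Wr10} reference to $\alpha=0$. The $\mathbf B^{1/p}_{p,1}$ lower bound in (iii), by contrast, is fine as you have it: a single exponential at a shortest dual-lattice vector already achieves $b_n^{-1/p}$, with no logarithm to account for.
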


Relation (i) is proved in \cite{Tem25}. The lower bound in (i) for $\alpha = 0$ follows from Theorem \ref{Thm:Wr10}. 
The upper bound in part (iii) follows directly from (3.17) in \cite{DU14}. for the lower bound we refer to
\cite[Thm.\ 7.3]{UU14}. Note, that the embedding $\mathbf{B}^{1/p}_{p,1} \hookrightarrow C(\T^d)$ holds true, see
Lemma \ref{emb},(iii) above\,. 

\noindent Next we state the results for small smoothness. 

\begin{prop}\label{T6.3.3} Let $2<p<\infty$ and $1/p<r\leq 1/2$. Then 
$$
     \Phi_n(\bW_{p}^r) \asymp \begin{cases}
     b_n^{-r}(\log b_n)^{1-r},&1/p<r<1/2;\\
     b_n^{-r}\sqrt{(\log b_n) (\log \log b_n)},& 
 r=1/2.
\end{cases}
$$
\end{prop}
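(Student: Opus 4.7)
My plan is to treat the upper bound first, building on the lattice-structure of the Fibonacci cubature and the dyadic block analysis that has been used throughout the Section. Since $\Phi_n$ is exact on $\Tr(\Gamma(\gamma b_n))$ for some absolute constant $\gamma>0$, writing $f=\sum_{\bs}\delta_\bs(f)$ via Littlewood--Paley I would get
\[
\Phi_n(f)-\hat f(\mathbf{0}) \ = \ \sum_{\bs:\,\rho(\bs)\not\subset\Gamma(\gamma b_n)} \Phi_n(\delta_\bs(f)),
\]
so that only indices with $|\bs|_1>N$, $N:=[\log_2 b_n]$, contribute. The idea is then to estimate each block by $|\Phi_n(\delta_\bs(f))|\lesssim\|\delta_\bs(f)\|_\infty$ and to replace the $L_\infty$-norm by an $L_p$-norm via the Nikol'skii inequality from Theorem~\ref{T2.4.5}, giving
\[
|\Phi_n(\delta_\bs(f))| \ \lesssim \ 2^{|\bs|_1/p}\,\|\delta_\bs(f)\|_p.
\]

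Next I would exploit the chain of embeddings $\bW^r_p\hookrightarrow \bB^r_{p,2}$ (valid for $p\ge 2$, see Lemma~\ref{emb}), which gives $\sum_{\bs}2^{2r|\bs|_1}\|\delta_\bs(f)\|_p^2\lesssim 1$. Setting $a_l:=\big(\sum_{|\bs|_1=l}\|\delta_\bs(f)\|_p^2\big)^{1/2}$ and using Cauchy--Schwarz in the $\asymp l$ blocks at each level, the total contribution from $|\bs|_1>N$ is reduced to controlling
\[
\sum_{l>N} l^{1/2}\,2^{l/p}\,a_l \ = \ \sum_{l>N} l^{1/2}\,2^{-(r-1/p)l}\,(2^{rl}a_l).
\]
In the naive form this is too weak, so the refinement is to split the sum over $l$ at a threshold $L^\ast$ and to use on each piece either a second Cauchy--Schwarz or a direct geometric majorisation, optimising over $L^\ast$ to produce the claimed $(\log b_n)^{1-r}$ factor for $r<1/2$. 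The borderline case $r=1/2$ is exactly when the resulting inner geometric series becomes harmonic, and a standard iterative resummation produces the $\sqrt{\log b_n\cdot\log\log b_n}$ factor instead.

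For the lower bound I would construct a fooling function whose Fourier spectrum sits inside the conjugate Fibonacci lattice $L^\ast(b_n):=\{\mathbf{k}\in\Z^2:k_1+k_2b_{n-1}\equiv 0\pmod{b_n}\}$, which ensures that every non-zero coefficient contributes to $\Phi_n(f)-\hat f(\mathbf{0})$. Exploiting the classical bound $\max\{1,|k_1|\}\cdot\max\{1,|k_2|\}\gtrsim b_n$ for $\mathbf{k}\in L^\ast(b_n)\setminus\{0\}$, I would select on each dyadic level $l\in[N,CN]$ a single non-trivial frequency $\mathbf{k}^{(l)}\in L^\ast(b_n)\cap\rho(\bs^{(l)})$ with $|\bs^{(l)}|_1=l$ and form
\[
f_0(\mathbf{x})\ =\ c\sum_{l=N}^{CN}\mu_l\cos(2\pi(\mathbf{k}^{(l)},\mathbf{x})),
\]
with the sequence $(\mu_l)$ chosen to saturate the Littlewood--Paley characterisation of $\bW^r_p$ while maximising the Fibonacci error. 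A short computation (dual to the upper bound) matches the two bounds; in the $r=1/2$ case the optimiser $\mu_l$ is essentially $2^{-l/2}/\sqrt l$, which is what forces the extra $\sqrt{\log\log b_n}$.

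The main obstacle will be the sharp matching between the upper and lower bounds in the borderline regime $r=1/2$: both the upper-bound resummation and the lower-bound choice of $\mu_l$ depend on a delicate balance between the Nikol'skii factor $2^{|\bs|_1/p}$, the smoothness decay $2^{-r|\bs|_1}$, and the combinatorics of the hyperbolic layers (which has cardinality $\asymp l$ for $d=2$). This balance is tight precisely at $r=1/2$, and the associated geometric-to-harmonic degeneration is what produces the unusual double-logarithmic factor.
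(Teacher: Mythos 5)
Your upper-bound plan contains two fatal errors, and neither can be repaired within the route you sketch.

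First, the embedding direction is reversed. For $p \geq 2$, Lemma~\ref{emb}(iv) gives $\bB^r_{p,\min\{p,2\}} \hookrightarrow \bW^r_p \hookrightarrow \bB^r_{p,\max\{p,2\}}$, so with $p>2$ you have $\bB^r_{p,2}\hookrightarrow \bW^r_p \hookrightarrow \bB^r_{p,p}$. The claim $\bW^r_p \hookrightarrow \bB^r_{p,2}$ is false in this regime; the only one-sided control you have from the Littlewood--Paley blocks is an $\ell_p$-sum, not an $\ell_2$-sum. Second, and more fundamentally, the bound $|\Phi_n(\delta_\bs(f))|\lesssim\|\delta_\bs(f)\|_\infty$ throws away the entire lattice structure of the Fibonacci rule. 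The cubature error of a polynomial $\delta_\bs(f)$ is
\[
\Phi_n(\delta_\bs(f))-\widehat{\delta_\bs f}(\mathbf 0)=\sum_{\bk\in\rho(\bs)\cap L^\perp(b_n)\setminus\{\mathbf 0\}}\hat f(\bk),
\]
and the key gain comes from counting lattice points: $|\rho(\bs)\cap L^\perp(b_n)|\lesssim \max\{1,2^{|\bs|_1}/b_n\}$. Only then, combined with Hausdorff--Young, does one get $|\Phi_n(\delta_\bs(f))|\lesssim (2^{|\bs|_1}/b_n)^{1/p}\|\delta_\bs(f)\|_p$ for $|\bs|_1>N$. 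Without this, the level-$l$ contribution is inflated by a factor $b_n^{1/p}$, and no choice of splitting threshold can recoup it.

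Third, even after fixing both of these points, the scheme still falls short of the sharp exponent. Using $\|f\|_{\bB^r_{p,p}}\lesssim 1$ (the correct embedding) together with the lattice-point count and H\"older in $\bs$, one arrives at $\Phi_n(\bW^r_p)\lesssim b_n^{-r}(\log b_n)^{1-1/p}$. Since $r>1/p$ implies $1-1/p>1-r$, this is strictly weaker than the claimed $b_n^{-r}(\log b_n)^{1-r}$. Put differently, the stated bound corresponds to $\theta=1/r$ in $\Phi_n(\bB^r_{p,\theta})\asymp b_n^{-r}(\log b_n)^{1-1/\theta}$, but $1/r<p$ means $\bB^r_{p,1/r}\subsetneq\bB^r_{p,p}$ and $\bW^r_p$ does not embed into $\bB^r_{p,1/r}$. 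The Proposition is therefore a genuinely finer result than any Besov embedding can deliver; the proof from \cite{Tem24} (and its Frolov analogue in \cite{UU14}) works directly with the $L_p$-norm of the Littlewood--Paley square function in \eqref{NeqW}, which retains cancellations across hyperbolic layers that are destroyed when you discretize to an $\ell_p$-sequence norm. This is exactly the point emphasized in the remark after Proposition~\ref{sob_faber_repr2} about the small-smoothness region.

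Your lower-bound idea is in the right spirit (a fooling function supported on the dual lattice, one frequency per hyperbolic layer), but the endgame is equally delicate: you need to show $\|f_0\|_{\bW^r_p}\lesssim 1$ by computing the $L_p$-norm of the associated square function for a nearly-extremal choice of $\mu_l$, and the doubly-logarithmic factor at $r=1/2$ emerges precisely from the log-divergence of a one-parameter optimization that you would have to carry out explicitly; the sketch here does not yet close that gap.
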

One observes a different behavior in the $\log$-power and an additional
$\log\log$ if $r=1/2$. A similar effect seems to hold in the multivariate situation
when dealing with Frolov's cubature, see Theorem \ref{cor:sobolev} below for the upper bounds. Note, that in contrast to
the result for the Fibonacci cubature rule there are so far no sharp lower bounds for the multivariate
situation. 

\subsection{The Frolov cubature formulas}\index{Cubature formula!Frolov}
\label{Sec:frolov}

The Frolov cubature formulas were introduced and studied in \cite{Fro1,Fro2}.
The reader can find a detailed discussion of this topic in \cite[Chapt.\
IV]{TBook}, \cite{Tsurv}, and \cite{MU14}. For the analysis of this method in
several Besov and Triebel-Lizorkin type spaces of mixed smoothness we refer to
Dubinin \cite{Du,Du2} and the recent papers \cite{UU14, NUU15}. The Frolov
cubature formulas are used in the proof of the upper bounds in the following
theorem. 

\begin{thm}\label{Thm:Fro1} {\em (i)} For $1<p<\infty$ and $r>\max\{1/p,1/2\}$
we
have
$$
\kappa_m(\bW^r_p)\asymp m^{-r} (\log m)^{(d-1)/2}.
$$
{\em (ii)} For $1\leq p,\theta \le \infty$, and $r>1/p$ we have
$$
\kappa_m(\bB^r_{p,\theta})\asymp m^{-r} (\log
m)^{(d-1)(1-1/\theta)}.
$$
{\em (iii)} For $1 \leq p < \infty$  we have 
$$
\kappa_m(\bB^{1/p}_{p,1})\asymp m^{-1/p}.
$$
\end{thm}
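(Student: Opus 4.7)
\medskip

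\noindent\textbf{Proof proposal.} The lower bounds in parts (i) and (ii) are already established: for (i) Theorem \ref{T6.2.1a} gives $\kappa_m(\bW^r_p)\gtrsim m^{-r}(\log m)^{(d-1)/2}$ for $1\le p<\infty$ and $r>1/p$, while for (ii) Theorem \ref{T6.2.3} supplies $\kappa_m(\bB^r_{p,\theta})\gtrsim m^{-r}(\log m)^{(d-1)(1-1/\theta)}$. For (iii) the lower bound $\kappa_m(\bB^{1/p}_{p,1})\gtrsim m^{-1/p}$ follows either from the univariate version (embed $\T\hookrightarrow\T^d$ by using a fooling function depending on one variable) or directly from the local-bump construction \eqref{g_1} restricted to a single index $\bj$ with $|\bj|_1\asymp \log m$, noting that for $\theta=1$ the norm $\|\cdot\|_{\bB^{1/p}_{p,1}}$ of such a single-layer bump matches the integral up to constants.

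\smallskip

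The upper bounds are the substantial part and rely entirely on Frolov's lattice construction. The plan is to fix an algebraic matrix $B\in\mathbb{R}^{d\times d}$ such that $\prod_{j=1}^d|x_j|\ge c_0>0$ for every nonzero $\bx\in B\Z^d$, and, for a scaling parameter $a\ge 1$, to form the cubature
\[
\Phi_a(f)\;=\;\frac{1}{|\det B|\,a^d}\sum_{\bm\in\Z^d} f(a^{-1}B\bm),
\]
where $f$ is first extended from $\T^d$ to a compactly supported function on $\R^d$ with the same integral, via a smooth cutoff/unfolding procedure (as in Skriganov, Bykovskii or \cite{UU14,NUU15}); this preserves the relevant mixed-smoothness norms up to constants. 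The number of active nodes is $\asymp a^d$, which we equate to $m$. The Poisson summation formula then gives the exact error representation
\[
\Phi_a(f)-I(f)\;=\;\sum_{\bk\in\Z^d\setminus\{0\}}\widehat{g}(a B^{-T}\bk),
\]
where $g$ is the unfolded version of $f$; crucially, every nonzero frequency $\by=aB^{-T}\bk$ satisfies $\prod_j|y_j|\gtrsim a^d$, so the ``bad'' frequencies lie outside a hyperbolic cross of radius $\asymp a^d$.

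\smallskip

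For (ii), I would split the Fourier side of $g$ into dyadic blocks $\rho(\bs)$ using Proposition \ref{samp_repr1Q} (or the $\delta_\bs$-characterization from \eqref{DefB}), and use the hyperbolic-cross exclusion to show that only blocks with $|\bs|_1\gtrsim \log a^d\asymp\log m$ contribute. On each such block, the number of relevant dual-lattice frequencies is uniformly bounded, so applying the Nikol'skii inequality (Theorem \ref{T2.4.7}) to pass from $\ell_\infty$ of Fourier coefficients to the $L_p$-norm of the block yields
\[
\sum_{\bk\ne 0}|\widehat{g}(aB^{-T}\bk)|\;\lesssim\;\sum_{|\bs|_1\gtrsim\log m} 2^{-|\bs|_1/p}\|\delta_\bs(g)\|_p.
\]
Hölder's inequality in $\bs$ (with exponents $\theta$ and $\theta'$) combined with $\|g\|_{\bB^r_{p,\theta}}\lesssim 1$ and the sum $\sum_{|\bs|_1>n}2^{-(r-1/p+1/p)|\bs|_1\theta'}\asymp 2^{-r\theta' n}n^{d-1}$ produces exactly the bound $m^{-r}(\log m)^{(d-1)(1-1/\theta)}$. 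Part (i) follows by applying (ii) with $\theta=2$ after noting the Littlewood–Paley embedding $\bW^r_p\hookrightarrow\bB^r_{p,2}$ valid for all $1<p<\infty$ (this is the right-hand embedding in \eqref{chainBFB} for $p\le 2$; for $p>2$ one instead runs the same dyadic argument directly on the square-function characterization \eqref{NeqW}, where the $\ell_2$-sum over $\bs$ replaces Hölder and gives the $(d-1)/2$ exponent).

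\smallskip

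The main obstacle is part (iii): here $r=1/p$ is at the critical embedding line $\bB^{1/p}_{p,1}\hookrightarrow C(\T^d)$, and the naive estimate above produces a stray factor $(\log m)^{d-1}$ from the counting of blocks $|\bs|_1=n$. To eliminate it, one must exploit the $\theta=1$ structure: the sum $\sum_\bs 2^{|\bs|_1/p}\|\delta_\bs(f)\|_p\le \|f\|_{\bB^{1/p}_{p,1}}$ is already absolutely convergent, so arranging the dyadic-block error estimate as
\[
|\Phi_a(f)-I(f)|\;\lesssim\;\sum_{|\bs|_1\ge c\log m}\!\!\bigl(\#\{\bk:aB^{-T}\bk\in\rho(\bs)\}\bigr)^{1/p'}\!\cdot 2^{-|\bs|_1/p}\|\delta_\bs(f)\|_p
\]
and bounding the first factor by an $\bs$-uniform constant (this is the heart of the Frolov/Dubinin argument, cf.~\cite{Du,UU14}) yields the required $m^{-1/p}$ without logarithm. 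This careful dual-lattice counting at the critical scale is the technical crux of the theorem.
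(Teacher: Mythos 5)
Your overall route---reduce to Frolov's lattice rule, use the Poisson summation identity to express the error as a sum of $\widehat g$ over nonzero dual-lattice points, then organize those points into dyadic blocks $\rho(\bs)$ and control each block via a mixed-smoothness characterization---is precisely the route the paper takes (the paper itself defers the block estimates to the references \cite{TBook,Tsurv,Du2,UU14}). Both sides handle periodicity by first proving the analogues for the classes $\Wo$, $\mathring{\bB}^r_{p,\theta}$ of functions supported in the cube and then transferring by a cutoff or change of variable, with the lower bounds coming from Theorems \ref{T6.2.1a}, \ref{T6.2.3} and, for (iii), \cite[Thm.~7.3]{UU14}; your local-bump construction for the (iii) lower bound is in the spirit of those references. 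At the level of the outline there is no genuine divergence from the paper.

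There are, however, concrete errors in the sketch that would break the argument if carried out as written. First, the admissibility hypothesis $\prod_j|x_j|\gtrsim 1$ is imposed on $B\Z^d$, but your cubature samples on $a^{-1}B\Z^d$, so by Poisson the error involves $\cf g$ on the scaled dual lattice $aB^{-T}\Z^d$; it is therefore $B^{-T}\Z^d$, not $B\Z^d$, that must be admissible for the ``$\prod|y_j|\gtrsim a^d$'' exclusion to hold. (In the paper's notation the admissible lattice is $A\Z^d$ and the nodes are $a^{-1}(A^{-1})^T\Z^d$, so it is the dual of the sampling lattice that is admissible; matching the two notations you should either pose the admissibility on $B^{-T}\Z^d$ or place the nodes on $a^{-1}B^{-T}\Z^d$.) Second, and more damaging, the assertion that ``the number of relevant dual-lattice frequencies is uniformly bounded'' on each block $\rho(\bs)$ is false: property (2) of Theorem \ref{L6.4.1} gives $\lesssim 2^{|\bs|_1}/a^d+1$ lattice points per block, and this is $\asymp 2^{|\bs|_1-n}$ (with $2^n\asymp a^d\asymp m$), which grows without bound once $|\bs|_1$ exceeds the critical scale. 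That growth is exactly what makes the estimate nontrivial: the crude per-point bound $|\cf g_\bs(\by)|\le\|g_\bs\|_1$ times this count already diverges for $1/p<r\le 1$ (a range permitted in part (ii)), so the refined Hausdorff--Young/Plancherel--Polya argument that you flag only as the crux of part (iii) is in fact indispensable in (i) and (ii) as well. Third, your displayed inequality $\sum_{\bk\ne 0}|\cf g(aB^{-T}\bk)|\lesssim\sum_{|\bs|_1\gtrsim\log m}2^{-|\bs|_1/p}\|\delta_\bs(g)\|_p$ is inconsistent: pushing it through H\"older as you describe yields $m^{-(r+1/p)}(\log m)^{(d-1)(1-1/\theta)}$, not the stated rate, and the exponent ``$r-1/p+1/p$'' in your summed weight is a symptom of conflating two bookkeeping conventions. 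The intended per-block contribution should be $\|\delta_\bs(g)\|_p$ without the extraneous $2^{-|\bs|_1/p}$, or with the actual lattice-point count $2^{|\bs|_1-n}$ carried through. Finally, a minor point: Proposition \ref{samp_repr1Q} is a time-side (B-spline) discretization; the Fourier-side estimate you need is tied to the $\delta_\bs$-type dyadic characterization (or its $\R^d$ analogue from \eqref{f2_2}), so the citation is misdirected even though the underlying decomposition is the right one.
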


The lower bounds in (i) are provided by Theorem \ref{T6.2.1a} and in (ii) by
Theorem \ref{T6.2.3}.  The lower bound in (iii) is proven in
\cite[Thm.\ 7.3]{UU14}. 

Contrary to the case of the Fibonacci (see above) and the Korobov (see below)
cubature formulas  the Frolov cubature formulas defined below are
not designed for a direct application to periodic functions. As a result one
needs to use a two step strategy. We begin with a definition. 
 The following results, see for instance \cite[IV.4]{TBook} and \cite{KaOeUl16}, play a fundamental role in the construction of these formulas. 

\begin{thm}\label{L6.4.1} There exists a matrix $A$ such that the lattice
$L(\bm) = A\bm$, i.e., 
$$L(\bm) :=
\begin{pmatrix}
L_1(\bm)\\
\vdots\\
L_d(\bm)
\end{pmatrix}\,,$$
where $\bm$ is a {\rm(}column{\rm)}
vector with integer coordinates,
has the following properties
 
\begin{itemize}
 \item[(1)] $\qquad \left |\prod_{j=1}^d L_j(\bm)\right|\ge 1$
for all $\bm \neq \mathbf 0$;
\item[(2)] each parallelepiped $P$ with volume $|P|$
whose edges are parallel
to the coordinate axes contains no more than $|P| + 1$ lattice
points.
\end{itemize}
\end{thm}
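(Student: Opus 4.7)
The plan is to construct the matrix $A$ via algebraic number theory: pick a monic polynomial $p(x) = \prod_{j=1}^d(x-\xi_j) \in \mathbb{Z}[x]$ of degree $d$, irreducible over $\mathbb{Q}$, with $d$ distinct real roots $\xi_1,\dots,\xi_d$ (the existence of such $p$ is a classical fact; one may use, for example, suitably perturbed Chebyshev-type polynomials or an explicit totally real number field). Define $A$ so that its rows are the Vandermonde vectors $(1,\xi_j,\xi_j^2,\dots,\xi_j^{d-1})$; equivalently, for $\bm=(m_0,\dots,m_{d-1})^\top \in \mathbb{Z}^d$ set $L_j(\bm)=\sum_{i=0}^{d-1} m_i\xi_j^i$. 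Note $\det A=\prod_{i<j}(\xi_j-\xi_i)\ne 0$, so $A$ is non-singular and $A\mathbb{Z}^d$ is indeed a lattice.

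For property (1), I would observe that $\prod_{j=1}^d L_j(\bm)$ equals the field-theoretic norm $N_{K/\mathbb{Q}}(\alpha_\bm)$ of the algebraic integer $\alpha_\bm:=\sum_{i=0}^{d-1} m_i\xi_1^i \in \mathbb{Z}[\xi_1]$, where $K=\mathbb{Q}(\xi_1)$. Since $\xi_1$ has degree $d$ over $\mathbb{Q}$, no nonzero polynomial of degree $<d$ with rational coefficients vanishes at $\xi_1$, so $\alpha_\bm\ne 0$ whenever $\bm\ne 0$. Its norm is then a nonzero rational integer, giving $|\prod_j L_j(\bm)|\ge 1$.

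For property (2), I would exploit property (1) through a one-dimensional pigeonhole argument on the first coordinate. Fix an axis-parallel box $P$ with side lengths $a_1,\dots,a_d$ and volume $V=a_1\cdots a_d$, and suppose $L(\bm_1),\dots,L(\bm_N)$ are lattice points lying in $P$. First observe that their first coordinates are pairwise distinct: if $L_1(\bm_i)=L_1(\bm_k)$ for $i\ne k$, then $\bm:=\bm_k-\bm_i\ne 0$ satisfies $L_1(\bm)=0$, contradicting property (1). So we may order them strictly by first coordinate. The $N-1$ positive gaps sum to at most $a_1$, hence at least one consecutive difference $\bm=\bm_{k+1}-\bm_k$ satisfies $0<L_1(\bm)\le a_1/(N-1)$ while trivially $|L_j(\bm)|\le a_j$ for $j\ge 2$. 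Applying property (1) to this $\bm$ yields
$$
1\;\le\;\prod_{j=1}^d |L_j(\bm)|\;\le\;\frac{a_1}{N-1}\prod_{j=2}^d a_j\;=\;\frac{V}{N-1},
$$
so $N\le V+1$, as claimed. (The case $N\le 1$ is trivial.)

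The main obstacle I expect is packaging the algebraic ingredients cleanly, in particular the construction of a totally real degree-$d$ irreducible polynomial and the identification of $\prod_j L_j(\bm)$ with the field norm; once these are in place, property (1) is essentially immediate, and property (2) reduces—somewhat surprisingly—to a one-line pigeonhole argument exploiting the multiplicative lower bound that property (1) provides.
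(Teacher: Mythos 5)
Your proof is correct and follows exactly the standard Frolov construction and argument. The paper itself does not present a proof of this theorem—it states the result and refers to Temlyakov's book \cite[IV.4]{TBook} and to \cite{KaOeUl16, Ka16}—but the construction you describe (Vandermonde matrix built from the real roots of a monic irreducible degree-$d$ polynomial in $\mathbb{Z}[x]$, with property (1) obtained by identifying $\prod_j L_j(\bm)$ with the field norm of a nonzero algebraic integer, and property (2) obtained by the one-dimensional pigeonhole argument on consecutive first coordinates combined with property (1)) is precisely the one given in those references and sketched in the paper's surrounding discussion, including the explicit admissible polynomial $P_d(x)=\prod_{j=1}^d(x-2j+1)-1$.
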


 There is a constructive approach in choosing the lattice generating matrix $A$. In the original paper by
Frolov \cite{Fro1} a Vandermonde matrix
\begin{equation}\label{vanderm}
    A = \left(\begin{array}{cccc}
                1 & \xi_1 & \cdots & \xi_1^{d-1}\\
                1 & \xi_2 & \cdots & \xi_2^{d-1}\\
                \vdots & \vdots & \ddots & \vdots\\
                1 & \xi_{d} & \cdots & \xi_d^{d-1}
              \end{array}\right)\,
\end{equation}
has been considered, where $\xi_1,\ldots,\xi_d$ are the real roots of an irreducible polynomial over $\mathbb{Q}$,
e.g., $P_d(x):=\prod_{j=1}^d (x-2j+1)-1$\,. The general principle of this construction has been elaborated in detail by Temlyakov
in his book \cite[IV.4]{TBook} based on results on algebraic number theory, see Borevich, Shafarevich \cite{BoSh66}, Gruber, Lekkerkerker \cite{GrLekk87}, or Skriganov \cite{Sk}. 
Let us also refer to \cite{Ka16} for a detailed exposition of the construction principle based on the above mentioned references. 
The polynomial $P_d$ has a striking disadvantage from a numerical analysis point of view, namely that the real roots of the polynomials grow with $d$ and
therefore the entries in $A$ get huge due to the Vandermonde structure. In fact, sticking to the structure
\eqref{vanderm}, it seems to be a crucial task to find proper irreducible polynomials with real roots of small modulus. In
\cite[IV.4]{TBook} Temlyakov proposed the use of rescaled Chebyshev polynomials \index{Chebychev polynomials}$Q_d$ in dimensions $d=2^\ell$. To be more precise we use
for $x\in [-2,2]$
\begin{equation}\nonumber
Q_d(x) = 2 T_d(x/2)\quad\mbox{with}\quad T_d(\cdot):=\cos(d\arccos(\cdot))\,.
\end{equation}
The polynomials $Q_d$ belong to $\mathbb{\zz}[x]$ and have leading coefficient $1$. Its roots are real and given by
\begin{equation}\label{zeros}
    \xi_{k} = 2\cos\Big(\frac{\pi(2k-1)}{2d}\Big) \quad,\quad k=1,...,d\,.
\end{equation}
Let us denote the Vandermonde matrix \eqref{vanderm} with the scaled Chebyshev roots \eqref{zeros} by $T$ 
and call the corresponding lattice $\Gamma_T = T(\Z^d)$ a {\em Chebyshev lattice}.
It turned out recently, see \cite{KaOeUl16}, that a Chebychev lattice is always orthogonal. To be more precise we have the following result. 

\begin{thm}\label{cf1} The $d$-dimensional Chebyshev lattice $\Gamma_T = T(\Z^d)$ is orthogonal. In particular, there
exists a lattice representation $\tilde{T}=TS$ with $S\in \text{SL}_d(\zz)$ such that 
\begin{itemize}
 \item[(i)] $\tilde{T}_{k,\ell} \in [-2,2]$ for $k,\ell  =1,...,d$ and
 \item[(ii)] $\tilde{T}^{\ast}\tilde{T} = \mathrm{diag}(d, 2d,\ldots,2d)$.
 \end{itemize}
\end{thm}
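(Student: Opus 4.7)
The strategy is to construct $S$ explicitly as the change-of-basis matrix from the monomial basis $\{1,x,x^2,\ldots,x^{d-1}\}$ to a (slightly modified) Chebyshev basis, and then verify both conclusions directly by invoking the classical discrete orthogonality of the Chebyshev nodes.

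First I would define the polynomials $\widetilde Q_0(x):=1$ and $\widetilde Q_n(x):=Q_n(x)=2T_n(x/2)$ for $n\ge 1$, where $T_n$ is the Chebyshev polynomial of the first kind. Each $\widetilde Q_n$ lies in $\Z[x]$, has degree $n$, and has leading coefficient $1$ (this follows from the three-term recursion $Q_{n+1}(x)=xQ_n(x)-Q_{n-1}(x)$ with $Q_0=2$, $Q_1=x$, together with the observation that only the definition of $\widetilde Q_0$ has been changed). Expanding $\widetilde Q_{\ell-1}(x)=\sum_{j=0}^{\ell-1} s_{j,\ell-1}x^j$ with $s_{\ell-1,\ell-1}=1$, I would then set $S_{i,\ell}:=s_{i-1,\ell-1}$ for $1\le i,\ell\le d$, with the convention $s_{j,n}=0$ for $j>n$. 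By construction $S$ is upper triangular with integer entries and unit diagonal, hence $S\in\mathrm{SL}_d(\Z)$, and $\widetilde T=TS$ with
\[
\widetilde T_{k,\ell}=\sum_{i=1}^{\ell} s_{i-1,\ell-1}\,T_{k,i}=\sum_{j=0}^{\ell-1}s_{j,\ell-1}\,\xi_k^{\,j}=\widetilde Q_{\ell-1}(\xi_k).
\]

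Next I would verify (i). Writing $\theta_k:=\pi(2k-1)/(2d)$ so that $\xi_k=2\cos\theta_k$, the definition of $Q_n$ and $T_n(\cos\theta)=\cos(n\theta)$ give, for $\ell\ge 2$,
\[
\widetilde T_{k,\ell}=Q_{\ell-1}(\xi_k)=2\cos\bigl((\ell-1)\theta_k\bigr)\in[-2,2],
\]
while $\widetilde T_{k,1}=1\in[-2,2]$. This immediately yields the entry-wise bound in (i).

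For (ii) I would use the discrete orthogonality of the Chebyshev nodes: for integers $0\le n,m<d$,
\[
\sum_{k=1}^d \cos(n\theta_k)\cos(m\theta_k)=
\begin{cases}
d & n=m=0,\\
d/2 & n=m\ne 0,\\
0 & n\ne m.
\end{cases}
\]
Computing the Gram matrix entrywise with $(\widetilde T^{\ast}\widetilde T)_{\ell,m}=\sum_{k=1}^d \widetilde T_{k,\ell}\widetilde T_{k,m}$ then splits into three cases: $\ell=m=1$ gives $\sum_k 1=d$; $\ell=m\ge 2$ gives $4\sum_k\cos^2((\ell-1)\theta_k)=4\cdot d/2=2d$; and if $\ell\ne m$ at least one of the factors $1,2\cos((\ell-1)\theta_k),2\cos((m-1)\theta_k)$ is a cosine with nonzero index distinct from the other, so the sum vanishes by the relation above. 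This establishes $\widetilde T^{\ast}\widetilde T=\mathrm{diag}(d,2d,\ldots,2d)$.

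The only mildly subtle point is the bookkeeping in the first column: the modification $\widetilde Q_0=1$ (instead of $Q_0=2$) is what makes both (i) hold with the simple bound and the first diagonal entry equal $d$ rather than $4d$; one must then check that this modification still produces an integer unimodular $S$, which it does because $\widetilde Q_0=1$ still has leading coefficient $1$ and integer coefficients. Beyond this convention choice, the proof is essentially a direct appeal to classical discrete Chebyshev orthogonality, so I do not anticipate serious technical obstacles.
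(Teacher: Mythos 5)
Your proof is correct and is essentially the natural (and presumably the intended) approach: change basis from the monomial Vandermonde columns to scaled-and-shifted Chebyshev polynomials via an upper-triangular unimodular integer matrix $S$, then invoke the discrete orthogonality of $\{T_n\}_{n=0}^{d-1}$ at the Gauss--Chebyshev nodes. The survey itself does not give a proof and only cites \cite{KaOeUl16}, so there is nothing to compare against inline; your argument matches the standard route one would expect in that reference. Two details you handled correctly and which deserve to be flagged as the crux: replacing $Q_0=2$ by $\widetilde Q_0=1$ is exactly what keeps $S$ unimodular \emph{and} makes the first diagonal Gram entry come out to $d$ rather than $4d$; and the monicity plus integrality of $Q_n$ for $n\ge 1$ follows from the recursion $Q_{n+1}=xQ_n-Q_{n-1}$, $Q_0=2$, $Q_1=x$, which you invoke. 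The only cosmetic weakness is the phrasing of the off-diagonal vanishing (``at least one of the factors\ldots''), which is a little loose; it is cleaner to say that for $\ell\ne m$ the product is $4\cos((\ell-1)\theta_k)\cos((m-1)\theta_k)$ (with the convention that the factor is $1=\cos(0\cdot\theta_k)$ when the corresponding index is $1$), so the sum is exactly the discrete inner product of $T_{\ell-1}$ and $T_{m-1}$ at the nodes, which vanishes since $\ell-1\ne m-1$ and both lie in $\{0,\dots,d-1\}$.
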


However, Chebyshev-polynomials are not always irreducible over $\mathbb{Q}$. In fact, the polynomials $Q_d$ are irreducible if and only if $d = 2^\ell$ \cite[IV.4]{TBook}. Hence, 
a Chebyshev lattice $\Gamma_T$ is admissible in the sense of Theorem \ref{L6.4.1} if and only if $d=2^\ell$. In that case we call $\Gamma_T$ a {\em Chebyshev-Frolov lattice} and obtain the following corollary. 
\begin{cor}\label{cor_cheb} If $d=2^\ell$ for some $\ell\in \N$ the Chebyshev-Frolov lattice \index{Lattice!Chebychev-Frolov}$\Gamma_T = T(\Z^d)$ and its dual lattice are both 
orthogonal and admissible (in the sense of Theorem \ref{L6.4.1}). In particular, there is a lattice representation
for $\Gamma$ given by $\tilde{T} = Q D$ with a diagonal matrix $D = \mathrm{diag}(\sqrt{d},\sqrt{2d},...,\sqrt{2d})$  and an orthogonal matrix $Q$. For the dual
lattice $\Gamma^{\perp}$ we have the representation $\tilde{T}^{\perp} =Q D^{-1}$. 
\end{cor}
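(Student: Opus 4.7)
The plan is to assemble the corollary directly from Theorem \ref{cf1} together with the algebraic number theory construction of admissible Frolov lattices recalled before the statement. First I would exploit the identity $\tilde{T}^{\ast}\tilde{T} = \mathrm{diag}(d,2d,\ldots,2d) = D^2$ from Theorem \ref{cf1}(ii), which says exactly that the columns of $\tilde{T}$ are pairwise orthogonal with prescribed norms $\sqrt{d},\sqrt{2d},\ldots,\sqrt{2d}$. Setting $Q := \tilde{T}D^{-1}$, a one-line computation gives $Q^{\ast}Q = D^{-1}\tilde{T}^{\ast}\tilde{T} D^{-1} = D^{-1}D^{2}D^{-1} = I_d$, so $Q$ is orthogonal and $\tilde{T}=QD$ is the desired representation. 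Orthogonality of $\Gamma_T$ is then immediate.

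Next I would handle the dual lattice. Recall that the dual of a full-rank lattice $\Gamma = M\Z^d$ in $\R^d$ is generated by $(M^{-\top})$. Plugging in $M=\tilde{T}=QD$ and using $Q^{-1}=Q^{\top}$ gives $(M^{-1})^{\top} = (D^{-1}Q^{\top})^{\top} = QD^{-1}$, which is again of the form (orthogonal)$\cdot$(diagonal). Hence $\Gamma^{\perp}$ is orthogonal with the claimed representation $\tilde{T}^{\perp}=QD^{-1}$.

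The substantive point is admissibility in the sense of Theorem \ref{L6.4.1}. Here I would invoke the construction from \cite[IV.4]{TBook}: if $P\in\Z[x]$ is monic, irreducible over $\Q$, of degree $d$ and with $d$ real roots $\xi_1,\ldots,\xi_d$, then the Vandermonde matrix built from these roots generates an admissible lattice. Property (1) is the key algebraic input, since $\prod_{j=1}^d L_j(\bm)$ equals, up to a nonzero rational constant, the norm from $\Q(\xi_1)$ to $\Q$ of the algebraic integer $\sum_i m_i \xi_1^{i-1}$, and such norms are nonzero integers whenever $\bm\neq 0$; property (2) is standard once property (1) holds. For the Chebyshev choice $P=Q_d$ with roots \eqref{zeros} the Vandermonde matrix is precisely $T$, so $\Gamma_T$ is admissible exactly when $Q_d$ is irreducible over $\Q$. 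As noted in the text preceding the corollary, this happens if and only if $d=2^{\ell}$, giving admissibility under the hypothesis. For the dual, note that $\Gamma^{\perp}$ is the lattice associated, via the standard trace-dual construction in the ring of integers of $\Q(\xi_1)$, with the same totally real field, so admissibility transfers to $\Gamma^{\perp}$ by the same Vandermonde/norm argument applied to the conjugate basis.

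The only genuinely nontrivial ingredient is the Vandermonde/norm argument for admissibility; everything else is formal linear algebra flowing from Theorem \ref{cf1}. The main obstacle in writing a clean proof would be establishing admissibility of $\Gamma^{\perp}$, since it does not follow by pure formal manipulation from the admissibility of $\Gamma_T$ and requires identifying $\tilde{T}^{\perp}=QD^{-1}$ with a Vandermonde-type generator coming from the trace-dual basis of the totally real field $\Q(\xi_1)$.
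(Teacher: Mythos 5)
Your proof is correct and matches the paper's approach: the orthogonal factorization $\tilde{T}=QD$ and the dual formula $\tilde{T}^{\perp}=QD^{-1}$ follow immediately from Theorem~\ref{cf1}(ii), admissibility of $\Gamma_T$ for $d=2^\ell$ is exactly what the text preceding the corollary asserts via irreducibility of $Q_d$, and admissibility of the dual comes from the trace-dual ideal construction in $\mathbb{Q}(\xi_1)$ as you say (this last ingredient is implicit in the Frolov construction references \cite[IV.4]{TBook} and \cite{KaOeUl16}). Your identification of dual admissibility as the sole genuinely nontrivial step is exactly right.
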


\noindent Let $a>1$ and $A$ a matrix from Lemma \ref{L6.4.1}. We consider the cubature formula
\begin{equation}\label{cub_frolov}
    \Phi(a,A)(f):=(a^d|\det A|)^{-1}\sum\limits_{\bm \in
\Z^d}f\Big(\frac{(A^{-1})^T\bm}{a}\Big)
\end{equation}
for $f$ with support in $[0,1]^d$\,. Clearly, the number $N$ of points of this
cubature formula does not exceed $C(A)a^d |\det A|$. In our framework \eqref{cub_def} the weights $\lambda_i$,
$i=1,...,m$, are all equal but do not sum up to one in general. The following figure
illustrates the construction of the Frolov points.

\begin{figure}[H]
 \begin{center}
 \includegraphics[scale = 1.3]{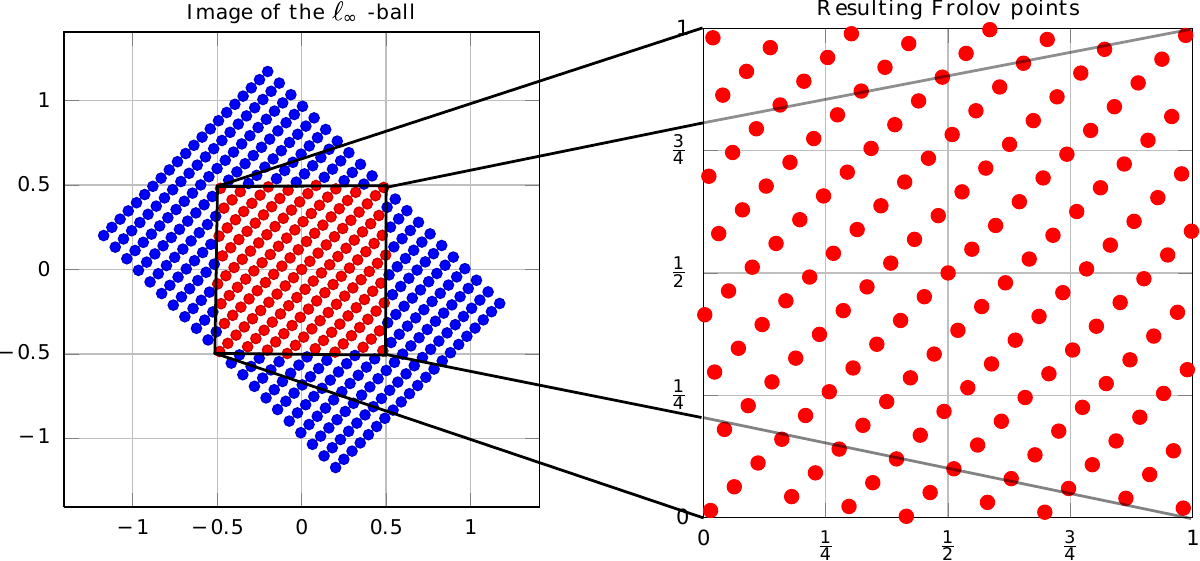}
 \caption{Generating the Frolov points}\index{Lattice!Frolov}
 \end{center}
 \end{figure}

The first step in application of the Frolov cubature formulas is to apply them
to analogs of classes $\bW^r_p$ and $\bB^r_{p,\theta}$ of respective functions
with support in $[0,1]^d$. Let us first give a precise definition of these
classes by using the approach via differences from
Lemmas \ref{diff}, \ref{fdiff}. Let $1<p<\infty$ and $m>r>0$ then
$\bW^r_p(\R^d)$ is the collection of functions $f\in L_1(\R^d)$ such that 
$$
  \|f\|^{(m)}_{\bW^r_p(\R^d)}:=\Big\|\Big(\sum\limits_{\bs\in \N_{0}^d}
2^{r|\bs|_12}\mathcal{R}^{e(\bs)}_m(f,2^{-\bs},\cdot)^2\Big)^{1/2}
\Big\|_{L_p(\R^d)}
$$
is finite. If $1\leq p,\theta \leq \infty$ and $m>r>0$
then $\bB^r_{p,\theta}(\R^d)$ is the collection of all functions $f \in
L_1(\R^d)$ such that 
$$
  \|f\|^{(m)}_{\Brpt(\R^d)} := \Big[\sum\limits_{\bs\in \N_{0}^d} 2^{r|\bs|_1
\theta}\omega_{m}^{e(\bs)}(f,2^{-\bs})^{\theta}_{L_p(\R^d)}\Big]^{1/\theta}
$$
is finite. Now we consider those functions from $\bW^r_p(\R^d)$ which are
supported in the cube $[0,1]^d$, by setting
$$
      \mathring{\bW}^r_p:=\{f\in \bW^r_p(\R^d)~:~\supp f \subset [0,1]^d\}
$$
and 
$$
      \mathring{\bB}^r_{p,\theta}:=\{f\in \Brpt(\R^d)~:~\supp f \subset
[0,1]^d\}\,.
$$
For the moment let us use the letter $A$ also for the rescaled Frolov matrix
(w.l.g. positive determinant). Then \eqref{cub_frolov} can be rewritten to (see \cite[Lem.\ IV.4.6]{TBook})
\begin{equation}\label{poi}
  \frac{1}{\det A}\sum\limits_{\bm \in \Z^d}f((A^{-1})^T\bm) = \sum\limits_{\bk\in
\Z^d} \cf f(A\bk)\,,
\end{equation}
which is a consequence of Poisson's summation formula
involving the continuous Fourier transform given by
$$
    \cf f (\by) := \int_{-\infty}^{\infty} f(\bx)e^{-2\pi i (\by,\bx)}\,d\bx\,.
$$
Before commenting on this identity let us mention that it immediately gives a
formula for the integration error since $\cf f(0) = \int_{[-1,1]^d}f(\bx)d\bx$.
In \eqref{poi} the left-hand side is a finite
sum due to the support assumption on $f$. However, the right-hand side does not
have to be unconditionally convergent. However, any convergent method of summation of the Fourier series can be used to
establish the above identity. For details we refer to \cite[Cor.\ 3.2]{UU14}. The formula \eqref{poi} is the heart of
the matter for the analysis of the
method in spaces $\mathring{\bW}^r_p$ and $\mathring{\bB}^r_{p,\theta}$. The
first step consists in proving a counterpart of Theorem
\ref{Thm:Fro1} for spaces $\mathring{\bW}^r_p$ and
$\mathring{\bB}^r_{p,\theta}$, see \cite{TBook, Tsurv, UU14}.

\begin{thm}\label{thm:besovsobolev} {\em (i)} For each $1<p<\infty$ and
$r>\max\{1/p,1/2\}$ we have 
\[
    \Phi(a,A)(\mathring{\bW}^r_p) \,\asymp\, a^{-rd} (\log
a)^{(d-1)/2}\quad,\quad a>1\,.
\]
{\em (ii)} For each $1 \leq  p,\theta\le\infty$ and $r>1/p$, we have 
\[
 \Phi(a,A)(\mathring{\bB}^r_{p,\theta}) \,\asymp\, a^{-rd} (\log
a)^{(d-1)(1-1/\theta)}\quad,\quad a>1\,.
\]
{\em (iii)} For each $1 \leq  p < \infty$  we have 
\[
   \Phi(a,A)(\mathring{\bB}^{1/p}_{p,1}) \,\asymp\, a^{-d/p}\quad,\quad
a>1\,.
\]
\end{thm}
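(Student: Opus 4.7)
The plan is to reduce everything to the Poisson summation identity \eqref{poi}, which (after the rescaling absorbed in the factor $a$) gives the integration error
\[
\Phi(a,A)(f) - \int_{\R^d} f(\bx)\,d\bx \;=\; \sum_{\bk \in \Z^d \setminus\{0\}} \cf f(aA\bk).
\]
The right-hand side is a sum over the dual lattice $aA\Z^d$ whose geometry is dictated by Theorem~\ref{L6.4.1}. First I would use property~(1) in Theorem~\ref{L6.4.1} to note that any nonzero lattice point $\by = aA\bk$ satisfies $\prod_{j=1}^d |y_j| \geq a^d$; hence every relevant frequency lies outside the smooth hyperbolic cross $\Gamma(a^d)$. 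Second, I would use property~(2) to observe that for every dyadic block $\rho(\bs)$ with $|\bs|_1 \geq n_0 := \lceil d\log_2 a\rceil + C$ the number of dual lattice points falling into $\rho(\bs)$ is bounded by an absolute constant $c_d$. These two facts together reduce bounding the error to a weighted sum of $|\cf f(\by)|$ taken over one representative $\by(\bs)$ in each block $\rho(\bs)$ with $|\bs|_1 > n_0$.

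For part~(i), combining the Littlewood-Paley characterization \eqref{NeqW} with the Nikol'skii inequality in Theorem~\ref{T2.4.5} to pass from $L_p$ to $\|\cdot\|_\infty$ control of $\delta_\bs(f)$ on $\rho(\bs)$, I would obtain
\[
|\cf f(\by(\bs))| \;\lesssim\; 2^{|\bs|_1/p}\|\delta_\bs(f)\|_p,
\]
and then use the embedding $\bW^r_p \hookrightarrow \bB^{r}_{p,\max\{p,2\}}$ from \eqref{chainBFB} together with Hölder's inequality in the form $(1/\theta')+(1/\theta)=1$ with $\theta = \max\{p,2\}$. The resulting geometric sum yields the factor $a^{-rd}(\log a)^{(d-1)/2}$; here the condition $r > \max\{1/p,1/2\}$ is what makes the tail sum $\sum_{|\bs|_1 > n_0} 2^{-(r-1/p)|\bs|_1 \theta'}$ convergent. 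Part~(ii) is handled along the same lines using the characterization $\|f\|_{\Brpt} \asymp \bigl(\sum_\bs 2^{r|\bs|_1 \theta}\|A_\bs(f)\|_p^\theta\bigr)^{1/\theta}$; Hölder and the sum $\sum_{|\bs|_1 > n_0} 2^{-\varepsilon|\bs|_1 \theta'}$ produce the logarithmic factor $(\log a)^{(d-1)(1-1/\theta)}$ after \eqref{hcsums}.

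The limiting case~(iii) requires extra care since the smoothness $r = 1/p$ matches the embedding threshold into $C(\T^d)$. Here I would work directly with $\theta = 1$, and estimate
\[
\Bigl|\sum_{\bk\neq 0} \cf f(aA\bk)\Bigr| \;\lesssim\; \sum_{|\bs|_1 > n_0} 2^{-|\bs|_1/p} \cdot 2^{|\bs|_1/p}\|A_\bs(f)\|_p \cdot c_d,
\]
and then control the product using the definition of $\bB^{1/p}_{p,1}$ directly, so that no logarithmic factor is generated beyond the geometric decay $a^{-d/p}$ coming from $\sum_{|\bs|_1>n_0} 2^{-|\bs|_1/p} \cdot 2^{-(1/p)|\bs|_1}$ after careful bookkeeping. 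The lower bounds in (i) and (ii) are already proved in Theorems \ref{T6.2.1a} and \ref{T6.2.3}; since the fooling trigonometric polynomial constructed there can be multiplied by a fixed $C^\infty$ cut-off supported in $[0,1]^d$ (invoking Proposition~\ref{atomic} to keep the norm in the corresponding class under control), these bounds transfer to the classes $\mathring{\bW}^r_p$ and $\mathring{\bB}^r_{p,\theta}$. The lower bound in (iii) is the content of \cite[Thm.~7.3]{UU14} and can be quoted directly.

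The main obstacle will be two-fold: first, justifying the Poisson summation identity \eqref{poi} under only the smoothness assumptions $r > 1/p$ (the right-hand side need not converge absolutely), which is handled by invoking an appropriate Abel/Cesàro summability of the Fourier series as in \cite[Cor.~3.2]{UU14}; and second, in part~(iii) carefully arranging the estimate so that the borderline smoothness $r = 1/p$ together with $\theta = 1$ conspires to exactly cancel the logarithmic factor, a balance that is quite delicate and where the use of $A_\bs(f)$ (rather than $\delta_\bs(f)$) is essential because the operators $\delta_\bs(\cdot)$ are not uniformly bounded on $L_p$ for $p = 1,\infty$.
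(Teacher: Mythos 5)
Your high-level framework is the correct one: reduce the error to the Poisson summation identity \eqref{poi}, observe via Theorem~\ref{L6.4.1}(1) that the contributing frequencies $\by = aA\bk$ lie outside the hyperbolic cross $\Gamma(a^d)$, decompose along dyadic blocks $\rho(\bs)$, and assemble the result with H\"older's inequality. However, the crucial technical claim — that the number of points of $aA\Z^d$ falling into a dyadic block $\rho(\bs)$ with $|\bs|_1 \ge n_0 := \lceil d\log_2 a\rceil + C$ is bounded by an absolute constant $c_d$ — is false, and it is precisely the step that trivializes the hard part of the proof. Theorem~\ref{L6.4.1}(2), after rescaling, gives the bound $\#\bigl(aA\Z^d \cap P\bigr) \le |P|/a^d + 1$ for axis-parallel parallelepipeds $P$. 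The block $\rho(\bs)$ has volume $\asymp 2^{|\bs|_1}$, so the count is $\asymp 2^{|\bs|_1}/a^d = 2^{|\bs|_1 - n_0}$, which grows without bound as $|\bs|_1 - n_0 \to \infty$. (You can verify this directly in $d=2$: the block with $s_1=s_2=\log_2 a + k$ has volume $\asymp a^2 2^{2k}$ and the dual lattice has density $\asymp a^{-2}$, so it contains $\asymp 2^{2k}$ lattice points.) The reduction to one representative $\by(\bs)$ per block is therefore invalid.

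This is not a repairable detail: the growth of the count is the reason the proof requires the heavier machinery used in \cite{TBook, Du, Du2, UU14}. For part~(i), the $(\log a)^{(d-1)/2}$ factor appears \emph{independently of $p$}, and an argument that just pairs one representative per block with H\"older of exponent $\theta = \max\{p,2\}$ would produce $(\log a)^{(d-1)(1-1/\max\{p,2\})}$, which is strictly larger than $(\log a)^{(d-1)/2}$ whenever $p>2$. The right logarithmic power comes from applying Cauchy--Schwarz per block, $\sum_{\by \in \rho(\bs)}|\cf f(\by)| \le (\mbox{count})^{1/2}\bigl(\sum_{\by}|\cf f(\by)|^2\bigr)^{1/2}$, and then controlling the $\ell^2$-sum by a discrete Plancherel / Bessel-type inequality that exploits both the compact support of $f$ and property~(2) of the admissible lattice. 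Your proposed pointwise bound $|\cf f(\by(\bs))| \lesssim 2^{|\bs|_1/p}\|\delta_\bs(f)\|_p$ also appears to be off: tracing through the H\"older computation it yields $a^{-d(r-1/p)}$ rather than the required $a^{-rd}$, so either the Nikol'skii-type exponent is wrong or the missing lattice count is silently compensating. In part~(iii) the situation is worse: with the correct count $2^{|\bs|_1}/a^d$, the sum $a^{-d}\sum_{|\bs|_1>n_0} 2^{|\bs|_1}\|A_\bs(f)\|_p$ does not converge in the borderline case $r=1/p$, $\theta = 1$, $p>1$, so the delicate cancellation you describe does not materialize from the constant-count reduction. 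The remaining ingredients in your outline — justifying \eqref{poi} by summability, the transfer of the lower bounds via Proposition~\ref{atomic} and Theorems~\ref{T6.2.1a}, \ref{T6.2.3} — are sound, but they are not the crux of the matter.
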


Note, that in case $\theta = 1$ the
$\log$ term disappears and multivariate cubature shows the same asymptotical
behavior as univariate quadrature in this setting.

\subsubsection*{The case of small smoothness}\index{Small smoothness}
Here we deal with the cubature of functions from classes $\Wrp$ where
$2<p<\infty$ and $1/p<r\leq 1/2$. Using Frolov's cubature formula we can prove a
multivariate counterparts of the upper bounds in Theorem \ref{T6.3.3}. Indeed using the modified
Frolov cubature $\Phi(a,A)(f)$ we obtain 

\begin{thm}\label{cor:sobolev} Let $2<p<\infty$ and $1/p<r\leq 2$. Then the
following bounds hold true for $a>2$
$$
     \Phi(a,A)(\mathring{\bW}_{p}^r) \lesssim \begin{cases}
     a^{-rd}(\log a)^{(d-1)(1-r)},&1/p<r<1/2;\\
     a^{-rd}(\log a)^{(d-1)/2}\sqrt{\log\log a} ,& 
 r=1/2.
\end{cases}
$$
\end{thm}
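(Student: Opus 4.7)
The starting point is Poisson's summation formula \eqref{poi}, which rewrites the integration error as
\[
R(f)\;:=\;\Phi(a,A)(f)-\int_{[0,1]^d}f(\bx)\,d\bx \;=\; \sum_{\bm\in\Z^d\setminus\{0\}} \widehat{f}(aA\bm).
\]
The geometric input is Theorem \ref{L6.4.1}: after rescaling, the dual lattice $\Lambda^{*}:=aA\Z^d\setminus\{0\}$ satisfies $\prod_j|(aA\bm)_j|\gtrsim a^d$, and every axis-parallel box of volume $V$ contains at most $C(V/a^d+1)$ lattice points. Consequently, for a dyadic hyperbolic shell $\rho(\bs)\subset\R^d$ in the continuous frequency variable one has $\Lambda^{*}\cap\rho(\bs)=\emptyset$ whenever $|\bs|_1\le n_0:=d\log_2 a-C$, and $|\Lambda^{*}\cap\rho(\bs)|\lesssim 2^{|\bs|_1-n_0}$ otherwise.

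The plan is to decompose $f=\sum_{\bs\in\N_0^d}f_\bs$ by a smooth Littlewood--Paley partition in the frequency variable, so that $\widehat{f}_\bs$ is supported in a mild enlargement of $\rho(\bs)$, and to bound
\[
|R(f)|\;\le\;\sum_{|\bs|_1>n_0}\;\sum_{aA\bm\in\rho(\bs)}|\widehat{f}_\bs(aA\bm)|
\]
layer by layer. For each layer I would apply H\"older in the lattice variable with exponent $p$ and control the resulting $\ell^{p'}$-norm by the continuous $L^{p'}$-norm of $\widehat{f}_\bs$ via Hausdorff--Young (valid since $p\ge 2$), using that $f$ is supported in $[0,1]^d$ so that $f_\bs$ is, up to harmless tails, compactly supported and $\|f_\bs\|_{L_{p}}$ is the natural quantity. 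Globally, I would then apply H\"older in the outer sum over $\bs$ together with the Triebel--Lizorkin characterization \eqref{NeqW},
\[
\|f\|_{\bW^r_p}\;\asymp\;\Big\|\Big(\sum_{\bs}2^{2r|\bs|_1}|f_\bs(\cdot)|^2\Big)^{1/2}\Big\|_{p},
\]
which, by the triangle inequality in $L_{p/2}$ with $p\ge 2$, yields the square-summable control $\bigl(\sum_{\bs}2^{2r|\bs|_1}\|f_\bs\|_{p}^{2}\bigr)^{1/2}\lesssim\|f\|_{\bW^r_p}$.

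The main obstacle is the small-smoothness regime $1/p<r\le 1/2$. For $r>1/2$ (the range of Theorem \ref{thm:besovsobolev}(i)) the lattice-count factor $2^{|\bs|_1/p}a^{-d/p}$ and the Hausdorff--Young factor $\|f_\bs\|_p$ combine with $2^{-r|\bs|_1}$ to produce a tail that is absolutely summable in $|\bs|_1$, and the logarithmic factor $(\log a)^{(d-1)/2}$ comes entirely from the cardinality $\asymp n^{d-1}$ of $\{\bs:|\bs|_1=n\}$ in a Cauchy--Schwarz step. For $r\le 1/2$ this tail ceases to be summable and a single Cauchy--Schwarz over all $\bs$ with $|\bs|_1>n_0$ diverges. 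The plan to overcome this is the standard two-scale splitting: choose an auxiliary level $n_1=\kappa n_0$ with $\kappa>1$ to be optimized, use an $\ell^{1}$-type layer-count bound on the near-threshold range $n_0<|\bs|_1\le n_1$ and Cauchy--Schwarz combined with the Littlewood--Paley square function on the deep range $|\bs|_1>n_1$. Balancing $\kappa$ against the smoothness weight $2^{-r|\bs|_1}$ yields the announced exponent $(d-1)(1-r)$ of $\log a$ for $1/p<r<1/2$; at the critical smoothness $r=1/2$ the two regimes balance only up to a logarithmic correction, giving the extra factor $\sqrt{\log\log a}$ in direct analogy with the bivariate Fibonacci result in Proposition \ref{T6.3.3}. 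Throughout, the delicate technical point is to avoid the naive bound $|\widehat{f}_\bs(aA\bm)|\le\|f_\bs\|_{1}$ on deep layers, replacing it by discrete sampling estimates that see only the lattice density $a^d$ rather than the layer bandwidth $2^{|\bs|_1}\gg a^d$; this is where Hausdorff--Young and the support assumption on $f$ enter crucially.
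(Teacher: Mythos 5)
Your framework is the right one---Poisson summation, the admissible-lattice counting in hyperbolic layers, Littlewood--Paley in the continuous frequency variable, and a two-scale split to absorb the failure of summability below $r=1/2$---and this is indeed how the proof in \cite{UU14,NUU15} is organized. However, the crucial technical step you rely on is not correct as stated, and, worse, if it \emph{were} correct your argument would prove a strictly stronger bound than the theorem asserts, which is a sign that the chain cannot close. You propose to pass from the $\ell^{p'}$-norm of the lattice samples $\{\hat f_\bs(aA\bm)\}$ to $\|\hat f_\bs\|_{L^{p'}}$ (a Plancherel--Polya sampling step, not Hausdorff--Young as you call it) and then to $\|f_\bs\|_{L^p}$. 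Plancherel--Polya requires $\hat f_\bs$ to be of exponential type, i.e.\ $f_\bs$ to be compactly supported; but $f_\bs=\check\psi_\bs\ast f$ is a convolution with a band-limited, \emph{non-}compactly-supported kernel, so $f_\bs$ is not compactly supported and its ``tails'' are exactly the obstruction, not a harmless technicality. You cannot repair this by invoking the support of $f$ itself, because the contribution from $\xi\in\rho(\bs)\cap\Lambda^*$ is governed by $\hat f_\bs$ on $\rho(\bs)$ once you localize in frequency, and then you no longer have a compactly supported function to apply Plancherel--Polya to.

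The second problem is quantitative. Carry your chain out to the end: H\"older with exponent $p$ gives $N_\bs^{1/p}\asymp(2^ka^{-d})^{1/p}$, the (putative) sampling bound gives $\|f_\bs\|_p$, Cauchy--Schwarz over the $\asymp k^{d-1}$ blocks in layer $k$ and the square-function characterization \eqref{NeqW} give $k^{(d-1)/2}2^{-rk}\|f\|_{\Wrp}$, and summing $k>n_0\asymp d\log_2 a$ yields $a^{-dr}(\log a)^{(d-1)/2}$ for \emph{every} $r>1/p$ -- with summability threshold at $r=1/p$, not $r=1/2$. This contradicts both your own description (you claim a threshold at $1/2$) and the conclusion of the theorem: the exponent $(d-1)(1-r)$ in the statement is strictly larger than $(d-1)/2$ when $r<1/2$, so the theorem asserts a \emph{worse} log-power than your chain would deliver. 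The correct threshold $r=1/2$ emerges only if the discretization step is an $\ell^2$/Bessel-type inequality $\sum_{\xi\in\Lambda^*}|\hat f_\bs(\xi)|^2\lesssim\|f_\bs\|_2^2$, which is a genuine Frolov-lattice fact (of the Skriganov/Davenport kind, not a consequence of Plancherel--Polya for sparse lattices) and is the actual technical heart of \cite{UU14}. You never identify or prove an estimate of this type, and ``balancing $\kappa$'' in the two-scale split is left entirely unspecified. The plan is right in outline but the key lemma is missing and the claimed refinement via Hausdorff--Young is spurious.
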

\noindent In contrast to the Fibonacci cubature rules the lower bounds in this
situation are still open.

\subsection{Modifications of Frolov's method}

Frolov's method works well for functions with zero boundary condition, see Theorem \ref{thm:besovsobolev} above. 
In order to treat periodic functions from $\Brpt$ and $\Wrp$ with Frolov's
method we study the following recently developed  modification of the algorithm, 
which has been introduced in \cite{NUU15}. 
Let $\psi:\R^d
\to
[0,\infty)$
be a compactly supported function ($\supp \psi \subset \Omega$) such that 
$$
      \sum\limits_{\bk\in \Z^d} \psi(\bx+\bk) = 1\quad,\quad \bx \in \R^d\,.
$$
Then for any $1$-periodic function in each component we have 
$$
    \int_{\R^d} \psi(\bx)f(\bx)\,d\bx = \sum\limits_{\bk \in \Z^d}
\int_{[0,1]^d}\psi(\bx+\bk)f(\bx+\bk)\,d\bx
    = \int_{[0,1]^d} f(\bx)\sum\limits_{\bk \in \Z^d} \psi(\bx+\bk)\,d\bx =
\int_{[0,1]^d} f(\bx)\,d\bx\,.
$$
Let $\X_a$ be the Frolov points generated by the rescaled matrix
$\frac{1}{a}(A^{-1})^T$. The modified cubature formula is then
$$    
    \Phi(a,A,\psi)(f) := \Phi(a,A)(\psi f) = (a^d \det A)^{-1}\sum\limits_{\bx
\in \X_a \cap \Omega} \psi(\bx) f(\{\bx\})\,,
$$
where $\{\bx\}\in [0,1)^d$ denotes the fractional part of the components in $\bx$.
Proving the boundedness of the operator $M_{\psi}:\bW^r_{p} \to 
\bW^r_{p}(\R^d)$ which maps $f \mapsto \psi f$, see
\cite{NUU15}, one ends up
with
$$
   \Phi(a,A)(\mathring{\bW}^r_p) \asymp \Phi(a,A,\psi)(\bW^r_p)\quad,\quad a>1\,,
$$
as well as 
$$
  \Phi(a,A)(\mathring{\bB}^r_{p,\theta}) \asymp \Phi(a,A,\psi)(\Brpt)\quad,\quad a>1\,.
$$
which implies Theorem \ref{Thm:Fro1}. 

\subsubsection*{Change of variable}\index{Change of variable}

Let us also mention a classical modification of the algorithm which goes back to Bykovskii \cite{By}, see
also \cite{TBook, Tsurv} for details.\\\noindent Let $\ell$ be a natural number. We define the following
functions
$$
\psi_\ell (u) =
\begin{cases}
\int_0^ut^\ell(1-t)^\ell dt\biggm/\int_0^1t^\ell(1-t)^\ell dt,\qquad&
u\in[0,1],\\
0\qquad & u<0,\\
1\qquad & u>1;
\end{cases}
$$
For continuous functions of $d$ variables we define the
cubature formulas
\begin{equation}\label{modi_Frolov}
  \Phi(a,A,\ell)(f) :=
\Phi(a,A)\Big(f(\psi_{\ell}(x_1),...,\psi_{\ell}(x_d))\prod\limits_{i=1}^d
\psi'_\ell(x_i)\Big)\,.
\end{equation}
Analyzing this method boils down to mapping properties of the operator 
$$
    T_{\ell}:f \mapsto
f(\psi_{\ell}(x_1),...,\psi_{\ell}(x_d))\prod\limits_{i=1}^d
\psi'_\ell(x_i)
$$
within $\bW^r_p$ and $\bB^r_{p,\theta}$. This has been studied for spaces
$\bW^r_p$ with $r\in \N$ in \cite{TBook}, \cite{Tsurv} and
for spaces $\bB^r_{p,\theta}$, $r>1/p$, in \cite{Du2}. The full range of
relevant spaces, including fractional smoothness, is considered in the paper
\cite{NUU15}. The final result is 
$$
   \Phi(a,A)(\mathring{\bW}^r_p) \asymp \Phi(a,A,\ell)(\bW^r_p(\R^d))\quad,\quad a>1\,,
$$
as well as 
$$
  \Phi(a,A)(\mathring{\bB}^r_{p,\theta}) \asymp \Phi(a,A,\ell)(\Brpt(\R^d))\quad,\quad a>1\,,
$$
if $\ell$ is large (depending on $p$ and $r$)\,. This shows that the spaces
$\bW^r_p(\R^d), \Brpt(\R^d)$ and $\bW^r_p, \Brpt$  also show the rate of convergence given in Theorem
\ref{Thm:Fro1} with respect to $\kappa_m$.

\subsubsection*{Random Frolov}\index{Cubature formula!Random Frolov}

Further modifications of Frolov's method can be found in the recent papers Krieg, Novak 
\cite{KrNo16} and M. Ullrich \cite{MU16}. The authors define a Monte Carlo integration method $M_m^{\omega}$ with $m$ 
nodes based on the Frolov cubature formula and 
obtain the typical $\min\{1/2,1-1/p\}$ gain in the main rate of convergence. In addition, M. Ullrich \cite{MU16} 
observed the surprising phenomenon that the logarithm disappears in the rooted mean square error (variance), i.e.
$$
      \sup\limits_{f\in \bW^r_p} \big(\mathbb{E}|I(f)-M_m^{\omega}(f)|^2\big)^{1/2} \lesssim m^{-(r+1-1/p)}\,.
$$
if $1<p<\infty$ and $r\geq (1/p-1/2)_+$\,.

\subsection{Quasi-Monte Carlo cubature}\index{Quasi-Monte Carlo}
Theorems \ref{T6.2.1a}, \ref{T6.2.3} provide the lower bounds for numerical integration by cubature rules with $m$
nodes. Theorem \ref{Thm:Fro1} shows that in a large range of smoothness $r$ there exist cubature rules with $m$ nodes,
which provide the upper bounds for the minimal error of numerical integration matching the corresponding lower bounds.
This means, that in the sense of asymptotical behavior of $\kappa_m(\bF)$ the problem is solved for a large range of
function classes. In the case of functions of two variables optimal cubature rules are very simple -- the Fibonacci
cubature rules. They represent a special type of cubature rules, so-called quasi-Monte Carlo rules. In the case $d\geq
3$ the optimal (in the sense of order) cubature rules, considered in the previous Section, are constructive but not as
simple as the Fibonacci cubature formulas. In fact, for the Frolov cubature formulae
\eqref{cub_frolov} all weights $\lambda_i$, $i=1,...,m$, in \eqref{cub_def} are equal but do not sum up to one. This
means in particular that constant functions would not be integrated exactly by Frolov's method. Equal weights which
sum up to one is the main feature of quasi-Monte Carlo integration. In this Section we will discuss the problem of
finding optimal quasi-Monte Carlo rules. 

Let us start with the following result. 

\begin{thm}\label{T6.4.1} For $0<r<1$ one has
\be\nonumber
\kappa_m(\bH^r_{\infty}) \asymp m^{-r}(\log m)^{d-1}.
\ee
\end{thm}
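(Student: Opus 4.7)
The plan is to establish both sides of the equivalence by specializing results stated earlier in the paper, using the identification $\bH^r_\infty = \bB^r_{\infty,\infty}$.

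For the lower bound I would invoke Theorem \ref{T6.2.3} with $p = \theta = \infty$. The smoothness condition $r > 1/p$ collapses to $r > 0$, which holds throughout $0 < r < 1$, and the logarithmic exponent $(d-1)(1-1/\theta)$ specializes to $d-1$, giving
$$
\kappa_m(\bH^r_\infty) \;\gtrsim\; m^{-r}(\log m)^{d-1}.
$$
A self-contained argument via the fooling-polynomial construction of Theorem \ref{T6.2.1a}, combining Lemma \ref{L6.2.2} with the Bernstein inequality of Theorem \ref{T2.4.3} to control the $\bH^r_\infty$-norm of $\sum_{|\bs|_1 = n} t_\bs$, would yield the same bound.

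For the upper bound I would apply Theorem \ref{Thm:Fro1}(ii) with $p = \theta = \infty$: the condition $r > 1/p$ is again $r > 0$, and the Frolov cubature formula \eqref{cub_frolov}, periodized via one of the devices in Section \ref{Sec:frolov} (either the change-of-variable rule \eqref{modi_Frolov} or the partition-of-unity version $\Phi(a,A,\psi)$), supplies a cubature on $\asymp m$ nodes with
$$
\kappa_m(\bH^r_\infty) \;\lesssim\; m^{-r}(\log m)^{d-1}.
$$

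The main obstacle is that naive sparse-grid cubature does not attain this rate. Indeed, applying \eqref{4.6} with $p = \infty$ on the Smolyak grid $SG^d(n)$ with $m \asymp 2^n n^{d-1}$ points gives only $m^{-r}(\log m)^{(r+1)(d-1)}$, which is off by a factor $(\log m)^{r(d-1)}$. What the Frolov construction supplies, and Smolyak does not, is a lattice-rigid node set respecting the hyperbolic structure of $\Gamma(N)$ across all hyperbolic layers simultaneously. The restriction $0 < r < 1$ is convenient throughout: $\bH^r_\infty$ is then characterized by first-order differences, which streamlines both the atomic estimates underlying the lower bound and the mapping properties required to periodize the Frolov rule.
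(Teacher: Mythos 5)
Your proof is correct, but it takes a genuinely different route from the paper's. The paper settles the upper bound with the Korobov cubature formula $P_m(f,\ba)$ built on good lattice points, while you invoke Theorem \ref{Thm:Fro1}(ii), i.e., the Frolov construction. Both routes are valid, but the distinction is precisely the point of stating Theorem \ref{T6.4.1} separately: it opens Subsection 8.7, whose theme is quasi-Monte Carlo rules, namely cubature formulas with equal weights that \emph{sum to one}. The Frolov rule \eqref{cub_frolov} has equal weights that do \emph{not} sum to one (it is not even exact on constants), so it is not a QMC rule; the Korobov rule is, and the theorem records that a QMC rule already attains the optimal order $m^{-r}(\log m)^{d-1}$ for $\bH^r_\infty$. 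Two further remarks. First, the restriction $0<r<1$ is not a feature of your argument but of Korobov's: Theorem \ref{Thm:Fro1}(ii) with $p=\theta=\infty$ gives $\kappa_m(\bH^r_\infty)\asymp m^{-r}(\log m)^{d-1}$ for \emph{all} $r>0$, so via Frolov you have actually proved something strictly stronger (and in particular the theorem did not "need" the difference-of-order-one structure you cite at the end). Second, your closing heuristic -- that Frolov supplies a lattice "respecting the hyperbolic structure of $\Gamma(N)$ across all layers simultaneously" -- is the right picture for the Korobov/Fibonacci rules, which are exact on $\Tr(\Gamma(cN))$ for good lattices; Frolov's accuracy instead stems from the admissibility property of the (number-theoretic) lattice used through the Poisson summation identity \eqref{poi}, which is a dual mechanism and not an exactness property on hyperbolic-cross polynomials.
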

The lower bound in the above theorem follows from Theorem \ref{T6.2.3}.
The upper bound is provided by the Korobov cubature formula\index{Cubature formula!Korobov}.  Let $m\in\N$, $\ba
= (a_1,\dots,a_d)\in\Z^d$.
We consider the cubature formulas
$$
P_m (f,\ba):= m^{-1}\sum_{\mu=1}^{m}f\left (\left \{\frac{\mu a_1}
{m}\right\},\dots,\left \{\frac{\mu a_d}{m}\right\}\right)
$$
which are called the Korobov cubature formulas. Those formulas are of a special
type. Similar to the Frolov cubature formulas all weights are equal. However,
in contrast to \eqref{cub_frolov} we have the additional feature that
the equal weights sum up to $1$. Consequently, the Korobov cubature
formulas compute exactly
integrals of constant functions. 

The above considered Fibonacci cubature formulas represent a special case of
this framework. Namely, if $d=2$, $m=b_n$, $\ba = (1,b_{n-1})$ we have
$$
P_m (f,\ba) =\Phi_n (f).
$$
Note that in the case $d>2$ the problem of finding concrete
cubature formulas of the type $P_m (f,\ba)$ as good as the Fibonacci cubature
formulas in the
case $d=2$ is unsolved. However, in the sequel we will stick to the
property that the weights sum up to $1$. In other words we are interested in
quasi-Monte Carlo cubature formulas of type 
$$
  I_m(f,X_m) := \frac{1}{m}\sum\limits_{i=1}^m f(x^i)
$$
and ask whether there is a sequence of point sets $\{X_m\}_m$ such that
$I_m(\mathbf{F},X_m)$ provides asymptotically optimal error bounds for classes
of multivariate functions $\mathbf{F}$. For more details and further
references see also the recent survey by Dick, Kuo and Sloan \cite{DiKuSl13}.

It is well-known that numerical integration of functions with mixed smoothness is closely related to the discrepancy\index{Discrepancy}
problem, see Subsection \ref{numintdisc} below. In a certain sense (duality) classical discrepancy theory corresponds to numerical integration in function
classes $\bW^r_p$ with $r=1$. We refer the reader to the survey paper \cite{Tsurv} and the book \cite{Tr10} for a
detailed discussion of the connection between numerical integration and discrepancy. In the discrepancy problem the
points $X_m$ have to be constructed such that they are distributed as evenly as possible over the unit
cube. Explicit constructions of well-distributed point sets in the unit cube
have been introduced by Sobol \cite{So67} and by Faure
\cite{Fa82}. Later Niederreiter \cite{Ni87} introduced the general concept of
$(t,n,d)$-nets\index{Quasi-Monte Carlo!$(t,n,d)$-nets}. For such point sets, it has been shown
that the star discrepancy (the $L_{\infty}$-norm of the discrepancy function), a
measure of the distribution properties of a point set, behaves well. We do not
want to go into the complicated details of the construction of such point sets.
Let us rather take a look on a $2$-dimensional example. The $2$-dimensional
van der Corput point set \cite{Co1,Co2}\index{Quasi-Monte Carlo!van der Corput points} in base $2$ is given by 
\begin{equation}\nonumber
    \mathcal{H}_n = \Big\{\Big(\frac{t_n}{2}+\frac{t_{n-1}}{2^2}+ ...
    +\frac{t_1}{2^n},
    \frac{s_1}{2}+\frac{s_2}{2^2}+...+\frac{s_n}{2^n}\Big):t_i \in
    \{0,1\}, s_i = 1-t_i, i=1,...,n  \Big\}\,.
\end{equation}
and represents a digital $(0,n,2)$-net. Here, the quality parameter is $t=0$
since every dyadic interval with volume $2^{-n}$ contains exactly $1$ point. 

\begin{figure}[H]
 \begin{center}
 \includegraphics[scale = 1.3]{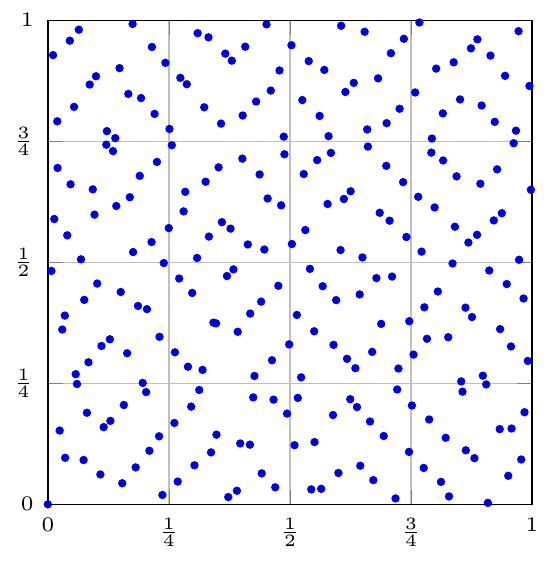}
 \caption{A digital net in $d=2$ with $N=256$ points}\index{Quasi-Monte Carlo!digital net}
 \end{center}
 \end{figure}

Based on the work of Hinrichs \cite{Hi10}, where Haar coefficients of the
discrepancy function have been computed,  the following theorem has been shown
in \cite{Ul14}
\begin{thm}\label{Ham} Let $1\leq p,\theta\leq \infty$ and $1/p<r<2$. Then
with $m=2^n$
$$
    I_m(\Brpt(\T^2), \mathcal{H}_n) \asymp m^{-r}(\log
m)^{1-1/\theta}\quad,\quad n\in \N\,.
$$
\end{thm}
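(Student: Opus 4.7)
The lower bound is immediate from Theorem \ref{T6.2.3} with $d=2$, which yields $\kappa_m(\bB^r_{p,\theta}(\T^2)) \gtrsim m^{-r}(\log m)^{1-1/\theta}$ and applies to \emph{any} cubature rule with $m$ nodes, QMC or otherwise. So the task reduces to establishing the matching upper bound $I_m(\Brpt(\T^2),\mathcal{H}_n) \lesssim m^{-r}(\log m)^{1-1/\theta}$ for the specific QMC rule based on $\mathcal{H}_n$.

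My strategy is to decompose $f$ in the tensorized Haar basis. Under the assumption $1/p < r < 2$, every $f \in \Brpt(\T^2)$ admits an unconditional expansion $f = \sum_{\bs,\bk} \lambda_{\bs,\bk}(f)\,h_{\bs,\bk}$ in $L_\infty$-normalized Haar functions indexed by dyadic rectangles $I_{\bs,\bk}$ of volume $2^{-|\bs|_1}$, with equivalent norm
\[
\|f\|_{\Brpt} \asymp \Big(\sum_{\bs \in \N_{-1}^2} 2^{(r-1/p)|\bs|_1\theta}\Big(\sum_{\bk}|\lambda_{\bs,\bk}(f)|^p\Big)^{\theta/p}\Big)^{1/\theta},
\]
in the spirit of Propositions \ref{discvscont}--\ref{contvsdisc} for the Faber system. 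Since $\int h_{\bs,\bk} = 0$, the QMC error is $-\sum_{\bs,\bk} \lambda_{\bs,\bk}(f)\, I_m(h_{\bs,\bk},\mathcal{H}_n)$.

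I would then exploit two properties of $\mathcal{H}_n$. First, because $\mathcal{H}_n$ is a digital $(0,n,2)$-net, every elementary dyadic rectangle of volume $2^{-n}$ contains exactly one point, so the two lobes of any $h_{\bs,\bk}$ with $|\bs|_1 \leq n$ split evenly and $I_m(h_{\bs,\bk},\mathcal{H}_n) = 0$. Second, for $|\bs|_1 > n$, the support $I_{\bs,\bk}$ has volume $<2^{-n}$ and contains at most $O(1)$ points; only $O(2^n) = O(m)$ of the $2^{|\bs|_1}$ indices $\bk$ at each such level are touched by $\mathcal{H}_n$, and for each touched index $|I_m(h_{\bs,\bk},\mathcal{H}_n)| \leq 1/m = 2^{-n}$. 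This is the content of Hinrichs's \cite{Hi10} explicit Haar-coefficient analysis of the discrepancy function of $\mathcal{H}_n$, which encodes precisely the QMC errors against Haar functions. Letting $K_{\bs}$ denote the set of contributing indices at level $\bs$, so $|K_{\bs}| \lesssim 2^n$, the error is bounded by $2^{-n}\sum_{|\bs|_1 > n}\sum_{\bk \in K_{\bs}}|\lambda_{\bs,\bk}(f)|$. H\"older in $\bk$ with exponents $(p,p')$ gives $\sum_{\bk\in K_{\bs}}|\lambda_{\bs,\bk}| \lesssim 2^{n/p'}\|\{\lambda_{\bs,\bk}\}_{\bk}\|_{\ell_p}$, and a second H\"older in $\bs$ with exponents $(\theta,\theta')$, using that $\#\{\bs \in \N_{-1}^2 : |\bs|_1 = N\} \asymp N$, yields
\[
2^{-n}\cdot 2^{n/p'}\cdot \Big(\sum_{N>n} N\, 2^{-(r-1/p)N\theta'}\Big)^{1/\theta'}\|f\|_{\Brpt} \asymp 2^{-rn}\, n^{1-1/\theta}\|f\|_{\Brpt},
\]
which is the required bound $m^{-r}(\log m)^{1-1/\theta}$.

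The main obstacle is the structural claim that for $|\bs|_1 > n$ only $O(m)$ of the indices $\bk$ give a nonzero contribution and that each such contribution is sharp, of order $2^{-n}$. This is where the $(0,n,2)$-net property and the fine combinatorial structure of the van der Corput construction enter, and where \cite{Hi10} does the heavy lifting; a naive volume argument alone would not suffice, in particular would not give the right power of the logarithm when $\theta \ne 1$. Secondary technical points are the validity of the Haar-basis characterization at the endpoints $p,\theta \in \{1,\infty\}$ and near $r=2$, which in the boundary cases requires an atomic-decomposition argument in the spirit of Proposition \ref{atomic} rather than standard wavelet theory.
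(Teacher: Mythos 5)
correct — Theorem \ref{T6.2.3} with $d=2$ does give the matching lower bound for any cubature rule.

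\textbf{Upper bound:} the computation at the end is the right arithmetic, but the decomposition you are applying it to is not available in the stated parameter range, and this is not a ``secondary technical point'' confined to the endpoints — it is the central issue of the proof.

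The difficulty is that for $r>1/p$ the $L_\infty$-normalized Haar functions $h_{\bs,\bk}$ do not even belong to $\Brpt(\T^2)$: being piecewise constant, each univariate factor has $\omega_2(h_{s,k},t)_p\asymp t^{1/p}$ for small $t$, so $\|h_{\bs,\bk}\|_{\Brpt}=\infty$ once $r>1/p$. Consequently the norm equivalence you write down (with weight $2^{(r-1/p)|\bs|_1}$) holds only for $r<1/p$ (roughly), which is \emph{disjoint} from the hypothesis $1/p<r<2$. So the first step of your argument — ``every $f\in\Brpt$ admits an unconditional Haar expansion with this equivalent norm'' — fails throughout the entire range of the theorem, not merely near $r=2$ or at $p,\theta\in\{1,\infty\}$.

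There is a second, matching mismatch in the invocation of Hinrichs \cite{Hi10}. The Haar coefficient $\langle D_{\mathcal{H}_n},h_{\bs,\bk}\rangle$ of the discrepancy function is \emph{not} the QMC error $\frac1m\sum_{\bx\in\mathcal{H}_n}h_{\bs,\bk}(\bx)$. An integration by parts (the usual Koksma–Hlawka/Zaremba identity) shows instead that
$$
\langle D_{\mathcal{H}_n},h_{\bs,\bk}\rangle \;=\; -\Big(\frac1m\sum_{\bx\in\mathcal{H}_n}H_{\bs,\bk}(\bx)-\int H_{\bs,\bk}\Big),
$$
where $H_{\bs,\bk}$ is the double antiderivative of $h_{\bs,\bk}$, i.e.\ a tensorized \emph{hat} function. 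So Hinrichs' coefficients encode the QMC errors against the Faber–Schauder system, not against Haar.

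These two errors cancel each other — and this is exactly what Ullrich's proof in \cite{Ul14} (which the paper cites for this theorem) does: decompose $f$ via the tensorized Faber–Schauder system, for which Propositions \ref{discvscont}--\ref{contvsdisc} give the norm equivalence precisely in the range $1/p<r<2$ (this is the source of the restriction $r<2$ stated in the paper just after the theorem); then recognize the QMC error of each $v_{\bs,\bk}$ as a Haar coefficient of $D_{\mathcal{H}_n}$ and apply \cite{Hi10}. One also has to adjust your ``first property'': a $(0,n,2)$-net integrates \emph{Haar} functions with $|\bs|_1\le n-2$ exactly, but it does \emph{not} integrate hat functions exactly at any level, so the corresponding contributions for $|\bs|_1\lesssim n$ are small but nonzero and must be controlled via the precise formulas from \cite{Hi10}. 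After replacing Haar $\to$ Faber throughout and adding this extra case analysis, your H\"older estimates and the final bound $2^{-rn}n^{1-1/\theta}$ go through. As written, the proposal has a genuine gap in its first step.
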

The technical restriction $r<2$ comes from the hierarchical decomposition of a $2$-variate function
into the tensorized Faber-Schauder system. At least the hierarchical decomposition via the
Faber-Schauder system, see Subsection \ref{timelimsamprep} is problematic for $r>2$. 
Note, that the van der Corput quasi-Monte Carlo rule shows the same asymptotic
behavior as the Fibonacci cubature formula above in the given range for $r$,
i.e., they are asymptotically optimal. Via embeddings between $\bB$ and $\bW$
spaces, see Lemma \ref{emb}, we can
immediately deduce the corresponding asymptotically optimal result for Sobolev
spaces $\Wrp(\T^2)$ if $1<p<\infty$ and $2>r>\max\{1/p,1/2\}$\,, see
Theorem \ref{T6.3.1}(i). What concerns small smoothness\index{Small smoothness} we conjecture a counterpart of Proposition \ref{T6.3.3} 
also in this framework. 

Considering the $d$-dimensional situation, things are much more involved. Based
on the $b$-adic ($b$ large) Chen-Skriganov \cite{ChSk02} point construction
$\mathcal{CS}_n$ (which we will not discuss in detail) Markhasin \cite{Ma13}
showed the following. 

\begin{thm} Let $1\leq p,\theta \leq \infty$ and $1/p<r\leq 1$. Then with
$m=b^n$ 
$$
    I_m(\Brpt,\mathcal{CS}_n) \asymp m^{-r}(\log
    m)^{(d-1)(1-1/\theta)}\quad,\quad n\in \N\,.
$$
\end{thm}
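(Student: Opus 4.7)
The plan is to establish the asymptotic equivalence by proving the matching upper and lower bounds separately. The lower bound is essentially automatic: since $I_m(\bB^r_{p,\theta},\mathcal{CS}_n)$ is the worst-case error of a \emph{specific} quasi-Monte Carlo rule with $m$ nodes (equal weights summing to one), one has trivially
\[
I_m(\bB^r_{p,\theta},\mathcal{CS}_n) \geq \kappa_m(\bB^r_{p,\theta}) \gtrsim m^{-r}(\log m)^{(d-1)(1-1/\theta)},
\]
where the second estimate is Theorem~\ref{T6.2.3} (which is valid in the range $r > 1/p$). So the entire work lies in the upper bound.

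For the upper bound my strategy would be to exploit the restriction $r \leq 1$, which is precisely the regime where tensorized Haar systems (or equivalently the Faber-Schauder characterization from Subsection~\ref{timelimsamprep} combined with a first-order discrete difference decomposition) form an unconditional basis for $\bB^r_{p,\theta}$. Concretely, I would use the characterization of Proposition~\ref{contvsdisc}--type giving $f = \sum_{\bs \in \N_{-1}^d}\sum_{\bk \in \D_{\bs}} \lambda_{\bs,\bk}(f)\, \eta_{\bs,\bk}$, where $\eta_{\bs,\bk}$ are Haar-type atoms at dyadic scale $\bs$ and location $\bk$, and the coefficient sequence satisfies $\|\lambda_{\bs,\bk}(f)\|_{\bb^r_{p,\theta}} \lesssim \|f\|_{\bB^r_{p,\theta}}$. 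Writing the integration error as
\[
I(f) - I_m(f,\mathcal{CS}_n) = \sum_{\bs,\bk} \lambda_{\bs,\bk}(f)\,\big(I(\eta_{\bs,\bk}) - I_m(\eta_{\bs,\bk},\mathcal{CS}_n)\big),
\]
the task reduces to controlling the QMC error on single Haar-type atoms.

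The main technical input is then the following fine property of the Chen--Skriganov $b$-adic construction (for $b$ chosen large depending on $d$): for every dyadic box of volume $b^{-|\bs|_1}$, the number of points of $\mathcal{CS}_n$ in the box differs from the expected number $m\cdot b^{-|\bs|_1}$ by a quantity independent of $|\bs|_1$ when $|\bs|_1 \leq n$, and the contribution from boxes with $|\bs|_1 > n$ is estimated trivially by $|I(\eta_{\bs,\bk})| + |I_m(\eta_{\bs,\bk},\mathcal{CS}_n)|$. Splitting the sum at $|\bs|_1 = n$ and grouping dyadic scales $\bs$ with $|\bs|_1 = j$, whose number is $\asymp j^{d-1}$, one obtains, after applying H\"older's inequality in the $\bk$-variable with exponent $p$ and then in the $\bs$-variable with exponent $\theta$,
\[
|I(f) - I_m(f,\mathcal{CS}_n)| \lesssim \|f\|_{\bB^r_{p,\theta}} \Big(\sum_{j\geq 0} \min\{b^{-j r \theta'} n^{(d-1)}, b^{-(j-n)\theta'} \text{(boundary term)}\}\Big)^{1/\theta'},
\]
which after balancing at $j \asymp n$ yields $m^{-r}(\log m)^{(d-1)(1-1/\theta)}$.

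The hardest step, and where most of the care is needed, is extracting exactly the power $(d-1)(1-1/\theta)$ in the logarithm, not $(d-1)$. A naive triangle inequality over the $\asymp n^{d-1}$ dyadic blocks at level $|\bs|_1=n$ would give the suboptimal exponent $(d-1)$; the sharp exponent requires applying H\"older's inequality with conjugate exponent $\theta'$ over these blocks and invoking the $\ell_\theta$-summability built into the Besov norm. A second subtle point is the case $r=1$: here the first-order Haar decomposition is at the boundary of its range of validity, and one may need to replace Haar atoms by the smoother B-spline atoms from Proposition~\ref{samp_repr1Q} (with $\ell=1$ giving piecewise linear B-splines) or to work directly with the Faber--Schauder representation, for which the Chen--Skriganov net still has the required ``resolution'' property on dyadic rectangles because the $b$-adic structure refines the binary one. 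Once these two issues are handled, the bound matches the lower bound and the asymptotic $\asymp$ follows.
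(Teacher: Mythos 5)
The paper gives no proof of this theorem; it simply cites Markhasin's monograph \cite{Ma13} (and, for $d=2$, Hinrichs \cite{Hi10}), where the argument proceeds by computing $b$-adic Haar coefficients of the discrepancy function of $\mathcal{CS}_n$ and then invoking a Hlawka--Zaremba-type duality between the Besov norm of $f$ and a dual Besov norm of the discrepancy function. Your proposal is the ``Fubini dual'' of that argument: instead of expanding the discrepancy function in Haar atoms, you expand $f$ and control the QMC error on each atom. These two routes are equivalent (the Haar coefficient of the discrepancy function at scale $\bs$, position $\bk$, is up to normalization precisely $I(\eta_{\bs,\bk}) - I_m(\eta_{\bs,\bk})$), and your identification of the lower bound as an instance of Theorem \ref{T6.2.3}, of the need for H\"older with exponent $\theta'$ across the $\asymp n^{d-1}$ blocks at a given hyperbolic level, and of the balance at $|\bs|_1 \asymp n$, all match Markhasin's structure.

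There is, however, one substantial gap that your sketch underestimates. The Chen--Skriganov construction is a $(t,n,d)$-net in base $b$ with $b$ a prime chosen \emph{large depending on $d$} (and in particular $b \neq 2$). The clean exactness property you invoke -- ``for every $b$-adic box of volume $b^{-|\bs|_1}$ the point count equals its expectation'' for $|\bs|_1 \leq n - t$ -- holds only for $b$-adic boxes, not dyadic ones. A dyadic Haar atom does \emph{not} sit nicely inside a $b$-adic resolution when $b$ is odd, and the remark that ``the $b$-adic structure refines the binary one'' is false in general. What Markhasin actually does is develop a $b$-adic Haar system $(h^{(b)}_{\bs,\bk,\ell})$ in which each wavelet at a $b$-adic cell carries $b-1$ internal degrees of freedom, prove a $b$-adic Haar characterization of $\Brpt$ valid exactly in the window $\max(1/p-1,0) < r < 1$ (plus an endpoint analysis at $r=1$), and only then run the splitting-at-$|\bs|_1 = n$ argument you outline. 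This $b$-adic Haar characterization is the genuinely new ingredient of \cite{Ma13} and cannot be bypassed: replacing the Haar atoms by dyadic Faber--Schauder atoms, as you suggest for the $r=1$ endpoint, reintroduces the same base mismatch and additionally destroys the exact vanishing of $I_m(\eta_{\bs,\bk})$ on low-frequency blocks (since hat functions are not piecewise constant, a $(t,n,d)$-net is not designed to integrate them exactly). So the structural outline is right, but the upper bound as written would not close without first importing a base-$b$ wavelet analysis of the Besov spaces, which is where most of the technical work actually lies.
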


The condition $r\leq 1$ is, of course, not satisfying. In order to prove a
$d$-dimensional counterpart of Theorem \ref{Ham} we need to use
higher order digital nets as recently introduced by Dick \cite{Di07}. Higher
order digital nets are designed to achieve higher order convergence rates if the
function possesses higher order smoothness, e.g. higher mixed partial
derivatives. In fact, classical $(t,m,d)$ nets often admit
optimal discrepancy estimates which (roughly) transfer to optimal integration
errors within the class $\bW^1_p$, see Subsection \ref{numintdisc} below. Several numerical experiments in
\cite{HiMaOeUl14} illustrate this fact. However, for higher order smoothness
corresponding statements are not known. What concerns order-$2$ digital nets
$\mathcal{DN}^2_n$ the following asymptotically optimal results have been shown
recently in \cite{HiMaOeUl14}. 

\begin{thm}\label{HiMaOeUl14} Let $1\leq p,\theta\leq \infty$ and $1/p<r<2$. Then with $m=2^n$
$$
   I_m(\Brpt,\mathcal{DN}^2_n) \asymp m^{-r}(\log
    m)^{(d-1)(1-1/\theta)}\quad,\quad n\in \N\,.
$$
\end{thm}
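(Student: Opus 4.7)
The plan is to split the assertion into the lower and upper bound and then combine the Faber--Schauder discretization of $\Brpt$ (which is well-suited for the smoothness range $1/p<r<2$) with a QMC-error analysis for the tensorized hat functions on the order-2 digital net.

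First, I would settle the lower bound. Since $\mathcal{DN}^2_n$ is an admissible set of integration nodes and the QMC weights $1/m$ are a particular choice of weights, we have
\[
I_m(\Brpt,\mathcal{DN}^2_n)\;\ge\;\kappa_m(\Brpt),
\]
and Theorem~\ref{T6.2.3} (or, alternatively, the ``local fooling function'' argument outlined after it, specialized to $m=2^n$) yields $\kappa_m(\Brpt)\gtrsim m^{-r}(\log m)^{(d-1)(1-1/\theta)}$. This disposes of the lower bound without any net-specific argument.

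For the upper bound I would use the Faber--Schauder representation of $f\in \Brpt$. Since $1/p<r<2$, Proposition~\ref{discvscont} gives the point-wise expansion
\[
f \;=\; \sum_{\bs\in \N_{-1}^d}\sum_{\bk\in \D_{\bs}} d^2_{\bs,\bk}(f)\,v_{\bs,\bk}
\]
with $\|d^2_{\bs,\bk}(f)\|_{\bb^r_{p,\theta}}\lesssim \|f\|_{\Brpt}$. Linearity of the QMC rule then reduces the task to controlling the integration error of every tensor hat function $v_{\bs,\bk}$. The level-$\bs$ functions are supported on dyadic rectangles of volume $2^{-|\bs|_1}$, so on levels with $|\bs|_1\le n$ I would use the trivial bound $|I(v_{\bs,\bk})-I_m(v_{\bs,\bk},\mathcal{DN}^2_n)|\lesssim 2^{-|\bs|_1}$ (which is the integral itself, matched by the elementary-interval equidistribution property of the net). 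For the deep levels $|\bs|_1>n$, which carry the main error, I would invoke the defining property of order-$2$ digital nets: the Walsh (equivalently, dyadic Haar-product) coefficients of the discrepancy function decay with an additional factor of $2^{-|\bs|_1}$ compared to classical $(t,n,d)$-nets. Translating this into the Faber--Schauder side via the identity \eqref{f_100} (so that each $v_{\bs,\bk}$ is a second-order difference of the indicator of a dyadic box), the QMC error for a single hat atom on level $\bs$ with $|\bs|_1>n$ should satisfy a bound of the form
\[
|I(v_{\bs,\bk})-I_m(v_{\bs,\bk},\mathcal{DN}^2_n)|\;\lesssim\;2^{-2|\bs|_1}\,\cdot\,\#\{\bk:\text{atom is hit by a net point}\},
\]
which gives, after summing over $\bk$ on a fixed level, a level bound of order $2^{-|\bs|_1}$ that is summable against the Besov weight $2^{-r|\bs|_1}$ whenever $r<2$.

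Combining these level estimates via a triangle/Hölder step (using the $\ell_\theta$-structure of $\bb^r_{p,\theta}$ from Definition~\ref{defsequ} together with \eqref{|j|>n}) produces
\[
|I(f)-I_m(f,\mathcal{DN}^2_n)|\;\lesssim\;\sum_{|\bs|_1\ge n}2^{-r|\bs|_1}\,\|d^2_{\bs,\cdot}(f)\|_{\ell_p}\,\lesssim\,2^{-rn}n^{(d-1)(1-1/\theta)}\|f\|_{\Brpt},
\]
which, since $m\asymp 2^n$, is exactly the claimed upper bound. The main obstacle is the middle step: one must translate the higher-order equidistribution property of $\mathcal{DN}^2_n$ into a sharp estimate for the QMC error of a single tensorized hat function $v_{\bs,\bk}$, valid uniformly in $\bs$ with $|\bs|_1>n$ and giving the extra $2^{-|\bs|_1}$ gain that is unavailable for ordinary $(t,n,d)$-nets. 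This is the multivariate generalization of Hinrichs' Haar-coefficient computation \cite{Hi10} that underlies Theorem~\ref{Ham}, and it is the technical heart of the argument; the rest is bookkeeping with the Besov sequence norm.
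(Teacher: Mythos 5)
Your overall strategy is exactly the one taken in the cited reference \cite{HiMaOeUl14}, which is where the paper delegates the proof (the paper itself does not prove this theorem). The lower bound argument via $I_m\ge\kappa_m$ and Theorem~\ref{T6.2.3} is correct and is indeed the standard reduction; it is net-independent, as you say. The upper bound strategy — decompose $f$ through the tensorized Faber--Schauder system using Proposition~\ref{discvscont} and then estimate the QMC error of each hat atom $v_{\bs,\bk}$ by invoking the order-$2$ equidistribution property of $\mathcal{DN}^2_n$ — is the correct blueprint, and you have correctly identified that the technical heart is a multivariate generalization of Hinrichs' Haar-coefficient computation.

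However, there is a concrete gap in your intermediate display. You write
\[
|I(f)-I_m(f,\mathcal{DN}^2_n)|\;\lesssim\;\sum_{|\bs|_1\ge n}2^{-r|\bs|_1}\,\|d^2_{\bs,\cdot}(f)\|_{\ell_p}\,,
\]
but since $\|d^2_{\bs,\cdot}(f)\|_{\ell_p}\lesssim 2^{-(r-1/p)|\bs|_1}\epsilon_\bs$ with $(\epsilon_\bs)\in\ell_\theta$ (Definition~\ref{defsequ}), the right-hand side is of order $2^{-(2r-1/p)n}n^{(d-1)(1-1/\theta)}\|f\|_{\Brpt}$, which decays \emph{strictly faster} than the claimed rate $2^{-rn}n^{(d-1)(1-1/\theta)}$ whenever $r>1/p$. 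This would contradict your own lower bound. The error is a double-counting of the smoothness decay: the factor $2^{-r|\bs|_1}$ cannot appear multiplied against the already-decaying coefficient norm $\|d^2_{\bs,\cdot}(f)\|_{\ell_p}$. The correct pairing after H\"older in $\bk$ and then in $\bs$ is
\[
|I(f)-I_m(f,\mathcal{DN}^2_n)|\;\le\;
\|f\|_{\Brpt}\cdot\Big(\sum_{\bs}2^{-(r-1/p)|\bs|_1\theta'}\,\|e_{\bs,\cdot}\|_{\ell_{p'}}^{\theta'}\Big)^{1/\theta'},
\]
where $e_{\bs,\bk}:=I(v_{\bs,\bk})-I_m(v_{\bs,\bk},\mathcal{DN}^2_n)$, and the lemma one actually needs is a bound on the $\ell_{p'}$-level-norms $\|e_{\bs,\cdot}\|_{\ell_{p'}}$ for the order-$2$ net; your heuristic per-atom bound $2^{-2|\bs|_1}\cdot\#\{\text{hit atoms}\}$ is not the right functional form (it conflates a per-atom quantity with a level-aggregate and does not track the split between ``hit'' and ``missed'' atoms that drives the $\ell_{p'}$-norm computation). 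Also, the citation of \eqref{f_100} is misplaced: that identity defines the coefficient functional $d^2_{\bs,\bk}(f)$ as a second mixed difference of $f$; it does not say that $v_{\bs,\bk}$ is a second difference of a dyadic indicator (the latter is the dual intuition, but you should supply it independently rather than cite \eqref{f_100}). Modulo these imprecisions, your identification of the key technical step and your reduction of the upper bound to it are accurate.
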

Compared to the Frolov cubature formulas the restriction $r<2$ is still
unsatisfactory. However, when it comes to the integration of so-called
``kink-functions'' from mathematical finance like, e.g., integrands of the
form $f(t) = \max\{0,t-1/2\}$ one observes a H\"older-Nikol'skii regularity 
$r=2$ if $p=1$. Hence the above method can take advantage of the maximal regularity of a kink.
This is what one observes in numerical experiments as well.  

Let us emphasize that the above stated results suffer from the restriction $r<2$ in case $d\geq 2$. It is not known
whether the optimal order of convergence can be achieved by a quasi-Monte Carlo rule in case of higher smoothness.
In case $d=2$ this is the case (Fibonacci). Note, that the modified Frolov method yields the optimal rate for higher
smoothness. However, this method is no QMC rule (not even the ``pure'' Frolov method). Let us mention that there is 
progress in this direction, see the recent preprints by Goda, Suzuki, Yoshiki \cite{GoSuYo15, GoSuYo16}, 
where higher order digital nets and corresponding quasi-Monte Carlo rules are used for non-periodic reproducing kernel Hilbert spaces, which extends 
Theorem \ref{HiMaOeUl14} in case $p=\theta = 2$ towards higher smoothness and non-periodic functions.\\

\subsection{Discrepancy and numerical integration}\index{Discrepancy}
\label{numintdisc}

\subsubsection*{Classical discrepancy}

In this chapter we discussed in detail the problem of numerical integration in the mixed smoothness classes. It is clear that for 
a formulation of the problem of optimal cubature formulas (optimal numerical integration rules) we need to specify a function class of 
functions which we numerically integrate. There are many different
function classes of interest. In this section we begin with a very simple class, which has a nice geometrical interpretation. The classical 
$L_\infty$ discrepancy (star-discrepancy) of a set $X_m =\{\bx^1,\dots,\bx^m\}$ is defined as follows
$$
D(X_m,L_\infty):= \sup_{\by\in [0,1]^d}\Big|\frac{1}{m}\sum_{j=1}^m \chi_{[\mathbf 0,\by]}(\bx^j) - \int_{[0,1]^d}\chi_{[\mathbf 0,\by]}(\bx)d\bx\Big|.
$$
Here, $\chi_{[\mathbf 0,\by]}(\bx):=\prod_{j=1}^d \chi_{[0,y_j]}(x_j), y_j\in [0,1], j=1,\dots,d$, where {$\chi_{[0,y]}(x)$}, { $y\in [0,1]$,} is a 
characteristic function of an interval $[0,y]$. 
Thus in this case the discrepancy problem is exactly the problem of 
numerical integration of functions from the class $\chi^d:=\{\chi_{[\mathbf 0,\by]}(\cdot)\}_{\by\in[0,1]^d}$ by cubature formulas with equal weights $1/m$. 

\begin{figure}[H]
    \begin{center}
        \begin{tikzpicture}[scale=5]
            \filldraw[fill=lightgray,draw=black] (0,0.4)--(0.65,0.4)--(0.65,0)--(0,0);
        \draw[->] (-0.1,0.0) -- (1.1,0.0);
        \draw[->] (0.0,-.1) -- (0.0,1.1);
        \draw (1.0,0.03) -- (1.0,-0.03) node [below] {$1$};
        \draw (0.03,1.0) -- (-0.03,1.00) node [left] {$1$};
        \draw (0.03,0.4) -- (-0.03,0.4) node [left] {$y_2$};
        \draw (1.0,0) -- (1.0,1.0);
        \draw (0,1) -- (1.0,1.0);

        \node at (0.1,0.1)  {$\bullet$};
        \node at (0.25,0.12)  {$\bullet$};
        \node at (0.2,0.34)  {$\bullet$};
        \node at (0.4,0.08)  {$\bullet$};
        \node at (0.37,0.28)  {$\bullet$};
        \node at (0.17,0.6)  {$\bullet$};
        \node at (0.13,0.73)  {$\bullet$};
        \node at (0.2,0.9)  {$\bullet$};
        \node at (0.33,0.65)  {$\bullet$};
        \node at (0.4,0.45)  {$\bullet$};
        \node at (0.52,0.59)  {$\bullet$};
        \node at (0.44,0.77)  {$\bullet$};
        \node at (0.58,0.2)  {$\bullet$};
        \node at (0.73,0.41)  {$\bullet$};
        \node at (0.68,0.11)  {$\bullet$};
        \node at (0.88,0.18)  {$\bullet$};
        \node at (0.84,0.83)  {$\bullet$};
        \node at (0.72,0.63)  {$\bullet$};
        \node at (0.94,0.74)  {$\bullet$};
        \node at (0.64,0.91)  {$\bullet$};
        \node at (0.68,0.45)  {$\bf y$};
        \draw (0.65,0.03) -- (0.65,-0.03) node [below] {$y_1$};
        \end{tikzpicture}
        \caption{The discrepancy function}\index{Discrepancy!Function}
    \end{center}
\end{figure}
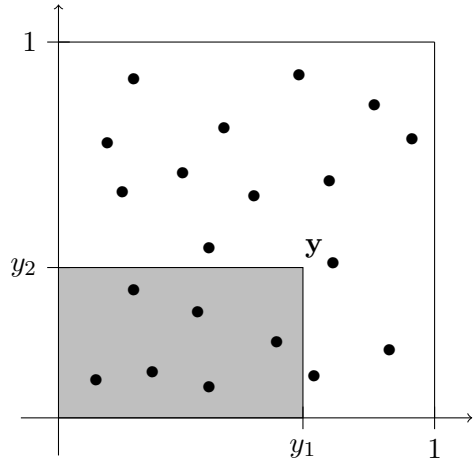

The description of the class $\chi^d$ is simple -- the functions in $\chi^d$ are labeled by $\by\in [0,1]^d$. This allows us to consider the 
$L_\infty$ norm, with respect to $\by$, of the error which represents a ``worst case''. In the case of the $L_p$ norms, $p<\infty$, of the error we may speak of 
an ``average case''. For $1\le p<\infty$ we define
$$
D(X_m,L_p):= \Big\|\frac{1}{m}\sum_{j=1}^m \chi_{[\mathbf 0,\by]}(\bx^j) - \int_{[0,1]^d}\chi_{[\mathbf 0,\by]}(\bx)d\bx\Big\|_p.
$$
It is easy to check that in this case
$$
D(X_m,L_p)= \sup_{f\in \dot{\bW}_{p'}^1 }\Big | \frac{1}{m}\sum_{j=1}^m f(\bx^j) -\int_{[0,1]^d} f(\bx) d \bx\Big|,
$$
where $ \dot{ \bW}_{p'}^1$  consists of the functions
$f(\bx)$ representable in the form
$$
f(\bx) =\int_{[0,1]^d} \chi_{[\mathbf 0,\by]}(\bx)\varphi(\by) d \by,
\qquad \|\varphi\|_{p'} \le 1.
$$
Again, in the case $1\le p<\infty$ the discrepancy problem is exactly the problem of numerical integration of functions from the class $\dot{ \bW}_{p'}^1$. Note the duality between $p$ in the discrepancy and $p'$ in the  class. Thus, the classical problem of the $L_p$ discrepancy coincides with the problem of numerical integration by the Quasi-Monte Carlo rules of 
the class $ \dot{ \bW}_{p'}^1$ of smoothness $1$. Usually, we consider the whole range of smoothness parameters $r$ in numerical integration.
Denote $\Omega_d:=[0,1]^d$. It will be convenient for us to consider the class
$\dot{ \bW}_{p}^r:= \dot{ \bW}_{p}^r (\Omega_d)$  consisting of the functions
$f(\bx)$ representable in the form
$$
f(\bx) =\int_{\Omega_d} B_r (\by, \bx)\varphi(\by) d \by,
\qquad \|\varphi\|_p \le 1,
$$
where
\begin{equation}
B_r( \by, \bx):= \prod_{j=1}^d\bigl((r-1)!\bigr)^{-1}
(y_j - x_j )_+^{r-1}\quad,\quad  \by, \bx\in\Omega_d\,.
\end{equation}
Note that in the case $r=1$ we have $B_1(\by,\bx) = \chi_{[\mathbf 0,\by)}(\bx)$. 
In connection with the definition of the class
$ \dot{ \bW}_{p}^r $ we remark here that
for the error of the cubature formula $\Lambda_m(\cdot,X_m)$ with
weights $\Lambda_m = (\lambda_1,\dots,\lambda_m)$ and nodes
$X_m = (\bx^1,\dots,\bx^m)$ the following relation holds. Let
$$
\Big |\Lambda_m(f,X_m) -\int_{\Omega_d} f( \bx) d \bx\Big| =:
R_m (\Lambda_m,X_m,f),
$$
then  
\begin{equation}\label{n1.15}
 \begin{split}
\Lambda_m\bigl( \dot{ \bW}_{p}^r,X_m\bigr)
&:= \sup_{f\in \dot{ \bW}_{p}^r}
R_m (\Lambda_m,X_m,f) \\
&=\left \|\sum_{\mu=1}^{m}\lambda_{\mu}B_r ( \by,\xi^{\mu})-
\prod_{j=1}^d (t_j^r /r!)\right\|_{p'} =:
D_r (X_m,\Lambda_m,d)_{p'}\,.
 \end{split}
\end{equation}
The quantity $D_r (X_m,\Lambda_m,d)_q$ in the case $r=1$,
$\Lambda_m = (1/m,\dots,1/m)$ is
 the classical $L_q$ discrepancy of the set of points $X_m$.  In the case 
$\Lambda_m=(1/m,\dots,1/m)$ we denote $D_r (X_m,d)_q:=D_r (X_m,(1/m,\dots,1/m),d)_q$ and call it the $r$-discrepancy (see \cite{Tem25} and \cite{Tsurv}).
Thus, the quantity $D_r (X_m,\Lambda_m,d)_{q}$ defined in (\ref{n1.15}) is a natural generalization of the concept of discrepancy 
\be\label{n1.15'}
D(X_m,L_q) = D(X_m,d)_{q} := D_1 (X_m,d)_{q}.
\ee
This generalization contains two ingredients: general weights $\Lambda_m$ instead of the special case of equal 
weights $(1/m,\dots,1/m)$ and any natural number $r$ instead of $r=1$. 
We note that in approximation theory we usually study the whole scale of smoothness classes rather than an individual smoothness class. 
The above generalization of discrepancy for arbitrary positive integer $r$ allows us to study the question: How does smoothness 
$r$ affect the rate of decay of generalized discrepancy?

The following result (see \cite{Tsurv}) connects the optimal errors of numerical integration for classes  $\dot{ \bW}_{p}^r$ 
and $\bW^r_p$. Recall the defintion of the quantities $\kappa_m$ in \eqref{eq:minimal} and \eqref{kappa2}. 
\begin{thm}\label{nT1.1} Let $1 \le p \le \infty$. Then for $r\in \N$
\be\label{n1.17}
 \kappa_m\bigl( \dot{ \bW}_p^r\bigr)\asymp
\kappa_m\bigl( \bW_p^r\bigr).
\ee
\end{thm}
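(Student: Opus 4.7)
The plan is to establish the two inequalities $\kappa_m(\dot{\bW}_p^r) \lesssim \kappa_m(\bW_p^r)$ and $\kappa_m(\bW_p^r) \lesssim \kappa_m(\dot{\bW}_p^r)$ separately by transferring cubature formulas between the periodic and non-periodic settings via a bounded linear operator that preserves the integral. The natural candidate is the Bykovskii-type tensor-product change of variable $T_\ell f(\bx) := f(\psi_\ell(x_1),\dots,\psi_\ell(x_d)) \prod_{j=1}^d \psi_\ell'(x_j)$, already used in \eqref{modi_Frolov}. For $\ell$ sufficiently large compared to $r$, the operator $T_\ell$ has two crucial properties: first, it is bounded from $\bW_p^r$ into $\dot{\bW}_p^r$ with constant independent of $m$, because $\psi_\ell \in C^\ell[0,1]$ vanishes to order $\ell$ at the endpoints $\{0,1\}$, forcing $T_\ell f$ to satisfy the boundary-vanishing inherent in the Bernoulli representation of $\dot{\bW}_p^r$; second, by the change-of-variables formula, $\int_{[0,1]^d} T_\ell f\,d\bx = \int_{[0,1]^d} f\,d\by$ since $\psi_\ell(0)=0$ and $\psi_\ell(1)=1$.

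For the direction $\kappa_m(\bW_p^r) \lesssim \kappa_m(\dot{\bW}_p^r)$, I take a near-optimal cubature $(\Lambda_m^*,X_m^*)$ for $\dot{\bW}_p^r$ with error at most $2\kappa_m(\dot{\bW}_p^r)$, and for each $g \in \bW_p^r$ I set $\tilde{\Lambda}_m(g) := \Lambda_m^*(T_\ell g, X_m^*)$. Since $T_\ell g \in c\cdot\dot{\bW}_p^r$, using the preservation of the integral,
\[
\Big|\tilde{\Lambda}_m(g) - \int_{[0,1]^d}\! g\,d\by\Big| = \Big|\Lambda_m^*(T_\ell g, X_m^*) - \int_{[0,1]^d}\! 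T_\ell g\,d\bx\Big| \le 2c\,\kappa_m(\dot{\bW}_p^r).
\]
Observing that $\tilde{\Lambda}_m$ is itself a cubature rule on $g$ with $m$ nodes $\psi_\ell(\bx^\mu_*)$ and weights $\lambda^*_\mu\prod_k \psi_\ell'((x^\mu_*)_k)$ yields the desired bound.

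The reverse direction $\kappa_m(\dot{\bW}_p^r) \lesssim \kappa_m(\bW_p^r)$ is analogous but requires a bounded linear operator $S:\dot{\bW}_p^r \to \bW_p^r$ preserving integrals. A natural recipe is to observe that every $f\in\dot{\bW}_p^r$ vanishes to order $r$ at the faces $x_j = 1$ (visible directly from the factor $(y_j-x_j)_+^{r-1}$ in the Bernoulli representation), so multiplication by a smooth cutoff $\chi$ equal to $1$ on $[\varepsilon,1-\varepsilon]^d$ and vanishing near the faces $x_j=0$ produces a function whose periodic extension to $\T^d$ lies in $c'\cdot\bW_p^r$ with $c'$ independent of $m$. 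Applying a near-optimal $\bW_p^r$-cubature, whose nodes can be arranged (by a translation-averaging argument over the torus) to lie in $[\varepsilon,1-\varepsilon]^d$, then yields a rule for $\dot{\bW}_p^r$ with the correct error.

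The main obstacle is the second direction: controlling the boundary effects so that the cutoff $\chi$ neither spoils the $\bW_p^r$-norm estimate nor significantly changes $\int f$. This is handled either by choosing $\chi$ to depend on a small but fixed $\varepsilon>0$ (so that the constants $c'$ and the integral defect are absorbed into implied constants), or, more robustly, via a duality argument based on the error representations in \eqref{n1.15}, comparing the $L_{p'}$-norm of $\sum_\mu \lambda_\mu B_r(\cdot,\bx^\mu) - \prod_j(y_j^r/r!)$ with that of its periodic counterpart built from the Bernoulli kernel $F_r$ and showing that the two extremal problems differ only by factors independent of $m$.
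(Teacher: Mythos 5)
Your plan for the direction $\kappa_m(\bW_p^r)\lesssim\kappa_m(\dot{\bW}_p^r)$ is sound in outline: the Bykovskii change of variable $T_\ell$ preserves the integral, carries each $\bW_p^r$ function to a function vanishing to high order at \emph{all} faces of the cube (hence in particular satisfying the one-sided vanishing conditions built into the $B_r$-representation of $\dot{\bW}_p^r$), and is bounded $\bW_p^r\to c\cdot\dot{\bW}_p^r$ once $\ell$ is large enough relative to $r$ and $p$ — this is precisely the Bykovskii/Dubinin mapping result cited in Section~8. Pulling back a near-optimal rule through $T_\ell$ does give an $m$-node rule with comparable error, exactly as you write.

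The reverse direction is where a genuine gap opens up, and it is fatal as written. The cubature formula you construct from a near-optimal $\bW_p^r$-rule applied to the periodization of $\chi f$ has worst-case error on $\dot{\bW}_p^r$ of size $\bigl|\int f - \int \chi f\bigr| + c(\varepsilon)\,\kappa_m(\bW_p^r)$. The first term is a fixed quantity of order $\varepsilon\cdot\|f\|_\infty$ which does not decay as $m\to\infty$, so it cannot be absorbed into the implicit constant in $\kappa_m(\dot{\bW}_p^r)\lesssim\kappa_m(\bW_p^r)$; and letting $\varepsilon=\varepsilon(m)\to 0$ makes $c(\varepsilon)$ (from the derivatives of $\chi$ entering the $\bW_p^r$-norm of $\widetilde{\chi f}$) blow up. In addition, a single torus translation cannot in general move a well-spread $m$-point node set entirely inside $[\varepsilon,1-\varepsilon]^d$, so even the step ``$\chi f$ and $f$ agree at all nodes'' is unjustified. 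The fallback you gesture at — comparing the extremal $L_{p'}$-problems for $\sum_\mu\lambda_\mu B_r(\cdot,\bx^\mu)-\prod_j y_j^r/r!$ and its Bernoulli-kernel counterpart from \eqref{n1.15} — is indeed the right instinct and is essentially how this direction is handled in \cite{Tsurv}: one exploits that, coordinatewise, $F_r(\cdot-y)$ and the periodization of $B_r(y,\cdot)$ differ by a polynomial of degree $<r$ in $y$ (an Euler--Maclaurin-type identity), so the two discrepancy problems coincide up to finite-rank corrections which cost only $O_d(1)$ extra nodes. But this step must actually be carried out; as it stands your proposal only establishes one inequality in \eqref{n1.17}.
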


\subsubsection*{General setting}  

Let us discuss two problems: (I) numerical integration for a function class defined by a kernel $K(\bx,\by)$; 
(II) discrepancy with respect to the collection of functions $\{K(\cdot,\by)\}_{\by\in[0,1]^d}$. As for Problem (I) 
let $1\le q\le \infty$. We define a set $\mathcal K_q$ of kernels possessing the following properties: 
Let $K(\bx,\by)$ be a measurable function on $\Omega^1\times\Omega^2$. We assume that for any $\bx\in\Omega^1$ we have $K(\bx,\cdot)\in L_q(\Omega^2)$; 
for any $\by\in \Omega^2$ the $K(\cdot,\by)$ is integrable over $\Omega^1$ and $\int_{\Omega^1} K(\bx,\cdot)d\bx \in L_q(\Omega^2)$. 
For a kernel $K\in \mathcal K_{p'}$ we define the class
\be\label{n2.1}
\bW^K_p :=\Big\{f:f=\int_{\Omega^2}K(\bx,\by)\varphi(\by)d\by,\quad\|\varphi\|_{L_p(\Omega^2)}\le 1\Big\}.  
\ee
Then each $f\in \bW^K_p$ is integrable on $\Omega^1$ (by Fubini's theorem) and defined at each point of $\Omega^1$. We denote for convenience
$$
J(\by):=J_K(\by):=\int_{\Omega^1}K(\bx,\by)d\bx.
$$
For a cubature formula $\Lambda_m(\cdot,X_m)$ we have
\begin{equation}\label{n2.2}
  \begin{split}
    \Lambda_m(\bW^K_p,X_m) &= \sup_{\|\varphi\|_{L_p(\Omega^2)}\le 1} \Big|\int_{\Omega^2}\bigl( J(\by)-\sum_{\mu=1}^m\lambda_\mu K(\bx^\mu,\by)\bigr)\varphi(\by)d\by\Big|\\
    &=\Big\| J(\cdot)-\sum_{\mu=1}^m\lambda_\mu K(\bx^\mu,\cdot)\Big\|_{L_{p'}(\Omega^2)}.
  \end{split}
\end{equation}
We use the a similar definition as above of the error of optimal cubature formula with $m$ nodes for a class $\bW$ 
\begin{equation}\label{kappa2}
\kappa_m(\bW):=\inf_{ \lambda_1,\dots,\lambda_m; \bx^{1},\dots,
\bx^m}\Lambda_m(\bW,X_m).
\end{equation}
Thus, by (\ref{n2.2})
\be\label{n2.2'}
\kappa_m(\bW^K_p) = \inf_{ \lambda_1,\dots,\lambda_m; \bx^{1},\dots,
\bx^m}\Big\| J(\cdot)-\sum_{\mu=1}^m\lambda_\mu K(\bx^\mu,\cdot)\Big\|_{L_{p'}(\Omega^2)}.
\ee
Note, that the error of numerical integration of those classes is closely related to the $m$-term approximation of a special 
function $J(\cdot)$ with respect to a dictionary $\{K(\bx,\cdot)\}_{\bx\in \Omega^1}$. This leads to interesting applications of the recently developed
theory of greedy approximation (cf.\ Section \ref{Sect:bestmterm} above) in numerical integration. We refer the reader to \cite[Sect.\
2]{Tsurv}.

Let us now consider Problem (II). The following definition is from \cite{VT149}.
\begin{defi}\label{mcD3.1} The $(K,q)$-discrepancy of a cubature formula $\Lambda_m$ with 
nodes $X_m = \{\bx^1,\dots,\bx^m\}$ and weights $\Lambda_m = (\lambda_1,\dots,\lambda_m)$ is defined as 
$$
 D(\Lambda_m,K,q):=\Big\|\int_{[0,1]^d} K(\bx,\by)d\bx - \sum_{\mu=1}^m \lambda_\mu K(\bx^\mu,\by)\Big\|_{L_q([0,1]^d)}.
$$
\end{defi} 
The particular case $K(\bx,\by) := \chi_{[0,\by]}(\bx):=\prod_{j=1}^d \chi_{[0,y_j]}(x_j)$
leads to the classical concept of the $L_q$-discrepancy. Therefore, we get the following ``duality'' between 
numerical integration and discrepancy (in a very general form)
$$
D(X_m,\Lambda_m,K,p') = \Lambda_m(\bW^K_p,X_m).
$$
Certainly, in the above definitions we can replace the $L_p$ space by 
more general Banach spaces $X$. Then we still have ``duality'' between discrepancy in the norm of $X$ and numerical integration of classes defined in $X'$. 

Let us define the minimal weighted $r$-discrepancy by 
$$
  D^{w}_r(m,d)_q := \inf_{X_m, \Lambda_m} D_r(X_m,\Lambda_m,d)_q\quad,\quad 1\le q\le \infty\,,
$$
whereas the minimal $r$-discrepancy is given by 
$$
  D_r(m,d)_q:=  \inf_{X_m} D_r(X_m,d)_q\quad,\quad 1\le q\le \infty\,,
$$
with the minimal (classical) discrepancy $D(m,d)_q:=D_1(m,d)_q$ as a special case. Clearly, it holds
$$
   D^{w}_r(m,d)_q \leq D_r(m,d)_q\,.
$$

\subsubsection*{Historical remarks on discrepancy}\index{Discrepancy}

To begin with let us mention that there exist several monographs and surveys on discrepancy theory, see \cite{BC, DrTi97, Ma99} to mention just a few. We would like to point out the following milestones 
regarding upper and lower bounds for the minimal discrepancy. 

{\bf Lower bounds.} Let us start with K. Roth \cite{Ro} who proved in 1954 that
\be\label{hi5.1}
D(m,d)_2 \ge C(d)m^{-1}(\log m)^{(d-1)/2}. 
\ee
In 1972 W. Schmidt \cite{Sch1} proved
\be\label{hi5.2}
D(m,2)_\infty \ge Cm^{-1}\log m . 
\ee
In 1977 W. Schmidt \cite{Sch} proved	
\be\label{hi5.3}
D(m,d)_q \ge C(d,q)m^{-1}(\log m)^{(d-1)/2},\qquad 1<q\le \infty.  
\ee
In 1981 G. Hal{\' a}sz \cite{Ha} proved
\be\label{hi5.4}
D(m,d)_1 \ge C(d)m^{-1}(\log m)^{1/2}. 
\ee
The following conjecture has been formulated in \cite{BC} as an excruciatingly difficult great open problem.
\begin{conj}\label{hiCon5.1} We have for $d\ge 3$
$$
D(m,d)_\infty \ge C(d)m^{-1}(\log m)^{d-1}. 
$$
\end{conj}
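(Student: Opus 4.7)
The plan is to attack Conjecture \ref{hiCon5.1} via the Haar function method combined with the $d$-dimensional Small Ball Inequality \eqref{2.6.5} for the Haar system, whose full strength is itself conjectural for $d\ge 3$. The starting point is the standard observation that for any point set $X_m$ with $m\asymp 2^n$, the discrepancy function
$$
D_m(\by):= \frac{1}{m}\sum_{j=1}^m \chi_{[\mathbf{0},\by]}(\bx^j) - \prod_{i=1}^d y_i
$$
admits a tensorized Haar expansion whose coefficients $\langle D_m, H_I\rangle$ encode the combinatorial positioning of $X_m$ relative to dyadic rectangles. A volume/pigeonhole argument at scale $n$ (there are $\asymp n^{d-1}$ shapes $\bs$ with $s_1+\dots+s_d=n$ and $2^n$ boxes of each shape, versus only $m\asymp 2^n$ points) yields, for each shape $\bs$, a choice of signs $\varepsilon_I^{\bs}\in\{\pm 1\}$ with $|I_j|=2^{-s_j}$ such that
$$
\Big\langle D_m, \sum_{I:|I_j|=2^{-s_j}}\varepsilon_I^{\bs} H_I \Big\rangle \gtrsim 1.
$$

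Summing over shapes $\bs$ with $|\bs|_1=n$ and testing against the composite function $\Phi(\bx):= \sum_{\bs:|\bs|_1=n}\sum_{I}\varepsilon_I^{\bs}H_I(\bx)$, the $L_\infty$–$L_1$ duality gives
$$
\|D_m\|_\infty \cdot \|\Phi\|_1 \;\ge\; |\langle D_m,\Phi\rangle| \;\gtrsim\; m^{-1}\,n^{d-1}.
$$
The conjecture would follow if one could establish $\|\Phi\|_1 \lesssim 1$ uniformly in $n$. This is exactly the content (in dual form) of the full $d$-dimensional Small Ball Inequality for the Haar system \eqref{2.6.5}: it guarantees that the $L_\infty$-norm of such a composite Haar polynomial controls the sum of $L_1$-norms of its hyperbolic layers without the $n^{(d-2)/2}$ loss. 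The two-dimensional case is Talagrand's inequality \eqref{2.6.2}, and this is precisely why Schmidt's bound \eqref{hi5.2} attains the target rate $m^{-1}\log m$ for $d=2$.

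The hard part—and the reason Conjecture \ref{hiCon5.1} is classified as an outstanding open problem—is that \eqref{2.6.5} itself is open for $d\ge 3$. The best currently available version, due to Bilyk–Lacey and Bilyk–Lacey–Vagharshakyan, replaces the exponent $(d-2)/2$ by $(d-1)/2-\delta(d)$ for some small $\delta(d)>0$. Plugging this into the scheme above yields only the improvement
$$
D(m,d)_\infty \;\gtrsim\; m^{-1}(\log m)^{(d-1)/2+\delta(d)},
$$
which is still far from the target $(\log m)^{d-1}$. Consequently I would split the effort into two directions: first, attempt to strengthen the Bilyk–Lacey–Vagharshakyan argument by refining their conditional-expectation / Riesz-product technique toward the sharp exponent $(d-2)/2$, which would immediately close the problem via the scheme above; second, in parallel, seek a transference mechanism from the trigonometric Small Ball Inequality \eqref{2.6.3}–\eqref{2.6.4} (where Lemma \ref{L2.6.1}'s Riesz-product construction is available in $d=2$) to the Haar setting, possibly using a $\bB^0_{\infty,2}$-type smoothing as in Theorem \ref{Hup}. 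The main obstacle in either route is the absence of any known Riesz-product construction in $d\ge 3$ that both lives in a hyperbolic layer and has uniformly bounded $L_1$ norm—constructing such a family, even probabilistically, would be the decisive breakthrough.
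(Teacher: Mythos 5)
The statement you are examining is Conjecture \ref{hiCon5.1}, which the paper explicitly flags as ``an excruciatingly difficult great open problem'' and for which no proof exists -- so there is nothing in the paper to compare your argument against, and you were right to present a strategy rather than a proof. Your duality setup ($\|D_m\|_\infty\cdot\|\Phi\|_1\ge|\langle D_m,\Phi\rangle|\gtrsim m^{-1}n^{d-1}$ for a suitable signed Haar polynomial $\Phi$), and your recognition that the whole program hinges on producing a test function $\Phi$ supported on a hyperbolic layer with $\|\Phi\|_1\lesssim 1$, is the correct framing of the obstruction, and your closing remark about the absence of a Riesz-product construction in $d\ge 3$ is exactly the right thing to say.

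However, there is a genuine error in the middle of your argument. You assert that ``$\|\Phi\|_1\lesssim 1$ uniformly in $n$ $\ldots$ is exactly the content (in dual form) of the full $d$-dimensional Small Ball Inequality for the Haar system \eqref{2.6.5}: it guarantees that the $L_\infty$-norm $\ldots$ controls the sum of $L_1$-norms of its hyperbolic layers \emph{without the $n^{(d-2)/2}$ loss}.'' This is not what \eqref{2.6.5} says: that inequality \emph{carries} the $n^{(d-2)/2}$ factor on the $L_\infty$ side, and dualizing it gives only $\|\Phi\|_1\lesssim n^{(d-2)/2}$, not $\|\Phi\|_1\lesssim 1$. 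Feeding that into your duality chain yields $\|D_m\|_\infty\gtrsim m^{-1}n^{d-1}/n^{(d-2)/2}=m^{-1}n^{d/2}$, i.e.\ the exponent $d/2$, not $d-1$. The paper states this explicitly just after \eqref{star}: ``The conjectured inequality \eqref{2.6.5} would imply the stronger lower bound $C(d)m^{-1}(\log m)^{d/2}$. Compared to Conjecture \ref{hiCon5.1} there would still be a large gap in the power of the logarithm if $d>2$.'' What your argument actually requires -- a hyperbolic-layer Riesz product with uniformly bounded $L_1$ norm -- is a \emph{strictly stronger} statement than \eqref{2.6.5}, and the $n^{(d-2)/2}$ factor in \eqref{2.6.5} is, for generic $\pm 1$ coefficients, known to be necessary (lacunary/random examples). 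So even a complete proof of the conjectural SBI would not close Conjecture \ref{hiCon5.1}; any route through your scheme must exploit the very special structure of the discrepancy Haar coefficients, which is an additional idea not present in your write-up and which your own final paragraph implicitly concedes is missing.
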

This problem is still open. Recently, in 2008, D. Bilyk and M. Lacey \cite{BL} and D. Bilyk, M. Lacey, and A. Vagharshakyan \cite{BLV} proved
\begin{equation}\label{star}
D(m,d)_\infty \ge C(d)m^{-1}(\log m)^{(d-1)/2 + \delta(d)} 
\end{equation}
with some $0<\delta(d)<1/2$, which essentially improved on the lower bound 
$$
  D(m,d)_{\infty} \ge C(d)m^{-1}(\log m)^{(d-1)/2}\Big(\frac{\log\log m}{\log \log \log m}\Big)^{\frac{1}{2d-2}}\,,
$$
see Baker \cite{Bak99}. The approach in \cite{BL, BLV} is based on an improved
version of the Small Ball Inequality\index{Inequality!Small Ball}, see Subsection \ref{subsect:SBI} above. The conjectured inequality \eqref{2.6.5} would imply 
the stronger lower bound $C(d)m^{-1}(\log m)^{d/2}$. Compared to Conjecture \ref{hiCon5.1} there would still be a large gap in the power of the 
logarithm if $d>2$. Note also the connection to metric entropy in $L_\infty$, see Conjecture \ref{small_ball_conj}, 
and the Small Ball Problem \index{Small Ball Problem}in probability theory, see Subsection \ref{sect:ESBP}.

There seems to be some further progress in connection with Conjecture \ref{hiCon5.1}. M. Levin \cite{Le13, Le14, Le15} 
recently proved that several widely used point constructions (like digital nets, Halton points, Frolov lattice) 
satisfy the lower bound proposed in Conjecture \ref{hiCon5.1}. The method of proof is deeply involved and uses 
nontrivial tools from algebraic number theory. 

The first result in estimating the weighted $r$-discrepancy was obtained in 1985 by V.A. Bykovskii \cite{By}
\be\label{hi5.5}
D_r^w(m,d)_2 \ge C(r,d)m^{-r}(\log m)^{(d-1)/2}. 
\ee
This result is a generalization of Roth's result (\ref{hi5.1}).
The generalization of Schmidt's result (\ref{hi5.3}) was obtained by V.N. Temlyakov \cite{Tem22} in 1990
\be\label{hi5.6}
D_r^w(m,d)_q \ge C(r,d,q)m^{-r}(\log m)^{(d-1)/2}, \qquad 1<q\le \infty. 
\ee
In 1994 V.N. Temlyakov \cite{Tem25} proved that for $r$ even integers we have for the minimal $r$-discrepancy  
\be\label{hi5.7}
D_r(m,d)_\infty \ge C(r,d)m^{-r}(\log m)^{d-1}. 
\ee
This result encourages us to formulate the following generalization of Conjecture \ref{hiCon5.1}.
\begin{conj}\label{hiCon5.2} For all $d,r\in \N$ we have
$$
D_r^w(m,d)_\infty \ge C(r,d)m^{-r}(\log m)^{d-1}. 
$$
\end{conj}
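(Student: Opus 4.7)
The statement in question (Conjecture \ref{hiCon5.2}) is not a theorem but an open conjecture, so what I can offer is a strategic plan for attacking it rather than a proof. The plan is to recast the weighted $r$-discrepancy as a numerical integration problem and then produce a universal ``fooling function'' for arbitrary cubature rules, using Riesz product techniques refined by a Small Ball type inequality.

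First I would use the identities \eqref{n1.15} and Theorem~\ref{nT1.1} to translate the conjectured bound into the (equivalent up to constants) statement
\[
\kappa_m(\bW^r_1) \;\gtrsim\; m^{-r}(\log m)^{d-1}.
\]
Thus it suffices to show: for every cubature rule $\Lambda_m(\cdot,X_m)$ with $m$ nodes there exists a function $f \in \bW^r_1$ with $\int_{\T^d} f\,d\bx - \Lambda_m(f,X_m) \gtrsim m^{-r}(\log m)^{d-1}$. Given Theorem~\ref{T4.2}/\ref{T4.4} we already have such a bound with exponent $(d-1)/2$ for $p>1$; the task is to recover the full exponent $d-1$ in the $L_1$-endpoint (dually, the $L_\infty$-endpoint for discrepancy).

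The heart of the construction, which is the key second step, is a hyperbolic cross Riesz product\index{Riesz products} adapted to the node set $X_m$. Choose $n$ so that $2^n n^{d-1} \asymp m$, and for each $\bs$ with $|\bs|_1 = n$ pick a real polynomial $t_\bs \in \mathcal{T}(\rho'(\bs))$ with $\|t_\bs\|_\infty \le 1$ that is designed, via a volume / pigeonhole argument (a multivariate analogue of Lemma~\ref{L6.2.2} refined as in \cite{KTE2}), so that $\sum_{\bs:|\bs|_1 = n} t_\bs$ correlates negatively with $\Lambda_m$ on $X_m$. Form the Riesz product
\[
\Phi(\bx) \;=\; \prod_{|\bs|_1 = n}\bigl(1 + t_\bs(\bx)\bigr),
\]
so that by Lemma~\ref{L2.6.1} one has $\Phi = 1 + \sum_{|\bs|_1 = n} t_\bs + R$ with $R$ orthogonal to $\mathcal{T}(Q_{n+a-6})$, hence $\|\Phi\|_1 = 1$. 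The candidate fooling function would then be $f := c\,2^{-rn} n^{-(d-1)} D^{-r}_\alpha \Phi$, normalized so that $\|f\|_{\bW^r_1} \le 1$; by construction $\int f \gtrsim n^{d-1} 2^{-rn} \asymp m^{-r}(\log m)^{d-1}$ while $\Lambda_m(f,X_m) \le 0$ (or negligible).

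The main obstacle is the last step, namely controlling the lower bound
\[
\Bigl\|\sum_{|\bs|_1=n} t_\bs\Bigr\|_\infty \;\gtrsim\; n^{d-1} \bigl\|\sum_{|\bs|_1=n} t_\bs\bigr\|_1 \Big/ n^{(d-2)/2}
\]
in the correlation / orthogonality argument. This is precisely the $d$-dimensional Small Ball Inequality \eqref{2.6.4} for the trigonometric system (Open problem~1.2/2.5), which is known for $d=2$ via Lemma~\ref{L2.6.1} but open for $d \ge 3$; this is exactly the bottleneck that limits the lower bound in \eqref{star} to $(d-1)/2 + \delta(d)$ in the classical case $r=1$. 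Any genuine progress toward Conjecture~\ref{hiCon5.2} for $d \ge 3$ therefore seems to require either a proof of \eqref{2.6.4}, or a substitute that exploits the extra smoothness $r>1$ (for instance an $r$-dependent variant in the spirit of Temlyakov's \cite{Tem25} proof of \eqref{hi5.7} for even $r$, where polynomial-in-$r$ ``reproducing'' kernels replace the Haar/trigonometric product structure); extending that $r$-even argument to arbitrary $r$ and, crucially, to arbitrary weights $\Lambda_m$ rather than equal weights, is the second layer of difficulty I would expect to encounter.
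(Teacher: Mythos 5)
You correctly recognize that Conjecture~\ref{hiCon5.2} is an open conjecture, not a theorem, and the paper provides no proof of it; your reduction via \eqref{n1.15} and Theorem~\ref{nT1.1} to the statement $\kappa_m(\bW^r_1)\gtrsim m^{-r}(\log m)^{d-1}$ is exactly right and is in fact Conjecture~\ref{C6.5.1} in the paper. Your sketch of the Riesz-product construction and the identification of the failure of Lemma~\ref{L2.6.1} in $d\ge 3$ as a structural obstacle is also sound in spirit.

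However, there is a substantive misdiagnosis in your analysis of the bottleneck. You write that the key missing ingredient is the $d$-dimensional Small Ball Inequality \eqref{2.6.4} and that ``this is exactly the bottleneck,'' implying that a proof of \eqref{2.6.4} would close the gap. It would not. The paper is explicit about this in the discussion following Conjecture~\ref{hiCon5.1}: even the full conjectured Small Ball Inequality \eqref{2.6.5} would only yield the lower bound $C(d)m^{-1}(\log m)^{d/2}$ for the classical star-discrepancy, which is still far from the conjectured $(\log m)^{d-1}$. The exponent $(d-2)/2$ in \eqref{2.6.4} boosts the trivial $(d-1)/2$ of Theorem~\ref{T6.2.1a} only to $d/2$, not to $d-1$. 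Your displayed inequality conflates these mechanisms: it makes it look like the SBI delivers $n^{d-1}$, whereas the $n^{d-1}$ in the $d=2$ result comes from the genuine Riesz-product factorization in Lemma~\ref{L2.6.1}, which simply does not survive the passage to $d\ge 3$ and has no known substitute. The Small Ball Inequality is a weak replacement for that factorization, not an equivalent one.

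As you correctly note at the end, the only case in which the full $(\log m)^{d-1}$ is actually known is Temlyakov's result \eqref{hi5.7} for even $r$, via a fundamentally different argument that exploits the smoothness-$r$ kernel structure rather than any Riesz-product/Small-Ball machinery; and Theorem~\ref{Thm:Wr10} shows that the weighted bound is achievable under the restriction $\sum|\lambda_\mu|\le B$. The honest picture is therefore: for $r\ge 2$ even and \emph{equal} weights the result is known; making it work for odd $r$, fractional $r$, and, crucially, for arbitrary real weights (which is what the ``$w$'' in $D_r^w$ demands) are all separate difficulties, and a proof of the Small Ball Inequality is not sufficient (and may not even be necessary) for any of them. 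I would restate your final paragraph to make this hierarchy explicit, rather than presenting \eqref{2.6.4} as the single missing ingredient.
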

The above lower estimates for $D_1^w(m,d)_q$ are formally stronger than the corresponding estimates for $D(m,d)_q$ because in 
$D_1^w(m,d)_q$ we are in addition optimizing over the weights $\Lambda_m$. 

{\bf Upper bounds.} We now present the upper estimates for the discrepancy in various settings. In 1956 H. Davenport \cite{Da} proved that
$$
D(m,2)_2 \le Cm^{-1}(\log m)^{1/2}.
$$
Other proofs of this estimate were later given by I.V. Vilenkin \cite{Vil}, J.H. Halton and S.K. Zaremba \cite{HZ}, and K. Roth \cite{Ro2}. In 1979 K. Roth \cite{Ro3} proved
$$
D(m,3)_2 \le Cm^{-1}\log m
$$
and in 1980 K. Roth \cite{Ro4} and K.K. Frolov \cite{Fro3} proved
$$
D(m,d)_2 \le C(d)m^{-1}(\log m)^{(d-1)/2}.
$$
In 1980 W. Chen \cite{Che} (and later in 1994 M. Skriganov \cite{Sk}) proved
$$
D(m,d)_q \le C(d)m^{-1}(\log m)^{(d-1)/2}, \qquad 1<q<\infty.
$$
Upper bounds for the star-discrepancy have been know since 1960. J.M Hammersley \cite{Ham60} and J.H. Halton \index{Quasi-Monte Carlo!Hammersley points} 
\index{Quasi-Monte Carlo!Halton points}
\cite{Ha60} were the first who gave explicit point constructions proving that 
\begin{equation}\label{f100}
   D(m,d)_{\infty} \le C(d)m^{-1} (\log m)^{d-1}\,.
\end{equation}
As we will point out below in detail, see Subsection \ref{exconst}, a bound like \eqref{f100} is non-trivial only for $m>e^{d-1}$ (without additional knowledge on 
the involved constant). For many applications this threshold is prohibitively large. 
What concerns the ``preasymptotic range'' for $m$ let us mention a surprising (non-constructive) result by S. Heinrich, E. Novak, G. Wasilkowski, H. Wo\'zniakowski \cite{HeNoWaWo01} from 2001. 
Via probabilistic arguments based on deep results from the theory of empirical processes (Dudley 1984, Talagrand 1994) and combinatorics (Haussler 1995) the authors in \cite{HeNoWaWo01} proved the existence of a universal (but unknown)
constant $c>0$ such that for arbitrary dimension $d \in \N$ 
\begin{equation}\label{eq:He01}
    D(m,d)_{\infty} \leq c\sqrt{d/m}\quad,\quad m\in \N\,.
\end{equation}
Essentially, it is shown that $m$ points drawn uniformly at random in $[0,1]^d$ satisfy the upper bound in \eqref{eq:He01} with non-zero probability. 
A simple proof of the above result is given by Aistleitner \cite{Ai11} with constant $c=10$. Note, that the expected star-discrepancy of a random point set is of order 
$\sqrt{d/m}$ as shown by Doerr \cite{Doe14}. Hinrichs \cite{Hi04} proved a lower bound for $D(m,d)_{\infty}$ which is also polynomial in $d/m$. In fact, his proof 
shows that this lower bound is also valid for $D_1^w(m,d)_{\infty}$\,. For further comments and open problems connected with the 
star-discrepancy see Heinrich \cite{He03}.

Let us also mention the following constructive result. Greedy approximation techniques allows us to build  constructive sets $X_m$ and $X_m(p)$, 
$1\le p<\infty$, such that for $d,m\geq 2$ (see \cite{Tbook}, pp. 402--403)
\begin{equation}
 \begin{split}
    D(X_m,L_\infty) &\le c_1 d^{3/2}(\max\{\ln d,\ln m\})^{1/2}m^{-1/2}\\
    D(X_m(p),L_p)&\le c_2 p^{1/2}m^{-1/2},\quad 1\le p<\infty\,,
 \end{split}
\end{equation}
with effective absolute constants $c_1$ and $c_2$. 

We finally comment on upper bounds for the weighted $r$-discrepancy. The estimate in Theorem \ref{Thm:Fro1} above together with Theorem \ref{nT1.1} implies
$$
D_r^w(m,d)_\infty \le C(r,d)m^{-r}(\log m)^{d-1},\qquad r\ge 2.
$$
It is clear that upper estimates for $D(m,d)_q$ are stronger than the same upper estimates for $D_1^w(m,d)_q$. Let us finally emphasize 
that the Smolyak nets are very poor from the point of view of discrepancy as it was shown by 
N. Nauryzbayev and N. Temirgaliyev (see \cite{NaTe09} and \cite{NaTe12}) .

\subsection{Open problems and historical comments}\index{Open problems!Numerical integration}
\label{OP_int}
Let us begin with a list of open problems which we will partly discuss below. \\

\noindent{\bf Open problem 8.1} Find the right order of the optimal error of numerical integration
$\kappa_m(\bW^r_1)$ if $r\geq 1$ (see Conjecture \ref{C6.5.1} below). \\

\noindent{\bf Open problem 8.2} Find the right order of the optimal error of numerical integration $\kappa_m(\bW^r_p)$
in the range of small smoothness\index{Small smoothness} (see Conjecture \ref{C6.5.1b}).\\

\noindent{\bf Open problem 8.3} Find the right order of the optimal error of numerical integration
$\kappa_m(\bW^r_\infty)$ if $r>0$ (see Conjecture \ref{C6.5.2} below). \\

\noindent{\bf Open problem 8.4} Find the right order of the star-discrepancy, see \eqref{f100} and 
Conjecture \ref{hiCon5.1} above.\\

Let us give some historical comments on the subject which go back to the 1950s. 
We begin with the lower estimates for cubature formulas. The results from
Theorem \ref{T6.2.1a} have forerunners. For the class $\bW^r_2$ Theorem
\ref{T6.2.1a} was established in \cite{By} by a different method. Theorem
\ref{T6.2.1a} was proved in \cite{Tem22}. In the case $\theta=\infty$ the lower
bound in Theorem \ref{T6.2.3} has been observed by Bakhvalov \cite{Bakh4} in
1972. Theorem \ref{Thm:Wr10} was
proved in \cite{Tem25}. For recent new proofs of Theorems \ref{T6.2.1a},
\ref{T6.2.3} we refer to \cite{UU14} and \cite{DU14}.
Concerning lower bounds
there are several open problems. Let us formulate them as conjectures, see 
\cite{Tsurv}.
\begin{conj}\label{C6.5.1} For any $d\ge 2$ and any $r> 1$ we have
$$
\kappa_m(\bW^r_1) \ge C(r,d)m^{-r}(\log m)^{d-1}.
$$
\end{conj}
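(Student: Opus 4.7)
The plan is to reformulate the problem as a weighted $r$-discrepancy lower bound and attack it through a combination of fooling-function techniques, $\bW^r_1$-adapted Riesz products, and a dichotomy on the sum of weights. Using the integral representation of $\bW^r_1$ via the Bernoulli kernel $B_r$ from Subsection \ref{numintdisc} together with Theorem \ref{nT1.1}, the conjecture is equivalent (up to constants depending only on $r$ and $d$) to
\[
D_r^w(m,d)_\infty \;\gtrsim\; m^{-r}(\log m)^{d-1},
\]
so one must show that $J_r(\by)=\int B_r(\bx,\by)\,d\bx$ cannot be approximated in $L_\infty$ by any weighted $m$-term sum of translates $B_r(\bx^\mu,\cdot)$ to accuracy better than $m^{-r}(\log m)^{d-1}$.

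A first attempt is the fooling-polynomial method of Theorem \ref{T6.2.1a}: setting $t=\sum_{|\bs|_1=n}t_\bs$ with the blocks $t_\bs$ of Lemma \ref{L6.2.2} yields $\hat t(0)-\Lambda_m(t,X_m)\gtrsim n^{d-1}$ for every $m$-node cubature formula with $m\asymp 2^n$; however, the Bernstein inequality (Theorem \ref{T2.4.3}) together with $\|t_\bs\|_\infty\le1$ gives only $\|t\|_{\bW^r_1}\lesssim 2^{rn}n^{d-1}$, producing the lower bound $m^{-r}$ with no logarithmic gain. The local atomic fooling function $g_{r,\theta}$ from Subsection \ref{sect:lbounds_int} faces a dual obstruction: by Proposition \ref{atomic} and the embedding $\bB^r_{1,1}\hookrightarrow\bW^r_{1,\alpha}$ of Lemma \ref{emb:p=1} one may only take $\theta=1$, for which $\int g_{r,1}\asymp 2^{-\ell r}$ again carries no logarithm, whereas the $\theta=\infty$ witness with the correct integral lies in $\bB^r_{1,\infty}$ but not in $\bW^r_1$. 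A new style of fooling function is therefore required.

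My proposed construction uses $\bW^r_1$-adapted Riesz products in the spirit of Lemma \ref{L2.6.1} and Subsection \ref{subsect:SBI}. For a level $n$ with $2^n\asymp m$, choose blocks $t_\bs\in\Tr(\rho'(\bs))$, $|\bs|_1=n$, with $\|t_\bs\|_\infty\le 1/2$, that separate the cubature nodes on each web $W(\bs)$ of Definition \ref{D4.1}, and form $\Phi=\prod_\bs(1+t_\bs)$. Then $\Phi\ge 0$, $\int\Phi=1$, and Lemma \ref{L2.6.1} identifies the linear part of $\Phi-1$ as $\sum_\bs t_\bs$, with higher-order terms supported in frequencies outside $Q_{n+O(1)}$. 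The technical heart of the argument is to prove, after a suitable convolution with a de la Vall\'ee Poussin smoother from Subsection \ref{multpol}, that $\|\Phi-1\|_{\bW^r_1}\lesssim 2^{rn}$ without the parasitic $n^{d-1}$ factor that is intrinsic to the additive polynomial estimate; the mechanism is that the multiplicative product structure forces cancellations in the $L_1$ norms of the mixed derivatives that are unavailable for a mere trigonometric sum. Combined with a Small Ball-type lower bound $\|\sum_\bs t_\bs\|_1\gtrsim n^{d-1}$, this would yield the conjectured bound whenever $\sum_\mu|\lambda_\mu|$ is of controlled size; the complementary regime of unbounded weights can be handled by testing against localized atoms from Proposition \ref{atomic} with signs $\sign(\lambda_\mu)$, while the bounded-weight regime is already covered by Theorem \ref{Thm:Wr10}.

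The hardest step is the Small Ball-type lower bound on $\|\sum_\bs t_\bs\|_1$ in dimension $d\ge 3$. For $d=2$ it is supplied by the bivariate inequality \eqref{2.6.3}, and the argument should close there, settling the conjecture in the plane. For $d\ge 3$ one genuinely needs the multidimensional Small Ball Inequality \eqref{2.6.4}, which is itself Open problem 2.5; the best currently available substitute, due to \cite{BL,BLV}, yields only $(\log m)^{(d-1)/2+\delta}$, and the case $r=1$ of Conjecture \ref{C6.5.1} is moreover equivalent, via the discrepancy reformulation, to the star-discrepancy Conjecture \ref{hiCon5.1}. The extra smoothness $r>1$ affords more flexibility in choosing the Riesz-product blocks $t_\bs$, but I do not see how it bypasses the need for \eqref{2.6.4}, so any complete proof of Conjecture \ref{C6.5.1} in dimension $d\ge3$ will almost certainly require, or effectively contain, a proof of the multidimensional Small Ball Inequality.
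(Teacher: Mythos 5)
This statement is \emph{Conjecture}~\ref{C6.5.1} --- an explicitly labeled open problem (Open~problem~8.1, see also Conjecture~\ref{hiCon5.2}). The paper offers no proof, so there is nothing to compare your proposal against; what I can do is assess whether your argument would constitute a proof. It does not, and the gap is not merely in dimensions $d\ge 3$ as you acknowledge: it is already present in the $d=2$ case you claim to settle.

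The step you label ``the technical heart of the argument,'' namely $\|\Phi-1\|_{\bW^r_1}\lesssim 2^{rn}$ with no $n^{d-1}$ factor, is asserted via the phrase ``the multiplicative product structure forces cancellations,'' with no mechanism given. Observe that the linear part $\sum_{|\bs|_1=n}t_\bs$ of $\Phi-1$ has $\asymp n^{d-1}$ dyadic blocks, each of $\bW^r_1$-norm $\asymp 2^{rn}$ (since $t_\bs\in\Tr(\rho'(\bs))$ with $|\bs|_1=n$ and $\|t_\bs\|_1\asymp 1$, the corresponding integrand $\varphi_\bs$ in the Bernoulli representation has $\|\varphi_\bs\|_1\asymp 2^{rn}$). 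Since the blocks live on disjoint frequency rectangles, their $\varphi_\bs$ contributions cannot cancel frequency-by-frequency, and the triangle-inequality bound $\lesssim 2^{rn}n^{d-1}$ is the natural one; beating it by a factor $n^{d-1}$ in $L_1$ is precisely the sort of cancellation that the entire hyperbolic-cross theory finds hopelessly difficult in the $L_1$ and $L_\infty$ endpoints (see the discussion of Lemma~\ref{L2.3.2} and of Open~problem~2.1 in Section~2). Adding the higher-order Riesz-product terms $R$ only makes $\Phi-1$ larger in $\bW^r_1$, not smaller. You have identified the crux, but you have not overcome it. The paper itself presents the conjecture for every $d\ge 2$; the known positive results for $d=2$ --- Theorem~\ref{Thm:Wr10} and the $r$-discrepancy bound~\eqref{hi5.7} --- both concern \emph{restricted} cubature formulas (bounded weight sum, equal weights), which is exactly the restriction your Riesz-product argument does not remove.

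Two smaller corrections: the Small Ball Inequality~\eqref{2.6.3} controls the $L_\infty$ norm of $\sum_\bs t_\bs$ from below by $\sum_\bs\|t_\bs\|_1$, not the $L_1$ norm, so the inequality $\|\sum_\bs t_\bs\|_1\gtrsim n^{d-1}$ you invoke does not follow from it (and by $L_1\le L_\infty$ the implication goes the wrong way). And the equivalence with the star-discrepancy Conjecture~\ref{hiCon5.1} which you mention concerns $r=1$, which sits outside the stated range $r>1$ of Conjecture~\ref{C6.5.1}, though it is a natural neighbour. Your survey of why the fooling-polynomial and atomic methods both lose the logarithm, and of the likely dependence of any $d\ge 3$ proof on a multidimensional Small Ball Inequality, is accurate and matches the paper's own commentary; but the passage from ``here is a plausible line of attack'' to ``this settles the $d=2$ case'' is unsupported.
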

\begin{conj}\label{C6.5.2} For any $d\ge 2$ and any $r> 0$ we have
$$
\kappa_m(\bW^r_\infty) \ge C(r,d)m^{-r}(\log m)^{(d-1)/2}.
$$
\end{conj}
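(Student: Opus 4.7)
The plan is to follow the fooling-polynomial construction of Theorem \ref{T6.2.1a}. Choose $n$ with $2^{n-1} \ge m$, invoke Lemma \ref{L6.2.2} to obtain polynomials $t_\bs \in \Tr(2^{\bs}, d)$ with $\|t_\bs\|_{\infty} \le 1$ and $\hat t_\bs(0) - \Lambda_m(t_\bs, X_m) \ge C(d) > 0$ for each $\bs$ with $|\bs|_1 = n$, and set $t = \sum_{|\bs|_1 = n} t_\bs$. Then
$$
\hat t(0) - \Lambda_m(t, X_m) \ge C(d)\,n^{d-1}.
$$
If we can establish the sharp upper bound $\|t\|_{\bW^r_{\infty}} \le C(r,d)\, 2^{rn} n^{(d-1)/2}$, then $t/\|t\|_{\bW^r_{\infty}} \in \bW^r_{\infty}$ is a fooling function yielding
$$
\kappa_m(\bW^r_{\infty}) \gtrsim \frac{n^{d-1}}{2^{rn} n^{(d-1)/2}} = \frac{n^{(d-1)/2}}{2^{rn}} \asymp m^{-r}(\log m)^{(d-1)/2},
$$
as desired.

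The main obstacle is the required estimate $\|t\|_{\bW^r_{\infty}} \lesssim 2^{rn} n^{(d-1)/2}$. In the range $1 \le p < \infty$ treated by Theorem \ref{T6.2.1a}, one exploits the embedding $\bB^r_{p,2} \hookrightarrow \bW^r_p$ (a consequence of the Littlewood-Paley theorem), which has no counterpart at $p = \infty$. The trivial route via the Bernstein inequality (Theorem \ref{T2.4.2}), applied to each block $D_\alpha^r t_\bs \in \Tr(\rho(\bs))$, only gives $\|D_\alpha^r t_\bs\|_{\infty} \lesssim 2^{rn}$; summed by the triangle inequality over the $\asymp n^{d-1}$ blocks in the layer $|\bs|_1 = n$, this produces $\|t\|_{\bW^r_{\infty}} \lesssim 2^{rn} n^{d-1}$, short of the target by the critical factor $n^{(d-1)/2}$. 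Recovering this factor requires genuine cancellation between trigonometric polynomials whose Fourier supports live in disjoint dyadic rectangles $\rho(\bs)$ — precisely the kind of information supplied by a Small Ball Inequality in dimension $d$.

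A natural implementation is to refine the choice of $t_\bs$ so as to exhibit additional structure: for instance, realize the fooling function as a real Riesz product in the spirit of Lemma \ref{L2.6.1}, or build each $t_\bs$ out of Rudin-Shapiro-type blocks which are known to control $L_\infty$-norms by $(\text{cardinality})^{1/2}$ rather than by cardinality. In dimension $d = 2$ this step can be completed by invoking the two-dimensional Small Ball Inequality for the trigonometric system \eqref{2.6.3}, yielding the conjectured lower bound unconditionally in that case. For $d \ge 3$, however, the analogous step depends on the conjectured $d$-dimensional Small Ball Inequality \eqref{2.6.4} (Open problem 2.5), which is the genuine stumbling block.

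An equivalent, dual formulation of the conjecture is the $L_1$-lower bound
$$
\Big\| 1 - \sum_{j=1}^m \lambda_j F_r(\bx^j - \cdot) \Big\|_{L_1(\T^d)} \gtrsim m^{-r}(\log m)^{(d-1)/2},
$$
obtained by passing to the peak functional for $\bW^r_{\infty} = F_r * L_{\infty}$. This route reduces the lower bound to sharp volume estimates for the unit ball of $\Tr(\Delta Q_n)$ in the $L_\infty$ metric in dimension $d$, of the same flavor as Theorem \ref{T2.5.5} — again available only for $d=2$ and otherwise tied to Open problems 2.3 and 2.4. Thus both natural routes converge on the same core difficulty, namely a sharp multivariate substitute for the Small Ball Inequality, which we identify as the principal obstruction to a proof of Conjecture \ref{C6.5.2} for $d \ge 3$.
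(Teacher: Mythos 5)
This statement is a \emph{conjecture} in the paper (Conjecture \ref{C6.5.2}, echoed as Open problems 1.8 and 8.3); no proof exists in the paper, so there is nothing to compare your proposal against. You correctly read the situation: you identify the result as open, trace the natural fooling-polynomial route from Theorem \ref{T6.2.1a} and Lemma \ref{L6.2.2}, isolate the missing ingredient --- the sharp bound $\|t\|_{\bW^r_\infty}\lesssim 2^{rn}n^{(d-1)/2}$ in place of the trivial $2^{rn}n^{d-1}$ --- and correctly connect it to the circle of ideas around the Small Ball Inequality and $L_\infty$ volume estimates, which is where the genuine obstruction lies.

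Two specific claims, however, need correction. First, the assertion that the $d=2$ case follows unconditionally from the two-dimensional Small Ball Inequality \eqref{2.6.3} is unsupported and at odds with the paper, which states the conjecture for \emph{all} $d\ge 2$ without carving out $d=2$. The trigonometric Small Ball Inequality bounds $\bigl\|\sum_{\bs}t_\bs\bigr\|_\infty$ from \emph{below} by $\sum_{\bs}\|t_\bs\|_1$; for your argument you need an \emph{upper} bound on $\|D^r_\alpha t\|_\infty$ (equivalently, genuine cancellation in $\bigl\|\sum_{\bs}t_\bs\bigr\|_\infty$), which is a different and not formally dual statement. Second, your proposal to ``refine the choice of $t_\bs$'' toward Riesz-product or Rudin--Shapiro structure conflicts with the nature of Lemma \ref{L6.2.2}: that lemma, proved via Theorem \ref{T2.2.1}, only furnishes \emph{some} $t_\bs$ with $\|t_\bs\|_\infty\le 1$ and $\hat t_\bs(0)-\Lambda_m(t_\bs,X_m)\ge C(d)$ inside a subspace determined by the arbitrary (adversarial) nodes $X_m$, so one has essentially no control over its arithmetic structure and in particular cannot force it to be of Rudin--Shapiro type. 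The honest conclusion, which you do reach at the end, is that for $p=\infty$ some device beyond the Littlewood--Paley and $\bB^r_{p,2}\hookrightarrow\bW^r_p$ machinery of the $p<\infty$ proof is needed to recover the $(d-1)/2$ power of the logarithm, and this is exactly why the statement remains open, including for $d=2$.
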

\noindent Also important is the missing lower bounds for the case of small smoothness in the
Sobolev case. There is so far only a result for the special case of the Fibonacci 
cubature formula, see Proposition \ref{T6.3.3} and \cite{Tem24} which supports the following conjecture. 

\begin{conj}\label{C6.5.1b} Let $d\geq 2$, $2<p<\infty$ and $1/p<r\leq 1/2$.
Then
we have
$$
     \kappa_m(\Wrp) \gtrsim \begin{cases}
     m^{-r}(\log m)^{(d-1)(1-r)},&1/p<r<1/2;\\
     m^{-r}(\log m)^{(d-1)/2}\sqrt{\log\log m} ,& 
 r=1/2.
\end{cases}
$$
\end{conj}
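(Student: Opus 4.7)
The baseline bound $\kappa_m(\bW^r_p) \gtrsim m^{-r}(\log m)^{(d-1)/2}$ already follows from Theorem~\ref{T6.2.1a}: the proof via Lemma~\ref{L6.2.2} and the estimate $\|t\|_{\bW^r_p} \lesssim 2^{rn} n^{(d-1)/2}$ uses only $r>1/p$ together with the embedding $\bB^r_{p,2}\hookrightarrow \bW^r_p$ from \eqref{chainBFB}. This already settles the case $r=1/2$ up to the conjectural $\sqrt{\log\log m}$ refinement. For $1/p<r<1/2$ the gap to the conjecture is a factor $(\log m)^{(d-1)(1/2-r)}$, which has to be produced by exploiting the specifically small-smoothness, $p>2$ structure of $\bW^r_p$ in a way that the crude Besov embedding does not see.

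The first step of the plan is to replace the polynomial fooling construction of Lemma~\ref{L6.2.2} by a \emph{local} fooling function of the type used in Subsection~\ref{sect:lbounds_int} and in the proof of Theorem~\ref{Thm:cubsmolyak}. Given a cubature formula with $m\asymp 2^n$ nodes $X_m$, for each $\bs$ with $|\bs|_1=n$ the cube $[0,1]^d$ is partitioned into $2^n$ dyadic rectangles of side lengths $2^{-s_j}$, at least $2^n/2$ of which contain no node of $X_m$; let $K_\bs$ index these "free" rectangles and let $a_{\bs,\bk}$ be the smooth atoms of Proposition~\ref{atomic} supported in $Q_{\bs,\bk}$ with $\int a_{\bs,\bk}\asymp 2^{-n}$. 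For coefficients $\{\lambda_{\bs,\bk}\}$ to be chosen, I would test with
\[
g \;=\; \sum_{|\bs|_1=n}\sum_{\bk\in K_\bs}\lambda_{\bs,\bk}\,a_{\bs,\bk}\,.
\]
Since every atom is supported away from $X_m$, the cubature error is exactly $\int g$, which for nonnegative coefficients gives $\int g\asymp 2^{-n}\sum_{\bs,\bk}\lambda_{\bs,\bk}$. The second step is to estimate $\|g\|_{\bW^r_p}$ through the square-function characterisation \eqref{NeqW}. Since the atoms indexed by a fixed $\bs$ have pairwise disjoint supports, each dyadic block $\delta_\bs g$ behaves like a single bump of amplitude $|\lambda_{\bs,\bk}|$, and the remaining task is to control
\[
\Big\|\Big(\sum_{|\bs|_1=n}2^{2rn}|\delta_\bs g|^2\Big)^{1/2}\Big\|_p
\]
in terms of $\bigl(\sum_{\bs,\bk}\lambda_{\bs,\bk}^p\bigr)^{1/p}$. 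Normalising and choosing $\lambda_{\bs,\bk}\equiv 1$ should then yield $\int g/\|g\|_{\bW^r_p}\gtrsim 2^{-rn}n^{(d-1)(1-r)}$, which with $2^n\asymp m$ gives the conjectured bound.

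The main obstacle, and the point where new input is needed, is the sharp estimate of this multivariate square function for $p>2$ and $r<1/2$. In the range $r\ge 1/2$ the Bessel-type bound $\|g\|_{\bW^r_p}\lesssim \|g\|_{\bB^r_{p,2}}$ from \eqref{chainBFB} is essentially tight and yields precisely the exponent $(d-1)/2$ of Theorem~\ref{T6.2.1a}; to beat it one has to use that the proposed $g$ is an atomic sum of \emph{disjointly supported} bumps, so that the square function concentrates on the set $E=\bigcup_{\bs,\bk}Q_{\bs,\bk}$, whose geometry (determined by $X_m$) does not fill the cube uniformly across all scales. Quantifying this non-uniformity, likely through a sharp atomic characterisation of $\bW^r_p$ in the small-smoothness range $1/p<r<1/2$ of the schematic form
\[
\|g\|_{\bW^r_p}\;\lesssim\;n^{(d-1)(r-1/2)}\Big(\sum_{|\bs|_1=n}2^{rpn}\|A_\bs g\|_p^{p}\Big)^{1/p}
\]
on atomic sums as above, together with a choice of signs $\lambda_{\bs,\bk}$ (constant, random $\pm 1$, or of Riesz-product type in the spirit of Lemma~\ref{L2.6.1}) that saturate it, is the decisive step and the reason this remains a conjecture. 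The borderline case $r=1/2$, where the claim predicts an extra $\sqrt{\log\log m}$, should follow from the same scheme by tracking a marginal logarithmic loss in the endpoint estimate, in the spirit of the borderline analysis behind Proposition~\ref{T6.3.3}.
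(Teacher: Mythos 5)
This statement is one of the paper's listed \textbf{open problems} (Open problem 8.2 / Conjecture~\ref{C6.5.1b}), so there is no proof to compare against. You correctly recognize this and present a plan rather than a proof, and the high-level scheme --- replace the polynomial fooling argument of Lemma~\ref{L6.2.2} by a disjointly supported atomic fooling function of the type in Subsection~\ref{sect:lbounds_int}, and then try to beat the embedding $\bB^r_{p,2}\hookrightarrow\bW^r_p$ on this specific sum --- is a sensible attack. Your bookkeeping up through the target estimate $\|g\|_{\bW^r_p}\lesssim 2^{rn}n^{(d-1)r}$ is also right: with $\lambda_{\bs,\bk}\equiv 1$ one has $\int g\asymp n^{d-1}$, so that target bound is exactly what the conjecture demands.

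However, the concrete schematic inequality you wrote down is numerically inconsistent and cannot be the right endpoint estimate. Plugging $\|A_\bs g\|_p\asymp 1$ and $\#\{\bs:|\bs|_1=n\}\asymp n^{d-1}$ into
\[
n^{(d-1)(r-1/2)}\Bigl(\sum_{|\bs|_1=n}2^{rpn}\|A_\bs g\|_p^{p}\Bigr)^{1/p}
\]
gives $2^{rn}\,n^{(d-1)(r-1/2+1/p)}$, not $2^{rn}\,n^{(d-1)r}$; since $p>2$, the exponent $r-1/2+1/p$ is strictly smaller than $r$. If this inequality held for $g$ it would yield $\kappa_m\gtrsim m^{-r}(\log m)^{(d-1)(1-r+1/2-1/p)}$, which is \emph{strictly larger} than the Frolov upper bound $\kappa_m\lesssim m^{-r}(\log m)^{(d-1)(1-r)}$ from Theorem~\ref{cor:sobolev} --- a contradiction. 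The sum over $\bs$ in the would-be ``sharp atomic characterisation'' should be an $\ell_2$-sum, i.e.\ the needed estimate is of the schematic form $\|g\|_{\bW^r_p}\lesssim n^{(d-1)(r-1/2)}\|g\|_{\bB^r_{p,2}}$ for this class of atomic sums; your $\ell_p$ version is simply false. Beyond that arithmetic slip, there is a structural concern worth stating explicitly: the index sets $K_\bs$ are chosen \emph{after} the adversary fixes $X_m$, and a Frolov-type node set can arrange the free-rectangle patterns $E_\bs=\bigcup_{\bk\in K_\bs}Q_{\bs,\bk}$ to look essentially independent across the $\asymp n^{d-1}$ directions $\bs$. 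In that case the square function $(\sum_{\bs}2^{2rn}|\delta_\bs g|^2)^{1/2}$ sits at $\asymp 2^{rn}n^{(d-1)/2}$ on most of the cube, and the positive-coefficient fooling function plateaus at exactly the $(d-1)/2$ exponent of Theorem~\ref{T6.2.1a}. So not only is the written inequality wrong, it is unclear that the local fooling construction, even with $\pm 1$ or Riesz-product coefficients, is capable of producing the extra factor $n^{(d-1)(1/2-r)}$ at all; this is precisely why even the $d=2$ case of the conjecture beyond the Fibonacci-specific lower bound (Proposition~\ref{T6.3.3}) is open.
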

A first step could be to establish such a lower bound for the ``pure'' Frolov
method with respect to the class of Sobolev functions $\Wo$ supported in the
cube $[0,1]^d$ which is of course smaller then $\Wrp$. 
 
We turn to the upper bounds. The first result in this direction was obtained by
N.M. Korobov \cite{Korb1} in 1959. He used the cubature formulas $P_m(f,\ba)$.
His results lead to the following bound
\be\label{6.5.1}
\kappa_m(\bW^r_1)\le C(r,d)m^{-r}(\log m)^{rd}, \qquad r>1.  
\ee
In 1959 N.S. Bakhvalov \cite{Bakh1} improved (\ref{6.5.1}) to
$$
\kappa_m(\bW^r_1)\le C(r,d)m^{-r}(\log m)^{r(d-1)}, \qquad r>1. 
$$
It is worth mentioning that Korobov and Bakhvalov worked with the space $\bE^r_d$. The above mentioned result is a
consequence of the embedding $\bW^r_1 \subset \bE^r_d$, see Lemma \ref{emb:p=1} above. 

There is vast literature on
cubature formulas based on function values at the nodes of number-theoretical nets. We do
not discuss this literature in detail because these results do not provide the optimal rate of errors for numerical
integration. A typical bound differs by an extra $(\log m)^a$ factor. The reader can find many results of this type in
the books \cite{Korb2}, \cite{Kor89}, \cite{HuWa81}, \cite{TBook}. For the case of small smoothness see  \cite{VT22}. 
An interesting method of building good Korobov's cubature formulas was suggested by S. M. Voronin and N.
Temirgaliev in \cite{VoTe89}. It is based on the theory of divisors. This method was further developed in a
number of papers by N. Temirgaliev and his students \cite{TeN90}--\cite{TeN97}, \cite{ZhTeTe09}, \cite{SiTe10},
\cite{BaSiTe14}.

The first best possible upper estimate for the classes $\bW^r_p$ was obtained by
N.S. Bakhvalov \cite{Bakh2} in 1963. He proved in the case $d=2$ that
\be\label{6.5.2}
\kappa_m(\bW^r_2)\le C(r)m^{-r}(\log m)^{1/2}, \qquad r\in \N.  
\ee
N.S. Bakhvalov used the Fibonacci cubature formulas. Propositions
\ref{Prop:Fib}(i), \ref{T6.3.3} and \ref{W1inf}(i),(ii) have been proved in
\cite{Tem24} and \cite{Tem25}. Proposition \ref{Prop:Fib}(ii) in the particular
case $\theta=\infty$, $1\leq p\leq \infty$ was proved in \cite{Tem24} (see also
\cite{TBook}, Ch. 4, Theorem 2.6). The same method gives the statement of
Theorem \ref{T6.3.1}(ii) for all remaining cases including Proposition
\ref{W1inf}(iii), see \cite{DU14}. 

A. Hinrichs and J. Oettershagen \cite{HiOe14} showed that the Fibonacci points are the
globally optimal point set of size $N$ for the quasi-Monte Carlo integration in $\bW^r_2(\T^2)$  for some numbers $N
\in \N$. 

In 1976 K.K. Frolov \cite{Fro1} used the Frolov cubature formulas to extend
(\ref{6.5.2}) to the case $d>2$ :
$$
\kappa_m(\ensuremath{\mathring{\mathbf W}_2^r})\le C(r,d)m^{-r}(\log
m)^{(d-1)/2}, \qquad r\in \N.  
$$
In 1985  V.A. Bykovskii \cite{By} proved the equivalence 
\begin{equation}\label{Byk:equiv}
   \kappa_m(\ensuremath{\mathring{\mathbf W}_p^r}) \asymp
\kappa_m({\mathbf W}_p^r)
\end{equation}
 in the case $p=2$ and got the upper bound in Theorem \ref{Thm:Fro1}(i) in the
case $p=2$. Bykovskii also used the Frolov cubature formulas. 
Relation \eqref{Byk:equiv} for $1<p<\infty$ and $r\in \N$, and its extension to classes $\bH^r_p$, $1\le p\le \infty$, $r>1/p$ have been
proved in \cite{Tem22} and \cite{Tem25} (see also \cite{Tsurv}, \cite[Chapt.\
IV.4]{TBook}), and in \cite{Du2}. Note, that the matter reduces in proving the
boundedness of a certain change of variable operator between the respective
spaces. For a complete solution of this problem we refer to \cite{NUU15}. Theorem
\ref{T6.4.1} is taken from \cite{TBook}. The upper bounds in 
Theorems \ref{Thm:Fro1}(ii) and \ref{thm:besovsobolev}(ii) were 
proved by V.V. Dubinin \cite{Du, Du2} in 1992 and 1997. The upper bound in Theorem \ref{cor:sobolev} has been 
proved recently by M. Ullrich and T. Ullrich \cite{UU14}, see also \cite{NUU15}.

The Frolov cubature formulas \cite{Fro2} (see also \cite{Du, Du2, Sk} and the
recent papers \cite{UU14, NUU15}) give the following bound
\be\nonumber
\kappa_m(\bW^r_1) \le C(r,d)m^{-r}(\log m)^{d-1},\qquad r>1.  
\ee
Thus the lower estimate in Conjecture \ref{C6.5.1} is the best possible.

In 1994 M.M. Skriganov \cite{Sk} proved the following estimate
\be\nonumber
\kappa_m(\ensuremath{\mathring{\mathbf W}_p^r}) \le C(r,d,p)m^{-r}(\log
m)^{(d-1)/2},\quad 1<p\le \infty, \quad r\in \N. 
\ee
This estimate combined with \eqref{Byk:equiv} implies 
\be\nonumber
\kappa_m(\bW^r_p)  \lesssim m^{-r}(\log m)^{(d-1)/2},\quad 1<p\le \infty,
\quad r\in \N.  
\ee
Theorems
\ref{thm:besovsobolev} (for $\theta<\infty$), \ref{cor:sobolev} have been proved recently in
\cite{UU14}, see also \cite{NUU15} for the extension to periodic and non-periodic spaces on
the cube $[0,1]^d$. Together with Theorems \ref{T6.2.1a}, \ref{T6.2.3} they
imply Theorem \ref{Thm:Fro1}.

The lower bounds in Theorem \ref{Thm:cubsmolyak} have been proved in
\cite{DU14}, see also \cite{UU14}. These results can be also proven with a
different technique, see the recent paper \cite{VT152}. This technique even
shows that the lower bound will not get smaller when adding $2^{\ell-1}$
arbitrary points. 

Concerning upper bounds for Smolyak cubature, see also Remark \ref{uppersmol},
we refer to Triebel \cite{Tr10}.  The reader can find some further results on numerical integration by Smolyak type methods in papers \cite{TeKuSh09,Temir10} and \cite{BaNoRi00,NoRi96,GeGr98,NoRi99,HiNoUl14}.

    \newpage
   \section{Related problems}

\subsection{Why classes with mixed smoothness?}\index{Bounded mixed derivative}\index{Mixed smoothness}
\label{why}

In this section we briefly discuss development of the hyperbolic cross
approximation theory with emphasis put on the development of methods and
connections to other areas of research. 
The theory of the hyperbolic cross approximation began in the papers by Babenko
\cite{Bab2} and Korobov \cite{Korb1}. Babenko studied approximation of functions
from classes $\bW^r_\infty$ in $L_\infty$ by the hyperbolic cross polynomials
and the Bernstein type inequality for the hyperbolic cross polynomials in the
$L_\infty$ norm. Korobov studied numerical integration of functions from classes
$$
\bE^r_d(C):= \{f\in L_1(\T^d): |\hat f(\bk)|\le C\prod_{j=1}^d
\max\{1,|k_j|\}^{-r}
\},
$$
see also Section \ref{sect:FS} above. One of the Korobov's motivations for studying classes $\bE^r_d(C)$ was related
to
numerical solutions of integral equations. Let $K(x,y)$ be the kernel of the
integral operator $J_K$. Then the kernel of the $(J_K)^d$ is given by
$$
K^d(x,y) = \int_{\T^{d-1}} K(x,x_1)K(x_1,x_2)\cdots K(x_{d-1},y)dx_1\cdots
dx_{d-1} .
$$
Smoothness properties of $K(x,y)$ are naturally transformed into mixed
smoothness properties of $K(x,x_1)K(x_1,x_2)\cdots K(x_{d-1},y)$. In the
simplest case of $f_j(t)$ satisfying $\|f_j'\|_\infty \le 1$, $j=1,2,\dots,d$ we
obtain $\|(f_1(x_1)\cdots f_d(x_d))^{(1,1,\dots,1)}\|_\infty \le 1$. 

It is an {\it a priori} argument about importance of classes with mixed
smoothness. There are other {\it a priori} arguments in support of importance of
classes with mixed smoothness. For instance, recent results on solutions of the
Schr{\"o}dinger equation, which we discussed in the Introduction, give such an
argument. There are also strong {\it a posteriori} arguments in favor of
thorough study of classes of mixed smoothness and the hyperbolic cross
approximation. These arguments can be formulated in the following general way.
Methods developed for the hyperbolic cross approximation are very good in
different sense. We discuss this important point in detail, beginning with
numerical integration. 

It was immediately understood that the trivial generalization of the univariate
quadrature formulas with equidistant nodes to cubature formulas with rectangular
grids does not work for classes with mixed smoothness. As a result different
fundamental methods of numerical integration were constructed: the Korobov
cubature formulas (in particular, the Fibonacci cubature formulas), the Smolyak
cubature formulas, the Frolov cubature formulas (see Section \ref{numint}).
These
nontrivial constructions are very useful in practical numerical integration,
especially, when the dimension of the model is moderate ($\le 40$). In
subsection 9.2 we discuss theoretic results on universality of these methods,
which explain such a great success of these methods in applications. 

From the general point of view the problem of numerical integration can be seen
as a problem of discretization: approximate a ``continuous object'' -- an
integral
with respect to the Lebesgue measure -- by a ``discrete object'' -- a cubature
formula. It is a fundamental problem of the discrepancy theory. It is now well
understood that the numerical integration of functions with mixed smoothness is
closely related to the discrepancy theory (see, for instance, \cite{Tsurv}). 

Other example of a fundamental problem, which falls into a category of
discretization problems is the entropy problem (see Section 6). It turns out
that the entropy problem for classes with bounded mixed derivative is equivalent
to an outstanding problem of probability theory -- the small ball problem. Both
of the mentioned above discretization problems are deep and difficult problems.
Some fundamental problems of those areas are still open. The problems that were
resolved required new interesting technique. For instance, the Korobov cubature
formulas and the Frolov cubature formulas are based on number theoretical
constructions. Study of the entropy numbers of classes of functions with mixed
smoothness uses deep results from the theory of finite dimensional Banach spaces
and geometry, including volume estimates of special convex bodies. 

One more fundamental problem of the discretization type is the sampling problem
discussed in Section 5. Alike the entropy problem and the problem of numerical
integration the sampling problem for classes with mixed smoothness required new
techniques. Study of the sampling problem is based on a combination of classical
results from harmonic analysis and new embedding type inequalities. Some
outstanding problems are still open. 

We now briefly comment on some steps in the development of the hyperbolic cross
approximation. First sharp in the sense of order results on the behavior of
asymptotic characteristics of classes with mixed smoothness were obtained in the
$L_2$ norm. The Hilbertian structure of $L_2$, in particular the Parseval
identity, was used in those results. In a step from $L_2$ to $L_p$,
$1<p<\infty$, different kind of harmonic analysis technique was used. The
classical Littlewood-Paley theorem and the Marcinkiewicz multipliers theorem are
the standard tools. Later, the embedding type inequalities between the $L_q$
norm of a function and the $L_p$ norms of its dyadic blocs were proved and
widely used. At the early stage of the hyperbolic cross approximation the
classes $\bW^r_p$ and $\bH^r_p$ were studied. Many new and interesting phenomena
(compared to the univariate approximation) were discovered. Even in the case
$1<p,q<\infty$ the study of asymptotic characteristics of classes $\bW^r_p$ and
$\bH^r_p$ in $L_q$ 
required new approaches and new techniques. 
Some of the problems are still open. A step from the case $1<p,q<\infty$ to the
case, when one (or two) of the parameters $p,q$ take extreme values $1$ or
$\infty$ turns out to be very difficult. The majority of the problems are still
open in this case. Some surprising phenomena were observed in that study. For
instance, in many cases the step from a problem for two variables to the problem
for $d\ge 3$ variables is not resolved. A number of interesting effects, which
required special approaches for being established, was discovered. We only
mention a few from a list of new methods, which were used in those discoveries:
properties of special polynomials, for example,
$$
\Big|\sum_{\bk\in\Gamma(N),\bk>0}(k_1\cdots k_d)^{-1}\sin k_1x_1 \cdots \sin
k_dx_d\Big|
\le C(d)
$$
in a combination with the Nikol'skii duality theorem; the Riesz products; the
Small Ball Inequality; the volume estimates of sets of Fourier coefficients of
bounded trigonometric polynomials. 

The sampling problem, including the numerical integration problem, turns out to
be a difficult problem for classes with mixed smoothness. Many outstanding
problems in this area are still open. In particular, the problem of numerical
integration of classes $\bW^r_1$ and $\bW^r_\infty$ is not resolved yet. 

\subsection{Universality}\index{Universality}

In this subsection we illustrate the following general observation. Methods of
approximation, which are optimal in the sense of order for the classes with
mixed smoothness, are universal for the collection of anisotropic smoothness
classes. This gives {\it a-posteriori} justification for thorough study of
classes of functions with mixed smoothness. 
The phenomenon of saturation is well known in approximation theory
\cite{DeLo93}, Ch.11. The classical example of a saturation method is the
Fej{\'e}r operator for approximation of univariate periodic functions. In the
case of the sequence of the Fej{\'e}r operators $K_n$, saturation means that the
approximation order by operators $K_n$ does not improve over the rate $1/n$ even
if we increase smoothness of functions under approximation. Methods (algorithms)
that do not have the saturation property are called unsaturated. The reader can
find a detailed discussion of unsaturated algorithms in approximation theory and
in numerical analysis in a survey paper \cite{Bab3}. We point out that the
concept of smoothness becomes more complicated in the multivariate case than it
is in the univariate case. In the multivariate case a function may have
different smoothness properties in different coordinate directions. In other
words, functions may belong to different anisotropic smoothness classes (see
H{\"o}lder-Nikol'skii 
classes below). It is known (\cite{TBook}) that approximation characteristics of
anisotropic smoothness classes depend on the average smoothness and optimal
approximation methods depend on anisotropy of classes. This motivated a study in
\cite{Te88b} of existence of an approximation method that is good for all
anisotropic smoothness classes. This is a problem of existence of a universal
method of approximation.  We note that the universality concept in learning
theory is very important and it is close to the concepts of adaptation and
distribution-free estimation in non-parametric statistics (\cite{GKKW},
\cite{BCDDT}, \cite{VT113}). 

We present in this section a discussion of known results on universal cubature
formulas. 
We define the multivariate periodic H{\"o}lder-Nikol'skii classes $\bN\bH^\br_p$
 in the following way.
The class $\bN\bH^\br_p$, $\br=(r_1,\dots,r_d)$ and $1\le p \le \infty$, 
is the set of periodic functions $f\in L_p(\T^d)$ such that for each 
$l_j = [r_j]+1$, $j=1,\dots,d$, the following relations hold
$$
\|f\|_p \le 1,\qquad \|\Delta^{l_j,j}_tf\|_p \le |t|^{r_j}, \quad j=1,\dots,d,
$$
where $\Delta^{l,j}_t$ is the $l$-th difference with step $t$ in the variable
$x_j$. In the case $d=1$, $NH^r_p$ coincides with the standard H{\"o}lder class
$H^r_p$. 

Let a vector   $ \br = (r_1 ,\dots,r_d)$, $r_j  > 0$ and a number
 $m$ be given. Denote  $g(\br):=(\sum_{j=1}^d r_j^{-1})^{-1}$.
We define numbers   $N_j:=\max\bigl([m^{\varrho_j}],1\bigr)$,
  $\varrho_j  := g( r)/r_j$,   $j  =  1,\dots,d$
and the cubature formula
$$
  q_m (f,\br) := q_{\bN} (f) ,\qquad \bN := (N_1 ,\dots,N_d) .
$$
$$
  q_{\bN}(f):=(\prod_{j=1}^d N_j)^{-1}\sum_{j_d=1}^{N_d}\dots
\sum_{j_1=1}^{N_1}f(2\pi j_1/N_1,\dots,2\pi j_d/N_d).
$$
 
It is known (\cite{Bakh1}, \cite{TBook}) that for $g(\br)>1/p$
$$
  \kappa_m(\bN\bH^\br_p) \asymp q_m (\bN\bH^\br_p,\br) \asymp m^{-g(\br)},\quad
1\le p \le \infty, 
$$
where 
$$
q_m (\bW,\br):=\sup_{f\in
\bW}\Big|q_m(f,\br)-(2\pi)^{-d}\int_{\T^d}f(\bx)d\bx\Big|.
$$

We note that the cubature formula $q_m(\cdot,\br)$ depends essentially on the
anisotropic class defined by the vector $\br$.  It is known (see \cite{VT42},
\cite{VT49}) that the Fibonacci cubature formulas (see Section \ref{numint} for
the
definition) are optimal (in the sense of order) among all cubature formulas: for
  $g(\br)>1/p$
$$
  \delta_{b_n}(\bN\bH^\br_p)\asymp \Phi_n( \bN\bH^\br_p)\asymp b_n^{-g(\br)}.
$$

Thus, the Fibonacci cubature formulas are universal for the collection 
$\{\bN\bH^\br_p:1\le p\le \infty, g(\br)>1/p\}$ in the following sense. The
$\Phi_n(\cdot)$ does not depend on the vector $\br$ and the parameter $p$  and
provides optimal (in the sense of order) error bound for each class
$\bN\bH^\br_p$ from the collection.

In the case $d>2$, just as for $d=2$, there exist universal cubature formulas
for the anisotropic H{\"o}lder-Nikol'skii classes (see  \cite{VT42},
\cite{TBook}). At the same time we emphasize that the universal cubature
formulas that we constructed for $d>2$ have an essentially different character
from the Fibonacci cubature formulas. The first cubature formulas of this type
have been constructed by K.K. Frolov, see Section \ref{numint}. Let
$Q^{\ell}_n(f)$ be the cubature formula defined in \eqref{modi_Frolov}.  The
following result has been obtained in \cite{VT42} (see also \cite{TBook}).

\begin{thm}\label{T9.1} Let $1<p\le\infty$, $g(\br)>1$. Then for
all $\br$ such that $r_j\le (\ell - 1)(1 - 1/p)-1$, $j=1,\dots,d$
the following relation holds,
$$
Q^{\ell}_n(\bN\bH_p^{\br}) \asymp n^{-g(\br)}.
$$
\end{thm}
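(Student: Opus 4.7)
The lower bound $Q^\ell_n(\bN\bH^\br_p) \gtrsim n^{-g(\br)}$ is immediate from the Bakhvalov asymptotic $\kappa_m(\bN\bH^\br_p) \asymp m^{-g(\br)}$ recalled just before the theorem, since $Q^\ell_n$ is a particular cubature formula with $\asymp n$ nodes. The substance of the theorem is therefore the matching upper bound, together with the crucial point that it holds uniformly in $\br$: neither the Frolov matrix $A$, nor the parameter $\ell$, nor the dilation $a$ (chosen only from the constraint $a^d|\det A|\asymp n$) depends on $\br$ or $p$. So the universality is a by-product of any uniform-in-$\br$ argument for the upper bound.

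My plan is to reduce, through the Bykovskii change of variable $T_\ell$ (composition with $\psi_\ell$ in each coordinate, multiplied by $\prod_j \psi_\ell'(x_j)$), the estimate for periodic $f\in\bN\bH^\br_p$ to an estimate for $g:=T_\ell f$ which is compactly supported in $[0,1]^d$. The first step is to check that $T_\ell$ is bounded from $\bN\bH^\br_p$ into an anisotropic H\"older-Nikol'skii class of functions supported in $[0,1]^d$ and having $(\ell-1)(1-1/p)-1\ge r_j$ vanishing derivatives at the boundary in the $j$-th direction; this is exactly why the hypothesis $r_j\le(\ell-1)(1-1/p)-1$ appears. Once $g$ is supported in $[0,1]^d$, Poisson summation in the form \eqref{poi} gives
\[
 Q^\ell_n(f)-\int_{\T^d} f(\bx)\,d\bx \;=\; \Phi(a,A)(g) - \int_{\R^d} g(\bx)\,d\bx \;=\; \sum_{\bk\in \Z^d\setminus\{0\}} \widehat{g}(aA\bk),
\]
so the cubature error is exactly a dual-lattice sum of Fourier coefficients.

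The second step is to bound this sum using the admissibility properties (1) and (2) of the Frolov lattice from Theorem \ref{L6.4.1}. The anisotropic smoothness of $g$ yields a coordinate-wise Fourier decay of the form $|\widehat g(\by)|\lesssim \prod_{j=1}^d \max\{1,|y_j|\}^{-r_j}$ (with a reduction to $p=\infty$ up to the $1/p$-loss already absorbed in the change of variable step via Nikol'skii's inequality). Applying property (1) to the points $\by=aA\bk$ and slicing the sum into dyadic parallelepipeds whose lattice-point counts are controlled by property (2), a direct computation evaluates the sum at the order
\[
   \sum_{\bk\ne 0}\prod_{j=1}^d \max\{1,|(aA\bk)_j|\}^{-r_j}\;\lesssim\; a^{-d\,g(\br)},
\]
where the balance between the $r_j$'s is what produces the average smoothness $g(\br)$. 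Choosing $a$ with $a^d|\det A|\asymp n$ gives the bound $n^{-g(\br)}$, uniformly in $\br$ and $p$ within the stated range.

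The main obstacle will be the first step, namely the careful analysis of the change of variable $T_\ell$ on anisotropic classes: one must show that the endpoint factors $\psi_\ell'(x_j)$ combine correctly with mixed anisotropic differences so that $g$ has enough boundary regularity to justify Poisson summation termwise and so that no extra logarithmic factor appears (in contrast to the mixed-smoothness case). This is where the precise smoothness threshold $r_j\le (\ell-1)(1-1/p)-1$ is actually used; the rest of the argument is an adaptation of Frolov's original lattice-sum estimate, and the universality then follows automatically from the $\br$-independence of the construction of $Q^\ell_n$.
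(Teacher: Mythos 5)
The overall framework you sketch --- Bykovskii change of variable, Poisson summation, and a Frolov dual-lattice sum --- is the right one, matches the cited references, and the lower bound via the Bakhvalov asymptotic $\kappa_m(\bN\bH^\br_p)\asymp m^{-g(\br)}$ is correctly handled. However, the key analytical claim in your upper-bound step, namely the Fourier decay
\[
|\widehat g(\by)|\;\lesssim\;\prod_{j=1}^{d}\max\{1,|y_j|\}^{-r_j},
\]
is \emph{false} for functions $g$ in the anisotropic class $\bN\bH^{\br}$. Product-type decay of this form characterizes \emph{mixed} smoothness (bounded mixed derivatives), whereas the anisotropic class only controls each directional derivative $\partial^{r_j}/\partial x_j^{r_j}$ separately and gives no control on cross-derivatives. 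A concrete counterexample in $d=2$ with $r_1=r_2=r$ is $g(x_1,x_2)=h(x_1+x_2)$ for a univariate $h\in H^r$; one has $\widehat g(k,k)=\widehat h(k)\asymp |k|^{-r}$, which is much larger than the claimed $|k|^{-2r}$. The correct coordinate-wise decay for the anisotropic class is only
\[
|\widehat g(\by)|\;\lesssim\;\min_{1\le j\le d}\max\{1,|y_j|\}^{-r_j},
\]
because each direction yields $|y_j|^{r_j}|\widehat g(\by)|\lesssim\bigl\|\partial^{r_j}g/\partial x_j^{r_j}\bigr\|_{1}$ and one may only take the best of the $d$ one-directional estimates.

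This is not a cosmetic slip; it propagates into (and invalidates) the lattice-sum computation. If your product decay were correct, the dual-lattice sum would evaluate to order $a^{-d\,r_{\min}}(\log a)^{d-1}$, not $a^{-d\,g(\br)}$, and for the isotropic choice $\br=(r,\ldots,r)$ (so $g(\br)=r/d$, $r_{\min}=r$, $m\asymp a^d$) this would give a cubature error $\lesssim m^{-r}(\log m)^{d-1}$, which is \emph{smaller} than Bakhvalov's lower bound $m^{-r/d}$ --- a contradiction. So the product-type Fourier bound cannot be part of any correct proof. With the correct $\min_j$-decay, the dual-lattice sum does give the sharp order $a^{-d\,g(\br)}$, but the balancing is different from the standard mixed-smoothness computation: in the dyadic decomposition one has to minimize $\max_j r_j s_j$ subject to $\sum_j s_j=N$, the minimum being $N\,g(\br)$ (attained when $r_j s_j$ is constant across $j$), and the concentration of the summand at this balance point is precisely what removes the logarithmic factor that is present for mixed classes. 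Your plan should carry through once you replace the product decay by the $\min_j$ decay and redo the lattice-sum evaluation with the corresponding max/min balancing.
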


\noindent This theorem establishes that the cubature formulas $Q^{\ell}_n$ are
universal
for the collection $$\Big\{\bN\bH^\br_p:1<p\le\infty, g(\br)>1,r_j\le (\ell -
1)(1 -
1/p)-1, j=1,\dots,d\Big\}\,.$$

We now briefly discuss universality results in the nonlinear approximation. The
following observation motivates our interest in  the universal dictionary
setting. In practice we often do not know the exact smoothness class $F$ where
our input function (signal, image) comes from. Instead, we often know that our
function comes from a class of certain structure, for instance, anisotropic
H{\"o}lder-Nikol'skii class. This is exactly the situation we are dealing with
in the following universal dictionary setting. We formulate an optimization
problem in a Banach space $X$ for a pair of a function class $F$ and a
collection $\DD$ of bases (dictionaries) $\D$. We use the following
notation from Section \ref{Sect:bestmterm}
\begin{equation}\nonumber
\begin{split}
\sigma_m(f,\D)_X &:= \inf_{\substack{g_i\in \D,c_i \\ i=1,\dots,m}}\|f-\sum_{i=1}^mc_ig_i\|_X;\\
\sigma_m(F,\D)_X &:= \sup_{f\in F} \sigma_m(f,\D)_X;\\
\sigma_m(F,\DD)_X &:= \inf_{\D\in \DD} \sigma_m(F,\D)_X.
\end{split}
\end{equation}

The universality problem is a problem of finding a method of approximation that
is good for each class from a given collection of classes. For example,  we
introduced in \cite{VT77} the following definition of universal dictionary.
\begin{def}\label{Definition 1.1} Let two collections $\F$ of function classes
and $\DD$ of dictionaries be given. We say that $\D\in \DD$ is universal for the
pair $(\F,\DD)$ if there exists a constant $C$ which may depend only on $\F$,
$\DD$, and $X$ such that for any $F\in \F$ we have
$$
\sigma_m(F,\D)_X \le C\sigma_m(F,\DD)_X .
$$
\end{def}
So, if for a collection $\F$ there exists a universal dictionary $\D_u \in \DD$,
it is an ideal situation. We can use this universal dictionary $\D_u$ in all
cases and we know that it adjusts automatically to the best smoothness class
$F\in \F$ which contains a function under approximation. Next, if a pair
$(\F,\DD)$ does not allow a universal dictionary we have a trade-off between
universality and accuracy.

 It was proved in \cite{VT77} that
\begin{equation}\label{9.2}
\sigma_m(\bN\bH^\br_p,\OO)_{L_q} \asymp m^{-g(\br)} 
\end{equation}
for
$$
 1< p< \infty,\quad 2\le q<\infty, \quad g(\br)>(1/p-1/q)_+. 
$$
It is important to remark that the basis $\cU^d$ studied in \cite{T69} (see
Section 7) realizes (\ref{9.2}) for all $\br$. Thus, the orthonormal bases
$\cU^d$ is optimal in the sense of order for $m$-term approximation of classes
of functions with mixed smoothness and also it is universal for $m$-term
approximation of classes with anisotropic smoothness.  

In this survey we discussed in detail the hyperbolic cross approximation, namely, approximation of periodic functions
by the trigonometric polynomials with frequencies in the hyperbolic crosses. There is a natural analog of the
trigonometric hyperbolic cross approximation in the wavelet approximation. In Sections 5, 7 we already discussed
wavelet type systems $\mathcal U^d$. The general construction goes along the same lines. 
We briefly explain the construction from \cite{DKT98, SiUl09}, where it is called ``Hyperbolic Wavelet Approximation''.
\index{Hyperbolic wavelet approximation}
Let $\Psi =\{\psi_I\}$ be a system of univariate functions indexed by dyadic intervals. Define $d$-variate system
$\Psi^d:=\{\psi_{\mathbf I}\}$, $\mathbf I = I_1\times\cdots\times I_d$, $\psi_{\mathbf I}(\bx) :=
\prod_{j=1}^d\psi_{I_j}(x_j)$. Then the subspace 
$$
\Psi^d(n) := \Span \{\psi_{\mathbf I}: |\mathbf I| \ge 2^{-n}\}
$$
is an analog of the $\Tr(Q_n)$. The reader can find an introduction to the hyperbolic wavelet approximation in
\cite{DKT98, SiUl09}.

\subsection{Further generalizations}

In this survey we discussed in detail some problems of linear and nonlinear approximation of 
functions with mixed smoothness. The term {\it mixed smoothness} means that in the definitions of the corresponding
classes we either use the mixed derivative (classes $\bW$) or the mixed difference (classes $\bH$ and $\bB$). Other
classical scale of classes includes 
the Sobolev classes mentioned in the Introduction and  the H{\"o}lder-Nikol'skii classes discussed above in Subsection
9.2 (for detailed study of approximation of these classes see \cite{TBook}, Chapter 2). The approximation properties and
the techniques used for studying these classes are very different. There is an interesting circle of papers by D.B.
Bazarkhanov \cite{Ba03, Baz22b, Baz22a, Ba10a, Ba10b, Ba14} where he builds a unified theory, which covers both the collection of Sobolev,
H{\"o}lder-Nikol'skii classes and classes with mixed smoothness. His approach is based on 
dividing the variable $\bx =(x_1,\dots,x_d)$ into groups $\bx^j:=(x_{k_{j-1}+1},\dots,x_{k_{j}})$, $j=1,\dots,l$,
$0=k_0<k_1<\cdots<k_l=d$ and assuming that, roughly speaking, $f(\bx)$ as a function on each $\bx^j$ belongs to, say, a
Sobolev class, and as a function on the variable $(\bx^1,\dots,\bx^l)$ has mixed smoothness. 

We mostly confined ourselves to approximation in the Banach space $L_q$, $1\le q\le \infty$, of functions from classes
defined by a restriction on the mixed derivative or mixed difference in the Banach space $L_p$, $1\le p\le \infty$. In
our discussion parameters $p$ and $q$ are scalars. In some approximation problems it is natural to consider $L_{\mathbf
p}$ spaces with vector $\mathbf p =(p_1,\dots,p_d)$ instead of a scalar $p$. For instance, considering general setting
of approximation of classes of univariate functions (see \cite{VT31})
$$
W^g_p:=\{f: f(x)=\int_\Omega g(x,y)\varphi(y) dy,\quad \|\varphi\|_{L_p(\Omega)}\le 1\}
$$
we immediately encounter the problem of approximation of the kernel $g(x,y)$ in vector norms. In particular, the
operator norm of the integral operator with the kernel $g(x,y)$ from 
$L_p(\Omega)$ to $L_\infty(\Omega)$ is equal to $\| \|g(x,\cdot)\|_{L_{p'}}\|_{L_\infty}$. 
Dinh D\~ung \cite{Di84a} -- \cite{Di87}, Galeev \cite{Ga78} -- \cite{Ga2} investigated embedding theorems, hyperbolic cross approximation and various widths for classes of functions with 
mixed smoothness defined in a vector $L_{\mathbf p}$ space in a vector $L_{\mathbf q}$ space.
G.A. Akishev \cite{Ak06} -- \cite{Ak10} conducted a detailed study of approximation of functions with 
mixed smoothness defined in a vector $L_{\mathbf p}$ space in general Lorentz-type spaces.

\subsection{Direct and inverse theorems}\index{Direct and inverse theorems}

In the univariate approximation theory classes with a given modulus of continuity (smoothness) are 
a natural generalization of classes $H^r_\infty$. These classes were studied in detail from the point of view of direct
and inverse theorems of approximation, which bear the names of 
Jackson and Bernstein. The reader can find the corresponding results in \cite{DeLo93} and \cite{TBook}. For instance it
is well known that the following conditions are equivalent
\be\nonumber
E_n(f)_p \lesssim n^{-r}
\ee
\be\nonumber
\|f\|_{H^r_p} <\infty.
\ee
Clearly, researchers tried to prove direct and inverse theorems for approximation by the hyperbolic cross approximation.
Direct theorems of that type for classes $\bW$, $\bH$ and 
$\bB$ are discussed in Section 4. There are interesting results on the Jackson type theorems for the hyperbolic cross
approximation. The first result in this direction was proved 
for approximation in the uniform metric in \cite{Te89}. Further detailed study of approximation of classes of periodic
functions of several variables with a given majorant of the mixed moduli of smoothness was conducted by 
Dinh D\~ung \cite{Di84b,Di84c,Di85,Di86,Di14}, N.N. Pustovoitov \cite{Pu13} -- \cite{Pu91} and by 
S.A. Stasyuk \cite{Sta12a} -- \cite{Sta14}. It was discovered in the early papers on the hyperbolic cross approximation
that contrary to the univariate approximation by the trigonometric polynomials we cannot characterize classes of
periodic functions of several variables with a given majorant of the mixed moduli of smoothness by the hyperbolic cross
approximation, see also \cite{ScSi04}. The problem of characterization of classes of functions with a given rate of
decay of their best approximations by the hyperbolic cross polynomials was solved in \cite{DePeTe94,Di97a,Di97b}. As a result  new concepts of a mixed modulus of smoothness were introduced in \cite{DePeTe94,Di97a}.

\subsection{Kolmogorov widths of the intersection of function classes} \index{Width!Kolmogorov}
\label{widths-diff-operator }

For $\br \in \R^d$, let the differential operator $D^{\br}$ be defined by 
$D^\br\colon f\mapsto (-i)^{|\br|_1} f^{(\br)}$, where $ f^{(\br)}$ is the fractional derivative of order $\br$ in the sense of Weil. For  a nonempty finite set $A\subset\N^d_0$ and a nonzero sequence of numbers $(c_\br)_{\br\in A}$, the polynomial $P(\bx):=  \sum_{\br\in A} c_\br \bx^\br$ induce the differential operator 
\[
P(D)=\sum_{\br \in A} c_\br D^{\br}. 
\]
Set 
\[
U^{[P]}_2:= \ \{f\in L_2: \, \|P(D)(f)\|_2 \leq 1\}.
\]

One of the most important problems in multivariate approximation is how to define smoothness function classes.
In the early paper \cite{Bab1}, Babenko suggested to define them as the functions from the finite intersection 
$\cap_{j=1}^J U^{[P_j]}$. 
There, he obtained a non-explicit upper bound of $d_n(\cap_{j=1}^J U^{[P_j]}_2,L_2)$ in terms of the eigenvalues of the
operator $\sum_{j=1}^J P_j^*P_j$. The Sobolev class of mixed smoothness $\bW^r_2$ can be considered as a particular case
of $U^{[P]}$. Tikhomirov and his school 
\cite{Di84a,Di84b,Di84c, Di85,Di86,Di00,Di01_2,Di02,Ga85,Ga88,Ga90,Ti87,Ti90} considered smoothness functions as the
functions from the intersections 
\[
\bW^A_p:= \cap_{\br \in A}\bW^\br_p, \quad  
\bH^A_p:= \cap_{\br \in A}\bH^\br_p, \quad  
\bB^A_{p,\theta}:= \cap_{\br \in A}\bB^\br_{p,\theta}
\]
for some (not necessarily finite) set $A \subset \R^d$.
They  investigated embedding theorems, hyperbolic cross approximation, various widths, entropy number and $m$-term
approximation for these classes of functions on $\T^d$ and $\R^d$. 

As an illustration, we give a typical result in \cite{Di84a} on the Kolmogorov width $d_n(\bW^A_p,L_p)$ for 
a nonempty finite set $A \subset [0,\infty)^d$ such that ${\bf 0} \in A$ and 
$\max_{\br \in A} r_i > 0$ for every $i=1,...,d$. 
Denote by $\operatorname{conv}(A)$ the convex hull of $A$. We define two quantities characterizing the smoothness of $\bW^A_p$.
\[
r(A)
:= \
\min\{t > 0: t(1,1,...,1) \in \operatorname{conv}(A)\} 
\]
and $\nu(A):= d-\mu$ where $\mu$ is the dimension of the minimal face (extremal subset) of  
$\operatorname{conv}(A)$ containing $r(1,1,...1)$. These quantities were introduced by Dinh D\~ung \cite{DD79} in a dual
form:  $1/r(A)$ is defined as the optimal value of the problem
\[
\operatorname{maximize}\, |\bx|_1, \quad 
\operatorname{subject \ to} \ \bx \in \R^d, \ (\br,\bx) \le 1, \ \forall \br \in A,
\]
and $\nu(A)-1$ as the dimension of its solutions.
In particular cases, we have $r(A)=r$, $\nu(A)=d-1$
for the class $\Wrp= \bW^A_p$ with $A = \{r(1,1,...1)\}$, and $r(A)=r_1$, $\nu(A)=\nu$,
for the class $\bW^{\br}_p= \bW^A_p$ with $A = \{\br\}$, 
$0 < r_1 = \cdots = r_\nu < r_{\nu + 1} \le r_{\nu + 2} \le \cdots \le r_d$ ($1\le \nu \le d$). These examples tell us
that  the quantities $r(A)$ and $\nu(A)$ are indeed characteristics of the smoothness of $\bW^A_p$.
\begin{thm} \label{thm[d_nW^A]}
For $1 < p < \infty$, we have
\begin{equation}\nonumber
d_n({\bf W}^A_p,L_p)
\ \asymp \
\left(\frac{\log^{\nu(A)-1}n}{n}\right)^{r(A)}\quad,\quad n\in \N\,.
\end{equation}
\end{thm}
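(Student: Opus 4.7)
The plan is to follow the Babenko--Mityagin approach for the classical Sobolev class, now recast in the polytope geometry encoded by $A$. First, I would establish a Littlewood--Paley type characterization of the intersection class. Since $\bW^A_p = \cap_{\br \in A}\bW^{\br}_p$, the equivalent norm \eqref{NeqW} yields, for $1<p<\infty$,
\[
\|f\|_{\bW^A_p}\;\asymp\;\max_{\br\in A}\Big\|\Big(\sum_{\bs\in \N_0^d} 2^{2(\br,\bs)}|\delta_{\bs}(f)|^2\Big)^{1/2}\Big\|_p,
\]
by a direct application of Theorem~\ref{T7.3.3} combined with the Marcinkiewicz multiplier theorem (Theorem~\ref{T7.3.4}) applied to the multipliers $(ik_1)^{r_1}\cdots(ik_d)^{r_d}$ for each $\br\in A$. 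This reduces the geometry of $\bW^A_p$ to a condition on dyadic blocks controlled uniformly over $\br\in A$.

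Next, I would introduce the generalized step hyperbolic cross adapted to $A$,
\[
T(A,n):=\{\bs\in \N_0^d\,:\,\max_{\br\in A}(\br,\bs)\le n\},\qquad Q^A_n:=\bigcup_{\bs\in T(A,n)}\rho(\bs),
\]
and prove the counting estimate $|Q^A_n|\asymp 2^{n/r(A)}\,n^{\nu(A)-1}$. The exponential factor follows from the dual definition of $r(A)$ (the LP maximum of $|\bs|_1$ under the constraints $(\br,\bs)\le n$), and the polynomial factor comes from the fact that the face of optimal $\bs$'s is exactly $\nu(A)-1$ dimensional. With $m:=|Q^A_n|$ this gives $\log m\asymp n/r(A)$, hence $2^{-n}\asymp\bigl((\log m)^{\nu(A)-1}/m\bigr)^{r(A)}$, so both bounds reduce to proving $E_{Q^A_n}(\bW^A_p)_p\asymp 2^{-n}$.

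For the upper bound I would approximate $f\in \bW^A_p$ by $S_{Q^A_n}(f)$. By the Littlewood--Paley equivalence above and the definition of $T(A,n)$, for every $\bs\notin T(A,n)$ there exists $\br\in A$ with $(\br,\bs)>n$, and hence a square-function estimate
\[
\|f-S_{Q^A_n}(f)\|_p \;\lesssim\;\Big\|\Big(\sum_{\bs\notin T(A,n)} |\delta_\bs(f)|^2\Big)^{1/2}\Big\|_p \;\lesssim\; 2^{-n}\,\|f\|_{\bW^A_p},
\]
which together with the boundedness of $S_{Q^A_n}$ on $L_p$ (Corollary~\ref{C7.1}) yields the upper estimate for $d_m$. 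For the lower bound I would combine a Bernstein-type inequality on $\Tr(Q^A_n\setminus Q^A_{n-c})$ with the finite-dimensional width estimate Theorem~\ref{TBT2.4.6}: any polynomial $t\in \Tr(\rho(\bs))$ with $\bs$ on the optimal face satisfies $\|t\|_{\bW^A_p}\asymp 2^n\|t\|_p$, so $2^{-n}\cdot\mathcal T(\rho(\bs))$ is contained in $\bW^A_p$; summing over the $\asymp n^{\nu(A)-1}$ blocks $\rho(\bs)$ near the optimal face provides an $\asymp m$-dimensional ``bad'' subspace on which the $L_p$-norm is equivalent to the $\bW^A_p$-norm after scaling by $2^{-n}$, which forces $d_m(\bW^A_p,L_p)\gtrsim 2^{-n}$ by the usual Kashin--Mityagin volume argument.

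The main obstacle is the sharp counting $|Q^A_n|\asymp 2^{n/r(A)}n^{\nu(A)-1}$, because the exponent of the logarithm is delicate: it is the dimension of the (possibly non-simplicial) face of $\operatorname{conv}(A)$ on which the LP optimum is attained, and passing from the LP duality formula for $r(A)$ to a matching lattice-point count requires carefully separating the contribution of $\bs$'s near the optimal face from the lower-order strata of $T(A,n)$. A secondary obstacle is that one must organize the lower-bound ``bad'' subspace so that all its dyadic blocks lie in a single face direction of $T(A,n)$, since only there do the $\bW^A_p$-seminorms collapse to a single scale $2^{-n}$; this is exactly where the geometric invariant $\nu(A)$ enters, and it is the step where different face structures of $\operatorname{conv}(A)$ would produce different logarithmic exponents.
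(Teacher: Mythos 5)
Your plan is the Babenko--Mityagin scheme transported to the polytope geometry encoded by $A$, and it is substantially correct; it is also consistent with the paper's remark immediately after Theorem~\ref{thm[d_nW^[P]]} that the asymptotic order is realized by projection onto trigonometric polynomials with frequencies in the \emph{intersection} of the hyperbolic crosses attached to the $\br\in A$ (your $Q^A_n=\bigcap_{\br\in A}Q^{\br}_n$). The four ingredients — the Littlewood--Paley norm $\|f\|_{\bW^A_p}\asymp\max_{\br\in A}\|(\sum_\bs 2^{2(\br,\bs)}|\delta_\bs(f)|^2)^{1/2}\|_p$, the generalized step hyperbolic cross $T(A,n)=\{\bs:\max_{\br\in A}(\br,\bs)\le n\}$, the upper estimate via $S_{Q^A_n}$, and a lower-bound subspace of dimension $\asymp 2|Q^A_n|$ — are all the right ones, and you correctly identify the cardinality estimate $|Q^A_n|\asymp 2^{n/r(A)}\,n^{\nu(A)-1}$ as the hinge. (It follows by writing $|Q^A_n|\asymp\sum_{\bs\in T(A,n)}2^{|\bs|_1}$ and stratifying by $|\bs|_1=n/r(A)-k$: the $k=0$ slice has $\asymp n^{\nu(A)-1}$ lattice points by the LP face dimension, while the $k\ge 1$ strata grow only polynomially in $k$ and are damped by $2^{-k}$. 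The paper does not supply a proof but refers to \cite{Di84a}.)

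Two corrections to the lower bound. Theorem~\ref{TBT2.4.6} is the Kashin--Gluskin \emph{upper} estimate for $d_m(\Tr(\bN,d)_2,L_\infty)$; it is used only when approximating in $L_q$ with $q>2>p$ and has no role in a $d_m(\cdot,L_p)$ lower bound. For $p=q$ the finite-dimensional ingredient you need is elementary: $d_m(B^N_p,\ell^N_p)\asymp 1$ for $m\le N/2$ (Lemma~\ref{Lemma[Pietsch-Stesin]} with $q=p$), together with the boundedness on $L_p$ of the Fourier projection onto your bad subspace (Corollary~\ref{C7.1}). No volume argument is needed, and ``Kashin--Mityagin volume argument'' is a garbled attribution --- Mityagin's proof of \eqref{BabMit} went through the Marcinkiewicz multiplier theorem and exactly this elementary width bound, while Kashin's theorem is a different result targeting $p\ne q$. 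Finally, you do not need to organize the bad subspace ``in a single face direction'': once $\bs\in T(A,n)$ you already have $\max_{\br\in A}(\br,\bs)\le n$, hence $\|t\|_{\bW^A_p}\lesssim 2^n\|t\|_p$ for \emph{every} $t\in\Tr(Q^A_n)$, so you can take $V=\Tr(Q^A_n)$ as the bad subspace directly; the invariant $\nu(A)$ enters only through the cardinality of $Q^A_n$, not through any geometric filtering of the blocks.
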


The problem of computing asymptotic orders of 
$d_n(U^{[P]}_2,L_2)$ in the general case when $W^{[P]}_2$ is compactly 
embedded into $L_2$ has been open for a long time; see, e.g., 
\cite[Chapter~3]{TBook} for details. It has been recently solved in \cite{CoD14} for a non-degenerate
differential operator $P(D)$ (see there for a definition of non-degenerate
differential operator). Here we give a generalization of this result which can be proven in a similar way.

\begin{thm} \label{thm[d_nW^[P]]}
Let $P_j$, $j = 1,...,J$ be polynomials with the power sets $A_j$ and the coefficient sequences 
$(c_\br^j)_{\br\in A_j}$. Assume that the different operators $P_j(D)$ are non-degenerate, 
$0 \in A:= \cap_{j=1}^J A_j$ and the intersection of $A$ with the ray 
$\{\lambda e^j: \lambda > 0\}$  nonempty where $e^j$ denotes the 
$j$th standard unit vector of $\R^d$. 
Then we have
\begin{equation}\label{d_nW^[P]}
d_n(\cap_{j=1}^J U^{[P_j]}_2,L_2)
\ \asymp \
\left(\frac{\log^{\nu(A)-1}n}{n}\right)^{r(A)}\quad,\quad n\in \N.
\end{equation}
\end{thm}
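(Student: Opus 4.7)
The plan is to mimic the single-operator analysis of \cite{CoD14} after first collapsing the $J$ operator conditions into a single weighted $\ell_2$-constraint. Since each $P_j(D)$ acts as a Fourier multiplier with symbol $P_j(\bk)$, the condition $f \in \bigcap_{j=1}^J U^{[P_j]}_2$ is equivalent to
\[
\sum_{\bk \in \Z^d} |P_j(\bk)|^2\, |\hat f(\bk)|^2 \le 1, \qquad j = 1,\ldots,J,
\]
which is in turn equivalent, up to a factor depending only on $J$, to the single weighted ellipsoid constraint
\[
\sum_{\bk \in \Z^d} W(\bk)^2\, |\hat f(\bk)|^2 \le 1, \qquad W(\bk)^2 := \sum_{j=1}^J |P_j(\bk)|^2.
\]
So the intersection class is, up to equivalent norms, a Hilbert-space ellipsoid whose principal axes are the normalized Fourier basis and whose singular values are the numbers $1/W(\bk)$.

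Next I would invoke the classical formula for Kolmogorov widths of an ellipsoid in a Hilbert space, which gives
\[
d_n\Bigl(\bigcap_{j=1}^J U^{[P_j]}_2,\, L_2\Bigr) \;\asymp\; \frac{1}{W^\ast_{n+1}},
\]
where $W^\ast_k$ denotes the $k$-th smallest element of the multiset $\{W(\bk) : \bk \in \Z^d\}$; the optimal $n$-dimensional subspace is then spanned by the Fourier modes associated with the $n$ smallest values of $W(\bk)$, a hyperbolic-cross-type set. Consequently, proving \eqref{d_nW^[P]} reduces to establishing the two-sided counting asymptotics
\[
\#\{\bk \in \Z^d : W(\bk) \le N\} \;\asymp\; N^{1/r(A)}\,(\log N)^{\nu(A)-1}, \qquad N \to \infty,
\]
for then the identity $\#\{W\le W^\ast_{n+1}\} \asymp n$ can be inverted to give $W^\ast_{n+1} \asymp \bigl(n/(\log n)^{\nu(A)-1}\bigr)^{r(A)}$, which together with the preceding display produces precisely the claimed bound.

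The main technical step, and the step I expect to be the principal obstacle, is therefore the counting estimate. My approach would be to use the non-degeneracy of each $P_j(D)$ to obtain, at infinity, two-sided comparisons of the form $|P_j(\bk)| \asymp \sum_{\br \in A_j} |\bk^\br|$ with $|\bk^\br| := \prod_{i=1}^d \max\{1,|k_i|\}^{r_i}$, which pinches $W(\bk)$ between multiples of an explicit piecewise-monomial function. The hypotheses $0 \in A$ and $A \cap \{\lambda e^i : \lambda > 0\} \neq \emptyset$ for every coordinate direction $i$ enter here to guarantee that $\operatorname{conv}(A)$ has sufficient reach to force $W(\bk) \to \infty$ in every direction and to pin down its growth along the diagonal $\lambda(1,\ldots,1)$. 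The counting bound then follows from the standard lattice-point estimates for anisotropic hyperbolic regions that also underlie Theorem \ref{thm[d_nW^A]} (see in particular \cite{DD79,Di84a} for the relevant LP-duality argument that relates $r(A)$ and $\nu(A)$ to the number of lattice points). The delicate point is that $W(\bk)$ is a \emph{maximum} over polynomials whose Newton polytopes are the larger sets $A_j \supseteq A$, so some care is needed to show that the effective geometry governing the counting function is nevertheless controlled by the common intersection $A$, producing $r(A)$ and $\nu(A)$ rather than the exponents associated with the individual $A_j$'s or with $\bigcup_j A_j$. Once this is settled, the width asymptotics \eqref{d_nW^[P]} follow directly from the reductions above.
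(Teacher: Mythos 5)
Your reduction is the correct starting point and, since the paper gives no proof of this theorem --- it only remarks that the $J>1$ case ``can be proven in a similar way'' to \cite{CoD14} --- you are genuinely filling in the argument rather than retracing one. The two-sided inclusion of $\bigcap_j U^{[P_j]}_2$ between a $W$-ellipsoid and its $\sqrt{J}$-dilate, with $W(\bk)^2 = \sum_j|P_j(\bk)|^2 \asymp \max_j|P_j(\bk)|^2$, then the Hilbert-space width formula $d_n = 1/W^\ast_{n+1}$, and finally the inversion of a lattice-point count are exactly the right tools, and the counting step is precisely what is treated in \cite{DD79,Di84a}.

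However, the ``delicate point'' you flag cannot be deferred: resolving it produces an exponent that seems to disagree with the theorem as printed. Non-degeneracy of each $P_j$ gives $|P_j(\bk)| \asymp \max_{\br\in A_j}|\bk^\br|$, and hence $W(\bk) \asymp \max_{\br \in \bigcup_j A_j}|\bk^\br|$; in other words $W$ is comparable to the symbol of an operator whose Newton polytope is $\bigcup_j A_j$, not the intersection. Applying the $J=1$ analysis therefore yields $r(\bigcup_j A_j)$ and $\nu(\bigcup_j A_j)$, which in general differ from $r(A),\nu(A)$ for $A=\bigcap_j A_j$. For a concrete check take $d=2$, $A_1=\{0,e^1,e^2,(1,1)\}$, $A_2=\{0,e^1,e^2,(2,2)\}$ with generic positive coefficients: then $A=\{0,e^1,e^2\}$ contains $0$ and meets each coordinate ray, so the hypotheses hold, but $W(\bk)\asymp(\max\{1,|k_1|\}\max\{1,|k_2|\})^2$, $\#\{W\le N\}\asymp N^{1/2}\log N$, and the ellipsoid formula gives $d_n \asymp (\log n/n)^2$, which matches $r(\bigcup_j A_j)=2$, $\nu(\bigcup_j A_j)=2$ but not the stated $r(A)=1/2$, $\nu(A)=1$ (which would give $n^{-1/2}$). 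So either the theorem should read $\bigcup_j A_j$ in place of $\bigcap_j A_j$, or an additional hypothesis such as $\operatorname{conv}(A_1)=\cdots=\operatorname{conv}(A_J)$ is silently assumed. Carry out the counting step explicitly and let it tell you which set actually governs the width; do not assume the printed $\bigcap$ without this verification.
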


Notice that in both Theorems \ref{thm[d_nW^A]} and \ref{d_nW^[P]}, the asymptotic order of the Kolmogorov width is
realized by the approximation by trigonometric polynomials with frequencies from the intersection of hyperbolic crosses
corresponding to the set $A$. For related results, surveys and bibliography on embedding theorems, hyperbolic cross
approximation, various widths,  entropy number and $m$-term approximation of classes of multivariate periodic functions
with several bounded fractional derivatives or bounded differences see    
\cite{CoD14,Di84a, Di84b, Di84c, Di85, Di86,Di87,Di00,Di01_2,Di02,DM13,DM13_C,DT79,Ga81,Ga85,Ga88,Ga90,Mag79,Mag86,Mag87,Mag88,Ti87,Ti90,Rom12}.

\subsection{Further $s$-numbers}\index{s-numbers}
\label{sect:snumbers}
It is well-known that every compact operator $T$ in a Hilbert space $H$, i.e., $T:H\to H$ can be represented through
its singular value decomposition (Schmidt expansion)
$$
    Tx = \sum\limits_{m} s_m\langle x, u_m\rangle v_m\quad,\quad x\in H\,,
$$
where $(u_m)_m$ and $(v_m)_m$ are the eigenelements of $T^*T$ and $TT^*$. Hence, we associate every compact operator a
sequence $(s_m(T))_m$ of singular numbers. 

The step in extending this concept to (quasi-)Banach spaces was done by Pietsch in 1974, see \cite[6.2.2]{Pi07} for
more historical facts. 

\begin{defi}\label{def:sfunc}
Let X,Z be quasi Banach-spaces and $Y$ be a $p$-Banach space, let $S,T\in\mathcal{L}(X,Y)$ and $R\in \mathcal{L}(Y,Z)$.
A mapping $s:T\to (s_m(T))_{m=0}^{\infty}$ with the following properties
\begin{description}
  \item[(S1)] $\|T\|_{\mathcal{L}(X,Y)}=s_0(T)\geq s_1(T)\geq\ldots\geq 0$\,,\\
  \item[(S2)] for all $m_1,m_2\in\N_0$ holds
  $$s_{m_1+m_2}(R\circ S)\leq s_{m_1}(R)s_{m_2}(S)\,,$$
  \item[(S3)] for all $m_1,m_2\in\N_0 $ holds $$s_{m_1+m_2}^p(S+T)\leq s_{m_1}^p(S)+s_{m_2}^p(T)\,,$$\\
  \item[(S4)] for all $m\in \N$ holds $s_m(\mbox{id}:\ell_2^m\to \ell_2^m)=1\,,m=1,2,...$,
  \item[(S5)] and $s_m(T)=0$ whenever $\mbox{rank } T \leq m$,
\end{description}
is called $s$-function.
\end{defi}

The definition of the widths studied in Section 4 (orthowidths, Kolmogorov widths, linear widths) can be
extended in order to approximate general linear operators $T$ instead of the identity/embedding operator. There are
some issues related with this interpretation which are discussed in \cite[6.2.6]{Pi07} and \cite[p. 30]{Pin85}.
However, for Kolmogorov and approximation numbers the above axioms are satisfied and they form an $s$-function. In
other words, they represent special sequences of $s$-numbers, $d_m(T)$ and $a_m(T)$. We use the usual notation $a_m(T)$
(instead of $\lambda_m(T)$) in order to avoid confusion with the sequence of eigenvalues. A simple consequence of (S5)
is the fact that the approximation numbers form the largest sequence of $s$-numbers. In fact, for any operator with rank
less or equal to $m$ it holds 
$$
    s_m(T) \leq s_m(S) + \|T-S\| = \|T-S\|
$$
and therefore $s_m(T) \leq a_m(T)$. As a direct implication we see that the orthowidths (discussed in Section 4) can not
be interpreted as $s$-numbers. In fact, in some situations they are asymptotically larger than approximation numbers
(linear widths), see Section 4. 

Let us emphasize that the sequence of dyadic entropy numbers $\epsilon_m(T)$ (studied in Chapter 6) does not give an
$s$-functions since it does not satisfy (S5). However, there are many more interesting examples of $s$-numbers. Let us
first discuss Gelfand numbers/widths. 

The {\em Gelfand numbers}\index{Width!Gelfand} have been introduced by Tikhomirov in 1965 (Gelfand just proposed them) 
\begin{equation}\label{gelf}
    c_m(\bF,X):=\inf\limits_{\substack{A:\bF \to \R^m\\
\text{linear}}} \sup\limits_{\substack{\|f\|_{\bF}\leq 1 \\ f\in \ker A}}
\|f\|_X\,.
\end{equation}
They can as well be defined for operators $T:\bF \to X$ such that we end up with a scale $c_m(T)$ 
of $s$-numbers (\cite[6.2.3.3]{Pi07}). They satisfy
a useful duality relation with the Kolmogorov numbers. For any compact operator $T:X\to Y$ it holds 
$$
    c_m(T^*:Y'\to X') = d_m(T:X\to Y) \quad \mbox{and} \quad d_m(T^*:Y' \to X') = c_m(T:X\to Y)\,,
$$
where $T^*$ denotes the dual operator to $T$ and $X'$, $Y'$ the dual spaces to $X$ and $Y$.
As a special case we obtain the relation
$$
    d_m(\bW^r_p,L_q) = c_m(\bW^r_{q'},L_{p'})\quad,\quad m\in\N\,.
$$
if  $1<p,q<\infty$. 
``Dualizing'' Figure \ref{dmW} gives the following order (compared to linear widths).

\begin{figure}[H]
\begin{minipage}{0.48\textwidth}
\begin{center}
\begin{tikzpicture}[scale=2.5]

\draw[->] (-0.1,0.0) -- (2.1,0.0) node[right] {$\frac{1}{p}$};
\draw[->] (0.0,-.1) -- (0.0,2.1) node[above] {$\frac{1}{q}$};

\draw (1.0,0.03) -- (1.0,-0.03) node [below] {$\frac{1}{2}$};
\draw (0.03,1) -- (-0.03,1) node [left] {$\frac{1}{2}$};

\node at (1.7,2.2) {$\lambda_m(\bW^r_p,L_q)$};

\draw (0,2) -- (2,2);
\draw (1,1) -- (2,1);

\draw (1,1) -- (2,0);
\draw (1,0) -- (1,1);
\draw (1,1) -- (2,1);
\draw (2,2) -- (2,0);

\node at (1.4,0.2) {\tiny ${\alpha = r-\frac{1}{2}+\frac{1}{q}}$};
\node at (1.6,0.8) {\tiny ${\alpha= r-\frac{1}{p}+\frac{1}{2}}$};
\node at (1,1.4) {$\alpha = r-(\frac{1}{p}-\frac{1}{q})_+$};

\draw (2,0.03) -- (2,-0.03) node [below] {$1$};
\draw (0.03,2) -- (-0.03,2) node [left] {$1$};

\end{tikzpicture}

\end{center}

\end{minipage}
\begin{minipage}{0.48\textwidth}
 \begin{center}
\begin{tikzpicture}[scale=2.5]

\draw[->] (1,0.0) -- (2.1,0.0) node[right] {$\frac{1}{p}$};
\draw (0,0.0) -- (1,0.0);
\draw[->] (0.0,-.1) -- (0.0,2.1) node[above] {$\frac{1}{q}$};

\draw (1.0,0.03) -- (1.0,-0.03) node [below] {$\frac{1}{2}$};
\draw (0.03,1) -- (-0.03,1) node [left] {$\frac{1}{2}$};

\draw (0,0) -- (1,1);
\draw (0,0) -- (2,0) -- (2,2) -- (0,2);
\draw (1,0) -- (1,1) -- (2,1);

\node at (1,1.4) {$\alpha = r$};

\node at (1.48,0.45) {\tiny $\alpha = {r-\frac{1}{2}+\frac{1}{q}}$};
\node at (0.58,0.12) {\tiny $\alpha = {r-\frac{1}{p}+\frac{1}{q}}$};
\node at (1.7,2.2) {$c_m(\bW^r_p,L_q)$};
\draw (2,0.03) -- (2,-0.03) node [below] {$1$};
\draw (0.03,2) -- (-0.03,2) node [left] {$1$};

\end{tikzpicture}

\end{center}

\end{minipage}
\caption{Comparison of $\lambda_m(\bW^r_p,L_q)$ and $c_m(\bW^r_p,L_q)$, rate $(m^{-1}(\log m)^{d-1})^{\alpha}$}
\end{figure}
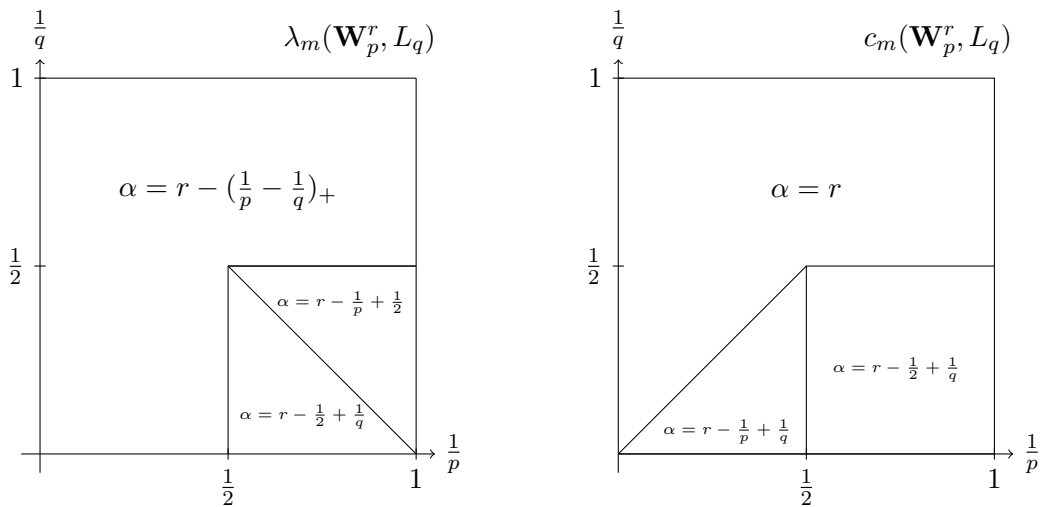

The Gelfand numbers can be interpreted from an
algorithmic point of view. Suppose, one is
interested in reconstructing an object (function) from $m$ linear samples.
The mapping $A$ serves as measurement map and is linear, whereas the reconstruction can be a nonlinear
operator using the measurement vector from $\R^m$. In that sense they directly relate to the novel field of compressed
sensing, see \cite[Chapt.\ 10]{FoRa13}, \cite[Chapt.\ 5]{Tbook}, and the recent paper \cite{FoPaRaUl10}.
An optimal measurement map minimizes \eqref{gelf} and measures the maximal ``distance'' of two instances which provide
the same output (by $A$). Hence, any reconstruction map can not distinguish between those two instances and the
reconstruction error would relate to their distance. In the picture there are parameter regions where the linear widths
are asymptotically strictly larger than the Gelfand numbers. In this region a general
(possibly nonlinear) operator/algorithm (that uses linear information) might
beat the linear algorithm which is behind the linear widths. Such an algorithm
can probably come out of an optimization like in learning theory or compressed
sensing. 

In addition, some of the $s$-numbers are useful for upper bounding the sequence of eigenvalues\index{Eigenvalue} of compact operators
(Weyl inequalities, \cite[6.4.2]{Pi07})\index{Inequality!Weyl}.  For this reason authors studied  the order of Weyl numbers
\index{s-numbers!Weyl numbers} $x_m(T)$ also
in the context of function spaces with bounded mixed difference/derivative, see Nguyen, Sickel \cite{NgSi14, Ng14,
Ng15}. In contrast to the Weyl numbers, which represent special $s$-numbers, the Bernstein number $b_m(T)$ 
\index{s-numbers!Bernstein numbers}fail to
satisfy (S2, S3). However, Bernstein numbers are important since they often serve as lower bounds for Kolmogorov and
Gelfand numbers \index{s-numbers!Gelfand numbers}, and especially, as lower bounds for the error analysis of Monte-Carlo algorithms, see the recent work
\cite{Ku15}.  

V.K. Nguyen shows in \cite[Lem.\ 3.3]{Ng15} the interesting relation
$$
      b_m(T) \leq 2\sqrt{2} \epsilon_m(T)\quad,\quad m\in \N\,.\index{Entropy number}
$$
Let us shortly write $b_m(X,Y)$ and $x_m(X,Y)$ when considering the identity mapping $b_m(I)$ and
$x_m(I)$ for $I:X\to Y$. Together with an abstract relation between Bernstein and Weyl numbers \cite{Pi08} this
leads to the sharp relation
$$
    b_m(\bW^r_p,L_q) \asymp \min\{x_m(\bW^r_p,L_q), \epsilon_m(\bW^r_p,L_q)\}\quad, \quad m\in \N\,.    
$$
The results on Bernstein and Weyl numbers can be illustrated in the following diagrams. 

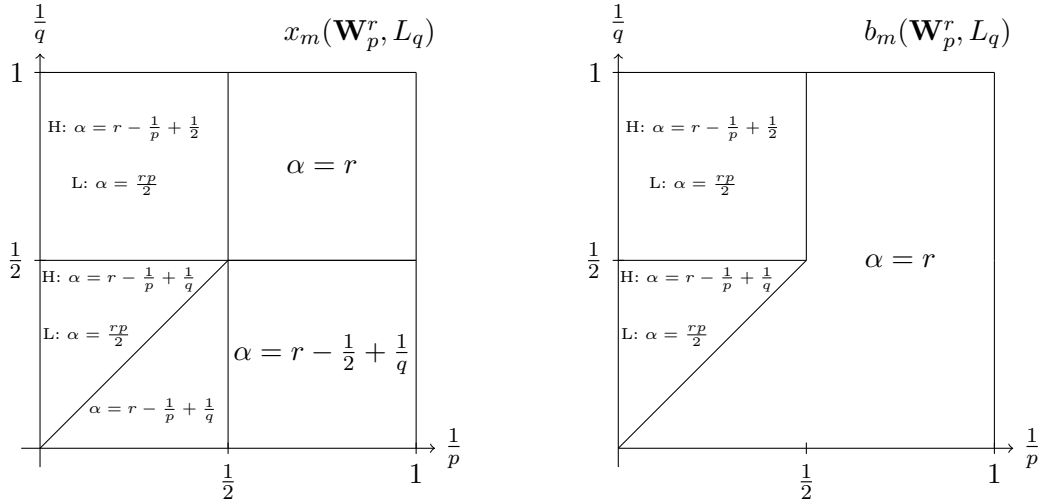
\begin{figure}[H]
\begin{minipage}{0.48\textwidth}
\begin{center}
\begin{tikzpicture}[scale=2.5]

\draw[->] (-0.1,0.0) -- (2.1,0.0) node[right] {$\frac{1}{p}$};
\draw[->] (0.0,-.1) -- (0.0,2.1) node[above] {$\frac{1}{q}$};

\draw (1.0,0.03) -- (1.0,-0.03) node [below] {$\frac{1}{2}$};
\draw (0.03,1) -- (-0.03,1) node [left] {$\frac{1}{2}$};

\node at (1.7,2.2) {$x_m(\bW^r_p,L_q)$};

\draw (0,2) -- (2,2);
\draw (1,1) -- (2,1);

\draw (1,0) -- (1,1);
\draw (1,1) -- (2,1);
\draw (2,2) -- (2,0);
\draw (1,1) -- (1,2);
\draw (0,0) -- (1,1);
\draw (0,1) -- (1,1);

\node at (1.5, 1.5) {$\alpha = r$};
\node at (1.5,0.5) {$\alpha = r-\frac{1}{2}+\frac{1}{q}$};

\node at (0.6,0.2) {\tiny $\alpha = r-\frac{1}{p}+\frac{1}{q}$};

\node at (0.42,0.9) {\tiny H: $\alpha = r-\frac{1}{p}+\frac{1}{q}$};
\node at (0.25,0.6) {\tiny L: $\alpha = \frac{rp}{2}$};

\node at (0.45,1.7) {\tiny H: $\alpha = r-\frac{1}{p}+\frac{1}{2}$};
\node at (0.4,1.4) {\tiny L: $\alpha = \frac{rp}{2}$};


\draw (2,0.03) -- (2,-0.03) node [below] {$1$};
\draw (0.03,2) -- (-0.03,2) node [left] {$1$};

\end{tikzpicture}

\end{center}

\end{minipage}
\begin{minipage}{0.48\textwidth}
 \begin{center}
\begin{tikzpicture}[scale=2.5]

\draw[->] (1,0.0) -- (2.1,0.0) node[right] {$\frac{1}{p}$};
\draw (0,0.0) -- (1,0.0);
\draw[->] (0.0,-.1) -- (0.0,2.1) node[above] {$\frac{1}{q}$};

\draw (1.0,0.03) -- (1.0,-0.03) node [below] {$\frac{1}{2}$};
\draw (0.03,1) -- (-0.03,1) node [left] {$\frac{1}{2}$};

\draw (0,2) -- (2,2);
\draw (1,1) -- (1,2);
\draw (0,0) -- (1,1);
\draw (0,1) -- (1,1);
\draw (2,2) -- (2,1);
\draw (2,1) -- (2,0);

\node at (1.5, 1.0) {$\alpha = r$};

\node at (0.42,0.9) {\tiny H: $\alpha = r-\frac{1}{p}+\frac{1}{q}$};
\node at (0.25,0.6) {\tiny L: $\alpha = \frac{rp}{2}$};

\node at (0.45,1.7) {\tiny H: $\alpha = r-\frac{1}{p}+\frac{1}{2}$};
\node at (0.4,1.4) {\tiny L: $\alpha = \frac{rp}{2}$};

\node at (1.7,2.2) {$b_m(\bW^r_p,L_q)$};
\draw (2,0.03) -- (2,-0.03) node [below] {$1$};
\draw (0.03,2) -- (-0.03,2) node [left] {$1$};

\end{tikzpicture}

\end{center}

\end{minipage}
\caption{Comparison of $x_m(\bW^r_p,L_q)$ and $b_m(\bW^r_p,L_q)$, rate $(m^{-1}(\log m)^{d-1})^\alpha$}
\end{figure}
Here, $H$ refers to the parameter domain of ``high smoothness'', $r>\frac{1/\max\{2,q\}-1/p}{p/2-1}$ and $L$
refers to ``low smoothness'', that is $r<\frac{1/\max\{2,q\}-1/p}{p/2-1}$.

\subsection{The quasi-Banach situation}\index{quasi-Banach}
\label{quasiB}

\subsubsection*{Complements to Section 3: Mixed differences}\index{Function spaces!Bounded mixed difference}

By replacing the moduli of continuity \eqref{modc}
by more regular variants like integral means \eqref{rectm} of differences we can extend the characterization in Lemma \ref{diff} to all $0<p\leq \infty$
and $r>(1/p-1)_+$.

For $\bs\in \N_0^d$ we put $2^{-\bs}:=(2^{-s_1},...,2^{-s_d})$ and $e(\bs) = \{i:s_i \neq 0\}$. We have the following characterization, see \cite{NUU15} and \cite{Ul06}.

\begin{thm}\label{thm95} Let $0<p, \theta\leq \infty$ and $s>\sigma_{p}$. Let further $m \in \N$ be a natural number with
$m>s$\,. Then 
$$
    \|f\|_{\bB^s_{p,\theta}} \asymp \Big(\sum\limits_{\bs\in \N_{0}^d} 2^{s|\bs|_1 \theta}\|
\mathcal{R}^{e(\bs)}_m(f,2^{-\bs},\cdot)\|_p^{\theta}\Big)^{1/\theta}\,.
$$
In case $\theta=\infty$ the sum above 
is replaced by the supremum over $\bs$.
\end{thm}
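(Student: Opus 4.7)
The statement extends Lemma~\ref{diff} from the Banach range $p,\theta\geq 1$ to the full quasi-Banach range $0<p,\theta\leq \infty$, with $s>\sigma_p$. I would work with the Fourier-analytic definition of $\bB^s_{p,\theta}$ from Definition~\ref{d1} and establish the equivalence by proving both inequalities separately. The extra regularization built into the rectangular means \eqref{rectm}, namely the absolute value inside the integral, is precisely what allows us to bypass the failure of the triangle inequality when $p<1$, since $\mathcal{R}^{e}_{m}(f,\bt,\cdot)$ is a genuine pointwise nonnegative object that behaves additively under decompositions $f=\sum_\bk f_\bk$.

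For the direction $\text{RHS}\lesssim \|f\|_{\bB^s_{p,\theta}}$, I would first decompose $f=\sum_{\bk\in\N_0^d}f_{\bk}$ via a smooth dyadic decomposition as in \eqref{f2_2}. The key pointwise estimate is the Peetre-type bound
\begin{equation*}
|\Delta^{m,e(\bs)}_{\bh}f_{\bk}(\bx)|\;\lesssim\;\prod_{j\in e(\bs)}\min\bigl\{1,\,|h_j|^{m}2^{k_j m}\bigr\}\,\bigl(P_{\bk,a}f_{\bk}\bigr)(\bx),
\end{equation*}
where $P_{\bk,a}$ is the multivariate Peetre maximal operator of order $a$. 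Integrating $|h_j|\leq 2^{-s_j}$ in the $\bh$-variables and using the monotonicity of $\min\{1,|h_j|^m 2^{k_j m}\}$ gives
\begin{equation*}
\mathcal{R}^{e(\bs)}_m(f_{\bk},2^{-\bs},\bx)\;\lesssim\;\prod_{j\in e(\bs)}\min\bigl\{1,\,2^{m(k_j-s_j)}\bigr\}\,P_{\bk,a}f_{\bk}(\bx).
\end{equation*}
Taking $L_p$-(quasi-)norms, summing in $\bk$ by the $p$-triangle inequality, and invoking the Fefferman--Stein vector-valued maximal inequality (choosing $a$ sufficiently large depending on $p$, which is possible because $s>\sigma_p$) to pass from $\|P_{\bk,a}f_{\bk}\|_p$ to $\|f_{\bk}\|_p$, one recovers a convolution in the $\bs$-index whose summability in $\ell_\theta$ follows from a discrete multivariate Hardy inequality.

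For the converse $\|f\|_{\bB^s_{p,\theta}}\lesssim \text{RHS}$, I would represent each Fourier block $f_{\bs}$ as a superposition of mixed differences of $f$. Concretely, using that $(1-e^{ih\xi})^{m}$ can be inverted on the Fourier support of $f_{\bs}$ modulo a bounded multiplier, one constructs a kernel $K_{\bs}$ such that $f_{\bs}=\sum_{e\supseteq e(\bs)}K_{\bs}\ast_{e}\Delta^{m,e}_{2^{-\bs}}f$, or an integrated version thereof, so that $|f_{\bs}(\bx)|$ is pointwise dominated by a constant times $\mathcal{R}^{e(\bs)}_m(f,2^{-\bs},\bx)$ up to a tame convolution. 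Then $\|f_{\bs}\|_p\lesssim \|\mathcal{R}^{e(\bs)}_m(f,2^{-\bs},\cdot)\|_p$ and the Besov quasi-norm is recovered by summation.

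The main obstacle is the lower-bound direction in the quasi-Banach regime, where one cannot freely use the Fourier multiplier theorem and must instead construct explicit convolution representations adapted to the split of variables $e(\bs)$ and to the scale $2^{-\bs}$, while verifying that the required kernels have $L_1$ or (better) Wiener-algebra norms uniformly in $\bs$. The hypothesis $s>\sigma_p$ enters precisely here, to ensure enough moments vanish and enough decay is present so that the Plancherel--Polya--Nikol'skii inequality and the Peetre maximal machinery close up in the range $0<p\leq 1$.
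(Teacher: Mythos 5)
Your proposal follows the same Peetre-maximal / Fourier-analytic framework as the references \cite{NUU15,Ul06} cited at this point in the paper, so at the level of strategy you are aligned with what the literature does. The upper-bound direction ($\mathrm{RHS}\lesssim\|f\|_{\bB^s_{p,\theta}}$) is essentially correct and complete in outline: the pointwise Peetre estimate for mixed differences of the band-limited blocks $f_{\bk}$, integration in $\bh$ giving the factor $\prod_{j\in e(\bs)}\min\{1,2^{m(k_j-s_j)}\}$, the reduction $P_{\bk,a}f_{\bk}\lesssim\big(M|f_{\bk}|^{r}\big)^{1/r}$ with $r<\min\{p,\theta,1\}$ followed by the Fefferman--Stein inequality, and the concluding discrete Hardy/convolution estimate, where it is precisely $s>0$ and $m>s$ that make the two geometric tails converge. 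One correction on your parenthetical: the Peetre parameter $a$ can always be taken large (choose $a>d/r$); $s>\sigma_p$ does \emph{not} enter there. It is needed so that $\bB^s_{p,\theta}$ embeds into locally integrable functions, which is required in \emph{both} directions simply for the rectangular means $\mathcal{R}^{e(\bs)}_m(f,2^{-\bs},\cdot)$ to be defined a.e.

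The genuine gap is in the lower-bound direction, which you yourself flag as "the main obstacle" but do not close. Three interlocking issues are left unresolved. First, $\mathcal{R}^{e}_m$ in \eqref{rectm} carries an absolute value, so it is \emph{not} a Fourier multiplier; one must first pass to the linear averaging $\tilde{\mathcal{R}}^e_m(f,\bt,\bx):=\prod_{j\in e}\frac{1}{2t_j}\int_{-t_j}^{t_j}\Delta^{m,e}_{\bh}(f,\bx)\,d\bh$ and use the pointwise bound $|\tilde{\mathcal{R}}^e_m|\leq\mathcal{R}^e_m$. Second, the symbol of $\tilde{\mathcal{R}}^e_m$ at scale $\bt=2^{-\bs}$ is $\prod_{j\in e(\bs)}S_m(2^{-s_j}\xi_j)$ with $S_m(u):=\frac{1}{2}\int_{-1}^{1}(e^{iuv}-1)^m\,dv$, which vanishes at $u=0$ to order $u^m$ and must be shown, for each fixed $m$, to be bounded away from zero across the dyadic annulus $u\in[1/2,2]$ --- this is not automatic and the implied constant degenerates with $m$, so "modulo a bounded multiplier" glosses over the real verification. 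Third, after the inversion one has a fixed-$\bh$ or averaged-in-$\bh$ estimate for $f_{\bs}$; to convert it into a statement about $\|\mathcal{R}^{e(\bs)}_m(f,2^{-\bs},\cdot)\|_p$ for $p<1$ one cannot use the integral Minkowski inequality and must instead route the $\bh$-average through the band-limited $L_p$ multiplier assertion and the Peetre/Hardy--Littlewood maximal operator, exactly as in the upper bound. Until those three steps are carried out, the lower bound is a sketch, not a proof. A minor side remark: your decomposition $f_{\bs}=\sum_{e\supseteq e(\bs)}K_{\bs}\ast_{e}\Delta^{m,e}_{2^{-\bs}}f$ looks misplaced --- for a fixed block $f_{\bs}$ only $e=e(\bs)$ is relevant; the union over $e\subset\{1,\dots,d\}$ appears in the definition of the Besov seminorm, not in the block-wise inversion.
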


\subsubsection*{Complements to Section 5: Sampling representations}\index{Sampling!Representation}

An extension of Propositions \ref{samp_repr1},
\ref{samp_repr2} to all parameters
$0<p,\theta \leq \infty$ and $r>1/p$ is possible but not straight forward. 
As shown recently in \cite[Thm.\ 5.7]{ByUl15} the de la Vall\'ee Poussin sampling 
representation \eqref{lhs1} (as well as \eqref{lhs2} for mixed Triebel-Lizorkin spaces) 
works for all $1/2<p\leq 1$. In order to get a sampling
representation for arbitrary small $0<p\leq 1$ we have to use different fundamental
interpolants in \eqref{dvp}. Those have to provide a better decay than
\eqref{2.1.9} on the ``time side'' or, in different words, the Fourier
transform of the kernel has to be much smoother than just piecewise linear. 
This can be arranged, see \cite{ByUl15}, by an iterated convolution of dilates
of the characteristic function of the interval $[-1/2,1/2]$, a process related
to the construction of the so-called $up$-function, see \cite{Rv90}. However, when dealing with sampling on Smolyak grids 
it turned out, see \cite{ByUl15}, that the integrability in the target space $L_q$ determines the appropriate sampling kernel. In fact, when
approximating functions from the class $\bB^r_{p,\theta}$, with $p<1$ in $L_2$ even classical Dirichlet-Smolyak operators
will do the job, see \cite[Rem.\ 6.5, Thm.\ 5.14]{ByUl15}.\index{Kernel!Dirichlet} 

The Faber-Schauder and B-spline quasi-interpolation relations in Propositions
\ref{contvsdisc}, \ref{discvscont}, \ref{samp_repr1Q},(i)  
for the periodic space $\Brpt$ as well as its non-periodic counterpart can be extended to $0<p,\theta
\leq \infty$. In contrast to the de la Vall\'ee Poussin kernels the hat functions are perfectly localized. In other words, we have 
sufficient decay on the time side. However, the smoothness of the piecewise linear hat functions is limited. Therefore, on can not expect 
to decompose arbitrary smooth function over the Faber-Schauder basis. This explains the natural restriction $1/p<r<2$ in Proposition
\eqref{discvscont} and hence we can only go ``down'' to $p>1/2$ and only get a maximal rate of convergence (worst-case error) bounded by $2$. 
Using $B$-splines one can achieve higher order convergence by adapting the order of
the B-spline also to $p$ (and not just on $r$). Note that the restriction in Proposition \ref{samp_repr1Q} reads as 
$1/p<r<2\ell$ if $p<1$. In particular this means that we have to choose $2\ell$ larger than $1/p$, a similar
effect as described in the previous paragraph.

\subsubsection*{Complements to Section 5: Sampling widths}\index{Width!Sampling}

The method described in the proof of Theorem \ref{Theorem[T_n(B)<]}
also works for $p,\theta, q<1$ once we have the corresponding sampling
representations. For this issue we refer \cite{Di11} and \cite{ByUl15}. Here we obtain the following results.

\begin{thm} \label{Theorem[rho_n]}
Let $ 0 < p, q, \theta \le \infty$, and $r > 1/p$.
Then we have. 
\begin{itemize}
\item[{\rm (i)}] For $p \ge q$ and $\theta \le 1$,
\begin{equation*} 
\varrho_m(\bBr,L_q)
 \ \asymp \ 
 (m^{-1} \log^{d-1}m)^r\quad,\quad
\begin{cases}
 2 \le q < p < \infty, \\
1 < p = q \le \infty.
\end{cases}
\end{equation*}
\item[{\rm (ii)}] For $1 < p < q < \infty$, 
\begin{equation*} 
\varrho_m(\bBr,L_q)
 \ \asymp \ 
(m^{-1} \log^{d-1}m)^{r - 1/p + 1/q}(\log^{d-1}m)^{(1/q -
1/\theta)_+}\quad,\quad 
 \begin{cases}
2 \le p, \ 2 \le \theta \le q,  \\
q \le 2.
\end{cases}
 \end{equation*}
\end{itemize} 
\end{thm}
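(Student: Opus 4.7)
The plan is to follow the same two-step strategy as in Theorem \ref{Th[rho_n]}, obtaining the upper bounds from the Smolyak-type sampling operator and the lower bounds from linear widths together with an atomic ``fooling function'' argument, taking care to extend each ingredient into the quasi-Banach range $\theta\le 1$.

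For the upper bounds I would use the B-spline quasi-interpolation operator $R_n$ from \eqref{def[R_n]}, choosing the spline order $2\ell$ so large that $1/p<r<\min\{2\ell,2\ell-1+1/p\}$. The key point is that Proposition~\ref{samp_repr1Q}(i) provides a discrete characterization of $\Brpt$ for \emph{all} $0<p,\theta\le\infty$, so the error analysis behind Theorem~\ref{Theorem[analogs]} carries over verbatim to the quasi-Banach setting and yields
\[
\sup_{f\in\Brpt}\|f-R_n(f)\|_q\lesssim 2^{-rn}n^{(d-1)(1-1/\theta)_+}
\quad\text{for } p\ge q,
\]
together with the analogous $2^{-n(r-1/p+1/q)}n^{(d-1)(1/q-1/\theta)_+}$ bound for $1<p<q<\infty$. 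Substituting $m\asymp 2^nn^{d-1}$ and exploiting $(1-1/\theta)_+=0$ when $\theta\le 1$ eliminates the logarithmic factor in (i); case (ii) follows by direct substitution.

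For the lower bounds I would argue in two ways. Whenever $p,q\ge 1$ are in the range where the linear widths $\lambda_m(\Brpt,L_q)$ in Theorem~\ref{thm[lambda_nB]} (or the Kolmogorov widths $d_m$ in Theorems~\ref{thm[d_mB,p<q]}, \ref{thm[d_mB,p>q]}) match the claimed rates, the inequality $\varrho_m\ge\lambda_m\ge d_m$ from \eqref{dlr} gives the bound immediately, covering all of case (ii) and the subcase $1<p=q\le\infty$ of case (i) with $\theta=1$. For the remaining subcase $2\le q<p<\infty$, $\theta\le 1$ of (i), where $\Brpt\subsetneq\bB^r_{p,1}$ and the linear-width lower bound is no longer strong enough, I would invoke the local fooling-function construction of \eqref{g_1}, chosen so that it is supported in the (at least) half of the cubes $Q_{\bj,\bk}$ with $|\bj|_1=\ell+1$ that avoid the sampling nodes $X_m$, with $2^\ell\asymp m$. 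Proposition~\ref{atomic} and estimate \eqref{flelambda} show that $\|g_{r,\theta}\|_{\Brpt}\lesssim 1$ uniformly in $\ell$ for all $0<p,\theta\le\infty$. Since $g_{r,\theta}\equiv 0$ on $X_m$, every linear sampling reconstruction vanishes on it, and the pairwise disjointness of the supports $Q_{\bj,\bk}$ (for fixed $\bj$) together with $|K_{\bj}(X_m)|\gtrsim 2^\ell$ gives $\|g_{r,\theta}\|_q\gtrsim 2^{-r\ell}\asymp(m^{-1}\log^{d-1}m)^r$, matching the upper bound.

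The main obstacle will be verifying the quasi-Banach versions of both the sampling representation (Proposition~\ref{samp_repr1Q}) and the atomic norm estimate \eqref{flelambda} in the regime $\theta<1$. Although the required statements are available in the literature (through Vyb\'iral's atomic decompositions and the B-spline techniques of D\~ung), combining them for \emph{both} the upper bound on $R_n$ and the norm control of the fooling function requires the spline order to be tuned simultaneously to $p$, $\theta$ and $r$. In particular, the standard Faber--Schauder ($\ell=1$) discretization is insufficient once $r>2$ or $\theta$ is very small, and one has to pass to higher-order B-splines, checking that the constants in \eqref{flelambda} depend only on $p,\theta,r,d$ and not on the sparsity pattern $K_{\bj}(X_m)$ induced by the sampling nodes.
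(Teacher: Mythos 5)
Your upper-bound strategy is sound and essentially matches the paper's: for $\theta\le 1$ one can simply use the embedding $\bB^r_{p,\theta}\hookrightarrow\bB^r_{p,1}$ and the Banach case (Theorem \ref{Th[rho_n]}), or the B-spline quasi-interpolation together with Proposition~\ref{samp_repr1Q}(i); either way the $(1-1/\theta)_+$ exponent collapses to $0$.

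The lower bound is where your proposal goes wrong. First, your $m$-to-$\ell$ bookkeeping is inconsistent: you set $2^{\ell}\asymp m$ to guarantee $|K_{\bj}(X_m)|\gtrsim 2^{\ell}$, but under that choice $2^{-r\ell}\asymp m^{-r}$, which is missing the entire factor $(\log m)^{(d-1)r}$ needed to reach $(m^{-1}\log^{d-1}m)^r$. Second, the function $g_{r,\theta}$ of \eqref{g_1} is normalized by $\ell^{-(d-1)/\theta}$ precisely so that $\|g_{r,\theta}\|_{\Brpt}\lesssim1$, and then its $L_q$-norm is $\asymp 2^{-r\ell}\ell^{(d-1)(1-1/\theta)}$, \emph{not} $\gtrsim 2^{-r\ell}$; for $\theta<1$ the exponent $(d-1)(1-1/\theta)$ is negative, so your claimed bound $\|g_{r,\theta}\|_q\gtrsim 2^{-r\ell}$ fails and the fooling function is \emph{weaker} than even $m^{-r}$. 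Indeed, the support-counting argument gives at most $m^{-r}(\log m)^{(d-1)(1-1/\theta)}$ for an arbitrary $m$-point set, which is the cubature-type lower bound of \eqref{f1}, not the $L_q$-recovery rate. The paper itself flags this: the remark after the derivation of \eqref{g_1} states that the construction must be ``slightly modified'' when $\min\{p,\theta\}<1$ (one natural fix is to concentrate on a single hyperbolic layer $\bj_0$, dropping the $\ell^{-(d-1)/\theta}$ factor, which yields a $\theta$-independent lower bound $\asymp 2^{-r\ell}\asymp m^{-r}$ — still short of the target). The sharp lower bound for $p\ge q$, $\theta\le1$ does not come from local fooling functions at all; as in the paper's proof of Theorem~\ref{Th[rho_n]}, it should be obtained from the Kolmogorov/linear-width lower bounds (Theorem~\ref{thm[d_mB,p>q]}, which already gives $d_m(\bB^r_{p,1},L_q)\asymp(m^{-1}\log^{d-1}m)^r$ for $2\le q<p<\infty$), and then extended to $\theta<1$ by exhibiting an extremal subspace on which the $\bB^r_{p,\theta}$-norm is $\theta$-independent. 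That is the missing step your proposal does not supply.
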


\subsubsection*{Complements to Section 8: Numerical integration}\index{Numerical integration}

\begin{thm}
{\em (ii)} For each $0 <  p,\theta\le\infty$ and $r>1/p$, we have 
\[
\Phi(a,A,\psi)(\Brpt) \,\asymp\, a^{-dr+d(1/p-1)_+} (\log
a)^{(d-1)(1-1/\theta)_+}\quad,\quad a>1\,.
\]
{\em (ii)} For each $0 <  p < \infty$ and $0<\theta \leq 1$, we have 
\[
\Phi(a,A,\psi)(\mathbf{B}^{1/p}_{p,\theta}) \,\asymp\,
a^{-d/\max\{p,1\}}\quad,\quad a>1\,.
\]
\end{thm}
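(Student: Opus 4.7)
The plan rests on the identity $\Phi(a,A,\psi)(f) = \Phi(a,A)(\psi f)$ which transfers the cubature error estimate from the periodic class $\Brpt$ to the compactly supported image $\psi f$. The first key step is to verify that the pointwise multiplication operator $M_\psi: \Brpt(\T^d) \to \Brpt(\R^d)$ is bounded in the full quasi-Banach range $0<p,\theta\le\infty$, $r>\sigma_p$, with image supported in $\Omega = \mathrm{supp}\,\psi$. For the periodic Banach case $p\ge1$ this is proved in \cite{NUU15}; the extension to $p<1$ follows by lifting that argument via the atomic decomposition in Proposition \ref{atomic} together with the quasi-Banach difference characterization in Theorem \ref{thm95}, using that multiplication by $\psi \in C^\infty_c$ preserves atomic molecules up to a constant depending only on $\psi$.

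For the upper bound in (i) I split into cases. For $1\le p\le\infty$ and $\theta\ge 1$ I apply Theorem \ref{thm:besovsobolev}(ii) to $\psi f$, yielding $a^{-dr}(\log a)^{(d-1)(1-1/\theta)}$. For $\theta<1$ I first use the monotonicity embedding $\Brpt\hookrightarrow\bB^r_{p,1}$ and land at the rate $a^{-dr}$, which agrees with $a^{-dr}(\log a)^{(d-1)(1-1/\theta)_+}$. For $0<p<1$ I use Lemma \ref{emb}(i) with $u=1$ to obtain $\Brpt\hookrightarrow\bB^{r-(1/p-1)}_{1,\theta}$ (valid because $r>1/p$ implies $r-(1/p-1)>1$) and then apply the previously treated case $p=1$ to $\psi f$, producing $a^{-d(r-(1/p-1))}(\log a)^{(d-1)(1-1/\theta)_+} = a^{-dr+d(1/p-1)}(\log a)^{(d-1)(1-1/\theta)_+}$.

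For the lower bound in (i) I build a fooling function $g$ using Proposition \ref{atomic}. Fix $\ell$ with $2^\ell\asymp a^d$ and note that on each level $\bs$ with $|\bs|_1=\ell+1$ at most $m\asymp a^d\asymp 2^\ell$ of the $2^{\ell+1}$ dyadic cubes of type $Q_{\bs,\bk}$ can meet a Frolov node (this follows from property (2) in Theorem \ref{L6.4.1}), so at least half of them, an index set $K_\bs$, miss every node. When $\theta>1$ I put one atom $a_{\bs,\bk(\bs)}$ at each of the $\asymp \ell^{d-1}$ indices with $|\bs|_1=\ell+1$, with $\bk(\bs)\in K_\bs$; when $\theta\le 1$ a single atom at a single $\bs$ suffices. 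The coefficient $c$ is chosen so that \eqref{flelambda} yields $\|g\|_{\Brpt}\lesssim 1$; in the one-atom-per-$\bs$ variant this forces $c\asymp 2^{-\ell(r-1/p)}\ell^{-(d-1)/\theta}$. Then $\int_{\T^d} g\,d\bx \asymp c\,\ell^{d-1}\cdot 2^{-\ell}\asymp a^{-dr+d(1/p-1)_+}(\log a)^{(d-1)(1-1/\theta)_+}$, while $\Phi(a,A,\psi)(g)=0$ by construction, giving the matching lower bound. Part (ii) is handled analogously: for the upper bound with $p\ge 1$ one invokes Theorem \ref{Thm:Fro1}(iii), whereas for $p<1$ Lemma \ref{emb}(i) gives $\mathbf{B}^{1/p}_{p,\theta}\hookrightarrow\mathbf{B}^{1}_{1,\theta}\hookrightarrow\mathbf{B}^{1}_{1,1}$ (the second inclusion using $\theta\le 1$), reducing to the $p=1$ case with rate $a^{-d}$. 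The matching lower bound employs a single atom for $p\le 1$ (yielding $a^{-d}$) and the variant with all $\asymp 2^\ell$ atoms from $K_\bs$ concentrated at a single $\bs$ for $p>1$ (yielding $a^{-d/p}$ via the $\ell_p$-norm $|K_\bs|^{1/p}$).

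The main obstacle is the quasi-Banach multiplier result $\|\psi f\|_{\Brpt(\R^d)} \lesssim \|f\|_{\Brpt}$ for $p<1$ or $\theta<1$, where the loss of the triangle inequality forces us to work via atomic/quarkonial decompositions rather than direct difference estimates. A secondary technical point is the precise counting of Frolov nodes inside dyadic cubes needed to guarantee that $|K_\bs| \asymp 2^\ell$; this is where admissibility property (2) of Theorem \ref{L6.4.1} is essential, even though Frolov nodes are not aligned with dyadic grids.
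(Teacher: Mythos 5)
Your overall architecture (reduce the periodic error to the compactly-supported error via the multiplier $M_\psi$, handle $p<1$ by the embedding $\Brpt\hookrightarrow\bB^{r-(1/p-1)}_{1,\theta}$, and manufacture fooling functions via Proposition~\ref{atomic}) is the natural route and matches what is done in \cite{NUU15,UU14}; the part (ii) argument is fine. But there is a genuine arithmetic gap in the lower bound for part (i). With one atom per $\bs$, a coefficient $c\asymp 2^{-\ell(r-1/p)}\ell^{-(d-1)/\theta}$, and $\int a_{\bs,\bk}\asymp 2^{-|\bs|_1}=2^{-\ell}$, the integral is
\begin{equation*}
\int g\,d\bx \;\asymp\; c\cdot\ell^{d-1}\cdot 2^{-\ell}\;\asymp\; 2^{-\ell\bigl(r-1/p+1\bigr)}\,\ell^{(d-1)(1-1/\theta)}\;\asymp\; a^{-dr+d(1/p-1)}(\log a)^{(d-1)(1-1/\theta)}\,,
\end{equation*}
with the exponent $1/p-1$, \emph{not} $(1/p-1)_+$. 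For $p\le 1$ these agree and you have the right rate; but for $p>1$ the quantity $a^{-dr+d(1/p-1)}$ is strictly smaller than the target $a^{-dr}$, so your one-atom-per-$\bs$ construction does \emph{not} provide a matching lower bound when $p>1$. You should either (a) note that the entire Banach range $p\ge1$ is already covered by Theorem~\ref{thm:besovsobolev}(ii) via the multiplier equivalence, so the only case needing a new lower bound is $p<1$, or (b) run the ``many atoms per $\bs$'' variant (exactly as you already do for part (ii)): with $|K_\bs|\asymp 2^\ell$ atoms at each of the $\asymp\ell^{d-1}$ levels, the $\ell_p$-norm in Definition~\ref{defsequ} contributes $|K_\bs|^{1/p}\asymp 2^{\ell/p}$, which forces $c\asymp 2^{-\ell r}\ell^{-(d-1)/\theta}$ and cancels the unwanted $2^{\ell(1/p-1)}$ deficit, giving $\int g\,d\bx\asymp a^{-dr}(\log a)^{(d-1)(1-1/\theta)}$, as required. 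In other words, the size of $K_\bs$ must itself be chosen differently according to whether $p\le1$ (one atom) or $p>1$ (full $K_\bs$), in addition to the $\theta$-split between one $\bs$ (for $\theta\le1$) and $\asymp\ell^{d-1}$ values of $\bs$ (for $\theta>1$). Two smaller remarks: the counting step showing $|K_\bs|\gtrsim 2^\ell$ is really a cardinality argument -- there are $\asymp a^d\asymp 2^\ell$ Frolov nodes, each hitting one of $2^{\ell+1}$ dyadic boxes at level $\bs$, so at least half the boxes are empty -- and admissibility property (2) of Theorem~\ref{L6.4.1} is not the crucial input here. Also, your multiplier step $M_\psi:\Brpt(\T^d)\to\Brpt(\R^d)$ in the quasi-Banach range should be attributed to \cite{NUU15}; it does not follow trivially from Proposition~\ref{atomic}, which only gives one-sided control of norms by coefficient sequences, not stability under smooth multiplication.
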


Analogous estimates hold true for the Fibonacci-cubature formulas.

\subsubsection*{Nonlinear approximation of non-compact embeddings}
\index{Nonlinear approximation} \index{Embedding!Non-compact}

In contrast to the study of $s$-numbers it makes also sense to study non-compact
(but continuous) embeddings in connection with best $m$-term approximation. This
has been done recently in \cite{HaSi10}. Let us first clarify the precise
embedding situation. A proof may be found in \cite{Vyb06}. 

\begin{lem} Let $1<q<\infty$ and $r>0$. \\
{\em (i)} Let $1<p<\infty$. Then 
$$
    \Wrp \hookrightarrow L_q 
$$
if and only if $r\geq 1/p-1/q$.\\
{\em (ii)} Let $0<p<\infty$. Then 
$$
    \bB^r_{p,p} \hookrightarrow L_q
$$
if and only if $r\geq 1/p-1/q$\,.\\
{\em (iii)} The embeddings are compact if and only if $r>1/p-1/q$\,. The
embeddings (i) and (ii) also hold true for spaces $\bB^r_{p,p}(\R^d)$ and
$\Wrp(\R^d)$ if $p\leq q$ and 
$r\geq 1/p-1/q$ but they are never compact.
\end{lem}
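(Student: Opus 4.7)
The proof divides into sufficiency, sharpness, and the compactness dichotomy; in each step I would exploit material already developed in the excerpt rather than building machinery from scratch.

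For sufficiency, when $p \geq q$ the continuous inclusion $L_p \hookrightarrow L_q$ on the compact torus together with the trivial embeddings $\Wrp, \bB^r_{p,p} \hookrightarrow L_p$ (valid for any $r>0$) concludes. So we may assume $p < q$. The strict range $r > 1/p-1/q$ is already covered by the compact embeddings of Lemma \ref{emb}, hence the genuinely new content is the critical endpoint $r = 1/p-1/q =: \beta$. Here I would combine the Jawerth--Franke type embeddings of Lemmas \ref{L3.5c} and \ref{L3.5d} with the Littlewood--Paley identification \eqref{NeqW}: for (ii) the monotonicity $\bB^r_{p,p} \hookrightarrow \bB^r_{p,q}$ (valid since $p\leq q$) followed by Lemma \ref{L3.5c} delivers an inclusion into $L_q$; for (i) one applies Lemma \ref{L3.5d} to reach $\bB^{0}_{q,p}$, which embeds into $L_q$ in the favorable regime $p \leq 2$, while the range $p>2$ is handled by combining the square-function characterization \eqref{NeqW} with a sharp dyadic-block Nikol'skii estimate extracted from Theorem \ref{T2.4.6}. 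The quasi-Banach range $0<p<1$ in (ii) is accommodated by the atomic representation in Proposition \ref{atomic}.

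For necessity, given $r < \beta$ I would falsify both embeddings simultaneously by the one-variable Dirichlet kernels $f_N(\bx) := \D_N(x_1)$ regarded as periodic functions on $\T^d$. Since $f_N$ depends on a single variable, every mixed difference or derivative involving another coordinate vanishes; by \eqref{2.1.6} and the multivariate Bernstein inequality (Theorem \ref{T2.4.2}) one obtains
\begin{equation*}
  \|f_N\|_q \asymp N^{1-1/q}, \qquad \|f_N\|_{\Wrp} \asymp \|f_N\|_{\bB^r_{p,p}} \asymp N^{r+1-1/p},
\end{equation*}
so the embedding constant would be at least $\asymp N^{\beta - r} \to \infty$.

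For compactness in (iii), when $r > \beta$ strictly I would pick an intermediate $r' \in (\beta, r)$ and factor through $\bW^{r'}_p$: the orthogonal projections $S_{Q_n}$ are of finite rank and approximate the identity uniformly on the unit ball of $\Wrp$ because $E_{Q_n}(\Wrp)_q \to 0$ by Theorem \ref{TBT3.2}, exhibiting the embedding as an operator-norm limit of finite-rank maps, hence compact; the Besov version is identical. At the endpoint $r = \beta$, shifting the Dirichlet test functions into well-separated dyadic frequency shells yields a bounded sequence in $\Wrp$ (resp.\ $\bB^r_{p,p}$) whose $L_q$-images are pairwise well-separated, killing compactness. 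On $\R^d$ the non-compactness is cheaper: for any nontrivial bump $\varphi$ the translates $\varphi(\cdot - \tau_n)$ with $|\tau_n| \to \infty$ form a norm-bounded sequence in $\Wrp(\R^d)$ and $\bB^r_{p,p}(\R^d)$ converging weakly to zero in $L_q(\R^d)$, so admitting no strongly convergent subsequence.

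The main obstacle is the endpoint sufficiency at $r = \beta$: a naive dyadic block estimate using Nikol'skii together with the Littlewood--Paley theorem loses a factor $(\log n)^{d-1}$ in the mixed-smoothness setting and therefore fails at the critical line. Bypassing this requires the sharp Jawerth--Franke inequalities of Lemmas \ref{L3.5c}--\ref{L3.5d}, whose proofs in turn rely on the nontrivial Nikol'skii-type sequence-space estimate of Theorem \ref{T2.4.6}; the remaining pieces (necessity and the $\R^d$ non-compactness) are routine once the right test functions are in hand.
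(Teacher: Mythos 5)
The paper gives no proof of this lemma at all: the text surrounding it simply says ``A proof may be found in \cite{Vyb06}.'' So there is no paper argument to compare against, and the question is only whether your proposal stands on its own. It mostly does, and the main skeleton --- Jawerth--Franke for the endpoint sufficiency, a one-variable Dirichlet kernel to falsify the embedding below the critical line, finite-rank approximation for compactness, translated bumps for the $\R^d$ non-compactness --- is the right one. Two steps, however, are asserted with citations that do not actually cover them.

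The first concerns your necessity estimate. You claim $\|f_N\|_{\bW^r_p}\asymp\|f_N\|_{\bB^r_{p,p}}\asymp N^{r+1-1/p}$ ``by \eqref{2.1.6} and the multivariate Bernstein inequality (Theorem \ref{T2.4.2}).'' Bernstein only supplies the upper bound, and Theorem \ref{T2.4.2} is stated for $\mathbf 1\le\mathbf p\le\infty$, whereas (ii) allows $0<p<1$. In that range the raw Dirichlet kernel does \emph{not} scale as you use it: $\|\D_N\|_p$ is $O(1)$ for $0<p<1$ (and $\asymp\log N$ at $p=1$), not $\asymp N^{1-1/p}$. The claimed two-sided Besov estimate is still correct, but the reason is different: since $f_N$ depends on $x_1$ alone, the smooth dyadic blocks $f_\bs(f_N)$ from Definition \ref{d1} vanish unless $\bs=(s_1,0,\dots,0)$, and each nonvanishing block is a modulated bandlimited bump with $\|f_{(s_1,0,\dots,0)}(f_N)\|_p\asymp 2^{s_1(1-1/p)}$ for every $0<p\le\infty$; summing $2^{rs_1p}\cdot 2^{s_1(1-1/p)p}$ over $s_1\lesssim\log N$ (with $r>1/p-1=\sigma_p$) gives $N^{r+1-1/p}$. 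State it this way rather than leaning on a theorem outside its hypotheses.

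The second concerns the endpoint sufficiency of (i) when $p>2$: the phrase ``combining the square-function characterization \eqref{NeqW} with a sharp dyadic-block Nikol'skii estimate extracted from Theorem \ref{T2.4.6}'' is too loose, since a vector-valued ($L_q(\ell_2)$ against $L_p(\ell_2)$) Nikol'skii inequality is not what Theorem \ref{T2.4.6} gives. The clean implementation is: \eqref{chainBFB} yields $\bW^\beta_p\hookrightarrow\bB^\beta_{p,\max\{p,2\}}=\bB^\beta_{p,p}$ for $p>2$ and then (ii) finishes; or more simply, Lemma \ref{emb}(ii) with $u=q$, $r=\beta$ gives $\bW^\beta_p\hookrightarrow\bW^0_q=L_q$ directly for all $1<p<q<\infty$, eliminating the case split. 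Likewise, the invocation of Proposition \ref{atomic} (a synthesis, not analysis, statement) for the quasi-Banach sufficiency of (ii) is unnecessary: $\bB^\beta_{p,p}\hookrightarrow\bB^\beta_{p,q}\hookrightarrow L_q$ via Lemma \ref{L3.5c} already works verbatim for $0<p<1$, since that lemma is stated for $0<p<u<\infty$.
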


The compact embeddings $r>1/p-1/q$ have been discussed above. Now we are interested in $r=1/p-1/q$. Again we use a
wavelet basis of sufficiently smooth wavelets as dictionary $\Phi$. 

\begin{thm}\label{noncomp1} {\em (i)} Let $0<p\leq \max\{p,1\}<q<\infty$ and $r = 1/p-1/q$.
Then 
$$
    \sigma_m(\bB^r_{p,p},\Phi)_q \asymp m^{-r}(\log m)^{(d-1)(r-1/p+1/2)_+}\,.
$$
{\em (ii)} Let $1<p\leq 2\leq q<\infty$ and $r=1/p-1/q>0$. Then 
$$
    \sigma_m(\Wrp,\Phi)_q \asymp m^{-r}(\log m)^{(d-1)r}\,.
$$
\end{thm}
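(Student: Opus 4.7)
The plan is to transfer the problem to sequence spaces via the wavelet isomorphism, and then apply Stechkin-type lemmas. For part (i), the tensorized wavelet basis $\Phi$ induces an isomorphism $f \leftrightarrow \lambda = \{\lambda_{\bs,\bk}\}$ identifying $\bB^r_{p,p}$ with the weighted sequence space $\bb^r_{p,p}$ (cf.\ Definition \ref{defsequ} and Proposition \ref{atomic}, rescaled to smoother atoms), so that $\|f\|_{\bB^r_{p,p}} \asymp \|\{2^{|\bs|_1(r+1/2-1/p)} \lambda_{\bs,\bk}\}\|_{\ell_p}$, while for $1<q<\infty$ the $L_q$-norm of the wavelet series is equivalent to a Triebel-Lizorkin square function, estimated from above by $\|\{2^{|\bs|_1/2} \lambda_{\bs,\bk}\}\|_{\ell_q}$ when $q\geq 2$ (after invoking Lemma \ref{L3.5c}/\ref{L3.5d}). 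Under the critical scaling $r=1/p-1/q$, rearranging the scalar sequence $\{2^{|\bs|_1/2}|\lambda_{\bs,\bk}|\}$ in decreasing order and applying the strong Stechkin lemma (Lemma \ref{Lqp2}) yields exactly $\sigma_m \lesssim m^{-r}$.

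The logarithmic correction $(\log m)^{(d-1)(r-1/p+1/2)_+}$ reflects the gap between a ``pure'' $\ell_q$ norm and the square-function $L_q$ norm and must be extracted by a two-level argument: first apply Stechkin inside each hyperbolic layer $\{\bs:|\bs|_1=n\}$, whose cardinality is $\asymp n^{d-1}$, and then sum over layers with a geometric balancing on $n$. When $p\leq 2\leq q$ the Franke-type embedding $\bB^r_{p,p}\hookrightarrow \bW^{r-1/p+1/q}_q$ is what buys the extra $n^{d-1}$ factor; when $q\leq 2$ the exponent $(r-1/p+1/2)_+$ vanishes and no log appears, consistent with direct $\ell_p\to\ell_q$ Stechkin. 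Part (ii) proceeds similarly but starts from the square-function description \eqref{NeqW} of $\Wrp$ (equivalently the $\bw^r_p$ sequence space of Definition \ref{defsequw}); since $\Wrp$ is genuinely larger than $\bB^r_{p,p}$ in the relevant range, the log power increases from $(d-1)(1/2-1/q)$ to $(d-1)r=(d-1)(1/p-1/q)$.

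The lower bounds will be obtained by constructing a fooling function supported on a single hyperbolic layer. Fix $n$ with $2^n n^{d-1}\asymp 2m$, set $M=|Q_n\setminus Q_{n-1}|\asymp 2^n n^{d-1}$, and consider $f=c\sum_{|\bs|_1=n}\sum_{\bk\in \mathcal D_\bs}\varepsilon_{\bs,\bk}\psi_{\bs,\bk}$ with signs $\varepsilon_{\bs,\bk}=\pm 1$ and a constant $c$ normalized so that $\|f\|_{\bB^r_{p,p}}\asymp 1$ (for part (i)) or $\|f\|_{\Wrp}\asymp 1$ (for part (ii)). Any $m$-term approximant must leave at least $M-m \gtrsim M$ coefficients untouched; evaluating the $L_q$-norm of the residual via the inverse direction of the square-function characterization produces the claimed lower bound. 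A small randomization over signs, or alternatively Riesz products as in Subsection \ref{subsect:SBI}, guarantees the required $L_q$-norm lower bound on the residual uniformly in the choice of $m$ kept indices.

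The hard part, as usual in the ``non-compact'' regime $r=1/p-1/q$, is matching the precise log exponent in the upper and lower bounds: one cannot afford any slack in Stechkin, and the two-level summation has to be balanced optimally. In particular, extracting exactly $(r-1/p+1/2)_+$ in case (i) when $p\leq 2\leq q$ requires handling the Franke/Jawerth embedding with sharp control, and in case (ii) one has to distinguish whether $p\leq 2$ or $p\geq 2$ to pick the correct sequence space isomorphism $\bw^r_p$ and thereby avoid losing a $\log$. A secondary technical nuisance is that smooth compactly supported wavelets only characterize these spaces under sufficient decay/vanishing moment conditions, which is why $\Phi$ is required to have ``sufficient smoothness, decay and vanishing moments''---this assumption is used implicitly throughout both the upper and lower bound constructions.
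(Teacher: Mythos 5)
The paper does not prove this theorem; it cites it as a consequence of recent results of Hansen and Sickel \cite{HaSi10,HaSi12}, who use precisely the route you describe: sequence-space isomorphism via a suitable tensorized wavelet basis, followed by Stechkin-type estimates on coefficients, with fooling functions for the lower bounds. So the skeleton of your argument is right and matches the intended one.

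However, there is a real gap in your reduction of the $L_q$-norm to a pure $\ell_q$-norm of coefficients. For $q\geq 2$ the chain \eqref{chainBFB} gives $\bB^0_{q,2}\hookrightarrow L_q\hookrightarrow \bB^0_{q,q}$, so the embedding you would need for an \emph{upper} bound on the residual $L_q$-norm by an $\ell_q$-quantity points the \emph{wrong} way; what you actually control the residual by is the $\bB^0_{q,2}$-norm, which in wavelet coefficients is a \emph{mixed} norm $\ell_2(\bs;\ell_q(\bk))$ with weight $2^{|\bs|_1(1/2-1/q)}$ (not $2^{|\bs|_1/2}$). This is not a cosmetic difference: the $\ell_2$ summation over the $\asymp n^{d-1}$ values of $\bs$ in a fixed layer $|\bs|_1=n$ is precisely where the logarithmic factor $(\log m)^{(d-1)(1/2-1/q)}$ comes from. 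Your "two-level Stechkin" idea is the right mechanism, but it has to be set up against this mixed norm, not against $\ell_q$, and the balancing between layers requires splitting the budget $m$ into decreasing amounts per layer and tuning the decay. Also, the weight identification $\|f\|_{\bB^r_{p,p}}\asymp\|\{2^{|\bs|_1(r+1/2-1/p)}\lambda_{\bs,\bk}\}\|_{\ell_p}$ presumes $L_2$-normalized wavelets; for consistency the $L_q$-side normalization must be computed with the same convention.

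On the lower bound, the single-layer fooling function is the right object, but "any $m$-term approximant must leave $\gtrsim M$ coefficients untouched" is not enough. An adversary could in principle concentrate the retained coefficients in a small number of dyadic blocks $\bs$. You need the pigeonhole refinement: since each block in the layer has $\asymp 2^n$ indices and there are $\asymp n^{d-1}$ blocks, choosing $M\asymp 2m$ with $m\asymp 2^n n^{d-1}$ forces a positive fraction of blocks to retain a positive fraction of their coefficients no matter how the approximant chooses its $m$ indices. Only then does the lower $\bB^0_{q,q}$-estimate (for $q\geq 2$), applied via Khintchine/random signs or the deterministic finite-overlap bound $\sum_\bk|\psi_{\bs,\bk}|^2\asymp 2^{|\bs|_1}$, give a residual $L_q$-norm comparable to the full $\|f\|_q$, and a short computation then reproduces the exponent $(d-1)(r+1/2-1/p)$. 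These are fixable but essential details that your sketch currently leaves underspecified.
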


\begin{rem} In the case $q=2$ we obtain with $r=1/p-1/2$
\begin{equation}
    \sigma_m(\bB^r_{p,p},\Phi)_2 \asymp m^{-r}\,. \label{nonc}
\end{equation}
This has been already observed in \cite{SiUl09}, see also \cite{Ni06}. The
latter reference dealt with tensor products of univariate Besov spaces
$B^r_{p,p}$ which have been recently identified as spaces with dominating mixed
smoothness also in the case $p<1$, see \cite{SiUl09}. 
\end{rem}

We can say even more in the situation $q=2$ for spaces on $\R^d$. For $s>0$ and
$p$ as above we consider the nonlinear approximation space
$\mathbf{A}^s_p (L_2(\R^d))$, see e.g. \cite{D}, as the collection of all
$f\in L_2 (\R^d)$ such that
\[
\| \, f \, \|_{\mathbf{A}^s_p (L_2(\R^d))} := \| \, f \, \|_2+ \Big(
\sum_{m=1}^\infty \frac 1 m \, [m^s\, \sigma_m (f,\Phi)_2]^p\Big)^{1/p}
<\infty\, .
\]
Mainly as a corollary of the characterization of $\mathbf{A}^{\frac 1p -
\frac 12}_p (L_2 (\R^d))$, see e.g. Pietsch \cite{Pi}, DeVore
\cite[Thm.~4]{D} or \cite[Sect.\ 1.8]{Tbook}, we obtain the following
identification.

\begin{thm}\label{noncomp2}
Let $0< p<2$. Then we have
\[
\mathbf{A}^{\frac 1p - \frac 12}_p (L_2(\R^d)) = \bB^{1/p-1/2}_{p,p}(\R^d)
\]
in the sense of equivalent quasi-norms.
\end{thm}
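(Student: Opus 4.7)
\textbf{Proof plan for Theorem \ref{noncomp2}.}

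The strategy is to discretize both sides of the claimed identity by passing through the sequence space of wavelet coefficients, and then to observe that at the critical smoothness $r=1/p-1/2$ the two resulting conditions coincide. Fix a tensorized orthonormal wavelet basis $\Phi=\{\psi_{\bs,\bk}\}$ of $L_2(\R^d)$ whose generating wavelets have sufficient smoothness, decay, and vanishing moments so as to serve as an unconditional basis for the relevant quasi-Banach Besov spaces (this is the standard Daubechies-type construction; it must be chosen so that the parameters $0<p<2$ and $r=1/p-1/2$ lie in the admissible range). Let $\lambda_{\bs,\bk}(f):=\langle f,\psi_{\bs,\bk}\rangle$ be the $L_2$-wavelet coefficients. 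By Parseval,
\[
\|f\|_2 \;=\; \Big(\sum_{\bs,\bk}|\lambda_{\bs,\bk}(f)|^2\Big)^{1/2},
\]
and best $m$-term approximation with respect to the orthonormal system $\Phi$ is realized by retaining the $m$ largest coefficients, so
\[
\sigma_m(f,\Phi)_2 \;=\; \sigma_m\big((\lambda_{\bs,\bk}(f))_{\bs,\bk}\big)_{\ell_2}.
\]

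First I would analyze the approximation-space side. Applying Lemma \ref{pietsch} to the coefficient sequence $x=(\lambda_{\bs,\bk}(f))_{\bs,\bk}$ with $q=2$ and the given $p\in(0,2)$ yields
\[
\Big[\sum_{m=1}^{\infty}\big(m^{1/p-1/2}\sigma_m(x)_{\ell_2}\big)^p\frac{1}{m}\Big]^{1/p} \;\asymp\; \|x\|_{\ell_p}.
\]
Combined with Parseval and the identity for $\sigma_m$ above, this is exactly the statement that $f\in\mathbf{A}^{1/p-1/2}_p(L_2(\R^d))$ if and only if $(\lambda_{\bs,\bk}(f))\in\ell_p$, with equivalent quasi-norms.

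Next I would analyze the Besov side. The wavelet characterization of $\bB^{r}_{p,p}(\R^d)$ for $r>(1/p-1)_+$, in the spirit of Proposition \ref{contvsdisc} and Definition \ref{defsequ} (suitably formulated for $\R^d$; compare also Proposition \ref{atomic}), states that the mapping $f\mapsto(\lambda_{\bs,\bk}(f))$ is an isomorphism between $\bB^r_{p,p}(\R^d)$ and the sequence space with quasi-norm
\[
\Big(\sum_{\bs\in\N_0^d} 2^{|\bs|_1(r+1/2-1/p)p}\sum_{\bk\in\Z^d}|\lambda_{\bs,\bk}|^{p}\Big)^{1/p},
\]
where the exponent $r+1/2-1/p$ arises from passing between $L_2$-normalized and $L_p$-normalized dyadic wavelets. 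Setting $r=1/p-1/2$ kills the dyadic weight and leaves precisely $\|(\lambda_{\bs,\bk}(f))\|_{\ell_p}$.

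Combining the two previous steps yields
\[
\|f\|_{\mathbf{A}^{1/p-1/2}_p(L_2(\R^d))}\;\asymp\;\|(\lambda_{\bs,\bk}(f))\|_{\ell_p}\;\asymp\;\|f\|_{\bB^{1/p-1/2}_{p,p}(\R^d)},
\]
which is the claimed identity with equivalent quasi-norms. The main obstacle is purely technical: ensuring that the wavelet system $\Phi$ is simultaneously (i) an unconditional basis of $L_2(\R^d)$, (ii) compatible with the best $m$-term analysis so that the greedy $m$-term approximation is realized by coefficient thresholding, and (iii) an unconditional basis for $\bB^{1/p-1/2}_{p,p}(\R^d)$ in the quasi-Banach range $0<p<1$, where the standard arguments from Section \ref{Chardiff} and Proposition \ref{contvsdisc} must be replaced by their $\R^d$ counterparts with smoothness, decay, and vanishing-moment conditions adapted to $p$; once such a $\Phi$ is fixed, the two displayed equivalences above are essentially automatic, and the theorem follows.
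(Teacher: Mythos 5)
Your proposal is correct and follows essentially the same route the paper indicates: the paper says the theorem is ``mainly a corollary of the characterization of $\mathbf{A}^{1/p-1/2}_p(L_2(\R^d))$'' (Pietsch/DeVore/Temlyakov), which is exactly your appeal to Lemma \ref{pietsch} after passing to wavelet coefficients, combined with the matching wavelet characterization of $\bB^{1/p-1/2}_{p,p}(\R^d)$ at the critical smoothness where the dyadic weight vanishes. The details you work out (Parseval, thresholding realizes $\sigma_m$ in a Hilbert space, the $L_2$-normalization shift $r\mapsto r-1/p+1/2$, and the admissibility condition $1/p-1/2>(1/p-1)_+$) are the same ones implicit in the paper's references.
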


\subsection{Sampling along lattices}\index{Sampling!Lattices}
\label{samp_lattices}

In Section 5 (Subsection \ref{sampwidths}) we encountered several situation where Smolyak's algorithm (sparse grids) represent the optimal sampling algorithm with respect to $\varrho_m$. However, in case $q\leq p$ an optimal 
sampling algorithm is only known in case $d=2$ and $p=q=\infty$. The natural question arises whether there are other discrete point set constructions and corresponding sampling 
algorithms which might behave better than sparse grids or perform even optimal. Here one could think about the big variety of point set constructions which are used for numerical integration
 based on number theoretic constructions, see 
\cite{DiKuSl13} or Section \ref{numint} above. This question has been considered by several authors in the past. We refer to \cite{VT27}, \cite{KuSlWo06}, \cite{KuWaWo09}, \cite{KaKu12}, \cite{Ka13}, \cite{KaPoVo15}, \cite{KaPoVo15_2}. 
It turned out that the proposed methods provide certain numerical advantages since the lattice structure allows for using the classical Fast Fourier Transform for the evaluation and 
reconstruction of trigonometric polynomials. Moreover, in contrast to sparse grid discretization techniques
the use of oversampled lattice rules provides better numerical stability properties \cite{KaKu12}, \cite{KaKu11}. However, when expressing the error in terms of the used 
function values (in the sense of $\varrho_m$) it turns out, see the recent paper \cite{ByKaUlVo16}, that one only can expect half the rate of convergence of a sparse grid method, i.e.
$$
	  \varrho_m^{\text{latt}_1}(\bW^r_2,L_2) \gtrsim m^{-r/2}\,.
$$
The quantities on the left-hand side are similarly defined as $\varrho_m$ above with the difference that we take the $\inf$ over all sampling algorithms taking function values 
along rank-$1$ lattices with $m$ points. Surprisingly, such methods never perform optimal with respect to the ``information complexity''. However, as already mentioned in Subsection \ref{sampsmol}, the runtime 
of such an algorithm can be accelerated significantly by exploiting the lattice structure. 

A related framework is studied in the papers \cite{BaDaDeGr14}, \cite{NoRu16}. The authors deal with sampling recovery of the class or rank$-1$ tensors
$$
    F^r_{M,d}:=\Big\{f = \bigotimes\limits_{i=1}^d f_i~:~\|f_i\|_{\infty}\leq 1, \|f_i^{(r)}\|_{\infty} \leq M\Big\}
$$
which represents a subclass of the unit ball in the space $\bW^r_{\infty}$ if $M=1$. The authors in \cite{BaDaDeGr14} use a low discrepancy point set (Halton points, see Section 8) to propose a sampling recovery algorithm with $N$
points and $L_\infty$-error $\lesssim C(r,d)N^{-r}$ with constant $C(d,r)$ scaling like $d^{dr}$. The question after the curse of dimensionality arises. By proposing a Monte-Carlo sampling recovery algorithm 
the authors in \cite{NoRu16} prove that the curse of dimensionality is present (in the randomized setting) if and only if  $M\geq 2^rr!$. In case $M<2^rr!$ the complexity is only polynomial in the dimension. 

\subsection{Sampling recovery in energy norm}\index{Sampling!Recovery!Energy norm}
\label{Sampling recovery in energy norm}

There is a large class of solutions of the electronic Schr\"odinger equation 
in quantum chemistry, which belong to Sobolev spaces with mixed regularity and, moreover, possess  
some additional  Sobolev isotropic smoothness properties, see Yserentant's recent lecture notes 
\cite{Y2010} and the references therein. This type of regularity is precisely expressed by the spaces 
$\bW^{r,\beta}_p$ for $p=2$ introduced in \cite{GrKn09}. Here, the parameter $r$ reflects the 
smoothness in the dominating mixed sense and 
the parameter $\beta$ reflects the smoothness in the  isotropic sense. 
In \cite{BG99,BuGr04,GrKn00,Gr05,GrKn09,ScSuTo08}, the authors used Galerkin methods for the $W^1_2(\T^d)$-approximation of the solution of general elliptic variational problems. 
In particular, by use of tensor-product biorthogonal wavelet bases, 
the authors of \cite{GrKn09} constructed so-called optimized sparse grid  subspaces for 
finite element approximations of the solution having $\bW^{r,\beta}_2$-regularity, 
whereas the approximation error is measured in the energy norm of isotropic Sobolev space $W^\gamma_2$.
For non-periodic functions of mixed smoothness $r$ from $\bW^r_2$, 
linear sampling algorithms on sparse grids have been investigated by Bungartz and Griebel 
\cite{BuGr04} employing hierarchical Lagrangian polynomials multilevel basis  
and measuring the approximation error in energy $W^1_2$-norm.

The problem of sampling recovery on sparse grids in energy norm $W^\gamma_q$ of functions from  classes 
$\bW^{r,\beta}_p$ and $\bB^{r,\beta}_{p,\theta}$ has been investigated in \cite{ByDuUl14,Di14}.
The space $\bB^{r,\beta}_{p,\theta}$ is a ``hybrid'' of the space $\Brpt$ of mixed smoothness $r$ and the classical isotropic Besov space $B^\beta_{p,\theta}$ of smoothness $\beta$. 
The space $\bB^{r,\beta}_{p,\theta}$ can be seen as a Besov type generalization of the space 
$\bW^{r,\beta}_2 = \bB^{\alpha,\beta}_{2,2}$. For the definition of these spaces, see \cite{ByDuUl14,Di14}.
It turns out that for this sampling recovery, we can achieve the right order of the sampling widths.

\begin{thm}  \label{Sampling_Energy_NormW}
 Let $r, \beta, \gamma\,  \in \R$ 
 such that $\min\{r, r + \beta\} >1/2$, $\gamma \ge 0$ and $0 <  \gamma - \beta < r$.
Then we have
\[
\varrho_m(\bW^{r,\beta}_2, W^\gamma_2(\T^d)) \asymp 
m^{-(r-\gamma + \beta)} .  
\]
\end{thm}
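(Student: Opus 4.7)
The plan is to exploit the dyadic block structure of both the hybrid source space and the isotropic target norm. A Littlewood-Paley-type characterization reads
\begin{equation*}
\|f\|_{\bW^{r,\beta}_2}^2 \asymp \sum_{\bs \in \N_0^d} 2^{2(r|\bs|_1+\beta|\bs|_\infty)}\|\delta_\bs(f)\|_2^2,\qquad \|f\|_{W^\gamma_2}^2 \asymp \sum_{\bs} 2^{2\gamma|\bs|_\infty}\|\delta_\bs(f)\|_2^2,
\end{equation*}
so that, with $\alpha:=r-\gamma+\beta>0$ and $L(\bs):=r|\bs|_1-(\gamma-\beta)|\bs|_\infty$, the effective smoothness governing the embedding $\bW^{r,\beta}_2\hookrightarrow W^\gamma_2$ on a dyadic block $\rho(\bs)$ is precisely $L(\bs)$. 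Its level sets $\Gamma_n:=\{\bs:L(\bs)\le n\}$ are downward-closed anisotropic simplices concentrated along the coordinate axes, since $\gamma>\beta$ makes moving off-axis costly.

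For the upper bound I would use the Smolyak-type sampling operator $T_n(f) := \sum_{\bs \in \Gamma_n} v_{\bs}(f)$, modelled on \eqref{sampop_per}, supported on the sparse grid $\wt{SG}^d(\Gamma_n):=\bigcup_{\bs \in \Gamma_n} J_{3\cdot 2^{s_1}}\times\cdots\times J_{3\cdot 2^{s_d}}$. The crucial step is to establish
\begin{equation*}
m := |\wt{SG}^d(\Gamma_n)| \asymp \sum_{\bs \in \Gamma_n} 2^{|\bs|_1} \asymp 2^{n/\alpha}
\end{equation*}
\emph{without} any $(\log n)^{d-1}$ correction. Parametrizing by $\ell=|\bs|_1$ and $u=|\bs|_1-|\bs|_\infty\ge 0$, the constraint $L(\bs)\le n$ becomes $\alpha\ell + (\gamma-\beta)u \le n$, so a block contributes $2^\ell \le 2^{n/\alpha-u(\gamma-\beta)/\alpha}$; combining this with a polynomial count for fixed $u$ reduces the sum to the geometric series $2^{n/\alpha}\sum_{u\ge 0}2^{-u(\gamma-\beta)/\alpha}(u+1)^{d-1}$, which converges uniformly in $n$ precisely because $\gamma>\beta$. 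A hybrid-space adaptation of Proposition \ref{samp_repr1} then yields $\|v_\bs(f)\|_2 \lesssim 2^{-r|\bs|_1-\beta|\bs|_\infty}\|f\|_{\bW^{r,\beta}_2}$, and the Littlewood-Paley tail estimate
\begin{equation*}
\|f-T_n(f)\|_{W^\gamma_2}^2 \lesssim \sum_{\bs \notin \Gamma_n} 2^{2\gamma|\bs|_\infty}\|v_\bs(f)\|_2^2 \lesssim \|f\|_{\bW^{r,\beta}_2}^2 \sum_{\bs \notin \Gamma_n} 2^{-2L(\bs)} \lesssim 2^{-2n}\|f\|_{\bW^{r,\beta}_2}^2
\end{equation*}
delivers $\varrho_m(\bW^{r,\beta}_2,W^\gamma_2) \lesssim 2^{-n} \asymp m^{-\alpha}$.

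For the matching lower bound I would argue via a fooling-polynomial construction in the spirit of Theorem \ref{T4.2} and Section \ref{Subs:appr_hypcross}. Given any $m$ nodes $X_m$ and any linear reconstruction, fix $\bs^*=(s^*,0,\dots,0)$ with $s^*$ chosen minimally so that $|\rho(\bs^*)|=2^{s^*}>2m$; a dimension count produces a nonzero $t\in\Tr(\rho(\bs^*))$ that vanishes on every node of $X_m$, and linearity then forces the algorithm's output on $t$ to be identically zero. Since $|\bs^*|_1=|\bs^*|_\infty=s^*$, the normalization $\|t\|_{\bW^{r,\beta}_2}=1$ gives $\|t\|_2\asymp 2^{-(r+\beta)s^*}$ and hence $\|t\|_{W^\gamma_2}\asymp 2^{-\alpha s^*}$; since $s^*\asymp\log_2 m \asymp n/\alpha$, this yields $\varrho_m \gtrsim m^{-\alpha}$.

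The principal obstacle is the log-free counting of the sparse grid over $\Gamma_n$: in contrast with classical Smolyak sparse grids \eqref{Sg}, which inherit a $(\log m)^{d-1}$ factor from the boundary of the simplex $\{|\bs|_1\le n\}$, the anisotropic pencil $\Gamma_n$ tapers sharply away from the coordinate axes, and only the hypothesis $\gamma>\beta$ produces the geometric damping needed to kill this factor. A secondary technical point is promoting Proposition \ref{samp_repr1} to the two-parameter hybrid space $\bW^{r,\beta}_2$; because both norms decompose unconditionally over dyadic blocks in $L_2$ with explicit weights, the Hilbert-space argument transfers, but implicit constants must be tracked in terms of $\beta$ to prevent accumulation across scales.
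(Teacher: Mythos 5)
Your overall strategy matches the paper's: an anisotropic (``energy'') hyperbolic cross indexed by $L(\bs)=r|\bs|_1-(\gamma-\beta)|\bs|_\infty\le n$, a Smolyak-type sampling operator on the associated sparse grid, the key observation that $\gamma>\beta$ collapses the grid count to $m\asymp 2^{n/\alpha}$ with no $\log$ factor, and a single-block fooling polynomial on a coordinate axis for the lower bound. The grid-cardinality calculation and the lower bound are correct.

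There is, however, a genuine error in your tail estimate. After invoking the blockwise bound $\|v_\bs(f)\|_2 \lesssim 2^{-r|\bs|_1-\beta|\bs|_\infty}\|f\|_{\bW^{r,\beta}_2}$ you claim
\begin{equation*}
\|f\|_{\bW^{r,\beta}_2}^2\sum_{\bs\notin\Gamma_n}2^{-2L(\bs)}\lesssim 2^{-2n}\|f\|_{\bW^{r,\beta}_2}^2,
\end{equation*}
but the sum on the left is in fact $\asymp n^{d-1}2^{-2n}$: the boundary shell $\{\bs: n<L(\bs)\le n+1\}$ contains $\asymp n^{d-1}$ multi-indices (for each of the $\asymp n$ admissible values of $u=|\bs|_1-|\bs|_\infty$ there are $\asymp u^{d-2}$ multi-indices at the critical $\ell$), each contributing $\asymp 2^{-2n}$. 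So the argument as written recreates exactly the logarithmic factor you claim to avoid, and $\varrho_m\lesssim m^{-\alpha}$ does not follow. The fix is to \emph{not} pass to the sup-type blockwise bound. Instead, starting from $\sum_{\bs\notin\Gamma_n}2^{2\gamma|\bs|_\infty}\|v_\bs(f)\|_2^2$, factor and pull out a supremum, then use the full weighted $\ell_2$ characterization that the Hilbert-space setting supplies:
\begin{equation*}
\sum_{\bs\notin\Gamma_n}2^{2\gamma|\bs|_\infty}\|v_\bs(f)\|_2^2
\le \Bigl(\sup_{\bs\notin\Gamma_n}2^{-2L(\bs)}\Bigr)\sum_{\bs}2^{2(r|\bs|_1+\beta|\bs|_\infty)}\|v_\bs(f)\|_2^2
\lesssim 2^{-2n}\|f\|_{\bW^{r,\beta}_2}^2.
\end{equation*}
It is precisely this sup-over-tail step, not the termwise estimate, that produces the log-free rate; the distinction is not cosmetic. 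You should also be aware that the preceding inequality $\|f-T_n f\|_{W^\gamma_2}^2\lesssim\sum_{\bs\notin\Gamma_n}2^{2\gamma|\bs|_\infty}\|v_\bs(f)\|_2^2$ is not automatic, since the sampling blocks $v_\bs(f)$ have Fourier content below level $\bs$ and hence overlap in frequency; an almost-orthogonality or weighted Cauchy--Schwarz argument is needed, which is routine in $L_2$ but should be acknowledged. Finally, the hypothesis $\min\{r,r+\beta\}>1/2$ is exactly what makes the hybrid sampling representation available (it governs the worst-decaying direction of the block weights and guarantees $\bW^{r,\beta}_2\hookrightarrow C(\T^d)$), and your remark about constant tracking in $\beta$ should be sharpened to this.
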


\begin{thm}  \label{Sampling_Energy_NormB}
Let $0 < p, \theta \le \infty$, $1 < q < \infty$, $r > 0$, $\gamma \ge 0$ and  
$\beta \in \R$, $\beta \not= \gamma$. Assume that there hold the conditions $\min (r,r + \beta) > 1/p$ and
\begin{equation} \nonumber
r > 
\begin{cases}
(\gamma -  \beta)/d, \ & \beta > \gamma, \\
\gamma -  \beta, \ & \beta < \gamma.
\end{cases} 
\end{equation}
Then we have 
\begin{equation} \nonumber
\varrho_m(\bB^{r,\beta}_{p,\theta}, W^\gamma_q(\I^d)) 
\ \asymp 
\begin{cases}
m^{- r  - (\beta - \gamma)/d + (1/p - 1/q)_+},  &  \beta > \gamma, \\
m^{- r  - \beta + \gamma + (1/p - 1/q)_+},  &  \beta < \gamma.
\end{cases}
\end{equation}
\end{thm}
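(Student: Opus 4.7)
\medskip

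\noindent\textbf{Proof plan for Theorem \ref{Sampling_Energy_NormB}.}

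The plan is to combine a B-spline quasi-interpolation representation on an \emph{optimized} sparse grid (rather than a standard Smolyak grid) with an atomic-decomposition-based fooling-function argument. As a starting point I would take the sampling representation \eqref{eq:B-splineRepresentation} of the operator $R_n$ from Subsection \ref{sampsmol}, together with its characterization of $\Brpt$ in the right-hand side of Proposition \ref{samp_repr1Q}, item (i), which survives for the hybrid space $\bB^{r,\beta}_{p,\theta}$ by simply inserting an additional isotropic weight on the blocks $q_{\bs}(f)$: the condition $f\in \bB^{r,\beta}_{p,\theta}$ should be equivalent (under the assumption $\min(r,r+\beta)>1/p$) to
\[
\Big(\sum_{\bs\in\N_0^d} 2^{r|\bs|_1\theta}\, 2^{\beta(\max_j s_j)\theta}\,\|q_{\bs}(f)\|_p^\theta\Big)^{1/\theta} < \infty,
\]
with the usual modification for $\theta=\infty$. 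This reduces the problem to the combinatorial task of choosing, for a prescribed budget of $m$ sample points, a subset $\Gamma_m\subset\N_0^d$ of index vectors $\bs$ so as to minimise the truncation error of $f\mapsto \sum_{\bs\in\Gamma_m}q_{\bs}(f)$ in the $W^\gamma_q$-norm.

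\smallskip

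\noindent\textbf{Upper bound.} First I would use the isotropic Sobolev norm characterization via a Littlewood--Paley decomposition together with a Bernstein--Nikol'skii inequality in order to estimate, for every $\bs$,
\[
\|q_{\bs}(f)\|_{W^\gamma_q} \lesssim 2^{\gamma(\max_j s_j)}\,2^{(1/p-1/q)_+|\bs|_1}\,\|q_{\bs}(f)\|_p\,.
\]
Combining this with the weighted $\bB^{r,\beta}_{p,\theta}$-characterization above yields an error of order
\[
\sup_{\bs\notin\Gamma_m} 2^{-r|\bs|_1}\,2^{-(\beta-\gamma)(\max_j s_j)}\,2^{(1/p-1/q)_+|\bs|_1}
\]
(up to the usual $\theta$-dependent logarithmic factors). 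Now the optimization over $\Gamma_m$ under the constraint $\sum_{\bs\in\Gamma_m}|\operatorname{supp} q_{\bs}|\leq m$ yields two different shapes: when $\beta>\gamma$ the penalty $2^{-(\beta-\gamma)\max_j s_j}$ favours \emph{cubes} (small $\max_j s_j$), and the optimum is dominated by full-grid-like layers, producing the rate $m^{-r-(\beta-\gamma)/d+(1/p-1/q)_+}$; when $\beta<\gamma$ the same term favours strongly anisotropic $\bs$ concentrated on single coordinates, and the optimum reduces to a univariate-type sparse grid of rate $m^{-r-\beta+\gamma+(1/p-1/q)_+}$. Realizing $\Gamma_m$ by a concrete linear combination of B-spline function values gives the sampling algorithm.

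\smallskip

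\noindent\textbf{Lower bound.} For the matching lower bound I would apply the local fooling-function technique from Subsection \ref{sect:lbounds_int} based on Proposition \ref{atomic}. Given $m$ sampling nodes $X_m$, I would pick the layer $\bs^\ast$ singled out by the optimisation above and, within that layer, find a subfamily of at least $c\,2^{|\bs^\ast|_1}$ dyadic bricks $Q_{\bs^\ast,\bk}$ that contain no point of $X_m$; a signed superposition
\[
g_{r,\beta,\theta}\;=\;C\,2^{-r|\bs^\ast|_1}\,2^{-\beta(\max_j s^\ast_j)}\,\bigl(\tfrac{1}{\#\text{bricks}}\bigr)^{1/\theta}\!\!\sum_{\bk\in K(X_m)} \varepsilon_{\bk}\,a_{\bs^\ast,\bk}
\]
of smooth bumps $a_{\bs^\ast,\bk}$ then belongs to the unit ball of $\bB^{r,\beta}_{p,\theta}$ by \eqref{flelambda} (adapted to the hybrid space) and vanishes on $X_m$, so any linear sampling algorithm returns zero on it, while the $W^\gamma_q$-norm of $g_{r,\beta,\theta}$ can be computed directly by the same Bernstein--Nikol'skii inequality used above, giving precisely the claimed lower bound.

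\smallskip

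\noindent\textbf{Main obstacle.} The technically demanding point will be the mixed/isotropic B-spline characterisation of $\bB^{r,\beta}_{p,\theta}$ in the quasi-Banach range (when $p$ or $\theta$ is less than one), which seems to require a careful adaptation of the maximal-function and atomic decomposition arguments that enter Proposition \ref{samp_repr1Q}; in particular the simultaneous control of the mixed weight $2^{r|\bs|_1}$ and the isotropic weight $2^{\beta\max_j s_j}$ is not automatic and has to be handled by a two-step Hardy-type argument. A second, more subtle difficulty is that the optimized sparse grid $\Gamma_m$ is not a Smolyak grid of the form \eqref{SG}, so the lower bound from Theorem \ref{T5.2} cannot be used off the shelf; the web $W(\bs^\ast)$ argument has to be carried out at the single critical layer $\bs^\ast$ dictated by the optimization, which is where the distinct geometry of the two cases $\beta\gtrless\gamma$ is encoded.
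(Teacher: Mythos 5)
Your plan is essentially the route taken in Dinh D\~ung's paper \cite{Di14}, which is the reference this survey cites for Theorem~\ref{Sampling_Energy_NormB}: a B-spline quasi-interpolation series representation for the hybrid class $\bB^{r,\beta}_{p,\theta}$, a discrete equivalent quasi-norm in terms of the coefficient functionals $q_\bs(f)$ weighted by $2^{r|\bs|_1}2^{\beta|\bs|_\infty}$, truncation to an energy-norm-optimized index set $\Gamma_m$ depending on $\operatorname{sign}(\beta-\gamma)$, and a local fooling-function construction at the critical layer for the lower bound. The survey itself does not give a proof beyond this description, so at the level of detail provided the two approaches coincide.

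One point you should address concretely: your parenthetical "up to the usual $\theta$-dependent logarithmic factors" in the upper bound is at odds with the theorem, whose two-sided bound has \emph{no} logarithmic factor at all---this is precisely what distinguishes energy-norm sparse grids from standard Smolyak grids. A plain triangle-inequality-plus-H\"older estimate of the form you sketch does, in general, produce an extra $(\log m)^{1/\theta'}$ (or similar) factor from the lattice points straddling the curved boundary $\{(r-\sigma)|\bs|_1 + (\beta-\gamma)|\bs|_\infty = T\}$, and this must be eliminated: for $q=2$ one uses orthogonality so that the weighted sum over $\bs\notin\Gamma_m$ is dominated by its worst term, and for general $q$ one needs a Littlewood--Paley/maximal-function argument in $L_q$ (precisely the tool behind Proposition~\ref{samp_repr1Q}(ii)) rather than $\ell_1$-summability of the block norms. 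If you do not track this carefully, your upper bound will be one logarithm worse than the lower bound you produce from a single layer $\bs^*$, and you will not get the claimed $\asymp$. The same transversality of the boundary of $\Gamma_m$ to the Smolyak hyperplanes $|\bs|_1=\text{const}$ is also what guarantees $|\Gamma_m|\asymp 2^{|\bs^*|_1}$ without a polynomial factor, so verifying it once clears both sides of your argument. You also correctly identify that the critical $\bs^*$ is the diagonal corner when $\beta>\gamma$ and a maximally anisotropic vector when $\beta<\gamma$; in the fooling-function step you must choose $|\bs^*|_1\geq\log_2(2m)$, since at $m\asymp2^{|\bs^*|_1}$ exactly there may not be enough empty bricks.
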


Theorem~\ref{Sampling_Energy_NormW} has been proven in \cite{ByDuUl14} and 
Theorem~\ref{Sampling_Energy_NormB} in \cite{Di14}. 
Special sparse grids are constructed for sampling recovery of  $\bW^{r,\beta}_p$ and $\bB^{r,\beta}_{p,\theta}$. They have much smaller number of sample points
than the corresponding standard full grids and Smolyak grids, but give the same error of the sampling recovery on the both latter ones. The construction of asymptotically optimal linear sampling algorithms 
 is essentially based on quasi-interpolation representations by Dirichlet kernel series or B-spline series of functions from  
 $\bW^{r,\beta}_p$ and $\bB^{r,\beta}_{p,\theta}$ with a discrete equivalent quasi-norm in terms of the coefficient function-valued functionals of this series. For details and more results on this topic see  
\cite{ByDuUl14,Di14}.

\subsection{Continuous algorithms in $m$-term approximation and nonlinear widths}\index{Width!Nonlinear}
\label{nonlinear widths}

Notice that if $X$ is separable Banach space and the dictionary $\Di$ is  dense in the unit ball of $X$, 
then the best $m$-term approximation of $f$ with respect to $\Di$ vanishes, i.e., $\sigma_m(f,\Di)_X = 0$ for any $f
\in X$. 
On the other hand, for almost all well-known dictionaries with good approximation properties the quantity
$\sigma_m(\bF,\Di)_X$ has reasonable lower bounds for well-known classes of functions $\bF$ having a common smoothness. 
 Therefore, the first problem which actually arises,  is to impose additional conditions on  $\Di$ and/or methods of
$m$-term approximation.
In \cite{Di98,KTE1} to obtain the right order of $\sigma_m(\bF,\Di)_X$ for certain function classes the dictionary $\Di$
is
required to satisfy some ``minimal properties''. 
 
Another approach to dealing with this problem  is to impose continuity assumptions on methods of $m$-term approximation
by the elements of $\Sigma_m(\Di)$ which is the set of all linear combinations $g$ of the form
$
g = \sum_{j=1}^m a_j g_j, \ g_j \in \Di.
$
This does not weaken the rate of the $m$-term approximation for many well-known dictionaries and function classes.
The continuity assumptions on approximation methods certainly lead to various notions of nonlinear width, in particular,
the classical  Alexandroff width and the nonlinear manifold width \cite{DHM89,Math90}. 
Thus, the classical  Alexandroff width $a_m(\bF,X)$ characterizes the best approximation of $\bF$ by continuous methods
as
mappings from $\bF$ to a complex of topological dimension $\le n$ (see, e.g., \cite{Ti90} for an exact definition). 
Let us recall one of the notions of nonlinear widths based on continuous methods of $m$-term approximation suggested in
\cite{Di00}. 

A continuous method of $m$-term approximation with regard to the dictionary $\Di$ is called a continuous mapping from
$X$ into $\Sigma_m(\Di)$. Denote by  ${\mathbb F}$ the set of all dictionaries $\Di$ in $X$ such that the intersection
of 
$\Di$ with any finite dimensional linear subspace is a finite set, and by $C(X,\Sigma_m(\Di))$ the set of all continuous
mappings $S$ from $X$ into $\Sigma_m(\Di)$. The restriction with  continuous methods of $m$-term approximation and
dictionaries $\Di \in {\mathbb F}$ leads to the notion of nonlinear width:
\[
\alpha_m(\bF,X) :=   \inf_{\Di \in {\mathbb F}} \ \inf_{S \in C(X,\Sigma_m(\Di))} \ \sup_{x \in \bF} \|f - S(f)\|_X.
\]

\begin{thm}\label{thm:tau_Brpt} Let  $1<p,q<\infty$ and $2 \le \theta \le \infty$ and $r > \max(0,1/p-1/q,1/p-1/2)$.
 Then we have
\[ 
a_m(\Wrp,L_q) 
\ \asymp \
\alpha_m(\Wrp,L_q) 
\ \asymp \ 
m^{-r}(\log m)^{(d-1)r},
\]
and
\[
a_m(\Brpt,L_q) 
\ \asymp \
\alpha_m(\Brpt,L_q) 
\ \asymp \ 
m^{-r}(\log m)^{(d-1)(r+1/2-1/\theta)}.
\]
\end{thm}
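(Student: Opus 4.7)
The plan is to establish $a_m(\bF,L_q)\le\alpha_m(\bF,L_q)$ directly from the definitions and then to match the two with an upper bound on $\alpha_m$ and a lower bound on $a_m$. The first inequality is immediate: any continuous $m$-term approximation $S:X\to\Sigma_m(\Di)$ is, after composition with the inclusion, a continuous map of $\bF$ into the complex $\Sigma_m(\Di)\subset X$, whose topological dimension is at most $m$; hence it is admissible for the Alexandroff width.

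For the upper bound on $\alpha_m$ I would use the wavelet-type orthonormal trigonometric dictionary $\mathcal U^d$ of Subsection 7.2 (alternatively the tensor-product B-spline system of Subsection \ref{timelimsamprep}), both of which lie in $\mathbb F$. Theorems \ref{uw} and \ref{T7.6} give best $m$-term approximation of $\bW^r_p$ in $L_q$ of order $m^{-r}(\log m)^{(d-1)r}$ under the stated restrictions on $r$, and Theorem \ref{T2.12B} together with Proposition \ref{BCp} gives the order $m^{-r}(\log m)^{(d-1)(r+1/2-1/\theta)}$ for $\bB^r_{p,\theta}$ in the range $2\le\theta\le\infty$. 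The only additional input needed is continuity of the selection of the $m$ largest coefficients, which fails at ties for the hard thresholding greedy algorithm $G_m^q$. I would remedy this by a standard soft-thresholding construction, in which a continuous partition of unity on $\R_+$ controls the weights applied to the coefficients in a thin shell around the $m$-th largest magnitude. This produces a continuous map $X\to\Sigma_{Cm}(\mathcal U^d)$ whose worst-case error on the class is bounded by a constant multiple of the best $m$-term approximation, and absorbing the factor $C$ into $m$ changes only constants.

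For the lower bound on $a_m$ I would argue topologically. Choose $n$ such that $m\asymp 2^n n^{d-1}$ and work inside the hyperbolic layer $\Tr(\Delta Q_{n+c})$, whose dimension $N$ exceeds $m$ by a fixed fraction. For $t\in\Tr(\Delta Q_n)$ one has Bernstein--Nikol'skii-type relations
\[
\|t\|_{\bW^r_p}\asymp 2^{rn}\|t\|_p,\qquad \|t\|_{\bB^r_{p,\theta}}\asymp 2^{rn}n^{(d-1)/\theta}\|t\|_p,
\]
so after scaling a ball of appropriate $L_p$-radius in this subspace embeds into the unit ball of the class. The topological input is the classical consequence of Borsuk's antipodality theorem: a continuous map of a symmetric convex body of dimension $N$ into a complex of dimension strictly less than $N$ must fail to cover a point at distance at least the inradius of the body from the image. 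Combining this with the volume estimates for $B_{\Delta Q_n}(L_q)$ in Theorems \ref{T2.5.1} and \ref{T2.5.4} to compare $L_p$ and $L_q$ radii, and with the discretization of these $L_q$-norms on $\Tr(\Delta Q_n)$, produces the lower bounds $m^{-r}(\log m)^{(d-1)r}$ for $\bW^r_p$ and $m^{-r}(\log m)^{(d-1)(r+1/2-1/\theta)}$ for $\bB^r_{p,\theta}$, where the extra $n^{(d-1)(1/2-1/\theta)}$ in the Besov case comes from pairing the $n^{(d-1)/\theta}$ factor in the Bernstein--Nikol'skii relation above with the $n^{(d-1)/2}$ factor appearing in the $L_p\leftrightarrow L_q$ volume comparison.

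The main obstacle will be running the Borsuk-type argument with the correct geometric parameters, and in particular ensuring that the test subspace has Banach--Mazur distance to $\ell_2^N$ sufficiently small (in the scale of the problem) so that the inradius bound in $L_q$ picks up exactly the factor needed. This is what ultimately produces the separation between the $(\log m)^{(d-1)r}$ rate for $\bW^r_p$ and the strictly larger $(\log m)^{(d-1)(r+1/2-1/\theta)}$ rate for $\bB^r_{p,\theta}$ when $\theta<\infty$, and it is the reason for the restriction $\theta\ge 2$: for $\theta<2$ the relevant discretization tools break down at the endpoint and would give a smaller logarithmic exponent than the one claimed. The continuity-preserving modification of the greedy algorithm in the upper bound, by contrast, is technically routine.
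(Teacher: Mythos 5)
This is one of the few theorems in the survey for which no argument, even in sketch, is offered; the statement is recorded with a bare citation to \cite{Di00}, so there is no internal proof to compare against and your proposal must be judged on its own merits.

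Your reduction $a_m\le\alpha_m$ is correct, and the overall architecture --- a continuous near-best $m$-term construction for the upper bound on $\alpha_m$, a topological lower bound on $a_m$ --- is indeed the framework of the cited reference. The difficulty lies in the ingredients, and the lower bound has a genuine gap. The route ``$a_m\gtrsim b_m$ via Borsuk on a high-dimensional ball inside the class'' cannot reproduce the claimed rate across the full parameter range. The sharp orders of the Bernstein numbers $b_m(\Wrp,L_q)$ are exactly what Subsection~\ref{sect:snumbers} is about ($b_m\asymp\min\{x_m,\epsilon_m\}$ via \cite{Pi08}, \cite{Ng15}), and in the region $q<p$ with $p>2$ one has
\[
b_m(\Wrp,L_q)\ \asymp\ \Bigl(m^{-1}\log^{d-1}m\Bigr)^{\,r-1/p+1/\max\{2,q\}}\,,
\]
whose exponent is strictly larger than $r$; that is, $b_m=o\bigl(m^{-r}(\log m)^{(d-1)r}\bigr)$ there. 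Since this is the supremum over all test subspaces, no clever choice of $L$ avoids the loss: it is precisely the worst-case Nikol'skii ratio between $\|\cdot\|_q$ and $\|\cdot\|_p$ on any large trigonometric subspace. Likewise for the Besov part, the relation you write as $\|t\|_{\Brpt}\asymp 2^{rn}n^{(d-1)/\theta}\|t\|_p$ on $\Tr(\Delta Q_n)$ is only one-sided, \eqref{RoStB1}; it is sharp only for polynomials evenly spread over the $\asymp n^{d-1}$ blocks, while a single-block element has $\|t\|_{\Brpt}\asymp 2^{rn}\|t\|_p$. Consequently the largest $L_q$-ball of $\Tr(\Delta Q_n)$ that fits inside $\Brpt$ has radius $\lesssim 2^{-rn}n^{-(d-1)/\theta}$, a factor $n^{(d-1)/2}$ short of the target $2^{-rn}n^{(d-1)(1/2-1/\theta)}$. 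You attribute this missing factor to the ``$L_p\leftrightarrow L_q$ volume comparison,'' but Theorem~\ref{T2.5.4} gives $\bigl(vol(B_{\Delta Q_n}(L_p))\bigr)^{(2|\Delta Q_n|)^{-1}}\asymp|\Delta Q_n|^{-1/2}$ for every $1\le p<\infty$, independently of $p$; equal normalized volumes give no containment of one unit ball in a fixed dilate of another (the failure of Marcinkiewicz-type discretization on $\Tr(\Delta Q_n)$, Theorem~\ref{T2.5.7}, is already a warning that these balls are far from round). So the lower bound, as proposed, undershoots polynomially in $m$ (for $q<p$) and by a power of $\log m$ (for the $\bB$-classes), and must be replaced with a different mechanism.

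The upper bound is closer to the truth but not ``technically routine.'' Taking the threshold at the $(2m)$-th largest value of $\|c_I\psi_I\|_q$ does give a continuous, $(2m-1)$-sparse map, but one must still verify that the resulting $L_q$-error is $\lesssim\sigma_m(f,\cU^d)_q$ \emph{on the class}, and the inequality \eqref{tag1.8} already shows that plain thresholding with $\cU^d$ is uniformly lossy on $L_q$ by $(\log m)^{(d-1)|1/2-1/q|}$. The rescue is that the class-restricted rates in Theorems~\ref{uw}, \ref{T7.6}, \ref{T2.12B} are achievable by thresholding-type greedy methods, and one has to check that the continuous modification inherits this class-restricted behaviour; that is doable (and your conditions on $r$ and the restriction $\theta\ge2$ are the right ones to make those theorems applicable), but a partition of unity alone does not settle it.
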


This theorem was proven in \cite{Di00}. A different concept of non-linear width based on pseudo-dimension related to
Learning Theory, but without continuity assumptions was suggested in \cite{RM98,RM99} and investigated for
function classes of mixed smoothness in  \cite{Di01,Di01_2}.  For other notions of nonlinear widths, related results,
surveys and bibliography, see \cite{Di00,Di01,Di01_2,DV96,D,DHM89,DKLT93,KTE1,Math90,RM98,RM99,St75,Ti76,Ti90}.

    \newpage
    \section{High-dimensional approximation}\index{High-dimensional approximation}
\label{Sect:highdim}
We explained in Subsections 9.1 and 9.2 that classes with mixed smoothness play
a central role among the classes of functions with finite smoothness. A typical
error bound for numerical integration, sampling, and hyperbolic cross
approximation  of, say, $\bW^r_p$ in $L_p$ is of the form $m^{-r}(\log
m)^{(d-1)\xi}$, where $\xi>0$ may depend on $r$ and $p$. 
The classical setting of the problem of the error behavior asks for dependence
on $m$, when other parameters $r$, $p$, $d$ are fixed. In this survey we mostly
discuss this setting. Then the factor $ (\log m)^{(d-1)\xi}$ plays a secondary
role compared to the factor $m^{-r}$. 
However, as we pointed out in the Introduction, the problems with really large
$d$ attract a lot of attention. In this case, for reasonably small $m$ the
factor $(\log m)^{(d-1)\xi}$ becomes  a dominating one and makes it impossible
to apply a model of classes with finite smoothness for practical applications.
It is a very important and difficult problem of contemporary numerical analysis.
One of the promising ways to resolve this problem is to use classes of functions
with special structure instead of smoothness classes. This approach is based on
the concept of {\it sparsity} (with respect to a dictionary) and is widely used
in compressed sensing and in greedy approximation. For instance, sparsity with
respect to a dictionary with tensor product  structure is a popular structural
assumption (see Section 6). 

\subsection{Anisotropic mixed smoothness} \index{Anisotropic mixed smoothness}
\label{Sect:aniso}
We point out that classes with mixed smoothness also have a potential for
applications  in high-dimensional approximation. To illustrate this point we
begin with the anisotropic version of classes $\bW$, $\bH$, $\bB$. So far we 
discussed isotropic classes of functions with mixed smoothness. Isotropic
means that all variables play the same role in the definition of our smoothness
classes. In the hyperbolic cross approximation theory anisotropic classes of
functions with mixed smoothness are of interest and importance. The framework described below 
goes back to the work of Mityagin \cite[pp.\ 397, 409]{Mit} in 1962 and Telyakovskii \cite[p.\ 438]{Tely64} in 1964
and has been later used by several authors from the former Soviet Union, see \cite[pp. 32, 36, 72]{Tmon} (English version) 
for further historical comments.

We give the corresponding definitions. Let $\br=(r_1,\dots,r_d)$ be such that
$0<r_1=r_2=\dots=r_\nu<r_{\nu+1}\le r_{\nu+2}\le\dots\le r_d$ with $1\le \nu \le
d$. 
For $\bx=(x_1,\dots,x_d)$ denote
$$
F_\br(\bx) := \prod_{j=1}^d F_{r_j}(x_j)
$$
and
$$
\bW^\br_p := \{f:f=\varphi\ast F_\br,\quad \|\varphi\|_p \le 1\}.
$$
We now proceed to classes $\bH^\br_q$ and $\bB^\br_{q,\theta}$.
Define
$$
\|f\|_{\bH^\br_q}:= \sup_\bs \|\delta_\bs(f)\|_q 2^{(\br,\bs)},
$$
and for $1\le \theta <\infty$ define
$$
\|f\|_{\bB^\br_{q,\theta}}:= \left(\sum_{\bs}\left(\|\delta_\bs(f)\|_q
2^{(\br,\bs)}\right)^\theta\right)^{1/\theta}.
$$
We will write $\bB^\br_{q,\infty}:=\bH^\br_q$. 
Denote the corresponding unit ball
$$
\bB^\br_{q,\theta}:= \{f: \|f\|_{\bB^\br_{q,\theta}}\le 1\}.
$$
It is known that in many problems of estimating asymptotic characteristics the
anisotropic classes of functions of $d$ variables with mixed smoothness behave
in the same way as isotropic classes of functions of $\nu$ variables (see, for
instance, \cite{Tmon,Di86,Telyak88}). It is clear that the above remark holds for the lower
bounds. To prove it for the upper bounds one needs to adapt the
hyperbolic cross approximation to the smoothness of the class. We pay more
samples in those directions where the smoothness is small. This results in an
anisotropic hyperbolic cross. 

What concerns numerical integration, the studied methods like the Fibonacci and
Frolov cubature formulas need not to be adapted. These methods are
able to ``detect'' the anisotropy themselves.

\begin{figure}[H]
\begin{center}

\begin{tikzpicture}[scale=0.2]
\pgfmathsetmacro{\m}{5};
\pgfmathsetmacro{\rf}{1};
\pgfmathsetmacro{\rs}{2};
\pgfmathtruncatemacro{\xl}{2^(\m / \rf)}
\pgfmathtruncatemacro{\yl}{2^(\m / \rs)}
\pgfmathtruncatemacro{\limf}{round(\m / \rf)}

\pgfmathtruncatemacro{\lima}{round(\m / \rf)}
\pgfmathtruncatemacro{\limb}{round(\m / \rs)}

\foreach \x in {1,...,\lima}
{
\pgfmathtruncatemacro{\lim}{round((\m-\rf*\x)/(\rs))}
\foreach \y in {1,...,\limb}
{
\pgfmathtruncatemacro{\xm}{\x-1}
\pgfmathtruncatemacro{\ym}{\y-1}
    \draw[dashed] (2^\xm,2^\ym)-- (2^\x,2^\ym) --(2^\x,2^\y) -- (2^\xm,2^\y) --
(2^\xm,2^\ym);
        \draw[dashed] (-2^\xm,-2^\ym)-- (-2^\x,-2^\ym) --(-2^\x,-2^\y) --
(-2^\xm,-2^\y) -- (-2^\xm,-2^\ym) ;
        \draw[dashed] (-2^\xm,2^\ym)-- (-2^\x,2^\ym) --(-2^\x,2^\y) --
(-2^\xm,2^\y) -- (-2^\xm,2^\ym) ;
        \draw[dashed] (2^\xm,-2^\ym)-- (2^\x,-2^\ym) --(2^\x,-2^\y) --
(2^\xm,-2^\y) -- (2^\xm,-2^\ym) ;
}
}

\foreach \x in {0,...,\limf}
{
\pgfmathtruncatemacro{\lim}{round((\m-\rf*\x)/(\rs))}
\foreach \y in {0,...,\lim}
{
\pgfmathtruncatemacro{\xm}{\x-1}
\pgfmathtruncatemacro{\ym}{\y-1}
\ifnum \x<1 \relax%
     \ifnum \y<1 \relax%
         draw[color=black,very thick] (-1,-1)-- (1,-1) --(1,1) -- (-1,1) --
(-1,-1);
    \else
        \draw[color=black,very thick] (-1,-2^\ym)-- (1,-2^\ym) --(1,-2^\y) --
(-1,-2^\y) -- (-1,-2^\ym) ;
        \draw[color=black,very thick] (-1,2^\ym)-- (1,2^\ym) --(1,2^\y) --
(-1,2^\y) -- (-1,2^\ym) ;
     \fi
\else%
    \ifnum \y<1 \relax%
            \ifnum \x<1 \relax%

            \else
            \draw[color=black,very thick] (2^\xm,-1)-- (2^\x,-1) --(2^\x,1) --
(2^\xm,1) -- (2^\xm,-1) ;
            \draw[color=black,very thick] (-2^\xm,-1)-- (-2^\x,-1) --(-2^\x,1)
--
(-2^\xm,1) -- (-2^\xm,-1);
            \fi
    \else
    \draw[color=black,very thick] (2^\xm,2^\ym)-- (2^\x,2^\ym) --(2^\x,2^\y) --
(2^\xm,2^\y) -- (2^\xm,2^\ym);
        \draw[color=black,very thick] (-2^\xm,-2^\ym)-- (-2^\x,-2^\ym)
--(-2^\x,-2^\y) -- (-2^\xm,-2^\y) -- (-2^\xm,-2^\ym) ;
        \draw[color=black,very thick] (-2^\xm,2^\ym)-- (-2^\x,2^\ym)
--(-2^\x,2^\y) -- (-2^\xm,2^\y) -- (-2^\xm,2^\ym) ;
        \draw[color=black,very thick] (2^\xm,-2^\ym)-- (2^\x,-2^\ym)
--(2^\x,-2^\y) -- (2^\xm,-2^\y) -- (2^\xm,-2^\ym) ;
    \fi
\fi

}}

\end{tikzpicture} 
  \caption{Anisotropic hyperbolic cross for $\br = (1,2)$ in $d=2$}\index{Hyperbolic cross!Anisotropic}
 \end{center}
\end{figure}
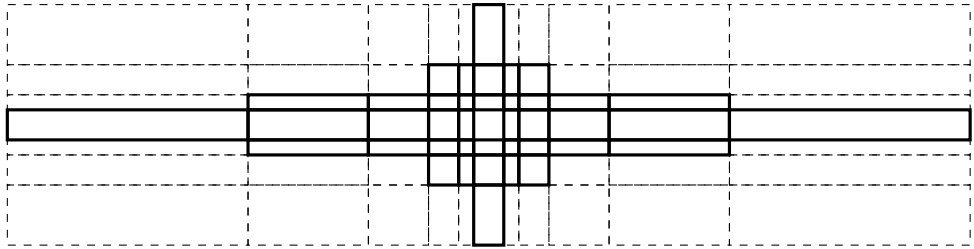

The consequence is that the logarithmic factor $(\log m)^{(d-1)\xi}$ is
replaced by the factor $(\log m)^{(\nu-1)\xi}$. Thus, if we use a model of
anisotropic class with mixed smoothness $\br$ with small $\nu$, the factor
$(\log m)^{(\nu-1)\xi}$ is not a problem anymore. In this case it is important
to study dependence on $d$ of the constants. In the recent paper \cite{DG15}, it has been shown that with a fixed $\nu$
and some moderate conditions on the anisotropic mixed smoothness $\br$ the rate of the hyperbolic cross 
approximation does not depend on the dimension $d$, when $d$ may be very large or even infinite 
(see also Subsection \ref{approximation in infinite dimensions}).

\subsection{Explicit constants and preasymptotics}\index{Preasymptotics}
\label{exconst}

As already mention there is an increasing interest in approximation problems where the
dimension $d$ is large or even huge. Since then people were not only interested in getting the right order of the respective
approximation error, also the dependence of the error bounds on the underlying dimension $d$ became crucial. 
In case of high dimensions, the traditional estimate \eqref{BabMit} becomes problematic. Let us illustrate this issue in the following
figure. Fixing $d$ and $r$ the function $f_{d}(t):= t^{-r}\,
(\log t)^{r(d-1)}$ is increasing on $[1,e^{d-1}]$ and decreasing on $[e^{d-1},\infty)$.\\

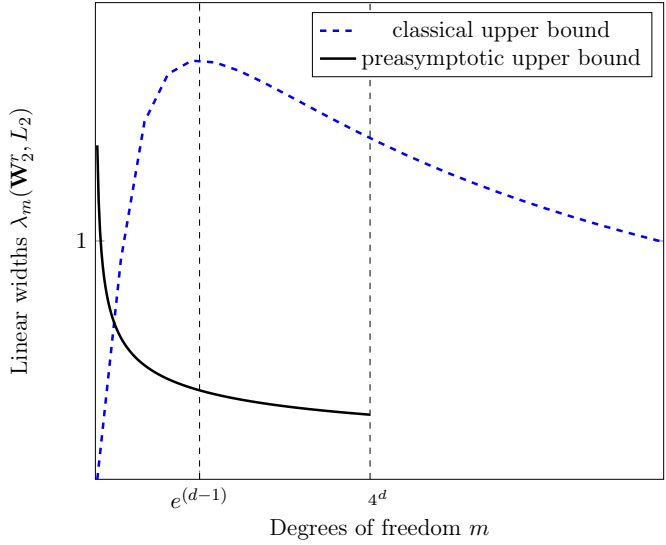
\begin{figure}[H]
\centering
\begin{tikzpicture}[trim axis left, scale = 0.8]
\begin{axis}[ xmin=0, xmax=300, domain=1:300,
    xlabel={Degrees of freedom $m$}, ylabel={Linear widths
$\lambda_m(\bW^r_2,L_2)$},
    xtick={55}, xticklabels={ $e^{(d-1)}$},
    ytick={10}, yticklabels={$1$},
    ymin=0, ymax=20,
    legend entries={classical upper bound, preasymptotic upper bound},
    width=0.7\textwidth,
]
\addplot[
	very thick,
	blue,
	no marks,
	dashed,
] {0.8*(((ln(x))^4)/x)^2};
\addplot[
	very thick,
	black,
	no marks,
	domain=1:145,
	samples=200,
] {(exp(8)/x)^0.33};
\draw[dashed] (55,0) -- (55,220);
\draw[dashed] (145,0) -- (145,220);
\end{axis}
\draw (4.7,0) node[below] {\tiny $4^d$};
\end{tikzpicture}
\caption{Asymptotics vs. preasymptotics}\index{Preasymptotic}
\label{preas}
\end{figure}

Hence, its maximum on $[1,\infty)$ is \enspace
$
\max_{t\ge 1} f_{d}(t) = f_{d}(e^{d-1}) = \big(\frac{d-1}{e}\big)^{r(d-1)}\, ,
$
which increases super-exponentially in $d$. 
That means, for large $d$ we have to wait ``exponentially long'' until the 
sequence $n^{-r}(\ln n)^{(d-1)r}$ decays, and even longer until it becomes less than one. This observation indicates the
so-called ``curse of dimensionality'', a phrase originally coined by Bellmann \cite{Be57} in 1957. For a mathematically
precise notion see \cite[p.\ 1]{NW08}. How to avoid that the logarithm increases exponentially in $d$ has been
discussed in Subsection \ref{Sect:aniso}. This could be one approach for making the
problem tractable in high dimensions. In any case it is important to control the
behavior of the constants $c(r,d)$ and $C(r,d)$ in $d$ and, in a second step, to establish {\em preasymptotic
estimates} for small $m$. 

The first result on upper bounds in this direction (to the authors'
knowledge) has been given by Wasilkowski, Wo\'zniakowski \cite[Sect.\ 4.1]{WaWo95}. There the authors studied Smolyak's
algorithm in a general framework (similar as done in Section \ref{Subs:appr_hypcross}) and obtained upper bounds for
the error based on the behavior of the participating univariate operators.

Linear and Kolmogorov widths of Sobolev
classes of a mixed smoothness in $L_2(\T^d)$-norm and so-called energy norms in the high-dimensional setting have been
first investigated by Dinh D\~ung and Ullrich in \cite{DU13} where it is stressed to treat the $d$-dependent upper
bound and lower bound of these widths together. Of course, the specific definition of the space is now of particular
relevance. Here, the class $\bW^r_2$ is defined as the set of function in $L_2(\T^d)$ 
for which the right hand side of \eqref{NeqW} is finite. 
In particular, the following almost precise upper and lower bounds explicit in the dimension $d$ have been proven in
\cite{DU13} (see also Theorem \ref{thm:lambdaH1} below). 
\begin{thm}\label{thm:DU13}
 For any $r > 0$ and $m \geq 2^d$
\begin{equation*} 
\frac{1 + \log e}{4^r(d-1)^{r(d-1)}} \biggl(\frac{m}{(\log m)^{d-1}}\biggl)^{-r}  
\le 
d_m(\bW^r_2, L_2(\T^d)) 
\le 
4^r\Big(\frac{2e}{d-1}\Big)^{r(d-1)} \biggl(\frac{m}{(\log m)^{d-1}}\biggl)^{-r}.
\end{equation*}
\end{thm}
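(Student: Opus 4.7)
The plan is to exploit the Hilbert-space structure of both $\bW^r_2$ and $L_2(\T^d)$ to reduce the Kolmogorov width to a singular-value computation, and then to convert the problem into an explicit counting estimate on hyperbolic crosses. The factor $(d-1)^{r(d-1)}$ in the denominator/numerator signals that at bottom this is Stirling applied to $(d-1)!$, which is the natural constant appearing in the $d$-dimensional divisor problem.

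First I would identify an equivalent Hilbert norm on $\bW^r_2$ of the form
\begin{equation*}
\|f\|_{\bW^r_2}^2 \,\asymp\, \sum_{\bk\in\Z^d} |\hat f(\bk)|^2\, w(\bk)^2, \qquad w(\bk):=\prod_{j=1}^d \max\{1,|k_j|\}^r,
\end{equation*}
which is immediate from \eqref{NeqW} together with Parseval. The embedding $\bW^r_2\hookrightarrow L_2$ is then unitarily equivalent to the diagonal operator on $\ell_2(\Z^d)$ with eigenvalues $w(\bk)^{-1}$, and by the classical Babenko identity recalled in Section~4.1, $d_m(\bW^r_2,L_2)=\sigma_{m+1}$, where $(\sigma_k)_{k\geq 1}$ is the non-increasing rearrangement of $\{w(\bk)^{-1}\}_{\bk\in\Z^d}$. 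Since $w(\bk)^{-1}\geq N^{-r}$ precisely when $\bk\in\Gamma(N)$, this means that $d_m(\bW^r_2,L_2)$ equals $N^{-r}$ for the largest $N$ such that $|\Gamma(N)|\leq m$, and the optimal $m$-dimensional subspace is $\Tr(\Gamma(N))$.

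Second, I would prove explicit two-sided counting estimates of the form
\begin{equation*}
c_1(d)\,N(\log N)^{d-1} \,\leq\, |\Gamma(N)| \,\leq\, c_2(d)\,N(\log N+d)^{d-1}
\end{equation*}
for integers $N\geq 1$, by reduction to the $d$-dimensional divisor summatory function $D_d(N)=|\{\bn\in\N^d:\prod_j n_j\leq N\}|$. The key point is that $|\Gamma(N)|$ agrees with $D_d(N)$ up to a factor bounded by $3^d$ (since $\max\{1,|k_j|\}=n$ admits at most three integer preimages $k_j$), and the recursion $D_d(N)=\sum_{n=1}^N D_{d-1}(\lfloor N/n\rfloor)$ combined with $\sum_{n=1}^N 1/n\leq 1+\log N$ yields by induction on $d$ the upper bound $D_d(N)\leq N(\log N+d-1)^{d-1}/(d-1)!$. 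A matching lower bound with constant of the same order comes from the same recursion. Together with \eqref{hcsums} this gives both directions with $c_1(d),c_2(d)$ essentially of order $1/(d-1)!$.

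Third, I would invert these estimates. Writing $c_2(d)\,N(\log N+d)^{d-1}=m$ and using the hypothesis $m\geq 2^d$, one has $\log N + d\leq C\log m$ with $C$ absolute, so $N\gtrsim m/(c_2(d)(\log m)^{d-1})$; raising to the $-r$ gives the upper bound on $d_m$, and the lower bound follows symmetrically. The main obstacle, and the place where the peculiar exponents $(d-1)^{r(d-1)}$ and $(2e)^{r(d-1)}$ are manufactured, is the conversion of the factor $((d-1)!)^{-r}$ produced by the divisor counting into the stated form via Stirling, $(d-1)!\geq (d-1)^{d-1}e^{-(d-1)}\sqrt{2\pi(d-1)}$; the loose constants $4^r$ on the upper side and $1+\log e$ on the lower side absorb the slack from passing through the implicit inversion $|\Gamma(N)|\asymp m\leftrightarrow N\asymp m/(\log m)^{d-1}$ and from the $3^d$ versus $2^d$ discrepancy between $|\Gamma(N)|$ and $D_d(N)$. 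A secondary but non-trivial issue is that the relation $m\mapsto N$ is only implicit because of the $\log N$ inside the counting formula, so one must either iterate once or use the restriction $m\geq 2^d$ to obtain a clean closed-form bound.
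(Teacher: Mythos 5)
The skeleton of your argument — Hilbert-space reduction to singular values, cardinality counting, inversion under the hypothesis $m\ge 2^d$ — is the right one, and the appeal to Babenko's identity is the correct key. But there is a genuine gap at the very first step, and it is fatal for the specific constants in the statement.

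You replace the norm of $\bW^r_2$ by $\sum_{\bk}|\hat f(\bk)|^2 w(\bk)^2$ with $w(\bk)=\prod_j\max\{1,|k_j|\}^r$, invoking an equivalence "$\asymp$" from \eqref{NeqW} and Parseval. Once you write "$\asymp$" the Hilbert structure is gone: the embedding $\bW^r_2\hookrightarrow L_2$ is no longer unitarily equivalent to the diagonal operator with eigenvalues $w(\bk)^{-1}$, only bounded-equivalent, so Babenko's identity $d_m = \sigma_{m+1}$ does not yield the exact singular values and every explicit constant is lost. Worse, this particular equivalence is not small: for $\bk$ in the dyadic block $\rho(\bs)$ one has $\prod_j\max\{1,|k_j|\}^r \le 2^{r|\bs|_1}\le 2^{rd}\prod_j\max\{1,|k_j|\}^r$, a factor of order $2^{rd}$, which dwarfs the $d$-independent factor $4^r$ the theorem claims. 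The paper makes this explicit in the text immediately following the statement: the class $\bW^r_2$ is defined \emph{exactly} via the right-hand side of \eqref{NeqW}, for which (by Parseval) the norm equals $(\sum_\bs 2^{2r|\bs|_1}\|\delta_\bs(f)\|_2^2)^{1/2}$ with no equivalence constant, and the singular values of the embedding are exactly $2^{-r|\bs(\bk)|_1}$, constant on each dyadic block $\rho(\bs)$.

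Once you commit to that exact norm, the counting object is $|Q_n|$ rather than $|\Gamma(N)|$: the rearranged singular value $\sigma_{m+1}$ equals $2^{-rn}$ for the unique $n$ with $|Q_{n-1}|\le m < |Q_n|$, and one computes exactly
$$|Q_n| = \sum_{l=0}^n 2^l\,\bigl|\{\bs\in\N_0^d:|\bs|_1=l\}\bigr| = \sum_{l=0}^n 2^l\binom{l+d-1}{d-1}.$$
The explicit two-sided constants then come from the elementary bounds on the dominant term, such as $\binom{n+d-1}{d-1}\le (n+d-1)^{d-1}/(d-1)!\le(e(n+d-1)/(d-1))^{d-1}$, combined with the hypothesis $m\ge 2^d$ to bound $n+d-1$ by an absolute multiple of $\log m$. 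This is where the $(2e/(d-1))^{r(d-1)}$, $(d-1)^{-r(d-1)}$, $4^r$, and $1+\log e$ all arise. Your detour through the $d$-dimensional divisor summatory function $D_d(N)$ and the smooth hyperbolic cross $\Gamma(N)$ is a different route that introduces a second source of $2^d$-type slop (the inclusions $Q_n\subset\Gamma(2^n)\subset Q_{n+d}$); it can recover the $\asymp$-order but not the stated constants. The Stirling observation you make is correct in spirit — $(d-1)!$ does get bounded from below by $(d-1)^{d-1}e^{-(d-1)}$ — but it is applied to the binomial coefficient counting the number of multi-indices at a fixed level $|\bs|_1=l$, not to the multiplicative divisor function. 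To repair your proof you should drop the passage to $w(\bk)=\prod_j\max\{1,|k_j|\}^r$ entirely, keep the block-constant eigenvalues $2^{-r|\bs|_1}$, and compute $|Q_n|$ directly.
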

One observes a super-exponential decay of the constants. Other results have been
obtained recently by K\"uhn, Sickel, Ullrich \cite{KSU15}. In contrast to Theorem \eqref{thm:DU13} the authors 
considered the space $\bW^r_2$ to be specifically normed as follows
\begin{equation}\label{bW_2}
    \|f\|^2_{\bW^r_2}:= \sum\limits_{\bk\in \Z^d} |\hat{f}(\bk)|^2 \prod\limits_{j=1}^d (1+|k_j|^2)^r\,.
\end{equation}
It has been shown in \cite{KSU15} that in this situation the ``asymptotic constant''\index{Asymptotic constant} behaves exactly as follows
\begin{equation}\label{asconst}
    \lim_{m \to
  \infty} \, \frac{m^r}{(\log m)^{(d-1)r}}\cdot \lambda_m(\bW^r_2,
L_2) \, =
  \Big[\frac{2^d}{(d-1)!}\Big]^r\,.
\end{equation}
This result is surprising from several points of view. First, the limit
exists, second one can compute it explicitly and third, the number on the
right-hand side decays exponentially in $d$. The next step is to ask for
estimates of type \eqref{BabMit} with precise given constants in some range
$m\geq m_0$. There are several results in this direction, see for instance
\cite[Thm.\ 3.8]{BuGr04}, \cite{DU13}, \cite[Thm.\ 5.1]{ScSuTo08}, where the super-exponential decay of the constant has
been already observed in different periodic and non-periodic settings. For a thorough discussion and comparison of the
mentioned results we refer to \cite[Section 4.5]{KSU15}. To be consistent with the setting described in \eqref{bW_2} let
us state the following result from
\cite{KSU15}. 

\begin{thm} Let $r>0$ and  $d\in \N$. Then we have\\
 $$
  \lambda_{m} (\bW^r_2,L_2) \le
\Big[\frac{(3\cdot \sqrt{2})^d}{(d-1)!}\Big]^r\frac{(\ln m)^{(d-1)r}}{m^r}\,, 
\qquad \mbox{if}\quad m \ge 27^d
 $$
and
$$
  \lambda_{m} (\bW^r_2,L_2) \ge 
  \left[
  \frac{3}{d!}\, \Big(\frac{2}{2 + \ln 12}\Big)^{d}\right]^r\, \frac{(\ln m)^{(d-1)r}}{m^r} 
  \,,
\qquad 
\mbox{if}\quad m >  (12 \, e^2)^{d} \, .
 $$
\end{thm}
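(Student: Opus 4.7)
The plan is to reduce the problem to sharp lattice-point counting in a ``Sobolev hyperbolic cross'' and then invert the counting estimate. Since $L_2$ is a Hilbert space and the embedding $\bW^r_2 \hookrightarrow L_2$ is compact, one has $\lambda_m(\bW^r_2,L_2) = d_m(\bW^r_2,L_2)$ and both equal the $(m+1)$-st singular number of the embedding. Under the particular norm \eqref{bW_2}, the embedding is diagonalized by the trigonometric system and has singular values $\big(\prod_{j=1}^d (1+k_j^2)\big)^{-r/2}$ indexed by $\bk\in\Z^d$. Ordering the products $\mu_\bk := \prod_j(1+k_j^2)$ in a non-decreasing sequence $\mu_1 \le \mu_2 \le \ldots$, this gives the identity $\lambda_m(\bW^r_2,L_2) = \mu_{m+1}^{-r/2}$, and the task is transferred to estimating $\mu_{m+1}$.

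First I would establish a counting lemma with explicit dependence on $d$ for $N(T) := \#\{\bk\in\Z^d : \mu_\bk \le T\}$, of the form
$$ \frac{c_2^d}{(d-1)!}\, T^{1/2}(\ln T)^{d-1} \;\le\; N(T) \;\le\; \frac{c_1^d}{(d-1)!}\, T^{1/2}(\ln T)^{d-1}, $$
valid above explicit thresholds $T_0(d), T_0'(d)$, where the relevant values will turn out to be $c_1 = 3\sqrt{2}$ and the lower-bound constant from the statement. The natural route is induction on $d$: write
$$ N_d(T) \;=\; \sum_{k\in\Z,\; 1+k^2\le T} N_{d-1}\!\big(T/(1+k^2)\big), $$
insert the inductive estimate for $N_{d-1}$, and compare the resulting one-dimensional sum against $\int_{\R}(1+x^2)^{-1/2}\,dx$ restricted to the appropriate range. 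The factor $1/(d-1)!$ emerges from the iterated logarithmic integrals $\int (\ln T - \ln(1+x^2))^{d-2}\,dx$, while the geometric factor $c^d$ absorbs the constants lost at each inductive step when passing from sum to integral and discarding boundary terms.

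Second, I would invert these counting bounds. For the upper bound on $\lambda_m$: choose $T$ so that the upper counting estimate gives $N(T)\le m$; this forces $\mu_{m+1} > T$, hence $\lambda_m < T^{-r/2}$. Solving $\frac{(3\sqrt{2})^d}{(d-1)!}\,T^{1/2}(\ln T)^{d-1} \le m$ in the regime $m\ge 27^d$, where $\ln T \asymp 2\ln m$ and $m$ dominates $(\ln m)^{d-1}$ robustly, produces $T^{1/2}\ge \tfrac{(d-1)!}{(3\sqrt{2})^d}\cdot \tfrac{m}{(\ln m)^{d-1}}$ (up to a factor absorbed into the threshold), and raising to the power $r$ gives the claimed inequality. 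The lower bound is obtained symmetrically: use the lower counting estimate to find $T$ with $N(T)\ge m+1$, deduce $\mu_{m+1}\le T$, and invert under the stronger threshold $m > (12e^2)^d$, which is needed because the lower-bound inversion is the more delicate one.

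The principal obstacle is the counting lemma with sharp constants. Obtaining the factor $1/(d-1)!$ is immediate from the induction, but controlling the exponential base and the precise threshold requires careful book-keeping: every inductive step incurs a multiplicative loss from sum-to-integral comparison and from handling the terms with $k^2+1$ close to $T$, and these losses compound to the $d$-th power. The lower bound is especially subtle because one cannot afford to discard the boundary contributions carelessly; one must verify that the main integral term already dominates the truncation errors for $T \ge T_0'(d)$. The appearance of $2+\ln 12$ in the lower-bound constant presumably traces back to the explicit choice of integration range and a concrete value of the last variable for which the inductive inequality is effective, which is where the threshold $(12e^2)^d$ originates after inversion.
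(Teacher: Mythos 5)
Your reduction is the same one used by K\"uhn, Sickel, and Ullrich (the survey cites \cite{KSU15} without proof): with the norm \eqref{bW_2} the embedding is diagonal over the exponentials, $\lambda_m = a_{m+1}$ is the $(m{+}1)$-st singular number, and the statement becomes an explicit two-sided estimate for the non-decreasing rearrangement of the weights $\mu_\bk=\prod_j(1+k_j^2)$, i.e., a lattice count $N(T)=\#\{\bk : \mu_\bk\le T\}$ followed by inversion. That is not an alternative route --- it is the intended one, and the slice recursion $N_d(T)=\sum_{k} N_{d-1}\bigl(T/(1+k^2)\bigr)$ you write down is exactly the engine of the induction in \cite{KSU15}.

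What you have, though, is a plan, and the entire content of this statement lives in the counting constants you have not produced. Two points your sketch slides over. First, $3\sqrt 2$ is \emph{not} the constant in the counting lemma: since $T^{1/2}\asymp m/(\ln m)^{d-1}$ one has $\ln T\approx 2\ln m$, so inverting $\frac{c^d}{(d-1)!}\,T^{1/2}(\ln T)^{d-1}\le m$ costs an extra factor $(\ln T/\ln m)^{d-1}\le 2^{d-1}$; the final constant $3\sqrt 2$ is thus essentially $2\cdot(3/\sqrt 2)$, and the counting lemma must be proved with base close to $3/\sqrt 2$. If you set out to prove the counting bound with base $3\sqrt 2$ as you wrote, the inversion would fail for large $d$. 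Second, the lower-bound constant $\tfrac{3}{d!}\bigl(\tfrac{2}{2+\ln 12}\bigr)^d$ carries a full extra factor $1/d$ beyond the $1/(d-1)!$ of the counting main term, which signals further loss at the inversion step under the threshold $m>(12e^2)^d$; neither the extra $d$ nor $2/(2+\ln 12)$ is explained by ``choice of integration range'' alone, and tracking them through the induction and the inversion is exactly where the work is. So: right architecture, correctly identified bottleneck, but the hard part is also the missing part.
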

\noindent Note, that the constants decay super-exponentially in $d$ as expected
from \eqref{asconst}. However, if $d$ is large we have to ``wait'' very long
until that happens. Hence, the next question is what happens in the
preasymptotical range, say for $m$ less than $2^d$. Here, we get the bound below in Theorem \ref{PA}
(see \cite[Thm.\ 4.17]{KSU15} and Figure \ref{preas} above). For similar results in more general classes as well as a 
non-periodic counterpart, see the longer arXiv version of \cite{CD13}. 

\begin{thm}\label{PA} 
Let $d\geq 2$, and $r>0$. Then for any $2\leq m \leq \frac{d}{2}4^d$ we have
the upper estimate 
$$
    \lambda_m(\bW^r_2,L_2) \leq 
\Big(\frac{e^2}{m}\Big)^{\frac{r}{4+2\log_2 d}}\,.
$$
\end{thm}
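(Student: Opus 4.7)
The plan is to exploit the Hilbert space structure of $\bW^r_2 \hookrightarrow L_2$ and reduce the estimate to a preasymptotic counting bound for a suitable hyperbolic cross. Since the norm on $\bW^r_2$ is the Fourier multiplier norm \eqref{bW_2}, the trigonometric system diagonalizes the embedding, so Kolmogorov and linear widths coincide with the singular numbers:
$$\lambda_m(\bW^r_2, L_2) = d_m(\bW^r_2, L_2) = \sigma_{m+1},$$
where $(\sigma_j)_{j \ge 1}$ is the decreasing rearrangement of $\{\prod_{i=1}^d (1+k_i^2)^{-r/2}: \bk \in \Z^d\}$. Setting $\Gamma^*(T) := \{\bk \in \Z^d: \prod_i (1+k_i^2) \le T\}$, the optimal $m$-dimensional subspace is the space of trigonometric polynomials with frequencies in $\Gamma^*(T)$, and
$$\lambda_m(\bW^r_2, L_2) \le T^{-r/2} \quad \text{whenever} \quad |\Gamma^*(T)| \le m.$$

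The problem thus reduces to the preasymptotic cardinality estimate
\begin{equation}\label{cardbd-proposal}
|\Gamma^*(T)| \le e^2 \cdot T^{\,2 + \log_2 d} \qquad (T \ge 1).
\end{equation}
Granting \eqref{cardbd-proposal}, I would choose $T = (m/e^2)^{1/(2+\log_2 d)}$, so that $e^2 T^{2+\log_2 d} = m$ and hence $|\Gamma^*(T)| \le m$. This immediately yields
$$\lambda_m(\bW^r_2, L_2) \le T^{-r/2} = (e^2/m)^{r/(2(2+\log_2 d))} = (e^2/m)^{r/(4+2\log_2 d)},$$
as required. The restriction $2 \le m \le \tfrac{d}{2} 4^d$ corresponds precisely to the range of $T$ in which \eqref{cardbd-proposal} is sharper than the classical asymptotic bound $|\Gamma^*(T)| \asymp T(\log T)^{d-1}/(d-1)!$ (cf.\ \eqref{hcsums}).

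To prove \eqref{cardbd-proposal} I would stratify $\Gamma^*(T)$ by the support size $s := |\{i: k_i \ne 0\}|$. Since $1+k_i^2 \ge 2$ whenever $k_i \ne 0$, the constraint $\prod_i (1+k_i^2) \le T$ forces $2^s \le T$, i.e.\ $s \le \lfloor \log_2 T \rfloor$. Hence
$$|\Gamma^*(T)| = \sum_{s=0}^{\lfloor \log_2 T \rfloor} \binom{d}{s}\, M_s(T), \qquad M_s(T) := |\{\bk \in (\Z\setminus\{0\})^s: \prod_i (1+k_i^2) \le T\}|.$$
Writing each nonzero coordinate as $\pm n_i$ with $n_i \ge 1$ and using $1+n_i^2 \ge 2n_i$, one obtains $M_s(T) \le 2^s \cdot |\{\bn \in \N^s: \prod_i n_i \le T/2^s\}|$, which is controlled by a standard divisor estimate such as $T(1+\ln(T/2^s))^{s-1}/(s-1)!$. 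Combining this with $\binom{d}{s} \le d^s/s!$ and the key identity $d^{\log_2 T} = T^{\log_2 d}$, which converts the $d$-dependence into an additional factor $T^{\log_2 d}$ in the exponent of $T$, then yields \eqref{cardbd-proposal} after careful summation in $s$.

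The main obstacle is extracting the precise exponent $2+\log_2 d$ together with the clean constant $e^2$. A naive bound such as $\sum_{s=0}^d \binom{d}{s} 2^s = 3^d$ is useless in high dimensions; the improvement comes from exploiting the truncation $s \le \log_2 T$, which limits the effective contribution of the binomial coefficients and allows the exponential dependence on $d$ to be reabsorbed into the factor $T^{\log_2 d}$. Careful bookkeeping of all the logarithmic factors, combined with the divisor estimate above, delivers \eqref{cardbd-proposal} with the stated constant, which together with the Hilbert space reduction completes the proof.
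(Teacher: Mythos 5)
Your reduction to singular values and the final algebra are correct and follow exactly the skeleton behind the paper's Theorem~\ref{PA}: expressing $d_m(\bW^r_2,L_2)=\lambda_m(\bW^r_2,L_2)$ as the $(m+1)$-st element of the decreasing rearrangement of the weights $\prod_j(1+k_j^2)^{-r/2}$, you need the $m$-th smallest weight threshold $T^{-r/2}$ to satisfy the cardinality constraint $|\{\bk:\prod_j(1+k_j^2)\le T\}|\le m$, and the choice $T=(m/e^2)^{1/(2+\log_2 d)}$ together with a bound of the form $e^2T^{2+\log_2 d}$ on that cardinality gives the claim. This is the right structure and it is the one the paper implicitly uses.

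The genuine gap is that the cardinality estimate \eqref{cardbd-proposal} is the whole content of the theorem, and you state it but do not prove it: your stratification-by-support sketch, with $\binom{d}{s}\le d^s/s!$ and divisor-summatory estimates, leads to Bessel-type sums $\sum_s d^s(\ln T)^{s-1}/(s!(s-1)!)$ whose truncation at $s\le\log_2 T$ is not obviously bounded by $e^2T^{2+\log_2 d}$, and you yourself flag extracting the precise exponent and constant as ``the main obstacle.'' The paper circumvents this by citing the preasymptotic cardinality bound \eqref{cardhyp}, $|\tilde\Gamma(N,d)|\le e^2N^{2+\log_2 d}$, proved in \cite[Thm.~4.9]{KSU15} by a much cleaner moment-generating-function (Mellin/zeta) argument: for any $\alpha>1$,
\begin{equation*}
 |\tilde\Gamma(N,d)| \le N^\alpha\sum_{\bk\in\Z^d}\prod_j(1+|k_j|)^{-\alpha} = N^\alpha\bigl(2\zeta(\alpha)-1\bigr)^d,
\end{equation*}
and optimizing with $\alpha=2+\log_2 d$ makes $(2\zeta(\alpha)-1)^d\le e^2$ because $2^{-\alpha}=1/(4d)$. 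To tie this back to your $\Gamma^*(T)$ you then only need the elementary integer inequality $1+|k|\le 1+k^2$ for all $k\in\Z$, which gives $\Gamma^*(T)\subseteq\tilde\Gamma(\lfloor T\rfloor,d)$ -- a step you do not mention but which is exactly what lets one quote \eqref{cardhyp} as stated. Supplying either that inclusion together with a reference to \eqref{cardhyp}, or a self-contained zeta-function proof of \eqref{cardbd-proposal}, would close the gap; the combinatorial route as sketched does not.
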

\noindent Note, that there is no hidden constant in the upper bound. This type
of error decay reflects ``quasi-polynomial'' tractability, a notion recently
introduced by Gnewuch, Wo\'zniakowski
\cite{GnWo11}. The result in Theorem \ref{PA} is based on a refined estimate for the cardinality of the hyperbolic cross 
$$
    \tilde{\Gamma}(N,d):=\Big\{k\in \Z^d~:~\prod\limits_{j=1}^d (1+|k_j|) \leq N \Big\}\quad,\quad N\in \N\,,
$$
compare with \eqref{HC} above. In \cite[Thm.\ 4.9]{KSU15} it is shown
\begin{equation}\label{cardhyp}
  |\tilde{\Gamma}(N,d)| \leq e^2 N^{2+\log_2 d}\,.
\end{equation}
In \cite{CoDeFoRa11} the authors state cardinality bounds (without proof) for slightly modified hyperbolic crosses in $d$ dimensions.

What concerns the approximation in the uniform norm $L_\infty$ in case $r>1/2$ we refer to the recent paper
Cobos, K\"uhn, Sickel \cite{CoKuSi15}. Analyzing the formula \eqref{CKS} the authors obtained the asymptotic constant 
$$
    \lim\limits_{m\to\infty} \frac{m^{r-1/2}\lambda_m(\bW^r_2,L_\infty)}{(\log m)^{(d-1)r}} =
\frac{1}{\sqrt{2r-1}}\Big[\frac{2^d}{(d-1)!}\Big]^{r}\,.
$$
Preasymptotic error bounds for isotropic periodic Sobolev spaces are given in \cite{KSU14}, \cite{KuMaUl16}.

\subsection{Approximation in the energy norm}\index{Energy norm}

Motivated by the aim to approximate the solution of a Poisson equation in the energy norm, i.e., in the norm of the
isotropic Sobolev space $H^1$, Bungartz and Griebel \cite{BG99} investigated upper estimates of the quantities
$\lambda_m(\bW^r_2, H^1)$, where $H_1$ denotes the isotropic Sobolev space in $L_2$, the so-called energy space. 
These studies have been continued in Griebel, Knapek \cite{GrKn00, GrKn09}, Bungartz, Griebel \cite{BuGr04}, Griebel
\cite{Gr05}, Schwab, S\"uli, Todor \cite{ScSuTo08}, and Dinh D\~ung, Ullrich \cite{DU13}. A first result on upper bounds in this setting is due to Griebel
$$
      \lambda_m(\bW^2_\infty([0,1]^d),H^1) \leq C_{d} m^{-1}\quad,\quad m\in \N\,.
$$
Note, that the usual $\log$-term disappears. In other words, there is no bad $d$-dependence in the rate. For
constructing an appropriate approximant we have to modify the standard hyperbolic cross according to the
energy-norm. Instead of \eqref{HC} we use projections onto the ``energy hyperbolic cross''
$$
    Q^E_n:=\bigcup\limits_{r|\bs|_1 - |\bs|_\infty\leq n} \rho(\bs)\,.
$$
Griebel \cite{Gr05} observed that in case $r=2$ the constant $C_{d}  = d^2\cdot 0.97515^d$ suffices for large $m > m_d$. The precise range for $m$ 
has not been given. The following theorem is stated in \cite[Thm.\ 4.7(ii)]{DU13}. 
\begin{thm}\label{thm:lambdaH1}
Let $1 < r \le 2$. Then one can
precisely determine a threshold $\lambda = \lambda(r)> 1$ such that for $m>\lambda^d$, the correct upper and lower
bounds explicit in the dimension $d$
\begin{equation}\label{lambdaH1}
 C_r d^{r-1}\biggl(\frac{1}{2^{1/(r-1)}-1}\biggl)^d \, m^{-(r-1)}
\leq 
\lambda_m(\tilde{\bW}^r_2,H^1(\T^d)) 
\leq C_r' d^{r-1}\biggl(\frac{1}{2^{1/(r-1)}-1}\biggl)^d \, m^{-(r-1)}
\end{equation}
hold true with some explicit positive  constants $C_r,C_r'$ depending on $r$ only. 
\end{thm}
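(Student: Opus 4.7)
The plan is to exploit the Hilbert space structure on both sides. Since both $\tilde{\bW}^r_2$ and $H^1(\T^d)$ are normed via weighted $\ell_2$ norms on the Fourier side, the embedding is unitarily equivalent to a diagonal operator on $\ell_2(\Z^d)$. Consequently, for every $m\in \N$ the linear width equals the Kolmogorov width and both coincide with the $(m+1)$-st singular value. Concretely, writing $w_r(\bk)^2 := \prod_{j=1}^d(1+|k_j|^2)^r$ and $w_E(\bk)^2 := 1 + \sum_{j=1}^d |k_j|^2$, one has $\lambda_m(\tilde{\bW}^r_2,H^1) = \sigma_{m+1}^\downarrow$, where $\sigma^\downarrow$ denotes the nonincreasing rearrangement of the sequence $\{\sigma(\bk)\}_{\bk\in\Z^d}$ with $\sigma(\bk) := w_E(\bk)/w_r(\bk)$.

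The first step is a dyadic reduction: on each block $\rho(\bs)$ one has $\sigma(\bk) \asymp_r 2^{|\bs|_\infty - r|\bs|_1}$, so that the sublevel set $\{\bk : \sigma(\bk) \ge 2^{-n}\}$ is, up to multiplicative constants depending only on $r$, the energy hyperbolic cross
\begin{equation*}
Q^E_n \;=\; \bigcup_{r|\bs|_1-|\bs|_\infty \le n}\rho(\bs)\,.
\end{equation*}
Thus, choosing $n$ such that $|Q^E_n|\asymp m$ will give the upper bound via orthogonal projection onto $\Tr(Q^E_n)$, and the lower bound by a direct pigeonhole argument on the singular values (there are fewer than $m$ indices where $\sigma(\bk)$ exceeds the target threshold). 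Because the problem is a singular value problem in Hilbert space, upper and lower matching is automatic once the counting of $|Q^E_n|$ is sharp on both sides.

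The second — and main — step is an explicit estimate
\begin{equation*}
|Q^E_n| \;\asymp_r\; d^{r-1}\,\Big(\tfrac{1}{2^{1/(r-1)}-1}\Big)^{d}\, 2^{n/(r-1)}
\end{equation*}
valid for $n \ge c(r)\,d$, which is equivalent to $n \ge \log_2\lambda(r)\cdot d$ for an appropriate $\lambda(r) > 1$. To derive this I would first bound the cardinality of the layer $\{\bs\in\N_0^d : r|\bs|_1-|\bs|_\infty = n\}$ by conditioning on which coordinate $j^*$ realizes $|\bs|_\infty =: s$, reducing the count to the number of $(s_i)_{i\ne j^*}$ in a simplex of size $(n-(r-1)s)/r$ with each $s_i \le s$. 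Summing the resulting expressions in $s$ yields geometric series of the form $\sum_{s\ge 0}2^{-s(r-1)\cdot \text{(something)}}$, and the factor $(2^{1/(r-1)}-1)^{-d}$ arises precisely as the $d$-fold product of such a geometric sum; the factor $d^{r-1}$ comes from the exponent of the simplex volume, while the choice of the coordinate $j^*$ contributes only a polynomial-in-$d$ factor absorbed in $C_r,C_r'$. The lower bound for $|Q^E_n|$ is obtained by keeping only a dominant stratum of $\bs$'s (e.g.\ those with a single large coordinate and the rest small), which gives the same order up to $r$-dependent constants provided $n\gtrsim_r d$.

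The final step is to invert the counting asymptotics: choose $n$ so that $m\asymp |Q^E_n|$, which yields $2^{-n}\asymp \big[d^{r-1}(2^{1/(r-1)}-1)^{-d}/m\big]^{r-1}$, and then the Hilbert space identification together with $\sigma(\bk)\asymp 2^{|\bs|_\infty-r|\bs|_1}$ on $\rho(\bs)$ gives the stated two-sided bound \eqref{lambdaH1}, where the threshold $\lambda(r)$ is the one that ensures our asymptotics for $|Q^E_n|$ kicks in. The hard part will be making the two-sided count of $|Q^E_n|$ sharp uniformly in $d$, since naively summing the layers produces spurious polynomial-in-$n$ factors and one has to track carefully which contributions are truly dominant; this is where the condition $1<r\le 2$ is needed, as it ensures $2^{1/(r-1)}-1$ is finite and that the geometric series converges at a rate strong enough to absorb the remaining stratifications into $r$-dependent constants.
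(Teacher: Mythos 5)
The Hilbert-space framing is the right one: both $\tilde{\bW}^r_2$ and $H^1(\T^d)$ are Fourier-weighted $\ell_2$ spaces, so the embedding is unitarily equivalent to a diagonal operator and $\lambda_m$ is indeed the $(m+1)$-st singular value, and this is the route taken in the cited reference \cite{DU13}. However, the proposal has several concrete gaps.

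First, the proposal's final step is inconsistent with its own cardinality claim. If one actually accepts $m\asymp|Q^E_n|\asymp_r d^{r-1}(2^{1/(r-1)}-1)^{-d}\,2^{n/(r-1)}$ and inverts, one obtains
\begin{equation*}
2^{-n}\asymp_r d^{(r-1)^2}\bigl(2^{1/(r-1)}-1\bigr)^{-d(r-1)}\,m^{-(r-1)},
\end{equation*}
which matches the target only when $r=2$. The $d^{r-1}$ in the statement should arise from raising the $d$ in the cardinality to the $(r-1)$-power during inversion, which means the cardinality factor should be $d$, not $d^{r-1}$.

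Second, the claimed two-sided bound for $|Q^E_n|$ is never actually established, and a direct summation yields a different answer. Writing $\bs=\bt+\mathbf 1$, using that $|\rho(\bs)|=2^{|\bs|_1}$ and that $\mathbf{1}[r|\bt|_1-|\bt|_\infty\le n']$ sits between $\mathbf{1}[r|\bt|_1-t_1\le n']$ and $\sum_j\mathbf{1}[r|\bt|_1-t_j\le n']$, one reduces to $\sum_\bt 2^{|\bt|_1}\mathbf{1}[(r-1)t_1+rS\le n']$ with $S=\sum_{j\ge 2}t_j$; summing the geometric $t_1$-sum and applying the negative binomial theorem gives
\begin{equation*}
|Q^E_n|\asymp_r d\,\bigl(2^{1/(r-1)}-1\bigr)^{-(d-1)}2^{n/(r-1)}
\end{equation*}
once $n$ clears an $r$-dependent multiple of $d$. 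This differs from your formula both in the power of $d$ and in the exponent of the geometric factor, so the combinatorial core of the argument is not merely unverified, it is at odds with a direct calculation. Reconciling it with the exact constants in the theorem requires pinning down the precise normalizations used in \cite{DU13}, which your sketch glosses over.

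Third, the dyadic reduction $\sigma(\bk)\asymp_r 2^{|\bs|_\infty-r|\bs|_1}$ is not dimension-free: on $\rho(\bs)$ the quantity $1+\sum_j k_j^2$ is of order $\sum_j 2^{2s_j}$, which lies between $2^{2|\bs|_\infty}$ and $d\,2^{2|\bs|_\infty}$. This $\sqrt d$ slop shifts the sublevel threshold by $\tfrac12\log_2 d$ and hence the cardinality by a factor $d^{1/(2(r-1))}$. When the whole point is to extract explicit $d$-dependence, this cannot be swept into an ``$\asymp_r$''; you have to work directly with $\sigma(\bk)^2\asymp(\sum_j 2^{2s_j})/2^{2r|\bs|_1}$. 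Finally, the stated rationale for $1<r\le 2$ is off: $2^{1/(r-1)}-1$ is finite for every $r>1$, and the geometric series converges for every $r>1$ as well. The role of $r\le 2$ is that it keeps $2^{1/(r-1)}-1\ge1$ so that the factor $(2^{1/(r-1)}-1)^{-d}$ does not blow up with $d$, which is what makes the result meaningful as a high-dimensional bound.
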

Here $\tilde{\bW}^r_2$ is the subspace of $\bW^r_2$ containing all functions $f$ such that 
$\hat{f}(\bk) \neq 0 \implies \prod_{i=1}^d k_i \neq 0$, and $\bW^r_2$ is defined as the set of function in $L_2(\T^d)$ 
for which the right hand side of \eqref{NeqW} is finite\,. The upper bound in \eqref{lambdaH1} also holds true for every $r > 1$. Note, that the terms depending on $d$ in the both sides of \eqref{lambdaH1} are the same and decay exponentially in $d$ for $r<2$.  For $r=2$, the relations \eqref{lambdaH1} becomes\
\begin{equation}\nonumber
 C_2\, d\, m^{-1}
\leq 
\lambda_m(\tilde{\bW}^2_2,H^1(\T^d)) 
\leq C_2'\, d \, m^{-1}.
\end{equation}
For more results on this direction, see \cite{DU13}.
For a detailed comparison of the mentioned results we refer again to \cite[Sect.\ 4.5]{KSU15}.

\subsection{$\varepsilon$-dimension and approximation in infinite dimensions}
\label{approximation in infinite dimensions}

In computational mathematics, the so-called $\varepsilon$-dimension
$n_\varepsilon = n_\varepsilon(\bF,X)$ is used to quantify the computational complexity (in Information-Based Complexity
(IBC) a similar object is termed {\em information complexity} or {\em $\varepsilon$-cardinality}). 
It is defined by
\be\label{epsdim}
n_\varepsilon(\bF,X)
:= \ 
\inf \left\{m \in \N:\, \exists L_m: \, \sup_{f \in \bF} \ \inf_{g \in L_m} \|f - g\|_X \le \varepsilon \right\},
\ee 
where $L_m$  denotes a linear subspace in $X$ of dimension $\le m$. This approximation characteristic is the inverse of
$d_m(\bF,X)$.
In other words, the quantity $n_\varepsilon(\bF,X)$ is  the minimal number
$n_\varepsilon$ such that the approximation of $\bF$ by a suitably
chosen approximant $n_\varepsilon$-dimensional subspace $L$ in $X$ gives the approximation error 
$\le \varepsilon$ (see \cite{DD79,DD80}).  
The quantity $n_{\varepsilon}$ represents a modification of the information
complexity (used in IBC) which is described by the minimal number $n(\varepsilon,d)$ 
of ``linear'' information (in case of $\Lambda^{\text{all}}$) needed to solve the corresponding $d$-variate linear
approximation problem of the identity operator within accuracy 
$\varepsilon$. In contrast to \eqref{epsdim} this quantity is defined as the inverse of the well-known Gelfand width,
see \cite[4.1.4, 4.2]{NW08} and \eqref{gelf} above. For further information on this topic we refer the interested
reader to the books \cite{NW08, NoWo09, NoWo12} and the references therein. 
The following theorem on $\varepsilon$-dimensions of the Sobolev class $\bW^r_2$ in the high-dimensional setting (see
the previous subsection) has been proved in \cite{CD13}.

\begin{thm} \label{n_e[d]}
Let $r >  0$, $d \ge 2$. Then we have for every $\varepsilon \in (0,1]$, 
\begin{equation}\nonumber
n_\varepsilon(\bW^r_2,L_2(\T^d)) 
\  \leq \ 
\frac{2^d}{(d-1)!} \, \varepsilon^{-1/r}\,(\ln \varepsilon^{-1/r} + d \ln 2)^{d-1}
\end{equation}
and for every $\varepsilon \in (0, (2/3)^{rd})$
\begin{equation*} 
n_\varepsilon(\bW^r_2,L_2(\T^d)) 
\  \geq \ 
\frac{2^d}{(d-1)!} \, 
\frac{\varepsilon^{-1/r}  [\ln \varepsilon^{-1/r}  - d\ln (3/2)]^d}
{\ln \varepsilon^{-1/r}  - d\ln (3/2) + d}
\ - \ 1.
\end{equation*}
\end{thm}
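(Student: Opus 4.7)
The strategy is to exploit the Hilbert space structure of the problem to reduce the computation of $n_\varepsilon(\bW^r_2, L_2(\T^d))$ to a counting problem for a hyperbolic cross, and then apply sharp two-sided estimates for the cardinality of that hyperbolic cross.

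First, I would fix the normalization of $\bW^r_2$: equipping $L_2(\T^d)$ with its usual orthonormal basis $\{e^{i(\bk,\bx)}\}_{\bk \in \Z^d}$, the class $\bW^r_2$ is, up to equivalence of norms, the unit ball of the weighted $\ell_2$-space associated with the weights $w_\bk := \prod_{j=1}^d \max\{1,|k_j|\}^{r}$, so the embedding $\bW^r_2 \hookrightarrow L_2(\T^d)$ is diagonal in the Fourier basis with singular values $\lambda_\bk = w_\bk^{-1}$. Since the problem is diagonal in a Hilbert space, the Babenko identity $d_m(\bW^r_2,L_2) = \lambda_{m+1}(A)$ (the $(m+1)$-st singular value, see the beginning of Section 4) immediately yields the identity
\begin{equation*}
n_\varepsilon(\bW^r_2,L_2(\T^d)) \;=\; \#\bigl\{\bk \in \Z^d : w_\bk^{-1} > \varepsilon\bigr\} \;=\; \bigl|\tilde\Gamma(\varepsilon^{-1/r},d)\bigr|,
\end{equation*}
where $\tilde\Gamma(N,d) := \{\bk \in \Z^d : \prod_j \max\{1,|k_j|\} \le N\}$ is the (closed) hyperbolic cross of radius $N$. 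More precisely, using the sharp version of the Kolmogorov width via the Schmidt expansion, one obtains $n_\varepsilon = |\tilde\Gamma(\varepsilon^{-1/r},d)|$ for the upper bound and $n_\varepsilon \ge |\tilde\Gamma(N_0,d)| - 1$ for a suitable $N_0$ slightly smaller than $\varepsilon^{-1/r}$ for the lower bound; the $-1$ in the stated theorem comes exactly from this adjustment.

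Second, the remaining task is purely combinatorial: bound $|\tilde\Gamma(N,d)|$ from above and below in a form explicit in $d$. The rough asymptotic $|\tilde\Gamma(N,d)| \asymp N(\log N)^{d-1}$ from \eqref{hcsums} is by itself too imprecise for the constant $2^d/(d-1)!$. I would instead derive both estimates from the integral representation
\begin{equation*}
|\tilde\Gamma(N,d)| \;=\; \sum_{\bk \in \Z^d,\, \prod_j \max\{1,|k_j|\} \le N} 1,
\end{equation*}
splitting according to the set $e \subseteq \{1,\dots,d\}$ of coordinates where $|k_j|\ge 1$, using the bound $\#\{k\in\Z:1\le|k|\le t\} \le 2t$ and $\ge 2(t-1)_+$ and the classical $d$-fold integral
\begin{equation*}
\int_{[1,\infty)^d,\, x_1\cdots x_d \le N} \frac{dx_1\cdots dx_d}{x_1\cdots x_d} \;=\; \frac{(\ln N)^d}{d!}.
\end{equation*}
A careful monotonicity comparison between the sum and the corresponding integral (shift by $1$ in each coordinate in one direction to dominate, shift by $-1$ in the other to be dominated) yields, after collecting contributions from all $e$,
\begin{equation*}
|\tilde\Gamma(N,d)| \;\le\; \frac{2^d}{(d-1)!}\,N\bigl(\ln N + d\ln 2\bigr)^{d-1}
\end{equation*}
and the matching lower bound with $\ln N - d\ln(3/2)$ and the extra denominator $\ln N - d\ln(3/2) + d$ (the latter appears because the lower bound for the integral is obtained from the identity $\int_1^M (\ln x)^{d-1}\,dx = M(\ln M - (d-1) + o(1))\cdots$, carried out explicitly).

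Third, inserting $N = \varepsilon^{-1/r}$ into these two cardinality estimates gives the claimed two-sided bounds, where the restriction $\varepsilon \in (0,(2/3)^{rd})$ in the lower bound corresponds exactly to the requirement $\ln N > d\ln(3/2)$ that makes the right-hand side meaningful. The main obstacle is the second step: obtaining cardinality estimates with the precise constant $2^d/(d-1)!$ and explicit logarithmic corrections $\pm d\ln 2$, $\pm d\ln(3/2)$ requires tracking error terms in the sum-to-integral passage carefully (rather than using the soft $\asymp$ bounds from \eqref{hcsums} or the preasymptotic bound \eqref{cardhyp}); the Hilbert-space identification in the first step and the substitution in the third step are essentially mechanical once the combinatorics is done.
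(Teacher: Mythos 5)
Your high-level plan — reduce $n_\varepsilon$ to the exact singular-value threshold count $\#\{\bk : \lambda_\bk > \varepsilon\}$ in the Hilbert-space embedding and then estimate the cardinality of the resulting hyperbolic cross — is the right framework; the paper itself does not prove this theorem but cites \cite{CD13}, and the spectral reduction you invoke is indeed exact. However, the concrete implementation has a gap that makes the claimed upper bound false as you have set it up. The weight $w_\bk = \prod_j\max\{1,|k_j|\}^r$ you fix at the outset (the one induced by $F_{r,\alpha}$ in Definition~\ref{def2Sob}) has $3^d$ Fourier modes of weight exactly one, namely all $\bk$ with $|k_j|\le 1$. Hence $n_\varepsilon = 3^d$ for every $\varepsilon$ in a left neighborhood of $1$, while your claimed upper bound tends to $\frac{2^d}{(d-1)!}(d\ln 2)^{d-1}$ as $\varepsilon\to 1^-$; that quantity is $8\ln 2\approx 5.5$ for $d=2$ and, more generally, is smaller than $3^d$ for $2\le d\le 6$, so the bound fails outright. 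The class in \cite{CD13} necessarily carries a normalization of type $\prod_j(1+|k_j|)^r$ or $\prod_j(1+k_j^2)^{r/2}$ (cf.\ \eqref{bW_2} and the discussion in Subsection~\ref{exconst}), for which $\bk=0$ is the unique weight-one mode; neither the leading constant $2^d/(d-1)!$ nor the correction $d\ln 2$ can come out of the weight you chose, and you never checked your choice against the stated constants.

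The counting step has problems too. The integral you cite, $\int_{[1,\infty)^d,\,\prod_j x_j\le N}\,\frac{dx_1\cdots dx_d}{x_1\cdots x_d} = \frac{(\ln N)^d}{d!}$, is not the one a boxing/volume argument uses: to dominate a lattice-point count by a volume you need the Lebesgue volume $\int_{[1,\infty)^d,\,\prod_j x_j\le N}\,dx_1\cdots dx_d = \frac{1}{(d-1)!}\int_0^{\ln N}e^{s}\,s^{d-1}\,ds \sim N\,(\ln N)^{d-1}/(d-1)!$, which has degree $d-1$ in the logarithm and divisor $(d-1)!$, not $d$ and $d!$. Since the entire content of the theorem is the explicit constant and the explicit correction terms, this is not a cosmetic slip. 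You also concede that you have not carried out the sum-to-integral comparison; in fact a straightforward "enclose each lattice point in a unit cube and compare volumes" argument overcounts by an additional $2^d$ or $3^d$ (depending on how you shift the boxes) on top of the target $2^d/(d-1)!$ and produces the wrong logarithmic correction, so the bookkeeping is substantially harder than "essentially mechanical."
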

\noindent 
It is worth to emphasize that in high-dimensional approximation, the form of the upper and lower bounds for  
$n_\varepsilon(\bW^r_2,L_2(\T^d))$ as in Theorem \ref{n_e[d]} is more natural and suitable than the form as used 
in the traditional form where the terms $\varepsilon^{- 1/r} |\log \varepsilon|^{(s-1)/r }$
are a priori split from constants which are actually a function of dimension parameter $d$ 
(and smoothness parameter~$r$), and therefore, any high-dimensional estimate based on them leads to a rougher bound. 
The situation is analogous for Kolmogorov $m$-widths of classes of functions having a mixed smoothness if the terms 
$m^{- r} (\log m)^{r (d-1)}$  are a priori split from constants depending on the dimension $d$. 

See also \cite{DU13} for some similar results, and \cite{CD13} for a general version of Theorem \ref{n_e[d]}
as well an extension to non-periodic functions.
From Theorem \ref{n_e[d]} we have
\begin{equation*} 
\lim\limits_{\varepsilon \to 0}\, 
\frac{n_\varepsilon(\bW^r_2,L_2(\T^d))}{\varepsilon^{-1/r} (\ln \varepsilon^{-1/r})^{d-1}} 
\  =  \ 
\frac{2^d}{(d-1)!}.
\end{equation*}

\bigskip
\noindent
The efficient approximation of a function of infinitely many variables is an important issue for a lot of problems in 
uncertainty quantification, computational finance and computational physics 
and is encountered for example in the numerical treatment of path integrals, stochastic processes, random fields and
stochastic or parametric PDEs. While the problem of quadrature of functions in weighted Hilbert spaces with infinitely
many variables has recently found a lot of interest 
in the information based complexity community, see e.g., 
\cite{BG14,CDMR09,DG14a,DG14b,G10,GMR14,HMNR,HW,KSWW, NHMR,PW,PWW,SW98,WW96}, there is much less literature on
approximation. So far, the approximation of functions in weighted Hilbert spaces with infinitely many variables has been
studied for a properly weighted $L_2$-error norm in \cite{Wa12b}. (See also \cite{Wa12a,WW11a,WW11b}, where however a
norm in a special Hilbert space was employed such that the approximation problem indeed got easier than the integration
problem.) 
It has been noticed in \cite{PW10}, one may have two options for obtaining tractability in infinite-dimensional
approximation: either by using decaying weights or by using an increasing smoothness vector $\br$, see also Section
\ref{Sect:aniso} above. 

In the recent decades, various approaches and methods have been proposed and studied for
numerical solving of the parametric and stochastic elliptic PDE 
\begin{equation}\label{spde}
-\mathrm{div}_\bx(\sigma(\bx,\by)\nabla_\bx u(\bx,\by)) = f(\bx) \quad \bx \in D \quad \by \in \Omega,
\end{equation} 
with homogeneous boundary conditions $u(\bx,\by)=0$, $\bx\in \partial D$, $\by \in \Omega$, where $D \subset \R^m$,
$\Omega \subset \R^d$ and $d$ may be very large or infinity.
See to \cite{CD15, GWZ14, SG11} for surveys and bibliography on different aspects in approximation and numerical methods
for the problem \eqref{spde}.

The recent paper  \cite{DG15} has investigated the linear hyperbolic cross approximation in infinite dimensions of
functions from spaces of mixed Sobolev type smoothness and mixed Sobolev-analytic-type smoothness in the
infinite-variate case where specific summability properties of the smoothness are fulfilled. Such function spaces appear
for example for the solution of the equation \eqref{spde}. The optimality of this approximation is studied in
terms of
the $\varepsilon$-dimension of their unit balls for which tight upper and lower bounds are given. These results then are
applied to the linear approximation to the solution of the problem \eqref{spde}.
The obtained upper and lower bounds of the approximation error as well as of the associated
$\varepsilon$-dimension are completely independent of any parametric or stochastic dimension, and of the parameters
which define the smoothness properties of the parametric or stochastic part of the solution. 
In the following, as an example let us briefly mention one of the results from  \cite{DG15} on the hyperbolic cross
approximation in infinite dimensions in the norm of the space $\mathcal{G}:=H^\beta(\T^m)\otimes L_2{(\T^\infty)}$ of
functions from the space $\mathcal{H}:= H^\alpha(\T^m)\otimes K^{\br}(\T^\infty)$, and its optimality in terms of
$\varepsilon$-dimension
$n_\varepsilon(\mathcal{U},\mathcal{G})$. The space $H^\gamma(\T^m)$, $\gamma \ge 0$, equipped with a different
equivalent norm can be identified with the isotropic Sobolev space $W^\gamma_2(\T^m)$. The space of  
anisotropic infinite-dimensional mixed smoothness space $K^{\br}(\T^\infty)$ with 
$\br = (r_1,r_2,...) \in \R^\infty$ and $0 < r_1 \le r_2, \cdots \le r_j \cdots$, is an infinite-variate generalization
of the space $K^{\br}(\T^d)$  which equipped with  a different equivalent norm can be identified with the space
$\bW^{\br}_2(\T^d)$ (see \cite{DG15} for exact definitions of these spaces).
Denote by $\mathcal{U}$   the unit ball in $\mathcal{H}$. 
\begin{thm} \label{theorem[n_e[d-inf]}
If $(\alpha - \beta)/m < r_1$ and 
$\sum_{i=1}^\infty \frac{(3/2)^{1 - mr_i/(\alpha - \beta)}}{mr_i/(\alpha - \beta) - 1} < \infty$, we have for every $\varepsilon \in (0,1]$, 
\begin{equation} \label{ineq[n_e<]-periodic}
\lfloor \varepsilon^{- 1/(\alpha - \beta)} \rfloor^m - 1
\ \le \
n_\varepsilon(\mathcal{U},\mathcal{G})
\ \le \
C \, \varepsilon^{-m/(\alpha-\beta)}, 
\end{equation}
where $C$ is a constant depending on $\alpha,\beta,m,\br$ only. 
\end{thm}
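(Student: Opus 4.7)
\textbf{Proof plan for Theorem \ref{theorem[n_e[d-inf]}.}
The plan is to exploit that both $\mathcal{H}=H^\alpha(\T^m)\otimes K^{\br}(\T^\infty)$ and $\mathcal{G}=H^\beta(\T^m)\otimes L_2(\T^\infty)$ are Hilbert spaces, so the embedding $\mathcal{H}\hookrightarrow\mathcal{G}$ is diagonalized by the Fourier basis $\{e^{i(\bk,\bx)}\otimes e^{i(\bs,\by)}\}_{(\bk,\bs)}$. Under the assumptions, this embedding is compact, and the (squared) singular values are
\[
\sigma_{(\bk,\bs)}^2 \;=\; \omega(\bk)^{-1}\tau(\bs)^{-1},\qquad \omega(\bk):=(1+|\bk|^2)^{\alpha-\beta},\quad \tau(\bs):=\prod_{j}(1+|s_j|)^{2r_j}.
\]
By the general relation between Kolmogorov widths and singular values in Hilbert spaces (Schmidt expansion, Theorem~\ref{Schmidt}), the $\varepsilon$-dimension equals the number of indices with $\sigma_{(\bk,\bs)}>\varepsilon$, that is,
\[
n_\varepsilon(\mathcal{U},\mathcal{G}) \;=\; \#\bigl\{(\bk,\bs):\omega(\bk)\tau(\bs)<\varepsilon^{-2}\bigr\}.
\]

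For the lower bound in \eqref{ineq[n_e<]-periodic} I would restrict to $\bs=\mathbf{0}$, which gives $\tau(\bs)=1$ and reduces the count to $\#\{\bk\in\Z^m:(1+|\bk|^2)^{\alpha-\beta}<\varepsilon^{-2}\}$. A straightforward lattice point argument (taking $|k_j|$ up to the largest integer $\leq\varepsilon^{-1/(\alpha-\beta)}$ coordinate-wise and discarding a single boundary term) then yields the stated bound $\lfloor\varepsilon^{-1/(\alpha-\beta)}\rfloor^m-1$. For the upper bound, I would split the count according to $\bs$ and use the isotropic estimate $\#\{\bk\in\Z^m:(1+|\bk|^2)<X\}\leq c_m X^{m/2}$ for $X\geq 1$ to obtain
\[
n_\varepsilon(\mathcal{U},\mathcal{G}) \;\leq\; \sum_{\bs:\tau(\bs)<\varepsilon^{-2}} c_m\Bigl(\tfrac{\varepsilon^{-2}}{\tau(\bs)}\Bigr)^{\!m/(2(\alpha-\beta))} \;\leq\; c_m\,\varepsilon^{-m/(\alpha-\beta)}\sum_{\bs\in\Z^\infty_c}\prod_{j}(1+|s_j|)^{-p_j},
\]
where $p_j:=mr_j/(\alpha-\beta)$; note $p_1>1$ by the hypothesis $(\alpha-\beta)/m<r_1$. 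The sum factorizes into an infinite product $\prod_j\bigl(1+2\sum_{n\geq 2}n^{-p_j}\bigr)$.

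The core estimate, which is also the main obstacle, is to control $\sum_{n\geq 2}n^{-p_j}$ sharply enough that the resulting infinite product is summable exactly under the hypothesis of the theorem. The crude bound $\sum_{n\geq 2}n^{-p}\leq 1/(p-1)$ is too weak, since $\sum_j 1/(p_j-1)$ need not converge. The key is to use midpoint-convexity of $x\mapsto x^{-p}$: since $n^{-p}\leq\int_{n-1/2}^{n+1/2}x^{-p}\,\mathrm{d}x$ for $n\geq 2$, one gets
\[
\sum_{n\geq 2}n^{-p_j} \;\leq\; \int_{3/2}^{\infty}x^{-p_j}\,\mathrm{d}x \;=\; \frac{(3/2)^{1-p_j}}{p_j-1}.
\]
This is precisely the expression appearing in the hypothesis, and $\log(1+x)\leq x$ turns the log of the infinite product into the convergent series $\sum_j 2(3/2)^{1-p_j}/(p_j-1)$. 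Hence
\[
\sum_{\bs}\prod_{j}(1+|s_j|)^{-p_j}\leq C(\alpha,\beta,m,\br)<\infty,
\]
completing the upper bound with the correct exponent $\varepsilon^{-m/(\alpha-\beta)}$ and a dimension-independent constant. Once this product bound is in hand, the assembly of the two inequalities is routine.
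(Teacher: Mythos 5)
The survey itself does not reprove this statement—it cites \cite{DG15}—so there is no internal proof to compare against; but the diagonalization/singular-value-counting strategy you propose is the only natural one in this Hilbert-space setting and is almost certainly what \cite{DG15} does. Your identification of the midpoint-rule bound $\sum_{n\ge 2}n^{-p}\le\int_{3/2}^{\infty}x^{-p}\,dx=\frac{(3/2)^{1-p}}{p-1}$ as the mechanism that converts the hypothesis into convergence of the infinite product is exactly right, and the way it matches the constant $3/2$ and the exponent $p_j=mr_j/(\alpha-\beta)$ in the theorem confirms this is the heart of the upper bound; the step $\log(1+x)\le x$ finishing it is also correct.

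The lower bound as sketched, however, has a genuine gap that traces back to the form of the weight $\omega(\bk)$. With $\omega(\bk)=(1+|\bk|^2)^{\alpha-\beta}$ the admissible index set $\{\bk\in\Z^m:\omega(\bk)<\varepsilon^{-2}\}$ is a Euclidean ball of radius roughly $\delta:=\varepsilon^{-1/(\alpha-\beta)}$, and the coordinate-wise box $\{|k_j|\le\lfloor\delta\rfloor\ \forall j\}$ is not contained in it for $m\ge 2$, since its corners have $\ell_2$-norm $\sqrt{m}\,\lfloor\delta\rfloor$. The obstruction is not cosmetic: the lattice-point count inside a Euclidean ball of radius $\delta$ in $\Z^m$ is approximately $V_m\delta^m$ with $V_m$ the $m$-ball volume, and $V_m$ decays super-exponentially in $m$, so the clean bound $\lfloor\delta\rfloor^m-1$ would actually be false for large $m$ if the weight were genuinely $(1+|\bk|^2)^{\alpha-\beta}$. (A secondary issue: $\#\{\bk\in\Z^m:|k_j|\le L\}=(2L+1)^m$, not $L^m$.) For the stated form to come out exactly, the norm on $H^\gamma(\T^m)$ used in \cite{DG15}—although equivalent to the standard isotropic Sobolev norm—must be chosen in a product- or $\ell_\infty$-adapted normalization so that the constraint on $\bk$ factorizes coordinate-wise; with that weight your cube argument and boundary correction do become routine. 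You need to pin down that precise definition before asserting the lower bound; the upper-bound argument is insensitive to the choice of equivalent norm and already works.
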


The upper bound in \eqref{ineq[n_e<]-periodic} is
realized by a linear hyperbolic cross approximation in infinite dimensions corresponding to the infinite-variate
anisotropic mixed smoothness of the spaces $\mathcal{H}$ and  $\mathcal{G}$. Depending on the regularity of the
diffusions $\sigma(\bx,\by)$ and the right hand side $f(\bx)$ in the periodic equation \eqref{spde} with $D=\T^m$ and 
$\Omega = \T^\infty$, we may assume the solution $u(\bx,\by)$
belonging to $\mathcal{H}$ for some $\alpha$ and $\br$. Then we approximate $u(\bx,\by)$ in the ``energy'' norm of
$\mathcal{G}$ for some $0 \le \beta < \alpha$. Based on  the linear hyperbolic cross approximation in infinite dimension
and \eqref{ineq[n_e<]-periodic}, we can construct a linear method of rank $\le n$ which gives the convergence rate of
approximation to the solution of  \eqref{spde} as $n^{-(\alpha - \beta)/m}$. See \cite{DG15, Di15_2} for more results, on infinite dimensional approximation and applications in  parametric and stochastic PDEs, in particular, on non-periodic and mixed versions of \eqref{spde} and of Theorem \ref{theorem[n_e[d-inf]}.

   \newpage
   \section{Appendix}

 \subsection{General notation}
 
 Let us start with introducing some notations.
For $1\le p \le \infty$ we shall denote by $p'$ the duality exponent, that
is, the number (or $\infty$) such that $1/p + 1/p' = 1$. For a vector
$\mathbf 1\le\mathbf p \le \infty$
we denote $\bp ' = (p'_1,\dots,p'_d)$ and $1/\bp = (1/p_1,\dots,1/p_d)$.

For the sake of brevity we shall write $\int f d\mu$ instead of
$(2\pi)^{-d}\int_{\T^d} f(\mathbf x) d\mathbf x$, where $\T^d = [-\pi,\pi]^d$
and $\mu$ means the normalized Lebesgue measure on $\T^d$. For functions
$f,g:\T^d\to  \C$ we define the convolution\index{Convolution}
$$
    f*g(\bx):=(2\pi)^{-d}\int_{\T^d}f(\by)g(\bx-\by)d\by\,.
$$
In the case $\mathbf p = \mathbf 1p$
we shall write
the scalar $p$ instead of the vector $\mathbf p$. 
Let further denote $L_q(\T^d)$, $0<q\leq \infty$, the space of
all measurable functions $f:\T^d\rightarrow \C$ satisfying
$$
    \|f\|_p :=
    \Big(\,\int_{\T^d}|f(\bx)|^p\,\dint \mu\Big)^{1/p} < \infty
$$
with the usual modification in case $p=\infty$. In case $1\leq p\leq \infty$
the quantity $\|\cdot\|_p$ represents a norm. In case $0<p<1$ it is a
quasi-norm. The space $C(\T^d)$ is often used as a replacement for
$L_{\infty}(\T^d)$. It denotes the collection of all continuous periodic
functions equipped with the $L_{\infty}$-topology. 

\subsection{Inequalities} \label{App_ineq}We shall mention some well-known inequalities. 

{\bf 1.1. The H\"older inequality.}\index{Inequality!H\"older} Let $1\le p\le\infty$, $f_1\in L_{p}$,
$f_2\in L_{p'}$.
Then $f_1 f_2\in L_1$ and
\be\label{7.1}
\int |f_1 f_2| d\mu\le \|f_1\|_p\|f_2\|_{p'}.
\ee
 
{\bf 1.2.} As a consequence of the relation (\ref{7.1}) we obtain the
H\"older inequality for a vector $\mathbf 1\le \mathbf p \le \infty$
$$
\int |f_1 f_2| d\mu\le \|f_1\|_{\mathbf p}\|f_2\|_{\mathbf p'}.
$$

{\bf 1.3. The H\"older inequality for several functions.} Let
$1\le p_i\le \infty$, $i = 1,\dots,m$, $1/p_1 + \dots+ 1/p_m = 1$,
$f_i\in L_{p_i}$, $i=1,\dots,m.$
Then $f_1\dots f_m\in L_1$ and
\be\label{7.3}
\int|f_1\dots f_m|d\mu\le\|f_1\|_{p_1}\dots\|f_m\|_{p_m}.
\ee

{\bf 1.4. The monotonicity of $L_p$-norms.} Let $1\le q\le p \le \infty$,
then
\be\nonumber
\|f\|_q\le\|f\|_p
\ee
and for $\mathbf 1\le \mathbf q \le \mathbf p \le \infty$,
\be\nonumber
\|f\|_{\mathbf q}\le\|f\|_{\mathbf p}.
\ee

{\bf 1.5.} Let $1\le a < p < b\le \infty$,
$\theta= (1/p - 1/b)(1/a - 1/b)^{-1}$,
then
\be\nonumber
\|f\|_p\le\|f\|_a^{\theta}|f\|_b^{1-\theta} .
\ee

{\bf 1.6.} From the inequality (\ref{7.1}) we easily obtain the H\"older
inequality for sums :
$$
\sum_{i=1}^N |a_i b_i |\le\left(\sum_{i=1}^N|a_i|^p\right)^{1/p}
\left(\sum_{i=1}^N|b_i|^{p'}\right)^{1/p'},\qquad 1 \le p \le \infty.
$$
We remark that in this inequality one can take $N = \infty$.

{\bf 1.7. The Minkowski inequality.}\index{Inequality!Minkowski} Let $1\le p\le \infty$ , $f\in L_p$,
$i = 1,\dots,m$. Then
\be\nonumber
\left\|\sum_{i=1}^mf_i\right\|_p\le\sum_{i=1}^m\|f_i\|_p.
\ee

{\bf 1.8.} It is possible to deduce the generalized Minkowski
inequality from the Minkowski inequality. Let
$\mathbf 1\le \mathbf p \le \infty$, then
\be\nonumber
\left\|\int\varphi(\cdot,\mathbf y)d\mu(\mathbf y)\right\|_{\mathbf p}
\le\int\bigl\|\varphi(\cdot,\mathbf y)
\bigr\|_{\mathbf p}d\mu(\mathbf y).
\ee

{\bf 1.9.} Let $1\le q\le p\le\infty$ , then
\be\nonumber
\left(\int\left(\int\bigl|f(\mathbf x,\mathbf y)\bigr|^qd\mu(\mathbf y)\right)
^{p/q}d\mu(\mathbf x)\right)^{1/p}\le \left(\int\left(\int\bigl|f(\mathbf
x,\mathbf y)
\bigr|^pd\mu(\mathbf x)\right)^{q/p}d\mu(\mathbf y)\right)^{1/q}.
\ee
 
{\bf 1.10. The Young inequality.}\index{Inequality!Young} Let $p$, $q$ and $a$ be real numbers
satisfying the conditions
\be\nonumber
1\le p\le q\le \infty,\qquad 1 - 1/p + 1/q = 1/a.
\ee
Let $f\in L_p$ and $K\in L_a$ be $2\pi$-periodic
functions of a single
variable. Let us consider the convolution of these functions
$$
J(x) = (2\pi)^{-1}\int_{-\pi}^{\pi} K(x - y)f(y)dy = K * f .
$$
Then
\be\nonumber
\|J\|_q\le \|K\|_a\|f\|_p .
\ee

{\bf 1.11. The Young inequality for vector $\mathbf p$, $\mathbf q$,
$\mathbf a$.}\\ Let
$\mathbf 1\le\mathbf p\le\mathbf q\le \infty$,
$\mathbf 1 - 1/\mathbf p + 1/\mathbf q = 1/\mathbf a$
$$
J(\mathbf x) =\int K(\mathbf x -\mathbf y)f(\mathbf y)d\mu(\mathbf y) = K * f .
$$
Then
\be\nonumber
\|J\|_{\mathbf q}\le\|K\|_{\mathbf a}\|f\|_{\mathbf p} .
\ee

{\bf 1.12. The Abel inequality.}\index{Inequality!Abel} For nonnegative and non-increasing
$v_1 ,\dots,v_n$ we have
$$
\left|\sum_{i=1}^{n}u_iv_i\right|\le v_1
\max_k\left|\sum_{i=1}^{k}u_i\right| .
$$

This inequality easily follows from the following formula
$$
\sum_{i=1}^{n}u_iv_i=\sum_{\nu=1}^{n-1}(v_{\nu}-v_{\nu+1})
\sum_{i=1}^{\nu} u_i+v_n\sum_{i=1}^{n}u_i,
$$
which is called the Abel transformation.

 Along with spaces $L_p$ we shall use spaces $l_p$,
$1\le p\le \infty$, of
sequences $\mathbf z = \{z_k \}_{k=1}^{\infty}$ equipped with the norm
$$
\|\mathbf z\|_p =\|\mathbf z\|_{l_p}:=\left(\sum_{k=1}^{\infty}|z_k|^p
\right)^{1/p},\quad 1 \le p < \infty ,
$$
$$
\|\mathbf z\|_{\infty}:=\|\mathbf z\|_{l_{\infty}}=\sup_k |z_k|.
$$

The spaces $l_p$ are Banach spaces.

\subsection{Duality in $L_{p}$ spaces}\index{Duality}

{\bf 2.1.} Let $f\in L_{p}$, $g\in L_{p'}$. We denote
$$
\<f,g\> := (2\pi)^{-d}\int _{\pi_d}f(\mathbf x)
\overline {g(\mathbf x)} d\mathbf x = \int f\overline g d\mu ,
$$
where $\overline z$ is the complex conjugate number to a number $z$ .

\begin{thm}\label{T7.2.1} Let $1\le p \le \infty$ and $f\in L_{p}$ then
$$
\|f\|_p = \sup_{g\in L_{p'},\|q\|_{p'}\le1} |\<f,g\>|.
$$
\end{thm}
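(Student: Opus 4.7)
The plan is to establish the identity via two matching inequalities. For the upper bound $\sup_{\|g\|_{p'}\le 1}|\langle f,g\rangle| \le \|f\|_p$, I would simply invoke H\"older's inequality (inequality 1.1 of the appendix) applied to $f\bar g$, which gives $|\langle f,g\rangle| \le \int|f||g|\,d\mu \le \|f\|_p\|g\|_{p'}\le \|f\|_p$ for every admissible $g$. This direction is essentially free.

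The substantive direction is to produce, for each $p$, a function $g$ with $\|g\|_{p'}\le 1$ such that $|\langle f,g\rangle|$ equals (or approaches) $\|f\|_p$. I would split into three cases according to the range of $p$. In the range $1<p<\infty$, assuming $\|f\|_p>0$, the natural candidate is
\[
g(\bx) := \|f\|_p^{1-p}\,\overline{\operatorname{sign} f(\bx)}\,|f(\bx)|^{p-1},
\]
where $\operatorname{sign} f = f/|f|$ on $\{f\ne 0\}$ and $0$ elsewhere. A direct computation using $(p-1)p' = p$ gives $\|g\|_{p'}^{p'} = \|f\|_p^{(1-p)p'}\int|f|^{p}\,d\mu = 1$, while $\langle f,g\rangle = \|f\|_p^{1-p}\int|f|^{p}\,d\mu = \|f\|_p$. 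For $p=1$ (so $p'=\infty$), take simply $g=\overline{\operatorname{sign} f}$; then $\|g\|_\infty \le 1$ and $\langle f,g\rangle = \int|f|\,d\mu = \|f\|_1$.

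The only genuinely delicate case is $p=\infty$, where there need not be an exact maximizer, so the identity must be realized in the limit. Given $\varepsilon>0$, the essential supremum definition of $\|f\|_\infty$ guarantees that the set $E_\varepsilon := \{\bx\in\T^d : |f(\bx)|>\|f\|_\infty-\varepsilon\}$ has positive Lebesgue measure. I would take
\[
g_\varepsilon(\bx) := \frac{1}{\mu(E_\varepsilon)}\,\overline{\operatorname{sign} f(\bx)}\,\chi_{E_\varepsilon}(\bx),
\]
which lies in $L_1$ with $\|g_\varepsilon\|_1 = 1$, and compute $\langle f,g_\varepsilon\rangle = \mu(E_\varepsilon)^{-1}\int_{E_\varepsilon}|f|\,d\mu \ge \|f\|_\infty-\varepsilon$. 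Sending $\varepsilon\to 0$ yields the desired supremum. The main obstacle is precisely this $p=\infty$ case: one has to be careful that $\mu(E_\varepsilon)>0$ (which is the content of the essential-supremum definition) and to recognize that, unlike the reflexive range, the supremum is generally not attained, so an approximating sequence replaces the explicit extremizer. The trivial case $\|f\|_p=0$ is handled separately by noting both sides vanish.
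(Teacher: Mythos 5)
The paper states this theorem in the appendix without proof --- it is classical $L_p$-duality (Riesz representation) --- so there is no internal argument to compare against. Your route is the canonical one: H\"older's inequality for the easy direction, and an explicit extremizing $g$ (or an approximating family when $p=\infty$) for the converse. The exponent arithmetic $(p-1)p'=p$, the essential-supremum argument at $p=\infty$, and the treatment of the degenerate case $\|f\|_p=0$ are all sound.

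There is, however, a consistent phase error in your choice of $g$ which matters for complex-valued $f$. The paper's pairing is sesquilinear, $\langle f,g\rangle=\int f\bar g\,d\mu$, and you have set $\mathrm{sign}\,f = f/|f|$. To make $f\bar g$ a nonnegative real, you need $\bar g$ to carry the phase $\overline{f}/|f|=\overline{\mathrm{sign}\,f}$, which means taking
$$
g=\|f\|_p^{1-p}\,(\mathrm{sign}\,f)\,|f|^{p-1},
$$
\emph{without} the conjugate. With your choice $g=\|f\|_p^{1-p}\,\overline{(\mathrm{sign}\,f)}\,|f|^{p-1}$ one gets $f\bar g = \|f\|_p^{1-p}\,f^2\,|f|^{p-2}$, whose integral is in general not $\|f\|_p$: the modulus is right but the argument is $2\arg f$ rather than $0$, so the phase varies and the integral need not be real, much less equal to $\int|f|^p$. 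The same spurious conjugate should be dropped in the $p=1$ and $p=\infty$ constructions. This is the kind of slip that arises from having the bilinear pairing $\int fg\,d\mu$ in mind, for which $\overline{\mathrm{sign}\,f}$ would indeed be correct. Once that single conjugate is removed throughout, the proof is complete; the normalization computation $\|g\|_{p'}^{p'}=1$ depends only on $|g|$ and is unaffected.
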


 \begin{rem}\label{R7.1} The statement analogous to Theorem \ref{T7.2.1} is
valid for
spaces $l_p$ :
$$
\|\mathbf z\|_{l_p}=\sup_{\|\mathbf w\|_{l_{p'}}\le1}|(\mathbf z,\mathbf w)|,
\qquad 1 \le p \le \infty .
$$
\end{rem}

{\bf 2.2.} Let $F$ be a complex linear normed space and $F^{*}$ be the
conjugate (dual) space to $F$ , that is elements of $F^{*}$ are linear
functionals $\varphi$ defined on $F$ with the norm
$$
\|\varphi\| = \sup_{f\in F: \|f\|\le1}\bigl|\varphi(f)\bigr| .
$$

Let $\varPhi = \{\varphi_k \}_{k=1}^n$ be a set of functionals from
$F^{*}$. Denote
$$
F_{\varPhi}= \bigl\{ f \in F :\varphi_k (f) = 0 ,\quad k = 1,\dots,n\bigr\}.
$$

\begin{thm}\label{T7.2.2} ({\bf The Nikol'skii duality theorem.}) Let
$\varPhi = \{\varphi_k\}_{k=1}^n$ be a fixed system of functionals
from $F^{*}$. Then for any $\varphi\in F^{*}$
$$
 \inf_{\{c_k\}_{k=1}^n}
\left \|\varphi-\sum_{k=1}^{n}c_k\varphi_k\right\|=\sup_{ f\in F_{\varPhi}:
\|f\|\le1}\bigl|\varphi(f)\bigr| .
$$
\end{thm}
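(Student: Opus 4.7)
The plan is to prove both inequalities separately, with the nontrivial direction resting on the Hahn--Banach theorem together with a finite-dimensional linear-algebra fact about functionals that vanish on the joint kernel of a finite system.

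First I would dispatch the easy inequality $\ge$. Pick any $f \in F_{\varPhi}$ with $\|f\|\le 1$ and any scalars $c_1,\dots,c_n$. Since $\varphi_k(f)=0$ for all $k$, we have
$$
\varphi(f) = \varphi(f) - \sum_{k=1}^{n} c_k \varphi_k(f) = \Bigl(\varphi - \sum_{k=1}^{n} c_k \varphi_k\Bigr)(f),
$$
so $|\varphi(f)|\le \|\varphi-\sum_k c_k\varphi_k\|$. Taking the supremum over admissible $f$ on the left and the infimum over $\{c_k\}$ on the right gives the inequality in one direction.

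For the converse inequality $\le$, set $\alpha := \sup_{f\in F_{\varPhi},\,\|f\|\le 1}|\varphi(f)|$. The restriction $\varphi|_{F_{\varPhi}}$ is a continuous linear functional on the subspace $F_{\varPhi}\subset F$ of norm exactly $\alpha$. I would then invoke the Hahn--Banach theorem to extend $\varphi|_{F_{\varPhi}}$ to a functional $\widetilde\varphi \in F^{*}$ defined on all of $F$ with $\|\widetilde\varphi\| = \alpha$. By construction, the difference $\psi := \varphi - \widetilde\varphi$ annihilates $F_{\varPhi} = \bigcap_{k=1}^{n}\ker\varphi_k$.

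The key algebraic step is then: any $\psi\in F^{*}$ that vanishes on $\bigcap_{k=1}^{n}\ker\varphi_k$ is a linear combination of $\varphi_1,\dots,\varphi_n$. This is a standard finite-dimensional fact which I would prove by reducing to a linearly independent subsystem $\{\varphi_{k_1},\dots,\varphi_{k_m}\}$ of $\{\varphi_k\}$, choosing $e_j \in F$ with $\varphi_{k_i}(e_j)=\delta_{ij}$ (possible by independence), and verifying that $\psi - \sum_j \psi(e_j)\varphi_{k_j}$ vanishes on all of $F$. Thus we obtain scalars $c_1,\dots,c_n$ with $\varphi - \widetilde\varphi = \sum_k c_k\varphi_k$, and hence $\|\varphi - \sum_k c_k\varphi_k\| = \|\widetilde\varphi\|=\alpha$, which yields the $\le$ direction.

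The main obstacle is the finite-dimensional lemma identifying the annihilator of $\bigcap_k \ker\varphi_k$ with $\mathrm{span}\{\varphi_1,\dots,\varphi_n\}$; everything else is either routine or a direct appeal to Hahn--Banach. Once the two inequalities are in hand, the theorem follows immediately.
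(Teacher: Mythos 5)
Your proof is correct. The paper states Theorem \ref{T7.2.2} in the Appendix without proof (it is a classical reference result), so there is no in-paper argument to compare with; your route via Hahn--Banach plus the annihilator lemma is the standard one for this duality. A couple of micro-remarks worth recording as you write it up: in the $\ge$ direction you should explicitly use $\|f\|\le 1$ when passing from $|(\varphi-\sum c_k\varphi_k)(f)|$ to $\|\varphi-\sum c_k\varphi_k\|$; in the $\le$ direction Hahn--Banach gives $\|\widetilde\varphi\|\le\alpha$ and the reverse inequality $\|\widetilde\varphi\|\ge\alpha$ is automatic because $\widetilde\varphi$ extends $\varphi|_{F_\varPhi}$, so equality is immediate. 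For the finite-dimensional lemma, after extracting a maximal linearly independent subsystem $\{\varphi_{k_1},\dots,\varphi_{k_m}\}$ you should note that $\bigcap_{j}\ker\varphi_{k_j}=\bigcap_{k}\ker\varphi_k$ (since the remaining $\varphi_k$ lie in the span of the chosen ones), which is exactly what is needed to conclude that $\psi-\sum_j\psi(e_j)\varphi_{k_j}$ vanishes on all of $F$ once you decompose an arbitrary $f$ as $g+\sum_i\varphi_{k_i}(f)e_i$ with $g\in F_\varPhi$. With those details spelled out, the argument is complete.
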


\begin{thm}\label{T7.2.3} Let $\varphi,\varphi_1,\dots.\varphi_n\in L_p$,
$1\le p\le \infty$ , then
$$
\inf_{c_k; k=1,\dots,n}
\left \|\varphi-\sum_{k=1}^{n}c_k\varphi_k\right\|_p=\sup_{
\|g\|_{p'}\le1:(\varphi_k,g)=0, k=1,\dots,n}
\bigl|\<\varphi,g\>\bigr| .
$$
\end{thm}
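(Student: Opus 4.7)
The plan is to reduce Theorem \ref{T7.2.3} directly to the abstract Nikol'skii duality theorem (Theorem \ref{T7.2.2}) by choosing the right Banach space $F$ and embedding the $L_p$-functions $\varphi,\varphi_1,\dots,\varphi_n$ as continuous linear functionals on $F$. The natural choice is $F := L_{p'}$, and for each $h\in L_p$ we define $\tilde h \in F^{*}$ by $\tilde h(g) := \langle h,g\rangle$. Theorem \ref{T7.2.1} tells us precisely that this assignment is an isometric embedding: $\|\tilde h\|_{F^{*}} = \|h\|_p$ for all $h\in L_p$ and all $1\le p\le \infty$.

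Next I would apply Theorem \ref{T7.2.2} to the functionals $\tilde\varphi,\tilde\varphi_1,\dots,\tilde\varphi_n\in F^{*}$, with $\varPhi = \{\tilde\varphi_k\}_{k=1}^n$. The left-hand side of Theorem \ref{T7.2.2} becomes
$$
\inf_{c_1,\dots,c_n}\Big\|\tilde\varphi-\sum_{k=1}^n c_k\tilde\varphi_k\Big\|_{F^{*}}
= \inf_{c_1,\dots,c_n}\Big\|\varphi-\sum_{k=1}^n c_k\varphi_k\Big\|_p,
$$
where the last equality uses linearity of the embedding $h\mapsto\tilde h$ together with the isometry from Theorem \ref{T7.2.1}. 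On the right-hand side, the condition $g\in F_{\varPhi}$ translates into $\tilde\varphi_k(g)=\langle\varphi_k,g\rangle=0$ for $k=1,\dots,n$, and $\tilde\varphi(g)=\langle\varphi,g\rangle$, yielding
$$
\sup_{\substack{g\in L_{p'},\ \|g\|_{p'}\le 1\\ \langle\varphi_k,g\rangle=0,\ k=1,\dots,n}}|\langle\varphi,g\rangle|,
$$
which is exactly the supremum in the statement.

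The only point requiring a brief check is that this reduction is valid uniformly in the range $1\le p\le\infty$. The potentially delicate cases are $p=1$ (so $F = L_\infty$) and $p=\infty$ (so $F=L_1$), since in these endpoint situations $L_p$ is not reflexive. However, no reflexivity is needed here: what we actually use is only the isometric embedding $L_p\hookrightarrow (L_{p'})^{*}$, which is exactly the content of Theorem \ref{T7.2.1} and which holds for the full range $1\le p\le\infty$. Hence Theorem \ref{T7.2.2} applies verbatim and delivers the claimed identity.

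The main (minor) obstacle is therefore not analytic but book-keeping: one must be careful to pair $L_p$ with $F=L_{p'}$ in the correct direction so that Theorem \ref{T7.2.1} supplies the isometry $\|\tilde h\|_{F^{*}}=\|h\|_p$, rather than the reverse identification which would require reflexivity. Once this pairing is fixed, Theorem \ref{T7.2.3} follows as an immediate corollary of Theorems \ref{T7.2.1} and \ref{T7.2.2} with no further computation.
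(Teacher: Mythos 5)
The paper states Theorem \ref{T7.2.3} in the Appendix without proof, and your reduction via $F=L_{p'}$ and the isometric embedding $L_p\hookrightarrow (L_{p'})^{*}$ from Theorem \ref{T7.2.1}, followed by a direct application of Theorem \ref{T7.2.2}, is the standard derivation and is correct for all $1\le p\le\infty$. Your observation that only the isometric embedding (not reflexivity) is needed is exactly the right point; the only tiny detail left implicit is that $g\mapsto\langle h,g\rangle$ is anti-linear in $g$, but this is harmless since $\sup|\tilde\varphi(g)|$ and the vanishing conditions $\tilde\varphi_k(g)=0$ are unchanged under conjugation.
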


\subsection {Fourier series of functions in $L_p$}
 
{\bf 3.1.} For a function $f\in L_1(\T^d)$ we define Fourier coefficients
$$
\hat f(\mathbf k) = (2\pi)^{-d}\int_{\T^d}f(\mathbf x) e^{-i(
\bk,\bx )}
d\mathbf x= \langle f,e^{i (\mathbf k,\mathbf \cdot)}\rangle\quad,\quad \bk \in \Z^d\,.
$$
\index{Parseval's identity}There are the well-known Parseval equality: for any $f\in L_2(\T^d)$
$$
\|f\|_2 = \left (\sum_{\mathbf k}\bigl|\hat f(\mathbf k)\bigr|^2\right)^{1/2},
$$
\index{Theorem!Riesz-Fischer}and the Riesz-Fischer theorem:  if
$\sum_{\mathbf k}|c_{\mathbf k}|^2 < \infty$, then
$$
f(\mathbf x) =\sum_{\mathbf k}c_{\mathbf k} e^{i(\mathbf k,\mathbf x)}\in L_2
\qquad\text{ and }\qquad \hat f(\mathbf k) = c_{\mathbf k} .
$$

In the space $L_p$, $1 < p < \infty$, the following statement holds.

\begin{thm}\label{T7.3.1} ({\bf The Hausdorff-Young theorem.})\index{Theorem!Hausdorff-Young} Let $1 < p\le 2$,
then for any $f\in L_p$,
$$
\left (\sum_{\mathbf k}\bigl|\hat f(\mathbf k)
\bigr|^{p'}\right)^{1/p'} \le\|f\|_p .
$$
If a sequence
$\{c_{\mathbf k}\}$ is such that $\sum_{\mathbf k}|c_{\mathbf k}|^p< \infty$,
then there exists a function $f\in L_{p'}$ for which $\hat f(\mathbf k) =
c_{\mathbf k}$ and
$$
\|f\|_{p'}\le\left (\sum_{\mathbf k}\bigl|\hat f(\mathbf k)
\bigr|^p\right)^{1/p}.
$$
\end{thm}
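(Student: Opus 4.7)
My plan is to deduce both statements from the Riesz--Thorin complex interpolation theorem applied at the endpoints $p=1$ and $p=2$. I will treat the two assertions as dual forms of the boundedness of the Fourier transform and its formal adjoint between the scales $L_p(\T^d)$ and $\ell_{p'}(\Z^d)$.

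For the first statement, consider the linear map $T: f \mapsto \{\hat f(\bk)\}_{\bk\in\Z^d}$. At the endpoint $p=1$ the trivial bound
\[
|\hat f(\bk)| = \Big|(2\pi)^{-d}\!\!\int_{\T^d} f(\bx) e^{-i(\bk,\bx)}\,d\bx\Big| \le \|f\|_1
\]
shows $T:L_1\to\ell_\infty$ with operator norm $\le 1$. At the endpoint $p=2$, Parseval's identity yields $\|Tf\|_{\ell_2} = \|f\|_2$, so $T:L_2\to\ell_2$ has norm $1$. Since $(\ell_\infty,\ell_2)$ and $(L_1,L_2)$ are compatible couples and $1/p = (1-\theta)/1 + \theta/2$ corresponds to $1/p' = (1-\theta)/\infty + \theta/2$ for $\theta\in[0,1]$, the Riesz--Thorin theorem gives $T:L_p\to\ell_{p'}$ with operator norm $\le 1$ for $1\le p\le 2$, which is exactly the first inequality.

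For the second statement, consider the formal adjoint $S:\{c_\bk\}\mapsto \sum_\bk c_\bk e^{i(\bk,\cdot)}$. At $p=1$, the Minkowski inequality from Appendix 1.7 gives
\[
\Big\|\sum_\bk c_\bk e^{i(\bk,\cdot)}\Big\|_\infty \le \sum_\bk |c_\bk| = \|c\|_{\ell_1},
\]
so $S:\ell_1\to L_\infty$ with norm $\le 1$. At $p=2$, the Riesz--Fischer theorem guarantees that $S:\ell_2\to L_2$ is an isometry. Applying Riesz--Thorin once more, $S:\ell_p\to L_{p'}$ is bounded with norm $\le 1$ for $1\le p\le 2$, which yields the existence of $f = Sc \in L_{p'}$ with $\hat f(\bk) = c_\bk$ (uniqueness of Fourier coefficients) and the stated norm bound.

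The main obstacle, strictly speaking, is that Riesz--Thorin is being invoked as a black box; if one wishes to present a fully self-contained argument, the work is in establishing the three-lines lemma and the complex interpolation of $L_p$-spaces. An alternative route that avoids complex interpolation entirely is to first prove the second statement on the dense subspace of finitely supported sequences $\{c_\bk\}$ using a direct duality argument together with Theorem~\ref{T7.2.1}: for such finite sums $f = \sum_\bk c_\bk e^{i(\bk,\cdot)}$ one has $\|f\|_{p'} = \sup_{\|g\|_p\le 1} |\langle f,g\rangle| = \sup_{\|g\|_p\le 1}|\sum_\bk c_\bk \overline{\hat g(\bk)}|$, and then the first statement reduces the right-hand side to $\|c\|_{\ell_p}$, so the two assertions are in fact equivalent via Theorem~\ref{T7.2.1} and only one of them needs the interpolation input.
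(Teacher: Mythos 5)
Your proposal is correct and takes essentially the same approach as the paper: the paper's only remark on this theorem is that it ``can be derived from'' the Riesz--Thorin interpolation theorem (Theorem~\ref{T7.3.2}), and your argument is the standard instantiation of that remark, interpolating the trivial $L_1\to\ell_\infty$ bound against the Parseval $L_2\to\ell_2$ identity (and its adjoint for the second assertion). The closing observation that the two halves are equivalent via duality (Theorem~\ref{T7.2.1}) is a nice piece of tidying, but it does not change the underlying source of the estimate, which is Riesz--Thorin in both your argument and the paper's.
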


This theorem can be derived from the following interpolation
theorem, which is a special case of the general Riesz-Thorin
theorem.

Denote the norm of an operator $T$ acting from a Banach space $E$
to a Banach space $F$ by
$$
\|T\|_{E\to F}=\sup_{\|f\|_E\le1}\|Tf\|_F .
$$

\begin{thm}\label{T7.3.2} ({\bf The Riesz-Thorin theorem.})\index{Theorem!Riesz-Thorin}
Let $E_q$ be either $L_q$ or $l_q$
and $F_p$ be either $L_p$ or $l_p$ and for $1\le q_i,p_i\le \infty$,
$$
\|T\|_{E_{q_i}\to F_{p_i}}\le M_i,\qquad i = 1,2.
$$

Then for all $0 < \theta < 1$
$$
\|T\|_{E_{q}\to F_{p}}\le M_1^{\theta}M_2^{1-\theta},
$$
where
$$
1/q = \theta/q_1 + (1-\theta)/q_2,\qquad
1/p = \theta/p_1 + (1-\theta)/p_2 .
$$
\end{thm}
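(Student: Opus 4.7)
The plan is to prove Theorem \ref{T7.3.2} by the classical complex-analytic method based on the three-lines theorem of Hadamard. First I would record the three-lines lemma: if $F$ is bounded and continuous on the closed strip $S=\{z \in \C : 0 \le \mathrm{Re}(z) \le 1\}$, holomorphic in its interior, with $|F(it)|\le M_2$ and $|F(1+it)|\le M_1$ for all $t\in\R$, then $|F(\theta)|\le M_1^{\theta}M_2^{1-\theta}$ for every $0<\theta<1$. This is a standard application of the maximum principle to the auxiliary function $G(z)=F(z)M_1^{-z}M_2^{z-1}$, combined with a regularization $G_{\varepsilon}(z)=G(z)e^{\varepsilon z(z-1)}$ to handle the unbounded strip.

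Next I would reduce the operator inequality to a bilinear estimate via duality (Theorem \ref{T7.2.1} and Remark \ref{R7.1}): it suffices to show that for all simple functions $f,g$ with $\|f\|_{E_q}=\|g\|_{F_{p'}}=1$ one has $|\langle Tf,g\rangle|\le M_1^{\theta}M_2^{1-\theta}$. The use of simple functions ensures finite, absolutely convergent sums so that all later manipulations are legitimate; density then yields the full statement. When $p=\infty$ the duality step needs the usual modification, but since the conclusion there follows directly from the definition of the operator norm and the dominating convergence in the image sum, this case is handled separately.

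The core step is the construction of analytic families $f_z$ and $g_z$. Writing $f=\sum_k a_k\chi_{A_k}$ with disjoint $A_k$ and $a_k\neq 0$, and setting
$$
\alpha(z)=\frac{1-z}{q_2}+\frac{z}{q_1},\qquad \beta(z)=\frac{1-z}{p_2'}+\frac{z}{p_1'},
$$
so that $\alpha(\theta)=1/q$ and $\beta(\theta)=1/p'$, I define
$$
f_z:=\sum_k |a_k|^{q\alpha(z)}\,\mathrm{sgn}(a_k)\,\chi_{A_k},\qquad g_z:=\sum_\ell |b_\ell|^{p'\beta(z)}\,\mathrm{sgn}(b_\ell)\,\chi_{B_\ell},
$$
with $g=\sum_\ell b_\ell \chi_{B_\ell}$ analogously decomposed. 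Then $f_\theta=f$, $g_\theta=g$, and a direct computation shows $\|f_{it}\|_{E_{q_2}}=\|f\|_{E_q}^{q/q_2}=1$, $\|f_{1+it}\|_{E_{q_1}}=1$, and similarly $\|g_{it}\|_{F_{p_2'}}=\|g_{1+it}\|_{F_{p_1'}}=1$ for every $t\in\R$. The auxiliary function $F(z):=\langle T f_z, g_z\rangle$ is then a finite linear combination of exponentials in $z$, hence entire and bounded on $S$, with $|F(it)|\le M_2$ and $|F(1+it)|\le M_1$ by the hypothesis and H\"older's inequality \eqref{7.1}. The three-lines lemma gives $|F(\theta)|=|\langle Tf,g\rangle|\le M_1^{\theta}M_2^{1-\theta}$, which is what we wanted.

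The step I expect to be the main obstacle is not the analytic machinery itself but the careful bookkeeping of exponents in the definitions of $f_z$ and $g_z$, particularly in the extremal cases where some of $q_i,p_i$ equal $1$ or $\infty$. For $q_i=\infty$ the factor $|a_k|^{q\alpha(z)}$ must be interpreted so that on the corresponding vertical line one recovers $\|f_z\|_\infty=1$; the standard device is to redefine $f_z$ on that line as $\sum_k\mathrm{sgn}(a_k)\chi_{A_k}$ (and analogously for $g_z$) and to verify that the resulting $F(z)$ is still holomorphic and bounded on $S$. Once this case analysis is completed the proof transfers verbatim from functions on $\T^d$ to sequence spaces, so the joint statement for $E_q\in\{L_q,\ell_q\}$ and $F_p\in\{L_p,\ell_p\}$ is obtained simultaneously.
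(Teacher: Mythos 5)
The paper states the Riesz--Thorin theorem in the Appendix as a classical background result and does not supply a proof, so there is no ``paper's proof'' against which to compare. Your complex-interpolation argument via the three-lines lemma is the standard Thorin-style proof, and it is essentially correct: the reduction to bilinear estimates for simple functions via the duality in Theorem~\ref{T7.2.1} and Remark~\ref{R7.1}, the construction of the analytic families $f_z$, $g_z$ with the affine exponents $\alpha$, $\beta$, and the application of H\"older's inequality together with the hypotheses on the vertical boundary lines all go through. It is worth making explicit why $F(z)=\langle Tf_z,g_z\rangle$ is bounded on the closed strip: since $\alpha$ and $\beta$ are affine with \emph{real} coefficients, $F$ is a finite linear combination of exponentials $c\,e^{\gamma z}$ with $\gamma$ real, so $|F|$ depends only on $\mathrm{Re}\,z\in[0,1]$ along the strip and is thus bounded.

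One small remark on the extremal cases. The ``standard device'' of redefining $f_z$ on a boundary line when some $q_i=\infty$ is in fact unnecessary. If, say, $q_1=\infty$ and $q_2<\infty$, then $\alpha(z)=(1-z)/q_2$ and $q\alpha(1+it)=-iqt/q_2$ is purely imaginary, so $|f_{1+it}|\equiv 1$ on the support of $f$ and $\|f_{1+it}\|_\infty=1$ automatically from the original definition. The only genuine degeneracy is $q_1=q_2$ (or $p_1=p_2$), where no interpolation is needed on that side and one may simply take $f_z\equiv f$ (respectively $g_z\equiv g$). Likewise, the duality step for $p=\infty$ requires no special treatment here: on $\T^d$ (finite measure) and on $\Z^d$ ($\sigma$-finite) one has $\|h\|_\infty=\sup_{\|g\|_1\le1}|\langle h,g\rangle|$, which is exactly the content of Theorem~\ref{T7.2.1} for $p=\infty$, so the same bilinear argument applies verbatim.
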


{\bf 3.2.} Let $[y]$ be the integer part of the real number $y$, that is, the
largest integer $[y]$ such that $[y]\le y$. For a vector
$\mathbf s = (s_1,\dots,s_d)$
with nonnegative integer coordinates we define the set $\rho(\mathbf s)$
of vectors $\mathbf k$ with integer coordinates:
$$
\rho(\mathbf s) =\bigl\{ \mathbf k = (k_1,\dots,k_d):[2^{s_j-1}]\le
|k_j| < 2^{s_j},\qquad j = 1,\dots,d \bigr\} .
$$

For $f\in L_1$ we denote
$$
\delta_{\mathbf s}(f,\mathbf x) :=\sum_{\mathbf k\in\rho(\mathbf s)}
\hat f(\mathbf k)e^{i(\mathbf k,\mathbf x)} .
$$

\begin{thm}\label{T7.3.3} ({\bf The Littlewood-Paley theorem.})\index{Theorem!Littlewood-Paley}
Let $1 < p < \infty$. There exist
positive numbers $C_1 (d,p)$ and $C_2 (d,p)$, which depend on
$d$ and $p$, such that for each function $f\in L_p$,
$$
C_1(d,p)\|f\|_p\le\left \|\left(\sum_{\mathbf s}\bigl|\delta_{\mathbf s}
(f,\mathbf x)\bigr|^2\right)^{1/2}\right\|_p\le C_2 (d,p)\|f\|_p.
$$
\end{thm}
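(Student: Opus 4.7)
The plan is to deduce Theorem~\ref{T7.3.3} by combining Khintchine's inequality with the Marcinkiewicz multiplier theorem (Theorem~\ref{T7.3.4}), which is cited earlier in the text and may be taken as known. The scheme has four steps: randomization, reduction to a multiplier estimate, application of Marcinkiewicz, and a duality argument to obtain the lower bound.

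First, I would randomize the square function. Let $\{\varepsilon_{\mathbf{s}}\}_{\mathbf{s}\in \N_0^d}$ be independent Rademacher variables on a probability space $(\Omega,\mathbb P)$. Khintchine's inequality applied pointwise in $\mathbf{x}$ gives, for $1<p<\infty$,
\[
\Big\|\Big(\sum_{\mathbf{s}}|\delta_{\mathbf{s}}(f,\mathbf{x})|^2\Big)^{1/2}\Big\|_p^p
\;\asymp\; \int_{\T^d}\mathbb E\Big|\sum_{\mathbf{s}}\varepsilon_{\mathbf{s}}\delta_{\mathbf{s}}(f,\mathbf{x})\Big|^p\,d\mu(\mathbf{x})
\;=\;\mathbb E\,\big\|T_\varepsilon f\big\|_p^p,
\]
where $T_\varepsilon f := \sum_{\mathbf{s}} \varepsilon_{\mathbf{s}}\delta_{\mathbf{s}}(f)$ and Fubini is used in the last step. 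Hence the upper bound in the theorem is equivalent to the uniform estimate $\|T_\varepsilon\|_{p\to p}\le C(d,p)$ in the choice of signs $\varepsilon$.

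Second, $T_\varepsilon$ is a Fourier multiplier with symbol $m_\varepsilon:\Z^d\to\{-1,+1\}$ defined to be $\varepsilon_{\mathbf{s}}$ on the block $\rho(\mathbf{s})$. Because $m_\varepsilon$ is \emph{constant} on every dyadic rectangle $\rho(\mathbf{s})$, and these rectangles are exactly the building blocks of the Marcinkiewicz condition, the mixed differences of $m_\varepsilon$ along any face of a dyadic box have variation bounded by an absolute constant, independently of $\varepsilon$. Applying Theorem~\ref{T7.3.4} yields $\|T_\varepsilon f\|_p\le C(d,p)\|f\|_p$ uniformly in $\varepsilon$, and plugging back into the Khintchine equivalence gives the right-hand inequality
\[
\Big\|\Big(\sum_{\mathbf{s}}|\delta_{\mathbf{s}}(f)|^2\Big)^{1/2}\Big\|_p \le C_2(d,p)\,\|f\|_p.
\]

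Third, I would obtain the lower bound by duality. By Theorem~\ref{T7.2.1}, for $f\in L_p$ pick $g\in L_{p'}$ with $\|g\|_{p'}\le 1$ and $\langle f,g\rangle \ge \tfrac{1}{2}\|f\|_p$. Since the blocks $\rho(\mathbf{s})$ partition $\Z^d\setminus\{0\}$ (together with the zero mode, which is handled trivially), Parseval-type orthogonality gives $\langle f,g\rangle=\sum_{\mathbf{s}}\langle\delta_{\mathbf{s}}(f),\delta_{\mathbf{s}}(g)\rangle$, and pointwise Cauchy--Schwarz followed by Hölder in $L_p\times L_{p'}$ yields
\[
|\langle f,g\rangle|
\le \Big\|\Big(\sum_{\mathbf{s}}|\delta_{\mathbf{s}}(f)|^2\Big)^{1/2}\Big\|_p
\cdot\Big\|\Big(\sum_{\mathbf{s}}|\delta_{\mathbf{s}}(g)|^2\Big)^{1/2}\Big\|_{p'}.
\]
The upper bound, already established, applied to $g$ in $L_{p'}$ (with exponent $p'\in(1,\infty)$) then gives the desired $\|f\|_p \le C_1(d,p)^{-1}\|(\sum_{\mathbf{s}}|\delta_{\mathbf{s}}(f)|^2)^{1/2}\|_p$.

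The main obstacle is the multiplier step: one must check that the piecewise constant symbol $m_\varepsilon$ does satisfy the hypotheses of the multivariate Marcinkiewicz theorem uniformly in $\varepsilon$. This is essentially a combinatorial verification tied to the fact that the Littlewood--Paley blocks $\rho(\mathbf{s})$ are aligned with the dyadic rectangles in Marcinkiewicz's condition, so no sign flips contribute more than $O(1)$ variation across any face; once this is granted, everything else is routine. Khintchine and the duality step are standard and cause no difficulty.
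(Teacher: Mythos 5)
The paper states Theorem~\ref{T7.3.3} in the Appendix as a classical fact without proof, so there is no in-text argument against which to compare; I review your proposal on its own. It is the standard textbook proof (Khintchine randomization, a multiplier estimate for the random-sign operator $T_\varepsilon$, duality for the lower bound), and the overall structure is correct. The one place you invoke more than is literally available is the multiplier step: Theorem~\ref{T7.3.4} as stated in the paper is the \emph{univariate} Marcinkiewicz theorem on $\T$, while $T_\varepsilon$ is a multiplier on $\T^d$. To make that step rigorous you must either cite the multivariate Marcinkiewicz multiplier theorem, whose hypothesis involves uniformly bounded \emph{mixed} difference variation over all dyadic rectangles (not just per-variable variation in each coordinate), and then check, as you sketch, that the block-constant symbol $m_\varepsilon$ meets it with a $d$-dependent absolute constant; or else iterate the univariate theorem coordinate by coordinate, using a Rademacher/vector-valued form of the one-dimensional Littlewood--Paley estimate at each stage. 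Either route is routine, but neither is a direct application of the theorem as quoted, so the sentence ``Applying Theorem~\ref{T7.3.4} yields \dots'' glosses over a real step.

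The Khintchine and duality parts are fine. One small inaccuracy: the blocks $\rho(\mathbf{s})$, $\mathbf{s}\in\N_0^d$, already partition all of $\Z^d$ because $\rho(\mathbf{0})=\{\mathbf{0}\}$, so there is no separate ``zero mode'' to dispose of outside the sum over $\mathbf{s}$.
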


\begin{cor}\label{C7.1} Let $G$ be a finite set of vectors $\mathbf s$
and
let the operator $S_G$ map a function $f\in L_p$, $p > 1$, to a function
$$
S_G (f) =\sum_{\mathbf s\in G}\delta_{\mathbf s} (f) .
$$
Then
$$
\|S_G\|_{L_p\to L_p}\le C(d,p) ,\qquad 1 < p < \infty .
$$
\end{cor}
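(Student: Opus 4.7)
The plan is to deduce Corollary \ref{C7.1} directly from the Littlewood-Paley Theorem \ref{T7.3.3} by viewing $S_G(f)$ as a ``sub-decomposition'' of $f$ and comparing square functions. Since the blocks $\delta_{\mathbf s}$ are projections onto the disjoint frequency blocks $\rho(\mathbf s)$, we have the identity $\delta_{\mathbf s}(S_G(f)) = \delta_{\mathbf s}(f)$ whenever $\mathbf s\in G$ and $\delta_{\mathbf s}(S_G(f)) = 0$ otherwise. This is the key structural observation; everything else is bookkeeping.

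First, I would apply the left-hand (upper) bound of Theorem \ref{T7.3.3} to the function $S_G(f)\in L_p$:
$$
C_1(d,p)\|S_G(f)\|_p \le \Big\|\Big(\sum_{\mathbf s}|\delta_{\mathbf s}(S_G(f),\mathbf x)|^2\Big)^{1/2}\Big\|_p.
$$
Using the observation above, the inner sum collapses to a sum only over $\mathbf s\in G$, so the right-hand side equals
$$
\Big\|\Big(\sum_{\mathbf s\in G}|\delta_{\mathbf s}(f,\mathbf x)|^2\Big)^{1/2}\Big\|_p,
$$
which is pointwise dominated by $\bigl(\sum_{\mathbf s}|\delta_{\mathbf s}(f,\mathbf x)|^2\bigr)^{1/2}$. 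Then I would invoke the right-hand (lower) bound of Theorem \ref{T7.3.3} applied to $f$ to conclude
$$
\Big\|\Big(\sum_{\mathbf s}|\delta_{\mathbf s}(f,\mathbf x)|^2\Big)^{1/2}\Big\|_p \le C_2(d,p)\|f\|_p.
$$
Combining the three displays gives $\|S_G(f)\|_p \le \bigl(C_2(d,p)/C_1(d,p)\bigr)\|f\|_p$, with a constant depending only on $d$ and $p$ but, crucially, not on $G$.

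There is essentially no obstacle here: the proof is a two-line application of Littlewood-Paley once one notes the projection property $\delta_{\mathbf s}\circ\delta_{\mathbf t} = \delta_{\mathbf s}$ if $\mathbf s=\mathbf t$ and $0$ otherwise (which follows immediately from the definition of $\delta_{\mathbf s}$ as a Fourier restriction to $\rho(\mathbf s)$ and the disjointness of the family $\{\rho(\mathbf s)\}_{\mathbf s\in\N_0^d}$). The only minor point to verify is that $S_G(f)\in L_p$ in the first place, which is immediate because $G$ is finite and each $\delta_{\mathbf s}(f)$ is a trigonometric polynomial (hence in $L_p$) obtained as convolution of $f\in L_p$ with a finite Fourier kernel. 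Thus the independence of the bound from the particular finite set $G$ is automatic from the monotonicity of the square function in the index set.
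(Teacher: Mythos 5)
Your proof is correct and is exactly the argument the paper has in mind when it presents Corollary~\ref{C7.1} without proof as an immediate consequence of the Littlewood-Paley Theorem~\ref{T7.3.3}: use the lower Littlewood-Paley estimate on $S_G(f)$ (a trigonometric polynomial, hence in $L_p$), observe that the disjointness of the blocks $\rho(\mathbf s)$ makes the square function of $S_G(f)$ the truncated square function of $f$, and then bound by the full square function and close with the upper Littlewood-Paley estimate on $f$. The resulting constant $C_2(d,p)/C_1(d,p)$ is manifestly independent of $G$, as required.
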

For the sake of brevity we shall write $\|T\|_{L_q\to L_p}=\|T\|_{q\to p}$.

\begin{cor}\label{C7.2} Let $p^{*} = \min \{p,2\}$; then for $f\in L_p$
we have
$$
\|f\|_p\le C(d,p)\left(\sum_{\mathbf s}\bigl\|\delta_{\mathbf s}(f,\mathbf x)
\bigr\|_p^{p^{*}}\right)^{1/p^{*}},\qquad 1 < p <\infty .
$$
\end{cor}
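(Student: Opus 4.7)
The plan is to deduce Corollary \ref{C7.2} directly from the Littlewood-Paley theorem (Theorem \ref{T7.3.3}) by treating the cases $p\ge 2$ and $1<p<2$ separately, since the exponent $p^*=\min\{p,2\}$ itself splits along this dichotomy. In both cases, the starting point is the upper bound
$$
\|f\|_p \le C_2(d,p)\,\Big\|\Big(\sum_{\mathbf s}|\delta_{\mathbf s}(f,\mathbf x)|^2\Big)^{1/2}\Big\|_p,
$$
so the task reduces to estimating the $L_p$-norm of the square function in terms of the sequence $\{\|\delta_{\mathbf s}(f)\|_p\}_{\mathbf s}$.

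In the case $p\ge 2$, we have $p^*=2$ and $p/2\ge 1$, so Minkowski's inequality (item 1.7 in the Appendix, applied in $L_{p/2}$ with respect to the counting measure over $\mathbf s$) gives
$$
\Big\|\sum_{\mathbf s}|\delta_{\mathbf s}(f)|^2\Big\|_{p/2} \le \sum_{\mathbf s}\big\||\delta_{\mathbf s}(f)|^2\big\|_{p/2} = \sum_{\mathbf s}\|\delta_{\mathbf s}(f)\|_p^2,
$$
and taking a square root yields the desired bound with exponent $p^*=2$. In the case $1<p<2$, we have $p^*=p$, and I would instead exploit the pointwise embedding $\ell_2\hookrightarrow\ell_p$ for sequences, namely $(\sum_{\mathbf s}|a_{\mathbf s}|^2)^{1/2}\le (\sum_{\mathbf s}|a_{\mathbf s}|^p)^{1/p}$, applied to $a_{\mathbf s}=\delta_{\mathbf s}(f,\mathbf x)$; integrating the $p$-th power over $\mathbf x$ then interchanges sum and integral trivially (Fubini) and gives
$$
\Big\|\Big(\sum_{\mathbf s}|\delta_{\mathbf s}(f)|^2\Big)^{1/2}\Big\|_p \le \Big(\sum_{\mathbf s}\|\delta_{\mathbf s}(f)\|_p^p\Big)^{1/p}.
$$

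Combining either estimate with the Littlewood-Paley bound produces the corollary. There is no real obstacle here; the only point requiring care is the direction of the $\ell_2$--$\ell_p$ embedding, which flips at $p=2$ and is precisely what forces the definition of $p^*$. The constant $C(d,p)$ absorbs $C_2(d,p)$ from Theorem \ref{T7.3.3} and blows up as $p\to 1$ or $p\to\infty$, which is consistent with the strict inequality $1<p<\infty$ in the statement.
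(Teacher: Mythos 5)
Your proof is correct and is the standard derivation of this corollary from the Littlewood--Paley square function estimate: for $p\ge 2$ apply the triangle inequality in $L_{p/2}$ to the squared blocks, and for $1<p<2$ use the pointwise monotonicity $\ell_2\hookrightarrow\ell_p$ of sequence norms followed by Fubini. The paper states this as an immediate corollary of Theorem \ref{T7.3.3} without spelling out the argument, and what you wrote is precisely the intended reasoning, with the constant $C(d,p)$ coming from $C_2(d,p)$ in the Littlewood--Paley theorem.
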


\begin{thm}\label{T7.3.4} ({\bf The Marcinkiewicz multiplier \index{Theorem!Marcinkiewicz multiplier}
theorem.}) Suppose that 
$\lambda_0,\lambda_1,\dots$ are Marcinkiewicz
multipliers, that is, they satisfy the conditions
$$
|\lambda_n|\le M ,\qquad n = 0,\pm 1,\dots;\qquad
\sum_{l=\pm2^{\nu}}^{\pm(2^{\nu+1}-1)}|\lambda_l-\lambda_{l+1}|\le M,
\qquad \nu = 0,1,\dots,
$$
where $M$ is a number.

Then the operator $\Lambda$ which maps a function $f$ to a function
$$
\sum_k\lambda_k\hat f(k)e^{ikx},
$$
is bounded as operator from $L_p$ to $L_p$ for $1 < p < \infty$.
\end{thm}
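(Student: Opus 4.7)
The strategy is to reduce the multiplier bound to the square-function estimate of the Littlewood--Paley theorem (Theorem \ref{T7.3.3}), and then use the Marcinkiewicz condition on $\{\lambda_k\}$ via Abel summation inside each dyadic block $\rho(\nu) = \{k\in\Z: 2^{\nu-1}\le |k|<2^\nu\}$. Since $\Lambda$ commutes with the projections $\delta_\nu$, it suffices to prove
\[
\Big\|\Big(\sum_{\nu\ge 0}|\delta_\nu(\Lambda f)|^2\Big)^{1/2}\Big\|_p \le C(p)\,M\,\|f\|_p,\qquad 1<p<\infty,
\]
because by Theorem \ref{T7.3.3} the left-hand side is comparable to $\|\Lambda f\|_p$ while the right-hand side is comparable to $\|f\|_p$ (up to the factor $M$).

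I would then split each block $\rho(\nu)$ into its positive and negative parts and treat them symmetrically. Focusing on $\rho^+(\nu)=[2^{\nu-1},2^\nu-1]$, introduce the intra-block partial sums $\sigma_k^\nu(f)(x) := \sum_{j=2^{\nu-1}}^{k}\hat f(j)e^{ijx}$ for $k\in\rho^+(\nu)$, and apply the Abel transformation (Inequality 1.12) to obtain
\[
\sum_{k=2^{\nu-1}}^{2^\nu-1}\lambda_k\hat f(k)e^{ikx}
= \sum_{k=2^{\nu-1}}^{2^\nu-2}(\lambda_k-\lambda_{k+1})\sigma_k^\nu(f)(x) + \lambda_{2^\nu-1}\sigma_{2^\nu-1}^\nu(f)(x).
\]
Using $|\lambda_n|\le M$ together with the triangle inequality and the hypothesis $\sum_{k\in\rho^+(\nu)}|\lambda_k-\lambda_{k+1}|\le M$, this yields the pointwise bound
\[
|\delta_\nu^+(\Lambda f)(x)| \,\lesssim\, M\cdot \sup_{k\in\rho^+(\nu)}|\sigma_k^\nu(f)(x)|,
\]
and analogously for the negative half; set $N_\nu f(x) := \sup_{k\in\rho(\nu)}|\sigma_k^\nu(f)(x)|$.

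Combining these pointwise bounds and squaring reduces matters to the vector-valued square-function estimate
\[
\Big\|\Big(\sum_{\nu\ge 0}|N_\nu f|^2\Big)^{1/2}\Big\|_p \le C(p)\,\|f\|_p,\qquad 1<p<\infty.
\]
To establish this, I would exploit that $\sigma_k^\nu(f) = e^{i2^{\nu-1}x}\,P_{k-2^{\nu-1}}(e^{-i2^{\nu-1}\cdot}f)$, where $P_m$ denotes the one-sided partial sum operator of order $m$; up to modulation, $\sigma_k^\nu$ is a composition of a Riesz projection (one-sided summation) with the truncation of Fourier coefficients to the shifted block. In particular, each $\sigma_k^\nu$ factors through a uniformly bounded linear combination of Riesz projections applied to frequency-localized data supported essentially in $\rho(\nu)$. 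The boundedness of the Riesz projection on $L_p$ for $1<p<\infty$, combined with a standard randomization/Khintchine argument (or equivalently the Fefferman--Stein vector-valued extension), transfers this to the $\ell^2$-valued square-function estimate stated above.

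\textbf{Main obstacle.} The genuinely nontrivial step is the vector-valued maximal estimate for the partial sums $N_\nu$: one must control the supremum over $k\in\rho(\nu)$ of truncated partial sums inside every dyadic block, simultaneously and in an $\ell^2(\nu)$-valued sense. The one-variable case of this is within reach of M.~Riesz's theorem together with Khintchine-type randomization, but it is the step where the nontrivial harmonic analysis enters; all the earlier steps are essentially algebraic manipulations that convert the Marcinkiewicz condition into a statement about weighted sums of uniformly bounded partial-sum operators.
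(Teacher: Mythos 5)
The paper does not prove this theorem: it is quoted in the Appendix as a classical fact without proof, so I assess your proposal on its own against the standard argument.

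Your opening is correct. Since $\Lambda$ is a Fourier multiplier it commutes with the $\delta_\nu$, so the Littlewood--Paley theorem reduces the claim to an $\ell^2(\nu)$-valued square-function estimate, and the Abel transformation inside each dyadic block is exactly the right way to convert the bounded-variation hypothesis on $\{\lambda_k\}$ into a usable formula. The gap is the next step. From
\[
\delta_\nu^{+}(\Lambda f)=\sum_{k\in\rho^{+}(\nu)}(\lambda_k-\lambda_{k+1})\,\sigma_k^\nu(f)+\lambda_{2^\nu-1}\,\sigma_{2^\nu-1}^\nu(f)
\]
you pass to the pointwise estimate $|\delta_\nu^{+}(\Lambda f)|\lesssim M\,N_\nu f$ with $N_\nu f:=\sup_{k\in\rho(\nu)}|\sigma_k^\nu f|$, and then try to close the proof with $\|(\sum_\nu|N_\nu f|^2)^{1/2}\|_p\lesssim\|f\|_p$ using uniform boundedness of the Riesz projection plus Khintchine/Fefferman--Stein. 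This is a genuine gap, not a detail: once you take the supremum in $k$, $N_\nu f$ is the Carleson maximal partial-sum operator of $\delta_\nu f$ (the cutoff depends on $x$), and the $\ell^2(\nu)$-valued bound you invoke is essentially a vector-valued Carleson--Hunt theorem -- far deeper than the Marcinkiewicz theorem you are trying to prove. Neither the Khintchine randomization argument nor the Fefferman--Stein machinery produces maximal estimates of this kind: Khintchine and the vector-valued Riesz inequality bound $\ell^2$ families of \emph{linear} truncations $S_{k(\nu)}$ with $k(\nu)$ fixed per block, and Fefferman--Stein concerns the Hardy--Littlewood maximal function, not maximal Fourier partial sums. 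The only elementary maximal bound available per block is Rademacher--Menshov, which produces a factor $\sim\nu=\log_2(\text{block length})$, and that kills the estimate.

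The fix -- which is the classical route -- is to \emph{not} take the supremum. Keep the Abel-summed identity and apply Cauchy--Schwarz in $k$ against the weights $c_{\nu,k}:=|\lambda_k-\lambda_{k+1}|$ (using $\sum_{k\in\rho(\nu)}c_{\nu,k}\le M$):
\[
|\delta_\nu(\Lambda f)|^2\ \lesssim\ M\sum_{k\in\rho(\nu)}c_{\nu,k}\,|\sigma_k^\nu f|^2\ +\ M^2\,|\sigma_{2^\nu-1}^\nu f|^2 .
\]
Summing over $\nu$ and applying the Littlewood--Paley theorem, you only need the vector-valued Riesz inequality for the \emph{linear} family $\{\sigma_k^\nu\}_{\nu,k}$: taking $g_{\nu,k}=\sqrt{c_{\nu,k}}\,\delta_\nu f$ and observing $\sigma_k^\nu f=S_{[2^{\nu-1},\,k]}(\delta_\nu f)$,
\[
\Big\|\Big(\sum_{\nu}\sum_{k}c_{\nu,k}\,|\sigma_k^\nu f|^2\Big)^{1/2}\Big\|_p
\ \le\ C(p)\,\Big\|\Big(\sum_{\nu,k}|g_{\nu,k}|^2\Big)^{1/2}\Big\|_p
\ \le\ C(p)\,M^{1/2}\,\Big\|\Big(\sum_{\nu}|\delta_\nu f|^2\Big)^{1/2}\Big\|_p ,
\]
and the right side is $\lesssim\|f\|_p$ by Theorem \ref{T7.3.3}. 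This keeps you entirely inside the linear vector-valued theory and avoids Carleson. Your first two steps are right; the third must be routed through Cauchy--Schwarz rather than a pointwise maximal bound.
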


\begin{thm}\label{T7.3.5} ({\bf The Hardy-Littlewood-Sobolev inequality.})\index{Inequality!Hardy-Littlewood-Sobolev} Let
$1 < q <
p < \infty$,
$$
\mu = 1 - 1/q + 1/p ,\qquad \|f\|_{L_q(\R)}= \left (\int_{-\infty}^{\infty}
|f(x)|^q\,dx \right)^{1/q}<\infty,\quad 
Jf(x) :=\int_{-\infty}^{\infty}f(y)|x - y|^{-\mu} dy .
$$
Then the inequality
$$
\|Jf\|_{L_p(\R)}\le C(q,p)\|f\|_{L_q(\R)}
$$
holds.
\end{thm}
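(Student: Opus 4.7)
The operator $J$ is convolution with the kernel $K(x) = |x|^{-\mu}$, which is not in any ordinary $L^r(\R)$ but lies in the Lorentz space $L^{1/\mu,\infty}(\R)$, since a direct computation gives $|\{x \in \R : |x|^{-\mu} > \lambda\}| = 2\lambda^{-1/\mu}$. The plan is to first establish the weak-type $(q,p)$ estimate
\[
|\{x : |Jf(x)| > 2\lambda\}| \le C \Big(\frac{\|f\|_q}{\lambda}\Big)^p,\qquad \lambda > 0,
\]
and then upgrade to the strong-type estimate by Marcinkiewicz interpolation.

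For the weak-type bound I would truncate the kernel at a level $R > 0$ to be chosen: write $K = K_1^R + K_2^R$ with $K_1^R = K \chi_{\{|x| \le R\}}$ and $K_2^R = K \chi_{\{|x| > R\}}$. Since the hypothesis $p > q$ forces $\mu < 1$, the inner piece satisfies $K_1^R \in L^1(\R)$ with $\|K_1^R\|_1 = c_1 R^{1-\mu}$. For the outer piece, using $\mu q' > 1$ (which follows from $\mu q' = 1 + q'/p$), one has $K_2^R \in L^{q'}(\R)$ with $\|K_2^R\|_{q'} = c_2 R^{1/q' - \mu} = c_2 R^{-1/p}$. By Young's inequality (Appendix 1.10) and Hölder's inequality (Appendix 1.1),
\[
\|K_1^R * f\|_q \le c_1 R^{1-\mu} \|f\|_q,\qquad \|K_2^R * f\|_\infty \le c_2 R^{-1/p} \|f\|_q.
\]
Choose $R = R(\lambda) = (c_2 \|f\|_q/\lambda)^{p}$, so that $\|K_2^R * f\|_\infty \le \lambda$. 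Then $\{|Jf| > 2\lambda\} \subset \{|K_1^R * f| > \lambda\}$, and Chebyshev's inequality gives
\[
|\{|Jf| > 2\lambda\}| \le \lambda^{-q} \|K_1^R * f\|_q^q \le c_1^q \lambda^{-q} R^{q(1-\mu)} \|f\|_q^q.
\]
Using $1-\mu = 1/q - 1/p$, the exponent works out to $\lambda^{-q} R^{q(1-\mu)} = \lambda^{-q}(\|f\|_q/\lambda)^{pq(1/q-1/p)} \cdot \text{const} = \text{const} \cdot (\|f\|_q/\lambda)^{p - q}\lambda^{-q}$, which yields the claimed weak-type bound $C(\|f\|_q/\lambda)^p$.

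For the second step, I would apply the Marcinkiewicz interpolation theorem, choosing two pairs $(q_0, p_0)$ and $(q_1, p_1)$ on the line $1/p_i = 1/q_i - (1-\mu)$ with $1 < q_0 < q < q_1 < 1/\mu$ (so that both the constraints $p_i > q_i$ and $\mu q_i' > 1$ remain valid). The weak-type argument above applies at each endpoint with a uniform constant, and since $J$ is sublinear, Marcinkiewicz interpolation gives the strong-type bound $\|Jf\|_p \le C(q,p)\|f\|_q$. The main technical point is the bookkeeping for the exponents in the truncation argument, which relies in an essential way on the two identities $1/q' - \mu = -1/p$ and $q(1-\mu) = 1 - q/p$; both are equivalent to the defining relation $\mu = 1 - 1/q + 1/p$, so the scaling in the splitting step is forced once $R$ is chosen to balance the $L^\infty$ contribution against $\lambda$.
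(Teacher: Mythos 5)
The paper states Theorem~\ref{T7.3.5} in the Appendix as a classical fact without proof, so there is no in-paper argument to compare against; I therefore assess your argument on its own terms. Your route --- truncating the kernel $K(x)=|x|^{-\mu}$ at scale $R$, using Young on the local part and H\"older on the tail to get a weak-type $(q,p)$ bound after an optimal choice of $R$, and then upgrading by Marcinkiewicz interpolation between two nearby weak-type endpoints on the line $1/p = 1/q - (1-\mu)$ --- is a correct and standard proof of the one-dimensional Hardy--Littlewood--Sobolev inequality. The weak-type half is carried out correctly: the identities $\mu q' = 1 + q'/p$ and $1/q' - \mu = -1/p$ give $\|K_1^R\|_1 = c_1 R^{1-\mu}$ and $\|K_2^R\|_{q'} = c_2 R^{-1/p}$, and with $R = (c_2\|f\|_q/\lambda)^p$ the exponent arithmetic $pq(1-\mu) = p-q$ forces $|\{|Jf|>2\lambda\}| \le C(\|f\|_q/\lambda)^p$ exactly as you compute.

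There is one slip in the interpolation step that you should correct: you require $q_1 < 1/\mu$, but the admissibility constraint for the right endpoint (namely that $p_1 < \infty$, equivalently $\mu q_1' > 1$) is $q_1 < 1/(1-\mu)$, which is a different quantity. Your stated bound can actually be vacuous: for $q=2$, $p=3$ one has $\mu = 5/6$, so $1/\mu = 6/5 < q$ and there is no admissible $q_1 > q$ with $q_1 < 1/\mu$. With the corrected bound there is always room, since on the line $1/p = 1/q - (1-\mu)$ one has $q < 1/(1-\mu)$ precisely because $1/p > 0$. The left endpoint constraint $q_0 > 1$ is likewise always satisfiable because $q > 1$. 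After replacing $1/\mu$ by $1/(1-\mu)$, Marcinkiewicz interpolation between the two weak-type endpoints yields the claimed strong $(q,p)$ bound and the proof is complete.
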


\begin{cor}\label{C7.3} Let $1 < q < p <\infty$, $\beta = 1/q - 1/p$.
Then the
operator $A_{\beta}$ which maps a function $f\in L_q$ to a function
$$
\sum_{\mathbf k}\hat f(\mathbf k)\left (\prod_{j=1}^d
\max\bigl\{1,|k_j |\bigr\}\right)^{-\beta} e^{i(\mathbf k,\mathbf x)}
$$
is bounded as operator from $L_q$ to $L_p$.
\end{cor}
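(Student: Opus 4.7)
The multiplier factors as a product over coordinates, so the operator itself factors as a commuting composition $A_\beta = A_\beta^{(1)} \circ A_\beta^{(2)} \circ \cdots \circ A_\beta^{(d)}$, where $A_\beta^{(j)}$ is the univariate operator with multiplier $\max\{1,|k_j|\}^{-\beta}$ acting on the $j$-th variable only. My plan is to (i) prove the bound $A_\beta^{(j)}:L_q(\T)\to L_p(\T)$ in one variable by reducing to Theorem \ref{T7.3.5}, and then (ii) iterate the univariate bound one variable at a time via Minkowski's integral inequality (Appendix 1.8) to obtain the multivariate bound.

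\textbf{Step 1: Univariate bound.} Note that since $1<q<p<\infty$, we have $\beta=1/q-1/p\in(0,1)$, hence $\mu:=1-\beta=1-1/q+1/p\in(0,1)$, which is exactly the HLS condition of Theorem \ref{T7.3.5}. The univariate operator $A_\beta^{(1)}$ is convolution on $\T$ with the kernel $G_\beta(x):=\sum_{k\in\Z}\max\{1,|k|\}^{-\beta}e^{ikx}$. Standard estimates for this kernel (via the Abel inequality 1.12 applied twice to the Fourier coefficients) give $|G_\beta(x)|\lesssim |x|^{\beta-1}$ for $|x|\le\pi$, and $G_\beta$ is smooth away from $0$. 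Splitting $G_\beta=G_\beta\chi_{[-1,1]}+G_\beta\chi_{[-\pi,\pi]\setminus[-1,1]}$, extending the first piece to $\R$ by zero outside $[-1,1]$, and applying Theorem \ref{T7.3.5} with exponent $\mu$ handles the singular part; the regular part lies in $L_\infty$ and is handled by Young's inequality (Appendix 1.10). The outcome is
\begin{equation*}
  \|A_\beta^{(1)}g\|_{L_p(\T)}\le C(q,p)\|g\|_{L_q(\T)}.
\end{equation*}

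\textbf{Step 2: Iteration via mixed norms.} The heart of the argument is the following claim: if $T$ denotes any of the univariate operators $A_\beta^{(k+1)}$ (so $T$ is convolution in $x_{k+1}$ with a nonnegative kernel $K$), then for every $h:\T^d\to\C$,
\begin{equation*}
  \|Th\|_{L_p(dx_1)\cdots L_p(dx_{k+1})L_q(dx_{k+2})\cdots L_q(dx_d)}
  \le C\,\|h\|_{L_p(dx_1)\cdots L_p(dx_k)L_q(dx_{k+1})\cdots L_q(dx_d)}.
\end{equation*}
To prove this, fix the outer variables $x_{k+2},\dots,x_d$, regard $h$ as a function of $(x_1,\dots,x_{k+1})$, apply the generalized Minkowski inequality (Appendix 1.8) to pull the $x_{k+1}$-convolution outside the iterated $L_p$-norms in $x_1,\dots,x_k$, and then invoke the univariate bound of Step 1 in the variable $x_{k+1}$ to replace the resulting $L_p(dx_{k+1})$-norm by a constant times the $L_q(dx_{k+1})$-norm; finally take the $L_q$-norm in $x_{k+2},\dots,x_d$. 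Starting from $\|f\|_{L_q(\T^d)}$ and applying this estimate $d$ times ($k=0,1,\dots,d-1$) yields
\begin{equation*}
  \|A_\beta f\|_{L_p(\T^d)}
  =\|A_\beta^{(1)}\cdots A_\beta^{(d)}f\|_{L_p(dx_1)\cdots L_p(dx_d)}
  \le C^d\|f\|_{L_q(\T^d)},
\end{equation*}
which is the desired bound.

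\textbf{Main obstacle.} The routine part is the multivariate iteration, which is a clean application of Minkowski's integral inequality once the univariate bound is in hand. The genuine work is concentrated in Step 1: transferring HLS from $\R$ to the torus, which requires the kernel estimate $|G_\beta(x)|\lesssim|x|^{\beta-1}$ near the origin together with the localization/extension argument that splits off the singular piece so that Theorem \ref{T7.3.5} applies directly.
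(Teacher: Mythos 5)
Your proposal is correct, and since the paper states Corollary \ref{C7.3} without supplying a proof, your argument is precisely the ``missing'' derivation from Theorem \ref{T7.3.5}: factor the tensor-product multiplier as a commuting composition of univariate operators, establish the univariate $L_q(\T)\to L_p(\T)$ bound by the kernel estimate $|G_\beta(x)|\lesssim|x|^{\beta-1}$ together with localization to $\R$ and HLS, and iterate through the coordinates via the generalized Minkowski inequality in mixed norms. The parameter bookkeeping is right: $\beta=1/q-1/p\in(0,1)$, $\mu=1-\beta\in(0,1)$, and $1/p=1/q+\mu-1$ is exactly the HLS exponent relation.

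One small slip: you assert in Step 2 that the univariate kernel $K=G_\beta$ is nonnegative, but for $0<\beta<1$ the sequence $\max\{1,|k|\}^{-\beta}$ fails the convexity hypothesis of Young's theorem at $k=1$, so nonnegativity of $G_\beta$ is at best unclear and should not be invoked. Fortunately it is also unnecessary: the generalized Minkowski inequality (Appendix 1.8) automatically yields the pointwise majorization $\|T h(\cdot,x_{k+1})\|_{L_p(dx_1)\cdots L_p(dx_k)}\le(|K|*G)(x_{k+1})$ with $G(y):=\|h(\cdot,y,\dots)\|_{L_p(dx_1)\cdots L_p(dx_k)}$, and your Step 1 argument only ever used the majorant $|G_\beta(x)|\lesssim|x|^{\beta-1}$ on $[-\pi,\pi]$, so it applies verbatim to convolution with $|G_\beta|$. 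Simply replace ``nonnegative kernel $K$'' by ``kernel $K$ satisfying $|K(x)|\lesssim|x|^{\beta-1}$'' and the iteration goes through unchanged.
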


 \newpage

\pagebreak

\newpage
\listoffigures

\printindex

\end{document}